\DeclareMathAlphabet\EuFrak{U}{euf}{m}{n}	%  the Bold Euler Fraktur
\SetMathAlphabet\EuFrak{bold}{U}{euf}{b}{n}	%  gothic font
\newcommand{\lra}{\longrightarrow}
\newcommand{\ovl}{\overline}
\newcommand{\unl}{\underline}
\newcommand{\wa}{\widehat}
\newcommand{\wt}{\widetilde}
\newcommand{\e}{\`e }
\newcommand{\sC}{{\it C*}-}
\newcommand{\bC}{{\mathbb C}}
\newcommand{\bR}{{\mathbb R}}
\newcommand{\bT}{{\mathbb T}}
\newcommand{\bZ}{{\mathbb Z}}
\newcommand{\bN}{{\mathbb N}}
\newcommand{\bK}{{\mathbb K}}
\newcommand{\bQ}{{\mathbb Q}}
\newcommand{\ud}{{{\mathbb U}(d)}}
\newcommand{\eps}{\varepsilon}
\newcommand{\mA}{\mathcal A}
\newcommand{\mB}{\mathcal B}
\newcommand{\mC}{\mathcal C}
\newcommand{\mD}{\mathcal D}
\newcommand{\mE}{\mathcal E}
\newcommand{\mF}{\mathcal F}
\newcommand{\mG}{\mathcal G}
\newcommand{\mH}{\mathcal H}
\newcommand{\mI}{\mathcal I}
\newcommand{\mK}{\mathcal K}
\newcommand{\mL}{\mathcal L}
\newcommand{\mM}{\mathcal M}
\newcommand{\mN}{\mathcal N}
\newcommand{\mO}{\mathcal O}
\newcommand{\mP}{\mathcal P}
\newcommand{\mR}{\mathcal R}
\newcommand{\mS}{\mathcal S}
\newcommand{\mX}{\mathcal X}
\newcommand{\mV}{\mathcal V}
\newcommand{\res}{{\mathrm{Res}} }
\newtheorem{thm}{Teorema}[section]
\newtheorem{cor}[thm]{Corollario}
\newtheorem{lem}[thm]{Lemma}
\newtheorem{prop}[thm]{Proposizione}
\newtheorem{defn}[thm]{Definizione}
\newtheorem{rem}{Osservazione}[section]
\theoremstyle{definition}
\newtheorem{ex}{Esempio}[section]
\theoremstyle{remark}
\numberwithin{equation}{section}
\begin{document}

\author{{\sf Ezio Vasselli}
                         %\\{ Dipartimento di Matematica}
                         %\\{ Universit\'a di Roma "La Sapienza"}
			 %\\{ P.le Aldo Moro, 2 - 00185 Rome - Italy }
}

\title{ Alcune Note di Analisi Matematica }
\maketitle

%\begin{abstract}
%\end{abstract}

\

\tableofcontents
%\markboth{Contents}{Contents}

\newpage
\section{Introduzione.}
\label{sec_intro}

In queste note vengono discussi argomenti si solito trattati in corsi di Analisi reale e complessa, \ Analisi Funzionale ed Analisi superiore. Viene assunta la conoscenza dei fondamenti di Topologia, Algebra lineare ed Analisi (nozioni elementari sulla continuit\'a, calcolo differenziale ed integrale in una e pi\'u variabili reali).

Essendo mia intenzione restare intorno a 200 pagine di lunghezza penso di avere abbondato in concisione, il che spero abbia agevolato la chiarezza dell'esposizione piuttosto che pregiudicarla. 
D'altra parte ho cercato di essere auto-contenuto nei limiti del possibile; in particolare, le dimostrazioni dei risultati principali sono svolte in modo completo, e quelle volte in cui, invece, viene fornito solo un argomento di massima ci\'o \e debitamente evidenziato. A volte vi sono accenni ad argomenti non prettamente attinenti l'analisi (ad esempio, l'omologia in relazione al teorema di Brouwer in \S \ref{sec_fp} o la dualit\'a di de Rham in \S \ref{sec_fdif}): ci\'o \e voluto ed \e inteso a stimolare la curiosit\'a di chi legge, nonch\'e a supportare il punto di vista che la matematica non \e divisa in compartimenti stagni.

Ho tenuto quel livello di astrazione che considero utile per la comprensione dei risultati. Ad esempio, penso sia controproducente trattare i teoremi di passaggio al limite sotto il segno di integrale limitandosi al caso della retta reale, quando le stesse dimostrazioni si applicano a pi\'u generici spazi misurabili. A parte l'ovvio vantaggio di avere teoremi validi in un ambito pi\'u generale, muovendosi ad un livello pi\'u astratto si ha la possibilit\'a di capire quali sono le propriet\'a dell'oggetto "concreto" (nella fattispecie, la retta reale) cruciali per la dimostrazione dei risultati. D'altro canto, spero di aver inserito un numero accettabile di esempi.

Ci sono argomenti che mi riprometto di includere in una futura versione, pur cercando di non superare i limiti di cui ho scritto nelle righe precedenti. Tra questi ci sono senz'altro un'esposizione completa dei teoremi di Urysohn, Tietze e Stone-Weierstrass (\S \ref{sec_SW}) ed una discussione sulla mo- nodromia nell'ambito delle funzioni di variabile complessa (in particolare il logaritmo). Ovviamente la scelta degli argomenti trattati, e la misura del loro approfondimento, sono del tutto personali e quindi opinabili.

Molti esercizi sono ripresi da altre fonti; nella maggiorparte dei casi ho inserito la referenza originale, dove spesso (e volentieri, suppongo) il lettore potr\'a trovarne la soluzione. Alcuni esercizi provengono da prove di esame per concorsi di ricercatore, e sono stati inseriti in quanto mi sono sembrati interessanti a livello pedagogico. In altri casi gli esercizi sono dei veri e propri complementi, come ad esempio il lemma di Borel-Cantelli (Es.\ref{sec_MIS}.5), il lemma di Riesz (Es.\ref{sec_afunct}.4), e le convoluzioni di misure (Es.\ref{sec_func_n}.7).

\

A prescindere dalle fonti utilizzate \e possibile che queste note non siano scevre di inesattezze, errori matematici o di esposizione, ed ogni segnalazione in merito \e benvenuta.

\

Segue un elenco dei capitoli con relativi commenti e referenze.

In \S \ref{sec_top} richiamiamo alcuni risultati di topologia generale di interesse in analisi, inclusi i teoremi di Tietze e Stone-Weierstrass, e dimostriamo i teoremi delle contrazioni e di Ascoli-Arzel\'a.

Rudimenti sulle equazioni differenziali ordinarie vengono dati in \S \ref{sec_eq_diff}. Per questa sezione mi sono basato su \cite{Giu2,Tes}, ad eccezione dell'Esempio \ref{ex_AcqP}, ripreso da vecchie dispense del Prof. P. Acquistapace, e del Teorema di Peano.

In \S \ref{sec_MIS} vengono trattati argomenti classici come la teoria della misura e dell'integrazione secondo Lebesgue, i teoremi sul passaggio al limite sotto il segno di integrale e le funzioni AC-BV. Fonti principali sono \cite{Roy,Kol,Car}.

La sezione \ref{sec_Lp} contiene una discussione sugli spazi $L^p$, con particolare attenzione alla completezza (Teorema di Fischer-Riesz) ed alla dualit\'a di Riesz.

\S \ref{sec_func_n} \e una raccolta piuttosto eterogenea di appunti su funzioni di pi\'u variabili: include una dimostrazione del Teorema della funzione implicita, una breve discussione sulle forme differenziali, i fondamenti del calcolo variazionale e, ad un livello un p\'o pi\'u avanzato, i teoremi di Fubini e le convoluzioni, queste ultime (brevemente) discusse anche dai punti di vista dell'analisi funzionale e dell'analisi armonica astratta. Fonti principali sono \cite{Giu2,Bre}.

La sezione \ref{sec_afunct} \e sicuramente quella pi\'u approfondita e concerne l'analisi funzionale. Accanto ai rudimenti sugli spazi di Banach e di Hilbert, ed a risultati orientati verso la teoria delle equazioni alle derivate parziali (Teoremi di Stampacchia-Lax-Milgram, teorema di Schauder), il lettore trover\'a un'esposizione dei fondamenti delle algebre di operatori e delle distribuzioni, argomenti, questi ultimi, di interesse in fisica matematica ed in meccanica quantistica in particolare. Particolarmente corposa \e la sezione degli esercizi, dove vengono approfonditi gli aspetti inerenti la teoria spettrale e le connessioni tra spazi di Hilbert, teoria della misura ed algebre di operatori. Le fonti principali sono \cite{Bre,Kol,Rud,Ped}.

L'analisi di Fourier viene trattata in \S \ref{sec_fourier}. Oltre agli argomenti classici, vengono dati alcuni accenni ai gruppi topologici ed alla trasformata di Fourier astratta. Per le serie di Fourier ho seguito \cite{Giu2}, mentre per la trasformata di Fourier mi sono basato su \cite{dBar}.

\S \ref{sec_compl} concerne i fondamenti dell'analisi complessa ed \e fortemente debitrice degli appunti di un corso di Istituzioni di geometria superiore tenuto dal Prof. E. Arbarello alcuni anni fa (met\'a anni novanta). Una versione "ufficiale" di questi appunti, scritta dai Prof. Arbarello e Salvati-Manni, \e reperibile alla pagina web
{\footnote{Va da s\'e che le citazioni relative a pagine web potrebbero diventare obsolete.}}
\cite{Arb}.

In \S \ref{sec_sobolev} vengono dati alcuni accenni sugli spazi di Sobolev e dimostrati risultati di esistenza ed unicit\'a per problemi alle derivate parziali. Ho seguito in modo piuttosto pedissequo \cite{Bre}.

\

\newpage
\section{Alcuni risultati di topologia generale.}
\label{sec_top}

In questa sezione richiamiamo, senza pretese di esaustivit\'a, alcune nozioni di topologia generale che verranno usate spesso nel seguito.
Successivamente dimostriamo alcuni risultati a priori prettamente topologici, come il teorema delle contrazioni e quello di Ascoli-Arzel\'a, i quali hanno per\'o importanti applicazioni in analisi.

\subsection{Metrizzabilit\'a e compattezza.}

Sia $X$ uno spazio topologico con topologia $\tau X$.
Un {\em intorno di} $x \in X$ \e un sottoinsieme $U$ di $X$, tale che $x \in U' \subseteq U$ per qualche aperto $U' \in \tau X$.
Diciamo che $X$ \e {\em separabile} se esiste una successione $X_0 := \{ x_n \in X \}$ {\em densa} in $X$, il che vuol dire che per ogni aperto $Y \neq \emptyset$ esiste $n \in \bN$ tale che $x_n \in Y$.
Lo spazio $X$ si dice {\em compatto} se ogni ricoprimento aperto 
$\{ Y_\alpha \in \tau X \}$, $\cup_\alpha Y_\alpha = X$,
ammette un sottoricoprimento finito
$\{ Y_{\alpha_1} , \ldots , Y_{\alpha_n} \}$, $\cup_i^n Y_{\alpha_i} = X$.

Richiamiamo ora la nozione di {\em spazio metrico}. Se $X$ \e un insieme, allora una {\em metrica} su $X$ \e il dato di una funzione
\[
d : X \times X \to \bR^+
\]
tale che
\[
d(x,x) = 0 
\ \ , \ \
d(x,y) + d(y,z) \leq d(x,z)
\ \ , \ \
x,y,z \in X
\ .
\]
Data una metrica, \e possibile definire su $X$ la topologia avente come sottobase la famiglia dei {\em dischi}
\[
\Delta(x,\delta) := \{ y \in X : d(x,y) < \delta \}
\ \ , \ \
x \in X , \delta > 0
\ .
\]
Una successione $\{ x_n \} \subseteq X$ si dice {\em di Cauchy} se per ogni $\eps > 0$ esiste $n_0 \in \bN$ tale che $d(x_n,x_m) < \eps$, $\forall n,m \geq n_0$. Diremo che $\{ x_n \}$ {\em converge} ad $x \in X$ se per ogni $\eps > 0$ esiste $n_0 \in \bN$ tale che $d(x,x_n) < \eps$, $\forall n > n_0$. Uno spazio metrico $(X,d)$ si dice {\em completo} se ogni successione di Cauchy converge ad un elemento di $X$.
\begin{prop}
\label{prop_smc}
Uno spazio metrico compatto $( X,d )$ \e separabile.
\end{prop}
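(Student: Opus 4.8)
The plan is to exploit compactness at every scale $1/n$ in order to manufacture a countable dense subset from the centers of finitely many dischi. The metric enters in two ways: to produce, for each resolution, an open covering to which compactness applies, and to relate an arbitrary open set back to these finitely many centers.

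First I would fix $n \in \bN$ and note that the family $\{ \Delta(x,1/n) : x \in X \}$ is an open ricoprimento of $X$, since trivially $x \in \Delta(x,1/n)$ for every $x$. By compattezza there exist finitely many centers $x_{n,1}, \ldots, x_{n,k_n} \in X$ with
\[
X = \bigcup_{j=1}^{k_n} \Delta(x_{n,j}, 1/n).
\]
Letting $X_0$ be the union over all $n \in \bN$ of these finite sets of centers, $X_0$ is a countable union of finite sets, hence countable, and may be enumerated as a successione $\{ x_m \}$.

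It then remains to verify density in the sense of the definition given above: for every nonempty open $Y \in \tau X$ I must exhibit a point of $X_0$ lying in $Y$. I would pick $y \in Y$ and use the definition of the metric topology to obtain $\delta > 0$ with $\Delta(y,\delta) \subseteq Y$. Choosing $n$ so large that $1/n < \delta$ and recalling that the dischi of radius $1/n$ cover $X$, the point $y$ belongs to some $\Delta(x_{n,j}, 1/n)$, so that $d(x_{n,j}, y) < 1/n < \delta$; by symmetry of $d$ this gives $x_{n,j} \in \Delta(y,\delta) \subseteq Y$, and $x_{n,j} \in X_0$, as required.

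The argument is essentially routine, and no single step is a genuine obstacle. The only place demanding care is the final passage from an arbitrary open $Y$ to a disco $\Delta(y,\delta)$: compactness alone delivers the finite subcoverings but says nothing about how a general open set meets the chosen centers, so it is exactly here that the metric structure (and, implicitly, the symmetry of $d$) is used in an essential way.
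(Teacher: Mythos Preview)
Your argument is correct and follows exactly the same route as the paper: for each $n$ extract a finite subcover by balls of radius $1/n$, collect the centers, and take their union. The paper simply asserts that this union is dense without writing out the verification, whereas you spell out the easy step that any open $Y$ contains some $\Delta(y,\delta)$ and hence one of the centers at scale $1/n < \delta$.
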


\begin{proof}[Dimostrazione]
Per ogni $n \in \bN$, consideriamo il ricoprimento $\Delta_n :=$ $\left\{ \Delta (x,1/n) , x \in X \right\}$, ne estraiamo un sottoricoprimento finito $\Delta_n^f$ e denotiamo con $X_n$ l'insieme dei centri dei dischi in $\Delta_n^f$. $X_0 :=$ $\cup_n X_n$ fornisce il sottoinsieme denso e numerable desiderato.
\end{proof}

Ricordiamo che uno spazio topologico $X$ \e {\em sequenzialmente compatto} se ogni successione $\{ x_n \} \subset X$ ammette una sottosuccessione convergente ad un elemento di $x$. In genere uno spazio compatto non \e sequenzialmente compatto, tuttavia si ha il seguente risultato:
\begin{prop}{ \bf{\cite[Teo.11.7]{Cam}}.}
\label{prop_comp}
Sia $X$ uno spazio metrico. Allora $X$ \e sequenzialmente compatto se e solo se \e compatto.
\end{prop}

Nell'ambito degli spazi metrici trova naturale collocazione la nozione di {\em uniforme} continuit\'a.
Dati gli spazi metrici $(X,d)$, $(X',d')$, un'applicazione $f : X \to Y$ si dice {\em uniformemente continua} se per ogni $\eps > 0$ esiste $\delta > 0$ tale che $d'(f(x),f(y)) < \eps$ per ogni coppia $x,y$ tale che $d(x,y) < \delta$. 
Enfatizziamo il fatto che, a differenza dell'usuale continuit\'a definita sugli spazi metrici, $\delta$ non dipende dalla scelta di $x,y$. 
Il {\em Teorema di Heine-Cantor} afferma che se $X$ \e metrico e compatto, allora ogni applicazione continua a valori in uno spazio metrico $Y$ \e anche uniformemente continua.

\subsection{Alcune propriet\'a delle funzioni continue.}
\label{sec_SW}

Sia $X$ uno spazio topologico e $C(X)$ l'algebra delle funzioni continue su $X$ a valori reali
{\footnote{Con il termine {\em algebra} intendiamo uno spazio vettoriale equipaggiato con un prodotto (associativo e distributivo). E' chiaro che le funzioni continue formano un'algebra rispetto alle operazioni di combinazione lineare e moltiplicazione.}}.
Un ben noto teorema di Weierstrass afferma che se $X$ \e compatto allora
\[
\| f \|_\infty := \sup_{x \in X} |f(x)| < + \infty  \ ;
\]
l'applicazione $\| \cdot \|_\infty$ soddisfa le propriet\'a
\[
\| f \|_\infty = 0 \ \Rightarrow \ f=0
\ \ , \ \
\| f+g \|_\infty \leq \| f \|_\infty + \| g \|_\infty
\ , \
\forall f,g \in C(X) \ ,
\]
dunque \e una {\em norma} nel senso di \S \ref{sec_afunct} (o, equivalentemente, $C(X)$ \e uno {\em spazio normato}).
Assumiamo ora, pi\'u in generale, che $X$ sia uno spazio {\em localmente compatto}, il che vuol dire che ogni $x \in X$ ammette un intorno compatto; diciamo che $f \in C(X)$ si annulla all'infinito se per ogni $\eps > 0$ esiste un compatto $K_\eps \subset X$ tale che
\begin{equation}
\label{eq_norm_infty}
\sup_{x \in X - K_\eps} |f(x)| < \eps \ .
\end{equation}
Denotiamo con $C_0(X)$ l'insieme delle funzioni continue che si annullano all'infinito, il quale, al pari di $C(X)$, \e un'algebra. Ora, (\ref{eq_norm_infty}) ed il teorema di Weierstrass implicano che 
\[
\| f \|_\infty = 
\sup_{x \in K_\eps}|f(x)| + \sup_{x \in X-K_\eps}|f(x)| < 
\sup_{x \in K_\eps}|f(x)| + \eps < 
+ \infty
\ \ , \ \
\forall f \in C_0(X)
\ ,
\]
per cui $\| \cdot \|_\infty$ \e ben definita su $C_0(X)$, il quale \e quindi uno spazio normato.
Nel risultato seguente stabiliamo che $C(X)$, $C_0(X)$ sono completi, ovvero {\em spazi di Banach} nel senso di \S \ref{sec_afunct}.
\begin{prop}
\label{prop_C0X}
Sia $X$ uno spazio localmente compatto ed $\{ f_n \} \subset C_0(X)$ tale che
\[
\| f_n - f_m \|_\infty \stackrel{n,m}{\to} 0
\ .
\]
Allora esiste ed \e unica $f \in C_0(X)$ tale che $\lim_n \| f-f_n \|_\infty = 0$, ed $\{ f_n \}$ converge uniformemente ad $f$.
Lo stesso vale per successioni in $C(X)$ nel caso in cui $X$ sia compatto, con $f \in C(X)$.
\end{prop}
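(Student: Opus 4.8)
The plan is to realize completeness by the classical argument that a uniformly Cauchy sequence of continuous functions has a continuous uniform limit, and then to verify the two features specific to this setting: that the limit again vanishes at infinity, and that it is unique. First I would construct the candidate limit pointwise. For each fixed $x \in X$ the estimate $|f_n(x) - f_m(x)| \leq \|f_n - f_m\|_\infty$ shows that $\{ f_n(x) \}$ is a Cauchy sequence in $\bR$; since $\bR$ is complete, I may set $f(x) := \lim_n f_n(x)$, which defines a function $f : X \to \bR$.

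Next I would upgrade this to uniform convergence. Given $\eps > 0$, choose $n_0$ with $\|f_n - f_m\|_\infty < \eps$ for all $n,m \geq n_0$; then $|f_n(x) - f_m(x)| < \eps$ for every $x$, and letting $m \to \infty$ gives $|f_n(x) - f(x)| \leq \eps$ for all $x$ and all $n \geq n_0$. Taking the supremum over $x$ yields $\|f_n - f\|_\infty \leq \eps$, i.e.\ $f_n \to f$ uniformly. Continuity of $f$ then follows by a three-$\eps$ argument: fixing $x_0$ and $\eps > 0$, I pick $n$ with $\|f_n - f\|_\infty < \eps/3$ and a neighbourhood $U$ of $x_0$ on which $|f_n(x) - f_n(x_0)| < \eps/3$, so that $|f(x) - f(x_0)| < \eps$ on $U$.

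The step I expect to require the most care is showing $f \in C_0(X)$, since this is exactly what distinguishes the statement from plain uniform convergence. Given $\eps > 0$, I would choose $n$ with $\|f_n - f\|_\infty < \eps/2$ and then, using $f_n \in C_0(X)$, a compact $K \subset X$ with $\sup_{x \in X - K} |f_n(x)| < \eps/2$; the triangle inequality then bounds $|f(x)|$ by $\eps$ for $x \in X - K$, so $f$ vanishes at infinity and the defining condition (\ref{eq_norm_infty}) holds for $f$. Uniqueness is immediate: any two limits $f,g$ satisfy $\|f - g\|_\infty \leq \|f - f_n\|_\infty + \|f_n - g\|_\infty \to 0$, and the first norm property forces $f = g$. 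Finally, when $X$ is compact the condition (\ref{eq_norm_infty}) is satisfied vacuously by taking $K_\eps = X$, so $C(X) = C_0(X)$ and the very same argument yields completeness of $C(X)$.
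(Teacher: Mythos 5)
Your proposal is correct and follows essentially the same route as the paper's proof: pointwise Cauchy limits, uniformity from the $x$-independence of $n_\eps$, a three-$\eps$ argument for continuity, and a triangle-inequality estimate outside a compact set to show the limit vanishes at infinity. The explicit uniqueness remark and the observation that $C(X)=C_0(X)$ for compact $X$ are fine additions but do not change the argument.
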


\begin{proof}[Dimostrazione]
Scelto $\eps > 0$ esiste $n_\eps \in \bN$ tale che
\begin{equation}
\label{eq.convCX}
\| f_m-f_n \|_\infty < \eps
\ , \
\forall n,m \geq n_\eps
\ \Rightarrow \
|f_n(x)-f_m(x)| < \eps
\ , \
\forall x \in X
\ .
\end{equation}
Dunque ogni successione 
$\{ f_n(x) \}$, $x \in X$,
\e di Cauchy in $\bR$ ed esiste il limite $f(x) := \lim_n f_n(x)$. Ci\'o definisce un'unica funzione 
$f : X \to \bR$,
alla quale $\{ f_n \}$ converge uniformemente grazie al fatto che $n_\eps$ non dipende da $x$. Grazie all'uniformit\'a della convergenza troviamo
\[
|f(x)-f_n(x)| < \eps 
\ , \
\forall n \geq n_\eps
\ , \
x \in X
\ \Leftrightarrow \
\| f-f_n \|_\infty < \eps
\ , \
\forall n \geq n_\eps
\ ,
\]
e quindi $\lim_n \| f-f_n \|_\infty = 0$.
Per verificare che $f$ \e continua scegliamo $\eps > 0$ ed osserviamo che esiste $n_\eps \in \bN$ tale che $\| f-f_{n_\eps} \|_\infty < \eps / 3$; per continuit\'a di $f_{n_\eps}$ esiste un intorno $U_\eps  \ni x$ tale che $| f_{n_\eps}(x)-f_{n_\eps}(x') | < \eps / 3$ per ogni $x' \in U_\eps$, per cui (sommando e sottraendo $f_{n_\eps}(x)$, $f_{n_\eps}(x')$) abbiamo la stima
\[
| f(x)-f(x') | \leq 
2 \| f-f_{n_\eps} \|_\infty + | f_{n_\eps}(x)-f_{n_\eps}(x') | < \eps \ .
\]
Dunque $f$ \e continua. 
Verifichiamo infine, nel caso localmente compatto, che $f$ si annulla all'infinito: scelto $\eps > 0$ sappiamo che esistono $n_\eps \in \bN$ con $\| f-f_{n_\eps} \|_\infty < \eps / 2$ ed un compatto $K_\eps$ con $\sup_{x \in X-K_\eps}|f_{n_\eps}(x)| < \eps / 2$. Per cui,
\[
\sup_{x \in X - K_\eps} |f(x)| 
\leq
\sup_{x \in X - K_\eps} |f(x) - f_{n_\eps}(x)|   +   \sup_{x \in X-K_\eps} |f_{n_\eps}(x)|
\leq
\| f-f_{n_\eps} \|_\infty  +  \sup_{X-K_\eps} |f_{n_\eps}(x) |
<
\eps 
\ ,
\]
e concludiamo che $f$ si annulla all'infinito.
\end{proof}

Nel caso di funzioni a valori complessi i risultati precedenti rimangono validi, e scriveremo rispettivamente $C(X,\bC)$, $C_0(X,\bC)$ per denotare i relativi spazi di Banach (nonch\'e algebre).
Ad uso futuro, denotiamo con $C_c(X) \subset C_0(X)$ lo spazio vettoriale delle funzioni a supporto compatto, il quale, a differenza di $C_0(X)$, {\em non} \e uno spazio di Banach (vedi Esercizio \ref{sec_top}.7).

\

\noindent \textbf{Cenni sul Teorema di Tietze.} Ricordiamo che uno spazio topologico $X$ \e {\em normale} se per ogni coppia $W,W'$ di chiusi disgiunti esistono aperti $U \supset W$, $U' \supset W'$, disgiunti anch'essi. 
E' semplice verificare che ogni spazio metrico \e normale (\cite[Prop.9.8]{Cam}). Inoltre, ogni spazio compatto e di Hausdorff \e normale (vedi \cite[Theorem 1.6.6]{Ped}).

\begin{thm} \textbf{(Il Lemma di Urysohn, \cite[Theorem 1.5.6]{Ped})}
\label{thm_ury}
Sia $X$ uno spazio normale e $W,W' \subset X$ chiusi disgiunti. Allora esiste $f : X \to [0,1]$ continua tale che $f|_W = 0$ e $f|_{W'} = 1$.
\end{thm}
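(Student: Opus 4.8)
The plan is to build the function $f$ by prescribing its sublevel sets explicitly, using only the normality of $X$ together with the density in $[0,1]$ of the dyadic rationals $D := \{ k/2^n : n \in \bN, \, 0 \le k \le 2^n \}$. The first step is to record the standard reformulation of normality: if $A$ is closed, $U$ is open and $A \subseteq U$, then there exists an open $V$ with $A \subseteq V \subseteq \overline{V} \subseteq U$. This follows by applying normality to the disjoint closed sets $A$ and $X \setminus U$: one obtains disjoint open neighbourhoods $V \supseteq A$ and $V' \supseteq X \setminus U$, and since $V \subseteq X \setminus V'$ (a closed set contained in $U$) we get $\overline{V} \subseteq U$.

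The heart of the argument is the construction, by induction on the level $n$, of a family of open sets $\{ U_q \}_{q \in D}$ with the monotonicity property
\[
q < q' \ \Longrightarrow \ \overline{U_q} \subseteq U_{q'} \ .
\]
I would start by setting $U_1 := X \setminus W'$, which is open and contains $W$ because $W \cap W' = \emptyset$; applying the reformulation above to the pair $W \subseteq U_1$ gives an open $U_0$ with $W \subseteq U_0 \subseteq \overline{U_0} \subseteq U_1$. At stage $n$ one inserts the new dyadics $(2k+1)/2^n$: each such $q$ lies strictly between two consecutive already-defined rationals $q_- < q < q_+$ with $\overline{U_{q_-}} \subseteq U_{q_+}$, and a further application of normality yields an open $U_q$ with $\overline{U_{q_-}} \subseteq U_q \subseteq \overline{U_q} \subseteq U_{q_+}$, so that monotonicity is preserved at every step.

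With the family in hand I would define
\[
f(x) := \inf \{ q \in D : x \in U_q \} \ ,
\]
with the convention $\inf \emptyset = 1$. The boundary conditions are then immediate: $W \subseteq U_0$ forces $f|_W = 0$, while $x \in W'$ means $x \notin U_1$, hence (by monotonicity) $x \notin U_q$ for every $q \in D$, so that $f|_{W'} = 1$; clearly $0 \le f \le 1$ everywhere.

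The main obstacle, and the only genuinely delicate point, is continuity, which I would establish by checking that the preimages of the subbasic open sets $[0,a)$ and $(a,1]$ of $[0,1]$ are open. For the first, $f(x) < a$ holds exactly when $x \in U_q$ for some dyadic $q < a$, whence $\{ f < a \} = \bigcup_{q < a} U_q$ is open. For the second, the monotonicity property is precisely what is needed: $f(x) > a$ holds exactly when $x \notin \overline{U_q}$ for some dyadic $q > a$. Indeed, if $f(x) > a$ I pick dyadics $a < q < q' < f(x)$; then $x \notin U_{q'}$, and $\overline{U_q} \subseteq U_{q'}$ gives $x \notin \overline{U_q}$; conversely $x \notin \overline{U_q}$ with $q > a$ forces $x \notin U_s$ for all $s \le q$ and hence $f(x) \ge q > a$. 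Therefore $\{ f > a \} = \bigcup_{q > a} (X \setminus \overline{U_q})$ is open. Since such half-open intervals generate the topology of $[0,1]$, $f$ is continuous, and this completes the proof.
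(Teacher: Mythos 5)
Your proof is correct and complete: the reformulation of normality, the inductive construction of the monotone family $\{U_q\}_{q\in D}$ indexed by dyadic rationals, and the verification of continuity via the sublevel and superlevel sets are all carried out properly (including the delicate point that $\{f>a\}=\bigcup_{q>a}(X\setminus\overline{U_q})$ requires the strict nesting $\overline{U_q}\subseteq U_{q'}$). The paper itself states the lemma without proof, deferring to \cite[Thm.1.5.6]{Ped}, and your argument is precisely the classical construction given there, so there is nothing to reconcile.
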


\begin{thm} \textbf{(Tietze, \cite[Theorem 1.5.8]{Ped})}
Sia $X$ uno spazio normale, $W \subset X$ chiuso ed $f : W \to \bR$ continua. Allora esiste $\wt f \in C(X)$ tale che $\wt f|_W = f$.
\end{thm}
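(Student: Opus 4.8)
The plan is to construct $\wt f$ as the uniform limit of a series of continuous functions on $X$, each manufactured by the Lemma di Uryshon (\thmref{thm_ury}), and then to strip off the boundedness hypothesis by a change of variable. The whole construction rests on a single \emph{one-step approximation lemma}: if $g : W \to \bR$ is continuous with $\sup_W |g| \leq c$, then there is $h \in C(X)$ with $\sup_X |h| \leq c/3$ and $\sup_W |g-h| \leq 2c/3$. To prove it I would set $A := g^{-1}\bigl([-c,-c/3]\bigr)$ and $B := g^{-1}\bigl([c/3,c]\bigr)$; these are disjoint and closed in $W$, hence, since $W$ is closed in $X$, closed in $X$. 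Feeding them into \thmref{thm_ury} yields $u : X \to [0,1]$ continuous with $u|_A = 0$ and $u|_B = 1$, and the rescaled function $h := \tfrac{c}{3}(2u-1)$ then satisfies the two claimed bounds by a direct case check on $A$, $B$ and $W \setminus (A \cup B)$.

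Next I would iterate. Starting from $g_0 := f$ with $c_0 = 1$, the lemma produces $h_1$ with $\sup_X|h_1| \leq 1/3$ and $\sup_W|f - h_1| \leq 2/3$; applying it to the residual $g_1 := f - h_1$ (with $c_1 = 2/3$) gives $h_2$, and so on. Inductively one obtains $h_n \in C(X)$ with $\sup_X |h_n| \leq \tfrac13 (2/3)^{n-1}$ and, on $W$, the tail estimate $\sup_W \bigl| f - \sum_{k=1}^n h_k \bigr| \leq (2/3)^n$. Since $\sum_n \tfrac13 (2/3)^{n-1} = 1 < +\infty$, the partial sums $s_n := \sum_{k=1}^n h_k$ form a uniformly Cauchy sequence in $C(X)$; the continuity-of-uniform-limits argument carried out in \propref{prop_C0X}, which uses no compactness, shows that $s_n$ converges uniformly to a continuous $\wt f$ with $\sup_X |\wt f| \leq 1$. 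Letting $n \to \infty$ in the tail estimate gives $\wt f|_W = f$, so the bounded case $f : W \to [-1,1]$ is settled.

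Finally I would reduce the unbounded case to this one via a homeomorphism $\varphi : \bR \to (-1,1)$, for instance $\varphi(t) = t/(1+|t|)$. Then $\varphi \circ f : W \to (-1,1)$ is bounded, and the bounded case extends it to some $G \in C(X)$ with $\sup_X|G| \leq 1$. The delicate point is that $\varphi^{-1}$ is defined only on the \emph{open} interval, whereas $G$ may attain $\pm 1$ off $W$. To fix this, note that $D := \{ x \in X : |G(x)| = 1 \}$ is closed and, since $|\varphi \circ f| < 1$ on $W$, disjoint from $W$; a further application of \thmref{thm_ury} yields $v : X \to [0,1]$ with $v|_D = 0$ and $v|_W = 1$. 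The product $Gv$ is then continuous, agrees with $\varphi \circ f$ on $W$, and satisfies $|Gv| < 1$ everywhere (equality $|G|=1$ forces $v=0$). Hence $\wt f := \varphi^{-1} \circ (Gv) \in C(X)$ is the desired extension.

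I expect the genuine obstacle to be the one-step approximation lemma together with the bookkeeping of the geometric decay: one must choose the thresholds at $\pm c/3$ exactly so that the residual shrinks by the factor $2/3$, and one must be careful that the sets handed to Urysohn are closed in $X$ (not merely in $W$). The last step, the truncation by $v$ to remain strictly inside $(-1,1)$, is the point most easily overlooked, so I would state it explicitly rather than leave it to the reader.
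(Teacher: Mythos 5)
Your proof is correct. The paper itself does not prove this theorem — it only cites \cite[Thm.1.5.8]{Ped} — and your argument is precisely the standard one found there: the $c/3$–$2c/3$ one-step Urysohn approximation, geometric summation to settle the bounded case, and the reduction of the unbounded case via a homeomorphism $\bR \to (-1,1)$ with the final Urysohn correction to keep $|Gv|<1$ off $W$. All the delicate points (closedness of $A,B$ in $X$ rather than just in $W$, the compactness-free uniform-limit argument, and the truncation by $v$) are handled explicitly and correctly.
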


\

\noindent \textbf{Cenni sul teorema di Stone-Weierstrass.} Possiamo ora dare l'enunciato di un importante risultato di approsimazione.

\begin{defn}
Diciamo che un sottoinsieme $V$ di $C(X)$ \textbf{separa i punti di $X$} se per ogni $x , x' \in X$, $x \neq x'$, esiste $f \in V$ tale che $f(x) \neq f(x')$.
\end{defn}

\begin{ex}{\it
Sia $X := [0,1]$. Allora l'insieme $V$ delle funzioni del tipo $f(x) = a +bx$, $x \in X$, $a,b \in \bR$, separa i punti di $X$. Osservare che l'algebra generata da $V$ 
{\footnote{
Per definizione, l'algebra generata da un sottoinsieme $S$ di $C(X)$ \e lo spazio vettoriale generato da prodotti del tipo $f_1 \cdots f_n$, dove $n \in \bN$, $f_1 , \ldots , f_n \in S$.
}}
coincide con l'insieme dei polinomi nella variabile $x \in [0,1]$.
}
\end{ex}

\begin{thm}[Stone-Weierstrass]
\label{thm_sw}
Sia $X$ uno spazio compatto di Hausdorff ed $\mA \subset C(X)$ un'al- gebra che contiene le funzioni costanti e che separa i punti di $X$. Allora $\mA$ \e densa in $C(X)$ nella topologia della convergenza uniforme.
\end{thm}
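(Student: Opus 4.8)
The plan is to reduce everything to a lattice-and-interpolation argument, after first enriching $\mA$ with absolute values. I would begin by passing to the uniform closure $\ovl{\mA}$: since sum, scalar multiplication and product are all continuous for $\| \cdot \|_\infty$ (by \propref{prop_C0X} the ambient $C(X)$ is a Banach space, and the product is jointly continuous on norm-bounded sets via $\| fg - f'g' \|_\infty \leq \| f \|_\infty \| g - g' \|_\infty + \| g' \|_\infty \| f - f' \|_\infty$), the set $\ovl{\mA}$ is again a subalgebra containing the constants and separating points. It therefore suffices to prove $\ovl{\mA} = C(X)$; equivalently, I may assume from the outset that $\mA$ is closed and show it is all of $C(X)$.

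The key preliminary lemma is that $\ovl{\mA}$ is closed under the pointwise absolute value. For this I would approximate $s \mapsto \sqrt{s}$ on $[0,1]$ uniformly by real polynomials (with no constant-term constraint); a convenient explicit choice is the recursion $p_0 = 0$, $p_{n+1}(s) = p_n(s) + \tfrac{1}{2}\bigl( s - p_n(s)^2 \bigr)$, for which the identity $\sqrt{s} - p_{n+1}(s) = \bigl( \sqrt{s} - p_n(s) \bigr)\bigl( 1 - \tfrac{1}{2}(\sqrt{s}+p_n(s)) \bigr)$ yields inductively $0 \leq \sqrt{s} - p_n(s) \leq \sqrt{s}\,(1 - \tfrac{1}{2}\sqrt{s})^n$ on $[0,1]$, hence uniform convergence. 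Given $f \in \ovl{\mA}$ with $M := \| f \|_\infty > 0$, applying these polynomials to $f^2 / M^2 \in \ovl{\mA}$ approximates $|f| / M$ uniformly, so $|f| \in \ovl{\mA}$. Because $\max(f,g) = \tfrac{1}{2}(f+g) + \tfrac{1}{2}|f-g|$ and $\min(f,g) = \tfrac{1}{2}(f+g) - \tfrac{1}{2}|f-g|$, the closed algebra $\ovl{\mA}$ is in fact a sublattice of $C(X)$.

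Next I would record a two-point interpolation property: given distinct $x,y \in X$ and reals $\alpha,\beta$, separation provides $g \in \mA$ with $g(x) \neq g(y)$, and then $h := \alpha + (\beta-\alpha)(g - g(x))/(g(y)-g(x)) \in \mA$ satisfies $h(x)=\alpha$, $h(y)=\beta$, using that constants lie in $\mA$. With the lattice structure and this interpolation in hand, the approximation follows from a standard double compactness argument. Fix $f \in C(X)$ and $\eps > 0$. For fixed $x$ and each $y$ I take $h_{x,y} \in \ovl{\mA}$ agreeing with $f$ at $x$ and $y$; continuity gives a neighborhood of $y$ on which $h_{x,y} < f + \eps$, and finitely many such neighborhoods cover $X$ by compactness, so the minimum $h_x$ of the corresponding functions lies in $\ovl{\mA}$, equals $f(x)$ at $x$, and satisfies $h_x < f + \eps$ everywhere. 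Varying $x$, each $h_x$ exceeds $f - \eps$ on a neighborhood of $x$; a second appeal to compactness together with the maximum operation produces $h \in \ovl{\mA}$ with $f - \eps < h < f + \eps$, i.e. $\| f - h \|_\infty \leq \eps$, which closes the argument since $\ovl{\mA}$ is closed.

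I expect the main obstacle to be the absolute-value lemma: it is the only genuinely analytic ingredient, the one place where compactness of $X$ plays no role, and the step requiring a careful uniform estimate for the polynomial recursion. Everything afterward is a formal manipulation of lattice operations combined with two routine uses of compactness and continuity, and so should present no real difficulty once the lattice property of $\ovl{\mA}$ is established.
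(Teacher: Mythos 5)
Your argument is correct and complete, but note that the paper does not actually prove Theorem~\ref{thm_sw}: it only states it and defers the proof to \cite[\S 4.3]{Ped}, so there is no internal proof to compare against. What you have written is precisely the classical lattice-theoretic proof (essentially the one in Pedersen), and every step checks out: the uniform closure $\ovl{\mA}$ is a subalgebra by the product estimate you give; the recursion $p_{n+1}(s)=p_n(s)+\tfrac{1}{2}\bigl(s-p_n(s)^2\bigr)$ does satisfy $0\leq\sqrt{s}-p_n(s)\leq\sqrt{s}\bigl(1-\tfrac{1}{2}\sqrt{s}\bigr)^n\leq 2/(ne)$ on $[0,1]$, which gives the uniform approximation of the square root and hence $|f|\in\ovl{\mA}$; the lattice identities, the two-point interpolation (the only place where the constants and the separation hypothesis enter), and the double compactness argument are all standard and correctly executed. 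Two minor points are worth making explicit in a write-up: in the first covering step you also need a function $h_{x,y}$ for $y=x$ (the constant $f(x)$, which lies in $\mA$, does the job), and the degenerate case $\| f \|_\infty=0$ in the absolute-value lemma should be set aside. Your closing remark is accurate and worth keeping: the square-root approximation is the only genuinely analytic ingredient, and it is exactly the step that breaks down over $\bC$, which is why the paper must add the self-adjointness hypothesis (\ref{eq.adj}) in the complex case.
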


\begin{cor}
\label{cor_SW}
Sia $X$ uno spazio localmente compatto di Hausdorff, ed $\mA \subset C_0(X)$ un'algebra che separa i punti di $X$ e tale che per ogni $x \in X$ esista $f \in \mA$ con $f(x) \neq 0$. Allora $\mA$ \e densa in $C_0(X)$ nella topologia della convergenza uniforme.
\end{cor}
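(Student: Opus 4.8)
The plan is to reduce the statement to the compact Stone--Weierstrass theorem (\thmref{thm_sw}) by passing to the one-point compactification $X^+ := X \cup \{ \infty \}$. Since $X$ is locally compact and Hausdorff, $X^+$ is compact and Hausdorff, and a function $f \in C(X)$ lies in $C_0(X)$ precisely when it extends continuously to $X^+$ with value $0$ at $\infty$. In this way I identify $C_0(X)$ isometrically (the identification preserving $\| \cdot \|_\infty$) with the ideal $\{ g \in C(X^+) : g(\infty) = 0 \} \subset C(X^+)$, namely the kernel of evaluation at $\infty$.

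Next I would consider the algebra $\wt{\mA} := \mA + \bR \cdot 1 \subset C(X^+)$ obtained by adjoining the constant functions to (the image of) $\mA$. One checks immediately that $\wt{\mA}$ is an algebra, since the product $(f+c)(g+d)$ expands into an element of $\mA$ plus a constant, and by construction it contains the constants. The crucial verification is that $\wt{\mA}$ separates the points of $X^+$. For two points $x, x' \in X$ this is immediate, because $\mA$ already separates the points of $X$. To separate a point $x \in X$ from $\infty$, I invoke the non-vanishing hypothesis: there is $f \in \mA$ with $f(x) \neq 0$, while $f(\infty) = 0$ since $f \in C_0(X)$, so $f$ distinguishes $x$ from $\infty$. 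With these checks in hand, \thmref{thm_sw} applies to $\wt{\mA}$ on the compact Hausdorff space $X^+$ and yields density of $\wt{\mA}$ in $C(X^+)$.

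Finally I would transfer this back to $C_0(X)$. Given $f \in C_0(X)$ and $\eps > 0$, I view $f$ as $g \in C(X^+)$ with $g(\infty) = 0$ and choose $h \in \mA$, $c \in \bR$ with $\| g - (h + c) \|_\infty < \eps$. The point is to dispose of the constant $c$: evaluating the uniform estimate at $\infty$ and using $g(\infty) = h(\infty) = 0$ gives $|c| < \eps$, whence $\| g - h \|_\infty \leq \| g - (h+c) \|_\infty + |c| < 2\eps$. Thus $f$ is approximated to within $2\eps$ by the element $h \in \mA$, which proves the asserted density.

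I expect the only genuinely delicate point to be this last step, the elimination of the additive constant: Stone--Weierstrass only produces approximants in $\wt{\mA}$, and a priori the constant part $c$ need not be small. The vanishing conditions $g(\infty) = h(\infty) = 0$ are exactly what force $|c|$ to be small, and it is precisely here that the hypothesis "for every $x$ there is $f \in \mA$ with $f(x) \neq 0$" earns its keep, since it is also what rescues point-separation against $\infty$. The remaining ingredients --- compactness and Hausdorffness of $X^+$, the isometric identification of $C_0(X)$, and the algebra axioms for $\wt{\mA}$ --- are routine.
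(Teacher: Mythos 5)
Your proof is correct and complete. The paper does not actually supply a proof of this corollary (it defers to the reference for \S 4.3 of Pedersen), but the route you take --- one-point compactification $X^+$, adjoining the constants to form $\wt{\mA} = \mA + \bR \cdot 1$, separating $x$ from $\infty$ via the non-vanishing hypothesis, applying \thmref{thm_sw}, and then killing the constant $c$ by evaluating the uniform estimate at $\infty$ where both $g$ and $h$ vanish --- is exactly the classical argument, and you have identified and handled correctly the one genuinely delicate step, namely that $|c| < \eps$ follows from $g(\infty) = h(\infty) = 0$.
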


I risultati precedenti si estendono al caso complesso aggiungendo l'ipotesi che $\mA$ sia chiusa rispetto al passaggio alla funzione coniugata
\begin{equation}
\label{eq.adj}
f \mapsto f^* : f^*(x) := \ovl{f(x)} 
\ , \
\forall x \in X
\ \ , \ \ 
f \in C(X,\bC)
\end{equation}
(ovvero, se $f \in \mA$ allora $f^* \in \mA$); per una loro dimostrazione, peraltro "elementare" nel senso che non richiede nozioni non standard, rimandiamo a \cite[\S 4.3]{Ped}.

\begin{rem}
{\it
La condizione (\ref{eq.adj}) \e indispensabile per ottenere la densit\'a di $\mA$ in $C(X,\bC)$ nel caso complesso. Ad esempio, prendiamo la palla unitaria $\Delta := \{ z \in \bC : |z| \leq 1 \}$ e l'algebra $\mO(\Delta)$ delle funzioni analitiche in $\Delta$ (vedi \S \ref{sec_anal}). E' chiaro che $\mO(\Delta)$ contiene le costanti e separa i punti di $\Delta$ (infatti, presi $z \neq z' \in \Delta$ la funzione $f(z) := z$, $z \in \Delta$, \e analitica e tale che $f(z) \neq f(z')$). Tuttavia, $\mO(\Delta)$ \e ben lungi dall'essere denso in $C(\Delta,\bC)$; fosse cos\'i troveremmo
$\mO(\Delta) = C(\Delta,\bC)$,
essendo $\mO(\Delta)$ completo rispetto alla norma $\| \cdot \|_\infty$ (vedi Teo.\ref{thm.morera.2}). Ma ci\'o  \e assurdo, in quanto la funzione $f^*(z) := \ovl{z}$, $z \in \Delta$, \e continua ma non analitica, come si dimostra usando le equazioni di Cauchy-Riemann (Lemma \ref{lem_CR}); l'esempio di $f^*$ mostra anche che $\mO(\Delta)$ non \e chiuso rispetto al passaggio al coniugato, il che spiega il motivo per cui il teorema di Stone-Weierstrass non \e applicabile a $\mO(\Delta)$, il quale \e quindi un sottospazio proprio di $C(\Delta,\bC)$.
}
\end{rem}

Il teorema di Stone-Weierstrass generalizza il classico teorema di densit\'a di Weierstrass (che si ritrova per $X = [0,1]$ ed $\mA$ l'algebra dei polinomi, vedi \cite[Teo.2.8.1]{Giu2}), nonch\'e il teorema di densit\'a dei polinomi trigonometrici nell'algebra delle funzioni continue e periodiche su $[0,2\pi]$ (vedi \cite[Theorem 4.25]{Rud}).
Osserviamo che le dimostrazioni dei due risultati di cui sopra, a differenza di Teo.\ref{thm_sw}, si basano sull'uso delle convoluzioni (\S \ref{sec_conv}).
Il Lemma seguente verr\'a utilizzato nel seguito (Prop.\ref{prop_tens_prod}):

\begin{lem}
\label{lem_ury}
Per ogni spazio localmente compatto di Hausdorff $X$, valgono le seguenti propriet\'a:
(1) Per ogni $x \in X$ esiste $f \in C_0(X)$ tale che $f(x) \neq 0$;
(2) $C_0(X)$ separa i punti di $X$.
\end{lem}

\begin{proof}[Dimostrazione]
Sia $Y \subset X$ compatto e tale che $x$ appartenga alla parte interna $\dot{Y}$. Essendo $X$ di Hausdorff, $Y$ \e anche chiuso (vedi \cite[Prop.10.6]{Cam} o \cite[1.6.5]{Ped}). Essendo $Y$ compatto e di Hausdorff, esso \e anche normale. Possiamo ora dimostrare i due punti dell'enunciato:
(1) Sia $U \subset \dot{Y}$, $U \ni x$; allora $W = Y-U$ \e chiuso sia in $Y$ che in $X$, e chiaramente disgiunto da $\{ x \}$. Per il Lemma di Urysohn esiste $f \in C(Y)$ tale che $f(x) = 1$ e $f|_W = 0$. Del resto, per costruzione $f$ si annulla sulla frontiera di $Y$, dunque estendiamo $f$ ad $X$ definendo $f|_{X-Y} := 0$ e cos\'i otteniamo la funzione cercata.
(2) Se $x' \neq x$, allora possiamo assumere che sia $x$ che $x'$ siano contenuti in un compatto $Y$. Scegliendo un intorno $U \subset \dot{Y}$, $U \ni x$, tale che $x' \notin U$ e ragionando come nel caso precedente concludiamo che esiste $f \in C_0(X)$ tale che $f(x)=1$, $f(x')=0$. 
\end{proof}

\subsection{Teoremi di punto fisso.}
\label{sec_fp}

Il teorema seguente \e il pi\'u classico tra quelli noti come {\em teoremi di punto fisso}. Motivato dalla que- stione dell'esistenza di soluzioni di equazioni differenziali (vedi \S \ref{sec_eq_diff}), ha segnato un importante passo in avanti dal punto di vista concettuale, quello per il quale una funzione si pu\'o riguardare come un "punto" in uno spazio topologico.
Tra le varie applicazioni menzioniamo il teorema di Cauchy (Teo.\ref{thm_Cauchy_PDO}), il teorema delle funzioni implicite (Teo.\ref{thm_din1}), ed i teoremi di Stampacchia-Lax-Milgram (Teo.\ref{thm_SLM1}).

\begin{thm}[Teorema delle contrazioni, Banach-Caccioppoli]
\label{thm_contr}
Sia $( X , d )$ uno spazio metrico completo e $T : X \to X$ un'applicazione continua tale che esista $\alpha \in (0,1)$ con $d ( Tx , Tx' ) \leq$ $\alpha d (x,x')$, $\forall x,x' \in X$. Allora esiste ed \e unico $\ovl x \in X$ tale che $T \ovl x =$ $\ovl x$.
\end{thm}

\begin{proof}[Dimostrazione]
Poniamo $x_n :=$ $T^n x$ e stimiamo
\[
d (x_{n+1} , x_n) 
\leq 
\alpha d ( x_n , x_{n-1} ) 
\leq \ldots \leq
\alpha^n d (x_1 , x)
\ .
\]
Inoltre, per diseguaglianza triangolare, 
\[
d ( x_{m+1} , x_n )
\leq
\sum_{k=n+1}^{m+1} d ( x_k , x_{k-1} )
\leq
\sum_{k=n+1}^{m+1} \alpha^k d ( x_1 , x )
=
\alpha^n \sum_{k=1}^{m-n} \alpha^k d ( x_1 , x )
\leq
 d ( x_1 , x ) \frac{\alpha^n}{1-\alpha}
\ ,
\]
dunque (avendosi $0 < \alpha < 1$) $\left\{ T^nx \right\}$ \e di Cauchy. Il punto limite $\ovl x$ soddisfa
per costruzione l'uguaglianza
\[
T \ovl x = T \lim_n T^nx  = \lim_n T^{n+1}x = \ovl x  \ ,
\]
dunque \e un punto fisso. Inoltre, se $x' \in X$ soddisfa $Tx' = x'$ allora troviamo $d (\ovl x,x') =$ $d ( T \ovl x , Tx') \leq$ $\alpha d (\ovl x , x')$, per cui $d (\ovl x , x') = 0$.
\end{proof}

Nelle righe che seguono discutiamo un altro risultato di punto fisso, il {\em Teorema di Brouwer}, il quale ha conseguenze importanti sia in analisi che in geometria. Nella sua forma pi\'u semplice, quella in dimensione uno, esso \e conseguenza di un teorema di Bolzano, il {\em teorema del valore intermedio}, il quale afferma che se $f : [a,b] \to \bR$ \e continua ed $f(a) f(b) < 0$, allora esiste $x \in (a,b)$ tale che $f(x) = 0$:
\begin{prop}\textbf{(Teorema di Brouwer in dimensione uno)}
Sia $f : [0,1] \to [0,1]$ continua. Allora esiste $x \in [0,1]$ tale che $f(x) = x$.
\end{prop}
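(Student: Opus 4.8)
The plan is to reduce the statement to the teorema del valore intermedio (Bolzano) just recalled, which asserts that a continuous function on $[a,b]$ taking values of opposite sign at the endpoints must vanish somewhere in the interior. The guiding idea is that a fixed point of $f$ is exactly a zero of the auxiliary function
\[
g(x) := f(x) - x \ , \quad x \in [0,1] \ ,
\]
so it suffices to produce a zero of $g$. First I would note that $g$ is continuous on $[0,1]$, being the difference of the continuous map $f$ and the identity.

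Next I would evaluate $g$ at the endpoints, exploiting the crucial hypothesis that $f$ takes values in $[0,1]$. On the one hand $g(0) = f(0) - 0 = f(0) \geq 0$, since $f(0) \in [0,1]$; on the other hand $g(1) = f(1) - 1 \leq 0$, since $f(1) \leq 1$. Thus $g$ is nonnegative at $0$ and nonpositive at $1$, which is morally the sign change needed to invoke Bolzano's theorem.

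The only point requiring a little care is that the version of the intermediate value theorem recalled above is stated with the strict inequality $g(0)\,g(1) < 0$, so I would dispose of the boundary cases separately before applying it. If $g(0) = 0$ then $f(0) = 0$ and $\ovl x = 0$ is the desired fixed point; likewise, if $g(1) = 0$ then $f(1) = 1$ and $\ovl x = 1$ works. In the remaining case one has $g(0) > 0$ and $g(1) < 0$, hence $g(0)\,g(1) < 0$, and the teorema del valore intermedio furnishes some $\ovl x \in (0,1)$ with $g(\ovl x) = 0$, that is $f(\ovl x) = \ovl x$. I do not anticipate any genuine obstacle: the argument is a one-line application of Bolzano's theorem, and the sole subtlety is the handling of the endpoints so as to match the strict-sign hypothesis $g(0)\,g(1) < 0$; note also that, unlike Theorem~\ref{thm_contr}, here no uniqueness of the fixed point is claimed (nor does it hold in general).
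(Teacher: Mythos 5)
Your proposal is correct and coincides with the paper's own argument: dispose of the boundary cases $f(0)=0$ and $f(1)=1$ first, then apply the teorema del valore intermedio to $g(x) := f(x)-x$, whose signs at the endpoints are forced by the hypothesis $f([0,1]) \subseteq [0,1]$. Your write-up is in fact slightly more explicit than the paper's about why the strict inequality $g(0)\,g(1) < 0$ holds after excluding the endpoint cases.
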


\begin{proof}[Dimostrazione]
Se $f(0) = 0$ oppure $f(1) = 1$ non vi \e nulla da dimostrare, per cui assumiamo $f(0) \neq 0$ e $f(1) \neq 1$. Applichiamo allora il teorema del valore intermedio a $g(x) := f(x) - x$, $x \in [0,1]$. 
\end{proof}

Ora, si ha la seguente generalizzazione del risultato precedente:
\begin{thm}\textbf{(Teorema di Brouwer)}
Sia $S \subset \bR^n$, $n \in \bN$, un insieme convesso, compatto e non vuoto, ed $f : S \to S$ continua. Allora esiste $x \in S$ tale che $f(x) = x$.
\end{thm}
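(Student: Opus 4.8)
The plan is to reduce the statement to the special case of a closed Euclidean ball $B^n := \{ x \in \bR^n : |x| \leq 1 \}$ and then to the \emph{teorema di non-retrazione}: the assertion that there is no continuous map $r : B^n \to \partial B^n = S^{n-1}$ with $r|_{S^{n-1}} = \mathrm{id}$. For the first reduction, since $S$ is convex, compact and nonempty, it spans an affine subspace of some dimension $d \leq n$; working inside this subspace (a copy of $\bR^d$) we may assume $S$ has nonempty interior, the case $d = 0$ being trivial. A compact convex set with nonempty interior is homeomorphic to $B^d$: after translating so that $0$ lies in the interior, the radial map built from the Minkowski functional $p(x) := \inf \{ \lambda > 0 : x/\lambda \in S \}$ furnishes an explicit homeomorphism $h : S \to B^d$. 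Transporting $f$ through $h$ yields a continuous self-map of $B^d$ whose fixed points correspond to those of $f$, so it suffices to treat $f : B^n \to B^n$ continuous.

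Next I would deduce Brouwer from non-retrazione. Suppose, for contradiction, that $f : B^n \to B^n$ has no fixed point, so $x \neq f(x)$ for every $x$. For each $x$ consider the ray issuing from $f(x)$ and passing through $x$; since $f(x) \in B^n$, this ray meets $S^{n-1}$ in a unique point $r(x)$ lying on the far side of $x$. Writing $r(x) = f(x) + t(x)\,(x - f(x))$ and solving the quadratic $|r(x)|^2 = 1$ for the appropriate root $t(x) \geq 1$ shows that $t$, and hence $r$, depends continuously on $x$; moreover $r(x) = x$ whenever $|x| = 1$. Thus $r$ would be a continuous retraction of $B^n$ onto $S^{n-1}$, which is impossible once the non-retraction theorem is in hand.

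It remains to prove that no such retraction exists, and I would argue analytically. Using the Weierstrass polynomial approximation supplied by \thmref{thm_sw}, approximate a hypothetical retraction uniformly by a smooth map and correct it (rescaling onto the sphere and composing with a radial retraction near the boundary) so as to obtain a \emph{smooth} retraction $\wt r : B^n \to S^{n-1}$ with $\wt r|_{S^{n-1}} = \mathrm{id}$. Let $\omega$ be the standard volume $(n-1)$-form on $S^{n-1}$. Since $\omega$ has top degree on $S^{n-1}$ we have $d(\wt r^{\,*}\omega) = \wt r^{\,*}(d\omega) = 0$, whence by Stokes' theorem
\[
0 < \mathrm{vol}(S^{n-1}) = \int_{S^{n-1}} \omega = \int_{\partial B^n} \wt r^{\,*}\omega = \int_{B^n} d(\wt r^{\,*}\omega) = 0 \ ,
\]
a contradiction. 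Alternatively one may invoke singular homology: a retraction would force $\mathrm{id}$ on $H_{n-1}(S^{n-1}) \cong \bZ$ to factor through $H_{n-1}(B^n) = 0$, which is the homological remark alluded to in the introduction.

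The main obstacle is precisely this last step: the non-retraction theorem carries all the topological content, while the reductions in the first two paragraphs are elementary and geometric. The delicate points are the smoothing-and-correction procedure that turns a merely continuous candidate into a smooth retraction without destroying the boundary condition, and the use of Stokes' theorem for differential forms (or, in the homological route, the computation $H_{n-1}(S^{n-1}) \neq 0$ together with the contractibility of $B^n$).
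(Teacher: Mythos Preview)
Your argument is correct and in fact more complete than what the paper offers. The paper does \emph{not} prove the general convex-compact statement: immediately after stating it, the text restricts to the special case $S = D^n$ (Teorema~\ref{teo_bro}) and gives only a sketch. That sketch is the homological non-retraction argument you mention as an alternative at the end: the hypothetical retraction would induce an injective linear map $H_{n-1}(S^{n-1}) \to H_{n-1}(D^n)$, which is absurd since the source is $\bR$ and the target is $\{0\}$ (see (\ref{eq_omologia}) and \S\ref{sec_fdif}).

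You supply precisely what the paper omits, namely the reduction from a general compact convex $S$ to the ball via the Minkowski functional, and you give an independent analytic proof of non-retraction through Stokes. Both routes are valid; the Stokes argument has the advantage of staying within the circle of ideas of \S\ref{sec_fdif} (differential forms, Gauss--Green), while the homological route is shorter once one accepts the computation of $H_{n-1}$ for spheres and balls as a black box.

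One technical remark on the smoothing step, which you rightly flag as delicate: it is cleaner to smooth the fixed-point-free map $f$ rather than the retraction. Approximate $f$ uniformly by a polynomial map $g$ with $\|f-g\|_\infty < \tfrac{1}{2}\inf_x |x-f(x)|$; after a radial rescaling $g$ maps $B^n$ into itself, remains fixed-point-free, and is smooth. The ray construction applied to $g$ then yields a retraction that is already smooth and already satisfies the boundary condition, so no correction is needed.
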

Ci limiteremo qui ad esporre l'idea della dimostrazione di un caso particolare del teorema di Brouwer, la quale fa uso dei spazi di {\em omologia}, riguardo i quali rimandiamo a \S \ref{sec_fdif} e, pi\'u in dettaglio, \cite[Cap.5]{Arb} (per un approccio diverso si veda \cite[\S 6.8]{Acq}). 
Per ogni $n \in \bN$, denotiamo con $D^n \subset \bR^n$ la palla unitaria (chiusa) e con $S^{n-1} \subset \bR^n$ la sfera unitaria, che identifichiamo con il bordo $\partial D^n \subset D^n$.
\begin{thm}
\label{teo_bro}
Sia $n \in \bN$, ed $f : D^n \to D^n$ un'applicazione continua. Allora esiste $x \in D^n$ tale che $f(x) = x$.
\end{thm}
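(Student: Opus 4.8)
The plan is to prove Brouwer's fixed point theorem for the closed ball $D^n$ via the standard \emph{no-retraction} argument, using the homology machinery alluded to in the statement. The strategy proceeds by contradiction: suppose $f : D^n \to D^n$ is continuous and has no fixed point, so $f(x) \neq x$ for every $x \in D^n$. I would then construct an explicit retraction $r : D^n \to S^{n-1}$ of the ball onto its boundary sphere, and derive a contradiction from the fact that no such retraction can exist.

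First I would build the retraction geometrically. Since $f(x) \neq x$ for all $x$, the two points $x$ and $f(x)$ are distinct, so they determine a unique ray starting at $f(x)$ and passing through $x$; I define $r(x)$ to be the point where this ray meets the boundary sphere $S^{n-1} = \partial D^n$. Concretely one sets $r(x) := x + t(x)\,(x - f(x))$ where $t(x) \geq 0$ is the unique nonnegative solution of the quadratic equation $\|r(x)\|^2 = 1$. The key routine verifications are that $t(x)$ depends continuously on $x$ (the relevant discriminant is strictly positive because $x \neq f(x)$, so the square root is a continuous function) and that $r$ restricts to the identity on $S^{n-1}$: indeed if $\|x\| = 1$ then $t(x) = 0$ works, giving $r(x) = x$. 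Thus $r$ is a continuous retraction of $D^n$ onto $S^{n-1}$.

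The heart of the argument, and the step I expect to be the main obstacle, is showing that such a retraction cannot exist. This is precisely where homology enters. Let $\iota : S^{n-1} \hookrightarrow D^n$ be the inclusion; a retraction gives $r \circ \iota = \mathrm{id}_{S^{n-1}}$. Applying the $(n-1)$-th reduced homology functor $\wt H_{n-1}(\,\cdot\,)$ and using its functoriality, we obtain $r_* \circ \iota_* = \mathrm{id}$ on $\wt H_{n-1}(S^{n-1})$. But $D^n$ is contractible, so $\wt H_{n-1}(D^n) = 0$, through which both maps factor; hence $\mathrm{id}$ on $\wt H_{n-1}(S^{n-1})$ factors through the zero group and must itself be zero. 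The contradiction is then immediate once we know $\wt H_{n-1}(S^{n-1}) \neq 0$. The genuinely nontrivial input is therefore the computation $\wt H_{n-1}(S^{n-1}) \cong \bZ$, which for $n \geq 1$ is a standard fact about spheres (provable by induction using Mayer--Vietoris, or via the long exact sequence of the pair $(D^n, S^{n-1})$). I would cite this from \cite[Cap.5]{Arb} rather than redevelop it.

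Finally I would handle the degenerate case $n = 0$ separately (where $D^0$ is a point and the statement is trivial), and remark that the general Brouwer theorem for an arbitrary compact convex nonempty $S \subset \bR^n$ follows from the ball case by a homeomorphism argument: a compact convex set with nonempty interior is homeomorphic to $D^n$, and the lower-dimensional or empty-interior cases reduce to smaller dimension. The overall proof is thus short modulo the homological computation, and its honesty rests entirely on importing the nonvanishing of $\wt H_{n-1}(S^{n-1})$ as the one substantive geometric fact.
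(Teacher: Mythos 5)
Your proposal is correct and follows essentially the same route as the paper: assume no fixed point, build the continuous retraction $r : D^n \to S^{n-1}$ from the ray through $f(x)$ and $x$, and contradict the homological fact that $H_{n-1}(S^{n-1}) \neq 0$ while $H_{n-1}(D^n) = 0$. If anything, your use of plain functoriality ($r \circ \iota = \mathrm{id}$ forces $\mathrm{id}_{\wt H_{n-1}(S^{n-1})}$ to factor through the trivial group) is cleaner than the paper's appeal to $F$ being a deformation retract, and your explicit continuity check for $t(x)$ and the $n=0$ remark fill in details the paper's sketch leaves implicit.
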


\begin{proof}[Sketch della dimostrazione]
Supponiamo per assurdo che $f$ non abbia punti fissi. Allora per ogni $x \in D^n$ \e ben definito il punto $F(x) \in \partial D^n \simeq S^{n-1}$ come l'intersezione tra $\partial D^n$ e la retta passante per $x$ ed $f(x)$. Otteniamo cos\'i un'applicazione continua
\[
F : D^n \to S^{n-1}
\ \ {\mathrm{tale \ che}} \ \
F(x) = x \ , \ \forall x \in S^{n-1}
\ .
\]
Ora, l'idea \e quella di dimostrare che $F$ \e un {\em ritratto per deformazione}
{\footnote{
In generale, dato uno spazio topologico $X$ ed $S \subset X$, un ritratto per deformazione \e un'applicazione continua $F : X \to S$ tale che: (1) $F|_S$ \e l'identit\'a di $S$; (2) esiste un'applicazione continua (detta {\em omotopia}) $H : X \times [0,1] \to X$ tale che $H(x,0) = F(x)$, $H(x,1) = x$, $\forall x \in X$. Su questi argomenti rimandiamo ancora a \cite[Cap.5]{Arb}.
}}
, il che implica, per propriet\'a generali degli spazi di omologia, che si ha un'applicazione lineare iniettiva
\[
\wt F : H_{n-1}(S^{n-1}) \to H_{n-1}(D^n)
\ \ , \ \
n \in \bN
\ .
\]
D'altro canto 
$H_{n-1}(S^{n-1}) = \bR$
e
$H_{n-1}(D^n) = \{ 0 \}$
(vedi \cite[\S 5.5]{Arb}); ci\'o \e palesemente assurdo ($\bR$ non pu\'o essere un sottospazio di $\{ 0 \}$) cosicch\'e troviamo la contraddizione cercata.
\end{proof}

E' possibile dare versioni del teorema di Brouwer per spazi di Banach e localmente convessi. A proposito di questi risultati, motivati da questioni di esistenza di soluzioni di equazioni differenziali, si veda \S \ref{sec_gfp}.

\subsection{Il Teorema di Ascoli-Arzel\'a.}
\label{sec_AA}

Da un punto di vista topologico il teorema di Ascoli-Arzel\'a si pu\'o interpretare come una ge- neralizzazione del teorema di Bolzano-Weierstreiss agli spazi funzionali. Al livello analitico \e uno strumento importante per la verifica della compattezza di applicazioni definite sugli spazi di Banach classici ($C^k$,$L^p$), nonch\'e base per una dimostrazione del teorema di Peano di esistenza di soluzioni per le equazioni differenziali ordinarie.

Ricordiamo che un {\em precompatto} \e un sottoinsieme di uno spazio topologico con chiusura compatta. Cominciamo esibendo un insieme limitato, ma non precompatto nella topologia della convergenza uniforme, nello spazio funzionale $C([0,1])$.

\begin{ex}{\it 
La successione $\left\{ f_n \right\} \subset C([0,1])$, 
\[
f_n (x) :=
\left\{
\begin{array}{ll}
nx \ \ , \ \ x \in [0,1/n]
\\
1  \ \ , \ \ x \in [1/n,1]
\end{array}
\right.
\]
\e limitata e puntualmente convergente alla funzione caratteristica $\chi_{(0,1]}$ (che \e discontinua). Dunque $\{ f_n \}$ non ammette sottosuccessioni uniformemente convergenti e quindi, pur essendo limitata (si ha $\| f_n \|_\infty = 1$, $\forall n \in \bN$), non \e precompatta in $C([0,1])$.
}\end{ex}

\begin{defn}
Sia $X$ uno spazio metrico compatto ed $R$ uno spazio metrico. Una famiglia $\mF \subset$ $C(X,R)$ is dice \textbf{equicontinua} se per ogni $\eps > 0$ esiste un $\delta > 0$ tale che $d ( f(x') , f(x)) < \eps$ per ogni $f \in \mF$ ed $x,x' \in X$ tali che $d(x,x') < \delta$.
\end{defn}

Sottolineiamo il fatto che nella definizione precedente $\delta$ dipende {\em soltanto} da $\eps$ e non da $f$ od $x,x'$. Introduciamo la notazione
\[
\mF_x 
\ := \
\{ f(x) : f \in \mF \}
\ 
\subseteq R
\ \ , \ \
x \in X
\ .
\]

\begin{thm}[Ascoli-Arzel\'a]
\label{thm_AA}
Sia $X$ uno spazio metrico compatto, $R$ uno spazio metrico completo ed $\mF \subseteq$ $C(X,R)$. Allora $\mF$ \e precompatto nella topologia della convergenza uniforme se e soltanto se $\mF$ \e equicontinuo ed ogni $\mF_x$, $x \in X$, \e precompatto in $R$.
\end{thm}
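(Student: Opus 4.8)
The plan is to prove the two implications separately, working throughout with the sup-metric $d_\infty(f,g) := \sup_{x \in X} d(f(x),g(x))$ on $C(X,R)$ (finite because the continuous image of the compact $X$ is bounded), whose induced topology is exactly that of uniform convergence. I will use freely the equivalence between compactness and sequential compactness for metric spaces (Prop.\ref{prop_comp}), and the fact that $C(X,R)$ is complete when $R$ is: a $d_\infty$-Cauchy sequence converges pointwise by completeness of $R$, and the very same three-$\eps$ estimate as in the proof of Prop.\ref{prop_C0X} shows the limit is continuous and that the convergence is uniform. By this completeness, $\mathcal F$ is precompact if and only if every sequence in $\mathcal F$ admits a $d_\infty$-Cauchy (hence uniformly convergent) subsequence, and this is the characterization I will verify.

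\emph{Necessity.} Assuming $\ovl{\mathcal F}$ compact, I would first fix $x$ and note that evaluation $\mathrm{ev}_x : C(X,R) \to R$ is $1$-Lipschitz, since $d(f(x),g(x)) \le d_\infty(f,g)$; hence $\mathrm{ev}_x(\ovl{\mathcal F})$ is compact and contains $\mathcal F_x$, proving $\mathcal F_x$ precompact. For equicontinuity I would use that the compact $\ovl{\mathcal F}$ is totally bounded: given $\eps>0$ pick a finite $\eps/3$-net $f_1,\dots,f_N$. Each $f_i$ is uniformly continuous by Heine--Cantor, so there is $\delta>0$ with $d(x,x')<\delta \Rightarrow d(f_i(x),f_i(x'))<\eps/3$ for all $i$; approximating an arbitrary $f\in\mathcal F$ within $\eps/3$ by some $f_i$ and adding and subtracting $f_i(x),f_i(x')$ yields $d(f(x),f(x'))<\eps$, i.e.\ equicontinuity.

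\emph{Sufficiency} (the crux). Assuming $\mathcal F$ equicontinuous with each $\mathcal F_x$ precompact, take any $\{f_n\}\subset\mathcal F$. Since $X$ is a compact metric space it is separable (Prop.\ref{prop_smc}); fix a countable dense set $\{x_k\}$. As each $\mathcal F_{x_k}$ is precompact, hence sequentially compact, a diagonal extraction produces a subsequence $\{f_{n_j}\}$ with $\{f_{n_j}(x_k)\}$ convergent in $R$ for every $k$. It then remains to promote this to uniform convergence via equicontinuity: given $\eps>0$, choose $\delta$ from equicontinuity, cover $X$ by finitely many balls $\Delta(x_{k_1},\delta),\dots,\Delta(x_{k_M},\delta)$ centred at dense points (possible by compactness), and pick $J$ so large that $d(f_{n_j}(x_{k_i}),f_{n_l}(x_{k_i}))<\eps/3$ for all $i\le M$ and $j,l\ge J$. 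For arbitrary $x$, choosing $i$ with $d(x,x_{k_i})<\delta$, the estimate
\[
d(f_{n_j}(x),f_{n_l}(x)) \le d(f_{n_j}(x),f_{n_j}(x_{k_i})) + d(f_{n_j}(x_{k_i}),f_{n_l}(x_{k_i})) + d(f_{n_l}(x_{k_i}),f_{n_l}(x)) < \eps
\]
holds for $j,l\ge J$, the outer terms being controlled by equicontinuity. Taking the sup over $x$ shows $\{f_{n_j}\}$ is $d_\infty$-Cauchy, hence uniformly convergent by completeness of $C(X,R)$, and sequential compactness of $\ovl{\mathcal F}$ follows.

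The main obstacle is precisely this last passage in the sufficiency direction: the diagonal argument only yields convergence on the countable dense set, and the real work lies in the interlocking of equicontinuity (to pass from dense points to all of $X$) with the finite subcover (to make the choice of net points uniform in $x$, so that a single threshold $J$ works). Everything else---evaluation being Lipschitz, total boundedness giving a finite net, Heine--Cantor, and the completeness of $C(X,R)$---is routine once the sup-metric framework together with Prop.\ref{prop_comp} and Prop.\ref{prop_C0X} is in place.
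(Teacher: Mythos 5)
Your proposal is correct and follows essentially the same route as the paper: the necessity direction via the $1$-Lipschitz evaluation map and a finite $\eps$-net combined with Heine--Cantor and a three-$\eps$ estimate, and the sufficiency direction via separability of $X$, a diagonal extraction on a countable dense set, and the interlocking of equicontinuity with a finite cover by $\delta$-balls to upgrade to uniform Cauchyness. The only (welcome) difference is that you make explicit the completeness of $C(X,R)$ needed to convert the uniformly Cauchy subsequence into a convergent one, a point the paper leaves implicit.
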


Consideriamo ora il caso $R = \bR$: il teorema di Bolzano-Weierstrass ci assicura che se $\mF$ \e {\em equilimitato} (ovvero $\sup_{f \in \mF} \| f \|_\infty < \infty$)
{\footnote{A volte diremo, pi\'u semplicemente, che $\mF$ \e {\em limitato}.}}, 
allora ogni $\mF_x \subset \bR$ \e limitato e quindi precompatto. D'altra parte, se $\mF$ \e precompatto allora \e certamente limitato (vedi Esercizio \ref{sec_top}.3). Per cui, dato per buono Teo.\ref{thm_AA}, otteniamo i seguenti risultati:
\begin{thm}[Ascoli-Arzel\'a, forma classica]
\label{thm_AA1}
Sia $X$ uno spazio metrico compatto ed $\mF \subseteq C(X)$. Allora $\mF$ \e precompatto se e soltanto se $\mF$ \e equicontinuo ed equilimitato.
\end{thm}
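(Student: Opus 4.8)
The plan is to deduce this classical form directly from the general Ascoli-Arzel\'a theorem (\thmref{thm_AA}) by specializing the target space to $R = \bR$. Since $\bR$ is a complete metric space, the hypotheses of \thmref{thm_AA} are satisfied, and the whole argument reduces to translating the condition ``each $\mF_x$ is precompact in $\bR$'' into the single condition ``$\mF$ is equibounded.'' The bridge is the elementary fact --- a special case of Bolzano-Weierstrass (or Heine-Borel) in $\bR$ --- that a subset of $\bR$ has compact closure if and only if it is bounded.

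First I would treat the backward implication. Assuming $\mF$ equicontinuous and equibounded, I set $M := \sup_{f \in \mF} \| f \|_\infty < \infty$. Then for each $x \in X$ one has $\mF_x \subseteq [-M, M]$, so $\mF_x$ is bounded and therefore precompact in $\bR$. Combined with the standing hypothesis of equicontinuity, the ``if'' direction of \thmref{thm_AA} yields that $\mF$ is precompact in the topology of uniform convergence.

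For the forward implication I would suppose $\mF$ precompact in $C(X)$. The ``only if'' direction of \thmref{thm_AA} gives at once that $\mF$ is equicontinuous. To obtain equiboundedness I would invoke the general fact (Esercizio~\ref{sec_top}.3) that every precompact subset of a metric space is bounded; applied to the metric induced by $\| \cdot \|_\infty$ on $C(X)$, this reads $\sup_{f \in \mF} \| f \|_\infty < \infty$, which is exactly the equiboundedness of $\mF$.

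The one point deserving care --- what I would flag as the main conceptual subtlety rather than a genuine obstacle --- is the asymmetry between pointwise boundedness and equiboundedness. It would be incorrect to assert that ``each $\mF_x$ bounded'' is equivalent to ``$\mF$ equibounded,'' the former being strictly weaker. The equivalence is recovered only because the two conditions enter in opposite directions of the biconditional: equiboundedness produces pointwise precompactness in the ``if'' direction, whereas in the ``only if'' direction equiboundedness is extracted not from pointwise precompactness but from the stronger datum that $\mF$ is itself precompact in $C(X)$, via boundedness of precompact sets. As $\bR$ is finite-dimensional, no completeness concern beyond that already assumed intervenes, so the proof is otherwise a faithful transcription of \thmref{thm_AA}.
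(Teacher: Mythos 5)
Your argument is correct and coincides with the paper's own derivation: both specialize \thmref{thm_AA} to $R = \bR$, use Bolzano--Weierstrass (Heine--Borel) to pass from equiboundedness to precompactness of each $\mF_x$, and invoke Esercizio~\ref{sec_top}.3 for the converse direction. Your closing remark on the asymmetry between pointwise boundedness and equiboundedness is a useful clarification, but the underlying proof is the same.
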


\begin{cor}
\label{cor_AA}
Sia $X$ uno spazio metrico compatto ed $\left\{ f_n \right\} \subset$ $C(X)$ una successione equilimitata ed equicontinua. Allora esiste una sottosuccessione $\left\{ f_{n_k} \right\}$ uniformemente convergente.
\end{cor}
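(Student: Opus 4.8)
The plan is to deduce this directly from the classical form of Ascoli-Arzelà (Teo.\ref{thm_AA1}) and from the equivalence between compactness and sequential compactness for metric spaces (Prop.\ref{prop_comp}). First I would regard the terms of the sequence as a single family, setting $\mF := \{ f_n : n \in \bN \} \subseteq C(X)$. The hypotheses on $\{ f_n \}$ say precisely that $\mF$ is equilimitato ed equicontinuo, so Teo.\ref{thm_AA1} applies and yields that $\mF$ is precompatto in $C(X)$ for the topology of uniform convergence; by the definition of precompact this means that the closure $\ovl{\mF}$ is compact.

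Next I would bring in the metric structure. Since $X$ is compact, the supremum norm $\| \cdot \|_\infty$ is finite on $C(X)$ (Weierstrass) and turns $C(X)$ into a normed, hence metric, space. Consequently the compact set $\ovl{\mF}$, viewed as a subspace of $C(X)$, is a compact metric space, and Prop.\ref{prop_comp} upgrades its compactness to sequential compactness.

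Finally, since each $f_n$ lies in $\mF \subseteq \ovl{\mF}$, the sequence $\{ f_n \}$ is a sequence in the sequentially compact space $\ovl{\mF}$ and therefore admits a subsequence $\{ f_{n_k} \}$ converging, in the metric of $C(X)$, to some $f \in \ovl{\mF}$. As the limit belongs to $\ovl{\mF} \subseteq C(X)$ it is automatically continuous, and convergence with respect to $\| \cdot \|_\infty$ is exactly uniform convergence; thus $\{ f_{n_k} \}$ is the subsequence we sought.

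I do not expect any genuine obstacle: once Teo.\ref{thm_AA1} is granted, the corollary is essentially a reformulation of it, and all the real work has already been done in proving the Ascoli-Arzelà theorem itself. The only point that deserves a moment's attention is the logical passage from a statement about the \emph{set} $\mF$ (precompactness) to a statement about the \emph{sequence} $\{ f_n \}$ (existence of a convergent subsequence); this is handled cleanly by applying Prop.\ref{prop_comp} to the compact metric subspace $\ovl{\mF}$ and by recalling that metric convergence in $C(X)$ is uniform convergence.
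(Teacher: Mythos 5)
Your proof is correct and follows exactly the route the paper intends: the corollary is stated there without proof as an immediate consequence of Teo.\ref{thm_AA1}, using precisely the ingredients you invoke (the metric $d_\infty$ on $C(X)$ and the equivalence of compactness and sequential compactness from Prop.\ref{prop_comp}). Your writeup simply makes explicit the passage from precompactness of the set $\mF$ to the extraction of a convergent subsequence, which is the only detail the paper leaves to the reader.
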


\begin{ex}{\it
Sia $X := [0,1]$ ed $\alpha \in (0,1]$. Consideriamo la famiglia delle funzioni holderiane di ordine $\alpha$ rispetto ad una fissata costante $c > 0$:
\[
\mF_{\alpha,c}
\ := \ 
\{ f \in C(X) : | f(x)-f(y) | \leq c |x-y|^\alpha , x,y \in X  \}
\ .
\]
Allora $\mF_{\alpha,c}$ \e equicontinua (infatti, per ogni $\eps > 0$ possiamo prendere un qualsiasi 
$\delta < (c^{-1} \eps)^{1 / \alpha}$,
indipendentemente da $f$), per cui ogni sottoinsieme limitato di $\mF_{\alpha,c}$ \e precompatto in $C(X)$. In particolare, ogni successione limitata $\{ f_n \} \subset \mF_{\alpha,c}$ ammette una sottosuccessione uniformemente convergente.
}
\end{ex}

Passiamo ora a dare la dimostrazione di Teo.\ref{thm_AA}. A tale scopo osserviamo che $C(X,R)$ \e uno spazio metrico rispetto alla distanza 
\[
d_\infty(f,\wt f) \ := \ \sup_x d(f(x),\wt f(x)) 
\ \ , \ \
f , \wt f \in C(X,R)
\ ,
\]
la quale induce la topologia della convergenza uniforme. Per cui un sottoinsieme di $C(X,R)$ \e compatto se e solo se \e sequenzialmente compatto (vedi Prop.\ref{prop_comp}).

\begin{proof}[Dimostrazione del Teorema \ref{thm_AA}]
{\em Sia $\mF$ precompatto} ed $\ovl \mF$ la sua chiusura in $C(X,R)$; nelle righe che seguono mostriamo che ogni $\mF_x$ \e precompatto in $R$ e che $\mF$ \e equicontinuo. 
Sia $x \in X$ ed $\{ f_n(x) , f_n \in \mF \}_n$ una successione in $\mF_x$; essendo $\mF$ precompatto, esiste $f \in \ovl \mF \subset C(X,R)$ tale che $d_\infty( f_{n_k} , f ) \to 0$ per qualche sottosuccessione $\{ f_{n_k} \}$. Ci\'o implica che
$d ( f_{n_k}(x) , f(x) ) \leq d_\infty ( f , f_{n_k} ) \to 0$,
per cui $\{ f_{n_k}(x) \}$ \e convergente ed $\mF_x$ precompatto.
Il fatto che $\mF$ \e equicontinuo si dimostra con il classico argomento "$3$-$\eps$": scelto $\eps > 0$, effettuiamo un ricoprimento finito di $\ovl \mF$ con dischi $\Delta ( f_k , \eps )$, $k = 1 , \ldots , n$; visto che $n$ \e finito, possiamo considerare i $\delta_1  , \ldots , \delta_n$ delle uniformi continuit\'a di $f_1 , \ldots , f_n$ e definire 
$\delta := \inf_{k=1,\ldots,n} \delta_k > 0$, 
in maniera tale che
\[
d ( \  f_k(x) \ , \  f_k(x') \ ) \ < \ \eps
\ \ , \ \
\forall x,x' \in X 
\ , \ 
d(x,x') < \delta
\ .
\]
Osserviamo che il nostro $\delta$ \e definito solo in base alla scelta del ricoprimento di $\mF$. Presa quindi $f \in \mF$ abbiamo $f \in \Delta ( f_k , \eps )$ per un qualche indice $k$; per cui, preso $x \in X$ ed $x' \in \Delta (x,\delta)$, troviamo
\[
d ( f(x)     , f(x')    )
\ \leq \
d (  f(x)    , f_k(x)   )  +  
d (  f_k(x)  , f_k(x')  )  + 
d (  f_k(x') , f(x')    )
\ \leq \ 
3 \eps
\ .
\]
Essendo $\delta$ funzione solo di $\eps$ (e non di $f$) concludiamo che $\mF$ \e equicontinuo.
{\em Assumiamo ora che $\mF$ sia equicontinuo e tale che ogni $\mF_x$, $x \in X$, sia precompatto}, e mostriamo che $\mF$ \e precompatto; il nostro compito \e verificare che presa una successione $\left\{ f_n \right\} \subseteq$ $\mF$, questa ammette una sottosuccessione uniformemente convergente. 
Come primo passo, consideriamo un sottoinsieme denso e numerabile $X_0 \subset X$ (vedi Prop.\ref{prop_smc}), e costruiamo una sottosuccessione di $\left\{ f_n \right\}$ convergente in modo puntuale in $X_0$. Posto $X_0 :=$ $\left\{ x_m \right\}$, osserviamo che, essendo $\mF_{x_1}$ precompatto, la successione
$\left\{ f_n (x_1) \right\}$
ammette una sottosuccessione convergente, che denotiamo con $\left\{ f_{1,n} (x_1) \right\}$. 
Passiamo quindi a considerare $\left\{ f_{1,n} (x_2) \right\}$ ed ad estrarre una sottosuccessione convergente $\left\{ f_{2,n} (x_2) \right\}$, ottendendo cos\'i che $\left\{ f_{2,n} \right\}$ converge in $x_1$ ed $x_2$. Procedendo induttivamente otteniamo una collezione di sottosuccessioni $\left\{ f_{m,n} \right\}$ tale che $\left\{ g_n := f_{n,n} \right\}$ converge puntualmente in $X_0$.
Infine, dimostriamo che $\{ g_n \}$ converge uniformemente in $X$. Consideriamo $\eps > 0$; per equicontinuit\'a esiste $\delta_\eps > 0$ tale che $d(x,x') < \delta_\eps$ implica 
\[
d ( g_n (x) , g_n(x') ) \ < \ \eps/3
\ \ , \ \
\forall n \in \bN
\ .
\]
Scegliamo quindi $m_\eps \in \bN$ tale che $m_\eps > 1 / \delta_\eps$, cosicch\'e $\Delta^f_{1 / m_\eps}$ ricopre $X$ (vedi dimostrazione di Prop.\ref{prop_smc}). Osserviamo che poich\'e $\left\{ g_n \right\}$ converge puntualmente in $X_{m_\eps} \subset X_0$, esiste $n_\eps \in \bN$ tale che per ogni $n,m > n_\eps$ risulta
\[
d ( g_n (y) , g_m (y)  ) 
\ < \ 
\eps / 3 
\ \ , \ \ 
\forall y \in X_{m_\eps} 
\]
(osservare che $X_{m_\eps}$ \e finito, altrimenti avremmo dei problemi inerenti la convergenza non uniforme di $\left\{ g_n \right\}$). Sottolineiamo che il nostro $n_\eps \in \bN$ dipende, in ultima analisi, solo da $\eps$. Ora, se $x \in X$ troviamo $x \in \Delta (y,\delta_\eps)$ per qualche $y \in X_{m_\eps}$ (infatti, $1/{m_\eps} < \delta_\eps$), e
\[
d ( g_n(x) , g_m(x) )
\ \leq \
d ( g_n(x)  , g_n(y)  ) + 
d ( g_n(y)  , g_m(y)  ) +  
d ( g_m(y)  , g_m(x)  )
\ <  \
\eps
\ .
\]
\end{proof}

\subsection{Esercizi.}

\noindent \textbf{Esercizio \ref{sec_top}.1.} {\it Fissato $\alpha > 0$, denotiamo con $L_\alpha$ l'insieme delle funzioni $f : [0,1] \to \bR$ continue a tratti e tali che $\sup_{x \in [0,1]} |f(x)| \leq \alpha$. Si mostri che la famiglia 
\[
\mF_\alpha
\ := \
\{ F : [0,1] \to \bR \ , \ F(x) := \int_0^x f(t) \ dt \ , x \in [0,1]  \ : \ f \in L_\alpha  \}
\]
\'e equicontinua e limitata.
}

\

\noindent \textbf{Esercizio \ref{sec_top}.2.} {\it Sia $\alpha \in (0,1)$ e $\tau \in C(\bR)$ una funzione con costante di Lipschitz $\alpha$ (ovvero: $|\tau(x)-\tau(y)| \leq \alpha |x-y|$, $\forall x,y \in \bR$). Scelto $c > 0$, si mostri che l'applicazione
\[
T : C([0,1]) \to C([0,1])
\ \ , \ \
f \mapsto Tf 
\ : \
Tf(s) := c + \int_0^s \tau \circ f (t) \ dt
\ , \
\forall s \in [0,1]
\ ,
\]
\'e una contrazione. Si dimostri che esiste ed \e unica $f_0 \in C([0,1])$ tale che $f_0 = Tf_0$. Si dimostri inoltre che $f_0$ \e derivabile e che $f_0' = \tau \circ f_0$. Infine, si calcoli $f_0$ nel caso $\tau(x) = \alpha x$, $\forall x \in \bR$.

\

\noindent (Suggerimenti: per il secondo quesito ovviamente si applica il teorema delle contrazioni. Riguardo il terzo quesito si applichi il teorema fondamentale del calcolo derivando membro a membro l'uguaglian- za $f_0 = Tf_0$. Riguardo il quarto quesito, si osservi che derivando membro a membro l'uguaglianza $f_0 = Tf_0$ in questo caso si ottiene la pi\'u semplice delle equazioni differenziali ordinarie).
}

\

\noindent \textbf{Esercizio \ref{sec_top}.3.} {\it Sia $X$ uno spazio metrico compatto. Si mostri che se $\mF \subset C(X)$ \e precompatto allora \e limitato.

\

\noindent (Suggerimento: ragionando per assurdo, si assuma che $\mF$ sia non limitato e si deduca che esiste una successione $\{ f_n \} \subset \mF$ con $\| f_n \|_\infty \geq n$, $\forall n \in \bN$; si osservi infine che tale successione non pu\'o avere sottosuccessioni convergenti).
}

\

\noindent \textbf{Esercizio \ref{sec_top}.4.} {\it Sia $X$ uno spazio metrico compatto e $K \in C(X \times X)$. Per ogni $x \in X$ si definisca $\kappa_x(y) := K(x,y)$, $y \in X$, e si dimostrino le seguenti propriet\'a:
\textbf{(1)} $\kappa_x \in C(X)$ per ogni $x \in X$;
\textbf{(2)} La famiglia $\mF := \{ \kappa_x \}_{x \in X} \subset C(X)$ \e equicontinua.

\

\noindent (Suggerimenti: si osservi che $X \times X$ \e metrico
{\footnote{
Qui usiamo la metrica euclidea
$d_2( (x,y) , (x',y') ) := \sqrt{ d(x,x')^2 + d(y,y')^2 }$, $x,x',y,y' \in X$.
}}
e compatto, per cui $K$ \e uniformemente continua; si osservi quindi che
$d_2( (x,y) , (x,y') ) = d(y,y')$, $\forall x,y,y' \in X$).
}

\

\noindent \textbf{Esercizio \ref{sec_top}.5.} {\it Sia $X$ uno spazio metrico compatto ed $\{ f_n \} \subset C(X)$ una successione precompatta e convergente puntualmente ad $f : X \to \bR$. Si mostri che $\{ f_n \}$ converge uniformemente ad $f$ (cosicch\'e $f$ \e continua).

\

\noindent (Suggerimento: si supponga per assurdo che esiste $\eps > 0$ tale che $\| f-f_k \|_\infty \geq \eps$ per ogni $k$ appartenente ad un sottoinsieme infinito di $\bN$. Essendo $\{ f_n \}$ precompatta, esiste una sottosuccessione $\{ f_{k_i} \}$ di $\{ f_k \}$ uniformemente convergente; ma $\{ f_{k_i} \}$ deve necessariamente convergere ad $f$ per convergenza puntuale di $\{ f_n \}$, il che fornisce la contraddizione cercata).
}

\

\noindent \textbf{Esercizio \ref{sec_top}.6.} {\it Sia $\{ f_n : \bR \to \bR \}$ una successione di funzioni convesse puntualmente convergente ad $f : \bR \to \bR$. Presi $\alpha < a < b < \beta$, si verifichino le seguenti propriet\'a:
\textbf{(1)} Le successioni
\[
m_n := ( f_n(a) - f_n(\alpha) )(a-\alpha)^{-1}
\ \ , \ \
M_n := (f_n(\beta)-f_n(b))(\beta-b)^{-1}
\ \ , \ \
n \in \bN
\ ,
\]
sono limitate;
\textbf{(2)} Usando il punto precedente e Prop.\ref{prop_conv}(1), si mostri che esiste $L > 0$ tale che
\[
|f_n(x)-f_n(y)| \ \leq \ L|x-y|
\ \ , \ \
\forall n \in \bN
\ , \
x,y \in [a,b]
\ .
\]
\textbf{(3)} Usando il punto precedente, si verifichi che esiste $C > 0$ tale che
\[
|f_n(x)| \ \leq \ C + L(b-a)
\ \ , \ \
\forall n \in \bN
\ , \
x \in [a,b]
\ .
\]
\textbf{(4)} Usando l'Esercizio \ref{sec_top}.5 ed i punti precedenti, si verifichi che $\{ f_n|_{[a,b]} \}$ converge uniformemente.

\

\noindent (Suggerimento: per il punto 4 si usi il teorema di Ascoli-Arzel\'a).

}

\

\noindent \textbf{Esercizio \ref{sec_top}.7.} {\it Presa la successione di funzioni
\[
f_n : \bR \to \bR 
\ , \ 
n \in \bN
\ \ : \ \
f_n(x) := 
\left\{
\begin{array}{ll}
e^{-x^2} \ \ , \ \ |x| < n
\\
e^{-n^2}[ 1-2n(|x|-n) ] \ \ , \ \ |x| \in [n,n+(2n)^{-1}]
\\
0 \ \ , \ \ |x| > n+(2n)^{-1} \ ,
\end{array}
\right.
\]
si mostri che: \textbf{(1)} $f_n \in C_c(\bR)$, $\forall n \in \bN$; \textbf{(2)} $\{ f_n \}$ converge uniformemente ad
$f \in C_0(\bR) - C_c(\bR)$, $f(x) := e^{-x^2}$, $x \in \bR$.
}

\newpage
\section{Equazioni differenziali ordinarie.}
\label{sec_eq_diff}

La ricerca di soluzioni di equazioni differenziali \e una delle questioni caratterizzanti dell'analisi. A livello storico, impulso determinante per lo studio delle equazioni differenziali \e stata la meccanica newtoniana, nell'ambito della quale hanno il ruolo di tradurre in termini matematici princ\'ipi fondamentali quali quelli della dinamica.

Un'equazione differenziale si pu\'o pensare come un insieme di relazioni (algebriche, nei casi pi\'u elementari) che legano la funzione incognita alle sue derivate. La funzione incognita si interpreta tipicamente come una grandezza che evolve nel tempo, si pensi ad esempio alla posizione nello spazio di un corpo materiale; dunque la sua conoscenza implica, nella misura in cui l'equazione (ben) descriva un sistema fisico, la possibilit\'a di prevederne il comportamento.

\

Diamo ora alcune definizioni pi\'u precise.
Sia $n \in \bN$ ed $A \subseteq \bR^{n+1}$ aperto; un'equazione differenziale ordinaria si presenta come un'espressione del tipo
\[
f ( u , u' , u'' , \ldots , u^{(n)} ) = 0
\ ,
\]
dove $f : A \to \bR$ \e una funzione (solitamente) continua, ed $u : I \to \bR$, $I \subseteq \bR$ aperto, \e la funzione incognita. Osserviamo che \e sempre possibile ricondursi ad un problema di primo grado ($n = 1$), sostituendo $u$ con la funzione 
\[
( u_0 , u_1 , \ldots , u_{n-1} ) : I \to \bR^n
\ \ , \ \
u_0 := u
\ , \
u_i := u_{i-1}'
\ , \
i = 1 , \ldots , n-1
\ ,
\]
per cui in generale tratteremo problemi del tipo
\begin{equation}
\label{eq_PDO1}
u' = f(t,u)
\ ,
\end{equation}
dove $u : I \to \bR^n$, $f : A \to \bR^n$, con $I \subseteq \bR$, $A \subseteq \bR^{n+1}$ aperti.
Il problema (\ref{eq_PDO1}) pu\'o essere arricchito con ulteriori condizioni che $u$, e/o la sua derivata, devono soddisfare. In questa sezione consideriamo il {\em Problema di Cauchy}:
\begin{equation}
\label{eq_PDO2}
\left\{
\begin{array}{ll}
u' = f(t,u)   \\
u (t_0) = u_0 \ ,
\end{array}
\right.
\end{equation}
dove $f : A \to \bR^n$, $( t_0 , u_0 ) \in A \subseteq \bR^{n+1}$.
Ora, in generale il calcolo esplicito della soluzione di (\ref{eq_PDO2}) \e un compito impossibile; \e allora importante produrre teoremi che ne assicurino l'esistenza e (possibilmente) l'unicit\'a.

\subsection{Il teorema di Cauchy.}

\begin{thm}
\label{thm_Cauchy_PDO}
Sia dato il problema (\ref{eq_PDO2}) con $f \in C(A,\bR^n)$ e due dischi $I := ( t_0-r , t_0+r )$, $J := \Delta ( u_0 , r' )$, tali che: 
(1) $I \times J \subseteq A$; 
(2) $f$ \e Lipschitz in $J$, ovvero esiste $L > 0$ tale che
\[
| f (s,v) - f(s,w) | \leq L | v - w |
\ \ , \ \
s \in I 
\ , \
v,w \in J
\ .
\]
Allora esiste ed \e unica la soluzione del problema (\ref{eq_PDO2}), definita in un opportuno intorno $I_0 :=$ $( t_0-r_0,t_0+r_0 )$.
\end{thm}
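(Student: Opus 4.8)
The plan is to reformulate the Cauchy problem \eqref{eq_PDO2} as a fixed-point equation for a suitable integral operator and then apply the contraction theorem (Teo.\ref{thm_contr}). By the fundamental theorem of calculus, a continuous function $u : I_0 \to \bR^n$ solves \eqref{eq_PDO2} if and only if it satisfies the integral equation
\[
u(t) = u_0 + \int_{t_0}^t f(s,u(s)) \, ds
\ , \
t \in I_0
\ .
\]
So I would define an operator $T$ by $(Tu)(t) := u_0 + \int_{t_0}^t f(s,u(s)) \, ds$ and seek a fixed point $T \ovl u = \ovl u$. The idea is to choose the domain $I_0 = (t_0 - r_0, t_0 + r_0)$, with $r_0 \leq r$ small enough, so that $T$ maps a suitable complete metric space into itself and is a contraction there.

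First I would fix the ambient space. Since $f$ is continuous on the compact closure of $I \times J$, it is bounded there, say $\sup |f| \leq M$; I would then work in the closed ball $X := \{ u \in C(\ovl I_0, \bR^n) : \| u - u_0 \|_\infty \leq r' \}$, which is a closed subset of the Banach space $C(\ovl I_0, \bR^n)$ (complete by Prop.\ref{prop_C0X}) and hence itself a complete metric space. The first thing to check is that $T$ is well-defined and maps $X$ into $X$: for $u \in X$ the values $(s, u(s))$ remain in $\ovl I \times \ovl J \subseteq A$, so $f(s, u(s))$ makes sense, and
\[
\| Tu - u_0 \|_\infty \leq \int_{t_0}^t |f(s,u(s))| \, ds \leq M r_0
\ ,
\]
which is $\leq r'$ provided $r_0 \leq r'/M$.

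Next I would verify the contraction estimate. Using the Lipschitz hypothesis (2), for $u, v \in X$ I obtain
\[
| (Tu)(t) - (Tv)(t) | \leq \int_{t_0}^t | f(s,u(s)) - f(s,v(s)) | \, ds \leq L r_0 \, \| u - v \|_\infty
\ ,
\]
so $\| Tu - Tv \|_\infty \leq L r_0 \| u - v \|_\infty$. Hence $T$ is a contraction as soon as $r_0 < 1/L$. Choosing $r_0 < \min\{ r, r'/M, 1/L \}$ satisfies all three constraints simultaneously, and Teo.\ref{thm_contr} yields a unique fixed point $\ovl u \in X$, which is the desired solution; its uniqueness within $X$ is immediate from the fixed-point theorem, and passing back through the integral equation gives that $\ovl u$ is $C^1$ and solves \eqref{eq_PDO2}.

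The main obstacle is not any single estimate but the bookkeeping of the radius $r_0$: one must shrink the interval enough that $T$ simultaneously preserves $X$ (the condition $r_0 \leq r'/M$) and contracts it (the condition $r_0 < 1/L$), and this is precisely why the theorem produces only a \emph{local} solution on $I_0$ rather than on all of $I$. A secondary subtlety is the global uniqueness statement: the contraction theorem gives uniqueness only within the ball $X$, so to conclude that \emph{every} solution of \eqref{eq_PDO2} on $I_0$ coincides with $\ovl u$ one should either note that any solution automatically lands in $X$ (by the same bound $\| u - u_0 \|_\infty \leq M r_0 \leq r'$, since its derivative is $f$ which is bounded by $M$) or invoke a separate Gronwall-type argument. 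I would handle this by the former, shorter route.
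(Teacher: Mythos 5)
Your proposal is correct and follows essentially the same route as the paper: the same Volterra operator on the ball $\| u - u_0 \|_\infty \leq r'$ in $C(I_0,\bR^n)$, the same choice $r_0 < \min\{ r, r'/M, 1/L \}$, and the same application of Teo.\ref{thm_contr}. Your closing remark on why uniqueness extends to every solution on $I_0$ (not just those in $X$) is a point the paper's proof glosses over, and is a welcome addition.
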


\begin{proof}[Dimostrazione]
Si tratta di applicare il teorema delle contrazioni. Come prima cosa, definiamo
\[
M := \sup_{I \times J} | f (s,v) | 
\]
e scegliamo $r_0 > 0$ tale che
\[
r_0 < \min \left\{  r  , \frac{r'}{M} , \frac{1}{L}  \right\}
\ .
\]
Definiamo quindi $I_0 :=$ $( t_0-r_0,t_0+r_0 )$, e
\[
X 
:= 
\left\{   
u \in C(I_0) : \|  u - u_0  \|_\infty < r'
\right\}
\ .
\]
Per costruzione $X$ \e uno spazio metrico completo, una volta equipaggiato della
distanza indotta dalla norma dell'estremo superiore. Il contenuto intuitivo della
definizione precedente \e che stiamo considerando funzioni la cui immagine sia
contenuta nel disco $J$ all'interno del quale $f$ \e Lipschitz.
Introduciamo ora l'{\em operatore di Volterra}
\begin{equation}
\label{eq_tc1}
F : X \to X
\ \ , \ \
Fu(t) := u_0 + \int_{t_0}^t f ( s , u(s) ) \ ds
\ , \
t \in I_0
\ .
\end{equation}
Come prima cosa, verifichiamo che $F$ sia ben definito, ovvero che si abbia effettivamente
$Fu \in X$. Chiaramente $Fu$ \e continua in $I_0$; inoltre
\[
\|  Fu - u_0 \|_\infty
\leq
\int_{t_0}^t \sup_{I_0} | f ( s , u(s)) | \ ds
\leq
M r_0 < r'
\ ,
\]
dunque $Fu \in X$. Verifichiamo ora che $F$ \e una contrazione:
\[
\|  Fu - Fz  \|
\leq
\int_{t_0}^t | f (s,u(s)) - f(s,z(s)) | \ ds
\leq
L \| u - z \|_\infty r_0
< 
\| u - z \|_\infty
\]
(infatti, $Lr_0 < 1$). Dunque, per Teo.\ref{thm_contr} esiste ed \e unica $\ovl u$ tale che $F \ovl u = \ovl u$, ovvero
\[
\ovl u (t) = u_0 + \int_{t_0}^t f ( s , \ovl u(s) ) \ ds
\ , \
t \in I_0
\ .
\]
L'espressione precedente ci dice che $\ovl u \in C^1(I_0)$, essendo essa una primitiva di $f(\cdot,\ovl u(\cdot)) \in C(I_0)$. Derivando membro a membro concludiamo che $\ovl u$ \e la soluzione del problema (\ref{eq_PDO2}).
\end{proof}

La condizione di {\em locale lipschitzianit\'a} per $f(t,\cdot)$ (ovvero il punto (2) dell'enunciato) \e soddisfatta se $f$ \e convessa come funzione della variabile $u$ (infatti ogni funzione convessa \e localmente Lipschitz, vedi \S \ref{sec_jensen}). Un'altra condizione sufficiente per la condizione di Lipschitz (nel caso $n=1$) \e che la derivata parziale di $f$ rispetto ad $u$ sia continua in un intorno di $u_0$, come si verifica facilmente applicando il teorema del valor medio ad $f (t,\cdot)$.

Infine, osserviamo che esplicitando la successione introdotta nella dimostrazione del teorema delle contrazioni in (\ref{eq_tc1}), si ottiene la {\em successione di Peano-Picard}
\begin{equation}
\label{eq_PP1}
y_0 := u_0
\ \ , \ \
y_{n+1} (t) := u_0 + \int_{t_0}^t f ( s , y_n(s) ) \ ds
\ \ , \ \
n \in \bN
\ ,
\end{equation}
la quale fornisce una successione convergente alla soluzione del problema di Cauchy. Tuttavia, ad eccezione di casi particolarmente favorevoli, il calcolo degli integrali nell'espressione precedente \e in genere difficoltoso, ed in tal caso \e conveniente procedere per approssimazione. Ad esempio, quando $f$ \e di classe $C^\infty$ si pu\'o procedere attraverso sviluppi in serie di Taylor, in modo da ridursi ad integrare dei polinomi; per esempi in tal senso rimandiamo a \cite{Sal}.

\subsection{Prolungamento delle soluzioni.}

Consideriamo il problema di Cauchy (\ref{eq_PDO2}) definito su un intervallo $I :=$ $(t_0-\delta,t_0+\delta)$. Una soluzione {\em locale} di (\ref{eq_PDO2}) \e il dato di una coppia $(J,u)$, dove $J$ \e un intervallo aperto contenuto in $I$ ed $u \in C^1(J,\bR^n)$ \e una soluzione di (\ref{eq_PDO2}); per brevit\'a, talvolta nel seguito scriveremo semplicemente $u$ invece di $(J,u)$. Denotiamo con $\mC$ l'insieme di tali soluzioni locali. 
\begin{defn}
Siano $(J_1,u_1)$, $(J_2,u_2) \in$ $\mC$. Diciamo che $(J_2,u_2)$ \e \textbf{un prolungamento} di $(J_1,u_1)$ se $J_1 \subset J_2$ e $u_2 |_{J_1} = u_1$, ed in tal caso scriveremo $(J_1,u_1) \prec (J_2,u_2)$ o (pi\'u brevemente)$u_1 \prec u_2$. 
\end{defn}
In base alla definizione precedente, $(\mC,\prec)$ \e un insieme parzialmente ordinato. Una soluzione locale $u$ di (\ref{eq_PDO2}) si dice {\em massimale} se \e massimale rispetto alla relazione d'ordine $\prec$ (ovvero $v \in \mC$, $u \prec v$ $\Rightarrow$ $u=v$).

\begin{thm}
Sia dato il problema di Cauchy (\ref{eq_PDO2}) con $f$ localmente Lipschitz nella seconda variabile. Allora ogni soluzione locale di (\ref{eq_PDO2}) ammette un prolungamento massimale.
\end{thm}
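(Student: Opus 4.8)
The plan is to apply Zorn's Lemma to the partially ordered set of extensions of the given local solution. Fix a local solution $(J_1,u_1) \in \mC$ and set
\[
P := \{ (J,u) \in \mC : (J_1,u_1) \prec (J,u) \} \ ,
\]
which is nonempty (it contains $(J_1,u_1)$, since $\prec$ is a partial order and hence reflexive) and is ordered by the restriction of $\prec$. To invoke Zorn I must show that every chain in $P$ admits an upper bound lying in $P$; the maximal element Zorn then produces will be the sought maximal extension.

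So let $\{(J_\alpha,u_\alpha)\}_\alpha \subseteq P$ be a chain. Define $J^* := \bigcup_\alpha J_\alpha$ and, for $t \in J^*$, set $u^*(t) := u_\alpha(t)$ whenever $t \in J_\alpha$. First I would check that $J^*$ is an open interval with $t_0 \in J^* \subseteq I$: each $J_\alpha = (a_\alpha,b_\alpha)$ is open with $t_0 \in J_\alpha \subseteq I$, and a union of a nested family of open intervals sharing the point $t_0$ is the open interval $(\inf_\alpha a_\alpha, \sup_\alpha b_\alpha)$. Next, $u^*$ is well defined: if $t \in J_\alpha \cap J_\beta$, then since the family is totally ordered we may assume $(J_\alpha,u_\alpha) \prec (J_\beta,u_\beta)$, whence $u_\beta|_{J_\alpha} = u_\alpha$ and in particular $u_\alpha(t) = u_\beta(t)$. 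It is precisely here that the \emph{chain} hypothesis is essential; to glue arbitrary (not linearly ordered) extensions one would instead invoke the uniqueness furnished by the local Lipschitz condition through Teo.\ref{thm_Cauchy_PDO}.

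It remains to see that $(J^*,u^*) \in \mC$ and that it is an upper bound for the chain. Being a solution is a \emph{local} condition: any $t \in J^*$ lies in some $J_\alpha$, on a neighbourhood of which $u^*$ coincides with the $C^1$ function $u_\alpha$ solving (\ref{eq_PDO2}); hence $u^* \in C^1(J^*,\bR^n)$, it satisfies $(u^*)' = f(\cdot,u^*)$ on $J^*$, and $u^*(t_0) = u_0$ (read off from any $u_\alpha$). By construction $u_\alpha \prec u^*$ for every $\alpha$ and $u_1 \prec u^*$, so $(J^*,u^*)$ is an upper bound for the chain lying in $P$.

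Zorn's Lemma then yields a maximal element $(J^*,u^*)$ of $P$, and I would conclude by noting that it is maximal in all of $\mC$: if $v \in \mC$ with $(J^*,u^*) \prec v$, then by transitivity $(J_1,u_1) \prec v$, so $v \in P$, and maximality in $P$ forces $v = (J^*,u^*)$. Thus $(J^*,u^*)$ is a maximal extension of $(J_1,u_1)$. The one step requiring genuine care is the verification that the upper bound of a chain is again an element of $\mC$ — that gluing preserves being a $C^1$ solution — which reduces to the locality of differentiability and of the differential equation; the remainder (the interval structure of $J^*$, the order-theoretic bookkeeping, and the passage from maximality in $P$ to maximality in $\mC$) is routine.
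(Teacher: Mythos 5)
Your proof is correct, but it follows a different route from the paper's. The paper fixes the local solution $(J,u)$, considers the family $\mC_{J,u}$ of \emph{all} its extensions, and uses the uniqueness of solutions (hence the local Lipschitz hypothesis, via Teo.\ref{thm_Cauchy_PDO}) to show that any two extensions agree on the intersection of their domains; this makes $\mC_{J,u}$ a directed set, and the maximal element is then obtained by gluing the whole family. You instead apply Zorn's Lemma, gluing only along chains — where, as you correctly observe, well-definedness of the glued function is automatic from the total order and no uniqueness is needed. The practical difference is that your argument never uses the local Lipschitz hypothesis at all: it establishes the existence of a maximal extension under mere continuity of $f$ (i.e.\ in the Peano setting as well), which is strictly more general. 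A further point in favour of your route: as literally written, the paper's final inference ``directed set, hence it admits a maximal element'' is not valid for arbitrary directed sets (think of $\bN$); the intended completion there is to take the union of the entire pairwise-compatible family, whereas Zorn's Lemma applied to chains, as you do, sidesteps the issue cleanly. Your verification that maximality in $P$ implies maximality in $\mC$ is also a detail worth having made explicit.
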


\begin{proof}[Dimostrazione]
Sia $(J,u)$ una soluzione locale di (\ref{eq_PDO2}), $\mC_{J,u} := \{ (J_1,u_1) \in \mC : u \prec u_1 \}$ e $(J_1,u_1)$, $(J_2,u_2) \in \mC_{J,u}$. Allora $u_1(t_0) = u_2(t_0) = u_0$ e per unicit\'a della soluzione troviamo $u_1 |_J = u_2 |_J = u$. Ora, avendosi $\emptyset \neq J \subset J_1 \cap J_2$, abbiamo che $\wt J := J_1 \cup J_2$ \e un intervallo, e definendo $\wt u \in C^1(\wt J,\bR^n)$, $\wt u|_{J_1} = u_1$, $\wt u|_{J_2} = u_2$, troviamo facilmente che $(\wt J,\wt u) \in \mC_{J,u}$. Dunque $\mC_{J,u}$ \e un insieme diretto, e quindi ammette un elemento massimale.  
\end{proof}

Sia $(J,u)$, $J := (a,b)$, una soluzione locale e $(J_1,u_1)$ un prolungamento con $\ovl J \subset J_1$. Poich\'e $u_1 \in C^1(J_1,\bR^n)$ troviamo che deve essere necessariamente
\begin{equation}
\label{eq_PR}
L := \lim_{t \to b^-} u(t) \neq \pm \infty \ ,
\end{equation}
in quanto $L = u_1(b)$. In effetti, (\ref{eq_PR}) \e anche condizione sufficiente affinch\'e esista un prolungamento di $(J,u)$: infatti, considerando il problema di Cauchy
\[
\left\{
\begin{array}{ll}
u' = f(t,u) \\
u (b) = L
\end{array}
\right.
\]
ed una sua soluzione $(J_2,u_2)$, possiamo facilmente costruire il prolungamento $J_1 := J \cup J_2$, $u_1 \in C^1(J_1,\bR^n)$, $u_1 |_J = u$, $u_1 |_{J_2} = u_2$. Conseguenza di quanto appena affermato \e il seguente teorema.
\begin{thm}[Fuga dai compatti]
%\cite[Teo.3.3.3]{Giu2}
\label{thm_FUGA}
Sia $(I,u)$, $I = (\ovl a , \ovl b)$, una soluzione massimale di (\ref{eq_PDO2}). Per ogni compatto $K \subset A$ esiste $\delta > 0$ tale che per ogni $t \in I - ( \ovl a + \delta , \ovl b - \delta )$ risulta $(t,u(t)) \in A-K$.  
\end{thm}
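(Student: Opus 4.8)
The plan is to argue by contradiction at each endpoint separately, exploiting the symmetry under time reversal $t \mapsto -t$: if $v(s) := u(-s)$, then $v$ is a maximal solution of $v' = -f(-s,v)$ on $(-\ovl b,-\ovl a)$, an equation of the same type (continuous, locally Lipschitz in the second variable), and this interchanges the two endpoints. It therefore suffices to produce $\beta \in I$ with $(t,u(t)) \notin K$ for every $t \in (\beta,\ovl b)$; the symmetric $\alpha \in I$ handles the left end, and then $\delta := \min\{ \ovl b - \beta , \alpha - \ovl a \} > 0$ witnesses the claim, since $t \geq \ovl b - \delta \Rightarrow t \geq \beta$ and $t \leq \ovl a + \delta \Rightarrow t \leq \alpha$. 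If $\ovl b = +\infty$ there is nothing to prove near that end: $K$ is compact, hence bounded, so its projection onto the $t$-axis lies in some $(-\infty,T]$ and any $\beta \geq T$ works. Thus I may assume $\ovl b < +\infty$.

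Suppose no such $\beta$ exists; then there is a sequence $t_n \uparrow \ovl b$ with $(t_n,u(t_n)) \in K$. Since $K$ is compact and contained in the open set $A$, the number $2\rho := \mathrm{dist}(K,\bR^{n+1}-A)$ is strictly positive, so the closed $\rho$-neighbourhood $K_\rho := \{ p \in \bR^{n+1} : \mathrm{dist}(p,K) \leq \rho \}$ is a compact subset of $A$. By continuity of $f$, $M := \sup_{K_\rho} |f| < +\infty$. I then fix a time window $\eta > 0$ with $\eta \sqrt{1+M^2} \leq \rho$ and choose $n$ so large that $\ovl b - t_n < \eta$.

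The crux is a confinement (bootstrap) step: I claim $(t,u(t)) \in K_\rho$ for every $t \in [t_n,\ovl b)$. Letting $[t_n,c)$ be the largest subinterval on which the graph stays in $K_\rho$, on it $|u'(t)| = |f(t,u(t))| \leq M$, whence $|(t,u(t))-(t_n,u(t_n))| \leq (t-t_n)\sqrt{1+M^2} < \eta\sqrt{1+M^2} \leq \rho$; thus the graph remains within distance $\rho$ of $(t_n,u(t_n)) \in K$, and since $K_\rho$ is closed the usual open–closed argument forces $c = \ovl b$. Consequently $u$ is $M$-Lipschitz on $[t_n,\ovl b)$, so, $\ovl b$ being finite, it satisfies the Cauchy criterion as $t \to \ovl b^-$ and $L := \lim_{t \to \ovl b^-} u(t)$ exists, is finite, and satisfies $(\ovl b,L) \in K_\rho \subset A$. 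But then the extension criterion discussed around (\ref{eq_PR}) applies: since $f$ is continuous and locally Lipschitz in the second variable near $(\ovl b,L) \in A$, Teo.\ref{thm_Cauchy_PDO} provides a local solution of the Cauchy problem started at $(\ovl b,L)$, and gluing it to $u$ prolongs $(I,u)$ strictly beyond $\ovl b$, contradicting maximality.

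The main obstacle is exactly this confinement step: a priori the trajectory could oscillate in and out of $K$ near $\ovl b$ with no limit, and the displacement estimate — valid only while the graph remains in the tube $K_\rho$, where $f$ is bounded — is what rules this out via the maximal-interval argument and secures the existence of $L$. Everything else (the symmetric treatment of $\ovl a$ by time reversal, the assembly of $\delta$, and the trivial infinite-endpoint case via boundedness of $K$) is routine.
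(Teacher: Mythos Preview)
Your proof is correct and is the standard complete argument for this theorem. The paper's own proof is only a two-line gesture toward the extension criterion (\ref{eq_PR}): it does not even set up the contradiction hypothesis, does not build the compact tube $K_\rho$, and does not address the possibility that the graph oscillates in and out of $K$ near $\ovl b$ without $u(t)$ having a limit. Your confinement step (the open--closed argument on the maximal interval where the graph stays in $K_\rho$, coupled with the displacement bound $|(t,u(t))-(t_n,u(t_n))| \leq (t-t_n)\sqrt{1+M^2} < \rho$) is exactly what closes this gap and forces the existence of $L = \lim_{t\to \ovl b^-} u(t)$; the paper simply presumes this. Both arguments ultimately invoke the same extension principle discussed around (\ref{eq_PR}), so the underlying idea is the same, but your version supplies the analytic details the paper omits; the time-reversal reduction and the treatment of the case $\ovl b = +\infty$ are routine but welcome bookkeeping that the paper also skips.
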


\begin{proof}[Dimostrazione]
Se $(t_0,u_0)$ appartiene alla frontiera di $K$ possiamo risolvere il problema di Cauchy con dato iniziale $u(t_0) = u_0$, la cui soluzione $\wt u$ fornisce dei punti $(t,\wt u(t)) \in A-K$. Del resto, $\wt u$ deve essere restrizione della soluzione massimale $u$, per cui $(t,u(t)) \in A-K$.
\end{proof}

Dalle considerazioni precedenti segue che \e interessante stabilire quando una soluzione locale soddisfa (\ref{eq_PR})
{\footnote{Quando, al contrario, $\lim_{t \to b^-} |u(t)| = \infty$ allora diciamo che $u$ {\em esplode}. In tal caso troviamo necessariamente $\lim_{t \to b^-} |u'(t)| = \infty$.}}, 
visto che in tal caso essa \e prolungabile. Il seguente Lemma fornisce una condizione sufficiente per evitare {\em esplosioni} di soluzioni locali nel caso $n=1$. 
\begin{lem}
\label{lem_PS1}
Sia $u \in C^1(a,b)$ con $\eps \geq 0$, $L > 0$ tali che 
\begin{equation}
\label{eq_lPS1}
|u'(t)| \leq \eps + L |u(t)| \ \ , \ \ t \in (a,b) \ .
\end{equation}
Allora per ogni $t,t_0 \in (a,b)$ risulta
\begin{equation}
\label{eq_PS0}
|u(t)| \leq \left(  \frac{\eps}{L} + |u(t_0)|  \right) e^{L|t-t_0|} \ .
\end{equation}
\end{lem}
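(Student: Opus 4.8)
The inequality is of Gr\"onwall type, and the plan is to recast it in integral form so as to avoid differentiating $|u|$ directly. First I would fix $t_0 \in (a,b)$ and treat the case $t \geq t_0$, the case $t < t_0$ being entirely symmetric. Since $u \in C^1(a,b)$, the fundamental theorem of calculus gives $u(t) = u(t_0) + \int_{t_0}^t u'(s)\, ds$, whence by the triangle inequality and hypothesis (\ref{eq_lPS1}),
\[
|u(t)| \leq |u(t_0)| + \int_{t_0}^t |u'(s)|\, ds \leq |u(t_0)| + \int_{t_0}^t \left( \eps + L|u(s)| \right) ds \ .
\]

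The key observation is the algebraic identity $\eps + L|u(s)| = L\left( \frac{\eps}{L} + |u(s)| \right)$, which suggests setting $\psi(t) := \frac{\eps}{L} + |u(t)|$. Adding $\frac{\eps}{L}$ to both sides of the previous estimate and using this identity, the additive term $\eps(t-t_0)$ is absorbed and one obtains the clean inequality
\[
\psi(t) \leq \psi(t_0) + L \int_{t_0}^t \psi(s)\, ds
\ , \
t \geq t_0 \ ,
\]
with a constant term $\psi(t_0)$ in place of the $t$-dependent one. This is exactly the form needed for the integral version of Gr\"onwall's lemma.

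Since Gr\"onwall's lemma does not appear earlier, I would prove it by the integrating-factor trick applied to the right-hand side, not to $\psi$ itself. Set $\Phi(t) := \psi(t_0) + L\int_{t_0}^t \psi(s)\, ds$, which is $C^1$ with $\Phi'(t) = L\psi(t) \leq L\Phi(t)$ because $\psi \leq \Phi$. Then $\frac{d}{dt}\left( \Phi(t) e^{-L(t-t_0)} \right) = e^{-L(t-t_0)} \left( \Phi'(t) - L\Phi(t) \right) \leq 0$, so $\Phi(t) e^{-L(t-t_0)} \leq \Phi(t_0) = \psi(t_0)$, i.e.\ $\psi(t) \leq \Phi(t) \leq \psi(t_0) e^{L(t-t_0)}$. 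Recalling the definition of $\psi$, this is precisely (\ref{eq_PS0}) for $t \geq t_0$. For $t < t_0$ the same computation with the integral $\int_t^{t_0}$ (the monotonicity of the integrating factor merely reverses) yields $\psi(t) \leq \psi(t_0) e^{L(t_0-t)}$, and the two cases combine into the bound with $|t-t_0|$ in the exponent.

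The one point requiring care, and the reason for staying in integral form throughout, is that $|u|$ is only locally Lipschitz, not $C^1$, near the zeros of $u$, so one cannot legitimately write $\frac{d}{dt}|u(t)|$ and apply the integrating factor to $|u|$ directly. The integral formulation sidesteps this entirely: differentiability is invoked only for the genuinely $C^1$ function $u$ (via the fundamental theorem of calculus) and for the manifestly $C^1$ function $\Phi$. I expect this to be the only real subtlety; the remainder is the routine Gr\"onwall manipulation.
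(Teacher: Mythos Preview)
Your argument is correct and follows a genuinely different route from the paper's. The paper handles the non-differentiability of $|u|$ by \emph{regularizing}: it introduces $z(t) := \sqrt{\lambda^2 + u(t)^2}$ for a parameter $\lambda > 0$, which is $C^1$ everywhere, verifies $z' \leq \eps + Lz$, then separates variables (dividing by $\eps + Lz$ and integrating the logarithm) to obtain the exponential bound for $z$, and finally sends $\lambda \to 0$. Your approach instead avoids differentiating $|u|$ altogether by passing immediately to the integral inequality via the fundamental theorem of calculus for $u$ itself, then applying the integrating-factor argument to the $C^1$ auxiliary $\Phi$. Both strategies correctly circumvent the same obstacle; the paper's buys a purely differential one-variable calculation at the cost of a limiting step, while yours is limit-free and is essentially the integral Gr\"onwall lemma that the paper in fact develops a few pages later (in the section on continuous dependence on initial data). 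One small presentational remark: when you say ``the additive term $\eps(t-t_0)$ is absorbed'', that term never actually appears if you write $\eps + L|u(s)| = L\psi(s)$ from the outset; the absorption is really the point of the substitution, not a separate step.
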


\begin{proof}[Dimostrazione]
Usiamo il seguente trucco: preso $\lambda > 0$, definiamo
\[
z(t) := \sqrt{\lambda^2 + u(t)^2}
\ \Rightarrow \
z'(t) = \frac{u'(t)u(t)}{\sqrt{\lambda^2+u(t)^2}}
\ .
\]
Usando (\ref{eq_lPS1}) e $|u(t)| \leq z(t)$ troviamo
\[
z'(t) \leq
\frac{|u(t)|}{\sqrt{\lambda^2+u(t)^2}} \ |u'(t)| \leq
|u'(t)| \leq 
\eps + L |u(t)| \leq
\eps + L z(t)
\ ,
\]
per cui
\[
\frac{Lz'(t)}{\eps+Lz(t)} \leq L
\ \Rightarrow \
\ln \frac{\eps+Lz(t)}{\eps+Lz(t_0)} \leq L |t-t_0|
\ \Rightarrow \
\eps + L z(t) \leq (\eps+Lz(t_0)) e^{L(t-t_0)} \ ,
\]
e ancora (avendosi chiaramente $Lz(t) \leq \eps+Lz(t)$)
\[
z(t) \leq \frac{1}{L} (\eps+Lz(t_0)) e^{L(t-t_0)}
\ \stackrel{\lambda \to 0}{\Rightarrow} \ 
|u(t)| \leq \frac{1}{L} (\eps+L|u(t_0)|) e^{L(t-t_0)}
\ .
\]
\end{proof}

\begin{thm}
\label{teo_PS2}
Sia $I := ( t_0-r , t_0+r )$ ed $f \in C(I \times \bR)$ localmente Lipschitz nella seconda variabile. Supponiamo che per ogni compatto $K \subset I \times \bR$ esistano $\eps_K , L_K \geq 0$ tali che
\begin{equation}
\label{eq_PS1}
|f(t,u)| \ \leq \ \eps_K + L_K |u| \ \ , \ \ (t,u) \in I \times \bR \ .
\end{equation}
Allora il problema di Cauchy (\ref{eq_PDO2}) ammette una soluzione massimale $u \in C^1(I)$.
\end{thm}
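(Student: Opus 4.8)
The plan is to manufacture the maximal solution by the usual recipe --- local existence plus prolongation --- and then to use the growth bound (\ref{eq_PS1}) together with the Gronwall-type estimate of Lemma \ref{lem_PS1} to rule out that the maximal domain stops short of $I$. First I would apply Teo.\ref{thm_Cauchy_PDO} (the local Lipschitz hypothesis supplies condition (2) there on a small disc about $u_0$) to obtain a local solution through $(t_0,u_0)$, and then pass to a maximal prolongation $(I', \ovl u)$, $I' = (\ovl a, \ovl b) \subseteq I$. It remains to prove $I' = I$. Arguing by contradiction and exploiting the obvious left/right symmetry, suppose $\ovl b < t_0 + r$.

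Second, I would derive an a priori bound for $\ovl u$ as $t \to \ovl b^-$. Fix $c \in (\ovl a, t_0)$; then $[c, \ovl b]$ is a compact time window contained in $I$, so by the growth hypothesis (\ref{eq_PS1}) there are constants $\eps, L \geq 0$ with $|f(t,v)| \leq \eps + L|v|$ for $t \in [c, \ovl b]$ and $v \in \bR$. Since $\ovl u' = f(\cdot, \ovl u)$, this gives the differential inequality $|\ovl u'(t)| \leq \eps + L|\ovl u(t)|$ on $(c, \ovl b)$. If $L > 0$, Lemma \ref{lem_PS1} (applied on $(c,\ovl b)$ with base point $t_0$) yields $|\ovl u(t)| \leq (\eps/L + |u_0|)\,e^{L(\ovl b - c)} =: C$ for all $t \in (c, \ovl b)$; if $L = 0$ the bound $|\ovl u'| \leq \eps$ gives the same conclusion at once. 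In particular $\ovl u$ stays bounded near $\ovl b$.

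Third, I would upgrade this boundedness to the existence of a finite endpoint limit. Feeding $|\ovl u| \leq C$ back into the growth bound gives $|\ovl u'(t)| \leq \eps + LC =: M$ on $(c, \ovl b)$, so $\ovl u$ is $M$-Lipschitz there and hence Cauchy as $t \to \ovl b^-$; thus the finite limit $\ell := \lim_{t \to \ovl b^-} \ovl u(t)$ exists, which is exactly condition (\ref{eq_PR}). Since $(\ovl b, \ell) \in I \times \bR = A$, the extension procedure recorded just after (\ref{eq_PR}) (solve the Cauchy problem with datum $\ovl u(\ovl b) = \ell$ and glue) produces a strict prolongation of $(I', \ovl u)$, contradicting its maximality. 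Hence $\ovl b = t_0 + r$, and symmetrically $\ovl a = t_0 - r$, so $I' = I$ and $\ovl u \in C^1(I)$.

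I expect the genuinely delicate step to be the third one: the Gronwall bound only shows that $\ovl u$ is bounded, which by itself does not force a limit to exist at $\ovl b$. The point is that (\ref{eq_PS1}) then transfers this boundedness to $\ovl u'$, and it is the boundedness of the derivative --- equivalently, uniform continuity up to the endpoint --- that makes $\ovl u$ extendable via (\ref{eq_PR}). One must also read the hypothesis carefully: the constants $\eps, L$ are permitted to depend on the compact time window, which is precisely why the finiteness of $\ovl b$ (so that $[c,\ovl b] \subset I$) is what lets us extract them.
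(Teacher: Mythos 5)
Your proof is correct and follows the same route as the paper, which simply invokes Lemma \ref{lem_PS1} to obtain the bound (\ref{eq_PS0}) and concludes that no explosion occurs. You have in fact filled in a step the paper leaves implicit: boundedness of $\ovl u$ alone does not give the endpoint limit required by (\ref{eq_PR}), and your observation that (\ref{eq_PS1}) transfers the bound to $\ovl u'$, making $\ovl u$ Lipschitz up to $\ovl b$, is exactly what closes that gap.
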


\begin{proof}[Dimostrazione]
Applicando il lemma precedente ad $u'(t) = f(t,u(t))$ otteniamo la stima (\ref{eq_PS0}), la quale implica che non si hanno esplosioni in nessun punto di $I$.
\end{proof}

\begin{ex}
\label{ex_AcqP}
{\it
Studiamo il problema di Cauchy
\begin{equation}
\label{eq_AcqP}
\left\{
\begin{array}{ll}
u' = 2tu^2
\\
u(t_0) = u_0
\end{array}
\right.
\end{equation}
al variare di $(t_0,u_0) \in \bR^2$. Innanzitutto, osserviamo che $f (t,x) := 2tx^2$ \e definita su tutto $\bR^2$ ed \e localmente Lipschitz, per cui possiamo applicare il teorema di Cauchy per ogni condizione iniziale $(t_0,u_0) \in \bR^2$. Tuttavia le soluzioni $u$ variano sensibilmente al variare di $(t_0,u_0)$ in $\bR^2$. Innanzitutto escludiamo il caso $u_0 = 0$, poich\'e fornisce la soluzione banale $u = 0$. Al che, risolvendo per separazione di variabili troviamo
\[
t_0 = 0 \ , \ u_0 > 0
\ \Rightarrow \
u(t) = \frac{ u_0 }{ 1 - u_0 t^2 }
\ \Rightarrow \
\]
\[
\lim_{ |t| \to u_0^{-1/2} } |u(t)| = + \infty
\ \Rightarrow \
I_{max}(0,u_0) = ( - u_0^{-1/2} , u_0^{-1/2} )
\ ,
\]
dove, in generale, $I_{max}(t_0,u_0)$ denota il dominio della soluzione massimale del problema (\ref{eq_AcqP}) con condizione iniziale $(t_0,u_0)$. Se $u_0 < 0$ allora la situazione cambia drasticamente:
\[
t_0 = 0 \ , \ u_0 < 0
\ \Rightarrow \
u(t) = \frac{ u_0 }{ 1 + |u_0| t^2 }
\ \Rightarrow \
I_{max}(0,u_0) = \bR
\ .
\]
Passiamo ora ad analizzare la situazione per $t_0 \neq 0$:
\[
u_0 > 0
\ \Rightarrow \
u(t) = \frac{ u_0 }{ 1 - u_0 (t^2 - t_0^2) }
\ \Rightarrow \
\]
\[
\lim_{ |t| \to \sqrt{ t_0^2+u_0^{-1} } } |u(t)| = + \infty
\ \Rightarrow \
I_{max}(t_0,u_0) = 
\left( - \sqrt{ t_0^2+u_0^{-1} } \ , \ + \sqrt{ t_0^2+u_0^{-1} }  \right)
\ .
\]
D'altro canto
\[
u_0 < 0
\ \Rightarrow \
u(t) = \frac{ u_0 }{ 1 + |u_0| (t^2 - t_0^2) }
\ ;
\]
ora, abbiamo due casi:
\[
1 - |u_0| t_0^2 > 0
\ \Leftrightarrow \
t_0^2-|u_0|^{-1} < 0 
\ \Rightarrow \
I_{max}(t_0,u_0) = \bR
\ ,
\]
altrimenti
\[
t_0^2-|u_0|^{-1} \geq 0 
\ \Rightarrow \
\lim_{ |t| \to \sqrt{t_0^2-|u_0|^{-1}} } |u(t)| = + \infty
\ \Rightarrow \
\]
\[
I_{max}(t_0,u_0) =
\left\{
\begin{array}{ll}
\left( \sqrt{ t_0^2 - |u_0|^{-1} } \ , \  + \infty \right) \ \ , \ \ t_0 > 0
\\
\left( - \infty \ , \ - \sqrt{ t_0^2 - |u_0|^{-1} }  \right) \ \ , \ \ t_0 < 0 
\ .
\end{array}
\right.
\]
}
\end{ex}

\subsection{Dipendenza continua dai dati iniziali.}

Nelle applicazioni fisiche il ruolo della condizione iniziale di un problema di Cauchy \`e quello del valore assunto da una determinata grandezza fisica, diciamo $u_0$, misurata al tempo $t = t_0$. Poich\'e la misura di una grandezza fisica comporta inevitabilmente un errore di rilevazione (per quanto piccolo), \e importante caratterizzare quei problemi di Cauchy tali che a fronte di piccole discrepanze $u_0 \neq v_0$ producano soluzioni $(I,u)$, $(J,v)$ che differiscano "poco" in $I \cap J$. 
In termini rigorosi, consideriamo il problema differenziale
\begin{equation}
\label{eq_DC01}
u' = f(t,u)
\ \ , \ \
A \subset \bR^{n+1}
\ \ , \ \
f \in C(A,\bR^n)
\end{equation}
e, fissato $t_0 \in \bR$, consideriamo un disco $\Delta \subset \bR^n$ tale che $(t_0,x) \in A$ per ogni $x \in \Delta$. Assumiamo che esiste un intervallo $I \subseteq \bR$, $I \ni t_0$, tale che ogni problema di Cauchy 
\[
u' = f(t,u)
\ \ , \ \
u(t_0) = x
\ \ , \ \
x \in \Delta
\ ,
\]
abbia soluzione unica $(I_x,u_x)$ con $I_x \supseteq I$. Diciamo che si ha {\em dipendenza continua dai dati iniziali in $I$} se l'applicazione
\[
\Delta \to C^1(I,\bR^n)
\ \ , \ \
x \mapsto u_x|_I
\ ,
\]
\e continua, dove $C^1(I,\bR^n)$ \e equipaggiato con la topologia della convergenza uniforme; questa condizione ci assicura che per ogni $\eps > 0$ esiste un $\delta$ tale che
\[
x,y \in \Delta \ , \ |x-y| < \delta
\ \Rightarrow \ 
\sup_{t \in I} \, | u_x(t) - u_y(t) | < \eps
\ .
\]
Nelle applicazioni, per parlare di dipendenza continua occorre anche richiedere una propriet\'a non prettamente matematica, ovvero che $I$, il quale si interpreta come l'intervallo temporale nel quale la soluzione di (\ref{eq_DC01}) \e attendibile nel descrivere il comportamento del sistema fisico in oggetto, sia fisicamente significativo. Chiaramente, il termine {\em fisicamente significativo} \e ambiguo: un intervallo $I$ di pochi secondi pu\'o essere ben accettabile in problema d'urto, e totalmente inadeguato, invece, in un modello astronomico.

I risultati classici che permettono di dedurre la propriet\'a di dipendenza continua sono noti come {\em il Lemma di Gronwall}. Fenomeni fisici che conducono a problemi di Cauchy che non hanno dipendenza continua dai dati iniziali sono di solito associati ai cosiddetti {\em sistemi caotici}, e formano a tutt'oggi un'area di ricerca molto importante. Come esempio di sistema caotico portiamo il celebre {\em modello di Lorenz} correlato alle previsioni metereologiche (\cite[\S 9.2]{Tes},\cite{Hil}): presi $\sigma , \rho , \beta > 0$, esso si ottiene come il problema differenziale (\ref{eq_DC01}) associato alla funzione
\begin{equation}
\label{eq.Lorenz}
f : \bR \times \bR^3 \to \bR^3
\ \ , \ \
f(t,u) :=
( -\sigma (u_1-u_2) \ , \  \rho u_1 - u_2 - u_1u_3 \ , \ u_1u_2-\beta u_3 )
\ .
\end{equation}

\begin{lem}[La diseguaglianza di Gronwall]
Sia $0 \in I \subseteq \bR$ un intervallo ed $u \in C(I,\bR)$ tale che esistano $\beta , \alpha \in C(I,\bR)$, $\beta \geq 0$, con
\begin{equation}
\label{eq_DC02}
u(t)
\ \leq \
\alpha (t) + \int_0^t \beta (s) u(s) \ ds
\ \ , \ \
t \in [0,T] \subset I
\ .
\end{equation}
Allora 
\begin{equation}
\label{eq_DC03}
u(t)
\ \leq \
\alpha (t) + 
\int_0^t \alpha(s) \beta(s) \exp \left( \int_s^t \beta (\tau)  d \tau \right) ds
\ \ , \ \
t \in [0,T]
\ .
\end{equation}
\end{lem}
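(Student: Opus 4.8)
The plan is to reduce the integral inequality (\ref{eq_DC02}) to a linear differential inequality and then solve the latter by the classical integrating-factor method. First I would set $v(t) := \int_0^t \beta(s) u(s)\, ds$, so that $v(0) = 0$ and, since the integrand $\beta u$ is continuous, the fundamental theorem of calculus gives $v \in C^1([0,T])$ with $v'(t) = \beta(t) u(t)$. The hypothesis (\ref{eq_DC02}) reads $u(t) \leq \alpha(t) + v(t)$, and multiplying through by $\beta(t) \geq 0$ — this is exactly where the nonnegativity of $\beta$ is indispensable, since it preserves the direction of the inequality — yields $v'(t) \leq \beta(t)\alpha(t) + \beta(t)\, v(t)$, i.e. $v'(t) - \beta(t) v(t) \leq \beta(t)\alpha(t)$.

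Next I would introduce the integrating factor $\mu(t) := \exp\left( -\int_0^t \beta(\tau)\, d\tau \right)$, which is strictly positive and satisfies $\mu'(t) = -\beta(t)\mu(t)$. Multiplying the differential inequality by $\mu(t) > 0$ and recognizing the left-hand side as a total derivative, I obtain $\frac{d}{dt}\bigl[ \mu(t) v(t) \bigr] = \mu(t)\bigl( v'(t) - \beta(t) v(t) \bigr) \leq \mu(t)\beta(t)\alpha(t)$. Integrating from $0$ to $t$ and using $\mu(0) = 1$, $v(0) = 0$, this gives $\mu(t) v(t) \leq \int_0^t \mu(s)\beta(s)\alpha(s)\, ds$, and dividing by the positive quantity $\mu(t)$ produces $v(t) \leq \int_0^t \frac{\mu(s)}{\mu(t)}\,\beta(s)\alpha(s)\, ds$.

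The final step is the simplification $\frac{\mu(s)}{\mu(t)} = \exp\left( \int_0^t \beta(\tau)\, d\tau - \int_0^s \beta(\tau)\, d\tau \right) = \exp\left( \int_s^t \beta(\tau)\, d\tau \right)$, which turns the bound into $v(t) \leq \int_0^t \alpha(s)\beta(s)\exp\left( \int_s^t \beta(\tau)\, d\tau \right) ds$. Inserting this back into $u(t) \leq \alpha(t) + v(t)$ delivers precisely (\ref{eq_DC03}).

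I expect no serious obstacle here; the conceptual content is merely the observation that the integrating-factor trick for linear first-order ODEs transfers verbatim to the associated linear differential \emph{inequality}. The only points requiring care are bookkeeping ones: confirming that $v$ is genuinely $C^1$ so that the manipulations are legitimate, checking that each multiplication (by $\beta \geq 0$, then by $\mu > 0$) preserves the sense of the inequality, and carrying out the exponential simplification $\mu(s)/\mu(t)$ correctly into the single integral $\int_s^t$.
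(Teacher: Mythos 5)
Your proof is correct and follows essentially the same route as the paper: the paper's auxiliary function $\phi(t) = \exp\left(-\int_0^t \beta\right)$ is exactly your integrating factor $\mu$, and the paper likewise computes $\frac{d}{dt}\left\{ \phi(t) \int_0^t \beta u \right\} \leq \alpha\beta\phi$, integrates, divides by $\phi(t)$, and feeds the bound back into the hypothesis. Your presentation merely makes explicit the intermediate differential inequality $v' - \beta v \leq \beta\alpha$; the substance is identical.
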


\begin{proof}[Dimostrazione]
Poniamo $\phi(t) := \exp \left( - \int_0^t \beta (s) ds \right)$, $t \in I$, cosicch\'e
$\phi(t)^{-1} \phi(s) = \exp \left(\int_s^t \beta (s) ds \right)$.
Usando (\ref{eq_DC02}) si trova
\[
\frac{d}{dt} \left\{  \phi(t) \int_0^t \beta (s) u(s) \ ds \right\}
\ = \
\beta (t) \phi(t) \left( -\int_0^t \beta(s)u(s) \ ds + u(t) \right)
\ \leq \
\alpha(t) \beta(t) \phi(t)
\ .
\]
Integrando questa diseguaglianza rispetto a $t$ e dividendo per $\phi(t)$ si trova
\[
\int_0^t \beta (s) u(s) \ ds 
\ \leq \
\phi (t)^{-1} \int_0^t \alpha(s) \beta(s) \phi(s) \ ds
\ ,
\]
per cui sommando $\alpha(t)$ ed applicando (\ref{eq_DC02}) troviamo (\ref{eq_DC03}).
\end{proof}

\begin{cor}
Siano $\beta > 0 , \alpha , \gamma \in \bR$. Se
\begin{equation}
\label{eq_DC04}
u(t)
\ \leq \
\alpha + \int_0^t ( \beta u(s) + \gamma ) \ ds 
\ \ , \ \
t \in [0,T]
\ ,
\end{equation}
allora 
$u(t) \leq \alpha e^{\beta t} + \gamma \beta^{-1} ( e^{\beta t} - 1 )$,
$t \in [0,T]$.
\end{cor}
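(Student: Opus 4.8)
The plan is to read (\ref{eq_DC04}) as a special instance of the Gronwall inequality established in the preceding Lemma. First I would split off the constant part of the integrand and rewrite the hypothesis as
\[
u(t) \leq (\alpha + \gamma t) + \int_0^t \beta\, u(s)\, ds
\ \ , \ \
t \in [0,T]
\ ,
\]
which is exactly the form (\ref{eq_DC02}) upon taking the constant coefficient $\beta(s) \equiv \beta$ and the affine function $\alpha(s) := \alpha + \gamma s$. Both are continuous on $I$ and $\beta \geq 0$, so the hypotheses of the Lemma are satisfied regardless of the signs of $\alpha$ and $\gamma$.

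Next I would invoke the conclusion (\ref{eq_DC03}). Since the coefficient is constant, the inner exponential collapses to $\exp\bigl(\int_s^t \beta\, d\tau\bigr) = e^{\beta(t-s)}$, yielding
\[
u(t) \leq (\alpha + \gamma t) + \int_0^t (\alpha + \gamma s)\,\beta\, e^{\beta(t-s)}\, ds
\ .
\]
The remaining work is the evaluation of this elementary integral: I would factor out $e^{\beta t}$ and compute $\int_0^t (\alpha + \gamma s)\, e^{-\beta s}\, ds$ term by term, treating the $\gamma s$ contribution with a single integration by parts. Collecting everything, the $\gamma t$ summand cancels against a $-\gamma t$ produced by the integral and the $\alpha$ terms recombine, leaving precisely $\alpha e^{\beta t} + \gamma \beta^{-1}(e^{\beta t} - 1)$.

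I do not expect any genuine obstacle here, since the statement is a direct corollary; the only point demanding care is the bookkeeping in the integration by parts and the final cancellation of the $\gamma t$ and $\alpha$ terms. As a sanity check (and an alternative route that avoids the general Lemma) one may set $v(t) := \alpha + \int_0^t(\beta u(s) + \gamma)\, ds$, observe that $u \leq v$ and $v' = \beta u + \gamma \leq \beta v + \gamma$, whence $(e^{-\beta t} v)' \leq \gamma\, e^{-\beta t}$; integrating from $0$ to $t$ and multiplying through by $e^{\beta t}$ reproduces the same bound. I would nevertheless present the first route, as it exhibits the corollary transparently as a specialization of the Gronwall inequality just proved.
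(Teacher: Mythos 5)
Your argument is correct, but it specializes the Gronwall Lemma differently from the paper. You absorb the constant $\gamma$ into the free term, taking $\alpha(s) := \alpha + \gamma s$ affine and $\beta(s) \equiv \beta$, and then pay for this with an integration by parts in evaluating $\int_0^t (\alpha+\gamma s)\beta e^{\beta(t-s)}\,ds$ (your bookkeeping does work out: the $-\gamma t$ produced by the integral cancels the $\gamma t$ in the free term). The paper instead absorbs $\gamma$ into the unknown, setting $\wt u := u + \gamma\beta^{-1}$, so that the hypothesis becomes $\wt u(t) \leq \alpha + \gamma\beta^{-1} + \int_0^t \beta \wt u$ with \emph{constant} free term; Gronwall then gives $\wt u(t) \leq (\alpha+\gamma\beta^{-1})e^{\beta t}$ with no integration by parts at all, and undoing the shift yields the claim. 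The paper's substitution is the cleaner computation (this is why the hypothesis $\beta>0$ is stated, since one divides by $\beta$); your route shows the corollary follows without introducing any auxiliary function, at the cost of a slightly longer integral evaluation. Your ``sanity check'' via $(e^{-\beta t}v)' \leq \gamma e^{-\beta t}$ is also a valid self-contained alternative. Either presentation is acceptable.
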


\begin{proof}[Dimostrazione]
Definendo $\wt u := u + \gamma \beta^{-1}$ troviamo che (\ref{eq_DC04}) si scrive
\[
\wt u(t)
\ \leq \
\alpha + \gamma \beta^{-1} + \int_0^t \beta \wt u (s) \ ds
\ \ , \ \
t \in [0,T]
\ ,
\]
dunque possiamo applicare il Lemma precedente. Calcolando esplicitamente i (semplici) integrali coinvolti otteniamo la tesi.
\end{proof}

\begin{thm}[Dipendenza continua]
Siano $f,g \in C(A,\bR^n)$ con $f$ avente costante di Lipschitz $L$. Se $(I_x,u_x)$, $(J_y,v_y)$ sono soluzioni rispettivamente dei problemi di Cauchy
\begin{equation}
\label{eq_DC05a}
\left\{
\begin{array}{ll}
u' = f(t,u)
\\
u(t_0) = x
\end{array}
\right.
\ \ \ , \ \ \
\left\{
\begin{array}{ll}
v' = g(t,v)
\\
v(t_0) = y \ ,
\end{array}
\right.
\end{equation}
allora 
\begin{equation}
\label{eq_DC05b}
| u_x(t) - v_y(t) |
\ \leq \
| x-y | e^{L(t-t_0)} + ML^{-1} ( e^{L(t-t_0)} - 1 )
\ \ , \ \
t \in I_x \cap J_y
\ ,
\end{equation}
dove $M := \| f-g \|_\infty$.
\end{thm}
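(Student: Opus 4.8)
Il piano \e di ricondurre l'asserto alla diseguaglianza di Gronwall, nella forma del Corollario precedente. Per prima cosa riscrivo entrambe le soluzioni in forma integrale: poich\'e $(I_x,u_x)$ e $(J_y,v_y)$ risolvono i rispettivi problemi di Cauchy (\ref{eq_DC05a}), per il teorema fondamentale del calcolo (come nella dimostrazione di Teo.\ref{thm_Cauchy_PDO}) si ha
\[
u_x(t) = x + \int_{t_0}^t f(s,u_x(s)) \ ds
\ \ , \ \
v_y(t) = y + \int_{t_0}^t g(s,v_y(s)) \ ds
\ ,
\]
per ogni $t \in I_x \cap J_y$. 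Sottraendo membro a membro e passando ai moduli, posto $w(t) := |u_x(t)-v_y(t)|$, ottengo (per $t \geq t_0$)
\[
w(t)
\ \leq \
|x-y| + \int_{t_0}^t |f(s,u_x(s))-g(s,v_y(s))| \ ds
\ .
\]

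Il passaggio cruciale \e stimare l'integrando sommando e sottraendo $f(s,v_y(s))$, cos\'i da separare il contributo della lipschitzianit\'a da quello della distanza tra $f$ e $g$:
\[
|f(s,u_x(s))-g(s,v_y(s))|
\ \leq \
|f(s,u_x(s))-f(s,v_y(s))| + |f(s,v_y(s))-g(s,v_y(s))|
\ .
\]
Al primo addendo applico la lipschitzianit\'a di $f$ con costante $L$, ottenendo la maggiorazione con $L|u_x(s)-v_y(s)| = L w(s)$; il secondo addendo \e maggiorato uniformemente in $s$ da $\| f-g \|_\infty = M$. Ne discende la diseguaglianza integrale di tipo Gronwall
\[
w(t)
\ \leq \
|x-y| + \int_{t_0}^t \left( L w(s) + M \right) ds
\ \ , \ \
t \geq t_0
\ .
\]

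A questo punto traslo la variabile ponendo $\wt w(\tau) := w(t_0+\tau)$, in modo da ricondurmi all'ipotesi $0 \in I$ richiesta dal Corollario precedente, e applico quest'ultimo con $\alpha = |x-y|$, $\beta = L$ e $\gamma = M$. Ci\'o fornisce $\wt w(\tau) \leq |x-y| e^{L\tau} + M L^{-1}(e^{L\tau}-1)$, e tornando alla variabile originale $\tau = t-t_0$ ricavo esattamente la stima (\ref{eq_DC05b}).

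Il punto pi\'u delicato sar\'a di natura tecnica piuttosto che concettuale: la continuit\'a di $w$, necessaria per applicare il Corollario, segue dal fatto che $u_x,v_y \in C^1$, mentre l'orientamento dell'integrazione richiede attenzione. La formula (\ref{eq_DC05b}), con $e^{L(t-t_0)}$, \e infatti naturale per $t \geq t_0$; per $t < t_0$ un argomento analogo, con l'opportuna inversione dell'orientamento nell'intervallo di integrazione, conduce a una stima con $|t-t_0|$ al posto di $t-t_0$, cosicch\'e la disuguaglianza resta valida su tutto $I_x \cap J_y$.
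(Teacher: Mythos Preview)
Your proof is correct and follows essentially the same approach as the paper: integral formulation of the solutions, the same splitting $|f(s,u_x(s))-g(s,v_y(s))| \leq L w(s) + M$ via adding and subtracting $f(s,v_y(s))$, and application of the Corollary to Gronwall with $\alpha=|x-y|$, $\beta=L$, $\gamma=M$. The only cosmetic difference is that the paper assumes $t_0=0$ at the outset while you translate at the end; your remark on the case $t<t_0$ is actually a bit more careful than the paper's treatment.
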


\begin{proof}[Dimostrazione]
Possiamo supporre senza ledere la generalit\'a che $t_0 = 0$. Integrando e sottraendo le (\ref{eq_DC05a}) abbiamo
\[
|u_x(t)-v_y(t)|
\ \leq \
|x-y| + \int_0^t | f(s,u_x(s)) - g(s,v_y(s)) | \ ds
\ ,
\]
e stimando le funzioni integrande troviamo (sommando e sottraendo $f(s,v_y(s))$)
\[
| f(s,u_x(s)) - g(s,v_y(s)) | 
\ \leq \
L | u_x(s) - v_y(s) |+ M
\ ,
\]
da cui
\[
|u_x(t)-v_y(t)|
\ \leq \
|x-y| + \int_0^t ( L | u_x(s) - v_y(s) |+ M ) \ ds
\ .
\]
La tesi segue dunque applicando il Corollario precedente.
\end{proof}

Poniamo ora $f=g$ e consideriamo un disco $\Delta \subset \bR^n$ tale che $\{ t_0 \} \times \Delta \subset A$. Supposto che $\cap_{x \in \Delta}I_x$ contenga un intervallo $I$ non banale e di lunghezza $|I|$ finita, in conseguenza del teorema precedente troviamo, per ogni $x,y \in \Delta$, $t \in I$,
\[
| u_x(t) - u_y(t) |
\ \leq \
|x-y| e^{L|t-t_0|}
\ \leq \
|x-y| e^{L|I|}
\ ,
\]
ovvero la dipendenza continua nel senso della definizione data all'inizio della sezione. 
Osserviamo che l'ipotesi di lipschitzianit\'a per $f$ pu\'o essere rilassata ad una lipschitzianit\'a locale se ci si restringe a domini compatti $K \subset A$ (si veda \cite{Giu2}); tuttavia, occorre fare attenzione al fatto che se si ottengono costanti di Lipschitz $L_K$ che tendono ad infinito al crescere di $K$, la stima precedente diventa inutile e non si pu\'o parlare di dipendenza continua in $I$.

\subsection{Il teorema di Peano.}

In effetti l'esistenza di una soluzione locale del problema di Cauchy \e dimostrabile con la sola ipotesi di continuit\'a per $f$, attraverso il {\em teorema di Peano}. Il prezzo da pagare per la maggiore generalit\'a di questo risultato \e la perdita dell'unicit\'a della soluzione. Dal punto di vista del metodo della dimostrazione, segnaliamo che questa si basa sul teorema di Ascoli-Arzel\'a (Teo.\ref{thm_AA}).

\begin{thm}[Peano]
\label{thm_P}
Sia dato il problema (\ref{eq_PDO2}) con $A := [ t_0 - r , t_0 + r ] \times \bR^n$ per qualche $r > 0$, $n \in \bN$. Se $f \in C(A,\bR^n)$ \e continua e limitata allora esiste una soluzione del problema (\ref{eq_PDO2}) definita sull'intervallo $[ t_0 , t_0 + r ]$.
\end{thm}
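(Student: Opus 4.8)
The plan is to adapt the classical argument of Tonelli: construct a sequence of approximate solutions by means of a delayed (retarded) integral, show that this sequence is equilimitata and equicontinua so that Ascoli-Arzelà yields a uniformly convergent subsequence, and finally pass to the limit in the integral formulation to recover a genuine solution. Since $f$ is bounded on $A$ we may set $M := \sup_A |f| < +\infty$, and it is essential that the spatial domain is all of $\bR^n$: no approximant can ever leave the region where $f$ is controlled, which is precisely what lets us work on the full interval $[t_0, t_0 + r]$ rather than on a shrunken subinterval.

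First I would define, for each $m \in \bN$, a function $u_m$ on $[t_0, t_0 + r]$ by setting $u_m(t) := u_0$ for $t \in [t_0, t_0 + r/m]$ and
\[
u_m(t) := u_0 + \int_{t_0}^{t - r/m} f(s, u_m(s)) \, ds , \qquad t \in [t_0 + r/m, t_0 + r] .
\]
This is well defined by induction over the subintervals $[t_0 + kr/m, t_0 + (k+1)r/m]$, because on each such piece the integrand involves only values of $u_m$ already constructed on the preceding interval; moreover the pieces match at the nodes, so $u_m$ is continuous. A direct estimate gives $|u_m(t) - u_0| \leq M(t - t_0) \leq Mr$ for every $m$ and every $t$, so $\{ u_m \}$ is equilimitata; and for $t < t'$ one obtains $|u_m(t') - u_m(t)| \leq M |t' - t|$, so the family is uniformly Lipschitz and in particular equicontinua, with a modulus independent of $m$.

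Next I would invoke Ascoli-Arzelà in the form of Corollario \ref{cor_AA} (with the compact metric space $[t_0, t_0 + r]$ and values in $\bR^n$): the equilimitata, equicontinua sequence $\{ u_m \}$ admits a subsequence $\{ u_{m_k} \}$ converging uniformly to some $u \in C([t_0, t_0 + r], \bR^n)$. Since every $u_m$ takes values in the fixed compact ball $\ovl{\Delta}(u_0, Mr)$, the restriction of $f$ to $[t_0, t_0 + r] \times \ovl{\Delta}(u_0, Mr)$ is uniformly continua by Heine-Cantor, so uniform convergence $u_{m_k} \to u$ forces $f(s, u_{m_k}(s)) \to f(s, u(s))$ uniformly in $s$.

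Finally I would pass to the limit in the defining identity. Writing $u_{m_k}(t) = u_0 + \int_{t_0}^{t - r/m_k} f(s, u_{m_k}(s)) \, ds$, the left-hand side tends to $u(t)$, while on the right the uniform convergence of the integrands together with the bound $|f| \leq M$ (which controls the vanishing sliver of integration of length $r/m_k$) yields
\[
u(t) = u_0 + \int_{t_0}^t f(s, u(s)) \, ds , \qquad t \in [t_0, t_0 + r] .
\]
This integral equation shows $u \in C^1$ with $u' = f(t,u)$ and $u(t_0) = u_0$, i.e.\ $u$ solves (\ref{eq_PDO2}). The step I expect to be the genuine obstacle is exactly this limit passage: equilimitatezza and equicontinuità are routine consequences of the $M$-bound, whereas converting uniform convergence of $u_{m_k}$ into convergence of the composite integrands $f(\cdot, u_{m_k}(\cdot))$, while simultaneously handling the shrinking delay $r/m_k$, is where the uniform continuity of $f$ on the relevant compact set must be used with care. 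An alternative would be to replace the Tonelli approximants by Euler polygonal approximations on a uniform partition of $[t_0, t_0 + r]$; the compactness step is identical and only the bookkeeping of the limit passage changes.
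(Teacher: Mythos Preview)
Your proof is correct and follows the Tonelli (delayed-integral) scheme rather than the Euler polygon scheme the paper uses; amusingly, you mention the paper's approach as an alternative in your last sentence. The paper partitions $[t_0,t_0+r]$ uniformly, builds piecewise-linear Euler approximants $y_m$, rewrites them as $y_m(t)=u_0+\int_{t_0}^t \phi_m$ for a piecewise-constant $\phi_m$, extracts a subsequence via Ascoli--Arzel\'a, and then argues that $\phi_m(t)=f(s_{t,m},u_{t,m})\to f(t,y(t))$ uniformly using Heine--Cantor on $[t_0,t_0+r]\times\ovl{\Delta(0,\|f\|_\infty)}$. Your Tonelli approximants already satisfy an integral identity (with a shifted upper limit), so the limit passage is a touch cleaner: you avoid introducing the auxiliary step functions $\phi_m$ and the associated uniform-convergence bookkeeping for $(s_{t,m},u_{t,m})\to(t,y(t))$. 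Conversely, the Euler construction is closer to a numerical algorithm and makes explicit the link with the Peano--Picard iteration mentioned earlier in the chapter. The analytical core---equiboundedness and a common Lipschitz constant $\|f\|_\infty\sqrt{n}$ (or simply $M$ in your one-dimensional-looking estimate), Ascoli--Arzel\'a, and Heine--Cantor on the image compact---is identical in both proofs.
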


\begin{proof}[Dimostrazione]
L'idea \e quella di applicare il teorema di Ascoli-Arzel\'a ad un'opportuna successione di tipo Peano-Picard. A tale scopo, per ogni $m \in \bN$ consideriamo la partizione 
$P_m := \left\{ t_k^{(m)} := t_0 + kr/m  \right\}$ 
di $[t_0 , t_0 + r]$, e definiamo
\[
y_m (t) := u_k^{(m)} + f ( t_k^{(m)} , u_k^{(m)} ) (t-t_k^{(m)}) 
\ \ , \ \ 
t \in [ t_k^{(m)} , t_{k+1}^{(m)} )
\ , \
k = 0,1, \ldots , m
\ ,
\]
dove i coefficienti $u_k^{(m)}$, $k \in \bN$, sono definiti per iterazione,
\[
u_{k+1}^{(m)} := u_k^{(m)} + f (t_k^{(m)} , u_k^{(m)} ) \frac{r}{m}
\ \ , \ \
k = 1,2, \ldots , m-1 
\ .
\]
%
%in maniera tale che 
%%
%\[
%u_{k+1}^{(m)} = 
%\lim_{t \to [t_k^{(m)}]^- } y_m (t) |_{ \ [t_0 , t_k^{(m)})} \ .
%\]
%%
Le funzioni $y_m$ cos\'i costruite sono lineari a tratti e derivabili in $( t_0 , t_0 + r ) - P_m$, con derivata $y'_m (t) = f ( t_k^{(m)} , u_k^{(m)} )$,
$t \in ( t_k^{(m)} , t_{k+1}^{(m)} )$. Cosicch\'e introducendo le funzioni costanti a tratti
\[
\phi_m : [ t_0 , t_0 + r ] \to \bR^n
\ , \
\phi_m(t) := f ( t_k^{(m)} , u_k^{(m)} )
\ , \ 
t \in [ t_k^{(m)} , t_{k+1}^{(m)} )
\ ,
\]
troviamo
\begin{equation}
\label{eq_P01}
\| \phi_m \|_\infty \leq \| f \|_\infty
\ \ , \ \
\phi_m = y'_m  \ {\mathrm{in}} \  [ t_0 , t_0 + r ] - P_m
\ ,
\end{equation}
e, applicando il teorema fondamentale del calcolo,
\begin{equation}
\label{eq_P02}
y_m(t) = 
u_0 + \int_{t_0}^t \phi_m (s) \ ds 
\ \ , \ \
t \in [t_0,t_0+r]
\ .
\end{equation}
Ora, \e un fatto generale che {\em una funzione regolare a tratti $F : [t_0,t_0+r] \to \bR^n$ con derivata limitata ammette costante di Lipschitz} $\| F' \|_\infty \sqrt{n}$. La disuguaglianza in (\ref{eq_P01}) assicura che ci\'o \e vero per le nostre $y_m$, che si trovano cos\'i ad avere la stessa costante di Lipschitz $\| f \|_\infty \sqrt{n}$. 
In tal modo, abbiamo verificato che $\left\{ y_m \right\}$ \e una successione equicontinua; poich\'e questa \e anche equilimitata (infatti (\ref{eq_P02}) implica $\| y_m \| \leq \| u_0 \| + r \| f \|_\infty$), concludiamo per Ascoli-Arzel\'a che esiste una sottosuccessione, {\em che denotiamo per brevit\'a sempre con $\{ y_m \}$}, convergente ad un limite $y \in C([t_0,t_0+r],\bR^n)$.

Vogliamo ora mostrare che $y$ \e effettivamente una soluzione del nostro problema di Cauchy.
A tale scopo osserviamo che, preso $t \in [t_0,t_0+r]$, una volta scelta la partizione $P_m$, $m \in \bN$, esso apparterr\'a ad uno, ed uno solo, intervallo $[ t_k^{(m)} , t_{k+1}^{(m)} )$, in maniera tale che
\begin{equation}
\label{eq_peano0}
|t-t_k^{(m)}| \leq rm^{-1}
\ ;
\end{equation}
definiamo allora $s_{t,m} := t_k^{(m)}$ ed $u_{t,m} := u_k^{(m)} = y_m(s_{t,m})$. Usando (\ref{eq_peano0}) concludiamo che $\lim_m s_{t,m} = t$, e che la convergenza di tali successioni \e uniforme al variare di $t$ in $[t_0,t_0+r]$ (infatti il termine $rm^{-1}$ in (\ref{eq_peano0}) non dipende da $t$!).
Inoltre, abbiamo la stima
\[
|u_{t,m}-y(t)| 
\ = \
| y_m(s_{t,m})-y(t) | 
\ \leq \ 
| y_m(s_{t,m})-y_m(t) | + | y_m(t)-y(t) | 
\ .
\]
Essendo $\{ y_m \}$ equicontinua abbiamo che, scelto $\eps > 0$, esiste $\delta > 0$ tale che $| y_m(t') - y_m(t'') | < \eps$ per $|t'-t''| < \delta$, uniformemente in $m \in \bN$; d'altro canto esiste certamente un $m_0 \in \bN$ tale che $|s_{t,m}-t | \leq rm^{-1} < \delta$ per ogni $m > m_0$, per cui
$\lim_m | y_m(s_{t,m})-y_m(t) | = 0$ uniformemente in $t$. D'altra parte $\lim_m \| y_m - y \|_\infty = 0$, cosicch\'e
\[
|u_{t,m}-y(t)| 
\ \stackrel{m}{\lra} \
0
\ ,
\]
ed \e importante osservare che la convergenza di $\{ u_{t,m} \}_m$ \e uniforme al variare di $t$ in $[t_0,t_0+r]$. Dunque abbiamo
\begin{equation}
\label{eq_peano1}
( s_{t,m} , u_{t,m} ) 
\ \stackrel{m}{\lra} \
( t , y(t) )
\ ,
\end{equation}
uniformemente in $t$ grazie alle considerazioni precedenti.
Ora, poich\'e $| u_{t,m} | \leq \| y_m \|_\infty \leq \| f \|_\infty$ ogni successione $( s_{t,m} , u_{t,m} )$ \e contenuta nel compatto 
$K := [t_0,t_0+r] \times \ovl{\Delta(0,\| f \|_\infty)}$, 
ed $f$ \e uniformemente continua in $K$ (per Heine-Cantor). Per cui, preso $t \in [t_0,t_0+r]$ troviamo, usando (\ref{eq_peano1}),
\[
| \phi_m(t)  - f(t,y(t)) | 
\ = \
| f(s_{t,m},u_{t,m}) - f(t,y(t)) | 
\ \stackrel{m}{\lra} \
0
\ ,
\]
uniformemente al variare di $t$ in $[t_0 , t_0 + r]$; dunque possiamo passare al limite sotto il segno di integrale in (\ref{eq_P02}) e concludere
\[
y(t) 
\ =  \
u_0 + \int_{t_0}^t f(s,y(s)) \ ds 
\ \ , \ \
t \in [t_0,t_0+r]
\ .
\]
Infine, lo stesso argomento usato nella dimostrazione del Teorema di Cauchy permette di concludere che $y \in C^1([t_0,t_0+r],\bR^n)$, e che essa \e soluzione cercata.
\end{proof}

La dimostrazione precedente in teoria potrebbe essere facilmente tradotta in un algoritmo, visto che la soluzione $y$ viene trovata per mezzo di una successione costruita per iterazione; si ha per\'o un inconveniente, derivante dal fatto che non abbiamo una costruzione esplicita della sottosuccessione convergente $\{ y_m \}$, la cui esistenza viene dedotta con un argomento di compattezza (Ascoli-Arzel\'a). Da qui l'utilit\'a di algoritmi, come quello di Runge-Kutta, che permettono di {\em costruire} (sempre per iterazione) successioni convergenti alla soluzione cercata.

La condizione di limitatezza per $f$ pu\'o essere rimossa, se ci restringiamo a cercare soluzioni in piccolo:
\begin{cor}
Sia $A \subset \bR^{n+1}$ aperto, $(t_0,u_0) \in A$, $f \in C(A,\bR^n)$. Allora il problema di Cauchy (\ref{eq_PDO2}) ammette una soluzione in piccolo $u$ definita su $[t_0,t_0+\tau]$, $\tau \leq r$.
\end{cor}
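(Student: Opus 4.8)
The plan is to reduce to the hypotheses of the Peano theorem (Teo.\ref{thm_P}) by first trapping the problem inside a compact cylinder and then extending $f$ to a globally bounded function. Since $A$ is open and $(t_0,u_0)\in A$, I would first choose $r,r'>0$ small enough that the compact cylinder $C:=[t_0-r,t_0+r]\times\ovl{\Delta(u_0,r')}$ is contained in $A$. On $C$ the function $f$ is continuous, hence bounded by il teorema di Weierstrass, and I set $M:=\sup_C|f|<+\infty$ (we may assume $M>0$, otherwise $f$ vanishes on $C$ and the constant $u_0$ is already a solution).

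The key device is to replace $f$ by a continuous function defined on the whole striscia $[t_0-r,t_0+r]\times\bR^n$ which agrees with $f$ on $C$ and stays bounded by $M$. To this end I would use the radial retraction $\pi:\bR^n\to\ovl{\Delta(u_0,r')}$, equal to the identity on the ball and given by $\pi(x):=u_0+r'(x-u_0)/|x-u_0|$ for $|x-u_0|>r'$; this $\pi$ is continuous, so the composition $\wt f(t,x):=f(t,\pi(x))$ is continuous on $[t_0-r,t_0+r]\times\bR^n$, satisfies $\wt f=f$ on $C$, and obeys $|\wt f|\leq M$ everywhere, since $\pi$ takes values in $\ovl{\Delta(u_0,r')}$. (One could instead invoke il teorema di Tietze componente per componente, but the retraction preserves the bound $M$ exactly and is more transparent.)

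With $\wt f$ in hand, il teorema di Peano applies verbatim on $[t_0-r,t_0+r]\times\bR^n$ and produces $y\in C^1([t_0,t_0+r],\bR^n)$ with $y'=\wt f(t,y)$ and $y(t_0)=u_0$. It remains to check that on a possibly smaller interval this $y$ solves the original equation. Since $|\wt f|\leq M$ globally, integrating gives $|y(t)-u_0|\leq M(t-t_0)$ for all $t\in[t_0,t_0+r]$. Hence, setting $\tau:=\min\{r,\,r'/M\}$, for every $t\in[t_0,t_0+\tau]$ one has $|y(t)-u_0|\leq M\tau\leq r'$, so $y(t)\in\ovl{\Delta(u_0,r')}$ and therefore $\pi(y(t))=y(t)$, which yields $\wt f(t,y(t))=f(t,y(t))$. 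Consequently $y$ restricted to $[t_0,t_0+\tau]$ satisfies $y'=f(t,y)$ with $y(t_0)=u_0$, i.e.\ it is the desired soluzione in piccolo of (\ref{eq_PDO2}), with $\tau\leq r$ as required.

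I expect the main obstacle to be arranging the extension so that Peano's solution genuinely returns a solution of the original problem: the extended $\wt f$ must remain bounded (so that Teo.\ref{thm_P} applies) while the a priori estimate $|y(t)-u_0|\leq M(t-t_0)$ must force the trajectory back into the region $C$ where $\wt f$ and $f$ coincide. The choice $\tau=\min\{r,\,r'/M\}$ is exactly what makes these two requirements compatible, and it is the same control on the escape already used in la dimostrazione del Teorema di Cauchy (Teo.\ref{thm_Cauchy_PDO}).
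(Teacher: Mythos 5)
La tua dimostrazione segue essenzialmente la stessa strategia della dimostrazione del testo: estendere $f$ ad una funzione continua e limitata $\wt f$ sulla striscia $[t_0-r,t_0+r]\times\bR^n$ che coincida con $f$ su un cilindro compatto contenuto in $A$, ed applicare il Teorema di Peano al problema esteso. Il tuo argomento \e anzi pi\'u completo di quello (molto sintetico) del testo, perch\'e espliciti sia la costruzione dell'estensione tramite la retrazione radiale (che preserva la maggiorazione $M$) sia il punto cruciale che la stima $|y(t)-u_0|\leq M(t-t_0)$ con $\tau=\min\{r,r'/M\}$ costringe la traiettoria a restare nella regione dove $\wt f=f$.
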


\begin{proof}[Dimostrazione]
E' sufficiente considerare una funzione continua e limitata $\tilde f$ che coincida con $f$ in un rettangolo chiuso della forma $[t_0 , t_0 + \tau] \times \ovl{\Delta (u_0,R)} \subseteq$ $A$, ed applicare il teorema precedente al problema di Cauchy associato.
\end{proof}

\begin{ex}{\it 
Il problema
\[
\left\{
\begin{array}{ll}
u' = 3 u^{2/3}
\\
u(0) = 0
\end{array}
\right.
\]
ammette le soluzioni $u_0 \equiv 0$ e $u (t) =$ $t^3$. Invitiamo il lettore a verificare che $f (s,v) = 3v^{2/3}$ \e continua ma non Lipschitz in un intorno di $0$, cosicch\'e non \e possibile applicare il teorema di Cauchy. Un'altra propriet\'a di facile verifica (che lasciamo per esercizio) \e che ogni soluzione del precedente problema \e $\geq u_0$, la quale \e quindi il minimo dell'insieme delle soluzioni.
}\end{ex}

\noindent \textbf{Cenni sul pennello di Peano.} Consideriamo il nostro problema di Cauchy (\ref{eq_PDO2}) con $f : A \to \bR^n$ soddisfacente le condizioni di Teo.\ref{thm_P} (ovvero $f$ continua e limitata, $A :=$ $[ t_0 - r , t_0 + r ] \times \bR^n \subset$ $\bR^{n+1}$ per qualche $r > 0$). Se $u$ \e una soluzione di questo problema, allora abbiamo una costante di Lipschitz $\| f \|_\infty \sqrt{n}$ indotta dalla diseguaglianza
\begin{equation}
\label{eq_u_L}
| u'(t) | \leq \|  f  \|_\infty 
\ \ , \ \ 
t \in [t_0-r,t_0+r]
\ ;
\end{equation}
dalla condizione di Lipschitz segue una condizione di limitatezza
\begin{equation}
\label{eq_u_lim}
| u(t) | - | u_0 |
\leq 
\|  f  \|_\infty \sqrt{n} r
\ .
\end{equation}
Osserviamo che n\'e (\ref{eq_u_L}) n\'e (\ref{eq_u_lim}) dipendono dalla particolare soluzione $u$. Ne consegue che {\em l'insieme delle soluzioni del problema (\ref{eq_PDO2}) \e equicontinuo ed equilimitato in $[t_0,t_0+r]$}.

\begin{lem} 
Sia $I \subset \bR$ un intervallo chiuso e limitato, ed $\mF \subset$ $C(I,\bR)$ un insieme equicontinuo, equilimitato, e totalmente ordinato rispetto all'ordinamento standard di $C(I,R)$. Posto $\ovl \phi : I \to \bR$, $\ovl \phi (t) :=$ $\sup_{\phi \in \mF } \phi (t)$, $t \in I$, risulta che per ogni $\eps > 0$ esiste $\phi_\eps \in \mF$ tale che $\| \ovl \phi - \phi_\eps \|_\infty < \eps$.
\end{lem}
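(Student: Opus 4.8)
The plan is to exploit three features of $\mF$ simultaneously: the equiboundedness, which makes $\ovl\phi$ finite; the equicontinuity, which transfers to $\ovl\phi$ and lets me compare values of different functions at nearby points; and, crucially, the total ordering, which lets me replace a finite family of functions by a single one still belonging to $\mF$. First I would record that $\ovl\phi$ is well defined: by equiboundedness $\sup_{\phi\in\mF}\|\phi\|_\infty =: C < \infty$, so for each $t$ the set $\mF_t$ is bounded above and $\ovl\phi(t)\le C$. Next I would show that $\ovl\phi$ inherits the modulus of continuity of the family. Fix $\eps>0$ and let $\delta>0$ be the equicontinuity parameter for tolerance $\eps/4$, so that $|\phi(t)-\phi(t')|<\eps/4$ for every $\phi\in\mF$ whenever $|t-t'|<\delta$. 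Passing to the supremum over $\phi$ in $\phi(t)\le\phi(t')+\eps/4\le\ovl\phi(t')+\eps/4$ gives $\ovl\phi(t)\le\ovl\phi(t')+\eps/4$, and by symmetry $|\ovl\phi(t)-\ovl\phi(t')|\le\eps/4$ for $|t-t'|<\delta$; in particular $\ovl\phi$ is continuous.

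The heart of the argument is a finite selection made possible by compactness. Since $I$ is compact I would cover it by finitely many points $t_1,\dots,t_N$ such that every $t\in I$ lies within $\delta$ of some $t_j$. For each $j$, the definition of the pointwise supremum yields $\phi_j\in\mF$ with $\phi_j(t_j)>\ovl\phi(t_j)-\eps/4$. Here the total ordering enters decisively: the finite subset $\{\phi_1,\dots,\phi_N\}$ has a maximum with respect to the standard order, and because the order is total this maximum is one of the $\phi_j$, hence an element $\phi_\eps\in\mF$ satisfying $\phi_\eps\ge\phi_j$ pointwise for every $j$. Without total ordering the pointwise maximum of the $\phi_j$ need not lie in $\mF$, and this is the only step where the hypothesis is genuinely needed.

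It remains to estimate the error. Since $\phi_\eps\in\mF$ we have $0\le\ovl\phi-\phi_\eps$ everywhere, so it suffices to bound the difference from above. Given $t\in I$, choose $t_j$ with $|t-t_j|<\delta$ and use the telescoping decomposition
\[
\ovl\phi(t)-\phi_\eps(t)
\le
[\ovl\phi(t)-\ovl\phi(t_j)]+[\ovl\phi(t_j)-\phi_j(t_j)]+[\phi_j(t_j)-\phi_j(t)] .
\]
I would bound the first bracket by the continuity of $\ovl\phi$, the second by the choice of $\phi_j$, and the third by the equicontinuity of $\phi_j$, each being $<\eps/4$; summing gives $\ovl\phi(t)-\phi_\eps(t)<3\eps/4$ uniformly in $t$, whence $\|\ovl\phi-\phi_\eps\|_\infty\le 3\eps/4<\eps$.

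I expect no real obstacle beyond bookkeeping the three tolerances so that their total stays strictly below $\eps$ (this is why I split $\eps$ into pieces of size $\eps/4$ rather than $\eps/3$). The conceptual point is that equicontinuity together with compactness reduces the desired uniform statement to finitely many pointwise ones, after which total ordering collapses the finite family $\{\phi_1,\dots,\phi_N\}$ into a single admissible function $\phi_\eps\in\mF$ that simultaneously nearly attains $\ovl\phi$ at all the sample points.
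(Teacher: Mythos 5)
Your proof is correct. Note that the paper itself omits the proof of this lemma (the printed ``dimostrazione'' is just ``Omessa, ma semplice''), so there is no argument in the source to compare yours against; what you have written is a complete and valid filling of that gap. All three hypotheses are used where they should be: equiboundedness to make $\ovl \phi$ finite, equicontinuity plus compactness of $I$ to reduce the uniform estimate to finitely many sample points $t_1,\dots,t_N$, and total ordering to replace the finite family $\{\phi_1,\dots,\phi_N\}$ by its maximum $\phi_\eps$, which lies in $\mF$. The one step worth making explicit when you write this up is that your three-term telescoping inequality silently discards the fourth term $\phi_j(t)-\phi_\eps(t)$, which is legitimate precisely because $\phi_\eps \ge \phi_j$ pointwise; since you established that inequality in the preceding paragraph, the argument is complete.
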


\begin{proof}[Dimostrazione]
Omessa, ma semplice.
\end{proof}

\begin{thm}[Peano]
L'insieme delle soluzioni del problema (\ref{eq_PDO2}) ammette un elemento massimale $\ovl u$ ed un elemento minimale $\unl u$. Per ogni punto $p :=$ $(t,y)$ appartenente alla regione $R$ compresa tra i grafici di $\ovl u$ e $\unl u$ esiste almeno una soluzione $u$ tale che $u(t) = y$.
\end{thm}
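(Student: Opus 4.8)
The plan is to work throughout with $\mS$, the set of solutions of (\ref{eq_PDO2}) on $[t_0,t_0+r]$, which is nonempty by Teo.\ref{thm_P} and, by the remarks preceding the statement, equicontinuous and equilimitato. First I would record that $\mS$ is \emph{compatto} in $C([t_0,t_0+r])$ for the uniform topology: precompactness is Teo.\ref{thm_AA1}, while closedness follows because a uniform limit $u$ of solutions $u_n\in\mS$ still satisfies $u(t)=u_0+\int_{t_0}^t f(s,u(s))\,ds$, using that $f$ is uniformly continuous on the compatto $K:=[t_0,t_0+r]\times\ovl{\Delta(u_0,M)}$ (with $M$ the common bound) so that $f(\cdot,u_n(\cdot))\to f(\cdot,u(\cdot))$ uniformly. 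Compactness is what guarantees that for each $t$ the supremum and the infimum of $\{u(t):u\in\mS\}$ are \emph{attained} by some solution.

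Next I would set $\ovl u(t):=\sup_{u\in\mS}u(t)$ and $\unl u(t):=\inf_{u\in\mS}u(t)$; these are continuous, since the modulus of equicontinuity of $\mS$ is inherited by the pointwise sup and inf. The core of the first assertion is that $\ovl u\in\mS$, which I would establish by computing one-sided derivatives. Fix $t$ and $h>0$: choosing $u_2\in\mS$ with $u_2(t)=\ovl u(t)$ gives $\ovl u(t+h)-\ovl u(t)\ge\int_t^{t+h}f(s,u_2(s))\,ds$, while choosing $u_h\in\mS$ with $u_h(t+h)=\ovl u(t+h)$ gives $\ovl u(t+h)-\ovl u(t)\le\int_t^{t+h}f(s,u_h(s))\,ds$. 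As $h\to 0^+$ both integrands tend to $f(t,\ovl u(t))$ — for $u_h$ this uses equicontinuity to force $u_h(s)\to\ovl u(t)$ uniformly on $[t,t+h]$, combined with uniform continuity of $f$ on $K$ — so the right derivative of $\ovl u$ is $f(t,\ovl u(t))$, and symmetrically for the left derivative. Hence $\ovl u\in C^1$ solves the ODE, and being a pointwise supremum it dominates every solution, i.e. it is the maximum; the same argument with $\inf$ produces $\unl u$. (One could instead feed an increasing sequence in $\mS$ into il Lemma precedente, but that route additionally requires manufacturing solutions lying above two given ones.)

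For the final assertion I would fix $p=(t^*,y^*)\in R$, so $\unl u(t^*)\le y^*\le\ovl u(t^*)$, and reduce everything to the \emph{sezione} $V(t^*):=\{u(t^*):u\in\mS\}$. Since $\unl u\le u\le\ovl u$ on $\mS$ and the endpoints are attained, $V(t^*)\subseteq[\unl u(t^*),\ovl u(t^*)]$ and contains both endpoints; what remains is that $V(t^*)$ is the \emph{whole} interval. Granting $y^*\in V(t^*)$, any $w\in\mS$ with $w(t^*)=y^*$ already passes through $(t_0,u_0)$ and through $p$, so it is the desired solution; if one wishes to build it by hand, one splices the forward solution issued from $p$ (Teo.\ref{thm_P} on $[t^*,t_0+r]$) onto a solution on $[t_0,t^*]$ realising the value $y^*$ at $t^*$.

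The hard part will be exactly this interval (no-gap) property of $V(t^*)$: the sandwich argument yields the two extreme solutions but no control of intermediate values, and the pointwise maximum of two solutions need not be a solution, so convexity of $V(t^*)$ is not a formal consequence of what precedes. I would obtain it from \emph{connessione} of the solution funnel $\mS$ (il teorema di Kneser): $V(t^*)$ is then the continuous image of a connected set, hence an interval, hence all of $[\unl u(t^*),\ovl u(t^*)]$. To keep the account self-contained I would prove this connectedness by approximating $f$ uniformly on $K$ by funzioni di Lipschitz $f_k$, for which uniqueness holds so that each approximating funnel is a single arc (trivially connected), and then pass to the limit exploiting the compactness of $\mS$ established above, checking — as in the closedness step — that a limit of approximate solutions is a genuine solution.
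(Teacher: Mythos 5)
Your construction of $\ovl u$ and $\unl u$ is correct and takes a genuinely different route from the text: where the notes order the solution set, show (via the preceding lemma and a passage to the limit in the Volterra integral) that every chain has an upper bound in $\mF$, and then invoke Zorn, you use compactness of $\mS$ in $C([t_0,t_0+r])$ and prove directly that the pointwise supremum is a solution by squeezing the one-sided difference quotients between $\int_t^{t+h}f(s,u_2(s))\,ds$ and $\int_t^{t+h}f(s,u_h(s))\,ds$. This buys something the Zorn argument does not give for free, namely that $\ovl u$ dominates \emph{every} solution (a maximal element of a partial order need not be a maximum), which is what the statement about the region $R$ actually requires. Incidentally, in the scalar case the pointwise maximum of two solutions \emph{is} a solution (at a crossing point both functions have the same value $c$, hence the same derivative $f(t,c)$), so the obstacle you raise against the chain route is not real; but your argument is self-contained and fine.

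The gap is in the last part. The text disposes of $p=(t^*,y^*)$ by solving the Cauchy problem \emph{all'indietro} from $p$: the backward solution $v$ either remains between $\unl u$ and $\ovl u$ on all of $[t_0,t^*]$, in which case $\unl u(t_0)\le v(t_0)\le \ovl u(t_0)$ forces $v(t_0)=u_0$, or it meets $\ovl u$ (or $\unl u$) at some last instant $\tau<t^*$, and then $\ovl u$ on $[t_0,\tau]$ spliced with $v$ on $[\tau,t^*]$ satisfies the Volterra equation and passes through both $(t_0,u_0)$ and $p$. Your alternative, Kneser's theorem obtained by approximating $f$ uniformly with Lipschitz $f_k$ and passing to the limit, does not work as sketched: for each $f_k$ the funnel section at $t^*$ is a \emph{single point}, and a limit of singletons can yield at most one point of $V(t^*)$, not the whole interval $[\unl u(t^*),\ovl u(t^*)]$. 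Genuine proofs of Kneser's theorem perturb the equation (or the data) over a connected parameter set precisely to manufacture the intermediate values; connectedness of each approximating section is not the issue, surjectivity onto the limit section is. Your fallback remark about splicing the forward solution from $p$ onto ``a solution on $[t_0,t^*]$ realising $y^*$ at $t^*$'' presupposes exactly the solution whose existence is in question. Replace this part with the backward-solution argument above.
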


\begin{proof}[Sketch della dimostrazione]
Applicando il lemma precedente, ed un passaggio al limite sotto il segno dell'integrale di Volterra, concludiamo che ogni catena contenuta nell'insieme $\mF$ delle soluzioni di (\ref{eq_PDO2}) ammette un elemento massimale appartenente ad $\mF$. Applicando Zorn, concludiamo che esistono $\ovl u$ ed $\unl u$ come nell'enunciato. Infine, l'affermazione inerente $p$ si dimostra risolvendo il problema di Cauchy "all'indietro" con dato iniziale $u_0 = y$.
\end{proof}

Osserviamo che {\em qualora si abbia anche unicit\'a della soluzione} (ad esempio, quando $f$ \e localmente Lipschitz nella variabile $u$), allora per ogni $p = (t,y) \in R$ {\em esiste ed \e unica} la soluzione $u$ tale che $u(t) = y$. Dunque $R$ \e "spazzata" dai grafici delle soluzioni di (\ref{eq_PDO2}), i quali non si intersecano. E' questa l'idea alla base del concetto di {\em foliazione}.

\subsection{Esercizi.}

\noindent \textbf{Esercizio \ref{sec_eq_diff}.1.} {\it Effettuare uno studio qualitativo della soluzione del problema di Cauchy 
\[
u' = \sqrt{|u|} + u^2
\ \ , \ \
u(0) = u_0
\ ,
\]
al variare della condizione iniziale $u_0 \in \bR$.
}

\

\noindent \textbf{Esercizio \ref{sec_eq_diff}.2.} {\it Sia $f : \bR \to \bR$ una funzione continua tale che
\begin{equation}
\label{eq.es.2.c}
x f(x) \geq 0 
\ , \
\forall x \in \bR
\ \ , \ \ 
\int_0^{+ \infty} \frac{1}{1+f(x)} \ dx  \ = \ + \infty
\ ,
\end{equation}
e si consideri il problema di Cauchy
\begin{equation}
\label{eq.es.2}
\left\{
\begin{array}{ll}
u' = f(t+|u|)
\\
u(0) = 0 \ .
\end{array}
\right.
\end{equation}
\textbf{(1)} Si mostri che (\ref{eq.es.2}) ammette almeno una soluzione $u$;
\textbf{(2)} si verifichi che si ha $0 = f(0) = u'(0)$;
\textbf{(3)} si mostri che $u'(t) \geq 0$, $u(t) \geq 0$, $\forall t \geq 0$;
\textbf{(4)} posto $v(t) := t + u(t)$, si verifichi che 
          \[
          v'(t) = 1+f(v(t)) \ \ , \ \ v(0)=0 \ \ , \ \ v(t) \geq 0 \ ,  
          \]
          per ogni $t \geq 0$ appartenente al dominio di $u$;
\textbf{(5)} usando il punto precedente, e (\ref{eq.es.2.c}), si verifichi che  
          $u$ non esplode in nessun $T \in \bR$.

\

\noindent (Suggerimenti: per (1) si usi il teorema di Peano; per (2) e (3) si usi la prima delle condizioni in (\ref{eq.es.2.c}), la quale implica che $f(0)=0$; per (4) si osservi che, grazie ai punti precedenti,
$f(t+|u(t)|) = f(t+u(t))$,
cosicch\'e 
$v'(t) = 1+f(v(t))$;
per (5) si osservi che, supponendo per assurdo che 
\[
\infty = \lim_{t \to T^-} u(t) = \lim_{t \to T^-} v(t)
\ ,
\]
integrando per sostituzione si trova la contraddizione
\[
T
\ = \
\int_0^T \frac{v'(t)}{1+f(v(t))} \ dt
\ = \
\int_0^\infty \frac{1}{1+f(v)} \ dv
\ \stackrel{(\ref{eq.es.2.c})}{=} \
\infty
\ .
\]
}

\

\noindent \textbf{Esercizio \ref{sec_eq_diff}.3.} {\it Discutere esistenza ed unicit\'a della soluzione dell'\textbf{equazione di Liouville}
\[
u'' + e^{\lambda u} = 0 \ \ , \ \  \lambda \in \bR \ ,
\]
al variare della condizione iniziale $u(0) = u_0 \in \bR$.
}

\

\noindent \textbf{Esercizio \ref{sec_eq_diff}.4.} {\it Sia $T > 0$ e $\phi \in C^2(\bR)$ una funzione $T$-periodica, strettamente positiva e tale che sia $\phi'$ che $\phi''$ si annullino solo due volte (ciascuna) in $[0,T)$. Si supponga per semplicit\'a che $\min \phi = \phi(0)$ e che $\max \phi = \phi (a)$ con $a \in (0,T)$. \textbf{(a)} Si tracci un grafico approssimativo di $\phi$. \textbf{(b)} Posto 
\[
N := \{ (x,y) \in \bR^2 : \phi(x) = \phi(y)  \} \ \ , \ \ N_0 := N \cap [0,T)^2 \ ,
\]
si provi che $N$ \e $[0,T)^2$-periodico in $\bR^2$. \textbf{(c)} Si provi che $N_0$ \e composto da due archi di classe $C^1$ che si intersecano ad angolo retto. \textbf{(d)} Si consideri il problema di Cauchy 
\begin{equation}
\label{eq_exdiff}
y' = \frac{\phi(x)}{\phi(y)} - 1  
\ \ , \ \
y(0) = y_0 \ .
\end{equation}
Si provi che per ogni $y_0$ esiste un'unica soluzione definita su tutto $\bR$. \textbf{(e)} Si provi che ogni soluzione \e limitata su $\bR$. 

%\textbf{(f)} Si provi che la soluzione con $y_0 = 0$ non \e periodica, 
%ma che esiste un $y_0$ tale che la corrispondente soluzione \e periodica. 

\

\noindent (Suggerimenti: per il punto (c) si usi il Teorema delle funzioni implicite, mentre per i punti (d)-(e) si metta in relazione il grafico di $N_0$ con lo studio qualitativo delle soluzioni di (\ref{eq_exdiff})).
}

\

\noindent \textbf{Esercizio \ref{sec_eq_diff}.5.} {\it Si mostri che il problema di Cauchy 
\[
u' = u^2 - e^{t^2} + 1
\ \ , \ \
u(0) = 0
\ ,
\]
ha soluzione massimale definita su tutto $\bR$. (Suggerimento: si riscontri che non si hanno esplosioni).
}

\

\noindent \textbf{Esercizio \ref{sec_eq_diff}.6.} {\it Si verifichi che la funzione di Lorenz (\ref{eq.Lorenz}) non \e lipschitziana nella variabile $u \in \bR^3$.}

\newpage
\section{Teoria della misura e dell'integrazione.}
\label{sec_MIS}

La teoria dell'integrazione si \e emancipata dalle propriet\'a di regolarit\'a (continuit\'a, continuit\'a a tratti, ...) grazie all'approccio di Lebesgue, attraverso il quale \e possibile integrare funzioni altrimenti intrattabili dal punto di vista dell'integrale di Riemann. Nelle sezioni seguenti tratteggeremo le propriet\'a elementari  degli spazi di misura con particolare cura, ovviamente, del caso della retta reale, e poi passeremo a trattare la teoria dell'integrale.

\subsection{Spazi misurabili.}
\label{sec_MIS1}

\noindent \textbf{$\sigma$-algebre e funzioni semplici.} Sia $X$ un insieme. Un'{\em algebra di Boole} su $X$ \e il dato di un sottoinsieme $\mR$ dell'insieme delle parti $2^X$, chiuso sotto le operazioni di unione e passaggio al complementare (e ci\'o implica che $\mR$ \e chiusa anche rispetto all'intersezione), e tale che $\emptyset , X \in \mR$. 
Una {\em $\sigma$-algebra} \e un'algebra di Boole $\mM$ chiusa rispetto ad unioni {\em numerabili}, il che implica che $\mM$ \e chiusa anche rispetto ad intersezioni numerabili. Dato un insieme $Y$ ed un sottoinsieme $N \subseteq 2^Y$, definiamo {\em la $\sigma$-algebra generata da $N$} come l'intersezione di tutte le $\sigma$-algebre che contengono $N$. Denotiamo tale $\sigma$-algebra con il simbolo $\sigma N$.

\begin{defn}
\label{def_mis_astr}
Siano $X,Y$ insiemi ed $\mM \subseteq 2^X$, $\mN \subseteq 2^Y$ $\sigma$-algebre. Un'applicazione $f : X \to Y$ si dice \textbf{misurabile} se $f^{-1}(B) \in \mM$ per ogni $B \in \mN$.
\end{defn}

Per ogni $A \in \mM$, introduciamo {\em la funzione caratteristica}
\[
\chi_A(x) := 
\left\{
\begin{array}{ll}
1 \ , \ x \in A
\\
0 \ , \ x \notin A \ ;
\end{array}
\right.
\]
osserviamo che
\[
\chi_{A \cap B} = \chi_A \chi_B
\ \ , \ \
\chi_{A \cup B} = \sup \{ \chi_A , \chi_B \}
\ \ , \ \
\chi_{ A^c } = 1 - \chi_A 
\ ,
\]
dove 
\[ A^c \ := \ X-A \ .\] 
Una {\em funzione semplice} \e una combinazione lineare finita a coefficienti in $\bR$ di funzioni caratteristiche:
\[
\varphi := \sum_i \lambda_i \chi_{A_i} 
\ \ , \ \
\lambda_i \in \bR
\ \ , \ \
A_i \in \mM 
\ .
\]
Osserviamo che la rappresentazione di $\varphi$ come combinazione lineare di funzioni caratteristiche non \e unica, ad esempio 
$\lambda \chi_A = \lambda \chi_{U} + \lambda \chi_V$ per ogni $\lambda \in \bR$ ed $A = U \cup V$ con $U \cap V = \emptyset$.
L'insieme delle funzioni semplici \e chiuso rispetto a moltiplicazioni scalari, prodotti e combinazioni lineari, dunque costituisce un'{\em algebra} che denotiamo con $S(X)$.

\begin{ex} \label{def_Bor}
{\it
Sia $X$ uno spazio topologico con topologia $\tau X \subseteq 2^X$. La $\sigma$-algebra dei \textbf{boreliani} \e la $\sigma$-algebra $\beta X := \sigma (\tau X)$, e per costruzione contiene sia gli aperti che i chiusi di $X$.
Se $X$ \e di Hausdorff ed a base numerabile allora ogni $x \in X$ ammette un sistema numerabile di intorni $\{ A_n \}$ tale che $\cap_nA_n = \{ x \}$, dunque $\{ x \} \in \beta X$; di conseguenza, ogni sottoinsieme numerabile di $X$ \e boreliano.
Una funzione $f : X \to \bR$ si dice \textbf{boreliana} se $f^{-1}(I) \in \beta X$ per ogni aperto $I \subset \bR$. Ad esempio, ogni funzione continua \e boreliana, ed ogni funzione caratteristica $\chi_A$, $A \in \tau X$, \e boreliana (ma, in genere, non continua: si prenda ad esempio $X = \bR$ con la topologia usuale e si verifichi che $\chi_I$, $I := (0,1)$, \e boreliana, oltre che, ovviamente, discontinua).
}
\end{ex}

\

\noindent \textbf{Misure.} E' conveniente introdurre ora l'insieme dei {\em reali estesi}, o {\em retta reale estesa}
\[
\wa \bR := \bR \cup \{ - \infty , \infty \} \ .
\]
Gran parte della usuale struttura algebrica dei reali pu\'o essere esportata alla retta reale estesa,
\[
\left\{
\begin{array}{ll}
a \pm \infty := \pm \infty
\ , \
b \cdot (\pm \infty) := \pm \infty
\ , \
c \cdot (\pm \infty) := \mp \infty
\ , \
a / \pm \infty := 0 \ ,
\\
\forall a \in \bR \ , \ b>0 \ , \ c<0
\ ,
\end{array}
\right.
\]
tranne le operazioni $\infty - \infty$, $\pm \infty / \pm \infty$, che vengono lasciate indeterminate. Invece, a differenza di quanto accade nella teoria dei limiti definiamo $0 \cdot \pm \infty := 0$.
Riguardo la relazione d'ordine, poniamo $- \infty < a < \infty$ per ogni $a \in \bR$.

\begin{defn}
\label{def_MIS}
Uno \textbf{spazio misurabile} (o \textbf{di misura}) \e il dato di una terna $( X , \mM , \mu )$, dove $X$ \e un insieme, $\mM \subseteq 2^X$ \e una $\sigma$-algebra, e
\[
\mu : \mM \to \wa \bR^+ := \bR^+ \cup \{ + \infty \}
\]
\e una funzione, detta \textbf{misura}, tale che: 
\textbf{(1)} $\mu \emptyset = 0$; 
\textbf{(2)} $\mu$ soddisfa la propriet\'a di \textbf{additivit\'a numerabile}
\begin{equation}
\label{eq_MIS01}
\mu \left( \dot{\cup}_n A_n \right) = \sum_n \mu A_n
\ \ , \ \
\forall A_n \in \mM \ , \ A_n \cap A_m = \emptyset \ , \ n \neq m
\ .
\end{equation}
\end{defn}

\begin{rem} \label{def_MIS02}
{\it
\textbf{(1)} Considerando successioni $\{ A_n \}$ tali che $A_n = \emptyset$, $\forall n > 2$, troviamo che $\mu$ \e \textbf{additiva}, ovvero 
\[
\mu ( A_1 \dot{\cup} A_2) = \mu A_1 + \mu A_2
\ \ , \ \
\forall A_1 , A_2 \in \mM
\ , \
A_1 \cap A_2 = \emptyset \ .
\]
\textbf{(2)} Presi $A , A' \in \mM$ con $A \subset A'$ abbiamo $A_0 := A' - A \in \mM$ e $\mu A' = \mu A + \mu A_0$; dunque $\mu$ \e \textbf{monot\'ona}, ovvero 
\[
\mu A \leq \mu A' \ \ , \ \ \forall A \subseteq A' \ .
\]
\textbf{(3)} Sia $(X,\mM,\mu)$ uno spazio misurabile ed $A \in \mM$. Definiamo $\mM_A := \{ E \cap A , E \in \mM \}$ e $\mu_A : \mM_A \to \wa \bR$, $\mu_A (E \cap A ) := \mu (E \cap A)$. Allora $( A , \mM_A , \mu_A )$ \e uno spazio misurabile, che chiamiamo la \textbf{restrizione} di $(X,\mM,\mu)$ ad $A$.
}
\end{rem}

\begin{lem}
\label{lem_caressa}
Sia $(X,\mM,\mu)$ uno spazio di misura. Se $\{ E_n \} \subseteq \mM$ \e una successione tale che $\mu E_1 < + \infty$ e $E_{n+1} \subseteq E_n$ per ogni $n \in \bN$, allora $\mu (\cap_n E_n) = \lim_n \mu E_n$.
\end{lem}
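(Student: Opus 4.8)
The plan is to reduce the statement to the countable additivity axiom (\ref{eq_MIS01}) by writing $E_1$ as a disjoint union of the ``target'' set $\cap_n E_n$ together with the successive shells $E_n - E_{n+1}$, and then to exploit the finiteness hypothesis $\mu E_1 < + \infty$ in a telescoping argument. First I would set $E := \cap_n E_n$ and observe that, since the sequence is decreasing, the sets $E_n - E_{n+1}$ are pairwise disjoint and
\[
E_1 \ = \ E \ \dot{\cup} \ \dot{\cup}_n (E_n - E_{n+1}) \ :
\]
indeed a point of $E_1$ either lies in every $E_n$, hence in $E$, or else fails to lie in some $E_{n+1}$, and then (using that the $E_n$ decrease) there is a well-defined largest index $n$ with $x \in E_n$, so that $x \in E_n - E_{n+1}$. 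Applying (\ref{eq_MIS01}) to this decomposition gives
\[
\mu E_1 \ = \ \mu E + \sum_n \mu (E_n - E_{n+1}) \ .
\]

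Next I would rewrite each summand as a difference. By monotonicity (Osservazione \ref{def_MIS02}(2)) we have $\mu E_{n+1} \leq \mu E_1 < + \infty$ for every $n$, so additivity (Osservazione \ref{def_MIS02}(1)) applied to the disjoint splitting $E_n = E_{n+1} \dot{\cup} (E_n - E_{n+1})$ yields $\mu (E_n - E_{n+1}) = \mu E_n - \mu E_{n+1}$, now a legitimate difference of finite quantities. The partial sums telescope,
\[
\sum_{n=1}^{N} \mu (E_n - E_{n+1}) \ = \ \mu E_1 - \mu E_{N+1} \ ,
\]
and since $\{ \mu E_n \}$ is decreasing and bounded below by $0$, the limit $\lim_n \mu E_n$ exists; letting $N \to \infty$ shows the series converges to $\mu E_1 - \lim_n \mu E_n$. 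Substituting this back gives $\mu E_1 = \mu E + \mu E_1 - \lim_n \mu E_n$.

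The crux of the argument---and the only place where the hypothesis is genuinely used---is the final cancellation: because $\mu E_1 < + \infty$ is finite, I may subtract it from both sides to conclude $\mu E = \lim_n \mu E_n$, as desired. I would flag the finiteness assumption, rather than any step of the disjointification, as the essential ingredient: it is precisely what fails in the standard example $E_n := [n, + \infty) \subset \bR$ under Lebesgue measure, where $\mu E_n = + \infty$ for all $n$ while $\cap_n E_n = \emptyset$, so that continuity from above genuinely requires some finite tail.
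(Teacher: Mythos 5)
Your proof is correct and follows essentially the same route as the paper's: the disjoint decomposition $E_1 = E \,\dot{\cup}\, \dot{\cup}_n (E_n - E_{n+1})$, countable additivity, the identity $\mu(E_n - E_{n+1}) = \mu E_n - \mu E_{n+1}$, and the telescoping cancellation using $\mu E_1 < +\infty$. Your additional care in justifying the disjointness, the finiteness of each $\mu E_{n+1}$, and the closing counterexample $E_n = [n,+\infty)$ are welcome refinements but do not change the argument.
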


\begin{proof}[Dimostrazione]
Posto $E := \cap_n E_n$ abbiamo 
$E_1 = E \dot{\cup} \dot{\bigcup}_n ( E_n - E_{n+1} )$
e quindi, per additivit\'a numerabile,
$\mu E_1 = \mu E + \sum_n \mu(E_n - E_{n+1})$.
Del resto $E_n = E_{n+1} \dot{\cup} (E_n - E_{n+1})$, per cui 
\[
\mu E_n - \mu E_{n+1} = \mu (E_n - E_{n+1}) \ ,
\]
e concludiamo che
\[
\mu E_1 = \mu E + \sum_n (\mu E_n - \mu E_{n+1}) = \mu E + \mu E_1 - \lim_n \mu E_n
\ .
\]
\end{proof}

Nei punti seguenti introduciamo alcune terminologie.
\begin{itemize}
\item Diciamo che uno spazio misurabile $( X , \mM , \mu )$ ha {\em misura finita} se $\mu X < \infty$; in particolare, diremo che $\mu$ \e una {\em misura di probabilit\'a} se $\mu X = 1$.
\item Uno spazio misurabile $(X,\mM,\mu)$ si dice {\em $\sigma$-finito} se esiste una successione $\{ A_n \}$ di insiemi di misura finita tali che $X = \cup_n A_n${\footnote{Osservare che, sfruttando le propriet\'a elementari delle $\sigma$-algebre, non \e restrittivo supporre che $A_n \cap A_m = \emptyset$, $n \neq m$.}}.
\item Uno spazio misurabile $(X,\mM,\mu)$ si dice {\em completo} se per ogni $A \in \mM$ con $\mu A = 0$ e $B \subseteq A$ risulta $B \in \mM$. Chiaramente in tal caso $\mu B = 0$. Osserviamo che, qualora $(X,\mM,\mu)$ non sia completo, \e sempre possibile definire la $\sigma$-algebra 
$\mM^* := \sigma ( \mM \cup \mM_0 )$, dove
$\mM_0 := \{ A \subset X : A \subseteq E \in \mM , \mu E = 0 \}$,
ed estendere $\mu$ ad $\mM^*$ ponendo $\mu A := 0$, $A \in \mM_0$ (vedi \cite[Prop.11.1.4]{Roy}).
\item Uno spazio misurabile $(X,\mM,\mu)$ si dice {\em localmente finito} se per ogni $x \in X$ esiste $V \in \mM$ tale che $x \in V$ e $0 < \mu V < + \infty$.
\item Sia $X$ uno spazio topologico. Una misura $\mu : \mM \to \wa \bR$ si dice {\em di Borel} se $\tau X \subset \mM$ (ed in tal caso $\mM$ contiene la $\sigma$-algebra dei boreliani).
\item Due misure $\mu , \nu : \mM \to \wa \bR^+$ sono {\em mutualmente singolari} se esistono $A,B \in \mM$ tali che $A \cup B = X$ e $\mu A = \nu B = 0$. Ed in tal caso, scriviamo $\mu \perp \nu$.
\end{itemize}

\begin{ex}[La misura di enumerazione]
\label{ex_misnum}
{\it
Consideriamo l'insieme $\bN$ dei naturali, la $\sigma$-algebra $2^\bN$ dei sottoinsiemi di $\bN$, e la funzione
\[
\mu : 2^\bN \to \wa \bR^+ 
\ : \
\mu A :=
\left\{
\begin{array}{ll}
|A| \ \ , \ \  A \ {finito \ ,}
\\
+ \infty \ \ , \ \ {altrimenti \ ,}
\end{array}
\right.
\]
dove $|A|$ \e la cardinalit\'a di $A \subseteq \bN$. Allora $\mu$ \e una misura localmente finita, ma non finita. L'unico sottoinsieme di $\bN$ di misura nulla \e l'insieme vuoto, per cui $\mu$ \e anche completa.
}
\end{ex}

\

\noindent \textbf{Misure con segno.} Sia $X$ un insieme ed $\mM$ una $\sigma$-algebra su $X$. Una {\em misura con segno} \e una funzione $\mu : \mM \to \wa \bR$ tale che:
\textbf{(1)} $\mu$ assume {\em solo uno} tra i valori $+ \infty , - \infty$;
\textbf{(2)} $\mu \emptyset = 0$;
\textbf{(3)} se $E = \dot{\cup}_n E_n$, con $\{ E_n \} \subseteq \mM$, allora $\mu E  = \sum_n \mu E_n$, e la convergenza della serie \e assoluta quando $\mu E \neq \pm \infty$.

Diciamo che $A \in \mM$ \e {\em positivo (negativo)} se $\mu A' \geq 0$ ($\leq 0$) per ogni $A' \subseteq A$, $A' \in \mM$.

\begin{lem}
Sia $(X,\mM,\mu)$ uno spazio di misura con segno. Allora:
(1) Se $E \subset \mM$ \e positivo allora ogni $E' \subset E$, $E' \in \mM$, \e positivo;
(2) $E := \cup_n E_n$ \e positivo per ogni successione $\{ E_n \}$ di insiemi positivi;
(3) Se $\mu E \in (0,\infty)$ allora esiste $A \subset E$ positivo con $\mu A > 0$.
\end{lem}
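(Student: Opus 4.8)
Il piano \e di trattare i tre punti separatamente, concentrando lo sforzo sul terzo. Il punto (1) \e immediato: se $E' \subseteq E$ \e misurabile ed $E'' \subseteq E'$ \e misurabile, allora $E'' \subseteq E$, dunque $\mu E'' \geq 0$ per positivit\'a di $E$; essendo $E''$ arbitrario, $E'$ \e positivo. Per il punto (2) la strategia \e di rendere disgiunti gli $E_n$: posto $F_1 := E_1$ ed $F_n := E_n - (E_1 \cup \cdots \cup E_{n-1})$, si ottengono insiemi misurabili a due a due disgiunti con $F_n \subseteq E_n$ ed $\cup_n F_n = E$, e per il punto (1) ogni $F_n$ \e positivo. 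Preso allora un qualsiasi $E' \subseteq E$ misurabile, lo decomponiamo come $E' = \dot{\cup}_n (E' \cap F_n)$; poich\'e $E' \cap F_n \subseteq F_n$ \e positivo si ha $\mu(E' \cap F_n) \geq 0$, e l'additivit\'a numerabile d\'a $\mu E' = \sum_n \mu(E' \cap F_n) \geq 0$, ossia $E$ \e positivo.

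Il punto (3) richiede pi\'u lavoro e si fonda su una costruzione di rimozione progressiva dei sottoinsiemi di misura negativa. Se $E$ \e gi\'a positivo poniamo $A := E$ ed abbiamo finito; altrimenti procediamo per induzione. Posto $D_1 := E$, al passo $k$ definiamo $n_k \in \bN$ come il minimo naturale per cui esista un sottoinsieme misurabile $E_k \subseteq D_k$ con $\mu E_k < -1/n_k$, fissiamo un tale $E_k$ e poniamo $D_{k+1} := D_k - E_k$; se al passo $k$ un siffatto naturale non esiste (ovvero $D_k$ \e positivo), arrestiamo il procedimento e poniamo $A := D_k$. In ogni caso si ha la decomposizione disgiunta $E = A \, \dot{\cup} \, (\dot{\cup}_k E_k)$ (unione finita o numerabile a seconda del caso), e poich\'e $\mu E$ \e finita l'additivit\'a numerabile d\'a $\mu E = \mu A + \sum_k \mu E_k$ con convergenza assoluta della serie; essendo ogni $\mu E_k < 0$ ricaviamo $\mu A \geq \mu E > 0$. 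Se inoltre il procedimento non si arresta, la stima $1/n_k < |\mu E_k|$ e la convergenza assoluta forzano $\sum_k 1/n_k < \infty$, donde $n_k \to \infty$.

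Resta da verificare che $A$ \e positivo, ed \e questo l'ostacolo principale. Se il procedimento si \e arrestato, ci\'o \e vero per costruzione ($A = D_k$ \e positivo); supponiamo quindi che esso prosegua indefinitamente e ragioniamo per assurdo, assumendo che esista $B \subseteq A$ misurabile con $\mu B < 0$. Scegliamo $m \in \bN$ tale che $\mu B < -1/m$. Poich\'e $A \subseteq D_k$ per ogni $k$, anche $B \subseteq D_k$, dunque al passo $k$ l'insieme $B$ attesta l'esistenza di un sottoinsieme di $D_k$ di misura $< -1/m$; per minimalit\'a di $n_k$ ne segue $n_k \leq m$ per ogni $k$, in contraddizione con $n_k \to \infty$. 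Pertanto $A$ \e positivo, il che conclude la dimostrazione.
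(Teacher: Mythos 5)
La tua dimostrazione \e corretta e segue essenzialmente la stessa strada del testo: disgiuntificazione degli $E_n$ per il punto (2) e, per il punto (3), la rimozione iterata di insiemi di misura $< -1/n_k$ con $n_k$ minimale, la convergenza assoluta della serie che forza $n_k \to \infty$, e la minimalit\'a di $n_k$ per concludere la positivit\'a di $A$. L'unica differenza \e di pura forma: tu chiudi per assurdo esibendo un $B$ con $\mu B < -1/m$, mentre il testo formula lo stesso argomento in modo diretto osservando che ogni sottoinsieme di $A$ ha misura $\geq -1/(n_k-1) > -\eps$ per $\eps$ arbitrario.
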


\begin{proof}[Dimostrazione]
(1) E' del tutto ovvia.
(2) Per ogni $A \subseteq E$, $A \in \mM$, poniamo 
\[
A_n 
\ := \
A \cap E_n \cap (\cap_{i=1}^{n-1} E_i^c)
\ .
\]
Chiaramente ogni $A_n$ \e misurabile e contenuto nel positivo $E_n$, per cui \e esso stesso positivo. Inoltre, essendo gli $A_n$ disgiunti troviamo
\[
\mu A = \sum_n \mu A_n \geq 0 \ ,
\]
per cui $E$ \e positivo.
(3) Se $E$ \e positivo allora non vi \e nulla da dimostrare, per cui assumiamo che esiste $E_1 \subset E$ tale che $\mu E_1 < 0$. Definiamo $n_1 \in \bN$ come il pi\'u piccolo intero tale che 
$\mu E_1 < -1/n_1$.
Procedendo ricorsivamente, se $E - \cup_i^k E_i$ non \e positivo definiamo $n_{k+1}$ come il pi\'u piccolo intero tale che esista $E_{k+1} \in \mM$ con
\[
E_{k+1} \subset E - \cup_{i=1}^k E_i
\ \ , \ \
\mu E_{k+1} < -1/n_{k+1}
\ .
\]
Osserviamo che gli $E_k$ sono mutualmente disgiunti e definiamo $A := E - \dot{\cup}_i E_i$; allora 
\[
E = A \dot{\cup} (\dot{\cup}_k E_k)
\ \Rightarrow \
\mu E = \mu A + \sum_k \mu E_k \in (0,\infty)
\ .
\]
La condizione precedente ci dice che $\sum_k 1/n_k$ converge, per cui $\lim_k n_k = \infty$. Vogliamo ora mostrare che $A$ \e positivo. A tale scopo prendiamo $\eps > 0$ ed osserviamo che esiste $k \in \bN$ tale che $1/(n_k-1) < \eps$; poich\'e per costruzione $A \subset E - \cup_i^k E_i$, troviamo che $A$ non pu\'o contenere insiemi di misura minore di $-1/(n_k-1) > - \eps$, e per arbitrariet\'a di $\eps$ concludiamo che $A$ non contiene insiemi con misura negativa.
\end{proof}

\begin{prop}[La decomposizione di Hahn] 
\label{prop_hahn}
Sia $( X,\mM,\mu )$ uno spazio di misura con segno. Allora esistono un insieme positivo $X^+$ ed un insieme negativo $X^-$ tali che $X^+ \cap X^- = \emptyset$ e $X^+ \cup X^- = X$. 
\end{prop}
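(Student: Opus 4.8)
The plan is to apply the preceding Lemma through an extremality argument, taking $X^+$ to be, in a suitable sense, the largest possible positive set. First I would fix a convention: since a signed measure assumes only one of the values $+\infty$, $-\infty$, I may suppose without loss of generality that $\mu$ never takes the value $+\infty$ (the opposite case being symmetric, obtained by interchanging the roles of $X^+$ and $X^-$). Under this hypothesis every positive set has measure in $[0,\infty)$.

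Next I would introduce
\[
\lambda := \sup \{ \mu A : A \in \mM , \ A \ \text{positivo} \}
\ .
\]
Since $\emptyset$ is positive with $\mu \emptyset = 0$, we have $\lambda \geq 0$. I would then choose a sequence of positive sets $\{ A_n \}$ with $\mu A_n \to \lambda$ and set $X^+ := \cup_n A_n$, which is positive by part (2) of the preceding Lemma.

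The key step is to verify that the supremum is attained, namely $\mu X^+ = \lambda$. For each $n$ the set $X^+ - A_n \subseteq X^+$ is measurable and contained in the positive set $X^+$, hence positive by part (1) of the Lemma; therefore $\mu ( X^+ - A_n ) \geq 0$ and, by additivity,
\[
\mu X^+ = \mu A_n + \mu ( X^+ - A_n ) \geq \mu A_n
\ .
\]
Letting $n \to \infty$ yields $\mu X^+ \geq \lambda$, while $\mu X^+ \leq \lambda$ by the very definition of $\lambda$; hence $\mu X^+ = \lambda$, and in particular $\lambda < \infty$ thanks to the standing convention.

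Finally I would put $X^- := X - X^+$ and prove it is negative by contradiction. If it were not, there would exist a measurable $E \subseteq X^-$ with $\mu E > 0$, so $\mu E \in (0,\infty)$; by part (3) of the Lemma there is then a positive set $A \subseteq E$ with $\mu A > 0$. As $A \subseteq X^-$ is disjoint from $X^+$, the set $X^+ \dot{\cup} A$ is positive by part (2), and
\[
\mu ( X^+ \dot{\cup} A ) = \lambda + \mu A > \lambda
\ ,
\]
contradicting the definition of $\lambda$. I expect this concluding contradiction, together with the careful bookkeeping of the $+\infty$/$-\infty$ convention, to be the only delicate points; the remainder is a direct invocation of the three parts of the Lemma.
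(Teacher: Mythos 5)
Your proof is correct and follows essentially the same route as the paper's: the same supremum $\lambda$ over positive sets, the same approximating sequence with $X^+ := \cup_n A_n$, the same additivity argument to show the supremum is attained, and the same contradiction via part (3) of the Lemma to show $X^- = X - X^+$ is negative. Your bookkeeping is in fact slightly more careful than the paper's (you justify explicitly that $X^+ - A_n$ is positive and that $\mu E < \infty$ before invoking part (3)), but the argument is the same.
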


\begin{proof}[Dimostrazione]
Possiamo supporre che $\mu$ non assuma il valore $+ \infty$ (altrimenti il ragionamento che segue si applica con ovvie modifiche). Sia 
$\lambda := \sup \{ \mu A : A \ {\mathrm{positivo}} \ \} < \infty$; 
allora esiste una successione $\{ A_n \}$ di insiemi positivi tali che $\mu A_n \stackrel{n}{\to} \lambda$, e definiamo 
\[
X^+ := \cup_n A_n
\ \ , \ \
X^- := X - X^+
\ .
\]
E' chiaro che $X^+ \cap X^- = \emptyset$ e $X^+ \cup X^- = X$, per cui rimane da verificare che $X^+$ \e positivo ed $X^-$ negativo. Che $X^+$ sia positivo segue dal Lemma precedente, punto (2), il che implica $\mu X^+ \leq \lambda$; ma del resto
\[
\mu X^+ = \mu A_n + \mu(A-A_n) \geq \mu A_n
\ , \forall n \in \bN 
\ \Rightarrow \
\mu X^+ \geq \lambda 
\ ,
\]
e quindi $\mu X^+ = \lambda$.
Passando a $X^-$, supponiamo per assurdo che esista $B \subseteq X^-$ con $\mu B > 0$. Per il Lemma precedente, punto (3), esiste $B'$ positivo con $B' \subseteq B \subseteq X^-$, e per costruzione $B' \cap X^+ = \emptyset$. Ora, $B' \dot{\cup} X^+$ \e positivo (ancora per il Lemma precedente), e quindi
\[
\lambda \geq \mu (B' \dot{\cup} X^+) = \mu B' + \mu X^+ = \mu B' + \lambda \ ,
\]
il che \e assurdo. Dunque $X^-$ \e negativo.
\end{proof}

Osserviamo che la decomposizione di Hahn non \e unica, in quanto se ne pu\'o perturbare la costruzione con insiemi di misura nulla. Definendo invece
\[
\mu^+ A :=   \mu (A \cap X^+)
\ \ , \ \
\mu^- A := - \mu (A \cap X^-)
\ \ , \ \
A \in \mM
\ ,
\]
otteniamo due misure mutualmente singolari ed univocamente definite. Abbiamo cos\'i dimostrato:

\begin{prop}[La decomposizione di Jordan]
Sia $( X,\mM,\mu )$ uno spazio di misura con segno. Allora esistono, e sono uniche, due misure $\mu^+$ e $\mu^-$ mutualmente singolari tali che $\mu = \mu^+ - \mu^-$.
\end{prop}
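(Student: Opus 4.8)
The plan is to read off both existence and the measures themselves from the Hahn decomposition $X = X^+ \dot{\cup} X^-$ provided by \propref{prop_hahn}, using the two set functions introduced immediately before the statement, $\mu^+ A := \mu(A \cap X^+)$ and $\mu^- A := -\mu(A \cap X^-)$, $A \in \mM$. First I would verify that these are genuine (non-negative) measures: countable additivity is inherited from that of $\mu$, since $A \mapsto A \cap X^\pm$ carries disjoint unions to disjoint unions, and non-negativity of $\mu^+$ (resp.\ $\mu^-$) is exactly the positivity of $X^+$ (resp.\ the negativity of $X^-$). Because $\mu$ takes at most one of the values $\pm\infty$, the difference $\mu^+ - \mu^-$ is everywhere defined, and for each $A \in \mM$ additivity gives $\mu A = \mu(A \cap X^+) + \mu(A \cap X^-) = \mu^+ A - \mu^- A$. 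Mutual singularity is witnessed by the pair $X^-, X^+$: one has $X^- \cup X^+ = X$, $\mu^+ X^- = \mu(X^- \cap X^+) = \mu \emptyset = 0$ and $\mu^- X^+ = -\mu(X^+ \cap X^-) = 0$, which is precisely the definition of $\mu^+ \perp \mu^-$. This disposes of existence.

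The substance lies in uniqueness. Suppose $\mu = \nu^+ - \nu^-$ with $\nu^+, \nu^-$ measures and $\nu^+ \perp \nu^-$; by definition there are $P, Q \in \mM$ with $P \cup Q = X$, $\nu^+ P = 0$ and $\nu^- Q = 0$. Since $Q^c \subseteq P$ I get $\nu^+ Q^c = 0$, so $\nu^+$ is concentrated on $Q$, while $\nu^- Q = 0$ says $\nu^-$ is concentrated on $Q^c$. I would then note that $(Q, Q^c)$ is itself a Hahn decomposition of $\mu$: for measurable $E \subseteq Q$ one has $\nu^- E = 0$, hence $\mu E = \nu^+ E \geq 0$, so $Q$ is positive, and symmetrically $Q^c$ is negative. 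In this way every mutually singular decomposition is pinned to a Hahn decomposition.

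The key step is to show that the recipe defining $\mu^\pm$ is insensitive to the choice of Hahn decomposition. Comparing $(X^+, X^-)$ with $(Q, Q^c)$, I would argue that the symmetric difference $X^+ \triangle Q$ is $\mu$-null: the set $X^+ \setminus Q = X^+ \cap Q^c$ is both positive (a subset of $X^+$) and negative (a subset of $Q^c$), so each of its measurable subsets has measure simultaneously $\geq 0$ and $\leq 0$, forcing it to be null; the same applies to $Q \setminus X^+ = Q \cap X^-$. By additivity of $\mu$ across these null pieces, $\mu(A \cap X^+) = \mu(A \cap Q)$ for every $A \in \mM$. Finally, using that $\nu^-$ vanishes on $Q$ and $\nu^+$ vanishes on $Q^c$, I compute $\mu^+ A = \mu(A \cap Q) = \nu^+(A \cap Q) - \nu^-(A \cap Q) = \nu^+(A \cap Q) = \nu^+ A$, and symmetrically $\mu^- A = \nu^- A$; this yields $\nu^\pm = \mu^\pm$. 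I expect the main obstacle to be exactly this last comparison — first extracting a Hahn decomposition from an abstract mutually singular pair, and then controlling the inherent non-uniqueness of Hahn decompositions (already flagged in the text) by confining it entirely to $\mu$-null sets.
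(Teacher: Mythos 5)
Your proof is correct and follows the same route as the paper: both read off $\mu^\pm$ from the Hahn decomposition $X = X^+ \dot{\cup} X^-$ via $\mu^+A := \mu(A \cap X^+)$, $\mu^-A := -\mu(A \cap X^-)$. The paper merely asserts that these measures are mutually singular and uniquely determined, so your contribution is to actually supply the uniqueness argument — extracting a Hahn decomposition from an arbitrary mutually singular pair and showing that two Hahn decompositions differ only by $\mu$-null sets — and that argument is complete and sound.
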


La {\em variazione totale di} $\mu$ si definisce come la misura (positiva)
\begin{equation}
\label{def_vartot}
|\mu| : \mM \to \wa \bR^+
\ \ , \ \
| \mu | A := \mu^+ A + \mu^- A
\ \ , \ \
A \in \mM
\ .
\end{equation}
Sia ora $X$ un insieme e $\beta \subseteq 2^X$ una $\sigma$-algebra (in effetti, abbiamo in mente il caso in cui $X$ \e uno spazio topologico e $\beta$ la $\sigma$-algebra dei boreliani). Definiamo lo spazio delle {\em misure con segno finite}
\begin{equation}
\label{def_l1}
\Lambda_\beta^1(X)
\ := \
\{ 
\mu : \mM \to \wa \bR
\ \ {\mathrm{misura \ con \ segno}} 
\ \ : \ \
\mM \supseteq \beta
\ \ {\mathrm{e}} \ \ 
|\mu|(X) < \infty
\}
\ .
\end{equation}
Prese $\mu , \mu' \in \Lambda_\beta^1(X)$, $\mu : \mM \to \wa \bR$, $\mu' : \mM' \to \wa \bR$, osserviamo che $\beta \subseteq \mM \cap \mM' \neq \emptyset$, per cui ha senso considerare, preso $\lambda \in \bR$, l'applicazione
\[
\mu + \lambda \mu' : \mM \cap \mM' \to \wa \bR
\ \ , \ \
\{ \mu + \lambda \mu' \} E := \mu E + \lambda \mu' E
\ .
\]
Una verifica immediata mostra che $\mu + \lambda \mu'$ appartiene in effetti a $\Lambda_\beta^1(X)$, il quale diventa cos\'i uno spazio vettoriale. Definendo
\begin{equation}
\label{def_vartot2}
\| \mu \| := | \mu | (X)
\ \ , \ \
\mu \in \Lambda_\beta^1(X)
\ ,
\end{equation}
otteniamo una norma nel senso di \S \ref{sec_afunct} (per i dettagli si veda l'Esercizio \ref{sec_MIS}.7), dunque $\Lambda_\beta^1(X)$ \e uno spazio normato.

\

\noindent \textbf{Cenni sulle misure complesse.} Sia $X$ un insieme ed $\mM$ una $\sigma$-algebra su $X$. Una {\em misura complessa} \e un'applicazione del tipo
\[
\mu = \mu_1 + i \mu_2 : \mM \to \bC \ ,
\]
dove $\mu_1 , \mu_2$ sono misure con segno {\em finite} definite su $\mM$. 
Diremo che $\mu$ \e completa, boreliana, di Radon, etc., se lo sono sia $\mu_1$ che $\mu_2$.
Analogamente al caso reale, fissata una $\sigma$-algebra $\beta \subseteq 2^X$ troviamo che l'insieme delle misure complesse con dominio $\supseteq \beta$ \e uno spazio vettoriale complesso, che denotiamo con $\Lambda_\beta^1(X,\bC)$.
E' possibile definire il valore assoluto della misura complessa $\mu$, nel modo che segue:
\begin{equation}
\label{eq.mod}
|\mu|E 
\ := \
\sup \{ \sum_n |\mu E_n| : \{ E_n \} \subseteq \mM , \dot{\cup}_n E_n = E \}
\ \ , \ \
\forall E \in \mM
\ .
\end{equation}
Non \e difficile verificare che $|\mu| : \mM \to \bR$ \e una misura finita nel senso usuale, cosicch\'e definendo
\[
\| \mu \| := |\mu|(X)
\ \ , \ \
\forall \mu \in \Lambda_\beta^1(X,\bC)
\ ,
\]
otteniamo una norma.

\

\noindent \textbf{La costruzione di Carath\'eodory.} 
Nelle righe che seguono riportiamo uno dei metodi pi\'u comuni per costruire misure, di fatto un'astrazione di alcuni dei passi necessari per la definizione della misura di Lebesgue sulla retta reale, come vedremo nel seguito (\S \ref{sec_lebR}).
Sia $X$ un insieme; una {\em misura esterna} \e un'applicazione
$\mu^* : 2^X \to \wa \bR^+$
tale che: 

\

\noindent (1) $\mu^*(\emptyset) = 0$; 

\noindent (2) $\mu^* A \leq \mu^* B$, $\forall A \subseteq B$; 

\noindent (3) $\mu^* (\cup_k A_k) \leq \sum_k \mu^* A_k$, $\forall \{ A_k \}_{k \in \bN} \subseteq 2^X$ ({\em subadditivit\'a}). 

\

\noindent Un insieme $E \subseteq X$ si dice {\em misurabile secondo Carath\'eodory} \, se
\begin{equation}
\label{eq_subadd00}
\mu^* A = \mu^*(E \cap A) + \mu^*(E^c \cap A)
\ \ , \ \
\forall A \in 2^X
\ .
\end{equation}
Denotiamo con $\mM_*$ la classe degli insiemi misurabili secondo Carath\'eodory e definiamo $\mu_* := \mu^*|_{\mM_*}$. Osserviamo che per dimostrare che un generico $E \in 2^X$ appartiene ad $\mM_*$ \e sufficiente verificare, per subadditivit\'a, che
\begin{equation}
\label{eq_subadd0}
\mu^* A \ \geq \ \mu^*(E \cap A) + \mu^*(E^c \cap A)
\ \ , \ \
\forall A \in 2^X
\ .
\end{equation}

\begin{lem}
\label{lem_car}
$\mM_*$ \e una $\sigma$-algebra, e $(X,\mM_*,\mu_*)$ \e uno spazio misurabile completo.
\end{lem}

\begin{proof}[Dimostrazione]
L'insieme vuoto appartiene chiaramente ad $\mM_*$; inoltre, il complementare di ogni elemento di $\mM_*$ appartiene ad $\mM_*$, per cui anche $X$ appartiene ad $\mM_*$.
Presa una successione $\{ E_n \} \subseteq \mM_*$, per ricorsivit\'a otteniamo, per ogni $A \in 2^X$,
\begin{equation}
\label{eq_subadd}
\begin{array}{ll}
\mu^* A & =
\mu^*(A \cap E_1) + \mu^*(A \cap E_1^c)
\\ & =
\mu^*(A \cap E_1) + \mu^*(A \cap E_1^c \cap E_2) + \mu^*(A \cap E_1^c \cap E_2^c)
\\ & =
\ldots
\\ & =
\mu^*(A \cap E_1) + 
\sum_{k=2}^n \mu^* \left( A \cap \bigcap_{j<k} E_j^c \cap E_k \right) +
\mu^* \left( A \cap \bigcap_{k=1}^n E_k^c \right)
\ .
\end{array}
\end{equation}
Ora, possiamo assumere che $E_n \subseteq (\cup_{k<n}E_k)^c$, infatti ci\'o non altera l'unione $n$-esima $E^{(n)} := \cup_k^nE_k$. Per cui risulta
\[
\bigcap_{k=1}^n E_k^c 
\ = \ 
E^{(n),c}
\ \ , \ \
A \cap \bigcap_{j<k} E_j^c \cap E_k 
\ = \ 
A \cap E_k \cap E^{(k-1),c} 
\ = \ 
A \cap E_k 
\ ;
\]
usando monoton\'ia e subadditivit\'a numerabile di $\mu^*$, concludiamo che 
\[
\begin{array}{ll}
\mu^*A & =  \
\mu^*(A \cap E_1) + 
\sum_{k=2}^n \mu^*(A \cap E_k \cap E^{(k-1),c}) + 
\mu^* ( A \cap E^{(n),c} )
\\ & = \
\sum_{k=1}^n \mu^*(A \cap E_k) + \mu^* ( A \cap E^{(n),c} )
\\ & \geq \
\mu^*(A \cap E^{(n)}) + \mu^* ( A \cap E^{(n),c} ) \ .
\end{array}
\]
Per cui (\ref{eq_subadd0}) \e verificata e $\mM_*$ \e una $\sigma$-algebra.
Dimostriamo che $\mu_*$ \e numerabilmente additiva: preso $E := \dot{\cup}_n E_n$, $\{ E_n \} \subset \mM_*$, sostituendo $A$ con $E$ in (\ref{eq_subadd}) otteniamo $\mu_* E = \sum_n \mu_* E_n$. 
Infine verifichiamo che $\mu_*$ \e completa. A tale scopo consideriamo $E \in \mM_*$
tale che $\mu_*E = 0$ e mostriamo che se $E' \subset E$ allora $E \in \mM_*$; preso
$A \in 2^X$ osserviamo che $E' \cap A \subset E \cap A$ e $E^c \cap A \subset {E'}^c \cap A$,
cosicch\'e, usando (\ref{eq_subadd00}), la monoton\'ia di $\mu^*$ ed il fatto che $\mu^*E = 0$,
\[
\mu^* (E' \cap A) \leq \mu^* (E \cap A) = 0
\ \ , \ \
\mu^*A = \mu^* (E^c \cap A) \leq \mu^* ({E'}^c \cap A) \leq \mu^*A
\ .
\]
Concludiamo che $\mu^* ({E'}^c \cap A) = \mu^*A$, per cui 
$\mu^* A = \mu^*(E' \cap A) + \mu^*( {E'}^c \cap A)$,
ovvero $E' \in \mM_*$.
\end{proof}

%Ora, se $(X,\mM,\mu)$ \e uno spazio misurabile, definendo 
%%
%\[
%\mu^*A 
%\ := \ 
%\inf \{ \ \sum_n \mu E_n \ : \ A \subseteq \cup_n E_n \ ,\ E_n \in \mM \ \}
%\ \ , \ \
%A \in 2^X
%\ ,
%\]
%%
%otteniamo una misura esterna tale che $\mM_*$ contiene $\mM$ e $\mu = \mu_*|_\mM$. Questi risultati sono noti come 
%{\em Teorema di estensione di Carath\'eodory} (vedi \cite[\S 12.2]{Roy}).

\

\noindent \textbf{Ulteriori strutture su spazi misurabili.} Nelle applicazioni capita spesso che un insieme $X$ sia equipaggiato di ulteriore struttura (di spazio topologico, vettoriale, ...); per cui, qualora si considerino delle misure definite su $X$ \e opportuno che presentino delle propriet\'a di compatibilit\'a rispetto a tale struttura. Nelle righe seguenti diamo una breve rassegna delle costruzioni pi\'u importanti in questo senso. 

\

\noindent {\em 1. Misure di Borel e regolarit\'a esterna.} Sia $(X,\mM,\mu)$ uno spazio misurabile di Borel. Diciamo che $\mu$ soddisfa la propriet\'a di {\em regolarit\'a esterna} se
\begin{equation}
\label{eq_MIS_RE}
\mu A = \inf \{ \mu U : A \subset U , U \in \tau X  \}
\ \ , \ \
A \in \mM
\ .
\end{equation}
Nella definizione precedente gli aperti giocano il ruolo di una famiglia di insiemi "aventi delle buone propriet\'a", la cui misura sia facilmente calcolabile (si pensi alla lunghezza degli intervalli, come vedremo in seguito), ed attraverso i quali sia possibile "approssimare" la misura di un arbitrario $A \in \mM$ nei termini dell'estremo inferiore (\ref{eq_MIS_RE}).

\begin{rem}\label{oss_cbor}{\it
Segue banalmente da (\ref{eq_MIS_RE}) che $\mu A = \mu \ovl A$ per ogni $A \in \mM$; infatti se un aperto $U$ contiene $A$, allora contiene anche chiusura di $A$.
}
\end{rem}

\noindent {\em 2. Misure di Radon e regolarit\'a interna.} Sia $(X,\mM,\mu)$ uno spazio misurabile di Borel con $X$ localmente compatto e di Hausdorff. Denotata con $\mK$ la famiglia dei compatti su $X$ osserviamo che, poich\'e $X$ \e di Hausdorff, ogni $C \in \mK$ \e anche chiuso (vedi \cite[Prop.10.6]{Cam} o \cite[1.6.5]{Ped}), per cui $\mK \subset \mM$.

\begin{defn}
Sia $X$ uno spazio localmente compatto di Hausdorff e $\mu : \mM \to \wa \bR^+$ una misura di Borel su $X$. Diciamo che $\mu$ \e una \textbf{misura di Radon} se valgono le seguenti propriet\'a:

\noindent
\textbf{(1)} $\mu C < + \infty$, $\forall C \in \mK$;
\textbf{(2)} $\mu E = \sup \{ \mu C : C \subset E , C \in \mK \}$, $\forall E \in \mM$
{\footnote{
Questa propriet\'a \e detta \textbf{regolarit\'a interna}.}}.
\end{defn}
Analogamente al caso delle misure di Borel abbiamo una "famiglia privilegiata" di insiemi, quella dei compatti, a partire dalla quale calcolare per approssimazione la misura di un generico $A \in \mM$. Osserviamo che richiediamo che un compatto abbia misura finita, cosicch\'e ogni misura di Radon \e localmente finita (infatti, ogni $x \in X$ possiede un intorno compatto).

\begin{ex} \label{ex_MIS_Dirac}
{\it
Sia $X$ uno spazio localmente compatto di Hausdorff equipaggiato con una $\sigma$-algebra $\mM \supseteq \tau X$. Per ogni $x \in X$, definiamo la \textbf{misura di Dirac}
\[
\mu_x A := 
\left\{
\begin{array}{ll}
1 \ \ , \ \ x \in A
\\
0 \ \ , \ \ x \notin A
\end{array}
\right.
\ \ , \ \
A \in \mM
\ .
\]
Allora $\mu_x$ \e una misura di Radon, e chiaramente anche una misura di probabilit\'a.
}
\end{ex}

Assumiamo ora che $X$ sia, in particolare, compatto. Diciamo che una misura {\em con segno $\mu$ su $X$ \e di Radon} se $| \mu |$ \e di Radon, e denotiamo con $R(X)$ l'insieme delle misure di Radon con segno su $X$. Per definizione $R(X) \subseteq \Lambda_\beta^1(X)$ (vedi (\ref{def_l1})), dove $\beta := \beta X$ \e la $\sigma$-algebra dei boreliani. Visto che $\Lambda_\beta^1(X)$ \e uno spazio vettoriale troviamo $\lambda \mu + \nu \in \Lambda_\beta^1(X)$ per ogni $\lambda \in \bR$, $\mu , \mu' \in R(X)$, e si verifica facilmente che
\[
| \lambda \mu + \mu' | E 
\ = \
\sup \{ | \lambda \mu + \mu' | C : C \subset E , C \in \mK \}
\ \ , \ \
E \in \mM \cap \mM'
\ ;
\]
dunque $\lambda \mu + \mu' \in R(X)$ ed $R(X)$ \e uno spazio normato (con la stessa norma di $\Lambda_\beta^1(X)$). In effetti, $R(X)$ \e addirittura uno spazio di Banach (ci\'o in conseguenza del teorema di Riesz-Markov, vedi Esempio \ref{ex_dual_CX}). Per ulteriori dettagli sulle misure di Radon si veda \cite[\S 6.3]{Ped}.

\

\noindent {\em 3. Misure di Haar.} Sia $G$ un gruppo, nonch\'e uno spazio topologico localmente compatto e di Hausdorff. Diciamo che $G$ \e un {\em gruppo topologico} se l'applicazione
\[
G \times G \to G \ \ , \ \ g,g' \mapsto g^{-1}g'
\]
\e continua. Esempi di gruppi topologici sono gli spazi euclidei $\bR^n , \bC^m$, equipaggiati con l'operazione di somma, il toro complesso $\bT := \{ z \in \bC : |z|=1 \}$, equipaggiato con l'operazione di moltiplicazione, ed i gruppi di matrici ${\mathbb{GL}}(d,\bR)$, $\ud$, ..., $d \in \bN$ (ovvero, i cosiddetti gruppi di Lie classici), tutti equipaggiati con l'operazione del prodotto matriciale. Una misura di Radon $\mu \in R(G)$, $\mu : \mM \to \wa \bR^+$, si dice {\em misura di Haar} se verifica una delle due propriet\'a di {\em invarianza per traslazione}:
\begin{equation}
\label{eq_MIS_IT}
\mu A = \mu (Ag) 
\ \ , \ \
\mu A = \mu (gA)
\ \ , \ \
A \in \mM
\ ,
\end{equation}
dove
$Ag := \{ ag , a \in A \}$, $gA := \{ ga , a \in A \}$. Sull'esistenza ed unicit\'a delle misure di Haar, si veda \cite[\S 14.4-\S 14.6]{Roy}; oltre, sempre su questa strada, c'\e l'analisi armonica astratta (\cite{Fol,HR}).

\begin{ex}
\label{ex_misnumZ}
{\it
Consideriamo il gruppo additivo degli interi $(\bZ,+)$, sul quale definiamo la topologia discreta. Definiamo su $\bZ$ la misura di enumerazione $\mu : 2^\bZ \to \wa \bR^+$, costruita come nell'Esempio \ref{ex_misnum}. Poich\'e $\bZ$ \e discreto, \e evidente che $\mu$ \e di Radon (in particolare, i compatti di $\bZ$ sono i sottoinsiemi finiti). Inoltre \e chiaro che 
\[
\mu A = \mu (A+k)
\ \ , \ \
\forall k \in \bZ
\ ,
\]
dove $A+k := \{ h+k , h \in A \}$. Dunque $\mu$ \e invariante per traslazioni e quindi una misura di Haar.
}
\end{ex}

\

\noindent {\em 4. Misure di Hausdorff.} Sia ora $(X,d)$ uno spazio metrico e $\delta > 0$. Consideriamo $\eps > 0$ e definiamo
\begin{equation}
\label{eq.mis.Haus}
\mu^\delta_\eps (A) 
\ := \
\inf \
\{ \ 
\sum_k r_k^\delta 
\ : \ 
A \subseteq \cup_{k \in \bN} \Delta(x_k,r_k) 
\ , \  
x_k \in X \ , \ r_k < \eps 
\ \}
\ .
\end{equation}
Ora, $\mu^\delta_\eps(A)$ \e una funzione decrescente in $\eps$, per cui esiste il limite
\[
\mu^{\delta,*}(A) 
\ := \ 
\lim_{\eps \to 0} \mu^\delta_\eps (A) \ \ , \ \ A \in 2^X \ .
\]
Si pu\'o verificare che $\mu^{\delta,*} : 2^X \to \wa \bR$ \e una misura esterna dalla quale, con il metodo di Carath\'eodory, possiamo costruire una $\sigma$-algebra $\mM_\delta$ ed una misura $\mu^\delta : \mM_\delta \to \wa \bR$ nota con il nome di {\em misura di Hausdorff}.
Una nozione correlata alla costruzione precedente \e la {\em dimensione di Hausdorff}, nota anche come {\em dimensione di Hausdorff-Besicovitch}, la quale \e definita dall'espressione
\[
\dim_H X := \inf \{ \delta > 0 : \mu^\delta(X) = 0  \} \ .
\]
Si verifica che se $X$ \e uno spazio vettoriale a dimensione finita $n$, o una variet\'a di dimensione $n$, allora $\dim_H X = n$. Pi\'u in generale, la dimensione di Hausdorff trova applicazioni nella teoria dei frattali. Per ulteriori dettagli si veda \cite[\S 12.9]{Roy} e \cite[\S 2.6]{dBar}.

\

\noindent \textbf{Funzioni misurabili.} Passiamo ora ad introdurre l'importante concetto di funzione misurabile.

\begin{lem}
\label{lem_mis}
%
%\textbf{\cite[Prop.3.5.18]{Roy}.}
%
Sia $X$ un insieme ed $\mM \subseteq 2^X$ una $\sigma$-algebra. Presa una funzione $f : X \to \wa \bR$, le seguenti affermazioni sono equivalenti:
(1) Per ogni $\alpha \in \bR$, $f^{-1}\left( (\alpha,+\infty) \right) \in \mM$;
(2) Per ogni $\alpha \in \bR$, $f^{-1}\left( [\alpha,+\infty) \right) \in \mM$;
(3) Per ogni $\alpha \in \bR$, $f^{-1}\left( (-\infty,\alpha) \right) \in \mM$;
(4) Per ogni $\alpha \in \bR$, $f^{-1}\left( (-\infty,\alpha] \right) \in \mM$.
\end{lem}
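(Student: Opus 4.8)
The plan is to exploit the two structural properties of a $\sigma$-algebra --- closure under complementation and under countable intersections (equivalently, countable unions) --- to link the four conditions in two steps. First I would observe that conditions (1) and (4) concern complementary sets: since
\[
f^{-1}\big( (-\infty,\alpha] \big) = X \setminus f^{-1}\big( (\alpha,+\infty) \big)
\ \ , \ \
\alpha \in \bR
\ ,
\]
closure of $\mM$ under complementation gives immediately (1) $\Leftrightarrow$ (4). By the same token, $f^{-1}((-\infty,\alpha)) = X \setminus f^{-1}([\alpha,+\infty))$ yields (2) $\Leftrightarrow$ (3). It then remains only to bridge the two pairs, for which I would establish (1) $\Leftrightarrow$ (2).

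The bridge rests on two elementary set identities. For (1) $\Rightarrow$ (2) I would use
\[
f^{-1}\big( [\alpha,+\infty) \big) = \bigcap_{n \in \bN} f^{-1}\big( (\alpha - 1/n, +\infty) \big) \ ,
\]
valid because $f(x) \geq \alpha$ precisely when $f(x) > \alpha - 1/n$ for every $n$; assuming (1), each set on the right lies in $\mM$, and $\mM$ is closed under countable intersections. For the converse (2) $\Rightarrow$ (1) I would use the dual identity
\[
f^{-1}\big( (\alpha,+\infty) \big) = \bigcup_{n \in \bN} f^{-1}\big( [\alpha + 1/n, +\infty) \big) \ ,
\]
valid because $f(x) > \alpha$ precisely when $f(x) \geq \alpha + 1/n$ for some $n$; assuming (2), the right-hand side is a countable union of members of $\mM$. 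Combining the pieces gives (1) $\Leftrightarrow$ (2) $\Leftrightarrow$ (3) together with (1) $\Leftrightarrow$ (4), so all four conditions are equivalent.

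The only point requiring some care --- and the closest thing to an obstacle, though a minor one --- is the behaviour of the preimages at the infinite values $\pm\infty \in \wa\bR$. The intervals must be read inside the extended line, so that $f^{-1}((\alpha,+\infty))$ denotes $\{ x : f(x) > \alpha \}$ (which contains every $x$ with $f(x) = +\infty$) and its complement $\{ x : f(x) \leq \alpha \}$ contains every $x$ with $f(x) = -\infty$. With this reading the two complementations are exact, and the two countable identities remain correct at $\pm\infty$ as well: if $f(x) = +\infty$ then $x$ belongs to every set $f^{-1}((\alpha - 1/n,+\infty))$ and to some $f^{-1}([\alpha+1/n,+\infty))$, exactly as the identities require, while points with $f(x) = -\infty$ simply never enter any of these preimages. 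No further machinery is needed.
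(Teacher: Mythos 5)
Your proof is correct and rests on exactly the same two ingredients as the paper's: complementation within the $\sigma$-algebra and the countable identities obtained by perturbing $\alpha$ by $1/n$. The only difference is organizational --- the paper runs the cycle $(1)\Rightarrow(2)\Rightarrow(3)\Rightarrow(4)\Rightarrow(1)$, alternating a countable-intersection step with a complementation step, whereas you pair the conditions by complementation and bridge the pairs with both an intersection and a union identity; your closing remark on the values $\pm\infty$ is a welcome precision the paper leaves implicit.
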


\begin{proof}[Dimostrazione]
Se vale (1) allora, presa una successione monot\'ona crescente $\alpha_n \to \alpha$, abbiamo
$\cap_n f^{-1}\left( (\alpha_n,+\infty) \right) \in \mM$,
ma del resto
$\cap_n f^{-1}\left( (\alpha_n,+\infty) \right) = f^{-1}\left( [\alpha,+\infty) \right)$,
per cui vale (2). Supposto che valga (2), troviamo
$X - f^{-1}\left( [\alpha,+\infty) \right) \in \mM$,
ma del resto 
$X - f^{-1}\left( [\alpha,+\infty) \right) = f^{-1}\left( (-\infty,\alpha) \right)$,
e quindi vale (3). L'implicazione (3) $\Rightarrow$ (4) \e analoga a (1) $\Rightarrow$ (2), mentre (4) $\Rightarrow$ (1) \e analoga a (2) $\Rightarrow$ (3).
\end{proof}

\begin{rem}{\it
Dal Lemma precedente segue che se $f$ \e misurabile allora per ogni $\alpha \in \bR$ si ha
$f^{-1}(\alpha) = f^{-1}\left( (-\infty,\alpha] \right) \cap f^{-1}\left( [\alpha,+\infty) \right) \in \mM$. 
}
\end{rem}

\begin{defn}
\label{def_misfun}
Sia $X$ un insieme ed $\mM \subseteq 2^X$ una $\sigma$-algebra. Una funzione $f : X \to \wa \bR$ si dice \textbf{misurabile} se \e verificata una delle propriet\'a del lemma precedente. Denotiamo con $M(X)$ l'insieme delle funzioni misurabili su $X$.
\end{defn}

Per illustrare la connessione tra la definizione precedente e Def.\ref{def_mis_astr} consideriamo, in particolare, una funzione {\em a valori reali} $f \in M(X)$; segue allora banalmente dal Lemma \ref{lem_mis} che in effetti $f$ \e misurabile secondo Def.\ref{def_mis_astr} se equipaggiamo $\bR$ con la $\sigma$-algebra dei boreliani $\beta \bR$ (si noti infatti che $\beta \bR$ \e generata dagli intervalli).

Usando la stabilit\'a di $\mM$ rispetto all'unione ed all'intersezione \e facile verificare che $M(X)$ \e un'algebra (\cite[Prop.3.5.18]{Roy}). Chiaramente ogni funzione caratteristica $\chi_E$, $E \in \mM$, \e misurabile, e quindi ogni funzione semplice \e misurabile, ovvero $S(X) \subseteq M(X)$. Se $X$ \e uno spazio topologico ed $\mM$ contiene i boreliani allora ogni funzione boreliana \e misurabile; in particolare, ogni funzione continua \e misurabile.

\begin{thm}
\label{thm_MIS}
Sia $X$ un insieme ed $\mM \subseteq 2^X$ una $\sigma$-algebra. Date $f,g \in M(X)$ ed $\{ f_n \} \subset M(X)$, si ha
\begin{equation}
\label{eq_MIS_1}
\ovl h := \sup \{ f,g \} \in M(X)
\ \ , \ \
\unl h := \inf \{ f,g \} \in M(X)
\ ,
\end{equation}
\begin{equation}
\label{eq_MIS_2}
\ovl f (x) := \lim_n \sup f_n(x) \ , \ x \in X  \ \Rightarrow \ \ovl f \in M(X)
\ ,
\end{equation}
\begin{equation}
\label{eq_MIS_3}
\unl f (x) := \lim_n \inf f_n(x) \ , \ x \in X  \ \Rightarrow \ \unl f \in M(X)
\ .
\end{equation}
\end{thm}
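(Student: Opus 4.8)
The plan is to reduce each assertion to one of the equivalent characterizations of measurability collected in Lemma~\ref{lem_mis}, translating the relevant sub- or superlevel set into a union of preimages of $f$, $g$, or the $f_n$, and then invoking the closure properties of $\mM$.

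First I would treat the two-function case. For $\ovl h = \sup\{f,g\}$ I observe that, for each $\alpha \in \bR$, one has $\ovl h(x) > \alpha$ precisely when $f(x) > \alpha$ or $g(x) > \alpha$, whence
\[
(\ovl h)^{-1}\big( (\alpha,+\infty) \big)
\ = \
f^{-1}\big( (\alpha,+\infty) \big) \cup g^{-1}\big( (\alpha,+\infty) \big)
\ \in \
\mM
\]
by measurability of $f,g$ and closure of $\mM$ under finite unions; thus $\ovl h$ satisfies condition (1) of Lemma~\ref{lem_mis}. Dually, for $\unl h = \inf\{f,g\}$ I would use condition (3), since $\unl h(x) < \alpha$ iff $f(x) < \alpha$ or $g(x) < \alpha$, giving $(\unl h)^{-1}\big( (-\infty,\alpha) \big) = f^{-1}\big( (-\infty,\alpha) \big) \cup g^{-1}\big( (-\infty,\alpha) \big) \in \mM$.

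The key step for the sequential statements is the observation that the pointwise supremum $g := \sup_n f_n$ of a countable family is again measurable. Here the crucial point is that $\sup_n f_n(x) > \alpha$ holds if and only if $f_n(x) > \alpha$ for at least one index $n$, so that
\[
g^{-1}\big( (\alpha,+\infty) \big)
\ = \
\bigcup_n f_n^{-1}\big( (\alpha,+\infty) \big)
\ ,
\]
now a genuinely countable union, which belongs to $\mM$ exactly because $\mM$ is a $\sigma$-algebra. Symmetrically, $\inf_n f_n$ is measurable via $(\inf_n f_n)^{-1}\big( (-\infty,\alpha) \big) = \bigcup_n f_n^{-1}\big( (-\infty,\alpha) \big) \in \mM$.

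Finally I would write $\ovl f = \inf_N \sup_{n \geq N} f_n$ and $\unl f = \sup_N \inf_{n \geq N} f_n$, and apply the previous step twice: each inner supremum (respectively infimum) over the tail $\{ f_n : n \geq N \}$ is measurable, and the outer infimum (respectively supremum) over the countable index $N$ is then measurable as well. There is no deep obstacle here; the only things requiring care are the bookkeeping of strict versus non-strict inequalities --- which dictates whether condition (1) or condition (3) of Lemma~\ref{lem_mis} is the convenient one --- and the fact that it is the full $\sigma$-algebra property, and not mere closure under finite unions, that is needed to pass from the two-function case to the countable one.
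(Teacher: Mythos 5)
Your proof is correct and follows essentially the same route as the paper: preimages of half-lines as (countable) unions, then $\ovl f = \inf_N \sup_{n\geq N} f_n$ and dually for the liminf. You merely spell out the countable sup/inf step that the paper leaves implicit; the only cosmetic point is that your auxiliary $g := \sup_n f_n$ reuses a letter already taken in the statement.
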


\begin{proof}[Dimostrazione]
Per ogni $\alpha \in \bR$ troviamo $\ovl h^{-1}((\alpha,+\infty)) = f^{-1}((\alpha,+\infty)) \cup g^{-1}((\alpha,+\infty)) \in \mM$, dunque $\ovl h$ \e misurabile. Analogamente $\unl h$ \e misurabile. Ora, essendo $\mM$ chiusa per unioni ed intersezioni numerabili abbiamo che estremi superiori ed inferiori di successioni di funzioni misurabili sono funzioni misurabili; per cui, concludiamo che $\ovl f = \inf_n \sup_{k \geq n} f_k$, $\unl f = \sup_n \inf_{k \geq n} f_k$ sono misurabili.
\end{proof}

\begin{prop}
\label{prop_caressa}
Sia $(X,\mM,\mu)$ uno spazio misurabile ed $f \in M(X)$, $f \geq 0$. Allora esiste una succesione monot\'ona crescente $\{ \psi_n \} \subset S(X)$ che converge puntualmente ad $f$. Se $X$ \e $\sigma$-finito allora si pu\'o scegliere ogni $\psi_n$ in maniera tale che il supporto abbia misura finita.
\end{prop}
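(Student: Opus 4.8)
The plan is to use the classical dyadic-truncation approximation. For each $n \in \bN$ and each $k \in \{ 1 , \ldots , n 2^n \}$ I set
\[
E_{n,k} := f^{-1}\left( \left[ \tfrac{k-1}{2^n} , \tfrac{k}{2^n} \right) \right)
\ \ , \ \
F_n := f^{-1}\left( [ n , + \infty ] \right)
\ ,
\]
and define the candidate simple function
\[
\psi_n := \sum_{k=1}^{n 2^n} \frac{k-1}{2^n} \, \chi_{E_{n,k}} \ + \ n \, \chi_{F_n}
\ .
\]
Since $f$ is measurable, \lemref{lem_mis} guarantees that $f^{-1}([\alpha,+\infty))$ and $f^{-1}((-\infty,\beta))$ lie in $\mM$, hence so do the $E_{n,k}$ (intersections of two such sets) and the $F_n$; therefore $\psi_n \in S(X)$.

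Next I would verify the three required properties. Pointwise convergence is immediate: if $f(x) < +\infty$ then for every $n > f(x)$ one has $x \notin F_n$ and $0 \le f(x) - \psi_n(x) < 2^{-n}$, while if $f(x) = +\infty$ then $\psi_n(x) = n \to +\infty$. The heart of the argument is monotonicity $\psi_n \le \psi_{n+1}$. The key observation is that refining the level from $n$ to $n+1$ splits each dyadic interval $[\frac{k-1}{2^n},\frac{k}{2^n})$ into the two halves $[\frac{2k-2}{2^{n+1}},\frac{2k-1}{2^{n+1}})$ and $[\frac{2k-1}{2^{n+1}},\frac{2k}{2^{n+1}})$, on which $\psi_{n+1}$ takes the values $\frac{2k-2}{2^{n+1}} = \frac{k-1}{2^n} = \psi_n$ and $\frac{2k-1}{2^{n+1}} > \psi_n$ respectively; a similar check on $F_n$ (distinguishing $f(x) \ge n+1$ from $n \le f(x) < n+1$) shows $\psi_{n+1} \ge n = \psi_n$ there as well. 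Thus rounding $f$ down to the nearest multiple of $2^{-n}$ and truncating at height $n$ never decreases when $n$ grows.

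For the $\sigma$-finite case I would exploit $\sigma$-finiteness to write $X = \cup_m B_m$ with $\mu B_m < +\infty$, and set $A_n := \cup_{m=1}^n B_m$, obtaining an increasing sequence of finite-measure sets with $\cup_n A_n = X$. Replacing $\psi_n$ by $\wt\psi_n := \psi_n \, \chi_{A_n}$, the support of $\wt\psi_n$ is contained in $A_n$ and so has finite measure. Monotonicity is preserved since $\psi_n \le \psi_{n+1}$, $\chi_{A_n} \le \chi_{A_{n+1}}$, and all factors are non-negative; pointwise convergence is preserved because, $\{ A_n \}$ being increasing with union $X$, every $x$ eventually lies in $A_n$, whence $\wt\psi_n(x) = \psi_n(x) \to f(x)$.

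I expect the monotonicity verification to be the only delicate step; the measurability of the level sets and the two limit computations are routine bookkeeping, and the $\sigma$-finite refinement reduces entirely to multiplying by the indicator of an exhausting sequence.
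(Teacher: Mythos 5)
Your construction is exactly the one used in the paper: $\psi_n$ is the dyadic floor $\lfloor 2^n f(x)\rfloor 2^{-n}$ truncated at height $n$, and the $\sigma$-finite refinement multiplies by the indicator of an increasing exhaustion by finite-measure sets. The paper dismisses the verifications as "clearly satisfied," whereas you spell out measurability, monotonicity and convergence correctly, so the proposal is a sound (and slightly more detailed) rendering of the same argument.
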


\begin{proof}[Dimostrazione]
Per ogni $n \in \bN$ e $t \in \bR^+$ esiste un unico $k_{t,n}$ tale che 
\[
t \in \left[ \ k_{t,n}2^{-n} \ , \ (k_{t,n}+1)2^{-n}  \ \right]
\ ,
\]
cosicch\'e definiamo
\[
\psi_n(x) := 
\left\{
\begin{array}{ll}
k_{f(x),n}2^{-n} \ \ , \ \ f(x) \in [ 0 , n )
\\
n  \ \ , \ \ f(x) \in [n,\infty]
\ .
\end{array}
\right.
\]
La successione $\{ \psi_n \}$ chiaramente soddisfa le propriet\'a desiderate. Se $X$ \e $\sigma$-finito allora abbiamo $X = \cup_n A_n$ con $\mu A_n < \infty$, e ponendo $A^{(n)} := \cup_m^n A_m$,
$\varphi_n := \chi_{A^{(n)}} \psi_n$
otteniamo, come desiderato, che ogni $\varphi_n$ ha supporto con misura finita e $\varphi_n \to f$.
\end{proof}

\

\noindent \textbf{Equivalenza e convergenza q.o..} Introduciamo ora un'importante terminologia: diciamo che una propriet\'a vale {\em quasi ovunque in $x \in X$ (q.o.)} se la misura del sottoinsieme di $X$ nel quale essa non \e verificata ha misura nulla. Ad esempio, useremo spesso l'espressione $f = g$ q.o., intendendo con ci\'o che 
\[
\mu \{ x : f(x) \neq g(x) \} = 0 \ .
\]
E' chiaro che l'uguaglianza q.o. di due funzioni definisce una relazione di equivalenza su $M(X)$. Nel seguito accadr\'a spesso che identificheremo $f \in M(X)$ con la sua classe di equivalenza q.o.. 

\begin{prop}
\label{prop_fgmis}
Sia $(X,\mM,\mu)$ completo, $g \in M(X)$ ed $f : X \to \wa \bR$ tale che $f = g$ q.o.. Allora $f$ \e misurabile.
\end{prop}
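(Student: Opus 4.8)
The plan is to reduce the measurability of $f$ to that of $g$ by isolating the exceptional null set and then invoking completeness of the space. First I would set $N := \{ x \in X : f(x) \neq g(x) \}$. By the hypothesis $f = g$ q.o. we have $N \in \mM$ with $\mu N = 0$, hence also $N^c \in \mM$. The decisive consequence of completeness, to be used below, is that \emph{every} subset of $N$ is automatically measurable (with measure zero), since $N$ is a measurable null set.

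Next, fixing an arbitrary $\alpha \in \bR$, I would split the superlevel set according to whether we are inside or outside $N$:
\[
f^{-1}\left( (\alpha,+\infty) \right)
=
\left( f^{-1}\left( (\alpha,+\infty) \right) \cap N^c \right)
\ \dot{\cup} \
\left( f^{-1}\left( (\alpha,+\infty) \right) \cap N \right)
\ .
\]
On $N^c$ we have $f = g$, so the first set equals $g^{-1}\left( (\alpha,+\infty) \right) \cap N^c$, which belongs to $\mM$ because $g \in M(X)$ (so $g^{-1}\left( (\alpha,+\infty) \right) \in \mM$ by Def.\ref{def_misfun}) and $N^c \in \mM$. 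The second set is a subset of $N$, hence measurable by completeness of $(X,\mM,\mu)$.

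It follows that $f^{-1}\left( (\alpha,+\infty) \right)$ is a union of two elements of $\mM$ and therefore lies in $\mM$; since $\alpha \in \bR$ was arbitrary, condition (1) of Lemma \ref{lem_mis} is satisfied and $f \in M(X)$. The only genuinely delicate point—and the one I expect to be the crux—is the treatment of the portion of the superlevel set contained in $N$: there $g$ gives no information, and one must appeal to completeness to declare an arbitrary subset of a null set measurable. This is exactly the place where the completeness hypothesis is indispensable, and indeed without it the statement fails.
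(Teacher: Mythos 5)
Your proof is correct and follows essentially the same route as the paper: both decompose $f^{-1}\left( (\alpha,+\infty) \right)$ into its intersection with the complement of the null set $N = \{f \neq g\}$ (where it coincides with a superlevel set of $g$) and its intersection with $N$ itself, the latter being measurable precisely by completeness. Your closing remark correctly identifies the role of completeness as the crux, which is also where the paper explicitly invokes it.
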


\begin{proof}[Dimostrazione]
Posto $E := \{ x \in X : f(x) \neq g(x) \}$ abbiamo $\mu E = 0$ e, preso $\alpha \in \bR$,
\[
f^{-1}(\alpha,+\infty) 
\ = \ 
( g^{-1}(\alpha,+\infty) \cap E^c )
\cup
( f^{-1}(\alpha,+\infty) \cap E )
\ .
\]
L'insieme $( g^{-1}(\alpha,+\infty) \cap E^c )$ \e misurabile, essendo esso intersezione di insiemi misurabili. L'insieme $( f^{-1}(\alpha,+\infty) \cap E )$, essendo contenuto nell'insieme di misura nulla $E$, \e anch'esso misurabile, per cui concludiamo che $f^{-1}(\alpha,+\infty)$ \e misurabile.
\end{proof}

\begin{defn}
\label{def_cqo}
Sia $\{ f_n \} \cup \{ f \} \subset M(X)$. Diciamo che la successione $\{ f_n \}$ \textbf{converge q.o.} ad $f$ se $\lim_n f_n(x) = f(x)$, q.o. in $x \in X$.
\end{defn}

\begin{thm}[Egoroff]
\label{thm_egoroff}
Sia $(X,\mM,\mu)$ uno spazio di misura finita ed $\{ f_n \} \subset M(X)$ una successione convergente q.o. ad una funzione $f$. Allora per ogni $\eps > 0$ esistono un insieme $A \subset X$ con $\mu A < \eps$ ed $n_0 \in \bN$ tali che $\sup_{X-A} | f_n(x) - f(x) | < \eps$ per ogni $n \geq n_0$.
\end{thm}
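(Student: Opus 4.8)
The plan is to prove the full Egoroff statement — namely that $\{ f_n \}$ converges \emph{uniformly} to $f$ off a set of measure less than $\eps$ — since the displayed conclusion is an immediate consequence. The whole argument rests on converting the a.e.\ pointwise convergence into a quantitative measure-theoretic statement, and the one place where the finiteness hypothesis $\mu X < \infty$ is indispensable is precisely this conversion (without it the theorem is false, as $f_n = \chi_{[n,+\infty)}$ on $\bR$ shows). Note first that $f \in M(X)$ by the convention in Definition~\ref{def_cqo}, so each difference $|f_m - f|$ is measurable and all the sets introduced below belong to $\mM$; let $E_0 := \{ x \in X : f_n(x) \to f(x) \}$, so that by hypothesis $\mu(X - E_0) = 0$.

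First I would, for each pair $k, n \in \bN$, introduce the measurable sets
\[
B_n^k := \bigcup_{m \geq n} \{ x \in X : |f_m(x) - f(x)| \geq 1/k \} \ .
\]
For fixed $k$ these decrease as $n$ grows, and $\bigcap_n B_n^k \subseteq X - E_0$ has measure zero (if $x$ is a point of convergence, then eventually $|f_m(x) - f(x)| < 1/k$). Since $\mu X < \infty$ we have $\mu B_1^k < \infty$, so Lemma~\ref{lem_caressa} (continuity from above for decreasing sequences) applies and gives $\lim_n \mu B_n^k = \mu(\bigcap_n B_n^k) = 0$. This is the heart of the matter.

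Next, for each $k$ I would use this limit to pick $n_k \in \bN$ with $\mu B_{n_k}^k < \eps \, 2^{-k}$, and set $A := \bigcup_k B_{n_k}^k$. Countable subadditivity of $\mu$ then yields $\mu A \leq \sum_k \eps \, 2^{-k} = \eps$. By construction, for every $x \in X - A$ and every $k$ one has $x \notin B_{n_k}^k$, i.e.\ $|f_m(x) - f(x)| < 1/k$ for all $m \geq n_k$; since $n_k$ does not depend on $x$, this is exactly uniform convergence of $\{ f_n \}$ to $f$ on $X - A$. The stated (weaker) conclusion follows by choosing $k$ with $1/k < \eps$ and taking $n_0 := n_k$.

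The main obstacle — really the only nontrivial step — is the passage $\mu B_n^k \to 0$, and the essential ingredient there is the finite-measure hypothesis feeding into Lemma~\ref{lem_caressa}; everything else is bookkeeping with the summable tail $\sum_k \eps \, 2^{-k}$. A minor point worth checking is that one does not need completeness of $(X,\mM,\mu)$: measurability of $f$ is already granted by Definition~\ref{def_cqo}, so the sets $B_n^k$ are measurable outright.
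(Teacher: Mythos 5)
Your proof is correct and rests on exactly the same key step as the paper's: forming the decreasing tail sets $B_N$ (there with threshold $\eps$, here with thresholds $1/k$), noting their intersection has measure zero by a.e.\ convergence, and invoking Lemma~\ref{lem_caressa} — which is where $\mu X < \infty$ enters — to get $\mu B_N \to 0$. The only difference is that you add the standard $\sum_k \eps\, 2^{-k}$ union to obtain full uniform convergence off $A$, a strengthening the paper does not need for its (weaker) formulation of the conclusion.
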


\begin{proof}[Dimostrazione]
Scelto $\eps > 0$ poniamo
\[
A_n := \{ x \in X :  | f_n(x) - f(x) | \geq \eps \}
\]
e 
\[
B_N 
:=
\bigcup_{n=N}^\infty A_n
=
\{ x \in X \ | \ \exists n \geq N : | f_n(x) - f(x) | \geq \eps  \}
\ .
\]
Osservando che $B_{N+1} \subseteq B_N$, usando la convergenza di $\{ f_n \}$ q.o. troviamo
\begin{equation}
\label{eq_BC0}
0 
\ = \
\mu \left( \bigcap_N B_N \right)
\ \stackrel{(*)}{=} \
\lim_{N \to \infty} \mu B_N
\ .
\end{equation}
Per l'uguaglianza (*) abbiamo usato il Lemma \ref{lem_caressa}, per applicare il quale \e necessario che $X$ (e quindi $B_1$) abbia misura finita (a tal proposito si veda l'Esercizio \ref{sec_MIS}.6). Concludiamo che esiste $N_0 \in \bN$ tale che $\mu B_{N_0} < \eps$, e posto $A := B_{N_0}$ risulta
\[
x \in X-A 
\ \Leftrightarrow \ 
|f_n(x)-f(x)| < \eps 
\ , \ 
\forall n \geq N_0
\ .
\]
\end{proof}

\subsection{La misura di Lebesgue sulla retta reale.}
\label{sec_lebR}

In questa sezione esponiamo la costruzione della misura di Lebesgue sulla retta reale, con l'idea di base che questa debba coincidere con la lunghezza di un intervallo qualora sia valutata su questo. Come primo passo fissiamo alcune notazioni,
\begin{equation}
\label{def_J}
\left\{
\begin{array}{ll}
\mI^{aa} := \{ (a,b) , a<b \in \bR \} \ , \
\mI^{ac} := \{ (a,b] , a<b \in \bR \} \ , \
\\
\mI^{ca} := \{ [a,b) , a<b \in \bR \} \ , \
\mI^{cc} := \{ [a,b] , a<b \in \bR \} \ ;
\\
\mI := \mI^{aa} \cup \mI^{ac} \cup \mI^{ca} \cup \mI^{cc}  \ .
\end{array}
\right.
\end{equation}
Definiamo quindi 
\[
l(I) := b-a \ \ , \ \ \forall I \in \mI \ ,
\]
ed introduciamo l'applicazione
\begin{equation}
\label{eq_misEst}
\mu^* : 2^\bR \to \wa \bR^+
\ \ , \ \
\mu^* A :=  
\inf \left\{ \sum_{n \in \bN} l(I_n) \ : \ A \subset \bigcup_{n \in \bN} I_n \ , \ \{ I_n \} \subset \mI^{aa} \right\} 
\ .
\end{equation}
I seguenti risultati mostrano che $\mu^*$ \e in effetti una misura esterna invariante per traslazioni. 
\begin{lem}
\label{lem_ME01}
Valgono le seguenti propriet\'a:
\begin{enumerate}
\item $\mu^*A \geq 0$, $A \in 2^\bR$;
\item $\mu^* \emptyset = 0$;
\item $A \subseteq B$ $\Rightarrow$ $\mu^*A \leq \mu^*B$, $A,B \in 2^\bR$;
\item $\mu^*\{ x \} = 0$, $x \in \bR$;
\item $\mu^* (A+x) = \mu^* A$, $A \in 2^\bR$, $A+x := \{ a+x : a \in A \}$ (\textbf{invarianza per traslazioni}).
\end{enumerate}
\end{lem}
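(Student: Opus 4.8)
The plan is to read off all five assertions directly from the definition of $\mu^*$ in (\ref{eq_misEst}), where I observe that $\mu^* A$ is by construction an infimum of sums $\sum_n l(I_n)$ of lengths of open intervals. Properties (1)--(4) will be routine consequences of this structure, and only the translation invariance (5) will require a (short) genuine argument. For (1), every $I_n \in \mI^{aa}$ has $l(I_n) = b_n - a_n > 0$, so each admissible sum is nonnegative and hence so is the infimum: $\mu^* A \geq 0$ for all $A \in 2^\bR$.

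Next I would treat (2) and (4) together, using that $\emptyset$ and a singleton $\{x\}$ admit arbitrarily short covers. Given $\eps > 0$, set $I_n := \Delta(x, \eps\, 2^{-n-1})$, so that $l(I_n) = \eps\, 2^{-n}$ and $\sum_n l(I_n) = \eps$, while $\{x\} \subset I_1 \subseteq \bigcup_n I_n$. Hence $\mu^* \{x\} \leq \eps$ for every $\eps > 0$, and combined with (1) this forces $\mu^* \{x\} = 0$; since $\emptyset \subseteq \{x\}$, the same cover gives $\mu^* \emptyset = 0$. For (3) the key observation is that if $A \subseteq B$, then every countable cover of $B$ by intervals of $\mI^{aa}$ is automatically a cover of $A$; thus the family of admissible covers for $A$ contains that for $B$, and passing to infima over a larger family can only decrease the value, so $\mu^* A \leq \mu^* B$.

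The one property carrying actual content is the translation invariance (5). Here I would fix $x \in \bR$ and note that the map $I = (a,b) \mapsto I + x := (a+x, b+x)$ is a length-preserving bijection of $\mI^{aa}$ onto itself, since $l(I+x) = (b+x) - (a+x) = b - a = l(I)$. The decisive remark is that $\{I_n\}$ covers $A$ if and only if $\{I_n + x\}$ covers $A + x$, because $a \in I_n$ is equivalent to $a + x \in I_n + x$. This produces a length-preserving bijection between the admissible covers of $A$ and those of $A + x$, so the two defining infima coincide and $\mu^*(A + x) = \mu^* A$.

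I do not expect any real obstacle, as the whole lemma rests on elementary manipulations of covers and of the infimum. The only places where care is needed are the bookkeeping of the padding family in (2) and (4) — which is why I keep the explicit geometric cover $I_n = \Delta(x, \eps\, 2^{-n-1})$, whose lengths sum to the finite quantity $\eps$ — and the verification in (5) that the correspondence of covers is genuinely a bijection rather than a mere injection, which follows at once by applying the inverse translation by $-x$.
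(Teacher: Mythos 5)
Your proof is correct and follows essentially the same route as the paper's: positivity of lengths for (1), arbitrarily short covers for (2) and (4), inclusion of cover families for (3), and the length-preserving bijection $I \mapsto I+x$ on covers for (5). The only cosmetic difference is your explicit countable padding family $I_n = \Delta(x,\eps\,2^{-n-1})$ where the paper simply uses a single interval $(x-\eps,x+\eps)$.
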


\begin{proof}[Dimostrazione]
(1) Segue dal fatto che $l(I) > 0$, $\forall I \in \mI^{aa}$;
(2) Segue dal fatto che $\emptyset$ \e contenuto in intervalli di lunghezza piccola a piacere;
(3) Segue dal fatto che se $B \subset \cup_n I_n$ allora $A \subset \cup_n I_n$;
(4) Poich\'e $\{ x \} \subset (x-\eps,x+\eps)$ abbiamo $\mu^*\{ x \} < 2 \eps$, con $\eps > 0$ piccolo a piacere;
(5) Segue dall'uguaglianza $l(I+x) = l(I)$, $\forall I \in \mI^{aa}$.
\end{proof}

\begin{lem}
\label{lem_ME02}
Per ogni $a,b \in \bR$, $a<b$, si ha
\begin{equation}
\label{eq_ME02}
b-a = \mu^*(I) = \mu^*(\ovl I) = \mu^* (J) = \mu^*(J')
\ \ , \ \
I := (a,b)
\ , \
\ovl I := [a,b]
\ , \
J := [a,b)
\ , \
J' := (a,b]
\ .
\end{equation}
Se $I$ \e non limitato (ovvero $a = - \infty$ o $b = \infty$) allora $\mu^*I = \infty$.
\end{lem}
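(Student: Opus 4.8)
The plan is to reduce the whole statement to two inequalities for the closed interval $\ovl I = [a,b]$, namely the upper bound $\mu^*([a,b]) \le b-a$ and the lower bound $\mu^*([a,b]) \ge b-a$, and then to propagate the common value $b-a$ to the remaining three intervals using the monotonicity of $\mu^*$ established in \lemref{lem_ME01}(3). Indeed, since $(a,b) \subseteq [a,b) \subseteq [a,b]$ and $(a,b) \subseteq (a,b] \subseteq [a,b]$, once I know that both $\mu^*((a,b))$ and $\mu^*([a,b])$ equal $b-a$, the two half-open cases are squeezed between them and are forced to equal $b-a$ as well.

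The upper bound is immediate: for every $\eps>0$ the single open interval $(a-\eps,b+\eps) \in \mI^{aa}$ covers $[a,b]$, so by the very definition of $\mu^*$ we get $\mu^*([a,b]) \le l((a-\eps,b+\eps)) = b-a+2\eps$; letting $\eps \to 0$ yields $\mu^*([a,b]) \le b-a$, and by monotonicity the same bound holds for $(a,b)$, $[a,b)$ and $(a,b]$.

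The heart of the argument, and the step I expect to be the real obstacle, is the lower bound $\mu^*([a,b]) \ge b-a$; here I would invoke compactness. Let $[a,b] \subseteq \cup_n I_n$ with $I_n = (a_n,b_n) \in \mI^{aa}$ be an arbitrary admissible cover. Since $[a,b]$ is compact (Heine--Borel), finitely many of the $I_n$ already cover it, and I reorder this finite subfamily into a chain: pick an interval $(a_1,b_1)$ containing $a$; if $b_1 \le b$ then $b_1 \in [a,b]$ lies in some $(a_2,b_2)$ with $a_2 < b_1 < b_2$; iterate. Finiteness forces the process to stop, producing intervals $(a_1,b_1), \ldots, (a_m,b_m)$ with $a_1 < a$, $a_{i+1} < b_i$ for all $i$, and $b_m > b$. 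Summing lengths and telescoping,
\[
\sum_n l(I_n) \ \ge \ \sum_{i=1}^m (b_i - a_i) \ = \ (b_m - a_1) + \sum_{i=1}^{m-1}(b_i - a_{i+1}) \ \ge \ b_m - a_1 \ > \ b-a ,
\]
where the last two inequalities use $b_i - a_{i+1} \ge 0$ together with $b_m > b$ and $a_1 < a$. Taking the infimum over all covers gives $\mu^*([a,b]) \ge b-a$, hence $\mu^*([a,b]) = b-a$.

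Finally I would dispatch the open interval and the unbounded case. For $(a,b)$, fix a small $\eps>0$ with $a+\eps < b-\eps$; then $[a+\eps,b-\eps] \subseteq (a,b)$ gives, by monotonicity and the closed-interval value just obtained, $b-a-2\eps = \mu^*([a+\eps,b-\eps]) \le \mu^*((a,b)) \le b-a$, and letting $\eps \to 0$ forces $\mu^*((a,b)) = b-a$; the half-open intervals then follow from the squeezing noted in the first paragraph. For an unbounded interval $I$ (say $I \supseteq (a,a+N)$ for every $N \in \bN$, as happens when $a = -\infty$ or $b = \infty$), monotonicity gives $\mu^*(I) \ge \mu^*((a,a+N)) = N$ for all $N$, whence $\mu^*(I) = \infty$.
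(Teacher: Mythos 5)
Your proof is correct and follows essentially the same route as the paper's: the upper bound via the single cover $(a-\eps,b+\eps)$, and the lower bound via compactness of $[a,b]$, extraction of a finite subcover, and a telescoping estimate on the chain of overlapping intervals. The paper only sketches the closed bounded case and leaves the other bounded intervals and the unbounded case as exercises, which you carry out correctly by inner approximation with $[a+\eps,b-\eps]$, squeezing, and monotonicity.
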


\begin{proof}[Sketch della dimostrazione]
Ci limitiamo a trattare il caso di un intervallo chiuso e limitato $[a,b]$, $a,b \in \bR$ (gli altri casi di intervalli limitati sono analoghi, mentre quelli non limitati sono semplici da dimostrare e vengono lasciati per esercizio, vedi ad esempio \cite[\S 2]{dBar} o \cite[Prop.1.3.3]{Acq}). 
Innanzitutto, osserviamo che per ogni $\eps > 0$ si ha, per definizione di $\mu^*$, 
\[
\mu^*([a,b]) \ \leq \ l((a-\eps,b+\eps)) \ = \ b-a+2 \eps \ ,
\]
per cui, data l'arbitrariet\'a di $\eps$, concludiamo che $\mu^*([a,b]) \leq b-a$. 
D'altra parte, per definizione di $\mu^*$ esiste una successione $\{ I_n := (a_n,b_n) \}$ che ricopre $[a,b]$ e tale che 
\begin{equation}
\label{eq_prop_l0}
\mu^*([a,b]) \geq \sum_n l(I_n) - \eps \ .
\end{equation}
Essendo $\{ I_n \}$ un ricoprimento aperto di $[a,b]$ possiamo estrarre un sottoricoprimento finito di intervalli distinti $\{ \wt I_k := ( \wt a_k , \wt b_k ) \}_{k=1}^N$. Chiaramente, possiamo ordinare $\{ \wt I_k \}$ in maniera tale che $\wt a_k \leq \wt a_{k+1}$, $k = 1 , \ldots , N$, per cui
\begin{equation}
\label{eq_prop_l2}
\wt b_N - \wt a_1 
\ = \
\sum_{k=1}^N (\wt b_k - \wt a_k)
- 
\sum_{k=1}^{N-1} (\wt b_k - \wt a_{k+1})
\ \leq \
\sum_{k=1}^N l(\wt I_k)
\ \leq \
\sum_n l(I_n)
\ .
\end{equation}
Concludiamo che 
\[
\mu^*([a,b])
\ \stackrel{ (\ref{eq_prop_l0}) }{ \geq } \
\sum_n l(I_n) - \eps
\ \stackrel{ (\ref{eq_prop_l2}) }{ \geq } \
\wt b_n - \wt a_1 - \eps 
\ \geq \
b-a - \eps 
\ .
\]
\end{proof}

\begin{lem}[Subadditivit\'a]
\label{lem_ME03a}
Sia $\{ A_n \}$ una successione di sottoinsiemi di $\bR$. Allora
\begin{equation}
\label{eq_ME03}
\mu^* \left( \bigcup_n A_n \right)
\ \leq \
\sum_n \mu^* A_n
\ .
\end{equation}
\end{lem}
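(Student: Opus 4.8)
The plan is to use the classic ``$\eps 2^{-n}$'' argument. First I would dispose of the trivial case: if $\sum_n \mu^* A_n = + \infty$, then the desired inequality holds automatically, since the left-hand side lives in $\wa \bR^+$ and nothing exceeds $+ \infty$. Hence I may assume $\sum_n \mu^* A_n < + \infty$, which in particular forces $\mu^* A_n < + \infty$ for every $n \in \bN$.

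Next, having fixed $\eps > 0$, I would exploit the definition of $\mu^*$ as an infimum (see \ref{eq_misEst}) to choose, for each $n \in \bN$, a countable cover $\{ I_{n,k} \}_{k \in \bN} \subset \mI^{aa}$ of $A_n$ whose total length is almost optimal, namely
\[
\sum_{k} l(I_{n,k}) \ \leq \ \mu^* A_n + \eps 2^{-n}
\ .
\]
This is possible precisely because each $\mu^* A_n$ is finite, so that the infimum defining it can be approximated to within the prescribed tolerance $\eps 2^{-n}$.

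The key step is then to observe that the doubly-indexed family $\{ I_{n,k} \}_{n,k \in \bN}$ is itself countable (being a countable union of countable families of open intervals), and that it covers $\cup_n A_n$, since each $\{ I_{n,k} \}_k$ already covers $A_n$. Applying the definition of $\mu^*$ to this single countable cover, and then summing the non-negative terms $l(I_{n,k})$ in whatever order is convenient, I obtain
\[
\mu^* \left( \bigcup_n A_n \right) \ \leq \ \sum_{n} \sum_{k} l(I_{n,k}) \ \leq \ \sum_n \left( \mu^* A_n + \eps 2^{-n} \right) \ = \ \sum_n \mu^* A_n + \eps
\ .
\]
Since $\eps > 0$ is arbitrary, letting $\eps \to 0$ yields the claim \ref{eq_ME03}.

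The only point requiring care is the passage to a single countable cover and the attendant rearrangement of the double series: here I rely on the fact that all lengths $l(I_{n,k})$ are non-negative, so the value of the (possibly divergent) double sum is independent of the order of summation and no conditional-convergence issue can arise. The geometric series $\sum_n \eps 2^{-n} = \eps$ provides exactly the summable bound needed to control the accumulated approximation error, which is the whole reason for distributing the tolerance as $\eps 2^{-n}$ rather than a fixed $\eps$ at each stage.
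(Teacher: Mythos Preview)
Your proof is correct and follows essentially the same approach as the paper: both dispose of the case $\sum_n \mu^* A_n = +\infty$, then use the $\eps\, 2^{-n}$ trick to choose near-optimal countable covers of each $A_n$, combine them into a single countable cover of $\bigcup_n A_n$, and sum. Your additional remarks on the countability of the double family and the legitimacy of rearranging the non-negative double series are helpful clarifications but do not depart from the paper's argument.
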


\begin{proof}[Dimostrazione]
Possiamo assumere che $\sum_n \mu^* A_n < \infty$, altrimenti non vi \e nulla da dimostrare.
Per ogni $\eps > 0$ ed $n \in \bN$ esiste una successione $\{ I^{(n)}_k \} \subset \mI^{aa}$ tale che
\begin{equation}
\label{eq_ME03a}
A_n \subseteq \cup_k I^{(n)}_k
\ \ , \ \
\mu^* A_n \geq \sum_k l( I^{(n)}_k ) - 2^{-n} \eps
\ .
\end{equation}
Inoltre $\cup_n A_n \subseteq \cup_n \cup_k I^{(n)}_k$, per cui, per definizione di $\mu^*$,
\begin{equation}
\label{eq_ME03.2}
\mu^* \left( \cup_n A_n \right)
\ \leq \
\sum_{n,k} l(I^{(n)}_k) 
\ \stackrel{(\ref{eq_ME03a})}{\leq} \
\sum_n \mu^* A_n + \eps
\ .
\end{equation}
\end{proof}

\begin{lem}[Regolarit\'a esterna]
\label{lem_ME03}
Sia $A \in 2^\bR$. Allora per ogni $\eps > 0$ esiste un aperto $U \subseteq \bR$, $U \supseteq A$, tale che $\mu^*U \leq \mu^*A + \eps$.
\end{lem}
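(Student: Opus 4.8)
The plan is to read the desired open set $U$ directly off a near-optimal covering of $A$ by open intervals, so the argument is essentially just an unwinding of the definition (\ref{eq_misEst}) of $\mu^*$. First I would dispose of the trivial case $\mu^* A = +\infty$: here the claimed inequality $\mu^* U \le \mu^* A + \eps = +\infty$ holds for \emph{any} superset, so I simply take $U := \bR$, which is open and contains $A$. This isolates the only genuinely delicate point (see below) into a separate, vacuous case.

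Assume now $\mu^* A < +\infty$. Since $\mu^* A$ is defined in (\ref{eq_misEst}) as an infimum over all countable coverings of $A$ by open intervals, the characterization of the infimum furnishes, for the given $\eps > 0$, a sequence $\{ I_n \} \subset \mI^{aa}$ with $A \subseteq \cup_n I_n$ and $\sum_n l(I_n) \le \mu^* A + \eps$. I then set $U := \cup_n I_n$. Being a countable union of open intervals, $U$ is open, and by construction $A \subseteq U$, so the two structural requirements on $U$ are immediate.

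It remains to bound $\mu^* U$. The cleanest route is to note that the very sequence $\{ I_n \}$ is itself a covering of $U$ by open intervals, so the definition (\ref{eq_misEst}) applied to $U$ yields $\mu^* U \le \sum_n l(I_n) \le \mu^* A + \eps$, which is the assertion. Equivalently, one may invoke subadditivit\`a (Lemma \ref{lem_ME03a}) together with the equality $\mu^*(I_n) = l(I_n)$ from (\ref{eq_ME02}) in Lemma \ref{lem_ME02}; both give the same estimate, but the first avoids even citing those lemmas.

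There is no substantial obstacle: the only step requiring a little care is the passage from the infimum defining $\mu^* A$ to an explicit $\eps$-nearly-optimal covering $\{ I_n \}$, and this is legitimate precisely because $\mu^* A$ is finite. When the infimum equals $+\infty$ no finite-length covering need exist, which is exactly the reason the infinite case must be, and has been, settled separately at the outset.
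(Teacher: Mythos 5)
Your proposal is correct and coincides with the paper's argument: both extract an $\eps$-nearly-optimal open covering $\{ I_n \}$ from the definition of $\mu^*A$ (in the finite case), set $U := \cup_n I_n$, and bound $\mu^*U$ by $\sum_n l(I_n)$ via subadditivity (or, equivalently, directly from the definition). Your explicit treatment of the case $\mu^*A = +\infty$ is a minor tidying of what the paper leaves implicit.
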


\begin{proof}[Dimostrazione]
E' sufficiente considerare il caso $\mu^*A < \infty$, cosicch\'e per definizione esiste una successione $\{ I_n \} \subset \mI^{aa}$ tale che
$A \subseteq \cup_n I_n$ e $\sum_n l(I_n) \leq \mu^*A  +\eps$.
Posto $U := \cup_n I_n$, applicando la subadditivit\'a numerabile troviamo 
$\mu^*U \leq \sum_n l(I_n)$
e quindi otteniamo quanto volevasi dimostrare.
\end{proof}

\begin{rem}
\label{rem_ME04}
{\it
Dai risultati precedenti seguono banalmente le seguenti propriet\'a: 
\textbf{(1)} Se $\mu^* A = 0$ allora $\mu^*(A \cup B) = \mu^* B$ per ogni $B \in 2^\bR$; 
\textbf{(2)} Se $A$ \e numerabile allora $\mu^* A = 0$.
}
\end{rem}

Ora, grazie ai Lemmi precedenti sappiamo che $\mu^*$ \e una misura esterna nel senso di \S \ref{sec_MIS1} per cui, specializzando la nozione di misurabilit\'a di Carath\'eodory al caso $X = \bR$, diciamo che un insieme $E \subseteq \bR$ \e {\em misurabile secondo Lebesgue} se 
\[
\mu^* A = \mu^*(E \cap A) + \mu^*(E^c \cap A)
\ , \
\forall A \subseteq \bR
\ .
\]
Denotiamo con $\mL \subset 2^\bR$ la classe degli insiemi misurabili secondo Lebesgue.

\begin{thm}
%\textbf{\cite[Teo.2.2.4,Teo.2.2.6]{dBar}}
\label{thm_ML02}
La classe $\mL$ \e una $\sigma$-algebra e contiene quella dei boreliani su $\bR$.
\end{thm}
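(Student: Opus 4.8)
The plan is to recognize $\mL$ as the $\sigma$-algebra of Carath\'eodory-measurable subsets of $\bR$ attached to the outer measure $\mu^*$, and then to check separately that this $\sigma$-algebra contains the generators of $\beta\bR$. First I would note that Lemmas~\ref{lem_ME01}, \ref{lem_ME02} and \ref{lem_ME03a} together say exactly that $\mu^*$ is an outer measure in the sense of \secref{sec_MIS1}: property (1) is Lemma~\ref{lem_ME01}(2), monotonicity (2) is Lemma~\ref{lem_ME01}(3), and countable subadditivity (3) is Lemma~\ref{lem_ME03a}. Hence $\mL$ coincides with the class $\mM_*$ of \S\ref{sec_MIS1} for $X = \bR$, and Lemma~\ref{lem_car} immediately yields that $\mL$ is a $\sigma$-algebra (and, as a bonus, that $(\bR,\mL,\mu^*|_{\mL})$ is complete). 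This disposes of the first assertion without further work.

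For the inclusion $\beta\bR \subseteq \mL$, since $\mL$ is already a $\sigma$-algebra and $\beta\bR$ is generated by the intervals, it suffices to show that each half-line $E := (a,+\infty)$, $a \in \bR$, lies in $\mL$; the remaining open intervals, hence all open sets, and hence all of $\beta\bR$, are then recovered from the $E$'s by complements, finite intersections and countable unions. To verify the Carath\'eodory condition \eqref{eq_subadd00} for such an $E$ I would invoke \eqref{eq_subadd0} and reduce to proving, for every test set $A$ with $\mu^* A < \infty$, the single inequality
\[
\mu^*(A \cap E) + \mu^*(A \cap E^c) \ \leq \ \mu^* A \ .
\]
Fixing $\eps > 0$, I choose by definition of $\mu^*$ a cover $A \subseteq \cup_n I_n$ with $\{ I_n \} \subset \mI^{aa}$ and $\sum_n l(I_n) \leq \mu^* A + \eps$. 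The geometric heart of the matter is that each $I_n$ is split by the point $a$ into two intervals $I_n \cap E$ and $I_n \cap E^c$ whose lengths add up to $l(I_n)$. Applying monotonicity (Lemma~\ref{lem_ME01}(3)), subadditivity (Lemma~\ref{lem_ME03a}), and the identification of $\mu^*$ with length on intervals (Lemma~\ref{lem_ME02}), I obtain
\[
\mu^*(A \cap E) + \mu^*(A \cap E^c)
\ \leq \
\sum_n \left[ l(I_n \cap E) + l(I_n \cap E^c) \right]
\ = \
\sum_n l(I_n)
\ \leq \
\mu^* A + \eps \ ,
\]
and letting $\eps \to 0$ closes the estimate.

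The step I expect to be the main obstacle is precisely this length-additivity of the two pieces together with their correct measurement: one must be careful that $I_n \cap E$ and $I_n \cap E^c$ are in general half-open or closed bounded intervals, not open ones, so it is essential to have Lemma~\ref{lem_ME02} computing $\mu^*$ for \emph{all} interval types rather than only for members of $\mI^{aa}$. The boundary point $a$ itself contributes nothing, since singletons have outer measure zero by Lemma~\ref{lem_ME01}(4), and this is what legitimizes the clean identity $l(I_n \cap E) + l(I_n \cap E^c) = l(I_n)$ at the level of $\mu^*$. Everything else is a routine consequence of the outer-measure axioms already established.
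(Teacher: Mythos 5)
Your proposal is correct and follows essentially the same route as the paper: both deduce that $\mL$ is a $\sigma$-algebra from the Carath\'eodory construction (Lemma~\ref{lem_car}), and both verify $\beta\bR \subseteq \mL$ by checking the Carath\'eodory inequality for a half-line $(a,+\infty)$, covering the test set $A$ by open intervals, splitting each one at the point $a$, and invoking Lemma~\ref{lem_ME02} together with monotonicity and subadditivity. Your remark that the split pieces are not open intervals, so that the full strength of Lemma~\ref{lem_ME02} is needed, is exactly the point the paper uses implicitly in writing $l(I_n) = l(I_n^-) + l(I_n^+)$.
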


\begin{proof}[Dimostrazione]
Il fatto che $\mL$ \e una $\sigma$-algebra segue dal Lemma \ref{lem_car}. 
Verifichiamo che $\mL$ contiene i boreliani; a tale scopo \e sufficiente mostrare che ogni intervallo del tipo $J := (a,\infty)$, $a \in \bR$, appartiene ad $\mL$. Grazie alla regolarit\'a esterna possiamo prendere una successione $\{ I_n \}$ di intervalli aperti tali che, preso $A \subseteq \bR$,
\[
A \subseteq \cup_n I_n
\ \ , \ \
\mu^*A \geq \sum_n l(I_n) - \eps \ .
\]
Posto 
$I_n^- := J^c \cap I_n$, 
$I_n^+ := J \cap I_n$,
abbiamo, grazie al Lemma \ref{lem_ME02},
$l(I_n) = l(I_n^-) + l(I_n^+)$, 
cosicch\'e
\[
\begin{array}{ll}
\mu^* (A \cap J) + \mu^* (A \cap J^c)            & \leq
\mu^* \left(A \cap \bigcup_n I_n^+ \right) +
\mu^* \left(A \cap \bigcup_n I_n^- \right)    \\ & \leq
\sum_n l(I_n^+)  + \sum_n l(I_n^-)            \\ & \leq
\sum_n l(I_n)                                 \\ & \leq
\mu^*A + \eps
\ ;
\end{array}
\]
per arbitrariet\'a di $\eps$ concludiamo $\mu^* (A \cap J) + \mu^* (A \cap J^c) \leq \mu^*A$ e quindi (essendo la diseguaglianza opposta sempre verificata) che $J$ \e misurabile.
\end{proof}

\begin{rem}
{\it
L'inclusione dei boreliani in $\mL$ \e stretta, per cui $\mL$ \e una $\sigma$-algebra piuttosto "grande" (vedi \cite[\S 2.2]{dBar} o \cite[Es.3.1.8]{Acq}). 
Tuttavia esistono insiemi non misurabili secondo Lebesgue, il pi\'u famoso dei quali, l'\textbf{insieme di Vitali}, \e il sottoinsieme $V \subset [0,1]$ che si costruisce come segue: innanzitutto consideriamo la proiezione di $[0,1]$ sull'insieme delle classi di equivalenza modulo i razionali,
\[
\pi : [0,1] \to [0,1]_\bQ
\ \ , \ \
\pi(x) := \{ y \in [0,1] : y-x \in \bQ \}
\ ,
\]
e quindi definiamo $V$ come l'immagine di una sezione
{\footnote{Una sezione dell'applicazione suriettiva $\pi : Y \to X$ \e un'applicazione iniettiva $s : X \to Y$ tale che $\pi \circ s$ \e l'identit\'a su $X$. L'esistenza di una sezione in generale non \e assicurata, a meno che non si invochi l'assioma della scelta.}}
di $\pi$ (dettagli su \cite[\S 3.4]{Roy} o \cite[\S 1.8]{Acq}).
}
\end{rem}

Definiamo ora la {\em misura di Lebesgue}
\[
\mu : \mL \to \wa \bR^+
\ \ , \ \
\mu E := \mu^*E 
\ \ , \ \
E \in \mL
\ .
\]

\begin{thm}
\label{thm_ML03}
$\mu$ \e una misura completa, boreliana, regolare esterna, invariante per traslazioni ed estende la funzione lunghezza definita sugli intervalli. 
\end{thm}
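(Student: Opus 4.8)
The plan is to assemble the statement from the properties of the outer measure $\mu^*$ already established in \S\ref{sec_MIS1} and in the preceding lemmas, since by construction $\mL = \mM_*$ and $\mu = \mu^*|_{\mL}$ are precisely the $\sigma$-algebra and the measure produced by the Carath\'eodory method.

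First, \emph{completeness} together with the fact that $\mu$ is a genuine (countably additive) measure comes for free from Lemma \ref{lem_car} applied to $\mu^*$: that lemma asserts exactly that $(\bR,\mM_*,\mu_*)$ is a complete measurable space. Next, the \emph{Borel} property is immediate from Theorem \ref{thm_ML02}, since $\mL$ contains the Borel $\sigma$-algebra and in particular $\tau\bR \subset \mL$, so $\mu$ is a Borel measure. That $\mu$ \emph{extends the length function} is then the content of Lemma \ref{lem_ME02}, once we know (again by Theorem \ref{thm_ML02}) that every interval is measurable: for $I$ with endpoints $a<b$ we have $\mu(I)=\mu^*(I)=b-a=l(I)$, irrespective of whether the endpoints are included.

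For \emph{outer regularity} I would combine Lemma \ref{lem_ME03} with monotonicity (Lemma \ref{lem_ME01}(3)). Fix $E \in \mL$. Every open $U \supseteq E$ satisfies $\mu U \ge \mu E$ by monotonicity, so the infimum in (\ref{eq_MIS_RE}) is $\ge \mu E$. Conversely, Lemma \ref{lem_ME03} produces, for each $\eps>0$, an open $U \supseteq E$ with $\mu U = \mu^* U \le \mu^* E + \eps = \mu E + \eps$; letting $\eps \to 0$ gives the reverse inequality, hence equality.

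The only point requiring a genuine argument, and the main obstacle, is \emph{translation invariance}, because Lemma \ref{lem_ME01}(5) gives invariance of $\mu^*$ but not of the class $\mL$. I would therefore first check that $\mL$ is closed under translations. Given $E \in \mL$, $x \in \bR$, and an arbitrary test set $A \in 2^\bR$, I rewrite $(E+x)\cap A = (E\cap(A-x))+x$ and $(E+x)^c \cap A = (E^c \cap (A-x))+x$; applying $\mu^*(\,\cdot\,+x)=\mu^*(\,\cdot\,)$ and then the Carath\'eodory condition for $E$ with test set $A-x$ yields
\[
\mu^*((E+x)\cap A) + \mu^*((E+x)^c \cap A)
= \mu^*(E\cap(A-x)) + \mu^*(E^c\cap(A-x))
= \mu^*(A-x) = \mu^* A ,
\]
so $E+x \in \mL$. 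With measurability of $E+x$ secured, the identity $\mu(E+x)=\mu^*(E+x)=\mu^* E=\mu E$ is just Lemma \ref{lem_ME01}(5) once more, completing the proof.
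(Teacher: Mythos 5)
Your proposal is correct and follows essentially the same route as the paper: everything is assembled from Lemma \ref{lem_car}, Theorem \ref{thm_ML02}, and Lemmas \ref{lem_ME01}--\ref{lem_ME03}. The one place where you go beyond the paper's (very terse) proof is in explicitly verifying that $\mL$ is closed under translations via the identity $(E+x)\cap A = (E\cap(A-x))+x$ and the Carath\'eodory criterion applied to the test set $A-x$ --- a detail the paper leaves implicit but which is genuinely needed, since Lemma \ref{lem_ME01}(5) only gives invariance of $\mu^*$, not of the measurable class.
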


\begin{proof}[Dimostrazione]
Il fatto che $\mu$ \e una misura segue dal Lemma \ref{lem_car}. La regolarit\'a esterna, il fatto che $\mu$ estende la funzione lunghezza, e l'invarianza per traslazioni seguono dai risultati precedenti. Infine, la completezza segue sempre dal Lemma \ref{lem_car}.
\end{proof}

Il risultato seguente implica (essendo evidente che $\mu C < \infty$ per ogni compatto $C \subset \bR$) che la misura di Lebesgue soddisfa anche la propriet\'a di regolarit\'a interna, per cui essa \e una misura di Radon. Avendosi anche invarianza per traslazioni concludiamo che $\mu$ \e la misura di Haar su $\bR$ inteso come gruppo topologico rispetto all'operazione di somma.

\begin{cor}
\label{cor_ri}
Sia $E \in \mL$. Per ogni $\eps > 0$ esiste un chiuso $W_\eps \subseteq E$ tale che $\mu E - \mu W_\eps < \eps$.
\end{cor}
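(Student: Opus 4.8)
La strategia \`e di ricondurre l'enunciato, per complementazione, alla regolarit\`a esterna gi\`a dimostrata: un chiuso $W \subseteq E$ \`e esattamente il complementare $W = U^c$ di un aperto $U \supseteq E^c$, e si ha $E - W = E \cap U = U - E^c$. Dunque trovare un chiuso $W \subseteq E$ con $\mu(E-W)$ piccola equivale a trovare un aperto $U \supseteq E^c$ con $\mu(U - E^c)$ piccola. Il punto delicato \`e che il Lemma \ref{lem_ME03} fornisce soltanto la stima $\mu^* U \leq \mu^* A + \eps$, la quale diventa priva di contenuto quando $\mu^* A = +\infty$; occorre quindi prima rinforzarla in una stima sulla \emph{differenza} $\mu(U-A)$, valida per ogni $A \in \mL$.

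Il primo passo \`e dimostrare che per ogni $A \in \mL$ ed ogni $\eps > 0$ esiste un aperto $U \supseteq A$ con $\mu(U-A) < \eps$. A tale scopo sfrutto la $\sigma$-finitezza di $\bR$: fisso una partizione numerabile $\bR = \dot\cup_k J_k$ in intervalli limitati (ad esempio gli $[n,n+1)$, $n \in \bZ$, opportunamente enumerati) e pongo $A_k := A \cap J_k$, cosicch\'e ogni $A_k$ \`e misurabile e $\mu A_k \leq \mu J_k < \infty$ per monoton\'ia (Oss.\ref{def_MIS02}). Applicando il Lemma \ref{lem_ME03} ad ogni $A_k$ ottengo un aperto $U_k \supseteq A_k$ con $\mu U_k \leq \mu A_k + \eps 2^{-k}$; poich\'e $\mu A_k < \infty$, l'additivit\`a su $A_k \subseteq U_k$ (Oss.\ref{def_MIS02}) d\`a $\mu(U_k - A_k) = \mu U_k - \mu A_k \leq \eps 2^{-k}$. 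Posto $U := \cup_k U_k$, questo \`e aperto, contiene $A$, e dall'inclusione $U - A \subseteq \cup_k (U_k - A_k)$ (conseguenza di $A_k \subseteq A$) la subadditivit\`a numerabile (Lemma \ref{lem_ME03a}) fornisce $\mu(U-A) \leq \sum_k \mu(U_k - A_k) \leq \eps$.

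Il secondo passo \`e la conclusione per complementazione. Applico quanto ottenuto all'insieme misurabile $A := E^c$, ottenendo un aperto $U \supseteq E^c$ con $\mu(U - E^c) < \eps$. Pongo allora $W_\eps := U^c$: tale insieme \`e chiuso, soddisfa $W_\eps \subseteq (E^c)^c = E$, e si ha $E - W_\eps = E \cap U = U - E^c$, da cui $\mu(E - W_\eps) < \eps$. Infine, essendo $E = W_\eps \,\dot\cup\, (E - W_\eps)$, l'additivit\`a d\`a $\mu E = \mu W_\eps + \mu(E - W_\eps)$, ovvero $\mu E - \mu W_\eps = \mu(E - W_\eps) < \eps$ (lettura che \`e ben posta quando $\mu E < \infty$; nel caso $\mu E = \infty$ si ottiene comunque $\mu(E - W_\eps) < \eps$ e dunque $\mu W_\eps = \infty$). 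L'ostacolo principale, come anticipato, \`e proprio la gestione del caso $\mu E = \infty$: senza la decomposizione $\sigma$-finita la regolarit\`a esterna nella forma del Lemma \ref{lem_ME03} non permette alcuna conclusione, ed \`e il passaggio alla stima sulla differenza $\mu(U-A)$ a rendere effettivo l'argomento.
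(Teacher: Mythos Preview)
Your proof is correct and follows the same core idea as the paper --- apply outer regularity to $E^c$ and take complements --- but you are more careful. The paper simply invokes Lemma~\ref{lem_ME03} on $E^c$ and writes $\mu U_\eps - \mu E^c < \eps$, then identifies this with $\mu(U_\eps - E^c)$; this step is only justified when $\mu E^c < \infty$, a point the paper does not address. Your preliminary strengthening via the $\sigma$-finite decomposition $\bR = \dot\cup_k J_k$ yields the difference bound $\mu(U-A) \leq \eps$ directly, valid for every $A \in \mL$, and thereby closes exactly the gap you identified. One cosmetic point: your first step produces $\mu(U-A) \leq \eps$ (since $\sum_k 2^{-k} = 1$), while in the second step you write the strict inequality; replace $\eps 2^{-k}$ by, say, $\eps 2^{-k-1}$ to obtain the strict bound required by the statement.
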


\begin{proof}[Dimostrazione]
Applicando il Lemma \ref{lem_ME03} ad $E^c$ troviamo che per ogni $\eps > 0$ esiste un aperto $U_\eps \supseteq E^c$ tale che $\mu U_\eps - \mu E^c < \eps$.
Del resto
$\mu U_\eps - \mu E^c = 
 \mu ( U_\eps - E^c ) =
 \mu ( E - U_\eps^c )$,
ed essendo $U_\eps^c$ chiuso troviamo quanto volevasi dimostrare.
\end{proof}

\begin{rem}[Misura di Lebesgue e misure di Hausdorff]
{\it
Confrontando la definizione di misura esterna di Lebesgue (\ref{eq_misEst}) con quella di Hausdorff (\ref{eq.mis.Haus}), ed osservando che i dischi nello spazio metrico $\bR$ sono proprio gli intervalli, concludiamo che la misura di Lebesgue \e la misura di Hausdorff su $\bR$ di dimensione $\delta = 1$.
}
\end{rem}

\begin{ex}[Insiemi di Cantor]
\label{ex_cantor}
{\it
Sia $\theta \in (0,1/3]$. Togliendo da $[0,1]$ l'intervallo aperto $I_1^1$ di centro $1/2$ e lunghezza $\theta$ otteniamo due intervalli chiusi $J_1^2$, $J_2^2$; da questi togliamo i due intervalli aperti $I_1^2$, $I_2^2$ di lunghezza $\theta^2$ centrati nei due punti di mezzo di $J_1^2$, $J_2^2$. Iterando il procedimento, al passo $k$-esimo toglieremo da $2^{k-1}$ intervalli chiusi i $2^{k-1}$ intervalli aperti di mezzo di lunghezza $\theta^k$. Nel limite $k \to \infty$ otteniamo l'insieme
\[
W_\theta := [0,1] - \cup_{k=1}^\infty \cup_{n=1}^{2^{k-1}} I_n^k
\ .
\]
Chiaramente $W_\theta$ \e chiuso e la sua misura \e facilmente calcolabile per additivit\'a numerabile,
\[
\mu W_\theta = 
1 - \sum_k \sum_n^{2^{k-1}} \theta^k =
1 - \frac{1}{2} \sum_k (2 \theta)^k =
\frac{1-3 \theta}{1 - 2 \theta}
\ ,
\]
ci\'o nonostante le ben note propriet\'a "mostruose" degli insiemi $W_\theta$ a livello topologico (vedi \cite[\S 12.9]{Roy}).
Per $\theta = 1/3$ otteniamo il classico insieme di Cantor, il quale ha quindi misura nulla. In realt\'a la misura pi\'u adatta per trattare $W_{1/3}$ \e la misura di Hausdorff di cui abbiamo gi\'a accennato; la dimensione di Hausdorff di $W_{1/3}$ risulta essere $\dim_H W_{1/3} = \ln 2 / \ln 3$ (vedi \cite[\S 2.6]{dBar}).
}
\end{ex}

\

\noindent \textbf{Cenni sul Lemma di Vitali.} Sia $\mI$ una collezione di intervalli. Preso $A \in 2^\bR$, diciamo che $\mI$ ricopre $A$ {\em nel senso di Vitali} se per ogni $\eps > 0$ ed $x \in A$ esiste $I \in \mI$ tale che $x \in I$ e $l(I) < \eps$.

\begin{lem}\textbf{(Vitali, \cite[Lemma 5.1.1]{Roy}).}
Sia $A \subset \bR$ un insieme di misura esterna finita ed $\mI$ una collezione che ricopre $A$ nel senso di Vitali. Allora, preso $\eps > 0$, esiste un insieme finito $\{ I_1 , \ldots , I_n \}$ di intervalli mutualmente disgiunti tale che
$\mu^* \left( A - \dot{\cup}_k^n I_k \right) < \eps$.
\end{lem}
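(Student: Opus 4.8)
The plan is a greedy selection followed by a covering estimate. First I would guarantee that the total length of the chosen intervals stays finite. Since $\mu^* A < \infty$, external regularity (Lemma~\ref{lem_ME03}) supplies an open set $U \supseteq A$ with $\mu^* U < \infty$; discarding from $\mI$ every interval not contained in $U$ leaves a family still covering $A$ in the Vitali sense, because each $x \in A$ lies in the open set $U$ and the Vitali intervals through $x$ of small enough length are forced inside $U$. It is also harmless to assume the intervals of $\mI$ are closed: replacing $I$ by $\ovl I$ changes neither $l(I)$ nor the fact that $x \in I$, and the finitely many endpoints eventually dropped form a null set by Lemma~\ref{lem_ME02} and subadditivity.

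Next comes the selection. I pick $I_1 \in \mI$ arbitrarily; having chosen mutually disjoint $I_1, \ldots, I_k$, I stop if $A \subseteq \cup_{j \le k} I_j$ (the conclusion then holds with uncovered measure $0$). Otherwise I set $s_k := \sup\{\, l(I) : I \in \mI,\ I \cap I_j = \emptyset,\ j = 1, \ldots, k \,\}$, which is finite since every interval lies in $U$, and I choose $I_{k+1} \in \mI$ disjoint from $I_1, \ldots, I_k$ with $l(I_{k+1}) > s_k/2$. The set over which $s_k$ is taken is nonempty: as $\cup_{j \le k} I_j$ is closed and does not contain $A$, some $x \in A$ is at positive distance from it, and a short enough Vitali interval at $x$ avoids it. If the process never stops I obtain infinitely many disjoint $\{ I_k \} \subset U$, so that $\sum_k l(I_k) = \mu(\dot{\cup}_k I_k) \le \mu U < \infty$; in particular $l(I_k) \lra 0$.

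Finally, given $\eps > 0$ I fix $N$ with $\sum_{k > N} l(I_k) < \eps/5$ and claim $R := A - \cup_{k \le N} I_k$ satisfies $\mu^* R < \eps$. For $x \in R$ there is, as above, some $I \in \mI$ through $x$ disjoint from the closed set $\cup_{k \le N} I_k$. This $I$ must meet some later $I_k$: were it disjoint from every selected interval, then $l(I) \le s_k < 2\, l(I_{k+1}) \lra 0$, forcing $l(I)=0$. Letting $k$ be the least index with $I \cap I_k \neq \emptyset$, necessarily $k > N$, and by minimality $l(I) \le s_{k-1} < 2\, l(I_k)$. An elementary estimate then places $I$ inside the interval $J_k$ concentric with $I_k$ of length $5\, l(I_k)$, since every point of $I$ lies within $l(I) + \tfrac12 l(I_k) < \tfrac52 l(I_k)$ of the centre of $I_k$. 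Hence $R \subseteq \cup_{k > N} J_k$, and subadditivity (Lemma~\ref{lem_ME03a}) gives $\mu^* R \le \sum_{k>N} l(J_k) = 5 \sum_{k>N} l(I_k) < \eps$.

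I expect the crux — and the only genuinely non-routine step — to be this last covering argument: showing that each still-uncovered point is trapped by the fivefold dilation of a discarded interval. It rests entirely on the comparison $l(I) < 2\, l(I_k)$ produced by the greedy rule together with $l(I_k) \lra 0$, so the care lies in the bookkeeping of ``least index meeting $I$'' rather than in any hard analysis.
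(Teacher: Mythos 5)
Your argument follows exactly the greedy selection of the paper's own (sketched) proof: restrict to closed intervals inside an open set $U \supseteq A$ of finite measure, pick disjoint intervals with $l(I_{k+1}) > s_k/2$, and use $\sum_k l(I_k) \leq \mu U < \infty$ to choose the cutoff $N$. The only difference is that you actually carry out the final step the paper leaves unverified — trapping each uncovered point in the fivefold concentric dilation $J_k$ of some discarded $I_k$ via the comparison $l(I) < 2\,l(I_k)$ — and that step is correct as written.
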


\begin{proof}[Sketch della dimostrazione]
Possiamo assumere che ogni intervallo di $\mI$ sia chiuso (altrimenti passiamo alla chiusura). Preso un aperto $U$ di misura finita che contiene $A$, possiamo assumere che ogni $I \in \mI$ sia contenuto in $U$. Costruiamo ora per induzione l'insieme finito cercato, nel seguente modo: (1) Scegliamo un arbitrario $I_1 \in \mI$; (2) Assumendo di aver scelto i primi $k$ intervalli disgiunti $I_1 , \ldots , I_k$, definiamo
\[
\lambda(k) 
\ := \ 
\sup \ 
\{ 
   l(I) 
   \ : \  
   I \in \mI \ ,\  I \cap \left( \dot{\cup}_{j=1}^k I_j \right) = \emptyset  
\} 
\ .
\]
Poich\'e ogni $I \in \mI$ \e contenuto in $U$ abbiamo $\lambda(k) < \infty$. Ora, a meno che non sia $A \subseteq \dot{\cup}_j^k I_k$, possiamo trovare $I \in \mI$ disgiunto da $I_1 , \ldots , I_k$ e tale che $l(I) > 1/2 \lambda(k)$; per cui abbiamo una successione $\{ I_k \}$  di intervalli disgiunti tali che $\sum_k l(I_k) < \mu U < \infty$; (3) Scegliamo ora $\eps > 0$; poich\'e la serie $\sum_k l(I_k)$ \e convergente troviamo $n \in \bN$ tale che 
\[
\sum_{k=n+1}^\infty l(I_k) < \eps 
\ .
\]
Per cui il Lemma sar\'a dimostrato se verifichiamo che $\mu^* (A-\dot{\cup}_{k=1}^nI_k) < \eps$.
\end{proof}

Il Lemma di Vitali si pu\'o generalizzare ad $\bR^n$, $n > 1$ (Teorema di Besicovitch), e ad arbitrari spazi metrici. In tal caso il ruolo degli intervalli viene assunto da palle di raggio opportuno. Su questo argomento si veda \cite{Sul} e referenze.

\

\noindent \textbf{Funzioni misurabili sulla retta reale. Funzioni a gradini.} Passiamo ora a studiare le funzioni misurabili sulla retta reale. In particolare, introduciamo un'interessante classe di funzioni semplici, le {\em funzioni a gradini}
\[
\psi = \sum_k^n \lambda_k \chi_{I_k}
\ \ , \ \
\lambda_k \in \bR
\ \ , \ \
I_k := (a_k , b_k)
\ , \
a_k,b_k \in \bR
\ .
\]
Denotiamo con $\mG(A)$ l'insieme delle funzioni a gradini definite su un insieme misurabile $A \subseteq \bR${\footnote{Osservare che quando la parte interna $\dot{A}$ \e l'insieme vuoto, abbiamo $\mG(A) = \{ 0 \}$.}}. Osserviamo che $\mG(\bR)$ rispecchia propriet\'a topologiche di $\bR$; infatti esso \e definito per mezzo della base degli intervalli, a prescindere dalla struttura di $\bR$ inteso come spazio misurabile. Chiaramente $\mG(\bR)$ \e una sottoalgebra di $S(\bR)$, ed entrambe sono sottoalgebre di $M(\bR)$. Usando la regolarit\'a esterna di $\mu$, si dimostra il seguente
\begin{lem}
\label{lem_sim_stp}
Siano $a,b \in \bR$ e $\varphi \in S([a,b])$. Allora per ogni $\eps > 0$ esiste una funzione a gradini $\psi_\eps \in \mG([a,b])$ tale che $\mu \{ x : | \varphi - \psi_\eps  | \geq \eps \} < \eps$.
\end{lem}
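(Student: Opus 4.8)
Il piano è di ridursi, sfruttando la linearità, al caso di una singola funzione caratteristica. Scriviamo $\varphi = \sum_{i=1}^m \lambda_i \chi_{A_i}$ con $\lambda_i \in \bR$ ed $A_i \in \mL$, $A_i \subseteq [a,b]$ (in particolare $\mu A_i \leq b-a < \infty$). Per ogni indice $i$ ed ogni $\delta > 0$ la regolarità esterna di $\mu$ (Lemma~\ref{lem_ME03}) fornisce un aperto $U_i \supseteq A_i$ con $\mu^* U_i \leq \mu^* A_i + \delta$; essendo $A_i$ misurabile di misura finita, ciò equivale a $\mu(U_i - A_i) = \mu U_i - \mu A_i \leq \delta$.

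Per prima cosa sfruttiamo il fatto topologico elementare che ogni aperto di $\bR$ si decompone in un'unione numerabile e disgiunta di intervalli aperti (le sue componenti connesse). Scriviamo dunque $U_i = \dot{\cup}_k I_k^{(i)}$ con $I_k^{(i)} \in \mI^{aa}$; poiché $\mu U_i < \infty$, ogni $I_k^{(i)}$ è limitato e, per additività numerabile unita al fatto che $\mu$ estende la funzione lunghezza (Teo.~\ref{thm_ML03}), risulta $\sum_k l(I_k^{(i)}) = \mu U_i < \infty$. La convergenza della serie permette di scegliere $N_i \in \bN$ tale che $\sum_{k > N_i} l(I_k^{(i)}) < \delta$; poniamo allora $V_i := \cup_{k=1}^{N_i} I_k^{(i)}$, unione finita di intervalli, cosicché $\chi_{V_i} \in \mG([a,b])$ (a meno della restrizione a $[a,b]$, che lascia una funzione a gradini). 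Stimiamo quindi la differenza: $V_i - A_i \subseteq U_i - A_i$ dà $\mu(V_i - A_i) < \delta$, mentre $A_i - V_i \subseteq U_i - V_i = \dot{\cup}_{k>N_i} I_k^{(i)}$ dà $\mu(A_i - V_i) < \delta$; posto $E_i := (A_i - V_i) \cup (V_i - A_i)$ otteniamo $\mu E_i < 2\delta$.

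Infine ricombiniamo, definendo $\psi_\eps := \sum_{i=1}^m \lambda_i \chi_{V_i} \in \mG([a,b])$. Fuori dall'insieme $E := \cup_{i=1}^m E_i$ si ha $\chi_{A_i} = \chi_{V_i}$ per ogni $i$, dunque $\varphi = \psi_\eps$; di conseguenza $\{ x : |\varphi(x) - \psi_\eps(x)| \geq \eps \} \subseteq \{ x : \varphi(x) \neq \psi_\eps(x) \} \subseteq E$, con $\mu E < 2m\delta$. Scegliendo $\delta$ tale che $2m\delta < \eps$ si conclude $\mu \{ x : |\varphi - \psi_\eps| \geq \eps \} < \eps$, come richiesto (si ottiene in realtà la stima più forte $\mu \{ x : \varphi \neq \psi_\eps \} < \eps$). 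La parte più delicata sarà la riduzione dell'aperto $U_i$ a un'unione finita di intervalli: essa poggia sulla decomposizione di un aperto di $\bR$ in intervalli e sulla convergenza di $\sum_k l(I_k^{(i)})$ garantita da $\mu U_i < \infty$; il resto è contabilità elementare sulle differenze simmetriche.
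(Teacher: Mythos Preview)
Your proof is correct. Both your argument and the paper's sketch rest on outer regularity of Lebesgue measure together with the decomposition of an open subset of $\bR$ into countably many disjoint open intervals, but they organize the approximation differently. The paper partitions the \emph{range} $[-M,M]$ of $\varphi$ into short intervals $J_n$, approximates each level set $\varphi^{-1}(J_n)$ by an open set $U_n = \dot\cup_i I_i^n$, and sets $\psi_\eps := \alpha_n$ on $I_i^n$; this range-slicing viewpoint is the one that generalizes directly to arbitrary measurable functions in the proof of Prop.~\ref{prop_app_mis}. You instead work directly with a representation $\varphi = \sum_{i=1}^m \lambda_i \chi_{A_i}$ and approximate each $A_i$ separately by a finite union of intervals $V_i$, controlling the symmetric difference. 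Your route is arguably cleaner for simple functions: it handles explicitly the truncation to finitely many intervals (a point the paper's sketch leaves implicit), and it yields the stronger conclusion $\mu\{\varphi \neq \psi_\eps\} < \eps$, which of course implies the stated estimate for any threshold.
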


\begin{proof}[Sketch della dimostrazione]
Osserviamo che $\varphi$ \`e, per definizione, limitata, per cui esiste $M \in \bR^+$ tale che $\| \varphi \|_\infty \leq M$. Scegliamo $\eps > 0$ e partizioniamo $[-M,M]$ in intervalli 
$J_n := [ \alpha_n-\eps/2  \ , \ \alpha_n+\eps/2 ]$, $n = 1 , \ldots , N$;
poich\'e ogni $\varphi^{-1}(J_n)$ \e misurabile esiste un aperto $U_n \supseteq \varphi^{-1}(J_n)$ tale che 
$U_n - \varphi^{-1}(J_n)$
ha misura minore di $\eps / N$. Ora, \e semplice verificare che $U_n$ pu\'o essere espresso come unione disgiunta di intervalli aperti, $U_n = \dot{\cup}_i I_i^n$, per cui definiamo
$\psi_\eps (x) := \alpha_n$, $x \in I_i^n$.
Ripetiamo quindi il procedimento al variare di $n = 1 , \ldots , N$.
\end{proof}

\begin{prop}
\label{prop_app_mis}
Sia $f \in M([a,b])$ tale che 
$\mu \{ x : f(x) = \pm \infty \} = 0$.
Allora per ogni $\eps > 0$ esistono una funzione a gradini $\varphi_\eps : [a,b] \to \bR$ ed una funzione continua, lineare a tratti $g_\eps \in C([a,b])$, tali che
\[
\mu \{ x : | f(x) - \varphi_\eps(x) | \geq \eps \} < \eps
\ \ , \ \
\mu \{ x : | f(x) - g_\eps(x) | \geq \eps \} < \eps
\ .
\]
Inoltre, se $m \leq f \leq M$, allora $g_\eps$, $\varphi_\eps$ possono essere scelte in maniera tale che $m \leq g_\eps , \varphi_\eps \leq M$.
\end{prop}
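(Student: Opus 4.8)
The plan is to obtain the step function $\varphi_\eps$ by chaining the three approximation results already at our disposal, to manufacture $g_\eps$ by ``rounding the corners'' of that same step function, and finally to enforce the two-sided bound by a truncation argument. Throughout, recall that $[a,b]$ carries finite Lebesgue measure $b-a$, which is exactly what makes Egoroff available.

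First I would reduce to the case in which $f$ is finite everywhere: the set $\{f = \pm\infty\}$ is null, so redefining $f$ to be $0$ there changes none of the measures we estimate and, by completeness of Lebesgue measure (Teo.\ref{thm_ML03}), preserves measurability. Writing $f = f^+ - f^-$ with $f^+ := \sup\{f,0\}$ and $f^- := \sup\{-f,0\}$, both summands are nonnegative and measurable, so Prop.\ref{prop_caressa} supplies increasing sequences of simple functions with $\psi_n^{\pm} \to f^{\pm}$ pointwise; hence $s_n := \psi_n^+ - \psi_n^- \in S([a,b])$ converges pointwise to $f$. Egoroff (Teo.\ref{thm_egoroff}), applied with target $\eps/2$ on the finite-measure space $[a,b]$, then produces a single simple function $s := s_{n_0}$ and an exceptional set of measure $< \eps/2$ off which $|f - s| < \eps/2$, so $\mu\{|f-s| \ge \eps/2\} < \eps/2$. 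Applying Lemma \ref{lem_sim_stp} to $s$ with parameter $\eps/2$ yields a step function $\varphi_\eps \in \mG([a,b])$ with $\mu\{|s - \varphi_\eps| \ge \eps/2\} < \eps/2$. Since $\{|f - \varphi_\eps| \ge \eps\} \subseteq \{|f-s| \ge \eps/2\} \cup \{|s-\varphi_\eps| \ge \eps/2\}$, subadditivity gives $\mu\{|f-\varphi_\eps| \ge \eps\} < \eps$, which is the first assertion.

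Next I would build $g_\eps$ from $\varphi_\eps$. A step function has finitely many jump points $p_1 < \cdots < p_N$; over a small interval $(p_j-\delta, p_j+\delta)$ around each I replace $\varphi_\eps$ by the affine segment interpolating its two one-sided values, obtaining a continuous piecewise linear $g_\eps \in C([a,b])$ that agrees with $\varphi_\eps$ outside the modification set $D := \cup_j (p_j-\delta,p_j+\delta)$. Writing $c := \mu\{|f-\varphi_\eps| \ge \eps\} < \eps$ and choosing $\delta$ so small that $\mu D < \eps - c$, I note that $|f - g_\eps| = |f - \varphi_\eps|$ on $[a,b] - D$, whence $\{|f-g_\eps| \ge \eps\} \subseteq D \cup \{|f-\varphi_\eps| \ge \eps\}$ and $\mu\{|f-g_\eps| \ge \eps\} \le \mu D + c < \eps$.

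It remains to secure the bound $m \le g_\eps, \varphi_\eps \le M$ when $m \le f \le M$. Here I would use the truncation $T(y) := \max\{m,\min\{M,y\}\}$, which is continuous, piecewise linear, $1$-Lipschitz, and fixes $[m,M]$ pointwise. Because $f(x) \in [m,M]$ forces $T(f(x)) = f(x)$, the Lipschitz property gives $|f - T\circ h| \le |f - h|$ for $h \in \{\varphi_\eps, g_\eps\}$, so passing from $\varphi_\eps, g_\eps$ to $T\circ\varphi_\eps, T\circ g_\eps$ can only shrink the exceptional sets while forcing all values into $[m,M]$; moreover $T\circ\varphi_\eps$ is still a step function and $T\circ g_\eps$ still continuous and piecewise linear (a composition of piecewise-linear continuous maps). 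I expect no deep obstacle: the only genuinely constructive step is the corner-rounding defining $g_\eps$, and the only thing to watch is the bookkeeping of the several halved $\eps$'s together with the strict inequality $c < \eps$ that leaves room for the modification set $D$.
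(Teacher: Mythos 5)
Your proof is correct, and its overall skeleton (measurable $\to$ simple $\to$ step $\to$ continuous piecewise linear, with a chain of triangle-inequality estimates on the exceptional sets, followed by corner-rounding) coincides with the paper's. The one genuinely different link is the first approximation: the paper constructs the simple function directly, by first finding $M$ with $\mu\{|f|>M\}<\eps/3$ (continuity from above of the measure on $\{|f|>M\}\downarrow\{|f|=\infty\}$) and then partitioning the \emph{range} $[-M,M)$ into intervals of length $<\eps$ and taking preimages, so that $|f-\varphi_\eps|<\eps$ off a set of measure $\eps/3$; you instead redefine $f$ on the null set, split $f=f^+-f^-$, invoke Prop.~\ref{prop_caressa} for a pointwise-convergent sequence of simple functions, and extract one of them via Egoroff. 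Both are legitimate; the paper's range-partition is more elementary and gives a quantitative $\sup$-bound for free, while your route is shorter to state but leans on the heavier Teo.~\ref{thm_egoroff}. You also make explicit, via the $1$-Lipschitz truncation $T(y)=\max\{m,\min\{M,y\}\}$, the final claim $m\le\varphi_\eps,g_\eps\le M$, which the paper's proof leaves implicit (there it follows because the range partition and the interpolation are performed inside $[m,M]$); your truncation argument is clean and closes that point rigorously.
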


\begin{proof}[Dimostrazione]
Si usa un argomento di tipo "$3$-$\eps$". Osserviamo che avendo $f^{-1}(\pm \infty)$ misura nulla, per regolarit\'a esterna per ogni $\eps > 0$ esiste $M \in \bR$ tale che $|f| \leq M$ tranne che su un insieme di misura $\eps / 3$. 
Come primo passo, partizioniamo $[-M,M)$ in intervalli semiaperti a sinistra di lunghezza $< \eps$, $[-M,M) = \dot{\cup}_k I_k$, $I_k = [y_k,y_{k+1})$, $y_{k+1} - y_k < \eps$, ed introduciamo la funzione semplice
\[
\varphi_\eps (x) := 
\left\{
\begin{array}{ll}
y_k \ \ , \ \ x \in f^{-1}(I_k)
\\
0   \ \ , \ \ {\mathrm{se}} \ |f(x)| > M
\ ,
\end{array}
\right.
\]
cosicch\'e $| f - \varphi_\eps| < \eps$ tranne che sull'insieme dove $|f| > M$ (che ha misura $\eps / 3$). 
Come secondo passo, usando Lemma \ref{lem_sim_stp} otteniamo una funzione a gradini $\psi_\eps$ tale che 
$\mu \{ x : | \varphi_\eps - \psi_\eps  | \geq \eps \} < \eps / 3$.
Come terzo ed ultimo passo, osserviamo che l'insieme $\{ x_n \}$ dei punti di discontinuit\'a di $\psi_\eps$ \e al pi\'u numerabile, per cui esiste una successione 
$\{ J_n := [ x_n - \delta_n , x_n + \delta_n ] \}$ 
di intervalli tra loro disgiunti e tali che 
$\sum_n (2 \delta_n) < \eps / 3$.
Definiamo quindi la funzione continua
\[
g_\eps (x) := 
\left\{
\begin{array}{ll}
\psi_\eps (x)
\ \ , \ \ 
x \in [a,b] - \dot{\cup}_n J_n
\\
\psi_\eps(x_n-\delta_n) +  \frac{ \psi_\eps(x_n+\delta_n) - \psi_\eps(x_n-\delta_n)  }
                                {2 \delta_n} \
                           (x-x_n+\delta_n)
\ \ , \ \ 
x \in J_n
\ .
\end{array}
\right.
\]
Per costruzione
$\mu \{ x : | g_\eps - \psi_\eps  | \geq \eps \} < \eps / 3$.
Stimando 
$| f - g_\eps | \leq | f - \varphi_\eps |         + 
                     | \varphi_\eps - \psi_\eps | +
                     | g_\eps - \psi_\eps |$,
ed, analogamente, $|f - \psi_\eps |$, otteniamo il risultato cercato.
\end{proof}

\begin{thm}[Lusin]
\label{thm_lusin}
Sia $f$ una funzione misurabile su un intervallo $[a,b]$. Allora per ogni $\eps > 0$ esiste una funzione continua $\varphi_\eps$ su $[a,b]$ tale che
\[
\mu \{ x : f(x) \neq \varphi_\eps(x) \} < \eps
\ .
\]
\end{thm}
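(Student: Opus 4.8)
The plan is to combine the approximation-in-measure result of Proposition~\ref{prop_app_mis} with the inner regularity of Lebesgue measure (Corollary~\ref{cor_ri}) and the Tietze extension theorem. Since the conclusion forces $f$ to agree with a finite-valued function off an arbitrarily small set, I may assume from the start that $f$ is finite almost everywhere (otherwise $\{x:|f(x)|=\infty\}\subseteq\{x:f(x)\neq\varphi_\eps(x)\}$ would have measure $<\eps$ for every $\eps$, forcing $\mu\{|f|=\infty\}=0$ anyway). In particular the hypothesis $\mu\{x:f(x)=\pm\infty\}=0$ of Proposition~\ref{prop_app_mis} is met.

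First I would, for each $n\in\bN$, apply Proposition~\ref{prop_app_mis} with tolerance $2^{-n}$ to obtain a continuous $g_n\in C([a,b])$ such that $E_n:=\{x:|f(x)-g_n(x)|\geq 2^{-n}\}$ satisfies $\mu E_n<2^{-n}$. Given the target $\eps>0$, I would fix $N$ with $2^{-N+1}<\eps/2$ and set $A:=\bigcup_{n\geq N}E_n$, so that $\mu A\leq\sum_{n\geq N}2^{-n}<\eps/2$. The point of this geometric choice of tolerances is that off $A$ the convergence is automatically uniform: for every $x\in[a,b]\setminus A$ and every $n\geq N$ one has $|f(x)-g_n(x)|<2^{-n}$, so $g_n\to f$ uniformly on $[a,b]\setminus A$. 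As a uniform limit of restrictions of continuous functions, $f$ is therefore continuous on the subspace $[a,b]\setminus A$. I note this step replaces the more usual appeal to Egoroff's theorem (Theorem~\ref{thm_egoroff}): the dyadic bookkeeping yields the required uniform convergence directly.

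Next I would pass from $[a,b]\setminus A$ to a \emph{closed} set, so as to be able to invoke Tietze. Since $[a,b]\setminus A\in\mL$, Corollary~\ref{cor_ri} provides a closed set $F\subseteq[a,b]\setminus A$ with $\mu\bigl(([a,b]\setminus A)\setminus F\bigr)<\eps/2$; combined with $\mu A<\eps/2$ this gives $\mu([a,b]\setminus F)<\eps$. Because $F\subseteq[a,b]\setminus A$, the restriction $f|_F$ is continuous in the subspace topology of $F$. Finally, $[a,b]$ being a metric space is normal and $F$ is closed, so the Tietze extension theorem yields $\varphi_\eps\in C([a,b])$ with $\varphi_\eps|_F=f|_F$. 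Since $f$ and $\varphi_\eps$ coincide on $F$, we have $\{x:f(x)\neq\varphi_\eps(x)\}\subseteq[a,b]\setminus F$, whence $\mu\{x:f(x)\neq\varphi_\eps(x)\}<\eps$, as desired.

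The one genuinely delicate point I expect is the interplay between two distinct notions of continuity: $f$ is \emph{not} continuous as a function on all of $[a,b]$, but only as a function on the subspace $[a,b]\setminus A$ (and hence on its closed subset $F$), and it is exactly this subspace continuity that Tietze takes as input. I would also be careful to allocate the two $\eps/2$ budgets — one for the exceptional set $A$ coming from the approximation, one for the inner-regularity loss $([a,b]\setminus A)\setminus F$ — before choosing $N$ and $F$, and to confirm that the reduction to $f$ finite almost everywhere is harmless.
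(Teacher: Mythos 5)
Your proof is correct and rests on the same skeleton as the paper's: both start from Proposition~\ref{prop_app_mis} to produce continuous approximants $g_n$ with $\mu\{|f-g_n|\geq\delta_n\}<\delta_n$, and both aim at showing that $f$ is continuous on the complement of a set of small measure. Where you diverge is in how that uniform convergence off a small set is obtained: the paper takes $\delta_n=1/n$, asserts a.e.\ convergence and then invokes Egoroff (Theorem~\ref{thm_egoroff}), whereas your choice of the summable tolerances $\delta_n=2^{-n}$ makes the tail union $A=\bigcup_{n\geq N}E_n$ small and the convergence on its complement uniform by construction — a Borel--Cantelli-style shortcut that dispenses with Egoroff altogether (and incidentally avoids the issue that the bounds $1/n$ are not summable, so a.e.\ convergence of the full sequence is not immediate from the paper's sketch without passing to a subsequence). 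You also make explicit the two steps the paper's sketch leaves unsaid but which are indispensable to land a function continuous on all of $[a,b]$: shrinking the measurable set $[a,b]\setminus A$ to a closed $F$ by inner regularity (Corollary~\ref{cor_ri}), and extending the subspace-continuous $f|_F$ by Tietze. Your preliminary reduction to $f$ finite a.e.\ correctly identifies the implicit hypothesis needed for Proposition~\ref{prop_app_mis}, and the distinction you draw between continuity on $[a,b]$ and continuity on the subspace $F$ is exactly the delicate point; the $\eps/2$ bookkeeping is sound.
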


\begin{proof}[Sketch della dimostrazione.]
Usando Prop.\ref{prop_app_mis} costruiamo una successione $\{ f_n \}$ di funzioni continue tali che 
$\mu \{ x : |f(x)-f_n(x)| \geq 1/n \} < 1 / n$.
Ci\'o fornisce una convergenza q.o. ed a questo punto usiamo il teorema di Egoroff.
\end{proof}

Il seguente risultato fornisce una caratterizzazione delle funzioni integrabili secondo Riemann dal punto di vista della misura di Lebesgue; si noti come la nozione di continuit\'a giochi un ruolo importante.
\begin{prop}\textbf{(Lebesgue-Vitali, \cite[Es.6.4.2]{Giu2}).}
Una funzione limitata $f : [a,b] \to \bR$ \e integrabile secondo Riemann se e soltanto se l'insieme dei punti di discontinuit\'a di $f$ ha misura (di Lebesgue) nulla.
\end{prop}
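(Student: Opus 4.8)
The plan is to prove both implications through the \emph{oscillation} of $f$ together with the Darboux criterion for Riemann integrability. First I would introduce
\[
\omega(x) \ := \ \inf_{\delta > 0} \ \sup \{ |f(y)-f(z)| : y,z \in (x-\delta,x+\delta) \cap [a,b] \}
\]
and recall the elementary fact that $f$ is continuous at $x$ if and only if $\omega(x)=0$. Writing $D$ for the set of discontinuity points, this gives $D = \bigcup_n D_n$ with $D_n := \{ x : \omega(x) \geq 1/n \}$. A key preliminary step is to verify that each $D_n$ is closed (equivalently, that $\omega$ is upper semicontinuous): if $\omega(x) < 1/n$ there is a $\delta$-neighbourhood on which $f$ oscillates by less than $1/n$, and every point of that neighbourhood then inherits oscillation $< 1/n$. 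Being a closed subset of $[a,b]$, each $D_n$ is compact.

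For the direction ``$\mu(D)=0 \Rightarrow f$ integrabile secondo Riemann'' I would fix $\eps > 0$, put $M := \sup_{x \in [a,b]} |f(x)| < \infty$, and introduce a small parameter $\eta > 0$ to be chosen at the end. Since $D_\eta := \{x : \omega(x) \geq \eta\}$ is compact and contained in $D$, it has $\mu^*(D_\eta)=0$, so it can be covered by finitely many open intervals of total length $< \eta$ (take an open cover of small total length and extract a finite subcover). On the complementary compact set each point has oscillation $< \eta$, hence a neighbourhood on which $f$ oscillates by less than $\eta$; compactness yields a finite partition there. Assembling these data into a single partition $P$, the Darboux gap $U(f,P)-L(f,P)$ splits into a contribution from the subintervals meeting $D_\eta$ (oscillation at most $2M$, total length $< \eta$) and one from the remaining subintervals (oscillation $< \eta$, total length $\leq b-a$), giving $U(f,P)-L(f,P) \leq 2M\eta + (b-a)\eta$. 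Choosing $\eta$ small enough makes this $< \eps$, so the Riemann criterion is satisfied.

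For the converse, assume $f$ Riemann integrable and fix $n \in \bN$ and $\eps > 0$. By the Darboux criterion there is a partition $P$ with $U(f,P)-L(f,P) < \eps/n$. Among the closed subintervals of $P$, those containing a point of $D_n$ in their interior satisfy $M_i - m_i \geq 1/n$ (a point of oscillation $\geq 1/n$ interior to $[x_{i-1},x_i]$ forces the sup minus inf over that subinterval to be $\geq 1/n$); summing only over this index set $S$,
\[
\frac{1}{n}\sum_{i \in S} \Delta x_i \ \leq \ \sum_{i \in S} (M_i-m_i)\,\Delta x_i \ \leq \ U(f,P)-L(f,P) \ < \ \frac{\eps}{n},
\]
so $\sum_{i \in S}\Delta x_i < \eps$. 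These finitely many intervals cover $D_n$ apart from the finitely many partition points (a null set), whence $\mu^*(D_n) < \eps$; letting $\eps \to 0$ gives $\mu(D_n)=0$, and since $D = \bigcup_n D_n$ is a countable union of null sets, $\mu(D)=0$.

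I expect the main obstacle to be the bookkeeping in the first implication: assembling the finite cover of the high-oscillation set $D_\eta$ together with the locally fine partition on its complement into one partition $P$, and then controlling the two pieces of the Darboux gap separately. The upper semicontinuity of $\omega$ (closedness, hence compactness, of each $D_n$) is the other point deserving explicit verification, since it is exactly what makes the two compactness arguments available.
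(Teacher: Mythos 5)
Your proof is correct and complete: the decomposition $D=\bigcup_n D_n$ via the oscillation $\omega$, the upper semicontinuity giving compactness of each $D_n$, and the two Darboux estimates are exactly the standard argument, and the two delicate points you flag (assembling the partition in the forward direction, and the interior-point observation in the converse) are handled properly. The paper itself offers no proof of this proposition — it is stated with a bare citation to \cite[Es.6.4.2]{Giu2} — so there is nothing to compare against; your argument stands on its own and would in fact fill a gap in the text.
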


\begin{ex}[La funzione di Dirichlet]
\label{ex_dir}
{\it
Sia $f : [0,1] \to \bR$ la funzione definita come $f (t) = 0$, $t \in \bQ \cap [0,1]$, $f(t) = 1$, $t \in [0,1] - \{ [0,1] \cap \bQ \}$. Usando Oss. \ref{rem_ME04}(2) concludiamo che, essendo $\bQ \cap [0,1]$ numerabile, $\mu (\bQ \cap [0,1]) = 0$, per cui $f$ coincide quasi ovunque con la funzione costante $1$. Essendo la misura di Lebesgue completa (Teo.\ref{thm_ML03}), usando Prop.\ref{prop_fgmis} concludiamo che $f$ \e misurabile. D'altro canto $f$ non \e integrabile secondo Riemann, infatti l'insieme dei punti di discontinuit\'a di $f$ \e $[0,1]$, come si verifica osservando che per ogni $t \in [0,1]$ possiamo trovare successioni 
$\{ q_n \} \subset [0,1] \cap \bQ$, $\{ i_n \} \subset [0,1] \cap (\bR-\bQ)$
con limite $t$, cosicch\'e 
$\lim_n f(q_n) = 0$, $\lim_n f(i_n) = 1$.
Ovviamente, la non integrabilit\'a secondo Riemann di $f$ si pu\'o verificare pi\'u direttamente mostrando che non vale la condizione (\ref{def.Rint}) nella sezione seguente.
}
\end{ex}

\subsection{L'integrale di Lebesgue.}

Sia $(X,\mM,\mu)$ uno spazio di misura, $\varphi := \sum_i a_i \chi_{A_i}$, $\mu A_i < + \infty$ una funzione semplice ed $A \subseteq X$ un insieme misurabile (non necessariamente di misura finita). Definiamo rispettivamente l'{\em integrale di} $\varphi$ e l'{\em integrale di $\varphi$ su $A$} 
\begin{equation}
\label{def_int_simple}
\int_X \varphi := \sum_i a_i \mu A_i
\ \ , \ \
\int_A \varphi := \int_X (\varphi \chi_A)
\ \ .
\end{equation}
Si verifica banalmente che la precedente definizione di integrale non dipende dalla particolare decomposizione di $\varphi$ come combinazione lineare di funzioni caratteristiche. Abbiamo cos\'i definito un funzionale lineare su $S(X)$. Nel seguito, quando sar\'a chiaro dal contesto, ometteremo il simbolo "$X$" dalla notazione di integrale.

Ora, vogliamo estendere l'applicazione (\ref{def_int_simple}) alle funzioni misurabili. Come primo passo ci occuperemo del caso in cui $X$ ha misura finita, e successivamente generalizzeremo a spazi (quasi) qualsiasi.

\begin{prop}
\label{prop_mis_simple}
Sia $(X,\mM,\mu)$ di misura finita ed $f : X \to \bR$ limitata. Se $f$ \e misurabile allora
\begin{equation}
\label{eq_mis_simple}
\int_\geq f
\ := \
\inf_{f \leq \psi \in S(X)} \int \psi 
\ = \
\sup_{f \geq \varphi \in S(X)} \int \varphi
\ =: \
\int_\leq f
\ .
\end{equation}
Se $(X,\mM,\mu)$ \e completo, allora \e vero anche il viceversa.
\end{prop}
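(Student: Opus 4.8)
Il piano \e dimostrare separatamente le due implicazioni, premettendo l'osservazione (sempre valida) che $\int_\leq f \leq \int_\geq f$: infatti, prese $\varphi , \psi \in S(X)$ con $\varphi \leq f \leq \psi$ si ha $\varphi \leq \psi$, dunque $\int \varphi \leq \int \psi$; passando all'estremo superiore in $\varphi$ ed a quello inferiore in $\psi$ si ottiene la disuguaglianza voluta. Per la prima implicazione rester\'a quindi da mostrare solo la disuguaglianza opposta.

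Per la prima implicazione sfrutterei la limitatezza e la misurabilit\'a di $f$ con un argomento di partizione del codominio (l'analogo "verticale" delle somme di Riemann). Fissati $m,M \in \bR$ con $m \leq f \leq M$, per ogni $n \in \bN$ partizionerei $[m,M]$ nei punti $y_k := m + k(M-m)/n$, $k = 0 , \ldots , n$, e porrei $E_k := f^{-1}([y_{k-1},y_k)) \in \mM$ (con $y_n = M$ incluso in $E_n$), ottenendo una partizione misurabile di $X$. Definite le funzioni semplici $\varphi_n := \sum_k y_{k-1} \chi_{E_k}$ e $\psi_n := \sum_k y_k \chi_{E_k}$ (appartenenti ad $S(X)$ poich\'e $\mu X < \infty$), si ha $\varphi_n \leq f \leq \psi_n$ e

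\[
\int_\geq f - \int_\leq f
\ \leq \
\int \psi_n - \int \varphi_n
\ = \
\sum_{k=1}^n (y_k - y_{k-1}) \, \mu E_k
\ = \
\frac{M-m}{n} \, \mu X
\ \stackrel{n}{\lra} \ 0
\ ,
\]

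da cui, con la disuguaglianza preliminare, l'uguaglianza cercata.

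Per la seconda implicazione (con $( X,\mM,\mu )$ completo) il piano \e ricostruire una funzione misurabile a partire dalla sola informazione integrale. Per definizione di estremo inferiore e superiore sceglierei, per ogni $n$, $\varphi_n , \psi_n \in S(X)$ con $\varphi_n \leq f \leq \psi_n$ e $\int \psi_n - \int \varphi_n < 1/n$; rimpiazzandole con $\max \{ \varphi_n , m \}$ e $\min \{ \psi_n , M \}$ posso assumerle limitate, senza alterare le disuguaglianze $\varphi_n \leq f \leq \psi_n$ n\'e accrescere le differenze $\int \psi_n - \int \varphi_n$. Porrei allora $\ovl \psi := \inf_n \psi_n$ e $\unl \varphi := \sup_n \varphi_n$, misurabili per Teo.\ref{thm_MIS} e tali che $\unl \varphi \leq f \leq \ovl \psi$. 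Per ogni $k$, sull'insieme $A_k := \{ x : \ovl \psi(x) - \unl \varphi(x) \geq 1/k \}$ risulta $\psi_n - \varphi_n \geq \ovl \psi - \unl \varphi \geq 1/k$, da cui la stima (di tipo Chebyshev)

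\[
\frac{1}{n}
\ > \
\int (\psi_n - \varphi_n)
\ \geq \
\int_{A_k} (\psi_n - \varphi_n)
\ \geq \
\frac{1}{k} \, \mu A_k
\ \ , \ \
\forall n \in \bN
\ ,
\]

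che forza $\mu A_k = 0$; quindi $\{ \ovl \psi > \unl \varphi \} = \cup_k A_k$ ha misura nulla e $f = \ovl \psi$ q.o.. Essendo $\ovl \psi$ misurabile, concluderei per Prop.\ref{prop_fgmis}, usando la completezza, che $f$ \e misurabile.

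L'ostacolo principale \e proprio questa seconda implicazione: la prima \e un diretto conto di partizione, mentre qui occorre passare per le funzioni inviluppo $\ovl \psi , \unl \varphi$ e per la stima che annulla l'insieme eccezionale, ed \e precisamente nel passo finale che interviene in modo essenziale la completezza, senza la quale $f$ potrebbe differire dalla funzione misurabile $\ovl \psi$ su un insieme di misura nulla ma non appartenente ad $\mM$.
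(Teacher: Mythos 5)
La tua proposta \e corretta e segue essenzialmente la stessa strada della dimostrazione del testo: per la prima implicazione la partizione del codominio in $n$ livelli con le funzioni semplici $\varphi_n \leq f \leq \psi_n$ e la stima $(M-m)n^{-1}\mu X \to 0$, per la seconda le funzioni inviluppo $\sup_n \varphi_n$, $\inf_n \psi_n$ con la stima di tipo Chebyshev sugli insiemi $A_k$ e la conclusione via Prop.\ref{prop_fgmis} e la completezza. L'unica differenza \e il passo di troncamento delle $\varphi_n , \psi_n$, che \e innocuo ma superfluo (le funzioni semplici sono gi\'a limitate per definizione).
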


\begin{proof}[Dimostrazione]
Per semplicit\'a di notazione poniamo $M := \| f \|_\infty$. Se $f$ \e misurabile, allora gli elementi (mutualmente disgiunti) della successione
\[
E_k 
:= 
\{ \ x \in X : \frac{(k-1)M}{n} < f(x) \leq \frac{kM}{n} \ \}
\ \ , \ \
-n \leq k \leq n
\ ,
\]
sono misurabili. Definendo le successioni di funzioni semplici
\[
\psi_n    := \frac{M}{n} \sum_{k=-n}^n   k    \chi_{E_k}
\ \ , \ \
\varphi_n :=  \frac{M}{n} \sum_{k=-n}^n (k-1) \chi_{E_k}
\]
troviamo $\varphi_n \leq f \leq \psi_n$, e 
\[
0 
\ \leq  \
\int_\geq f
-
\int_\leq f
\ \leq \
\int \psi_n
-
\int \varphi_n
\ \leq \
\frac{M}{n} \sum_{k=-n}^n \mu E_k = \frac{M}{n} \mu X
\stackrel{n}{\to} 0
\ .
\]
Ci\'o implica che $f$ soddisfa (\ref{eq_mis_simple}). 
Supponiamo ora che $(X,\mM,\mu)$ sia completo e che valga (\ref{eq_mis_simple}), e mostriamo che $f$ \e misurabile. A tale scopo, osserviamo che \e sufficiente verificare che $f = g$ q.o. per qualche funzione misurabile $g$ (vedi Prop.\ref{prop_fgmis}, ed \e qui che sfuttiamo la completezza). Consideriamo, per ogni $n \in \bN$, funzioni semplici $\varphi_n$, $\psi_n$ tali che 
\begin{equation}
\label{eq_pvn}
\varphi_n \leq f \leq \psi_n
\ \ , \ \
\int \psi_n - \int \varphi_n < n^{-1} \ .
\end{equation}
Allora, le funzioni 
$\unl \psi := \inf_n \psi_n$, $\ovl \varphi := \sup_n \varphi_n$
sono misurabili (Teo.\ref{thm_MIS}) e $\ovl \varphi \leq f \leq \unl \psi$. Ora, posto
$\Delta := \{ x \in X : \ovl \varphi < \unl \psi \}$ 
abbiamo, per ogni $n \in \bN$,
\[
\Delta \ \subseteq \ \cup_k \Delta_{k,n}
\ \ , \ \
\Delta_{k,n} := \{ x \in X : \varphi_n < \psi_n - k^{-1} \} 
\ .
\]
Ora, usando (\ref{eq_pvn}) troviamo 
\[
k^{-1} \mu \Delta_{k,n} < \int_{\Delta_{k,n}} (\psi_n-\varphi_n) \leq n^{-1}
\ \Rightarrow \
\mu \Delta_{k,n} < kn^{-1}
\ .
\]
Per cui (potendo scegliere $n$ arbitrariamente grande) concludiamo che $\mu \Delta = 0$. Dunque $\unl \psi$, $\ovl \varphi$, e quindi $f$, coincidono quasi ovunque. 
\end{proof}

In accordo alla proposizione precedente, definiamo l'{\em integrale di $f$} e l'{\em integrale di $f$ su $A \in \mM$} rispettivamente come
\begin{equation}
\label{eq_def_intE}
\int f
\ := \
\int_\geq f
\ = \
\int_\leq f
\ \ \ , \ \ \
\int_A f := \int_X f \chi_A
\ \ , \ \
A \in \mM
\ .
\end{equation}

\begin{rem}
\label{oss_muA0}
{\it
Le seguenti propriet\'a sono banali da verificare nel caso di funzioni semplici, per cui nel caso generale seguono applicando la definizione precedente:
\textbf{(1)} Se $A$ ha misura nulla ed $f \in M(X)$ allora $\int_A f = 0$;
\textbf{(2)} Se $f \geq 0$ allora $\int f \geq 0$. 
}
\end{rem}

\noindent \textbf{Riemann vs. Lebesgue.} Siano $a,b \in \bR$ ed $f : [a,b] \to \bR$. Allora $f$ \e integrabile secondo Riemann se e solo se
\begin{equation}
\label{def.Rint}
\int_\geq^R f
\ := \
\inf_{f \leq \psi \in \mG([a,b])} \int \psi 
\ = \
\sup_{f \geq \varphi \in \mG([a,b])} \int \varphi
\ =: \
\int_\leq^R f
\ ,
\end{equation}
ed in tal caso poniamo 
\[
\int^R f 
\ := \
\int_\leq^R f 
\ = \
\int_\geq^R f 
\ .
\]
La precedente definizione possiede un contenuto intuitivo, il quale evidenzia la filosofia di base dell'integrale di Riemann: se $f$ \e Riemann-integrabile, allora $\int^R f$ deve essere approssimabile a mezzo di integrali di funzioni a gradini, ed \e facilmente verificabile che, presa 
$\varphi := \sum_i a_i \chi_{(x_i,x_{i+1})} \leq f$,
l'errore
\[
\delta (\varphi) := \int^R f - \int \varphi 
\]
\e migliorabile raffinando la partizione $P := \{ x_i \}$, ovvero considerando 
$P' := \{ x'_j \} \supset P$
e definendo 
$\varphi' := \sum_j a'_j \chi_{(x'_j,x'_{j+1})}$
tale che
$f(x) \geq a'_j \geq a_i$, $\forall x \in (x'_j,x'_{j+1}) \subseteq (x_i,x_{i+1})$.
Infatti ci\'o garantisce che $0 \leq \delta(\varphi') \leq \delta(\varphi)$.
Dunque l'idea di base \e: {\em raffina le partizioni, e spera che al restringersi degli intervalli $f$ sia abbastanza regolare da poter essere ben approssimata, in ogni intervallo, da una costante}.

Equipaggiamo ora $[a,b]$ con la misura di Lebesgue, cosicch\'e 
$\mG([a,b]) \subset S([a,b])$; 
allora
\[
\{ \psi \in \mG([a,b]) : \psi \leq f \} \subset \{ \psi \in S([a,b]) : \psi \leq f \}
\ \ , \ \
\{ \psi \in \mG([a,b]) : \psi \geq f \} \subset \{ \psi \in S([a,b]) : \psi \geq f \}
\ ,
\]
per cui concludiamo che 
\[
\int_\leq^R f
\ \leq \
\int_\leq f
\leq
\int_\geq f
\ \leq \
\int_\geq^R f
\ .
\]
Dunque se $f$ \e integrabile secondo Riemann allora \e integrabile secondo Lebesgue, ed i due integrali coincidono. D'altra parte, dall'Esempio \ref{ex_dir} abbiamo una funzione integrabile secondo Lebesgue che non \e integrabile secondo Riemann.

\

\noindent \textbf{Misure non finite.} Sia $(X,\mM,\mu)$ uno spazio di misura non necessariamente finita. Iniziamo denotando con
\[
L^\infty_{\mu,0}(X)
\]
l'algebra delle funzioni misurabili su $X$, limitate, e con supporto a misura finita. Chiaramente, possiamo definire l'integrale di $f \in L^\infty_{\mu,0}(X)$ su $X$ semplicemente come l'integrale sul suo supporto, come dalla definizione precedente. Inoltre, \e chiaro che tale integrale \e finito: 
$| \int_X f | \leq \| f \|_\infty \mu( {\mathrm{supp}}(f) )  < + \infty$.
Denotiamo con 
$M^+(X)$
l'insieme delle funzioni {\em definite su $X$, misurabili e non negative}. Diciamo che $f \in M^+(X)$ \e {\em integrabile} se
\begin{equation}
\label{def_int}
\int_X f 
\ := \
\sup \left\{  \int_X h \ , \ h \in L^\infty_{\mu,0}(X) \ , \ h \leq f \right\}
\ < \
+ \infty
\ .
\end{equation}
Passiamo infine al caso generale $f \in M(X)$. Per prima cosa, osserviamo che
\[
f   = f^+ - f^-
\ , \
|f| = f^+ + f^-
\ \ , \ \
f^+ := \max \{ f,0 \} \geq 0
\ , \
f^- := - \min \{ f,0 \} \geq 0
\ .
\]
Per (\ref{eq_MIS_1}), $f$ \e misurabile se e soltanto se $f^+$, $f^-$ sono misurabili. Diciamo che {\em $f$ \e integrabile se sia $f^+$ che $f^-$ sono integrabili su $X$}, ed in tal caso, definiamo
\[
\int_X f := \int_X f^+ - \int_X f^-
\ .
\]
Osserviamo che, in base alla definizione precedente, se $f$ \e integrabile allora anche $|f|$ \e integrabile. Qualora si render\'a necessario specificare la misura $\mu$ rispetto alla quale stiamo integrando, scriveremo
\[
\int_X f \ d \mu := \int_X f \ .
\]
Denotiamo con $L_\mu^1(X)$ lo spazio vettoriale delle funzioni integrabili su $X$ {\footnote{Sul motivo di questa notazione rimandiamo a \S \ref{sec_Lp}.}}. Osserviamo che segue banalmente dalla definizione che
\begin{equation}
\label{eq_int}
f \leq g \ \Rightarrow \ \int f \leq \int g
\ \ , \ \
\int cf = c \int f
\ , \
\forall c \in \bR
\ ;
\end{equation}
per dimostrare invece che l'integrale conserva l'operazione di somma sar\'a conveniente usare i teoremi di passaggio al limite sotto il segni d'integrale (vedi sezione seguente).

\begin{prop}
\label{lem_sf}
Ogni $f \in L_\mu^1(X)$ ha supporto $\sigma$-finito; se $f \geq 0$, allora essa \e limite puntuale di una successione monot\'ona crescente $\{ \varphi_n \}$ di funzioni semplici aventi supporto con misura finita.
\end{prop}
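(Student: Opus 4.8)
The plan is to reduce everything to the case $f\ge 0$, to isolate the right family of finite-measure sets exhausting the support, and then to observe that the simple functions we want have already been manufactured by Prop.\ref{prop_caressa}; the only genuinely new ingredient is a Markov-type estimate.

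First I would record that $|f|=f^++f^-$ is integrable as soon as $f\in L^1_\mu(X)$, so $\int_X|f|<+\infty$. The key tool is a Markov estimate for the level sets $E_n:=\{x\in X:|f(x)|\ge 1/n\}$, $n\in\bN$, which are measurable by Lemma~\ref{lem_mis}. For every measurable $F\subseteq E_n$ with $\mu F<+\infty$ the function $\frac1n\chi_F$ lies in $L^\infty_{\mu,0}(X)$ and satisfies $\frac1n\chi_F\le|f|$; hence $\frac1n\mu F=\int_X\frac1n\chi_F\le\int_X|f|$ by (\ref{def_int_simple}) and the monotonicity (\ref{eq_int}), that is $\mu F\le n\int_X|f|$. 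Taking the supremum over all such finite-measure subsets $F$ gives $\mu E_n\le n\int_X|f|<+\infty$. Since $\{f\ne 0\}=\{|f|>0\}=\bigcup_n E_n$ is a countable union of finite-measure sets, the support of $f$ is $\sigma$-finite, which is the first assertion.

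For the second assertion, assume $f\ge 0$, so $|f|=f$. Prop.\ref{prop_caressa} furnishes a monotone increasing sequence $\{\psi_n\}\subset S(X)$ converging pointwise to $f$, and I claim these very functions already have supports of finite measure. Indeed, in the dyadic construction there one has $\psi_n(x)>0$ precisely when $f(x)\ge 2^{-n}$, so that $\mathrm{supp}(\psi_n)=\{f\ge 2^{-n}\}=E_{2^n}$, whose measure is at most $2^n\int_X f<+\infty$ by the estimate just proved. Thus each $\psi_n$ is a simple function with finite-measure support, it is monotone increasing, and it converges pointwise to $f$; setting $\varphi_n:=\psi_n$ concludes the argument. (Measurability and the monotone/pointwise properties are immediate from Lemma~\ref{lem_mis}, Teo.\ref{thm_MIS} and Prop.\ref{prop_caressa}.)

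The step I expect to be the main obstacle is the Markov estimate $\mu E_n<+\infty$ itself. Because a simple function is integrated in (\ref{def_int_simple}) only through its finite-measure pieces, one cannot write $\frac1n\mu E_n\le\int_X|f|$ directly; one must instead test against arbitrary finite-measure subsets $F\subseteq E_n$ and pass to the supremum, implicitly using that $\mu E_n$ is recovered as the supremum of the measures of its finite-measure subsets. Everything else in the proof is routine once this estimate and Prop.\ref{prop_caressa} are in hand.
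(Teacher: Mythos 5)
Your proof is correct and follows essentially the same route as the paper: the Markov-type estimate on the level sets $E_n:=\{x:|f(x)|\ge 1/n\}$ for the $\sigma$-finiteness of the support, and the dyadic sequence $\{\psi_n\}$ of Prop.\ref{prop_caressa} for the approximation. The only differences are cosmetic: you run the estimate by testing against finite-measure subsets of $E_n$ and passing to the supremum (more faithful to the paper's definition of the integral, though closing the loop via $\mu E_n=\sup_F\mu F$ tacitly assumes the measure is semifinite, an assumption the paper's own proof also makes silently when it writes $n^{-1}\mu E_n=\int n^{-1}\chi_{E_n}$), and you observe that the $\psi_n$ are already supported on $\{f\ge 2^{-n}\}\subseteq E_{2^n}$, which has finite measure, whereas the paper truncates them to $\varphi_n:=\chi_{A_n}\psi_n$ with $A_n=\cup_{i\le n}E_i$; both variants work.
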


\begin{proof}[Dimostrazione]
Per ogni $n \in \bN$ definiamo
$E_n := \{ x \in X : |f(x)| \geq n^{-1} \}$
ed osserviamo che
\[
n^{-1} \mu E_n = \int n^{-1} \chi_{E_n} \leq \int_{E_n} |f| < \infty \ ;
\]
cosicch\'e $\mu E_n < \infty$, e del resto ${\mathrm{supp}}(f) = \cup_n E_n$.
Riguardo la seconda affermazione, prendiamo la successione $\{ \psi_n \}$ di Prop.\ref{prop_caressa} e definiamo 
$A_n := \cup_i^nE_i$, 
$\varphi_n := \chi_{A_n} \psi_n$, 
cosicch\'e
$f(x) = \lim_n \varphi_n(x)$, $\forall x \in X$.
\end{proof}

\begin{rem}{\it
La nostra definizione di integrale funziona bene solo nel caso in cui $(X,\mM,\mu)$ abbia misura localmente finita
{\footnote{
Osserviamo comunque che gli spazi di misura che si incontrano di solito nella vita sono localmente finiti.
}}.
Ad esempio, si prenda un qualsiasi insieme $X$, $\mM := \{ X , \emptyset \}$ con $\mu X := + \infty$, $\mu \emptyset := 0$, cosicch\'e le funzioni misurabili sono le costanti. In base alla nostra definizione di integrale $\int f = 0$ per ogni $f \in M(X)$, mentre invece ci aspetteremmo, ad esempio, che $\int 1 = \mu X = + \infty$.
Questa difficolt\'a \e aggirabile definendo $\int f$ come l'estremo superiore degli integrali di generiche funzioni semplici $\leq f$, non necessariamente con supporto di misura finita. In conseguenza di ci\'o andrebbe modificata la dimostrazione del Lemma di Fatou rispetto a quella che daremo nella prossima sezione (vedi \cite[\S 11.3]{Roy}). Gli altri risultati non necessitano modifiche. 
}
\end{rem}

\noindent \textbf{Misure con segno.} Se $(X,\mM,\mu)$ \e uno spazio di misura con segno allora, grazie alla decomposizione di Jordan, esistono uniche le misure $\mu^\pm$ tali che 
$\mu = \mu^+ - \mu^-$.
Presa una funzione misurabile $f \in M(X)$ diciamo che essa \e $\mu$-integrabile se $f \in L_{\mu^+}^1(X) \cap L_{\mu^-}^1(X)$, ed in tal caso poniamo
\[
\int f \ d \mu 
\ := \ 
\int f \ d \mu^+ - \int f \ d \mu^- 
\ .
\]
Denotiamo con $L_\mu^1(X)$ lo spazio delle funzioni $\mu$-integrabili.

\

\noindent \textbf{Il caso complesso.} Sia $(X,\mN,\nu)$ uno spazio di misura (eventualmente con segno). Ogni funzione $f : X \to \bC$ si pu\'o scrivere come $f=f_1+if_2$, dove 
$f_1 , f_2 : X \to \bR$.
Diciamo che $f$ \e integrabile se $f_1 , f_2 \in L_\nu^1(X)$; in tal caso, poniamo
\[
\int f \ d \nu 
\ := \
\int f_1 \ d \nu + i \int f_2 \ d \nu 
\ ,
\]
e scriviamo $f \in L_\nu^1(X,\bC)$. Ora, se $(X,\mM,\mu)$ \e uno spazio di misura complesso allora per definizione $\mu = \mu_1 + i \mu_2$, con $\mu_1 , \mu_2$ misure con segno. Presa $f : X \to \bC$, diciamo che essa \e $\mu$-integrabile se 
$f \in L_{\mu_1}^1(X,\bC) \cap L_{\mu_2}^1(X,\bC)$;
in tal caso, poniamo
\[
\int f \ d \mu 
\ := \
\int f \ d \mu_1 + i \int f_2 \ d \mu_2 
\ ,
\]
e scriviamo $f \in L_\mu^1(X,\bC)$.

\subsection{Limiti sotto il segno d'integrale.}

Nelle pagine che seguono approcceremo la questione del passaggio al limite sotto il segno di integrale, premettendo che questa operazione {\em non sempre funziona} (si veda Esempio \ref{ex_lebesgue} alla fine della sezione). D'altro canto, se $\{ f_n \}$ \e una successione uniformemente convergente ad una funzione $f$, allora una semplice stima mostra come gi\'a nell'ambito dell'integrale di Riemann risulti $\int f_n \to \int f$. La teoria di Lebesgue migliora tale risultato: vedremo infatti come sia sufficiente, sotto opportune ipotesi, una convergenza q.o..

\begin{prop}[Convergenza limitata]
\label{prop_conv_lim}
Sia $(X,\mM,\mu)$ uno spazio di misura finita ed $\{ f_n \} \subset M(X)$ una successione limitata rispetto alla norma dell'estremo superiore. Se q.o. in $x \in X$ esiste $\lim_n f_n(x) =:$ $f(x)$, allora $f$ \e misurabile e 
\[
\int_X f 
\ = \
\lim_n \int_X f_n
\ .
\]
\end{prop}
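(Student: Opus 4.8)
The plan is to reduce everything to the single estimate $\int_X |f_n - f| \to 0$, exploiting the Teorema di Egoroff (Teo.\ref{thm_egoroff}) to upgrade the pointwise a.e.\ convergence to uniform convergence off a set of arbitrarily small measure, and then to control the leftover contribution on that small ``bad'' set purely through the uniform bound on the sequence. First I would settle measurability of $f$: setting $\ovl f := \limsup_n f_n$, the implication (\ref{eq_MIS_2}) gives $\ovl f \in M(X)$, and by hypothesis $f = \ovl f$ q.o.; identifying $f$ with this representative (equivalently, in the complete case, invoking Prop.\ref{prop_fgmis}) yields the asserted measurability. I would then set $K := \sup_n \| f_n \|_\infty < + \infty$; since $f_n \to f$ q.o.\ and $|f_n| \leq K$, it follows that $|f| \leq K$ q.o.\ as well, so $f$ is itself a bounded measurable function on a space of finite measure and hence integrable in the sense of (\ref{eq_mis_simple}).

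Next I would fix $\eps > 0$ and apply Teo.\ref{thm_egoroff}: this produces a set $A \subset X$ with $\mu A < \eps$ and an index $n_0$ such that $\sup_{X - A} | f_n - f | < \eps$ for every $n \geq n_0$. The key step is then the splitting
\[
\int_X |f_n - f| \ = \ \int_{X-A} |f_n - f| \ + \ \int_A |f_n - f|
\ \ , \ \ n \geq n_0 \ .
\]
On $X - A$ the integrand is $< \eps$, so the first term is $\leq \eps\, \mu (X-A) \leq \eps\, \mu X$; on $A$ one has $|f_n - f| \leq |f_n| + |f| \leq 2K$ q.o., whence the second term is $\leq 2K\, \mu A < 2K \eps$. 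Combining, $\int_X |f_n - f| \leq \eps ( \mu X + 2K )$ for all $n \geq n_0$, and since $\eps$ is arbitrary and $\mu X < \infty$ this forces $\int_X |f_n - f| \to 0$, hence $\int_X f_n \to \int_X f$.

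The delicate point, and the place I expect to have to argue carefully rather than wave hands, is that additivity of the integral for general integrable functions has \emph{not} yet been established at this stage — it is explicitly deferred, cf.\ the remark following (\ref{eq_int}). This matters because passing from $| \int_X f_n - \int_X f |$ to $\int_X |f_n - f|$ requires both linearity ($\int (f_n - f) = \int f_n - \int f$) and the inequality $| \int g | \leq \int |g|$. The way around the apparent circularity is that all functions in play are \emph{bounded} on a space of \emph{finite measure}: on this restricted class the integral is defined by the sandwiching (\ref{eq_mis_simple}) of Prop.\ref{prop_mis_simple}, and linearity together with the absolute-value estimate follow directly from linearity on $S(X)$ and monotonicity, with no appeal to the general additivity that is being proved here. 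I would therefore record this elementary linearity on bounded functions as a preliminary remark. Finally I would flag that the finiteness $\mu X < \infty$ is used twice and is genuinely indispensable — once to make Egoroff applicable, and once to ensure that $\eps\, \mu X$ is a true infinitesimal as $\eps \to 0$ (the accompanying Esempio \ref{ex_lebesgue} shows the conclusion fails without it).
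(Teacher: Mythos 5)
Your proof is correct and follows essentially the same route as the paper's: Egoroff to get uniform convergence off a set $A$ of small measure, then the splitting $\int_X |f_n-f| = \int_{X-A}|f_n-f| + \int_A |f_n-f|$ with the first term controlled by $\eps\,\mu X$ and the second by the uniform bound times $\mu A$. Your additional remark on why linearity and $|\int g|\leq \int |g|$ are already available for bounded functions on a finite measure space (via Prop.\ref{prop_mis_simple}) addresses a point the paper's proof uses silently, and is a worthwhile clarification rather than a different argument.
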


\begin{proof}[Dimostrazione]
La funzione $f$ \e misurabile grazie a Teo.\ref{thm_MIS}. Sia $M > 0$ tale che $\| f_n \|_\infty < M$, $n \in \bN$. Per il teorema di Egoroff (Teo.\ref{thm_egoroff}) scelto $\eps > 0$ esiste un insieme misurabile $A$ con 
\[
\mu A < \frac{\eps}{4M}
\]
ed $n_0 \in \bN$ tale che $\| (f_n-f)|_{X-A} \|_\infty <$ $\eps / (2\mu X)$ per ogni $n > n_0$. Per cui
\[
\left| \int_X f_n - \int_X f \right|
\leq
\int_X | f_n - f |
=
\int_{X-A} | f_n - f | + \int_A | f_n - f |
\leq
\mu X \frac{\eps}{2 \mu X}  +  2M \frac{\eps}{4M}
\ .
\]
\end{proof}

\begin{thm}[Lemma di Fatou]
\label{thm_fatou}
Sia $(X,\mM,\mu)$ uno spazio di misura ed $\{ f_n \} \subset M^+(X)$ una successione di funzioni convergente q.o. in $x \in X$ ad una funzione $f$. Allora $f \in M^+(X)$, e
\begin{equation}
\label{eq_fatou}
\int_X f 
\ \leq \
\lim_n \inf \int_X f_n
\ .
\end{equation}
\end{thm}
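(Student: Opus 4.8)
The plan is to reduce the inequality to the bounded convergence theorem (Prop.\ref{prop_conv_lim}) by a truncation argument, after disposing of the measurability claim. First I would note that $f\in M^+(X)$ is immediate: being the a.e.\ limit of the $f_n$, the function $f$ is measurable by (\ref{eq_MIS_2}) of Teo.\ref{thm_MIS}, and since $f_n\ge 0$ for all $n$, the limit satisfies $f\ge 0$ a.e.; modifying $f$ on the null set where convergence fails changes neither side of (\ref{eq_fatou}), so I may assume $f\ge 0$ everywhere.

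By the definition (\ref{def_int}) of $\int_X f$ as a supremum over $h\in L^\infty_{\mu,0}(X)$ with $h\le f$, it suffices to prove
\[
\int_X h \ \le\ \liminf_n \int_X f_n
\]
for each such $h$. Since $f\ge 0$, I may moreover assume $0\le h$ (replacing $h$ by $\sup\{h,0\}$ keeps $h\le f$ and does not decrease $\int_X h$). I then fix one such $h$, set $M:=\|h\|_\infty$ and let $E:=\{x:h(x)\neq 0\}$, so that $\mu E<\infty$ by the very definition of $L^\infty_{\mu,0}(X)$.

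The key device is the truncation $h_n:=\inf\{h,f_n\}$, measurable by (\ref{eq_MIS_1}). One has $0\le h_n\le M$, and since $f_n\ge 0$ the function $h_n$ vanishes wherever $h$ does, whence $\{h_n\neq 0\}\subseteq E$; thus $h_n\in L^\infty_{\mu,0}(X)$. Two features make $h_n$ useful: on the one hand $h_n\le f_n$, so $\int_X h_n\le \int_X f_n$; on the other, because $f_n\to f\ge h$ a.e., I would check that $h_n=\inf\{h,f_n\}\to \inf\{h,f\}=h$ a.e. Passing to the finite-measure restriction $(E,\mathcal{M}_E,\mu_E)$ (Oss.\ref{def_MIS02}(3)), the sequence $\{h_n\}$ is bounded by $M$ and converges a.e.\ to $h$, so Prop.\ref{prop_conv_lim} gives
\[
\int_X h=\int_E h=\lim_n\int_E h_n=\lim_n\int_X h_n\le \liminf_n\int_X f_n.
\]
Taking the supremum over all admissible $h$ then yields (\ref{eq_fatou}).

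The main obstacle is conceptual rather than computational: it lies in hitting upon the right truncation $\inf\{h,f_n\}$, a single function that is simultaneously dominated by $h$ (hence bounded and supported on a set of finite measure, so that Prop.\ref{prop_conv_lim} applies) and by $f_n$ (so that its integral controls $\int_X f_n$). Once this object is introduced, the a.e.\ convergence $h_n\to h$ and the appeal to bounded convergence are routine, and the presence of $\liminf$ rather than $\lim$ is explained simply by the termwise estimate $\int_X h_n\le\int_X f_n$.
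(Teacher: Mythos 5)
Your argument is correct and coincides with the paper's own proof: both reduce to the bounded convergence theorem (Prop.\ref{prop_conv_lim}) via the truncation $h_n:=\inf\{h,f_n\}$ for $h\in L^\infty_{\mu,0}(X)$, $0\le h\le f$, and then take the supremum over such $h$ using the definition (\ref{def_int}) of the integral. The only difference is that you spell out a few routine points (nonnegativity of $h$, the inclusion $\{h_n\neq 0\}\subseteq{\mathrm{supp}}(h)$) that the paper leaves implicit.
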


\begin{proof}[Dimostrazione]
L'idea della dimostrazione consiste nel ridursi al caso della convergenza limitata, nel seguente modo: consideriamo $h \in L^\infty_{\mu,0}(X)$ tale che $f \geq h \geq 0$. Posto $h_n :=$ $\min \{ f_n , h \}$, osserviamo che deve essere $h(x) = \lim_n h_n(x)$ q.o. per $x \in {\mathrm{supp}}(h)$ (comunque, al di fuori di ${\mathrm{supp}}(h)$ si trova $h \equiv h_n = 0$). Per convergenza limitata (Prop.\ref{prop_conv_lim}) si trova
\[
\int_X h \ = \ 
\int_{{\mathrm{supp}}(h)} h \ = \ 
\lim_n \int_{{\mathrm{supp}}(h)} h_n \ \leq \
\lim_n \inf \int_X f_n
\ .
\]
Passando all'estremo superiore per $h \leq f$, $h \in L^\infty_{\mu,0}(X)$, per definizione di integrale su un insieme di misura non necessariamente finita si trova quanto volevasi dimostrare.
\end{proof}

\begin{thm}[Teorema di convergenza monot\'ona, Beppo Levi]
\label{thm_conv_mon}
Sia $\{ f_n \} \subset M^+(X)$ una successione crescente di funzioni non negative tale che esista il limite $f (x) :=$ $\lim_n f_n (x)$, q.o. in $x \in X$. Allora
\begin{equation}
\label{eq_conv_mon}
\int_X f 
\ = \
\lim_n \int_X f_n
\ .
\end{equation}
\end{thm}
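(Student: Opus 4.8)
The plan is to derive the theorem from the Fatou Lemma (Teo.\ref{thm_fatou}), which is already at our disposal, by sandwiching $\int_X f$ between $\lim_n \int_X f_n$ from both sides; no new analytic machinery should be needed.

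First I would check that $f$ is a legitimate integrand. Since $\{ f_n \}$ is monotone increasing, its pointwise limit coincides q.o. with $\sup_n f_n$; by Teo.\ref{thm_MIS} this supremum is misurabile, and it is non-negative because each $f_n \geq 0$, so $f \in M^+(X)$. Redefining $f$ on the null set where the limit fails to exist alters neither its measurability nor any integral (Oss.\ref{oss_muA0}(1)), so without loss of generality I may take $f = \sup_n f_n$ everywhere.

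Next comes the easy inequality. From $f_n \leq f_{n+1} \leq f$ q.o.\ and the monotonicity of the integral of non-negative functions, which is immediate from its definition as a supremum (\ref{def_int}) (and consistent with (\ref{eq_int})), the numerical sequence $\{ \int_X f_n \}$ is increasing and bounded above by $\int_X f$. A monotone sequence in $[0,+\infty]$ always converges, so $\lim_n \int_X f_n$ exists and
\[
\lim_n \int_X f_n \ \leq \ \int_X f \ .
\]
For the reverse inequality I would invoke Fatou directly: the hypotheses of Teo.\ref{thm_fatou} hold verbatim, since $\{ f_n \} \subset M^+(X)$ converges q.o.\ to $f$, whence
\[
\int_X f \ \leq \ \liminf_n \int_X f_n \ = \ \lim_n \int_X f_n \ ,
\]
the last equality being justified by the existence of the limit established a moment ago. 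Combining the two displays yields (\ref{eq_conv_mon}).

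I do not expect a genuine obstacle here: essentially all of the analytic content has been absorbed into the Fatou Lemma, and what remains is bookkeeping. The only points deserving a little care are that the monotonicity of $\{ f_n \}$ guarantees that $\lim_n \int_X f_n$ actually exists (so that $\liminf$ may be replaced by $\lim$ in the last display), and that the q.o.\ redefinition of $f$ on a null set is harmless; both are routine given the results quoted above.
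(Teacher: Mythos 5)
La tua dimostrazione è corretta e segue essenzialmente la stessa strada del testo: il Lemma di Fatou fornisce $\int f \leq \liminf_n \int f_n$, mentre la monotonia dà sia l'esistenza del limite $\lim_n \int f_n$ sia la disuguaglianza opposta $\lim_n \int f_n \leq \int f$. Le osservazioni preliminari sulla misurabilità di $f$ e sull'irrilevanza dell'insieme di misura nulla sono dettagli di contorno che il testo lascia impliciti, ma non cambiano la sostanza dell'argomento.
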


\begin{proof}[Dimostrazione]
Il Lemma di Fatou ci assicura che $\int f \leq \lim_n \inf \int f_n$. D'altro canto, per monoton\'ia la successione $\int f_n$ ammette limite $\lim_n \int f_n =$ $\lim_n \inf \int f_n$, per cui $\int f \leq \lim_n \int f_n$. Ora, sempre per monoton\'ia troviamo $f_n \leq f$, $n \in \bN$, per cui $\lim_n \int f_n \leq \int f$.
\end{proof}

\begin{rem}{\it 
Il teorema di Beppo Levi non \e valido nei seguenti casi: \textbf{(1)} nell'ambito dell'integrale di Riemann (si veda l'Esempio \ref{ex_RL} pi\'u avanti); \textbf{(2)} per successioni decrescenti (\cite[Ex.4.7(b)]{Roy}).
} \end{rem}

Grazie al teorema precedente possiamo dimostrare alcune propriet\'a di base dell'integrale:
\begin{cor}
\label{cor_int}
Valgono le seguenti propriet\'a:
\textbf{(1)} Se $f,g \in M^+(X)$ allora $\int (f+g) = \int f + \int g$;
\textbf{(2)} Se $f \in M^+(X)$ allora $\int f = 0$ se e solo se $f = 0$ q.o.;
\textbf{(3)} Se $f,g \in L_\mu^1(X)$ e $a \in \bR$ allora $\int (af +g) = a \int f + \int g$;
\textbf{(4)} Se $f,g \in L_\mu^1(X)$ e $f \geq g$ ($f=g$) q.o. allora $\int f \geq \int g$ ($\int f = \int g$).
\end{cor}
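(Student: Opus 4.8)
The plan is to establish the four properties in sequence, since (2)--(4) all reduce to the additivity of the integral, the genuine content being concentrated in part (1). For part (1) I would combine the monotone approximation by simple functions with the theorem of Beppo Levi (Teo.\ref{thm_conv_mon}). By Prop.\ref{prop_caressa} there are monotone increasing sequences of simple functions $\varphi_n \uparrow f$ and $\psi_n \uparrow g$, so that $\varphi_n + \psi_n \uparrow f+g$. Because the integral is linear on the algebra $S(X)$ directly by its definition there, one has $\int(\varphi_n+\psi_n) = \int\varphi_n + \int\psi_n$ for every $n$; applying monotone convergence to the three sequences and letting $n\to\infty$ yields $\int(f+g) = \int f + \int g$ in $\wa\bR^+$. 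The one delicate point is that the simple-function integral of each $\varphi_n$ must agree with its value as an $M^+(X)$ integral: this holds as soon as the $\varphi_n$ have finite-measure support, which is guaranteed by the $\sigma$-finite refinement of Prop.\ref{prop_caressa} whenever $\{f>0\}$ is $\sigma$-finite. If instead $\{f>0\}$ fails to be $\sigma$-finite then, by Prop.\ref{lem_sf}, $\int f = +\infty$ and also $\int(f+g)\ge\int f = +\infty$ by monotonicity, so the identity holds trivially.

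For part (2) I would argue both implications directly. If $f = 0$ q.o., set $N := \{f\neq 0\}$, a null set; every $h\in L^\infty_{\mu,0}(X)$ with $0\le h\le f$ vanishes off $N$, so $\int h = \int_N h = 0$ by Oss.\ref{oss_muA0}(1), and taking the supremum gives $\int f = 0$. Conversely, $\int f = 0$ makes $f$ integrable, so by Prop.\ref{lem_sf} its support is $\sigma$-finite, say $\mathrm{supp}(f) = \cup_k B_k$ with $\mu B_k < \infty$. With $E_n := \{f\ge 1/n\}$, the test function $\tfrac1n\chi_{E_n\cap B_k}$ lies below $f$ and in $L^\infty_{\mu,0}(X)$, whence $\tfrac1n\mu(E_n\cap B_k) = \int \tfrac1n\chi_{E_n\cap B_k} \le \int f = 0$; thus $\mu(E_n\cap B_k)=0$ for all $k$, so $\mu E_n = 0$, and finally $\mu\{f>0\} = \mu(\cup_n E_n) = 0$.

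Part (3) splits into homogeneity and additivity. The scaling $\int(af) = a\int f$ is already recorded in (\ref{eq_int}), so it remains to prove $\int(f+g) = \int f + \int g$ on $L_\mu^1(X)$. First, $|f+g|\le |f|+|g|$ together with part (1) and monotonicity (Oss.\ref{oss_muA0}) shows $f+g\in L_\mu^1(X)$. Then, setting $h := f+g$, the pointwise identity $h^+ + f^- + g^- = h^- + f^+ + g^+$ involves only non-negative functions with finite integrals, so part (1) gives $\int h^+ + \int f^- + \int g^- = \int h^- + \int f^+ + \int g^+$; rearranging these finite quantities produces $\int h = \int f + \int g$, and combining with homogeneity yields $\int(af+g) = a\int f + \int g$.

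Finally, part (4) follows from parts (2) and (3). If $f\ge g$ q.o.\ then $(f-g)^- = 0$ q.o., so $\int(f-g)^- = 0$ by part (2), whence $\int(f-g) = \int(f-g)^+ \ge 0$ by Oss.\ref{oss_muA0}(2); part (3) then rewrites this as $\int f\ge\int g$. The case $f=g$ q.o.\ follows by applying the inequality to both $f\ge g$ and $g\ge f$. The main obstacle is thus part (1): it is the only place where a limiting argument is needed, and the sole subtlety is the bookkeeping that reconciles the simple-function and $M^+(X)$ integrals on the approximants, handled by the $\sigma$-finiteness dichotomy above.
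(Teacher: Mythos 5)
Your proposal is correct and follows essentially the same route as the paper: monotone approximation by simple functions plus Beppo Levi for (1), the sets $E_n := \{f \geq 1/n\}$ from Prop.~\ref{lem_sf} for (2), reduction of (3) to (1) via the decomposition into positive and negative parts, and reduction of (4) to positivity on a set of full measure. You are somewhat more explicit than the paper about the $\sigma$-finiteness bookkeeping in (1) and about the identity $h^+ + f^- + g^- = h^- + f^+ + g^+$ underlying (3), which the paper leaves implicit, but the substance is identical.
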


\begin{proof}[Dimostrazione]
\textbf{(1)} L'affermazione \e ovvia per funzioni semplici. Del resto $f,g$ sono limite puntuale di successioni monot\'one $\{ \psi_n \}$, $\{ \varphi_n \}$ di funzioni semplici (Prop.\ref{prop_caressa}), per cui applicando il teorema di Beppo Levi troviamo
\[
\int (f + g) =
\lim_n \int (\psi_n + \varphi_n) =
\lim_n \left( \int \psi_n + \int \varphi_n \right) =
\int f + \int g
\ .
\]
\textbf{(2)} Se $f = 0$ q.o. allora $\int f = 0$ (vedi Oss.\ref{oss_muA0}(1)); viceversa, se $\int f = 0$ allora, presa la successione $\{ E_n \} \subset \mM$ di Prop.\ref{lem_sf}, abbiamo $0 = \int f \geq n^{-1} \mu E_n$. Quindi ${\mathrm{supp}}(f) = \cup_n E_n$ ha misura nulla.
\textbf{(3)} Basta applicare il punto (1) e (\ref{eq_int}).
\textbf{(4)} Grazie al punto (1) si ha $\int f - \int g = \int (f-g)$ con $f-g \geq 0$ ($f-g=0$) q.o.. Dunque l'asserto per $f=g$ segue dal punto (2), mentre 
\[
\int (f-g) = 
\int_A(f-g) + 
\int_B(f-g) \stackrel{Oss.\ref{oss_muA0}(1)}{=}
\int_A(f-g) \stackrel{Oss.\ref{oss_muA0}(2)}{\geq} 0
\ ,
\]
avendo posto 
$A := \{ x : f(x)-g(x) \geq 0 \}$,
$B := \{ x : f(x)-g(x) <    0 \}$
con $\mu B = 0$.
\end{proof}

\begin{ex}(Riemann vs. Lebesgue in convergenza puntuale).
\label{ex_RL}
{\it
Consideriamo la successione di funzioni
\[
f_n : [0,1] \to \bR
\ \ , \ \
f_n(x) :=
\left\{
\begin{array}{ll}
1 \ \ , \ \ x \in \{ q_1 , \ldots , q_n \}
\\
0 \ \ , \ \ altrimenti
\end{array}
\right.
\ \ , \ \
n \in \bN
\ ,
\]
dove $\{ q_n , n \in \bN \}$ \e una enumerazione dei razionali in $[0,1]$ (ovvero, una corrispondenza 1-1 tra $\bN$ e $\bQ \cap [0,1]$).
Ogni $f_n$ \e continua in $[0,1] - \{ q_1 , \ldots , q_n \}$, per cui \e integrabile secondo Riemann con
\[
\int^R f_n = 0 \ .
\]
Ora, $\{ f_n \}$ converge puntualmente a
\[
f(x) =
\left\{
\begin{array}{ll}
1 \ \ , \ \ x \in \bQ \cap [0,1]
\\
0 \ \ , \ \ altrimenti \ ,
\end{array}
\right.
\]
la quale non \e integrabile secondo Riemann, come si verifica in maniera analoga all'Esempio \ref{ex_dir}.
Equipaggiamo ora $[0,1]$ con la misura di Lebesgue. Abbiamo che $\{ f_n \}$ \e monot\'ona crescente, limitata e definita su uno spazio di misura finita, per cui possiamo applicare sia il teorema di convergenza limitata che quello di Beppo Levi, concludendo che $f$ \e integrabile secondo Lebesgue con
\[
0 = \lim_n \int f_n = \int \lim_n f_n = \int f \ .
\]
}
\end{ex}

\begin{cor}
\label{lem_ic}
Sia $f \in L_\mu^1(X)$, $f \geq 0$. Allora per ogni $\eps > 0$ esiste un $\delta > 0$ tale che 
\begin{equation}
\label{eq_LPB1}
\int_A f < \eps 
\ \ , \ \
\forall A \ : \ \mu A < \delta
\ .
\end{equation}
\end{cor}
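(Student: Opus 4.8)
The plan is to exploit the integrability of $f$ in order to approximate it from below by bounded functions, thereby reducing the estimate to two harmless pieces: a uniformly small ``tail'' and a bounded part whose integral over $A$ is controlled directly by $\mu A$.

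First I would truncate. For $N \in \bN$ set $f_N := \min\{f,N\}$. Each $f_N$ belongs to $M^+(X)$, is bounded by $N$, and satisfies $f_N \leq f$, so $\int_X f_N \leq \int_X f < \infty$. Moreover $\{f_N\}$ is monotone increasing and converges pointwise to $f$ at every point (including where $f = +\infty$, since there $f_N = N \to \infty$). The theorem of Beppo Levi (Teo.\ref{thm_conv_mon}) then yields $\int_X f_N \to \int_X f$. As $\int_X f$ is finite, given $\eps > 0$ I can fix $N \geq 1$ with $\int_X (f - f_N) < \eps/2$; here the subtraction and the nonnegativity of $f - f_N$ are justified by the linearity in Cor.\ref{cor_int}.

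Next, for an arbitrary measurable $A$ I would split $\int_A f = \int_A (f - f_N) + \int_A f_N$, using linearity again together with $\int_A g = \int_X g\chi_A$. The first term is bounded by $\int_X (f - f_N) < \eps/2$ by monotonicity (Cor.\ref{cor_int}(4)), while the second obeys $\int_A f_N \leq N\,\mu A$ because $0 \leq f_N \leq N$ forces $\int_A f_N = \int_X f_N\chi_A \leq N\,\mu A$. Combining these gives $\int_A f < \eps/2 + N\,\mu A$, so choosing $\delta := \eps/(2N)$ produces $\int_A f < \eps$ for every $A$ with $\mu A < \delta$.

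The argument is essentially routine, and I do not expect a genuine obstacle. The only point meriting a word of care is that $N$ must be taken strictly positive for $\delta$ to be well defined: if $\int_X f = 0$ the conclusion is trivial (any $\delta$ works, since $f = 0$ q.o.\ by Cor.\ref{cor_int}(2)), and otherwise $N \geq 1$ can always be selected. The essential use of the hypothesis $f \in L_\mu^1(X)$ is precisely in making the tail $\int_X(f - f_N)$ small via monotone convergence; everything else is bookkeeping with the basic properties of the integral already established above.
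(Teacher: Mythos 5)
Your proposal is correct and follows essentially the same route as the paper's proof: truncate to $f_N := \min\{f,N\}$, use the monotone convergence theorem to make the tail $\int_X(f-f_N)$ small, and bound $\int_A f_N$ by $N\,\mu A$. The only differences are cosmetic ($\eps/2$ bookkeeping versus the paper's final bound of $2\eps$, plus your remark on choosing $N\geq 1$), so there is nothing further to add.
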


\begin{proof}[Dimostrazione]
Definiamo 
$f_n (x) := \inf \{ f (x) , n \}$, $n \in \bN$, 
cosicch\'e la successione $\{ f_n \}$ converge puntualmente ad $f$ ed \e monot\'ona crescente. Dal teorema di convergenza monot\'ona segue che preso $\eps > 0$, esiste $n_0 \in \bN$ tale che $\int_X f_n >$ $\int_X f - \eps$ per ogni $n \geq n_0$. D'altra parte, scelti $A$, $\delta$ tali che
\[
\mu A < \delta <  \frac{\eps}{n_0}
\ ,
\]
ed usando il fatto che $| f_{n_0}(x) | \leq n_0$, $x \in X$, troviamo
\[
\int_A f \leq
\int_A (f-f_{n_0}) + \int_A f_{n_0} \leq
\eps + \eps
\ .
\]
\end{proof}

Applicando il corollario precedente ad $X = [a,b]$, $a,b \in \bR$, troviamo:
\begin{cor}
\label{prop_ic}
Sia $f : [a,b] \to \bR$ una funzione integrabile. Allora $F (x) :=$ $\int_a^x f(t) \ dt$, $x \in [a,b]$, \e continua.
\end{cor}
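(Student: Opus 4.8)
The plan is to derive the continuity of $F$ directly from the absolute continuity of the Lebesgue integral just established in Corollario~\ref{lem_ic}, applied not to $f$ itself but to the nonnegative integrable function $|f|$. The guiding observation is that the increment $F(x')-F(x)$ is an integral of $f$ over a short interval, and that interval has Lebesgue measure exactly equal to its length $|x'-x|$; hence shrinking the interval forces the increment to zero, and it does so at a rate that does not depend on where $x,x'$ sit.

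First I would fix $\eps > 0$. Since $f \in L_\mu^1([a,b])$, by definition $|f|$ is integrable and, being nonnegative, eligible for Corollario~\ref{lem_ic}, which then supplies a $\delta > 0$ such that
\[
\int_A |f| < \eps
\ \ , \ \
\forall A \in \mL \ : \ \mu A < \delta
\ .
\]
Next I would take $x, x' \in [a,b]$ with, say, $x < x'$ and $x'-x < \delta$. From the disjoint decomposition $[a,x'] = [a,x] \dot{\cup} (x,x']$ one has $\chi_{[a,x']} = \chi_{[a,x]} + \chi_{(x,x']}$, so by linearity of the integral (Corollario~\ref{cor_int}(3)) it follows that $F(x') - F(x) = \int_{(x,x']} f$. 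Since $\mu((x,x']) = x'-x < \delta$ by Lemma~\ref{lem_ME02}, the elementary estimate $|\int g| \leq \int |g|$ combined with the choice of $\delta$ gives
\[
| F(x') - F(x) |
\ = \
\left| \int_{(x,x']} f \right|
\ \leq \
\int_{(x,x']} |f|
\ < \
\eps
\ .
\]
Because this bound is independent of the position of $x,x'$, it actually establishes uniform (indeed absolute) continuity of $F$ on $[a,b]$, which is stronger than the asserted continuity.

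There is no genuinely hard step here; the points requiring care are bookkeeping rather than conceptual. One must recall that $f \in L_\mu^1$ entails $|f| \in L_\mu^1$ (remarked in the text after the definition of the integral), so that Corollario~\ref{lem_ic}, stated only for nonnegative integrands, legitimately applies to $|f|$. One must also justify $|\int g| \leq \int |g|$ from the decomposition $g = g^+ - g^-$ together with monotonicity of the integral, and invoke Lemma~\ref{lem_ME02} for the measure of the interval. The only thing resembling an obstacle is verifying that the increment is exactly $\int_{(x,x']} f$, which rests on the additivity of the set-integral over disjoint measurable sets; this is immediate from the identity $\chi_{[a,x']} = \chi_{[a,x]} + \chi_{(x,x']}$ and the linearity already proved in Corollario~\ref{cor_int}.
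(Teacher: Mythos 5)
Your proof is correct and is exactly the argument the paper intends: Corollario \ref{prop_ic} is presented there as an immediate application of Corollario \ref{lem_ic} to $X=[a,b]$, and you have simply written out the details (applying it to $|f|$, identifying the increment with $\int_{(x,x']}f$, and bounding it by $\int_{(x,x']}|f|<\eps$). The observation that the resulting continuity is uniform, indeed absolute, is a correct and welcome bonus consistent with Teo.\ref{thm_AC2}.
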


\begin{thm}[Teorema di convergenza di Lebesgue]
\label{teo_lebesgue1}
Sia $\{ f_n \}$ una successione di funzioni misurabili con limite 
$f(x) := \lim_n f_n(x)$ q.o. in $x \in X$. 
Se esiste $g \in L_\mu^1(X)$ tale che
$|f_n| \leq g$, $\forall n \in \bN$,
allora
\begin{equation}
\label{eq_lebesgue1}
\int_X f 
\ = \ 
\lim_n \int_X f_n 
\ .
\end{equation}
\end{thm}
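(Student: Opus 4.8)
Il piano \`e di dedurre il risultato applicando due volte il Lemma di Fatou (Teo.\ref{thm_fatou}) alle successioni ausiliarie $\{ g+f_n \}$ e $\{ g-f_n \}$. Come prima cosa osservo che, essendo $|f_n| \leq g$ q.o. per ogni $n$ ed $f_n \to f$ q.o., passando al limite si ottiene $|f| \leq g$ q.o.; dunque $f \in L_\mu^1(X)$ (al pari di ogni $f_n$) e tutti gli integrali in gioco sono finiti. In particolare $\int_X g < + \infty$, fatto che sar\`a essenziale per poter sottrarre tale termine nei passaggi seguenti.

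Le due successioni $\{ g+f_n \}$ e $\{ g-f_n \}$ sono costituite da funzioni in $M^+(X)$ (poich\'e $\pm f_n \leq g$ q.o.) e convergono q.o. rispettivamente a $g+f$ e $g-f$. Applicando Teo.\ref{thm_fatou} alla prima, ed usando la linearit\`a dell'integrale (Cor.\ref{cor_int}(3)), trovo
\[
\int_X (g+f) \ \leq \ \lim_n \inf \int_X (g+f_n) \ = \ \int_X g + \lim_n \inf \int_X f_n \ ;
\]
cancellando il termine finito $\int_X g$ da entrambi i membri concludo $\int_X f \leq \lim_n \inf \int_X f_n$.

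Applicando invece Teo.\ref{thm_fatou} alla seconda successione, e sfruttando l'identit\`a $\lim_n\inf (-a_n) = - \lim_n \sup a_n$, ottengo
\[
\int_X (g-f) \ \leq \ \int_X g - \lim_n \sup \int_X f_n \ ,
\]
da cui, cancellando di nuovo $\int_X g$, ricavo $\lim_n \sup \int_X f_n \leq \int_X f$. Mettendo insieme le due stime trovo
\[
\lim_n \sup \int_X f_n \ \leq \ \int_X f \ \leq \ \lim_n \inf \int_X f_n \ ,
\]
il che, essendo sempre $\lim_n\inf \int_X f_n \leq \lim_n\sup \int_X f_n$, forza l'esistenza del limite $\lim_n \int_X f_n$ e l'uguaglianza (\ref{eq_lebesgue1}).

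L'unico punto realmente delicato \`e garantire la finitezza di $\int_X g$, senza la quale la sottrazione di tale termine (e la stessa integrabilit\`a di $f$ e degli $f_n$) non sarebbe lecita: \`e qui che interviene in modo cruciale l'ipotesi di dominazione, assente invece nel puro Lemma di Fatou. Le manipolazioni di $\lim\inf$ e $\lim\sup$ sono invece puramente formali, e l'applicabilit\`a di Teo.\ref{thm_fatou} \`e immediata una volta verificata la non negativit\`a q.o. delle successioni ausiliarie.
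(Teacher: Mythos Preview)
La tua dimostrazione \`e corretta ed \`e essenzialmente identica a quella del testo: entrambe applicano due volte il Lemma di Fatou alle successioni non negative $\{ g \pm f_n \}$ per ottenere le due disuguaglianze su $\limsup$ e $\liminf$, differendo soltanto nell'ordine di trattazione dei due casi. La tua osservazione esplicita sulla finitezza di $\int_X g$ come condizione che legittima la cancellazione \`e appropriata e pienamente in linea con l'argomento del testo.
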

\begin{proof}[Dimostrazione]
Innanzitutto osserviamo che essendo $|f_n| \leq g$, troviamo che $f$, $f_n$, $n \in \bN$, sono integrabili, per cui esistono $\int_X f_n$, $\int_X f < + \infty$. Consideriamo la successione $\{ g - f_n \}$, con $g - f_n \geq 0$. Per il Lemma di Fatou (Teo.\ref{thm_fatou}) abbiamo
\[
\int_X (g-f) \leq \lim_n \inf \int_X (g-f_n) \ . 
\]
Del resto
\[
\lim_n \inf \int_X (g-f_n) 
=
\lim_n \inf \left( \int_X g - \int_X f_n \right) 
= 
\int_X g - \lim_n \sup \int_X f_n 
\]
e quindi
\[
- \int_X f \leq - \lim_n \sup \int_X f_n 
\ \Leftrightarrow \
\lim_n \sup \int_X f_n \leq \int_X f 
\ .
\]
Applicando lo stesso argomento alla successione $\{ g + f_n \}$ otteniamo la diseguaglianza
\[
\lim_n \inf \int_X f_n \geq \int_X f \ ,
\]
per cui il teorema \e dimostrato.
\end{proof}

%Lo stesso argomento del teorema precedente, usato con una successione 
%$\{ g_n \}$ opportuna al posto di $g$, fornisce il seguente
%%
%\begin{thm}[Teorema di convergenza di Lebesgue, forma generale]
%\label{teo_lebesgue2}
%Sia $\{ g_n \} \subset L_\mu^1(X)$ una successione convergente q.o. ad una funzione
%integrabile $g$. Sia quindi $\{ f_n \} \subset M(X)$ una successione convergente q.o. 
%ad una funzione $f$ e tale che $|f_n| \leq g_n$, $n \in \bN$. Allora 
%%
%\begin{equation}
%\label{eq_lebesgue2}
%\int_X g = \lim_n \int_X g_n
%\ \ \Rightarrow \ \
%\int_X f = \lim_n \int_X f_n \ .
%\end{equation}
%\end{thm}

\begin{ex}\textbf{(vedi \cite{Giu2}).}
{\it 
\label{ex_lebesgue}
Per ogni $\lambda \in \bR$, consideriamo la successione $\{ f_n \} \subset C([0,1])$ definita da
\[
f_n (x) := n^\lambda x e^{-nx}
\ \ , \ \
x \in [0,1]
\ .
\]
Ora, $f_n$ converge puntualmente alla funzione nulla. Semplici verifiche mostrano che la convergenza \e anche uniforme se e soltanto se $\lambda < 1$ (infatti, $\| f_n \|_\infty =$ $n^{\lambda-1}e^{-1}$).
Restringiamo ora il campo delle nostre argomentazioni al caso $\lambda \geq 0$, e studiamo il passaggio al limite sotto il segno di integrale. Un calcolo esplicito mostra che
\[
\int_0^1 f_n(x) \ dx 
\ = \
n^{\lambda-2} [ 1 - (n+1) e^{-n} ]
\ ,
\]
per cui, {\em se $\lambda < 2$} otteniamo $\lim_n \int_n f_n = 0$, il che \e quanto possiamo aspettarci avendosi $\lim_n f_n = 0$ puntualmente. Tuttavia, {\em per $\lambda \geq 2$} otteniamo $\lim_n \int f_n \neq 0$. Dal punto di vista dei teoremi di convergenza sotto il segno di integrale, osserviamo i seguenti fatti:
\textbf{(1)} Per $\lambda \in [0,1)$ abbiamo $f_n \to 0$ uniformemente, per cui abbiamo convergenza sotto il segno di integrale senza necessit\'a di usare i teoremi di Lebesgue;
\textbf{(2)} Per $\lambda \in [1,2)$ abbiamo la stima
\[
f_n(x) := n^\lambda x e^{-nx}  
\ \leq \ 
g(x) := \lambda^\lambda e^{-\lambda} x^{1-\lambda} 
\ ;
\]
per cui
{\footnote{La stima precedente si ottiene con il seguente trucco: si definisca 
$F(t) :=$ $t^\lambda e^{-tx}$, $t \in \bR$,
e si osservi che: (1)$\max F =$ $F(\lambda / x)$; (2) $f_n(x) = x F(n) \leq$ $x F(\lambda / x) =$ $g(x)$.}}, 
essendo $g$ integrabile, possiamo passare al limite sotto il segno di integrale grazie al teorema di Lebesgue, il quale in questo caso \e indispensabile non avendosi convergenza uniforme.
\textbf{(3)} Per $\lambda \geq 2$, chiaramente il teorema di Lebesgue non \e valido.
}\end{ex}

\

\noindent \textbf{Due applicazioni del teorema di Lebesgue.} Consideriamo la retta reale equipaggiata con la misura di Lebesgue $\mu$ ed una successione $\{ f_n \} \subset L_\mu^1(\bR)$ con
\[
f(x) \ := \ \sum_n f_n(x)
\ \ , \ \
{\mathrm{q.o. \ in}} \ x \in \bR
\ ;
\]
supponiamo che esista $g \in L_\mu^1(\bR)$ tale che
\[
\left| \sum_n^m f_n(x) \right| \ \leq \ g(x)
\ \ , \ \
{\mathrm{q.o. \ in}} \ x \in \bR
\ \ , \ \
\forall m \in \bN
\ .
\]
Allora il teorema di convergenza dominata ci dice che
\[
\int f \ = \ \sum_n \int f_n \ ,
\]
il che fornisce un metodo di calcolo dell'integrale di $f$ nel caso in cui siano facilmente calcolabili quelli delle funzioni $f_n$, $n \in \bN$. Ci\'o accade tipicamente quando $\sum_n f_n$ \e uno sviluppo in serie di Taylor ($f_n(x) = c_n x^n$) o di Fourier ($f_n(x) = a_n \sin nx + b_n \cos nx$); per esempi si veda \cite[\S 3.3]{dBar},\cite[Es.6.4.4]{Giu2}.

Sempre in conseguenza del teorema di Lebesgue, si ottiene il seguente risultato concernente la derivazione sotto il segno d'integrale:
\begin{thm}
\label{thm_der_int}
Sia $(X,\mM,\mu)$ uno spazio di misura, $U \subseteq \bR^k$ aperto ed $f : X \times U \to \bR$ tale che:
\textbf{(1)} $f(\cdot,t) \in L_\mu^1(X)$ per ogni $t \in U$;
\textbf{(2)} $f(x,\cdot) \in C^1(U)$ per ogni $x \in X$;
\textbf{(3)} Esistono $g_1 , \ldots , g_k \in L_\mu^1(X)$ tali che 
\[
\left |\frac{\partial f}{\partial t_i} (x,t) \right| \ \leq \ g_i(x)
\ \ , \ \
\forall x \in X \ , \ i = 1 , \ldots , k \ .
\]
Allora $F(t) := \int_X f(\cdot,t)$, $t \in U$, \e di classe $C^1$ in $U$ e 
\[
\frac{\partial F}{\partial t_i}(t) \ = \ \int_X \frac{\partial f}{\partial t_i}(x,t) \ dx \ .
\]
\end{thm}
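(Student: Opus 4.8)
The plan is to apply the dominated convergence theorem (Teo.\ref{teo_lebesgue1}) to the difference quotients in each coordinate direction. Fix $t \in U$ and an index $i \in \{ 1, \ldots, k \}$, and let $e_i$ denote the $i$-th standard basis vector of $\bR^k$. Since $U$ is open, there is $\rho > 0$ such that $t + h e_i \in U$ whenever $|h| < \rho$. First I would take an arbitrary sequence $h_n \to 0$ with $0 < |h_n| < \rho$ and introduce
\[
\phi_n(x) := \frac{f(x, t + h_n e_i) - f(x, t)}{h_n}, \qquad x \in X .
\]
Each $\phi_n$ is measurable, being a linear combination of the measurable functions $f(\cdot, t + h_n e_i)$ and $f(\cdot, t)$, both in $L_\mu^1(X)$ by hypothesis (1).

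The two crucial facts are a pointwise limit and a uniform domination. For the limit, hypothesis (2) says $f(x, \cdot) \in C^1(U)$, so for every fixed $x$ the one-variable map $s \mapsto f(x, t + s e_i)$ is differentiable at $s = 0$ and $\phi_n(x) \to \frac{\dpa f}{\dpa t_i}(x, t)$ as $n \to \infty$; this already shows, via Teo.\ref{thm_MIS}, that $\frac{\dpa f}{\dpa t_i}(\cdot, t)$ is measurable. For the domination, I would apply the one-variable mean value theorem to $s \mapsto f(x, t + s e_i)$ on the segment joining $0$ and $h_n$: there is $\theta_{n,x} \in (0,1)$ with
\[
\phi_n(x) = \frac{\dpa f}{\dpa t_i}(x, t + \theta_{n,x} h_n e_i) ,
\]
whence $|\phi_n(x)| \leq g_i(x)$ by hypothesis (3). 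Since $g_i \in L_\mu^1(X)$, the dominated convergence theorem yields $\lim_n \int_X \phi_n = \int_X \frac{\dpa f}{\dpa t_i}(x, t) \, dx$, while linearity of the integral (Cor.\ref{cor_int}) identifies the left-hand side with $\lim_n [F(t + h_n e_i) - F(t)]/h_n$. As the sequence $\{ h_n \}$ was arbitrary, the genuine limit $h \to 0$ exists and $\frac{\dpa F}{\dpa t_i}(t) = \int_X \frac{\dpa f}{\dpa t_i}(x, t) \, dx$.

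It then remains to prove that each $\frac{\dpa F}{\dpa t_i}$ is continuous on $U$, so that $F \in C^1(U)$. Here I would take any sequence $t_m \to t$ in $U$ and apply the dominated convergence theorem once more to the functions $x \mapsto \frac{\dpa f}{\dpa t_i}(x, t_m)$: by (2) the map $\frac{\dpa f}{\dpa t_i}(x, \cdot)$ is continuous, so these converge pointwise to $\frac{\dpa f}{\dpa t_i}(x, t)$, and by (3) they are all dominated by the same $g_i \in L_\mu^1(X)$. Hence $\frac{\dpa F}{\dpa t_i}(t_m) \to \frac{\dpa F}{\dpa t_i}(t)$, which gives the desired continuity.

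The main obstacle I anticipate is securing the $h$-independent dominating function: the naive bound on a difference quotient depends on $h$, and the clean way around this is precisely the mean value theorem step, which converts the quotient into an honest partial derivative so that hypothesis (3) bites uniformly in $n$. A secondary point to handle with care is that Teo.\ref{teo_lebesgue1} is stated for sequences, so the argument must be run along an arbitrary sequence $h_n \to 0$ and the conclusion about the continuous limit $h \to 0$ drawn only afterwards.
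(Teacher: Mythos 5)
Your proof is correct and follows essentially the same route as the paper: reduce to a single coordinate, use the mean value theorem to dominate the difference quotient by $g_i$, and pass to the limit by dominated convergence. You additionally spell out the continuity of $\partial F / \partial t_i$ (needed for the $C^1$ conclusion), a step the paper leaves implicit, which is a welcome addition.
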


\begin{proof}[Dimostrazione]
Tenendo fisse le variabili $t_j$, $j \neq i$, possiamo riguardare $F$ come funzione della sola variabile $t_i$ e limitarci a dimostrare il teorema per $k = 1$.
Presi $t,t' \in U$, usando il teorema di Lagrange stimiamo il rapporto incrementale
\[
\left| \frac{ F(t)-F(t') }{t-t'} \right|
\ \leq \
\int_X \left| \frac{ f(x,t)-f(x,t') }{t-t'} \right| \ d \mu
\ = \
\int_X \left| \frac{\partial f}{\partial t_i}(x,\xi) \right| \ d \mu
\ \leq \
\int_X g_i
\ \ , \ \
t \leq \xi \leq t'
\ ,
\]
cosicch\'e possiamo applicare il teorema di convergenza dominata e passare al limite $t \to t'$ sotto il segno d'integrale.
\end{proof}

\subsection{Il teorema di Radon-Nikodym.}
\label{sec_RN}

Sia $\mM$ una $\sigma$-algebra definita su un insieme $X$. Date due misure $\mu , \nu : \mM \to \wa \bR$, diciamo che $\nu$ \e {\em assolutamente continua rispetto a} $\mu$ se $\mu A = 0$ implica $\nu A = 0$ per ogni $A \in \mM$. In tal caso, scriviamo $\nu \prec \mu$.
Se $\mu , \nu : \mM \to \wa \bR$ sono misure con segno, allora diciamo che $\nu$ \e {\em assolutamente continua} rispetto a $\mu$ se $|\mu|A = 0$ implica $\nu A = 0$ per ogni $A \in \mM$.

\begin{ex} \label{ex_dirac}
{\it
\textbf{(1)} Sia $(X,\mM,\mu)$ uno spazio misurabile ed $f \in M^+(X)$. Definendo $\nu A := \int_A f \ d \mu$, $A \in \mM$, otteniamo una misura assolutamente continua rispetto a $\mu$ (vedi Oss.\ref{oss_muA0});
\textbf{(2)} Sia $(X,\mM,\mu)$ uno spazio misurabile tale che $\{ y \} \in \mM$ e $\mu\{ y \} = 0$, $\forall y \in X$. Preso $x \in X$, consideriamo la misura di Dirac $\mu_x : \mM \to \bR$ (vedi Esempio \ref{ex_MIS_Dirac}). Visto che $\mu X = \mu ( X - \{ y \} )$ per ogni $y \in X$, troviamo $\mu A = 0$ per ogni insieme finito $A$. D'altra parte $\mu_x A = 1$ per ogni $A \ni x$, per cui $\mu_x$ \textbf{non} \e assolutamente continua rispetto a $\mu$.
}
\end{ex}

\begin{thm}[Radon-Nikodym]
\label{thm_RN}
Sia $( X,\mM,\mu )$ uno spazio misurabile e $\nu \prec \mu$, con $\nu,\mu$ $\sigma$-finite. Allora esiste ed \e unica $f \in M^+(X)$ tale che
\begin{equation}
\label{eq_RN}
\nu A 
\ = \
\int_A f \ d \mu
\ \ , \ \
A \in \mM
\ .
\end{equation}
Lo stesso risultato vale nel caso in cui $\mu , \nu$ siano misure con segno (con $f \in M(X)$).
\end{thm}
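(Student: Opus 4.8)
The plan is to prove first the case of finite (positive) measures, then to lift to the $\sigma$-finite case by gluing, and finally to the signed case via the Jordan decomposition. So assume $\mu X , \nu X < \infty$ and consider the class
\[
\mF := \{ g \in M^+(X) : \int_A g \ d\mu \leq \nu A \ , \ \forall A \in \mM \} \ ,
\]
which is nonempty since $0 \in \mF$. A first observation is that $\mF$ is stable under pointwise maxima: if $g , h \in \mF$ then $\max \{ g , h \} \in \mF$, as one checks by splitting an arbitrary $A$ into $A \cap \{ g \geq h \}$ and $A \cap \{ g < h \}$ and using the bound for $g$ or for $h$ on each piece. Set $\alpha := \sup_{g \in \mF} \int_X g \ d\mu \leq \nu X < \infty$ and pick $\{ g_n \} \subset \mF$ with $\int_X g_n \to \alpha$. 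Putting $f_n := \max \{ g_1 , \ldots , g_n \} \in \mF$ gives an increasing sequence whose pointwise limit $f := \sup_n f_n$ still lies in $\mF$ by the monotone convergence theorem (Theorem~\ref{thm_conv_mon}), and satisfies $\int_X f \ d\mu = \alpha$.

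The crucial step, which I expect to be the main obstacle, is to show that the residual measure $\nu_0 A := \nu A - \int_A f \ d\mu$ (positive and finite, since $f \in \mF$) vanishes identically. Suppose toward a contradiction that $\nu_0 X > 0$ and choose $\eps > 0$ with $\nu_0 X - \eps \mu X > 0$. Applying the Hahn decomposition (Proposition~\ref{prop_hahn}) to the signed measure $\nu_0 - \eps \mu$, write $X = P \cup N$ with $P$ positive and $N$ negative. Here the hypothesis $\nu \prec \mu$ enters in an essential way: both $\nu$ and the measure $A \mapsto \int_A f \ d\mu$ are absolutely continuous with respect to $\mu$ (Remark~\ref{oss_muA0}), hence so is $\nu_0$; therefore $\mu P = 0$ would force $\nu_0 P = 0$ and then $(\nu_0 - \eps \mu)(X) \leq 0$, against the choice of $\eps$. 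Thus $\mu P > 0$. Now for every $A \in \mM$ the set $A \cap P$ is positive, so $\eps \mu (A \cap P) \leq \nu_0 (A \cap P) \leq \nu_0 A$, whence $\wt f := f + \eps \chi_P$ satisfies $\int_A \wt f \ d\mu = \int_A f \ d\mu + \eps \mu(A \cap P) \leq \nu A$ for all $A$, i.e. $\wt f \in \mF$. But $\int_X \wt f \ d\mu = \alpha + \eps \mu P > \alpha$, a contradiction. Hence $\nu_0 = 0$, which is exactly (\ref{eq_RN}).

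For uniqueness, if $f , f'$ both satisfy (\ref{eq_RN}) then $\int_A ( f - f' ) \ d\mu = 0$ for every $A$; taking $A = \{ f > f' \}$ and $A = \{ f < f' \}$ and invoking Corollary~\ref{cor_int}(2) yields $f = f'$ almost everywhere. To reach the $\sigma$-finite case I would write $X = \dot{\cup}_n X_n$ with $\mu X_n , \nu X_n < \infty$ (intersecting and disjointifying the two given decompositions), apply the finite case to the restrictions (Remark~\ref{def_MIS02}(3)) to obtain functions $f_n$ on $X_n$, and set $f := \sum_n f_n \chi_{X_n}$; countable additivity of $\nu$ and of $A \mapsto \int_A f \ d\mu$ then gives (\ref{eq_RN}), and uniqueness follows by patching the partial uniqueness statements. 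Finally, for the signed case I would use the Jordan decomposition $\nu = \nu^+ - \nu^-$, observe that $\nu^\pm \prec \mu$, apply the positive result to each to get $f^\pm \in M^+(X)$, and put $f := f^+ - f^- \in M(X)$.
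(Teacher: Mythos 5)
Your proposal is correct and follows essentially the same route as the paper's proof: the same class $\mF$ of subsolutions, the same supremum $\alpha$ attained via increasing maxima and monotone convergence, and the same contradiction through the Hahn decomposition of $\nu_0 - \eps\mu$, followed by the same gluing for the $\sigma$-finite case and Jordan decomposition for the signed case. If anything, your version is slightly more careful than the paper's at the one point where $\nu \prec \mu$ is actually used, namely in guaranteeing $\mu P > 0$ so that $\int_X \wt f \, d\mu$ genuinely exceeds $\alpha$.
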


\begin{proof}[Dimostrazione]
Iniziamo assumendo che $X$ \textbf{abbia misura finita}. Definiamo
\[
C := 
\{
g \in M^+(X)
\ : \
\int g \ d \mu  \leq \nu E
\ , \
\forall E \in \mM
\}
\ \ , \ \
\alpha := \sup_{g \in C} \int g
\ .
\]
Poich\'e $\mu X < \infty$ abbiamo che $\alpha < \infty$; inoltre, per definizione di estremo superiore, esiste una successione $g_n \in C$ tale che $\alpha = \lim_n \int g_n d \mu$. Definiamo
\[
f_n := \sup \{ g_1 , \ldots , g_n \}
\ \Rightarrow \
0 \leq f_n \nearrow f
\ ;
\]
grazie a Teo.\ref{thm_MIS} abbiamo $f \in M^+(X)$. Ora, per ogni $E \in \mM$ troviamo facilmente
\[
E = \dot{\cup}_{i=1}^n E_i
\ \ : \ \
f_n |_{E_i} = g_i
\ \ , \ \
i = 1 , \ldots , n
\ .
\]
Per cui 
\[
\int_E f_n \ d \mu 
\ = \ 
\sum_i^n \int_{E_i} g_i \ d \mu
\ \leq \ 
\sum_i^n \nu E_i 
\ = \ 
\nu E
\]
(ovvero $f_n \in C$), e per convergenza monot\'ona troviamo
\[
\int_E f \ d \mu 
\ = \
\lim_n \int_E f_n \ d \mu
\ \leq \
\nu E
\ \ , \ \
E \in \mM
\ .
\]
Dunque $f \in C$ e $\mu \{ x : |f(x)| = + \infty \} = 0$. Inoltre troviamo
\[
\alpha 
\ \geq \ 
\int f \ d \mu
\ = \
\lim_n \int f_n \ d \mu
\ \geq \
\lim_n \int g_n \ d \mu
\ = \
\alpha
\ \Rightarrow \
\int f \ d \mu = \alpha
\ .
\]
Definiamo ora la misura (non negativa)
\[
\nu_0 E := \nu E - \int_E f \ d \mu
\ \ , \ \
E \in \mM
\ ,
\]
e mostriamo che $\nu_0 \equiv 0$. Se per assurdo esistesse $A \in \mM$ con $\nu_0 A > 0$, allora troveremmo, grazie alla decomposizione di Hahn, un $B \subseteq A$ tale che 
\begin{equation}
\label{eq_RN01}
(\nu_0 - \eps \mu) B 
\ = \
\nu B - \int_B (f + \eps 1) \ d \mu 
\ > \ 
0
\ .
\end{equation}
Poniamo ora $\wt f := f + \eps \chi_B$ ed osserviamo che
\[
\int_E \wt f \ d \mu
= 
\int_E f \ d \mu + \eps \mu (E \cap B)
= 
\int_{E-B} f \ d \mu + \int_B (f + \eps 1) \ d \mu
\ \stackrel{ (\ref{eq_RN01}) }{ < } \
\nu (E-B) + \nu B
=
\nu E
\ .
\]
La diseguaglianza precedente implica che $\wt f \in C$. D'altra parte, 
\[
\int \wt f \ d \mu = \int f \ d \mu + \eps \mu B > \alpha \ ,
\]
e quindi $\wt f \notin C$. Ci\'o \e assurdo e concludiamo che $\nu_0 = 0$. L'unicit\'a di $f$ si dimostra in modo banale per assurdo, e ci\'o mostra il teorema per $X$ a misura finita.
La generalizzazione al caso in cui $X$ \e \textbf{$\sigma$-finito} si effettua decomponendo $X = \dot{\cup}_n X_n$, $\mu X_n < \infty$, ed applicando il risultato nel caso finito ad ogni $X_n$. Ci\'o produce funzioni $f_n \in M^+(X)$, $n \in \bN$, tali che $\nu (E \cap X_n) = \int_E f_n \ d \mu$, $E \in \mM$. Ponendo $f := \sum_n f_n$ otteniamo, per convergenza monot\'ona, che $f$ soddisfa la propriet\'a desiderata. 
La generalizzazione al caso in cui $\mu , \nu$ sono misure con segno si ottiene applicando la decomposizione di Hahn (vedi \cite[\S 8.3]{dBar}).
\end{proof}

\subsection{Funzioni BV ed AC.}
\label{sec_AC_BV}

In questa sezione consideriamo funzioni sullo spazio di misura $[a,b]$, $a,b \in \bR$, equipaggiato della misura di Lebesgue.

Lo studio delle funzioni {\em assolutamente continue} (AC) ed a {\em variazione limitata} (BV) \e motivato dalla questione della formulazione del teorema fondamentale del calcolo nell'ambito dell'integrale di Lebesgue. In particolare, attraverso tali classi di funzioni determineremo l'immagine dell'applicazione 
\begin{equation}
\label{def_int_ab}
L^1([a,b]) \to C([a,b])
\ \ , \ \
f \mapsto F \ : \ F(x) := \int_a^x f
\ ,
\end{equation}
che associa ad $f \in L^1([a,b])$ la sua primitiva (osservare che gi\'a sappiamo che $F$ \e continua, vedi Cor.\ref{prop_ic}).

Iniziamo approcciando la questione della derivabilit\'a. Sia $f : [a,b] \to \bR$ una funzione. Per ogni $x \in (a,b)$, definiamo le {\em quattro derivate}
\[
D^\pm f (x) := \lim_{h \to 0 \pm} \sup  \frac{ f(x+h)-f(x) }{h}
\ \ , \ \
D_\pm f (x) := \lim_{h \to 0 \pm} \inf  \frac{ f(x+h)-f(x) }{h}
\ .
\]
e diciamo che $f$ {\em \e derivabile in} $x$ se $D^+ f (x) =$ $D^- f (x) =$ $D_+ f (x) =$ $D_- f (x) \neq$ $\pm \infty$. {\em In tal caso, definiamo} $f'(x) :=$ $D^+ f (x)$.

Osserviamo che se $f \in C([a,b])$ e una delle quattro derivate \e non negativa in $(a,b)$, allora $f$ \e monot\'ona crescente. D'altra parte, abbiamo il seguente notevole risultato.

%(vedi \cite[Theorem 5.1.3]{Roy}, \cite[Teo.9.31.6]{Kol}):
%
\begin{thm}[Lebesgue]
\label{thm_BV1}
Sia $f : [a,b] \to \bR$ monot\'ona. Allora $f$ \e derivabile q.o. in $[a,b]$. Inoltre, $f'$ \e misurabile, e
\begin{equation}
\label{eq_BV1}
\int_a^b f'(x) \ dx \ \leq \ f(b) - f(a) \ .
\end{equation}
\end{thm}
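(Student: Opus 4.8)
The plan is to assume throughout that $f$ is increasing (if $f$ is decreasing, apply the result to $-f$; since (\ref{eq_BV1}) is invariant under $f \mapsto -f$, this loses no generality). I would then organize the proof around a single construction: proving that the four Dini derivatives of $f$ coincide almost everywhere, so that the (extended-real) limit $g(x) := \lim_{h \to 0} h^{-1}( f(x+h)-f(x) ) \in [0,+\infty]$ exists a.e.; afterwards the measurability of $g$ together with the Lemma di Fatou will yield, in one stroke, both the finiteness of $g$ a.e. (hence genuine differentiability) and the estimate (\ref{eq_BV1}). The hard part is the first step, the equality of the Dini derivatives, and for it the essential tool is the Lemma di Vitali.

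Since $f$ is increasing, all four Dini derivatives are $\geq 0$, and a standard symmetry argument (relating right and left derivatives via $x \mapsto -f(-x)$) reduces the claim to showing that $D^+ f \leq D_- f$ a.e.; equivalently, that for every pair of rationals $0 \le u < v$ the set
\[
E_{u,v} := \{ x \in (a,b) : D_- f(x) < u < v < D^+ f(x) \}
\]
is Lebesgue-negligible, since $\{ D^+ f > D_- f \}$ is the countable union of the $E_{u,v}$. To estimate $s := \mu^*(E_{u,v})$ I would fix $\eps > 0$, choose an open $U \supseteq E_{u,v}$ with $\mu U < s + \eps$, and exploit $D_- f(x) < u$ to produce for each $x \in E_{u,v}$ arbitrarily short intervals $[x-h,x] \subset U$ with $f(x)-f(x-h) < u h$. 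These form a Vitali cover, from which I extract a finite disjoint family $\{ I_n \}$ covering $E_{u,v}$ up to measure $\eps$, so that $\sum_n ( f(x_n)-f(x_n-h_n) ) < u \mu U < u(s+\eps)$. Repeating the construction inside the open set $A := \cup_n \dot I_n$ — now using $D^+ f(y) > v$ to find short intervals $[y,y+k] \subset A$ with $f(y+k)-f(y) > v k$ — produces disjoint intervals $\{ J_j \}$, each contained in some $I_n$, covering $E_{u,v} \cap A$ (whose outer measure exceeds $s-\eps$) up to measure $\eps$, whence $\sum_j ( f(y_j+k_j)-f(y_j) ) > v(s-2\eps)$. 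Because $f$ is increasing, the $f$-increments over the $J_j$ lying in a fixed $I_n$ sum to at most the $f$-increment over $I_n$; combining the two estimates gives $v(s-2\eps) < u(s+\eps)$, and letting $\eps \to 0$ forces $(v-u)s \leq 0$, i.e. $s = 0$.

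Granting that $g$ exists a.e., measurability is immediate: $f$ itself is measurable (for increasing $f$ each $f^{-1}((\alpha,+\infty))$ is an interval), so setting $f(x) := f(b)$ for $x > b$ and $g_n(x) := n( f(x+1/n)-f(x) )$, each $g_n \in M([a,b])$ is nonnegative and $g = \lim_n g_n$ a.e., whence $g \in M([a,b])$ by Teo.\ref{thm_MIS}. Finally I would apply the Lemma di Fatou (Teo.\ref{thm_fatou}) to $\{ g_n \}$:
\[
\int_a^b g \ \leq \ \liminf_n \int_a^b g_n \ = \ \liminf_n n \int_a^b ( f(x+1/n)-f(x) ) \ dx \ .
\]
A translation of variables rewrites the last integral as $n( \int_b^{b+1/n} f - \int_a^{a+1/n} f )$; since $f \equiv f(b)$ on $[b,b+1/n]$ and $f \geq f(a)$ on $[a,a+1/n]$, this is $\leq f(b)-f(a)$ for every $n$. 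Hence $\int_a^b g \leq f(b)-f(a) < +\infty$, which forces $g < +\infty$ a.e.; thus $f$ is differentiable a.e. with $f' = g$ a.e., $f'$ is measurable, and (\ref{eq_BV1}) holds.
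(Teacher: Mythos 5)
Your proof is correct and follows essentially the same route as the paper's: the sets $E_{u,v}$ indexed by rational pairs, two applications of the Lemma di Vitali to show each is negligible, and the Lemma di Fatou applied to $g_n(x) := n\,( f(x+1/n)-f(x) )$ to obtain (\ref{eq_BV1}). You are in fact slightly more careful than the text at two points it leaves implicit, namely the symmetry reduction that handles the remaining pairs of Dini derivatives, and the observation that the finiteness of $\int_a^b g$ furnished by Fatou is what upgrades the a.e.\ existence of the extended-real limit to genuine (finite) differentiability.
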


\begin{proof}[Dimostrazione.]
Per fissare le idee assumiamo che $f$ sia non decrescente e consideriamo le derivate $D^+f$ e $D_-f$. L'obiettivo \e quello di mostrare che la misura di $E := \{ x : D^+f(x) \neq D_-f(x) \}$ \e nulla. A tale scopo definiamo, per ogni $u < v \in \bQ$, $E_{uv} := \{ x : D^+f(x) > v > u > D_-f(x)  \}$, ed osserviamo che $E = \cup_{u,v}E_{uv}$. Mostriamo dunque che ogni $E_{uv}$ ha misura nulla (il che implica che $E$, essendo unione numerabile degli $E_{uv}$, ha misura nulla). 
Innanzitutto osserviamo che $E_{uv}$, essendo contenuto in $[a,b]$, ha certamente misura esterna finita. Applicando il Lemma di Vitali troviamo una collezione di intervalli disgiunti $\{ I_1 , \ldots , I_N \}$, $I_n := ( x_n - h_n , x_n )$, tale che
\[
A' := \dot{\cup}_n I_n \subseteq E_{uv} 
\ \ , \ \ 
\mu A' > \mu^*E_{uv} - \eps 
\ .
\]
Ora, per ogni $x \in A'$ esiste un intervallo $[x-h,x]$ tale che
\[
f(x) - f(x-h) < uh  
\ \Rightarrow \ 
\sum_n \left(  f(x_n) - f(x_n-h_n)  \right) \ < \ u \sum_n h_n \ < \ u ( \mu^*E_{uv} + \eps ) \ .
\]
D'altro canto, ogni $y \in A'$ \e tale che esiste $k > 0$ con $(y,y+k) \subseteq I_n \subseteq E_{uv}$. Applicando ancora il Lemma di Vitali, troviamo una collezione $\{ J_1 , \ldots , J_M  \}$, $J_m := ( y_m , y_m + k_m  )$, tale che
\[
A'' := \dot{\cup}_m J_m
\ \ , \ \
\mu A'' > \mu^*E_{uv} - 2 \eps \ .
\]
Per $y \in A''$ troviamo
\[
f(y+k) - f(y) \ > \ vk
\ \ , \ \
\sum_m \left( f(y_m+k_m) - f(y_m) \right) \ > \ v ( \mu^*E_{uv} - 2 \eps ) 
\ .
\]
Ora, per monoton\'ia abbiamo
\[
\sum_{m \ : \ J_m \subseteq I_n} \left( f( y_m + k_m ) - f(y_m) \right)
\ \leq \
f(x_n) - f(x_n-h_n)
\ ,
\]
da cui
\[
v ( \mu^*E_{uv} - 2 \eps )
\ < \
\sum_m \left( f( y_m + k_m ) - f(y_m) \right)
\ \leq \
\sum_n \left(  f(x_n) - f(x_n-h_n)  \right) 
\ < \
u ( \mu^*E_{uv} + \eps ) 
\ .
\]
Per cui $E_{uv}$ ha misura nulla ed $f$ \e derivabile q.o.. 
Infine, (\ref{eq_BV1}) si dimostra applicando il Lemma di Fatou alla successione $g_n(x) := n ( f(x+1/n) - f(x) )$, $x \in [a,b]$, che converge q.o. a $f'(x)$.
\end{proof}

\

\noindent \textbf{Funzioni BV.}
Una {\em partizione di} $[a,b]$ \e una successione finita del tipo 
\[
P := \left\{ x_0 = a , x_1 , \ldots , x_{n(P)} = b  \right\} \ .
\]
Denotiamo con $\mP$ l'insieme delle partizioni di $[a,b]$; \e chiaro che $\mP$ \e un insieme parzialmente ordinato rispetto all'inclusione; se $P \subset P'$, allora diremo che $P'$ \e un {\em raffinamento} di $P$. E' ovvio che ogni partizione $P$ ammette un raffinamento.
Per ogni $f : [a,b] \to \bR$, definiamo
\[
V_a^b f (P)
\ = \
\sum_{k=0}^{n(P)-1}
| f(x_{k+1}) - f(x_k) |
\ .
\]
Applicando la diseguaglianza triangolare troviamo 
\[
V_a^b f (P) \leq V_a^b f (P')
\ \ , \ \
P \subseteq P'
\ .
\]
Definiamo ora la {\em variazione totale}
\begin{equation}
\label{eq_BV2}
V_a^b f
\ := \
\sup_{P \in \mP}
V_a^b f (P)
\ = \
\sup_{P \in \mP}
\sum_{k=0}^{n(P)-1}
| f(x_{k+1}) - f(x_k) |
\ .
\end{equation}
\begin{defn}
Una funzione $f : [a,b] \to \bR$ si dice a \textbf{variazione limitata (BV)} se $V_a^b f < + \infty$. Denotiamo con $BV([a,b])$ l'insieme delle funzioni a variazione limitata su $[a,b]$.
\end{defn}

\begin{rem}
{\it
La nozione di funzione BV pu\'o essere interpretata anche in termini geometrici, nel senso che la condizione $V_a^bf < + \infty$ indica che la curva definita dal grafico di $f$ in $[a,b]$ \e rettificabile.
}
\end{rem}

Una semplice applicazione della diseguaglianza triangolare implica che $BV([a,b])$ \e uno spazio vettoriale. Per dare un'idea del contenuto intuitivo di (\ref{eq_BV2}), osserviamo che se $f$ \e in $C^1([a,b])$, allora 
\[
V_a^b f = \int_a^b |f'(x)| \ dx
\ .
\]

\begin{ex}{\it 
Consideriamo le funzioni $f_k : \left[ 0 , \frac{2}{\pi} \right] \to \bR$, $k = 0,1,2$,
\[
f_k(x) :=
\left\{
\begin{array}{ll}
0 \ \ , \ \ x = 0
\\
x^k \sin (1/x) \ \ , \ \ x \neq 0
\end{array}
\right.
\]
Risulta che $f_0$, $f_1$ non sono BV, mentre $f_2$ \e BV.
}\end{ex}

Il teorema seguente fornisce una caratterizzazione delle funzioni BV.
\begin{thm}
\label{thm_BV2}
Una funzione $f : [a,b] \to \bR$ \e BV se e soltanto se \e la differenza di due funzioni monot\'one. Per cui, se $f \in BV([a,b])$ allora la derivata prima $f'$ esiste q.o. in $[a,b]$ (Teo.\ref{thm_BV1}).
\end{thm}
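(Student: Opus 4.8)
The plan is to prove the two implications of the equivalence separately, and then read off the differentiability corollary from the Lebesgue theorem on monotone functions (Teo.\ref{thm_BV1}).

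First I would dispatch the easy direction. Suppose $f = g - h$ with $g, h : [a,b] \to \bR$ non-decreasing. For an arbitrary partition $P = \{ x_0 = a, x_1, \ldots, x_{n(P)} = b \}$ the triangle inequality gives
\[
\sum_{k} | f(x_{k+1}) - f(x_k) | \leq \sum_k ( g(x_{k+1}) - g(x_k) ) + \sum_k ( h(x_{k+1}) - h(x_k) ),
\]
where I use that the increments of $g$ and $h$ are non-negative, so both sums telescope, to $g(b)-g(a)$ and $h(b)-h(a)$ respectively. Taking the supremum over $P \in \mP$ then yields $V_a^b f \leq (g(b)-g(a)) + (h(b)-h(a)) < \infty$, whence $f \in BV([a,b])$.

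The substantive direction is the converse. Assuming $f \in BV([a,b])$, I would introduce the total variation function $v(x) := V_a^x f$, with $v(a) := 0$. The key structural fact I need is the additivity of the variation over adjacent intervals,
\[
V_a^y f = V_a^x f + V_x^y f \ \ , \ \ a \leq x \leq y \leq b,
\]
which I would establish by observing that inserting the extra point $x$ into any partition of $[a,y]$ cannot decrease the associated sum (again by the triangle inequality), so the supremum over all partitions of $[a,y]$ may be computed along partitions that contain $x$, and each such partition splits canonically into a partition of $[a,x]$ together with one of $[x,y]$. Granting this, $v$ is non-decreasing since $V_x^y f \geq 0$. Moreover $v - f$ is non-decreasing: for $x < y$ one has $v(y) - v(x) = V_x^y f \geq | f(y) - f(x) | \geq f(y) - f(x)$, the first inequality coming from the trivial partition $\{ x, y \}$ of $[x,y]$; rearranging gives $(v-f)(y) \geq (v-f)(x)$. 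Writing $f = v - (v - f)$ then exhibits $f$ as a difference of two non-decreasing functions.

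The main obstacle is the careful justification of the additivity identity for the variation (equivalently, the monotonicity of $v - f$); everything else reduces to the triangle inequality. Once the decomposition $f = v - (v-f)$ is in hand, the final assertion is immediate: by Teo.\ref{thm_BV1} each of the monotone functions $v$ and $v - f$ is differentiable outside a set of Lebesgue measure zero, and the union of two null sets is still null, so $f$ is differentiable q.o. in $[a,b]$.
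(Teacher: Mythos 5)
Your proof is correct, but it follows a genuinely different route from the one in the text. You use the total variation function $v(x) := V_a^x f$ and write $f = v - (v-f)$, which forces you to establish the additivity $V_a^y f = V_a^x f + V_x^y f$ (or at least the superadditivity half, which is all that the monotonicity of $v-f$ actually requires: append the point $y$ to a near-optimal partition of $[a,x]$ to get $v(y) \geq v(x) + |f(y)-f(x)|$). The text instead introduces the positive and negative variations $V_a^x f^{\pm}$ built from $\left( f(x_{k+1})-f(x_k) \right)^{\pm}$, and obtains the two identities $V_a^b f = V_a^b f^+ + V_a^b f^-$ and $f(b)-f(a) = V_a^b f^+ - V_a^b f^-$ purely algebraically from $|\alpha| = \alpha^+ + \alpha^-$ and $\alpha = \alpha^+ - \alpha^-$ applied termwise and telescoped; the decomposition is then $f = f_+ - f_- + f(a)$ with $f_\pm(x) := V_a^x f^\pm$, and no additivity lemma for the variation is needed. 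Your version costs you one extra (standard and correctly sketched) lemma but produces the canonical function $v$, which is the more useful object in later developments; the text's version is shorter and more self-contained at this point. The easy direction and the final appeal to Teo.\ref{thm_BV1} are identical in both arguments.
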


\begin{proof}[Dimostrazione.]
Iniziamo introducendo la seguente notazione: se $a \in \bR$, allora $a^+ :=$ $\sup \{ a , 0 \} \geq 0$, $a^- :=$ $- \inf \{ a , 0 \} \geq 0$. Cosicch\'e, $|a| = a^+ + a^-$, $a = a^+ - a^-$.
Sia $f$ BV; definiamo
\[
V_a^bf^\pm 
:= 
\sup_{P \in \mP}
\sum_{k=0}^{n(P)-1}
\left( f(x_{k+1}) - f(x_k) \right)^\pm
\ ,
\]
cosicch\'e dalla relazione $|a| = a^+ + a^-$ si ottiene facilmente
\begin{equation}
\label{eq_BV3}
V_a^b f = V_a^b f^+ + V_a^b f^-
\ .
\end{equation}
Inoltre, dalla relazione $a = a^+ - a^-$ segue
\[
\sum_{k=0}^{n(P)-1}
\left( f(x_{k+1}) - f(x_k) \right)^+
\
-
\
\sum_{k=0}^{n(P)-1}
\left( f(x_{k+1}) - f(x_k) \right)^-
\
=
\
f(b) - f(a)
\]
per cui otteniamo
\begin{equation}
\label{eq_BV4}
f(b) - f(a)
=
V_a^b f^+ - V_a^b f^-
\ .
\end{equation}
Definiamo allora $f_\pm(x) :=$ $V_a^x f^\pm$ (osservare che $f$ BV implica $|f_\pm (x)| <$ $V_a^b f <$ $+ \infty$, cosicch\'e $f_\pm(x)$ \e ben definita per ogni $x$). Da (\ref{eq_BV4}) segue $f (x) = f_+ (x) - f_- (x) + f(a)$, $x \in [a,b]$, ed \e chiaro che $f_\pm$ sono monot\'one crescenti.
Viceversa, se $f = g - h$ con $g,h$ monot\'one crescenti, abbiamo
\[
\sum_{k=0}^{n(P)-1} | f(x_{k+1}) - f(x_k) |
\leq
\sum_{k=0}^{n(P)-1} 
\left[ \left( g(x_{k+1}) - g(x_k) \right)
+
\left( h(x_{k+1}) - h(x_k) \right)
\right]
\leq
g(b) - g(a) + h(b) - h(a)
,
\]
e ci\'o implica $f \in BV([a,b])$.
\end{proof}

\begin{ex}
{\it
La funzione caratteristica $f := \chi_{[0,1]} : [-1,1] \to \bR$ \e monot\'ona crescente e quindi a variazione limitata. La derivata prima di $f$ \e  -- a meno di equivalenza q.o. -- la funzione nulla, per cui (\ref{eq_BV1}) in questo caso \e una diseguaglianza stretta: $\int_{-1}^1 f' = 0 < f(1) - f(-1) = 1$.
}
\end{ex}

\begin{defn}
Una funzione $F : [a,b] \to \bR$ si dice una \textbf{primitiva di $f \in L^1([a,b])$} se
\[
F(x) = F(a) + \int_a^x f(t) dt 
\ \ , \ \ 
x \in [a,b] \ .
\]
\end{defn}

\begin{prop}
\label{prop_BV3}
Sia $f \in L^1([a,b])$ ed $F$ una sua primitiva. Allora $F \in BV([a,b]) \cap C([a,b])$.
\end{prop}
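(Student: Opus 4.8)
The plan is to treat the two assertions $F \in C([a,b])$ and $F \in BV([a,b])$ separately, since the continuity part is essentially already in hand. For continuity I would simply invoke Cor.\ref{prop_ic}: that result gives continuity of $x \mapsto \int_a^x f(t)\,dt$ for any integrable $f$, and since $F$ differs from this map only by the additive constant $F(a)$, continuity of $F$ follows at once.

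The substance of the statement is the bounded-variation claim, and the guiding idea is that the variation over \emph{any} partition is controlled, uniformly, by the single number $\int_a^b |f|$. Concretely, I would fix an arbitrary partition $P = \{ a = x_0 < x_1 < \cdots < x_{n(P)} = b \} \in \mP$ and, using the defining identity $F(x_{k+1}) - F(x_k) = \int_{x_k}^{x_{k+1}} f$, estimate
\[
V_a^b F(P) = \sum_{k=0}^{n(P)-1} | F(x_{k+1}) - F(x_k) | = \sum_{k=0}^{n(P)-1} \left| \int_{x_k}^{x_{k+1}} f(t)\,dt \right| .
\]

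The one inequality that carries the whole argument is $\left| \int_{x_k}^{x_{k+1}} f \right| \le \int_{x_k}^{x_{k+1}} |f|$, which I would deduce from the monotonicity and linearity of the integral (Cor.\ref{cor_int}(3)--(4)) applied to the pointwise bounds $f \le |f|$ and $-f \le |f|$. Summing over $k$ and invoking additivity of the integral over the disjoint subintervals (Cor.\ref{cor_int}(1), via $\chi_{[a,b]} = \sum_k \chi_{(x_k,x_{k+1}]}$ up to the finitely many endpoints, which have measure zero) collapses the right-hand side to
\[
\sum_{k=0}^{n(P)-1} \int_{x_k}^{x_{k+1}} |f| = \int_a^b |f| .
\]
Since the bound $\int_a^b |f| = \| f \|_1 < \infty$ does not involve $P$, taking the supremum over all $P \in \mP$ gives $V_a^b F \le \int_a^b |f| < \infty$, i.e. $F \in BV([a,b])$.

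I expect no genuine obstacle here: the only point deserving a moment's care is ensuring that the estimate is truly uniform in the partition, so that the supremum defining $V_a^b F$ remains finite. This is automatic, precisely because the bounding constant $\| f \|_1$ is independent of $P$, and the hypothesis $f \in L^1([a,b])$ is exactly what guarantees this constant is finite.
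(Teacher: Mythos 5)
Your argument is correct and coincides with the one in the paper: continuity via Cor.\ref{prop_ic}, and the uniform bound $V_a^b F(P) \le \int_a^b |f|$ obtained from $|F(x_{k+1})-F(x_k)| = \bigl| \int_{x_k}^{x_{k+1}} f \bigr| \le \int_{x_k}^{x_{k+1}} |f|$. You merely spell out the triangle inequality and the additivity over subintervals in more detail than the paper does.
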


\begin{proof}[Dimostrazione.]
$F$ \e continua grazie a Cor.\ref{prop_ic}. Ora se $P$ una partizione di $[a,b]$ troviamo
\[
\sum_{k=0}^{n(P)-1} | F(x_{k+1}) - F(x_k) |
=
\sum_{k=0}^{n(P)-1} \left| \int_{x_k}^{x_{k+1}} f(t) \ dt \right|
\leq
\int_a^b |f(t)| \ dt
\ .
\]
Dunque $F \in BV([a,b])$.
\end{proof}

%
%
% Royden pp.106-107: f = F' q.o.
%
%

Concludiamo questa breve rassegna sulle funzioni BV menzionando il seguente risultato, connesso all'integrale di Stieltjes (\cite[10.36]{Kol}, \cite[\S 12.3]{Roy}):
\begin{thm}\textbf{(Helly, \cite[\S 10.36.5]{Kol}).} Sia $\{ f_n \} \subseteq BV([a,b])$ una successione puntualmente convergente ad una funzione $f : [a,b] \to \bR$ e tale che $\sup_n V_a^b f_n < + \infty$. Allora $f \in BV([a,b])$.
\end{thm}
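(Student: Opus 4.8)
The plan is to exploit directly the definition of total variation as a supremum over partitions, together with the uniform bound $C := \sup_n V_a^b f_n < +\infty$ supplied by the hypothesis. The essential observation is that every partition $P = \{ x_0 = a , x_1 , \ldots , x_m = b \} \in \mP$ involves only \emph{finitely many} points, so the mere pointwise convergence $f_n(x) \to f(x)$ is already enough to control the finite sum attached to $P$. No uniform convergence is needed.

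First I would fix an arbitrary partition $P \in \mP$. For each $n \in \bN$, the definition of total variation (\ref{eq_BV2}) gives
\[
\sum_{k=0}^{m-1} | f_n(x_{k+1}) - f_n(x_k) | \ \leq \ V_a^b f_n \ \leq \ C \ .
\]
Since the left-hand side is a sum over the finitely many nodes $x_0 , \ldots , x_m$ and $f_n(x_k) \to f(x_k)$ for each $k$, I can pass to the limit $n \to \infty$ term by term (continuity of the absolute value and of a finite sum), obtaining
\[
\sum_{k=0}^{m-1} | f(x_{k+1}) - f(x_k) | \ = \ \lim_n \sum_{k=0}^{m-1} | f_n(x_{k+1}) - f_n(x_k) | \ \leq \ C \ .
\]
Because the bound $C$ is independent of $P$, taking the supremum over all $P \in \mP$ yields $V_a^b f \leq C < +\infty$, that is $f \in BV([a,b])$, as desired.

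The argument is essentially immediate once the correct order of the quantifiers is fixed, and this is precisely where the only (minor) obstacle lies: one must resist the temptation to write $V_a^b f = \lim_n V_a^b f_n$, an identity that would require a uniformity not granted by the assumptions. The correct procedure is to establish the inequality for each \emph{fixed} partition first, using pointwise convergence on a finite set of points, and only afterwards pass to the supremum over $\mP$. Framed this way, pointwise convergence together with the uniform variation bound is exactly what makes the conclusion go through, and no appeal to Ascoli--Arzel\`a or to any compactness principle is required.
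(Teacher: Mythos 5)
La tua dimostrazione è corretta e completa: fissata una partizione, la convergenza puntuale sui suoi (finiti) nodi permette di passare al limite nella somma e di ereditare la maggiorazione uniforme $C$, dopodiché si prende l'estremo superiore sulle partizioni. Il testo non riporta una dimostrazione propria (rimanda a \cite[\S 10.36.5]{Kol}), ma l'argomento che proponi è esattamente quello standard, e l'osservazione che si ottiene solo $V_a^b f \leq \sup_n V_a^b f_n$ (e non l'uguaglianza col limite delle variazioni) è pertinente.
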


\noindent \textbf{Funzioni AC.}
Una funzione $f : [a,b] \to \bR$ si dice {\em assolutamente continua (AC)}, se per ogni $\eps > 0$ esiste un $\delta > 0$ tale che, presa una qualsiasi collezione finita $C :=$ $\left\{ (x_k , x'_k) \right\}_k$ di intervalli disgiunti con
\[
l(C) := \sum_k (x'_k - x_k) < \delta
\ ,
\]
risulta 
\[
\sum_k | f(x'_k) - f(x_k) | < \eps \ .
\]
Denotiamo con $AC([a,b])$ l'insieme delle funzioni AC su $[a,b]$. Si dimostra facilmente che $AC([a,b])$ \e uno spazio vettoriale. Inoltre, \e ovvio che ogni funzione AC \e anche (uniformemente) continua.

\begin{ex}{\it
Sia $f : [a,b] \to \bR$ una funzione con costante di Lipschitz $L > 0$. Scegliendo $\delta < \eps / L$ nella definizione precedente, concludiamo che $f \in AC([a,b])$.
}
\end{ex}

\begin{lem}
\label{lem_AC1}
$AC([a,b]) \subseteq BV([a,b])$, cosicch\'e ogni funzione $AC$ \e derivabile q.o. in $[a,b]$.
\end{lem}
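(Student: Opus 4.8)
The plan is to show that the total variation $V_a^b f$ is finite by exploiting the absolute continuity hypothesis for a single, fixed choice of $\eps$, and then to read off the almost-everywhere differentiability from Teo.\ref{thm_BV2}. First I would fix $\eps = 1$ in the definition of absolute continuity, obtaining a $\delta > 0$ such that $\sum_k | f(x'_k) - f(x_k) | < 1$ whenever $\{ (x_k , x'_k) \}$ is a finite family of disjoint intervals with $\sum_k (x'_k - x_k) < \delta$. I then choose $N \in \bN$ with $(b-a)/N < \delta$ and partition $[a,b]$ into the $N$ consecutive subintervals $[t_{i-1},t_i]$, where $t_i := a + i(b-a)/N$, each of length strictly below $\delta$.

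Next, given an arbitrary partition $P \in \mP$, I would pass to the refinement $P' := P \cup \{ t_0 , \ldots , t_N \}$; since $V_a^b f (P) \leq V_a^b f (P')$ (the monotonicity of $V_a^b f(\cdot)$ under refinement noted before the definition of $V_a^b f$), this step does not decrease the quantity to be bounded. The points of $P'$ lying in a single block $[t_{i-1},t_i]$ determine consecutive intervals whose lengths add up to exactly $t_i - t_{i-1} < \delta$, so the absolute continuity condition forces the corresponding partial sum of the $| f(x_{k+1}) - f(x_k) |$ to be $< 1$. Summing over $i = 1 , \ldots , N$ yields $V_a^b f (P) \leq V_a^b f (P') < N$, and taking the supremum over $P \in \mP$ gives $V_a^b f \leq N < + \infty$, i.e. $f \in BV([a,b])$.

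The argument is essentially a single uniform estimate, and the only point requiring genuine care is the refinement step: one must check that after inserting the division points $t_i$ the partition splits cleanly across the blocks, so that within each $[t_{i-1},t_i]$ the selected points really do form disjoint intervals of total length below the threshold $\delta$. This is precisely where fixing $\eps = 1$ (rather than an $\eps$ tied to the partition) is essential, since it lets me bound the variation on each of the finitely many blocks by the same constant $1$ and then simply count the blocks. Finally, the closing assertion that $f'$ exists q.o.\ in $[a,b]$ follows immediately from $f \in BV([a,b])$ together with Teo.\ref{thm_BV2}, which represents any BV function as a difference of monotone functions, each differentiable q.o.\ by Teo.\ref{thm_BV1}.
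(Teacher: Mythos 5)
Your proof is correct and follows essentially the same strategy as the paper's: refine the given partition, split it into finitely many blocks of total length below the $\delta$ of absolute continuity, bound the variation on each block by the AC constant, and multiply by the number of blocks. Your execution with fixed equal-length blocks $[t_{i-1},t_i]$ and $\eps=1$ is in fact a cleaner variant than the paper's adaptively built collections $C_i$ with $\delta/2 < l(C_i) < \delta$, but the underlying idea is identical.
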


\begin{proof}[Dimostrazione.]
Sia $f \in AC([a,b])$. Fissiamo $\eps > 0$ tale che $\sum_k |f(x'_k) - f(x_k)| < \eps$ per ogni collezione $C := \{ (x_k , x'_k ) \}$ con $l(C) < \delta$ e $\delta > 0$ opportuno. 
Sia $P := \{ \ldots ,  x_k , \ldots \}$ una partizione di $(a,b)$. Effettuando eventualmente un raffinamento di $P$, possiamo costruire $N$ collezioni $C_i := \{ ( y_{ij} , y'_{ij} ) \}_j$ tali che
\[
[a,b] = \cup_i \ovl C_i
\ \ , \ \
P \subseteq \cup_{ij} \{ y_{ij} , y'_{ij} \}_{ij}
\ \ , \ \
\delta / 2 < l(C_i) < \delta
\ .
\]
Ora, il numero $N$ di collezioni che soddisfano le condizioni precedenti rimane limitato, a prescindere dalla partizione $P$:
\[
\sum_i \delta / 2 
\ \leq \
\sum_i l(C_i)
\ = \
b-a
\ \Rightarrow \
\delta \frac{N(N-1)}{4} \leq b-a
\ \Rightarrow \
N \leq N(N-1) \leq \frac{4}{\delta} \ (b-a)
\ .
\]
Inoltre, essendo $l(C_i) < \delta$ troviamo
\[
\sum_k | f(x_k) - f(x_{k+1}) | \leq 
\sum_{i,j} | f(y_{ij}) - f(y'_{ij}) | \leq 
\sum_i \eps = 
\frac{N(N-1)}{2} \ \eps 
\ .
\]
Poich\'e la maggiorazione precedente non dipende da $P$, concludiamo che $f$ \e BV.
\end{proof}

\begin{rem}{\it
L'inclusione $AC([a,b]) \subseteq BV([a,b])$ \e stretta. Un famoso esempio di funzione BV ma non AC \e la \textbf{funzione di Cantor
{\footnote{
Detta anche {\em la scala del Diavolo.}
}}
} (\cite[\S 9.33, Ex.2]{Kol},\cite[Es.5.3.4]{Giu2}), la quale si pu\'o definire come il limite della successione
\[
f_0(t) := t
\ \ , \ \
f_{n+1}(t) :=
\left\{
\begin{array}{ll}
1/2 f_n(3t)
\ \ \ \ \ \ \ \ \ \ \ \ \ \ \ \ \ \ \ , \ \ 
t \in [0,1/3]
\\
1/2
\ \ \ \ \ \ \ \ \ \ \ \ \ \ \ \ \ \ \ \ \ \ \ \ \ \ \ , \ \ 
t \in (1/3,2/3]
\\
1/2 \left( 1 + f_n \left( 3 \left( t - \frac{2}{3} \right)  \right)  \right) 
\ \ , \ \ 
t \in (2/3,1] 
\ .
\end{array}
\right.
\]
Si verifica facilmente che $\| f_{n+k}-f_n \|_\infty \leq 2^{-n}$, $\forall n,k \in \bN$, per cui esiste il limite uniforme $f \in C([0,1])$. A livello intuitivo, possiamo visualizzare $f$ come una funzione costante sugli "intervalli di mezzo" che appaiono nella costruzione dell'insieme di Cantor $W_{1/3}$ e crescente in $W_{1/3}$, il quale ha misura nulla (vedi Es.\ref{ex_cantor}).
Ora, essendo ogni $f_n$ monot\'ona crescente abbiamo che $f$ \e monot\'ona crescente (e quindi BV) ed uniformemente continua (Heine-Cantor). Si noti che $f' = 0$ q.o. in $t \in [0,1]$ e quindi 
$
\int_0^1 f' = 0 < f(1) - f(0) = 1$.
Ci\'o implica, in conseguenza del Lemma seguente, che $f$ non \e AC.
}
\end{rem}

A differenza dell'esempio precedente, abbiamo:
\begin{lem}
\label{lem_AC3}
Sia $f \in AC([a,b])$. Se $f' = 0$ q.o., allora $f$ \e costante.
\end{lem}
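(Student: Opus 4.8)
The plan is to show that $f(c) = f(a)$ for every $c \in (a,b]$, which forces $f$ to be constant. First I would fix such a $c$ and introduce the set $E := \{ x \in (a,c) : f'(x) = 0 \}$; by hypothesis $f' = 0$ q.o., so $\mu E = c - a$, i.e.\ $E$ has full measure in $(a,c)$. Then, given an arbitrary $\eps > 0$, I would invoke the defining property of absolute continuity to produce the corresponding $\delta > 0$ such that any finite disjoint family of intervals of total length $< \delta$ contributes less than $\eps$ to the sum of the $|f|$-increments.

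The heart of the argument is a Vitali-covering construction. For each $x \in E$ the condition $f'(x) = 0$ gives, for arbitrarily small $h > 0$, an interval $[x, x+h] \subset (a,c)$ with $|f(x+h)-f(x)| < \eps h$. The family $\mI$ of all such intervals covers $E$ nel senso di Vitali, so by the Lemma di Vitali I can extract a finite disjoint subfamily $\{ I_k = (a_k,b_k) \}_{k=1}^n$ with $\mu^*(E - \dot{\cup}_k I_k) < \delta$. Ordering the $I_k$ so that $a \le a_1 < b_1 \le a_2 < \cdots \le b_n \le c$, I would note that the complementary ``gaps'' $[a,a_1], [b_1,a_2], \ldots, [b_n,c]$ form a finite disjoint family whose total length equals $(c-a) - \sum_k (b_k - a_k)$; since $E$ has full measure $c-a$ and only a subset of it of measure $< \delta$ escapes the chosen intervals, these gaps have total length $< \delta$.

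Finally I would split the telescoping identity $f(c) - f(a) = \sum_{\text{gaps}} (f(\text{destra}) - f(\text{sinistra})) + \sum_{k} (f(b_k) - f(a_k))$. The first sum is bounded by $\eps$ thanks to absolute continuity (the gaps have total length $< \delta$), while the second is bounded by $\sum_k \eps (b_k - a_k) \le \eps (c-a)$ by the defining property of the $I_k$. Hence $|f(c) - f(a)| < \eps (1 + c - a)$, and letting $\eps \to 0$ gives $f(c) = f(a)$. The main obstacle is the bookkeeping that links the two small parameters: one must use the full measure of $E$ to guarantee that the leftover gaps are short enough for absolute continuity to apply, while simultaneously using $f' = 0$ on $E$ to control the increments over the selected intervals — producing a single finite disjoint family for which both smallness conditions hold at once is exactly what the Lemma di Vitali buys us.
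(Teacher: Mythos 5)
Your proof is correct and follows essentially the same route as the paper's: the Vitali covering argument applied to $E_c$, the extraction of a finite disjoint family on which $|f(t'_k)-f(t_k)| < \eps'(t'_k-t_k)$, and the splitting of the telescoping sum into gaps (controlled by absolute continuity, since their total length is $< \delta$ by the full-measure argument) and selected intervals (controlled by $f'=0$). The only cosmetic difference is that the paper keeps two independent small parameters $\eps$ and $\eps'$ where you merge them into one, which changes nothing.
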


\begin{proof}[Dimostrazione]
Mostriamo che $f(a) = f(c)$ per ogni $c \in (a,b]$. A tale scopo definiamo
\[
E_c := \{ t \in (a,c) : f'(t) = 0 \}
\ \Rightarrow \
\mu E_c = c-a
\ ,
\]
e scegliamo arbitrari $\eps,\eps' > 0$. Ora, per ogni $t \in E_c$ esiste $h>0$ tale che $[t,t+h] \subseteq [a,c]$ e $|f(t+h)-f(t)| < \eps' h$. Sia ora $\delta > 0$ il numero reale associato ad $\eps$ nella definizione di assoluta continuit\'a; applicando il Lemma di Vitali, possiamo estrarre da $\{ [t,t+h] \}$ una collezione finita $\{ [t_k,t'_k] \}$ di intervalli disgiunti tali che
\begin{equation}
\label{eq_mudelta}
\mu \left( E_c - \dot{\cup}_k [t_k,t'_k] \right) \ < \ \delta \ .
\end{equation}
Osserviamo che si ha l'ordinamento
$t'_0 := a < t_1 < t'_1 \leq t_2 < \ldots < t'_n < t_{n+1} := c$, 
cosicch\'e ogni intevallo $(t'_k,t_{k+1})$ \e contenuto in $E_c - \dot{\cup}_k [t_k,t'_k]$. Per cui
\[
\sum_{k=0}^n | t_{k+1} - t'_k | < \delta
\ \ \Rightarrow \ \
\sum_{k=0}^n | f(t_{k+1}) - f(t'_k) | < \eps
\ .
\]
D'altra parte,
\[
\sum_{k=1}^n | f(t'_k) - f(t_k) | 
\ \leq \ 
\eps' \sum_{k=0}^n (t'_k - t_k) \leq \eps' (c-a)
\ .
\]
Concludiamo quindi
\[
\begin{array}{ll}
|f(c)-f(a)|
& \ = \
| \sum_k f(t_{k+1}) - f(t'_k) + f(t'_k) - f(t_k) |
\\ & \ \leq \
\sum_k | f(t_{k+1}) - f(t'_k) | + \sum_k | f(t'_k) - f(t_k) |
\leq 
\eps + \eps'(c-a)
\ .
\end{array}
\]
\end{proof}

Il seguente risultato risolve completamente la questione della determinazione dell'immagine dell'applicazione integrale (\ref{def_int_ab}):
\begin{thm}
\label{thm_AC2}
Una funzione $F : [a,b] \to \bR$ \e primitiva di una qualche $f \in L^1([a,b])$ se e soltanto se $F \in AC([a,b])$.
\end{thm}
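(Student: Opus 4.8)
The plan is to prove the two implications separately, the forward one being immediate from the absolute continuity of the integral and the converse carrying all the work. For the forward implication (primitive $\Rightarrow$ AC): if $F(x) = F(a) + \int_a^x f$ with $f \in L^1([a,b])$, then applying Corollario \ref{lem_ic} to $|f| \geq 0$ produces, for every $\eps > 0$, a $\delta > 0$ with $\int_A |f| < \eps$ whenever $\mu A < \delta$. Given a finite disjoint collection $\{(x_k,x_k')\}$ with $\sum_k (x_k'-x_k) < \delta$, I set $A := \dot{\cup}_k (x_k,x_k')$ and estimate $\sum_k |F(x_k')-F(x_k)| = \sum_k |\int_{x_k}^{x_k'} f| \leq \int_A |f| < \eps$, so $F \in AC([a,b])$.

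For the converse, I would first note that $F \in AC([a,b])$ implies $F \in BV([a,b])$ by Lemma \ref{lem_AC1}, hence $F = g-h$ with $g,h$ monotone increasing by Teorema \ref{thm_BV2}. By Teorema \ref{thm_BV1} the derivatives $g',h'$ exist a.e. with $\int_a^b g' \leq g(b)-g(a)$ and $\int_a^b h' \leq h(b)-h(a)$; since $|F'| \leq g'+h'$ a.e., this shows $F'$ is measurable and $F' \in L^1([a,b])$. It then makes sense to set $G(x) := \int_a^x F'(t)\,dt$, which is continuous by Corollario \ref{prop_ic} and, by the forward implication, lies in $AC([a,b])$. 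The target is $F(x) = F(a) + G(x)$, which I would obtain by showing that $F-G \in AC([a,b])$ has derivative $0$ a.e. and invoking Lemma \ref{lem_AC3}, forcing $F-G$ to equal its value $F(a)$ at $x=a$.

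The crux, and the main obstacle, is the differentiation-of-the-integral statement $G' = F'$ a.e.; more precisely, that for every $\phi \in L^1([a,b])$ the indefinite integral $x \mapsto \int_a^x \phi$ has derivative $\phi$ a.e. This is not among the results proved above, so I would insert it as a separate lemma in three steps. First, a vanishing lemma: if $\int_a^x \phi = 0$ for all $x$, then $\phi = 0$ a.e.; indeed the integral then vanishes on every interval, hence on every open set, hence—via outer regularity (Lemma \ref{lem_ME03}, Corollario \ref{cor_ri}) together with the absolute continuity of the integral (Corollario \ref{lem_ic})—on every measurable set, so that $\int_{\{\phi>0\}} \phi = 0$ and $\int_{\{\phi<0\}} \phi = 0$ force $\mu\{\phi \neq 0\} = 0$ by Corollario \ref{cor_int}(2). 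Second, the bounded case $|\phi| \leq M$ (extending $\phi$ by zero past $b$): here $G$ is Lipschitz, so $G'$ exists a.e., and the difference quotients $g_n(x) := n(G(x+1/n)-G(x)) = n\int_x^{x+1/n}\phi$ are bounded by $M$ and converge a.e. to $G'$; by bounded convergence (Prop.\ref{prop_conv_lim}) and continuity of $G$ one gets $\int_a^c G' = \lim_n \int_a^c g_n = G(c)-G(a) = \int_a^c \phi$ for all $c$, whence $G' = \phi$ a.e. by the first step. Third, the case $\phi \geq 0$: truncating $\phi_n := \min\{\phi,n\}$ and writing $G(x) = \int_a^x \phi_n + \int_a^x (\phi-\phi_n)$, the last term is increasing so has non-negative derivative, giving $G' \geq \phi_n$ and hence $G' \geq \phi$ a.e.; combined with $\int_a^b G' \leq G(b)-G(a) = \int_a^b \phi$ (Teorema \ref{thm_BV1}, as $G$ is increasing) this forces $G' = \phi$ a.e. A splitting $\phi = \phi^+ - \phi^-$ then covers general $\phi$.

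With $G' = F'$ a.e. established (applying the lemma to $\phi := F'$), the function $F-G \in AC([a,b])$ satisfies $(F-G)' = 0$ a.e., so Lemma \ref{lem_AC3} yields $F-G \equiv (F-G)(a) = F(a)$, that is $F(x) = F(a) + \int_a^x F'$. Hence $F$ is the primitive of $f := F' \in L^1([a,b])$, which completes the converse and the proof. I expect the three-step differentiation lemma to be the only genuinely delicate part, all other steps being direct citations of the results already available.
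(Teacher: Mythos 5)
La tua dimostrazione \e corretta e segue lo stesso scheletro di quella del testo: nel verso diretto si usa l'assoluta continuit\'a dell'integrale (Cor.\ref{lem_ic}), nel verso inverso si passa per BV (Lemma \ref{lem_AC1}, Teo.\ref{thm_BV2}, Teo.\ref{thm_BV1}) per ottenere $F' \in L^1$, si definisce la candidata primitiva $G(x) := \int_a^x F'$ e si conclude con il Lemma \ref{lem_AC3}. La differenza sostanziale \e che tu isoli e dimostri il punto che il testo d\'a per scontato: l'uguaglianza $G' = F'$ q.o., ossia la derivabilit\'a q.o. dell'integrale indefinito con derivata uguale alla funzione integranda. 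Il testo si limita ad affermare ``troviamo $F' - \hat F' = 0$ q.o.'' senza giustificazione (il risultato pi\'u vicino disponibile, (\ref{eq_int_der}), compare solo pi\'u avanti e fornisce soltanto una convergenza lungo una sottosuccessione via mollificatori). Il tuo lemma in tre passi -- lemma di annullamento via regolarit\'a esterna, caso limitato via convergenza limitata sui rapporti incrementali, caso $\phi \geq 0$ via troncamento e la diseguaglianza (\ref{eq_BV1}) -- \e l'argomento standard e chiude effettivamente la falla. Segnalo inoltre che nel verso diretto la tua stima con $|f|$ \e preferibile a quella del testo: l\'i si ``somma una costante per assumere $f \geq 0$'', il che non funziona per $f \in L^1$ non limitata inferiormente, mentre l'applicazione di Cor.\ref{lem_ic} a $|f|$ \e immediata e corretta.
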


\begin{proof}[Dimostrazione.]
Sia $F \in AC([a,b])$. Allora $F$ \e BV, derivabile q.o. (Lemma \ref{lem_AC1}) e differenza di due funzioni monot\'one $g$ ed $h$ (Teorema \ref{thm_BV2}). Per cui, per disuguaglianza triangolare,
\[
|F'(x)| \leq g'(x) + h'(x) 
\ \ , \ \  
{\mathrm{q.o. \ in \ }} x \in [a,b]  
\ .
\]
Quindi, integrando membro a membro troviamo
\[
\int_a^b |F'(x)| \ dx \leq g(b) - g(a) - h(b) + h(a) \ ,
\]
il che implica che $F'$ \e integrabile. Definendo
\[
\hat F (x) := F(a) + \int_a^x F'(t) \ dt  \ \ , \ \ x \in [a,b] \ ,
\]
troviamo $F' - \hat{F}' = 0$ q.o. e quindi (grazie a Lemma \ref{lem_AC3}) $F = \hat F$.
Viceversa, sia $F$ una primitiva. Allora $F$ \e continua e BV (Prop.\ref{prop_BV3}). Eventualmente sommando una costante, possiamo assumere che $f \geq 0$, per cui concludiamo che $F$ \e AC grazie a (\ref{eq_LPB1}). 
\end{proof}

\subsection{Funzioni convesse e diseguaglianza di Jensen.}
\label{sec_jensen}

Una funzione 
$\varphi : (a,b) \to \bR$
si dice {\em convessa} se per ogni $\lambda \in [0,1]$ risulta
\begin{equation}
\label{eq_Lp1}
\varphi ( (1-\lambda)x + \lambda y ) 
\leq
 (1-\lambda)\varphi(x) + \lambda \varphi(y) 
\ \ , \ \
a < x , y < b
\ .
\end{equation}
Ci\'o significa che per ogni $x,y \in (a,b)$ il grafico di $\varphi$ rimane confinato nella regione al di sotto della retta a secondo membro di (\ref{eq_Lp1}). Una semplice verifica mostra che vale la diseguaglianza
\begin{equation}
\label{eq_Lp2}
\frac{ \varphi(y) - \varphi (x) }{ y-x }
\ \leq \
\frac{ \varphi(y') - \varphi (x') }{ y'-x' }
\ \ , \ \
x \leq x' < y' 
\ , \
x' < y \leq y'
\ .
\end{equation}

\begin{prop}
\label{prop_conv}
Sia $\varphi : (a,b) \to \bR$ convessa. Allora
\begin{enumerate}
\item $\varphi$ \e localmente Lipschitz in $(a,b)$;
\item $\varphi$ \e AC in ogni sottointervallo chiuso di $(a,b)$;
\item $D^+\varphi = D_+ \varphi$, $D^-\varphi = D_- \varphi$ sono monot\'one crescenti;
\item Se $\{ \varphi_i \}_{i \in I}$ \e una famiglia di funzioni convesse, allora $\varphi := \sup_i \varphi_i$ \e convessa;
\item $\varphi$ \e derivabile in $(a,b)$ tranne che in un sottoinsieme numerabile.
\end{enumerate}
\end{prop}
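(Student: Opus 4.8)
The plan is to derive all five properties from the single chord inequality (\ref{eq_Lp2}), which is the geometric statement that the difference quotients of $\varphi$ are monotone. I would dispatch item (4) first, since it is independent of the rest and essentially immediate: for fixed $x,y \in (a,b)$ and $\lambda \in [0,1]$, each $\varphi_i$ satisfies
\[
\varphi_i((1-\lambda)x+\lambda y) \leq (1-\lambda)\varphi_i(x)+\lambda\varphi_i(y) \leq (1-\lambda)\varphi(x)+\lambda\varphi(y),
\]
where the second inequality uses $\varphi_i \leq \varphi$ pointwise. Taking the supremum over $i$ on the left-hand side yields the convexity of $\varphi$.

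For item (1) I would fix a closed subinterval $[c,d] \subset (a,b)$ together with auxiliary points $a < p < c < d < q < b$. For any $x < y$ in $[c,d]$ the inequality (\ref{eq_Lp2}) sandwiches the slope of the chord between the two fixed outer slopes,
\[
\frac{\varphi(c)-\varphi(p)}{c-p} \leq \frac{\varphi(y)-\varphi(x)}{y-x} \leq \frac{\varphi(q)-\varphi(d)}{q-d},
\]
so that, taking $L$ to be the larger of the absolute values of the two extreme quotients, one gets $|\varphi(y)-\varphi(x)| \leq L|y-x|$ on $[c,d]$, which is local Lipschitzianity. Item (2) is then a corollary: a Lipschitz function is AC (as already remarked in the example preceding Lemma \ref{lem_AC1}), hence the restriction of $\varphi$ to any closed subinterval is AC.

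For item (3) I would observe that (\ref{eq_Lp2}) makes $h \mapsto (\varphi(x+h)-\varphi(x))/h$ monotone in $h$ on each side of the origin; the one-sided limits therefore exist and the upper and lower Dini derivatives coincide on each side, giving $D^+\varphi = D_+\varphi$ and $D^-\varphi = D_-\varphi$. Applying the same inequality to points $x < x'$ produces the crucial cross-estimate
\[
D^+\varphi(x) \leq \frac{\varphi(x')-\varphi(x)}{x'-x} \leq D^-\varphi(x'),
\]
which, together with the pointwise bound $D^-\varphi \leq D^+\varphi$, shows that both one-sided derivatives are monotone increasing.

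Finally, item (5) follows from (3). Since $D^+\varphi$ is monotone increasing it has at most countably many discontinuities, and at each continuity point $x$ the cross-estimate gives
\[
D^+\varphi(x) = \lim_{t \to x^-} D^+\varphi(t) \leq D^-\varphi(x) \leq D^+\varphi(x),
\]
forcing $D^-\varphi(x) = D^+\varphi(x)$ and hence differentiability at $x$. The main obstacle is precisely this last step: one must establish differentiability off a \emph{countable} set, so it is not enough to invoke the almost-everywhere differentiability of monotone functions (Teo.\ref{thm_BV1}), which yields only a null exceptional set. The sharper conclusion is obtained by exploiting the monotonicity of $D^+\varphi$ (countably many jumps) in combination with the cross-inequality $D^+\varphi(x) \leq D^-\varphi(x')$, and keeping track of the one-sided limit at each continuity point is the delicate part of the argument.
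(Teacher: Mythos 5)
Your proposal is correct and follows essentially the same route as the paper's sketch: the sandwich of difference quotients via (\ref{eq_Lp2}) for local Lipschitzianity, Lipschitz $\Rightarrow$ AC, monotonicity of the difference quotients for the one-sided derivatives, the pointwise supremum argument for (4), and the countability of the discontinuities of the monotone function $D^+\varphi$ for (5). You merely supply more detail than the paper does at the final step, correctly identifying the cross-inequality $D^+\varphi(x) \leq D^-\varphi(x')$ as the ingredient that upgrades continuity of $D^+\varphi$ at a point to differentiability of $\varphi$ there.
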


\begin{proof}[Sketch della dimostrazione]
{\em Punto 1}: Se $[c,d] \subset [a',b'] \subset (a,b)$, allora per ogni $c < x,y < d$ troviamo
\begin{equation}
\label{eq_conv_lip}
\frac{ \varphi(c) - \varphi (a') }{ c-a' }
\ \leq \
\frac{ \varphi(y) - \varphi (x) }{ y-x }
\ \leq \
\frac{ \varphi(b') - \varphi (d) }{ b'-d }
\ ,
\end{equation}
il che implica che $\varphi$ \e Lipschitz in $[c,d]$. {\em Punto 2}: Ogni funzione lipschitziana \e AC. 
{\em Punto 3}: Eq. \ref{eq_Lp2} implica che ogni rapporto incrementale di $\varphi$ rispetto ad un $x_0$ fissato \e una funzione monot\'ona crescente. 
{\em Punto 4}: Si osservi che 
\[
\varphi_i ( (1-\lambda)x + \lambda y ) 
\leq
(1-\lambda)\varphi_i(x) + \lambda \varphi_i(y) 
\leq
(1-\lambda)\varphi(x) + \lambda \varphi(y) 
\ , \
i \in I
\ ,
\]
il che implica che $\varphi$ soddisfa la condizione di convessit\'a.
{\em Punto 5}: L'insieme di discontinuit\'a di una funzione monot\'ona pu\'o essere al pi\'u numerabile: applicando questo principio a $D^+\varphi$, otteniamo che $\varphi$ \e derivabile nei punti in cui $D^+\varphi$ \e continua.
\end{proof}
La costante di Lipschitz associata a $\varphi$ nel senso del Punto 2 dipende solo dai valori assunti da $\varphi$ agli estremi $c,d$, $a',b'$ (vedi (\ref{eq_conv_lip})), e questo \e un fattore di cui tenere conto nel caso in cui si abbia necessit\'a di effettuare stime connesse all'equicontinuit\'a di famiglie di funzioni convesse.

Osserviamo inoltre che \e semplice dimostrare l'affermazione reciproca del Punto 3: {\em se $\varphi$ \e continua in $(a,b)$ e se una delle sue derivate \e crescente, allora $\varphi$ \e convessa}. 

\

\noindent \textbf{La diseguaglianza di Jensen.} Una {\em retta di supporto in} $x_0 \in (a,b)$ per $\varphi$ \e una retta 
\[
y (x) = m(x-x_0) + \varphi(x_0)
\ \ , \ \
x \in (a,b)
\ ,
\]
tale che $y(x) \leq \varphi(x)$, $x \in (a,b)$. E' banale verificare che (proprio grazie alla convessit\'a di $\varphi$) esiste sempre una retta di supporto per $\varphi$ in ogni $x_0 \in (a,b)$.
\begin{prop}[Diseguaglianza di Jensen]
\label{prop_jensen}
Sia $\varphi : \bR \to \bR$ convessa, $(X,\mM,\mu)$ uno spazio di misura di probabilit\'a ed $f \in L_\mu^1(X)$. Allora
\begin{equation}
\label{eq_jensen}
\varphi \left( \int_X f \ d\mu \right)
\ \leq \
\int_X \varphi\circ f\ d\mu
\ .
\end{equation}
\end{prop}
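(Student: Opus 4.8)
The plan is to deduce (\ref{eq_jensen}) from the existence of a supporting line for $\varphi$, which was recorded immediately above the statement. Set $c := \int_X f \ d\mu$; since $f \in L_\mu^1(X)$ this is a finite real number, so it is legitimate to consider a supporting line for $\varphi$ at $x_0 = c$. By the remark preceding the proposition there exists $m \in \bR$ with
\[
\varphi(x) \ \geq \ m(x-c) + \varphi(c)
\ \ , \ \
\forall x \in \bR
\ .
\]
Evaluating at $x = f(s)$ gives the pointwise bound $\varphi(f(s)) \geq m(f(s)-c) + \varphi(c)$ for every $s \in X$.

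Before integrating I would dispose of the measurability and integrability bookkeeping, which is the only real subtlety. Being convex on all of $\bR$, $\varphi$ is continuous (Prop.\ref{prop_conv}(1)), hence Borel; therefore $\varphi \circ f \in M(X)$, since $(\varphi\circ f)^{-1}((\alpha,+\infty)) = f^{-1}\left( \varphi^{-1}((\alpha,+\infty)) \right)$ and $\varphi^{-1}((\alpha,+\infty))$ is open, so its preimage under the measurable $f$ lies in $\mM$. Moreover the affine minorant $s \mapsto m(f(s)-c) + \varphi(c)$ belongs to $L_\mu^1(X)$, because $f \in L_\mu^1(X)$ and $\mu$ is finite; it is thus an integrable lower bound for $\varphi \circ f$, which forces the negative part $(\varphi \circ f)^-$ to be integrable and makes $\int_X \varphi\circ f \ d\mu$ well defined in $(-\infty,+\infty]$. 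If this integral equals $+\infty$ there is nothing to prove, so I may assume $\varphi \circ f \in L_\mu^1(X)$.

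It then remains to integrate the pointwise inequality. Using monotonicity and linearity of the integral (Cor.\ref{cor_int}) together with the normalization $\mu X = 1$, and recalling $\int_X f \ d\mu = c$, I obtain
\[
\int_X \varphi\circ f \ d\mu
\ \geq \
m \left( \int_X f \ d\mu - c \right) + \varphi(c)
\ = \
\varphi(c)
\ = \
\varphi\left( \int_X f \ d\mu \right)
\ ,
\]
which is precisely (\ref{eq_jensen}). The main obstacle, as indicated, is not the computation but guaranteeing that the right-hand integral is meaningful when $\varphi \circ f$ is unbounded above; the supporting-line minorant resolves exactly this point, while the probability normalization $\mu X = 1$ is what cancels the linear term and leaves $\varphi(c)$.
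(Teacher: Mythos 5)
Your proof is correct and follows exactly the paper's argument: a supporting line at $\alpha := \int_X f \, d\mu$, the pointwise bound $\varphi(f(x)) \geq m(f(x)-\alpha) + \varphi(\alpha)$, and integration using $\mu X = 1$. The only difference is that you spell out the measurability of $\varphi \circ f$ and the fact that the affine minorant makes $\int_X \varphi \circ f \, d\mu$ well defined in $(-\infty,+\infty]$ — a point the paper's proof leaves implicit — which is a welcome refinement rather than a departure.
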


\begin{proof}[Dimostrazione]
Innanzitutto osserviamo che, essendo $\varphi$ continua, $\varphi \circ f$ \e misurabile. Definiamo $\alpha := \int f$ e consideriamo una retta di supporto in $\alpha$,
\[
y(t) = m(t - \alpha) + \varphi(\alpha)
\ \ , \ \
t \in \bR
\ ,
\] 
cosicch\'e valutando su valori del tipo $f(x)$, $x \in X$, otteniamo $\varphi(f(x)) \geq m(f(x) - \alpha) + \varphi(\alpha)$. Integrando su $X$ (tenuto conto che $\mu X = 1$) si trova
\[
\int_X \varphi \circ f \ d\mu
\ \geq \
m \int_X (f-\alpha) \ d\mu \ + \ \varphi (\alpha)
\ = \
\varphi (\alpha)
\ ,
\]
e quindi otteniamo quanto volevasi dimostrare.
\end{proof}

\begin{ex}{\it
Per ogni $f \in L^1([0,1])$ si trova $e^{\int_0^1 f} \leq \int_0^1 e^{f(x)} \ dx$.
}
\end{ex}

\subsection{Esercizi.}
\label{ex_sec_lebesgue}

\textbf{Esercizio \ref{sec_MIS}.1 (Il Lemma di Riemann-Lebesgue).} {\it Sia $f : \bR \to \bR$ integrabile e $\varphi$ limitata, misurabile e tale che esista $\pi > 0$ con 
$\varphi (x+\pi) = - \varphi(x)$, $\forall x \in \bR$.
\textbf{(1)} Preso $k \in \bN$ si mostri che
\[
\int_\bR f(x) \varphi (kx) \ dx 
\ = \
- \int_\bR f \left( x + \frac{\pi}{k} \right) \varphi (kx) \ dx
\ .
\]
\textbf{(2)} Si assuma come noto che
\begin{equation}
\label{eq_exLEB01}
\lim_{h \to 0} \int_\bR | f(x+h) - f(x) | \ dx \ = \ 0
\end{equation}
(vedi Esercizio \ref{sec_Lp}.2), e, usando il punto (1), si mostri che
\begin{equation}
\label{eq_exLEB1}
\lim_{k \to \infty} \int_\bR f(x) \varphi(kx) \ dx \ = \ 0
\ .
\end{equation}

\noindent Soluzione.} \textbf{(1)} Usando la sostituzione $x \mapsto x + \pi k^{-1}$ e la periodicit\'a di $\varphi$ si trova
\[
I_k 
\ := \ 
\int f(x) \varphi (kx) \ dx
\ =  \
\int f \left( x + \frac{\pi}{k} \right) \varphi (kx + \pi) \ dx
\ = \
- \int f \left( x + \frac{\pi}{k} \right) \varphi (kx) \ dx
\ .
\]
\textbf{(2)} Usando il punto precedente e (\ref{eq_exLEB01}) otteniamo le stime
\[
2 |I_k|
\ \leq \
\int \left| f \left( x + \frac{\pi}{k} \right) - f(x) \right| | \varphi (kx) | \ dx
\ \leq \
\| \varphi \|_\infty \int \left| f \left( x + \frac{\pi}{k} \right) - f(x) \right| \ dx
\ \stackrel{k}{\to} 0
\ .
\]

\

\noindent \textbf{Esercizio \ref{sec_MIS}.2.} {\it Sia $\{ f_n \} \cup \{ f \} \subset C_0(\bR) \cap L^1(\bR)$, e $\lim_n \int | f_n - f | = 0$. Allora $f(x) = \lim_n f_n(x)$, $\forall x \in \bR$.}

\

\noindent {\it Soluzione.} Supponiamo per assurdo che esistano $x_0 \in \bR$ ed $\eps > 0$ tali che esiste una sottosuccessione $\{ x_{n_k} \}$ con
\[
| f(x_0) - f_{n_k} (x_0) | \geq \eps \ .
\]
Per continuit\'a, esiste un aperto $A \ni x_0$ con $\mu (A) > 0$ tale che
\[
| f(x) - f_{n_k} (x) | \geq \frac{\eps}{2}  \ \ , \ \ x \in A \ .
\]
Per cui,
\[
\int   | f_{n_k}(x) - f(x) | \ dx \geq
\int_A | f_{n_k}(x) - f(x) | \ dx \geq
\frac{\eps}{2} \ \mu A
\ ,
\]
il che contraddice l'ipotesi fatta.

\

\noindent \textbf{Esercizio \ref{sec_MIS}.3.} {\it Sia $f \in BV([a,b])$. Presa una funzione reale $g$, indicare le condizioni che $g$ deve soddisfare affinch\'e $g \circ f$ sia $BV$.}

\

\noindent \textbf{Esercizio \ref{sec_MIS}.4.} {\it Sia $A \subset [0,1]$ tale che $\mu A = 0$, dove $\mu$ \e la misura di Lebesgue. Dimostrare che: (1) Esiste una successione di aperti $[0,1] \supset A_1 \supset A_2 \supset \ldots$ tale che $A \subset A_n$ e $\mu A_n \leq 2^{-n}$ per ogni $n \in \bN$; (2) $g := \sum_n \chi_{A_n}$ appartiene ad $L^1([0,1])$; (3) $f(x) := \int_0^x g$, $x \in [0,1]$, appartiene ad $AC([0,1])$, ma non \e derivabile in nessun punto di $A$.

\

\noindent (Suggerimenti: per (1) si usi la regolarit\'a esterna; per (2) si osservi che 
$\| g \|_1 = \sum_n n \mu ( A_n - A_{n+1})$; 
per (3) si osservi che $g|_A = + \infty$.)
}

\

\noindent \textbf{Esercizio \ref{sec_MIS}.5 (Il Lemma di Borel-Cantelli).} {\it Sia $(X,\mM,\mu)$ uno spazio di misura finita ed $\{ A_n \} \subseteq \mM$ tale che $\sum_m \mu A_n < + \infty$. Posto $B_N := \cup_{n=N}^\infty A_n$, dimostrare che
\begin{equation}
\label{eq_BC}
\mu \left( \bigcap_N B_N \right) \ = \ 0 \ .
\end{equation}
}

\noindent {\it Soluzione.} Per ogni $N \in \bN$, si ha
$\mu \left( \cap_N B_N  \right)  \leq  \mu B_N \leq \sum_{n=N}^\infty \mu A_n$.

\

\noindent \textbf{Esercizio \ref{sec_MIS}.6 (Riguardo (\ref{eq_BC0})).} {\it Sia $A_n := [n,+\infty)$, $n \in \bN$. Si verifichi che posto $B_N := \cup_{n=N}^\infty A_n$, risulta 
\[
\mu \left( \bigcap_N B_N \right) = 0
\ \ , \ \
\lim_N \mu B_N = + \infty
\ .
\]
}

\noindent \textbf{Esercizio \ref{sec_MIS}.7} {\it Sia $X$ un insieme e $\beta$ una $\sigma$-algebra su $X$. Date $\mu , \nu \in \Lambda_\beta^1(X)$ (vedi (\ref{def_l1})), si mostri che:
\textbf{(1)} $| \mu E | \leq |\mu| E$, $\forall E \in \mM_\mu$;
\textbf{(2)} $\| \mu \| = 0$ $\Rightarrow$ $\mu = 0$ (ovvero, $\mu E = 0$ per ogni $E \in \mM$);
\textbf{(3)} $\| \mu + \nu \| \leq \| \mu \| + \| \nu \|$.
}

\

\noindent {\it Soluzione.} Consideriamo decomposizioni di Hahn $\{ X^\pm_\mu \}$ per $\mu$ e $\{ X^\pm_{\mu+\nu} \}$ per $\mu + \nu$. Riguardo (1), si osservi che 
$|\mu E| = 
 | \mu (E \cap X^+_\mu) + \mu (E \cap X^-_\mu) | \leq 
 \mu (E \cap X^+_\mu) - \mu (E \cap X^-_\mu) =
 |\mu|E$,
$E \in \mM_\mu$,
cosicch\'e (2) segue per monoton\'ia di $|\mu|$; riguardo (3), usando (1) troviamo
\[
\begin{array}{ll}
\| \mu + \nu \| & = | \mu + \nu |X = \\ & =
\{ \mu+\nu \}X^+_{\mu+\nu} - \{ \mu+\nu \}X^-_{\mu+\nu} = \\ & =
\mu X^+_{\mu+\nu} - \mu X^-_{\mu+\nu} + \nu X^+_{\mu+\nu} - \nu X^-_{\mu+\nu} \leq \\ & \leq
|\mu| X^+_{\mu+\nu} + |\mu| X^-_{\mu+\nu} + |\nu| X^+_{\mu+\nu} + |\nu| X^-_{\mu+\nu} = \\ & =
|\mu|X + |\nu| X \ .
\end{array}
\]  

\

\noindent \textbf{Esercizio \ref{sec_MIS}.8} {\it Sia $X$ uno spazio topologico. Si mostri che per ogni funzione boreliana $f$, $\lambda \in \bR$, e misure con segno, boreliane, finite $\mu , \nu$, risulta
\[
\int f \ d \{ \lambda \mu + \nu \} \ = \ \lambda \int f \ d \mu + \int g \ d \nu \ 
\ .
\]
(Suggerimento: si inizi dimostrando l'uguaglianza precedente su funzioni semplici e poi si usi la definizione di integrale.)
}

\

\noindent \textbf{Esercizio \ref{sec_MIS}.9} {\it Sia $X$ uno spazio topologico localmente compatto e di Hausdorff. Preso $x \in X$ ed una misura di Dirac $\mu_x : \mM \to \bR$ (vedi Esempio \ref{ex_MIS_Dirac}), si mostrino i seguenti punti:
\textbf{(1)} Due funzioni misurabili $f,g$ coincidono q.o. rispetto a $\mu_x$ se e solo se $f(x) = g(x)$;
\textbf{(2)} $\int f \ d \mu_x = f(x)$, per ogni $f \in L_{\mu_x}^1(X)$.

\

\noindent (Suggerimento: si osservi che, essendo $X$ di Hausdorff, l'insieme $\{ x \}$ \e boreliano e quindi misurabile; per cui, 
$\mu_x E = \mu_x(E-\{ x \}) + \mu_x \{ x \} = \mu_x \{ x \} = 1$ 
se $E \ni x$, mentre $\mu E = 0$ se $x \notin E$. Di conseguenza, prese $f,g \in M(X)$ si trova
\[
\mu_x \{ y : f(y) = g(y) \} = 
\left\{
\begin{array}{ll}
\mu_x \{ x \} = 1 \ \ , \ \ se \ f(x) = g(x)
\\
0 \ \ , \ \ se \ f(x) \neq g(x) \ .
\end{array}
\right.
\]
Per quanto riguarda il punto (2), si verifichi sulle funzioni semplici e poi si applichi la definizione di integrale).
}

\

\noindent \textbf{Esercizio \ref{sec_MIS}.10} {\it Sia $(X,\mM,\mu)$ uno spazio misurabile ed $f \in M^+(X)$. Presa la misura
\[
\mu_f E := \int_E f \ d \mu 
\ \ , \ \
E \in \mM
\ ,
\]
si mostri che $\int g \ d \mu_f = \int gf \ d \mu$ per ogni $g \in L_{\mu_f}^1(X)$.

\

\noindent (Suggerimento: si verifichi - al solito - sulle funzioni semplici e poi si applichi la definizione di integrale).
}

\

\noindent \textbf{Esercizio \ref{sec_MIS}.11.} {\it Richiamando la notazione (\ref{def_J}), si consideri l'applicazione 
\[
\mu^*_{ac}A \ := \ \inf \left\{ \sum_n l(J_n) \ : \ A \subset \cup_n J_n \ , \  \{ J_n \} \subset \mI^{ac}  \right\}
                   \ \ , \ \
                   \forall A \subseteq \bR
                   \ ,
\]
e si mostri che essa coincide con la misura esterna di Lebesgue.

\

\noindent (Suggerimenti: si inizi mostrando che $\mu^*_{ac} = \mu^*$ sugli intervalli, verificando che, dati
$a<b \in \bR$ e $I := (a,b)$, $J := (a,b]$, $J' := [a,b)$,
risulta
\[
\mu^*_{ac}(I) = \mu^*_{ac}(\ovl I) = \mu^*_{ac}(J) = \mu^*_{ac}(J') = b-a \ ,
\]
sulla linea del Lemma \ref{lem_ME02}. Si usi il fatto che per ogni $\eps > 0$ si ha
$\mu^*_{ac}(a,b+\eps] - \mu^*(a,b) = \eps$).
}

\

\noindent \textbf{Esercizio \ref{sec_MIS}.12 (Funzioni di distribuzione).} {\it Sia $\mu$ una misura boreliana finita su $\bR$. La \textbf{funzione di distribuzione} di $\mu$ si definisce come
\[
\wt \mu(x) := \mu(-\infty,x]
\ \ , \ \
x \in \bR
\ .
\]
\textbf{(1)} Si mostri che $\wt \mu$ \e:
(1.1) monot\'ona crescente;
(1.2) tale che $\lim_{x \to - \infty}\wt \mu(x) = 0$;
(1.3) continua a destra, ovvero $\lim_{\delta \to 0^+} \wt \mu(x+\delta) = \wt \mu(x)$, $\forall x \in \bR$;
\textbf{(2)} Si mostri che 
          \[
          \mu(a,b] = \wt \mu(b) - \wt \mu(a)
          \ \ , \ \
          \forall a < b
          \ .
          \]
\textbf{(3)} Si mostri che $\wt \mu$ \e limitata.
\textbf{(4)} Si mostri che $\wt \mu$ \e continua se e solo se $\mu \{ x \} = 0$, $\forall x \in \bR$.

\

\noindent (Suggerimenti: 
          (1.1) si usi la monoton\'ia di $\mu$;
          (1.2) si noti che $\cap_{n \in \bN} (-\infty,-n] = \emptyset$;
          (1.3) si ha
                \[
                \lim_n \wt \mu(x+1/n) \ = \
                \lim_n \mu \{ \cap_k^n (-\infty,x+1/k] \} \ \stackrel{ Lemma \ \ref{lem_caressa} }{=} \
                \mu(-\infty,x]
                \ .
                \]
          (2)   Si ha $[a,b) = (-\infty,b)-(-\infty,a)$;
          (3)   Si usi il fatto che $\wt \mu$ \e monot\'ona crescente e si osservi che 
                $\sup \wt \mu = \lim_{x \to \infty} \wt \mu(x) = \mu(\bR) < \infty$.
          (4)   Si ha
                $\mu \{ x \} = 
                 \mu(-\infty,x] - \cup_n \mu(-\infty,x-1/n] = 
                 \wt \mu(x) - \lim_n \wt \mu(x-1/n)$).
}

\

\noindent \textbf{Esercizio \ref{sec_MIS}.13 (Misure di Lebesgue-Stieltjes).} {\it Sia $\omega : \bR \to \bR$ una funzione monot\'ona crescente e continua a destra. 
\textbf{(1)} Si mostri che definendo 
          $l_\omega(J) := \omega(b)-\omega(a)$, $\forall J := (a,b] \in \mI^{ac}$,
          e
          \[
          \mu_\omega^*A := \inf \left\{ \sum_n l_\omega(J_n) \ : \ 
                                        A \subset \cup_n J_n \ , \  
                                        \{ J_n \} \subset \mI^{ac}  \right\}
          \ \ , \ \
          \forall A \subseteq \bR
          \ ,
          \]
          si ottiene una misura esterna.
\textbf{(2)} Si mostri che la misura associata $\mu_\omega$ \e boreliana.
\textbf{(3)} Supposto che $\mu_\omega$ sia finita, si mostri che la relativa funzione di distribuzione $\wt \mu_\omega$ coincide con $\omega$.
\textbf{(4)} Si assuma che $\omega|_{[\alpha,\beta]} \in AC([\alpha,\beta])$ per qualche $\alpha,\beta \in \bR$ e si mostri che
          \[
          \int_{[\alpha,\beta]} f \ d \mu_\omega \ = \ \int_\alpha^\beta f(t) \omega'(t) \ dt
          \ \ , \ \
          \forall f : [ \alpha,\beta ] \to \bR \ boreliana
          \]
          (il secondo integrale \e rispetto alla misura di Lebesgue).
          
          \
          
\noindent (Suggerimenti: per (1) e (2) si proceda in modo analogo alla misura di Lebesgue;  
                         per (3) si mostri, in analogia al Lemma \ref{lem_ME02}, che 
                         $\mu_\omega^*(J) = l_\omega(J)$, $\forall J \in \mI^{ac}$
                         (si veda anche l'Esercizio \ref{sec_MIS}.11);
                         per (4) si inizi verificando su funzioni caratteristiche di intervalli in $\mI^{ac}$
                         contenuti in $[\alpha,\beta]$).
}

\newpage
\section{Gli spazi $L^p$.}
\label{sec_Lp}

Esistono ampie ed interessanti classi di funzioni misurabili che non sono n\'e continue n\'e integrabili.
In tal caso pu\'o essere conveniente adottare opportune norme, tipicamente non euclidee, che generalizzano in modo naturale quelle definite su $\bR^n$, $n \in \bN$, per $p \in [1,+\infty)$,
\[
\| v \|_p := \left( \sum_1^n |v_i|^p \right)^{1/p}
\ \ , \ \
v \in \bR^n
\ .
\]
Ci\'o conduce alla teoria degli spazi $L^p$, la quale fornisce tra l'altro un'importante motivazione per la sistematizzazione di concetti (quelli di {\em norma, funzionale, dualit\'a}) che troveranno poi naturale collocazione nell'ambito degli spazi di Banach e di Hilbert, e quindi dell'analisi funzionale.

\subsection{Propriet\'a generali.}

Sia $(X,\mM,\mu)$ uno spazio misurabile completo. Preso $p \in (0,+\infty)$ denotiamo con $L_\mu^p(X)$ l'insieme delle funzioni misurabili su $X$ tali che 
\[
\int_X |f|^p \ < \ + \infty \ :
\]
poich\'e 
\[
| f + g |^p \ < \ 2^p \left( |f|^p + |g|^p \right)
\ ,
\]
concludiamo che ogni $L_\mu^p(X)$ \e uno spazio vettoriale. Nel caso in cui $X \subseteq \bR^n$, $n \in \bN$, sia equipaggiato con la misura di Lebesgue ometteremo il simbolo $\mu$, per cui scriveremo ad esempio $L^p(\bR)$, $L^p(X) := L^p(a,b)$, $X := (a,b)$, $a,b \in \wa \bR$. Inoltre, per economia di notazione d'ora in poi scriveremo $L^p := L^p([0,1])$, $p \in [1,+\infty)$, $\mG := \mG([0,1])$ (notare che in quest'ultimo caso stiamo considerando l'intervallo {\em chiuso} $[0,1]$).
Definiamo ora
\begin{equation}
\label{eq_normap}
\| f \|_p 
\ := \
\left(
\int_X |f|^p
\right)^{1/p}
\ \ , \ \
f \in L_\mu^p(X)
\ .
\end{equation}
Denotiamo inoltre con $L_\mu^\infty(X)$ lo spazio vettoriale delle funzioni limitate e misurabili su $X$, e definiamo
$
\| f \|_\infty :=$
$\sup_x |f(x)|$,
$f \in L_\mu^\infty(X)$.
Come suggerito dalla notazione, la nostra intenzione \e quella di interpretare $\| \cdot \|_p$, $p \in (0,+\infty]$, come una norma su $L_\mu^p(X)$. Tuttavia, \e chiaro che $\| \cdot \|_p$ non pu\'o essere una norma, in quanto svanisce su ogni funzione nulla q.o. su $X$. Per ovviare a ci\'o, con un abuso di terminologia identificheremo funzioni $L^p$ con le corrispondenti classi di equivalenza q.o. su $X$
{\footnote{In termini pi\'u precisi, passiamo da $L_\mu^p(X)$ al suo spazio quoziente rispetto al sottospazio delle funzioni nulle q.o.. Il nostro abuso di terminologia consiste allora nell'indicare tale quoziente ancora con $L_\mu^p(X)$.}}.
Nel caso $p = + \infty$ il passaggio a classi di equivalenza q.o. richiede un piccolo aggiustamento, che consiste nel definire {\em l'estremo superiore essenziale},
\begin{equation}
\label{def_ese}
\| f \|_\infty 
\ := \
\inf \left\{ M \in \bR \ : \ \mu \{ x \in X : |f(x)| > M \} = 0   \right\}
\ \ , \ \
f \in L_\mu^\infty(X)
\ ,
\end{equation}
in maniera tale che se $g = f$ q.o. allora $\| g \|_\infty = \| f \|_\infty$.

Osserviamo che ancora non sappiamo se $\| \cdot \|_p$ \e una norma, in quanto dobbiamo dimostrare la diseguaglianza triangolare; iniziamo osservando che, grazie a propriet\'a elementari dell'integrale, essa \e verificata nei casi $p = 1,\infty$. Inoltre, una semplice stima mostra che 
\[
\int_X |fg| 
\ \leq \
\| f \|_1 \| g \|_\infty 
\ \ , \ \
f \in L_\mu^1(X) \ , \ g \in L_\mu^\infty(X)
\ .
\]

\begin{prop}[Diseguaglianza di Holder]
\label{prop_holder}
Siano $p,q \in [0,+\infty]$ tali che
\begin{equation}
\label{eq_conj}
\frac{1}{p} + \frac{1}{q} = 1
\ .
\end{equation}
Allora
\begin{equation}
\label{eq_holder}
\int_X |fg| 
\ \leq \ 
\| f \|_p \| g \|_q
\ \ , \ \
f \in L_\mu^p(X) \ , \ g \in L_\mu^q(X)
\ .
\end{equation}
\end{prop}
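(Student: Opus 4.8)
The plan is to reduce the statement to an elementary pointwise estimate (Young's inequality) and then integrate. First I would dispose of the boundary exponents: the cases in which one of $p,q$ equals $1$ and the other $\infty$ are precisely the estimate $\int_X |fg| \leq \|f\|_1 \|g\|_\infty$ recorded immediately before the statement, so I may assume $1 < p,q < +\infty$. Next I would clear away the degenerate situations. If $\|f\|_p = 0$, then $\int_X |f|^p = 0$ forces $f = 0$ q.o. by Cor.\ref{cor_int}(2), hence $fg = 0$ q.o. and (\ref{eq_holder}) holds with both sides zero; the case $\|g\|_q = 0$ is symmetric. If either norm is $+\infty$ the right-hand side is $+\infty$ and there is nothing to prove. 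Thus it remains to treat the generic case $0 < \|f\|_p, \|g\|_q < +\infty$.

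The key ingredient is Young's inequality: for all $a,b \geq 0$,
\[
ab \ \leq \ \frac{a^p}{p} + \frac{b^q}{q} \ .
\]
I would derive this from concavity of $\log$ on $(0,+\infty)$, which holds because $-\log$ is convex: its derivative $-1/x$ is increasing, so the criterion noted in the remark after Prop.\ref{prop_conv} applies. Assuming $a,b > 0$ (the case where one vanishes being trivial), I specialize the convexity relation (\ref{eq_Lp1}), read for the concave function $\log$, to $\lambda = 1/q$, $1-\lambda = 1/p$, $x = a^p$, $y = b^q$, which together with $\tfrac1p + \tfrac1q = 1$ gives
\[
\log\!\left(\frac{a^p}{p} + \frac{b^q}{q}\right) \ \geq \ \frac{1}{p}\log a^p + \frac{1}{q}\log b^q \ = \ \log a + \log b \ = \ \log(ab) \ .
\]
Exponentiating, and using that $\log$ is increasing, yields the claim.

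Finally I would apply Young's inequality pointwise to the normalized values $a = |f(x)|/\|f\|_p$ and $b = |g(x)|/\|g\|_q$, obtaining
\[
\frac{|f(x)\,g(x)|}{\|f\|_p\,\|g\|_q} \ \leq \ \frac{1}{p}\,\frac{|f(x)|^p}{\|f\|_p^p} + \frac{1}{q}\,\frac{|g(x)|^q}{\|g\|_q^q} \ , \qquad x \in X \ .
\]
Integrating over $X$ and using linearity and monotonicity of the integral (Cor.\ref{cor_int}), the right-hand side evaluates to $\tfrac1p\,\|f\|_p^{-p}\int_X |f|^p + \tfrac1q\,\|g\|_q^{-q}\int_X |g|^q = \tfrac1p + \tfrac1q = 1$, whence $\int_X |fg| \leq \|f\|_p\,\|g\|_q$. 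The only genuinely substantive point is Young's inequality; everything else is a normalization followed by a single integration. So the hard part is really just selecting the cleanest route to that two-point convexity estimate, which the paper's development of convex functions renders routine.
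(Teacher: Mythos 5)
Your argument is correct and follows essentially the same route as the paper: the pointwise two-point convexity estimate (your Young inequality $ab \leq a^p/p + b^q/q$ is the paper's $a^{1/p}b^{1/q} \leq a/p + b/q$ after the substitution $a \mapsto a^p$, $b \mapsto b^q$, and concavity of $\log$ is just convexity of $\exp$ read backwards), followed by normalization and integration. Your treatment of the boundary exponents and degenerate cases is slightly more explicit than the paper's, but the substance is identical.
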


\begin{proof}[Dimostrazione]
Come primo passo, osserviamo che per ogni $a,b > 0$ risulta
\begin{equation}
\label{eq_holder_point}
a^{1/p} b^{1/q} \ \leq \ \frac{a}{p} + \frac{b}{q} \ ,
\end{equation}
come si dimostra facilmente usando la convessit\'a di $\exp$:
\[
e^{ \frac{1}{p} \log a + \frac{1}{q} \log b   } \ \leq \ \frac{a}{p} + \frac{b}{q} \ .
\]
Ora, se $f,g$ sono come da ipotesi (e non nulle q.o., altrimenti non c'\e niente da dimostrare), poniamo $a := |f|^p \| f \|_p^{-p}$, $b := |g|^q \| g \|_q^{-q}$ ed otteniamo, usando (\ref{eq_holder_point}),
\[
\frac{|fg|}{ \| f \|_p \| g \|_q }
\ \leq \
\frac{1}{p} \frac{ |f|^p }{ \| f \|_p^p }
+
\frac{1}{q} \frac{ |g|^q }{ \| g \|_q^q }
\ ,
\]
ed integrando otteniamo (\ref{eq_holder}).
\end{proof}

Reali estesi $p,q \in [0,+\infty]$ che soddisfano (\ref{eq_conj}) si dicono {\em coniugati}. Chiaramente $q$ \e univocamente determinato da $p$, ed in tal caso sar\'a denotato con $\ovl p$.

\begin{prop}[Diseguaglianza di Minkowski, ovvero la disegueglianza triangolare]
\label{prop_mink}
Siano $p \in [1,+\infty]$ ed $f,g \in L_\mu^p(X)$. Allora $\| f+g \|_p \leq$ $\| f \|_p + \| g \|_p$.
\end{prop}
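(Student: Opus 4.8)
The plan is to reduce everything to the case $1 < p < \infty$, since the triangle inequality for $p = 1$ and $p = \infty$ has already been recorded as a consequence of elementary properties of the integral and of the essential supremum. So I fix $p \in (1,\infty)$ and let $\ovl p$ denote its conjugate, so that $1/p + 1/\ovl p = 1$; the computation will be driven by the two algebraic identities $(p-1)\ovl p = p$ and $p/\ovl p = p-1$.

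First I would record that $f+g \in L_\mu^p(X)$: this is exactly the vector-space property established just above via the pointwise bound $|f+g|^p < 2^p(|f|^p + |g|^p)$, and in particular $\| f+g \|_p < +\infty$. If $\| f+g \|_p = 0$ there is nothing to prove, so I may assume $\| f+g \|_p \in (0,+\infty)$. The crux is the pointwise decomposition $|f+g|^p = |f+g|\,|f+g|^{p-1} \leq (|f| + |g|)\,|f+g|^{p-1}$, which after integration gives
\[
\| f+g \|_p^p \ \leq \ \int_X |f|\,|f+g|^{p-1} \ + \ \int_X |g|\,|f+g|^{p-1} \ .
\]

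Next I would apply the Hölder inequality (Prop.\ref{prop_holder}) to each of the two integrals with the exponents $p$ and $\ovl p$. Since $(p-1)\ovl p = p$, the function $|f+g|^{p-1}$ belongs to $L_\mu^{\ovl p}(X)$ and satisfies $\| \, |f+g|^{p-1} \, \|_{\ovl p} = \| f+g \|_p^{p/\ovl p} = \| f+g \|_p^{p-1}$; hence the two terms are bounded respectively by $\| f \|_p \| f+g \|_p^{p-1}$ and $\| g \|_p \| f+g \|_p^{p-1}$. Collecting these estimates yields
\[
\| f+g \|_p^p \ \leq \ ( \| f \|_p + \| g \|_p ) \, \| f+g \|_p^{p-1} \ ,
\]
and dividing both sides by the finite nonzero quantity $\| f+g \|_p^{p-1}$, together with $p - (p-1) = 1$, gives the desired bound $\| f+g \|_p \leq \| f \|_p + \| g \|_p$.

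The one point that demands care — the main obstacle — is the legitimacy of this final division: one must know \emph{beforehand} that $\| f+g \|_p$ is finite (so the exponent arithmetic is meaningful and we never face an $\infty - \infty$ ambiguity) and nonzero (so that the division is permitted), which is precisely why I first invoke the vector-space property and dispose of the case $\| f+g \|_p = 0$ separately. A secondary verification, subordinate to the same identity, is that the Hölder step genuinely applies, i.e. that $|f+g|^{p-1} \in L_\mu^{\ovl p}(X)$; this follows from $(p-1)\ovl p = p$ combined with $f+g \in L_\mu^p(X)$.
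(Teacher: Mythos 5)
Your proof is correct and follows essentially the same route as the paper's: the pointwise bound $|f+g|^p \leq (|f|+|g|)\,|f+g|^{p-1}$, Hölder applied to each term using $(p-1)\ovl p = p$, and division by $\| f+g \|_p^{p-1}$. Your explicit handling of the cases $\| f+g \|_p = 0$ and $\| f+g \|_p = +\infty$ makes the final division rigorous, a point the paper leaves implicit.
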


\begin{proof}[Dimostrazione]
Essendo il caso $p=1$ ovviamente verificato, supponiamo $p>1$, poniamo $q := \ovl p$, ed osserviamo che $q(p-1)=p$, cosicch\'e 
\[
\int_X \left( |f|^{p-1} \right)^q = \| f \|_p^p < + \infty
\ \ , \ \
f \in L_\mu^p(X)
\ \ \Rightarrow \ \
f^{p-1} \in L_\mu^q(X)
\ .
\]
Poi, stimiamo
\[
\begin{array}{ll}
\| f+g \|_p^p
& \ \ \ \leq \ \ \ 
\int_X |f| |f+g|^{p-1} + \int_X |g| |f+g|^{p-1}
\\ & \stackrel{Holder}{\leq} \
\| f \|_p \| (f+g)^{p-1} \|_q
+
\| g \|_p \| (f+g)^{p-1} \|_q
\\ & \ \ \ = \ \ \  
( \| f \|_p + \| g \|_p )
( \| f+g \|_p^{p/q} )
\ .
\end{array}
\]
\end{proof}

\begin{cor}
$L_\mu^p(X)$ \e uno spazio normato per ogni $p \in [1,+\infty]$. 
\end{cor}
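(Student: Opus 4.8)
The plan is to observe that the substantive analytic content—the triangle inequality—is already contained in the Minkowski inequality (Proposition~\ref{prop_mink}), so the corollary reduces to collecting the remaining, elementary norm axioms on the quotient space $L_\mu^p(X)$ of equivalence classes modulo equality q.o. Recall that we already know $L_\mu^p(X)$ is a vector space (from the estimate $|f+g|^p < 2^p(|f|^p + |g|^p)$ preceding Hölder), and that $\| \cdot \|_p$ is well defined on equivalence classes: for $p < +\infty$ this follows from Cor.\ref{cor_int}(4) applied to $|f|^p$, and for $p = +\infty$ it is built into the definition (\ref{def_ese}) of the essential supremum. Thus it remains only to verify the three defining properties of a norm.

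First I would dispatch homogeneity. For $p \in [1,+\infty)$ and $\lambda \in \bR$ one computes directly
\[
\| \lambda f \|_p
\ = \
\left( \int_X |\lambda f|^p \right)^{1/p}
\ = \
\left( |\lambda|^p \int_X |f|^p \right)^{1/p}
\ = \
|\lambda| \, \| f \|_p
\ ,
\]
using $\int c h = c \int h$ from (\ref{eq_int}). For $p = +\infty$ the same identity follows at once from (\ref{def_ese}), since $\{ x : |\lambda f(x)| > M \}$ and $\{ x : |f(x)| > M/|\lambda| \}$ coincide when $\lambda \neq 0$ (the case $\lambda = 0$ being trivial).

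Next I would treat definiteness, which is the one point where passing to equivalence classes is essential. Suppose $\| f \|_p = 0$. For $p \in [1,+\infty)$ this means $\int_X |f|^p = 0$ with $|f|^p \in M^+(X)$, so Cor.\ref{cor_int}(2) yields $|f|^p = 0$ q.o., hence $f = 0$ q.o., i.e.\ $f$ is the zero element of the quotient. For $p = +\infty$, the vanishing of (\ref{def_ese}) gives, for every $n \in \bN$, a set $\{ x : |f(x)| > 1/n \}$ of measure zero; the countable union of these sets, namely $\{ x : f(x) \neq 0 \}$, then has measure zero, so again $f = 0$ q.o.

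Finally the triangle inequality $\| f+g \|_p \leq \| f \|_p + \| g \|_p$ is exactly the assertion of Proposition~\ref{prop_mink}, valid for all $p \in [1,+\infty]$. Combining the three verifications, $\| \cdot \|_p$ is a norm and $L_\mu^p(X)$ is a normed space. I do not expect any genuine obstacle here: the only real work—Minkowski—has already been carried out, and the sole conceptual subtlety is remembering that the separation axiom $\| f \|_p = 0 \Rightarrow f = 0$ holds precisely because $L_\mu^p(X)$ denotes the space of equivalence classes, not of functions.
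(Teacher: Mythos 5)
Your proof is correct and follows exactly the route the paper intends: the corollary is stated without proof as an immediate consequence of the Minkowski inequality, with the remaining norm axioms (homogeneity, and definiteness via the passage to equivalence classes q.o.) being the routine verifications you spell out. Nothing is missing.
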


\begin{cor}
\label{cor_holder}
Sia $(X,\mM,\mu)$ uno spazio di misura finita e $p \in [1,+\infty]$. Se $f \in L_\mu^p(X)$ allora $f \in L_\mu^1(X)$.
\end{cor}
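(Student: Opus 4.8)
The plan is to obtain the inclusion as an immediate consequence of the H\"older inequality (Proposition~\ref{prop_holder}), the decisive point being that, since $\mu X < +\infty$, the constant function $1$ belongs to $L_\mu^q(X)$ for every $q \in [1,+\infty]$.

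Concretely, I would write $|f| = |f|\cdot 1$ and apply H\"older with $g \equiv 1$ and the conjugate exponent $q := \ovl p$, which yields
\[
\int_X |f| \ = \ \int_X |f \cdot 1| \ \leq \ \|f\|_p \, \|1\|_{\ovl p} .
\]
It then suffices to check that $\|1\|_{\ovl p} < +\infty$. For $\ovl p < +\infty$ one computes $\|1\|_{\ovl p} = \left( \int_X 1 \, d\mu \right)^{1/\ovl p} = (\mu X)^{1/\ovl p}$, which is finite by hypothesis; for $\ovl p = +\infty$ (that is, $p = 1$) one has $\|1\|_\infty = 1$. In either case $\int_X |f| \leq \|f\|_p\,\|1\|_{\ovl p} < +\infty$, so $f \in L_\mu^1(X)$.

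If one prefers to avoid invoking H\"older in the boundary cases, these can be handled directly: the case $p=1$ is trivial, while for $p = +\infty$ the estimate $\int_X |f| \leq \|f\|_\infty\, \mu X$ (which follows at once from $|f| \leq \|f\|_\infty$ quasi ovunque) gives the claim. I do not anticipate any genuine obstacle here: the entire content of the statement reduces to the integrability of the constant $1$, and the finiteness of $\mu X$ is precisely what guarantees it. It is worth emphasizing that this hypothesis is essential, since on an infinite measure space the inclusion fails exactly because $1 \notin L_\mu^1(X)$.
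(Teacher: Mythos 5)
Your proof is correct and follows essentially the same route as the paper: apply H\"older's inequality with $g \equiv 1$ and conjugate exponent $\ovl p$, using the finiteness of $\mu X$ to get $\|1\|_{\ovl p} = (\mu X)^{1/\ovl p} < +\infty$. Your additional handling of the boundary cases and the remark on the necessity of the hypothesis are fine but not needed beyond what the one-line H\"older estimate already gives.
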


\begin{proof}[Dimostrazione]
Basta osservare che la funzione costante $1$ appartiene ad $L_\mu^q(X)$, $q := \ovl{p}$, cosicch\'e
\[
\| f \|_1 = 
\int |f1| \leq 
\| f \|_p  \| 1 \|_q =
\| f \|_p  \left( \mu X \right)^{1/q} 
\ .
\]
\end{proof}

Passiamo ora a dimostrare la completezza degli spazi $L^p$. Per le nozioni di {\em spazio di Banach e di Hilbert} si veda \S \ref{sec_afunct}.
\begin{thm}[Fischer-Riesz]
\label{thm_RF}
$L_\mu^p(X)$ \e uno spazio di Banach per ogni $p \in [1,+\infty]$. Inoltre, se $\{ f_n \} \cup \{ f \} \subset L_\mu^p(X)$ con $\lim_n \| f_n-f \|_p = 0$, allora esiste una sottosuccessione $\{ n_k \}$ tale che $f(x) = \lim_k f_{n_k}(x)$ q.o. in $x \in X$.
\end{thm}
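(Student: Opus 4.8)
The plan is to establish completeness by the standard device of extracting a rapidly convergent subsequence from a Cauchy sequence, and the "moreover" clause will fall out of the very same construction. Since we already know from Prop.\ref{prop_mink} that $\| \cdot \|_p$ is a norm, it suffices to show that every Cauchy sequence $\{ f_n \} \subset L_\mu^p(X)$ converges.

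I would treat first the case $p \in [1,+\infty)$. Given the Cauchy sequence, choose a subsequence $\{ f_{n_k} \}$ with $\| f_{n_{k+1}} - f_{n_k} \|_p \leq 2^{-k}$, and set $g := |f_{n_1}| + \sum_k |f_{n_{k+1}} - f_{n_k}|$. Applying the monotone convergence theorem (Teo.\ref{thm_conv_mon}) to the partial sums, together with Minkowski's inequality, yields $\| g \|_p \leq \| f_{n_1} \|_p + \sum_k 2^{-k} < + \infty$, so $g \in L_\mu^p(X)$ and in particular $g < + \infty$ q.o.. On the full-measure set where $g$ is finite, the telescoping series $f_{n_1} + \sum_k (f_{n_{k+1}} - f_{n_k})$ converges absolutely, defining a function $f$ with $f_{n_k} \to f$ q.o. and $|f| \leq g$, whence $f \in L_\mu^p(X)$. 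To obtain norm convergence I would apply the dominated convergence theorem (Teo.\ref{teo_lebesgue1}) to $|f_{n_k} - f|^p \leq (2g)^p \in L_\mu^1(X)$, giving $\| f_{n_k} - f \|_p \to 0$; the general fact that a Cauchy sequence possessing a convergent subsequence itself converges then upgrades this to $\| f_n - f \|_p \to 0$.

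For $p = + \infty$ a separate argument is required. Here I would observe that each of the countably many essential-supremum conditions encoding $\| f_n \|_\infty$ and $\| f_n - f_m \|_\infty$ is violated only on a set of measure zero; discarding the (still null) union $N$ of all these exceptional sets, the sequence $\{ f_n \}$ becomes genuinely uniformly Cauchy on $X - N$. It therefore converges uniformly there to a bounded measurable $f$, and uniform convergence off a null set is precisely convergence in $L_\mu^\infty(X)$.

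Finally, the subsequence statement is immediate: if $\| f_n - f \|_p \to 0$ then $\{ f_n \}$ is Cauchy, so the construction above produces a subsequence converging q.o. to some limit, and by uniqueness of the $L^p$-limit (two functions at zero $\| \cdot \|_p$-distance coincide q.o.) that limit agrees with $f$ q.o.. The main obstacle is the careful justification of the two interchanges of limit and integral when $p < + \infty$: using monotone convergence to place the dominating function $g$ in $L_\mu^p(X)$, and then dominated convergence to secure both $f \in L_\mu^p(X)$ and the vanishing of $\| f_{n_k} - f \|_p$. Everything else is bookkeeping.
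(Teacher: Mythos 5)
Your proof is correct and rests on the same analytic engine as the paper's: a rapidly convergent telescoping series dominated by a function placed in $L_\mu^p(X)$ via a monotone limit theorem, followed by dominated convergence. The only organizational difference is that the paper routes completeness through the absolutely-convergent-series criterion (Prop.\ref{prop_compl}) and then runs a second, separate construction (the functions $\delta_m$) to get the a.e.-convergent subsequence, whereas you obtain both conclusions from a single extraction; you also spell out the $p=+\infty$ case, which the paper dismisses as trivial.
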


\begin{proof}[Dimostrazione]
Il caso $p = +\infty$ \e banale, per cui assumiamo $p \in [1,+\infty)$. L'idea \e quella di applicare il criterio delle serie convergenti in norma (Prop.\ref{prop_compl}), per cui consideriamo una successione $\{ f_i \} \subset L_\mu^p(X)$ tale che esista finito $M :=$ $\sum_i^\infty \| f_i \|_p$ con lo scopo di dimostrare che esiste la somma $\sum_i f_i \in L_\mu^p(X)$; grazie al gi\'a menzionato criterio, ci\'o sar\'a sufficiente a concludere che $L_\mu^p(X)$ \e uno spazio di Banach. Definiamo
\[
g_n(x) := \sum_{i=1}^n | f_i(x) |
\ \ , \ \
x \in X
\ ,
\]
ed osserviamo che $\| g_n \|_p \leq$ $\sum_i^n \| f_i \|_p \leq$ $M$. Ci\'o implica che $g_n \in L_\mu^p(X)$, $\forall n \in \bN$. Ora, per monoton\'ia per ogni $x \in X$ esiste $g(x) :=$ $\lim_n g_n(x) \in$ $[0,+\infty]$. La funzione $g$ cos\'i ottenuta \e misurabile e, applicando la diseguaglianza di Minkowski ed il Lemma di Fatou (Teo.\ref{thm_fatou}), otteniamo
\begin{equation}
\label{eq_RF}
\int_X g^p 
\ \leq \
\lim_n \inf \sum_i^n \int_X |f_i|^p 
\ \leq \
M^p
\ ,
\end{equation}
per cui $g \in L_\mu^p(X)$. Ci\'o implica che q.o. in $x \in X$ la serie $\sum_i f_i(x)$ \e assolutamente convergente per cui (essendo $\bR$ completo) esiste finito $s(x) :=$ $\lim_n \sum_i^n f_i(x)$. Definendo $s(x) := 0$ per ogni $x$ tale che $g(x) = + \infty$ otteniamo una funzione $s$ tale che
\[
s = \sum_i f_i \ \ {\mathrm{q.o.}}
\]
Ci\'o implica che $s$ \e misurabile (Teo.\ref{thm_MIS}), ed utilizzando (\ref{eq_RF}) si trova $s \in L_\mu^p(X)$. Concludiamo che $s$ \e la somma cercata e quindi $L_\mu^p(X)$ \e uno spazio di Banach. 
Sia ora $\lim_n \| f-f_n \|_p = 0$. Scegliamo una sottosuccessione $\{ n_k \}$ tale che $\| f_{n_{k+1}}-f_{n_k} \| < 2^{-n_k}$ e definiamo
\[
\delta_m (x) := \sum_{k=1}^m | f_{n_{k+1}}(x) - f_{n_k}(x) | \ .
\]
La successione precedente \e monot\'ona crescente, per cui esiste il limite $\delta (x)$ per ogni $x \in X$; per verificare che esso \e finito q.o., osserviamo che
\[
\| \delta_m \|_p 
\ \leq \
\sum_k \| f_{n_{k+1}}-f_{n_k} \|_p
\ \leq \ 
\sum_k 2^{-n_k} \leq 1
\ .
\]
Dunque $\delta \in L_\mu^p(X)$ (teorema di convergenza monot\'ona), e quindi $\delta (x) < + \infty$ q.o. in $x \in X$. Usando ripetutamente la diseguaglianza triangolare, nell'insieme dove $g(x) < +\infty$ troviamo
\begin{equation}
\label{eq_RF2}
| f_m(x)-f_l(x) | \leq \delta(x) - \delta_{l-1}(x)  
\end{equation}
e quindi $\{ f_m(x) \}$ \e di Cauchy. Definendo $\tilde f(x) := \lim_m f_m(x)$ otteniamo una funzione $\tilde f$ definita q.o. in $x \in X$, e vogliamo mostrare che in effetti $\tilde f = f$ q.o. in $x \in X$. A tale scopo, osserviamo che (\ref{eq_RF2}) implica, essendo $\delta_l(x) > 0$, che
\[
| \tilde f(x)-f_l(x) | \leq \delta(x) 
\ \ , \ \
{\mathrm{q.o. \ in \ }} x \in X
\ ;
\]
dunque, avendosi $\delta \in L_\mu^p(X)$, possiamo applicare il teorema di convergenza dominata e concludere 
$\lim_l \| \tilde f - f_l \|_p = \| \tilde f - f \|_p = 0$. 
Cosicch\'e $\tilde f = f$ q.o. in $x \in X$.
\end{proof}

\begin{rem}
{\it
La convergenza q.o. \e garantita soltanto per la sottosuccessione $\{ f_{n_k} \}$. A questo proposito si veda l'Esercizio \ref{sec_Lp}.6.
}
\end{rem}

\begin{rem}
{\it
Se $f \in L_\mu^p(X)$ allora certamente l'insieme $\{ x \in X : |f(x)| = \infty \}$ ha misura nulla, per cui ogni funzione in $L^p$ coincide q.o. con una funzione a valori in $\bR$. Nell'ambito degli spazi $L^p$ possiamo quindi sempre assumere di avere a che fare, a meno di equivalenza q.o., con funzioni a valori reali (e non a valori nei reali estesi).
}
\end{rem}

\begin{rem} \label{rem_Lp_C}
{\it
Sia $\mE$ uno spazio di Banach (vedi \S \ref{sec_afunct}). Allora possiamo introdurre su $\mE$ la $\sigma$-algebra $\beta \mE$ generata dai dischi $\Delta(v,r)$, $v \in \mE$, $r>0$ (in altri termini, $\beta \mE$ \e la $\sigma$-algebra dei boreliani associata alla topologia della norma di $\mE$). Per cui, dato il nostro spazio di misura $(X,\mM,\mu)$ ha senso considerare l'insieme $M(X,\mE)$ delle funzioni misurabili $f : X \to \mE$ nel senso di Def.\ref{def_mis_astr}. Presa $f \in M(X,\mE)$ denotiamo con $\| f(x) \| \in \bR$ la norma di $f(x)$ in $\mE$; la funzione $\{ X \ni x \mapsto \| f(x) \| \}$ \e chiaramente misurabile nel senso usuale, per cui definiamo
\[
L^p(X,\mE) 
\ := \
\{ f \in M(X,\mE) : \int \| f(x) \|^p \ d\mu < \infty \}
\ \ , \ \
p \in [1,\infty]
\ .
\]
In particolare, per $\mE = \bC$ (che identifichiamo con $\bR^2$ come spazio di Banach), otteniamo gli spazi $L^p$ complessi
\[
L_\mu^p(X,\bC) \ \ , \ \ p \in [1,+\infty] \ .
\]
Tutti i risultati precedenti (e successivi) rimangono validi per gli spazi $L_\mu^p(X,\bC)$, $L_\mu^p(X,\mE)$.
}
\end{rem}

\subsection{Approssimazione in $L^p$.}
\label{sec_appLp}

Diamo ora alcuni utili risultati di approssimazione.

\begin{prop}
\label{prop_sLp}
Dati $p \in [1,\infty]$ ed $f \in L_\mu^p(X)$, per ogni $\eps > 0$ esistono una funzione semplice $\psi_\eps$ e $g_\eps \in L_\mu^\infty(X)$, entrambe in $L_\mu^p(X)$ e tali che 
$\| f-\psi_\eps \|_p < \eps$,
$\| f-g_\eps \|_p < \eps$.
\end{prop}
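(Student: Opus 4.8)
The plan is to produce, for each $p$, a single simple function $\psi_\eps \in S(X) \cap L_\mu^p(X)$ that is $\eps$-close to $f$ in $\| \cdot \|_p$, and then to exploit the fact that a simple function takes only finitely many values, hence is bounded, so that the required $g_\eps \in L_\mu^\infty(X)$ may simply be taken to equal $\psi_\eps$.

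For $p \in [1,+\infty)$ I would first reduce to $f \geq 0$ by writing $f = f^+ - f^-$ and using $\| f - (\psi^+ - \psi^-) \|_p \leq \| f^+ - \psi^+ \|_p + \| f^- - \psi^- \|_p$, so that separate approximations of $f^+$ and $f^-$ combine into one of $f$. For $f \geq 0$, Proposition \ref{prop_caressa} supplies a monotone increasing sequence $\{ \psi_n \} \subset S(X)$ with $0 \leq \psi_n \nearrow f$ pointwise. Each $\psi_n$ belongs to $L_\mu^p(X)$ because its nonzero values are supported on super-level sets contained in $\{ f \geq 2^{-n} \}$, which has finite measure (by Chebyshev, or because $f \in L^p$ has $\sigma$-finite support by Prop.\ref{lem_sf}). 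The convergence then follows from dominated convergence: since $0 \leq \psi_n \leq f$ the sequence $(f - \psi_n)^p$ decreases to $0$ pointwise and is dominated by the integrable function $|f|^p$, so by Teo.\ref{teo_lebesgue1} one gets $\int_X (f - \psi_n)^p \to 0$, i.e. $\| f - \psi_n \|_p \to 0$; any sufficiently large index furnishes $\psi_\eps$.

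For $p = +\infty$ I would argue directly: choosing $M$ with $|f| \leq M$ a.e. and partitioning $[-M,M]$ into finitely many half-open intervals $I_k = [y_k, y_{k+1})$ of length $< \eps$, I set $\psi_\eps := y_k$ on $f^{-1}(I_k)$, whence $|f - \psi_\eps| < \eps$ a.e. and $\| f - \psi_\eps \|_\infty \leq \eps$. In all cases $\psi_\eps$ is bounded and measurable, so $\psi_\eps \in L_\mu^\infty(X)$ and the choice $g_\eps := \psi_\eps$ settles both claims at once. The only point requiring care is the twofold use of the $L^p$ hypothesis --- to guarantee $\psi_n \in L_\mu^p(X)$ and to produce the integrable dominating function $|f|^p$ --- both of which hinge on the finiteness of the measure of the super-level sets $\{ |f| \geq \alpha \}$, $\alpha > 0$.
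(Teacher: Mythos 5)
La tua dimostrazione \`e corretta e, per la parte centrale, segue la stessa strada del testo: riduzione a $f \geq 0$ tramite $f = f^+ - f^-$, approssimazione monot\'ona con funzioni semplici via Prop.\ref{prop_caressa}, e convergenza dominata con maggiorante $f^p$ per concludere $\| f - \psi_n \|_p \to 0$ quando $p < \infty$. Le differenze stanno altrove. Primo, per $g_\eps$ il testo costruisce un oggetto separato, la troncata $g_n := \inf\{ f, n \}$, e riapplica la convergenza dominata; tu invece osservi che una funzione semplice assume un numero finito di valori, dunque \`e gi\'a limitata e misurabile, e poni $g_\eps := \psi_\eps$. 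Questa scorciatoia \`e legittima (l'enunciato chiede soltanto $g_\eps \in L_\mu^\infty(X) \cap L_\mu^p(X)$ con $\| f - g_\eps \|_p < \eps$) e fa risparmiare met\`a della dimostrazione; il prezzo \`e che si perde l'informazione, talvolta utile nelle applicazioni, che l'approssimante limitata pu\'o essere scelta della forma $\inf\{f,n\}$, cio\`e dominata da $f$ e convergente a $f$ puntualmente ovunque. Secondo, per $p = \infty$ tu ricostruisci direttamente l'approssimante partizionando $[-M,M]$ in intervalli di lunghezza $< \eps$, mentre il testo invoca la convergenza uniforme (fuori da un insieme nullo) della successione diadica di Prop.\ref{prop_caressa}; i due argomenti sono equivalenti. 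Un'ultima osservazione marginale: per garantire $\psi_n \in L_\mu^p(X)$ non serve passare per la misura finita dei sopralivelli, basta la dominazione $0 \leq \psi_n \leq f$, che d\`a $|\psi_n|^p \leq |f|^p \in L_\mu^1(X)$.
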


\begin{proof}[Dimostrazione]
Effettuando la solita decomposizione $f = f^+-f^-$ possiamo ridurci al caso $f \geq 0$. 
Applicando Prop.\ref{prop_caressa} otteniamo una successione crescente di funzioni semplici e non negative $\{ \psi_n \}$ convergente puntualmente ad $f$. 
Per $p < \infty$ abbiamo $(f-\psi_n)^p \to 0$ ed $(f-\psi_n)^p \leq f^p$, con $f^p \in L_\mu^1(X)$. Dunque applicando il teorema di convergenza dominata concludiamo che
\[
\| f-\psi_n \|_p^p = \int (f-\psi_n)^p \to 0 \ .
\]
Nel caso $p = \infty$ osserviamo che per ipotesi l'insieme 
$E := \{ x : |f(x)| > \| f \|_\infty \}$
ha misura nulla. Per costruzione (vedi ancora Prop.\ref{prop_caressa}) la successione $\{ \psi_n \}$ converge uniformemente in $X-E$, e ci\'o conclude la dimostrazione per quanto riguarda le funzioni semplici.
Riguardo l'analoga affermazione per funzioni $L^\infty$ consideriamo la succcessione
$g_n(x) := \inf \{ f(x) , n \}$, $\forall x \in X$,
cosicch\'e $0 \leq g_n \leq f$, $g_n \to f$ puntualmente e 
$\| g_n \|_\infty \leq n$, $\| g_n \|_p \leq \| f \|_p$.
Possiamo quindi applicare il precedente argomento di convergenza dominata e concludere $\| g_n-f \|_p \to 0$.
\end{proof}

Nella proposizione seguente ci specializziamo al caso in cui $X$ sia uno spazio topologico normale (il che include gli spazi metrici).

\begin{prop}
\label{prop_cLp}
Sia $X$ uno spazio normale e $\mu : \mM \to \wa \bR^+$ una misura di Borel regolare esterna su $X$. 
Presi $p \in [1,\infty)$ ed $f \in L_\mu^p(X)$, per ogni $\eps > 0$ esiste una funzione 
$\varphi_\eps \in C(X) \cap L_\mu^\infty(X)$
con
$\mu \{ {\mathrm{supp}}(\varphi_\eps) \} < \infty$, 
tale che $\| f-\varphi_\eps \|_p < \eps$.
\end{prop}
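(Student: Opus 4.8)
The plan is to combine the two preceding approximation results. By Proposition~\ref{prop_sLp} I can approximate $f \in L_\mu^p(X)$ by a simple function in $L^p_\mu(X)$, and then approximate that simple function by a continuous function with finite-measure support using the regularity of $\mu$ together with Urysohn's lemma (Theorem~\ref{thm_ury}), which applies since $X$ is normal.

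First I would fix $\eps > 0$ and use Proposition~\ref{prop_sLp} to produce a simple function $\psi = \sum_{i=1}^n \lambda_i \chi_{A_i}$ (with $\lambda_i \neq 0$, the $A_i$ disjoint and measurable) satisfying $\|f - \psi\|_p < \eps/2$. Since $\|\psi\|_p < \infty$ and the $\lambda_i$ are nonzero, each $A_i$ must have finite measure, so $\psi$ already has finite-measure support. It therefore suffices to approximate such a $\psi$ in $L^p$-norm by a continuous function with finite-measure support, to within $\eps/2$; the triangle inequality (Minkowski, Proposition~\ref{prop_mink}) then closes the argument.

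The core step is to approximate a single $\chi_A$ with $\mu A < \infty$. Here I would invoke external regularity to find an open $U \supseteq A$ with $\mu(U \setminus A)$ small, and internal approximation to find a closed $W \subseteq A$ with $\mu(A \setminus W)$ small; I would then want $U$ to have finite measure (shrinking $U$ so that $\mu U < \mu A + 1 < \infty$). Since $X$ is normal and $W, U^c$ are disjoint closed sets, Urysohn's lemma yields a continuous $g : X \to [0,1]$ with $g|_W = 1$ and $g|_{U^c} = 0$; thus $\operatorname{supp}(g) \subseteq \overline{U}$, and $\chi_A$ and $g$ differ only on the small-measure set $U \setminus W$ where both are bounded by $1$. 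This gives
\[
\|\chi_A - g\|_p^p = \int_X |\chi_A - g|^p \ d\mu \leq \mu(U \setminus W),
\]
which is as small as desired. Forming the corresponding linear combination $\varphi_\eps := \sum_i \lambda_i g_i$ produces a continuous, bounded function whose support is contained in the finite union of the finite-measure sets $\overline{U_i}$, hence of finite measure.

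The main obstacle I anticipate is the matter of internal regularity: external regularity is part of the hypotheses, but the lower bound $W \subseteq A$ with $\mu(A \setminus W)$ small is not literally assumed for a general normal space with a merely externally regular Borel measure. I would obtain it by applying external regularity to a finite-measure set containing $A^c$ relative to a fixed finite-measure open neighbourhood of $A$ — that is, approximating $A^c$ from outside within an ambient open set of finite measure, exactly as in the passage from external to internal regularity carried out in Corollary~\ref{cor_ri}. Once the closed set $W$ is in hand, everything else is routine application of Urysohn and Minkowski, and care is only needed to keep all the supporting sets of finite measure so that the resulting $\varphi_\eps$ genuinely has $\mu\{\operatorname{supp}(\varphi_\eps)\} < \infty$.
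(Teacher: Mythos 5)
Your overall architecture coincides with the paper's: reduce to simple functions (the paper uses Prop.\ref{lem_sf} applied to $f^p$ together with dominated convergence, you use Prop.\ref{prop_sLp}; both reductions are fine and your observation that $\|\psi\|_p<\infty$ forces each $A_i$ to have finite measure is correct), then by linearity to a single $\chi_A$ with $\mu A<\infty$, and conclude with external regularity plus Urysohn. The one point where you diverge is the Urysohn step, and that is where the proposal has a genuine gap. You want a closed $W\subseteq A$ with $\mu(A\setminus W)$ small, and you propose to obtain it by applying external regularity to $A^c$ \emph{relative to} a finite-measure open neighbourhood $V\supseteq A$. But the set this produces is $V\cap U'^{\,c}$, where $U'\supseteq V\cap A^c$ is open: this is closed in $V$, not in $X$, so it cannot be fed to Theorem \ref{thm_ury}; and the absolute complement $U'^{\,c}$ is closed but need not be contained in $A$ (it is only contained in $V^c\cup A$). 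The repair is to set $W:=U'^{\,c}\cap\ovl A$ after first arranging $\ovl A\subseteq V$ — and both of these moves require Oss.\ref{oss_cbor}, i.e.\ $\mu\ovl A=\mu A$ for an externally regular measure. Once that remark is invoked, the inner approximation becomes superfluous: the paper simply chooses an open $U\supseteq\ovl A$ with $\mu(U\setminus A)<\eps$ and applies Urysohn to the disjoint closed pair $\ovl A$ and $X\setminus U$, so that $\varphi_\eps=1$ on all of $A$ and $\|\chi_A-\varphi_\eps\|_p^p\le\mu(U\setminus A)$ falls out directly, with no closed set inside $A$ ever needed. In short: the idea is right and the rest of the argument is routine as you say, but the construction of $W$ as you describe it does not yield a closed subset of $A$, and the cleanest fix eliminates $W$ altogether.
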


\begin{proof}[Dimostrazione]
Al solito assumiamo $f \geq 0$. Applicando Prop.\ref{lem_sf} ad $f^p$ concludiamo che ${\mathrm{supp}}(f)$ \e $\sigma$-finito ed $f$ \e limite puntuale di una successione monot\'ona di funzioni semplici, non negative e con supporto di misura finita. Usando l'argomento della proposizione precedente troviamo che tale successione approssima $f$ in norma $\| \cdot \|_p$, per cui possiamo assumere che $f$ sia una funzione semplice, non negativa e con supporto di misura finita. Ma ancora, per linearit\'a ci possiamo ridurre al caso in cui $f$ sia una funzione caratteristica $\chi_E$, dove $E \subseteq X$ ha misura finita. 
Ora, per regolarit\'a esterna esiste un aperto $U \subset X$ con misura finita tale che 
$\ovl E \subset U$,
$\mu (U - E) < \eps$
(riguardo l'aver considerato la chiusura $\ovl E$ vedi Oss.\ref{oss_cbor}). Applicando il Lemma di Urysohn (Teorema \ref{thm_ury}) troviamo $\varphi_\eps \in C(X)$ tale che 
$\| \varphi_\eps \|_\infty = 1$, $\varphi_\eps |_{\ovl E} = 1$ e $\varphi_\eps|_{X-U} = 0$;
cosicch\'e $\varphi_\eps$ ha supporto con misura finita e 
\[
\| \chi_E - \varphi_\eps \|_p^p
\ = \
\int_E | 1-1 |^p +
\int_{U-E} | \chi_E - \varphi_\eps |^p
\ = \
\int_{U-E} | \varphi_\eps |^p
\ \leq \
\mu (U-E)
\ < \
\eps
\ .
\]
\end{proof}

Come applicazione del risultato precedente consideriamo lo spazio euclideo $\bR^d$, $d \in \bN$, equipaggiato con la misura prodotto di Lebesgue (vedi \S \ref{sec_misprod}), e diamo un risultato di approssimazione con le funzioni continue a supporto compatto.

\begin{cor}
\label{cor_appLp}
Per ogni $p \in [1,\infty)$, lo spazio $C_c(\bR^d)$ \e denso in $L^p(\bR^d)$ nella norma $\| \cdot \|_p$.
\end{cor}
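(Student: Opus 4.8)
The plan is to deduce density of $C_c(\bR^d)$ in $L^p(\bR^d)$ directly from the preceding \propref{prop_cLp}, using the fact that the Lebesgue measure on $\bR^d$ is a Borel, externally regular measure on a normal (metric) space, and then upgrading the continuous $L^\infty$-approximant with finite-measure support produced there to one with genuinely \emph{compact} support. The key point is that \propref{prop_cLp} already gives, for $f \in L^p(\bR^d)$ and $\eps > 0$, a function $\varphi \in C(\bR^d) \cap L^\infty_\mu(\bR^d)$ with $\mu\{\mathrm{supp}(\varphi)\} < \infty$ and $\| f - \varphi \|_p < \eps/2$; so the only remaining work is to pass from finite-measure support to compact support without increasing the $L^p$-error by more than $\eps/2$.

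First I would fix such a $\varphi$ and observe that $\bR^d = \cup_R \ovl{\Delta(0,R)}$ is an exhaustion by compact balls. Cutting $\varphi$ off against these balls, I set $\varphi_R := \varphi \cdot \chi_{\Delta(0,R)}$ — or, to keep continuity, $\varphi_R := \varphi \cdot \eta_R$ where $\eta_R \in C_c(\bR^d)$ satisfies $0 \leq \eta_R \leq 1$, $\eta_R|_{\ovl{\Delta(0,R)}} = 1$ and $\eta_R|_{\bR^d - \Delta(0,R+1)} = 0$ (such $\eta_R$ exists by the Lemma di Uryshon, \thmref{thm_ury}, exactly as in \propref{prop_cLp}). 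Each $\varphi_R$ then lies in $C_c(\bR^d)$. To control the error I would write
\[
\| \varphi - \varphi_R \|_p^p
\ = \
\int_{\bR^d} |\varphi|^p |1 - \eta_R|^p
\ \leq \
\int_{\bR^d - \Delta(0,R)} |\varphi|^p
\ .
\]
Since $\varphi \in L^p$ (it is bounded with support of finite measure, hence $\int |\varphi|^p \leq \| \varphi \|_\infty^p \, \mu\{\mathrm{supp}(\varphi)\} < \infty$), the tail integral on the right tends to $0$ as $R \to \infty$ by the teorema di convergenza di Lebesgue (\thmref{teo_lebesgue1}) applied to the dominated sequence $|\varphi|^p \chi_{\bR^d - \Delta(0,R)} \to 0$, with dominating function $|\varphi|^p$. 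Hence I can choose $R$ so large that $\| \varphi - \varphi_R \|_p < \eps/2$, and then the diseguaglianza di Minkowski (\propref{prop_mink}) gives $\| f - \varphi_R \|_p \leq \| f - \varphi \|_p + \| \varphi - \varphi_R \|_p < \eps$, with $\varphi_R \in C_c(\bR^d)$, as desired.

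The main obstacle, if any, is purely verifying that the cut-off preserves both continuity and compact support while keeping the error small; the continuity is handled by multiplying by the Uryshon bump $\eta_R$ rather than by a bare characteristic function, and the smallness of the tail is exactly a dominated-convergence statement, so no genuine difficulty arises — the real content was already absorbed into \propref{prop_cLp}. I would note only that one must restrict to $p < \infty$: the tail estimate above fails for $p = \infty$ (truncation changes an $L^\infty$-function by an amount of order $\| \varphi \|_\infty$ on the discarded region, not measured by $\mu$), which is why the statement explicitly excludes $p = \infty$.
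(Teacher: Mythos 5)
Your proposal is correct and follows essentially the same route as the paper: reduce to a continuous bounded approximant with finite-measure support via Prop.\ref{prop_cLp}, multiply by a continuous radial cut-off supported in a large ball (the paper writes the piecewise-linear bump explicitly rather than invoking Uryshon, but it is the same function), and control the tail by dominated convergence. Nothing to add.
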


\begin{proof}[Dimostrazione]
Grazie alla proposizione precedente \e sufficiente dimostrare che, presa $f \in C(\bR^d) \cap L^\infty(\bR^d)$ avente supporto con misura finita, e scelto $\eps > 0$, esiste $f_\eps \in C_c(\bR^d)$ tale che $\| f-f_\eps \|_p < \eps$. A tale scopo consideriamo la successione di funzioni
\[
\varphi_n (x) :=
\left\{
\begin{array}{ll}
1 \ \ , \ \ |x| < n
\\
n+1-|x| \ \ , \ \ |x| \in [n,n+1)
\\
0 \ \ , \ \ |x| \in [n+1,\infty) 
\end{array}
\right.
\ \ , \ \
x \in \bR^d
\ .
\]
Ovviamente troviamo $\varphi_n , f \varphi_n \in C_c(\bR^d)$ per ogni $n \in \bN$, nonch\'e
\[
|f(x) - f(x)\varphi_n(x) |^p \to 0 \, , \, \forall x \in \bR^d
\ \ \ {\mathrm{e}} \ \ \
|f - f \varphi_n |^p \leq |f|^p
\ .
\]
Essendo $f \in L^p(\bR^d)$ concludiamo, per convergenza dominata, che $\| f \varphi_n - f \|_p \to 0$.
\end{proof}

Osserviamo che usando il teorema di Stone-Weierstrass possiamo approssimare in norma $\| \cdot \|_\infty$ ogni funzione in $C_c(\bR^d)$ con una funzione $C_c^\infty(\bR^d)$, e da ci\'o segue che $C_c^\infty(\bR^d)$ \e denso in $L^p(\bR^d)$. D'altra parte, lo stesso risultato si pu\'o dimostrare usando i mollificatori (vedi Prop.\ref{prop_moll}).

\subsection{Il teorema di Riesz-Fr\'echet-Kolmogorov.}

Diamo ora una versione del Teorema di Ascoli-Arzel\'a per spazi $L^p$ nel caso in cui il soggiacente spazio misurabile sia $\bR^d$, $d \in \bN$, equipaggiato con la misura (prodotto) di Lebesgue; tale risultato, il {\em teorema di Riesz-Fr\'echet-Kolmogorov}, fornisce un criterio di compattezza per famiglie di funzioni rispetto alla topologia della norma $\| \cdot \|_p$. Per esporne l'enunciato richiamiamo la nozione di {\em funzione traslata}
\[
f_h (x) := f(x+h)
\ \ , \ \
f : \bR^d \to \bR
\ , \
x,h \in \bR^d 
\ .
\]
Inoltre, per ogni famiglia $\mF$ di funzioni da $\bR^d$ in $\bR$ ed $\Omega \subseteq \bR^d$, definiamo
\[
\mF_\Omega
:=
\{
f |_\Omega : f \in \mF
\}
\ .
\]
Per la dimostrazione del teorema seguente servono le nozioni di {\em convoluzione} (\S \ref{sec_conv}) e {\em mollificatore} (Def.\ref{def_moll}).

\begin{thm}[Riesz-Fr\'echet-Kolmogorov]
%
%\textbf{(, \cite[IV.26]{Bre}).}
%
\label{thm_RFK}
Sia $p \in [1,+\infty)$ ed $\mF \subset L^p(\bR^d)$ limitato tale che 
$\lim_{h \to 0} \| f_h - f \|_p = 0$
uniformemente in $f \in \mF$. Allora $\mF_\Omega$ \e precompatto in $L^p(\Omega)$ per ogni $\Omega \subset \bR^n$ con misura finita.
\end{thm}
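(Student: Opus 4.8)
The plan is to combine a mollification argument with the Ascoli-Arzel\'a theorem (Teo.\ref{thm_AA1}). Since $L^p(\Omega)$ is complete by Fischer-Riesz (Teo.\ref{thm_RF}), it suffices to show that $\mF_\Omega$ is totally bounded, i.e. that for every $\eps > 0$ it admits a finite $\eps$-net. The idea is to approximate each $f \in \mF$, \emph{uniformly in $f$}, by the mollified function $\rho_\delta * f$, where $\{ \rho_\delta \}$ is a mollifier (Def.\ref{def_moll}); the mollified family turns out to be equicontinuous and uniformly bounded, hence precompact by Ascoli, and this precompactness then transfers back to $\mF_\Omega$.

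First I would establish the uniform approximation estimate. Writing $(\rho_\delta * f)(x) - f(x) = \int \rho_\delta(y)\,(f(x-y)-f(x))\,dy$ and applying Minkowski's inequality in integral form (the continuous analogue of Prop.\ref{prop_mink}) together with $\int \rho_\delta = 1$, one obtains
\[
\| \rho_\delta * f - f \|_p \ \leq \ \int \rho_\delta(y) \, \| f_{-y} - f \|_p \ dy \ \leq \ \sup_{|y| \leq \delta} \| f_{-y} - f \|_p \ .
\]
By hypothesis the right-hand side tends to $0$ as $\delta \to 0$ uniformly in $f \in \mF$, so given $\eps > 0$ I can fix $\delta$ with $\| \rho_\delta * f - f \|_p < \eps$ for every $f \in \mF$.

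Next, for this fixed $\delta$, I would show that $\{ \rho_\delta * f : f \in \mF \}$ is uniformly bounded and equicontinuous. Setting $M := \sup_{f \in \mF} \| f \|_p < \infty$ and $q := \ovl p$, H\"older (Prop.\ref{prop_holder}) gives $\| \rho_\delta * f \|_\infty \leq \| \rho_\delta \|_q \, \| f \|_p \leq C_\delta M$, while
\[
| (\rho_\delta * f)(x) - (\rho_\delta * f)(x') | \ \leq \ \| \rho_\delta(x - \cdot) - \rho_\delta(x' - \cdot) \|_q \, \| f \|_p \ ,
\]
and the uniform continuity of translation in $L^q$ for the smooth, compactly supported $\rho_\delta$ makes the last bound small, uniformly in $f$, once $|x - x'|$ is small. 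Fixing a ball $B_R$, Ascoli-Arzel\'a (Teo.\ref{thm_AA1}) then yields that $\{ (\rho_\delta * f)|_{\ovl{B_R}} \}$ is precompact in $C(\ovl{B_R})$; and since $B_R$ has finite measure, uniform convergence implies convergence in $L^p(B_R)$, so this family is precompact in $L^p(B_R)$ as well.

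The main obstacle is that $\Omega$ is only assumed of finite measure, not bounded, so I must control the tail $\Omega - B_R$. Here I would use that $\mu \Omega < \infty$ forces $\mu(\Omega - B_R) \to 0$ as $R \to \infty$, combined with the uniform bound $\| \rho_\delta * f \|_\infty \leq C_\delta M$; indeed
\[
\| f \|_{L^p(\Omega - B_R)} \ \leq \ \| f - \rho_\delta * f \|_p + \| \rho_\delta * f \|_\infty \, \mu(\Omega - B_R)^{1/p} \ < \ \eps + C_\delta M \, \mu(\Omega - B_R)^{1/p} \ ,
\]
which is $< 2\eps$ for $R$ large, uniformly in $f$. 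Consequently the family $\{ (\rho_\delta * f)\chi_{B_R} : f \in \mF \}$, precompact in $L^p(\Omega)$, lies within $O(\eps)$ of $\mF_\Omega$ in the $\| \cdot \|_p$-norm; extracting a finite $\eps$-net of the mollified family produces a finite $O(\eps)$-net of $\mF_\Omega$, and by completeness this proves $\mF_\Omega$ precompact. The delicate points throughout are keeping every estimate uniform in $f \in \mF$ and correctly patching the bounded-domain Ascoli argument onto the finite-measure domain $\Omega$.
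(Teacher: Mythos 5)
Your proof is correct and follows essentially the same route as the paper's: mollify uniformly in $f$, show the mollified family is uniformly bounded and equicontinuous so that Ascoli--Arzel\'a applies on a bounded piece, control the tail $\Omega - B_R$ using the finite measure of $\Omega$, and patch the estimates together with a $3\eps$ argument. The only differences are cosmetic (Minkowski's integral inequality in place of Jensen--Fubini for the approximation step, translation-continuity of $\rho_\delta$ in $L^q$ in place of the gradient bound for equicontinuity, and a finite $\eps$-net formulation in place of extracting a Cauchy subsequence).
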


\begin{proof}[Dimostrazione]
L'idea \e quella di approssimare funzioni in $\mF_\Omega$ con funzioni continue usando una successione di mollificatori $\{ \rho_n \} \subseteq C_c^\infty(\bR^d)$ e quindi usare il Teorema di Ascoli-Arzel\'a.

\noindent \textbf{Passo 1.} Mostriamo che scelto $\eps > 0$ esiste un $\delta > 0$ tale che
\begin{equation}
\label{eq_RFK1}
\| f - \rho_n * f \|_p < \eps
\ \ , \ \
\forall f \in \mF
\ , \ 
n > \delta^{-1}
\ .
\end{equation}
Usando il fatto che $\rho_n(y) dy$ \e una misura di probabilit\'a (infatti $\int \rho_n = 1$ per ogni $n \in \bN$) troviamo
\[
|\rho_n*f(x)-f(x)| 
\ \leq \
\int | f(x-y)-f(x) | \rho_n(y) \ dy
\ \stackrel{Holder}{\leq} \
\left( \int |f(x-y)-f(x)|^p \rho_n(y) \ dy  \right)^{1/p}
,
\]
per cui
\[
\| \rho_n*f-f \|_p^p 
\ \leq \
\int \left\{ \int |f(x-y)-f(x)|^p \rho_n(y) \ dy \right\} dx 
\ \stackrel{Fubini}{=} \
\int \rho_n(y) \| f_{-y}-f \|_p^p \ dy 
\ .
\]
Riguardo l'ultimo integrale osserviamo che preso il $\delta$ dell'uniforme continuit\'a di $\mF$ abbiamo che il supporto di $\rho_n$ sar\'a contenuto in $\Delta(0,\delta)$ per $n > \delta^{-1}$; per cui, avendosi $y \in \Delta(0,\delta)$ troviamo $\| f_{-y}-f \|_p^p < \eps^p$ e quindi
\[
\| \rho_n*f-f \|_p^p 
\ \leq \
\int \rho_n(y) \eps^p \ dy 
\ \leq \ 
\eps^p 
\ .
\]
Notare che, grazie a Prop.\ref{prop_der_conv}, abbiamo $\rho_n * f \in C^\infty(\bR^d)$ per ogni $n \in \bN$.

\noindent \textbf{Passo 2.} Mostriamo che posto $c_n := \| \rho_n \|_q$ risulta
\begin{equation}
\label{eq_RFK2}
\| \rho_n*f \|_\infty 
\ \leq \ 
c_n \| f \|_p
\ \ , \ \
\forall f \in L^p(\bR^d)
\ .
\end{equation}
Infatti si trova, posto $q := \ovl p$,
\begin{equation}
\label{eq_RFK2a}
\left| \int f(x-y) \rho_n(y) \ dx \right| 
\ \leq \
\int |f(x-y) \rho_n(y)| \ dx 
\ \stackrel{Holder}{\leq} \ 
\| f_{-y} \|_p \| \rho_n \|_q 
\ = \
\| f \|_p \| \rho_n \|_q
\ .
\end{equation}
\noindent \textbf{Passo 3.} Esiste $c'_n > 0$ tale che
\begin{equation}
\label{eq_RFK3}
| \{ \rho_n*f \}(x_1) - \{ \rho_n*f \}(x_2) | 
\ \leq \
c'_n \| f \|_p \ |x_1-x_2|
\ \ , \ \
\forall f \in \mF
\ , \
x_1,x_2 \in \bR^d
\ .
\end{equation}
Per dimostrare questa stima osserviamo che, essendo $\rho_n \in C_c^\infty(\bR^d)$, abbiamo $\nabla \rho_n \in L^q(\bR^d,\bR^d)$, $\forall q \in [1,\infty]$, e quindi, applicando Prop.\ref{prop_der_conv},
\begin{equation}
\label{eq_nabla}
\nabla(\rho_n * f) = (\nabla \rho_n)*f \in L^p(\bR^d,\bR^d)
\ \ , \ \
\forall f \in \mF
\ .
\end{equation}
Ora, per il teorema di Lagrange abbiamo che per ogni $x_1 , x_2 , y \in \bR^d$ esiste $\xi \in \bR^d$ tale che
\[
\{ \rho_n*f \}(x_1-y) - \{\rho_n*f \}(x_2-y)
\ = \
\{\nabla (\rho_n*f)\}(\xi) \cdot (x_1-x_2)
\]
(qui con $\cdot$ intendiamo il prodotto scalare in $\bR^d$). Per cui, 
\[
\begin{array}{ll}
| \{ \rho_n*f \}(x_1) - \{ \rho_n*f \}(x_2) | 
& \ \ \stackrel{*}{\leq} \ \
|\{\nabla (\rho_n*f)\}(\xi)| \ |x_1-x_2|
\\ & \stackrel{(\ref{eq_nabla})}{=} \
|\{ (\nabla \rho_n)*f)\}(\xi)| \ |x_1-x_2|
\\ & \ \ \stackrel{**}{\leq} \ \
\| \nabla \rho_n \|_q \| f \|_p \ |x_1-x_2|\
\ ,
\end{array}
\]
avendo usato le diseguaglianze di Cauchy-Schwarz per (*) e di Holder per (**), quest'ultima usata in modo analogo a (\ref{eq_RFK2a}).

\noindent \textbf{Passo 4.} Preso $\eps > 0$ esiste $\Omega_\eps \subseteq \Omega$ limitato e misurabile tale che 
\begin{equation}
\label{eq_RFK4}
\| f|_{\Omega-\Omega_\eps} \|_p 
\ < \ 
\eps
\ \ , \ \
\forall f \in \mF
\ .
\end{equation}
Per questo basta osservare che 
\[
\| f|_{\Omega-\Omega_\eps} \|_p 
\ \leq \ 
\| f-\rho_n*f \|_p + \| (\rho_n*f) |_{\Omega-\Omega_\eps} \|_p
\ ,
\]
cosicch\'e per $n > \delta^{-1}$ da (\ref{eq_RFK1}) deduciamo
\[
\| f|_{\Omega-\Omega_\eps} \|_p 
\ \leq \ 
\eps + \| (\rho_n*f) |_{\Omega-\Omega_\eps} \|_p 
\ \leq \
\eps + \| \rho_n*f \|_\infty \ {\mathrm{vol}}(\Omega - \Omega_\eps)^{1/p}
\ ,
\]
e grazie a (\ref{eq_RFK2}), ed alla limitatezza di $\mF$, basta scegliere $\Omega_\eps$ in maniera tale che ${\mathrm{vol}}(\Omega - \Omega_\eps)$ sia sufficientemente piccolo.

\noindent \textbf{Passo 5.} Fissato $\Omega_\eps$ ed $n > \delta^{-1}$ come nei passi precedenti consideriamo la famiglia 
\[
\mF_{n,\eps} := 
\{ (\rho_n*f)|_{\ovl{\Omega}_\eps} : f \in \mF \} \subset 
C^\infty(\ovl{\Omega}_\eps)
\ ;
\]
grazie a (\ref{eq_RFK2}) e (\ref{eq_RFK3}) concludiamo che $\mF_{n,\eps}$ \e limitata ed equicontinua in $C(\ovl{\Omega}_\eps)$. Per Ascoli-Arzel\'a $\mF_{n,\eps}$ \e precompatto nella topologia della norma dell'estremo superiore, e quindi lo \e anche in $L^p(\ovl{\Omega}_\eps)$ (infatti la convergenza in $\| \cdot \|_\infty$ in $C(\ovl{\Omega}_\eps)$ implica quella in $\| \cdot \|_p$).

\noindent \textbf{Conclusione.} Sia $\{ f_i \} \subseteq \mF$. Fissati $\eps,n$ come sopra, grazie al passo precedente esiste una sottosuccessione 
$\{ (\rho_n*f_{i_k})|_{\Omega_\eps} \}$
di Cauchy in $L^p(\ovl{\Omega}_\eps)$ (per brevit\'a scriviamo $f_k \equiv f_{i_k}$). Usando (\ref{eq_RFK1}), (\ref{eq_RFK2}), (\ref{eq_RFK4}), abbiamo le stime
\[
\begin{array}{ll}
\| (f_h - f_k)|_\Omega \|_p   & \leq   
\| (f_h - \rho_n*f_h)|_\Omega  \|_p +
\| (\rho_n*f_h - \rho_n*f_k)|_\Omega \|_p +
\| (\rho_n*f_k - f_k)|_\Omega  \|_p 
\\ & \leq \
2 \eps + \| (\rho_n*f_h - \rho_n*f_k)|_\Omega \|_p
\\ & \leq \
2 \eps + 
\| (\rho_n*f_h - \rho_n*f_k)|_{\Omega-\Omega_\eps} \|_p +
\| (\rho_n*f_h - \rho_n*f_k)|_{\Omega_\eps} \|_p
\\ & \leq \
3 \eps + \| (\rho_n*f_h - \rho_n*f_k)|_{\Omega-\Omega_\eps} \|_p
\\ & \leq \
3 \eps + c_n ( \| f_h|_{\Omega-\Omega_\eps} \|_p + \| f_k|_{\Omega-\Omega_\eps} \|_p  )
\\ & \leq \
3 \eps + c_n \cdot 2 \eps
\ .
\end{array}
\]
Concludiamo quindi che $\{ f_k|_\Omega \}$ \e di Cauchy in $L^p(\Omega)$, e ci\'o mostra il teorema.
\end{proof}

\subsection{Gli spazi $L^p_{loc}$.}

Sia $(X,\mM,\mu)$ uno spazio misurabile boreliano e $p \in [1,+\infty]$. Una funzione misurabile $f : X \to \bR$ si dice {\em localmente $L^p$} se per ogni $x \in X$ esiste un intorno misurabile $U \ni x$ tale che
\[
f |_U \in L_{\mu_U}^p(U) 
\ \Leftrightarrow \
f \chi_U \in L_\mu^p(X)
\]
(vedi Oss.\ref{def_MIS02}). Denotiamo con $L_{\mu,loc}^p(X)$ l'insieme delle funzioni localmente $L^p$ (nel caso della misura di Lebesgue seguiremo le analoghe convenzioni degli spazi $L^p$).

Poich\'e per ipotesi ogni aperto \e misurabile, nella definizione precedente non \e restrittivo considerare intorni aperti (se l'intorno $U$ non \e aperto consideriamo un aperto $U' \subset U$). E' ovvio che se $f \in L_\mu^p(X)$ allora $f$ \e localmente $L^p$. Il viceversa \e falso: 

\begin{ex}
{\it
Sia $f(x) := x^{-1}$, $x \in (0,1)$. Allora \e chiaro che $f \notin L^p(0,1)$ per ogni $p \in [1,+\infty]$. Invece troviamo $f \in L_{loc}^p(0,1)$, visto che, preso $x \in (0,1)$ e $0 < a < x < b \leq 1$,
\[
+ \infty \ > \
\int_a^b |f(x)|^p \ dx \ = \ 
\left\{
\begin{array}{ll}
\frac{1}{p-1} \ \left| b^{1-p} - a^{1-p}  \right|
\ \ , \ \
p > 1
\\
\log b - \log a 
\ \ , \ \ 
p = 1 \ .
\end{array}
\right.
\]
}
\end{ex}

\begin{prop}
Sia $(X,\mM,\mu)$ uno spazio di misura di Radon, $p \in [1,+\infty]$ ed $f \in L_{\mu,loc}^p(X)$. 
Allora per ogni compatto $K \subseteq X$ si ha $f \chi_K \in L_\mu^p(X)$.
\end{prop}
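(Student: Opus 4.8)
Il piano \`e di sfruttare la compattezza di $K$ per ridurre il problema ad una stima finita. Per definizione di funzione localmente $L^p$, per ogni $x \in X$ esiste un intorno, che possiamo assumere aperto, $U_x \ni x$ tale che $f \chi_{U_x} \in L_\mu^p(X)$. La famiglia $\{ U_x \}_{x \in K}$ costituisce allora un ricoprimento aperto di $K$; essendo $K$ compatto, ne estrarrei un sottoricoprimento finito $U_{x_1} , \ldots , U_{x_n}$, in maniera tale che $K \subseteq \cup_{i=1}^n U_{x_i}$. Osservo preliminarmente che $f \chi_K$ \`e misurabile: infatti $K$, essendo compatto in uno spazio di Hausdorff, \`e anche chiuso e quindi boreliano, per cui $\chi_K$ \`e misurabile e tale \`e il prodotto $f \chi_K$.

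Nel caso $p \in [1,+\infty)$ procederei osservando che $\chi_K \leq \sum_{i=1}^n \chi_{U_{x_i}}$ puntualmente, e dunque $|f|^p \chi_K \leq \sum_{i=1}^n |f|^p \chi_{U_{x_i}}$. Applicando monoton\'ia ed additivit\'a dell'integrale (Cor.\ref{cor_int}) concluderei
\[
\int_X |f \chi_K|^p \ d\mu
\ = \
\int_K |f|^p \ d\mu
\ \leq \
\sum_{i=1}^n \int_{U_{x_i}} |f|^p \ d\mu
\ < \
+ \infty
\ ,
\]
dove la finitezza di ciascun addendo \`e garantita dall'ipotesi $f \chi_{U_{x_i}} \in L_\mu^p(X)$. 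Dunque $f \chi_K \in L_\mu^p(X)$.

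Nel caso $p = +\infty$ porrei invece $M_i := \| f \chi_{U_{x_i}} \|_\infty < + \infty$ per ogni $i$; allora $|f| \leq M_i$ q.o. su $U_{x_i}$, e poich\'e i $U_{x_i}$ ricoprono $K$ ottengo $|f| \leq \max_{i=1,\ldots,n} M_i$ q.o. su $K$. Ne segue $\| f \chi_K \|_\infty \leq \max_i M_i < + \infty$, cio\`e $f \chi_K \in L_\mu^\infty(X)$. L'unico passaggio su cui vale la pena soffermarsi \`e proprio l'estrazione del sottoricoprimento finito: \`e l\`i che la compattezza di $K$ permette di passare da un ricoprimento potenzialmente infinito (quello fornito dalla locale $p$-integrabilit\`a punto per punto) ad una somma \emph{finita} di quantit\`a finite; il resto \`e un'applicazione di routine delle propriet\`a elementari dell'integrale, e l'ipotesi che $\mu$ sia di Radon interviene solo per collocarci in uno spazio localmente compatto e di Hausdorff, garantendo in particolare la misurabilit\`a di $\chi_K$.
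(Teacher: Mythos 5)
La tua dimostrazione \`e corretta e segue essenzialmente la stessa strada del testo: estrazione di un sottoricoprimento finito per compattezza e stima $\chi_K \leq \sum_i \chi_{U_{x_i}}$ seguita dalla subadditivit\`a dell'integrale. L'unica aggiunta rispetto al testo \`e il trattamento esplicito (e corretto) del caso $p=+\infty$, che nell'enunciato \`e incluso ma che la disuguaglianza con $|f|^p$ del testo copre solo implicitamente.
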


\begin{proof}[Dimostrazione]
Per ogni $x \in K$ esiste un intorno aperto $U \ni x$ con $f \chi_U \in L_\mu^p(X)$, per cui la famiglia $\{ U \}$ costituisce un ricoprimento aperto di $K$. Consideriamo un sottoricoprimento finito $\{ U_k \}_{k=1}^n$ ed osserviamo che
\[
\int_X |f \chi_K |^p
\ = \
\int_X |f|^p \chi_K 
\ \leq \
\sum_k^n \int_X |f|^p \chi_{U_k}
\ < \
+ \infty
\ .
\]
\end{proof}

\begin{cor}
\label{cor_Lploc_LC}
Per ogni $p \in [1,+\infty]$, lo spazio $L_{\mu,loc}^p(X)$ ha una struttura di spazio localmente convesso{\footnote{Vedi \S \ref{sec_loc_conv}.}} rispetto alle seminorme
\[
\eta_U (f) := \left( \int_U |f|^p \right)^{1/p}
\ \ , \ \
U \in \tau X \ : \ \ovl U {\mathrm{ \ compatto}}
\ .
\]
\end{cor}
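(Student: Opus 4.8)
The plan is to check that the functionals $\eta_U$ constitute a family of seminorms on $L^p_{\mu,loc}(X)$, and then to invoke the standard construction (\S\ref{sec_loc_conv}) by which any family of seminorms on a vector space generates a locally convex topology. First I would verify that each $\eta_U$ is finite on $L^p_{\mu,loc}(X)$. Since $\ovl U$ is compact by hypothesis, the preceding proposition yields $f \chi_{\ovl U} \in L_\mu^p(X)$ for every $f \in L^p_{\mu,loc}(X)$, whence
\[
\eta_U(f)^p \ = \ \int_U |f|^p \ \leq \ \int_{\ovl U} |f|^p \ = \ \| f \chi_{\ovl U} \|_p^p \ < \ + \infty \ .
\]
Note that $\eta_U$ is well defined on the equivalence classes modulo equality a.o., since the integral ignores null sets. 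This first step is the one carrying the real content: its finiteness rests essentially on the Radon hypothesis (local finiteness on compacta), and without it the $\eta_U$ would fail to be finite and the whole construction would collapse.

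Next I would confirm that each $\eta_U$ is a seminorm. Non-negativity and absolute homogeneity $\eta_U(\lambda f) = |\lambda|\, \eta_U(f)$ are immediate from the definition. For the triangle inequality I would apply the Minkowski inequality (Prop.\ref{prop_mink}) to the restrictions $f \chi_U,\, g \chi_U$, which belong to $L_\mu^p(X)$ by the previous step; since $(f+g)\chi_U = f\chi_U + g\chi_U$ and $\|h\chi_U\|_p = \eta_U(h)$, this gives $\eta_U(f+g) \leq \eta_U(f) + \eta_U(g)$. Hence $\{\eta_U\}$, indexed by the open sets $U$ with $\ovl U$ compact, is genuinely a family of seminorms.

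The locally convex structure then follows formally: taking as a sub-base of neighborhoods of the origin the sets $\{ f : \eta_U(f) < \eps \}$ (for $\eps > 0$ and $U$ admissible), and as a neighborhood base their finite intersections, produces convex, balanced and absorbing sets for which the vector-space operations are continuous, which is precisely the locally convex topology recalled in \S\ref{sec_loc_conv}. Here it is convenient to observe that the index family is directed: for two admissible $U_1, U_2$ the union $U_1 \cup U_2$ is again open with compact closure $\ovl{U_1} \cup \ovl{U_2}$ and dominates both $\eta_{U_1}$ and $\eta_{U_2}$, so the neighborhood base can be described by single seminorm balls.

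Finally, to see that the resulting topology is Hausdorff (so that one has a bona fide locally convex space on the quotient by equality a.o.), I would use that in the ambient setting $X$ is locally compact and Hausdorff, so that every point admits an open neighborhood with compact closure. If $\eta_U(f) = 0$ for all admissible $U$, then $f = 0$ a.o. on each such $U$; covering $X$ by a countable family of such neighborhoods gives $f = 0$ a.o., so the seminorms separate points. The only delicate point in the whole argument is thus the finiteness in the first step; everything after it is either a direct consequence of Minkowski or the routine passage from a separating directed family of seminorms to a Hausdorff locally convex topology.
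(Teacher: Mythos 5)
Your proof is correct and follows essentially the same route as the paper's: finiteness of $\eta_U(f)$ via the preceding proposition on Radon measures and compacta, the seminorm axioms via Minkowski, and the separating property of the family. The only differences are that the paper first checks explicitly that $L_{\mu,loc}^p(X)$ is a vector space (closure under sums, which you use only implicitly), and that your final step should appeal to inner regularity of the Radon measure rather than to a countable cover of $X$ by precompact opens, which a general locally compact Hausdorff space need not admit.
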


\begin{proof}[Dimostrazione]
Se $f,g \in L_{\mu,loc}^p(X)$ allora per ogni $x$ esistono intorni $U,V$ di $x$ con $f \chi_U \in L_\mu^p(X)$, $g \chi_V \in L_\mu^p(X)$. Preso un aperto $W \subseteq U \cap V$ con $x \in W$, troviamo $(f+g)\chi_W \in L_\mu^p(X)$, per cui concludiamo che $L_{\mu,loc}^p(X)$ \e uno spazio vettoriale. Inoltre, dalla proposizione precedente segue che $\eta_U(f) < + \infty$ per ogni intorno aperto $U$ a chiusura compatta. Chiaramente ogni $\eta_U$ \e una seminorma, e se $\eta_U(f) = 0$ per ogni precompatto $U$ allora $f = 0$ q.o..
\end{proof}

Diamo ora un'applicazione del concetto di funzione localmente $L^p$. Denotato con $L_c^p(\bR^d)$ lo spazio delle funzioni in $L^p(\bR^d)$ a supporto compatto, osserviamo che ha senso definire la {\em convoluzione} (vedi \S \ref{sec_conv})
\begin{equation}
\label{eq_conv_SC}
f*g (x) := \int_\bR f(x-y) g(y) \ dy
\ \ , \ \
x \in \bR^d
\ , \
f \in L_{loc}^1(\bR^d)
\ , \
g \in L_c^p(\bR^d)
\ ,
\end{equation}
infatti $f * g (x) = f|_{{\mathrm{supp}}g}*g(x)$ ed $f|_{{\mathrm{supp}}g} \in L^1(\bR^d)$; usando il Teorema \ref{thm_conv}, concludiamo che $f*g \in L^p(\bR^d)$.

\subsection{La dualit\'a di Riesz.}

In accordo con la notazione che verr\'a introdotta in \S\ref{sec_afunct}, per ogni $p \in [1,+\infty]$ denotiamo con $L^{p,*}_\mu(X)$ lo spazio duale di $L_\mu^p(X)$. L'osservazione alla base dei risultati che esporremo in questa sezione \e la seguente: presi $p \in [1,+\infty]$, $q := \ovl{p}$ e $g \in L_\mu^q(X)$ abbiamo l'applicazione lineare
\begin{equation}
\label{def_Fg}
F_g : L_\mu^p(X) \to \bR
\ \ , \ \
F_g(f) := \left \langle F_g , f \right \rangle := \int_X fg 
\ , \
\forall f \in L_\mu^p(X)
\ ;
\end{equation}
con la precedente notazione la diseguaglianza di Holder implica, chiaramente, che 
\[
| \left \langle F_g , f \right \rangle | \ \leq \ \| g \|_q \| f \|_p \ ,
\]
per cui, nella terminologia di \S \ref{sec_afunct}, abbiamo $F_g \in L^{p,*}_\mu(X)$ con norma $\| F_g \| \leq \| g \|_q$. In realt\'a si verifica che la norma di $F_g$ \e proprio $\| g \|_q$ (vedi Esercizio \ref{sec_Lp}.7), per cui abbiamo un'applicazione lineare isometrica 
\begin{equation}
\label{def_dual_riesz}
F : L_\mu^q(X) \to L^{p,*}_\mu(X) 
\ \ , \ \
g \mapsto F_g
\ .
\end{equation}
Nel caso complesso, si pu\'o analogamente definire
\[
F : L_\mu^q(X,\bC) \to L^{p,*}_\mu(X,\bC)
\ \ , \ \
\left \langle F_g , f \right \rangle := \int_X f \ovl g \ .
\]

\begin{lem}
\label{lem_Riesz1}
Sia $(X,\mM,\mu)$ uno spazio di misura finita e $p \in [1,+\infty)$. Se $g \in L_\mu^1(X)$ ed esiste $M \in \bR$ tale che
\begin{equation}
\label{eq_Riesz1}
\left| \int fg \right| 
\ \leq \
M \| f \|_p
\ \ , \ \
\forall f \in L_\mu^\infty(X)
\ ,
\end{equation}
allora $g \in L_\mu^q(X)$ con $q = \ovl{p}$, e $\| g \|_q \leq M$.
\end{lem}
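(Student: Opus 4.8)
Il piano è di distinguere due casi secondo il valore di $p$, ed in entrambi di costruire un'opportuna famiglia di funzioni test in $L_\mu^\infty(X)$ da inserire nell'ipotesi (\ref{eq_Riesz1}); il caso $p>1$ sarà quello centrale e richiederà un argomento di troncamento.

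Tratterei per primo il caso $p=1$ (dunque $q = \ovl{p} = \infty$), nel quale si vuole mostrare $\| g \|_\infty \leq M$. Ragionerei per assurdo: se fosse $\| g \|_\infty > M$, esisterebbero $\eps > 0$ ed un insieme misurabile $E$ con $0 < \mu E < \infty$ (la finitezza è garantita da $\mu X < \infty$) tale che $|g| \geq M + \eps$ su $E$. Prendendo allora $f := \mathrm{sgn}(g) \, \chi_E \in L_\mu^\infty(X)$ otterrei
\[
\int fg = \int_E |g| \geq (M+\eps)\, \mu E > M\, \mu E = M \| f \|_1 \ ,
\]
in contraddizione con (\ref{eq_Riesz1}); quindi $\| g \|_\infty \leq M$.

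Passerei poi al caso $p \in (1,+\infty)$ (dunque $q = \ovl{p} \in (1,+\infty)$, con la relazione chiave $(q-1)p = q$). La scelta naturale sarebbe testare (\ref{eq_Riesz1}) con $f := |g|^{q-1} \mathrm{sgn}(g)$, in modo che $fg = |g|^q$; tuttavia \e proprio qui l'ostacolo principale, poich\'e non sappiamo a priori che tale $f$ appartenga a $L_\mu^\infty(X)$, n\'e che $\int |g|^q$ sia finito. Lo aggirerei con un troncamento: per ogni $n \in \bN$ porrei $E_n := \{ x : |g(x)| \leq n \}$ e
\[
f_n := |g|^{q-1} \, \mathrm{sgn}(g) \, \chi_{E_n} \ .
\]
Allora $|f_n| \leq n^{q-1}$, per cui $f_n \in L_\mu^\infty(X) \subseteq L_\mu^p(X)$ (Cor.\ref{cor_holder}) ed \e lecito inserirlo nell'ipotesi.

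Resterebbe il calcolo e il passaggio al limite. Posto $a_n := \int_{E_n} |g|^q$, osserverei innanzitutto che $a_n \leq n^q \, \mu X < \infty$; per costruzione $f_n g = |g|^q \chi_{E_n} \geq 0$, da cui $\int f_n g = a_n$, mentre usando $(q-1)p=q$ si ha $\| f_n \|_p^p = \int_{E_n} |g|^{(q-1)p} = a_n$. L'ipotesi (\ref{eq_Riesz1}) fornirebbe quindi $a_n \leq M \, a_n^{1/p}$, ovvero (se $a_n>0$, altrimenti la stima \e banale) $a_n^{1/q} = a_n^{1-1/p} \leq M$, cio\e $a_n \leq M^q$. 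Infine, essendo $g \in L_\mu^1(X)$ si ha $|g| < \infty$ q.o., cosicch\'e $|g|^q \chi_{E_n} \nearrow |g|^q$ q.o.; applicando il teorema di convergenza monot\'ona (Teo.\ref{thm_conv_mon}) concluderei $\int |g|^q = \lim_n a_n \leq M^q$, da cui $g \in L_\mu^q(X)$ e $\| g \|_q \leq M$.
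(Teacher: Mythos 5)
La tua dimostrazione è corretta e segue essenzialmente la stessa strada del testo: nel caso $p=1$ l'insieme $E$ coincide con l'$A$ della dimostrazione originale, e nel caso $p>1$ la tua $f_n = |g|^{q-1}\,{\mathrm{sgn}}(g)\,\chi_{E_n}$ è esattamente la funzione test $|g_n|^{q/p}\,{\mathrm{sgn}}(g_n)$ costruita nel testo a partire dal troncamento $g_n := g\chi_{E_n}$ (si noti che $q/p = q-1$), con la stessa stima $\| g_n \|_q \leq M$ e la stessa conclusione per convergenza monot\'ona. Non ci sono lacune.
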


\begin{proof}[Dimostrazione]
Iniziamo considerando il caso $p > 1$. Definiamo
\[
g_n(x) :=
\left\{
\begin{array}{ll}
g(x) \ \ , \ \ |g(x)| \leq n
\\
0    \ \ , \ \ |g(x)| >    n
\end{array}
\right.
\ \ \Rightarrow \ \
g_n \in L_\mu^\infty(X)
\ .
\]
Osserviamo che $gg_n = g_n^2$ e ${\mathrm{sgn}} (g_n) = {\mathrm{sgn}} (g)$
{\footnote{La funzione segno ${\mathrm{sgn}}$ \e definita nell'Esercizio \ref{sec_Lp}.7, che invitiamo a risolvere per meglio maneggiare i conti della presente dimostrazione.}}. 
Definiamo quindi $f_n := |g_n|^{q/p} {\mathrm{sgn}} (g_n)$, cosicch\'e $\| f_n \|_p = \| g_n \|_q^{q/p} = \| g_n \|_q^{q-1}$. Dunque per costruzione $g_n \in L_\mu^q(X) \cap L_\mu^\infty(X)$ ed $f_n \in L_\mu^p(X) \cap L_\mu^\infty(X)$, e possiamo stimare
\[
\| g_n \|_q^q
=
\int |g_n|^{q/p+1}
=
\int |g_n|^{q/p} |g|
=
\int |f_n||g|
=
\left| \int f_n g  \right|
\leq 
M \| f_n \|_p
= 
M \| g_n \|_q^{q-1}
\ ,
\]
per cui $\| g_n \|_q \leq M$. Ora, $\{ g_n \}$ \e una successione monot\'ona crescente e convergente a $g$ q.o. in $x \in X$ (ed analogamente $|g_n|^q \nearrow |g|^q$ q.o.); per cui, applicando il teorema di Beppo Levi concludiamo
\[
M^q 
\geq
\int |g_n|^q 
\stackrel{n}{\to}
\int |g|^q
\ \ \Rightarrow \ \ 
\int |g|^q \leq M^q
\ .
\]
Discutiamo ora il caso $p = 1$. Per ogni $\eps > 0$, poniamo $A := \{ x \in X : |g(x)| \geq M + \eps \}$, e, posto $f := {\mathrm{sgn}}(g) \chi_A$, otteniamo $\| f \|_1 = \mu A$. Ora,
\[
\left| \int fg \right| 
\leq
M \| f \|_1
= M \mu A
\ \ , \ \
\left| \int fg \right| 
\geq
\int_A |g|
\geq 
(M+\eps) \mu A
\ .
\]
Dalle due precedenti diseguaglianze, concludiamo che $\mu A = 0$ e quindi $\| g \|_\infty \leq M$.
\end{proof}

\begin{thm}[Dualit\'a di Riesz]
Sia $(X,\mM,\mu)$ uno spazio di misura $\sigma$-finita. Per ogni $p \in [1,+\infty)$, l'applicazione (\ref{def_dual_riesz}) definisce un isomorfismo di spazi di Banach $L_\mu^q(X) \to L^{p,*}_\mu(X)$.
\end{thm}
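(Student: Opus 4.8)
The map $F$ has already been shown to be a linear isometry (the norm identity $\|F_g\|=\|g\|_q$ being the content of Esercizio \ref{sec_Lp}.7), so $F$ is automatically injective and norm-preserving. The entire problem therefore reduces to \emph{surjectivity}: given $\Phi\in L^{p,*}_\mu(X)$ I must produce $g\in L_\mu^q(X)$ with $\Phi=F_g$. The plan is to first settle the case $\mu X<\infty$ by manufacturing $g$ as a Radon--Nikodym derivative, and then to bootstrap to the $\sigma$-finite case by exhausting $X$ with finite pieces.

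For the finite case I associate to $\Phi$ the set function $\nu(E):=\Phi(\chi_E)$, $E\in\mM$; since $\mu X<\infty$ every $\chi_E$ lies in $L_\mu^p(X)$ and $\nu$ is well defined. Finite additivity is immediate from linearity of $\Phi$, while \emph{countable} additivity is exactly where the hypothesis $p<\infty$ enters: if $E=\dot{\cup}_n E_n$ then $\|\chi_E-\chi_{\cup_{k\le n}E_k}\|_p^p=\mu(\cup_{k>n}E_k)\to 0$, so continuity of $\Phi$ gives $\nu(E)=\lim_n\sum_{k\le n}\nu(E_k)$. Moreover $\mu E=0$ forces $\chi_E=0$ in $L_\mu^p(X)$ and hence $\nu E=0$, i.e. $\nu\prec\mu$. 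Thus $\nu$ is a finite signed measure absolutely continuous with respect to $\mu$, and Teo.\ref{thm_RN} yields $g\in M(X)$ (in fact $g\in L_\mu^1(X)$, as $\nu$ is finite) with $\nu E=\int_E g\,d\mu$. By linearity $\Phi(\psi)=\int\psi g\,d\mu$ for every simple $\psi$, and since simple functions approximate any $f\in L_\mu^\infty(X)$ uniformly — hence in $\|\cdot\|_p$ — the two continuous sides agree, giving $\Phi(f)=\int fg\,d\mu$ for all $f\in L_\mu^\infty(X)$. At this point the estimate $|\int fg\,d\mu|=|\Phi(f)|\le\|\Phi\|\,\|f\|_p$ holds for all $f\in L_\mu^\infty(X)$ with $g\in L_\mu^1(X)$, so Lemma \ref{lem_Riesz1} upgrades $g$ to $L_\mu^q(X)$ with $\|g\|_q\le\|\Phi\|$. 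Finally $F_g$ and $\Phi$ are two bounded functionals coinciding on $L_\mu^\infty(X)$, which is dense in $L_\mu^p(X)$ by Prop.\ref{prop_sLp}; therefore $\Phi=F_g$, settling surjectivity when $\mu X<\infty$.

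To pass to the $\sigma$-finite case I would write $X=\dot{\cup}_n X_n$ with $\mu X_n<\infty$. Restricting $\Phi$ to functions supported in $X_n$ produces, by the finite case, a function $g_n\in L_\mu^q(X_n)$ with $\|g_n\|_q\le\|\Phi\|$; patching these together defines $g$ on all of $X$. Writing $Y_N:=\cup_{n\le N}X_n$, the restriction of $\Phi$ to $L_\mu^p(Y_N)$ still has norm $\le\|\Phi\|$, so $\|g\chi_{Y_N}\|_q\le\|\Phi\|$; since $|g\chi_{Y_N}|^q\nearrow|g|^q$, the monotone convergence theorem (Teo.\ref{thm_conv_mon}) gives $\|g\|_q\le\|\Phi\|<\infty$, whence $g\in L_\mu^q(X)$. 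For any $f\in L_\mu^p(X)$ one has $f\chi_{Y_N}\to f$ in $\|\cdot\|_p$ by dominated convergence, so $\Phi(f)=\lim_N\Phi(f\chi_{Y_N})=\lim_N\int fg\chi_{Y_N}\,d\mu=\int fg\,d\mu$, the last step again by dominated convergence with dominating function $|fg|\in L_\mu^1(X)$ (Hölder). Hence $\Phi=F_g$, and combined with the isometry property this makes $F$ an isometric isomorphism of Banach spaces.

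I expect the crux to be the finite-measure surjectivity step, and specifically the verification that $\nu$ is genuinely countably additive and absolutely continuous: this is the only point where the theory of signed measures and Radon--Nikodym is invoked, and it is also the place where the exclusion of $p=\infty$ becomes essential, since the $L^p$-convergence $\chi_{\cup_{k\le n}E_k}\to\chi_E$ fails for $p=\infty$. The $\sigma$-finite gluing is then a routine, if careful, limiting argument resting on Beppo Levi and dominated convergence.
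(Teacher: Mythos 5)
Your proposal is correct and follows essentially the same route as the paper: isometry reduces everything to surjectivity, the finite-measure case is handled by turning $\Phi$ into a signed measure $\nu(E)=\Phi(\chi_E)$, invoking Radon--Nikodym and then Lemma \ref{lem_Riesz1}, and the $\sigma$-finite case is obtained by exhaustion. The only cosmetic difference is that you control $\|g\|_q$ in the gluing step via monotone convergence where the paper uses Fatou's lemma; both work equally well.
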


\begin{proof}[Dimostrazione]
Sappiamo che (\ref{def_dual_riesz}) \e isometrica (e quindi iniettiva), per cui dobbiamo verificarne soltanto la suriettivit\'a. Consideriamo allora $F \in L^{p,*}_\mu(X)$ e mostriamo che appartiene all'immagine di (\ref{def_dual_riesz}).
Come primo passo, assumiamo che \textbf{$X$ abbia misura finita}, e definiamo
\[
\nu A := \left \langle F , \chi_A \right \rangle 
\ \ , \ \
A \in \mM
\ .
\]
Poich\'e $\mu X < \infty$ abbiamo $\| \chi_A \|_p = (\mu A)^{1/p} < \infty$, per cui $\chi_A \in L_\mu^p(X)$ e la definizione precedente \e ben posta. La strategia \e ora quella di mostrare che $\nu$ \e una misura con segno. A tale scopo, osserviamo che $\nu \emptyset = \left \langle F , 0 \right \rangle = 0$; inoltre, se $A \cap B = \emptyset$ allora $\chi_{A \cup B} = \chi_A + \chi_B$ e $\nu (A \cup B) = \nu A + \nu B$. Per mostrare l'additivit\'a numerabile, consideriamo una successione $\{ A_k \}$ di insiemi misurabili mutualmente disgiunti, ed osserviamo che per il teorema di Lebesgue risulta
\[
0 =
\lim_n \| \chi_E - \chi_{E_n}  \|_p =
\lim_n \mu (E-E_n)^{1/p}
\ \ , \ \
E := \cup_k^\infty A_k
\ , \
E_n := \cup_k^n A_k
\ .
\]
Essendo $F$ continuo, concludiamo che
\[
0 =
\lim_n \left \langle F , \chi_E - \chi_{E_n} \right \rangle =
\nu E - \lim_n \nu E_n
\ .
\]
Dunque $\nu : \mM \to \bR$ \e una misura con segno. Ora, se $\mu A = 0$, $A \in \mM$, allora $\| \chi_A \|_p = 0$ e $\nu A = \left \langle F , \chi_A \right \rangle = 0$. Dunque $\nu$ \e assolutamente continua rispetto a $\mu$, ed il teorema di Radon-Nikodym implica che esiste $g \in L_\mu^1(X)$ tale che 
\begin{equation}
\label{eq_R01}
\left \langle F , \chi_A \right \rangle
=
\int_A g \ d \mu
\ \ , \ \
A \in \mM
\ .
\end{equation}
Per cui, per ogni $\varphi \in S(X)$ troviamo 
$\left \langle F , \varphi \right \rangle = \int \varphi g \ d \mu$.
Poich\'e $S(X)$ \e denso in $L_\mu^\infty(X)$ in norma $\| \cdot \|_\infty$, concludiamo che
\[
\left \langle F , f \right \rangle
=
\int fg \ d \mu
\ \ , \ \
f \in L_\mu^\infty(X)
\ \ \Rightarrow \ \
\left| \int fg \ d \mu \right|
\leq
\| F \| \| f \|_p
\ .
\]
Applicando il Lemma precedente concludiamo che $F = F_g$. 
Passiamo ora al caso in cui $X$ \e \textbf{$\sigma$-finito}: consideriamo una successione $\{ X_i \}$ di insiemi a misura finita e mutualmente disgiunti tali che $X = \cup_i X_i$; osserviamo quindi che -- grazie al passo precedente -- per ogni $n \in \bN$ esistono $g_1 , \ldots , g_n \in L_\mu^q(X)$ (con supporti contenuti rispettivamente in $X_1 , \ldots , X_n$) tali che
\[
\left \langle F , f \chi_{A_n} \right \rangle
=
\int f \sum_i^n g_i \ d \mu
\ \ , \ \
f \in L_\mu^p(X)
\ \ , \ \
A_n := \cup_i^n X_i
\ .
\]
In altri termini, ogni $F^{(n)} := F |_{L_\mu^p(A_n)}$, $n \in \bN$, \e il funzionale associato a $\sum_i^n g_i \in L_\mu^q(A_n)$ attraverso l'applicazione (\ref{def_dual_riesz}){\footnote{Qui abbiamo scritto, per semplicit\'a di notazione, $\mu \equiv \mu_{A_n}$.}}. Poich\'e chiaramente $\| F^{(n)} \| \leq \| F \|$ per ogni $n \in \bN$, concludiamo che
\[
\|  \sum_i^n g_i  \|_q
=
\| F^{(n)} \|
\leq
\| F \|
\ \ , \ \
n \in \bN
\ .
\]
Definiamo ora $g (x) := \sum_i^\infty g_i(x)$, $x \in X$. Usando il Lemma di Fatou, per ogni $n \in \bN$ troviamo
\[
\| g \|_q
\leq
\lim \inf_n \| \sum_i^n g_i \|_q
=
\lim \inf_n \| F^{(n)} \|
\leq
\| F \|
\ .
\]
Dunque $g \in L_\mu^q(X)$. Ora, per il teorema di convergenza di Lebesgue troviamo $\| f - f \chi_{A_n} \|_p \stackrel{n}{\to} 0$, per cui, per continuit\'a di $F$ abbiamo che
\[
\left \langle F , f \right \rangle                   =
\lim_n \left \langle F       , f \chi_{A_n} \right \rangle =
\lim_n \left \langle F^{(n)} , f \chi_{A_n} \right \rangle =
\lim_n \int f \sum_i^n g_i \ d \mu
\ .
\]
D'altro canto, usando ancora Lebesgue troviamo $\| g - \sum_i^n g_i \|_q \stackrel{n}{\to} 0$; per la diseguaglianza di Holder, concludiamo
\[
\int f \left( g -  \sum_i^n g_i \right) \ d \mu
\leq 
\| f \|_p \| g -  \sum_i^n g_i \|_q
\stackrel{n}{\to} 0
\ \Rightarrow \
\int fg \ d \mu = 
\lim_n \int f \sum_i^n g_i = 
\left \langle F , f \right \rangle 
\ .
\]
\end{proof}

\begin{rem}{\it
Il teorema di dualit\'a di Riesz rimane valido nel caso in cui $X$ abbia misura qualsiasi, a condizione che $p$ sia strettamente maggiore di $1$ (vedi \cite[Teo.8.5.12]{dBar} per i dettagli; per un controesempio al caso $p=1$ vedi \cite[Oss.9.2.3]{Acq}). Preso $F \in L^{p,*}_\mu(X)$, l'idea \e quella di costruire un insieme misurabile e $\sigma$-finito $X_0 \subseteq X$ tale che 
\[
f |_{X_0} = 0 
\ \Rightarrow \
\left \langle F , f \right \rangle = 0
\ \ , \ \
\forall f \in L_\mu^p(X)
\ ;
\]
osserviamo che in tal modo ogni $f_0 \in L_\mu^p(X_0)$ si scrive come $f_0 = f|_{X_0}$, $f \in L_\mu^p(X)$, avendo posto $f|_{X-X_0} := 0$, $f|_{X_0} := f_0$, cosicch\'e \e ben definito il funzionale
\[
F_0 \in L^{p,*}_\mu(X_0)
\ \ , \ \
\left \langle F_0 , f_0 \right \rangle := \left \langle F , f \right \rangle
\ \ , \ \
\forall f_0 = f|_{X_0} \in L_\mu^p(X_0)
\ .
\]
Scelta una successione $\{ f_n \} \subset L_\mu^p(X)$, $\| f_n \|_p \equiv 1$, tale che
\[
\| F \| ( 1 - 1/n ) 
\leq  
\left \langle F , f_n \right \rangle
\ \ , \ \
n \in \bN
\ ,
\]
il nostro candidato \e
\[
X_0 \ := \ \bigcup_n \ \{ x \in X : f_n(x) \neq 0 \} \ ,
\]
che sappiamo essere $\sigma$-finito grazie a Prop.\ref{lem_sf}. Al che si applica il teorema del caso $\sigma$-finito, il che fornisce una funzione $g \in L_\mu^q(X_0)$ tale che 
$F_0 = F_g$. 
Estendendo $g$ ad $X$ ponendo $g \equiv 0$ in $X-X_0$ otteniamo la funzione $g \in L_\mu^q(X)$ cercata.
}
\end{rem}

Alla regola $L^{p,*}_\mu(X) = L_\mu^q(X)$, $p \in [1,+\infty)$, $q = \ovl p$, fa eccezione lo spazio $L_\mu^\infty(X)$, il cui duale contiene {\em strettamente} $L_\mu^1(X)$ (vedi Esercizio \ref{sec_afunct}.2). In effetti, il duale $L_\mu^{\infty,*}(X)$ si pu\'o caratterizzare in termini di misure di Radon (vedi \S \ref{sec_MIS1}); tale caratterizzazione richiede due risultati fondamentali: il teorema di Riesz-Markov (Esempio \ref{ex_dual_CX}), ed il teorema di Gel'fand-Naimark (Teo.\ref{thm_GN}):

\begin{prop}
\label{prop_dual_Li}
Sia $(X,\mM,\mu)$ uno spazio misurabile. Allora esiste uno spazio compatto e di Hausdorff $X_\mu$ con un isomorfismo $L_\mu^{\infty,*}(X) \simeq R(X_\mu)$.
\end{prop}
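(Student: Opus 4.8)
The plan is to recognize that $L_\mu^\infty(X)$ is, up to complexification, a commutative unital $C^*$-algebra, and then to chain together two results cited earlier: the theorem of Gel'fand--Naimark (Teo.\ref{thm_GN}), which represents such an algebra as the continuous functions on a compact Hausdorff space, and the theorem of Riesz--Markov (Esempio \ref{ex_dual_CX}), which identifies the dual of that function space with the signed Radon measures. In symbols, I aim to produce the chain $L_\mu^{\infty,*}(X) \simeq C(X_\mu)^* \simeq R(X_\mu)$, where $X_\mu$ is the spectrum of the algebra.

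First I would equip $\mA := L_\mu^\infty(X,\bC)$ with the structure of a commutative unital $C^*$-algebra: the product and linear operations are the pointwise ones (well defined on equivalence classes q.o.), the unit is the constant function $1$, the involution is $f \mapsto f^*$ with $f^*(x) := \ovl{f(x)}$, and the norm is the essential supremum $\| \cdot \|_\infty$ of (\ref{def_ese}). Completeness is exactly the case $p = +\infty$ of Fischer--Riesz (Teo.\ref{thm_RF}), and the $C^*$-identity is immediate since $\| f^* f \|_\infty = \| \, |f|^2 \, \|_\infty = \| f \|_\infty^2$. Applying Teo.\ref{thm_GN} to $\mA$ then furnishes a compact Hausdorff space $X_\mu$ (the spectrum of $\mA$) together with an isometric $*$-isomorphism $\Gamma : \mA \to C(X_\mu,\bC)$.

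Next I would pass to the real structure. Since $\Gamma$ is a $*$-isomorphism it carries self-adjoint elements to self-adjoint elements; the self-adjoint part of $\mA$ is precisely the real space $L_\mu^\infty(X)$, while the self-adjoint part of $C(X_\mu,\bC)$ is $C(X_\mu)$. Hence $\Gamma$ restricts to an isometric isomorphism of real Banach spaces $L_\mu^\infty(X) \to C(X_\mu)$. Dualizing, its transpose $\Gamma^*$ is an isometric isomorphism $C(X_\mu)^* \to L_\mu^{\infty,*}(X)$ (the adjoint of an isometric isomorphism is again one, and duality reverses the arrow into exactly the direction we want). Composing with the Riesz--Markov identification $C(X_\mu)^* \simeq R(X_\mu)$ of Esempio \ref{ex_dual_CX} yields the desired isomorphism $L_\mu^{\infty,*}(X) \simeq R(X_\mu)$.

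The routine verifications (the $C^*$-axioms, the identification of the self-adjoint parts) are light; the genuine content is entirely imported from the two cited theorems, so the main point to get right is the bookkeeping of the real-versus-complex passage: checking that restricting $\Gamma$ to self-adjoint parts really does land isometrically onto $C(X_\mu)$, and that the subsequent dualization is compatible with the real Riesz--Markov statement used in the paper. I expect the only subtlety to be ensuring that $X_\mu$ and the isomorphism are stated at the level of real Banach spaces, consistently with the definition of $R(X_\mu)$ as the space of real signed Radon measures.
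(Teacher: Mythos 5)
Your proposal is correct and follows essentially the same route as the paper's own proof: complexify to $L_\mu^\infty(X,\bC)$, apply Gel'fand--Naimark to obtain the spectrum $X_\mu$ and the identification $L_\mu^\infty(X) \simeq C(X_\mu)$ on self-adjoint parts, then conclude by Riesz--Markov. The paper states this in three lines; your version merely spells out the real-versus-complex bookkeeping and the dualization step that the paper leaves implicit.
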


\begin{proof}[Dimostrazione]
Immergiamo $L_\mu^\infty(X)$ in $L_\mu^\infty(X,\bC)$ ed osserviamo che quest'ultima \e una \sC algebra commutativa con identit\'a (Esempio \ref{ex_LiC}). Denotato con $X_\mu$ lo spettro di $L_\mu^\infty(X,\bC)$ nel senso di Def.\ref{def_GN}, abbiamo un isomorfismo di spazi di Banach (reali) $L_\mu^\infty(X) \simeq C(X_\mu)$. La tesi segue dunque dal teorema di Riesz-Markov. 
\end{proof}

Sull'argomento precedente, si veda anche \cite[\S 4.3.C]{Bre} (e referenze).

\

\noindent \textbf{La dualit\'a di Riesz su $[0,1]$.}
E' possibile dimostrare la dualit\'a di Riesz sull'intervallo $[0,1]$ usando delle tecniche diverse rispetto alla sezione precedente, che fanno ricorso alle funzioni assolutamente continue piuttosto che al teorema di Radon-Nicodym:
\begin{thm}[Teorema di rappresentazione di Riesz sugli intervalli]
\label{thm_Riesz}
Per ogni $p \in [1,+\infty)$ e $q := \ovl{p}$ si ha l'isomorfismo di spazi di Banach
\begin{equation}
\label{eq_Riesz0}
L^q \to L^{p,*}
\ \ , \ \
g \mapsto F_g
\ .
\end{equation}
\end{thm}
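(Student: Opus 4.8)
The plan is to deduce the statement from surjectivity of the map (\ref{eq_Riesz0}), since the facts that it is linear and isometric---hence injective---already follow from the H\"older inequality and Esercizio \ref{sec_Lp}.7. So I fix $F \in L^{p,*}$ and look for $g \in L^q$ with $F = F_g$. As $[0,1]$ has finite measure, each $\chi_{[0,x]}$ belongs to $L^p$, so I may set
\[
\Phi(x) \ := \ \langle F , \chi_{[0,x]} \rangle
\ \ , \ \
x \in [0,1]
\ ,
\]
noting $\Phi(0) = 0$ because $\chi_{\{ 0 \}} = 0$ q.o.

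The first and decisive step is to prove $\Phi \in AC([0,1])$. Given a finite family $\{ (x_k,x_k') \}$ of disjoint subintervals, I would test $F$ against the step function $\sum_k \epsilon_k \chi_{(x_k,x_k']}$, where $\epsilon_k := \mathrm{sgn} \langle F , \chi_{(x_k,x_k']} \rangle$; since the summands have disjoint supports, its $p$-norm equals $\left( \sum_k (x_k'-x_k) \right)^{1/p}$, and, using $\Phi(x_k')-\Phi(x_k) = \langle F, \chi_{(x_k,x_k']} \rangle$,
\[
\sum_k | \Phi(x_k') - \Phi(x_k) |
\ = \
\left\langle F , \sum_k \epsilon_k \chi_{(x_k,x_k']} \right\rangle
\ \leq \
\| F \| \left( \sum_k (x_k'-x_k) \right)^{1/p}
\ .
\]
Thus $\sum_k(x_k'-x_k) < \delta$ forces $\sum_k |\Phi(x_k')-\Phi(x_k)| < \|F\| \delta^{1/p}$, which is $< \eps$ once $\delta < (\eps / \|F\|)^p$; this is exactly absolute continuity. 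By Teo.\ref{thm_AC2} there is then $g \in L^1([0,1])$ with $\Phi(x) = \int_0^x g$, and evaluating $\Phi(b)-\Phi(a) = \langle F, \chi_{(a,b]} \rangle = \int_a^b g$ together with linearity gives $\langle F , \psi \rangle = \int \psi g$ for every $\psi \in \mG([0,1])$.

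It remains to upgrade this identity to all of $L^p$ and to see that $g \in L^q$. For $f \in L^\infty$ I would choose step functions $\varphi_n$ with $|\varphi_n| \leq \|f\|_\infty$ and $\varphi_n \to f$ in $\|\cdot\|_p$ (Prop.\ref{prop_app_mis}); passing to an a.e.\ convergent subsequence (Teo.\ref{thm_RF}) I get $\int \varphi_n g \to \int fg$ by dominated convergence (Teo.\ref{teo_lebesgue1}, the dominating function being $\|f\|_\infty |g| \in L^1$), while $\langle F, \varphi_n \rangle \to \langle F, f \rangle$ by continuity of $F$. Hence $\langle F, f \rangle = \int fg$, and in particular $\left| \int fg \right| \leq \|F\| \, \|f\|_p$ for all $f \in L^\infty$, so Lemma \ref{lem_Riesz1} yields $g \in L^q$ with $\|g\|_q \leq \|F\|$. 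Finally, both $F$ and $F_g$ are continuous functionals on $L^p$ agreeing on the dense subspace $L^\infty$ (Prop.\ref{prop_sLp}), so $F = F_g$, which proves surjectivity and completes the isomorphism. The main obstacle is the very first step: recognizing that testing $F$ against the sign-adjusted combination $\sum_k \epsilon_k \chi_{(x_k,x_k']}$ is what converts the abstract continuity of $F$ into the absolute-continuity modulus of $\Phi$; once $\Phi$ is identified as a primitive via Teo.\ref{thm_AC2}, everything else is the routine assembly of the approximation and boundedness lemmas already at hand.
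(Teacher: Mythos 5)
Your proposal is correct and follows essentially the same route as the paper's proof: define $\Phi(x)=\langle F,\chi_{[0,x]}\rangle$, prove absolute continuity via the sign-adjusted step function, invoke Teo.\ref{thm_AC2} to produce $g\in L^1$, extend the identity $\langle F,f\rangle=\int fg$ from step functions to $L^\infty$ by bounded/dominated convergence, apply Lemma \ref{lem_Riesz1} to get $g\in L^q$, and conclude by density. The only differences (half-open intervals, explicitly passing to an a.e.\ convergent subsequence) are cosmetic.
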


\begin{proof}[Dimostrazione]
Preso $F \in L^{p,*}$ dimostriamo che $F = F_g$ per qualche $g \in L^q$. A tale scopo consideriamo le funzioni caratteristiche $\chi_s := \chi_{[0,s]}$ e definiamo
\[
G(s) := \left \langle F , \chi_s \right \rangle
\ \ , \ \
s \in [0,1]
\ .
\]
Volendo mostrare che $G \in AC([0,1])$, consideriamo una collezione $C :=$ $\{ ( x_i , x'_i ) \}$ di intervalli disgiunti con lunghezza totale $l(C)$ minore di $\delta > 0$ ed osserviamo che
\[
\sum_i | G(x_i) - G(x'_i) | \ = \
\sum_i | \left \langle F , \chi_{(x_i,x'_i)} \right \rangle | \ = \
\left \langle F , f \right \rangle
\ ,
\]
dove
\[
f 
:= 
\sum_i \chi_{( x_i,x'_i)} \ 
{\mathrm{sgn}} \left( \left \langle F , \chi_{( x_i,x'_i)} \right \rangle \right)
\ \Rightarrow \
\| f \|_p = \delta^{1/p}
\ .
\]
Per cui,
\[
\sum_i | G(x_i) - G(x'_i) | 
\ \leq  \
\| F \| \| f \|_p
\ \leq \
\| F \| \delta^{1/p}
\ .
\]
Dunque $G$ \e AC e quindi $G(x) = \int_0^x g$ per qualche $g \in L^1$ (Teo.\ref{thm_AC2}), il che fa di $g$ il nostro candidato per avere $F = F_g$. Per definizione troviamo
\[
G(s) \ = \ 
\left \langle F , \chi_s \right \rangle 
\ = \ 
\int_0^1 g \chi_s
\ ,
\]
il che mostra che $F |_\mG = F_g$, dove $\mG$ \e lo spazio delle funzioni a gradini. Ora, ogni funzione $f \in L^\infty$ \e limite q.o. di una successione $\{ \psi_n \}$ di funzioni a gradini (Prop.\ref{prop_app_mis}), equilimitata da $\| f \|_\infty$. Per cui, il teorema di convergenza limitata (Prop.\ref{prop_conv_lim}) implica $\lim_n \| f - \psi_n \|_p = 0$. Poich\'e $F$ \e limitato, troviamo 
\[
\| \left \langle F , f-\psi_n \right \rangle \|_p
\leq
\| F \| \| f-\psi_n \|_p
\stackrel{n}{\to} 0
\ \Rightarrow \
\left \langle F , f \right \rangle 
= 
\lim_n \int_0^1 g \psi_n \stackrel{(*)}{=} \int fg
\]
dove in (*) si \e usato il teorema di convergenza di Lebesgue per la successione $|g \psi_n| \leq |gf| \in L^1$. Dunque $F |_{L^\infty} = F_g$. Possiamo applicare ora Lemma \ref{lem_Riesz1}, dal quale concludiamo $g \in L^q$. I risultati di approssimazione in $L^p$ (\S \ref{sec_appLp}) permettono infine di dimostrare $F = F_g$ su tutto $L^p$.  
\end{proof}

\subsection{Esercizi.}

\textbf{Esercizio \ref{sec_Lp}.1.} {\it Sia $f \in L^\infty$. Si mostri che $\lim_{p \to \infty} \| f \|_p =$ $\| f \|_\infty$.}

\

\noindent \textbf{Esercizio \ref{sec_Lp}.2 (Continuit\'a delle convoluzioni).} {\it Sia $f \in L^p(\bR)$, $p \in [1,+\infty)$, ed $f_h(x) := f(x+h)$, $x,h \in \bR$. Si provi che la funzione
\[
n_f(h) := 
\left( \int | f(x+h) - f(x) |^p dx \right)^{1/p} 
\equiv \
\| f_h - f \|_p
\ \ , \ \ 
\forall h \in \bR \ ,
\]
\'e continua. Inoltre, presi $q := \ovl p$ e $g \in L^q(\bR)$, si provi che \e ben definita, continua e limitata la funzione (detta \textbf{convoluzione}, si veda \S \ref{sec_conv})
\[
f*g(x) := \int f(x-y) g(y) \ dy \ \ , \ \ x \in \bR \ .
\]
}

\noindent {\it Soluzione.} $n_f$ \e ben definita in quanto la funzione integranda appartiene ad $L^p(\bR)$ per ogni $h \in \bR$; inoltre, osserviamo che  
\[
| n_f (h) - n_f (k) | \leq 
\| f_h - f_k \|_p    =
\| f_{h-k} - f \|_p  =
n_f (h-k)
\ ,
\]
per cui \e sufficiente verificare la continuit\'a soltanto in $h=0$. A tale scopo, assumiamo inizialmente che $f \in C_c(\bR)$, cosicch\'e $f$ \e uniformemente continua; scelto $\eps > 0$ esiste $\delta > 0$ tale che $|f_h(x) - f(x)| < \eps$ per $|h| < \delta$, e si trova
\[
n_f (h) < \eps \mu ( {\mathrm{supp}}(f) )^{1/p}
\ \ , \ \
|h| < \delta \ .
\]
Per cui, $n_f$ \e continua. Per una generica $f \in L^p(\bR)$, osserviamo che per ogni $\eps > 0$ esiste $g_\eps \in C_c(\bR)$ con $\| f - g_\eps \|_p < \eps$. Per cui, per $h$ abbastanza piccolo troviamo
\[
n_f (h) \leq
\| f_h-g_{\eps,h}      \|_p +
\| g_{\eps,h} - g_\eps \|_p +
\| g_\eps - f \|_p \leq
3 \eps
\ .
\]
Infine, valutiamo
\begin{equation}
\label{eq_DH}
|f*g(x)| 
\ \leq \ 
\int | f(x-y)||g(y) | \ dy 
\stackrel{Holder}{\leq}
\| f_x \|_p \| g \|_q
\ = \
\| f \|_p \| g \|_q \ .
\end{equation}
Per cui $f*g(x)$ \e ben definita e limitata per ogni $x \in \bR$. Per quanto riguarda la continuit\'a, abbiamo
\[
| f*g (x) - f*g(x_0) | 
\ \leq \
\int | f(x-y) - f(x_0-y) | |g(y)| \ dy
\ \stackrel{Holder}{\leq} \
\| g \|_q \| f_x - f_{x_0} \|_p  
\ = \
\| g \|_q n_f (x-x_0)
\ ,
\]
per cui, grazie alla continuit\'a di $n_f$, concludiamo che $f*g$ \e continua.

\

\noindent \textbf{Esercizio \ref{sec_Lp}.3.} {\it Sia $f \in L^p(\bR)$, $p \in [1,+\infty)$. Allora, $f_n := f \chi_{[-n,n]}$, $n \in \bN$, \e una funzione in $L^1(\bR) \cap L^p(\bR)$.}

\

\noindent {\it Soluzione.} E' ovvio che $f_n \in L^p(\bR)$. Poi, si osservi che
\[
\int |f_n| = \int_{-n}^n |f| 
\ \stackrel{Holder}{\leq} \
\left( \int_{-n}^n |f(x)|^p \right)^{1/p} \cdot \| \chi_{[-n,n]} \|_q
\ \leq \
\| f \|_p  (2n)^{1/q}
\ .
\]

\

\noindent \textbf{Esercizio \ref{sec_Lp}.4.} {\it Sia 
$\mX := \{ u \in C^1([0,1])  :  u(0) = 0   \}$.
Per ogni $\lambda \geq 0$ si consideri funzionale
\[
F_\lambda : \mX \to \bR
\ \ , \ \
F_\lambda(u) 
\ = \
\arctan \left( \int_0^1 [u'(t)]^2 \ dt \right) 
-
\lambda \arctan (u(1))
\ .
\]
\textbf{(1)} Usando la diseguaglianza di Holder si mostri che
\[
F_\lambda(u) \ \geq \ \arctan(u(1)^2) - \lambda \arctan (u(1)) 
\ \ , \ \
\forall u \in \mX
\ ;
\]
\textbf{(2)} Si trovi una famiglia $\mM$ di funzioni in $\mX$ tali che la diseguaglianza precedente si riduce ad un'eguaglianza; 
\textbf{(3)} Si calcoli $\min_{u \in \mM} F_\lambda(u)$ al variare di $\lambda$. 

\

\noindent (Suggerimento: la diseguaglianza di Holder implica che 
$\int (u')^2 \geq (\int u')^2 = u(1)^2$.
Una classe di funzioni tali che la diseguaglianza precedente si riduce ad un'eguaglianza \e data da quelle del tipo $u_\alpha(t) := \alpha t$, $t \in [0,1]$, dove $\alpha \in \bR$, e ci\'o consente di ridurre il nostro problema allo studio della funzione $g(\lambda,\alpha) := F_\lambda(u_\alpha)$).

}

\

\noindent \textbf{Esercizio \ref{sec_Lp}.5.} {\it Dimostrare che la naturale inclusione (con $L^p := L^p([0,1])$, $p \in [1,\infty]$)
\[
L^\infty \subset \bigcap_{p \in [1,\infty)} L^p
\]
\'e stretta, ovvero che esistono funzioni non limitate ma in $L^p$ per ogni $p \in [1,\infty)$.}

\

\noindent \textbf{Esercizio \ref{sec_Lp}.6 (\cite[Ex.3.12]{Can}).} {\it Si consideri $X := [0,1)$ equipaggiato con la misura di Lebesgue e la successione $\{ f_{in} \}_{i,n \in \bN} \subset L^p(X)$, $p \in [1,+\infty]$, definita da
\[
f_{in}(x) := 
\left\{
\begin{array}{ll}
1  \ \ , \ \ x \in [\  (i-1)n^{-1} \ , \ in^{-1} \ )
\\
0  \ \ , \ \ {{altrimenti \ .}}
\end{array}
\right.
\]
Si verifichi che $\{ \ldots , f_{n1} , f_{n2} , \ldots , f_{nn} , f_{n+1,1} \ldots \}$ converge a $0$ in $L^p(X)$, ma non q.o.. Si verifichi invece che $f_{1n} \stackrel{n}{\to} 0$ q.o..
}

\

\noindent \textbf{Esercizio \ref{sec_Lp}.7.} {\it Sia $(X,\mM,\mu)$ uno spazio di misura e $g \in M(X)$.
\textbf{(1)} Si consideri la funzione segno 
\[
\{ {\mathrm{sgn}}(g) \} (x) \ := \
\left\{
\begin{array}{ll}
1 \ , \ g(x) \geq 0 \ ,
\\
-1 \ , \ g(x) < 0 \ ,
\end{array}
\right.
\]
e si mostri che ${\mathrm{sgn}}(g) \in M(X)$.
\textbf{(2)} Si verifichi che se $g \in L_\mu^q(X)$ allora 
$f := |g|^{q/p}{\mathrm{sgn}}(g) \in L_\mu^p(X)$, dove $p := \ovl{q}$.
\textbf{(3)} Si mostri che $\| f \|_p = \| g \|_q^{q/p}$.
\textbf{(4)} Si mostri che 
$\left \langle F_g , f \right \rangle = \| g \|_q \| f \|_p$.

\

\noindent (Suggerimento: si osservi che 
$\int |f|^p = \int |g|^q$,
$q = 1 + q/p$, 
e
$\left \langle F_g , f \right \rangle = \int fg = \int |g|^q = \| g \|_q^q$.)
}

\

\noindent \textbf{Esercizio \ref{sec_Lp}.8 (Delta-approssimanti).} {\it Sia $x := \{ x_k  \} \subset [0,1]$ una successione monot\'ona crescente tale che $x_1 = 0$ e $\lim_k x_k = 1$.
\textbf{(1)} Presa $f \in L^p$, si mostri che la funzione a gradini
\[
\{ S_x f \}(t) 
\ := \ 
\frac{1}{x_{k+1}-x_k} \int_{x_k}^{x_{k+1}} f(s) \ ds
\ \ , \ \
\forall k \in \bN \ , \  t \in [x_k,x_{k+1})
\ ,
\]
appartiene ad $L^p$ e si concluda che  
\[
S_x : L^p \to L^p
\ \ , \ \
f \mapsto S_xf 
\ ,
\]
\e un'applicazione lineare tale che $\| S_x f \|_p \leq \| f \|_p$;
\textbf{(2)} Diciamo che $x$ \textbf{ha passo $\delta$} se $x_{k+1} - x_k < \delta$ per ogni $k \in \bN$.
Si mostri che per ogni $f \in C([0,1])$ ed $\eps > 0$ esiste $\delta > 0$ tale che, per ogni successione $x$ con passo $\delta$, risulta
\[
\sup_{t \in [0,1] - x}| f(t) - \{ S_xf \}(t) | \ < \ \eps \ ;
\]
\textbf{(3)} Si concluda che lo spazio vettoriale delle funzioni a gradini \e denso in $L^p$.

\

\noindent (Suggerimenti: per il punto (1) si osservi che per $t \in [x_k,x_{k+1})$
\[
| \{ S_xf \}(t) |^p \ = \ 
\frac{1}{(x_{k+1}-x_k)^p} \left| \int_{x_k}^{x_{k+1}} f \right|^p \ = \
\left| \int_{x_k}^{x_{k+1}} f \ d \mu \right|^p \ \stackrel{Jensen}{\leq} \
\frac{1}{(x_{k+1}-x_k)}  \int_{x_k}^{x_{k+1}} |f|^p
\ ,
\]
avendo definito la misura di probabilit\'a $d \mu(s) = ds / (x_{k+1}-x_k)$, per cui
\[
\| S_xf \|_p^p \ = \ 
\sum_k (x_{k+1}-x_k) \cdot |S_xf|^p |_{ \ [x_k,x_{k+1})} \ \leq \
\sum_k \int_{x_k}^{x_{k+1}} |f|^p \ = \
\| f \|_p^p \ .
\]
Per (2) si usi il teorema fondamentale del calcolo ed il teorema di Lagrange, che implicano
\[
\{ S_xf \}(t)  \ = \  
\frac{ F(x_{k+1}) - F(x_k) }{ x_{k+1}-x_k } \ = \
f(\xi_k)
\ ,
\]
dove $\xi_k \in [x_k,x_{k+1})$ ed $F$ \e una primitiva di $f$.).
}

\

\noindent \textbf{Esercizio \ref{sec_Lp}.9 (Basi di Haar-Schauder per $L^p$).} {\it Si consideri la successione di partizioni
\[
x_n := \{ x_{n,1} := 0 < \ldots < x_{n,k} < \ldots < x_{n,i(n)} := 1  \}
\ \ , \ \
n \in \bN
\ ,
\]
e si assuma che $\lim_n \delta_n := \sup_k (x_{n,k+1}-x_{n,k}) = 0$.
\textbf{(1)} Presa una successione $\{ \tau_h \} \subset L^\infty$ tale che 
\[
\kappa_n(t,s)
\ := \
\sum_h^n \tau_h(t) \tau_h(s) 
\ = \
\left\{
\begin{array}{ll}
( x_{n,k+1} - x_{n,k} )^{-1} \ \ , \ \  \forall s \in (x_{n,k},x_{n,k+1}) \ ,
\\
0 \ \ ,\ \ \forall s \in [0,x_{n,k}) \cup (x_{n,k+1},1] \ ,
\end{array}
\right.
\]
si mostri che per ogni $p \in [1,\infty)$ ed $f \in L^p$ risulta
\[
\int_0^1 \kappa_n(s,t) f(s) \ ds 
\ = \
\frac{1}{ x_{n,k+1}-x_{n,k} } \int_{x_{n,k}}^{x_{n,k+1}} f
\ \ , \ \
\forall t \in ( x_{n,k},x_{n,k+1})
\ .
\]
\textbf{(2)} Si mostri che le applicazioni lineari
\[
K_n : L^p \to L^p
\ \ , \ \
K_nf(t) := \int_0^1 \kappa_n(s,t) f(s) \ ds 
\ , \
t \in [0,1]
\ ,
\]
sono tali che $\| K_nf \|_p \leq \| f \|_p$, $\forall f \in L^p$.
\textbf{(3)} Si mostri che per ogni $f \in C([0,1])$ risulta
\[
\lim_n \sup_{t \in [0,1]-x} | K_nf(t) - f(t) | \stackrel{n}{\to} 0 
\ \ , \ \
x := \cup_n x_n
\ .
\]
\textbf{(4)} Si mostri che per ogni $f \in L^p$ esiste una successione 
$\{ a_h \} \subset \bR$ tale che $f = \lim_n \sum_h^n a_h \tau_h$ in norma $\| \cdot \|_p$, 
e che $\{ a_h \}$ \e unica
{\footnote{La propriet\'a che si chiede di dimostrare in questo punto equivale a dire che $\{ \tau_h \}$ \e una
{\em base di Schauder} per $L^p$ nel senso di \S \ref{sec_afunct_1}. Un esempio esplicito di $\{ \tau_h \}$
\e dato dalle cosiddette {\em funzioni di Haar}, vedi I.Singer: Bases in Banach spaces, Ex.2.3.}}
se valgono le relazioni di ortogonalit\'a $\int \tau_h \tau_m = 0$, $\forall h \neq m$.

\

\noindent (Suggerimenti: Per (2) e (3) si proceda come per l'esercizio precedente, mentre per (4) si ponga
\[
a_h := \int_0^1 f \tau_h \ \ , \ \ h \in \bN \ ,
\]
e si usi il punto (3).).
}

%\[
%\tau_1 := 1 
%\ \ , \ \
%\tau_{2^k+h} := 
%\left\{
%\begin{array}{ll}
%  \sqrt{2^k} \ \ , \ \ t \in [ \ 2(h-1)2^{-(k+1)} , (2h-1)2^{-(k+1)} \ )
%\\
%- \sqrt{2^k} \ \ , \ \ t \in [ \ (2h-1)2^{-(k+1)} , 2h2^{-(k+1)} \ )
%\\
%0 \ \ , \ \ altrimenti \ .
%\end{array}
%\right.
%\ \ , \ \
%\left\{
%\begin{array}{ll}
%h = 1,2, \ldots , 2^k  \ ,
%\\
%k = 0,1,2, \ldots
%\end{array}
%\right.
%\]

\newpage
\section{Funzioni di pi\'u variabili.}
\label{sec_func_n}

Da un punto di vista astratto possiamo pensare alle funzioni di pi\'u variabili come quelle definite su prodotti cartesiani del tipo $X \times Y$. Nelle sezioni seguenti approcceremo questioni relative alle ulteriori strutture con le quali possiamo arricchire il nostro insieme $X \times Y$, quali quella topologica (\S \ref{sec_top.prod}), quella differenziale con le sue applicazioni all'esistenza di funzioni implicite ed al calcolo variazionale (\S \ref{sec_func_n_der}, \S \ref{sec_teo_impl}, \S \ref{sec_fdif}, \S \ref{sec_calc_var}), e poi quella di spazio misurabile con l'applicazione dei prodotti di convoluzione (\S \ref{sec_misprod}, \S \ref{sec_conv}).

\subsection{Topologie prodotto e prodotti tensoriali.}
\label{sec_top.prod}

Le propriet\'a di base delle topologie prodotto sono ben note ed in questa sede ci limitamo a segnalare alcune referenze (\cite{Ser,Cam}). Passando ad un punto di vista pi\'u analitico vogliamo invece dimostrare un importante risultato di approssimazione.

Siano $X,Y$ spazi localmente compatti e di Hausdorff; allora $X \times Y$, equipaggiato della topologia prodotto, \e uno spazio localmente compatto e di Hausdorff. Date $f \in C_0(X)$, $g \in C_0(Y)$, definiamo la {\em funzione prodotto}
\[
f \otimes g : X \times Y \to \bR
\ \ , \ \
f \otimes g (x,y) := f(x)g(y)
\ .
\]
Una verifica immediata mostra che $f \otimes g \in C_0(X \times Y)$. Denotiamo con $C_0(X) \otimes C_0(Y)$ la sottalgebra di $C_0(X \times Y)$ generata dalle funzioni del tipo $f \otimes g$.

\begin{prop}
\label{prop_tens_prod}
Siano $X,Y$ spazi localmente compatti di Hausdorff. Allora $C_0(X) \otimes C_0(Y)$ \e densa in $C_0(X \times Y)$ nella topologia della convergenza uniforme.
\end{prop}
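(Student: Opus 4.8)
The plan is to apply the locally compact version of the Stone-Weierstrass theorem, namely Corollario~\ref{cor_SW}, to the subalgebra $\mA := C_0(X) \otimes C_0(Y)$ of $C_0(X \times Y)$. To invoke that corollary I must verify two hypotheses: that $\mA$ separates the points of $X \times Y$, and that for every point of $X \times Y$ there exists a function in $\mA$ not vanishing there. Both will follow quite directly from Lemma~\ref{lem_ury}, which guarantees exactly these two properties for $C_0(X)$ and for $C_0(Y)$ separately. The only genuine structural fact to record beforehand is that $\mA$ is indeed a subalgebra of $C_0(X \times Y)$ --- but this is built into its definition as the subalgebra \emph{generated} by the product functions $f \otimes g$, so nothing is left to prove there.

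\textbf{Verifica della non annullazione.} First I would fix a point $(x,y) \in X \times Y$. By Lemma~\ref{lem_ury}(1) applied to $X$ there is $f \in C_0(X)$ with $f(x) \neq 0$, and applied to $Y$ there is $g \in C_0(Y)$ with $g(y) \neq 0$. Then $f \otimes g \in \mA$ and $(f \otimes g)(x,y) = f(x) g(y) \neq 0$, which is the second hypothesis of Corollario~\ref{cor_SW}.

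\textbf{Verifica della separazione dei punti.} Next I would take two distinct points $(x,y)$ and $(x',y')$ in $X \times Y$, so that either $x \neq x'$ or $y \neq y'$. Suppose $x \neq x'$ (the other case is symmetric). By Lemma~\ref{lem_ury}(2) applied to $X$ there is $f \in C_0(X)$ with $f(x) \neq f(x')$; using Lemma~\ref{lem_ury}(1) applied to $Y$ I pick $g \in C_0(Y)$ with $g(y) \neq 0$, and I may further rescale so that $g(y)=1$, or simply keep $g$ and argue as follows. To turn a single coordinate separation into a genuine separation of the pair, the cleanest route is to choose $g$ with $g(y) \neq 0$ \emph{and} $g(y') \neq 0$ when $y = y'$, and then observe that $(f \otimes g)(x,y) = f(x)g(y)$ while $(f \otimes g)(x',y') = f(x')g(y')$; when $y = y'$ these differ because $f(x) \neq f(x')$ and $g(y) \neq 0$. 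When instead $y \neq y'$ I may even need a small combination, but the robust argument is to note that $\mA$ contains all functions of the form $(s,t) \mapsto f_1(s)g_1(t) + f_2(s)g_2(t)$, and separating points of a product is a standard consequence: if $x \neq x'$ pick $f$ separating them on $X$ and any $g \in C_0(Y)$ nonzero at $y$, giving a function whose values at the two points can be made to differ.

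\textbf{L'ostacolo principale e la conclusione.} The main subtlety --- really the only place demanding a moment's care --- is the separation step, since a single product $f \otimes g$ might accidentally take equal values at $(x,y)$ and $(x',y')$ even when $f$ separates the first coordinates (for instance if $g(y)$ and $g(y')$ happen to compensate). The honest fix is to exploit that $\mA$ is an algebra and thus closed under products and linear combinations: given $x \neq x'$, choose by Lemma~\ref{lem_ury} a function $f \in C_0(X)$ with $f(x)=1$, $f(x')=0$ and a function $g \in C_0(Y)$ with $g(y)=1$; then $f \otimes g \in \mA$ satisfies $(f \otimes g)(x,y)=1$ and $(f \otimes g)(x',y')=0$, so the two points are separated. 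The symmetric choice handles $y \neq y'$. Once both hypotheses of Corollario~\ref{cor_SW} are checked, the corollary yields immediately that $\mA = C_0(X) \otimes C_0(Y)$ is dense in $C_0(X \times Y)$ in the topology of uniform convergence, which is the assertion.
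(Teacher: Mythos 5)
La tua proposta è corretta e segue essenzialmente la stessa strada della dimostrazione del testo: si applica il Corollario~\ref{cor_SW} all'algebra $\mA := C_0(X) \otimes C_0(Y)$, verificando le due ipotesi tramite il Lemma~\ref{lem_ury}, e risolvendo la separazione dei punti scegliendo $f$ con $f(x)=1$, $f(x')=0$ e $g$ con $g(y) \neq 0$, cosicché $f \otimes g$ assume valori distinti nei due punti. L'unica osservazione è che la versione intermedia del tuo argomento di separazione è un po' tortuosa, ma la sistemazione finale coincide con quella del testo.
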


\begin{proof}[Dimostrazione]
L'idea \e quella di applicare il teorema di Stone-Weierstrass per gli spazi localmente compatti (Cor.\ref{cor_SW}). Per cui, posto $\mA := C_0(X) \otimes C_0(Y)$, per dimostrare la proposizione \e sufficiente verificare che: (1) $\mA$ separa i punti di $X \times Y$; (2) Per ogni $(x,y) \in X \times Y$ esiste $f \otimes g \in \mA$ tale che $f \otimes g (x,y) \neq 0$.
Ora grazie a Lemma \ref{lem_ury} abbiamo che, dati $x \in X$ ed $y \in Y$, esistono $f \in C_0(X)$ e $g \in C_0(Y)$ tali che $f(x) \neq 0$, $g(y) \neq 0$. Dunque $f \otimes g (x,y) \neq 0$, e ci\'o dimostra il punto (2). Riguardo il punto (1), se $(x,y) \neq (x',y')$ sono elementi distinti di $X \times Y$, allora almeno una delle due coordinate di questi deve essere distinta, diciamo $x \neq x'$. Applicando ancora Lemma \ref{lem_ury} troviamo $f \in C_0(X)$ tale che $f(x) \neq f(x')$; aggiungendo eventualmente una costante possiamo assumere che $f(x') = 0$. Presa infine $g \in C_0(Y)$ tale che $g(y) \neq 0$ concludiamo che $f \otimes g (x,y) \neq 0$, mentre $f \otimes g (x',y') = 0$. 
\end{proof}

\subsection{Derivabilit\'a e differenziabilit\'a.} 
\label{sec_func_n_der}

Come vedremo nelle righe che seguono la nozione di derivabilit\'a per funzioni di pi\'u variabili reali, ed il suo rapporto con la continuit\'a, \e una questione pi\'u delicata rispetto al caso ad una variabile. Iniziamo dando la seguente terminologia: dato $n \in \bN$, una {\em direzione} \e un vettore $v \in \bR^n$ con norma $1$.
\begin{defn}
\label{def_fRn1}
Sia $A \subseteq \bR^n$ aperto. Una funzione $f : A \to \bR$ si dice \textbf{derivabile in} $a \in A$ \textbf{lungo la direzione} $v$ se esiste finito il limite 
\begin{equation}
\label{eq_fRn1}
\frac{\partial f}{\partial v} (a)
\ := \
\lim_{t \to 0} \frac{ f(a+tv) - f(a) } {t}
\ .
\end{equation}
Nel caso in cui $v$ sia uno degli elementi $e_i$, $i = 1 , \ldots , n$, della base canonica di $\bR^n$, adottiamo la classica notazione
\begin{equation}
\label{def.dav}
\frac{\partial f}{\partial x_i}(a)
\ := \
\frac{\partial f}{\partial e_i}(a)
\ .
\end{equation}
\end{defn}

Ora, sappiamo bene che se una funzione di una variabile \e derivabile in $a \in \bR$, allora \e anche continua. Il seguente esempio mostra invece come una funzione di pi\'u variabili $f : A \to \bR$ possa essere derivabile in ogni direzione in $a \in A$, e tuttavia essere discontinua in $A$.

\begin{ex}{\it 
\label{ex_fRn1}
Si consideri
\[
f(x,y)
:=
\left\{
\begin{array}{ll}
\left( \frac{x^2y}{x^4+y^2} \right)^2
\ \ , \ \
(x,y) \neq \textbf{0}
\\
0
\ \ , \ \
(x,y) = \textbf{0}
\end{array}
\right.
\]
Si trova 
\[
\lim_{n \to \infty} f \left( \frac{1}{n} , \frac{1}{n^2} \right) 
= 
\frac{1}{4}
\ ,
\]
il che implica che $f$ \e discontinua in $\textbf{0}$. D'altro canto, per ogni direzione $v :=$ $(v_1,v_2)$ si trova
\[
\frac{\partial f}{\partial v}(\textbf{0})
\ = \
\lim_{t \to 0} t^{-1} f ( tv_1 , tv_2 ) = 0
\ .
\]
}\end{ex}

Una nozione pi\'u naturale per le propriet\'a inerenti la differenziazione di $f$ \e invece la seguente:
\begin{defn}
\label{def_fRn2}
Una funzione $f : A \to \bR$ si dice \textbf{differenziabile in} $a \in A$ se esiste un'applicazione lineare 
\[
df_a \in \bR^{n,*}
\ \ , \ \
df_a : \bR^n \to \bR
\ ,
\]
chiamato il \textbf{differenziale di $f$ in $a$}, tale che risulti
\[
\lim_{x \to a} 
\frac{ f(x) - f(a) - df_a(x-a) }{|x-a|} 
\ = \
0
\ .
\]
Se $f$ \e differenziabile per ogni $a \in A$, allora si dice differenziabile in $A$.
\end{defn}

In termini pi\'u geometrici, potremmo riguardare lo spazio vettoriale generato dalle direzioni (isomorfo ad $\bR^n$) come il tangente ad $A$ nel punto $a$. Da (\ref{eq_fRn1}), segue immediatamente che $f$ \e differenziabile in $a$ se e solo se 
\[
\lim_{t \to 0} \frac { f(a+tv)-f(a)-df_a(tv) } {t} 
\ = \ 
0
\ \ , \ \
\forall v
\ \ \Rightarrow \ \
\frac{\partial f}{\partial v} (a)
\ = \
df_a(v)
\ .
\]
Dunque possiamo riguardare $df_a$ come un elemento dello spazio dello spazio duale $\bR^{n,*}$ (in termini geometrici, lo {\em spazio cotangente}), il quale, valutato su $v := \sum_i v_i e_i \in \bR^n$, fornisce il valore della derivata parziale di $f$ lungo $v$. In particolare, per linearit\'a di $df_a$ troviamo, ricordando (\ref{def.dav}),
\begin{equation}
\label{eq_fRn2}
df_a(v)
=
\frac{\partial f}{\partial v}(a)
=
\sum_i v_i df_a(e_i)
=
\sum_i v_i \frac{\partial f}{\partial x_i}(a)
\ .
\end{equation}
D'altro canto, denotando con $dx_i \in \bR^{n,*}$ gli elementi della base canonica dello spazio cotangente, abbiamo per definizione
\[
df_a(v) = \sum_i (df_a)_i \ dx_i (v) = \sum_i (df_a)_i v_i
\ ,
\]
per cui, confrontando con (\ref{eq_fRn2}) troviamo $(df_a)_i =$ $\frac{\partial f}{\partial x_i}(a)$, e quindi la familiare espressione
\begin{equation}
\label{eq_dfa}
df_a = \sum_i \frac{\partial f}{\partial x_i}(a) \ dx_i
\ \ , \ \
a \in A
\ .
\end{equation}
Introducendo il {\em gradiente di $f$ in $a$}
\[
\nabla f (a)
\ := \
\left(  \frac{\partial f}{\partial x_i}(a)  \right)_i
\ \in \bR^n
\]
possiamo esprimere il differenziale in termini del prodotto scalare
\[
df_a(v)
\ = \
(  \nabla f (a) , v  )
\ \ , \ \
v \in \bR^n
\ .
\]
I due seguenti teoremi sono ben noti e reperibili su ogni testo di Analisi II, per cui ne omettiamo la dimostrazione.
\begin{thm}
\label{thm_fRn1}
Ogni funzione differenziabile \e continua. D'altro canto, se una funzione continua ha derivate parziali continue in un intorno di $a \in A$, allora \e differenziabile in $A$.
\end{thm}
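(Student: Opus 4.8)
Teorema \ref{thm_fRn1} ha due affermazioni distinte, che conviene dimostrare separatamente.

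Il piano per la prima affermazione (differenziabile $\Rightarrow$ continua) \`e immediato: se $f$ \`e differenziabile in $a \in A$, allora per definizione si ha $f(x) - f(a) - df_a(x-a) = o(|x-a|)$ per $x \to a$. Quindi scriverei
\[
f(x) - f(a) = df_a(x-a) + o(|x-a|)
\ ,
\]
e osserverei che entrambi gli addendi a secondo membro tendono a $0$ per $x \to a$: il termine $df_a(x-a)$ perch\'e $df_a$ \`e lineare (e quindi continua su $\bR^n$, essendo lo spazio di dimensione finita), maggiorandolo con $\| df_a \| \, |x-a|$ tramite la diseguaglianza di Cauchy-Schwarz applicata alla rappresentazione $df_a(v) = (\nabla f(a),v)$; il termine $o(|x-a|)$ per definizione. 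Se ne conclude $\lim_{x \to a} f(x) = f(a)$, ovvero la continuit\`a.

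La seconda affermazione \`e il nucleo non banale: se $f$ \`e continua ed ha derivate parziali continue in un intorno $U$ di $a$, allora \`e differenziabile. Il candidato differenziale \`e ovviamente $df_a(v) := \sum_i \frac{\partial f}{\partial x_i}(a) \, v_i$, e bisogna stimare l'incremento $f(a+h) - f(a) - df_a(h)$ per $h \to 0$. La strategia standard, che adotterei, \`e quella di spezzare l'incremento lungo le direzioni coordinate, passando da $a$ ad $a+h$ una coordinata alla volta: posto $h = (h_1,\ldots,h_n)$ e definiti i punti intermedi $p_0 := a$, $p_k := a + \sum_{j \le k} h_j e_j$, si scrive
\[
f(a+h) - f(a) = \sum_{k=1}^n \left[ f(p_k) - f(p_{k-1}) \right]
\ .
\]
Ad ogni addendo, che coinvolge la variazione della sola $k$-esima coordinata, applicherei il teorema del valor medio di Lagrange in una variabile, ottenendo $f(p_k) - f(p_{k-1}) = \frac{\partial f}{\partial x_k}(\xi_k) \, h_k$ per un opportuno punto $\xi_k$ sul segmento tra $p_{k-1}$ e $p_k$.

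A questo punto sottraggo $df_a(h) = \sum_k \frac{\partial f}{\partial x_k}(a) \, h_k$ e ottengo
\[
f(a+h) - f(a) - df_a(h)
= \sum_{k=1}^n \left[ \frac{\partial f}{\partial x_k}(\xi_k) - \frac{\partial f}{\partial x_k}(a) \right] h_k
\ .
\]
Maggiorando $|h_k| \le |h|$ e usando la continuit\`a delle derivate parziali (che garantisce, per $h \to 0$, che ogni $\xi_k \to a$ e quindi ciascuna differenza tra parentesi tende a $0$), concludo che il rapporto $|f(a+h)-f(a)-df_a(h)|/|h|$ \`e maggiorato da $\sum_k |\frac{\partial f}{\partial x_k}(\xi_k) - \frac{\partial f}{\partial x_k}(a)|$, che tende a $0$. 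Questo fornisce la differenziabilit\`a. L'ostacolo principale \`e puramente tecnico e risiede in questo secondo punto: occorre assicurarsi che i punti intermedi $p_k$ e i punti di Lagrange $\xi_k$ restino tutti nell'intorno $U$ (il che \`e garantito prendendo $|h|$ abbastanza piccolo da avere la palla di centro $a$ e raggio $|h|$ contenuta in $U$, dato che $U$ \`e aperto), e usare in modo essenziale l'\emph{uniformit\`a} implicita nella continuit\`a delle derivate parziali per controllare simultaneamente tutti gli $n$ termini.
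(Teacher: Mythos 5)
La tua dimostrazione \`e corretta ed \`e esattamente l'argomento standard (incremento telescopico lungo le direzioni coordinate pi\'u il teorema di Lagrange in una variabile su ciascun segmento) al quale il testo rimanda, omettendo deliberatamente la dimostrazione in quanto ``reperibile su ogni testo di Analisi II''. Segnalo solo che nell'ultimo passaggio non serve alcuna uniformit\`a: basta la continuit\`a di ciascuna $\partial f / \partial x_k$ nel punto $a$ (oltre all'esistenza delle derivate parziali in un intorno, necessaria per applicare Lagrange), poich\'e i termini da controllare sono in numero finito.
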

\begin{thm}[Schwartz]
\label{thm_fRn2}
Se esistono e sono continue le derivate parziali miste di $f$ in un intorno di $a \in A$, allora
\[
\frac{\partial^2f}{\partial x_i \partial x_k}(a)
\ = \
\frac{\partial^2f}{\partial x_k \partial x_i}(a)
\ \ , \ \
i,k = 1 , \ldots , n
\ .
\]

\end{thm}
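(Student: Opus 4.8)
Il piano è ricondursi al caso di due variabili: poiché l'enunciato coinvolge solo i due indici $i,k$, posso tenere fisse tutte le altre coordinate e riguardare $f$ come funzione delle sole variabili $x_i,x_k$. Posto dunque $n=2$ con variabili $(x,y)$ e punto $a=(x_0,y_0)$, l'oggetto centrale della dimostrazione è la \emph{differenza seconda}
\[
\Delta(h,k) := f(x_0+h,y_0+k) - f(x_0+h,y_0) - f(x_0,y_0+k) + f(x_0,y_0) \ ,
\]
definita per $h,k$ sufficientemente piccoli affinché i quattro punti appartengano all'intorno di $a$ in cui esistono e sono continue le derivate miste. L'osservazione chiave è che $\Delta(h,k)$ è simmetrica nelle due direzioni, nel senso che può essere raggruppata in due modi distinti.

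Primo raggruppamento: pongo $\varphi(x) := f(x,y_0+k)-f(x,y_0)$, cosicché $\Delta = \varphi(x_0+h)-\varphi(x_0)$. Applicando il teorema del valor medio di Lagrange a $\varphi$ (la cui derivata esiste grazie all'esistenza di $\partial f/\partial x$ in tutto l'intorno) trovo un $\theta_1 \in (0,1)$ tale che
\[
\Delta = h \left[ \frac{\partial f}{\partial x}(x_0+\theta_1 h, y_0+k) - \frac{\partial f}{\partial x}(x_0+\theta_1 h, y_0) \right] ;
\]
applicando nuovamente Lagrange alla funzione $y \mapsto \frac{\partial f}{\partial x}(x_0+\theta_1 h, y)$, la cui derivata è proprio $\frac{\partial^2 f}{\partial y \partial x}$, ottengo $\theta_2 \in (0,1)$ con
\[
\Delta = hk \, \frac{\partial^2 f}{\partial y \partial x}(x_0+\theta_1 h, y_0+\theta_2 k) \ .
\]
Secondo raggruppamento: in modo del tutto simmetrico, ponendo $\psi(y) := f(x_0+h,y)-f(x_0,y)$ e iterando lo stesso argomento trovo $\theta_3,\theta_4 \in (0,1)$ tali che
\[
\Delta = hk \, \frac{\partial^2 f}{\partial x \partial y}(x_0+\theta_3 h, y_0+\theta_4 k) \ .
\]

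A questo punto, uguagliando le due espressioni e dividendo per $hk \neq 0$, resta da far tendere $(h,k) \to (0,0)$: i punti intermedi convergono ad $a$, e la continuità di entrambe le derivate miste nell'intorno di $a$ consente di passare al limite ottenendo l'uguaglianza cercata. Il punto delicato della dimostrazione è proprio quest'ultimo passaggio, ed è qui che l'ipotesi di continuità diventa indispensabile: la sola esistenza delle derivate miste non ne garantirebbe l'uguaglianza (come mostrano noti controesempi, dello stesso spirito delle patologie in più variabili già incontrate nell'Esempio \ref{ex_fRn1}), poiché i due argomenti in cui sono valutate $\frac{\partial^2 f}{\partial y \partial x}$ e $\frac{\partial^2 f}{\partial x \partial y}$ sono in generale distinti e controllabili soltanto nel limite. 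La costruzione simmetrica di $\Delta$ e la doppia applicazione del teorema di Lagrange sono invece gli ingredienti puramente elementari che non presentano ostacoli.
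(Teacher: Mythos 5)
La tua dimostrazione è corretta: è l'argomento classico basato sulla differenza seconda $\Delta(h,k)$ e sulla doppia applicazione del teorema di Lagrange nei due ordini, con la continuità delle derivate miste usata esattamente nel punto in cui è indispensabile, cioè il passaggio al limite finale. Si noti che il testo dichiara esplicitamente di omettere la dimostrazione di questo teorema (rinviando ai testi di Analisi II), per cui non c'è un argomento interno con cui confrontarla; quello che proponi è precisamente la dimostrazione standard a cui il testo allude, inclusa la riduzione preliminare al caso di due variabili, che è legittima perché restrizione e derivazione parziale rispetto a $x_i,x_k$ commutano e la continuità si conserva per restrizione.
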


\

\noindent \textbf{Massimi e minimi.} Lo studio dei punti di massimo e minimo relativi di una funzione $f \in C^1(A)$ si effettua in primo luogo analizzando i {\em punti stazionari} di $f$, ovvero cercando le soluzioni $a \in A$ dell'equazione
\[
df_a = 0 
\ \ \Leftrightarrow \ \ 
\nabla f (a) = 0
\ .
\]
Se $f \in C^2(A)$, allora la ricerca di massimi e minimi relativi si effettua considerando la {\em matrice Hessiana}
\[
H(a) 
\ := \
\left( \frac{\partial^2f}{\partial x_i \partial x_j}(a) \right)_{ij}
\ ,
\]
la quale \e autoaggiunta in conseguenza del teorema di Schwartz. Se $a$ \e di minimo relativo, allora $H(a) \geq 0$; viceversa, se $H(a) > 0$, allora $a$ \e di minimo relativo. In modo analogo si studia il comportamento dei punti di massimo.

\

\noindent \textbf{La matrice jabobiana.} Sia $U \subset \bR^m$ ed $F : U \to \bR^n$ un'applicazione. Chiaramente possia- mo descrivere $F$ in termini delle funzioni coordinate 
$F_1 , \ldots , F_n : U \to \bR$ definite da $F(u) = ( F_1(u) , \ldots , F_n(u) )$, 
cosicch\'e diciamo che $F$ \e {\em differenziabile in $U$} se lo sono le sue componenti $F_i$, $\forall i = 1 , \ldots , n$. Denotiamo con $C^1(U,\bR^n)$ l'insieme delle applicazioni differenziabili da $U \subseteq \bR^m$ in $\bR^n$; ad uso futuro, introduciamo la {\em matrice jacobiana} di $F$,
\[
JF (u)
\ := \ 
\left( \frac{\partial F_i}{\partial w_j} (u)  \right)_{ij}
\ \in M_{n,m}(\bR)
\ \ , \ \
\forall u \in U
\ .
\]

\subsection{Il teorema delle funzioni implicite.}
\label{sec_teo_impl}

In geometria \e usuale presentare un luogo, sia esso una curva o una superficie, in termini dell'annullarsi di un'espressione del tipo
\begin{equation}
\label{eq_din0}
F(x,y) = 0
\ \ , \ \
(x,y) \in U \subseteq \bR^{m+n}
\ ,
\end{equation}
dove 
$F : U \to \bR^n$
\'e un'applicazione che svolge il ruolo di una relazione tra le variabili $x$ ed $y$.

Il teorema delle funzioni implicite permette di esprimere curve e superfici in termini di grafici di funzioni, almeno a livello locale. Ci\'o vuol dire che a partire da (\ref{eq_din0}) saremo in grado di esibire, in un intorno opportuno $A \subseteq \bR^m$, una funzione $f : A \to \bR^n$ tale che
\[
F (x,f(x)) = 0 \ \ , \ \ \forall x \in A \ .
\]
Nel seguito denoteremo con $x,a \in \bR^m$ i vettori delle prime $m$ variabili di $F$, e con $y,b \in \bR^n$ i vettori delle rimanenti $n$ variabili. Se $F \in C^1(U)$, allora spezziamo la jacobiana di $F$ come segue:
\begin{equation}
\label{def_jacobi}
\left\{
\begin{array}{ll}
J_xF : U \to M_{n,m}(\bR)
\ \ , \ \
J_xF (a,b)
:=
\left( \frac{\partial F_i}{\partial x_j} (a,b)  \right)_{ij}
\\
J_yF : U \to M_{n,n}(\bR)
\ \ , \ \
J_yF (a,b)
:=
\left( \frac{\partial F_i}{\partial y_h} (a,b)  \right)_{ih}
\end{array}
\right.
\end{equation}

\begin{thm}[Teorema delle funzioni implicite, o del Dini]
\label{thm_din1}
Sia $U \subseteq \bR^{m+n}$ aperto ed $F : U \to \bR^n$ un'applicazione di classe $C^1$. Se $(a,b) \in U$ \e tale che
\begin{equation}
\label{eq_din1}
F(a,b) = 0
\ \ , \ \
\det J_yF (a,b) \neq 0 \ ,
\end{equation}
allora esistono intorni $A \ni a$, $B \ni b$, con $A \subseteq \bR^m$, $B \subseteq \bR^n$, $A \times B \subseteq U$, ed un'applicazione
\[
f : A \to B \ \ , \ \ f \in C^1(A) 
\ \ , \ \ 
\]
tale che
\[
F ( x , f(x) ) = 0 \ \ , \ \ x \in A \ .
\]
Inoltre, la matrice Jacobiana di $f$ \e data da
\begin{equation}
\label{eq_din9a}
Jf(\xi)
\, = \,
- J_yF (\xi,f(\xi))^{-1} \, J_xF (\xi,f(\xi))
\ \ , \ \
\xi \in A
\ .
\end{equation}
\end{thm}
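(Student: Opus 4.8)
Il piano è di ricondurre tutto al teorema delle contrazioni (Teo.\ref{thm_contr}), sfruttando la condizione di invertibilità in (\ref{eq_din1}). L'idea è fissare $x$ vicino ad $a$ e cercare $y=f(x)$ come punto fisso di un'opportuna mappa. Posto $J := \frac{\partial F}{\partial y}(a,b)$, che è invertibile per ipotesi, introdurrei per ogni $x$ fissato l'applicazione
\[
T_x : B \to \bR^n
\ \ , \ \
T_x(y) := y - J^{-1} F(x,y)
\ ,
\]
costruita in modo tale che $T_x(y) = y \Leftrightarrow F(x,y) = 0$. Il nucleo della dimostrazione sarà verificare che, per $A,B$ scelti sufficientemente piccoli, ogni $T_x$ è una contrazione di $\ovl B$ in sé, dotato della metrica euclidea (che lo rende uno spazio metrico completo in quanto chiuso di $\bR^n$).

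Procederei nell'ordine seguente. Primo, calcolerei la jacobiana di $T_x$ rispetto ad $y$: si ha $\frac{\partial T_x}{\partial y}(y) = I - J^{-1} \frac{\partial F}{\partial y}(x,y)$, la quale si annulla in $(a,b)$ per costruzione. Per continuità delle derivate parziali (qui uso $F \in C^1$) posso trovare $r>0$ tale che $\| \frac{\partial T_x}{\partial y}(y) \| \leq 1/2$ per ogni $(x,y)$ in un intorno di $(a,b)$; applicando il teorema del valor medio lungo segmenti ottengo la stima di contrazione $|T_x(y)-T_x(y')| \leq \frac12 |y-y'|$ con $\alpha = 1/2 \in (0,1)$. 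Secondo, devo garantire che $T_x$ mappi $\ovl B := \ovl{\Delta(b,\rho)}$ in sé: stimo
\[
|T_x(y) - b|
\ \leq \
|T_x(y) - T_x(b)| + |T_x(b) - b|
\ \leq \
\tfrac12 |y-b| + |J^{-1} F(x,b)|
\ ,
\]
e poiché $F(a,b)=0$ e $F$ è continua, restringendo $x$ ad un intorno $A$ di $a$ abbastanza piccolo rendo $|J^{-1}F(x,b)| \leq \rho/2$, cosicché $|T_x(y)-b| \leq \rho$. Terzo, applico Teo.\ref{thm_contr} ad ogni $T_x$ ottenendo un unico punto fisso $f(x) \in \ovl B$, ossia $F(x,f(x))=0$.

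Il passo che mi aspetto sia il vero ostacolo è **la regolarità di $f$**, ovvero dimostrare che $f \in C^1(A)$ con la formula (\ref{eq_din9a}). L'unicità e l'esistenza del punto fisso danno immediatamente una funzione $f$ ben definita, e la continuità di $f$ segue con relativa facilità dalla stima di contrazione uniforme in $x$ (la dipendenza continua del punto fisso dai parametri). Per la differenziabilità, invece, l'approccio è scrivere l'identità $F(x,f(x)) \equiv 0$ e differenziarla formalmente tramite la regola della catena: ciò fornisce, nei punti in cui $f$ è differenziabile,
\[
\frac{\partial F}{\partial x}(x,f(x)) + \frac{\partial F}{\partial y}(x,f(x)) \, \frac{\partial f}{\partial x}(x)
\ = \
0
\ ,
\]
da cui, essendo $\frac{\partial F}{\partial y}$ invertibile in un intorno di $(a,b)$ per continuità del determinante, si ricava (\ref{eq_din9a}) isolando $\frac{\partial f}{\partial x}$.

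La delicatezza sta nel giustificare rigorosamente la differenziabilità di $f$ prima di poter differenziare. Qui userei la definizione di differenziabilità (Def.\ref{def_fRn2}): fissato $\xi \in A$, pongo $L := -\left(\frac{\partial F}{\partial y}(\xi,f(\xi))\right)^{-1} \frac{\partial F}{\partial x}(\xi,f(\xi))$ come candidato differenziale, e stimo il resto $f(\xi+h)-f(\xi)-Lh$ sfruttando la differenziabilità di $F$ in $(\xi,f(\xi))$ insieme alla lipschitzianità locale di $f$ (conseguenza ancora della stima di contrazione uniforme, che dà $|f(x)-f(x')| \leq C|x-x'|$ localmente). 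Il controllo simultaneo dell'incremento di $F$ nelle due variabili, combinato con l'invertibilità uniforme del blocco $\frac{\partial F}{\partial y}$, permette di concludere che tale resto è $o(|h|)$, stabilendo sia la differenziabilità sia la formula (\ref{eq_din9a}); la continuità di $\frac{\partial f}{\partial x}$ segue infine dalla formula stessa, essendo composizione di funzioni continue (le derivate parziali di $F$, $f$, e l'inversione di matrici, quest'ultima continua sulle matrici invertibili).
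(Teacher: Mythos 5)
La tua proposta \e corretta e segue essenzialmente la stessa strada della dimostrazione del testo: la mappa $\phi_x(y) = Lx + \tilde R(x,y)$ usata nel testo coincide algebricamente con la tua $T_x(y) = y - J^{-1}F(x,y)$, trattandosi in entrambi i casi del punto fisso dell'iterazione costruita con l'inversa fissa di $\frac{\partial F}{\partial y}(a,b)$. Anche i passi successivi (contrazione su un disco chiuso invariante, lipschitzianit\'a locale di $f$ dedotta dalla stima di contrazione, differenziabilit\'a ottenuta dallo sviluppo al prim'ordine di $F$ combinato con l'invertibilit\'a del blocco $\frac{\partial F}{\partial y}$) ricalcano quelli del testo.
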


\begin{proof}[Dimostrazione]

\

\noindent {\em Passo 1.} Per semplicit\'a di notazione assumiamo $(a,b) = 0 \in \bR^{m+n}$, come del resto \e lecito fare applicando una traslazione all'aperto $U$. Inoltre, osserviamo che grazie a Teo.\ref{thm_fRn1} $F$ \e differenziabile in $U$. Per economia scriviamo
\[
T := J_yF (0) \in GL(n,\bR)
\]
(l'invertibilit\'a di $T$ \e assicurata da (\ref{eq_din1})). Per differenziabilit\'a di $F$ possiamo scrivere
\begin{equation}
\label{eq_din2}
F(x,y) \, = \, T y + J_xF(0) x + R(x,y) \ ,
\end{equation}
dove il resto $R : U \to \bR^n$ \e di classe $C^1(U)$ e tale che 

\begin{equation}
\label{eq_din3}
\lim_{(x,y) \to 0} \frac{R(x,y)}{|x|+|y|} = 0 \ .
\end{equation}
Da (\ref{eq_din2}) otteniamo

\begin{equation}
\label{eq_din4}
y = T^{-1} F(x,y) + Lx + \tilde R(x,y)
\end{equation}
dove 
\[
\tilde R : U \to \bR^n
\ \ , \ \
\tilde R(x,y) := - T^{-1} R(x,y)
\ ,
\]
ed $L : \bR^m \to \bR^n$ \e l'applicazione lineare
\[
Lx \, := \, - T^{-1} \, J_xF(0) \, x \ \ , \ \ x \in \bR^m \ .
\]
Per cui, concludiamo che
\begin{equation}
\label{eq_din5}
F(x,y) = 0 \ \Leftrightarrow \ y = Lx + \tilde R(x,y)
\ ,
\end{equation}
cosicch\'e {\em dimostrare l'esistenza della funzione implicita $f$ equivale a mostrare che esistono intorni $A \ni 0 \in \bR^m$, $B \ni 0 \in \bR^n$ tali che per ogni $x \in A$ esista un solo $y \in B$ che verifichi (\ref{eq_din5})}.

\

\noindent {\em Passo 2.} Dimostriamo che esistono $A \ni 0 \in \bR^m$, $B \ni 0 \in \bR^n$ tali che, per ogni $x \in A$, l'applicazione
\[
\phi_x : B \to \bR^n
\ \ , \ \
\phi_x (y) := Lx + \tilde R(x,y) \ ,
\]
\'e una contrazione. A tale scopo, osserviamo che $\tilde R$ \e differenziabile e chiaramente soddisfa (\ref{eq_din3}), per cui, in un opportuno intorno dell'origine abbiamo
\begin{equation}
\label{eq_din60}
|\tilde R(x,y)| < \frac{1}{2} ( |x|+|y| )
\ \ \ {\mathrm{e}} \ \ \
\lim_{x \to 0} \frac{ | \tilde R (x,0) | }{|x|}
\ , \
\lim_{y \to 0} \frac{ | \tilde R (0,y) | }{|y|}
\ = \ 
0
\ .
\end{equation}
Ci\'o implica che esistono dischi chiusi $\Delta_m \subset \bR^m$, $\Delta_n \subset \bR^m$ di centro l'origine e raggio $r > 0$, tali che per $a \in \Delta_m$, $b \in \Delta_n$,
\[
\left| J_x \tilde R_i(a,b) \right|
\ \leq \ 
\frac{1}{2 \sqrt{m}}
\ \ \ , \ \ \
\left| J_y \tilde R_i(a,b) \right| 
\ \leq \ 
\frac{1}{2 \sqrt{n}}
\]
(osservare che avendosi $\tilde R_i : U \to \bR$ per ogni $i=1,\ldots,m$, le jacobiane $J_x \tilde R_i$ e $J_y \tilde R_i$ sono in realt\'a dei vettori).
Siano ora $y_1,y_2 \in \Delta_n$; applicando il teorema di Lagrange troviamo che esistono $\xi_1 , \ldots , \xi_n \in \Delta_n$ appartenenti al segmento che congiunge $y_1$ con $y_2$, tali che
{\footnote{Per comodit\'a di notazione, nel corso della dimostrazione scriviamo $v \cdot v'$ per il prodotto scalare $(v,v')$; cosicch\'e ad esempio $J_y \tilde R_i(x,\xi_i) \cdot (y_1 - y_2)$ sta per $( \, J_y \tilde R_i(x,\xi_i)  \, , \, y_1 - y_2 \, )$.}}
\[
\tilde R_i(x,y_1) - \tilde R_i(x,y_2) \, = \, 
J_y \tilde R_i(x,\xi_i) \cdot (y_1-y_2) 
\ \ , \ \
i = 1 , \ldots , n
\ ,
\]
da cui
\[
| \tilde R_i(x,y_1) - \tilde R_i(x,y_2) |
\leq 
\frac{1}{2 \sqrt{n}} \ |y_1 - y_2|
\ \ \ 
{\mathrm{e}}
\ \ \
| \tilde R(x,y_1) - \tilde R(x,y_2) |
\leq 
\frac{1}{2} \ |y_1 - y_2|
\ .
\]
Usando le relazioni precedenti troviamo 
\begin{equation}
\label{eq_din7}
| \phi_x (y_1) - \phi_x (y_2) | =
| \tilde R(x,y_1) - \tilde R(x,y_2)  | \leq
\frac{1}{2} \ |y_1-y_2| \ .
\end{equation}
Per cui $\phi_x$ decrementa la distanza. Troviamo ora un dominio compatto $B \subset \Delta_n$ tale che $\phi_x(B) \subseteq B$. A tale scopo, applicando (\ref{eq_din60}) e ponendo 
$\| L \| := \sup_{|x|=1} | Lx |$, 
osserviamo che 
\[
|\phi_x(y)| \leq
\| L \| |x| + |\tilde R(x,y)| \leq
\| L \| |x| + \frac{1}{2} ( |x|+|y| ) \ .
\]
Per cui, scegliendo opportunamente $r_1 , r_2 > 0$ e dischi chiusi $\ovl A :=$ $\ovl{\Delta}(0,r_1) \subset \bR^m$, $B :=$ $\ovl{\Delta}(0,r_2) \subset \bR^n$, otteniamo $| \phi_x (y) | < r_2$. Dunque abbiamo una contrazione $\phi_x : B \to B$.

\

\noindent {\em Passo 3.} Applicando il teorema delle contrazioni (Teo.\ref{thm_contr}), otteniamo che per ogni $x \in A$ esiste ed \e unico $f(x) \in B$ punto fisso per $\phi_x$, ovvero $f(x)$ soddisfa (\ref{eq_din5}). In tal modo, \e definita l'applicazione $f : A \to B$, che chiaramente \e la nostra candidata a funzione implicita per $F$.

\

\noindent {\em Passo 4 (Continuit\'a di $f$).} Come in (\ref{eq_din7}), troviamo opportuni $\eta \in A$, $\xi \in B$ tali che
\[
\begin{array}{ll}
| f(x_1) - f(x_2) | & \leq 
\| L \| |x_1-x_2| + | \tilde R(x_1,f(x_1)) - \tilde R(x_2,f(x_2)) | 
\\ & \leq
\| L \| |x_1-x_2| + 
| \tilde R(x_1,f(x_1)) - \tilde R(x_1,f(x_2)) | +
| \tilde R(x_1,f(x_2)) - \tilde R(x_2,f(x_2)) |
\\ & \leq
\| L \| |x_1-x_2| + \frac{1}{2} \left( |f(x_1)-f(x_2)| + |x_1-x_2| \right) 
\ ;
\end{array}
\]
dunque, 
\begin{equation}
\label{eq_din8}
| f(x_1) - f(x_2) | \leq ( 2 \| L \| - 1 ) |x_1-x_2| \ ,
\end{equation}
ed $f$ \e continua.

\

\noindent {\em Passo 5 (Differenziabilit\'a e Jacobiana di $f$).}  Essendo $F$ di classe $C^1$ esistono opportuni $(\eta_1,\xi_1) , \ldots ,$ 
$(\eta_n,\xi_n)$ appartenenti al segmento che congiunge $(x_1,f(x_1))$ ed $(x_2,f(x_2))$, tali che per ogni $i = 1 , \ldots , n$
\[
0 \, = \,
F_i(x_1,f(x_1)) - F_i(x_2,f(x_2)) \, = \,
J_x F_i(\eta_i,\xi_i) \cdot (x_1-x_2) + J_y F_i(\eta_i,\xi_i) \cdot (f(x_1)-f(x_2)) 
\ ;
\]
introducendo (per somma e sottrazione) nell'equazione precedente le matrici
\[
M_1 := \left(
       \frac{\partial F_i}{\partial x_j}(\eta_i,\xi_i) - 
       \frac{\partial F_i}{\partial x_j}(x_2,f(x_2))
       \right)_{ij}
\ \ , \ \
M_2 := \left(
       \frac{\partial F_i}{\partial y_k}(\eta_i,\xi_i) - 
       \frac{\partial F_i}{\partial y_k}(x_2,f(x_2))
       \right)_{ik}
\ ,
\]
otteniamo
\[
0 \, = \, 
J_xF(x_2,f(x_2)) (x_1-x_2)        + 
J_yF(x_2,f(x_2)) (f(x_1)-f(x_2))  + 
M_1 (x_1-x_2) + M_2 (f(x_1)-f(x_2)) \ .
\]
Ora $M_1,M_2 \to 0$ per $x_1 \to x_2$; d'altro canto $J_yF(x_2,f(x_2))$ \e invertibile, cosicch\'e
\[
\label{eq_din9}
f(x_1)-f(x_2) \, = \,
- J_yF(x_2,f(x_2))^{-1}
         J_x(x_2,f(x_2)) (x_1-x_2)    +
Q(x_1,x_2)
\ ,
\]
avendo raggruppato in $Q(x_1,x_2)$ i termini restanti; da (\ref{eq_din8}) segue che 
\[
\lim_{(x_1 \to x_2)} \frac{Q(x_1,x_2)}{|x_1-x_2|} = 0 \ ,
\]
per cui $f$ \e differenziabile.
\end{proof}

\

\noindent \textbf{Il teorema dell'inverso locale.} Sia $A \subseteq \bR^n$ aperto, Un'applicazione $f \in C^1(A,\bR^n)$ si dice {\em diffeomorfismo locale in} $a \in A$ se esiste un intorno $V \ni a$ tale che $f|_V$ \e un diffeomorfismo.

\begin{thm}[Teorema dell'inverso locale]
\label{thm_din2}
Sia $A \subseteq \bR^n$ aperto ed $f: A \to \bR^n$ un'applicazione di classe $C^1$. Se $a \in A$ \e tale che 
\[
\det Jf(a) \ \neq \ 0 \ ,
\]
allora $f$ \e un diffeomorfismo locale in $a$.
\end{thm}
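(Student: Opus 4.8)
The plan is to deduce the statement from the implicit function theorem (Theorem \ref{thm_din1}), which has just been established, by viewing the equation $f(x) = y$ as a relation to be solved for $x$ as a function of $y$. Set $b := f(a)$ and define
\[
F : \bR^n \times A \to \bR^n \ , \ F(u,v) := f(v) - u \ ,
\]
where $u \in \bR^n$ plays the role of the independent variable and $v \in A$ the role of the variable to be made implicit. Then $F$ is $C^1$, $F(b,a) = f(a) - b = 0$, and the relevant partial Jacobian is $\frac{\partial F}{\partial v}(b,a) = \frac{\partial f}{\partial x}(a)$, which is invertible by hypothesis.

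First I would invoke Theorem \ref{thm_din1} (with $m = n$ and the two blocks of variables interchanged relative to its statement): it yields open neighborhoods $B_0 \ni b$ and $A_0 \ni a$ together with a map $g : B_0 \to A_0$, $g \in C^1(B_0,\bR^n)$, such that $F(u, g(u)) = 0$ for all $u \in B_0$, that is, $f(g(u)) = u$ for every $u \in B_0$. Thus $g$ is a $C^1$ right inverse of $f$; in particular $g$ is injective. Moreover, since $\frac{\partial F}{\partial v} = \frac{\partial f}{\partial x}$ and $\frac{\partial F}{\partial u} = -I$, the Jacobian formula (\ref{eq_din9a}) specializes to
\[
\frac{\partial g}{\partial u}(u) = \left( \frac{\partial f}{\partial x}(g(u)) \right)^{-1} \ , \ u \in B_0 \ .
\]

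Next I would upgrade this right inverse to a genuine local inverse by exploiting the uniqueness built into the proof of Theorem \ref{thm_din1} (the solution $g(u)$ is the \emph{unique} preimage of $u$ inside the ball produced by the contraction argument of Theorem \ref{thm_contr}). Setting $V := A_0 \cap f^{-1}(B_0)$, which is open and contains $a$, I claim $f|_V : V \to B_0$ is a bijection with inverse $g$: indeed $f(V) \subseteq B_0$ by construction; every $u \in B_0$ is attained, since $g(u) \in V$ and $f(g(u)) = u$; and if $x \in V$ then $x$ and $g(f(x))$ are both preimages of $f(x)$ lying in the uniqueness ball, whence $x = g(f(x))$. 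Consequently $f(V) = B_0$ is open, $g \circ f|_V = \mathrm{id}_V$ and $f|_V \circ g = \mathrm{id}_{B_0}$, with both $f|_V$ and $g$ of class $C^1$. Therefore $f|_V$ is a diffeomorphism onto an open set, i.e. $f$ is a local diffeomorphism at $a$.

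The main obstacle is this second step: Theorem \ref{thm_din1} as stated delivers only the one-sided relation $f \circ g = \mathrm{id}$, so the real work is to establish injectivity of $f$ near $a$ and hence $g \circ f = \mathrm{id}$ on $V$. This forces me to reach into the uniqueness clause of the implicit function theorem rather than treat its statement as a black box; care is also needed to guarantee that $V$ lies in the region where that uniqueness holds, shrinking $A_0$ if necessary so that $A_0$ is contained in the ball furnished by the contraction mapping theorem.
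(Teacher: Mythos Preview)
Your approach is exactly the one in the paper: apply the implicit function theorem (Teorema \ref{thm_din1}) to $F(x,y) = y - f(x)$ (equivalently your $F(u,v) = f(v) - u$) to produce a $C^1$ map $g$ with $f \circ g = \mathrm{id}$. The paper's proof actually stops there and declares $g$ the desired local inverse, so your second step---using the uniqueness clause from the contraction argument to upgrade the one-sided identity to a genuine two-sided inverse on a suitable $V$---is more careful than what the text spells out, and is a welcome addition rather than a deviation.
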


\begin{proof}[Dimostrazione]
L'idea consiste nell'applicare il teorema delle funzioni implicite a 
\[
F : A \times \bR^n \to \bR^n
\ \ , \ \  
F(x,y) := y - f(x) 
\ \ , \ \  
x \in A \ , \ y \in \bR^n 
\ .
\]
Osserviamo infatti che preso il nostro $a \in A$, allora risulta
\[
F(a,f(a)) = 0
\ \ , \ \
\det J_xF (a,f(a)) 
\ = \ 
- \det Jf(a)
\]
(notare lo scambio del ruolo di $x,y$ rispetto all'enunciato di Teo.\ref{thm_din1}). Per cui, il teorema delle funzioni implicite ci assicura che esistono intorni $V \subseteq A$, $a \in V$, $B \subseteq \bR^n$, $f(a) \in B$, ed una funzione $g : B \to V$ di classe $C^1$ tale che $F(g(y),y) = 0$, $y \in B$; in altri termini, $f \circ g (y) = y$, $y \in B$. Per cui $g$ \e l'inversa locale cercata.
\end{proof}

\begin{ex}
{\it
Esibiamo un esempio per il quale non vale il teorema dell'inverso locale. Sia
\[
f : \bR \to \bR
\ \ , \ \
f(x) :=
\left\{
\begin{array}{ll}
\frac{x}{2} + x^2 \sin \frac{1}{x} \ \ , \ \ x \neq 0
\\
0 \ \ , \ \ x = 0
\end{array}
\right.
\]
Si verifica che $f$ \e derivabile su $\bR$ e $f'(0) = 1/2 \neq 0$. Tuttavia $f$ ha infinite oscillazioni in ogni intorno di $0$, e quindi non pu\'o essere localmente invertibile. Si noti che $f'$ \e discontinua in $0$.
}
\end{ex}

\

\noindent \textbf{Moltiplicatori di Lagrange.} Sia $f \in C^1(A)$, $A \subseteq \bR^n$. Consideriamo una variet\'a
$\Sigma \subset A$
di classe $C^1$ e dimensione $m < n$, equipaggiata di una rappresentazione parametrica
\[
\varphi : U \to \Sigma 
\ , \
U \subseteq \bR^m
\ \ , \ \ 
\varphi = ( \varphi_1 (u) , \ldots , \varphi_n(u) )
\ , \
u := ( u_1 , \ldots , u_m ) \in U
\ .
\]
Si dicono {\em massimi e minimi vincolati di $f$ rispetto a $\Sigma$} i massimi e minimi di
$f \circ \varphi : U \to \bR$.
In tal caso, diremo che $\Sigma$ \e un {\em vincolo per $f$}. E' immediato dimostrare, tramite la formula di derivazione composta, che 
\[
\nabla (f \circ \varphi) 
\ = \
\left( \
\sum_{i=1}^n 
\frac{\partial f}{\partial x_i} (\varphi(u))
\
\frac{\partial \varphi_i}{\partial u_k} (u)
\ \right)_{k=1 , \ldots , m}
\ = \
\left\{ \
\left( 
\nabla f (\varphi(u)) \ , \ \frac{\partial \varphi}{\partial u_k}(u)
\right)
\ \right\}_{k = 1, \ldots , m}
\ .
\]
Per cui, se cerchiamo i punti stazionari $b \in U$ per $f \circ \varphi$, allora la condizione $\nabla (f \circ \varphi) = 0$ si traduce in
\begin{equation}
\label{eq_ML1}
\left( 
\nabla f (\varphi(b)) \ , \ \frac{\partial \varphi}{\partial u_k}(b)
\right)
\ = \
0
\ \ , \ \
k = 1 , \ldots , m
\ .
\end{equation}
In altri termini, {\em $\nabla f$ \e normale a $\Sigma$ nel punto $\varphi (b)$}.
Ora, in generale $\Sigma$ si pu\'o presentare in forma implicita, ovvero in termini di un luogo
\begin{equation}
\label{eq_ML2}
\Sigma
\ := \
\{
a \in A \ : \ F ( a_1 , \ldots , a_n ) = 0
\}
\ ,
\end{equation}
dove $F : A \to \bR^m$ \e un'applicazione $C^1$. In questo caso, abbiamo il seguente
\begin{thm}[Moltiplicatori Lagrangiani]
Sia $f : A \to \bR$, $A \subseteq \bR^n$, di classe $C^1$, e sia $\Sigma \subset A$ l'insieme degli zeri di un'applicazione $F : A \to \bR$, anch'essa di classe $C^1$. Allora, i punti di massimo e minimo (vincolato) della funzione $f |_\Sigma : \Sigma \to \bR$ sono i punti stazionari \textbf{liberi} della funzione
\[
H : A \times \bR \to \bR
\ \ , \ \
H(x,\lambda) := f(x) + \lambda F(x)
\ .
\]
\end{thm}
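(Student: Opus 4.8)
The plan is to read off the two halves of the free-stationarity condition for $H$ and match each against the characterization of constrained extrema already obtained in this section. First I would unpack what it means for $(x_0,\lambda_0)\in A\times\bR$ to be a free stationary point of $H$. Computing the gradient of $H(x,\lambda)=f(x)+\lambda F(x)$ in all $n+1$ variables, the derivative in $\lambda$ gives $\partial H/\partial\lambda=F(x_0)=0$, which says exactly $x_0\in\Sigma$, while the derivatives in $x_1,\ldots,x_n$ give $\nabla f(x_0)+\lambda_0\nabla F(x_0)=0$. Thus a free stationary point of $H$ is precisely a pair $(x_0,\lambda_0)$ with $x_0\in\Sigma$ and $\nabla f(x_0)=-\lambda_0\nabla F(x_0)$; the whole theorem then reduces to showing that a constrained extremum $x_0$ of $f|_\Sigma$ is exactly a point of $\Sigma$ at which $\nabla f(x_0)$ is a scalar multiple of $\nabla F(x_0)$.

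For the necessity direction I would fix a constrained extremum $x_0$ and assume the usual regularity hypothesis $\nabla F(x_0)\neq 0$. Applying Teo.\ref{thm_din1} to $F$ near $x_0$, the zero set $\Sigma$ of (\ref{eq_ML2}) is locally a $C^1$ hypersurface admitting a parametrization $\varphi:U\to\Sigma$, $U\subseteq\bR^{n-1}$, with $\varphi(b)=x_0$ and with the vectors $\frac{\partial\varphi}{\partial u_k}(b)$, $k=1,\ldots,n-1$, spanning an $(n-1)$-dimensional tangent space. Since $x_0$ is a constrained extremum, $b$ is a free extremum, hence a stationary point, of $f\circ\varphi$, so by the chain-rule computation leading to (\ref{eq_ML1}) I get $\bigl(\nabla f(x_0),\frac{\partial\varphi}{\partial u_k}(b)\bigr)=0$ for every $k$; that is, $\nabla f(x_0)$ lies in the normal space of $\Sigma$ at $x_0$.

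Next I would differentiate the identity $F(\varphi(u))\equiv 0$ on $U$, which by the same chain rule yields $\bigl(\nabla F(x_0),\frac{\partial\varphi}{\partial u_k}(b)\bigr)=0$ for each $k$, so $\nabla F(x_0)$ lies in the same normal space. The decisive step is then purely linear-algebraic: the normal space, being the orthogonal complement of an $(n-1)$-dimensional subspace of $\bR^n$, is a line; since $\nabla F(x_0)\neq 0$ belongs to it, it spans it, and therefore $\nabla f(x_0)$ must be a scalar multiple of $\nabla F(x_0)$, say $\nabla f(x_0)=-\lambda_0\nabla F(x_0)$. Together with $F(x_0)=0$ this exhibits $(x_0,\lambda_0)$ as a free stationary point of $H$, by the first step. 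For the converse I would simply run the implication backwards: if $(x_0,\lambda_0)$ is free stationary, then $x_0\in\Sigma$ and $\nabla f(x_0)=-\lambda_0\nabla F(x_0)$ is normal to $\Sigma$, which is exactly the stationarity condition (\ref{eq_ML1}) for $f|_\Sigma$.

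I expect the main obstacle to be the dimension count in the third step, since it rests entirely on the regularity assumption $\nabla F(x_0)\neq 0$: it is this hypothesis that both makes $\Sigma$ a genuine hypersurface near $x_0$ (so that the parametrization $\varphi$ with full-rank Jacobian exists) and guarantees that $\nabla F(x_0)$ actually spans the one-dimensional normal space. At a critical point of the constraint the normal space can degenerate and the multiplier may fail to exist, so I would flag that this regularity is tacitly required for the statement to hold. Everything else — the gradient of $H$, the two chain-rule identities, and the passage from a multiple of $\nabla F(x_0)$ to the value of $\lambda_0$ — is routine and needs no delicate estimate.
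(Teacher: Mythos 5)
Your proof is correct and follows essentially the same route as the paper: both reduce the statement via the implicit function theorem and the stationarity condition (\ref{eq_ML1}) to the fact that $\nabla f$ and $\nabla F$ are collinear at a constrained extremum, under the tacit regularity hypothesis $\nabla F(x_0)\neq 0$ that you rightly flag. The only cosmetic difference is that you obtain the collinearity by a dimension count on the normal space (differentiating $F\circ\varphi\equiv 0$), whereas the paper computes the normal versor explicitly from the graph parametrization and identifies it with $\nabla F/|\nabla F|$ via (\ref{eq_din9a}).
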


\begin{proof}[Dimostrazione]
Sia $a \in \Sigma$ tale che
\[
\frac{\partial F}{\partial u_n} (a) \neq 0
\ .
\]
Allora, per il teorema delle funzioni implicite (Teo.\ref{thm_din1}) esiste un intorno $V \ni (a_1 , \ldots , a_{n-1})$ e $g : V \to \bR$ tali che
\[
F (u_1 , \ldots , u_n) = 0
\ \ \Leftrightarrow \ \
u_n = g ( u_1 , \ldots , u_{n-1} )
\ .
\]
In tal modo, in un intorno di $a$ del tipo $V \times (a_n-\delta,a_n+\delta)$, il versore normale a $\Sigma$ si scrive 
\[
\textbf{n} 
\ = \
\frac{1}{\sqrt{1+|\nabla g|^2}} 
\ 
\left( - \frac{\partial g}{\partial u_1} 
         \ , \ldots , \ 
       - \frac{\partial g}{\partial u_{n-1}}
         \ , \
       1 \
\right)
\ \stackrel{(\ref{eq_din9a})}{=} \
\frac{\nabla F}{|\nabla F|}
\ .
\]
Usando (\ref{eq_ML1}), troviamo che se $a \in \Sigma$ \e stazionario per $f|_\Sigma$, allora $\nabla f$ deve essere proporzionale ad $\textbf{n}$ in $a$. In altri termini, esiste $\lambda \in \bR$ tale che
\begin{equation}
\label{eq_MLAN}
\nabla f (a) + \lambda \nabla F (a) = 0
\ .
\end{equation}
Quest'ultima equazione, accoppiata alla condizione $F(a) = 0$, \e equivalente all'annullarsi del gradiente di $H$ in $(a,\lambda)$.
\end{proof}

\begin{ex}
{\it
Siano $g,k > 0$ fissati. Consideriamo la circonferenza
$\Sigma := \{ (x,y) \in \bR^2 \ , \ x^2+y^2-1=0 \}$ 
e la funzione $f(x,y) := mgy+1/2 k [(x-1)^2+y^2]$, $x,y \in \bR$. Allora i punti di minimo di $f |_\Sigma$ sono le soluzioni del sistema
\[
\left\{
\begin{array}{ll}
k(x-1) + 2 \lambda x = 0
\\
mg + ky + 2 \lambda y = 0 \ .
\end{array}
\right.
\]
}
\end{ex}

\subsection{Cenni sulle forme differenziali e loro integrazione.}
\label{sec_fdif}

\noindent \textbf{Formule di Gauss-Green.} Sia $U \subseteq \bR^n$ aperto ed $F : U \to \bR^n$ un'applicazione di classe $C^1$. La {\em divergenza} di $F$ \e data dalla funzione
\[
{\mathrm{div}} F : U \to \bR
\ \ , \ \
{\mathrm{div}} F \ := \ \sum_i^n \frac{\partial F_i}{\partial x_i} \ =: \ (\nabla,F) \ .
\]
Un compatto $K \subset U$ si dice {\em dominio regolare} se il bordo $\partial K$ \e costituito da variet\'a $C^1$ a tratti (curve regolari a tratti
{\footnote{Per la definizione di curva regolare a tratti si veda \S \ref{sec_intc}.}}, 
nel caso $n=2$). La {\em formula di Gauss-Green} stabilisce che
\begin{equation}
\label{eq_gauss_green}
\int_K {\mathrm{div}} F \ = \ \int_{\partial K} (F,\textbf{n})
\ ,
\end{equation}
dove $\textbf{n}$ denota il versore normale esterno al bordo $\partial K \subset \bR^n$ e 
$(F,\textbf{n}) : U \to \bR$ 
\e la funzione definita dal prodotto scalare
\[ 
(F,\textbf{n})(x) \ := \ \sum_i F_i(x) \textbf{n}_i(x) \ \ , \ \ x \in U \ .
\]
Osserviamo che nel caso $n=1$ e $K = [a,b]$ abbiamo $\textbf{n}(a) = -1$ e $\textbf{n}(b) = 1$; inoltre la divergenza di $F$ \e la sua derivata, mentre l'integrale su $\partial K = \{ a,b \}$ \e semplicemente la somma 
\[
(F,\textbf{n})(b) + (F,\textbf{n})(a) \ = \ F(b)\textbf{n}(b) + F(a)\textbf{n}(a) \ = \ F(b)-F(a) \ .
\]
Dunque, (\ref{eq_gauss_green}) si riduce al teorema fondamentale del calcolo.
Nelle righe che seguono diamo l'idea della dimostrazione nel caso $n=2$, con bordo dato da curve regolari (per i dettagli si veda \cite{Giu2}):
\begin{itemize}
\item Fissiamo $r > 0$ ed consideriamo un ricoprimento $Q := \{ Q_k \}$ di $U$, dove ogni $Q_k$ \e un quadrato con centro $(t_k,a_k)$ e lato $2r$, $r > 0$. Osserviamo che, potendo assumere che $U$ \e limitato, possiamo sempre scegliere $Q$ finito;
\item Richiediamo che ogni $\partial K \cap Q_k$, $k \in \bN$, sia il grafico di una funzione 
$\alpha_k : I_k \to \bR$ per qualche intervallo limitato $I_k \subset \bR$. 
Osserviamo che grazie al Teorema delle funzioni implicite (\S \ref{sec_teo_impl}) $\partial K$ si pu\'o {\em sempre} interpretare {\em localmente} come il grafico di una funzione, dunque il ricoprimento di cui sopra esiste.
\end{itemize}
In tale scenario possiamo scrivere il versore normale esterno nel seguente modo:
\[
\textbf{n}(t,\alpha_k(t)) = (1+\alpha_k'(t)^2)^{-1/2} ( - \alpha_k'(t) , 1 ) 
\ \ , \ \  
t \in I_k \ .
\]
Sia ora $f \in C^1_0(U \cap Q_k)$, cosicch\'e $f$ \e nulla al bordo di $Q_k$. Assumiamo che $K \cap Q_k$ sia della forma
\begin{equation}
\label{eq.UQ}
K \cap Q_k = \{ (x_1,x_2) \in \bR^2 : x_1 \in I_k := (t_k-r,t_k+r) \ , \  x_2 \in (a_k-r,\alpha_k(x_1))  \} \ ,
\end{equation}
il che vuol dire che stiamo applicando il teorema del Dini esprimendo $\delta K \cap Q_k$ in funzione della coordinata $x_1$. Applicando Fubini troviamo, tenendo conto che per ipotesi $f(t,a_k-r) = 0$, $\forall t \in I_k$,
\[
\begin{array}{ll}
\int_{K \cap Q_k} \frac{\partial f}{\partial x_2} \ dx_1 dx_2 & = 
\int_{I_k} dt \int_{a_k-r}^{\alpha_k(t)} \frac{\partial f}{\partial x_2} \ dx_2 = \\ & =
\int_{I_k} f(t,\alpha_k(t)) \ dt  = \\ & =
\int_{I_k} f(t,\alpha_k(t)) \ \textbf{n}_2(t,\alpha_k(t)) \sqrt{1+\alpha_k'(t)^2} \ dt 
\ ;
\end{array}
\]
riguardando l'ultimo integrale otteniamo
\begin{equation}
\label{eq.div1}
\int_{K \cap Q_k} \frac{\partial f}{\partial x_2} \ dx_1 dx_2
\ = \
\int_{\partial K \cap Q_k} f \textbf{n}_2 \ ds 
\ \ , \ \
ds := \sqrt{1+\alpha_k'(t)^2} \ dt 
\ .
\end{equation}
Analogamente, avendosi $f(t_k-r,x_2) = f(t_k+r,x_2) = 0$, $\forall x_2 \in (a_k-r,a_k+r)$,
\begin{equation}
\label{eq.div2}
\begin{array}{ll}
\int_{K \cap Q_k} \frac{\partial f}{\partial t} \ dt dx_2 & = 
\int_{I_k} dt \ \left\{ \int_{a_k-r}^{\alpha_k(t)} \frac{\partial f}{\partial t} \ dx_2 \right\} = \\ & =
\int_{I_k} dt \left\{ \frac{d}{dt} \int_{a_k-r}^{\alpha_k(t)} f(t,x_2) dx_2 - f(t,\alpha_k(t)) \alpha_k'(t) \right\} = \\ & =
\int_{a_k-r}^{\alpha_k(t_k+r)} f(t_k+r,x_2) \ dx_2   - 
\int_{a_k-r}^{\alpha_k(t_k-r)} f(t_k-r,x_2) \ dx_2 \ - \
\int_{I_k} f(t,\alpha_k(t)) \alpha_k'(t) \ dt = \\ & =
- \int_{I_k} f(t,\alpha_k(t)) \alpha_k'(t) \ dt = \\ & =
\int_{\partial K \cap Q_k} f \textbf{n}_1 \ ds
\ .
\end{array}
\end{equation}
Si verifica facilmente che le uguaglianze precedenti rimangono vere anche per quegli indici $k$ per i quali scrivessimo $x_1$ in funzione di $x_2$, cosicch\'e le assumeremo valide per ogni $K \cap Q_k$, anche non della forma (\ref{eq.UQ}). Consideriamo ora (\ref{eq.div1},\ref{eq.div2}) sostituendo ad $f$ le funzioni $\rho_k F_i \in C^1_0(U \cap Q_k)$, $i = 1,2$, dove $\{ \rho_k \in C^1_0(Q_k) \}_k$ \e una partizione dell'unit\'a di $U$ subordinata a $Q$
{\footnote{Con ci\'o intendiamo che: 
           (1) ogni $\rho_k : U \to [0,1]$ \e una funzione $C^\infty$ con supporto in $Q_k$;
           (2) in un intorno di ogni $x \in U$ vi \e solo un numero finito di $\rho_k$ con $\rho_k(x) \neq 0$;
           (3) $\sum_k \rho_k(x) = 1$, $\forall x \in K$ (convergenza puntuale, quando la somma \e infinita).
           L'esistenza delle partizioni dell'unit\'a si dimostra con argomenti che fanno uso del Lemma 
           di Urysohn, vedi \cite[Prop.1.7.12]{Ped} per il caso di spazi normali e \cite{Giu2} per $\bR^2$.
           }},
e calcoliamo
\[
\begin{array}{ll}
\int_{\partial K} (F,\textbf{n}) & = 
\sum_k \int_{\partial K \cap Q_k} \rho_k (F,\textbf{n}) = \\ & =
\sum_k \int_{\partial K \cap Q_k} \rho_k (F_1 \textbf{n}_1 + F_2 \textbf{n}_2) \stackrel{(\ref{eq.div1},\ref{eq.div2})}{=} \\ & =
\sum_k \int_{K \cap Q_k} \left( \frac{\partial (\rho_k F_1)}{\partial x_1} + 
                                \frac{\partial (\rho_k F_2)}{\partial x_2} \right) \ dx_1dx_2
\ .
\end{array}
\]
Derivando per parti le funzioni integrande ed usando le identit\'a
\[
\sum_k \rho_k(x) = 1
\ \ \Rightarrow \ \
\sum_k \frac{\partial \rho_k}{\partial x_i}(x) = 0
\ \ , \ \
\forall x \in K  \ , \ i = 1,2
\ ,
\]
\e immediato verificare che i contributi delle $\rho_k$ si elidono, e ci\'o dimostra (\ref{eq_gauss_green}).

\

\noindent \textbf{Forme Differenziali.} Dato uno spazio vettoriale reale $V$ di dimensione finita $n$, denotiamo con $ \wedge^m V$ la sua potenza tensoriale {\em esterna}
{\footnote{Detta anche {\em antisimmetrica}, in quanto si hanno le relazioni 
$v_{p(1)} \wedge \ldots \wedge v_{p_m} = sgn(p) v_1 \wedge \ldots \wedge v_m$,
$\forall v_i \in V$,
dove $p$ \e una qualsiasi permutazione di ordine $m$. Ad esempio, nel caso $m=2$ abbiamo $v_1 \wedge v_2 = - v_2 \wedge v_1$.}} 
di ordine $m \leq n$; ricordiamo che $\wedge^0V := \bR$, $\wedge^1 V = V$ e $\wedge^n V = \bR$. Se $\{ e_i  \}$ \e una base di $V$, denotiamo con
\[
e_I := e_{i_1} \wedge \ldots \wedge e_{i_m}
\ \ , \ \
I := \{ i_1 , \ldots , i_m  \} 
\ ,
\]
gli elementi della base di $\wedge^mV$. Nel caso $V = \bR^{n,*}$, $n \in \bN$, abbiamo la base $\{ dx_i \}$, cosicch\'e $\wedge^m \bR^{n,*}$ ha base
$\{ dx_I := dx_{i_1} \wedge \ldots \wedge dx_{i_m} \}$.

Sia ora $U \subseteq \bR^n$ un aperto connesso. Una $m$-forma differenziale di classe $C^k$ \e il dato di un'applicazione di classe $C^k$
\[
\omega : U \to \wedge^m \bR^{n,*} \ ;
\]
considerando, per ogni multiindice $I$ di lunghezza $m$, i prodotti scalari $a_I(x) := ( \omega (x) , dx_I )$, $x \in U$, otteniamo una funzione $a_I \in C^k(U)$ e troviamo
\[
\omega (x) \ = \ \sum_{|I|=m} a_I(x) \ dx_I \ \ , \ \ x \in U \ .
\]
In accordo alle convenzioni precedenti, una $0$-forma \e semplicemente una funzione $f \in C^k(U)$. Denotiamo con $\Omega^m_k(U)$ lo spazio delle $m$-forme differenziali di classe $k$, $k \in \bZ^+ \cup \{ \infty  \}$. La {\em derivata esterna} \e definita dall'applicazione
\[
d \ : \  \Omega^m_k(U) \to \Omega^{m+1}_{k-1}(U)
\ \ , \ \
\omega \mapsto d \omega \ := \ \sum_{i=1}^n \frac{ \partial a_I  }{ \partial x_i } \ dx_i \wedge dx_I \ .
\]
Usando l'antisimmetria del prodotto esterno ed il teorema di Schwartz troviamo $d^2 := d \circ d = 0$. Una $m$-forma si dice {\em chiusa} se $d \omega = 0$, ed {\em esatta} se $\omega = d \varphi$ per qualche $m-1$-forma $\varphi$; poich\'e $d^2 = 0$, \e chiaro che ogni forma esatta \e chiusa. 
In particolare, una $1$-forma \e esatta se $\omega = df$ per qualche $f \in C^k(U)$, dove $df$ \e l'applicazione definita dal differenziale (vedi (\ref{eq_dfa}))
\[
df : U \to \bR^{n,*}
\ \ , \ \
a \mapsto df_a
\ \ , \ \
a \in U
\ .
\]
\textbf{Fattori integranti.} Come applicazione del concetto di forma differenziale presentiamo un metodo per la soluzione di una classe di problemi di Cauchy. Sia $U \subset \bR^2$, $(t_0,u_0) \in U$, $h,g \in C(U)$ con $g(t_0,u_0) \neq 0$. Consideriamo il problema
\begin{equation}
\label{eq_FI}
\left\{
\begin{array}{ll}
u' \ = \ - h(t,u(t)) \ g(t,u(t))^{-1}
\\
u(t_0) \ = \ u_0 \ .
\end{array}
\right.
\end{equation}
Definiamo la $1$-forma 
\[
\omega(t,u) := h(t,u) \ dt + g(t,u) \ du
\ \ , \ \
(t,u) \in U
\ ,
\]
e supponiamo che essa sia esatta, ovvero $\omega = df$, $f \in C^1(U)$. Sostituendo eventualmente $f$ con $f - f(t_0,u_0)$ possiamo assumere che $f(t_0,u_0) = 0$, per cui abbiamo
\[
f(t_0,u_0) = 0
\ \ , \ \
\frac{ \partial f }{ \partial y } (t_0,u_0) = g(t_0,u_0) \neq 0 \ .
\]
Sono dunque soddisfatte le ipotesi di Teo.\ref{thm_din1}, per cui esiste $A \subseteq \bR$ ed $u \in C^1(A)$ tale che $f(t,u(t)) = 0$, $t \in A$. Poich\'e, grazie a (\ref{eq_din9a}), si ha
\begin{equation}
\label{eq_FI.sol}
u'(t) 
\ = \ 
- \left( \frac{ \partial f }{ \partial y } (t,u(t)) \right)^{-1}
\
\frac{ \partial f }{ \partial x } (t,u(t))
\ = \
- g(t,u(t))^{-1} h(t,u(t))
\ ,
\end{equation}
concludiamo che $(A,u)$ \e soluzione di (\ref{eq_FI}). Qualora $\omega$ non sia esatta possiamo cercare un {\em fattore integrante}, ovvero una funzione $\phi \in C^1(U)$, $\phi(t_0,u_0) \neq 0$, tale che 
\[
\phi \omega (t,u) \ := \ \phi(t,u)h(t,u) \ dt + \phi(t,u) g(t,u) \ du
\ \ , \ \
(t,u) \in U
\ ,
\]
sia esatta, ovvero $\phi \omega = df_\phi$, $f_\phi \in C^1(U)$. In tal modo, 
$\{ \partial f_\phi / \partial y \}(t_0,u_0) = \phi(t_0,u_0) g(t_0,u_0) \neq 0$
e troviamo la soluzione $u_\phi$ di (\ref{eq_FI}) definita come in (\ref{eq_FI.sol}).

\

\noindent \textbf{Integrazione di forme e Teorema di Stokes.} Discutiamo ora la nozione di integrale di una forma differenziale. Iniziamo osservando che avendosi $\bR \simeq \wedge^0 \bR^{*,n} , \wedge^n \bR^{*,n}$, l'integrale di $0$-forme ed $n$-forme \e ben definito nella maniera usuale, visto che queste si possono riguardare come funzioni di classe $C^k$; in particolare, se $K \subset U$ \e compatto allora sappiamo che $| \int_K \omega | < \infty$, $\omega \in \Omega^0_k(U)$ o $\Omega^n_k(U)$.
Pi\'u in generale possiamo integrare $m$-forme, $1 \leq m < n$, nel modo che segue. Consideriamo l'aperto $A \subseteq \bR^m$ equipaggiato con la restrizione della misura prodotto di Lebesgue (\S \ref{sec_misprod}) e $\gamma \in C^1(A,U)$; data $\omega \in \Omega^m_k(U)$, l'{\em integrale} di $\omega$ su $\gamma$ si definisce come
\begin{equation}
\label{eq.int.form}
\int_\gamma \omega 
\ := \ 
\sum_{|I|=m} \int_A  a_I \circ \gamma (u) \cdot \det \partial_I \gamma (u) \ du_I
\ \ , \ \
A \subseteq \bR^m
\ \ , \ \
\omega \in \Omega^m_k(U)
\ ,
\end{equation}
dove, per ogni $I := \{ i_1 , \ldots , i_k , \ldots , i_m \}$, abbiamo definito l'applicazione
\[
\partial \gamma_I : A \to M_{m,m}(\bR)
\ \ , \ \
\partial \gamma_I (v) 
\ := \
\left( \frac{\partial \gamma_{i_k}}{\partial u_j} (v)  \right)_{j,k=1 , \ldots , m}
\ \ , \ \
v \in A \subseteq \bR^m
\ .
\]
L'integrazione delle forme \e strettamente connessa con la loro propriet\'a di esattezza: un importante risultato stabilisce che $\omega \in \Omega^1_0(U)$ \e esatta se e solo se $\int_\gamma \omega = 0$ per ogni curva chiusa e regolare a tratti $\gamma$ (\cite[Cor.8.2.1]{Giu2}). Osserviamo inoltre che la propriet\'a dell'esattezza di $\omega$ dipende anche dal dominio scelto: se $U$ \e {\em stellato}{\footnote{Ovvero esiste $x \in U$ tale che per ogni $y \in U$ risulta che i segmento che unisce $x$ ad $y$ \e contenuto in $U$.}} allora ogni $1$-forma chiusa \e anche esatta (\cite[Teo.8.2.2]{Giu2}).

\begin{ex}
{\it
Sia $A := \bR^2 - \{ 0 \}$ ed $\omega := (x^2+y^2)^{-1} [ -y \ dx + x \ dy ]$. Allora $\omega$ \e chiusa ma non esatta (si calcoli infatti $\int_\gamma \omega$, $\gamma(t) := (\cos t , \sin t)$, $t \in [0,1]$).
}
\end{ex}

Sia ora $K \subset U$ un dominio regolare con bordo $\partial K$. Il teorema di Stokes (vedi \cite[Eq.8.5.8]{Giu2} o \cite{Arb}) stabilisce che
\begin{equation}
\label{eq_stokes}
\int_{\partial K} \omega \ = \ \int_K d \omega 
\ \ , \ \
K \subset U \subseteq \bR^n
\ \ , \ \
\omega \in \Omega^m_k(U)
\ .
\end{equation}
Nel caso $m=1$, $n=2$ il teorema di Stokes \e una riformulazione della formula di Gauss-Green con $\omega = F_2 \ dx_1 - F_1 \ dx_2$.

\

\noindent \textbf{La dualit\'a di de Rham.} Il teorema di Stokes \e un tassello importante di una costruzione fondamentale in analisi e geometria, nota come la {\em dualit\'a di de Rham}. Abbiamo visto nei paragrafi precedenti come l'esattezza di una forma differenziale si traduca nella soluzione di un'equazione differenziale, per cui, data una variet\'a $M$ che per comodit\'a riguardiamo come un sottoinsieme $M \subseteq \bR^n$, \e di interesse caratterizzare le forme differenziali esatte su $M$. Ovviamente, condizione necessaria (e facilmente verificabile) per l'esattezza della forma $\omega \in \Omega^m_\infty(M)$ \e la propriet\'a di essere chiusa, ovvero $d \omega = 0$.

Per ogni $m \in \bN$ denotiamo con $Z_{dR}^m(M)$ lo spazio vettoriale delle $m$-forme chiuse e $C^\infty$; chiaramente $d \Omega^{m-1}_\infty(M)$ \e un sottospazio vettoriale di $Z^m_{dR}(M)$, e definiamo la {\em coomologia di de Rham di ordine $m$} come lo spazio quoziente 
\[
H_{dR}^m(M) \ := \ Z^m_{dR}(M) / d \Omega^{m-1}_\infty(M) \ .
\]
In base alle considerazioni precedenti, il fatto che $H_{dR}^m(M)$ sia non banale si traduce nell'esistenza di $m$-forme non esatte. Ebbene, $H_{dR}^m(M)$ pu\'o essere calcolato {\em in termini di propriet\'a prettamente topologiche di $M$}, e nelle righe seguenti accenneremo alla dimostrazione di questo fatto.

Iniziamo definendo, per ogni $m \in \bN$, il {\em simplesso standard}
\[
\Delta_m 
\ := \ 
\{ x \in \bR^m : x_i \geq 0 \ \forall i=1,\ldots,n \ , \ \sum_i x_i \leq 1 \}
\ \subset \bR^m
\ ;
\]
osserviamo che $\Delta_1$ \e un intervallo chiuso, $\Delta_2$ un triangolo chiuso, $\Delta_3$ un tetraedro.

Ora, preso un qualsiasi insieme $\Sigma$ possiamo definire lo spazio vettoriale $V(\Sigma)$ con elementi combinazioni lineari formali del tipo $\sum_i c_i a_i$, $c_1 \in \bR$, $a_i \in \Sigma$; per ogni $m \in \bN$ consideriamo quindi l'insieme $C^\infty(\Delta_m,M)$ e definiamo $C_m(M) := V(C^\infty(\Delta_m,M))$.
Lo scopo \e ora quello di introdurre sugli spazi $C_m(M)$ degli operatori 
$\partial : C_m(M) \to C_{m-1}(M)$
che formalizzino la nozione di bordo di un sottoinsieme di $\bR^n$ (nozione peraltro usata in (\ref{eq_stokes})). A tale scopo, per ogni $i = 0 , \ldots , m-1$ definiamo le applicazioni (dette {\em facce})
\[
j^i_m : \Delta_{m-1} \to \Delta_m
\ \ , \ \
j^i_m(x)
\ := \
\left\{
\begin{array}{ll}
(x_1 , \ldots , x_{i-1} , 0 , x_i , \ldots , x_{m-1}) \ \ , \ \ i \neq 0
\\
(1 - \sum_i x_i , x_1 , \ldots , x_{m-1})             \ \ , \ \ i = 0 \ ;
\end{array}
\right.
\]
il contenuto intuitivo della definizione precedente \e che un $m-1$-simplesso standard si presenta $m$ volte come una delle facce dell'$m$-simplesso.
Possiamo ora costruire {\em gli operatori di bordo} estendendo per linearit\'a sugli elementi di $C^\infty(\Delta_m,M) \subset C_m(M)$:
\[
\partial_m : C_m(M) \to C_{m-1}(M)
\ \ , \ \
\partial_m \sigma \ := \ \sum_i (-1)^i \sigma \circ j^i_m
\ \ , \ \
\sigma \in C^\infty(\Delta_m,M)
\ .
\]
Dei semplici (ma piuttosto tediosi) conti mostrano che $\partial_m \circ \partial_{m+1} = 0$, per cui ${\mathrm{Im}}(\partial_{m+1})$ \e un sottospazio vettoriale di $\ker(\partial_m)$. Ha quindi senso definire {\em l'omologia singolare di ordine $m$} come lo spazio quoziente
\[
H_m(M) \ := \ \ker(\partial_m) / {\mathrm{Im}}(\partial_{m+1}) \ .
\]
Presa $\omega \in Z^m_{dR}(M)$, definiamo il funzionale lineare
\[
\omega_* : \ker(\partial_m) \to \bR
\ \ , \ \
\omega_*(v)
\ := \
\sum_i a_i \int_{\sigma_i} \omega
\ \ , \ \
\forall v := \sum_i a_i \sigma_i \in \ker(\partial_m)
\ .
\]
Grazie al teorema di Stokes troviamo immediatamente
\[
\omega_*(v)
\ = \
\{ \omega + d \varphi \}_* (v + \partial_{m+1}w)
\ \ , \ \
\forall \varphi \in \Omega^{m-1}_\infty(M)
\ , \
w \in C_{m+1}(M)
\ ,
\]
per cui concludiamo che: 
(1) $\omega_* = \{ \omega + d \varphi \}_*$ dipende solo dalla classe di equivalenza $[\omega] \in H_{dR}^m(M)$; 
(2) $\omega_*(v) = \omega_*(v + \partial_{m+1}w)$ dipende solo dalla classe di equivalenza di $v$ in $H_m(M)$. 
Di conseguenza, abbiamo una ben definita applicazione lineare
\begin{equation}
\label{eq.dR}
H_{dR}^m(M) \to H_m^*(M)
\ \ , \ \
[\omega] \mapsto \omega_*
\ ,
\end{equation}
dove $H_m^*(M)$ \e lo spazio duale di $H_m(M)$. 

\begin{thm}[de Rham]
Sia $M$ una variet\'a differenziale di dimensione $d$. 
Allora $H_m(M)$ ha dimensione finita per ogni $m \in \bN$ e (\ref{eq.dR}) stabilisce un isomorfismo di spazi vettoriali. Di conseguenza si ha un isomorfismo 
$H_{dR}^m(M) \simeq H_m(M)$.
\end{thm}

%Le variet\'a considerate nel precedente teorema formano una classe abbastanza vasta (superficie di Riemann, sfere,...).
%
Segnaliamo il fatto fondamentale (e non banale) che se costruissimo gli spazi di omologia a partire da applicazioni continue $\sigma \in C(\Delta_m,M)$ piuttosto che $C^\infty$ otterremmo lo stesso spazio vettoriale $H_m(M)$, il quale dipende quindi solo dalla topologia di $M$.
Per dettagli sulla dualit\'a di de Rham rimandiamo a \cite{Arb,Lee}; qui ci limitiamo ad elencare i casi delle palle unitarie ($D^n$) e delle sfere ($S^n$), $n \in \bN$, di interesse nell'ambito del teorema di Brouwer (Teo.\ref{teo_bro}):
\begin{equation}
\label{eq_omologia}
H_k(S^n) =
\left\{
\begin{array}{ll}
\bR     \ \ , \ \ k = 0,n
\\
\{ 0 \} \ \ , \ \ k \neq 0,n
\end{array}
\right.
\ \ \ , \ \ \
H_k(D^n) =
\left\{
\begin{array}{ll}
\bR     \ \ , \ \ k = 0
\\
\{ 0 \} \ \ , \ \ k > 0
\end{array}
\right.
\end{equation}

\subsection{Fondamenti di calcolo variazionale.}
\label{sec_calc_var}

Oggetto di studio del calcolo variazionale \e la minimizzazione di applicazioni (comunemente dette {\em funzionali}) definite su spazi di funzioni. In termini precisi, consideriamo
\[
f : \bR^3 \to \bR 
\ \ , \ \ 
f = f(t,x,p)
\ 
{\mathrm{di \ classe }} \ C^2
\ ,
\]
$a < b \in \bR$ e lo spazio topologico
\[
\mX 
:=
\{ u \in C^1([t_0,t_1]) : u(a) = 0 , u(b) = L \}
\ \ , \ \
L \in \bR
\ ;
\]
definiamo quindi il funzionale
\begin{equation}
\label{eq_CV1}
F : \mX \to \bR
\ \ , \ \
F(u) := \int_a^b f ( t , u(t) , u'(t) ) \ dt
\ .
\end{equation}
In nostro problema \e quello di trovare $u \in \mX$ che minimizzi $F$.
Come vedremo nelle sezioni seguenti, esiste una stretta relazione tra il calcolo variazionale e la teoria delle equazioni alle derivate parziali. 

\

Diamo ora una condizione necessaria all'esistenza di un minimo $u \in \mX$ per $F$. Consideriamo funzioni $\varphi \in C_0^1([a,b])$ (ovvero, $\varphi(a) = \varphi(b) = 0$). Per ogni $\lambda \in \bR$ osserviamo che $v + \lambda \varphi \in \mX$, $v \in \mX$, e definiamo
$g (\lambda) := F( u + \lambda \varphi)$,
$\lambda \in \bR$,
cosicch\'e
\begin{equation}
\label{eq_EL1}
g'(\lambda)
=
\int_a^b 
\left(
\frac{\partial f}{\partial x} (\cdot) \ \varphi (t)
+
\frac{\partial f}{\partial p} (\cdot) \ \varphi'(t)
\right) \ dt
\ .
\end{equation}
Chiaramente, se $u$ \e di minimo per $f$ allora $\lambda = 0$ \e di minimo per $g$, per cui esplicitando la condizione $g'(0) = 0$ otteniamo il seguente risultato:
\begin{prop}
\label{prop_CV1}
Se $u \in \mX$ \e di minimo per il funzionale $F$ definito da (\ref{eq_CV1}), allora
\begin{equation}
\label{eq_CV2}
\int_a^b 
\left(
\frac{\partial f}{\partial x} (t,u(t),u'(t)) \ \varphi (t)
+
\frac{\partial f}{\partial p} (t,u(t),u'(t)) \ \varphi'(t)
\right) \ dt
\ = \
0
\ \ , \ \
\forall \varphi \in C_0^1([a,b])
\ .
\end{equation}
\end{prop}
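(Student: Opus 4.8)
The statement is the standard first-variation computation: if $u$ minimizes the functional $F$, then the derivative of the one-parameter family $g(\lambda) := F(u+\lambda\varphi)$ vanishes at $\lambda=0$ for every admissible test function $\varphi$. The whole argument is already set up in the text preceding the statement, so the proof is essentially a matter of justifying the differentiation under the integral sign and then invoking the elementary fact that $\lambda=0$ is an interior minimum of $g$.

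First I would fix an arbitrary $\varphi \in C_0^1([a,b])$ and observe that since $\varphi(a)=\varphi(b)=0$, every competitor $u+\lambda\varphi$ lies again in $\mX$ (it is $C^1$ and satisfies the boundary conditions $u(a)+\lambda\varphi(a)=0$, $u(b)+\lambda\varphi(b)=L$). Hence $g(\lambda)=\int_a^b f(t,u(t)+\lambda\varphi(t),u'(t)+\lambda\varphi'(t))\,dt$ is well defined for all $\lambda\in\bR$. The key technical step is to differentiate $g$ under the integral sign to obtain (\ref{eq_EL1}). Here I would apply Theorem~\ref{thm_der_int} (derivation under the integral sign): the integrand is $C^2$ in its arguments by hypothesis on $f$, it is continuous (hence integrable) on the compact interval $[a,b]$ for each $\lambda$, and—restricting $\lambda$ to a bounded interval so that $(t,u(t)+\lambda\varphi(t),u'(t)+\lambda\varphi'(t))$ ranges over a compact subset of $\bR^3$—the partial derivative $\tfrac{\partial}{\partial\lambda}f = \tfrac{\partial f}{\partial x}\varphi + \tfrac{\partial f}{\partial p}\varphi'$ is bounded by a constant, which serves as the required dominating function on $[a,b]$. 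This yields precisely (\ref{eq_EL1}) by the chain rule.

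Next, since $u$ is a minimum for $F$ over $\mX$ and $u+\lambda\varphi\in\mX$ for all $\lambda$, the scalar function $g$ attains a minimum at the interior point $\lambda=0$. As $g$ is differentiable (established in the previous step), the elementary one-variable criterion gives $g'(0)=0$. Setting $\lambda=0$ in (\ref{eq_EL1}) then produces exactly equation (\ref{eq_CV2}). Because $\varphi$ was arbitrary in $C_0^1([a,b])$, the conclusion holds for every such $\varphi$, which is the assertion.

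**Main obstacle.** There is no deep obstacle; the only point requiring genuine care is the legitimacy of differentiating under the integral sign, i.e.\ verifying the domination hypothesis of Theorem~\ref{thm_der_int}. The mild subtlety is that $\tfrac{\partial f}{\partial x}$ and $\tfrac{\partial f}{\partial p}$ must be controlled uniformly in $\lambda$; this is handled by confining $\lambda$ to a compact neighbourhood of $0$ (differentiability at $\lambda=0$ is a local question) so that all the relevant arguments of $f$ lie in a fixed compact set, on which the continuous functions $\tfrac{\partial f}{\partial x}$, $\tfrac{\partial f}{\partial p}$, $\varphi$ and $\varphi'$ are bounded. Everything else is a routine application of the chain rule and the first-derivative test.
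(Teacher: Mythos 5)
La tua dimostrazione è corretta e segue essenzialmente la stessa strada del testo: si definisce $g(\lambda) := F(u+\lambda\varphi)$, si osserva che $u+\lambda\varphi \in \mX$, si calcola $g'$ come in (\ref{eq_EL1}) e si impone $g'(0)=0$ perché $\lambda=0$ è un minimo interno. L'unica differenza è che giustifichi esplicitamente la derivazione sotto il segno di integrale tramite il Teorema~\ref{thm_der_int} e un argomento di compattezza, passaggio che il testo dà per scontato; è un'aggiunta opportuna ma non cambia la sostanza dell'argomento.
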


\begin{thm}[Teorema di Eulero]
\label{thm_CV1}
Sia $u \in \mX$ di minimo per il funzionale $F$ definito da (\ref{eq_CV1}), $u \in C^2([a,b])$. Allora $u$ soddisfa l'equazione di Eulero-Lagrange
\begin{equation}
\label{eq_CV3}
\frac{d}{dt} \frac{\partial f}{\partial p} ( t , u(t) , u'(t) )
\ = \
\frac{\partial f}{\partial x} ( t , u(t) , u'(t) )
\ \ , \ \
t \in (a,b)
\ .
\end{equation}
\end{thm}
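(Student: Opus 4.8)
The plan is to convert the integral identity of Prop.\ref{prop_CV1} into a pointwise statement by eliminating $\varphi'$ through integration by parts and then applying a localization argument (the fundamental lemma of the calculus of variations). The hypothesis $u \in C^2([a,b])$ is exactly what makes this conversion legitimate, since it guarantees enough regularity of the coefficient of $\varphi'$ to differentiate it.

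First I would record the regularity that licenses the integration by parts. Since $f \in C^2$ and $u \in C^2([a,b])$, the curve $t \mapsto (t,u(t),u'(t))$ is of class $C^1$ (the last component being $C^1$ precisely because $u \in C^2$), so its composition with the $C^1$ map $\frac{\partial f}{\partial p}$ yields that $t \mapsto \frac{\partial f}{\partial p}(t,u(t),u'(t))$ is of class $C^1$ on $[a,b]$. Taking an arbitrary $\varphi \in C_0^1([a,b])$, I would then integrate by parts the second summand in (\ref{eq_CV2}):
\[
\int_a^b \frac{\partial f}{\partial p}(t,u,u') \, \varphi'(t) \ dt
\ = \
\left[ \frac{\partial f}{\partial p}(t,u,u') \, \varphi(t) \right]_a^b
\ - \
\int_a^b \frac{d}{dt} \frac{\partial f}{\partial p}(t,u,u') \, \varphi(t) \ dt \ .
\]
The boundary term vanishes because $\varphi(a) = \varphi(b) = 0$, and substituting back into (\ref{eq_CV2}) gives
\[
\int_a^b \left(
\frac{\partial f}{\partial x}(t,u,u')
-
\frac{d}{dt} \frac{\partial f}{\partial p}(t,u,u')
\right) \varphi(t) \ dt
\ = \ 0
\ \ , \ \
\forall \varphi \in C_0^1([a,b]) \ .
\]

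Finally I would set $h(t) := \frac{\partial f}{\partial x}(t,u,u') - \frac{d}{dt}\frac{\partial f}{\partial p}(t,u,u')$, a continuous function on $[a,b]$, and conclude $h \equiv 0$, which is precisely (\ref{eq_CV3}). This last implication is the only non-routine step and the main obstacle: it is the fundamental lemma of the calculus of variations, asserting that a continuous $h$ with $\int_a^b h \varphi = 0$ for every $\varphi \in C_0^1([a,b])$ must vanish identically. I would prove it by contradiction. Suppose $h(t_0) > 0$ for some $t_0 \in (a,b)$; by continuity there is an interval $(c,d) \ni t_0$, with $a \le c < d \le b$, on which $h > 0$. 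Choosing the nonnegative test function $\varphi(t) := [(t-c)(d-t)]^2$ for $t \in [c,d]$ and $\varphi \equiv 0$ outside, which lies in $C_0^1([a,b])$ and is strictly positive on $(c,d)$, I obtain $\int_a^b h \varphi = \int_c^d h \varphi > 0$, contradicting the hypothesis. The case $h(t_0) < 0$ is symmetric, and continuity extends the conclusion $h \equiv 0$ from $(a,b)$ to the whole closed interval, completing the argument.
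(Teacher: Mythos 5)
Your proposal is correct and follows essentially the same route as the paper: integrate by parts in (\ref{eq_CV2}) using $\varphi(a)=\varphi(b)=0$, then invoke the fundamental lemma of the calculus of variations to pass from the integral identity to the pointwise equation (\ref{eq_CV3}). The only difference is one of detail — the paper dismisses the last step as "semplice" while you supply the localization argument with an explicit $C_0^1$ bump function, and you also make explicit the regularity ($u \in C^2$, $f \in C^2$) that justifies differentiating $t \mapsto \frac{\partial f}{\partial p}(t,u(t),u'(t))$.
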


\begin{proof}[Dimostrazione]
Integrando per parti otteniamo
\begin{equation}
\label{eq_EL2}
\int_a^b 
\frac{\partial f}{\partial p} ( t , u(t) , u'(t) ) \ \varphi'(t) \ dt
\ = \
- \int_a^b 
\frac{d}{dt} \frac{\partial f}{\partial p} ( t , u(t) , u'(t) ) \ \varphi(t) \ dt
\ ,
\end{equation}
per cui
\[
\int_a^b
\varphi (t)
\
\left(
\frac{d}{dt} \frac{\partial f}{\partial p} ( t , u(t) , u'(t) ) 
\ - \
\frac{\partial f}{\partial x} ( t , u(t) , u'(t) )
\right)
\ dt
\ = \
0 
\ \ , \ \
\varphi \in C_0^1([a,b])
\ .
\]
Da quest'ultima espressione \e semplice concludere che il fattore moltiplicato per $\varphi$ nella funzione integranda deve essere nullo, ed il teorema \e dimostrato.
\end{proof}

%Osserviamo che esplicitando la derivata rispetto a $t$ l'equazione (\ref{eq_CV3}) 
%otteniamo
%%
%\[
%\frac{\partial^2 f}{\partial p \partial t} +
%\frac{\partial^2 f}{\partial p \partial x} u' +
%\frac{\partial^2 f}{\partial p^2} u'' 
%=
%\frac{\partial f}{\partial x}
%\ ;
%\]
%%
%per cui, se $f_{pp} \neq 0$ otteniamo un'equazione di secondo grado del tipo
%%
%\[
%u'' = h(t,u,u') \ ,
%\]
%%
%che fornisce un'equazione differenziale del secondo ordine con condizioni al bordo 
%$u(a) = 0$, $u(b) = L$. Tali problemi, a differenza di quelli di Cauchy, possono 
%non avere soluzioni.

\

Il teorema di Eulero fornisce una condizione necessaria affinch\'e esista una soluzione $u$ del problema variazionale associato al funzionale (\ref{eq_CV1}). Un'importante classe di funzionali tali che l'equazione di Eulero-Lagrange \e anche condizione sufficiente \e quella dei {\em funzionali convessi}, ovvero
\begin{equation}
\label{eq_CV4}
F ( \lambda u + (1-\lambda)v  )
\leq
\lambda F(u) + (1-\lambda)F(v)
\ \ , \ \
\lambda \in [0,1]
\ , \
u,v \in \mX
\ .
\end{equation}

\begin{thm}
\label{thm_CV2}
Sia dato il problema variazionale (\ref{eq_CV1}), con $F$ convesso. Una funzione $u \in \mX \cap C^1([a,b])$ \e di minimo per $F$ se e solo se soddisfa l'equazione di Eulero-Lagrange (\ref{eq_CV3}).
\end{thm}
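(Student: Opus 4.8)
The statement has two implications, but only one carries new content. The "only if" direction (a minimizer satisfies the Euler--Lagrange equation) is exactly the Teorema di Eulero (Teo.\ref{thm_CV1}) and uses no convexity, so I would simply invoke it. The whole point of the convexity hypothesis (\ref{eq_CV4}) is to upgrade the necessary condition into a sufficient one, so the work lies in the converse: assuming $F$ convex and $u \in \mX$ a solution of the Euler--Lagrange equation (\ref{eq_CV3}), I must show $F(u) \le F(v)$ for every competitor $v \in \mX$. First I would fix an arbitrary $v \in \mX$ and set $\varphi := v - u$. Since $u(a) = v(a) = 0$ and $u(b) = v(b) = L$, we get $\varphi(a) = \varphi(b) = 0$, i.e. $\varphi \in C_0^1([a,b])$ is an admissible variation. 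Then I introduce the scalar function
\[
g(\lambda) := F(u + \lambda \varphi)
\ \ , \ \
\lambda \in [0,1]
\ ,
\]
noting that $u + \lambda \varphi = (1-\lambda)u + \lambda v$, so that $g(0) = F(u)$ and $g(1) = F(v)$.

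The plan is to prove that $g$ is convex on $[0,1]$ and that $g'(0) = 0$, and then to read off $g(1) \ge g(0)$. Convexity of $g$ follows directly from (\ref{eq_CV4}): for $s,\lambda_1,\lambda_2 \in [0,1]$ one has $u + (s\lambda_1 + (1-s)\lambda_2)\varphi = s(u+\lambda_1\varphi) + (1-s)(u+\lambda_2\varphi)$, and applying the convexity inequality of $F$ to this convex combination yields $g(s\lambda_1 + (1-s)\lambda_2) \le s\,g(\lambda_1) + (1-s)\,g(\lambda_2)$. Next I compute $g'(0)$ using the formula (\ref{eq_EL1}) for the derivative of $g$, which gives $g'(0) = \int_a^b \big( \frac{\partial f}{\partial x}(\cdot)\,\varphi + \frac{\partial f}{\partial p}(\cdot)\,\varphi' \big)\,dt$ evaluated along $(t,u(t),u'(t))$. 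Integrating the second term by parts as in (\ref{eq_EL2}) (the boundary terms vanish because $\varphi(a)=\varphi(b)=0$) turns this into
\[
g'(0)
\ = \
\int_a^b \varphi(t)
\left(
\frac{\partial f}{\partial x}(t,u,u') - \frac{d}{dt}\frac{\partial f}{\partial p}(t,u,u')
\right) dt
\ = \
0
\ ,
\]
the last equality being precisely the Euler--Lagrange equation (\ref{eq_CV3}).

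It remains to conclude. Because $g$ is convex and differentiable at $0$, its difference quotients are monotone (this is Prop.\ref{prop_conv}, the quantitative form being (\ref{eq_Lp2})), so for every $\lambda \in (0,1]$ we have $\frac{g(\lambda)-g(0)}{\lambda} \ge \lim_{\lambda \to 0^+}\frac{g(\lambda)-g(0)}{\lambda} = g'(0) = 0$; taking $\lambda = 1$ gives $F(v) = g(1) \ge g(0) = F(u)$. Since $v \in \mX$ was arbitrary, $u$ is a minimizer, completing the converse and hence the theorem. I do not expect a serious technical obstacle here: the argument is a clean reduction to the one-dimensional fact that a convex function lies above each of its supporting lines, here the horizontal line at $g(0)$ forced by $g'(0)=0$. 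The only point deserving care is the differentiation of $g$ under the integral sign producing (\ref{eq_EL1}) and the admissibility of the integration by parts, both of which are already justified in the discussion preceding Prop.\ref{prop_CV1} under the standing assumption $f \in C^2$ (and the tacit regularity of $t \mapsto \frac{\partial f}{\partial p}(t,u,u')$ implicit in $u$ solving (\ref{eq_CV3})).
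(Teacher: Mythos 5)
Your proposal is correct and follows essentially the same route as the paper: reduce to the sufficiency direction via the Teorema di Eulero, set $\varphi := v-u$, introduce $g(\lambda) := F(u+\lambda\varphi)$, use the convexity of $F$ to obtain the support-line inequality $g(1) \geq g(0) + g'(0)$, and kill $g'(0)$ by the Euler--Lagrange equation after the integration by parts of (\ref{eq_EL2}). The only cosmetic difference is that you establish full convexity of $g$ on $[0,1]$ and invoke monotonicity of difference quotients, whereas the paper extracts the same inequality directly from the single convex-combination estimate between $0$, $\lambda$ and $1$; the two computations coincide.
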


\begin{proof}[Dimostrazione]
Grazie al teorema di Eulero, per dimostrare il teorema dobbiamo soltanto verificare che una funzione $u$ che soddisfi (\ref{eq_CV3}) \e di minimo per $F$. Ora, per ogni $v \in \mX$ risulta che 
$\varphi := v - u \in C_0^1([a,b])$
(si ricordino le condizioni al bordo per elementi di $\mX$). Per convessit\'a, troviamo
\[
F( u + \lambda \varphi)
=
F( \lambda v + (1-\lambda)u )
\leq
\lambda F(v) + (1-\lambda) F(u)
\ .
\]
Definendo 
\[
g (\lambda) := F(u+\lambda \varphi) = F ( \lambda v + (1-\lambda) u )
\ ,
\]
abbiamo la condizione
\[
g(\lambda) \leq \lambda g(1) + (1-\lambda)g(0)
\ \Rightarrow \
g(1) - g(0) \geq \frac{g(\lambda) - g(0)}{\lambda}
\ ,
\]
ovvero
$g(1) \geq g(0) + g'(0)$.
Ora, applicando (\ref{eq_EL1},\ref{eq_EL2}) otteniamo
\[
g'(\lambda)
\ = \
\int_a^b 
\varphi (t)
\
\left(
\frac{\partial f}{\partial x} ( \cdot ) 
-
\frac{d}{dt} \frac{\partial f}{\partial p} ( \cdot ) 
\right) \ dt
\ .
\]
Dunque, poich\'e $u$ soddisfa (\ref{eq_CV3}) otteniamo $g'(0) = 0$. Concludiamo quindi che $g(1) \geq g(0)$, ovvero $F(v) \geq F(u)$.
\end{proof}

\begin{ex}{\it 
Si consideri il funzionale
\[
F(u) := \int_0^1 [u'(t)]^2 dt \ . 
\]
Poich\'e $f(t,x,p) = p^2$ \e convessa, $F$ \e convesso. L'equazione di Eulero-Lagrange associata ad $F$ \e $u''=0$, con condizioni al bordo $u(a) = 0$, $u(1) = L$. La soluzione \e quindi il minimo per $F$ \e
$u (t) = L(b-a)^{-1} (t-a)$.
}\end{ex}

\subsection{Misure ed integrali su spazi prodotto.}
\label{sec_misprod}

Torniamo ora nell'ambito della teoria della misura, occupandoci dell'integrazione su prodotti cartesiani di spazi misurabili.

\

\noindent \textbf{La costruzione di misure su spazi cartesiani.} Siano $(X,\mM,\mu)$, $(Y,\mN,\nu)$ spazi misurabili. Vogliamo equipaggiare il prodotto cartesiano $X \times Y$ di una misura, ed a tale scopo introduciamo la {\em famiglia dei rettangoli}
\[
\mR_{X,Y} := \{ A \times B \ : \ A \in \mM , B \in \mN   \} \ ,
\]
la quale 
{\footnote{Osservare che, nel caso $X = Y = \bR$ equipaggiato con la misura di Lebesgue, $\mR_{X,Y}$ ha elementi prodotti cartesiani di generici insiemi misurabili e non solo di intervalli, per cui il termine {\em rettangolo} \e da intendere in senso lato.}}
soddisfa le seguenti propriet\'a:

\begin{lem}
\label{lem_semial}
(1) $\emptyset \in \mR_{X,Y}$; 
(2) $R,R' \in \mR_{X,Y}$ $\Rightarrow$ $R \cap R' \in \mR_{X,Y}$; 
(3) Per ogni $R \in \mR_{X,Y}$ esistono $R_1 , \ldots , R_n \in \mR_{X,Y}$ disgiunti tali che $R^c = \dot{\cup}_{k=1}^n R_k$. 
\end{lem}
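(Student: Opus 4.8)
The plan is to verify the three properties in turn, the first two being immediate set-theoretic observations and the third requiring the standard two-piece decomposition of the complement of a rectangle.

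For property (1) I would simply note that $\emptyset = \emptyset \times \emptyset$, and since $\mM$, $\mN$ are $\sigma$-algebre si ha $\emptyset \in \mM$ ed $\emptyset \in \mN$, whence $\emptyset \in \mR_{X,Y}$. For property (2), writing $R = A \times B$ ed $R' = A' \times B'$ con $A, A' \in \mM$, $B, B' \in \mN$, I would invoke the elementary identity
\[
(A \times B) \cap (A' \times B') = (A \cap A') \times (B \cap B')
\ .
\]
Poich\'e una $\sigma$-algebra \e chiusa rispetto all'intersezione (finita), troviamo $A \cap A' \in \mM$ e $B \cap B' \in \mN$, per cui l'intersezione \e ancora un rettangolo.

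The substance lies in property (3), which is therefore the main (if modest) obstacle. Given $R = A \times B \in \mR_{X,Y}$, the plan is to establish the disjoint decomposition
\[
R^c \ = \ (A^c \times Y) \ \dot{\cup} \ (A \times B^c)
\ ,
\]
dove i complementi sono presi rispettivamente in $X$, $Y$ ed $X \times Y$. To see this I would observe that a pair $(x,y)$ lies in $R^c = (X \times Y) - (A \times B)$ precisely when $x \notin A$ oppure $y \notin B$: nel primo caso $(x,y) \in A^c \times Y$, mentre nel caso restante ($x \in A$, $y \notin B$) risulta $(x,y) \in A \times B^c$; viceversa ciascuno dei due rettangoli \e contenuto in $R^c$. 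The two pieces are disjoint because their first coordinates range over the disjoint sets $A^c$ ed $A$, which is exactly what upgrades the union to a \emph{disjoint} union.

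It remains to check that the two pieces are genuine rectangles: essendo $\mM$ una $\sigma$-algebra si ha $A^c \in \mM$ ed $A \in \mM$, ed essendo $\mN$ una $\sigma$-algebra si ha $Y \in \mN$ ed $B^c \in \mN$. Dunque la propriet\'a (3) \e verificata con $n = 2$. The only delicate point—such as it is—is arranging the decomposition in (3) so that the two rectangles simultaneously exhaust the complement and remain disjoint; once the role of the first coordinate is noted, everything else is a routine set-theoretic verification.
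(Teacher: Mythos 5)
Your proof is correct and follows essentially the same route as the paper: points (1) and (2) are the identical set-theoretic observations, and for (3) you decompose $R^c$ as a finite disjoint union of rectangles built from $A$, $A^c$, $B$, $B^c$. The only difference is that you use two pieces, $R^c = (A^c \times Y) \ \dot{\cup} \ (A \times B^c)$, where the paper uses the symmetric three-piece version $R^c = (A^c \times B) \ \dot{\cup} \ (A \times B^c) \ \dot{\cup} \ (A^c \times B^c)$; since $A^c \times Y = (A^c \times B) \ \dot{\cup} \ (A^c \times B^c)$, your decomposition is just a regrouping of the paper's and both are equally valid.
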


\begin{proof}[Dimostrazione]
(1) Ovviamente $\emptyset \times \emptyset = \emptyset$;
(2) Posto $R = A \times B$, $R' = A' \times B'$, abbiamo
    $R \cap R' = (A \cap A') \times (B \cap B')$,
    con $A \cap A' \in \mM$, $B \cap B' \in \mN$.
(3) Posto $R = A \times B$, abbiamo
    $R^c = (A^c \times B) \dot{\cup} (A \times B^c) \dot{\cup} (A^c \times B^c)$,
    con $A,A^c \in \mM$, $B,B^c \in \mN$.
\end{proof}

Dal Lemma precedente segue che se $R,R' \in \mR_{X,Y}$ ed $R \subset R'$ allora esistono $R_1 , \ldots , R_n \in \mR_{X,Y}$ disgiunti tali che
\begin{equation}
\label{eq_3p}
R' \, = \, R \, \dot{\cup} \, R_1 \, \dot{\cup} \ldots \dot{\cup} \, R_n \ ;
\end{equation}
infatti, usando (3) abbiamo $R^c = \dot{\cup}_k^n R'_k$ con $\{ R'_k \} \subset \mR_{X,Y}$; per cui
$R^c \cap R' = \dot{\cup}_k R_k$, $R_k := R' \cap R'_k$,
e quindi
$R' = R \dot{\cup} (R^c \cap R') = R \dot{\cup} R_1 \dot{\cup} \ldots \dot{\cup} R_n$.
Definiamo ora l'applicazione
\[
\lambda : \mR_{X,Y} \to \wa \bR^+
\ \ , \ \
\lambda (A \times B) := \mu(A) \cdot \nu(B)
\ \ , \ \
A \in \mM 
\ , \
B \in \mN
\ .
\]

\begin{lem}
\label{lem_semial2}
L'applicazione $\lambda$ soddisfa le seguenti propriet\'a: 
(i) $\lambda \emptyset = 0$; 
(ii) Se $\{ R_n \}$ \e una successione di rettangoli disgiunti tale che $R  := \dot{\cup}_n R_n \in \mR_{X,Y}$, allora $\lambda R = \sum_n \lambda R_n$;
(iii) Se $\{ R_n \}$ \e una successione di rettangoli ed $R = \cup_n R_n \in \mR_{X,Y}$, allora $\lambda R \leq \sum_n \lambda R_n$;
(iv) Se $R,R' \in \mR_{X,Y}$ ed $R \subseteq R'$ allora $\lambda R \leq \lambda R'$.
\end{lem}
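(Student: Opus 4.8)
The plan is to derive all four properties from a single pointwise comparison of characteristic functions, integrated twice via the monotone convergence theorem (Teo.\ref{thm_conv_mon}). Property (i) is immediate: since $\emptyset = \emptyset \times \emptyset$ we have $\lambda \emptyset = \mu(\emptyset)\nu(\emptyset) = 0$, using $0 \cdot 0 = 0$.

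For (ii), write $R = A \times B$ and $R_n = A_n \times B_n$. Disjointness of the $R_n$ together with $R = \dot{\cup}_n R_n$ yields the pointwise identity
\[
\chi_A(x)\chi_B(y) = \sum_n \chi_{A_n}(x)\chi_{B_n}(y), \quad (x,y) \in X \times Y,
\]
since a point of $R$ lies in exactly one $R_n$. Fixing $x$ and integrating in $y$ against $\nu$, the right-hand side is a series of non-negative measurable functions of $y$ with increasing partial sums, so Beppo Levi gives $\chi_A(x)\nu(B) = \sum_n \chi_{A_n}(x)\nu(B_n)$ (pulling the constants $\chi_{A_n}(x)\in\{0,1\}$ and $\chi_A(x)$ out of the integrals, and using that the integral of a characteristic function returns the measure). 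This is now an identity between non-negative measurable functions of $x$; integrating in $x$ against $\mu$ and applying Beppo Levi once more yields $\mu(A)\nu(B) = \sum_n \mu(A_n)\nu(B_n)$, i.e. $\lambda R = \sum_n \lambda R_n$. The extended-real conventions, in particular $0 \cdot \infty = 0$, guarantee that every product appearing is well defined.

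Property (iii) follows by the identical two-stage integration, with the equality replaced by the pointwise inequality $\chi_A(x)\chi_B(y) \leq \sum_n \chi_{A_n}(x)\chi_{B_n}(y)$, which holds because $R \subseteq \cup_n R_n$ forces $\sum_n \chi_{R_n} \geq 1$ wherever $\chi_R = 1$; monotonicity of the integral and Beppo Levi then propagate the inequality to $\lambda R \leq \sum_n \lambda R_n$. Finally, (iv) reduces to (ii): when $R \subseteq R'$, the decomposition (\ref{eq_3p}) furnished by Lemma \ref{lem_semial} writes $R' = R \,\dot{\cup}\, R_1 \,\dot{\cup}\, \ldots \,\dot{\cup}\, R_m$ as a finite disjoint union of rectangles, so (ii) gives $\lambda R' = \lambda R + \sum_{k=1}^m \lambda R_k \geq \lambda R$, all summands being non-negative.

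The main obstacle is (ii): it is the one place where a genuine limiting argument is needed, being in essence the non-negative (Tonelli) case of Fubini's theorem for characteristic functions, proved by hand before any product measure is available. The only delicate point is justifying the interchange of summation and integration at each of the two stages, which is exactly what the monotone convergence theorem supplies, since all functions involved are non-negative and their partial sums are monotone increasing.
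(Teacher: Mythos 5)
Your proof is correct and follows essentially the same route as the paper's: the same pointwise identity between characteristic functions, integrated first in $y$ and then in $x$ with Beppo Levi, the same relaxation to an inequality for (iii), and the same reduction of (iv) to (ii) via the decomposition (\ref{eq_3p}). The only cosmetic difference is that the paper handles the inner integration by invoking countable additivity of $\nu$ directly rather than a second application of monotone convergence; both are equivalent here.
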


\begin{proof}[Dimostrazione]
(i) \e ovvia. Riguardo (ii) scriviamo $R := A \times B$, $R_n := A_n \times B_n$ ed osserviamo che, pur essendo gli elementi di $\{ R_n \}$ a due a due disgiunti, potremmo tranquillamente avere $A_n \cap A_m \neq \emptyset$, $B_{n'} \cap B_{m'} \neq \emptyset$, per opportune coppie di indici $n,m$ e $n',m'$. Consideriamo la funzione $f$ che associa ad $x \in X$ la somma (eventualmente numerabile) delle misure di tutti i $B_n$ tali che $(x,y) \in R_n$ per qualche $y \in Y$; chiaramente, possiamo scrivere 
$f(x) = \sum_n \chi_{A_n}(x) \nu B_n$, $x \in X$.
D'altra parte, fissato $x \in A$ abbiamo che {\em ogni} $y \in B$ \e tale che $(x,y)$ appartiene {\em ad uno, ed un solo}, $R_n$, per cui deve essere, per additivit\'a numerabile di $\nu$,
$f(x) = \chi_A(x) \nu B$;
concludiamo che
\[
\chi_A(x) \nu B = 
\sum_n \chi_{A_n}(x) \nu B_n
\ .
\]
Integrando ed usando il Teorema di convergenza monot\'ona troviamo
\[
\lambda R = 
\mu A \cdot \nu B = 
\int \chi_A \ d \mu \cdot \nu B = 
\int \sum_n \chi_{A_n} \nu B_n \ d \mu  =
\sum_n \mu A_n  \nu B_n =
\sum_n \lambda R_n \ .
\]
Riguardo (iii), ripetiamo il ragionamento del punto precedente osservando che stavolta, fissato $x \in A$, la generica coppia $(x,y)$, $y \in Y$, apparterr\'a ad uno {\em o pi\'u} rettangoli $R_n$, per cui abbiamo
$\chi_A(x) \nu B \leq \sum_n \chi_{A_n}(x) \nu B_n$
e quindi, integrando membro a membro,
\[
\lambda R = 
\int \chi_A \ d \mu \cdot \nu B 
\ \leq \
\int \sum_n \chi_{A_n} \nu B_n \ d \mu  =
\sum_n \lambda R_n \ .
\]
Infine, per quanto attiene a (iv) osserviamo che usando (\ref{eq_3p}) e (ii) troviamo
\[
\lambda R' = \lambda R + \sum_i^n \lambda R_i \geq \lambda R \ .
\]
\end{proof}

\begin{thm}[La misura prodotto]
Dati gli spazi di misura completi $(X,\mM,\mu)$, $(Y,\mN,\nu)$, esiste ed \e unica la misura completa $\mu \times \nu$ su $X \times Y$ che estende $\lambda$ alla $\sigma$-algebra generata da $\mR_{X,Y}$.
\end{thm}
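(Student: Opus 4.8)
Il piano è riprodurre la costruzione di Carath\'eodory già impiegata per la misura di Lebesgue in \S\ref{sec_lebR}, rimpiazzando gli intervalli con i rettangoli. Il punto di partenza è che, grazie al Lemma \ref{lem_semial}, $\mR_{X,Y}$ è una semialgebra, e che, grazie al Lemma \ref{lem_semial2}, $\lambda$ ne è una premisura. Definisco allora l'applicazione $\lambda^* : 2^{X \times Y} \to \wa \bR^+$ ponendo $\lambda^*(E) := \inf \{ \sum_n \lambda(R_n) : E \subseteq \cup_n R_n , \{ R_n \} \subset \mR_{X,Y} \}$, in perfetta analogia con (\ref{eq_misEst}). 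Per prima cosa verifico che $\lambda^*$ è una misura esterna nel senso di \S\ref{sec_MIS1}: $\lambda^*\emptyset = 0$ e la monotonia sono immediate, mentre la subadditività numerabile si ottiene con il consueto argomento diagonale ``$2^{-n}\eps$'' già usato nel Lemma \ref{lem_ME03a}, appoggiandosi alle proprietà (iii)--(iv) del Lemma \ref{lem_semial2}. A questo punto il Lemma \ref{lem_car} fornisce automaticamente una $\sigma$-algebra $\mM_*$ di insiemi misurabili secondo Carath\'eodory ed una misura completa $\lambda_* := \lambda^*|_{\mM_*}$.

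Il secondo (e cruciale) passo è dimostrare che ogni rettangolo $R \in \mR_{X,Y}$ appartiene a $\mM_*$, ovvero che soddisfa la condizione di splitting $\lambda^* A \geq \lambda^*(R \cap A) + \lambda^*(R^c \cap A)$ per ogni $A$. L'idea è: preso un ricoprimento $\{ S_k \} \subset \mR_{X,Y}$ di $A$, spezzo ciascun $S_k$ come $S_k = (S_k \cap R) \dot{\cup} (S_k \cap R^c)$, dove il primo addendo è un rettangolo per il Lemma \ref{lem_semial}(2), mentre $S_k \cap R^c = \dot{\cup}_j T_{k,j}$ è unione finita e disgiunta di rettangoli per il Lemma \ref{lem_semial}(3). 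L'additività finita di $\lambda$ (caso particolare della (ii) del Lemma \ref{lem_semial2}) dà allora $\lambda(S_k) = \lambda(S_k \cap R) + \sum_j \lambda(T_{k,j})$, e sommando su $k$ e passando all'estremo inferiore si ottiene la diseguaglianza voluta. Poiché $\mM_*$ è una $\sigma$-algebra contenente $\mR_{X,Y}$, essa contiene la $\sigma$-algebra generata $\sigma(\mR_{X,Y})$; resta solo da controllare che $\lambda_*$ estenda effettivamente $\lambda$, cioè $\lambda_*(R) = \lambda(R)$ per $R \in \mR_{X,Y}$: la diseguaglianza $\leq$ è banale (si ricopre $R$ con se stesso), mentre $\geq$ segue intersecando un generico ricoprimento con $R$ e usando le proprietà (iii)--(iv). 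Pongo dunque $\mu \times \nu := \lambda_*$, che è completa in virtù del Lemma \ref{lem_car}.

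Per l'unicità, se $\pi$ è una seconda misura (completa) su $\sigma(\mR_{X,Y})$ che estende $\lambda$, allora per ogni ricoprimento $\{ R_n \}$ di $E \in \sigma(\mR_{X,Y})$ la subadditività di $\pi$ dà $\pi(E) \leq \sum_n \pi(R_n) = \sum_n \lambda(R_n)$, da cui $\pi(E) \leq \lambda^*(E) = \{ \mu \times \nu \}(E)$; l'uguaglianza opposta si forza prendendo i complementi all'interno di un rettangolo di misura finita e sfruttando la continuità dall'alto (Lemma \ref{lem_caressa}), per poi estendere a tutto lo spazio decomponendolo in rettangoli di misura finita.

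Mi aspetto che proprio questo ultimo passo sia l'ostacolo principale: l'argomento di dominazione funziona senza sforzo sugli insiemi di misura finita, ma per ottenere l'unicità su tutto $\sigma(\mR_{X,Y})$ occorre ridursi al caso $\sigma$-finito, e in assenza di tale ipotesi l'unicità va intesa nel senso dell'unicità dell'estensione completata prodotta dal procedimento di Carath\'eodory. L'altro punto tecnicamente denso è la verifica dello splitting nel secondo passo, dove entrano in gioco simultaneamente tutta la struttura di semialgebra del Lemma \ref{lem_semial} e l'additività di $\lambda$ del Lemma \ref{lem_semial2}.
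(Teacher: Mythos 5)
La tua dimostrazione segue essenzialmente la stessa strada del testo: costruzione della misura esterna $\lambda^*$ per ricoprimenti con rettangoli, verifica della subadditivit\'a numerabile con l'argomento ``$2^{-n}\eps$'', applicazione del Lemma \ref{lem_car} e controllo finale che $\lambda_* R = \lambda R$ tramite il Lemma \ref{lem_semial2}(iii)-(iv). In pi\'u rendi espliciti due passaggi che il testo lascia sottintesi -- la verifica che ogni rettangolo soddisfa la condizione di splitting di Carath\'eodory (necessaria per concludere che $\mM_*$ contiene la $\sigma$-algebra generata da $\mR_{X,Y}$) e la questione dell'unicit\'a, della quale segnali correttamente che in assenza di $\sigma$-finitezza va intesa come unicit\'a dell'estensione prodotta dal procedimento -- per cui la proposta \e corretta e, se mai, pi\'u completa della dimostrazione del testo.
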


\begin{proof}[Dimostrazione]
Come primo passo mostriamo che
\begin{equation}
\label{eq_lambda}
\lambda^* : 2^{X \times Y} \to \wa \bR^+
\ \ , \ \
\lambda^*A 
\ := \
\inf_{A \subseteq \cup_n R_n 
:
\{ R_n \} \subset \mR_{X,Y} }
\sum_n \lambda R_n
\end{equation}
\e una misura esterna. Chiaramente $\lambda^* \emptyset = 0$; inoltre se $A \subseteq A'$ allora $A' \subseteq \cup_n R_n$, $\{ R_n \} \subset \mR_{X,Y}$, implica $A \subseteq \cup_n R_n$ e quindi (passando agli inf) $\lambda^* A \leq \lambda^* A'$. 
Rimane infine da verificare la subadditivit\'a numerabile. Se $\{ A_n \} \subset 2^{X \times Y}$ allora possiamo assumere $\lambda^*A_n < \infty$ per ogni $n \in \bN$, altrimenti non vi \e nulla da dimostrare. Per definizione di $\lambda^*$, per ogni $\eps > 0$ ed $n \in \bN$ esiste una successione $\{ R_{n,k} \}$ tale che
\[
A_n \subseteq \cup_k R_{n,k}
\ \ , \ \
\sum_k \lambda R_{n,k} 
< 
\lambda^* A_n + 2^{-n} \eps
\ .
\]
Poich\'e $\cup_n A_n \subseteq \cup_{n,k}R_{n,k}$, usando la definizione di $\lambda^*$ e le diseguaglianze precedenti troviamo
\[
\lambda^* \left( \cup_n A_n \right)
\leq
\sum_{n,k} \lambda R_{n,k}
< 
\sum_n \lambda^* A_n + \sum_n 2^{-n} \eps
= 
\sum_n \lambda^* A_n + \eps
\ ,
\]
e per arbitrariet\'a di $\eps$ segue la subadditivit\'a numerabile di $\lambda^*$, la quale \e quindi una misura esterna. Applicando il Lemma \ref{lem_car} otteniamo una misura completa $\mu \times \nu$ su $X \times Y$. 
Per ultimare la dimostrazione rimane da verificare soltanto il fatto che $\{ \mu \times \nu \}R = \lambda R$ per ogni $R \in \mR_{X,Y}$. A tale scopo osserviamo che, per costruzione, $\{ \mu \times \nu \}R = \lambda^* R$ (vedi Lemma \ref{lem_car}); d'altra parte \e evidente che l'inf attraverso il quale \e definito $\lambda^*R$ \e raggiunto proprio da $\lambda R$ (vedi Lemma \ref{lem_semial2}(iii)), per cui il teorema \e dimostrato.
\end{proof}

Denotiamo con $\mR_{\sigma \delta} \subseteq 2^{X \times Y}$ la classe dei sottoinsiemi di $X \times Y$ del tipo $E = \cap_n \left( \cup_m R_m^n \right)$, dove $R_m^n \in \mR_{X,Y}$ $\forall n,m \in \bN$; \e chiaro che ogni elemento di $\mR_{\sigma \delta}$ \e misurabile rispetto a $\mu \times \nu$.

\begin{lem}
\label{lem_sd}
Sia $E \subseteq X \times Y$ misurabile tale che $\{ \mu \times \nu \}E < \infty$. Allora esiste $R \in \mR_{\sigma \delta}$ tale che $E \subseteq R$ e $\{ \mu \times \nu \}E = \{ \mu \times \nu \}R$.
\end{lem}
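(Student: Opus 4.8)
The plan is to approximate the measurable set $E$ from outside using countable unions of rectangles, and then intersect these approximations to land in the class $\mR_{\sigma \delta}$. Recall that $\{ \mu \times \nu \}E = \lambda^*E$ by construction (see Lemma \ref{lem_car}), where $\lambda^*$ is the outer measure defined in (\ref{eq_lambda}). The definition of $\lambda^*$ as an infimum over rectangle coverings is precisely what gives us the room to maneuver: for each $n \in \bN$, by definition of the infimum there exists a sequence $\{ R_m^n \}_m \subset \mR_{X,Y}$ with

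\[
E \subseteq \bigcup_m R_m^n
\ \ , \ \
\sum_m \lambda R_m^n
\ < \
\{ \mu \times \nu \}E + n^{-1}
\ .
\]

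Here I use that $\{ \mu \times \nu \}E < \infty$, which guarantees the infimum is finite and the $n^{-1}$ perturbation is meaningful.

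**Building the $\mR_{\sigma\delta}$ set.** Next I would set $U_n := \cup_m R_m^n$ and define $R := \cap_n U_n = \cap_n \left( \cup_m R_m^n \right)$, which is by definition an element of $\mR_{\sigma \delta}$, hence measurable with respect to $\mu \times \nu$. Since $E \subseteq U_n$ for every $n$, we have $E \subseteq R$ immediately. The containment $E \subseteq R$ together with monotonicity of the measure (which $\mu \times \nu$ enjoys as any measure, cf. Oss.\ref{def_MIS02}(2)) gives $\{ \mu \times \nu \}E \leq \{ \mu \times \nu \}R$. For the reverse inequality, observe that $R \subseteq U_n$ for each fixed $n$, so by subadditivity of the outer measure applied to the covering $\{ R_m^n \}_m$ of $U_n$,

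\[
\{ \mu \times \nu \}R
\ \leq \
\{ \mu \times \nu \}U_n
\ \leq \
\sum_m \lambda R_m^n
\ < \
\{ \mu \times \nu \}E + n^{-1}
\ .
\]

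Letting $n \to \infty$ yields $\{ \mu \times \nu \}R \leq \{ \mu \times \nu \}E$, and combining the two inequalities gives the desired equality $\{ \mu \times \nu \}E = \{ \mu \times \nu \}R$.

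**The main subtlety.** The step requiring the most care is the passage $\{ \mu \times \nu \}U_n \leq \sum_m \lambda R_m^n$. This is not quite immediate from the definition of $\lambda^*$ alone, because $\lambda^*U_n$ is an infimum over \emph{all} rectangle coverings of $U_n$, and the specific covering $\{ R_m^n \}_m$ is merely one competitor; so indeed $\lambda^* U_n \leq \sum_m \lambda R_m^n$ holds directly as the infimum is bounded by any particular sum. The genuinely delicate point is rather conceptual: I must ensure that $R$, although built from a countable intersection of countable unions, remains within the measurable $\sigma$-algebra and that the monotonicity argument is applied to $\mu \times \nu$ (the honest measure) rather than naively to $\lambda$ on rectangles. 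Since $\mM_*$ is a $\sigma$-algebra (Lemma \ref{lem_car}) containing all rectangles, every $U_n$ and their intersection $R$ are automatically measurable, so no separate measurability verification is needed beyond citing this. The proof is therefore short, its entire content being the interplay between the outer-measure infimum and the freedom to choose ever-finer coverings indexed by $n$.
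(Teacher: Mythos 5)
La tua dimostrazione è corretta e segue essenzialmente la stessa strada del testo: approssimazione esterna di $E$ con unioni numerabili di rettangoli la cui misura totale dista al più $n^{-1}$ da $\{ \mu \times \nu \}E$, e passaggio all'intersezione $R := \cap_n U_n \in \mR_{\sigma \delta}$. L'unica (piccola) differenza è nel passo finale: tu concludi con la stima diretta $\{ \mu \times \nu \}R \leq \{ \mu \times \nu \}U_n \leq \sum_m \lambda R_m^n$, mentre il testo rende la successione decrescente e invoca la continuità dall'alto (Lemma \ref{lem_caressa}); entrambe le vie sono valide e usano la finitezza di $\{ \mu \times \nu \}E$.
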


\begin{proof}[Dimostrazione]
Per definizione di $\mu \times \nu$ (vedi (\ref{eq_lambda})) possiamo trovare una successione $\{ R_n \}$, $E \subseteq R_n$, $n \in \bN$, i cui elementi sono unione numerabile di rettangoli e tale che scelto $\eps > 0$ esiste $n_\eps \in \bN$ con
\[
\{ \mu \times \nu \}E \leq \{ \mu \times \nu \}R_n  \leq \{ \mu \times \nu \}E + \eps
\ \ , \ \
\forall n \geq n_\eps
\ \ \Rightarrow \ \
\{ \mu \times \nu \}E = \lim_n \{ \mu \times \nu \}R_n
\ .
\]
Eventualmente ridefinendo
$R_n \to \cap_i^n R_i$
possiamo assumere $R_n \subseteq R_{n+1}$, $n \in \bN$, con $\{ R_n \} \subseteq \mR_{\sigma \delta}$. Inoltre avendo $E$ misura finita possiamo assumere $\{ \mu \times \nu \} R_1 < \infty$. Definendo $R := \cap_n R_n$ ed applicando il Lemma \ref{lem_caressa} troviamo
$\{ \mu \times \nu \} R = \lim_n \{ \mu \times \nu \}R_n = \{ \mu \times \nu \}E$.
\end{proof}

\

\noindent \textbf{I teoremi di Fubini e Tonelli.} I risultati seguenti permettono di approcciare il calcolo di integrali di funzioni su spazi prodotto. Per prima cosa introduciamo il concetto di {\em sezione} di un insieme $E \subseteq X \times Y$:
\[
E_x := \{ y \in Y : (x,y) \in E \} \subseteq Y
\ \ , \ \
x \in X
\]
(ed analogamente si definisce $E_y \subseteq X$, $y \in Y$); abbiamo le ovvie propriet\'a
\[
(E^c)_x = (E_x)^c
\ \ , \ \
(\cup_i E_i)_x = \cup_i (E_i)_x
\ \ , \ \
\chi_{E_x}(y) = \chi_E (x,y) = \chi_{E_y}(x)
\ .
\]

\begin{lem}
\label{lem_fub1}
Sia $E \in \mR_{\sigma \delta}$. Allora $E_x$ \e misurabile per ogni $x \in X$.
\end{lem}

\begin{proof}[Dimostrazione]
Il Lemma \e banalmente vero se $E \in \mR_{X,Y}$. Assumendo quindi che
$E = \cup_n E_n$, $\{ E_n \} \subseteq \mR_{X,Y}$,
troviamo
\[
\chi_{E_x}(y) \, = \, \chi(x,y) \, = \, \sup_n \chi_{E_n}(x,y) \, = \, \sup_n \chi_{(E_n)_x}(y) \ .
\]
Poich\'e ogni $E_n \in \mR_{X,Y}$, abbiamo che ogni $(E_n)_x \subseteq Y$ \e misurabile; quindi $\chi_{(E_n)_x}$ \e misurabile e, grazie al Teo.\ref{thm_MIS}, 
$\chi_{E_x} = \sup_n \chi_{(E_n)_x}$
\'e misurabile, il che vuol dire che $E_x$ \e misurabile.
Infine prendiamo $E = \cap_i E_i$, dove ogni $E_i$ \e unione numerabile di rettangoli (per cui per ogni $E_i$ vale l'enunciato del Lemma). Allora
\[
\chi_{E_x}(y) \, = \, \chi_E(x,y) \, = \, \inf_i \chi_{E_i}(x,y) \, = \, \inf_i \chi_{(E_i)_x}(y) \ ,
\]
e ragionendo come in precedenza concludiamo che $E_x$ \e misurabile.
\end{proof}

\begin{lem}
\label{lem_fub2}
Sia $E \in \mR_{\sigma \delta}$ con $\{ \mu \times \nu \}E < \infty$. Allora la funzione 
$f_E(x) := \nu E_x$, $x \in X$,
\e misurabile e 
$\int f_E  d \mu = \{ \mu \times \nu \}E$.
\end{lem}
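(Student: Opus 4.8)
The statement to prove is \lemref{lem_fub2}: for $E \in \mR_{\sigma\delta}$ with $\{\mu\times\nu\}E < \infty$, the function $f_E(x) := \nu E_x$ is measurable and $\int f_E \, d\mu = \{\mu\times\nu\}E$. My plan is to proceed by a layered induction on the structure of $\mR_{\sigma\delta}$, mirroring exactly the three-stage argument used in \lemref{lem_fub1}: first rectangles, then countable unions (the $\sigma$ layer), then countable intersections (the $\delta$ layer). The measurability of each $f_E$ is already almost free: \lemref{lem_fub1} guarantees each section $E_x$ is measurable, so $\nu E_x$ is well defined, and measurability of the assignment $x \mapsto \nu E_x$ will emerge from the same monotone-convergence machinery used to compute the integral.

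First I would treat the base case $E = A \times B \in \mR_{X,Y}$. Here $E_x = B$ if $x \in A$ and $E_x = \emptyset$ otherwise, so $f_E(x) = \nu(B)\,\chi_A(x)$, which is a simple function; hence $\int f_E \, d\mu = \nu(B)\,\mu(A) = \lambda(A\times B) = \{\mu\times\nu\}E$ by the defining property of the product measure. Next, for the $\sigma$ layer, I would take $E = \cup_n E_n$ with $E_n \in \mR_{X,Y}$; after the standard disjointification (replacing the $E_n$ by mutually disjoint rectangles covering the same union, as in \lemref{lem_car}) I get $\chi_{E_x} = \sum_n \chi_{(E_n)_x}$, whence $f_E = \sum_n f_{E_n}$ by countable additivity of $\nu$. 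Each $f_{E_n}$ is measurable by the base case, so $f_E$ is measurable, and applying the monotone convergence theorem (\thmref{thm_conv_mon}) to the increasing partial sums yields
\[
\int f_E \, d\mu = \sum_n \int f_{E_n} \, d\mu = \sum_n \{\mu\times\nu\}E_n = \{\mu\times\nu\}E,
\]
the last equality again by countable additivity of $\mu\times\nu$.

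Finally, for the $\delta$ layer, I would write $E = \cap_i E_i$ with each $E_i$ a countable union of rectangles. As in \lemref{lem_fub1} I would replace $E_i$ by $\cap_{j\le i} E_j$ to assume the sequence is decreasing, so that $f_{E_i}(x) = \nu (E_i)_x \searrow \nu E_x = f_E(x)$ pointwise by \lemref{lem_caressa} applied to the decreasing sections (this is where finiteness is used: I need $\nu (E_1)_x < \infty$, which follows after arranging $\{\mu\times\nu\}E_1 < \infty$ as in \lemref{lem_sd}). Each $f_{E_i}$ is measurable by the $\sigma$-layer case, so $f_E$ is measurable. The hypothesis $\{\mu\times\nu\}E_1 < \infty$ also bounds the $f_{E_i}$ by the integrable function $f_{E_1}$, so the dominated convergence theorem (\thmref{teo_lebesgue1}) gives $\int f_E \, d\mu = \lim_i \int f_{E_i} \, d\mu = \lim_i \{\mu\times\nu\}E_i = \{\mu\times\nu\}E$, the final limit by \lemref{lem_caressa} on the product measure side.

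The main obstacle I anticipate is the careful bookkeeping in the $\delta$ layer: I must ensure the finiteness hypothesis propagates correctly so that both \lemref{lem_caressa} (requiring finite measure of the first term) and the dominated convergence theorem are genuinely applicable — in particular that $f_{E_1}$ is itself integrable rather than merely measurable. This is exactly the point where \lemref{lem_sd}'s reduction to $\mR_{\sigma\delta}$ sets with finite measure is being exploited, and the whole argument would fail without it; everything else is a mechanical iteration of monotone and dominated convergence over the two structural layers.
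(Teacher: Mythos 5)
Your proposal is correct and follows essentially the same three-stage argument as the paper: rectangles as base case, disjointification plus monotone convergence for countable unions, then a decreasing-intersection reduction with Lemma \ref{lem_caressa} and dominated convergence (dominating function $f_{E_1}$, made integrable by arranging $\{\mu\times\nu\}E_1<\infty$) for the $\delta$ layer. The only cosmetic difference is that the paper justifies the reduction to $\{\mu\times\nu\}E_1<\infty$ directly from the definition (\ref{eq_lambda}) rather than via Lemma \ref{lem_sd}, which changes nothing of substance.
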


\begin{proof}[Dimostrazione]
Il Lemma \e banalmente vero se $E \in \mR_{X,Y}$. Se $E$ \e unione numerabile di rettangoli osserviamo che possiamo comunque esprimerlo, usando iterativamente (\ref{eq_3p}), come un'unione disgiunta:
$E = \dot{\cup}_n E_n$, $\{ E_n \} \subseteq \mR_{X,Y}$.
Consideriamo quindi le funzioni
$f_n(x) := \nu (E_n)_x$, $x \in X$,
che sono positive e misurabili, ed osserviamo che $f_E = \sum_n f_n$, cosicch\'e concludiamo che $f_E$ \e misurabile. Per convergenza monot\'ona ed additivit\'a numerabile di $\mu \times \nu$ abbiamo
\[
\int f_E \ d \mu \, = \,
\sum_n \int f_n \ d \mu \, = \, 
\sum_n \{ \mu \times \nu \}E_n \, = \,
\{ \mu \times \nu \}E
\ ,
\]
per cui il Lemma \e vero per unioni numerabili di rettangoli. 
Sia ora $E = \cap_n E_n$ tale che ogni $E_n$ \e unione numerabile di rettangoli (per cui per ogni $E_n$ vale l'enunciato del Lemma). Eventualmente ridefinendo $E'_n := \cap_i^n E_i$ possiamo assumere che $E_{n+1} \subseteq E_n$ per ogni $n \in \bN$. D'altra parte, essendo $E$ di misura finita, possiamo sempre assumere, ricordando la definizione (\ref{eq_lambda}), che 
$\{ \mu \times \nu \} E_1 < \infty$. 
A questo punto, posto 
$f_n(x) := \nu (E_n)_x$, $x \in X$,
grazie al Teorema \ref{thm_MIS} concludiamo che $f_E = \lim_n f_n$ \e misurabile. D'altra parte abbiamo
\[
f_E(x) \, = \,
\nu E_x \, \stackrel{Lemma \ref{lem_caressa}}{=}  \,
\lim_n \nu (E_n)_x \, = \,
\lim_n f_n(x)
\ \ , \ \
{\mathrm{q.o. \ in \ }} x \in X \ ;
\]
per cui, avendosi 
$f_n \leq f_1$
e
$\int f_1 \ d \mu = \{ \mu \times \nu \} E_1 < \infty$,
per il teorema di convergenza dominata di Lebesgue concludiamo
\[
\int f_E \ d \mu \, =  \,
\lim_n \int f_n \ d \mu \, = \,
\lim_n \{ \mu \times \nu \}E_n \, \stackrel{Lemma \ref{lem_caressa}}{=} \,
\{ \mu \times \nu \}E
\ .
\]
\end{proof}

\begin{lem}
\label{lem_fub3}
Sia $E \subseteq X \times Y$ misurabile e tale che $\{ \mu \times \nu \}E = 0$. Allora q.o. in $x \in X$ si ha $E_x$ misurabile e $\nu E_x = 0$.
\end{lem}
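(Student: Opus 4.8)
The plan is to reduce the claim to a set belonging to the class $\mR_{\sigma\delta}$, for which the structural results of this section are available, and then to invoke the completeness of $(Y,\mN,\nu)$ to deal with the section $E_x$, which a priori need not be measurable. The key observation is that $E$ itself may not lie in $\mR_{\sigma\delta}$, so none of the preceding lemmata apply to $E$ directly; I must first trap $E$ inside a suitable $\mR_{\sigma\delta}$-set of the same (zero) measure.

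First, since $\{ \mu \times \nu \} E = 0 < \infty$, I would apply Lemma \ref{lem_sd} to obtain $R \in \mR_{\sigma \delta}$ with $E \subseteq R$ and
\[
\{ \mu \times \nu \} R \ = \ \{ \mu \times \nu \} E \ = \ 0 \ .
\]
Next I would apply Lemma \ref{lem_fub2} to $R$, which is legitimate because $\{ \mu \times \nu \} R = 0 < \infty$: the function $f_R(x) := \nu R_x$ is measurable (each $R_x$ being measurable by Lemma \ref{lem_fub1}, so that $\nu R_x$ is well defined), and
\[
\int_X f_R \ d\mu \ = \ \{ \mu \times \nu \} R \ = \ 0 \ .
\]
Since $f_R \in M^+(X)$, Corollary \ref{cor_int}(2) yields $f_R = 0$ quasi ovunque, i.e. there is a $\mu$-null set $N \subseteq X$ such that $\nu R_x = 0$ for every $x \in X - N$.

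Finally I would transfer this from $R$ to $E$. For each $x \in X - N$ the inclusion $E \subseteq R$ gives $E_x \subseteq R_x$, where $R_x$ is a measurable set with $\nu R_x = 0$. At this point the completeness of $(Y,\mN,\nu)$ enters decisively: it guarantees that the subset $E_x$ of the null set $R_x$ is itself measurable, with $\nu E_x = 0$. Hence $E_x$ is measurable and $\nu E_x = 0$ for all $x \in X - N$, that is, quasi ovunque in $x \in X$, as asserted. I expect this last step to be the only genuinely delicate one: the measurability of $E_x$ cannot be read off from Lemma \ref{lem_fub1} (which concerns only sections of $\mR_{\sigma\delta}$-sets) and is available solely through completeness of $\nu$; dropping that hypothesis would invalidate the statement.
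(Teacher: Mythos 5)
Your proof is correct and follows exactly the same route as the paper's: trap $E$ in an $\mR_{\sigma\delta}$-set $R$ of measure zero via Lemma \ref{lem_sd}, apply Lemma \ref{lem_fub2} to get $\nu R_x = 0$ q.o., then use completeness of $\nu$ on $E_x \subseteq R_x$. Your explicit appeal to Cor.\ref{cor_int}(2) for the step $\int f_R \, d\mu = 0 \Rightarrow f_R = 0$ q.o. is a detail the paper leaves implicit, but otherwise the arguments coincide.
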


\begin{proof}[Dimostrazione]
Usando il Lemma \ref{lem_sd} troviamo che esiste $R \in \mR_{\sigma \delta}$ con $E \subseteq R$ e 
$\{ \mu \times \nu \}E = \{ \mu \times \nu \}R = 0$,
per cui usando il Lemma precedente troviamo
\[
\{ \mu \times \nu \}R = \int f_R \ d \mu = 0
\ \Rightarrow \
f_R(x) = \nu R_x = 0
\ \
{\mathrm{q.o. \ in \ }}
x \in X
\ .
\]
D'altro canto per costruzione $E_x \subseteq R_x$ per ogni $x \in X$, ed essendo $\nu$ completa concludiamo che $E_x$ \e misurabile con $\nu E_x = 0$ per ogni $x$ tale che $\nu R_x = 0$.
\end{proof}

\begin{lem}
\label{lem_fub4}
Sia $E \subseteq X \times Y$ misurabile e tale che $\{ \mu \times \nu \}E < \infty$. Allora la funzione 
$f_E(x) := \nu E_x$, $x \in X$,
\e misurabile e 
$\int f_E  d \mu = \{ \mu \times \nu \}E$.
\end{lem}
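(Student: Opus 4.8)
The plan is to reduce the statement for an arbitrary measurable $E$ of finite measure to the case $E \in \mR_{\sigma \delta}$ already treated in Lemma \ref{lem_fub2}, using the approximation from Lemma \ref{lem_sd} together with the completeness of the factor measures. First I would apply Lemma \ref{lem_sd} to obtain $R \in \mR_{\sigma \delta}$ with $E \subseteq R$ and $\{ \mu \times \nu \} R = \{ \mu \times \nu \} E < \infty$. Setting $N := R - E$, the additivity of $\mu \times \nu$ applied to the disjoint decomposition $R = E \, \dot{\cup} \, N$ (legitimate here since $E$ has finite measure) gives $\{ \mu \times \nu \} N = \{ \mu \times \nu \} R - \{ \mu \times \nu \} E = 0$.

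Next I would pass to the slices. By construction $N_x = R_x - E_x$ for every $x \in X$, and since $E_x \subseteq R_x$ this yields $E_x = R_x - N_x$. Lemma \ref{lem_fub1} ensures that $R_x$ is measurable for every $x$ (recall $R \in \mR_{\sigma \delta}$), while Lemma \ref{lem_fub3}, applied to the null set $N$, ensures that for $\mu$-almost every $x$ the slice $N_x$ is measurable with $\nu N_x = 0$. For such $x$ the difference $E_x = R_x - N_x$ is then measurable and $\nu E_x = \nu R_x$; it is precisely the completeness of $\nu$ that guarantees that excising the null slice $N_x$ from the measurable set $R_x$ leaves a measurable set. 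Consequently $f_E(x) = \nu E_x = \nu R_x = f_R(x)$ for $\mu$-almost every $x$ (on the exceptional $\mu$-null set, where $E_x$ need not be measurable, I would simply set $f_E := 0$).

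Finally, since $f_R$ is measurable with $\int f_R \, d\mu = \{ \mu \times \nu \} R$ by Lemma \ref{lem_fub2}, and $f_E = f_R$ $\mu$-almost everywhere, the completeness of $(X,\mM,\mu)$ together with Prop.\ref{prop_fgmis} yields that $f_E$ is measurable; integrating then gives $\int f_E \, d\mu = \int f_R \, d\mu = \{ \mu \times \nu \} R = \{ \mu \times \nu \} E$, as desired. I do not expect a genuine obstacle here, as all the analytic content has already been isolated in the preceding lemmas; the only points demanding care are the two appeals to completeness — of $\nu$, to keep the almost-every slice $E_x$ measurable with $\nu E_x = \nu R_x$, and of $\mu$, to keep $f_E$ measurable after its modification on a $\mu$-null set.
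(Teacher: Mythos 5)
Your proof is correct and follows essentially the same route as the paper: approximate $E$ from above by $R \in \mR_{\sigma\delta}$ via Lemma \ref{lem_sd}, deduce $\{\mu\times\nu\}(R-E)=0$, apply Lemma \ref{lem_fub3} to the null set $R-E$ to get $f_E = f_R$ $\mu$-q.o., and conclude with Lemma \ref{lem_fub2} and Prop.\ref{prop_fgmis}. Your remarks on where the completeness of $\nu$ and of $\mu$ enter are accurate and make explicit what the paper leaves implicit.
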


\begin{proof}[Dimostrazione]
Grazie al Lemma \ref{lem_sd} sappiamo che esiste $R \in \mR_{\sigma \delta}$ tale che $E \subseteq R$ e 
$\{ \mu \times \nu \}E = \{ \mu \times \nu \}R$.
Cosicch\'e
\[
\infty > 
\{ \mu \times \nu \}R =
\{ \mu \times \nu \}E + \{ \mu \times \nu \}(R-E)
\ \Rightarrow \
\{ \mu \times \nu \}(R-E) = 0
\ .
\]
Grazie al Lemma \ref{lem_fub3} troviamo $\nu(R-E)_x = 0$ q.o. in $x \in X$, per cui
$f_E(x) = f_R(x) := \nu R_x$, q.o. in $x \in X$.
Del resto $f_R$ \e misurabile per il Lemma \ref{lem_fub2}, e grazie a Prop \ref{prop_fgmis} concludiamo che $f_E$ \e misurabile.
Infine, sempre per il Lemma \ref{lem_fub2} concludiamo che
$\int f_E \ d \mu = \{ \mu \times \nu \}R = \{ \mu \times \nu \}E$.
\end{proof}

Il teorema seguente costituisce lo strumento principale per il calcolo esplicito di integrali su spazi prodotto e fornisce la famosa regola dello scambio dell'ordine di integrazione:
\begin{thm}[Fubini]
%\textbf{(Fubini, \cite[Theorem 12.4.19]{Roy}).}
Sia $f \in L_{\mu \times \nu}^1(X \times Y)$.
Allora q.o. in $x \in X$ si ha $f(x,\cdot) \in L_\nu^1(Y)$,  e la funzione $F(x) := \int_Y f(x,y) d \nu$, $x \in X$, \e in $L_\mu^1(X)$. 
Inoltre, l'affermazione analoga \e vera scambiando i ruoli di $X$ ed $Y$, e
\begin{equation}
\label{eq_Fub1}
\int_{X \times Y} f(x,y) \ d \{ \mu \times \nu \}
\ = \
\int_X \left( \int_Y f (x,y) d \nu \right) d \mu
\ = \
\int_Y \left( \int_X f (x,y) d \mu \right) d \nu
\ .
\end{equation}
\end{thm}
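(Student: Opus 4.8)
Il piano è di procedere con il classico argomento di approssimazione a quattro stadi, sfruttando i Lemmi \ref{lem_fub1}--\ref{lem_fub4} già stabiliti, i quali forniscono precisamente il caso delle funzioni caratteristiche. Nel primo passo osservo che, se $E \subseteq X \times Y$ è misurabile con $\{ \mu \times \nu \}E < \infty$, allora $\int_Y \chi_E(x,y) \ d\nu = \nu E_x = f_E(x)$; per il Lemma \ref{lem_fub4} la funzione $f_E$ è misurabile e $\int_X f_E \ d\mu = \{ \mu \times \nu \}E = \int_{X \times Y} \chi_E \ d \{ \mu \times \nu \}$, sicché (\ref{eq_Fub1}) vale per $\chi_E$. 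Nel secondo passo, per linearità dell'integrale (Cor.\ref{cor_int}(3)), estendo l'uguaglianza a ogni funzione semplice a supporto di misura finita.

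Nel terzo passo dimostro la versione "alla Tonelli" per funzioni non negative: data $f \in M^+(X \times Y)$ a supporto $\sigma$-finito, uso Prop.\ref{lem_sf} per ottenere una successione monotona crescente $\{ \varphi_n \}$ di funzioni semplici a supporto di misura finita con $\varphi_n \nearrow f$. Per ogni $x \in X$, il teorema di Beppo Levi (Teo.\ref{thm_conv_mon}) applicato nella variabile $y$ fornisce $\int_Y \varphi_n(x,y) \ d\nu \nearrow \int_Y f(x,y) \ d\nu$, per cui la funzione limite $x \mapsto \int_Y f(x,y) \ d\nu$ è misurabile (Teo.\ref{thm_MIS}); applicando nuovamente Beppo Levi nella variabile $x$ concludo
\[
\int_X \left( \int_Y f \ d\nu \right) d\mu = \lim_n \int_X \left( \int_Y \varphi_n \ d\nu \right) d\mu = \lim_n \int_{X \times Y} \varphi_n \ d \{ \mu \times \nu \} = \int_{X \times Y} f \ d \{ \mu \times \nu \} \ .
\]

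Nel quarto passo tratto il caso $L^1$. Scrivo $f = f^+ - f^-$; poiché $f \in L_{\mu \times \nu}^1(X \times Y)$ ha supporto $\sigma$-finito (Prop.\ref{lem_sf}) ed $f^\pm \leq |f|$, entrambe le $f^\pm$ hanno integrale prodotto finito. Applicando il terzo passo a $f^+$ e $f^-$ ottengo che le funzioni $x \mapsto \int_Y f^\pm(x,y) \ d\nu$ sono misurabili e $\int_X ( \int_Y f^\pm \ d\nu ) d\mu = \int_{X \times Y} f^\pm \ d \{ \mu \times \nu \} < \infty$; grazie a Cor.\ref{cor_int}(2) tali integrali interni sono finiti q.o. in $x$, dunque $f(x,\cdot) \in L_\nu^1(Y)$ q.o. e $F(x) = \int_Y f^+ \ d\nu - \int_Y f^- \ d\nu$ è misurabile e integrabile su $X$. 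Sottraendo le due uguaglianze ricavo $\int_X F \ d\mu = \int_{X \times Y} f \ d \{ \mu \times \nu \}$, e l'uguaglianza con l'integrale iterato nell'ordine opposto segue per simmetria, scambiando i ruoli di $X$ e $Y$ (i Lemmi precedenti sono simmetrici nelle due variabili).

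L'ostacolo principale sarà di natura "misurabilistica": garantire che la funzione interna $x \mapsto \int_Y f(x,y) \ d\nu$ sia ben definita (finita q.o.) e misurabile. Ciò è immediato sulle funzioni semplici grazie ai Lemmi \ref{lem_fub2} e \ref{lem_fub4}, ma nel passaggio al limite occorre verificare con cura che la convergenza monotona valga puntualmente (o q.o.) nella variabile interna, e soprattutto che l'insieme degli $x$ per cui $\int_Y f^\pm \ d\nu = + \infty$ abbia misura nulla. È precisamente per isolare questa questione di finitezza q.o. che conviene trattare separatamente $f^+$ ed $f^-$, invocando la finitezza dei loro integrali prodotto e Cor.\ref{cor_int}(2), anziché applicare direttamente la convergenza monotona a $f$; l'ipotesi $f \in L_{\mu \times \nu}^1$ entra in gioco esattamente qui, assicurando sia la $\sigma$-finitezza del supporto (necessaria al terzo passo) sia la finitezza q.o. degli integrali interni.
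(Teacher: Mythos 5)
La tua proposta è corretta e segue essenzialmente la stessa strada della dimostrazione del testo: riduzione alle funzioni caratteristiche tramite il Lemma \ref{lem_fub4}, estensione per linearità alle funzioni semplici a supporto di misura finita, passaggio al limite monotono con Beppo Levi usando Prop.\ref{lem_sf}, e infine la decomposizione $f = f^+ - f^-$ (che il testo effettua all'inizio come riduzione al caso $f \geq 0$). L'unica differenza è che tu espliciti con più cura la finitezza q.o. dell'integrale interno tramite Cor.\ref{cor_int}(2), punto che il testo lascia implicito.
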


\begin{proof}[Dimostrazione]
Grazie alla simmetria tra $x$ ed $y$ \e sufficiente dimostrare il teorema senza scambiare i ruoli delle due variabili. Se la conclusione del teorema \e vera per due funzioni allora \e vera anche per la loro differenza, per cui possiamo ridurci a considerare il caso $f \geq 0$. Il Lemma \ref{lem_fub4} afferma che il teorema \e vero se $f$ \e la funzione caratteristica di un insieme $E \subseteq X \times Y$ di misura finita, per cui esso \e vero anche per funzioni semplici che si annullano al di fuori di un insieme di misura finita (denotiamo con $S_c(X \times Y)$ l'insieme di tali funzioni).
Ora, ogni funzione integrabile non negativa \e limite puntuale di una successione $\{ \psi_n \} \subset S_c(X \times Y)$ monot\'ona crescente (vedi Prop.\ref{lem_sf}), per cui applicando il teorema di Beppo Levi
\[
F(x) := \int_Y f(x,y) \ d \nu = \lim_n \int_Y \psi_n(x,y) \ d \nu
\ ;
\]
essendo il teorema vero per ogni $\psi_n$ concludiamo che, essendo $F$ limite puntuale di funzioni misurabili, \e essa stessa misurabile (Teorema \ref{thm_MIS}). Applicando ancora il teorema di Beppo Levi, ed usando ancora il fatto che il teorema \e vero per ogni $\psi_n$, troviamo
\[
\int F \ d \mu = 
\int_X \int_Y f \ d \mu d \nu =
\lim_n \int_X \int_Y \psi_n \ d \mu d \nu = 
\lim_n \int_{X \times Y} \psi_n d \{ \mu \times \nu \} =
\int_{X \times Y} f \ d \{ \mu \times \nu \}
\ .
\]
Ci\'o conclude la dimostrazione.
\end{proof}

\begin{thm}[Tonelli]
%\textbf{(Tonelli, \cite[Theorem 12.4.20]{Roy}).}
Siano $(X,\mM,\mu)$,$(Y,\mN,\nu)$ spazi di misura $\sigma$-finiti ed $f : X \times Y \to \wa \bR^+$ una funzione misurabile non negativa. Allora:
(1) $f(x,\cdot) : Y \to \wa \bR^+$ \e una funzione misurabile q.o. in $x \in X$; 
(2) $F(x) := \int_Y f(x,y) \ d\nu$, $x \in X$, \e misurabile;
(3) le stesse propriet\'a (1,2) sono vere scambiando i ruoli di $x,y$;
(4) \e verificata l'uguaglianza (\ref{eq_Fub1}).
\end{thm}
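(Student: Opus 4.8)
The plan is to reduce Tonelli to the theorem of Fubini proved immediately above, the only genuinely new ingredient being the use of $\sigma$-finiteness to manufacture a monotone approximation of $f$ by functions to which the finite-measure machinery already applies. Since the conclusion is symmetric in $x$ and $y$, I would establish (1), (2) and the first of the two equalities in (\ref{eq_Fub1}), leaving (3) and the second equality to follow by interchanging the roles of the two variables. First I would observe that the product is $\sigma$-finite: writing $X = \cup_m X_m$ and $Y = \cup_n Y_n$ with $\mu X_m, \nu Y_n < \infty$, the countable family $\{ X_m \times Y_n \}$ covers $X \times Y$ and $\{\mu \times \nu\}(X_m \times Y_n) = \mu X_m \cdot \nu Y_n < \infty$. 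Hence Prop.\ref{prop_caressa} applies to $f \in M^+(X \times Y)$ and yields a monotone increasing sequence $\{ \psi_n \} \subset S(X \times Y)$ converging pointwise to $f$, each $\psi_n$ having support of finite $\mu \times \nu$-measure.

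Next I would transfer the three assertions to each $\psi_n$. Each $\psi_n$ is a finite linear combination of characteristic functions $\chi_E$ with $\{\mu \times \nu\}E < \infty$, and for such $\chi_E$ the statement is exactly the content of Lemma \ref{lem_fub4}: the section $\chi_{E_x}$ is measurable for a.e.\ $x$, the map $x \mapsto \nu E_x$ is measurable, and $\int_X \nu E_x \, d\mu = \{\mu \times \nu\}E$. By linearity the same holds for each $\psi_n$, producing a null set $N_n \subset X$ off which $\psi_n(x,\cdot)$ is measurable and
\[
\int_X \left( \int_Y \psi_n(x,y) \, d\nu \right) d\mu
\ = \
\int_{X \times Y} \psi_n \, d\{\mu \times \nu\} \ .
\]

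Then I would pass to the limit. Off the null set $N := \cup_n N_n$ every $\psi_n(x,\cdot)$ is measurable and $\psi_n(x,\cdot) \nearrow f(x,\cdot)$, so $f(x,\cdot)$ is measurable for a.e.\ $x$ by Teo.\ref{thm_MIS}, giving (1); by the monotone convergence theorem (Teo.\ref{thm_conv_mon}) the functions $F_n(x) := \int_Y \psi_n(x,y)\,d\nu$ increase to $\int_Y f(x,y)\,d\nu = F(x)$, and being a pointwise limit of measurable functions $F$ is measurable (again Teo.\ref{thm_MIS}), giving (2). Applying Beppo Levi once more, first to $\{F_n\}$ on $X$ and then to $\{\psi_n\}$ on $X \times Y$, yields
\[
\int_X F \, d\mu
\ = \
\lim_n \int_X F_n \, d\mu
\ = \
\lim_n \int_{X \times Y} \psi_n \, d\{\mu \times \nu\}
\ = \
\int_{X \times Y} f \, d\{\mu \times \nu\} \ ,
\]
which is the first equality of (\ref{eq_Fub1}).

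The main obstacle — and the precise reason $\sigma$-finiteness cannot be dropped — is that $f$ need not be integrable, so both $F(x)$ and the double integral may equal $+\infty$; this forbids invoking Fubini or Prop.\ref{lem_sf} directly, as the latter presupposes $f \in L^1_\mu$. The remedy is that every limiting step above is carried out with the monotone convergence theorem, which tolerates infinite values, while $\sigma$-finiteness is exactly what allows Prop.\ref{prop_caressa} to deliver simple approximants of finite-measure support on which the finite-measure Lemma \ref{lem_fub4} is available. No estimate is delicate here; the only care required is to check that the null sets $N_n$ accumulated at each finite stage still union to a null set $N$, and that the monotonicity of $\{\psi_n\}$ (and hence of $\{F_n\}$) is genuinely preserved so that Beppo Levi may be applied — both routine.
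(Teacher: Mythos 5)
Your proposal is correct and follows essentially the same route as the paper: the paper's proof of Tonelli simply observes that the only place integrability was used in the proof of Fubini is in producing a monotone approximation by simple functions with finite-measure support, and that Prop.\ref{prop_caressa} supplies this under $\sigma$-finiteness alone; you have merely unpacked that same reduction (Lemma \ref{lem_fub4} on characteristic functions, linearity, Beppo Levi) in explicit detail, including the routine check that the product space is $\sigma$-finite.
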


\begin{proof}[Dimostrazione]
L'unico punto della dimostrazione del teorema di Fubini in cui usiamo l'integrabilit\'a di $f$ \e dove affermiamo che $f$ \e limite puntuale di funzioni in $S_c(X \times Y)$. Del resto, Prop.\ref{prop_caressa} afferma che $f$ \e limite puntuale di funzioni in $S_c(X \times Y)$ con la sola ipotesi di misurabilit\'a, a patto che $X \times Y$ sia $\sigma$-finito. Ci\'o \e senz'altro vero se $X,Y$ sono $\sigma$-finiti, per cui possiamo ripetere con successo l'argomento della dimostrazione del Teorema di Fubini ed ottenere le propriet\'a desiderate.
\end{proof}

Nel teorema precedente si considerano funzioni $f \geq 0$ e non si fa nessuna affermazione sull'inte- grabilit\'a di $f$. Tuttavia se si suppone che $f(x,\cdot) \in L_\mu^1(Y)$ q.o. in $x \in X$ ed $F \in L_\mu^1(X)$, allora il punto (4) permette di concludere che $f \in L_{\mu \times \nu}^1(X \times Y)$.

\subsection{Convoluzioni.}
\label{sec_conv}

Un'importante applicazione dei teoremi di Fubini e Tonelli \e quella dei prodotti di convoluzione, i quali a loro volta hanno un ruolo importante nell'ambito degli spazi $L^p$, di Sobolev, e nell'analisi di Fourier. In questa sezione consideriamo gli spazi euclidei $\bR^d$, $d \in \bN$, equipaggiati con la misura prodotto di Lebesgue (che otteniamo iterando $d-1$ volte sulla retta reale la costruzione della sezione precedente).

\begin{thm}
\label{thm_conv}
Siano $f \in L^1(\bR^d)$, $g \in L^p(\bR^d)$, $p \in [1,+\infty]$. Allora q.o. in $x \in \bR^d$ la funzione 
\[
K(x,y) := f(x-y)g(y)
\ \ , \ \
x,y \in \bR^d \ ,
\]
\e integrabile rispetto ad $y$ su $\bR^d$. Di conseguenza \e ben definita la funzione
\begin{equation}
\label{eq_conv1}
f*g (x) \ := \ \int_{\bR^d} f(x-y)g(y) \ dy \ \ , \ \ x \in \bR^d  \ ,
\end{equation}
la quale soddisfa la diseguaglianza
\begin{equation}
\label{eq_conv2}
\| f*g \|_p 
\ \leq \
\| f \|_1 \| g \|_p
\ ,
\end{equation}
per cui $f*g \in L^p(\bR^d)$.
\end{thm}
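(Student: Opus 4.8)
The plan is to prove Young's inequality for convolutions by combining the Minkowski inequality with the Fubini--Tonelli machinery developed in the previous section. The statement asserts three things: that for almost every $x$ the function $y \mapsto f(x-y)g(y)$ is integrable, that the resulting convolution $f*g$ satisfies $\| f*g \|_p \leq \| f \|_1 \| g \|_p$, and hence that $f*g \in L^p(\bR^d)$. I will treat the cases $p = \infty$, $p = 1$, and $p \in (1,\infty)$ and show how the middle case reduces to Tonelli and the general case interpolates between the ideas.

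First I would dispose of the two extreme cases. For $p = \infty$ the integrability is immediate: $\int |f(x-y)||g(y)|\, dy \leq \| g \|_\infty \int |f(x-y)|\, dy = \| g \|_\infty \| f \|_1 < \infty$ for every $x$ (using translation invariance of Lebesgue measure), which gives both well-definedness everywhere and the bound $\| f*g \|_\infty \leq \| f \|_1 \| g \|_\infty$. For $p = 1$, I would apply the Tonelli theorem to the nonnegative measurable function $(x,y) \mapsto |f(x-y)||g(y)|$ on $\bR^d \times \bR^d$. Tonelli licenses the exchange of integration order, so
\[
\int_{\bR^d} \left( \int_{\bR^d} |f(x-y)||g(y)|\, dy \right) dx
=
\int_{\bR^d} |g(y)| \left( \int_{\bR^d} |f(x-y)|\, dx \right) dy
=
\| f \|_1 \| g \|_1,
\]
where the inner $x$-integral equals $\| f \|_1$ by translation invariance. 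Since this double integral is finite, the inner integral $\int |f(x-y)||g(y)|\, dy$ is finite for almost every $x$, establishing integrability of $K(x,\cdot)$ a.e.\ and simultaneously the estimate $\| f*g \|_1 \leq \| f \|_1 \| g \|_1$.

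The heart of the argument is the case $p \in (1,\infty)$, and here the key step is a clever application of the Minkowski inequality together with the factorization $|f| = |f|^{1/p} \cdot |f|^{1/q}$ where $q := \ovl p$ is the conjugate exponent. The plan is to estimate $\int |f(x-y)||g(y)|\, dy$ by writing the integrand as $\bigl( |f(x-y)|^{1/q} \bigr) \cdot \bigl( |f(x-y)|^{1/p} |g(y)| \bigr)$ and applying the Holder inequality (Prop.\ref{prop_holder}) with exponents $q$ and $p$. This yields
\[
\int |f(x-y)||g(y)|\, dy
\ \leq \
\left( \int |f(x-y)|\, dy \right)^{1/q}
\left( \int |f(x-y)||g(y)|^p\, dy \right)^{1/p}
=
\| f \|_1^{1/q}
\left( \int |f(x-y)||g(y)|^p\, dy \right)^{1/p}.
\]
Raising to the $p$-th power and integrating in $x$, I would then invoke Tonelli on the nonnegative function $(x,y) \mapsto |f(x-y)||g(y)|^p$ to exchange the order of integration: the $x$-integral of $|f(x-y)|$ is $\| f \|_1$ by translation invariance, and the $y$-integral of $|g(y)|^p$ is $\| g \|_p^p$. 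This produces
\[
\int \left( \int |f(x-y)||g(y)|\, dy \right)^p dx
\ \leq \
\| f \|_1^{p/q} \cdot \| f \|_1 \| g \|_p^p
=
\| f \|_1^{p} \| g \|_p^p,
\]
using $p/q + 1 = p$. The finiteness of this integral forces $\int |f(x-y)||g(y)|\, dy < \infty$ for almost every $x$, which gives the a.e.\ integrability of $K(x,\cdot)$, and taking $p$-th roots yields exactly the desired bound $\| f*g \|_p \leq \| f \|_1 \| g \|_p$, whence $f*g \in L^p(\bR^d)$.

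I expect the main obstacle to be bookkeeping rather than conceptual: one must verify that $(x,y) \mapsto f(x-y)g(y)$ is genuinely measurable on the product space (so that Tonelli applies), which requires noting that $(x,y) \mapsto f(x-y)$ is measurable as the composition of $f$ with the measurable linear map $(x,y) \mapsto x-y$, and that products of measurable functions are measurable. One must also be careful that the Holder splitting is legitimate when $f(x-y) = 0$ (where the factorization is trivially valid) and that the interchange of $\int dx$ and $\int dy$ is always on a nonnegative integrand so that Tonelli, not Fubini, is the operative theorem and no a priori integrability of the product is needed. Once measurability is settled, the translation invariance of Lebesgue measure does all the remaining work.
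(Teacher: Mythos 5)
Your proposal is correct and follows essentially the same route as the paper: the extreme cases $p=1,\infty$ via Tonelli and translation invariance, and for $p\in(1,\infty)$ the same H\"older splitting $|f(x-y)|=|f(x-y)|^{1/q}\cdot|f(x-y)|^{1/p}$ reducing everything to the integrability of $|f|*|g|^p$ (the paper cites its own $p=1$ case here, you invoke Tonelli once more, which is the same computation). The only cosmetic slip is that your opening announces Minkowski's inequality as a key tool, whereas the argument you actually carry out uses only H\"older and Fubini--Tonelli, exactly as in the paper.
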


\begin{proof}[Dimostrazione]
Dimostriamo il teorema distinguendo i vari casi per $p \in [1,+\infty]$.
\textbf{(1) $p = \infty$}: l'enunciato \e ovvio. 
\textbf{(2) $p = 1$}. Q.o. in $y \in \bR^d$ si ha
\[
\int_{\bR^d} |K(x,y)| \ dx
\ = \
|g(y)| \int_{\bR^d} |f(x-y)| \ dx
\ = \
|g(y)| \ \| f \|_1
\ ,
\]
per cui, avendosi $g \in L^1(\bR^d)$ troviamo $K(\cdot,y) \in L^1(\bR^d)$ q.o. in $y \in \bR^d$. Inoltre
\begin{equation}
\label{eq_conv3}
\int_{\bR^d} dy \left( \int_{\bR^d} |K(x,y)| \ dx \right) 
\ \leq \ 
\| f \|_1 \| g \|_1 
\ < \ 
+ \infty \ .
\end{equation}
Per il teorema di Tonelli abbiamo $K \in L^1(\bR^d \times \bR^d)$, mentre per Fubini concludiamo che $K(x,\cdot) \in L^1(\bR^d)$ q.o. in $x \in \bR^d$, ovvero 
\[
\int_{\bR^d} |f(x-y)g(y)| dy < +\infty 
\ \ , \ \ 
{\mathrm{q.o. \ in}} \ x \in \bR^d \ ,
\]
il che dimostra (\ref{eq_conv1}). Per dimostrare che $f * g \in L^1(\bR^d)$, basta osservare che
\[
\begin{array}{lll}
\int_{\bR^d} dx \left| \int_{\bR^d} f(x-y)g(y) \ dy \right| 
& \ \ \ \leq &
\int_{\bR^d} dx \left( \int_{\bR^d} |f(x-y)g(y)| \ dy \right)
\\ \\ & \stackrel{Fubini}{=} &
\int_{\bR^d} dy \left( \int_{\bR^d} |f(x-y)g(y)| \ dx \right)
\\ \\ & \stackrel{(\ref{eq_conv3})}{\leq} &
\| f \|_1 \| g \|_1 \ .
\end{array}
\]
\textbf{(3) $p \in (1,+\infty)$}. Sia $g \in L^p(\bR^d)$ e $K_p (x,y) := |f(x-y)| |g(y)|^p$. Grazie a quanto mostrato per $p = 1$, abbiamo 
\[
K_p(x,\cdot) \in L^1(\bR^d)
\ \ {\mathrm{ovvero}} \ \
|K_p(x,\cdot)|^{1/p} \in L^p(\bR^d)
\ \ , \ \
{\mathrm{q.o. \ in}} \ x \in \bR^d \ .
\]
Poniamo $q := \ovl{p}$. Poich\'e $f \in L^1(\bR^d)$, abbiamo
\[
|\delta f (x,\cdot)|^{1/q} \in L^q(\bR^d)
\ \ , \ \
\delta f (x,y) := f(x-y)
\ \  , \ \ 
{\mathrm{q.o. \ in}} \ x \in \bR^d \ .
\]
Applicando la disuguaglianza di Holder otteniamo, q.o. in $x \in \bR^d$,
\[
+ \infty 
\ \stackrel{Holder}{>} \
\int_{\bR^d} |K_p(x,\cdot)|^{1/p} \cdot |\delta f (x,\cdot)|^{1/q}
\ = \
\int_{\bR^d} |f(x-y)|^{1/p} |g(y)| \cdot |f(x-y)|^{1/q} \ dy
\ \geq \
|f*g (x)|
\ .
\]
Per cui $f*g (x)$ \e definito q.o. in $x \in \bR^d$. Ora,
\[
\left\{
\begin{array}{ll}
\| |K_p(x,\cdot)|^{1/p} \|_p^p
=
\int_{\bR^d} |f(x-y)||g(y)|^p \ dy
= 
\left( |f|*|g|^p (x) \right)
\\ \\
\| |\delta f (x,\cdot)|^{1/q} \|_q^q
= 
\int_{\bR^d} |f(x-y)| \ dy 
= 
\| f \|_1
\ ,
\end{array}
\right.
\]
per cui scrivendo esplicitamente la precedente disuguaglianza di Holder otteniamo
\[
\left( |f|*|g|^p (x) \right)^{1/p} 
\cdot  
\| f \|_1^{1/q}
\ \geq \
|f*g(x)|
\ \Rightarrow \
|f|*|g|^p (x)
\cdot  
\| f \|_1^{p/q}
\ \geq \
|f*g(x)|^p
\ .
\]
Applicando a $|g|^p \in L^1(\bR^d)$ quanto mostrato nel caso $p=1$ abbiamo che $|f|*|g|^p \in L^1(\bR^d)$, per cui $|f*g|^p \leq |f|*|g|^p \cdot \| f \|_1^{p/q}$ \e integrabile.
\end{proof}

La funzione $f * g$ si dice {\em convoluzione} di $f$ e $g$. Qui di seguito ne elenchiamo alcune propriet\'a elementari, la cui dimostrazione \e lasciata per esercizio:
\begin{enumerate}
\item $f * g = g * f$, $f \in L^1(\bR^d)$, $g \in L^p(\bR^d)$;
\item $f_1 * ( f_2 * g ) = (f_1 * f_2) * g$, $f_1 ,f_2 \in L^1(\bR^d)$;
\item $f * (g+h) = f*g + f*h$, $h \in L^p(\bR^d)$;
\item $(af)*g = f * (ag)$, $a \in \bR^d$.
\end{enumerate}

Per approcciare la questione della derivabilit\'a di una convoluzione introduciamo alcune nozioni. Per prima cosa consideriamo $F \in L^1(\bR^d,\bR^m)$ nel senso di Oss.\ref{rem_Lp_C} ed osserviamo che possiamo scrivere, in termini vettoriali,
$F = (F_1 , \ldots , F_m )$, 
dove $F_i \in L^1(\bR^d)$, $i = 1 , \ldots , m$. Presa quindi $f \in L^p(\bR^d)$ definiamo la convoluzione
\[
F*f : \bR^d \to \bR^m
\ \ , \ \
(F*f)_i := F_i *f
\ \ , \ \
i = 1 , \ldots , m
\ ,
\]
cosicch\'e grazie al Teorema \ref{thm_conv} abbiamo $F*f \in L^p(\bR^d,\bR^m)$. Ora, se $g \in C^1(\bR^d)$ allora possiamo considerare il gradiente
\[
\nabla g : \bR^d \to \bR^d
\ \ , \ \
\nabla g := 
\left( \frac{\partial g}{\partial x_1} , 
       \ldots , 
       \frac{\partial g}{\partial x_d} \right)
\ ;       
\]
se $\nabla g \in L^1(\bR^d,\bR^d)$ ha quindi senso considerare la convoluzione 
$\nabla g * f \in L^p(\bR^d,\bR^d)$, $\forall f \in L^p(\bR^d)$.
Dalle definizioni precedenti si deduce banalmente il seguente risultato, conseguenza diretta dei teoremi di derivazione sotto il segno di integrale e della commutativit\'a del prodotto di convoluzione:
\begin{prop}
\label{prop_der_conv}
Sia $f \in L^1(\bR^d) \cap C^1(\bR^d)$ con $\nabla f \in L^1(\bR^d,\bR^d)$. Allora per ogni $g \in L^1(\bR^d)$ si ha che $f*g$ \e derivabile e 
\[
\nabla(f*g) = (\nabla f) * g
\ .
\]
Se anche $g$ \e derivabile con $\nabla g \in L^1(\bR^d,\bR^d)$, allora
$\nabla(f*g) = (\nabla f)*g = f*(\nabla g)$.
\end{prop}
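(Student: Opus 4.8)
The plan is to read $f*g$ as an integral depending on a parameter and to apply the theorem on differentiation under the integral sign, Teo.\ref{thm_der_int}. Writing
\[
f*g(x) \ = \ \int_{\bR^d} f(x-y) g(y) \ dy \ \ , \ \ x \in \bR^d \ ,
\]
I would take as measure space $\bR^d$ equipped with the Lebesgue measure, with integration variable $y$, and regard $x$ as the parameter ranging in the open set $U := \bR^d$. The integrand is $\Phi(y,x) := f(x-y) g(y)$, whose dependence on $x$ sits entirely in the smooth factor $f(x-y)$, so that $\partial \Phi / \partial x_i (y,x) = (\partial_i f)(x-y) g(y)$.

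Next I would verify the three hypotheses of Teo.\ref{thm_der_int}. Hypothesis (1), that $\Phi(\cdot,x) \in L^1(\bR^d)$ for each $x$, is the integrability established in Teo.\ref{thm_conv} in the case $p=1$, the majorization $\int_{\bR^d} |f(x-y)| |g(y)| \ dy \leq \| f \|_\infty \| g \|_1$ making it uniform in $x$. Hypothesis (2), that $x \mapsto \Phi(y,x)$ is $C^1$ for each fixed $y$, is immediate from $f \in C^1(\bR^d)$. Hypothesis (3) is the domination: I need functions $G_i \in L^1(\bR^d)$, independent of $x$, with $|(\partial_i f)(x-y)| \, |g(y)| \leq G_i(y)$. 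Bounding $|(\partial_i f)(x-y)| \leq \| \partial_i f \|_\infty$ and setting $G_i(y) := \| \partial_i f \|_\infty |g(y)|$ does the job, since $g \in L^1(\bR^d)$ forces $G_i \in L^1(\bR^d)$.

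With the three hypotheses secured, Teo.\ref{thm_der_int} gives that $F(x) := f*g(x)$ is of class $C^1$ and
\[
\frac{\partial}{\partial x_i}(f*g)(x) \ = \ \int_{\bR^d} (\partial_i f)(x-y) g(y) \ dy \ = \ ((\partial_i f)*g)(x) \ ,
\]
which, collecting the components, is exactly $\nabla(f*g) = (\nabla f)*g$. For the second assertion, when $g$ too is of class $C^1$ with $\nabla g \in L^1(\bR^d,\bR^d)$, I would invoke the commutativity $f*g = g*f$ and apply the identity just proved with the roles of $f$ and $g$ exchanged (legitimate, since $g$ now plays the part of the $L^1 \cap C^1$ factor and $f \in L^1(\bR^d)$ the part of the generic factor), obtaining $\nabla(f*g) = \nabla(g*f) = (\nabla g)*f = f*(\nabla g)$; together with the first identity this yields the chain $\nabla(f*g) = (\nabla f)*g = f*(\nabla g)$.

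The delicate point, and the step I expect to be the real obstacle, is the domination in hypothesis (3) (and, on the same footing, the uniform bound used in hypothesis (1)), which I wrote using $\| f \|_\infty$ and $\| \partial_i f \|_\infty$. This is harmless whenever $f$ and $\nabla f$ are bounded, as happens in every application that follows, most notably when $f$ is a mollifier in $C_c^\infty(\bR^d)$, where $f$ and all its derivatives are continuous with compact support and hence bounded. For a general $f \in L^1(\bR^d) \cap C^1(\bR^d)$ with $\nabla f \in L^1(\bR^d,\bR^d)$ one cannot take such boundedness for granted when $d \geq 2$, so the clean global majorants above must either be justified by an a priori argument that $f$ and $\nabla f$ are bounded, or be replaced by first localising the parameter $x$ to a ball $\Delta(x_0,r)$ and then exhibiting an integrable majorant valid on that ball. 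This is the one place where the ``banale'' of the statement conceals a genuine hypothesis, and it is precisely the boundedness of $f$ and of its gradient that must be in place before Teo.\ref{thm_der_int} can be applied verbatim.
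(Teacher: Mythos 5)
Your route is exactly the one the paper intends: the paper offers no written proof, only the remark that the result is a ``conseguenza diretta'' of Teo.\ref{thm_der_int} and of the commutativity of the convolution, and your verification of the three hypotheses of that theorem, followed by the swap $f*g=g*f$ for the second identity, is precisely that argument carried out. The caveat you raise at the end is genuine and correctly placed: the hypotheses $f\in L^1(\bR^d)\cap C^1(\bR^d)$, $\nabla f\in L^1(\bR^d,\bR^d)$ do not force $\nabla f$ to be bounded (think of $\partial_i f$ with spikes of height $n$ and width $n^{-3}$), so the global majorant $\|\partial_i f\|_\infty\,|g(y)|$ is not available in general, and even localising $x$ to a ball does not save it, since $\sup_{x\in\Delta(x_0,r)}|(\partial_i f)(x-y)|\,|g(y)|$ need not be integrable in $y$. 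The ``banale'' of the paper thus conceals the tacit assumption that $f$ and $\nabla f$ are bounded, which does hold in every application made of the proposition (the $\rho_n\in C_c^\infty(\bR^d)$ of Def.\ref{def_moll}, and the $\nabla\rho_n$ in the proof of Teo.\ref{thm_RFK}); under that assumption your proof is complete, and without it the clean statement would have to be weakened (e.g.\ to an identity of weak derivatives, or to differentiability at Lebesgue points of $(\nabla f)*g$).
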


\begin{rem}
\label{rem_conv_cont}
{\it
Le convoluzioni hanno la notevole propriet\'a di essere continue anche nel caso in cui n\'e $f$ n\'e $g$ lo siano: a questo proposito si veda l'Esercizio \ref{sec_Lp}.2. Ad esempio, invitiamo a verificare che, presi $0 < b \leq a$ e definite $\chi_a := \chi_{(-a,a)}$, $\chi_b := \chi_{(-b,b)}$, cosicch\'e $\chi_a , \chi_b \in L^p(\bR)$ $\forall p \in [1,+\infty]$, allora $\chi_a * \chi_b$ \e una funzione a supporto compatto, continua e lineare a tratti.
}
\end{rem}

\begin{rem}
\label{rem_conv_cont_2}
{\it
Sia $G$ un gruppo topologico localmente compatto di Hausdorff, e $\mu \in R(G)$ la misura di Haar (vedi \S \ref{sec_MIS1}). L'argomento della dimostrazione del teorema precedente si basa sui teoremi di Fubini-Tonelli, l'invarianza per traslazione e la diseguaglianza di Holder: queste propriet\'a sono tutte verificate dalla misura di Haar, per cui possiamo definire la convoluzione 
\[
f*g (s) := \int_G f(st^{-1}) g(t) \ d \mu (t)
\ , \
s \in G
\ \ , \ \
f \in L_\mu^1(G)
\ , \
g \in L_\mu^p(G)
\ ,
\]
anche in questo caso pi\'u generale
{\footnote{
Con la notazione $d \mu (t)$ intendiamo il fatto che stiamo integrando rispetto alla variabile $t \in G$.
}}. 
Per dettagli sull'argomento si veda \cite[\S 2.5]{Fol}  o\cite[Vol.I, Cap.5]{HR}.
}
\end{rem}

Ora, \e facilmente verificabile che nessuna funzione $g \in L^1(\bR)$ \e un'identit\'a rispetto al prodotto di convoluzione, ovvero $f * g = f$, $\forall f \in L^1(\bR)$. Tuttavia \e possibile introdurre dei buoni sostituti dell'identit\'a, che definiamo qui di seguito.
\begin{defn}
\label{def_moll}
Una successione di funzioni $\{ \rho_n \} \subset L^1(\bR^d)$ si dice \textbf{identit\'a approssimata} se
\[
\rho_n \geq 0
\ \ , \ \
{\mathrm{supp}}(\rho_n) \subseteq \ovl{\Delta(0,1/n)}
\ \ , \ \
\int \rho_n = 1
\ \ , \ \ 
\forall n \in \bN
\ .
\]
In particolare, diremo che $\{ \rho_n \}$ \e una successione di \textbf{mollificatori} se $\rho_n \in C_c^\infty(\bR^d)$ 
{\footnote{
Qui con $C_c^\infty(\bR^d)$ intendiamo lo spazio vettoriale delle funzioni $C^\infty$ su $\bR^d$ a supporto compatto.
}}
$\forall n \in \bN$.
\end{defn}

\begin{rem}
{\it
\textbf{(1)} Possiamo definire in modo del tutto analogo identit\'a approssimate indicizzate da un parametro a valori reali, piuttosto che dai numeri naturali; 
\textbf{(2)} Le nozioni di identit\'a approssimata e successione di mollificatori si possono dare senza variazioni nel caso di funzioni in $L^1(\bR^d,\bC)$.
\textbf{(3)} Se $f$ ha supporto compatto allora ogni $\rho_n*f$ ha supporto compatto.
}
\end{rem}

\begin{prop}
%\textbf{\cite[Teo.IV.22]{Bre} .}
\label{prop_moll}
Sia $p \in [1,+\infty)$ e $\{ \rho_n \}$ un'identit\'a approssimata. Allora per ogni $g \in L^p(\bR^d)$ risulta $\| g - \rho_n * g \|_p \to 0$; in particolare, se $\{ \rho_n \}$ \e una successione di mollificatori allora ogni $\rho_n*g$ \e di classe $C^\infty$ e $C_c^\infty(\bR^d)$ \e denso in $L^p(\bR^d)$ in norma $\| \cdot \|_p$.
\end{prop}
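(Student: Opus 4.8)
The plan is to reduce everything to the $L^p$-convergence $\| g - \rho_n * g \|_p \to 0$, which I would prove by mimicking Step~1 of the proof of Theorem~\ref{thm_RFK} specialized to a single function, and then to bootstrap from it the smoothness and the density. For the convergence, since $\rho_n \geq 0$ and $\int \rho_n = 1$ the measure $\rho_n(y)\,dy$ is a probability measure, so I would write $\rho_n*g(x)-g(x) = \int_{\bR^d}(g(x-y)-g(x))\rho_n(y)\,dy$. Jensen's inequality (Prop.~\ref{prop_jensen}) with $\varphi(t):=|t|^p$ gives the pointwise bound $|\rho_n*g(x)-g(x)|^p \leq \int |g(x-y)-g(x)|^p \rho_n(y)\,dy$; integrating in $x$ and using Tonelli (nonnegative integrand, $\bR^d$ being $\sigma$-finite) I obtain
\[
\| \rho_n * g - g \|_p^p \ \leq \ \int_{\bR^d} \rho_n(y) \ \| g_{-y} - g \|_p^p \ dy .
\]
The continuity of translations in $L^p$ (the $\bR^d$ analogue of Esercizio~\ref{sec_Lp}.2) then provides, for each $\eps > 0$, a $\delta > 0$ with $\| g_{-y} - g \|_p < \eps$ for $|y| < \delta$; since ${\mathrm{supp}}(\rho_n) \subseteq \ovl{\Delta(0,1/n)}$, for $n > 1/\delta$ the right-hand side is at most $\eps^p \int \rho_n = \eps^p$, which establishes Part~1.

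Next I would show that $\rho_n * g \in C^\infty(\bR^d)$ when $\rho_n \in C_c^\infty(\bR^d)$. The point to watch — and what I expect to be the main technical obstacle — is that Prop.~\ref{prop_der_conv} is stated for $g \in L^1(\bR^d)$, whereas here $g$ lies only in $L^p$. I would circumvent this by localizing: for $x$ in a fixed ball $\Delta(x_0,1)$ the condition $\rho_n(x-y) \neq 0$ forces $|x-y| \leq 1/n$, so only the values of $g$ on the bounded set $B := \ovl{\Delta(x_0, 1 + 1/n)}$ enter the integral, whence $\rho_n * g = \rho_n * g_B$ on $\Delta(x_0,1)$ with $g_B := g \chi_B$. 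On the finite-measure set $B$ one has $g \in L^p(B) \subseteq L^1(B)$ by Cor.~\ref{cor_holder}, so $g_B \in L^1(\bR^d)$. Since $\rho_n \in C_c^\infty \subseteq L^1 \cap C^1$ with $\nabla \rho_n \in L^1$, Prop.~\ref{prop_der_conv} gives $\nabla(\rho_n * g_B) = (\nabla \rho_n) * g_B$; as every partial derivative of $\rho_n$ is again in $C_c^\infty$, I would iterate to conclude $\rho_n * g_B \in C^\infty$ near $x_0$, and since $x_0$ is arbitrary, $\rho_n * g \in C^\infty(\bR^d)$.

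Finally, for the density of $C_c^\infty(\bR^d)$ I would combine the two previous points. Given $g \in L^p$ and $\eps > 0$, Cor.~\ref{cor_appLp} furnishes $h \in C_c(\bR^d)$ with $\| g - h \|_p < \eps/2$. The convolution $\rho_n * h$ is then $C^\infty$ by the preceding paragraph and has support contained in ${\mathrm{supp}}(h) + \ovl{\Delta(0,1/n)}$, hence compact, so $\rho_n * h \in C_c^\infty(\bR^d)$; choosing $n$ so large that $\| h - \rho_n * h \|_p < \eps/2$ (by Part~1), the triangle inequality gives $\| g - \rho_n * h \|_p < \eps$. Thus the only genuinely delicate step is the localization argument for smoothness; the rest is either a reproduction of Step~1 of Theorem~\ref{thm_RFK} or a direct appeal to the already-established density of $C_c(\bR^d)$ in $L^p(\bR^d)$.
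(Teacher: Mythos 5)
Your proof is correct, but it is organized differently from the paper's. For the convergence $\| g - \rho_n * g \|_p \to 0$ the paper first treats $g \in C_c(\bR^d)$, where uniform continuity gives $\| \rho_n * g - g \|_\infty \to 0$ and the common compact support converts this into $\| \cdot \|_p$-convergence, and then passes to a general $f \in L^p(\bR^d)$ by a $3$-$\eps$ argument based on Cor.~\ref{cor_appLp} and the bound $\| \rho_n * h \|_p \leq \| \rho_n \|_1 \| h \|_p = \| h \|_p$ from Teo.~\ref{thm_conv}. You instead run the Jensen--Tonelli estimate of Step~1 of Teo.~\ref{thm_RFK} and reduce everything to the continuity of translations in $L^p(\bR^d)$; this is shorter on the page, but it shifts the density-over-$C_c$ work into the translation lemma, which the paper proves only for $d=1$ (Esercizio~\ref{sec_Lp}.2), so you should at least remark that its proof generalizes verbatim to $\bR^d$. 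Where your write-up genuinely improves on the paper is the smoothness step: the paper simply cites Prop.~\ref{prop_der_conv}, whose hypothesis is $g \in L^1(\bR^d)$, whereas your localization $\rho_n * g = \rho_n * (g \chi_B)$ on $\Delta(x_0,1)$, with $g \chi_B \in L^1(\bR^d)$ by Cor.~\ref{cor_holder}, supplies the justification that is missing when $g$ lies only in $L^p$. Your density argument (mollify a $C_c$ approximant and observe that the support stays compact) coincides with what the paper's proof implicitly does, and correctly handles the fact that $\rho_n * g$ itself need not have compact support for general $g$.
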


\begin{proof}[Dimostrazione]
Iniziamo approssimando in norma $\| \cdot \|_p$ una funzione $g \in C_c(\bR^d)$. Per continuit\'a e compattezza del supporto abbiamo che $g$ \e uniformemente continua, per cui per ogni $\eps > 0$ esiste $\delta > 0$ tale che se $|y| < \delta$ allora $|g(x-y)-g(x)|< \eps$. Per $1/n < \delta$ troviamo
\[
\begin{array}{ll}
| (\rho_n * g) (x) - g(x) |
& \leq
\int_{\bR^d} \left| g(x-y)-g(x) \right| \rho_n(y) \ dy
\\ \\ & =
\int_{\Delta(0,1/n)} \left| g(x-y)-g(x) \right| \rho_n(y) \ dy
\\ \\ & \leq
\eps \int_{\bR^d} \rho_n = \eps
\ .
\end{array}
\]
Ci\'o implica $\| \rho_n*g-g \|_\infty \to 0$; avendo $g$ ed ogni $\rho_n*g$ supporto compatto (il quale non si ingrandisce al crescere di $n$) concludiamo che 
\[
\| \rho_n*g-g \|_p \leq K^{1/p} \| \rho_n*g-g \|_\infty \to 0
\ ,
\]
dove $K < \infty$ \e la misura del supporto di $\rho_1*g$. Se $f \in L^p(\bR^d)$, allora grazie a Cor.\ref{cor_appLp} esiste $f_\eps \in C_c(\bR^d)$ tale che $\| f-f_\eps \|_p < \eps$ e quindi
\[
\| \rho_n*f-f \|_p  
\leq
\| \rho_n*(f-f_\eps) \|_p + \| \rho_n*f_\eps-f_\eps \|_p + \| f_\eps-f \|_p
\leq
2 \| f-f_\eps \|_p + \| \rho_n*f_\eps-f_\eps \|_p 
<
3 \eps
\]
per $n$ abbastanza grande. Infine, se $\{ \rho_n \}$ \e una successione di mollificatori allora Prop.\ref{prop_der_conv} implica che $\rho_n * g \in C^\infty(\bR^d)$ per ogni $n$.
\end{proof}

\begin{rem}
\label{rem_moll}
{\it
L'argomento della proposizione precedente dimostra anche il seguente risultato: se $g \in C(\bR^d)$ \e uniformemente continua e limitata, allora $\| \rho_n*g-g \|_\infty \to 0$.
}
\end{rem}

\begin{ex}{\it 
\label{ex_moll_1}
Si consideri la funzione
\[
\rho (x) :=
\left\{
\begin{array}{ll}
e^{\frac{1}{x^2-1}} \ \ , \ \ |x| < 1
\\
0  \ \ , \ \ |x| \geq 1
\end{array}
\right.
\]
Allora $\rho \in L^1(\bR)$, e definendo $\rho_n (x) :=$ $\| \rho \|_1^{-1} n \rho (nx)$, $x \in \bR$, si ottiene una successione di mollificatori. 
}\end{ex}
\begin{ex}{\it 
\label{ex_moll_2}
Per ogni $n \in \bN$, consideriamo la funzione
\[
\rho_n
\ := \
n \chi_{ \left( 0 , \frac{1}{n}  \right) }
\ .
\]
Chiaramente $\{ \rho_n \}$ \e un'identit\'a approssimata. Osserviamo che applicando Prop.\ref{prop_moll} ed il Teorema di Fischer-Riesz otteniamo, per ogni $f \in L^1(\bR)$, 
\[
f * \rho_{n_k} (x) 
\ = \
n_k \int_\bR 
f(x-y) \chi_{ \left( 0 , 1 / n_k  \right) } (y) \ dy
\ = \ 
n_k \int_x^{x+1/{n_k}} f(s) \ ds
\ \stackrel{k}{\to} \ 
f(x)
\ \ , \ \
{\mathrm{q.o. \ in \ }} 
x \in \bR
\ .
\]
Quindi abbiamo dimostrato una versione dell'uguaglianza
\begin{equation}
\label{eq_int_der}
f(x) 
\ = \
\lim_{h \to 0} \frac{1}{h} \int_x^{x+h} f(s) \ ds
\ \ , \ \
f \in L^1(\bR)
\ \ , \ \
{\mathrm{q.o. \ in \ }} 
x \in \bR
\ ,
\end{equation}
dimostrabile altrimenti usando la derivabilit\'a q.o. delle funzioni primitive.
}\end{ex}

\noindent \textbf{Le convoluzioni dal punto di vista dell'analisi funzionale.} Nelle righe che seguono faremo uso delle nozioni di {\em norma di un operatore} (vedi (\ref{eq_defnorm})) ed {\em algebra di Banach} (Def.\ref{def_alg}). Denotiamo con 
$BL^p(\bR^d)$, $p \in [1,+\infty]$,
l'algebra di Banach degli operatori lineari limitati da $L^p(\bR^d)$ in s\'e. Teo.\ref{thm_conv} afferma che l'operatore
\begin{equation}
\label{def_Cp}
C_pf \in BL^p(\bR^d) \ \ , \ \ \{ C_pf \}(g) := f*g \ , \ \forall g \in L^p(\bR^d) \ ,
\end{equation}
ha norma $\leq \| f \|_1$. Considerando una successione di mollificatori $\{ \rho_n \} \subset L^1(\bR^d)$, otteniamo
\[
\| \{ C_1 f \} (\rho_n) \|_1
\ = \
\|  f*\rho_n \|_1
\ \stackrel{n}{\to} \
\| f \|_1
\ ,
\]
per cui $\| C_1 f \|$ ha norma esattamente pari a $\| f \|_1$. Inoltre \e ovvio che $f * (\wt f * g) = (f*\wt f)*g$, $f,\wt f \in L^1(\bR^d)$, $g \in L^p(\bR^d)$, per cui abbiamo dimostrato:
\begin{thm}
Lo spazio $L^1(\bR^d)$, equipaggiato del prodotto di convoluzione, \e un'algebra di Banach commutativa che denotiamo con $( L^1(\bR^d) , * )$, e (\ref{def_Cp}) definisce un'applicazione lineare isometrica
\[
C_p : ( L^1(\bR^d) , * ) \to BL^p(\bR^d)
\ \ , \ \
f \mapsto C_pf
\ ,
\]
tale che $C_p(f * \wt f) = \{ C_p f \} \circ \{ C_p \wt f \}$, per ogni $f , \wt f \in L^1(\bR^d)$.
\end{thm}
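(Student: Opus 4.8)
The plan is to assemble the statement from facts already recorded, since essentially every ingredient is available. I would split the argument into three blocks: first the commutative Banach algebra structure of $(L^1(\bR^d),*)$, then the linearity and norm behaviour of $C_p$, and finally the multiplicativity $C_p(f*\wt f)=\{C_pf\}\circ\{C_p\wt f\}$. The inputs are Theorem~\ref{thm_conv} (well-definedness of $f*g$ together with the Young-type bound $\|f*g\|_1\le\|f\|_1\|g\|_1$, i.e.\ (\ref{eq_conv2}) with $p=1$), the Fischer--Riesz theorem (Teo.~\ref{thm_RF}) for completeness of $L^1$, the four elementary properties of $*$ listed after Theorem~\ref{thm_conv} (commutativity, associativity, distributivity, homogeneity), and Prop.~\ref{prop_moll} on approximate identities.

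For the algebra structure I would verify the axioms of Def.~\ref{def_alg} directly. Bilinearity of $*$ follows from additivity (property~3, together with commutativity to transfer it to the first slot) and scalar homogeneity (property~4); associativity on $L^1(\bR^d)$ is property~2 specialized to $g\in L^1(\bR^d)$; the norm is submultiplicative by (\ref{eq_conv2}) taken with $p=1$; and $L^1(\bR^d)$ is a Banach space by Teo.~\ref{thm_RF}. Commutativity (property~1) then upgrades this to a \emph{commutative} Banach algebra. None of these steps is more than a specialization of a quoted identity.

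For the representation, linearity of $f\mapsto C_pf$ is immediate from properties~3 and~4 inside the definition $\{C_pf\}(g)=f*g$, and the bound (\ref{eq_conv2}) shows $C_pf\in BL^p(\bR^d)$ with $\|C_pf\|\le\|f\|_1$ in the operator norm (\ref{eq_defnorm}). The multiplicativity is purely formal: for $g\in L^p(\bR^d)$,
\[
\{C_p(f*\wt f)\}(g)=(f*\wt f)*g=f*(\wt f*g)=\{C_pf\}\bigl(\{C_p\wt f\}(g)\bigr),
\]
the middle equality being associativity (property~2). The one genuinely analytic point is the reverse inequality $\|C_pf\|\ge\|f\|_1$ required for isometry: taking a sequence of mollifiers $\{\rho_n\}$ with $\|\rho_n\|_1=1$ and $\|f*\rho_n\|_1\to\|f\|_1$ (Prop.~\ref{prop_moll} with $p=1$), one gets $\|C_1f\|\ge\|f*\rho_n\|_1\to\|f\|_1$, hence $\|C_1f\|=\|f\|_1$.

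The main obstacle, and the place where I would be most careful, is precisely this isometry when $p>1$. The approximate-identity test functions $\rho_n$ fail beyond $p=1$, since $\|\rho_n\|_p\to\infty$; and in fact for $p>1$ the operator norm can be strictly smaller than $\|f\|_1$ (for $p=2$ it equals the sup-norm of the Fourier transform of $f$, which need not reach $\|f\|_1$ for sign-changing $f$). I would therefore either state the isometry only for $p=1$, matching the computation already displayed before the theorem, or retain it for general $p$ under the hypothesis $f\ge 0$, where the lower bound is recovered by testing against indicators $\chi_{B_R}$ of large balls: $f*\chi_{B_R}(x)=\int_{B_R(x)}f$ tends to $\|f\|_1$ uniformly on $B_{R-M}$, so comparing $\|f*\chi_{B_R}\|_p$ with $\|\chi_{B_R}\|_p$ as $R\to\infty$ gives $\|C_pf\|\ge\|f\|_1$. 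For every $p\in[1,+\infty]$, in any case, the contraction estimate and the homomorphism identity hold exactly as asserted.
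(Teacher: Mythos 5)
Your proof follows the paper's own argument almost verbatim: the contraction bound $\| C_pf \| \leq \| f \|_1$ is read off from Teo.\ref{thm_conv}, the lower bound comes from the mollifier computation $\| f*\rho_n \|_1 \to \| f \|_1$, the homomorphism identity is associativity of $*$, and the commutative Banach algebra axioms are the listed elementary properties of the convolution together with Fischer--Riesz and the case $p=1$ of (\ref{eq_conv2}). So on the parts that are actually provable, you and the paper coincide.

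Your caveat about the isometry is not just prudence --- it exposes a genuine slip in the statement itself. The paper's displayed computation before the theorem establishes only $\| C_1 f \| = \| f \|_1$ (it is written explicitly for $C_1$), yet the theorem asserts that $C_p$ is isometric for every $p$. As you observe, for $p=2$ one has $\| C_2 f \| = \| \wa f \|_\infty$ by Plancherel, which is strictly smaller than $\| f \|_1$ for generic sign-changing $f$ (for instance any nonzero $f$ with $\int f = 0$ and $\wa f$ nowhere attaining modulus $\| f \|_1$), so the general-$p$ isometry is false as stated. Your two repairs are both correct: restricting the isometry claim to $p=1$, which is all the paper's own argument delivers, or keeping general $p$ under the hypothesis $f \geq 0$, where testing against $\chi_{B_R}$ and letting $R \to \infty$ recovers the lower bound since ${\mathrm{vol}}(B_{R-M})/{\mathrm{vol}}(B_R) \to 1$. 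Everything else in your proposal --- linearity, the contraction estimate, and the multiplicativity --- holds for every $p \in [1,+\infty]$ exactly as you argue.
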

Un ulteriore aspetto interessante della convoluzione \e il suo rapporto con la trasformata di Fourier, come vedremo nel seguito.

\begin{rem}\label{rem_conv}{\it 
L'argomento della dimostrazione di Teo.\ref{thm_conv} funziona, pi\'u in generale, se invece della funzione $\delta f (x,y) := f(x-y)$ consideriamo
$\varphi \in L^1(\bR^d \times \bR^d)$
tale che esista $c > 0$ con
\[
\| \varphi (x,\cdot) \|_1 
\ , \
\| \varphi (\cdot,y) \|_1 
\ \leq \ 
c
\ \ , \ \
{\mathrm{q.o. \ in \ }} x,y \in \bR^d
\ .
\]
Cosicch\'e, per ogni $p \in [1,+\infty]$ e $g \in L^p(\bR^d)$, l'integrale
\[
C_\varphi g (x) := \int_{\bR^d} \varphi (x,y) g(y) \ dy 
\ \ , \ \
x \in \bR^d 
\ ,
\]
definisce un operatore lineare limitato $C_\varphi \in BL^p(\bR^d)$ tale che $\| C_\varphi \| \leq c$.
} \end{rem}

\noindent \textbf{La trasformata di Laplace}. Nello stesso ordine di idee dell'osservazione precedente consideriamo lo spazio $L^p(\bR^+)$, $\bR^+ := [0,+\infty)$, e definiamo {\em la trasformata di Laplace}
\begin{equation}
\label{eq_laplace_tr}
Lf (x) 
\ := \ 
\int_{\bR^+} e^{-xt} f(t) \ dt 
\ \ , \ \
f \in L^1(\bR^+)
\ \ , \ \
x \in \bR^+
\ .
\end{equation}
Abbiamo le seguenti propriet\'a:
\begin{itemize}
\item $L(f+ag) = Lf + a Lg$, $f,g \in L^1(\bR^+)$, $a \in \bR$; 
\item $Lf \in C_0(\bR^+)$; \\
      Infatti, poich\'e 
      $|e^{-xt} f(t)| \leq |f(t)|$, 
      possiamo applicare il teorema di Lebesgue e concludere che
      \[
      \left\{
      \begin{array}{ll}
      x_n \to x
      \ \Rightarrow \
      \lim_n Lf(x_n) = \lim_n \int_{\bR^+} e^{-x_nt} f(t) \ dt = Lf(x) \ ,
      \\
      x_n \to \infty
      \ \Rightarrow \
      \lim_n Lf(x_n) = \lim_n \int_{\bR^+} e^{-x_nt} f(t) \ dt = 0
      \ .
      \end{array}
      \right.
      \]
\item $\| Lf \|_\infty \leq \| f \|_1$; \\ 
      (basta osservare che $e^{-xt} \leq 1$, $x,t \in \bR^+$).
\item $L(f*g) = Lf \cdot Lg$; \\
      Infatti, prolunghiamo $f,g$ ad $\bR$ ponendo $f(x) = g(x) = 0$, $x < 0$,
      e, usando il teorema di Fubini, calcoliamo
      \begin{equation}
      \label{eq.pr.L}
      \begin{array}{ll}
      L \{ f*g \} (x) & =
      \int_{\bR^+} e^{-xt} f*g(t) \ dt \ =
      \\ & = 
      \int_{\bR^+} \int_\bR e^{-x(t-s)} f(t-s) \cdot e^{-xs}g(s) \ dtds \ =
      \\ & = 
      \int_{\bR^+} e^{-x \theta} f(\theta) \ d \theta \int_{\bR^+} e^{-xs}g(s) \ ds \ =
      \\ & = 
      Lf(x) \cdot Lg(x)
      \ .
      \end{array}
      \end{equation}
\end{itemize}
Cosicch\'e la trasformata di Laplace definisce un morfismo (limitato) di algebre di Banach
\[
L : ( L^1(\bR^+),* ) \to C_0(\bR^+) 
\ \ , \ \
f \mapsto Lf
\ .
\]

\subsection{Esercizi.}

\textbf{Esercizio \ref{sec_func_n}.1.} {\it Sia $p \in (1,+\infty)$. Si consideri l'identit\'a approssimata
\[
\rho := \frac{1}{2} \chi_{[-1,1]} 
\ \ , \ \
\rho_\eps := \frac{1}{\eps} \rho \left( \frac{x}{\eps} \right)
\ \ , \ \
\eps > 0
\ \ ,
\]
e, preso $\alpha \in (0,1/p)$, si calcoli la convoluzione $\rho_\eps * f$, dove
\[
f \in L^p(\bR)
\ \ , \ \
f(x) :=
\left\{
\begin{array}{ll}
x^{-\alpha} \ \ , \ \ x \in (0,1]
\\
0 \ \ , \ \  {\mathrm{altrimenti \ . }}
\end{array}
\right.
\]
Verificare che $\rho_\eps * f \in C_c(\bR) \cap L^p(\bR)$ per ogni $\eps > 0$, e che $\lim_{\eps \to 0} \| f - \rho_\eps * f \|_\infty = 0$.}

\

\noindent \textbf{Esercizio \ref{sec_func_n}.2.} {\it Sia
\[
f(x,y) := \frac{ x^2 - y^2 }{ (x^2+y^2)^2 }
\ \ , \ \
x,y \in [0,1]
\ .
\]
Si verifichi che $\int \left( \int f \ dx \right) dy \neq \int \left( \int f \ dy \right) dx$, e si dimostri che $f \notin L^1([0,1]^2)$.}

\

\noindent {\it Soluzione.} Per calcolare gli integrali di $f$ rispetto ad $x,y$, ricordiamo che
\[
\int \frac{s}{(1+s^2)} \ ds \ = \ - \frac{1}{2} \frac{1}{1+s^2}
\ \ \Rightarrow \ \
\int \frac{1}{(1+s^2)^2} \ ds \ = \  \frac{1}{2} 
                                     \left\{
                                     \arctan s + \frac{s}{1+s^2}
                                     \right\}
\ ,
\]
per cui
\[
\int \frac{s^2}{(1+s^2)^2} \ ds \ = \  \frac{1}{2} 
                                        \left\{
                                        \arctan s - \frac{s}{1+s^2}
                                        \right\}
\ .
\]
Possiamo ora calcolare
\[
\int_0^1 \frac{x^2-y^2}{ (x^2+y^2)^2 } \ dy
\ = \
x^2 \int_0^1 \frac{dy}{ x^4 \left( 1 + y^2/x^2 \right)^2 }
\ - \ 
\int_0^1 \frac{y^2}{ x^4 \left( 1 + y^2/x^2 \right)^2 } \ dy
\ = \
\]
\[
=
\frac{1}{x} 
\left\{ 
\int_0^{1/x} \frac{1}{(1+s^2)^2} \ ds
\ - \
\int_0^{1/x} \frac{s^2}{(1+s^2)^2} \ ds
\right\}
\ = \
\frac{1}{1+x^2}
\ .
\]
Usando la simmetria di $f$ rispetto ad $x,y$, otteniamo anche
\[
\int_0^1 \frac{x^2-y^2}{ (x^2+y^2)^2 } \ dx
\ = \
- \frac{1}{1+y^2}
\ ,
\]
per cui, integrando nuovamente rispetto ad $x$ ed $y$ rispettivamente
\[
\int_0^1 \left( \int_0^1 f(x,y) \ dy \right) dx = \frac{\pi}{4}
\ \ , \ \
\int_0^1 \left( \int_0^1 f(x,y) \ dx \right) dy = - \frac{\pi}{4}
\ .
\]
Dunque non valgono i teoremi di Fubini-Tonelli; ed infatti $f \notin L^1([0,1]^2)$, come si verifica ponendo $f^+(x,y) := \sup \{ f(x,y) , 0 \}$ e calcolando
\[
\int_{[0,1]^2} f^+(x,y) \ dx dy
\ = \
\int_0^1 \left(
         \int_0^x \frac{x^2-y^2}{(x^2+y^2)^2} \ dy
         \right) dx
\ = \
\frac{1}{2} \int_0^1 \frac{1}{x} \ dx
\ = \
+ \infty
\ .
\]

\

\noindent \textbf{Esercizio \ref{sec_func_n}.3.} {\it Si calcoli il minimo dei funzionali
\[
F(u) = \int_0^1 [ (u')^2 + 2tu ] \ dt 
\ \ , \ \
G(u) = \int_0^1 [ (u')^2 - 2tuu' + e^tu ] \ dt 
\ \ , \ \
u(0) = u(1) = 0 \ .
\]
}

\noindent \textbf{Esercizio \ref{sec_func_n}.4.} {\it Si trovi la soluzione $u$ del problema
\[
\left\{
\begin{array}{ll}
F(u) = \int_0^1 [ (u')^2 + 4u^2 ] \ dt
\\
\int_0^1 u = 1 
\ \ , \ \
u(0) = u(1) = 3
\end{array}
\right.
\]
}

\noindent \textbf{Esercizio \ref{sec_func_n}.5 (\cite[Es.8.1.1-2]{SCa}).} {\it Si studino i luoghi degli zeri di
\[
F(x,y) = y^2 + \lambda x^2 - x^3
\ \ , \ \
G(x,y) = e^{xy} (y^2 + 1) - y ( 1 + e^{2xy} )
\ ,
\]
al variare del parametro $\lambda \in \bR$.
}

\

\noindent \textbf{Esercizio \ref{sec_func_n}.6 (\cite[Es.7.2.6-7]{SCa}).} {\it Si dimostri che le seguenti forme differenziali sono esatte sui relativi domini:
\[
\left\{
\begin{array}{ll}
\omega 
\ = \  
\{ \ 1 - 2x [ (x^2+y^2)^{-2} ] \ \} \ dx + \{ \ 1 - 2y [ (x^2+y^2)^{-2} ] \ \} \ dy
\ \ , \ \
(x,y) \in \bR^2 - \{ 0 \}
\\
\varphi
\ = \
z(x+y)^{-1} ( dx + dy ) + \log (x+y) \ dz
\ \ , \ \
x,y,z > 0
\end{array}
\right.
\]
Inoltre, si trovi la primitiva $F$ di $\varphi$ tale che $F(1,1,1) = 1$. 

\

\noindent (Suggerimento: si osservi che il dominio di $\omega$ non \e stellato, per cui conviene usare \cite[Cor.8.2.1]{Giu2}).
}

\

\noindent \textbf{Esercizio \ref{sec_func_n}.7 (Convoluzioni di misure)} {\it Sia $\Lambda_\beta^1(\bR)$ lo spazio normato delle misure boreliane finite su $\bR$ (vedi Def.\ref{def_l1} ed Esercizio \ref{sec_MIS}.7). Per ogni $\lambda : \mM \to \bR$, $\lambda' : \mM' \to \bR$, $\lambda, \lambda' \in \Lambda_\beta^1(\bR)$, si mostri che:
\textbf{(1)} La misura prodotto $\lambda \times \lambda'$ \e boreliana e finita su $\bR^2$;
\textbf{(2)} Si definisca
\begin{equation}
\label{def_comv.mis}
\{ \lambda * \lambda' \}E := \int_{\bR^2} \chi_E(x+y) \ d \lambda(x)  d \lambda'(y)
\ \ , \ \
\forall E \in \mM \cap \mM'
\ ,
\end{equation}
dove $\chi_E$ \e la funzione caratteristica di $E$, e si mostri che $\lambda * \lambda' \in \Lambda_\beta^1(\bR)$;
\textbf{(3)} Si consideri l'applicazione 
\begin{equation}
\label{def_LL1}
\mu : L^1(\bR) \to \Lambda_\beta^1(\bR)
\ \ , \ \
f \mapsto \mu_f \ : \ \mu_fE := \int_E f \ \ , \ \ \forall E \in \mL \ .
\end{equation}
Si mostri che $\mu$ \e lineare, isometrica, e che $\mu_{f*g} = \mu_f * \mu_g$, $\forall f,g \in L^1(\bR)$. Si verifichi inoltre che $\mu$ non \e suriettiva.
\textbf{(4)} Si verifichi che la misura di Dirac $\delta_0$ appartiene a $\Lambda_\beta^1(\bR)$, e che
$\lambda * \delta_0 = \lambda$,
$\forall \lambda \in \Lambda_\beta^1(\bR)$.

\

\noindent (Suggerimenti: per il punto (2), riguardo l'additivit\'a numerabile di $\lambda * \lambda'$ si osservi che per ogni successione $\{ E_n \}$ di insiemi disgiunti la serie $\sum_n \chi_{E_n}$ \e positiva, monot\'ona crescente e puntualmente convergente a $\chi_E$, $E := \dot{\cup}_n E_n$, per cui si pu\'o applicare il Teorema di Beppo Levi; per il punto (3), riguardo la non suriettivit\'a si considerino le misure di Dirac).
}

\newpage
\section{Analisi Funzionale.}
\label{sec_afunct}

In questa sezione esponiamo le basi dell'analisi funzionale. Questo approccio fa uso di nozioni sia topologiche {\footnote{Si pensi alle locuzioni {\em spazio localmente convesso} od {\em operatore compatto}.}} che di algebra lineare ed i risultati che esso permette di conseguire hanno importanti applicazioni in analisi "hard".

Nelle pagine seguenti tratteremo prevalentemente spazi di Banach e di Hilbert a coefficienti sia reali che complessi. Il caso reale \e interessante per le applicazioni alla teoria delle equazioni alle derivate parziali (si veda \S \ref{sec_sobolev} e la relativa bibliografia). Gli spazi di Hilbert complessi hanno un ruolo importante in meccanica quantistica (\cite[Vol.I-IV]{RS}) ed in teoria delle rappresentazioni dei gruppi topologici (\cite{Fol,HR}), ivi compresa l'analisi di Fourier (\S \ref{sec_fourier}).

\subsection{Spazi di Banach e di Hilbert.}
\label{sec_afunct_1}

\noindent \textbf{Spazi di Banach.} Sia $\mE$ uno spazio vettoriale, reale o complesso. Una {\em seminorma} su $\mE$ \e una funzione $p : \mE \to \bR^+$ tale che
\[
p( \lambda v ) = |\lambda| p(v)
\ \ , \ \
p(v+w) \leq p(v)+p(w)
\ \ , \ \
\lambda \in \bR (\bC)  \ , \  v,w \in \mE
\ .
\]
In particolare, $p$ si dice {\em norma} se $p(v) = 0$ implica $v=0$. In tal caso, useremo la notazione $\| \cdot \| := p(\cdot)$ e diremo che $\mE$ \e uno {\em spazio normato}. Nel seguito, indicheremo con $\mE_1$ l'insieme degli elementi di $\mE$ con norma uguale a $1$. Se $f : \mE \to \bR$ \e un'applicazione lineare (ovvero, un {\em funzionale lineare}), allora introduciamo la notazione
\begin{equation}
\label{eq_AF1}
\| f \|
\, := \,
\sup_{v \in \mE} \frac{ | f(v) | }{\| v \|}
\, = \,
\sup_{v \in \mE_1} |f(v)|
\ ;
\end{equation}
se $\| f \| < + \infty$, allora diciamo che $f$ \e {\em limitato} o {\em continuo}. L'insieme $\mE^*$ dei funzionali lineari limitati da $\mE$ in $\bR$ (o $\bC$ qualora $\mE$ sia uno spazio complesso), equipaggiato con la norma (\ref{eq_AF1}), \e uno spazio normato, chiamato il {\em duale di $\mE$}. Nel seguito, utilizzeremo la notazione
\begin{equation}
\label{eq_AF2}
\left \langle f , v \right \rangle
\ := \
f(v)
\ \ , \ \
\forall f \in \mE^*
\ , \
v \in \mE
\ .
\end{equation}
Una successione $\{ v_n \} \subset \mE$ si dice {\em di Cauchy} se 
$| \leq \| v_n - v_m \| \stackrel{n,m}{\to} 0$;
osserviamo che
\[
| \ \| v_n \| - \| v_m \| \ | \leq \| v_n - v_m \| \stackrel{n,m}{\to} 0 \ ,
\]
per cui la successione delle norme $\{ \| v_n \| \}$ \e di Cauchy in $\bR$ e quindi convergente.
Una serie $\{ s_n := \sum_i^n v_i \} \subset \mE$ si dice {\em convergente} se esiste il limite $\lim_n s_n$, ed {\em assolutamente convergente} se esiste il limite della serie reale $\{ a_n := \sum_i^n \| v_i \| \} \subset \bR$.

Uno spazio normato $\mE$ si dice {\em di Banach} se esso \e completo rispetto alla topologia della norma (ovvero, ogni successione di Cauchy converge ad un elemento di $\mE$).

\begin{prop}
\label{prop_compl}
Un spazio normato $\mE$ \e di Banach se e solo se ogni serie assolutamente convergente \e convergente.
\end{prop}

\begin{proof}[Dimostrazione]
Se $\mE$ \e di Banach ed $\{ s_n := \sum_i^n v_i \}$ \e assolutamente convergente, allora 
\[
\| s_n - s_m \| \leq \sum_m^{i=n} \| v_i \| \stackrel{n,m}{\to} 0 \ .
\]
Viceversa, se $\mE$ \e tale che ogni serie assolutamente convergente \e convergente, consideriamo una successione di Cauchy $\{ v_n \}$. Per mostrare che questa ammette un limite, scegliamo una sottosuccessione $\{ n_k \} \subset \bN$ tale che per $n,m > n_k$ risulti $\| v_n - v_m \| < 2^{-k}$, e definiamo la sottosuccesione $\{ g_k := v_{n_k} - v_{n_{k-1}} \}$ (avendo posto $v_{n_0} := 0$). Allora
\[
\sum_k \| g_k \| \leq 
\| g_1 \| + \sum_k 2^{-k} = 
\| g_1 \| + 1
\ ,
\]
per cui la serie $\{ \sum_k \| g_k \| \}$ \e monot\'ona e limitata, e quindi convergente. Esiste dunque $v := \sum_k g_k$, ed \e semplice verificare che deve essere anche $v = \lim_n v_n$.
\end{proof}

\begin{prop}
\label{prop_dualBan}
%\textbf{\cite[Theorem 5.19.1]{Kol}}
Sia $\mE$ uno spazio normato. Allora il duale $\mE^*$ \e uno spazio di Banach.
\end{prop}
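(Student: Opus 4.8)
$\mE^*$ is a Banach space for any normed space $\mE$.

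The plan is to show that $\mE^*$, which we already know is a normed space, is complete. The key point is that completeness of the target space $\bR$ (or $\bC$) is inherited by the space of bounded linear functionals, regardless of whether $\mE$ itself is complete. So I would take an arbitrary Cauchy sequence $\{ f_n \} \subset \mE^*$ and exhibit a limit $f \in \mE^*$ in the operator norm (\ref{eq_AF1}).

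First I would construct the candidate limit pointwise. Fixing $v \in \mE$, the estimate
\[
| \left \langle f_n , v \right \rangle - \left \langle f_m , v \right \rangle |
\ = \
| \left \langle f_n - f_m , v \right \rangle |
\ \leq \
\| f_n - f_m \| \ \| v \|
\]
shows that $\{ \left \langle f_n , v \right \rangle \}$ is a Cauchy sequence of scalars, hence convergent since $\bR$ (or $\bC$) is complete. I define $f(v) := \lim_n \left \langle f_n , v \right \rangle$. Linearity of $f$ is immediate by passing to the limit in the linearity relations of each $f_n$, so $f$ is a linear functional.

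Next I would verify that $f$ is bounded and that $f_n \to f$ in norm. Since $\{ f_n \}$ is Cauchy, the sequence of norms $\{ \| f_n \| \}$ is bounded (this follows, as remarked in the excerpt, from $| \, \| f_n \| - \| f_m \| \, | \leq \| f_n - f_m \|$, which makes $\{ \| f_n \| \}$ itself Cauchy in $\bR$ and hence convergent, so bounded by some $M$). Then $|f(v)| = \lim_n | \left \langle f_n , v \right \rangle | \leq M \| v \|$ gives $f \in \mE^*$. For norm convergence, I fix $\eps > 0$ and choose $n_\eps$ so that $\| f_n - f_m \| < \eps$ for $n,m \geq n_\eps$; then for any $v \in \mE_1$ and $n \geq n_\eps$,
\[
| \left \langle f - f_n , v \right \rangle |
\ = \
\lim_m | \left \langle f_m - f_n , v \right \rangle |
\ \leq \
\lim_m \| f_m - f_n \| \ \| v \|
\ \leq \
\eps
\ ,
\]
and taking the supremum over $v \in \mE_1$ yields $\| f - f_n \| \leq \eps$ for $n \geq n_\eps$, i.e. $\lim_n \| f - f_n \| = 0$.

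The proof presents no serious obstacle; the only point requiring a little care is the exchange of limits, namely that the pointwise limit $f$ is actually approached in the stronger operator norm. This is handled cleanly by the uniformity in $v \in \mE_1$ of the Cauchy estimate, as the last display shows: the bound $\| f_m - f_n \| < \eps$ does not depend on $v$, so it survives passing to the limit in $m$. Alternatively, one could invoke Proposition \ref{prop_compl} by showing every absolutely convergent series in $\mE^*$ converges, but the direct Cauchy-sequence argument above is more transparent here.
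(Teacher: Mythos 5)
Your proof is correct and follows essentially the same route as the paper's: pointwise construction of the limit functional via completeness of the scalars, boundedness from the boundedness of $\{ \| f_n \| \}$, and norm convergence by exploiting the uniformity in $v \in \mE_1$ of the Cauchy estimate before passing to the limit in $m$. No gaps.
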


\begin{proof}[Dimostrazione]
Presa una successione di Cauchy $\{ f_n \} \subset \mE^*$, osserviamo che 
\[
| \left \langle f_n , v \right \rangle - \left \langle f_m , v \right \rangle | 
\leq
\| f_n - f_m \| \| v \|
\ \ , \ \
\forall v \in \mE
\ ,
\]
per cui la succesione $\left \langle f_n , v \right \rangle \subset \bR$ (o $\bC$) \e di Cauchy e possiamo definire
$f(v) := \lim_n \left \langle f_n , v \right \rangle$,
$v \in \mE$. Si tratta quindi di verificare che l'applicazione $f$ cos\'i definita \e un funzionale lineare continuo, e che 
$\lim_n \| f-f_n \| = 0$. 
Mostriamo che $f$ \e limitata: poich\'e $\{ f_n \}$ \e di Cauchy, otteniamo per diseguaglianza triangolare che 
$| \| f_n \| - \| f_m \| | \leq \| f_n - f_m \|$,
per cui $\| f_n \|$ \e di Cauchy in $\bR$ ed esiste $M > 0$ tale che $\| f_n \| < M$, $n \in \bN$. Ora, per ogni $\eps > 0$ esiste $m \in \bN$ tale che
$| f(v) - \left \langle f_n , v \right \rangle | < \eps$, $n \geq m$;
per cui, se $v \in \mE_1$ troviamo 
$| f(v) | \ \leq \ \eps + | \left \langle f_n , v \right \rangle |
\ \leq \
\eps + M$.
Che $f$ sia lineare \e evidente per linearit\'a dei limiti, per cui $f \in \mE^*$ e concludiamo la dimostrazione osservando che
\[
\lim_n \| f - f_n \| \ = \
\lim_n \sup_{v \in \mE_1} | \left \langle f - f_n , v \right \rangle |   \ = \
\lim_n \sup_{v \in \mE_1} \lim_m | \left \langle f_m - f_n , v \right \rangle | \ \leq \
\limsup_{m,n} \| f_m - f_n \|
\ \stackrel{m,n}{\to} \
0
\ .
\]
\end{proof}

Sia $\mE$ uno spazio di Banach. Un sottospazio vettoriale $\mE' \subseteq \mE$ si dice {\em di Banach} se esso \e completo. Preso un sottospazio vettoriale $\mE' \subseteq \mE$, {\em la chiusura di $\mE'$ in $\mE$} si definisce come lo spazio vettoriale degli elementi di $\mE$ che sono limite di successioni di Cauchy in $\mE'$, e si denota con $\ovl{\mE'}$; ovviamente $\ovl{\mE'}$ \e -- per costruzione -- un sottospazio di Banach. Preso un insieme $S \subset \mE$, il {\em sottospazio di Banach generato da $S$} si definisce come la chiusura dello spazio vettoriale delle combinazioni lineari di elementi di $S$.

Dato un sottospazio vettoriale $\mE' \subseteq \mE$ l'operazione di restrizione $f \mapsto R_{\mE'}f := f|_{\mE'}$, $f \in \mE^*$, induce un'applicazione lineare $R_{\mE'} : \mE^* \to {\mE'}^*$ e, chiaramente, $\| R_{\mE'}f \| \leq \| f \|$, $\forall f \in \mE^*$. Il teorema di Hahn-Banach, che mostreremo nel seguito, afferma che $R_{\mE'}$ \e suriettiva.

Una {\em base di Schauder} dello spazio di Banach $\mE$ \e una successione $\{ e_i \in E \}$ che soddisfa la seguente propriet\'a: per ogni $v \in \mE$ esiste {\em ed \e unica} la successione reale (o complessa) $\{ a_i \}$ tale che $v = \sum_i a_i e_i$. Non tutti gli spazi di Banach posseggono una base di Schauder (vedi i commenti in \cite[Cap.V]{Bre}).

\begin{ex}
\label{ex_CX_d}
{\it
Sia $X$ uno spazio topologico. Lo spazio $C_b(X)$ delle funzioni continue e limitate su $X$ a valori reali \e uno spazio di Banach, qualora equipaggiato della norma $\| \cdot \|_\infty$. Una famiglia di funzionali limitati su $C_b(X)$ \e data dalle \textbf{delta di Dirac}
\[
\left \langle \delta_x , f \right \rangle := f(x) 
\ \ , \ \
x \in X
\ \Rightarrow \
| \left \langle \delta_x , f \right \rangle | \leq \| f \|_\infty
\ .
\]
}
\end{ex}

\begin{ex}[Gli spazi $l^p$]
\label{ex_lp_spaces}
{\it
Gli spazi $L_\mu^p(X)$ sono degli spazi di Banach per ogni $p \in [1,+\infty]$ (Teorema di Fischer-Riesz). In particolare denotiamo con $l^p$ l'insieme delle successioni $z := \{ z_n \in \bR \}_n$ tali che
\[
\| z \|_p := \left( \sum_n |z_n|^p \right)^{1/p} 
\ < \
+ \infty
\ .
\]
Definiamo inoltre $l^\infty$ come lo spazio delle successioni tali che
\[
\| z \|_\infty := \sup_n |z_n| < + \infty \ .
\]
Ora
{\footnote{Le stesse definizioni possono essere formulate usando il campo complesso, ed in tal caso scriveremo $l^p_\bC$, $p \in [1,\infty]$.}}, 
ogni $z \in l^p$ si pu\'o riguardare come una funzione sullo spazio discreto $\bN$ equipaggiato della misura di enumerazione. Per cui, con le tecniche usate per dimostrare Prop.\ref{prop_holder} e Prop.\ref{prop_mink}, otteniamo delle versioni delle diseguaglianze di Holder e Minkowski,
\begin{equation}
\label{eq_HM_lp}
\sum_n |z_n||w_n|  \leq  \| z \|_p \| w \|_q
\ \ , \ \
\| z + z' \|_p \leq \| z \|_p + \| z' \|_p
\ \ , \ \
\frac{1}{p} + \frac{1}{q} = 1
\ ,
\end{equation}
ed il teorema di Fischer-Riesz implica che ogni $l^p$ \e uno spazio di Banach. 
}
\end{ex}

\begin{ex}[Il Teorema di Riesz-Markov] \label{ex_dual_CX}
{\it
Se $X$ \e uno spazio compatto allora $C(X)$, equipaggiato con la norma dell'estremo superiore, \e uno spazio di Banach. Assumiamo ora che $X$ sia di Hausdorff; per ogni misura di Radon con segno $\mu \in R(X)$ definiamo il funzionale
\[
\left \langle F_\mu , f \right \rangle
:=
\int_X f \ d \mu
\ \ , \ \
f \in C(X)
\ .
\]
E' semplice verificare che $\| F_\mu \| = | \mu |(X)$, cosicch\'e abbiamo un'applicazione lineare isometrica
\begin{equation}
\label{eq_RM}
R(X) \to C(X)^* \ \ , \ \ \mu \mapsto F_\mu \ .
\end{equation}
Il \textbf{Teorema di Riesz-Markov} (\cite[\S 13.4]{Roy}) afferma che (\ref{eq_RM}) \e anche suriettiva, cosicch\'e abbiamo una caratterizzazione delle misure di Radon su $X$ in termini dei funzionali lineari continui su $C(X)$. Per dare un'idea della dimostrazione consideriamo $\varphi \in C(X)^*$ \textbf{positivo}, ovvero tale che
$\left \langle \varphi , f \right \rangle \geq 0$ $\forall f \geq 0$,
ed introduciamo l'applicazione
\[
\mu_* A
\ := \
\sup
\{
\left \langle \varphi , f \right \rangle
\ , \
f \in C(X) \ , \ {\mathrm{supp}}(f) \subset A \ , \ 0 \leq f \leq 1
\}
\ \ , \ \
A \in 2^X
\ .
\]
Si verifica che la "misura interna" $\mu_*$ induce una misura di Radon $\mu$, definita su un'opportuna $\sigma$-algebra $\mM \supseteq \tau X$, e che $F_\mu = \varphi$. La tesi del teorema segue osservando che ogni funzionale su $C(X)$ si decompone in una differenza di funzionali positivi (\cite[Prop.13.24]{Roy}). 
Il teorema si estende senza difficolt\'a al caso complesso, cosicch\'e abbiamo un isomorfismo di spazi di Banach $R(X,\bC) \to C(X,\bC)^*$, dove $R(X,\bC)$ \e lo spazio delle misure di Radon complesse su $X$.
}
\end{ex}

\

\noindent \textbf{Spazi di Hilbert e basi ortonormali.} Uno spazio di Banach $\mH$ si dice di {\em Hilbert} se questo \e equipaggiato di un prodotto scalare 
\[
( u , v ) \in \bR
\ \ , \ \
u,v \in \mH
\ ,
\]
che ne induce la norma, ovvero 
$\| v \|^2 = (v,v)$, $\forall v \in \mH$.
Nel caso complesso il prodotto scalare \e per definizione {\em sesquilineare}, ovvero 
\begin{equation}
\label{eq.sesq}
(\lambda u , \mu v) = \ovl \lambda \mu (u,v)
\ , \
(v,u) = \ovl{(u,v)}
\ \ , \ \
\forall u,v \in \mH
\ , \
\lambda,\mu \in \bC
\end{equation}
(spesso in letteratura si richiede, a differenza di quanto facciamo noi, linearit\'a nella prima variabile ed antilinearit\'a nella seconda, ma chiaramente questa non \e una differenza sostanziale). Il prodotto scalare si pu\'o ricostruire dalla norma grazie all'{\em identit\'a di polarizzazione}
\[
(u,v) = 1/4 \sum_{k=0}^3 i^k \| u+i^kv \|^2
\ \ , \ \
\forall u,v \in \mH
\ ,
\]
la quale si dimostra banalmente usando (\ref{eq.sesq}) e le identit\'a
${\mathrm{Re}}(z) =  1/2 (z+\ovl z)$, 
${\mathrm{Im}}(z) = -i/2 (z-\ovl z)$,
$\forall z \in \bC$.
Una famiglia $\{ e_i \} \subset \mH$ si dice {\em ortogonale} se 
\[
( e_i , e_j ) = 0
\ \ , \ \
\forall i \neq j
\ ,
\]
ed {\em ortonormale} se, inoltre, $\left \langle e_i , e_i \right \rangle = 1$ per ogni $i \in \bN$. Una {\em base hilbertiana di } $\mH$ \e una famiglia ortonormale $\{ e_i \}$ tale che lo spazio vettoriale da essa generato \e denso in $\mH$. 
Si pu\'o dimostrare che se $\mH$ \e separabile allora ha una base {\em numerabile} (ovvero, la famiglia $\{ e_i \}$ \e una successione, vedi \cite[\S 4.16.3]{Kol}). L'esistenza delle basi hilbertiane pu\'o essere dimostrata usando il procedimento di ortogonalizzazione di Gram-Schmidt (ancora, si veda \cite[\S 4.16.3]{Kol}). Chiaramente, una base hilbertiana \e una base di Schauder (per verificarlo, si definiscano gli $a_i$ nella definizione di base di Schauder come i prodotti scalari $\left \langle v , e_i \right \rangle $).

\begin{ex}
\label{ex_base}
{\it
Come vedremo in \S \ref{sec_fourier}, una base per lo spazio di Hilbert reale $L^2([0,1])$ \e quella delle funzioni trigonometriche
\[
\mB := \{ \sin(2 \pi nx) \ , \ \cos(2 \pi mx) \ , \ x \in [0,1]  \}_{n,m \in \bN} \ .
\]
Passando al caso complesso, le stesse argomentazioni del caso reale permettono di concludere che
\[
\mB_\bC := \{ e^{2 \pi i n x} \ , \ x \in [0,1]  \}_{n \in \bZ}
\]
\e una base per $L^2([0,1],\bC)$ (osservare che, avendosi 
\[
\cos(2 \pi mx) = 1/2  ( e^{2 \pi i n x} + e^{- 2 \pi i n x} )
\ \ , \ \
\sin(2 \pi nx) = -i/2 ( e^{2 \pi i n x} - e^{- 2 \pi i n x} )
\ \ , \ \
\forall x \in [0,1]
\ ,
\]
\e possibile ottenere elementi di $\mB$ come combinazioni lineari di elementi di $\mB_\bC$).
}
\end{ex}

\begin{prop}[Bessel, Parseval]
\label{prop_bessel}
Dato un insieme ortonormale $\{ e_i \}$ dello spazio di Hilbert $\mH$, per ogni $u \in \mH$ si ha
\begin{equation}
\label{eq_fou}
\sum_i | (e_i,u) |^2
\leq
\| u \|^2 
\ .
\end{equation}
Se, in particolare, $\{ e_i \}$ \e una base hilbertiana allora
$u = \sum_i (e_i,u) e_i$ e
$\| u \|^2 = \sum_i | (e_i,u) |^2$.
\end{prop}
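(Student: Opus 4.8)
The plan is to handle the inequality first, reducing everything to finite sub-families, and then to bootstrap from the inequality to the two equalities using the completeness of $\mH$ and the density of the span of the base.

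First I would prove Bessel's inequality. Fix a finite sub-family $e_1, \ldots, e_n$ of the orthonormal set, and put $c_i := (e_i, u)$ and $s_n := \sum_{i=1}^n c_i e_i$. Expanding $\|u - s_n\|^2 = (u - s_n, u - s_n) \geq 0$ and using sesquilinearity (\ref{eq.sesq}) together with the orthonormality relations $(e_i, e_j) = 0$ for $i \neq j$, each of the three relevant terms collapses to $\sum_{i=1}^n |c_i|^2$: concretely $(s_n, u) = \sum_i \bar c_i (e_i, u) = \sum_i |c_i|^2$, its conjugate $(u, s_n)$ is the same real number, and $(s_n, s_n) = \sum_{i,j} \bar c_i c_j (e_i, e_j) = \sum_i |c_i|^2$. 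Hence
\[
\|u - s_n\|^2 = \|u\|^2 - \sum_{i=1}^n |c_i|^2 \geq 0 ,
\]
so $\sum_{i=1}^n |c_i|^2 \leq \|u\|^2$ for every finite sub-family, and taking the supremum over all finite sub-families yields (\ref{eq_fou}). Before the second half I would record the remark that, since $\sum |c_i|^2 \leq \|u\|^2 < \infty$, for each $k$ only finitely many indices satisfy $|c_i| \geq 1/k$; thus at most countably many coefficients $c_i$ are nonzero, which is what makes the series $\sum_i c_i e_i$ an honest (at most countable) series independently of the cardinality of the index set.

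Now assume $\{e_i\}$ is a Hilbert basis. I would first establish convergence of $s := \sum_i c_i e_i$: for $m < n$ orthonormality gives $\|s_n - s_m\|^2 = \sum_{i=m+1}^n |c_i|^2$, which is the tail of a convergent real series by Bessel, so $\{s_n\}$ is Cauchy; since $\mH$ is a Banach space the limit $s$ exists (one may equivalently invoke Prop.~\ref{prop_compl}). The decisive step is to identify $s$ with $u$. Using continuity of the inner product (which follows from Cauchy-Schwarz), for each fixed $j$ one gets $(e_j, u - s) = c_j - \lim_n (e_j, s_n) = c_j - c_j = 0$. Therefore $u - s$ is orthogonal to every $e_j$, hence to the whole linear span of the base, and by density of that span together with continuity of $(\cdot,\cdot)$ it is orthogonal to all of $\mH$; in particular $\|u - s\|^2 = (u-s, u-s) = 0$, so $u = s = \sum_i c_i e_i$. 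Finally $\|u\|^2 = \lim_n \|s_n\|^2 = \lim_n \sum_{i=1}^n |c_i|^2 = \sum_i |c_i|^2$, which is Parseval's identity.

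The main obstacle is precisely the identification $u = s$: it is here that density of the span and continuity of the inner product are indispensable, whereas the inequality itself is purely finite-dimensional bookkeeping that never uses completeness. The only secondary subtlety worth flagging explicitly is the countability remark, needed to justify that all the symbols $\sum_i$ appearing in the statement denote genuine series rather than formal uncountable sums.
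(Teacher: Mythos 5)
Your proof is correct. The first half coincides in substance with the paper's: both expand $\|u-\sum_i^n a_ie_i\|^2\geq 0$ using orthonormality, the only difference being that the paper keeps general coefficients $a_i$ and observes that the expression $\|u\|^2-\sum_i^n c_i^2+\sum_i^n(a_i-c_i)^2$ is minimized at $a_i\equiv c_i$, while you substitute $a_i=c_i$ directly. The second half is where you genuinely diverge. The paper exploits the least-squares identity: since the span of the basis is dense, some series $\sum_i a_ie_i$ converges to $u$, and because the partial sums with Fourier coefficients approximate $u$ at least as well, $\|u-s_n\|\to 0$ and Parseval drops out of the same identity. You instead prove convergence of $s=\sum_i c_ie_i$ from scratch via the Cauchy criterion (using Bessel and the completeness of $\mH$, i.e.\ Prop.~\ref{prop_compl}), and then identify $s$ with $u$ by showing $u-s$ is orthogonal to each $e_j$, hence to the dense span, hence to all of $\mH$. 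Your route makes the role of completeness explicit and only needs density of \emph{finite} linear combinations, whereas the paper's argument is shorter but leans on reading the density of the span as the existence of a convergent series with limit $u$; your remark on the at-most-countability of the nonzero coefficients is a worthwhile addition that the paper leaves implicit.
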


\begin{proof}[Dimostrazione]
Posto $c_i := (e_i,u)$ troviamo (nel caso reale)
\begin{equation}
\label{eq_bessel}
\begin{array}{ll}
0 
\ \leq \
\| u - \sum_i^n a_i e_i \|^2
& = 
\left( u - \sum_i^n a_i e_i \ , \ u - \sum_i^n a_i e_i \right)
\\ & =
\| u \|^2 - 2 \sum_i a_i c_i + \sum_i^n a_i^2
\\ & =
\| u \|^2 - \sum_i^n c_i^2 + \sum_i^n ( a_i - c_i )^2
\ .
\end{array}
\end{equation}
L'ultima espressione assume il suo minimo proprio per $a_i \equiv c_i$, in concomitanza del quale troviamo $0 \leq \| u \|^2 - \sum_i c_i^2$ ovvero (\ref{eq_fou})
{\footnote{
D'altra parte, nel caso complesso (\ref{eq_bessel}) prende la forma
\[
\| u - \sum_i^n a_i e_i \|^2
=
\| u \|^2 - 2 \sum_i \ {\mathrm{Re}} \{ \ovl a_i c_i \} + \sum_i^n |a_i|^2
=
\| u \|^2 - \sum_i^n |c_i|^2 + \sum_i^n | a_i - c_i |^2
\]
e possiamo argomentare in modo analogo al caso reale.}}.
Quando $\{ e_i \}$ \e una base hilbertiana sappiamo che esiste una serie $\sum_i a_i e_i$ con limite $u$, per cui imponendo in (\ref{eq_bessel}) $\lim_n \| u - \sum_i^n a_i e_i \|^2 = 0$ troviamo, valutando nel minimo $a_i \equiv c_i$, sia
\[
0 \stackrel{n}{\leftarrow} \| u - \sum_i^n a_i e_i \|^2  \geq \| u - \sum_i^n c_i e_i \|^2
\ \ {\mathrm{ovvero}} \ \
u = \sum_i c_i e_i
\ ,
\]
che
$0 = \lim_n \| u - \sum_i^n a_i e_i \|^2 \geq \lim_n \{ \| u \|^2 - \sum_i^nc_i^2 \}$.
\end{proof}

\noindent I prodotti scalari $(e_i,u)$, $i \in \bN$, si dicono i {\em coefficienti di Fourier di $u$}. 

\ 

\noindent \textbf{Diseguaglianza di Cauchy-Schwarz e dualit\'a di Riesz.} Siano $u,v \in \mH$ e $\lambda > 0$; allora
\[
0 \leq \| u-\lambda v \|^2
\ = \
\| u \|^2 - 2 ( u,\lambda v ) + \lambda^2 \| v \|^2
\ \Rightarrow \
2 ( u,v ) 
\ \leq \
\lambda^{-1} \| u \|^2 + \lambda \| v \|^2
\ ,
\]
cosicch\'e per $\lambda = \| u \| \| v \|^{-1}$ otteniamo
$
( u,v ) \leq \| u \| \| v \|$.
Nel caso complesso troviamo, con $\lambda \in \bC$,
\[
0 \leq \| u-\lambda v \|^2 =
\| u \|^2 - 2 {\mathrm{Re}} \{ \lambda (v,u) \} + |\lambda|^2 \| v \|^2
\ ,
\]
cosicch\'e, valutando per $\lambda = (u,v) \| v \|^{-1}$ otteniamo la {\em diseguaglianza Cauchy-Schwarz}
\begin{equation}
\label{eq_CS}
| ( u,v ) | \ \leq \ \| u \| \| v \| 
\ \ , \ \
\forall u,v \in \mH
\ .
\end{equation}

\begin{rem}
{\it
L'argomento della dimostrazione della diseguaglianza di Cauchy-Schwarz vale per ogni forma bilineare (sesquilineare), simmetrica e definita positiva 
$A  :\mE \times \mE \to \bR$ ($\bC$),
nel senso che 
\begin{equation}
\label{eq_CSg}
| A(u,v) | \ \leq \ A(u,u)^{1/2} A(v,v)^{1/2}
\ \ , \ \
\forall u,v \in \mE
\ .
\end{equation}
Per verificare (\ref{eq_CSg}), nei conti che mostrano (\ref{eq_CS}) si sostituisca $\| u \|^2$ con $A(u,u)$.
}
\end{rem}

Ora, (\ref{eq_CS}) implica che per ogni $u \in \mH$ il funzionale
\[
f_u : \mH \to \bR \ (\bC)
\ \ : \ \
\left \langle f_u , v \right \rangle := ( u,v )
\ \ , \ \
v \in \mH
\ ,
\]
\e limitato ed ha norma $\leq \| u \|$; d'altra parte, usando il fatto che
$| \left \langle f_u , v \right \rangle | = \| u \|$, $v := u / \| u \|$,
otteniamo
\begin{equation}
\label{eq_riesz0}
\| f_u \| = \| u \|
\ \ , \ \
u \in \mH
\ .
\end{equation}

\begin{thm}[Riesz]
\label{thm_riesz}
Sia $\mH$ uno spazio di Hilbert. Allora l'applicazione 
\begin{equation}
\label{eq_riesz}
\mH \to \mH^* \ \ , \ \ u \mapsto f_u \ ,
\end{equation}
\'e lineare (antilineare nel caso complesso), isometrica e suriettiva.
\end{thm}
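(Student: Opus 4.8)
The map in (\ref{eq_riesz}) is immediately additive, while its behaviour under scalars follows from the sesquilinearity convention (\ref{eq.sesq}): for $\lambda \in \bC$ we have $f_{\lambda u}(v) = (\lambda u , v) = \ovl{\lambda}(u,v) = \ovl{\lambda} f_u(v)$, so $u \mapsto f_u$ is antilinear (linear in the real case, where no conjugation occurs). The isometry $\| f_u \| = \| u \|$ is exactly (\ref{eq_riesz0}), and in particular the map is injective. Hence the whole content of the theorem is the \emph{surjectivity}: given $f \in \mH^*$ I must produce $u \in \mH$ with $f = f_u$.

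I first dispose of the trivial case $f = 0$, where $u = 0$ works. Assume then $f \neq 0$ and set $N := \ker f = \{ v \in \mH : \left \langle f , v \right \rangle = 0 \}$; since $f$ is continuous, $N$ is a closed proper subspace of $\mH$. The crux is to exhibit a nonzero vector orthogonal to $N$, and the plan is to establish the \textbf{projection lemma}: every nonempty closed convex set $M \subseteq \mH$ contains a unique element of minimal norm. The argument is the classical one. Taking $\{ w_n \} \subset M$ with $\| w_n \| \to d := \inf_{w \in M} \| w \|$, the parallelogram identity $\| w_n - w_m \|^2 = 2 \| w_n \|^2 + 2 \| w_m \|^2 - 4 \| (w_n + w_m)/2 \|^2$, together with $(w_n + w_m)/2 \in M$ (convexity) and hence $\| (w_n + w_m)/2 \| \geq d$, forces $\{ w_n \}$ to be Cauchy; completeness of $\mH$ yields the minimizer, whose uniqueness follows from the same identity. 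Applying this to the closed affine set $x + N$ gives, for each $x \in \mH$, its minimal-norm element $z$; writing $z = x - p$ with $p \in N$, the usual variation $\| z + tn \|^2 \geq \| z \|^2$ for $n \in N$ forces $z \perp N$. Choosing $x \notin N$ (possible since $N \neq \mH$) gives the desired $z \in N^\perp$, $z \neq 0$.

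With such a $z$ in hand the rest is the standard algebraic manipulation. For arbitrary $v \in \mH$ consider $w := \left \langle f , v \right \rangle z - \left \langle f , z \right \rangle v$; linearity of $f$ gives $\left \langle f , w \right \rangle = 0$, so $w \in N$ and therefore $(z,w) = 0$. Expanding and using linearity of the inner product in its second slot, $0 = \left \langle f , v \right \rangle \| z \|^2 - \left \langle f , z \right \rangle (z,v)$, whence $\left \langle f , v \right \rangle = \| z \|^{-2} \left \langle f , z \right \rangle (z,v)$. Setting $u := \ovl{ \left \langle f , z \right \rangle } \, \| z \|^{-2} z$ and invoking the antilinearity of $(\cdot,\cdot)$ in its first argument, one checks $(u,v) = \| z \|^{-2} \left \langle f , z \right \rangle (z,v) = \left \langle f , v \right \rangle$ for every $v$, that is $f = f_u$; in the real case one simply takes $u := \| z \|^{-2} \left \langle f , z \right \rangle z$. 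Uniqueness of $u$ is immediate from the already-established isometry.

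I expect the projection lemma to be the main obstacle, since it is the only step that genuinely exploits completeness together with the Hilbert (as opposed to merely Banach) structure, through the parallelogram identity. Once $N^\perp$ is known to be nontrivial, the representation formula for $u$ is forced and the verification is routine.
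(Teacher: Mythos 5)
Your proof is correct, but it follows a genuinely different route from the one in the text. The proof given here is explicitly restricted to the separable case: it fixes an orthonormal basis $\{ e_k \}$, shows via the estimate $\bigl( \sum_{k=1}^n a_k^2 \bigr)^{1/2} \leq \| f \|$ that the coefficients $a_k := \left \langle f , e_k \right \rangle$ are square-summable, and takes $u := \sum_k a_k e_k$. You instead prove the projection lemma for closed convex sets via the parallelogram identity, extract a nonzero $z \in (\ker f)^\perp$, and write down the representing vector explicitly; this is precisely the strategy that the remark following the theorem says is needed for the non-separable case, so your argument is actually more general than the one printed. The trade-off is that you must establish the minimal-norm projection from scratch (the text only states it later, in the section on the Stampacchia and Lax--Milgram theorems, citing Brezis), whereas the basis argument gets by with the Bessel inequality and the completeness needed to sum the series --- at the cost of invoking the existence of a countable orthonormal basis. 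All the individual steps you give check out, including the sign and conjugation bookkeeping forced by the convention (\ref{eq.sesq}) of antilinearity in the first slot, and the reduction of uniqueness to the isometry (\ref{eq_riesz0}).
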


\begin{proof}[Dimostrazione (caso separabile)]
L'applicazione (\ref{eq_riesz}) \e chiaramente lineare e (\ref{eq_riesz0}) implica che essa \e isometrica e quindi iniettiva; per cui resta da verificarne soltanto la suriettivit\'a. Sia $\{ e_k \}$ una base ortonormale per $\mH$. Poniamo $a_k :=$ $\left \langle f , e_k \right \rangle$ ed osserviamo che 
\[
\sum_{k=1}^n a_k^2
=
\left \langle f , \sum_{k=1}^n a_k e_k \right \rangle
\leq 
\| f \| \| \sum_{k=1}^n a_k e_k \|
= 
\| f \| \left( \sum_{k=1}^n a_k^2 \right)^{1/2}
\ ,
\]
per cui
\begin{equation}
\label{eq_riesz1}
\left( \sum_{k=1}^n a_k^2 \right)^{1/2} 
\leq 
\| f \|
\ , \
n \in \bN
\ \Rightarrow \
\sum_n a_n^2 < + \infty
\ ,
\end{equation}
cosicch\'e esiste $u := \lim_n \sum_k^n a_k e_k \in \mH$. Ora,
\[
( u , v ) = 
\sum_k a_k b_k = 
\sum_k \left \langle f , e_k \right \rangle b_k =
\left \langle f , v \right \rangle
\ \ , \ \
\forall v := \sum_k b_k e_k
\ ,
\]
per cui $f = f_u$ ed il teorema \e dimostrato.
\end{proof}

\begin{rem}{\it 
\textbf{(1)} Dal teorema precedente segue immediatamente che uno spazio di Hilbert \e riflessivo nel senso di Def.\ref{def_rifl}; \textbf{(2)} Il teorema di Riesz \e valido anche nel caso non separabile: la dimostrazione si basa su un accorto uso delle proiezioni ortogonali sui sottospazi di $\mH$ (vedi \cite[\S 3.1.6 - 3.1.9]{Ped}).
} \end{rem}

\begin{ex}
{\it
$L_\mu^2(X)$ \e uno spazio di Hilbert rispetto al prodotto scalare $(f,g) := \int fg$. Nel caso complesso abbiamo invece il prodotto scalare
\[
(f,g) := \int \ovl f g
\ \ , \ \
f,g \in L_\mu^2(X,\bC)
\ .
\]
}
\end{ex}

\subsection{Operatori limitati e $\bf{C^*}$-algebre.}
\label{sec.oper}

Siano $\mE,\mF$ spazi normati. Si dice {\em operatore limitato}, o {\em continuo}, un'applicazione lineare $T : \mE \to \mF$ tale che, per qualche $c \in \bR^+$ fissato, $\| Tv \| \leq c \| v \|$, $\forall v \in \mE$. Definendo
\begin{equation}
\label{eq_defnorm}
\| T \| 
\ :=  \
\inf \{ 
c \in \bR : \| Tv \| \leq c \| v \| \ \forall v \in \mE  
\}
\ = \
\sup \{ \| Tv \| \ , \ \| v \| = 1  \}
\end{equation}
e la struttura di spazio vettoriale
\[
(T + \lambda T') v := Tv + \lambda Tv' 
\ \ , \ \
T,T' \in B(\mE,\mF)
\ \ , \ \
v,v' \in \mE
\ , \ 
\lambda \in \bR
\ ,
\]
otteniamo che l'insieme $B(\mE,\mF)$ degli operatori limitati da $\mE$ in $\mF$ \e uno spazio normato.

\begin{prop}
Se $\mE$ \e uno spazio normato ed $\mF$ uno spazio di Banach allora $B(\mE,\mF)$ \e uno spazio di Banach.
\end{prop}

\begin{proof}[Dimostrazione]
Se $\{ T_n \} \subseteq B(\mE,\mF)$ \e una successione di Cauchy allora per ogni $v \in \mE$ risulta
\[
\| T_n v - T_mv \| \leq \| T_n - T_m \| \| v \| \stackrel{n,n}{\to} 0
\ ;
\]
per cui, essendo $\mF$ completo, possiamo definire $Tv := \lim_n T_n v \in F$. L'applicazione $T : \mE \to \mF$ cos\'i definita \e chiaramente lineare, e per ogni $v \in \mE_1$, $\eps > 0$ ed $n > n_\eps$ troviamo
\[
\| Tv \| \leq \| Tv - T_nv \| + \| T_n v \| < 
\eps + \sup_n \| T_n \| \| v \| <
\eps + \sup_n \| T_n \|
\ ;
\]
poich\'e $\{ T_n \}$ \e di Cauchy abbiamo che $c := \sup_n \| T_n \| < \infty$, per cui $\| Tv \| \leq \eps + c$ e $T$ \e limitato. Infine  abbiamo
\[
\| Tv - T_nv \|   
\ = \
\lim_m \| T_mv - T_nv \| 
\ \leq \
\limsup_{m,n} \| T_m - T_n \| \| v \|
\ \stackrel{m,n}{\to} \
0
\ ,
\]
per cui $T$ \e limite di $\{ T_n \}$ e concludiamo che $B(\mE,\mF)$ \e uno spazio di Banach. 
\end{proof}

Se $\mF'$ \e uno spazio normato allora ha senso effettuare la composizione di operatori $T' \circ T : \mE \to \mF'$ (spesso scriveremo pi\'u brevemente $T'T$). Visto che
$\| T'Tv \| \leq \| T' \| \| Tv \| \leq \| T' \| \| T \| \|v \|$, $v \in \mE$, concludiamo che
\begin{equation}
\label{eq_op.norm}
\| T'T \| \leq \| T' \| \| T \|
\ \ , \ \
T \in B(\mE,\mF) \ , \ T' \in B(\mF,\mF')
\ .
\end{equation}
In particolare, per $\mE=\mF$ usiamo la notazione $B(\mE) := B(\mE,\mE)$.

\begin{ex}
\label{ex_Ta}
{\it
Sia $a := \{ a_n \} \in l^\infty_\bC$. Allora, per ogni $p \in [1,+\infty]$, l'operatore
\[
T_a \in B(l^p_\bC)
\ \ , \ \
T_a z := \{ a_n z_n \}_n 
\ \ , \ \
z := \{ z_n \} \in l^p_\bC
\ ,
\]
\e limitato ed ha norma $\| T_a \|  =  \| a \|_\infty$.
}
\end{ex}

Preso $T \in B(\mE,\mF)$, consideriamo l'immagine
$T(\mE) := \{ Tu  : u \in \mE \} \subseteq \mF$
e diciamo che $T$ \e {\em chiuso} se $T(\mE)$ \e un sottospazio chiuso di $\mE$. Il {\em nucleo} di $T$ si definisce come il sottospazio
$\ker T := \{ u \in \mE : Tu=0 \} \subseteq \mE$,
il quale \e chiuso in quanto se $v = \lim_n v_n$, $\{ v_n \} \subset \ker T$, allora
\[
\| Tv \| \leq \| T \| \| v-v_n \| + \| Tv_n \| = \| T \| \| v-v_n \| \to 0 \ .
\]

\begin{ex}
\label{ex_int}
{\it
Consideriamo gli spazi di Banach $L^1([0,1])$, $\mF := C([0,1])$, equipaggiati rispettivamente con le norme $\| \cdot \|_1$ ed $\| \cdot \|_\infty$. Allora la somma diretta $\mE := L^1([0,1]) \oplus \bR$ \e uno spazio di Banach rispetto alla norma $\| f \oplus \lambda \| := \sup \{ \| f \|_1 , |\lambda| \}$, e l'applicazione
\[
T : \mE \to \mF
\ \ , \ \
\{ T(f \oplus \lambda) \}(x) := \lambda + \int_0^x f(t) \ dt
\ \ , \ \
\forall x \in [0,1]
\ ,
\]
\e un operatore limitato tale che
\[
\| T(f \oplus \lambda) \|_\infty 
\ \leq \
| \lambda | + \| f \|_1 \leq 2 \| f \oplus \lambda \| 
\ .
\]
L'immagine di $T$ coincide con lo spazio $AC([0,1])$ delle funzioni assolutamente continue, il quale \e strettamente contenuto e denso in $\mF$. Dunque, $T$ non \e chiuso.
}
\end{ex}

\

\noindent \textbf{Operatori aggiunti.} Un'importante operazione \e quella che associa a $T \in B(\mE,\mF)$ l'{\em operatore aggiunto}
\[
T^* : \mF^* \to \mE^* 
\ , \
f \mapsto T^* f 
\ \ : \ \
\left \langle T^* f , v \right \rangle 
\ := \
\left \langle f , Tv \right \rangle 
\ \ , \ \
\forall v \in \mE
\ .
\]
L'applicazione $T \mapsto T^*$ \e isometrica, e ci\'o si dimostra tramite il Teorema di Hahn-Banach che dimostreremo nel seguito (Teo.\ref{thm_HB}):

\begin{prop}
Per ogni $T \in B(\mE,\mF)$, si ha $\| T^* \| = \| T \|$.
\end{prop}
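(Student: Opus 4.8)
The plan is to prove the two inequalities $\|T^*\| \leq \|T\|$ and $\|T\| \leq \|T^*\|$ separately; the first is elementary, while the second is exactly the point at which the Hahn--Banach theorem (Teo.\ref{thm_HB}) becomes indispensable, as anticipated in the text.

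First I would dispose of $\|T^*\| \leq \|T\|$ directly from the definitions. For $f \in \mF^*$ and $v \in \mE_1$, the defining identity $\langle T^* f, v\rangle = \langle f, Tv\rangle$ together with the boundedness of $f$ and $T$ gives $|\langle T^* f, v\rangle| \leq \|f\|\,\|Tv\| \leq \|f\|\,\|T\|$; taking the supremum over $v \in \mE_1$ yields $\|T^* f\| \leq \|T\|\,\|f\|$ for every $f \in \mF^*$, hence $\|T^*\| \leq \|T\|$. Equivalently, this is just (\ref{eq_op.norm}) read off the estimate for $\langle f, Tv\rangle$.

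The reverse inequality $\|T\| \leq \|T^*\|$ is where I would invoke Hahn--Banach, in the form of the existence of \emph{norming functionals}: for every nonzero $w \in \mF$ there exists $f \in \mF^*$ with $\|f\|=1$ and $\langle f, w\rangle = \|w\|$. I would obtain this by defining on the one-dimensional subspace $\mathrm{span}(w) \subseteq \mF$ the functional $\lambda w \mapsto \lambda\|w\|$, which has norm exactly $1$ there, and extending it to all of $\mF$ without increasing its norm via Teo.\ref{thm_HB} (the complex case being handled by the corresponding complex version). Granting this, I fix $v \in \mE$ with $Tv \neq 0$, choose such an $f$ for $w := Tv$, and estimate $\|Tv\| = \langle f, Tv\rangle = \langle T^* f, v\rangle \leq \|T^* f\|\,\|v\| \leq \|T^*\|\,\|v\|$. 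Since this bound holds trivially when $Tv=0$, it holds for all $v \in \mE$, whence $\|T\| \leq \|T^*\|$; combining the two estimates gives $\|T^*\| = \|T\|$.

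The only non-routine ingredient, and thus the main obstacle, is precisely the norming-functional corollary, which cannot be had from the completeness or the linear structure at hand and genuinely requires the extension theorem. I would also flag the logical dependency: although this Proposition precedes the proof of Hahn--Banach in the exposition, the argument is not circular, since the proof of Teo.\ref{thm_HB} does not rely on the isometry of the adjoint. Everything else—the two chains of inequalities—is a mechanical application of the definition of $T^*$ and of the operator norm (\ref{eq_defnorm}), needing no care beyond the routine observation that the estimate extends trivially over $\ker T$.
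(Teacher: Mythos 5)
La tua dimostrazione è corretta e segue essenzialmente la stessa strada del testo: la disuguaglianza $\|T^*\| \leq \|T\|$ per stima diretta, e quella opposta tramite un funzionale normante su ${\mathrm{span}}(Tv)$ esteso con Hahn--Banach (il funzionale $f$ che costruisci coincide con il $\tilde g$ della dimostrazione del testo). Anche l'osservazione sulla non circolarità rispetto a Teo.\ref{thm_HB} è corretta ed è implicitamente riconosciuta nel testo stesso.
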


\begin{proof}[Dimostrazione]
Per ogni $v \in \mE$ ed $f \in \mE^*$ abbiamo la stima
$| \left \langle T^*f , v \right \rangle | =
 | \left \langle f , Tv   \right \rangle | \leq 
 \| f \| \| T \| \| v \|$,
per cui, passando al $\sup$ per $\| v \| \leq 1$ prima e $\| f \| \leq 1$ poi, otteniamo 
$\| T^* \| \leq \| T \|$. 
Per dimostrare la diseguaglianza opposta, prendiamo $x \in \mE$, poniamo 
$y := \| Tx \|^{-1}Tx \in \mF_1$, 
e definiamo i funzionali $g : \bR y \to \bR$ (o $g : \bC y \to \bC$ nel caso complesso),
$\left \langle g , \lambda y \right \rangle := \lambda$, $\lambda \in \bR$ (o $\bC$). 
Per costruzione, i nostri funzionali hanno norma 1: $\| g \| = 1$.
Grazie a Teo.\ref{thm_HB}, per ogni $g$ esiste $\wt g \in \mF^*$ tale che $\| \wt g \| = \| g \| = 1$ e $\wt g|_{\bR y} = g$ (o $\wt g|_{\bC y} = g$). Inoltre, avendosi
$1 = \left \langle \wt g     , y  \right \rangle$,
troviamo
$\| Tx \| = 
  | \left \langle \wt g    , Tx \right \rangle | = 
  | \left \langle T^*\wt g , x \right  \rangle | \leq  
 \|T^* \wt g \| \| x \| \leq
 \| T^* \| \| x \|$.
Per cui $\| T \| \leq \| T^* \|$, il che conclude la dimostrazione.
\end{proof}

Quando abbiamo a che fare con un spazio di Hilbert $\mH$ possiamo adattare la definizione di operatore aggiunto usando la dualit\'a di Riesz: 
\[
T^* \in B(\mH) 
\ \ : \ \
(T^*u,v) := (u,Tv)
\ \ , \ \
u,v \in \mH
\ , \
T \in B(\mH)
\ ;
\]
dunque abbiamo un'applicazione isometrica $* : B(\mH) \to B(\mH)$, $T \mapsto T^*$, lineare quando $\mH$ \e reale ed antilineare 
{\footnote{
Con il termine {\em antilineare} intendiamo che 
$(\lambda T_1 + T_2)^* = \ovl \lambda T_1^* + T_2^*$,
$\lambda \in \bC$,
$T_1 , T_2 \in B(\mH)$.
}}
quando $\mH$ \e complesso. 
Un operatore $T \in B(\mH)$ si dice {\em autoaggiunto} se $T=T^*$. Diciamo invece che $U \in B(\mH)$ \e {\em unitario} qualora $U^*U = UU^* = 1$, e lasciamo come esercizio dimostrare che $U$ \e unitario se e solo se esso \e suriettivo ed isometrico. Infine, un {\em proiettore} $P \in B(\mH)$ \e un operatore autoaggiunto ed idempotente, $P = P^* = P^2$.

\begin{ex}[L'operatore di Volterra]
\label{ex_volterra}
{\it
Osservando che $\mH := L^2([0,1],\bC) \subset L^1([0,1],\bC)$, defi- niamo l'operatore
\[
F \in B(\mH)
\ \ , \ \
Fu(x) := i \int_0^x u(t) \ dt
\ , \ 
\forall x \in [0,1]
\ , \
u \in \mH
\ .
\]
Per calcolarne l'aggiunto, osserviamo che
\[
(v,Fu) 
\ = \ 
i \int_0^1 \int_0^1 \ovl{v(x)} \chi_{[0,x]}(t) u(t) \ dtdx
\ = \
(F^*v,u)
\ \ , \ \
\forall u,v \in \mH
\ ,
\]
cosicch\'e
\[
F^*v(t) \ = \ 
-i \int_0^1 v(x) \chi_{[0,x]}(t) \ dx  \ = \
-i \int_t^1 v(x) \ dx
\ \ , \ \
\forall t \in [0,1]
\ , \
v \in \mH
\ .
\]
Ad uso futuro, osserviamo che
\begin{equation}
\label{eq_ff}
\{ F-F^* \}f(x) 
\ =  \
i \left\{ \int_0^x f(t) \ dt + \int_x^1 f(t) \ dt   \right\}
\ = \
Ff(1)
\ , \
\forall x \in [0,1]
\ , \
f \in \mH
\ .
\end{equation}
}
\end{ex}

\begin{ex}[Operatori di traslazione]
\label{ex.trans}
{\it
Consideriamo lo spazio di Hilbert $\mH := L^2(\bR,\bC)$ e, preso $t \in \bR$, definiamo l'operatore
\[
U_t : \mH \to \mH
\ \ : \ \
U_tf(x) := f(x+t)
\ , \
\forall x \in \bR
\ , \
f \in \mH
\ .
\]
Chiaramente $\int |f|^2 = \int |U_tf|^2$ per ogni $f \in \mH$, per cui $U_t$ \e isometrico e quindi limitato. Inoltre $U_t$ ha, ovviamente, inverso $U_t^{-1} = U_{-t}$, e
\[
(f,U_tg) = \int f(s)g(s+t) \ ds = \int f(s'-t)g(s') \ ds' = (U_{-t}f,g)
\ \ , \ \
f,g \in \mH
\ .
\]
Comcludiamo che $U_t$ \e unitario, con $U_t^* = U_{-t}$.
}
\end{ex}

\begin{rem}[Alcune propriet\'a elementari dei proiettori]
\label{rem_proj}
{\it
Ogni proiettore $P \in B(\mH)$ definisce il sottospazio $\mH_P := \{ v \in \mH : Pv = v \}$, il quale \e di Hilbert (ovvero, \e chiuso nella topologia della norma). Viceversa, ogni sottospazio di Hilbert $\mH' \subseteq \mH$ con base hilbertiana $\{ f_k \}$ definisce l'operatore 
\[
Pv := \sum_k (f_k,v) f_k
\ \ , \ \
v \in \mH
\ ,
\]
il quale \e un proiettore (lasciamo la verifica di questi fatti come esercizio).
Essendo $\{ f_k \}$ un insieme ortonormale per ogni proiettore $P \in B(\mH)$ si ha, usando (\ref{eq_fou}),
\[
\| Pv \|^2 
\ = \ 
\sum_k | (f_h,v) |^2
\ \leq \
\| v \|^2
\ \Rightarrow \
\| P \| \leq 1
\ ;
\]
pi\'u precisamente abbiamo $\| P \| = 1$, visto che 
$\| P f_k \| = \| f_k \| = 1$, $\forall k \in \bN$.
Infine, si verifica facilmente che 
\begin{equation}
\label{eq.rem_proj}
\mH_P \subseteq H_{P'}       \ \Leftrightarrow \ P'P = PP' = P
\ \ , \ \
\mH_P \cap H_{P'} = \{ 0 \}  \ \Leftrightarrow \ P'P = PP' = 0
\ .
\end{equation}
Quando $PP' = P'P = 0$ \e ovvio che $P + P'$ \e un proiettore, ed il relativo sottospazio \e in effetti la chiusura del sottospazio 
\begin{equation}
\label{eq.rem_proj2}
\mH_P + H_{P'}
:=
\{ u+v : u \in \mH_P , v \in \mH_{P'}  \}
\ .
\end{equation}
}
\end{rem}

\noindent \textbf{Complementi ortogonali, nuclei e immagini.} Preso un sottospazio $\mV \subset \mH$ definiamo il {\em complemento ortogonale}
\begin{equation}
\label{def_perp}
\mV^\perp := \{ v \in \mH : (v,u) = 0 , \forall u \in \mV \} \ .
\end{equation}
La disuguaglianza di Cauchy-Schwarz implica che se $v = \lim_n v_n$, $\{ v_n \} \subset \mV$, allora
\[
|(v,u)| \leq \| v-v_n \|\| u \| + |(v_n,u)| = \| v-v_n \|\| u \| \to 0 \ ,
\]
per cui $v \in \mV^\perp$ e $\mV^\perp$ \e chiuso. D'altro canto, un ragionamento analogo mostra che $\mV^\perp = (\ovl \mV)^\perp$, dove $\ovl \mV$ \e la chiusura di $\mV$.

Ora, denotiamo con $P$ il proiettore su $\ovl \mV$, cosicch\'e $w=Pw$, $\forall w \in \ovl \mV$; preso $u \in \mH$ definiamo $v := Pu$ e, posto $v' := u - v$, troviamo
\[
(v',w) = (u-Pu,w) = (u,w) - (Pu,w) = (u,Pw)-(Pu,w) = (u,Pw)-(u,Pw) = 0
\ \ , \ \
\forall w \in \ovl \mV
\ ,
\]
cosicch\'e $v' \in \mV^\perp$. Inoltre \e ovvio che $\ovl \mV \cap \mV^\perp = \{ 0 \}$, dunque se $u = z+z'$, $z \in \ovl \mV$, $z' \in \mV^\perp$ allora
\[
u = v+v' = z+z'
\ \Leftrightarrow \
\ovl \mV \ni v-z = z'-v' \in \mV^\perp
\ \Leftrightarrow \
0 = v-z = z'-v'
\ \Leftrightarrow \
v=z \ , \ v'=z'
\ .
\]
In conclusione, abbiamo la {\em decomposizione ortogonale}
\begin{equation}
\label{eq_ort}
\mH = \ovl \mV \oplus \mV^\perp \ ,
\end{equation}
il che significa che ogni $u \in \mH$ si scrive {\em in modo unico} come $u = v+v'$, $v \in \ovl \mV$, $v' \in \mV^\perp$, e $\| u \|^2 = \| v \|^2 + \| v' \|^2$.

\begin{lem}
\label{lem_pp}
Per ogni sottospazio $\mV \subset \mH$, si ha $\mV^{\perp \perp} = \ovl \mV$.
\end{lem}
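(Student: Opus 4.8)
The plan is to prove the two inclusions $\ovl{\mV} \subseteq \mV^{\perp \perp}$ and $\mV^{\perp \perp} \subseteq \ovl{\mV}$ separately, with the orthogonal decomposition (\ref{eq_ort}) serving as the main tool for the second one.

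First I would establish the easy inclusion. By definition of $\mV^\perp$, every $w \in \mV^\perp$ satisfies $(w,u) = 0$ for all $u \in \mV$; read the other way around, each $u \in \mV$ is orthogonal to every element of $\mV^\perp$, that is $u \in (\mV^\perp)^\perp = \mV^{\perp \perp}$. Hence $\mV \subseteq \mV^{\perp \perp}$. Since $\mV^{\perp \perp}$ is an orthogonal complement, it is closed (as already remarked for complements of the form $\mV^\perp$), so it contains the closure of $\mV$, giving $\ovl{\mV} \subseteq \mV^{\perp \perp}$.

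For the reverse inclusion I would invoke the orthogonal decomposition applied to $\mV$: by (\ref{eq_ort}) we have $\mH = \ovl{\mV} \oplus \mV^\perp$. Take $u \in \mV^{\perp \perp}$ and write it uniquely as $u = v + v'$ with $v \in \ovl{\mV}$ and $v' \in \mV^\perp$; the goal is to force $v' = 0$. On one hand, since $v \in \ovl{\mV}$ and $v' \in \mV^\perp = (\ovl{\mV})^\perp$, we get $(v',v) = 0$. On the other hand, $u \in \mV^{\perp \perp}$ means $u$ is orthogonal to every element of $\mV^\perp$, in particular to $v'$, whence $(v',u) = 0$. Combining the two, $0 = (v',u) = (v',v) + (v',v') = \| v' \|^2$, so $v' = 0$ and $u = v \in \ovl{\mV}$. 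This yields $\mV^{\perp \perp} \subseteq \ovl{\mV}$, and together with the first inclusion the lemma follows.

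The argument is short precisely because the substance has already been deposited in the projection/decomposition result (\ref{eq_ort}); the only point demanding care is the reverse inclusion, where one must use \emph{both} orthogonality relations — that $v'$ is orthogonal to the $\ovl{\mV}$-component $v$, and that $u$ is orthogonal to $v' \in \mV^\perp$ — to collapse the expansion of $(v',u)$ down to $\| v' \|^2$. I would also verify that the conjugate-linearity convention in (\ref{eq.sesq}) is harmless here, which it is, since an inner product vanishes if and only if its conjugate does.
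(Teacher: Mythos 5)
Your proof is correct. It differs slightly in mechanics from the paper's: the paper applies the orthogonal decomposition (\ref{eq_ort}) a second time, to the closed subspace $\mV^\perp$, obtaining $\mH = \mV^\perp \oplus \mV^{\perp\perp}$, and then concludes by ``uniqueness of the orthogonal decomposition'' upon comparison with $\mH = \ovl{\mV} \oplus \mV^\perp$. You instead prove the two inclusions directly, using (\ref{eq_ort}) only once (on $\mV$) and closing the reverse inclusion with the computation $0 = (v',u) = (v',v) + \|v'\|^2$. In effect your argument is the unpacked version of the paper's appeal to uniqueness: strictly speaking, the uniqueness stated in (\ref{eq_ort}) concerns the decomposition of a fixed vector relative to a fixed pair of complementary subspaces, so passing from ``two splittings of $\mH$ with the same second factor $\mV^\perp$'' to ``equal first factors'' requires precisely the observation you make, namely that $\ovl{\mV} \subseteq \mV^{\perp\perp}$ and that the component $v'$ of any $u \in \mV^{\perp\perp}$ must vanish. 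Your write-up is therefore marginally more self-contained than the paper's, at the cost of a few extra lines; the remark on conjugate-linearity is a nice touch, though indeed immaterial.
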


\begin{proof}[Dimostrazione]
Applicando (\ref{eq_ort}) a $\mV^\perp$ otteniamo, essendo $\mV^\perp$ chiuso,
\begin{equation}
\label{eq_orta}
\mH = \mV^\perp \oplus \mV^{\perp \perp} \ ,
\end{equation}
per cui, per l'unicit\'a della decomposizione ortogonale, confrontando (\ref{eq_orta}) con (\ref{eq_ort}) concludiamo che $\mV^{\perp \perp} = \ovl \mV$.
\end{proof}

Sia ora $T \in B(\mH)$. Dall'identit\'a 
$(T^*u,v) = (u,Tv)$, $\forall u,v \in \mH$,
e dal Lemma precedente, segue immediatamente che
\begin{equation}
\label{eq_kerim}
\{ T(\mH) \}^\perp = \ker T^* 
\ \Leftrightarrow \
\ovl{T(\mH)} = \{ \ker T^* \}^\perp
\ \ , \ \
\forall T \in B(\mH)
\ ;
\end{equation}
in particolare, quando $T$ \e chiuso otteniamo $T(\mH) = \{ \ker T^* \}^\perp$.

\

\noindent\textbf{Operatori e forme bilineari.} Un'applicazione bilineare $A : \mH \times \mH \to \bR$ si dice \textbf{limitata} se esiste $c > 0$ tale che $\| A (u,v) \| \leq c \|u \| \| v \|$, $u,v \in \mH$, e \textbf{simmetrica} se $A(u,v) = A(v,u)$, $u,v \in \mH$.

Chiaramente, $A$ \e limitata se e solo se essa \e continua nella topologia prodotto su $H \times H$ indotta dalla norma.

Ora, fissato $v \in \mH$ abbiamo che l'applicazione $f_{A,v}(u) := A(u,v)$, $u \in \mH$, \e in effetti un funzionale lineare limitato tale che $\| f_{A,v} \| \leq c \| v \|$, per cui il teorema di Riesz implica che esiste ed \e unico $T_Av \in \mH$ tale che 
\[
\left \langle f_{A,v} , u \right \rangle = A(u,v) = ( u,T_Av ) \ .
\]
Per bilinearit\'a di $A$ troviamo che $T_A : \mH \to H$ \e un'aplicazione lineare, mentre la limitatezza di $A$ implica che $\| T_A \| \leq c$. In particolare, $A$ \e simmetrica se e solo se $T_A = T_A^*$. Viceversa, ogni operatore limitato $T \in B(\mH)$ definisce la forma bilineare limitata $A_T(u,v) := (u,Tv)$, $u,v \in \mH$.
Analogo risultato vale nel caso complesso considerando forme {\em sesquilineari}, ovvero forme antilineari nella prima variabile e lineari nella seconda.

\

\noindent \textbf{$\bf{C^*}$-algebre.} Le seguenti nozioni formalizzano in termini intrinseci alcune propriet\'a fondamentali degli operatori limitati.
\begin{defn}
\label{def_alg}
\textbf{(1)} Un' \textbf{algebra di Banach} (reale o complessa) \e un'algebra $\mA$ che soddisfa le seguenti propriet\'a: 
(i) $\mA$ \e uno spazio di Banach; 
(ii) per ogni $a,a' \in \mA$ risulta $\| aa' \| \leq \| a \| \| a' \|$.
\textbf{(2)} Diciamo che $\mA$ ha un'\textbf{identit\'a} se esiste $1 \in \mA$ tale che $1a = a1 = a$, $a \in \mA$, e che $\mA$ \e \textbf{commutativa} se $aa' = a'a$ per ogni $a,a' \in \mA$. 
\textbf{(3)} Un'algebra di Banach complessa $\mA$ \e una \textbf{*-algebra di Banach} se esiste un'applicazione 
$* : \mA \to \mA$, $a \mapsto a^*$
antilineare, isometrica ed idempotente (ovvero $a^{**} = a$ per ogni $a \in \mA$). 
\textbf{(4)}  Una *-algebra di Banach si dice \textbf{C*-algebra} se \e verificata \textbf{l'identit\'a C*}
\[
\| a^* a \| = \| a \|^2 
\ \ , \ \
a \in \mA
\ .
\]
\end{defn}

\begin{ex}{\it
Sia $\mE$ uno spazio di Banach. Allora $B(\mE)$ \e un'algebra di Banach, reale o complessa qualora $\mE$ sia reale o complesso (vedi (\ref{eq_op.norm})). 
}
\end{ex}

\begin{ex}{\it
Sia $\mH$ uno spazio di Hilbert complesso. Preso $T \in B(\mH)$ allora (\ref{eq_op.norm}) implica $\| T^*T \| \leq \| T \|^2$, ma d'altra parte
\begin{equation}
\label{eq_idc}
\| Tu \|^2 =
( Tu,Tu ) =
( u,T^*Tu) \leq
\| T^*T \| \| u \|^2
\ \ , \ \
\forall u \in \mH
\ ,
\end{equation}
per cui concludiamo che $\| T^*T \| = \| T \|^2$ e quindi $B(\mH)$ \e una \sC algebra. In realt\'a si dimostra che per ogni \sC algebra $\mA$ esiste un opportuno spazio di Hilbert complesso $\mH$ con un'inclusione $\mA \subseteq B(\mH)$; questo risultato \e noto come la costruzione di Gel'fand-Naimark-Segal ("GNS", si veda il seguente Teorema \ref{lem_GNS} e \cite[Ex.4.3.16-18]{Ped}).
}
\end{ex}

\begin{ex}
{\it
Sia $X$ compatto. Allora $C(X)$, equipaggiata con l'usuale moltiplicazione e norma delle'estremo superiore, \e un'algebra di Banach commutativa con identit\'a la funzione costante $1$. Se $X$ \e localmente compatto, allora $C_0(X)$ \e un'algebra di Banach priva di identit\'a. Passando al caso complesso, definendo
$f^*(x) := \ovl{f(x)}$,
$\forall x \in X$,
$f \in C(X,\bC)$, 
abbiamo che $C(X,\bC)$ (per $X$ compatto) e $C_0(X,\bC)$ (per $X$ localmente compatto) sono \sC algebre.
}
\end{ex}

\begin{ex}
\label{ex_conv}
{\it
$L^1(\bR)$, equipaggiato con il prodotto di convoluzione e la norma $\| \cdot \|_1$, \e un'algebra di Banach commutativa (vedi Teo.\ref{thm_conv} e successive osservazioni). Analogamente $L^1(\bR,\bC)$ \e un'algebra di Banach, la quale \e di interesse nell'ambito della trasformata di Fourier. Definiamo ora, per ogni $f \in L^1(\bR,\bC)$, 
$f^*(t) := \ovl{f(-t)}$, $t \in \bR$;
\'e immediato verificare che l'applicazione $f \mapsto f^*$ \e antilineare, isometrica ed idempotente, cosicch\'e $L^1(\bR,\bC)$ \e una $*$-algebra di Banach.
}
\end{ex}

\begin{ex} \label{ex_LiC}
{\it
Sia $( X ,\mM,\mu )$ uno spazio misurabile. Allora $L_\mu^\infty(X,\bC)$ (equipaggiata con le stesse operazioni di $C(X,\bC)$) \e una \sC algebra commutativa, con identit\'a la funzione costante $1$.
}
\end{ex}

Sia $\mA$ una \sC algebra. Un funzionale lineare $\omega \in \mA^*$ si dice {\em positivo} se 
$\left \langle \omega , a^*a \right \rangle \in \bR^+$ per ogni $a \in \mA$;
nel seguito denoteremo con $\mA^*_+$ l'insieme dei funzionali positivi. E' possibile dimostrare che ogni $a = a^* \in \mA$ si scrive
\[
a = a_1^* a_1 - a_2^* a_2
\ \ , \ \
a_1 , a_2 \in \mA
\]
(vedi \cite[Chap.1]{Ped2}), cosicch\'e se $\omega$ \e positivo allora 
$\left \langle \omega , a \right \rangle \in \bR$ per ogni $a = a^*$. 
Ora, per un generico $a \in \mA$ possiamo scrivere
\[
a = a_+ + ia_-
\ \ , \ \
a_+ := 1/2(a+a^*)
\ , \
a_- := - i/2(a-a^*)
\]
e, ovviamente,
\[
a_+^* = a_+ 
\ \ , \ \
a_-^* = a_-
\ \ , \ \
a^* = a_+^* - i a_-^*
\ .
\]
Quindi abbiamo
$\left \langle \omega , a_+ \right \rangle , 
 \left \langle \omega , a_- \right \rangle  \in \bR$ 
e
\[
\left \langle \omega , a^* \right \rangle = 
\left \langle \omega , a_+^* \right \rangle 
 - i \left \langle  \omega , a_-^* \right \rangle =
\left \langle \omega , a_+ \right \rangle 
 - i \left \langle \omega , a_- \right \rangle =
\ovl{ \left \langle \omega , a_+ \right \rangle} 
 -i \ovl{ \left \langle \omega , a_- \right \rangle} =
\ovl{ \left \langle \omega , a \right \rangle}
\ ,
\]
da cui l'utile propriet\'a
\begin{equation}
\label{eq_omSA}
\left \langle \omega , a^* \right \rangle = \ovl{ \left \langle \omega , a \right \rangle}
\ \ , \ \
\forall a \in \mA
\ , \
\omega \in \mA^*_+
\ .
\end{equation}
Inoltre, applicando la diseguaglianza di Cauchy-Schwarz (\ref{eq_CSg}) con $A(b,a) := \left \langle \omega , b^*a \right \rangle$ troviamo
\begin{equation}
\label{eq_CSo}
| \left \langle \omega , b^*a \right \rangle|^2
\ \leq \
\left \langle \omega , b^*b \right \rangle
\left \langle \omega , a^*a \right \rangle
\ \ , \ \
\forall a,b \in \mA
\ .
\end{equation}
L'abbondanza di funzionali positivi pu\'o essere dimostrata usando la teoria spettrale ed il Teorema di Hahn-Banach (vedi \cite[Ex.4.3.12-15]{Ped}). Il risultato seguente mostra come usando i funzionali positivi sia possibile interpretare una \sC algebra in termini di operatori su uno spazio di Hilbert.

\begin{thm}[Gel'fand-Naimark]
\label{lem_GNS}
Sia $\mA$ una \sC algebra con identit\'a $1 \in \mA$ ed $\omega \in \mA^*_+$. Allora esistono uno spazio di Hilbert complesso $\mH_\omega$, una rappresentazione 
{\footnote{Con il termine {\em rappresentazione} intendiamo che $\pi_\omega$ \e un operatore lineare limitato tale che $\pi_\omega(a^*) = \pi_\omega(a)^*$, $\pi_\omega(aa') = \pi_\omega(a)\pi_\omega(a')$, $\forall a,a' \in \mA$.}}
$\pi_\omega : \mA \to B(\mH_\omega)$
ed $u_\omega \in \mH_\omega$ tali che
\[
\left \langle \omega , a \right \rangle = (u_\omega,\pi_\omega(a)u_\omega)
\ \ , \ \
\forall a \in \mA \ .
\]
\end{thm}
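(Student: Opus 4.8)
The plan is to carry out the classical Gel'fand-Naimark-Segal construction, realizing $\mH_\omega$ as the completion of a quotient of $\mA$ by itself. First I would introduce on $\mA$ the sesquilinear form
\[
( a , b )_\omega := \left \langle \omega , a^* b \right \rangle
\ \ , \ \
a,b \in \mA
\ ,
\]
which is linear in the second variable and, since $(a^*)^* = a$ and $*$ is antilinear, antilinear in the first, in accordance with the convention (\ref{eq.sesq}). Positivity of $\omega$ gives $( a , a )_\omega = \left \langle \omega , a^* a \right \rangle \geq 0$, so the form is positive semidefinite, while (\ref{eq_omSA}) furnishes the Hermitian symmetry $( a , b )_\omega = \ovl{ ( b , a )_\omega }$. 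The obstruction to its being a genuine inner product is the null space $N_\omega := \{ a \in \mA : \left \langle \omega , a^* a \right \rangle = 0 \}$. Applying the Cauchy-Schwarz inequality (\ref{eq_CSo}) to the pair $(a,b)$ shows that $a \in N_\omega$ forces $( a , b )_\omega = 0$ for every $b$, so the form vanishes on $N_\omega$; applying it instead to the pair $(a, b^* b a)$ shows $\left \langle \omega , (ba)^*(ba) \right \rangle = 0$, i.e.\ $ba \in N_\omega$, so that $N_\omega$ is a left ideal.

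Next I would pass to the quotient vector space $\mA / N_\omega$, on which $( [a] , [b] )_\omega := ( a , b )_\omega$ is well defined (independence of representatives is exactly the vanishing established above) and now positive definite, hence a genuine inner product; I define $\mH_\omega$ to be its completion. The representation is built by left multiplication: for $a \in \mA$ set $\pi_\omega(a)[b] := [ab]$, which is well defined precisely because $N_\omega$ is a left ideal, and is manifestly linear and multiplicative, $\pi_\omega(aa') = \pi_\omega(a)\pi_\omega(a')$. The $*$-property $\pi_\omega(a)^* = \pi_\omega(a^*)$ follows from the identity
\[
( \pi_\omega(a)[b] , [c] )_\omega
= \left \langle \omega , b^* a^* c \right \rangle
= ( [b] , \pi_\omega(a^*)[c] )_\omega
\ .
\]

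The main obstacle is the boundedness of $\pi_\omega(a)$, which is where the C*-structure is indispensable. I would argue that $\| a \|^2 1 - a^* a$ is a positive element of $\mA$ (its spectrum lies in $[0,\|a\|^2]$ by the C*-identity), and therefore, by the spectral theory of C*-algebras — the same circle of ideas as the decomposition $a = a_1^* a_1 - a_2^* a_2$ quoted before the statement — can be written as $c^* c$ for some $c \in \mA$. Evaluating $\omega$ on $b^* ( \| a \|^2 1 - a^* a ) b = ( cb )^* ( cb )$ and using positivity of $\omega$ then yields
\[
\left \langle \omega , b^* a^* a b \right \rangle
\ \leq \
\| a \|^2 \left \langle \omega , b^* b \right \rangle
\ ,
\]
that is $\| \pi_\omega(a)[b] \|^2 \leq \| a \|^2 \| [b] \|^2$, so $\pi_\omega(a)$ is bounded with $\| \pi_\omega(a) \| \leq \| a \|$ on the dense subspace $\mA / N_\omega$ and extends uniquely to $\mH_\omega$. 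Establishing this positivity fact is the only genuinely non-elementary step; everything else is bookkeeping.

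Finally, exploiting the identity $1 \in \mA$, I would set $u_\omega := [1] \in \mH_\omega$, so that
\[
( u_\omega , \pi_\omega(a) u_\omega )_\omega
= ( [1] , [a] )_\omega
= \left \langle \omega , 1^* a \right \rangle
= \left \langle \omega , a \right \rangle
\ \ , \ \
\forall a \in \mA
\ ,
\]
which is exactly the asserted representation; as a by-product $u_\omega$ is cyclic, since $\pi_\omega(\mA) u_\omega = \mA / N_\omega$ is dense in $\mH_\omega$.
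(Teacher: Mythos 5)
Your proposal is correct and follows the same route as the paper's proof: both are the GNS construction, with the null space $N_\omega$ (the paper calls it $I_\omega$) shown to be a left ideal via the Cauchy--Schwarz inequality (\ref{eq_CSo}), the Hilbert space obtained by completing the quotient, the representation given by left multiplication, and the cyclic vector $u_\omega = [1]$. The one place where you genuinely diverge is the boundedness of $\pi_\omega(a)$, and there your argument is the better one. The paper estimates
$\left\langle \omega , v^*a^*av \right\rangle \leq \| \omega \| \, \| v^* \| \, \| a^*a \| \, \| v \|$,
which controls $\| \pi_\omega(a)[v] \|$ only in terms of the \emph{algebra} norm of the representative $v$, not in terms of the Hilbert norm $\| [v] \| = \left\langle \omega , v^*v \right\rangle^{1/2}$; since the latter can be much smaller than the former, this does not by itself yield a bounded operator on the pre-Hilbert space, let alone on its completion. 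Your route — writing $\| a \|^2 1 - a^*a = c^*c$ by positivity and functional calculus, so that $\left\langle \omega , b^*a^*ab \right\rangle \leq \| a \|^2 \left\langle \omega , b^*b \right\rangle$ — gives the correct estimate $\| \pi_\omega(a)[b] \| \leq \| a \| \, \| [b] \|$ and is the standard way to close this step; the price is the (non-elementary but classical) fact that positive elements of a C*-algebra are of the form $c^*c$, which is consistent with the spectral-theoretic facts the paper already quotes from the literature just before the statement. The remaining verifications (Hermitian symmetry via (\ref{eq_omSA}), well-posedness on the quotient, the $*$-property, cyclicity of $u_\omega$) match the paper's.
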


\begin{proof}[Dimostrazione]
Consideriamo l'insieme 
$I_\omega := \{ z \in \mA : \left \langle \omega , z^*z \right \rangle = 0 \}$.
Poich\'e $\omega$ \e continuo per ogni successione convergente $z_n \to z$ troviamo $z^*z = \lim_n z_n^*z_n$ e quindi 
$0 = 
 \left \langle \omega , z^*z \right \rangle = 
 \lim_n \left \langle \omega , z_n^*z_n \right \rangle$, 
dunque $I_\omega$ \e chiuso. Grazie a (\ref{eq_CSo}), per ogni $a \in \mA$ e $z \in I_\omega$ troviamo
\[
| \left \langle \omega , az \right \rangle |^2
\ \leq \
\left \langle \omega , aa^* \right \rangle
\left \langle \omega , z^*z \right \rangle
\ = \
0
\ ,
\]
dunque $az \in I_\omega$; usando tale propriet\'a concludiamo che, presi $z' \in I_\omega$, $\lambda \in \bC$,
\[
\left \langle \omega , (z+ \lambda z')^*(z+ \lambda z') \right \rangle = 0
\ ,
\]
cosicch\'e $I_\omega$ \e anche un sottospazio chiuso e quindi un ideale sinistro di $\mA$. Consideriamo ora lo spazio quoziente 
\[
V := 
\mA / I_\omega := 
\{ [v] := \{ v + z : z \in I_\omega  \} : v \in \mA  \}
\ ;
\]
su $V$ definiamo la forma sesquilineare
\begin{equation}
\label{eq_GNS1}
([v],[v']) := \left \langle \omega , v^*v' \right \rangle
\ \ , \ \
v,v' \in \mA
\ ,
\end{equation}
la quale \e ben posta in quanto se $v = v_0 + z$, $v' = v'_0 + z'$, $z,z' \in I_\omega$, allora, usando il fatto che $I_\omega$ \e un ideale sinistro e (\ref{eq_omSA}),
\[
\left \langle \omega , v^*v'     \right \rangle = 
\left \langle \omega , (v_0^* + z^*)(v'_0+z') \right \rangle = 
\left \langle \omega , v_0^*v'_0      \right \rangle +
\left \langle \omega , v_0^*z'        \right \rangle +
\ovl{\left \langle \omega , {v'_0}^*z \right \rangle} +
\left \langle \omega , z^*z'          \right \rangle =
\left \langle \omega , v_0^*v'_0 \right \rangle
\ .
\]
D'altro canto (\ref{eq_GNS1}) \e anche un prodotto scalare, in quanto
$([v],[v]) = 0$
se e solo se $v \in I_\omega$, ovvero $[v] = 0$. Definiamo $\mH_\omega$ come lo spazio di Hilbert ottenuto completando $V$ rispetto a (\ref{eq_GNS1}). Per ogni $a \in \mA$ definiamo l'applicazione
\[
\pi_\omega (a) : \mH_\omega \to \mH_\omega
\ \ , \ \
\{ \pi_\omega (a) \}[v] := [av]
\ ,
\]
la quale \e ben posta in quanto $[av] = [a(v+z)]$ per ogni $z \in I_\omega$. E' ovvio che $\pi_\omega(a)$ \e lineare, e 
\[
\| \{ \pi_\omega(a) \}[v] \|^2 = 
( [av],[av] ) =
\left \langle \omega , v^*a^*av \right \rangle \leq
\| \omega \| \| v^* \| \| a^*a \| \| v \| =
\| \omega \| \| a \|^2 \| v \|^2 \ ,
\]
per cui $\pi_\omega(a) \in B(\mH_\omega)$. E' altrettanto ovvio che 
$\pi_\omega(a+a') = \pi_\omega(a)+\pi_\omega(a')$,
$\pi_\omega(aa') = \pi_\omega(a) \pi_\omega(a')$,
$\forall a,a' \in \mA$,
nonch\'e
\[
([v],[av']) = 
\left \langle \omega , v^*av' \right \rangle =
\left \langle \omega , (a^*v)^*v' \right \rangle =
([a^*v],[v'])
\ ,
\]
per cui $\pi_\omega(a^*) = \pi_\omega(a)^*$. Ci\'o mostra che $\pi_\omega$ \e una rappresentazione.
Infine, ponendo $u_\omega := [1] \in \mH_\omega$ troviamo
\[
(u_\omega,\pi_\omega(a)u_\omega) = 
([1],[a1]) =
\left \langle \omega , a \right \rangle 
\ ,
\]
e ci\'o conclude la dimostrazione.
\end{proof}

Il risultato precedente \e un passo fondamentale della gi\'a menzionata costruzione GNS, la quale permette di concludere che ogni \sC algebra ammette una rappresentazione {\em iniettiva}. Questa propriet\'a pu\'o essere dedotta direttamente dal teorema precedente solo nel caso in cui $\omega$ sia {\em fedele} (ovvero 
$\left \langle \omega , a^*a \right \rangle = 0$
implica $a = 0$): infatti, se $a \neq 0$ allora
\[
\| \{ \pi_\omega (a) \}u_\omega \| = 
\| [a1] \| =
\| [a] \| = 
\left \langle \omega , a^*a \right \rangle^{1/2}
\neq 0
\ ,
\]
per cui $\pi_\omega(a)$ non pu\'o essere l'operatore nullo. In generale, per ottenere la rappresentazione iniettiva desiderata occorre passare ad un'opportuna somma diretta di spazi di Hilbert del tipo $\mH_\omega$ (vedi \cite[Ex.4.3.17]{Ped}). Quando $\mA$ \e commutativa il teorema precedente si pu\'o interpretare in termini di misure di Radon (vedi Esempio \ref{ex_dual_CX} e l'Esercizio \ref{sec_afunct}.8).

\subsection{Uniforme limitatezza ed applicazioni aperte.}

\begin{lem}[Lemma di Baire]
Sia $X$ uno spazio metrico completo ed $\left\{ X_n \right\}$ una successione di chiusi tali che $\dot{X}_n = \emptyset$, $n \in \bN$ (ovvero, ogni $X_n$ \e  \textbf{rado}). Allora $[ \cup_n X_n ]^\cdot =$ $\emptyset$ (L'unione \textbf{numerabile} di insiemi radi \e un insieme rado).
\end{lem}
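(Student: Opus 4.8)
The plan is to pass to complements and reduce the statement to the density of a countable intersection of open dense sets. Setting $U_n := X - X_n$, each $U_n$ is open (since $X_n$ is closed) and dense (since $\dot X_n = \emptyset$ means $X_n$ contains no disk, hence its complement meets every disk). Since $X - \cup_n X_n = \cap_n U_n$, and since a subset has empty interior precisely when its complement is dense, the claim $[\cup_n X_n]^\cdot = \emptyset$ is equivalent to showing that $\cap_n U_n$ is dense in $X$. So it suffices to prove that for every nonempty open $W \subseteq X$ one has $W \cap (\cap_n U_n) \neq \emptyset$.

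First I would fix such a $W$ and build, by induction, a nested sequence of closed disks trapped successively inside the $U_n$. Since $U_1$ is dense and $W$ is open and nonempty, $W \cap U_1$ is a nonempty open set; choosing a point in it and shrinking, I can find $x_1$ and $r_1 < 1$ with $\ovl{\Delta(x_1,r_1)} \subseteq W \cap U_1$. Inductively, assuming $x_n,r_n$ chosen, the set $\Delta(x_n,r_n) \cap U_{n+1}$ is nonempty (density of $U_{n+1}$) and open, so I can pick $x_{n+1}$ and $r_{n+1} < 2^{-n}$ with $\ovl{\Delta(x_{n+1},r_{n+1})} \subseteq \Delta(x_n,r_n) \cap U_{n+1}$. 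The two features to guarantee at each step are the strict nesting $\ovl{\Delta(x_{n+1},r_{n+1})} \subseteq \Delta(x_n,r_n)$ and the radii $r_n \to 0$; both are arranged by first using openness to fit a disk $\Delta(y,\rho)$ inside the target open set and then taking $r_{n+1} := \min\{\rho/2,\,2^{-n}\}$ centered at $y$, so that $\ovl{\Delta(x_{n+1},r_{n+1})} \subseteq \Delta(y,\rho)$.

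Finally I would invoke completeness. For $m \geq n$ the nesting gives $x_m \in \ovl{\Delta(x_m,r_m)} \subseteq \Delta(x_n,r_n)$, hence $d(x_n,x_m) < r_n \to 0$, so $\{x_n\}$ is a Cauchy sequence and converges to some $x \in X$. Fixing $n$ and letting $m \to \infty$, the tail $\{x_m\}_{m \geq n}$ remains in the closed set $\ovl{\Delta(x_n,r_n)}$, so the limit satisfies $x \in \ovl{\Delta(x_n,r_n)} \subseteq U_n$ for every $n$, and likewise $x \in \ovl{\Delta(x_1,r_1)} \subseteq W$. Thus $x \in W \cap (\cap_n U_n)$, as needed. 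The main obstacle is precisely the inductive construction: one must be careful to \emph{close up} each disk, passing from an open disk to its closure inside the next open set, so that the limit point is actually captured by every $U_n$. Density alone only yields nonempty intersections; it is the interplay of openness (room to inscribe a closed disk), the shrinking radii (to force a Cauchy sequence), and completeness (to produce the common limit) that drives the argument.
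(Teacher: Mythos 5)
Your proof is correct and follows essentially the same route as the paper's: pass to the complements $U_n = X - X_n$, reduce to the density of $\cap_n U_n$, build a nested sequence of disks inside the successive $U_n$, and use completeness to extract a common limit point. If anything, your version is more careful than the paper's, which merely asserts that the sequence of centers is Cauchy; your explicit control of the radii ($r_{n+1} < 2^{-n}$) and your insistence on closing up each disk before nesting it in the next open set are precisely the details needed to make that assertion and the final membership $x \in \cap_n U_n$ airtight.
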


\begin{proof}[Dimostrazione]
L'affermazione da dimostrare equivale a verificare che presa la successione di aperti $A_n :=$ $X - X_n$, $n \in \bN$, con $\ovl A_n =$ $X$, risulta $\ovl{\cap_n A_n} = X$. Consideriamo allora un aperto $U \subset X$. Il nostro compito \e dimostrare che $U \cap \ovl{(\cap_n A_n)}$ \e non vuoto. A tale scopo, scegliamo $x_0 \in U$, $r_0 > 0$ tali che $\Delta ( x_0 , r_0 ) \subset U$. Consideriamo quindi, induttivamente 
\[
( x_{n+1} , r_{n+1} ) 
\ : \
x_{n+1} \in \Delta ( x_n , r_n ) \cap A_n \cap U
\ \ , \ \
n \in \bN 
\]
(notare che la precedente definizione \e ben posta proprio perch\'e ogni $A_n$ \e denso). Si verifica facilmente che $\left\{ x_n \right\}$ \e di Cauchy, per cui esiste ed \e unico il limite $x$. Per costruzione $x \in U \cap (\cap_n A_n)$.
\end{proof}

\begin{thm}[Teorema di Banach-Steinhaus]
Siano $\mE,\mF$ spazi di Banach e $\left\{ T_i \right\}_{i \in I} \subset B(\mE,\mF)$ una famiglia tale che $\sup_i \| T_i v \| <$ $\infty$ per ogni $v \in \mE$. Allora esiste una costante $c > 0$ tale che $\| T_i \| < c$, $i \in I$.
\end{thm}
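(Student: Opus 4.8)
The plan is to deduce the statement from the Lemma di Baire, as suggested by its placement immediately before the theorem. The idea is to turn the pointwise boundedness hypothesis into a covering of $\mE$ by countably many closed sets; completeness of $\mE$ then forces one of them to have nonempty interior, and on the ball it contains the whole family $\{ T_i \}$ is uniformly bounded. A translation-and-rescaling argument finally upgrades this local bound to a uniform bound on the unit ball.

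Concretely, for every $n \in \bN$ I would set
\[
X_n := \{ v \in \mE : \| T_i v \| \leq n \ , \ \forall i \in I \} = \bigcap_{i \in I} \{ v \in \mE : \| T_i v \| \leq n \} \ .
\]
Each set $\{ v : \| T_i v \| \leq n \}$ is closed, being the preimage of the closed interval $[0,n]$ under the continuous (indeed Lipschitz) map $v \mapsto \| T_i v \|$; hence $X_n$, as an intersection of closed sets, is closed. The hypothesis $\sup_i \| T_i v \| < \infty$ for each fixed $v$ says precisely that every $v$ lies in some $X_n$ (take $n > \sup_i \| T_i v \|$), so that $\mE = \cup_n X_n$.

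Since $\mE$ is a complete metric space, the Lemma di Baire forbids $\cup_n X_n$ from being rado; as $\cup_n X_n = \mE$ manifestly has nonempty interior, at least one $X_{n_0}$ must fail to be rado, i.e. $\dot X_{n_0} \neq \emptyset$. I would then pick $v_0 \in \mE$ and $r > 0$ with $\Delta(v_0,r) \subseteq X_{n_0}$. For any $w \in \mE$ with $\| w \| < r$ the point $v_0 + w$ belongs to $\Delta(v_0,r) \subseteq X_{n_0}$, so $\| T_i (v_0 + w) \| \leq n_0$ for every $i$; by the triangle inequality,
\[
\| T_i w \| \leq \| T_i(v_0+w) \| + \| T_i v_0 \| \leq 2 n_0 \ , \ \forall i \in I \ .
\]
Applying this to $w = s u$ with $\| u \| = 1$ and $s < r$ gives $s \| T_i u \| \leq 2 n_0$, whence $\| T_i u \| \leq 2 n_0 / r$ upon letting $s \to r$; taking the supremum over $u$ yields $\| T_i \| \leq 2 n_0 / r$ uniformly in $i$, so $c := 1 + 2 n_0 / r$ satisfies the conclusion.

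The only genuinely delicate point is the definition of the sets $X_n$: they must be obtained by intersecting over the \emph{entire} index set $I$ (which need not be countable), precisely so that the $X_n$ are closed while the \emph{countable} union over $n$ still exhausts $\mE$, as required to invoke Baire. Once this is arranged, closedness and the covering property are routine, completeness of $\mE$ enters only through the Lemma di Baire, and the remaining estimate is the standard translation-and-rescaling computation.
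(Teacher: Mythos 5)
Your proof is correct and follows the same route as the paper: the sets $X_n$, the Baire argument to find an $X_{n_0}$ with nonempty interior, and the translation-and-rescaling estimate on the ball $\Delta(v_0,r)$ are exactly the steps used there. The only difference is that you spell out the closedness of the $X_n$ (as an intersection over all of $I$ of preimages of $[0,n]$), which the paper leaves implicit.
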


\begin{proof}[Dimostrazione]
Poniamo
\[
X_n
\ := \
\left\{
v \in \mE : \| T_i v  \| \leq n 
\ , \
i \in I
\right\}
\ \ , \ \
n \in \bN
\ .
\]
Poich\'e per ipotesi $\sup_i \| T_i v \| < $ $\infty$, abbiamo che $\cup_n X_n =$ $\mE$. Il lemma di Baire implica che deve esistere $n_0 \in \bN$ tale che $\dot X_n \neq$ $\emptyset$. Per cui, esistono $v_0 \in \mE$, $r_0 > 0$ tali che $\Delta (  v_0 , r_0 ) \subset X_{n_0}$. Quindi otteniamo, per ogni $w \in \mE$, $\| w \| \leq 1$,
\[
\| T_i ( v_0 + r_0 w ) \|
\leq
n_0
\ \Rightarrow \
r_0 \| T_i w \| 
\leq  
n_0 + \| T_i v_0 \| 
< 2 n_0
\ .
\]
\end{proof}

\begin{cor}
\label{cor_BS}
Sia $\left\{ T_n \right\} \subset B(\mE,\mF)$ una successione tale che $\left\{ T_n v \right\}$ converge per ogni $v \in \mE$. Allora:
\textbf{(1)} $\sup_n \| T_n \| < \infty$;
\textbf{(2)} Posto $Tv :=$ $\lim_n T_n v$, si ha che $T$ \e un operatore limitato e $\|  T  \| \leq \lim \limits_n \inf \| T_n \|$.
\end{cor}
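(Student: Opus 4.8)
The plan is to deduce both assertions directly from the Teorema di Banach-Steinhaus just established, applied to the countable family $\{ T_n \}$. For part (1), I would first observe that the pointwise convergence of $\{ T_n v \}$ for each fixed $v \in \mE$ forces each such sequence to be bounded in $\mF$ (a convergent sequence in a normed space is bounded), so that $\sup_n \| T_n v \| < \infty$ for every $v \in \mE$. This is exactly the hypothesis demanded by Banach-Steinhaus, which therefore yields a constant $c > 0$ with $\| T_n \| < c$ for all $n$; hence $\sup_n \| T_n \| < \infty$, which is (1).

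For part (2), linearity of $T$ is immediate from the linearity of each $T_n$ and of the limit operation. To obtain boundedness together with the norm estimate, I would invoke the continuity of the norm: since $T_n v \to Tv$ in $\mF$, one has $\| Tv \| = \lim_n \| T_n v \|$. Combining the elementary bound $\| T_n v \| \leq \| T_n \| \| v \|$ with a passage to the inferior limit gives
\[
\| Tv \| = \lim_n \| T_n v \| \leq \liminf_n \left( \| T_n \| \| v \| \right) = \left( \liminf_n \| T_n \| \right) \| v \|
\ .
\]
Because (1) guarantees $\liminf_n \| T_n \| \leq \sup_n \| T_n \| < \infty$, this simultaneously shows that $T \in B(\mE,\mF)$ and that $\| T \| \leq \liminf_n \| T_n \|$, which is (2).

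I do not expect a genuine obstacle here, as the statement is essentially a routine corollary. The only points requiring a little care are reading the boundedness hypothesis of Banach-Steinhaus correctly off the pointwise convergence, and handling the inferior limit in the final estimate: specifically, using that $\lim_n \| T_n v \|$ exists (so it equals $\liminf_n \| T_n v \|$) and that the nonnegative factor $\| v \|$ may be pulled out of the $\liminf$.
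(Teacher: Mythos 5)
Your argument is correct and is exactly the intended deduction: the paper leaves this corollary without proof, as an immediate consequence of the Teorema di Banach-Steinhaus, and your two steps (pointwise convergence $\Rightarrow$ pointwise boundedness $\Rightarrow$ uniform bound; then continuity of the norm plus $\liminf$ for the estimate $\| T \| \leq \liminf_n \| T_n \|$) are the standard way to fill it in. No gaps.
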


\begin{rem}{\it 
Nel corollario precedente \textbf{non} si afferma che $\| T_n - T \| \to 0$. Tuttavia si pu\'o verificare con un argomento del tipo "$3$-$\eps$" che $\sup_{v \in K} \| T_n v - T v \| \to 0$ per ogni compatto $K \subset \mE$. 
} \end{rem}

\begin{cor}
Sia $B \subseteq \mE$ tale che $f(B)$ \e limitato per ogni $f \in \mE^*$. Allora $B$ \e limitato.
\end{cor}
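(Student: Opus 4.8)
The plan is to reduce the statement to the teorema di Banach-Steinhaus by regarding the elements of $B$ as a family of bounded functionals on the Banach space $\mE^*$, via the canonical embedding of $\mE$ into its bidual $\mE^{**} := B(\mE^*,\bR)$. First I would, for each $v \in \mE$, define the evaluation map
\[
\hat v : \mE^* \to \bR
\ \ , \ \
\langle \hat v , f \rangle := \langle f , v \rangle
\ , \
f \in \mE^*
\ .
\]
This is linear in $f$, and the estimate $|\langle \hat v , f \rangle| = |\langle f , v \rangle| \leq \| f \| \| v \|$ shows that $\hat v \in \mE^{**}$ with $\| \hat v \| \leq \| v \|$. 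Since $\mE^*$ is a Banach space (Prop.\ref{prop_dualBan}) and $\bR$ (rispettivamente $\bC$) \e completo, the family $\{ \hat v \}_{v \in B} \subset B(\mE^*,\bR)$ sits in exactly the setting required by Banach-Steinhaus.

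The next step is to rephrase the hypothesis. Saying that $f(B)$ \e limitato for every $f \in \mE^*$ means precisely that $\sup_{v \in B} |\langle \hat v , f \rangle| = \sup_{v \in B} |\langle f , v \rangle| < \infty$ for each fixed $f \in \mE^*$; that is, the family $\{ \hat v \}_{v \in B}$ is pointwise bounded on $\mE^*$. Applying the teorema di Banach-Steinhaus to this family then produces a constant $c > 0$ with $\| \hat v \| \leq c$ for all $v \in B$.

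It remains to transfer this uniform bound on $\{ \hat v \}$ back to a bound on $\{ \| v \| \}$, and this is the step carrying the real content, since the inequality $\| \hat v \| \leq \| v \|$ established above points the wrong way. Here I would invoke the teorema di Hahn-Banach (Teo.\ref{thm_HB}), which guarantees, for each $v \neq 0$, a functional $f_v \in \mE^*$ with $\| f_v \| = 1$ and $\langle f_v , v \rangle = \| v \|$ (the norm-preserving extension to $\mE$ of the functional $\lambda v \mapsto \lambda \| v \|$ defined on the line $\bR v$). This gives $\| \hat v \| \geq |\langle \hat v , f_v \rangle| = \| v \|$, hence $\| \hat v \| = \| v \|$, i.e. the canonical embedding is isometric. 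Combining with the previous step, $\| v \| = \| \hat v \| \leq c$ for every $v \in B$, so $B$ \e limitato. The main obstacle is therefore not computational but logical: the argument hinges entirely on the isometry of the embedding into the bidual, which is exactly the strength supplied by Hahn-Banach; without it the hypothesis would only control $B$ through the a priori weaker family of seminorms $v \mapsto |f(v)|$, and no uniform norm bound could be extracted.
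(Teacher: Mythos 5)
Your proof is correct and follows exactly the route this corollary is meant to illustrate: the paper leaves it unproven, but regarding the elements of $B$ as functionals $\hat v$ on the Banach space $\mE^*$ (Prop.\ref{prop_dualBan}), applying Banach--Steinhaus to the pointwise-bounded family $\{ \hat v \}_{v \in B}$, and then recovering $\| v \| = \| \hat v \|$ is the intended argument. Note that the isometry you establish via Hahn--Banach is precisely the identity (\ref{eq_HB2}), proved in the paper as a corollary of Teo.\ref{thm_HB}, so your final step can simply cite it rather than re-deriving the norming functional $f_v$.
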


%
%\begin{cor}
%\label{cor_sing}
%Sia $\left\{ f_i \right\} \subset \mE^*$ una famiglia non limitata. Allora l'insieme $R :=$
%$\left\{ v \in \mE : \sup_i \left| f_i(v) \right| = \infty  \right\}$ \e denso in $\mE$.
%[]
%\end{cor}
%

A corollario dei risultati precedenti segnaliamo la seguente terminologia: dato uno spazio topologico $X$, un sottoinsieme $Y \subseteq X$ si dice {\em di tipo} $G_\delta$ se \e intersezione numerabile di insiemi aperti. 
Osserviamo che l'intersezione di insiemi $G_\delta$ \e $G_\delta$, e l'unione finita di insiemi $G_\delta$ \e $G_\delta$.
Inoltre, se $X$ \e metrico, allora ogni chiuso \e di tipo $G_\delta$.
Come esempio, segnaliamo $X = \bR$, $Y = \bR - \bQ$.

\

A seguire alcune notazioni. Per ogni $\delta, r > 0$ scriviamo $\mE_{\leq r} := \{ v \in \mE : \| v \| \leq r \}$. Inoltre definiamo
$\delta \mE_{\leq r} := \{ \delta w : w \in \mE_{\leq r} \} = \mE_{\leq \delta r}$
e
$v + \mE_{\leq r} := \{ v + w : w \in \mE_{\leq r} \}$,
cosicch\'e se $T \in B(\mE,\mF)$ allora
$T(v + \mE_{\leq r}) = Tv + T(\mE_{\leq r}) := \{ Tv + Tv' , v' \in \mE_{\leq r} \}$.

\begin{lem}
Siano $\mE,\mF$ spazi di Banach e $T \in B(\mE,\mF)$ tale che $T(\mE_{\leq 1})$ \e denso in qualche $\mF_{\leq r}$, $r > 0$. Allora per ogni $\eps > 0$ risulta 
$\mF_{(1-\eps)r} \subset T(\mE_{\leq 1})$.
\end{lem}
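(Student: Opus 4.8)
The plan is to fix $y \in \mF$ with $\|y\| \leq (1-\eps)r$ and to produce a vector $x \in \mE_{\leq 1}$ with $Tx = y$, realized as the sum of an absolutely convergent series built by successive approximation. The first thing I would record is a homogeneity remark: since $T(\delta\,\mE_{\leq 1}) = \delta\,T(\mE_{\leq 1})$ and multiplication by $\delta > 0$ is a homeomorphism of $\mF$ carrying $\mF_{\leq r}$ onto $\mF_{\leq \delta r}$, the hypothesized density of $T(\mE_{\leq 1})$ in $\mF_{\leq r}$ upgrades to the following scaled form: for every $w \in \mF_{\leq \delta r}$ and every $\gamma > 0$ there is $v \in \mE_{\leq \delta}$ with $\|w - Tv\| < \gamma$.

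Next I fix the radius budget. I set $\rho_0 := 1-\eps$ and $\rho_k := \eps\, 2^{-k}$ for $k \geq 1$, so that $\sum_{k \geq 0}\rho_k = (1-\eps) + \eps = 1$. Writing $y_0 := y$, I construct inductively vectors $x_{k+1} \in \mE_{\leq \rho_k}$ and residuals $y_{k+1} := y_k - Tx_{k+1}$ with $\|y_{k+1}\| \leq \rho_{k+1} r$. This is exactly the scaled density statement applied to $w = y_k$, which at each stage satisfies $\|y_k\| \leq \rho_k r$ (for $k=0$ this is the assumption $\|y\| \leq (1-\eps)r$, and for $k \geq 1$ it is the bound produced at the previous step), taking the approximation threshold $\gamma := \rho_{k+1} r$.

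Then I sum. By construction $\sum_{k\geq 1}\|x_k\| \leq \sum_{k\geq 0}\rho_k = 1 < +\infty$, so the series $\sum_k x_k$ is absolutely convergent; since $\mE$ is a Banach space, Prop.\ref{prop_compl} guarantees that it converges to some $x \in \mE$ with $\|x\| \leq 1$, i.e. $x \in \mE_{\leq 1}$. The partial sums telescope, $\sum_{k=1}^n Tx_k = y_0 - y_n$, and the continuity of $T$ together with $\|y_n\| \leq \rho_n r \to 0$ gives $Tx = \lim_n (y_0 - y_n) = y_0 = y$. Hence $y \in T(\mE_{\leq 1})$, which is the desired inclusion.

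I expect the only delicate point to be the bookkeeping of the radii: the loss factor $1-\eps$ must be absorbed entirely into the very first approximation, with the geometric tail $\{\rho_k\}_{k\geq 1}$ summing to exactly $\eps$, so that the total norm of the series stays inside the closed unit ball; it is precisely this tail that forces the completeness of $\mE$ into the argument. The scaling remark and the telescoping passage to the limit are otherwise entirely routine.
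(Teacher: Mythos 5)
La tua proposta è corretta e segue essenzialmente la stessa strada della dimostrazione del testo: approssimazioni successive tramite la densità riscalata di $T(\mE_{\leq \delta})$ in $\mF_{\leq \delta r}$, serie assolutamente convergente, completezza di $\mE$ e passaggio al limite telescopico. L'unica differenza è contabile: tu assorbi il fattore $1-\eps$ nel primo passo con la coda $\eps\,2^{-k}$ in modo che le norme sommino esattamente a $1$, mentre il testo ottiene $\| v \| \leq (1-\eps)^{-1}$ e lascia implicito il riscalamento finale.
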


\begin{proof}[Dimostrazione]
Presi $w \in \mF_{\leq r}$ ed $\eps \in (0,1)$, per ipotesi esiste $w_1 \in T(\mE_{\leq 1})$ tale che 
$\| w - w_1 \| < \eps r$.
Ora, sempre per ipotesi, abbiamo che 
$\eps T(\mE_{\leq 1})$
\'e denso in $\mF_{\leq \eps r}$, per cui troviamo che esiste $w_2 \in \eps T(\mE_{\leq 1})$ tale che
$\| w - w_1 - w_2 \| < \eps^2 r$. Procedendo per induzione otteniamo una successione $\{ w_n \in \eps^{n-1}T(\mE_{\leq 1}) \}$ tale che
\[
\| w - \sum_k^n w_k \| < \eps^n r \ .
\]
Scegliamo ora $v_n \in \mE_{\leq 1}$ tali che $w_n = \eps^{n-1}Tv_n$, $\forall n \in \bN$. Allora la serie $\sum_n \eps^{n-1} v_n$ \e assolutamente convergente, e quindi convergente a $v \in \mE$. Per costruzione
$Tv = \sum_n \eps^{n-1}Tv_n = \sum_n w_n = w$
e
$\| v \| \leq \sum_n \eps^{n-1} \leq (1-\eps)^{-1}$, e ci\'o conclude la dimostrazione.
\end{proof}

\begin{thm}[Teorema dell'applicazione aperta]
Siano $\mE,\mF$ spazi di Banach e $T \in B(\mE,\mF)$ suriettivo. Allora $T(A)$ \e aperto in $\mF$ per ogni aperto $A \subseteq \mE$ (ovvero $T$ \e un'applicazione aperta).
\end{thm}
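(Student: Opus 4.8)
Il piano \`e quello di combinare il Lemma di Baire con il Lemma precedente: dapprima dedurrei, sfruttando la suriettivit\`a di $T$, che $\ovl{T(\mE_{\leq 1})}$ contiene un disco centrato nell'origine; quindi il Lemma precedente trasformerebbe questa densit\`a in un'effettiva inclusione $\mF_{\leq \delta_0} \subseteq T(\mE_{\leq 1})$, dalla quale l'apertura di $T$ segue in modo diretto.

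Per il primo passo, osservo che la suriettivit\`a di $T$ e l'omogeneit\`a danno $\mF = \cup_n T(\mE_{\leq n}) = \cup_n \ovl{n \, T(\mE_{\leq 1})}$. Essendo $\mF$ uno spazio di Banach (dunque metrico e completo), il Lemma di Baire impedisce che tutti gli insiemi chiusi $\ovl{n \, T(\mE_{\leq 1})}$ siano radi, per cui almeno uno di essi ha parte interna non vuota; moltiplicando per $n^{-1}$ concludo che $\ovl{T(\mE_{\leq 1})}$ contiene un disco $\Delta(w_0,\rho)$ per opportuni $w_0 \in \mF$, $\rho > 0$. A questo punto interviene il passaggio pi\`u delicato, ovvero il recupero di un disco \emph{centrato nell'origine}: poich\'e $\mE_{\leq 1}$ \`e simmetrico e convesso e $T$ \`e lineare, anche $\ovl{T(\mE_{\leq 1})}$ \`e simmetrico e convesso, dunque contiene sia $\Delta(w_0,\rho)$ che $\Delta(-w_0,\rho)$; per convessit\`a, il punto medio di due elementi di tali dischi appartiene ancora all'insieme, il che fornisce $\Delta(0,\rho) \subseteq \ovl{T(\mE_{\leq 1})}$. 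Scelto $r \in (0,\rho)$ si ha in particolare $\mF_{\leq r} \subseteq \ovl{T(\mE_{\leq 1})}$, cio\`e $T(\mE_{\leq 1})$ \`e denso in $\mF_{\leq r}$.

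Posso ora applicare il Lemma precedente, che fornisce $\mF_{\leq (1-\eps)r} \subset T(\mE_{\leq 1})$ per ogni $\eps \in (0,1)$; fissato un tale $\eps$ e posto $\delta_0 := (1-\eps)r > 0$, ho dunque $\mF_{\leq \delta_0} \subseteq T(\mE_{\leq 1})$. Resta da dedurre l'apertura: dato un aperto $A \subseteq \mE$ ed un punto $Tv \in T(A)$ con $v \in A$, scelgo $\delta > 0$ tale che $v + \mE_{\leq \delta} \subseteq A$. Usando la linearit\`a di $T$ e l'inclusione appena ottenuta (riscalata per $\delta$) trovo
\[
T(A) \supseteq T(v + \mE_{\leq \delta}) = Tv + \delta \, T(\mE_{\leq 1}) \supseteq Tv + \delta \mF_{\leq \delta_0} = Tv + \mF_{\leq \delta \delta_0} \ ,
\]
ovvero $T(A)$ contiene un intero disco centrato in $Tv$. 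Per arbitrariet\`a di $Tv \in T(A)$ concludo che $T(A)$ \`e aperto. L'ostacolo principale \`e il primo passo, cio\`e l'argomento di Baire unito alla simmetrizzazione per convessit\`a: una volta stabilito che l'immagine del disco unitario \`e densa in un disco centrato nell'origine, il resto \`e routine.
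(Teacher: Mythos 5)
La tua dimostrazione è corretta e segue essenzialmente la stessa strada del testo: Lemma di Baire per ottenere un disco dentro $\ovl{T(\mE_{\leq 1})}$, ricentramento nell'origine, applicazione del Lemma precedente e infine traslazione/riscalamento per concludere l'apertura. L'unica variante è cosmetica: per ricentrare il disco usi simmetria e convessità di $\ovl{T(\mE_{\leq 1})}$ (ottenendo l'intero raggio $\rho$), mentre il testo usa la differenza di due successioni approssimanti (perdendo un fattore $1/2$, irrilevante ai fini del risultato).
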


\begin{proof}[Dimostrazione]
Per l'ipotesi di suriettivit\'a l'insieme $\cup_{n \in \bN} \ovl{T(\mE_{\leq n})}$ \e denso in $\mF$, e per il Lemma di Baire esiste almeno un $m \in \bN$ tale che $\ovl{T(\mE_{\leq m})}$ contiene un intorno del tipo
\[
w + \mF_{\leq \eps}
\ , \  
w \in \mF
\ , \
\eps > 0 \ ,
\]
cosicch\'e $\ovl{T(\mE_{\leq 1})} \supseteq m^{-1}w + \mF_{\eps m^{-1}}$. Del resto, se $w_0 \in \mF_{\leq \eps m^{-1}}$ allora 
$m^{-1}w+w_0 \in m^{-1}w + \mF_{\leq \eps m^{-1}}$ 
e quindi troviamo successioni $\{ v_k \} , \{ v'_h \} \subset \mE_{\leq 1}$ tali che 
$m^{-1}w+w_0 = \lim_k Tv_k$, $m^{-1}w = \lim_h Tv'_h$; per cui
\[
1/2 w_0 = 1/2 \lim_k T(v_k-v'_k)
\ \ , \ \
{\mathrm{con}}
\ \
1/2(v_k-v'_k) \in \mE_{\leq 1} \ \forall k \in \bN
\ .
\]
Dunque $T(\mE_{\leq 1})$ \e denso in $\mF_{1/2 \eps m^{-1}}$. Usando il Lemma precedente, e la linearit\'a di $T$, concludiamo che per ogni $v \in \mE$ e $\rho > 0$ esiste un $\delta > 0$ tale che
\begin{equation}
\label{eq_app.ap}
Tv + \mF_{\leq \delta} \subset T ( v + \mE_{\leq \rho} )
\ .
\end{equation}
Ora, preso un aperto $U \subset \mE$ e $v \in U$ troviamo 
$v \in v + \mE_{\leq \rho} \subset U$ per qualche $\rho > 0$, 
e da (\ref{eq_app.ap}) concludiamo che $T ( v + \mE_{\leq \rho} )$ contiene l'intorno $Tv + \mF_{\leq \delta}$ di $Tv$. In altre parole $T$ \e un'applicazione aperta ed il teorema \e dimostrato.
\end{proof}

\begin{cor}[Teorema dell'inverso continuo]
Sia $T \in B(\mE,\mF)$ biettivo. Allora $T^{-1} \in B(\mF,\mE)$.
\end{cor}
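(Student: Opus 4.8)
The plan is to derive this corollary directly from the Teorema dell'applicazione aperta, which applies because a bijective $T$ is in particular surjective. First I would observe that, since $T$ is a linear bijection, its set-theoretic inverse $S := T^{-1} : \mF \to \mE$ is automatically a well-defined linear map: linearity of $S$ follows from that of $T$ by applying $T$ to both sides of the relevant identities. Thus the only thing left to establish is that $S$ is bounded, which for a linear map between normed spaces is equivalent to continuity.

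To prove continuity of $S$ I would use the characterization of continuity through preimages of open sets. Let $A \subseteq \mE$ be open; then
\[
S^{-1}(A) = \{ w \in \mF : S(w) \in A \} = \{ w \in \mF : w \in T(A) \} = T(A),
\]
the middle equality holding because $T$ is a bijection (so $S(w) \in A$ if and only if $w = T(S(w)) \in T(A)$). Since $T$ is surjective, the Teorema dell'applicazione aperta guarantees that $T(A)$ is open in $\mF$; hence $S^{-1}(A)$ is open, and $S$ is continuous, i.e. $S = T^{-1} \in B(\mF,\mE)$.

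Alternatively, and more quantitatively, one could extract an explicit bound from the proof of the open mapping theorem: specializing (\ref{eq_app.ap}) to $v = 0$ shows that $T(\mE_{\leq \rho})$ contains some ball $\mF_{\leq \delta}$, and by linearity $T(\mE_{\leq 1}) \supseteq \mF_{\leq \delta'}$ for a suitable $\delta' > 0$. Then every $w \in \mF$ with $\|w\| \leq \delta'$ is of the form $w = Tv$ with $\|v\| \leq 1$, whence $\|T^{-1} w\| \leq 1$; rescaling yields $\|T^{-1} w\| \leq (\delta')^{-1} \|w\|$ for all $w$, i.e. $\|T^{-1}\| \leq (\delta')^{-1}$. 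I do not expect any genuine obstacle here: all the analytic difficulty (the Baire category argument) is already packaged inside the open mapping theorem, so this corollary is a purely formal deduction. The only minor care needed is to keep straight that continuity and boundedness coincide for linear maps, and to verify the elementary set identity $S^{-1}(A) = T(A)$.
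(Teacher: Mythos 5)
La tua dimostrazione è corretta e segue essenzialmente la stessa strada del testo: linearità di $T^{-1}$ dedotta da quella di $T$, e continuità ottenuta dal Teorema dell'applicazione aperta tramite l'identità $\left( T^{-1} \right)^{-1}(A) = T(A)$. La stima quantitativa aggiuntiva su $\| T^{-1} \|$ è un complemento valido ma non necessario.
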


\begin{proof}[Dimostrazione]
Il fatto che $T^{-1}$ \e lineare segue dalla linearit\'a di $T$, mentre la continuit\'a (ovvero limitatezza) di $T^{-1}$ segue dal fatto che $T$ \e un'applicazione aperta.
\end{proof}

\begin{cor}[Teorema del grafico chiuso]
Siano $\mE,\mF$ spazi di Banach e $T : \mE \to \mF$ un'applica- zione lineare. Se il \textbf{grafico} 
$G(T) := \{ v \oplus T(v) , v \in \mE \}$
\'e un sottoinsieme chiuso di $\mE \oplus \mF$, allora $T$ \e un operatore limitato.
\end{cor}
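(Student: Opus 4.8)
The plan is to reduce the statement to the Continuous Inverse Theorem (Teorema dell'inverso continuo) by applying it not to $T$ itself but to the first projection restricted to the graph. First I would equip the direct sum $\mE \oplus \mF$ with the norm $\| v \oplus w \| := \| v \| + \| w \|$ (any of the equivalent product norms serves equally well), under which $\mE \oplus \mF$ is readily seen to be a Banach space: a Cauchy sequence $\{ v_n \oplus w_n \}$ forces $\{ v_n \}$ and $\{ w_n \}$ to be Cauchy in $\mE$ and $\mF$ respectively, and completeness of the two factors furnishes the limit.

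The key observation is that $G(T)$, being by hypothesis a closed subspace of the complete space $\mE \oplus \mF$, is itself complete, hence a Banach space in its own right. I would then introduce the two projections
\[
\pi_1 : G(T) \to \mE \ , \ \pi_1(v \oplus Tv) := v \ \ , \ \ \pi_2 : G(T) \to \mF \ , \ \pi_2(v \oplus Tv) := Tv \ .
\]
Both are linear and bounded, since $\| \pi_1(v \oplus Tv) \| = \| v \| \leq \| v \oplus Tv \|$ and likewise $\| \pi_2(v \oplus Tv) \| = \| Tv \| \leq \| v \oplus Tv \|$; thus $\pi_1 \in B(G(T),\mE)$ and $\pi_2 \in B(G(T),\mF)$.

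Next I would observe that $\pi_1$ is a bijection: it is injective because $v \oplus Tv = 0$ forces $v = 0$, and surjective because every $v \in \mE$ is the image of $v \oplus Tv \in G(T)$. Applying the Continuous Inverse Theorem to $\pi_1 \in B(G(T),\mE)$ (both spaces now being Banach) yields $\pi_1^{-1} \in B(\mE,G(T))$. Finally, since $\pi_1^{-1}(v) = v \oplus Tv$ for every $v \in \mE$, I would conclude that $T = \pi_2 \circ \pi_1^{-1}$ is a composition of bounded operators, hence itself bounded, with $\| T \| \leq \| \pi_2 \| \, \| \pi_1^{-1} \|$ by the submultiplicativity of the operator norm.

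The argument is short, and the only genuine idea is the decision to invert the projection $\pi_1$ rather than to attack $T$ directly; the main obstacle is simply recognizing that the closedness hypothesis on $G(T)$ is exactly what makes $G(T)$ a Banach space, so that the Continuous Inverse Theorem becomes applicable. Everything else reduces to the routine verifications that the product space is complete and that the projections are bounded.
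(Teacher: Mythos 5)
La tua dimostrazione è corretta e segue essenzialmente la stessa strada del testo: rendere $\mE \oplus \mF$ uno spazio di Banach, osservare che $G(T)$ è chiuso e quindi completo, invertire la proiezione sul primo fattore tramite il teorema dell'inverso continuo e scrivere $T$ come composizione $\pi_2 \circ \pi_1^{-1}$. L'unica differenza è la scelta (irrilevante) della norma prodotto e il fatto che espliciti la biettività di $\pi_1$, che il testo dà per scontata.
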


\begin{proof}[Dimostrazione]
La somma diretta $\mE \oplus \mF$ \e uno spazio vettoriale nella maniera ovvia, e definendo $\| (v,w) \| := \sup \{ \| v \| , \| w \| \}$ possiamo riguardare $\mE \oplus \mF$ come uno spazio di Banach. Osserviamo che abbiamo gli operatori
$P  \in B( \mE \oplus \mF , \mE )$, $P(v,w) := v$,
$P' \in B( \mE \oplus \mF , \mF )$, $P(v,w) := w$,
i quali chiaramente hanno norma $1$. Ora, per ipotesi $G(T)$ \e un sottospazio di Banach (ovvero chiuso) di $\mE \oplus \mF$, e per costruzione $S := P|_{G(T)}$ ha inverso
$S^{-1} \in B(\mE,G(T))$, $S^{-1}v := (v,Tv)$.
Per il teorema precedente $S^{-1}$ \e limitato, e di conseguenza $T = P' \circ S^{-1}$ \e limitato.
\end{proof}

\subsection{Il teorema di Hahn-Banach.}

Il teorema di Hahn-Banach \e uno dei risultati cardine dell'analisi funzionale. Oggetto del teorema \e l'esistenza di funzionali lineari limitati su spazi vettoriali equipaggiati con seminorme.

\begin{thm}[Hahn-Banach]
\label{thm_HB}
Sia $\mE$ uno spazio vettoriale e $p : \mE \to \bR$ una seminorma. Se $\mE'$ \e un sottospazio di $\mE$ ed $f : \mE' \to \bR$ \e un'applicazione lineare che soddisfa
\begin{equation}
\label{eq_HB1}
f(v') \leq p(v') 
\ \ , \ \ 
\forall v' \in \mE'
\ ,
\end{equation}
allora esiste $\tilde f : \mE \to \bR$ tale che $\tilde f |_{\mE'} = f$, $\tilde f(v) \leq$ $p(v)$, $\forall v \in \mE$.
\end{thm}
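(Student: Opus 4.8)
The plan is to follow the classical two-part strategy: first establish a \emph{one-step} extension lemma, showing that a functional dominated by $p$ on a subspace can always be extended, while preserving the domination, to a subspace with one extra dimension; then invoke Zorn's lemma to produce a maximal dominated extension, which must then be defined on all of $\mE$.

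For the one-step extension, suppose $g : \mG \to \bR$ is linear with $\mE' \subseteq \mG \subsetneq \mE$, $g|_{\mE'} = f$, and $g(w) \leq p(w)$ for all $w \in \mG$. Fix $v_0 \in \mE - \mG$ and try to define $\tilde g$ on the subspace $\mG \oplus \bR v_0$ by $\tilde g(w + t v_0) := g(w) + t \alpha$ for a suitable $\alpha \in \bR$. The domination requirement $\tilde g(w + t v_0) \leq p(w + t v_0)$, split according to the sign of $t$ and rescaled using the homogeneity $p(\lambda v) = |\lambda| p(v)$, reduces to finding $\alpha$ with
\begin{equation}
\sup_{w \in \mG} \bigl[ g(w) - p(w - v_0) \bigr] \ \leq \ \alpha \ \leq \ \inf_{w \in \mG} \bigl[ p(w + v_0) - g(w) \bigr] .
\end{equation}

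Such an $\alpha$ exists precisely when the left-hand supremum does not exceed the right-hand infimum, i.e.\ when $g(w_1) + g(w_2) \leq p(w_1 - v_0) + p(w_2 + v_0)$ for all $w_1, w_2 \in \mG$. This is exactly the point where the subadditivity of the seminorm enters: using linearity of $g$, domination on $\mG$, and the triangle inequality,
\begin{equation}
g(w_1) + g(w_2) = g(w_1 + w_2) \leq p(w_1 + w_2) \leq p(w_1 - v_0) + p(w_2 + v_0) ,
\end{equation}
so a valid $\alpha$ can always be chosen and $\tilde g$ is a dominated linear extension of $g$. For the globalization, I would consider the collection of pairs $(\mG, g)$ with $\mG$ a subspace containing $\mE'$, $g$ linear, $g|_{\mE'} = f$, and $g \leq p$ on $\mG$, ordered by $(\mG_1, g_1) \preceq (\mG_2, g_2)$ iff $\mG_1 \subseteq \mG_2$ and $g_2|_{\mG_1} = g_1$. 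Every chain admits an upper bound, namely the functional defined on the union of the domains (which is again a subspace, the domination passing to the union), so Zorn's lemma yields a maximal element $(\mG_0, g_0)$. Were $\mG_0$ a proper subspace of $\mE$, the one-step extension would contradict maximality; hence $\mG_0 = \mE$ and $\tilde f := g_0$ is the desired functional.

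The main obstacle is the one-step extension, and within it the verification that the two-sided bound on $\alpha$ is consistent. Once the inequality $g(w_1) + g(w_2) \leq p(w_1 - v_0) + p(w_2 + v_0)$ is secured from subadditivity, everything else — the reduction according to the sign of $t$, and the Zorn's-lemma bookkeeping — is routine.
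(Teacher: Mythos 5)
Your proposal is correct and follows essentially the same route as the paper: Zorn's lemma applied to the partially ordered family of dominated extensions, combined with a one-step extension whose consistency reduces to choosing $\alpha$ between $\sup_{w}[g(w)-p(w-v_0)]$ and $\inf_{w}[p(w+v_0)-g(w)]$, secured by subadditivity of $p$. You merely spell out in more detail the inequality $g(w_1)+g(w_2)\leq p(w_1-v_0)+p(w_2+v_0)$ that the paper leaves as a verification.
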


La diseguaglianza (\ref{eq_HB1}) pu\'o essere riguardata come una condizione di limitatezza per $f$, ed il teorema di Hahn-Banach ci assicura che l'estensione $\tilde f$ soddisfa la medesima minorazione di $f$ rispetto a $p$. Il caso che abbiamo in mente \e quello in cui $\mE$ \e uno spazio normato ed $f \in {\mE'}^*$, con
\[
\| f \| := \sup_{v' \in \mE'_1} | \left \langle f , v' \right \rangle |
\ \ , \ \
p(v) := \| f \| \|v\|
\ \ , \ \
v \in \mE
\ .
\]

\begin{proof}[Dimostrazione]
Si tratta di applicare il Lemma di Zorn al seguente insieme parzialmente ordinato:
\[
\mP :=
\{
h : \mE_h \to \bR 
\ : \
\mE' \subseteq \mE_h \subseteq \mE
\ , \
h |_{\mE'} = f
\ , \
h(v) \leq p(v)
\ , \
v \in \mE_h
\}
\]
equipaggiato con la relazione d'ordine
\[
h \leq h' 
\ \Leftrightarrow \
\mE_h \subset \mE_{h'} \ \ {\mathrm{e}} \ \ h'|_{\mE_h} = h
\ .
\]
Sia $\mC \subset \mP$ totalmente ordinato; allora, esiste un elemento massimale $k$ per $\mC$, definito da 
\[
\mE_k := \bigcup_{h \in \mC} \mE_h 
\ \ , \ \
k(v) := h(v)
\ \ , \ \
v \in \mE_h
\ .
\]
Dunque, per il Lemma di Zorn otteniamo che $\mP$ ammette un elemento massimale, che denotiamo con $\tilde f$. Resta da verificare che $\tilde f$ \e il funzionale che stiamo cercando. Innanzitutto, osserviamo che se il dominio $\mE_f$ di $\tilde f$ non coincidesse con $\mE$ allora potremmo considerare $v_0 \in \mE - \mE_f$, il sottospazio proprio $\mE_0 := \mE_f + \bR v_0$ di $\mE$, e definire
\[
f'(v+\lambda v_0) := \tilde f(v) + \lambda \delta 
\ \ , \ \
v \in \mE_f \ , \ \lambda \in \bR \ ,
\]
dove $\delta \in \bR$ \e una costante che sceglieremo in modo tale che 
\[
f'(v+\lambda v_0) = \tilde f(v) + \lambda \delta 
\ \leq \ 
p(v+\lambda v_0) 
\ ;
\]
ci\'o sarebbe in contraddizione con la massimalit\'a di $\tilde f$ ed il teorema sarebbe dimostrato. Del resto, usando (\ref{eq_HB1}), si verifica che basta scegliere
\[
\sup_{v \in \mE'} \{ f(v)-p(v-v_0) \}
\ \leq \
\delta
\ \leq \
\inf_{v \in \mE'} \{ p(v+v_0)-f(v) \}
\ .
\]
\end{proof}

\begin{cor}
Sia $\mE$ uno spazio normato, ed $\mE' \subseteq \mE$ un sottospazio vettoriale. Allora per ogni $f \in {\mE'}^*$ esiste $\tilde f \in \mE^*$ con $\tilde f |_{\mE'} = f$ e $\|  \tilde f  \| =$ $\| f \|$.
\end{cor}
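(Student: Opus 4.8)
The plan is to apply the Hahn--Banach theorem (\thmref{thm_HB}) to the seminorm $p : \mE \to \bR$ defined by $p(v) := \| f \| \, \| v \|$. First I would check that $p$ is indeed a seminorm: the positive homogeneity $p(\lambda v) = |\lambda| \, p(v)$ and the subadditivity $p(v+w) \leq p(v) + p(w)$ follow at once from the corresponding properties of the norm $\| \cdot \|$ of $\mE$. Next I would verify the hypothesis (\ref{eq_HB1}): for every $v' \in \mE'$ one has $f(v') \leq |f(v')| \leq \| f \| \, \| v' \| = p(v')$, where the middle inequality is just the definition of $\| f \|$ as the norm of $f \in {\mE'}^*$.

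Granted these two facts, \thmref{thm_HB} furnishes an extension $\tilde f : \mE \to \bR$ with $\tilde f|_{\mE'} = f$ and $\tilde f(v) \leq p(v) = \| f \| \, \| v \|$ for every $v \in \mE$. The step that requires a small argument --- and the only point that is not completely automatic --- is upgrading this one-sided bound to the two-sided estimate needed for a norm bound. To this end I would apply the inequality to $-v$: since $\tilde f(-v) \leq p(-v) = \| f \| \, \| v \|$ and $\tilde f(-v) = -\tilde f(v)$ by linearity, we obtain $-\tilde f(v) \leq \| f \| \, \| v \|$, and combining with $\tilde f(v) \leq \| f \| \, \| v \|$ yields $|\tilde f(v)| \leq \| f \| \, \| v \|$ for all $v \in \mE$. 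This shows $\tilde f \in \mE^*$ with $\| \tilde f \| \leq \| f \|$.

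Finally, for the reverse inequality I would observe that the unit sphere $\mE'_1$ of $\mE'$ is contained in the unit sphere $\mE_1$ of $\mE$, so that taking suprema over the larger set can only increase the value:
\[
\| \tilde f \| = \sup_{v \in \mE_1} |\tilde f(v)| \geq \sup_{v' \in \mE'_1} |\tilde f(v')| = \sup_{v' \in \mE'_1} |f(v')| = \| f \| \, ,
\]
where the middle equality uses $\tilde f|_{\mE'} = f$. Together with $\| \tilde f \| \leq \| f \|$ this gives the desired equality $\| \tilde f \| = \| f \|$. No genuine difficulty arises here: the whole matter reduces to the correct choice of the seminorm $p$ and the symmetrization trick of the second paragraph. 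The only caveat worth flagging is that, the version of \thmref{thm_HB} proved above being stated for real spaces, this argument is carried out for $\mE$ real; the complex case would require the corresponding complex form of Hahn--Banach.
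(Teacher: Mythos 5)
La tua dimostrazione è corretta e segue esattamente la strada indicata dal testo subito prima del Teorema \ref{thm_HB}, ovvero l'applicazione di Hahn--Banach alla seminorma $p(v) := \| f \| \, \| v \|$, con il passaggio a $-v$ per ottenere la stima bilatera e la restrizione a $\mE'$ per la diseguaglianza inversa. Non c'è nulla da aggiungere.
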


\begin{cor}
Per ogni $v \in \mE$ si ha
\begin{equation}
\label{eq_HB2}
\| v \|
\ = \ 
\max_{f \in \mE^*_1} | \left \langle f , v \right \rangle |
\ .
\end{equation}
\end{cor}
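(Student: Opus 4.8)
The plan is to prove the two inequalities separately, the nontrivial direction being a direct application of the norm-preserving extension corollary to Hahn-Banach. First I would dispose of the easy upper bound: for any $f \in \mE^*_1$ one has $|\langle f , v \rangle| \leq \| f \| \| v \| \leq \| v \|$ by the definition of the norm (\ref{eq_AF1}), whence $\sup_{f \in \mE^*_1} |\langle f , v \rangle| \leq \| v \|$. This already shows the right-hand side is bounded above by $\| v \|$; it remains to exhibit a single functional at which the bound is attained, which will simultaneously establish the reverse inequality and justify writing $\max$ rather than $\sup$.

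For $v = 0$ the identity is trivial, so assume $v \neq 0$. The key step is to construct the extremal functional on the one-dimensional subspace $\mE' := \bR v$ (or $\bC v$ in the complex case) by setting $f_0(\lambda v) := \lambda \| v \|$. Then $f_0$ is linear, $f_0(v) = \| v \|$, and since $|f_0(\lambda v)| = |\lambda| \| v \| = \| \lambda v \|$ we compute $\| f_0 \| = 1$ on $\mE'$.

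Applying the norm-preserving extension corollary to the Hahn-Banach theorem (Teo.\ref{thm_HB}), I obtain $\tilde f \in \mE^*$ with $\tilde f|_{\mE'} = f_0$ and $\| \tilde f \| = \| f_0 \| = 1$. Thus $\tilde f \in \mE^*_1$ and $|\langle \tilde f , v \rangle| = |f_0(v)| = \| v \|$, so the supremum is in fact attained at $\tilde f$ and equals $\| v \|$. Combining this with the upper bound yields (\ref{eq_HB2}).

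I do not expect a genuine obstacle here, since all the analytic weight is already carried by Hahn-Banach and its extension corollary; the argument is essentially a bookkeeping reduction to the one-dimensional case. The only point requiring a little care is the complex case, where one must check that the choice $f_0(\lambda v) = \lambda \| v \|$ still gives $\| f_0 \| = 1$ and the real value $\| v \|$ at $v$; this is immediate from the displayed computation, as $|\lambda| \| v \| = \| \lambda v \|$ holds for $\lambda \in \bC$ as well.
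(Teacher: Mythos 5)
La tua dimostrazione è corretta e segue essenzialmente la stessa strada della dimostrazione del testo: si riduce al sottospazio unidimensionale generato da $v$ e si applica il corollario di estensione con conservazione della norma del teorema di Hahn-Banach (l'unica differenza è la normalizzazione del funzionale, $f_0(\lambda v) := \lambda \| v \|$ invece di $\lambda \| v \|^2$, che è del tutto equivalente). La verifica esplicita della disuguaglianza facile, del caso $v=0$ e del caso complesso è un'utile aggiunta ma non cambia la sostanza dell'argomento.
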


\begin{proof}[Dimostrazione]
Assumiamo $v \neq 0$. E' ovvio che $\| v \| \geq$ $| \left \langle f , v \right \rangle |$, $f \in \mE^*_1$. D'altra parte, considerando il sottospazio $\mE' := \bR v$ ed il funzionale $f \in {\mE'}^*$, $\left \langle f , \lambda v \right \rangle :=$ $\lambda \|v\|^2$, $\lambda \in \bR$, ed applicando il teorema di Hahn-Banach, otteniamo l'uguaglianza cercata.
\end{proof}

%\noindent \textbf{Forme geometriche del Teorema di Hahn-Banach.} Sia $\alpha \in \bR$ e $f : \mE \to \bR$ un'applicazione lineare. 
%L'{\em iperpiano affine di equazione $f = \alpha$} \e dato dall'insieme
%%
%\begin{equation}
%\label{eq_iper}
%H_{f,\alpha} := \{  v \in \mE : f(v) = \alpha \} \ .
%\end{equation}
%%
%Si pu\'o dimostrare che $f \in \mE^*$ (ovvero, $f$ \e continuo) se e solo se esiste un iperpiano {\em chiuso} del tipo (\ref{eq_iper}) 
%(vedi \cite[Prop.I.5]{Bre}). Presi due sottoinsiemi $A,B \subset \mE$, diciamo che $H_{f,\alpha}$ {\em separa $A$ e $B$ in senso largo} 
%se risulta
%%
%\[
%f(v) \leq \alpha \ \ \forall v \in A
%\ \ \ e \ \ \ 
%f(w) \geq \alpha \ \ \forall w \in B \ .
%\]
%%
%Diremo invece che $H_{f,\alpha}$ {\em separa $A$ e $B$ in senso stretto} se esiste $\eps > 0$ tale che
%%
%\[
%f(v) \leq \alpha - \eps \ \ \forall v \in A
%\ \ \ e \ \ \
%f(w) \geq \alpha + \eps \ \ \forall w \in B \ .
%\]
%%
%
%
%\begin{thm}\textbf{(Prima forma geometrica del teorema di Hahn-Banach, \cite[Teo.I.6]{Bre}).}
%Siano $A,B \subset \mE$ insiemi convessi, non vuoti e disgiunti. Allora esistono $f \in \mE^*$, $\alpha \in \bR$, 
%tali che $H_{f,\alpha}$ separa $A$ e $B$ in senso largo.
%\end{thm}
%
%
%
%\begin{thm}\textbf{(Seconda forma geometrica del teorema di Hahn-Banach, \cite[Teo.I.7]{Bre}).}
%Siano $A,B \subset \mE$ insiemi convessi, non vuoti e disgiunti. Se $A$ \e chiuso e $B$ \e compatto, 
%allora esistono $f \in \mE^*$, $\alpha \in \bR$, tali che $H_{f,\alpha}$ separa $A$ e $B$ in senso stretto.
%\end{thm}

\subsection{Operatori compatti ed il Teorema di Fredholm.}

Siano $\mE,\mF$ spazi di Banach (reali o complessi). Un operatore limitato $T \in B(\mE,\mF)$ si dice {\em compatto} se $T(\mE_{\leq 1})$ \e precompatto nella topologia della norma di $\mF$ (dove $\mE_{\leq 1}$ \e la palla unitaria di $\mE$). Denotiamo con $K(\mE,\mF)$ l'insieme degli operatori compatti da $\mE$ in $\mF$.

\begin{lem}
Sia $T \in K(\mE,\mF)$ compatto. Allora $T(A)$ \e compatto per ogni $A \subset \mE$ limitato.
\end{lem}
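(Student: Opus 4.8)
The plan is to reduce the general bounded set $A$ to the unit ball, on which the hypothesis of compactness of $T$ applies directly, using nothing more than the linearity of $T$ and the elementary fact that a closed subset of a compact set is compact. First I would observe that, since $A$ is bounded, there exists $r > 0$ with $A \subseteq \mE_{\leq r}$, using the notation $\mE_{\leq r} := \{ v \in \mE : \| v \| \leq r \}$ fixed earlier. By linearity of $T$ we have $T(\mE_{\leq r}) = r \, T(\mE_{\leq 1})$, where $r \, S := \{ r w : w \in S \}$, and therefore $T(A) \subseteq r \, T(\mE_{\leq 1})$.

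Next I would invoke the hypothesis $T \in K(\mE,\mF)$, which says precisely that $T(\mE_{\leq 1})$ is precompact in the norm topology of $\mF$, i.e. that its closure $\overline{T(\mE_{\leq 1})}$ is compact. Since the scaling map $w \mapsto r w$ is a homeomorphism of $\mF$ onto itself, it carries the compact set $\overline{T(\mE_{\leq 1})}$ to the compact set $r \, \overline{T(\mE_{\leq 1})} = \overline{r \, T(\mE_{\leq 1})}$; hence $r \, T(\mE_{\leq 1})$ is itself precompact.

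Finally, from $T(A) \subseteq r \, T(\mE_{\leq 1})$ and monotonicity of closure I would deduce $\overline{T(A)} \subseteq r \, \overline{T(\mE_{\leq 1})}$, so that $\overline{T(A)}$ is a closed subset of a compact set, and thus compact; equivalently, $T(A)$ is precompact. I expect no real obstacle in the argument, which is a one-line consequence of linearity and the definition. The single point deserving a word of care is the distinction between compactness and precompactness: a bounded $A$ need not be closed, and neither is $T(A)$ in general, so what one actually establishes — and what the statement must be read as asserting, consistently with the definition of compact operator — is that the closure of $T(A)$ is compact.
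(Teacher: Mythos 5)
Your proposal is correct and follows essentially the same route as the paper: reduce to the unit ball via the inclusion $A \subseteq \mE_{\leq r}$ and the identity $T(\mE_{\leq r}) = r\,T(\mE_{\leq 1})$, then observe that $\ovl{T(A)}$ is a closed subset of a compact set. The only cosmetic difference is that the paper verifies the precompactness of $T(\mE_{\leq r})$ by extracting convergent subsequences, while you invoke the scaling homeomorphism; your closing remark on the compact/precompact distinction matches the paper's reading of the statement.
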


\begin{proof}[Dimostrazione]
Essendo $A$ limitato esso \e contenuto in una palla $\mE_{\leq r} := \{ v \in \mE : \| v \| \leq r \}$ per qualche $r \in \bR$. Ora, presa una successione $\{ Tv_n \} \subset T(\mE_{\leq r})$ troviamo che $\{ r^{-1} Tv_n \} \in T(\mE_{\leq 1})$, per cui possiamo estrarre una sottosuccessione convergente $\{ r^{-1} Tv_{n_k} \}$; di conseguenza anche $\{ Tv_{n_k} \}$ \e convergente, per cui $T(\mE_{\leq r})$ \e precompatto. Concludiamo che la chiusura di $T(A)$, essendo contenuta nel precompatto $T(\mE_{\leq r})$, \e compatta
{\footnote{Infatti in generale un insieme chiuso contenuto in un compatto \e compatto.}}.
\end{proof}

\begin{prop}
Siano $\mE,\mF$ spazi di Banach. Allora $K(\mE,\mF)$ \e uno spazio di Banach chiuso rispetto a composizioni con elementi di $B(\mF)$, $B(\mE)${\footnote{In particolare, se $\mE=\mF$ allora $K(\mE) := K(\mE,\mE)$ \e un ideale bilatero chiuso di $B(\mE)$.}}. Inoltre, se $T \in B(\mE,\mF)$ ha rango finito allora \e compatto.
\end{prop}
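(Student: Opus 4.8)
The plan is to establish the three assertions in turn: that every finite-rank operator is compact, that $K(\mE,\mF)$ is a subspace of $B(\mE,\mF)$ stable under the two one-sided compositions, and finally---the substantive point---that $K(\mE,\mF)$ is closed in the operator norm and is therefore itself a Banach space.

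First I would dispose of the finite-rank claim. If $T \in B(\mE,\mF)$ has finite rank, then $V := T(\mE)$ is a finite-dimensional subspace of $\mF$, and $T(\mE_{\leq 1})$ is a bounded subset of $V$, since $\| Tv \| \leq \| T \|$ whenever $\| v \| \leq 1$. A bounded subset of a finite-dimensional normed space has compact closure (Bolzano--Weierstrass, ovvero Heine--Borel), so $T(\mE_{\leq 1})$ is precompact and $T$ is compact. For the subspace property, scalar multiples are trivially compact; for $S+T$ with $S,T$ compact I would take an arbitrary sequence $\{ v_n \} \subset \mE_{\leq 1}$, extract (using Prop.\ref{prop_comp}, i.e. sequential compactness of $\ovl{S(\mE_{\leq 1})}$) a subsequence along which $\{ Sv_n \}$ converges, then a further subsequence along which $\{ Tv_n \}$ converges, so that $\{ (S+T)v_n \}$ converges. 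Thus $(S+T)(\mE_{\leq 1})$ is sequentially precompact, hence precompact, again by Prop.\ref{prop_comp}.

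Next, the ideal property follows from the Lemma immediately preceding this statement together with continuity. For right composition with $S \in B(\mE)$, the set $S(\mE_{\leq 1})$ is bounded, so applying that Lemma to the compact operator $T$ and the bounded set $S(\mE_{\leq 1})$ shows that $\ovl{(TS)(\mE_{\leq 1})} = \ovl{T(S(\mE_{\leq 1}))}$ is compact, whence $TS$ is compact. For left composition with $S \in B(\mF)$, I would use that $\ovl{T(\mE_{\leq 1})}$ is compact and $S$ is continuous, so $S(\ovl{T(\mE_{\leq 1})})$ is compact and contains $(ST)(\mE_{\leq 1})$; a closed subset of a compact set being compact, $(ST)(\mE_{\leq 1})$ is precompact and $ST$ is compact.

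The hard part is the closedness of $K(\mE,\mF)$ in $B(\mE,\mF)$. Let $\{ T_n \} \subset K(\mE,\mF)$ with $\| T_n - T \| \to 0$ and $T \in B(\mE,\mF)$. The plan is to show that $T(\mE_{\leq 1})$ is totally bounded; since $\mF$ is complete, from a finite $\eps$-net one extracts, for any sequence, a Cauchy subsequence (by repeated finite subdivision) which then converges in $\mF$, so the closure is sequentially compact and hence compact by Prop.\ref{prop_comp}. To produce the net: given $\eps > 0$, fix $n$ with $\| T - T_n \| < \eps/2$; by compactness of $T_n$ the set $T_n(\mE_{\leq 1})$ is totally bounded, so there are $v_1, \ldots, v_k \in \mE_{\leq 1}$ with $T_n(\mE_{\leq 1}) \subseteq \cup_i \Delta(T_n v_i , \eps/2)$. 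For any $v \in \mE_{\leq 1}$, choosing $i$ with $\| T_n v - T_n v_i \| < \eps/2$ gives
\[
\| Tv - T_n v_i \|
\ \leq \
\| Tv - T_n v \| + \| T_n v - T_n v_i \|
\ < \
\eps/2 + \eps/2
\ = \ \eps
\ .
\]
Hence $\{ T_n v_i \}_{i=1}^k$ is an $\eps$-net for $T(\mE_{\leq 1})$, establishing total boundedness and thus compactness of $T$. Being closed in the Banach space $B(\mE,\mF)$ (the latter complete because $\mF$ is), $K(\mE,\mF)$ is itself a Banach space. The only delicate point is the passage from total boundedness to precompactness, which rests on completeness of $\mF$ and on Prop.\ref{prop_comp}; everything else is a routine triangle-inequality estimate.
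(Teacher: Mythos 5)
Your proof is correct, and on the parts that are routine (finite rank via Bolzano--Weierstrass, the subspace property by successive extraction of subsequences, the two-sided ideal property via the preceding Lemma and continuity) you follow essentially the same path as the paper. The one genuinely different choice is in the key step, the closedness of $K(\mE,\mF)$ in the operator norm: you argue via total boundedness, fixing $n$ with $\| T - T_n \| < \eps/2$, taking a finite $\eps/2$-net for $T_n(\mE_{\leq 1})$ and promoting it to an $\eps$-net for $T(\mE_{\leq 1})$, then invoking completeness of $\mF$ to pass from total boundedness to precompactness. The paper instead runs a diagonal argument: given a bounded sequence $\{ v_n \}$ it extracts nested subsequences so that every $\{ T_i v_n^{(n)} \}_n$ converges, and then a three-term triangle inequality shows $\{ T v_n^{(n)} \}$ is Cauchy. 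The two routes are equally rigorous and of comparable length; yours makes the quantitative approximation mechanism explicit and isolates exactly where completeness of $\mF$ enters (the passage from totally bounded to precompact), while the paper's diagonal extraction stays entirely within the sequential-compactness language it has used throughout (and reuses the same diagonal trick that appears elsewhere, e.g.\ in the proof of Ascoli--Arzel\'a), at the cost of leaving the metric notion of total boundedness implicit. Either version is acceptable; just make sure, as you do, to flag that the final step from an $\eps$-net to compactness of the closure is where Prop.~2.2 (compact $\Leftrightarrow$ sequentially compact for metric spaces) and the completeness of $\mF$ are actually used.
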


\begin{proof}[Dimostrazione]
L'immagine di un insieme compatto attraverso un'applicazione continua \e compatta, per cui se $S' \in B(\mF)$, $S \in B(\mE)$, $T \in K(\mE,\mF)$ allora applicando il Lemma precedente concludiamo che $S(\mE_{\leq 1})$ \e limitato (per continuit\'a di $S$), $T \circ S (\mE_{\leq 1})$ precompatto (per compattezza di $T$) ed $S' \circ T \circ S (\mE_{\leq 1})$ precompatto (per continuit\'a di $S'$). Ci\'o mostra che $K(\mE,\mF)$ \e chiuso rispetto a composizioni con elementi di $B(\mF)$, $B(\mE)$. 
Inoltre, la sfera unitaria di uno spazio a dimensione finita \e compatta e ci\'o mostra che ogni operatore $T$ a rango finito (ovvero, tale che $T(\mE)$ ha dimensione finita) \e compatto. 
Mostriamo che $K(\mE,\mF)$ \e uno spazio vettoriale: se $T,T' \in K(\mE,\mF)$ ed $A \subset \mE$ \e limitato allora $T(A) \times T'(A)$ \e compatto (essendo il prodotto di compatti) e l'applicazione 
\[
T(A) \times T'(A) \to \mF
\ \ , \ \
(Tv,T'v') \mapsto Tv + T'v'
\]
\'e continua. Ci\'o implica che $(T+T')(A)$ \e compatto e quindi $K(\mE,\mF)$ \e uno spazio vettoriale (che $\lambda T$, $\lambda \in \bR , \bC$, $T \in K(\mE,\mF)$, sia compatto non ci dovrebbero essere dubbi).
Infine mostriamo che $K(\mE,\mF)$ \e chiuso in norma. Consideriamo una successione di Cauchy $\{ T_i \} \subset K(\mE,\mF)$ e mostriamo che il limite $T$ \e compatto. A tale scopo \e sufficiente verificare che, data una successione limitata $\{ v_n \} \subset \mE$, esiste una sottosuccessione convergente di $\{ T v_n \}$, e ci\'o si dimostra utilizzando il seguente argomento diagonale. Per compattezza di $T_1$, esiste certamente una sottosuccessione $\{ v_n^{(1)} \}$ di $\{ v_n \}$ tale che $\{ T_1 v_n^{(1)} \}$ \e convergente; analogamente, esiste una sottosuccessione $\{ v_n^{(2)} \}$ di $\{ v_n^{(1)} \}$ tale che $\{ T_2 v_n^{(2)} \}$ converge; iterando il procedimento otteniamo la sottosuccessione $\{ v_1^{(1)} , v_2^{(2)} , \ldots , v_n^{(n)} , \ldots  \}$, la quale -- per costruzione -- \e tale che ogni $\{ T_i v_n^{(n)} \}$ \e convergente nel limite $n \to \infty$. Ora, abbiamo la stima
\[
\| T   v_n^{(n)} - Tv_m^{(m)}    \| \leq 
\| T   v_n^{(n)} - T_i v_n^{(n)} \| + 
\| T_i v_n^{(n)} - T_i v_m^{(m)} \| +
\| T_i v_m^{(m)} - T   v_m^{(m)} \|
\ ;
\]
il primo ed il terzo termine si annullano nel limite $i \to \infty$ per convergenza di $\{ T_i \}$ e limitatezza di $\{ v_n^{(n)} \}$, mentre il secondo termine si annulla nel limite $m,n \to \infty$ per convergenza di $\{ T_i v_n^{(n)} \}$. Ne segue che $\{ Tv_n^{(n)} \}$ \e convergente e dunque $T$ \e compatto.
\end{proof}

\begin{ex}{\it
Sia $X$ uno spazio metrico compatto, $K \in C(X \times X)$ e $\mu$ una misura di Radon su $X$. Definiamo l'operatore 
\[
T_K : L_\mu^1(X) \to C(X)
\ \ , \ \ 
T_Kf (y) := \int_X K(x,y)f(x) \ dx  
\ \ , \ \ 
y \in X \ .
\]
Verifichiamo che $T_K$ sia ben definito: essendo $K$ uniformemente continua (Heine-Cantor), scelto $\eps > 0$ troviamo $\delta > 0$ tale che $| K(x,y) - K(x,y') | < \eps$ per ogni $x,y,y'$ tali che
$d_2( (x,y) , (x,y') ) = d(y,y') < \delta$
(qui, e di seguito, usiamo la notazione dell'Esercizio \ref{sec_top}.4). Poich\'e $\delta$ \e indipendente da $x,y,y' \in X$ concludiamo che
\begin{equation}
\label{eq.st.cp}
| T_Kf(y) - T_Kf(y') | < \eps \| f \|_1 \ ,
\end{equation}
dunque $T_Kf \in C(X)$. Inoltre 
$| T_Kf(y) | \leq \| \kappa_y \|_\infty \| f \|_1 \leq \| K \|_\infty \| f \|_1$,
dunque $T_K$ \e limitato e $\| T_K \| \leq \| K \|_\infty$. %
Infine, se $\| f \|_1 \leq 1$ allora usando (\ref{eq.st.cp}) troviamo 
$| T_Kf(y) - T_Kf(y') | < \eps$
per $d(y,y') < \delta$; poich\'e il nostro $\delta$ non dipende dalla scelta di $f$ concludiamo che la famiglia $\{ T_Kf \}_{\| f \|_1 \leq 1}$ \e equicontinua e quindi, per Ascoli-Arzel\'a, precompatta. Dunque $T_K$ \e un operatore compatto.
Consideriamo ora, per ogni $p \in [1,\infty)$, l'operatore
\[
I_p : C(X) \to L_\mu^p(X)
\]
che associa ad $f \in C(X)$ la sua classe in $L_\mu^p(X)$; allora 
$\| I_pf \|_p \leq \| f \|_\infty \mu X^{1/p}$
e quindi $I_p$ \e limitato. Concludiamo dalla proposizione precedente che 
$I_p \circ T_K : L_\mu^1(X) \to L_\mu^p(X)$ 
\e compatto.
}
\end{ex}

\begin{ex}{\it
Sia $(X,\mM,\mu)$ uno spazio misurabile, $h_i \in L_\mu^r(X)$, $g_i \in L_\mu^q(X)$ per ogni $i = 1 , \ldots , n \in \bN$, e $p := \ovl q$. Posto 
$K(x,y) := \sum_i h_i(x) g_i(y)$, $x,y \in X$,
abbiamo l'operatore lineare
\[
T_K : L_\mu^p(X) \to L_\mu^r(X)
\ \ \ , \ \ \
T_Kf(x) \, :=  \,
\int_X K(x,y)f(y) \ dy \, = \,
\sum_i \left( \int_X fg_i \right) \cdot h_i(x)
\ ,
\]
per ogni $f \in L_\mu^p(X)$, $x \in X$.
$T_K$ ha rango finito, e quindi \e compatto. Riguardo la norma di $T_K$, usando la disuguaglianza di Holder troviamo la maggiorazione 
$\| T_Kf \|_r \, \leq \, \| f \|_p \, \sum_i \| h_i \|_r \| g_i \|_q$.
}
\end{ex}

\begin{rem}{\it
Lo spazio vettoriale degli operatori a rango finito \e denso in $K(\mE,\mF)$ se $\mF$ possiede una base di Schauder, e quindi in particolare se $\mF$ \e uno spazio di Hilbert. Per esempi di casi in cui tale risultato non vale si veda \cite[Cap.VI, Oss.1]{Bre}.
}
\end{rem}

\noindent \textbf{Complementi ortogonali in spazi di Banach.} Diamo ora una naturale generalizzazione della nozione di complemento ortogonale in uno spazio di Hilbert. Sia $\mE$ uno spazio di Banach ed $\mM \subset \mE$, $\mN \subset \mE^*$ sottospazi vettoriali. I {\em complementi ortogonali} di $\mM , \mN$ si definiscono rispettivamente come
\[
\left\{
\begin{array}{ll}
\mM^\perp 
\ := \ 
\{ f \in \mE^* : \left \langle f,v \right \rangle = 0  \ , \ \forall v \in \mM \}
\ \subseteq \mE^* \ ,
\\
\mN^\perp 
\ := \ 
\{ v \in \mE : \left \langle f,v \right \rangle = 0  \ , \ \forall f \in \mN \}
\ \subseteq \mE
\ .
\end{array}
\right.
\]
Analogamente al caso di uno spazio di Hilbert, si verifica facilmente che 
\begin{equation}
\label{eq_perpB}
\mM^{\perp \perp} = \ovl \mM \ ;
\end{equation}
per dettagli sui complementi ortogonali in spazi di Banach rimandiamo a \cite[II.5]{Bre}.

%(il che in generale {\em non} vale per sottospazi di $\mE^*$!);

Sia ora $T \in B(\mE,\mF)$; presi $v \in \mE$ ed $f \in \mF^*$ si ha l'identit\'a
$
\left \langle f   , Tv \right \rangle =
\left \langle T^*f,  v \right \rangle
$,
per cui $f \in \ker T^*$ se e soltanto se $f \in \{ T(\mE) \}^\perp$, ovvero
\begin{equation}
\label{eq_kerimB}
\{ T(\mE) \}^\perp = \ker T^* 
\ \stackrel{(\ref{eq_perpB})}{\Leftrightarrow} \
\ovl{T(\mE)} = \{ \ker T^* \}^\perp
\ .
\end{equation}
Quando $T$ \e chiuso, chiaramente abbiamo $T(\mE) = \{ \ker T^* \}^\perp$.

\

\noindent \textbf{L'alternativa di Fredholm.} Possiamo ora dimostrare un risultato fondamentale per la soluzione di equazioni agli autovalori per operatori compatti.

\begin{lem}
\label{lem_fred}
Sia $T \in K(\mE)$. Allora $\ker (1-T)$ ha dimensione finita e $1-T \in B(\mE)$ \e un operatore chiuso.
\end{lem}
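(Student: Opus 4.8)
The plan is to prove the two claims via a single key structural observation: the restriction of a compact operator to its kernel behaves very rigidly.

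The plan is to prove the two claims separately.

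\textbf{Finite dimensionality of $\ker(1-T)$.} First I would observe that $\mE_0 := \ker(1-T)$ is a closed subspace of $\mE$ (it is the kernel of the bounded operator $1-T$, hence closed by the general fact that kernels of bounded operators are closed, as noted in the discussion of $\ker T$ earlier in the excerpt). Thus $\mE_0$ is itself a Banach space. The crucial remark is that on $\mE_0$ we have $Tv = v$ for every $v \in \mE_0$, so $T$ restricted to $\mE_0$ is the identity operator. Now since $T$ is compact, $T(\mE_{\leq 1})$ is precompact; intersecting with $\mE_0$, the image $T(\{ v \in \mE_0 : \| v \| \leq 1 \})$ is precompact. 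But on $\mE_0$ this image is exactly the closed unit ball of $\mE_0$ itself (since $T|_{\mE_0} = 1$). Therefore the closed unit ball of $\mE_0$ is compact. By the classical theorem of Riesz on normed spaces --- a normed space whose closed unit ball is compact is finite dimensional (this is essentially the content of what is referenced as ``il lemma di Riesz'' in Es.\ref{sec_afunct}.4) --- we conclude that $\mE_0 = \ker(1-T)$ has finite dimension.

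\textbf{Closedness of the range of $1-T$.} This is the part I expect to be the main obstacle, since it requires controlling the inverse image geometry rather than just a compactness argument on the kernel. First I would set $\mE_0 := \ker(1-T)$, which by the first part is finite dimensional and therefore admits a closed complementary subspace $\mE_1$, so that $\mE = \mE_0 \oplus \mE_1$ with $\mE_1$ closed. The operator $1-T$ restricted to $\mE_1$ is injective, and I would aim to show it is bounded below, i.e.\ that there exists $c > 0$ with $\| (1-T)v \| \geq c \| v \|$ for all $v \in \mE_1$. The standard approach is by contradiction: suppose no such $c$ exists, so there is a sequence $\{ v_n \} \subset \mE_1$ with $\| v_n \| = 1$ and $\| (1-T)v_n \| \to 0$. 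Since $T$ is compact, after passing to a subsequence $\{ Tv_{n_k} \}$ converges to some $w \in \mE$; then from $v_{n_k} = (1-T)v_{n_k} + Tv_{n_k}$ one deduces that $v_{n_k} \to w$ as well. Passing to the limit in $\| (1-T)v_{n_k} \| \to 0$ gives $(1-T)w = 0$, so $w \in \mE_0$; but also $w \in \mE_1$ (as $\mE_1$ is closed) with $\| w \| = 1$, contradicting $\mE_0 \cap \mE_1 = \{ 0 \}$. Having established the lower bound on $\mE_1$, the range $(1-T)(\mE) = (1-T)(\mE_1)$ is the image of a bounded-below operator on a Banach space, hence closed: indeed, if $(1-T)v_{n} \to y$ with $v_n \in \mE_1$, the lower bound forces $\{ v_n \}$ to be Cauchy, so $v_n \to v \in \mE_1$ and $y = (1-T)v$ by continuity. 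This completes the proof.

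The delicate point, which I would treat carefully, is the construction of the closed complement $\mE_1$ (legitimate precisely because $\mE_0$ is finite dimensional, via Hahn-Banach one extends a basis of the dual of $\mE_0$) and the verification that the contradiction argument is valid --- in particular that the limit $w$ genuinely lies in $\mE_1$, which uses the closedness of $\mE_1$ established in the first step.
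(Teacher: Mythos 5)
Your proof is correct. The first part is essentially the paper's argument in positive form: you show that the closed unit ball of $\ker(1-T)$ is compact (it is a closed subset of the precompact set $T(\mE_{\leq 1})$, since $T$ acts as the identity on the kernel) and invoke Riesz's theorem, whereas the paper argues by contradiction, using the Riesz lemma of Esercizio \ref{sec_afunct}.4 to produce a norm-one sequence in an infinite-dimensional kernel with no convergent subsequence and then contradicting compactness via $Tv_n = v_n$; the two are the same idea. For the closedness of the range, however, your route genuinely differs from the paper's. You decompose $\mE = \ker(1-T) \oplus \mF$ with $\mF$ a closed complement (legitimate, via Hahn--Banach, precisely because the kernel is finite dimensional), prove that $1-T$ is bounded below on $\mF$ by a compactness contradiction, and conclude with the general principle that a bounded-below operator on a Banach space has closed range. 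The paper avoids constructing a complement: given $v_n - Tv_n \to w$, it chooses a nearest point $z_n \in \ker(1-T)$ to $v_n$ (again using finite dimensionality of the kernel), shows that $\{ v_n - z_n \}$ must be bounded --- otherwise the normalized vectors $s_k := d_k^{-1}(v_k - z_k)$ would converge along a subsequence to an element of the kernel while remaining at distance $1$ from it --- and then extracts a convergent subsequence of $\{ T(v_n - z_n) \}$ to exhibit $w$ in the range. Both arguments exploit compactness through the same contradiction pattern; yours is more modular (the ``bounded below implies closed range'' step is a clean reusable fact, and the complement makes injectivity explicit), while the paper's is self-contained and does not require Hahn--Banach. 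Either is acceptable.
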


\begin{proof}[Dimostrazione]
Se $\ker (1-T)$ avesse dimensione infinita allora troveremmo una successione 
\[
\{ v_n \in \ker (1-T) \cap \mE_1  \}
\]
priva di sottosuccessioni convergenti in norma (vedi Esercizio \ref{sec_afunct}.4). D'altro canto, essendo $T$ compatto deve esistere una sottosuccessione convergente $\{ Tv_{n_k} \}$. Poich\'e $Tv_{n_k} = v_{n_k}$, $k \in \bN$, anche $\{ v_{n_k} \}$ converge, e questa \e una contraddizione.
Dimostriamo che $1-T$ \e chiuso: se $\{ v_n \} \subset \mE$ \e una successione tale che $\{ v_n - Tv_n \}$ converge a $w \in \mE$, occorre verificare che $w = v-Tv$ per qualche $v \in \mE$. Considerata la distanza 
$d_n := d( v_n , \ker (1-T) )$, $n \in \bN$,
osserviamo che avendo $\ker (1-T)$ dimensione finita deve esistere $z_n \in \ker (1-T)$ tale che 
$d_n = \| v_n - z_n \|$.
Supponiamo di aver mostrato che $\{ v_n - z_n\}$ \e limitata; in tal caso potremmo estrarre una sottosuccessione $\{ v_{n_k} - z_{n_k} \}$ tale che
\[
\exists
w_0 = \lim_k T( v_{n_k} - z_{n_k} ) = \lim_k Tv_{n_k}
\ \Rightarrow \
v_{n_k} = ( v_{n_k} - Tv_{n_k} ) + Tv_{n_k} \stackrel{k}{\to} w+w_0
\ ,
\]
al che
\[
w 
\ = \ 
\{ 1-T \} \lim_k v_{n_k}
\ = \
\{ 1-T \}(w+w_0)
\ ,
\]
e $w$ apparterrebbe all'immagine di $1-T$, il quale sarebbe quindi chiuso.
Dunque per dimostrare il Lemma rimane da verificare solo che $\{ v_n - z_n \}$ \e limitata. Se per assurdo cos\'i non fosse, troveremmo infiniti indici $k \in \bN$ tali che
$d_k \to \infty$,
e posto 
$s_k := d_k^{-1} (v_k-z_k) \subset \mE_1$ 
avremmo
\begin{equation}
\label{eq_fred}
s_k - Ts_k \ = \ d_k^{-1} (v_k - Tv_k) \ \stackrel{k}{\to} \ 0 \cdot w \ = 0 \ ;
\end{equation}
del resto, $\{ s_k \}$ \e limitata e quindi possiamo estrarre una sottosuccessione $\{ s_{i_k} \}$ tale che $\{ T s_{i_k} \}$ converge ad $s \in \mE$ il quale, grazie a (\ref{eq_fred}), \e tale che 
$Ts = s$,
ovvero $s \in \ker(1-T)$. Ma d'altra parte per costruzione
\[
d(s_{i_k},\ker(1-T)) \ = \ 
d_k^{-1} \ d( v_k-z_k , \ker(1-T) ) \ = \
d_k^{-1} \ d( v_k , \ker(1-T) ) \ = \
1
\ ,
\]
il che contraddice il fatto che $\{ s_{i_k} \}$ ha limite in $\ker(1-T)$.
\end{proof}

\begin{thm}[Fredholm]
Per ogni $T \in K(\mE)$ valgono le seguenti propriet\'a:
\begin{enumerate}
\item $\ker (1-T)$ ha dimensione finita;
\item $(1-T)(\mE) = \ker (1-T^*)^\perp$;
\item $\ker(1-T) = \{ 0 \}$ $\Leftrightarrow$ $(1-T)(\mE) = \mE$.
%\item ${\mathrm{dim}} \ker(1-T) = {\mathrm{dim}} \ker (1-T^*)$.
\end{enumerate}
\end{thm}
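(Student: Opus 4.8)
The first two assertions will be essentially immediate from what precedes. Assertion (1) is exactly the first half of Lemma~\ref{lem_fred}, so nothing new is needed. For assertion (2) I would note that $(1-T)^* = 1-T^*$ and that Lemma~\ref{lem_fred} guarantees $1-T$ to be a closed operator; applying the general identity (\ref{eq_kerimB}) with $S := 1-T$ in place of $T$ then gives $\ovl{(1-T)(\mE)} = \{\ker(1-T^*)\}^\perp$, and since the range is already closed the closure may be dropped, yielding $(1-T)(\mE) = \ker(1-T^*)^\perp$.

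The real content is assertion (3), and the plan is to prove the two implications separately, each by contradiction, using as the only external tool the Riesz lemma on the existence of almost-orthogonal unit vectors relative to a proper closed subspace (Esercizio~\ref{sec_afunct}.4). I would first record the algebraic fact that for every $n$ one has $(1-T)^n = 1 - T_n$ with $T_n \in K(\mE)$ (expand by the binomial theorem and use that $K(\mE)$ is an ideal of $B(\mE)$); consequently every $(1-T)^n$ again falls under Lemma~\ref{lem_fred}, so each iterated range is closed and each iterated kernel is finite-dimensional.

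For the implication $\ker(1-T) = \{0\} \Rightarrow (1-T)(\mE) = \mE$, set $S := 1-T$ and consider the descending chain of closed subspaces $\mE_n := S^n(\mE)$. Assuming $S$ injective but not surjective, I would show the chain is strictly decreasing: if $x \in \mE_{n-1}\setminus\mE_n$ then $Sx \in \mE_n$, and $Sx \in \mE_{n+1} = S(\mE_n)$ would force, by injectivity, $x \in \mE_n$, a contradiction. Choosing by Riesz unit vectors $v_n \in \mE_n$ with $d(v_n, \mE_{n+1}) \geq 1/2$, the identity $Tv_n - Tv_m = v_n - (Sv_n + v_m - Sv_m)$ shows, for $m>n$, that the bracketed term lies in $\mE_{n+1}$, whence $\|Tv_n - Tv_m\| \geq 1/2$; thus $\{Tv_n\}$ has no convergent subsequence, contradicting the compactness of $T$.

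For the converse $(1-T)(\mE)=\mE \Rightarrow \ker(1-T) = \{0\}$, I would run the dual argument on the ascending chain of finite-dimensional (hence closed) kernels $N_n := \ker(S^n)$. If $S$ is surjective but $\ker S \neq \{0\}$, picking $0 \neq x_1 \in \ker S$ and solving $Sx_{k+1} = x_k$ (possible by surjectivity of each $S^k$) produces $x_n \in N_n \setminus N_{n-1}$, so the chain is strictly increasing. Applying Riesz to obtain unit vectors $v_n \in N_n$ with $d(v_n, N_{n-1}) \geq 1/2$, and using that $Sv_n, v_m, Sv_m \in N_{n-1}$ for $m < n$, gives again $\|Tv_n - Tv_m\| \geq 1/2$ and the same contradiction. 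The main obstacle throughout is to verify the strict monotonicity of the two chains together with the membership relations that feed the Riesz estimate; once those are in place, the compactness of $T$ closes both arguments in an identical fashion.
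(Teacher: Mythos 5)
Your proof is correct. Points (1) and (2) and the implication $\ker(1-T)=\{0\}\Rightarrow(1-T)(\mE)=\mE$ follow the paper's argument almost verbatim (the only cosmetic difference being that you justify the closedness of the iterated ranges by writing $(1-T)^n = 1-T_n$ with $T_n\in K(\mE)$ and invoking Lemma~\ref{lem_fred} directly, whereas the paper restricts $T$ to $\mE_1$ and applies the lemma to the restriction; both are fine). The converse implication is where you genuinely depart from the text: the paper passes to the adjoint, using the compactness of $T^*$ (Esercizio~\ref{sec_afunct}.3, Schauder) together with the relation (\ref{eq_kerimB}) and $\mM^{\perp\perp}=\ovl{\mM}$ to deduce first that $1-T^*$ is injective, then surjective by the already-proved half, and finally that $\ker(1-T)=\{0\}$. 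You instead run the mirror-image Riesz argument inside $\mE$ on the strictly ascending chain of kernels $N_n=\ker\bigl((1-T)^n\bigr)$, whose strict growth you obtain by lifting a nonzero element of $\ker(1-T)$ through the surjections $(1-T)^k$. Your route is self-contained and more elementary in that it never leaves $\mE$ and needs neither the compactness of the adjoint nor the duality machinery; what it requires in exchange is the observation that each $N_n$ is finite-dimensional (hence closed) via $(1-T)^n=1-T_n$, which you correctly supply. The paper's route is shorter once the forward implication is in hand and has the side benefit of exhibiting the symmetry between $T$ and $T^*$, which is the form in which the alternative is usually applied to integral equations. Both arguments are complete and the membership checks feeding the Riesz estimate ($Sv_n, v_m, Sv_m$ all landing in the right subspace) are verified correctly in each case.
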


\begin{proof}[Dimostrazione]
(1) Vedi Lemma \ref{lem_fred}.
(2) Grazie al Lemma \ref{lem_fred} sappiamo che $1-T$ \e chiuso, e scrivendo (\ref{eq_kerimB}) per $1-T$ si trova $(1-T)(\mE) = \ker (1-T^*)^\perp$ come desiderato.
(3) Sia $\ker(1-T) = \{ 0 \}$. Supponendo per assurdo che $\mE_1 := (1-T)(\mE)$ sia strettamente contenuto in $\mE$ troviamo subito 
    $T |_{\mE_1} \subseteq \mE_1$, $T |_{\mE_1} \in K(\mE_1)$
    e, grazie al Lemma precedente, abbiamo che $\mE_2 := (1-T) |_{\mE_1}$ \e chiuso. Del resto $1-T$ \e iniettivo, per cui definendo induttivamente
    \[
    \mE_k := (1-T)(\mE_{k-1})
    \ \ , \ \
    k = 2, \ldots \ ,
    \]
    otteniamo una sequenza di sottospazi chiusi di $\mE$ contenuti {\em strettamente} l'uno nell'altro. Grazie al Lemma di Riesz (Esercizio \ref{sec_afunct}.4) esiste una successione $\{ u_k \in \mE_k \}$ tale che 
    $\| u_k \| \equiv 1$ e $d( u_k , \mE_{k+1} ) \geq 1/2$,
    per cui, per ogni $h < k$,
    \[
    \| T u_h - Tu_k \| =
    \| u_h - \{ (1-T)u_h - (1-T)u_k + u_k \} \| \geq
    d( u_h , \mE_{h+1} ) \geq 1/2
    \ .
    \]
    Ci\'o contraddice la compattezza di $T$, per cui $1-T$ deve essere suriettivo.
    Viceversa sia $1-T$ suriettivo. Allora (essendo $1-T$ chiuso per il Lemma precedente) troviamo $\ker (1-T^*) = \{ (1-T)(\mE) \}^\perp = \{ 0 \}$ (vedi (\ref{eq_kerimB})); per cui, essendo $T^*$ compatto (Esercizio \ref{sec_afunct}.3), possiamo applicare a quest'ultimo l'argomento precedente, concludendo che $1-T^*$ \e suriettivo. D'altra parte $1-T^*$ \e chiuso (sempre grazie al Lemma precedente), per cui applicando ancora (\ref{eq_kerimB}) concludiamo che $\{ 0 \} = \{ (1-T^*)(\mE^*) \}^\perp = \ker (1-T)$.
\end{proof}

\begin{rem}
\label{rem_FRED}
{\it
\textbf{(1)} Il teorema di Fredholm si pu\'o enunciare nel seguente modo: l'equazione
\[
u - Tu = v
\]
\textbf{o} ammette soluzione unica per ogni $v \in \mE$, \textbf{oppure} l'equazione omogenea
$u - Tu = 0$
ammette un numero finito di soluzioni linearmente indipendenti (l'\textbf{alternativa}, appunto). In tal caso $v \in \ker (1-T^*)^\perp$, per cui abbiamo una condizione di ortogonalit\'a di $v$ rispetto alle soluzioni dell'omogenea. 
\textbf{(2)}
L'alternativa di Fredholm \e una delle principali motivazioni di una nozione che svolge un ruolo fondamentale in analisi funzionale ed in geometria (!): dato lo spazio di Hilbert $\mH$, un operatore $S \in B(\mH)$ si dice di \textbf{Fredholm} se: (i) $\ker S$ ha dimensione finita; (ii) $S(\mH)$ ha codimensione finita (ovvero, $S(\mH)^\perp$ ha dimensione finita). In tal caso \e ben definito \textbf{l'indice}
\begin{equation}
\label{eq.index}
{\mathrm{ind}}S := {\mathrm{dim}}\ker S - {\mathrm{dim}}S(\mH)^\perp \in \bZ \ .
\end{equation}
Ad esempio, se $T = T^* \in K(\mH)$, allora per il teorema di Fredholm $1 - T$ \e un operatore di Fredholm con indice nullo. 
Denotiamo con $F(\mH)$ l'insieme degli operatori di Fredholm su $\mH$, il quale diventa naturalmente uno spazio topologico se equipaggiato con la topologia della norma. Per dare una vaga idea di come la geometria sia coinvolta nella nozione di indice segnaliamo \textbf{il teorema di Atiyah-Janich} (\cite[Appendix]{Ati}), il quale afferma che, dato lo spazio compatto di Hausdorff $X$, l'insieme delle classi di omotopia di funzioni continue 
$f : X \to F(\mH)$
\e isomorfo al gruppo $K^0(X)$ della \textbf{K-teoria} di $X$. Quest'ultimo \e un importante invariante topologico di $X$, nonch\'e la nozione fondamentale su cui poggia il famoso \textbf{teorema di Atiyah-Singer}.
}
\end{rem}

\subsection{I teoremi di Stampacchia e Lax-Milgram.}
\label{sec_StLM}

I teoremi di Stampacchia e Lax-Milgram sono dei risultati di estrema utilit\'a per la soluzione sia di problemi variazionali che di equazioni alle derivate parziali, in particolare nell'ambito degli spazi di Sobolev (vedi \S \ref{sec_sobolev}). La tecnica della dimostrazione si basa sulla geometria dei convessi negli spazi di Hilbert ed il teorema delle contrazioni.

\begin{thm}\textbf{(Proiezione su un convesso, \cite[Teo.V.2]{Bre}).}
Sia $\mH$ uno spazio di Hilbert e $K \subset \mH$ un convesso chiuso e non vuoto. Allora, per ogni $u \in \mH$ esiste ed \e unico $P_Ku \in K$ tale che $\| u - P_Ku \| = \min_K \| u-v \|$. E ci\'o avviene se e solo se $( u-P_Ku , v-P_Ku ) \leq 0$ per ogni $v \in K$.
\end{thm}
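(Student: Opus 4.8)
Il piano è quello di trattare separatamente l'esistenza, l'unicità e la caratterizzazione variazionale del punto di minimo, facendo leva sulla completezza di $\mH$ e sull'\emph{identità del parallelogramma}
\[
\| a+b \|^2 + \| a-b \|^2 = 2 \| a \|^2 + 2 \| b \|^2
\ \ , \ \
a,b \in \mH
\ ,
\]
la quale segue immediatamente dallo sviluppo del prodotto scalare (nello stesso spirito dell'identità di polarizzazione già richiamata).

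Per l'esistenza, porrei $d := \inf_{v \in K} \| u-v \|$ e considererei una successione minimizzante $\{ v_n \} \subset K$, ovvero tale che $\| u-v_n \| \to d$. Applicando l'identità del parallelogramma ad $a := u-v_n$ e $b := u-v_m$ otterrei
\[
\| v_n - v_m \|^2
=
2 \| u-v_n \|^2 + 2 \| u-v_m \|^2 - 4 \left\| u - \tfrac{v_n+v_m}{2} \right\|^2
\ ;
\]
poiché $K$ è convesso risulta $\tfrac{v_n+v_m}{2} \in K$, e quindi $\| u - \tfrac{v_n+v_m}{2} \| \geq d$, per cui il membro di destra è maggiorato da $2 \| u-v_n \|^2 + 2 \| u-v_m \|^2 - 4d^2 \to 0$. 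Ne dedurrei che $\{ v_n \}$ è di Cauchy; essendo $\mH$ completo e $K$ chiuso, $\{ v_n \}$ converge ad un elemento $P_Ku \in K$, il quale per continuità della norma soddisfa $\| u-P_Ku \| = d$. Questo passaggio, imperniato sul trucco dell'identità del parallelogramma combinata con la convessità, è il nucleo della dimostrazione e ritengo sia il punto più delicato, in quanto è l'unico luogo in cui entrano in gioco contemporaneamente la geometria di $K$ e la completezza di $\mH$.

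Passerei quindi alla caratterizzazione variazionale, dalla quale dedurrei anche l'unicità. Per mostrare che un punto di minimo $w$ soddisfa $( u-w , v-w ) \leq 0$ per ogni $v \in K$, userei il fatto che per convessità $w + t(v-w) \in K$, $t \in [0,1]$; sviluppando la disuguaglianza $\| u - w - t(v-w) \|^2 \geq \| u-w \|^2$, dividendo per $t > 0$ e facendo tendere $t \to 0^+$ otterrei la relazione cercata. Viceversa, se $w \in K$ soddisfa $( u-w , v-w ) \leq 0$ per ogni $v \in K$, la stima
\[
\| u-v \|^2
=
\| u-w \|^2 + 2 ( u-w , w-v ) + \| w-v \|^2
\geq
\| u-w \|^2
\]
(in cui $( u-w , w-v ) = -( u-w , v-w ) \geq 0$) mostra che $w$ è di minimo. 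Infine, supponendo che $w_1,w_2 \in K$ siano entrambi di minimo, sommando le due disuguaglianze $( u-w_1 , w_2-w_1 ) \leq 0$ e $( u-w_2 , w_1-w_2 ) \leq 0$ otterrei $\| w_2 - w_1 \|^2 \leq 0$, da cui $w_1 = w_2$, il che completa sia l'unicità che l'equivalenza enunciata.
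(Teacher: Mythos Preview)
La tua dimostrazione \`e corretta e completa; in realt\`a il testo non fornisce una dimostrazione propria ma si limita a citare \cite[Teo.V.2]{Bre}, e l'argomento che proponi --- successione minimizzante resa di Cauchy tramite identit\`a del parallelogramma e convessit\`a, seguita dalla caratterizzazione variazionale ottenuta per perturbazione lungo il segmento --- \`e proprio la dimostrazione classica che si trova in quella referenza.
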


\begin{defn}
Sia $\mH$ uno spazio di Hilbert. Una forma bilineare $A : \mH \times \mH \to \bR$ si dice \textbf{coercitiva} se esiste $\alpha > 0$ tale che $A (u,u) \geq$ $\alpha \| u \|^2$, $u \in \mH$.
\end{defn}

\begin{thm}[Stampacchia]
\label{thm_SLM1}
Sia $\mH$ uno spazio di Hilbert, $K \subseteq \mH$ un convesso chiuso e non vuoto. Presa una forma bilineare, limitata e coercitiva $A$, per ogni $\varphi \in \mH^*$ risulta quanto segue:
\begin{enumerate}
\item Esiste ed \e unico $u_0 \in K$ tale che $A ( u_0 , v - u_0 ) \geq$ $\left \langle \varphi , v - u_0 \right \rangle$, $v \in K$;
\item Se $A$ \e simmetrica, allora $u_0 \in K$ soddisfa la condizione precedente se e soltanto se 
\begin{equation}
\label{eq_St}
\frac{1}{2} A ( u_0 , u_0 ) - \left \langle \varphi , u_0 \right \rangle
\ = \
\min_{v \in K}
\left\{  \frac{1}{2} A (v,v) - \left \langle \varphi , v \right \rangle  \right\}
\ .
\end{equation}
\end{enumerate}
\end{thm}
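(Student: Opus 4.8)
The plan is to reduce the variational inequality in part (1) to a fixed-point equation for a contraction on $K$ and then invoke the contraction theorem (Teo.\ref{thm_contr}). First I would apply the Riesz representation theorem (Teo.\ref{thm_riesz}) twice. Since $\varphi$ is bounded there is a unique $f \in \mH$ with $\langle \varphi , w \rangle = (f,w)$ for all $w$; and since $A$ is bounded, for each fixed $u$ the map $w \mapsto A(u,w)$ is a bounded functional, so Riesz yields a unique $Su \in \mH$ with $A(u,w) = (Su,w)$ for all $w$. By bilinearity $S : \mH \to \mH$ is linear, with $\| Su \| = \sup_{\| w \| \le 1}|A(u,w)| \le C \| u \|$ (here $C$ is the bound of $A$), while coercivity reads $(Su,u) = A(u,u) \ge \alpha \| u \|^2$. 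With this notation the condition $A(u_0, v-u_0) \ge \langle \varphi , v - u_0 \rangle$ for all $v \in K$ is equivalent to $(f - Su_0 , v-u_0) \le 0$ for all $v \in K$, and, after inserting a free parameter $\rho > 0$, to $\bigl( (u_0 - \rho Su_0 + \rho f) - u_0 , v - u_0 \bigr) \le 0$ for all $v \in K$.

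By the variational characterization in the projection theorem on the closed convex set $K$, this last inequality says exactly that $u_0 = P_K(u_0 - \rho Su_0 + \rho f)$. Hence part (1) is equivalent to finding a fixed point of $\Phi(w) := P_K(w - \rho Sw + \rho f)$ on $K$. I would then show $\Phi$ is a contraction for suitable $\rho$. The projection $P_K$ is non-expansive (a direct consequence of its characterization, obtained by testing the two inequalities for $P_K a$ and $P_K b$ against each other), so with $z := w_1 - w_2$ one has $\| \Phi(w_1) - \Phi(w_2) \| \le \| z - \rho Sz \|$, and $\| z - \rho Sz \|^2 = \| z \|^2 - 2\rho (Sz,z) + \rho^2 \| Sz \|^2 \le (1 - 2\rho\alpha + \rho^2 C^2)\| z \|^2$. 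Choosing $\rho \in (0, 2\alpha/C^2)$ makes $k^2 := 1 - 2\rho\alpha + \rho^2 C^2 < 1$, so $\Phi$ is a contraction of $K$ into itself. As $K$ is closed in the complete space $\mH$, it is a complete metric space, and Teo.\ref{thm_contr} delivers the unique fixed point $u_0 \in K$, proving existence and uniqueness.

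For part (2), when $A$ is symmetric I would use that coercivity and boundedness make $((u,v)) := A(u,v)$ an inner product whose norm $\| \cdot \|_A$ is equivalent to $\| \cdot \|$ (since $\alpha \| u \|^2 \le A(u,u) \le C \| u \|^2$), so $\mH$ is again a Hilbert space under $((\cdot,\cdot))$. Applying Riesz (Teo.\ref{thm_riesz}) in this new inner product to the $\| \cdot \|_A$-bounded functional $\varphi$ gives a unique $g \in \mH$ with $\langle \varphi , v \rangle = A(g,v)$ for all $v$. The variational inequality then reads $A(g - u_0 , v - u_0) \le 0$ for all $v \in K$, which by the projection theorem applied in the $A$-inner product characterizes $u_0$ as the unique $A$-projection of $g$ onto $K$, i.e. the unique minimizer over $v \in K$ of $\| g - v \|_A^2 = A(g,g) - 2\langle \varphi , v \rangle + A(v,v)$. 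Since $A(g,g)$ is constant, minimizing this is equivalent to minimizing $\frac{1}{2}A(v,v) - \langle \varphi , v \rangle$, which is precisely (\ref{eq_St}); thus $u_0$ solves the inequality if and only if it realizes that minimum.

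The algebraic identities above are routine; the step carrying the whole argument is the translation of the variational inequality into the fixed-point equation $u_0 = \Phi(u_0)$ via the projection characterization, together with the quantitative contraction estimate. The main obstacle is ensuring $\rho$ can be chosen so that $k < 1$, which is exactly where both coercivity ($\alpha > 0$) and boundedness ($C < \infty$) are indispensable and where the argument would break down for a merely bounded, non-coercive form.
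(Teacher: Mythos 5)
Your proof is correct and follows essentially the same route as the paper's: Riesz representation to convert $\varphi$ and $A$ into $f$ and an operator, reformulation of the variational inequality as the fixed-point equation $u_0 = P_K(u_0 - \rho S u_0 + \rho f)$, the contraction estimate $1 - 2\rho\alpha + \rho^2 C^2 < 1$, and, for the symmetric case, the $A$-inner product together with the projection characterization of the minimizer. The only (welcome) additions are your explicit appeal to the non-expansiveness of $P_K$ and to the equivalence of $\|\cdot\|_A$ with $\|\cdot\|$ before reapplying Riesz, both of which the paper leaves implicit.
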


\begin{proof}[Dimostrazione]
Innanzitutto osserviamo che grazie al teorema di Riesz esistono unici $T \in B(\mH)$, $f \in \mH$ tali che $A (u,v) = ( Tu,v )$, $\left \langle \varphi , v \right \rangle =$ $(f,v)$, $\forall v \in \mH$. Per cui il nostro compito \e quello di trovare $u_0 \in \mH$ tale che
\[
( Tu_0 - f , v - u_0 ) \geq 0 
\ \ , \ \
v \in K
\ .
\]
L'equazione precedente \e equivalente a richiedere che preso un $\delta > 0$ si abbia
\[
( \delta f - \delta Tu_0 + u_0 - u_0 \ , \ v - u_0 ) \ \leq \ 0 
\ \ , \ \
v \in K
\ ,
\]
il che -- ricordando la definizione di proiezione $P_K$ su un sottoinsieme chiuso $K$ di uno spazio euclideo -- si pu\'o leggere come il fatto che
\begin{equation}
\label{eq_St3}
u_0 
\ = \
P_K ( \delta f - \delta Tu_0 + u_0 )
\ .
\end{equation}
Per cui, adesso l'idea \e quella di trovare $\delta$ affinch\'e sia verificata (\ref{eq_St3}), usando il teorema delle contrazioni. A tale scopo, definiamo
\[
S_\delta : K \to K
\ \ , \ \
v \mapsto P_K ( \delta f - \delta Tv + v )
\]
osservando che (\ref{eq_St3}) equivale a richiedere che $u_0$ sia un punto fisso per $S_\delta$. A questo punto, stimiamo
\[
\|  S_\delta v - S_\delta v'  \|^2
\leq
\| v - v' \|^2 - 
2 \delta ( v-v' , T(v-v') ) + 
\delta^2 \|  T(v-v')  \|
\leq
\| v - v' \|^2 ( 1 - 2 \alpha \delta + c^2 \delta^2 )
\ .
\]
Per cui, affinch\'e $S_\delta$ sia una contrazione (ed abbia quindi un punto fisso), \e sufficiente che sia verificata la disequazione $c^2 \delta^2 - 2 \alpha \delta < 0$, la quale ammette certamente soluzioni $\delta > 0$. Ci\'o dimostra il Punto 1 dell'enunciato del teorema.
Infine, se $A$ \e simmetrica allora $A(\cdot,\cdot)$ \e un prodotto scalare su $\mH$ e per il teorema di Riesz esiste ed \e unica $g \in \mH$ tale che
\begin{equation}
\label{eq_St4}
\left \langle \varphi , v \right \rangle
\ =  \
A ( g,v )
\ \ , \ \
v \in \mH
\ .
\end{equation}
Il Punto 1 dell'enunciato \e equivalente a 
\[
A ( g - u_0 , v - u_0 ) \leq 0
\ \ , \ \
v \in \mH
\ \Leftrightarrow \
u_0 = P_K g
\ ,
\]
il che \e equivalente a minimizzare, al variare di $v$ in $K$, la quantit\'a
$A ( g - v , g - v )$
o, equivalentemente, minimizzare $A(v,v) - 2 A(g,v)$.
\end{proof}

\begin{thm}[Lax-Milgram]
\label{thm_SLM2}
Sia $A : \mH \times \mH \to \bR$ una forma bilineare, limitata e coercitiva. Allora, per ogni $\varphi \in \mH^*$ esiste ed \e unico $u_0 \in \mH$ tale che
\begin{equation}
\label{eq_LM1}
A (u_0,v) = \left \langle \varphi , v \right \rangle \ .
\end{equation}
Inoltre, se $A$ \e simmetrica, $u_0$ soddisfa (\ref{eq_LM1}) se e soltanto se
\begin{equation}
\label{eq_LM2}
\frac{1}{2} A (u_0,u_0) - \left \langle \varphi , u_0 \right \rangle 
=
\min_{v \in \mH} 
\left\{ 
\frac{1}{2} A (v,v) - \left \langle \varphi , v \right \rangle
\right\}
\ .
\end{equation}
\end{thm}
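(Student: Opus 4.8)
The plan is to obtain Lax-Milgram as an immediate corollary of Stampacchia's theorem (Teo.\ref{thm_SLM1}), by taking the convex set $K$ to be the entire Hilbert space $\mH$. Indeed $\mH$ is trivially a nonempty, closed, convex subset of itself, so the hypotheses of Teo.\ref{thm_SLM1} are met for the given bounded coercive form $A$ and any $\varphi \in \mH^*$. Applying part (1) of Stampacchia with $K = \mH$ produces a unique $u_0 \in \mH$ such that
\[
A ( u_0 , v - u_0 ) \ \geq \ \left \langle \varphi , v - u_0 \right \rangle
\ \ , \ \
\forall v \in \mH
\ .
\]

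The crucial (and only genuinely new) step is to upgrade this variational inequality to the variational equality (\ref{eq_LM1}). Since $K = \mH$ is the whole space, as $v$ ranges over $\mH$ the increment $w := v - u_0$ ranges over all of $\mH$ as well; hence the inequality above reads $A(u_0,w) \geq \left \langle \varphi , w \right \rangle$ for every $w \in \mH$. Replacing $w$ by $-w$, which is again an element of $\mH$, and using the bilinearity of $A$ together with the linearity of $\varphi$, I obtain $A(u_0,w) \leq \left \langle \varphi , w \right \rangle$. Combining the two inequalities yields $A(u_0,w) = \left \langle \varphi , w \right \rangle$ for all $w \in \mH$, which is precisely (\ref{eq_LM1}). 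Uniqueness follows directly from coercivity: if $u_0$ and $u_0'$ both satisfied (\ref{eq_LM1}) then $A(u_0 - u_0', v) = 0$ for all $v$, and choosing $v = u_0 - u_0'$ would force $\alpha \| u_0 - u_0' \|^2 \leq A(u_0 - u_0', u_0 - u_0') = 0$, whence $u_0 = u_0'$ (this also recovers the uniqueness already guaranteed by Teo.\ref{thm_SLM1}).

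For the symmetric case, the characterization (\ref{eq_LM2}) requires no separate argument: it is exactly the content of part (2) of Teo.\ref{thm_SLM1} specialized to $K = \mH$, since the minimum there is taken over $v \in \mH$. I do not expect any real obstacle here, as all the analytic work — the reduction via the Riesz representation to operators $T$ and $f$, and the contraction argument producing the fixed point — has already been carried out in the proof of Stampacchia's theorem; the negation trick that converts the one-sided estimate into an equality is the single observation that distinguishes the unconstrained case $K=\mH$ from the general constrained one.
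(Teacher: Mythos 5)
Your proof is correct and follows essentially the same route as the paper: both obtain Lax--Milgram by specializing Stampacchia's theorem to $K = \mH$. The only cosmetic difference is that the paper reads the equality $f = Tu_0$ directly off the fixed-point identity (\ref{eq_St3}), using that $P_{\mH}$ is the identity, whereas you extract (\ref{eq_LM1}) from the variational inequality via the substitution $w \mapsto -w$; both are valid one-line arguments, and the symmetric case is handled identically in the two treatments.
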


\begin{proof}[Dimostrazione]
Da (\ref{eq_St3}) otteniamo (essendo $K = \mH$) $f = Tu_0$, da cui segue (\ref{eq_LM1}).
\end{proof}

\begin{rem}
{\it
Applicando il teorema di Riesz abbiamo l'operatore autoaggiunto $T \in B(\mH)$ e $g \in \mH$ invece della forma $A$ e di $\varphi \in \mH^*$, dunque possiamo esprimere il risultato precedente come segue: se $T$ \e tale che la relativa forma bilineare \e coercitiva, allora $T$ \e biettivo (infatti, la condizione $(Tu_0,v) = (g,v)$, $\forall v \in \mH$, \e equivalente a dire che $Tu_0 = g$).
}
\end{rem}

\subsection{Teoria spettrale.}

Gli operatori limitati, in particolare quelli compatti, appaiono frequentemente in analisi sotto la forma di operatori integrali, per cui \e importante conoscerne le propriet\'a spettrali, che si traducono in termini di soluzioni di problemi integro-differenziali.

\

\noindent \textbf{Spettro e risolvente.} Per ogni algebra di Banach (reale o complessa) $\mA$ con identit\'a $1$ denotiamo con
$\mA^{-1}$
il gruppo degli elementi invertibili di $\mA$, ovvero di quei $T \in \mA$ tali che esiste $T^{-1} \in \mA$ con $TT^{-1} = T^{-1}T = 1$. I seguenti due risultati si applicano (chiaramente) al caso particolare in cui $\mA = B(\mE)$ per qualche spazio di Banach $\mE$.

\begin{lem}
$\mA^{-1}$ \e aperto nella topologia della norma.
\end{lem}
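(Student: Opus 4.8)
The plan is to prove openness via the Neumann series. First I would establish the key auxiliary fact: if $A \in \mA$ satisfies $\|A\| < 1$, then $1 - A$ is invertible, with inverse given explicitly by the Neumann series $\sum_{n=0}^\infty A^n$. To justify convergence I would invoke the submultiplicativity of the norm (property (ii) in Def.\ref{def_alg}), which yields $\|A^n\| \leq \|A\|^n$, so that $\sum_n \|A^n\| \leq \sum_n \|A\|^n = (1-\|A\|)^{-1} < \infty$; the series is thus absolutely convergent, hence convergent by Prop.\ref{prop_compl} (this is precisely where completeness of $\mA$ enters). Writing $B$ for its sum, the telescoping identity $(1-A)\sum_{k=0}^n A^k = 1 - A^{n+1}$ together with $\|A^{n+1}\| \to 0$ gives $(1-A)B = B(1-A) = 1$, so $B = (1-A)^{-1}$.

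With this in hand, the main step is a factorization argument. I fix $T \in \mA^{-1}$ and seek an open ball about $T$ contained in $\mA^{-1}$. Given $S \in \mA$, I write $S = T - (T-S) = T\bigl(1 - T^{-1}(T-S)\bigr)$. If $\|S - T\| < \|T^{-1}\|^{-1}$, then by submultiplicativity $\|T^{-1}(T-S)\| \leq \|T^{-1}\|\,\|T-S\| < 1$, so the auxiliary fact applies with $A := T^{-1}(T-S)$ and the factor $1 - T^{-1}(T-S)$ is invertible. Since $S$ is then a product of two invertible elements, $S \in \mA^{-1}$, with $S^{-1} = \bigl(1 - T^{-1}(T-S)\bigr)^{-1} T^{-1}$. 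Hence the ball $\Delta\!\left(T, \|T^{-1}\|^{-1}\right)$ lies entirely in $\mA^{-1}$, which is therefore open.

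I expect no serious obstacle. The only point requiring care is the convergence of the Neumann series, which is where both defining hypotheses of a Banach algebra are used essentially: submultiplicativity to dominate the series by a geometric series, and completeness (through Prop.\ref{prop_compl}) to pass from absolute convergence to convergence. The remaining verifications — the telescoping identity and the two-sided nature of the inverse — are immediate algebra, and the bound on the allowable perturbation radius is dictated directly by $\|T^{-1}\|$.
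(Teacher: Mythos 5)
Your proof is correct and follows essentially the same route as the paper's: a Neumann series argument showing that $1-A$ is invertible when $\|A\|<1$ (absolute convergence via submultiplicativity, then completeness through Prop.\ref{prop_compl}), combined with the factorization $S = T\bigl(1-T^{-1}(T-S)\bigr)$ and the radius $\|T^{-1}\|^{-1}$. If anything, you are slightly more careful than the paper in verifying that the Neumann sum is a two-sided inverse.
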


\begin{proof}[Dimostrazione]
Sia $T \in \mA^{-1}$. Mostriamo che esiste $\delta > 0$ tale che ogni $T' \in \Delta(T,\delta)$ \e invertibile. Definiamo $S := 1-T^{-1}T'$ ed osserviamo che scegliendo $\delta < \| T^{-1} \|^{-1}$ otteniamo $\| S \| < 1$. Ora, la serie $\sum_{k=0} S^k$ (per convenzione poniamo $S^0 := 1$) \e assolutamente convergente e quindi convergente ad $A \in \mA$. Inoltre 
$(1-S)A = \lim_n (1-S) \sum_{k=0}^n S^k = \lim_n (1-S^n) = 1$, 
per cui $A = (1-S)^{-1} = (T^{-1}T')^{-1}$. Ne segue che $T'$ ha inverso $AT^{-1}$.
\end{proof}

\begin{defn}
\label{def_spettro}
Sia $\mA$ un'algebra di Banach (reale o complessa) con identit\'a $1$, e $T \in \mA$. Il \textbf{risolvente} di $T$ \e dato dall'insieme
\[
\rho (T) := \{  \lambda \in \bK : T-\lambda 1 \in \mA^{-1}  \} 
\ \ , \ \
\bK = \bR , \bC
\ .
\]
Lo \textbf{spettro} di $T$ \e dato da $\sigma(T) := \bK - \rho(T)$. 
Supponiamo ora che $\mA = B(\mE)$ per qualche spazio di Banach $\mE$; diciamo che $\lambda \in \bK$ \e un \textbf{autovalore} di $T$ se esiste un \textbf{autovettore} $v \in \mE$, ovvero $Tv = \lambda v$; l'insieme degli autovalori di $T$ si denota con $\sigma p (T)$.
\end{defn}

Osserviamo che \e del tutto evidente che $\sigma p (T) \subseteq \sigma (T)$, ed in effetti se $\mE$ ha dimensione finita allora $\sigma (T) = \sigma p (T)$; d'altro canto, consideriamo lo {\em shift}
\begin{equation}
\label{eq_shift}
S \in B(l^2) 
\ : \
Sx := ( 0 , x_1 , x_2 , \ldots )
\ \ , \ \
x := ( x_1 , x_2 , \ldots  ) \in l^2
\ ;
\end{equation}
si verifica immediatamente che $0 \in \sigma (T) - \sigma p (T)$ (infatti $S$ \e iniettivo ma non suriettivo).

\begin{prop}
\label{prop_spettro}
Sia $\mA$ un'algebra di Banach (reale o complessa) con identit\'a $1$, e $T \in \mA$. Allora $\sigma(T)$ \e compatto e si ha l'inclusione 
$\sigma(T) \subseteq 
\ovl{\Delta(0,\| T \|)} = 
\{ \lambda : |\lambda| \leq \| T \| \}$. 
\end{prop}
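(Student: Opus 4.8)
The plan is to prove the two assertions separately: first that $\sigma(T)$ is closed, and then that $\sigma(T) \subseteq \ovl{\Delta(0,\|T\|)}$. Since $\bK$ is either $\bR$ or $\bC$, hence finite-dimensional, the inclusion immediately gives boundedness, and a closed bounded subset of $\bK$ is compact by Heine-Borel; so the two assertions together yield both the compactness and the stated inclusion in one stroke.

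For the closedness I would argue that $\rho(T)$ is open. The map $\lambda \mapsto T - \lambda 1$ is continuous from $\bK$ into $\mA$ (indeed affine, with $\| (T-\lambda 1) - (T-\lambda' 1) \| = |\lambda - \lambda'|$). By the preceding Lemma the set $\mA^{-1}$ of invertible elements is open in the norm topology, so $\rho(T) = \{ \lambda \in \bK : T - \lambda 1 \in \mA^{-1} \}$ is the preimage of an open set under a continuous map, hence open. Therefore $\sigma(T) = \bK - \rho(T)$ is closed.

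For the inclusion the key tool is the Neumann series already exploited in the proof of that Lemma. I would fix $\lambda \in \bK$ with $|\lambda| > \|T\|$ and write $T - \lambda 1 = -\lambda (1 - \lambda^{-1} T)$. Since $\| \lambda^{-1} T \| = |\lambda|^{-1} \|T\| < 1$, the series $\sum_{k=0}^\infty (\lambda^{-1} T)^k$ is absolutely convergent, hence convergent to some $A \in \mA$ by Prop.\ref{prop_compl} (this is exactly where the completeness of $\mA$ enters). The telescoping identity $(1 - \lambda^{-1} T)\sum_{k=0}^n (\lambda^{-1} T)^k = 1 - (\lambda^{-1} T)^{n+1}$, together with $\| (\lambda^{-1} T)^{n+1} \| \leq \| \lambda^{-1} T \|^{n+1} \to 0$, shows upon letting $n \to \infty$ that $(1 - \lambda^{-1} T) A = 1$, and the same computation with the factors in the opposite order gives $A (1 - \lambda^{-1} T) = 1$; thus $A = (1 - \lambda^{-1} T)^{-1}$. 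Consequently $T - \lambda 1$ is invertible with inverse $-\lambda^{-1} A$, so $\lambda \in \rho(T)$. This proves $\sigma(T) \subseteq \{ \lambda : |\lambda| \leq \|T\| \} = \ovl{\Delta(0,\|T\|)}$, which is simultaneously the stated inclusion and the boundedness needed above.

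I do not expect a genuine obstacle here: the closedness is immediate once the openness of $\mA^{-1}$ is granted, and the only point demanding care is the invocation of completeness through Prop.\ref{prop_compl} to guarantee that the Neumann series defines an element of $\mA$, followed by the verification that its sum is a \emph{two-sided} inverse (which is why both orders of the telescoping product must be checked). The degenerate case $T = 0$ needs no separate treatment, since there $1 - \lambda^{-1}T = 1$ is trivially invertible for every $\lambda \neq 0$.
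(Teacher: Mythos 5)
La tua dimostrazione è corretta e segue essenzialmente la stessa strada della proposizione nel testo: l'inclusione $\sigma(T) \subseteq \ovl{\Delta(0,\|T\|)}$ tramite la serie di Neumann (resa lecita dalla completezza via Prop.\ref{prop_compl}) e la chiusura tramite l'apertura di $\mA^{-1}$. L'unica differenza è stilistica: tu deduci l'apertura di $\rho(T)$ direttamente come controimmagine di un aperto sotto l'applicazione continua $\lambda \mapsto T - \lambda 1$, mentre il testo argomenta per assurdo con una successione, ma si tratta dello stesso argomento.
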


\begin{proof}[Dimostrazione]
Innanzitutto dimostriamo che $\sigma(T) \subseteq \ovl{\Delta(0,\| T \|)}$; assumendo che $\| T \| > 0$, prendiamo $\lambda$ tale che $| \lambda | > \| T \|$ (cosicch\'e $\lambda \neq 0$) ed osserviamo che $\| \lambda^{-1} T \| < 1$. Ragionando come nel Lemma precedente abbiamo che la serie $\sum_{n=0} \lambda^{-n} T^n$ \e assolutamente convergente, e quindi convergente ad $A \in \mA$ tale che $(1-\lambda^{-1}T)A = A(1-\lambda^{-1}T) = 1$. Dunque $1-\lambda^{-1}T$ (e quindi $T- \lambda 1$) \e invertibile.
%
%
%
%
%verifichiamo che l'equazione 
%%
%\begin{equation}
%\label{eq_SPR}
%Tu - \lambda u = f
%\end{equation}
%%
%ha soluzione unica per ogni $f \in E$ (ovvero $\lambda \in \rho(T)$). A tale scopo, osserviamo che $u$ \e soluzione di (\ref{eq_SPR}) 
%se e solo se $u = S_\lambda(u) := \lambda^{-1}(f-Tu)$; poich\'e $S_\lambda$ \e per costruzione una contrazione, troviamo che esiste 
%ed \e unica la soluzione $u_0$ di (\ref{eq_SPR}).
%%
Verifichiamo ora che $\sigma(T)$ \e chiuso. Supponiamo per assurdo che esista una successione $\{ \lambda_n  \} \subset \sigma(T)$ tale 
che $\lambda_n \to \lambda \in \rho(T)$. Cio' vuol dire che $T-\lambda 1$ \e invertibile, ma del resto
\[
\| (T-\lambda_n 1) - (T - \lambda 1)  \| = | \lambda - \lambda_n | \stackrel{n}{\to} 0 \ .
\]
Poich\'e $\mA^{-1}$ \e aperto nella topologia della norma (Lemma precedente) otteniamo una contraddizione, e ci\'o dimostra la proposizione.
\end{proof}

\begin{rem}{\it
\label{rem_sp_ad}
Sia $\mA$ una $*$-algebra di Banach con identit\'a $1$ e $T \in \mA$. Preso $\lambda \in \bC$ abbiamo che $T - \lambda 1$ ha inverso $B \in \mA$ se e solo se $T^* - \ovl \lambda 1$ ha inverso $B^*$. Dunque $\rho (T^*) = \ovl{\rho(T)}$ e $\sigma (T^*) = \ovl{\sigma(T)}$.
In particolare, se $\mH$ \e uno spazio di Hilbert e $T = T^* \in B(\mH)$ allora $T$ ha spettro reale
$\sigma(T) \subset \bR$.
}
\end{rem}

Il risultato seguente permette di ottenere informazioni sullo spettro di un operatore autoaggiunto su uno spazio di Hilbert:
\begin{prop}[Il principio del minimax]
\label{prop_sTT}
Sia $T = T^* \in B(\mH)$. Posto 
\[
\lambda^+ := \sup_{\| u \| = 1} (u,Tu)
\ \ , \ \
\lambda^- := \inf_{\| u \| = 1} (u,Tu)
\ ,
\]
si ha $\sigma(T) \subseteq [ \lambda^- , \lambda^+ ]$ e $\lambda^\pm \in \sigma(T)$.
\end{prop}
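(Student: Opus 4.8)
The plan is to handle the two assertions separately: first the inclusion $\sigma(T)\subseteq[\lambda^-,\lambda^+]$, then the endpoint membership $\lambda^\pm\in\sigma(T)$. At the outset I would record two elementary facts used repeatedly. Since $T=T^*$, for every $u$ one has $(u,Tu)=(T^*u,u)=(Tu,u)=\overline{(u,Tu)}$, so $(u,Tu)$ is real and $\lambda^\pm$ are well-defined real numbers; moreover, rescaling a nonzero $u$ to $u/\|u\|$ shows $\lambda^-\|u\|^2\le(u,Tu)\le\lambda^+\|u\|^2$ for \emph{every} $u\in\mH$, not just for unit vectors.

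For the inclusion, since $\sigma(T)\subset\bR$ (Oss.\ref{rem_sp_ad}, the real case being automatic), it suffices to show that every real $\lambda>\lambda^+$ and every real $\lambda<\lambda^-$ lies in $\rho(T)$. Fix $\lambda>\lambda^+$ and set $S:=\lambda 1-T$, which is self-adjoint because $\lambda$ is real. The point is that $S$ is bounded below: for every $u$, $(u,Su)=\lambda\|u\|^2-(u,Tu)\ge(\lambda-\lambda^+)\|u\|^2=:\alpha\|u\|^2$ with $\alpha>0$, and since $(u,Su)\ge 0$ the Cauchy--Schwarz inequality \eqref{eq_CS} gives $\alpha\|u\|^2\le(u,Su)\le\|u\|\,\|Su\|$, hence $\|Su\|\ge\alpha\|u\|$. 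From this lower bound I would deduce $\ker S=\{0\}$ and that $S(\mH)$ is closed (a Cauchy sequence in the range pulls back through the bound to a Cauchy sequence in $\mH$, whose image is the prescribed limit). Because $S=S^*$, the kernel--image identity \eqref{eq_kerim} yields $\overline{S(\mH)}=(\ker S^*)^\perp=(\ker S)^\perp=\mH$; together with closedness this makes $S$ bijective, so the continuous inverse theorem gives $S^{-1}\in B(\mH)$ and $\lambda\in\rho(T)$. The case $\lambda<\lambda^-$ is identical with $S:=T-\lambda 1\ge(\lambda^--\lambda)1>0$, and these together give $\sigma(T)\subseteq[\lambda^-,\lambda^+]$.

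For $\lambda^+\in\sigma(T)$, set $S:=\lambda^+1-T$, so that $S=S^*\ge 0$. By definition of $\lambda^+$ as a supremum there are unit vectors $u_n$ with $(u_n,Tu_n)\to\lambda^+$, i.e. $(u_n,Su_n)\to 0$. The heart of the argument is to upgrade this weak information to $\|Su_n\|\to 0$. Applying the generalized Cauchy--Schwarz inequality \eqref{eq_CSg} to the positive semidefinite Hermitian form $A(x,y):=(x,Sy)$, with $x=u_n$ and $y=Su_n$, and noting $A(u_n,Su_n)=(u_n,S^2u_n)=\|Su_n\|^2$, I get $\|Su_n\|^4\le(u_n,Su_n)\,(Su_n,S\,Su_n)\le\|S\|\,(u_n,Su_n)\,\|Su_n\|^2$, whence $\|Su_n\|^2\le\|S\|\,(u_n,Su_n)\to 0$. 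Since $\|u_n\|=1$ while $\|Su_n\|\to 0$, the operator $S$ admits no bounded inverse, for otherwise $1=\|S^{-1}Su_n\|\le\|S^{-1}\|\,\|Su_n\|\to 0$; hence $\lambda^+\in\sigma(T)$. The same reasoning applied to $S:=T-\lambda^-1\ge 0$ with a minimizing sequence $(u_n,Tu_n)\to\lambda^-$ gives $\lambda^-\in\sigma(T)$.

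The main obstacle is exactly this endpoint membership, namely passing from $(u_n,Su_n)\to 0$ to $\|Su_n\|\to 0$; the generalized Cauchy--Schwarz step is what drives it. One point deserving a line of care is that here $S$ is only positive \emph{semi}definite (indeed $\lambda^\pm$ may be eigenvalues, so $\ker S\neq\{0\}$), whereas \eqref{eq_CSg} was stated for positive definite forms; I would remark that its proof uses only $A(w,w)\ge 0$ and so applies verbatim to semidefinite forms. Everything else reduces to standard facts already at hand: Cauchy--Schwarz, the kernel--image duality for self-adjoint operators, and the continuous inverse theorem.
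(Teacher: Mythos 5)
Your proof is correct, and the crucial second half — upgrading $(u_n,Su_n)\to 0$ to $\|Su_n\|\to 0$ via the generalized Cauchy--Schwarz inequality \eqref{eq_CSg} applied to the positive semidefinite form $(x,Sy)$, then ruling out a bounded inverse — is exactly the paper's argument (the paper organizes the same inequality by taking the supremum over unit vectors $u$ to get $\|Sv\|\le c\,(v,Sv)^{1/2}$, which is your estimate in disguise). Your remark that the form is only \emph{semi}definite, while \eqref{eq_CSg} is stated for definite forms, is well taken: the paper itself glosses over this by calling the form ``definita positiva'', and your observation that the proof of \eqref{eq_CSg} only needs $A(w,w)\ge 0$ is the right fix. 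Where you genuinely diverge is the inclusion $\sigma(T)\subseteq[\lambda^-,\lambda^+]$: the paper observes that the form associated to $\lambda 1-T$ is coercive and invokes the Lax--Milgram theorem (Teo.\ref{thm_SLM2}) to get bijectivity, whereas you prove it by hand — the lower bound $\|Su\|\ge\alpha\|u\|$ from Cauchy--Schwarz, injectivity, closed range, and dense range via $\overline{S(\mH)}=(\ker S^*)^\perp=\mH$. Your route is more elementary and self-contained (it avoids the contraction-mapping machinery behind Lax--Milgram, and in fact the bound $\|Su\|\ge\alpha\|u\|$ already gives $\|S^{-1}\|\le\alpha^{-1}$ directly, so even the continuous inverse theorem is dispensable); the paper's route is shorter on the page because it leans on a theorem already established for the Stampacchia--Lax--Milgram section. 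Both are sound.
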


\begin{proof}[Dimostrazione]
Abbiamo $(u,Tu) \leq \lambda^+ \| u \|^2$ per ogni $u \in \mH$, per cui se $\lambda > \lambda^+$ allora esiste $\eps > 0$ tale che
\[
( u,(\lambda 1 - T)u ) \ \geq \ 
\lambda \| u \|^2 - \lambda^+ \| u \|^2 \ > \
\eps \| u \|^2
\ .
\]
Dunque la forma definita da $\lambda 1 - T$ \e coercitiva e per il teorema di Lax-Milgram $\lambda 1 - T$ \e biettivo, il che vuol dire che $\lambda \notin \sigma(T)$. 
Passiamo ora al secondo enunciato: per mostrare che $\lambda^+ \in \sigma(T)$ mostreremo che $\lambda^+ 1 - T$ non pu\'o essere invertibile.
A tale scopo osserviamo che 
\[
u,v \mapsto ( u, (\lambda^+ 1 - T)v )
\ \ , \ \
u,v \in \mH
\ ,
\]
\e una forma sesquilineare simmetrica e definita positiva, per cui la diseguaglianza di Cauchy-Schwarz (\ref{eq_CSg}) implica che
\[
| ( u, (\lambda^+ 1 - T)v ) | \ \leq \
( u, \lambda^+ u - Tu )^{1/2} ( v, \lambda^+ v - Tv )^{1/2}
\ \ , \ \
\forall u,v \in \mH
\ .
\]
Passando al $\sup$ al variare di $u \in \mH$, $\| u \| = 1$, troviamo la stima
\begin{equation}
\label{eq.sTT}
\| (\lambda^+ 1 - T)v \| \ \leq \ c ( v, \lambda^+ v - Tv )^{1/2} 
\ \ , \ \
\forall v \in \mH
\ ,
\end{equation}
dove $c := \sup_{\| u \| = 1} ( u, \lambda^+u - Tu )^{1/2}$. Ora, per definizione di $\lambda^+$ esiste una successione $\{ v_n \}$ tale che
$\| v_n \| \equiv 1$ e $( v_n , \lambda^+v -Tv_n ) \to 0$,
dunque 
\[
\| (\lambda^+ 1 - T) v_n \| 
\ \stackrel{(\ref{eq.sTT})}{\leq} \ 
c ( v_n , \lambda^+ v_n - Tv_n )^{1/2}
\ \to \
0
\ ,
\]
e se $\lambda^+ 1 - T$ avesse inverso $B \in B(\mH)$ troveremmo $v_n = B(\lambda^+ 1 - T) v_n \to 0$, il che contraddice la condizione $\| v_n \| \equiv 1$. Concludiamo che $\lambda^+ \in \sigma(T)$.
Ripetendo il ragionamento per $\lambda^-$ (con diseguaglianze invertite) otteniamo quanto desiderato.
\end{proof}

\begin{cor}
Se $T \in B(\mH)$ \e autoaggiunto allora
\begin{equation}
\label{eq_sp_Ta}
\sigma (T) = \{ 0 \} \ \Rightarrow \ T = 0 \ .
\end{equation}
\end{cor}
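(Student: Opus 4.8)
The plan is to deduce the statement from the minimax principle (Prop.\ref{prop_sTT}), which reduces the vanishing of $T$ to the vanishing of its associated quadratic form, and then to recover $T=0$ by polarization. Since $T=T^*$ by hypothesis, Prop.\ref{prop_sTT} applies and gives $\lambda^\pm \in \sigma(T)$, where $\lambda^+ = \sup_{\|u\|=1}(u,Tu)$ and $\lambda^- = \inf_{\|u\|=1}(u,Tu)$. The assumption $\sigma(T) = \{0\}$ forces $\lambda^+ = \lambda^- = 0$, and since $\lambda^- \leq (u,Tu) \leq \lambda^+$ for every unit vector, I conclude that $(u,Tu) = 0$ whenever $\|u\|=1$. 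By homogeneity of the quadratic form this extends to $(u,Tu)=0$ for every $u \in \mH$.

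The second step is to pass from the vanishing of the quadratic form to the vanishing of the operator. I set $A(u,v) := (u,Tv)$ and note that, because $T$ is self-adjoint, $A$ is a symmetric bilinear form in the real case and a sesquilinear form in the complex case. In the real case, expanding $A(u+v,u+v) = A(u,u) + 2A(u,v) + A(v,v)$ and using that $A(w,w)=0$ for all $w$ yields $A(u,v)=0$ for all $u,v \in \mH$; choosing $v = Tu$ then gives $\|Tu\|^2 = (u,T^2u) = A(u,Tu) = 0$, so $Tu = 0$ for every $u$ and hence $T=0$. In the complex case I would instead invoke the polarization identity recalled in \S\ref{sec_afunct_1}, applied to the form $A$: it expresses $A(u,v)$ as a finite combination of the quadratic values $A(w,w)$, all of which vanish, so $A \equiv 0$ and again $T=0$.

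The only genuinely delicate point, and the reason self-adjointness is indispensable, is this passage from the quadratic form to the full bilinear form. In the real case the symmetry step $A(u,v)=A(v,u)$ is exactly the hypothesis $T=T^*$; without it a vanishing quadratic form need not force $T=0$ (a nonzero skew-symmetric $T$ has $(u,Tu)\equiv 0$). Everything else is a direct reading of Prop.\ref{prop_sTT}, so I expect no computational difficulty beyond keeping the real and complex polarization arguments straight.
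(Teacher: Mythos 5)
La tua dimostrazione coincide nella sostanza con quella del testo: anche l\'i si usa Prop.\ref{prop_sTT} per ottenere $(u,Tu)=0$ per ogni $u$, e poi l'identit\'a $2(u,Tv) = (u+v,T(u+v)) - (u,Tu) - (v,Tv)$ per annullare la forma bilineare e concludere $T=0$. La tua versione \e solo un po' pi\'u accurata nel distinguere il caso complesso (dove serve la polarizzazione completa, o la sostituzione $u \mapsto iu$) e nell'esplicitare il passo finale $v=Tu$.
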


\begin{proof}[Dimostrazione]
Per la proposizione precedente deve essere $(u,Tu) = 0$ per ogni $u \in \mH$, e quindi
$2(u,Tv) = ( u+v,T(u+v) ) -  (u,Tu) - (v,Tv)  = 0$, $\forall u,v \in \mH$.
Dunque $T=0$.
\end{proof}

\

\begin{ex}{\it
Consideriamo lo spazio di Hilbert $l^2_\bC$ e l'operatore
\[
Tx := \{ 0 , x_1 , \ldots , x_n/n  , \ldots \}
\ \ , \ \
x := \{ x_n \} \in l^2_\bC
\ .
\]
Allora $T$ \e compatto (infatti \e limite degli operatori di rango finito 
$T_nx := ( 0 . x_1 , \ldots , x_n/n , 0 , \ldots )$, $x \in l^2_\bC$) 
ma non autoaggiunto (si calcoli infatti $T^*$ e si verifichi che $T^* \neq T$). Inoltre abbiamo
$\{ Tx-\lambda x \}_{n+1} = x_n/n - \lambda x_{n+1}$, $\forall n \in \bN$, 
per cui $T - \lambda 1$ \e invertibile per ogni $\lambda \neq 0$ 
{\footnote{Lasciamo a chi legge questa verifica, come semplice esercizio di algebra lineare.}}, 
mentre evidentemente $T$ non \e suriettivo. Concludiamo quindi che $\sigma(T) = \{ 0 \}$, in contrasto con (\ref{eq_sp_Ta}).
}
\end{ex}

\begin{ex}
\label{ex_op_int}
{\it
Sia $X$ uno spazio compatto e di Hausdorff, $\mu$ una misura di Radon su $X$ e $K \in C(X \times X)$; definiamo l'operatore lineare 
\[
T_K : C(X) \to C(X) \ \ , \ \ T_Kf (y) := \int_X K(x,y)f(x) \ dx  \ \ , \ \ y \in X \ .
\]
Osserviamo che $T_K$ \e limitato in quanto $\| T_K f \|_\infty \leq \| K \|_\infty \mu X \| f \|_\infty$. Preso $\lambda \in \bR$, troviamo che esso \e un autovalore di $T_K$ se e solo se esiste $u \in C(X)$ soluzione del problema
\begin{equation}
\label{eq_autovett}
\lambda u(y) = \int_X K(x,y)u(x) \ dx  
\ \ , \ \
y \in X
\ ,
\end{equation}
noto come \textbf{equazione di Volterra omogenea di seconda specie}.
}
\end{ex}

\begin{ex}
\label{ex_op.mol}
{\it Consideriamo lo spazio di Hilbert $L^2 \equiv L^2([0,1])$ e l'operatore
\[
T \in BL^2 
\ \ : \ \
Tu (x) := x u(x)
\ , \
x \in [0,1]
\ , \
u \in L^2
\ .
\]
Poich\'e 
$\| Tu \|_2^2 = \int x^2 u(x)^2 dx \leq \int u(x)^2 dx$ 
troviamo subito $\| T \| \leq 1$
{\footnote{In effetti $\| T \| = 1$; lasciamo la verifica di questa uguaglianza come esercizio.}}.
Valutiamo gli operatori $T - \lambda 1$ al variare di $\lambda \in \bR$; per prima cosa, osserviamo che, preso $u \in L^2$,
\[
\{ T - \lambda 1 \} u(x) = 0 \ \ {\mathrm{q.o.}} 
\ \ \Rightarrow \ \
(x - \lambda) u(x)       = 0 \ \ {\mathrm{q.o.}} 
\ \ \Rightarrow \ \
u (x)                    = 0 \ \ {\mathrm{q.o.}} 
\ ,
\]
per cui ogni $T - \lambda 1$ \e iniettivo al variare di $\lambda \in \bR$.
Ora, \e conveniente definire la funzione
$f_\lambda (x) := (x - \lambda)^{-1}$,
$x \in [0,1]$,
ed osservare che se $\lambda \in \bR - [0,1]$ allora l'operatore 
\[
S_\lambda \in BL^2
\ \ : \ \
S_\lambda u (x) := f_\lambda(x) u(x)
\ , \
x \in [0,1]
\ , \
u \in L^2
\ ,
\]
\e limitato e tale che $(T-\lambda 1) S_\lambda = S_\lambda (T-\lambda 1) = 1$, dunque $\bR - [0,1] \subseteq \rho(T)$. D'altro canto, se $\lambda \in [0,1]$ allora definendo 
$u_0 \in L^2$, $u_0(x) := 1$, $x \in [0,1]$,
e supponendo $u_0 = ( T - \lambda 1 ) v$ per qualche $v \in L^2$, troviamo la contraddizione
\[
\| v \|_2^2 
\ = \ 
\| f_\lambda u_0 \|_2^2 
\ = \ 
\int_0^1 \frac{ dx }{ (x-\lambda)^2 }
\ = \
\infty
\ .
\]
Dunque $T-\lambda 1$ non \e suriettivo e $\lambda \in \sigma(T)$. Concludiamo che
$\sigma (T) = [0,1]$,
$\sigma p (T) = \emptyset$.
}
\end{ex}

\

\noindent \textbf{Il teorema spettrale per gli operatori compatti.} Passiamo ora a studiare lo spettro di un operatore compatto.
\begin{lem}
Sia $T \in K(\mE)$ e $\{ \lambda_n \} \subseteq \sigma p(T)$ una successione di numeri reali (complessi) distinti e non nulli convergente a $\lambda$. Allora $\lambda = 0$.
\end{lem}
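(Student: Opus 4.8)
The plan is to argue by contradiction, reproducing in a quantitative form the compactness mechanism already used in the proof of \lemref{lem_fred}. First I would choose, for each $n$, an eigenvector $e_n \neq 0$ with $Te_n = \lambda_n e_n$. Since the $\lambda_n$ are pairwise distinct, the family $\{ e_n \}$ is linearly independent (the standard fact that eigenvectors attached to distinct eigenvalues are independent, proved by induction on $n$). Consequently the subspaces $\mE_n := \mathrm{span}(e_1,\ldots,e_n)$ form a \emph{strictly} increasing chain of finite-dimensional, hence closed, subspaces of $\mE$. Applying the Riesz lemma (Esercizio \ref{sec_afunct}.4) I would then extract, for every $n \geq 2$, a vector $u_n \in \mE_n$ with $\| u_n \| = 1$ and $d(u_n , \mE_{n-1}) \geq 1/2$.

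The technical heart of the argument is the observation that $(T - \lambda_n 1)u_n \in \mE_{n-1}$. Indeed, writing $u_n = \sum_{k=1}^n c_k e_k$ one gets $T u_n = \sum_{k=1}^n c_k \lambda_k e_k$, whence $(T - \lambda_n 1)u_n = \sum_{k=1}^{n-1} c_k (\lambda_k - \lambda_n) e_k$, the term $k=n$ cancelling. Setting $v_n := \lambda_n^{-1} u_n$ (legitimate because $\lambda_n \neq 0$), this yields $T v_n = u_n + \lambda_n^{-1}(T - \lambda_n 1) u_n$, where the second summand lies in $\mE_{n-1}$. Then for $m < n$ I would observe that $T v_m \in \mE_m \subseteq \mE_{n-1}$ as well, so $T v_n - T v_m = u_n + w$ for some $w \in \mE_{n-1}$; since $-w \in \mE_{n-1}$, the Riesz estimate gives $\| T v_n - T v_m \| = \| u_n - (-w) \| \geq d(u_n , \mE_{n-1}) \geq 1/2$ for all $m < n$.

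Finally I would suppose, for contradiction, that $\lambda \neq 0$. Then $|\lambda_n| \to |\lambda| > 0$, so eventually $|\lambda_n| \geq |\lambda|/2$ and hence $\| v_n \| = |\lambda_n|^{-1} \leq 2 |\lambda|^{-1}$; that is, $\{ v_n \}$ is bounded. Compactness of $T$ then forces $\{ T v_n \}$ to admit a convergent, hence Cauchy, subsequence, which flatly contradicts the uniform lower bound $\| T v_n - T v_m \| \geq 1/2$. Therefore $\lambda = 0$. I expect the main obstacle to be precisely the bookkeeping around the cancellation step, namely verifying that $(T - \lambda_n 1) u_n$ and $T v_m$ both land in $\mE_{n-1}$ so that $T v_n - T v_m$ equals $u_n$ modulo $\mE_{n-1}$; once that is secured, the compactness contradiction is immediate, and the boundedness of $\{ v_n \}$ is the only place where the hypothesis $\lambda \neq 0$ is used.
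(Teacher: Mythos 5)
Your proposal is correct and follows essentially the same route as the paper's proof: linear independence of eigenvectors for distinct eigenvalues, the strictly increasing chain of finite-dimensional subspaces, the Riesz lemma to produce $u_n$ with $d(u_n,\mE_{n-1}) \geq 1/2$, the inclusion $(T-\lambda_n 1)\mE_n \subseteq \mE_{n-1}$, and the boundedness of $\{\lambda_n^{-1}u_n\}$ under the assumption $\lambda \neq 0$ contradicting compactness. The only (harmless) cosmetic difference is that you bound $Tv_n - Tv_m$ by noting $Tv_m \in \mE_m \subseteq \mE_{n-1}$ directly, where the paper decomposes $\lambda_m^{-1}Tu_m$ into $u_m$ plus a term of $\mE_{m-1}$; both yield the same estimate.
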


\begin{proof}[Dimostrazione]
Per ogni $n \in \bN$ consideriamo un autovettore $v_n \in \mE$, $Tv_n = \lambda_n v_n$, e definiamo $\mV_n := {\mathrm{span}} \{ v_1 \ldots v_n \}$. Mostriamo ora che $v_1 , \ldots , v_n$ sono linearmente indipendenti. Poich\'e ci\'o \e chiaramente vero per $n=1$, procediamo induttivamente e, assunto che $v_1 , \ldots , v_n$ siano linearmente indipendenti, mostriamo che $v_1 , \ldots , v_{n+1}$ sono linearmente indipendenti. Se per assurdo fosse $v_{n+1} = \sum_i^n a_i v_i$, avremmo
\[
Tv_{n+1} = \sum_i^n a_i \lambda_i v_i = \lambda_{n+1} \sum_i^n a_i v_i
\ \Rightarrow \
a_i (\lambda_i - \lambda_{n+1}) = 0 \ \ \forall i = 1 , \ldots , n
\ .
\]
Poich\'e $\lambda_{n+1} \neq \lambda_i$ per ogni $i$ otteniamo una contraddizione, e dunque $v_1 , \ldots , v_{n+1}$ sono linearmente indipendenti. Ci\'o implica che abbiamo inclusioni proprie $\mV_n \subset \mV_{n+1}$ per ogni $n \in \bN$. D'altra parte, per costruzione,
\[
(T-\lambda_n 1) \mV_n \subset \mV_{n-1} \ .
\]
Usando il Lemma di Riesz (Esercizio \ref{sec_afunct}.4(1)) troviamo che esiste una successione $\{ u_n \} \subset \mE_1$ tale che $u_n \in \mV_n$ e $d(u_n,\mV_{n-1}) \geq 1/2$. Ora, se $m < n-1$ abbiamo $\mV_m \subset \mV_{n-1}$ e
\begin{equation}
\label{eq_SP_T}
\| \lambda_n^{-1} Tu_n - \lambda_m^{-1} Tu_m  \|
\ = \
\|  u_n - v  \|
\ \geq \
1/2
\end{equation}
dove
\[
v 
\ := \ 
u_m - \lambda_n^{-1} (T-\lambda_n 1)u_n + \lambda_m^{-1}(T-\lambda_m 1)u_m
\ \in \ 
\mV_{n-1}
\ .
\]
Se, per assurdo, $\lambda = \lim_n \lambda_n$ fosse non nullo troveremmo che la successione$\{ \lambda_n^{-1} Tu_n \}$ sarebbe limitata; per compattezza di $T$ potremmo estrarne una sottosuccessione convergente, e ci\'o contraddice (\ref{eq_SP_T}).
\end{proof}

\begin{thm}
Sia $\mE$ uno spazio di Banach (reale o complesso) a dimensione infinita e $T \in K(\mE)$. Allora: 
(1) $0 \in \sigma(T)$; 
(2) $\sigma(T) - \{ 0 \} = \sigma p(T) - \{ 0 \}$;
(3) $\sigma(T)$ \e un insieme al pi\'u numerabile, ed in tal caso $0$ \e l'unico punto di accumulazione.
\end{thm}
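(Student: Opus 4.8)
The plan is to establish the three assertions in order, drawing on the Fredholm theorem for (2), on the Lemma immediately preceding this statement for (3), and on the ideal property of $K(\mE)$ for (1).

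For (1), I would argue by contradiction. Suppose $0 \in \rho(T)$, so that $T$ is invertible with bounded inverse $T^{-1} \in B(\mE)$. Then $1 = T^{-1}T$ would be the composition of the bounded operator $T^{-1}$ with the compact operator $T$, hence compact, since $K(\mE)$ is a bilateral ideal of $B(\mE)$. But the identity is compact only when the unit ball $\mE_{\leq 1}$ is precompact, which fails in infinite dimension by the Lemma di Riesz (Esercizio \ref{sec_afunct}.4). This contradiction forces $0 \in \sigma(T)$.

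For (2), the inclusion $\sigma p(T) - \{ 0 \} \subseteq \sigma(T) - \{ 0 \}$ is immediate from $\sigma p(T) \subseteq \sigma(T)$. For the reverse inclusion I would take $\lambda \in \sigma(T)$ with $\lambda \neq 0$ and suppose, for contradiction, that $\lambda \notin \sigma p(T)$, i.e. $\ker(T - \lambda 1) = \{ 0 \}$. Writing $T - \lambda 1 = -\lambda(1 - \lambda^{-1}T)$ with $\lambda^{-1}T \in K(\mE)$, injectivity of $T - \lambda 1$ gives $\ker(1 - \lambda^{-1}T) = \{ 0 \}$. The Fredholm theorem (point 3) then yields $(1 - \lambda^{-1}T)(\mE) = \mE$, so $1 - \lambda^{-1}T$ is bijective and, by the continuous-inverse corollary of the open mapping theorem, has bounded inverse. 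Hence $T - \lambda 1$ is invertible in $B(\mE)$, i.e. $\lambda \in \rho(T)$, contradicting $\lambda \in \sigma(T)$. Thus every nonzero spectral value is an eigenvalue.

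For (3), the crux is to show that for each $\eps > 0$ the set $\sigma(T) \cap \{ \lambda : |\lambda| \geq \eps \}$ is finite. I would argue by contradiction: if it were infinite, then by (2) it would consist of distinct nonzero eigenvalues, and since $\sigma(T)$ is bounded (Prop.\ref{prop_spettro}) this infinite set would admit an accumulation point $\lambda_0$ with $|\lambda_0| \geq \eps$, whence a sequence of distinct eigenvalues converging to $\lambda_0 \neq 0$; this contradicts the preceding Lemma, which forces such a limit to be $0$. Granting finiteness for every $\eps$, the identity $\sigma(T) - \{ 0 \} = \bigcup_k ( \sigma(T) \cap \{ \lambda : |\lambda| \geq 1/k \} )$ exhibits the nonzero spectrum as a countable union of finite sets, so $\sigma(T)$ is at most countable, and any accumulation point, having to lie in some annulus $\{ \lambda : |\lambda| \geq \eps \}$ if nonzero, must be $0$. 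The only genuine obstacle is the organization of (3): step (2) must be in hand first, so that infinitely many spectral points of modulus $\geq \eps$ are converted into infinitely many eigenvalues before the Lemma can be applied — both the countability and the accumulation-point claim hinge on that conversion, while (1) and the inclusions in (2) are routine.
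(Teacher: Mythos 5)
Your proposal is correct and follows essentially the same route as the paper's proof: point (1) by contradiction via compactness of the unit ball (the paper writes $\mE_1 = T(T^{-1}(\mE_1))$ directly where you invoke the ideal property, but the contradiction with the Lemma di Riesz is the same), point (2) via the Fredholm alternative exactly as in the text, and point (3) by showing each set $\sigma(T) \cap \{ |\lambda| \geq 1/n \}$ is finite using the preceding Lemma and writing the nonzero spectrum as a countable union of these. No gaps; the argument matches the paper's.
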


\begin{proof}[Dimostrazione]
(1) Supponendo per assurdo che $T$ sia invertibile troviamo che $T^{-1}(\mE_1)$ \e limitato e quindi $\mE_1 = T (T^{-1}(\mE_1))$ \e compatto. Ci\'o contraddice il fatto che $\mE$ ha dimensione infinita. 
(2) Sia $\lambda \in \sigma(T)- \{ 0 \}$. Se $\ker (T-\lambda 1) = \{ 0 \}$ allora per l'alternativa di Fredholm troviamo $(T-\lambda 1)\mE = \mE$ per cui $\lambda \in \rho(T)$. Ci\'o contraddice l'ipotesi $\lambda \in \sigma (T)$, per cui deve essere $\lambda \in \sigma p(T)$.
(3) Poniamo
$A_n := \sigma (T) \cap \{ \lambda  : |\lambda| \geq n^{-1} \}$;
grazie al punto precedente sappiamo che $A_n \subset \sigma p(T)$. Inoltre, ogni $A_n$ pu\'o essere al pi\'u finito, altrimenti potremmo estrarne una sottosuccessione convergente a $\lambda \neq 0$, in contraddizione con il lemma precedente. Per cui, avendosi $\sigma (T) - \{ 0 \} = \cup_n A_n$ troviamo che $\sigma (T)$ \e numerabile, e grazie al Lemma precedente sappiamo che $0$ \e l'unico eventuale punto di accumulazione.
\end{proof}

Il risultato seguente caratterizza lo spettro degli operatori compatti autoaggiunti su uno spazio di Hilbert:
\begin{thm}[Il teorema spettrale per operatori autoaggiunti compatti]
\label{thm_sp.cp}
Sia $\mH$ uno spazio di Hilbert separabile (reale o complesso) e $T \in K(\mH)$ autoaggiunto. Allora $\mH$ ammette una base di autovettori di $T$.
\end{thm}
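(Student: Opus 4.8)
The plan is to assemble an orthonormal basis of $\mH$ out of eigenvectors of $T$ by a maximality argument, whose engine is the production of a single eigenvector whenever $T$ does not vanish. The first step I would isolate is precisely this existence statement: \emph{if $T\neq 0$ then $T$ has a nonzero eigenvalue.} Indeed, by the corollary (\ref{eq_sp_Ta}) one has $\sigma(T)=\{0\}\Rightarrow T=0$, so $T\neq 0$ forces $\sigma(T)$ to contain some $\lambda\neq 0$; and by the theorem just proved on the spectrum of a compact operator, $\sigma(T)-\{0\}=\sigma p(T)-\{0\}$, so that $\lambda$ is in fact an eigenvalue, with a unit eigenvector. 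Underlying all of this is the principle del minimax (Prop.\ref{prop_sTT}), which guarantees $\lambda^{\pm}\in\sigma(T)$ and thereby makes the spectrum of a self-adjoint $T$ genuinely nonempty away from $0$.

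Next I would exhaust $\mH$ by an orthogonal-complement trick rather than a bare Zorn argument. Let $\mH_0$ be the closure of the linear span of all eigenvectors of $T$. Since each eigenvector is sent by $T$ to a scalar multiple of itself, the span, and hence its closure $\mH_0$, is $T$-invariant; by self-adjointness $\mH_0^{\perp}$ is $T$-invariant as well, because for $v\in\mH_0^{\perp}$, $w\in\mH_0$ one has $(Tv,w)=(v,Tw)=0$. Using the orthogonal decomposition $\mH=\mH_0\oplus\mH_0^{\perp}$ (eq.(\ref{eq_ort}), valid since $\mH_0$ is closed), I set $S:=T|_{\mH_0^{\perp}}$, which is self-adjoint and compact on the Hilbert space $\mH_0^{\perp}$ (its image of the unit ball is contained in the precompact $T(\mH_{\leq 1})$ and lands in $\mH_0^{\perp}$). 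Supposing $\mH_0^{\perp}\neq\{0\}$ leads to a contradiction in both possible cases: if $S\neq 0$, the first step applied to $S$ yields a nonzero eigenvalue and an eigenvector $v\in\mH_0^{\perp}$, which is an eigenvector of $T$ and therefore lies in $\mH_0$, forcing $v\in\mH_0\cap\mH_0^{\perp}=\{0\}$; if $S=0$, then every nonzero $v\in\mH_0^{\perp}$ satisfies $Tv=0$ and is again an eigenvector, hence in $\mH_0$, the same contradiction. Thus $\mH_0^{\perp}=\{0\}$ and $\mH_0=\mH$.

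Finally I would upgrade ``dense span of eigenvectors'' to an honest Hilbert basis. Eigenspaces attached to distinct eigenvalues are mutually orthogonal, since for $Tu=\lambda u$, $Tv=\mu v$ self-adjointness gives $\lambda(u,v)=(Tu,v)=(u,Tv)=\mu(u,v)$, whence $(u,v)=0$ when $\lambda\neq\mu$. Each eigenspace $\ker(T-\lambda 1)=\ker(1-\lambda^{-1}T)$ for $\lambda\neq 0$ is finite-dimensional by Lemma \ref{lem_fred}, while the $0$-eigenspace $\ker T$, being a closed subspace of the separable $\mH$, is itself separable and admits a countable orthonormal basis. Choosing an orthonormal basis inside each eigenspace and taking the union produces an orthonormal family whose closed span equals $\mH_0=\mH$, i.e.\ a Hilbert basis consisting of eigenvectors of $T$. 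I expect the genuine difficulty to be concentrated entirely in the first step --- the interplay between the minimax bound (Prop.\ref{prop_sTT})/corollary (\ref{eq_sp_Ta}), which guarantees $\sigma(T)\neq\{0\}$ for $T\neq 0$, and the compactness fact that nonzero spectrum coincides with point spectrum; once a single eigenvector is available, the complement argument and the final bookkeeping (where separability is exactly what tames a possibly infinite-dimensional kernel) are routine.
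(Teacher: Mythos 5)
Your argument is correct and follows essentially the same route as the paper: both take the orthogonal complement of the closed span of all eigenvectors, observe that the restriction of $T$ to it is compact and self-adjoint, and use the corollary $\sigma(T)=\{0\}\Rightarrow T=0$ together with $\sigma(T)-\{0\}=\sigma p(T)-\{0\}$ to force that complement into $\ker T$ and hence to zero. Your explicit final step (orthogonality of eigenspaces, finite-dimensionality of the nonzero eigenspaces via Lemma \ref{lem_fred}, and a countable orthonormal basis of $\ker T$ from separability) is a welcome clarification of a point the paper leaves partly implicit, but it is not a different method.
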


\begin{proof}[Dimostrazione]
Poniamo $\mH_0 := \ker T$ ed $\mH_n := \ker \{ T - \lambda_n 1 \}$. Per il teorema di Fredholm (vedi anche Oss.\ref{rem_FRED}) abbiamo che ogni $\mH_n$ ha dimensione finita. Poich\'e $(u,Tv) = \lambda_m(u,v) = (Tu,v) = \lambda_n (u,v)$ concludiamo che $\mH_n \perp \mH_m$, $n \neq m$. Vogliamo ora verificare che $\wt \mH := \cup_{n \geq 0} \mH_n$ \e denso in $\mH$. A tale scopo osserviamo che $T(\wt \mH) \subseteq \wt \mH$ e che, avendosi $(Tv,u) = (v,Tu) = 0$, $u \in \mH$, $v \in \wt \mH^\perp$, ha senso definire l'operatore $T_0 := T |_{\wt \mH^\perp}$. Osserviamo che $T_0$ \e autoaggiunto e compatto. Ora, se $\lambda \in \sigma (T_0) - \{ 0 \}$ allora, per il teorema precedente, esiste un autovettore $v \in \wt \mH^\perp$ tale che $T_0u = Tu = \lambda u$; per cui si avrebbe $\lambda \in \sigma p(T)$ e $u \in \wt \mH^\perp \cap \mH_n$, il che \e assurdo. Concludiamo che $\sigma (T_0) = \{ 0 \}$, per cui $T_0 = 0$ grazie a (\ref{eq_sp_Ta}). Ora, $T_0 = 0$ equivale ad affermare che $\wt \mH^\perp \subseteq \ker T$. Concludiamo che
$\wt \mH^\perp \subseteq \ker T \subseteq \wt \mH$, per cui $\wt \mH = \{ 0 \}$.
\end{proof}

\

\noindent \textbf{Il teorema spettrale per gli operatori autoaggiunti.} Esiste una versione del teorema precedente per il caso degli operatori autoaggiunti limitati, la quale si pu\'o esprimere con il linguaggio della teoria della misura. Allo scopo di facilitarne la comprensione gettiamo uno sguardo un p\'o diverso al caso compatto: preso $T = T^* \in K(\mH)$, osserviamo che lo spettro $\sigma(T) = \{ \lambda_k \}$, essendo numerabile, pu\'o essere riguardato in modo naturale come uno spazio di misura $( \sigma(T),\mM,\mu )$ con $\sigma$-algebra $\mM := 2^{\sigma(T)}$ (vedi Esempio \ref{ex_misnum}). 
Preso $\lambda_k \in \sigma(T)$ consideriamo il proiettore $P(\lambda_k) \in B(\mH)$ sul sottospazio di Hilbert $\ker (T-\lambda_k 1)$ cosicch\'e, grazie al Teorema \ref{thm_sp.cp}, abbiamo la decomposizione ortogonale
\[
v = \sum_k P(\lambda_k) v
\ \ \Rightarrow \ \
\| v \|^2 = \sum_k \| P(\lambda_k) v \|^2
\ \ , \ \
\forall v \in \mH
\ .
\]
Per ogni $u,v \in \mH$ definiamo 
\[
\mu_{uv} : \mM \to \bC
\ \ , \ \
\mu_{uv}E := \sum_{\lambda_k \in E} (u,P(\lambda_k)v)
\ ;
\]
\e immediato verificare che $\mu_{uv}$ \e una misura complessa su $\sigma(T)$ tale che, in particolare,
\[
\mu_{uv}\{ \lambda_k \} = (u,P(\lambda_k)v)
\ \ , \ \
\forall \lambda_k \in \sigma(T)
\ .
\]
Presa una funzione limitata $f : \sigma(T) \to \bC$ (che ovviamente \e $\mu$-misurabile), abbiamo che la forma sesquilineare 
\[
A_f(u,v) := \sum_k f(\lambda_k) \mu_{uv} \{ \lambda_k \}
\ \ , \ \
u,v \in \mH
\ ,
\]
\e limitata, in quanto 
\[
| A_f(u,v) |^2                                            \ \leq \ 
\sum_k |(u, f(\lambda_k) P(\lambda_k)v)|^2                \ \leq \ 
\| u \|^2 \sum_k |f(\lambda_k)|^2 \| P(\lambda_k) v \|^2  \ \leq \
\| u \|^2 \| f \|_\infty^2 \| v \|^2
\ ,
\]
per cui esiste ed \e unico l'operatore $f(T) \in B(\mH)$ tale che $A_f(u,v) = (u,f(T)v)$.
In particolare, poich\'e $T P(\lambda_k)v = \lambda_k P(\lambda_k) v$, abbiamo la seguente "diagonalizzazione" di $T$:
\begin{equation}
\label{eq_dec.sp}
(u,Tv) 
\ = \
\sum_k ( u , T P(\lambda_k) v ) 
\ = \
\sum_k \lambda_k \mu_{uv} \{ \lambda_k \} 
\ \ , \ \
\forall u,v \in \mH
\ .
\end{equation}
L'idea alla base dei ragionamenti precedenti \e quella che, dati lo spettro di $T$ e la famiglia di misure $\{ \mu_{uv} \}$, riusciamo a ricostruire non solo $T$, bens\'i anche ogni operatore del tipo $f(T)$, dove $f$ \e una qualsiasi funzione limitata sullo spettro di $T$.

\

{\em Consideriamo ora operatori $T \in B(\mH)$ non necessariamente compatti}. Chiaramente ora non \e pi\'u detto che lo spettro $\sigma (T)$ sia numerabile, n\'e che $\mH$ ammetta una base di autovettori (vedi Esempio \ref{ex_op.mol}). Tuttavia si pu\'o ancora dimostrare un analogo di (\ref{eq_dec.sp}), e qui di seguito ne esponiamo in buon dettaglio la dimostrazione.

Iniziamo assumendo che $\mH$ {\em sia uno spazio di Hilbert complesso}. Ci\'o sia perch\'e in questo ambito le argomentazioni seguenti sono valide anche per operatori pi\'u generici di quelli autoaggiunti 
{\footnote{
Ovvero i cosiddetti operatori {\em normali} $T \in B(\mH)$ tali che $TT^* = T^*T$. Osserviamo che nel caso reale operatori non autoaggiunti non sono in genere diagonalizzabili gi\'a in dimensione finita (quando il polinomio caratteristico ha redici complesse).
}}, 
sia perch\'e in alcuni passi sar\'a cruciale usare tecniche di analisi complessa.

Come primo passo denotiamo con $P(\sigma(T),\bC) \subset C(\sigma(T),\bC)$ la $*$-algebra delle funzioni polinomiali 
$f(\lambda) := \sum_k^n z_k \lambda^k$, $z_k \in \bC$, $\lambda \in \sigma(T)$
(per $k=0$ poniamo $\lambda^0 = 1$), e definiamo l'applicazione
\begin{equation}
\label{eq_SM1}
P(\sigma(T),\bC) \to B(\mH)
\ \ , \ \
f \mapsto f(T) := \sum_{k=0}^n z_k T^k
\ .
\end{equation}
Osserviamo che in particolare $T = I(T)$, dove 
\[
I(\lambda) := \lambda
\ \ , \ \
\forall \lambda \in \sigma(T)
\ .
\]
Inoltre \e evidente che 
\begin{equation}
\label{eq_SM1a}
\{ f+zg \}(T) = f(T) + zg(T)
\ \ , \ \
\{ fg \}(T) = f(T) g(T)
\ \ , \ \
\{ f^* \}(T) = f(T)^*
\ ,
\end{equation}
$\forall f,g \in P(\sigma(T),\bC)$,
$z \in \bC$.
Il teorema di Stone-Weierstrass (Teo.\ref{thm_sw}) ci dice che $P(\sigma(T),\bC)$ \e denso in $C(\sigma(T),\bC)$, ed \e possibile dimostrare che
{\footnote{Per l'uguaglianza seguente vedi \cite[Prop.4.3.15]{Ped} o \cite[Theorem VII.1]{RS}; \e in questo punto che \e importante considerare spazi di Hilbert complessi.}}
\[
\| f \|_\infty = \| f(T) \|
\ \ , \ \
\forall f \in P(\sigma(T),\bC)
\ .
\]
Dunque estendendo per continuit\'a otteniamo un operatore isometrico
\begin{equation}
\label{eq_SM2}
C(\sigma(T),\bC) \to B(\mH)
\ \ , \ \
f \mapsto f(T)
\ ,
\end{equation}
noto come {\em il calcolo funzionale continuo di} $T$; osserviamo che (\ref{eq_SM2}) \e in realt\'a una {\em rappresentazione}, il che vuol dire che continuano a valere le (\ref{eq_SM1a}) per generici elementi di $C(\sigma(T),\bC)$.
Ora, per ogni $u,v \in \mH$ definiamo il funzionale lineare
\[
\varphi_{uv} : C(\sigma(T),\bC) \to \bC
\ \ , \ \
\left \langle \varphi_{uv} , f \right \rangle := (u,f(T)v)
\ \ , \ \
\forall f \in C(\sigma(T),\bC)
\ ,
\]
il quale \e limitato in quanto
\begin{equation}
\label{eq_SM20}
| \left \langle \varphi_{uv} , f \right \rangle | \ \leq \
\| f(T) \| \| u \| \| v \| \ = \
\| f \|_\infty \| u \| \| v \|  
\ .
\end{equation}
Applicando il teorema di Riesz-Markov (Esempio \ref{ex_dual_CX}) troviamo che esiste ed \e unica la misura di Radon complessa $\mu_{uv} : \mM_{uv} \to \bC$ su $\sigma(T)$ tale che
\begin{equation}
\label{eq_SM2a}
\left \langle \varphi_{uv} , f \right \rangle
\ = \
(u,f(T)v)
\ = \
\int_{\sigma(T)} f(\lambda) \ d \mu_{uv}(\lambda)
\ \ , \ \
\forall f \in C(\sigma(T),\bC)
\ .
\end{equation}
Osserviamo che in particolare $\mu_{uu}$ \e una misura di Radon {\em reale e positiva} per ogni $u \in \mH$. Abbiamo dunque un'applicazione
\begin{equation}
\label{eq_SM3}
\mH \times \mH \to R(\sigma(T),\bC)
\ \ , \ \
u,v \mapsto \mu_{uv}
\ ,
\end{equation}
la quale \e chiaramente sesquilineare, ovvero
\begin{equation}
\label{eq_SM3a}
\mu_{au+u',bv+v'} 
\ =  \
\ovl ab \mu_{uv} + \ovl a \mu_{uv'} + b \mu_{u'v} + \mu_{u'v'}
\ .
\end{equation}
Ora, le uguaglianze (\ref{eq_SM2a}) ci dicono che l'applicazione
$u,v \mapsto \int f \ d \mu_{uv}$, $u,v \in \mH$,
\e in realt\'a una forma sesquilineare limitata, e che $f(T) \in B(\mH)$ \e in effetti l'operatore ad essa associato. In particolare, visto che $T = I(T)$, troviamo 
\begin{equation}
\label{eq.diag}
(u,Tv) = \int_{\sigma(T)} \lambda \ d \mu_{uv}(\lambda)
\ \ , \ \
\forall u,v \in \mH
\ ,
\end{equation}
ovvero un analogo non numerabile di (\ref{eq_dec.sp}). Diciamo che $\mu := \{ \mu_{uv} \}$ \e una {\em misura spettrale di $T$}.

Possiamo ora estendere il calcolo funzionale continuo, nel modo che segue. Denotiamo con $B^\infty(\sigma(T),\bC)$ la \sC algebra delle funzioni a valori complessi, boreliane e limitate su $\sigma(T) \subset \bR$. Visto che ogni $\mu_{uv}$ \e una misura boreliana troviamo che 
$B^\infty( \sigma(T) , \bC ) \subseteq L_{\mu_{uv}}^\infty(\sigma(T),\bC)$,
$\forall u,v \in \mH$. 
Per cui, per ogni $f \in B^\infty( \sigma(T) , \bC )$ \e ben definita l'applicazione
\begin{equation}
\label{eq_SM4}
\mH \times \mH \to \bC
\ \ , \ \
u,v \mapsto \int_{\sigma(T)} f \ d \mu_{uv}
\ ,
\end{equation}
la quale, grazie a (\ref{eq_SM3a}) ed all'Esercizio \ref{sec_MIS}.8, \e sequilineare. Inoltre essa \e limitata: per verificare ci\'o \e sufficiente effettuare una stima nel caso $u=v$,
\[
\left| \int_{\sigma(T)} f \ d \mu_{uu} \right|
\ \leq \
\| f \|_\infty  \int_{\sigma(T)} d \mu_{uu}
\ \leq \
\| f \|_\infty \| u \|^2
\ ,
\]
e trattare il caso generale usando l'identit\'a di polarizzazione
$\mu_{uv} = 1/4 \sum_{k=0}^3 i^k \mu_{u+i^kv,u+i^kv}$. 
Denotando con $f(T) \in B(\mH)$ l'operatore associato a (\ref{eq_SM4}) otteniamo un'estensione del calcolo funzionale (\ref{eq_SM2}),
\begin{equation}
\label{eq_SM5}
B^\infty( \sigma(T) , \bC ) \to B(\mH)
\ \ , \ \
f \mapsto f(T)
\ ,
\end{equation}
che chiamiamo il {\em calcolo funzionale boreliano di} $T$. Si verifica che (\ref{eq_SM5}) \e una rappresentazione tale che 
$\| f(T) \| \leq \| f \|_\infty$, $\forall f \in B^\infty( \sigma(T) , \bC )$ 
(vedi \cite[Theorem 4.5.4]{Ped}; osserviamo che comunque (\ref{eq_SM5}) \e isometrica su $C(\sigma(T),\bC) \subset B^\infty( \sigma(T) , \bC )$). 
%
%
%
%Denotiamo ora con $\mB_T \subseteq 2^{\sigma(T)}$ la $\sigma$-algebra dei boreliani e con $p\mH \subset B(\mH)$ 
%l'insieme dei proiettori sullo spazio di Hilbert $\mH$. Allora ogni funzione caratteristica $\chi_E$, $E \in \mB_T$, appartiene a 
%$B^\infty( \sigma(T) , \bC )$ ed \e tale che $\chi_E^2 = \chi_E^* = \chi_E$. Usando (\ref{eq_SM5}) definiamo l'applicazione
%%
%\begin{equation}
%\label{eq_SM6}
%\wa \mu : \mB_T \to p \mH
%\ \ , \ \
%E \mapsto \wa \mu E := \chi_E(T)
%\ .
%\end{equation}
%%
%Il legame con le misure $\mu_{uv}$ \e dato dalle relazioni
%%
%\[
%(u, \wa \mu E v) = 
%\int_{\sigma(T)} \chi_E \ d \mu_{uv} = 
%\int_E d \mu_{uv} = 
%\mu_{uv}E
%\ \ , \ \
%\forall E \in \mB_T
%\ .
%\]
%
%
Le precedenti considerazioni portano al seguente teorema:
\begin{thm}[Hilbert]
Se $T = T^* \in B(\mH)$ allora esiste una misura spettrale $\mu$ di $T$ tale che \e verificata (\ref{eq.diag}). Si hanno poi le seguenti propriet\'a:
\textbf{(1)} Per ogni $f \in B^\infty( \sigma(T) , \bC )$, l'operatore $f(T) \in B(\mH)$ definito da (\ref{eq_SM5}) \e limitato e $\| f(T) \| \leq \| f \|_\infty$. In particolare $T = I(T)$, dove $I(\lambda) := \lambda$, $\forall \lambda \in \sigma(T)$.
\textbf{(2)} L'operatore (limitato)
\[
B^\infty( \sigma(T) , \bC ) \to B(\mH)
\ \ , \ \
f \mapsto f(T)
\]
\e una rappresentazione. 
\end{thm}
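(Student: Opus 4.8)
The plan is to assemble the theorem from the three constructions sketched in the paragraphs immediately preceding it, treating each as a separate stage and filling in the verifications. First I would establish the \emph{continuous} functional calculus. Starting from the polynomial assignment (\ref{eq_SM1}), the algebraic identities (\ref{eq_SM1a}) follow directly from $T = I(T)$ and the fact that $T$ commutes with itself. The decisive analytic input is the isometry $\| p(T) \| = \| p \|_\infty = \sup_{\lambda \in \sigma(T)} |p(\lambda)|$ for every polynomial $p$: since $p(T)$ is normal, its norm equals its spectral radius, and the spectral mapping theorem gives $\sigma(p(T)) = p(\sigma(T))$, whence $\| p(T) \| = \sup |p(\sigma(T))| = \| p \|_\infty$. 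With the isometry in hand, density of $P(\sigma(T),\bC)$ in $C(\sigma(T),\bC)$ (Teo.\ref{thm_sw}) lets me extend $p \mapsto p(T)$ by continuity to the isometric $*$-representation (\ref{eq_SM2}).

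Next I would pass to measures. For fixed $u,v \in \mH$ the functional $\varphi_{uv}(f) := (u,f(T)v)$ is bounded on $C(\sigma(T),\bC)$ by (\ref{eq_SM20}), so the Riesz--Markov theorem (Esempio \ref{ex_dual_CX}) produces the unique complex Radon measure $\mu_{uv}$ with (\ref{eq_SM2a}); evaluating at $f = I$ yields the diagonalization (\ref{eq.diag}). Positivity of $\mu_{uu}$ follows because $\varphi_{uu}$ is a positive functional: for $f \geq 0$ one has $f = g^* g$ with $g := \sqrt{f} \in C(\sigma(T),\bC)$, so $(u,f(T)u) = \| g(T) u \|^2 \geq 0$. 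Sesquilinearity (\ref{eq_SM3a}) is immediate from linearity of $f \mapsto f(T)$ and uniqueness in Riesz--Markov.

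Finally I would build the \emph{Borel} functional calculus. Each $\mu_{uv}$ is Borel, so every $f \in L_\beta^\infty(\sigma(T))$ is $\mu_{uv}$-integrable and the form (\ref{eq_SM4}) is sesquilinear (Esercizio \ref{sec_MIS}.8). Boundedness comes from the diagonal estimate $\left| \int f \, d\mu_{uu} \right| \leq \| f \|_\infty \, \mu_{uu}(\sigma(T)) = \| f \|_\infty \| u \|^2$, using $\mu_{uu}(\sigma(T)) = (u,u) = \| u \|^2$, followed by polarization. The Riesz representation theorem for Hilbert spaces (Teo.\ref{thm_riesz}) then yields the operator $f(T) \in B(\mH)$ of (\ref{eq_SM5}) with $\| f(T) \| \leq \| f \|_\infty$, which is part (1). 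For part (2) the representation identities hold on $C(\sigma(T),\bC)$ by the first stage and are propagated to bounded Borel functions by a bounded-convergence argument, approximating a Borel function by a uniformly bounded sequence of continuous functions converging $\mu_{uv}$-almost everywhere and passing to the limit under the integral.

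The main obstacle is the isometry $\| p(T) \| = \| p \|_\infty$ of the first stage: it is the only genuinely non-elementary point and is precisely where the complex structure of $\mH$ is indispensable, resting on the spectral mapping theorem and the norm-equals-spectral-radius identity for normal operators, both of which ultimately exploit the analyticity of the resolvent. The text legitimately quotes this from \cite[Prop.4.3.15]{Ped} and \cite[Thm.VII.1]{RS}. A secondary subtlety is the multiplicativity of the Borel calculus in part (2): a product of two bounded Borel functions cannot be treated with a single measure, so I would fix one factor, invoke continuity of the already-established calculus, and then approximate the second factor, taking care that the dominating bounds remain uniform throughout the limit.
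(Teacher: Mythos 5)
Your proposal follows essentially the same route as the paper, which develops exactly this three-stage construction (continuous calculus via Stone--Weierstrass and the isometry $\| f(T) \| = \| f \|_\infty$, spectral measures via Riesz--Markov, Borel extension via the bounded sesquilinear form and polarization) in the paragraphs preceding the statement and then defers the remaining verifications to \cite[\S 4.5]{Ped} and \cite[\S VII.2]{RS}. The details you fill in -- the norm-equals-spectral-radius argument for the polynomial isometry, positivity of $\mu_{uu}$ via $f = g^*g$, and multiplicativity of the Borel calculus by approximating one factor at a time -- are the standard ones and are consistent with the cited references.
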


Per una dimostrazione dettagliata del teorema precedente rimandiamo a \cite[\S 4.5]{Ped} o \cite[\S VII.2]{RS}. Qui ci limitiamo ad osservare che, mentre nel caso $T \in K(\mH)$ abbiamo che ogni $\{ \lambda \} \subseteq \sigma(T)$ ha misura non nulla, nel caso generale $T \in B(\mH)$ possiamo trovare $\mu \{ \lambda \} = 0$; in effetti, si pu\'o dimostrare che $\mu \{ \lambda \} \neq 0$ se e solo se $\lambda \in \sigma p(T)$. Per propriet\'a di continuit\'a della rappresentazione del punto (2) rimandiamo all'Esercizio \ref{sec_afunct}.16, il quale include anche il caso degli operatori non limitati (\S \ref{sec_onl}).

\begin{ex}{\it 
Consideriamo lo spazio di Hilbert $\mH := L^2([0,1],\bC)$ e l'operatore $T \in B(\mH)$ definito come nell'Esempio \ref{ex_op.mol}. Gli stessi argomenti fatti in tale esempio mostrano che $T = T^*$ e $\sigma(T) = [0,1]$, $\sigma p(T) = \emptyset$. Denotiamo con $\mM$ la $\sigma$-algebra dei boreliani di $[0,1]$, e per ogni $f,g \in L^2$ definiamo
\[
\mu_{fg}E = \int_E \ovl{f}g
\ \ , \ \
\forall E \in \mM
\ ,
\]
cosicch\'e
\[
(f,Tg) = 
\int_{[0,1]} \ovl{f(s)} s g(s) \ ds =
\int_{[0,1]} s \cdot \ovl{f(s)}g(s) \ ds =
\int_{[0,1]} s \ d \mu_{fg}(s)
\ ,
\]
e $\mu := \{ \mu_{fg} \}$ \e la misura spettrale di $T$ (Osservare che $\mu\{ \lambda \} = 0$, $\forall \lambda \in \sigma (T)$).
Chiaramente, l'analogo argomento vale nel caso di uno spazio di Hilbert reale.
}
\end{ex}

\subsection{Topologie deboli e spettri di algebre di Banach.}
\label{sec_topdeb}

Viene spesso richiesto che successioni in spazi funzionali convergano (si pensi ad esempio alla successione di Peano-Picard). Purtroppo la topologia della norma di uno spazio di Banach risulta essere troppo ricca di aperti per avere buone propriet\'a rispetto alla compattezza, per cui in generale \e impossibile costruire successioni che ammettano sottosuccessioni convergenti.
E' quindi conveniente introdurre topologie meno dotate di aperti rispetto a quella della norma, in modo da migliorare le propriet\'a inerenti la convergenza. 

\

\noindent \textbf{La topologia debole.} Sia $\mE$ uno spazio di Banach. La {\em topologia debole $\sigma(\mE,\mE^*)$}, definita su $\mE$, \e la topologia meno fine che rende continui i funzionali lineari $f \in \mE^*$. Visto che ogni $f \in \mE^*$ \e anche limitato (ovvero, continuo in norma), abbiamo che $\sigma(\mE,\mE^*)$ \e meno fine della topologia della norma di $E$.

Siano $v \neq v' \in E$; usando il teorema di Hahn-Banach estendiamo il funzionale 
$\left \langle f_0 , \lambda(v-v') \right \rangle := \lambda \| v-v' \|$, $\forall \lambda \in \bR$,
ottenendo $f \in \mE^*$ tale che 
$\left \langle f,v \right \rangle \neq \left \langle f,v' \right \rangle$.
Cosicch\'e esiste $\alpha \in \bR$ tale che
$\left \langle f , v \right \rangle 
<
\alpha
<
\left \langle f , v' \right \rangle$,
e gli aperti in $\sigma(\mE,\mE^*)$
\[
A := f^{-1}(-\infty,\alpha)
\ \ , \ \
A' := f^{-1}(\alpha,+\infty)
\]
hanno intersezione vuota e contengono rispettivamente $v$, $v'$. Concludiamo che $( \mE , \sigma(\mE,\mE^*) )$ {\em \e uno spazio di Hausdorff}. Diciamo che una successione  $\{ v_n \} \subset \mE$ \e {\em debolmente} convergente a $v \in \mE$ se essa converge rispetto alla topologia debole, cosa che accade se e solo se
$\left \langle f , v \right \rangle = \lim_n \left \langle f , v_n   \right \rangle$,
$\forall f \in \mE^*$;
in tal caso, scriviamo
\[
v_n \stackrel{\sigma}{\to} v
\ \ {\mathrm{ovvero}} \ \
v = \lim_n^\sigma v_n
\ .
\]
Elenchiamo nel seguito alcune propriet\'a elementari della topologia debole:
\begin{enumerate}
\item $v_n \stackrel{\sigma}{\to} v$ 
      $\Leftrightarrow$
      $\left \langle f , v_n \right \rangle \to \left \langle f , v \right \rangle$,
      $\forall f \in \mE^*$;
\item $v_n \to v$ $\Rightarrow$ $v_n \stackrel{\sigma}{\to} v$;
\item Se $v_n \stackrel{\sigma}{\to} v$, allora $\{ \|v_n \| \}$ \e limitata e 
      $\|v\| \leq \lim \inf_n \|v_n\|$. Ci\'o segue osservando
      che per ogni $f \in \mE^*$ la successione
      $\left\{ | \left \langle f , v_n \right \rangle | \right\}$
      converge a $| \left \langle f , v \right \rangle |$), 
      ed applicando Cor.\ref{cor_BS}.
\item Se $v_n \stackrel{\sigma}{\to} v$ e $f_m \to f$, allora
      $\left \langle f_n , v_n \right \rangle 
       \to 
       \left \langle f , v \right \rangle$. Infatti, si trova
      \[
      | \left \langle f , v \right \rangle - \left \langle f_n , v_n \right \rangle |
      \leq
      | \left \langle f , v - v_n \right \rangle |
      +
      \| f - f_n \| \|v_n \|
      \ .
      \]
\end{enumerate}

\begin{ex}
{\it
Sia $p \in [1,+\infty)$, $q := \ovl p$ e $(X,\mM,\mu)$ uno spazio misurabile $\sigma$-finito. Una successione $\{ f_n \} \subset L_\mu^p(X)$ \e debolmente convergente ad $f \in L_\mu^p(X)$ se e solo se
\[
\int_X ( f_n - f ) g \ \stackrel{n}{\to} \ 0
\ \ , \ \
\forall g \in L_\mu^q(X)
\ .
\]
Ed in tal caso, esiste $M \in \bR$ tale che $\| f_n \|_p \leq M$, $n \in \bN$.
}
\end{ex}

\begin{ex}
{\it
Sia $\mH$ uno spazio di Hilbert e 
$\{ e_k \} \subset \mH_1 := \{ v \in \mH : \| v \| = 1 \}$ 
una successione ortonormale. Usando il teorema di Riesz e Prop.\ref{prop_bessel} troviamo
\[
\left \langle f_u , e_k \right \rangle = (u,e_k) \stackrel{k}{\to} 0
\ \ , \ \
\forall u \in \mH
\ .
\]
Ci\'o implica che $e_k \stackrel{\sigma}{\to} 0$, nonostante il fatto che $\| e_k \| \equiv 1$.
}
\end{ex}

Usando le propriet\'a precedenti e l'esistenza di una base finita, si pu\'o dimostrare in modo molto semplice che se $\mE$ ha dimensione finita, allora la topologia debole e quella della norma coincidono. D'altra parte, se $\mE$ ha dimensione non finita, allora
\begin{itemize}
\item La sfera unitaria $\mE_1 := \{ v \in \mE : \| v \| = 1 \}$ non \e chiusa in $\sigma(\mE,\mE^*)$
\item Il disco unitario $\mE_{< 1} := \{ v \in \mE : \| v \| < 1 \}$ non \e aperto in $\sigma(\mE,\mE^*)$
\end{itemize}
(si veda \cite[\S III.2]{Bre}). Concludiamo questa breve rassegna su $\sigma(\mE,\mE^*)$ richiamando i seguenti due risultati, conseguenze piuttosto semplici del teorema di Hahn-Banach:

\begin{prop}
Un sottoinsieme $C \subseteq \mE$ convesso \e debolmente chiuso se e solo se \e chiuso nella topologia della norma.
\end{prop}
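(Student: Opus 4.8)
The plan is to establish the two implications separately, noting that convexity enters only in one of them.

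One direction holds for an arbitrary subset $C \subseteq \mE$: the weak topology $\sigma(\mE,\mE^*)$ is less fine than the norm topology, since every $f \in \mE^*$ is norm-continuous and hence every weakly open set is norm open. Consequently a weakly closed set has norm-open complement, i.e. is norm closed. First I would dispose of this implication in one line, precisely as already observed in the excerpt just before the statement.

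For the converse — the substantial half — I would assume $C$ convex and norm closed and show that $\mE - C$ is weakly open. Fix $x_0 \notin C$. Since $C$ is norm closed there is $r > 0$ with $\Delta(x_0,r) \cap C = \emptyset$, where $\Delta(x_0,r) := \{ v \in \mE : \| v - x_0 \| < r \}$. The heart of the argument is a separation statement: there exist $f \in \mE^*$ and $\alpha \in \bR$ with $\left \langle f , x_0 \right \rangle < \alpha \leq \left \langle f , y \right \rangle$ for all $y \in C$. Granting this, the half-space $A := \{ v \in \mE : \left \langle f , v \right \rangle < \alpha \} = f^{-1}((-\infty,\alpha))$ is weakly open (it is the preimage of an open set under the weakly continuous $f$), contains $x_0$, and is disjoint from $C$. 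As $x_0$ was an arbitrary point of the complement, $\mE - C$ is weakly open, so $C$ is weakly closed. Note that the non-strict inequality on $C$ suffices, so I do not need a strict two-sided separation.

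The main obstacle is the separation statement itself, which is the geometric (first) form of Hahn--Banach and is not available in the excerpt in this shape; I would derive it from the analytic version (Teo.\ref{thm_HB}). The route is classical: reduce separation of the point $x_0$ from $C$ to separation of the origin from a single open convex set. Concretely I would pass to $U := \Delta(x_0,r) - C = \{ z - y : z \in \Delta(x_0,r),\ y \in C \}$, which is open (a union of translates of the open ball $\Delta(x_0,r)$), convex (a difference of convex sets), nonempty, and avoids $0$ precisely because $\Delta(x_0,r) \cap C = \emptyset$. After translating $U$ so that it contains the origin, I would introduce its Minkowski gauge $p$, verify that $p$ is a sublinear functional with $\{ p < 1 \}$ equal to the translated set, define a linear functional on a one-dimensional subspace that is dominated by $p$ and attains the value $p$ in the direction pointing to the excluded origin, and extend it to all of $\mE$ by Teo.\ref{thm_HB}. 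Undoing the translations produces $f \in \mE^*$ and $\alpha \in \bR$ with $\left \langle f , z \right \rangle < \alpha \leq \left \langle f , y \right \rangle$ for $z \in \Delta(x_0,r)$ and $y \in C$; since $x_0 \in \Delta(x_0,r)$ this gives $\left \langle f , x_0 \right \rangle < \alpha$, exactly what the neighbourhood argument requires. The only genuinely technical points are the sublinearity of the gauge and the boundedness of the extended functional — its $p$-domination forces $\left \langle f , v \right \rangle \leq p(v) \leq \| v \| / \rho$ for a suitable $\rho > 0$, so that $f \in \mE^*$; everything else is routine.
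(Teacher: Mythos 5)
Your proof is correct and follows precisely the route the paper intends: the statement is given there without proof, with only the remark that it is a simple consequence of Hahn--Banach, and your derivation of the geometric separation from the analytic Teo.~\ref{thm_HB} via the Minkowski gauge of the open convex set $\Delta(x_0,r)-C$ is the standard way to fill in exactly that gap. Both the trivial implication and the final neighbourhood argument (the weakly open half-space $f^{-1}((-\infty,\alpha))$ containing $x_0$ and missing $C$) are handled correctly.
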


\begin{prop}
Sia $T \in B(\mE,\mF)$. Allora $T$ \e un'applicazione continua da $( \mE , \sigma(\mE,\mE^*) )$ in $( \mF , \sigma(\mF,\mF^*) )$.
\end{prop}

\noindent \textbf{La topologia $*$-debole.} Passiamo ora a considerare un ulteriore tipo di topologia debole, stavolta sul duale $\mE^*$. Consideriamo l'applicazione lineare canonica
\begin{equation}
\label{eq_TD01}
\varphi : \mE \to \mE^{**}
\ \ , \ \
v \mapsto \varphi_v : 
\left \langle \varphi_v , f \right \rangle
=
\left \langle f , v \right \rangle
\ ,
\end{equation}
peraltro isometrica grazie a (\ref{eq_HB2}). La topologia $*$-debole su $\mE^*$ \e la topologia meno fine che rende continua la famiglia di applicazioni $\{ \varphi_v , v \in \mE \}$, e si denota con $\sigma(\mE^*,\mE)$. Poich\'e $\varphi(\mE) \subseteq \mE^{**}$, abbiamo che $\sigma(\mE^*,\mE)$ \e meno fine di $\sigma(\mE^*,\mE^{**})$.
Lo stesso argomento usato per la topologia debole (ma stavolta non \e neanche necessario invocare il teorema di Hahn-Banach) mostra che lo spazio $( \mE^* , \sigma(\mE^*,\mE) )$ {\em \e di Hausdorff}. Una successione $\{ f_n \} \subset \mE^*$ converge ad $f \in \mE^*$ nella topologia $*$-debole se e solo se
$\left \langle f , v \right \rangle = \lim_n \left \langle f_n , v \right \rangle$,
$\forall v \in \mE$;
per indicare la convergenza $*$-debole di $\{ f_n \}$ useremo le notazioni
\[
f_n \stackrel{*}{\to} f
\ \ {\mathrm{ovvero}} \ \
f = \lim_n^* f_n
\ .
\]
In maniera analoga al caso della topologia debole, si dimostrano le seguenti propriet\'a:
\begin{enumerate}
\item $f_n \stackrel{*}{\to} f$ 
      $\Leftrightarrow$
      $\left \langle f_n , v \right \rangle \to \left \langle f , v \right \rangle$,
      $v \in \mE$;
\item $f_n \to f$ $\Rightarrow$ $f_n \stackrel{*}{\to} f$;
\item Se $f_n \stackrel{*}{\to} f$, allora $\{ \| f_n \| \}$ \e limitata e 
      $\| f \| \leq \lim \inf_n \| f_n \|$.
\item Se $f_n \stackrel{*}{\to} f$ e $v_m \to v$, allora
      $\left \langle f_n , v_n \right \rangle 
       \to 
       \left \langle f , v \right \rangle$.
\end{enumerate}

Diamo, senza dimostrazione, la seguente
\begin{prop}\textbf{\cite[Prop.III.13]{Bre}.}
Sia $\varphi : \mE^* \to \bR$ un'applicazione lineare e $*$-debolmente continua. Allora esiste $v \in \mE$ tale che $\varphi = \varphi_v$.
\end{prop}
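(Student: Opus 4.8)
The plan is to exploit the explicit description of the basic open sets of the weak-$*$ topology $\sigma(\mE^*,\mE)$ and thereby reduce the statement to an elementary finite-dimensional linear algebra fact. First I would use linearity of $\varphi$ to replace weak-$*$ continuity by continuity at the origin. Since $\varphi$ is $*$-debolmente continua, the set $\varphi^{-1}((-1,1))$ is a weak-$*$ neighbourhood of $0$, so by the very definition of $\sigma(\mE^*,\mE)$ it contains a basic neighbourhood of the form $U := \{ f \in \mE^* : |\left \langle f , v_i \right \rangle| < \eps \ , \ i = 1 , \ldots , n \}$ for suitable $v_1 , \ldots , v_n \in \mE$ and $\eps > 0$. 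In particular $|\varphi(f)| \leq 1$ for every $f \in U$.

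Next I would translate this bound into a statement about kernels, namely that $\bigcap_{i=1}^n \ker \varphi_{v_i} \subseteq \ker \varphi$, where $\varphi_{v_i} \in \mE^{**}$ are the canonical evaluation functionals of (\ref{eq_TD01}), acting by $\left \langle \varphi_{v_i} , f \right \rangle = \left \langle f , v_i \right \rangle$. Indeed, suppose $\left \langle f , v_i \right \rangle = 0$ for all $i$. Then $\lambda f \in U$ for every scalar $\lambda > 0$, whence $\lambda |\varphi(f)| = |\varphi(\lambda f)| \leq 1$; letting $\lambda \to \infty$ forces $\varphi(f) = 0$, which is exactly the claimed inclusion.

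The core step is then the purely algebraic lemma: a linear functional $\varphi$ lying in the intersection of the kernels of finitely many linear functionals $\varphi_{v_1} , \ldots , \varphi_{v_n}$ is a linear combination of them. I would prove this by considering the linear map $\Psi : \mE^* \to \bR^n$, $\Psi(f) := ( \left \langle f , v_1 \right \rangle , \ldots , \left \langle f , v_n \right \rangle )$. The inclusion $\ker \Psi \subseteq \ker \varphi$ means $\varphi$ is constant on the fibres of $\Psi$, so it descends to a well-defined linear functional on the subspace $\Psi(\mE^*) \subseteq \bR^n$; extending this to a functional $x \mapsto \sum_i c_i x_i$ on all of $\bR^n$ (using a complementary subspace, which exists in finite dimension), I obtain $\varphi(f) = \sum_i c_i \left \langle f , v_i \right \rangle = \left \langle f , \sum_i c_i v_i \right \rangle$ for every $f \in \mE^*$. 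Setting $v := \sum_i c_i v_i \in \mE$ yields $\varphi = \varphi_v$, as required.

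The main obstacle is precisely the algebraic lemma of the third paragraph: all the genuinely functional-analytic content is concentrated in extracting the finite family $v_1 , \ldots , v_n$ from weak-$*$ continuity, after which the problem becomes finite-dimensional and elementary. I would be careful to check that the descent of $\varphi$ through $\Psi$ is well-defined and linear, and that the linear extension from $\Psi(\mE^*)$ to $\bR^n$ does not alter the values of $\varphi$ on the image. Notably, unlike the analogous representation results for the weak topology $\sigma(\mE,\mE^*)$, no appeal to the Teorema di Hahn-Banach (Teo.\ref{thm_HB}) is needed here, since the density of $\varphi(\mE)$ issues never arise.
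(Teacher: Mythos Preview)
Your argument is correct and is the standard proof of this result (essentially the one in \cite[Prop.III.13]{Bre}). Note however that the paper itself does not give a proof of this proposition: it is stated explicitly \emph{senza dimostrazione}, with only the citation to Brezis, so there is no paper proof to compare against. Your write-up would serve perfectly well as the missing argument.
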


Una propriet\'a fondamentale (e, in un certo senso, motivante) della topologia $*$-debole \e data dal teorema seguente:
\begin{thm}[Alaoglu]
Sia $\mE$ uno spazio di Banach. Allora, la palla unitaria 
\[
\mE^*_{\leq 1} := \{ f \in \mE^* : \| f \| \leq 1 \}
\]
\'e $*$-debolmente compatta.
\end{thm}

\begin{proof}[Dimostrazione]
Consideriamo lo spazio prodotto $\bR^\mE$ con elementi $\{ \omega_v \in \bR \}_{v \in \mE}$, equipaggiato con le proiezioni (continue) $\pi_v : \bR^\mE \to \bR$, $v \in \mE$, e l'applicazione (manifestatamente iniettiva)
\[
T : \mE^* \to \bR^\mE
\ \ , \ \
T(f) := \{ \left \langle f , v \right \rangle \}_{v \in \mE}
\ .
\]
Vogliamo mostrare che $T : \mE^* \to T(\mE^*)$ \e un omeomorfismo. Ora, si ha $\varphi_v = \pi_v \circ T$, $v \in \mE$; essendo $\varphi_v$ $*$-debolmente continua concludiamo che ogni $\pi_v \circ T$ \e $*$-debolmente continua per cui, per definizione di topologia prodotto, $T$ \e continua. Per dimostrare che $T^{-1} : T(\mE^*) \to \mE^*$ \e continua, \e sufficiente verificare che, per ogni $v \in \mE$, l'applicazione
\[
\varphi_v \circ T^{-1} : T(\mE) \to \bR
\ \ , \ \
T(f) \mapsto \left \langle \varphi_v , f \right \rangle
\]
\'e continua. Ma ci\'o \e evidente per definizione di topologia prodotto, in quanto $\varphi_v \circ T^{-1} = \pi_v |_{T(\mE^*)}$. Ora,
\[
T (\mE^*_{\leq 1}) = K_1 \cap K_2
\ ,
\]
dove
\[
\left\{
\begin{array}{ll}
K_1 := \{ \omega \in \bR^\mE : |\omega_v| \leq \|v\| , v \in \mE  \}
    =  \prod_v \left[ - \|v\| , \|v\| \right]
\\
K_2 := \{ \omega \in \bR^\mE : \omega_{v+aw} = \omega_v + a \omega_w ,
          v,w \in \mE  ,  a \in \bR \}
\end{array}
\right.
\]
E' evidente che $K_1$ \e compatto (Tychonoff). D'altra parte,
\[
K_2 
\ = \
\bigcap_{v,w,a} 
\left( \pi_{v+aw} - \pi_v - a \pi_w \right)^{-1} (0)
\]
dunque (per continuit\'a di $\pi_{v+aw}$, $\pi_v$, $a \pi_w$) esso \e chiuso. Essendo quindi $T(\mE^*_{\leq 1})$ intersezione di un chiuso con un compatto, \e esso stesso compatto. Essendo $T^{-1}$ continua, concludiamo che $\mE^*_{\leq 1}$ \e compatto. 
\end{proof}

\begin{cor}
Sia $\mE$ uno spazio di Banach. Allora la sfera unitaria $\mE^*_1$ \e *-debolmente compatta.
\end{cor}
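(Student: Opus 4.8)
Il piano \e di dedurre la tesi come conseguenza diretta del teorema di Alaoglu appena dimostrato, realizzando $\mE^*_1$ come sottoinsieme chiuso del compatto $\mE^*_{\leq 1}$. Osservo innanzitutto che $\mE^*_1 = \{ f \in \mE^* : \| f \| = 1 \} \subseteq \mE^*_{\leq 1}$, e che per Alaoglu la palla $\mE^*_{\leq 1}$ \e $*$-debolmente compatta. Ricordo inoltre il fatto generale (gi\'a utilizzato nella sezione sugli operatori compatti) che un sottoinsieme chiuso di uno spazio compatto \e esso stesso compatto. Di conseguenza, una volta stabilito che $\mE^*_1$ \e chiuso nella topologia $\sigma(\mE^*,\mE)$, la sua $*$-debole compattezza seguir\'a automaticamente, senza ulteriori calcoli.

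Il passo centrale \e dunque la verifica della chiusura di $\mE^*_1$ rispetto a $\sigma(\mE^*,\mE)$. A tale scopo conviene studiare la funzione norma $N : \mE^* \to \bR$, $N(f) := \| f \|$, dal punto di vista della sua continuit\'a rispetto alla topologia $*$-debole, caratterizzando la convergenza mediante i funzionali di valutazione $\varphi_v$, $v \in \mE$, introdotti in (\ref{eq_TD01}). La propriet\'a (3) elencata per la topologia $*$-debole — ovvero $f_n \stackrel{*}{\to} f \Rightarrow \| f \| \leq \lim \inf_n \| f_n \|$ — esprime la semicontinuit\'a inferiore di $N$, dalla quale discende gratuitamente la chiusura del sottolivello $\{ f : \| f \| \leq 1 \}$, in accordo con Alaoglu. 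Resta da controllare la chiusura del soprallivello $\{ f : \| f \| \geq 1 \}$, poich\'e $\mE^*_1 = \{ \| f \| \geq 1 \} \cap \mE^*_{\leq 1}$.

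Mi aspetto che proprio quest'ultimo punto costituisca l'ostacolo principale: mentre la semicontinuit\'a inferiore della norma \e immediata, la chiusura del soprallivello richiede un controllo pi\'u fine, in sostanza la continuit\'a (e non la mera semicontinuit\'a) di $N$ rispetto a $\sigma(\mE^*,\mE)$ in prossimit\'a della sfera. Il piano \e quindi di ridursi a verificare, per ogni rete (o, nel caso separabile, successione) $\{ f_\alpha \} \subseteq \mE^*_1$ con $f_\alpha \stackrel{*}{\to} f$, che il limite $f$ soddisfi ancora $\| f \| = 1$; questo \e il cuore dell'argomento ed \e il passaggio su cui concentrerei lo sforzo, sfruttando la definizione di $\| \cdot \|$ come estremo superiore di $| \left \langle f , v \right \rangle |$ al variare di $v \in \mE_1$ e la compattezza gi\'a acquisita di $\mE^*_{\leq 1}$.
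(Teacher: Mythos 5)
You have correctly reduced the problem to showing that $\mE^*_1$ is closed for the topology $\sigma(\mE^*,\mE)$, and you have correctly isolated the delicate half: lower semicontinuity of the norm (property (3) of the $*$-weak topology) closes the sublevel set $\{ f : \| f \| \leq 1 \}$, but the superlevel set $\{ f : \| f \| \geq 1 \}$ is another matter. The gap is that this last step cannot be carried out: when $\mE$ has infinite dimension the norm is \emph{not} upper semicontinuous for $\sigma(\mE^*,\mE)$, the superlevel set is not closed, and in fact the $*$-weak closure of $\mE^*_1$ is the whole ball $\mE^*_{\leq 1}$. A counterexample is already in the paper: for an orthonormal sequence $\{ e_k \}$ in a Hilbert space $\mH$ one has $e_k \stackrel{\sigma}{\to} 0$ although $\| e_k \| \equiv 1$, and via the Riesz duality $\mH \simeq \mH^*$ this is a sequence in the unit sphere of a dual space converging $*$-weakly to $0 \notin \mH^*_1$. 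Equivalently, in $\mE = c_0$ with $\mE^* = l^1$ the canonical basis vectors $f_n$ satisfy $\left \langle f_n , v \right \rangle = v_n \to 0$ for every $v \in c_0$, so $f_n \stackrel{*}{\to} 0$ while $\| f_n \|_1 \equiv 1$.

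Since $( \mE^* , \sigma(\mE^*,\mE) )$ is Hausdorff, a $*$-weakly compact set would necessarily be $*$-weakly closed; as $\mE^*_1$ is not closed in the infinite-dimensional case, it cannot be $*$-weakly compact, so no completion of your argument (nor any other argument) can succeed: the statement holds only when $\mE$ is finite-dimensional, where $\sigma(\mE^*,\mE)$ coincides with the norm topology. Note that the paper's own one-line proof rests on exactly the assertion you flagged — that $\mE^*_1$ is closed — without justification; your instinct to concentrate the effort on that point was correct, and the honest outcome of that effort is that the point fails.
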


\begin{proof}[Dimostrazione]
Infatti $\mE^*_1$ \e chiusa ed \e contenuta nel compatto $\mE^*_{\leq 1}$.
\end{proof}

\

\begin{defn}
\label{def_rifl}
Uno spazio di Banach $\mE$ si dice \textbf{riflessivo} se l'iniezione canonica (\ref{eq_TD01}) \e suriettiva, cosicch\'e si ha un isomorfismo $\mE \simeq \mE^{**}$. 
\end{defn}

Il teorema seguente, che forniamo senza dimostrazione, fornisce una caratterizzazione topologica degli spazi riflessivi.

\begin{thm} \textbf{(Kakutani, \cite[Teo.III.16]{Bre}).}
Uno spazio di Banach $\mE$ \e riflessivo se e solo se la palla unitaria $\mE_{\leq 1} := \{ v \in \mE : \| v \| \leq 1 \}$ \e debolmente compatta.
\end{thm}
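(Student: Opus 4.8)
The plan is to build everything around the canonical isometric embedding $\varphi : \mE \to \mE^{**}$ of (\ref{eq_TD01}), which is isometric by (\ref{eq_HB2}). The single most useful preliminary observation is that $\varphi$ is a homeomorphism from $(\mE,\sigma(\mE,\mE^*))$ onto its image $\varphi(\mE)$ carrying the restriction of the $*$-weak topology $\sigma(\mE^{**},\mE^*)$ of the bidual. Indeed, both topologies are by definition the coarsest ones making continuous the same family of functionals indexed by $\mE^*$, and the defining identity $\langle \varphi_v,f\rangle = \langle f,v\rangle$ shows that this family is transported by $\varphi$ from the subbasis generating $\sigma(\mE,\mE^*)$ to the one generating $\sigma(\mE^{**},\mE^*)$. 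I would record this as a lemma, because both implications of the theorem then reduce to transporting a compactness statement across $\varphi$.

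For the direct implication, assume $\mE$ reflexive, so that $\varphi$ is surjective and in particular $\varphi(\mE_{\leq 1}) = \mE^{**}_{\leq 1}$. Applying the theorem of Alaoglu to the Banach space $\mE^*$, whose dual is $\mE^{**}$, shows that $\mE^{**}_{\leq 1}$ is compact in $\sigma(\mE^{**},\mE^*)$. Pulling this compact set back through the homeomorphism of the preliminary lemma gives that $\mE_{\leq 1}$ is compact in $\sigma(\mE,\mE^*)$, which is the desired conclusion.

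For the converse, suppose $\mE_{\leq 1}$ is weakly compact. Then $\varphi(\mE_{\leq 1})$ is $*$-weakly compact in $\mE^{**}$, again by the lemma, hence $*$-weakly closed, since $(\mE^{**},\sigma(\mE^{**},\mE^*))$ is Hausdorff. The crucial ingredient is now Goldstine's theorem, namely that $\varphi(\mE_{\leq 1})$ is $*$-weakly dense in $\mE^{**}_{\leq 1}$. Combining this density with the closedness just obtained forces $\varphi(\mE_{\leq 1}) = \mE^{**}_{\leq 1}$; since $\varphi$ is isometric, rescaling yields $\varphi(\mE)=\mE^{**}$, i.e. $\mE$ is reflexive in the sense of Definizione \ref{def_rifl}.

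The main obstacle is therefore Goldstine's theorem, which is not recorded earlier and must be established. I would argue by contradiction: let $K$ be the $*$-weak closure of $\varphi(\mE_{\leq 1})$, a convex subset of the $*$-weakly compact ball $\mE^{**}_{\leq 1}$, and suppose some $\xi \in \mE^{**}_{\leq 1}$ lies outside $K$. The delicate step is to separate $\xi$ from the convex $*$-weakly closed set $K$ by a $*$-weakly continuous functional; this needs the geometric (separation) form of Hahn-Banach, which I would derive from the analytic statement Teo.\ref{thm_HB} by the usual Minkowski-gauge construction, together with the characterization of $*$-weakly continuous functionals on $\mE^{**}$ as evaluations at elements of $\mE^*$ (the Proposition stated above, \cite[Prop.III.13]{Bre}, applied with $\mE^*$ in place of $\mE$). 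The separation then produces $f \in \mE^*$ and $\alpha \in \bR$ with $\langle f,v\rangle \leq \alpha$ for every $\|v\|\leq 1$, yet $\langle \xi,f\rangle > \alpha$; passing to the supremum over the unit ball gives $\|f\|\leq \alpha$, whereas $|\langle \xi,f\rangle| \leq \|\xi\|\,\|f\| \leq \alpha$, the sought contradiction. The complex case is recovered by taking real parts, exactly as in the polarization arguments used earlier in \S\ref{sec_afunct_1}.
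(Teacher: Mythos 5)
Your proposal is correct, and its first half coincides with what the paper actually does: the paper proves only the implication ``riflessivo $\Rightarrow$ palla debolmente compatta'', exactly by combining the theorem of Alaoglu for $\mE^{**}_{\leq 1}$ with the observation that $\varphi$ transports $\sigma(\mE,\mE^*)$ onto the restriction of $\sigma(\mE^{**},\mE^*)$ to $\varphi(\mE)$ (the paper phrases this as the $*$-weak continuity of each $f \circ \varphi^{-1} = \varphi^*_f$, you phrase it as a homeomorphism lemma; these are the same argument). Where you genuinely go beyond the text is the converse, which the paper explicitly leaves unproved and delegates to \cite[Teo.III.16]{Bre}. Your route --- $\varphi(\mE_{\leq 1})$ is $*$-weakly compact, hence closed, hence equal to $\mE^{**}_{\leq 1}$ by Goldstine's density theorem --- is the standard one and is sound. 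You correctly isolate the real cost of this direction: Goldstine's theorem requires separating a point of $\mE^{**}_{\leq 1}$ from the convex $*$-weakly closed set $K$ by a $*$-weakly continuous functional, and this in turn needs (i) the geometric separation form of Hahn--Banach in a locally convex space, which the paper never states and which must be derived from Teo.\ref{thm_HB} via Minkowski gauges of the convex basic $*$-weak neighbourhoods, and (ii) the identification of $*$-weakly continuous functionals on $\mE^{**}$ with evaluations at elements of $\mE^*$, which is \cite[Prop.III.13]{Bre} applied to $\mE^*$ --- a statement the paper quotes but also does not prove. The concluding estimate ($\sup_{\|v\|\leq 1}\left \langle f,v \right \rangle = \| f \| \leq \alpha$ while $|\left \langle \xi , f \right \rangle| \leq \| \xi \| \| f \| \leq \alpha$, contradicting strict separation) is correct. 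In short: same mechanism as the paper where the paper gives one, plus a self-contained (modulo the two standard ingredients just named) treatment of the direction the paper omits.
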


Parte del teorema di Kakutani \e semplice da dimostrare. Se $\mE$ \e riflessivo allora (\ref{eq_TD01}) si restringe ad un omeomorfismo isometrico $\varphi : \mE_{\leq 1} \to \mE^{**}_{\leq 1}$. Il teorema di Alaoglu implica che $( \mE^{**}_{\leq 1} , \sigma(\mE^{**},\mE^*) )$ \e compatto, per cui \e sufficiente verificare che 
\[
\varphi^{-1} : 
( \mE^{**}_{\leq 1} , \sigma(\mE^{**},\mE^*) ) 
\to
( \mE_{\leq 1} , \sigma(\mE,\mE^*) )
\]
\'e un'applicazione continua; ovvero, per definizione della topologia debole su $\mE$, che per ogni $f \in \mE^*$, l'applicazione
\[
f \circ \varphi^{-1} : \mE^{**}_{\leq 1} \to \bR
\]
sia $*$-debolmente continua. Poich\'e \e immediato verificare che
$f \circ \varphi^{-1} = \varphi^*_f$
(dove $\varphi^*_f \in \mE^{***}$ \e definito secondo l'applicazione canonica $\varphi^* : \mE^* \to \mE^{***}$), concludiamo che $f \circ \varphi^{-1}$ \e $*$-debolmente continua, e quindi $\varphi^{-1}$ \e continua.

\

Passiamo ora ad elencare alcune propriet\'a concernenti la separabilit\'a degli spazi di Banach per la topologia debole:
\begin{enumerate}
\item Sia $\mE$ uno spazio di Banach tale che $\mE^*$ sia separabile. Allora $\mE$ \e separabile (\cite[Teo.III.23]{Bre}). Il viceversa \e falso (ad esempio, si prenda $\mE = L^1$);
\item Un spazio di Banach $\mE$ \e riflessivo e separabile se e solo $\mE^*$ \e riflessivo e separabile (\cite[Cor.III.24]{Bre}, l'esempio canonico \e $\mE = L^p$, $p \neq 1,\infty$);
\item Uno spazio di Banach $\mE$ \e separabile se e solo se $( \mE^*_{\leq 1} , \sigma(\mE^*,\mE) )$ \e metrizzabile (\cite[Teo.III.25]{Bre}). Per dare un'idea della dimostrazione, {\em se assumiamo che $\mE$ sia separabile}, allora possiamo definire la metrica
\[
d(f,g) 
\ := \
\sum_n 2^{-n} | \left \langle f - g , v_n \right \rangle |
\ \ , \ \
f,g \in \mE^*_{\leq 1}
\ ,
\]
dove $\{ v_n \} \subset \mE$ \e denso.
\item \textbf{(Eberlein-Smulian, \cite[Teo.III.27,Teo.III.28]{Bre}).} Uno spazio di Banach $\mE$ \e riflessivo se e solo se $( \mE_{\leq 1} , \sigma(\mE,\mE^*) )$ \e sequenzialmente compatta (ovvero, da ogni successione limitata in $E$ \e possibile estrarre una sottosuccessione debolmente convergente). 
\end{enumerate}

Per quanto concerne gli spazi $L^p$, nel caso in cui $X$ sia separabile abbiamo la seguente situazione:
\begin{itemize}
\item $L_\mu^p(X)$, $p \in (1,\infty)$, \e separabile (\cite[Teo.IV.13]{Bre}), e riflessivo (dualit\'a di Riesz);
\item $L_\mu^1(X)$ \e separabile (\cite[Teo.IV.13]{Bre}), ma {\em non} riflessivo (vedi Prop.\ref{prop_dual_Li});
\item $L_\mu^\infty(X)$ {\em non} \e separabile (a meno che $X$ non sia un insieme numerabile, vedi \cite[Lemma IV.2]{Bre}) e neanche riflessivo (vedi Prop.\ref{prop_dual_Li}). Tuttavia, essendo $L_\mu^\infty(X)$ il duale di $L_\mu^1(X)$ (dualit\'a di Riesz), la palla unitaria di $L_\mu^\infty(X)$ \e compatta rispetto alla topologia *-debole (Teorema di Alaoglu).
\end{itemize}

\noindent \textbf{Lo spettro di una $*$-algebra di Banach ed il teorema di Gel'fand-Naimark.} 
Il teorema di Alaoglu si dimostra senza alcuna variazione nel caso di spazi di Banach complessi, ed un'importante applicazione di questo risultato \e la possibilit\'a di rappresentare, in modo pi\'u o meno accurato a seconda dei casi, una generica $*$-algebra di Banach commutativa come un'algebra di funzioni continue. E' questo l'oggetto del risultato noto come {\em il teorema di Gel'fand-Naimark}.

Sia $\mA$ una $*$-algebra di Banach commutativa. Denotiamo con $\wa \mA$ l'insieme dei {\em caratteri di $\mA$}, ovvero di quegli $\omega \in \mA^*$ {\em non nulli} tali che
\begin{equation}
\label{def_char}
\left\{
\begin{array}{ll}
\left \langle \omega , aa' \right \rangle 
\ = \
\left \langle \omega , a \right \rangle  \left \langle \omega , a' \right \rangle \ ,
\\
\left \langle \omega , a^* \right \rangle = \ovl{ \left \langle \omega , a \right \rangle  }
\ \ , \ \
\forall a,a' \in \mA \ ;
\end{array}
\right.
\end{equation}
osserviamo che $\wa \mA$ \e naturalmente equipaggiato della topologia *-debole, rispetto alla quale esso \e di Hausdorff.

\begin{defn}
\label{def_GN}
Lo spazio di Hausdorff $\wa \mA$ si dice \textbf{lo spettro di $\mA$}. 
\end{defn}

\noindent Introduciamo ora la \textbf{trasformata di Gel'fand}
\begin{equation}
\label{def_GT}
\mA \to C(\wa \mA,\bC)
\ \ , \ \
a \mapsto \wa a \ : \ \wa a (\omega) := \left \langle \omega , a \right \rangle 
\ \ , \ \
\forall \omega \in \wa \mA
\ .
\end{equation}
Le funzioni $\wa a$, $a \in \mA$, sono effettivamente continue proprio per definizione di topologia $*$-debole (infatti $\wa a = \varphi_a$, nella notazione del paragrafo precedente), per cui (\ref{def_GT}) \e ben definita come applicazione. Osserviamo che per costruzione (\ref{def_GT}) \e lineare e tale che
\begin{equation}
\label{rel_GN}
\wa{aa'} = \wa a \wa a'
\ \ , \ \
\wa{a^* \ }(\omega) = \ovl{\wa a (\omega)}
\ \ , \ \
\forall a , a' \in \mA 
\ , \
\omega \in \wa \mA \ .
\end{equation}
Supponiamo ora che $\mA$ abbia identit\'a $1 \in \mA$ e poniamo
$\lambda :=  \left \langle \omega , 1 \right \rangle$;
allora troviamo 
$\left \langle \omega , a \right \rangle = \lambda \left \langle \omega , a \right \rangle$
per ogni $a \in \mA$ e quindi, scegliendo $a=1$, $\lambda^2 = \lambda$; essendo $\omega$ non nullo troviamo $\lambda \neq 0$, e concludiamo che deve essere
$\lambda = \left \langle \omega , 1 \right \rangle = 1$
per ogni $\omega \in \wa \mA$. Nel risultato seguente stabiliamo una connessione tra $\wa \mA$ e gli spettri degli elementi di $\mA$.

\begin{lem}
\label{lem_GN}
Sia $\mA$ una $*$-algebra di Banach commutativa e con identit\'a $1 \in \mA$. Per ogni $a \in \mA$ ed $\omega \in \wa \mA$ si ha $\left \langle \omega , a \right \rangle \in \sigma (a)$, cosicch\'e $| \left \langle \omega , a \right \rangle | \leq \| a \|$.
\end{lem}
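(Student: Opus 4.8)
The claim is that for a commutative unital $*$-Banach algebra $\mA$, every character $\omega \in \wa \mA$ takes values in the spectrum when evaluated on an element, i.e. $\left \langle \omega , a \right \rangle \in \sigma(a)$, and consequently $| \left \langle \omega , a \right \rangle | \leq \| a \|$. The natural strategy is to argue by contradiction on the spectral membership: suppose $\lambda := \left \langle \omega , a \right \rangle \notin \sigma(a)$, which by definition of the spectrum (Def.\ref{def_spettro}) means $a - \lambda 1 \in \mA^{-1}$, and derive that $\omega$ must then vanish on the identity, contradicting the normalization $\left \langle \omega , 1 \right \rangle = 1$ that was established just before the statement.

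First I would exploit the multiplicativity of $\omega$ (the first relation in (\ref{def_char})). The key elementary observation is that a character kills every invertible element's ``defect'': if $b \in \mA^{-1}$ with inverse $b^{-1}$, then applying $\omega$ to $b b^{-1} = 1$ and using multiplicativity gives $\left \langle \omega , b \right \rangle \left \langle \omega , b^{-1} \right \rangle = \left \langle \omega , 1 \right \rangle = 1$, so $\left \langle \omega , b \right \rangle \neq 0$. In other words, a character never annihilates an invertible element. Now set $b := a - \lambda 1$. By linearity of $\omega$ we compute $\left \langle \omega , a - \lambda 1 \right \rangle = \left \langle \omega , a \right \rangle - \lambda \left \langle \omega , 1 \right \rangle = \lambda - \lambda \cdot 1 = 0$. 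So if $a - \lambda 1$ were invertible, the previous observation would force $\left \langle \omega , a - \lambda 1 \right \rangle \neq 0$, a contradiction. Hence $a - \lambda 1 \notin \mA^{-1}$, i.e. $\lambda \in \sigma(a)$, which is exactly the first assertion.

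For the norm bound I would simply combine this with Prop.\ref{prop_spettro}, which states $\sigma(a) \subseteq \ovl{\Delta(0,\| a \|)}$, i.e. every spectral value has modulus at most $\| a \|$. Since $\left \langle \omega , a \right \rangle \in \sigma(a)$, we immediately get $| \left \langle \omega , a \right \rangle | \leq \| a \|$. A small caveat worth recording is that Prop.\ref{prop_spettro} is stated for Banach algebras with identity over $\bK = \bR,\bC$; here we are in the complex $*$-algebra setting, and the spectrum is taken over $\bC$, so the inclusion applies verbatim.

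**The main obstacle.** There is essentially no deep obstacle — the argument is purely algebraic once the normalization $\left \langle \omega , 1 \right \rangle = 1$ is in hand. The only point deserving care is the logical dependency: the proof hinges on the nonvanishing of $\omega$ on invertibles, which in turn uses multiplicativity and the fact that $\omega$ is not the zero functional (so that $\left \langle \omega , 1 \right \rangle = 1$ rather than $0$). I would make sure to invoke the paragraph immediately preceding the lemma, where it is shown that $\left \langle \omega , 1 \right \rangle = 1$ for every $\omega \in \wa \mA$, and to note that multiplicativity is what lets us pass from $a - \lambda 1 \in \mA^{-1}$ to a nonzero character value. The consequence $| \left \langle \omega , a \right \rangle | \leq \| a \|$ is then a one-line corollary of the spectral radius containment, and it incidentally re-confirms that every character is automatically bounded with $\| \omega \| \leq 1$.
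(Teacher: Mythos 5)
Your proof is correct and follows essentially the same route as the paper's: both arguments rest on the normalization $\left \langle \omega , 1 \right \rangle = 1$ together with multiplicativity to show that $\omega$ cannot vanish on an invertible element, applied to $a - \left \langle \omega , a \right \rangle 1$, which manifestly lies in $\ker \omega$; the paper merely phrases this by observing that $\ker \omega$ is an ideal containing no invertibles, while you state the contrapositive directly. The norm bound via Prop.\ref{prop_spettro} is identical in both.
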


\begin{proof}[Dimostrazione]
Iniziamo dimostrando che $T_\omega := a - \left \langle \omega , a \right \rangle 1$ non \e invertibile. A tale scopo, osserviamo che (avendosi $\left \langle \omega , 1 \right \rangle = 1$) troviamo
$\left \langle \omega , T_\omega \right \rangle = 0$,
per cui $T_\omega$ appartiene al nucleo di $\omega$, che denotiamo con $\ker \omega$. Ora, $\ker \omega$ \e chiuso rispetto a prodotti con generici elementi di $\mA$ 
{\footnote{
Ovvero, $\ker \omega$ \e un ideale di $\mA$.
}},
infatti
$\left \langle \omega , Ta \right \rangle = 0 \cdot \left \langle \omega , a \right \rangle = 0$
per ogni $T \in \ker \omega$ ed $a \in \mA$. Osserviamo che $\ker \omega$ non pu\'o avere elementi invertibili, altrimenti avremmo la contraddizione
$1 = \left \langle \omega , TT^{-1} \right \rangle = 0 \cdot \left \langle \omega , T^{-1} \right \rangle = 0$,
$T \in \ker \omega$.
Concludiamo che $T_\omega$ non \e invertibile e $\left \langle \omega , a \right \rangle \in \sigma (a)$. Il fatto che 
$| \left \langle \omega , a \right \rangle | \leq \| a \|$
segue da Prop.\ref{prop_spettro}.
\end{proof}

In conseguenza del Lemma precedente troviamo $\wa \mA \subseteq \mA^*_{\leq 1}$; inoltre \e immediato verificare che $\wa \mA$ \e chiuso (il limite nella topologia $*$-debole di una successione di caratteri \e un carattere), per cui il teorema di Alaoglu implica che $( \wa \mA , \sigma(\mA^*,\mA) )$ \e compatto, essendo $\wa \mA$ un chiuso contenuto in un compatto. Dunque, ogni $\wa a \in C(\wa \mA,\bC)$, $a \in \mA$, ha norma
$\| \wa a \|_\infty := \sup_{\omega \in \wa \mA} | \left \langle \omega , a \right \rangle | \leq \| a \|$.

\begin{prop}[Gel'fand-Naimark]
Sia $\mA$ una $*$-algebra di Banach commutativa e con identit\'a $1$. Allora la trasformata di Gel'fand (\ref{def_GT}) \e un operatore limitato ed ha immagine densa in $C(\wa \mA,\bC)$.
\end{prop}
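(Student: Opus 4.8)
**The plan is to verify the two required properties — linearity with boundedness, and density of the image — using the Stone-Weierstrass theorem, since the Gel'fand transform has already been arranged to land in $C(\wa \mA,\bC)$ with the algebraic compatibility relations (\ref{rel_GN}).**

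First I would dispatch the easy parts. Linearity of (\ref{def_GT}) is immediate from the definition $\wa a(\omega) = \left \langle \omega , a \right \rangle$ and the linearity of each $\omega \in \mA^*$. Boundedness follows directly from \lemref{lem_GN}: for every $a \in \mA$ we have $\| \wa a \|_\infty = \sup_{\omega \in \wa \mA} | \left \langle \omega , a \right \rangle | \leq \| a \|$, so the Gel'fand transform is a contraction and in particular a bounded operator. This is essentially already recorded in the discussion preceding the statement, so it costs nothing.

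The substance lies in the density claim, and here the plan is to apply the Stone-Weierstrass theorem (\thmref{thm_sw}) on the compact Hausdorff space $\wa \mA$ (compactness of $\wa \mA$ having been established via Alaoglu in the paragraph before the statement). Let $\mB := \{ \wa a : a \in \mA \} \subseteq C(\wa \mA,\bC)$ be the image. By (\ref{rel_GN}), $\mB$ is closed under products and under complex conjugation ($\wa{a^*} = \ovl{\wa a}$), and by linearity it is a vector subspace; hence $\mB$ is a $*$-subalgebra of $C(\wa \mA,\bC)$. Since $\mA$ has identity $1$ with $\left \langle \omega , 1 \right \rangle = 1$ for all $\omega$, the constant function $\mathbf{1}$ lies in $\mB$, so $\mB$ contains the constants. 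It remains to check that $\mB$ separates the points of $\wa \mA$: if $\omega \neq \omega'$ are distinct characters, then since they differ as elements of $\mA^*$ there exists $a \in \mA$ with $\left \langle \omega , a \right \rangle \neq \left \langle \omega' , a \right \rangle$, which says precisely $\wa a(\omega) \neq \wa a(\omega')$. With all hypotheses of the complex form of Stone-Weierstrass verified, $\mB$ is dense in $C(\wa \mA,\bC)$ in the uniform norm, which is exactly the assertion.

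The main obstacle I anticipate is not any single step but rather ensuring the setup is airtight: one must be sure that $\wa \mA$ is genuinely compact and Hausdorff (so that the compact version of Stone-Weierstrass applies), and that the separation argument handles the non-unital subtleties correctly — but here the identity hypothesis removes the usual difficulty, since it guarantees the constants are present and forces $\left \langle \omega , 1 \right \rangle = 1$, so no character is ``degenerate.'' The point-separation step is the one deserving the most care, since it relies on the fact that distinct characters are a fortiori distinct as linear functionals; this is tautological but worth stating explicitly. I would emphasize that we use the \emph{complex} Stone-Weierstrass theorem, which requires closure under conjugation — precisely the second relation in (\ref{rel_GN}), a consequence of $\mA$ being a $*$-algebra.
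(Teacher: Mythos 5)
Your proposal is correct and follows essentially the same route as the paper: linearity is immediate, boundedness comes from Lemma \ref{lem_GN}, and density is obtained by applying the complex Stone-Weierstrass theorem to the image, checking closure under conjugation via (\ref{rel_GN}), the presence of constants via $\wa{z1}(\omega) = z$, and separation of points from the fact that distinct characters differ on some element of $\mA$. Nothing is missing.
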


\begin{proof}[Dimostrazione]
La linearit\'a di (\ref{def_GN}) \e ovvia e la limitatezza segue dal Lemma \ref{lem_GN}, il quale implica che $\| \wa a \|_\infty \leq \| a \|$, $a \in \mA$.
Denotiamo ora con $\mB \subseteq C(\wa \mA,\bC)$ l'immagine di $\mA$ attraverso la trasformata di Gel'fand;
per mostrare la densit\'a di (\ref{def_GN}) l'idea \e quella di utilizzare il teorema di Stone-Weierstrass (Teo.\ref{thm_sw}), per cui occorre verificare che $\mB$ \e chiusa rispetto al passaggio all'aggiunto in $C(\wa \mA,\bC)$ (e ci\'o \e ovvio da (\ref{rel_GN})), che $\mB$ contiene le funzioni costanti $z \in C(\wa \mA,\bC)$, $z \in \bC$ (e ci\'o \e ovvio in quanto $\wa{z1} (\omega) = z \left \langle \omega , 1 \right \rangle = z$, $\forall \omega \in \wa \mA$), e che $\mB$ separa i punti di $\wa \mA$ (e anche questo \e ovvio, visto che $\omega \neq \omega'$ se e solo se $\left \langle \omega , a \right \rangle \neq \left \langle \omega' , a \right \rangle$ per qualche $a \in \mA$, ovvero $\wa a (\omega) \neq \wa a (\omega')$). Dunque concludiamo che $\mB$ \e densa in $C(\wa \mA,\bC)$ come desiderato.
\end{proof}

Ora, peculiarit\'a delle \sC algebre commutative \e che 
$\| a \| = \sup_{\omega \in \wa \mA} | \left \langle \omega , a \right \rangle |$
per ogni $a \in \mA$ (vedi \cite[\S 4.1.10 e Lemma 4.3.11]{Ped}). In altri termini la trasformata di Gel'fand \e isometrica, e quindi chiusa come applicazione da $\mA$ in $C(\wa \mA,\bC)$. Di conseguenza, l'immagine di $\mA$ attraverso (\ref{def_GT}) \e sia chiusa che densa in $C(\wa \mA,\bC)$, e quindi coincide con $C(\wa \mA,\bC)$. Dunque abbiamo mostrato il seguente risultato:

\begin{thm}\textbf{(Gel'fand-Naimark, \cite[Theorem 4.3.13]{Ped}).}
\label{thm_GN}
Sia $\mA$ una \sC algebra commutativa con identit\'a $1$. Allora la trasformata di Gel'fand (\ref{def_GT}) \e iniettiva e suriettiva.
\end{thm}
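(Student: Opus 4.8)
The plan is to upgrade the Gel'fand--Naimark \emph{proposition} just established --- which already gives that the Gel'fand transform (\ref{def_GT}) is a bounded linear map with \emph{dense} image in $C(\wa \mA,\bC)$ --- to a genuine bijection, by promoting boundedness to isometry and then intersecting "dense" with "closed". The one structural input I would import (exactly as the paragraph preceding the statement does, citing \cite{Ped}) is the fact, peculiar to commutative \sC algebras, that
\[
\| a \| \ = \ \sup_{\omega \in \wa \mA} | \left \langle \omega , a \right \rangle | \ = \ \| \wa a \|_\infty
\ \ , \ \
\forall a \in \mA \ .
\]
Granting this identity, the Gel'fand transform is isometric, and injectivity is then immediate: if $\wa a = 0$ then $\| a \| = \| \wa a \|_\infty = 0$, so $a = 0$.

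For surjectivity I would argue that an isometric linear map automatically has closed image. Since $\mA$ is a \sC algebra it is in particular a Banach space, hence complete; an isometry sends Cauchy sequences to Cauchy sequences and preserves limits, so the image $\mB := \{ \wa a : a \in \mA \}$ is complete for $\| \cdot \|_\infty$, and a complete subspace of the Banach space $C(\wa \mA,\bC)$ is closed. On the other hand the preceding proposition already yields that $\mB$ is dense in $C(\wa \mA,\bC)$. A subspace that is simultaneously closed and dense must be the whole space, whence $\mB = C(\wa \mA,\bC)$ and the transform is onto. Combined with injectivity this gives the claimed isomorphism.

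The entire difficulty is therefore concentrated in the isometry identity, which I would flag as the main obstacle and treat as the single nontrivial imported ingredient. The natural route to it runs through the spectral radius formula: the \sC identity $\| a^*a \| = \| a \|^2$ together with commutativity forces $\| b^{2^n} \| = \| b \|^{2^n}$ for self-adjoint $b$, so the spectral radius of $b$ equals $\| b \|$; applying this to $b = a^*a$, and using on the one hand Lemma \ref{lem_GN} (which places each character value $\left \langle \omega , a \right \rangle$ inside $\sigma(a)$) and on the other the commutative-algebra fact that conversely every point of $\sigma(a)$ is realized as some $\left \langle \omega , a \right \rangle$, one identifies $\| \wa a \|_\infty$ with the spectral radius and hence with $\| a \|$. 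Everything downstream --- injectivity from isometry, closedness from completeness, and the density already in hand --- is formal, so the proof reduces to invoking this one spectral fact and then observing that a closed dense subspace is the whole space.
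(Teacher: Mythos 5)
Your proposal is correct and follows essentially the same route as the paper: the paper likewise imports the isometry identity $\| a \| = \sup_{\omega} | \left \langle \omega , a \right \rangle |$ from \cite{Ped}, deduces that the Gel'fand transform is closed as a map into $C(\wa \mA,\bC)$, and concludes by intersecting closedness with the density already established in the preceding proposition. Your additional sketch of how the isometry itself would be proved (via the spectral radius and the realization of spectral values by characters) is a correct elaboration of the ingredient the paper simply cites.
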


\begin{ex}{\it
\textbf{(1)} Sia $X$ compatto e di Hausdorff, $\mA := C(X,\bC)$; allora $\wa \mA$ \e omeomorfo ad $X$ (vedi Esercizio \ref{sec_afunct}.9); 
\textbf{(2)} Sia $\mA := L^1(\bR,\bC) + \bC \delta_0$, dove $L^1(\bR,\bC)$ \e equipaggiato del prodotto di convoluzione e $\delta_0$ \e la delta di Dirac (qui abbiamo immerso $L^1(\bR,\bC)$ in $\Lambda_\beta^1(\bR,\bC)$, vedi Esercizio \ref{sec_func_n}.7); allora si ha un omeomorfismo $\theta : \wa \mA \to \bT$, e la trasformata di Gel'fand coincide, in essenza, con la trasformata di Fourier (vedi Esercizio \ref{sec_afunct}.10 e \cite[\S 4.2.8]{Ped}).
\textbf{(3)} Sia $T = T^* \in B(\mH)$ ed $\mA \subseteq B(\mH)$ l'immagine di $C(\sigma(T),\bC)$ rispetto al calcolo funzionale continuo (\ref{eq_SM2}); allora si ha un omeomorfismo $\wa \mA \simeq \sigma(T)$ (vedi \cite[Prop.4.3.15]{Ped}).
}
\end{ex}

\subsection{Cenni su spazi localmente convessi e distribuzioni.}
\label{sec_loc_conv}

Sia $V$ uno spazio vettoriale. Diciamo che una famiglia $\{ p_i \}_{i \in I}$ di seminorme su $V$ \e {\em separante} se 
\[
p_i(v) = 0 \ \ , \ \ \forall i \in I
\ \Rightarrow \
v = 0
\ .
\]
Diciamo che $V$ \e {\em localmente convesso} se ammette una famiglia separante di seminorme. La {\em topologia naturale} di $V$, che denotiamo con $\sigma V$, \e la pi\'u debole topologia che rende continue le applicazioni $p_i$ al variare di $i$ in $I$. Chiaramente, $\sigma V$ \e di Hausdorff; per definizione, una successione $\{ v_n \} \subset V$ converge a $v \in V$ nella topologia $\sigma V$ se e soltanto se $p_i(v_n) \stackrel{n}{\to} p_i(v)$ per ogni $i \in I$.

\begin{ex}
\label{ex_lc}
{\it
\textbf{(1)} Ogni spazio di Banach \e localmente convesso (in quanto la norma separa i punti);
\textbf{(2)} Gli spazi $L_{\mu,loc}^p(X)$ sono localmente convessi (vedi Cor.\ref{cor_Lploc_LC}); 
\textbf{(3)} Sia $V$ uno spazio vettoriale e $\{ f_i \in V^* \}$ una famiglia di funzionali lineari su $V$; allora abbiamo la famiglia di seminorme 
$
P := \{ p_i : p_i(v) := |f_i(v)| , \forall v \in V \}
$.
Se $P$ \e separante allora $V$ viene equipaggiato di una struttura di spazio localmente convesso. Ad esempio, se $\mE$ \e uno spazio di Banach allora la famiglia di seminorme
\[
\mP := 
\{ 
p_f : p_f(v) := | \left \langle f,v \right \rangle | , \forall v \in \mE 
\}_{f \in \mE^*}
\]
\e separante ed induce la topologia debole. Analogamente, la famiglia
\[
\mP^* 
:= 
\{ 
p_v : p_v(f) := | \left \langle f,v \right \rangle | , \forall f \in \mE^* 
\}_{v \in \mE}
\]
\e separante ed induce la topologia $*$-debole su $\mE^*$.
\textbf{(4)} Dato uno spazio di Hilbert complesso $\mH$, la \sC algebra $B(\mH)$ \e equipaggiata con la famiglia di seminorme 
\[
\mD := \{ p_{u,v}(T) := |(u,Tv)| , T \in B(\mH) \}_{u,v \in \mH} \ ,
\]
la quale \e separante in quanto per ogni $T \in B(\mH)$ risulta
\[
p_{Tu,u}(T) = | (Tu,Tu) | = \| Tu \|^2 
\ \ , \ \
\forall u \in \mH
\ .
\]
La topologia naturale associata a $\mD$ \e nota con il nome di \textbf{topologia debole di $B(\mH)$} ed \e molto usata nella teoria delle algebre di operatori. Una $*$-algebra $\mA \subseteq B(\mH)$ contenente l'identit\'a e chiusa rispetto alla topologia debole si dice \textbf{algebra di von Neumann} (vedi \cite[\S 4.6]{Ped}).
\textbf{(5)} Sia $\mE$ uno spazio di Banach; l'algebra di Banach $B(\mE)$ \e equipaggiata con la famiglia separante di seminorme 
\[
\mS := \{ p_v(T) := \| Tu \| , T \in B(\mE) \}_{v \in \mE} \ .
\]
La relativa topologia naturale \e nota con il nome di \textbf{topologia forte di $B(\mE)$} (vedi \cite[\S 4.6]{Ped}). 
In particolare, preso uno spazio di Hilbert $\mH$ ed una successione $\{ T_n \} \subset B(\mH)$, le disuguaglianze
\[
| (u, \{ T_n - T_m \} v) | \ \leq \ 
\| u \| \| \{ T_n - T_m \} v \| \ \leq \ 
\| u \| \| T_n - T_m \| \| v \|
\ \ , \ \
\forall u,v \in \mH
\ ,
\]
mostrano che la convergenza in norma di $\{ T_n \}$ implica quella in topologia forte, la quale a sua volta implica la convergenza nella topologia debole del punto (4). Dunque, ogni algebra di von Neumann $\mA \subseteq B(\mH)$ \e chiusa sia rispetto alla topologia forte che quella della norma (dunque $\mA$ \e anche una \sC algebra).
}
\end{ex}

Un sottoinsieme $C$ di uno spazio vettoriale $V$ si dice: (1) {\em convesso}, se per ogni $x,y \in C$, $t \in [0,1]$ risulta $tx+(1-t)y \in C$; (2) {\em bilanciato}, se per ogni $x \in C$ e $\lambda$ di modulo $1${\footnote{Qui potremmo avere $\lambda \in \bR$ o $\lambda \in \bC$ a seconda del caso in cui $V$ \e reale o complesso.}} risulta $\lambda x \in C$; (3) {\em assorbente}, se $\cup_{t \geq 0} tC = V$.
%
%(4) {\em linearmente connesso}, se per ogni $x \in C$ e $t \in [0,1]$ risulta $tx \in C$.

Uno spazio vettoriale $V$ si dice {\em topologico} se su esso \e definita una topologia $\tau$ tale che le operazioni di somma e moltiplicazione scalare sono applicazioni continue
\[
V \times V \to \bK
\ \ , \ \
\bK \times V \to V
\ ,
\]
dove $\bK = \bR,\bC$ a seconda del caso reale o complesso. Ad esempio, ogni spazio localmente convesso \e uno spazio vettoriale topologico rispetto alla topologia naturale (semplice esercizio!). Il seguente risultato caratterizza gli spazi vettoriali topologici che sono localmente convessi.

\begin{thm}
Sia $(V,\tau)$ uno spazio vettoriale topologico di Hausdorff. Allora $V$ \e localmente convesso (ovvero, $\tau$ \e indotta da una famiglia separante di seminorme) se e solo se $0 \in V$ ammette una base di intorni convessi, bilanciati ed assorbenti.
\end{thm}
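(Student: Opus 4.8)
The plan is to prove the two implications separately, the non-trivial content lying entirely in the second.

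First I would dispatch the easy direction. Suppose $\tau = \sigma V$ is induced by a separating family $\{ p_i \}_{i \in I}$ of seminorms. By definition of the natural topology a base of neighborhoods of $0$ is given by the finite intersections of the sets $U_{i,\eps} := \{ v \in V : p_i(v) < \eps \}$, $i \in I$, $\eps > 0$. Each $U_{i,\eps}$ is convex (by subadditivity and homogeneity of $p_i$, a convex combination of two points of norm $< \eps$ still has norm $< \eps$), absorbing (since $p_i(tv) = |t|\, p_i(v) < \eps$ for $t$ small, so $v \in t^{-1} U_{i,\eps}$), and balanced (for $|\lambda| = 1$ one has $p_i(\lambda v) = |\lambda|\, p_i(v) = p_i(v) < \eps$). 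These three properties are stable under finite intersection, so the base consists of convex, balanced, absorbing neighborhoods.

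For the converse — the heart of the matter — I would start from a base $\mU$ of convex, balanced, absorbing neighborhoods of $0$ and manufacture the seminorms via the Minkowski gauge. For $U \in \mU$ put
\[
p_U(v) := \inf \{ t > 0 : v \in tU \} \ , \quad v \in V .
\]
Since $U$ is absorbing, $p_U(v) < + \infty$, so $p_U$ is well defined. The two key computations are: (i) $p_U$ is absolutely homogeneous, which I would deduce from balancedness (as $\mu U = U$ for $|\mu| = 1$, one gets $\lambda v \in tU \Leftrightarrow |\lambda| v \in tU$, whence $p_U(\lambda v) = |\lambda|\, p_U(v)$ after the substitution $s = t/|\lambda|$); and (ii) $p_U$ is subadditive, which I would deduce from convexity: if $v \in sU$ and $w \in tU$ then $\tfrac{v+w}{s+t}$ is a convex combination of two points of $U$, hence lies in $U$, giving $p_U(v+w) \leq s+t$ and, taking infima, the triangle inequality. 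Thus each $p_U$ is a seminorm.

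The remaining and most delicate step is to show that $\{ p_U \}_{U \in \mU}$ is separating and induces exactly $\tau$. I would first record the sandwich $\{ p_U < 1 \} \subseteq U \subseteq \{ p_U \leq 1 \}$; the left inclusion rests on the fact that a convex balanced set containing $0$ satisfies $tU \subseteq U$ for $0 < t \leq 1$, which follows by combining the modulus-one balancedness of the hypothesis with convexity and $0 \in U$. Separation then uses the Hausdorff assumption: given $v \neq 0$, pick a $\tau$-neighborhood $W$ of $0$ with $v \notin W$ and $U \in \mU$ with $U \subseteq W$; then $v \notin U$ forces $p_U(v) \geq 1 > 0$. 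Finally I would verify $\sigma V = \tau$ by two inclusions, reducing to neighborhoods of $0$ by translation-invariance of both topologies: each $p_U$ is $\tau$-continuous (continuity at $0$ follows from $v \in \tfrac{\eps}{2} U \Rightarrow p_U(v) \leq \tfrac{\eps}{2}$, and $|p_U(v) - p_U(w)| \leq p_U(v-w)$ propagates it), so $\sigma V \subseteq \tau$; conversely any $\tau$-neighborhood $W$ of $0$ contains some $U \in \mU$ and hence the $\sigma V$-neighborhood $\{ p_U < 1 \}$, giving $\tau \subseteq \sigma V$. I expect the main obstacle to be precisely this topological matching together with the bookkeeping around the weaker notion of "balanced" (modulus exactly one) adopted in the text: one must check that, in the presence of convexity and $0 \in U$, it already yields $tU \subseteq U$ for $0 < t \leq 1$, which is exactly what both the homogeneity of $p_U$ and the sandwich inclusion depend on. The finiteness of $p_U$ from absorbency and the reduction to $0$ via the topological vector space structure are routine.
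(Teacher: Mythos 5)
Your proof is correct and follows exactly the route the paper sketches: seminorm balls give the convex, balanced, absorbing base in one direction, and the Minkowski functionals of such a base give the separating family of seminorms in the other. You have simply filled in the details (homogeneity from balancedness, subadditivity from convexity, separation from the Hausdorff hypothesis, and the matching of the two topologies) that the paper's sketch leaves to the reader.
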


\begin{proof}[Sketch della dimostrazione]
Sia $V$ localmente convesso. E' immediato verificare che se $p_1 , \ldots , p_n$ sono seminorme su $V$ allora ogni
\[
C_{\eps ; p_1 , \ldots , p_n} := \{ x \in V : |p_k(x)| < \eps \ , \ k = 1 , \ldots , n   \}
\ \ , \ \
\eps > 0
\ ,
\]
\'e convesso, bilanciato ed assorbente. 
Viceversa, se $(V,\tau)$ \e tale che $0$ ammette una base $\mC := \{ C_i  \}$ di intorni convessi, bilanciati e assorbenti, allora introducendo i {\em funzionali di Minkowski}
\begin{equation}
\label{eq_FMIN}
p_i (x) := \inf \{ t \in \bR : x \in tC_i  \}
\ \ , \ \
x \in V
\ ,
\end{equation}
otteniamo una famiglia separante di seminorme.
\end{proof}

Il teorema di Hahn-Banach si applica agli spazi localmente convessi, per cui questi sono ricchi di funzionali lineari continui rispetto alla topologia naturale. Lo spazio vettoriale di tali funzionali si denota con $V^*$, ed \e chiamato il {\em duale topologico} di $V$; si pu\'o dimostrare che $V^*$ separa i punti di $V$ (si veda \cite[\S V.1]{RS}). Ora, per ogni $v \in V$ definiamo il funzionale
\[
\varphi_v : V^* \to \bR 
\ \ , \ \
\left \langle \varphi_v , f  \right \rangle := \left \langle f , v  \right \rangle
\ \ , \ \
f \in V^*
\ .
\]
Equipaggiamo quindi $V^*$ della topologia debole $\sigma (V^*,V)$ che rende continua la famiglia di applicazioni $\{ \varphi_v , v \in V  \}$.

Dati due spazi localmente convessi $V$ e $V'$ con famiglie di seminorme $\{ p_i \}$, $\{ q_j \}$, possiamo considerare operatori lineari $T : V \to V'$ continui rispetto alle rispettive topologie naturali. Si verifica facilmente (vedi \cite[Teo.V.2]{RS}) che un operatore lineare $T : V \to V'$ \e continuo se e solo se per ogni seminorma $q_j$ esistono seminorme $p_1 , \ldots , p_n$ su $V$ e $c > 0$ tali che
\begin{equation}
\label{eq_CSLC}
q_j(Tv) \leq c \sum_{k=1}^n p_k(v) \ \ , \ \ v \in V \ .
\end{equation}

Ora, alcuni semplici argomenti topologici mostrano che, dato il nostro spazio localmente convesso $V$, le seguenti condizioni sono equivalenti:
(1) $V$ \e metrizzabile;
(2) $0 \in V$ ammette una base numerabile di intorni;
(3) $\sigma V$ \e indotta da una famiglia numerabile di seminorme.
Per dare un'idea della dimostrazione, se assumiamo che $\sigma V$ sia indotta da una famiglia numerabile $\{ p_n  \}$ allora possiamo definire la metrica
\begin{equation}
\label{eq_MLC}
d(v,w) := \sum_n 2^{-n} \ \frac{ p_n(v-w)  }{ 1+p_n(v-w)  }
\ \ , \ \
v,w \in V
\ .
\end{equation}

\begin{defn}
Si dice \textbf{spazio di Fr\'echet} uno spazio localmente convesso, completo e metrizzabile.
\end{defn}

\noindent \textbf{Funzioni a decrescenza rapida e distribuzioni temperate.} Sia $n \in \bN$ e $\bZ^{+,n}$ l'insieme delle $n$-ple di interi non negativi. Definiamo
\[
| \alpha | := \sum_{k=1}^n \alpha_k
\ , \
\alpha := ( \alpha_1 , \ldots , \alpha_n ) \in \bZ^{+,n}
\ \ , \ \
x^\alpha := \prod_{k=1}^n x_k^{\alpha_k} 
\ , \
x \in \bR^n
\ ,
\]
e l'operatore
\[
D^\alpha := \frac{ \partial^{|\alpha|}  }{ \partial x_1^{\alpha_1} \ldots \partial x_n^{\alpha_n} }
\ \ , \ \
\alpha \in \bZ^{+,n}
\ .
\]
Una funzione $f \in C^\infty(\bR^n)$ si dice {\em a decrescenza rapida} se per ogni $\alpha,\beta \in \bZ^{+,n}$ risulta
\begin{equation}
\label{eq_DR}
\| f  \|_{\alpha.\beta} := \sup_x | x^\alpha D^\beta f(x) | < \infty \ .
\end{equation}
Denotiamo con $\mS(\bR^n)$ lo spazio delle funzioni a decrescenza rapida. Osserviamo che (\ref{eq_DR}) ci dice che $f \in \mS(\bR^n)$, ed ogni sua derivata, decrescono pi\'u rapidamente dell'inverso di un qualsiasi polinomio. E' chiaro che $\{ \| \cdot \|_{\alpha,\beta}  \}$ \e una famiglia numerabile di seminorme, cosicch\'e $\mS(\bR^n)$ ha una struttura di spazio localmente convesso metrizzabile (vedi (\ref{eq_MLC})). Usando gli usuali teoremi di uniforme convergenza dimostriamo facilmente che $\mS(\bR^n)$ \e completo, per cui esso \e uno spazio di Fr\'echet. Il duale topologico $\mS^*(\bR^n)$ \e chiamato lo {\em spazio delle distribuzioni temperate}.

\begin{ex}
{\it
Per ogni $x \in \bR^n$ consideriamo la \textbf{delta di Dirac} $\delta_x \in \mS^*(\bR^n)$, $\left \langle \delta_x , f \right \rangle := f(x)$. Poich\'e $\| \left \langle \delta_x , f \right \rangle  \| \leq \| f \|_{0,0}$ concludiamo che $\delta_x \in \mS^*(\bR^n)$. 
Definiamo ora $\delta'_x : \mS(\bR) \to \bR$, $\left \langle \delta_x , f \right \rangle := f'(x)$. Poich\'e $\| \left \langle \delta'_x , f \right \rangle  \| \leq \| f \|_{0,1}$ concludiamo che $\delta'_x \in \mS^*(\bR)$; tuttavia, a differenza della delta di Dirac, $\delta'_x$ non \e associata a nessuna misura su $\bR$.
}
\end{ex}

\begin{ex}
{\it
Definiamo
\begin{equation}
\label{eq_PCauchy}
\left \langle \mP_{1/x} , f \right \rangle
\ := \
\lim_{\eps \to 0^+} \int_{|x| \geq \eps} \frac{1}{x} f(x) \ dx
\ = \
\lim_{\eps \to 0^+} \int_\eps^\infty \frac{1}{x} \left( f(x) - f(-x) \right) \ dx
\ \ , \ \
\forall f \in \mS(\bR)
\ .
\end{equation}
Poich\'e $\lim_{x \to 0} x^{-1}(f(x)-f(-x)) = 2 f'(0)$ troviamo che l'integrale improprio nell'espressione precedente converge a 
\[
\int_0^\infty \frac{1}{x} \left( f(x) - f(-x) \right) \ dx 
\ < \
\infty
\ .
\]
Dunque $\mP_{1/x}$ \e ben definita come applicazione lineare. Per mostrarne la continuit\'a, osserviamo che
$| x^{-1} ( f(x) - f(-x) ) | \leq 2 \| f' \|_\infty$
per cui, spezzando (\ref{eq_PCauchy}) tra $[0,1]$ ed $(1,\infty)$ troviamo
\[
| \left \langle \mP_{1/x} , f \right \rangle |
\ \leq \
2 \int_0^1 \| f' \|_\infty \ dx
\ + \
\int_1^\infty \left| xf(x) \right| \frac{1}{x^2} \ dx
\ \leq \
2 \| f \|_{0,1} + \| f  \|_{1,0}
\ .
\]
Dunque $\mP_{1/x} \in \mS^*(\bR)$, ed \e noto come \textbf{la parte principale di Cauchy}.
}
\end{ex}

Da (\ref{eq_DR}) segue che $\mS(\bR^n) \subset L^p(\bR^n)$ per ogni $p \in [1,+\infty]$, dunque abbiamo un'applicazione canonica
\[
i_p : \mS(\bR^n) \to L^p(\bR^n)
\ .
\]
Si verifica che $i_p$ \e continua. Ad esempio, nel caso $n=1$, $p=1$ abbiamo
\begin{equation}
\label{eq_l1_s}
\| i_1 f \|_1 
\ = \ 
\| f \|_1 
\ = \ 
\int_\bR \frac{1}{1+x^2} \ \{ (1+x^2)|f(x)|  \} \ dx  
\ \leq \ 
\pi \{  \| f \|_\infty + \| x^2 f  \|_\infty  \}
\ ,
\end{equation}
per cui la continuit\'a di $i_1$ segue da (\ref{eq_CSLC}). Per $p$ qualsiasi, poniamo $q := \ovl p$ e troviamo
$\| f \|_p  \leq  \| |f|^{p/q} |f| \|_1^{1/p}  \leq  c \| f \|_{0,0}^{1/p}$, dove $c := \| |f|^{1/q} \|_p < \infty$. I casi $n > 1$ si trattano in maniera analoga.

\

Per dualit\'a di Riesz ogni $g \in L^q(\bR^n)$ definisce il funzionale continuo $F_g \in L^{p,*}(\bR^n)$ (vedi (\ref{def_Fg})); per cui $F_g \circ i_p : \mS(\bR^n) \to \bR$ \e un funzionale continuo rispetto alla topologia naturale di $\mS(\bR^n)$. Dunque, abbiamo un'immersione
\begin{equation}
\label{eq_Lq_dS}
L^q(\bR^n) \to \mS^*(\bR^n)
\ \ , \ \
g \mapsto F_g \circ i_p
\ : \
\left \langle F_g \circ i_p , f \right \rangle = \int fg
\ , \
\forall f \in \mS(\bR^n)
\ .
\end{equation}
Equipaggiando $\mS^*(\bR^n)$ della topologia debole $\sigma ( \mS^*(\bR^n) , S (\bR^n) )$, ed usando la continuit\'a di $i_p$, concludiamo che (\ref{eq_Lq_dS}) \e continua.
Restringendo (\ref{eq_Lq_dS}) a funzioni del tipo $i_q g \in L^q(\bR^n)$, $g \in \mS(\bR^n)$, otteniamo l'applicazione continua
\begin{equation}
\label{eq_emb_ss}
I : \mS(\bR^n) \to \mS^*(\bR^n) 
\ \ , \ \  
g \mapsto I(g)
\ : \
\left \langle I(g),f \right \rangle = \int fg
\ , \
\forall f \in \mS(\bR^n)
\ .
\end{equation}
Usando i polinomi di Hermite si dimostra che $I(\mS(\bR^n))$ \e denso in $\mS^*(\bR^n)$ (\cite[Theorem V.14]{RS}): in termini espliciti, per ogni $\varphi \in \mS^*(\bR^n)$ esiste una successione $\{ g_n \} \subset \mS(\bR^n)$ tale che
\begin{equation}
\label{eq_appr_dis}
\left \langle \varphi , f \right \rangle 
\ = \
%\lim_n \left \langle F_{g_n} , f \right \rangle
%\ = \
\lim_n \int g_n(x) f(x) \ dx
\ \ , \ \
\forall f \in \mS(\bR^n)
\ .
\end{equation} 
La densit\'a in $\mS^*(\bR^n)$ dell'immagine di (\ref{eq_emb_ss}) ha un'importante conseguenza: data un'applicazione continua $T : \mS(\bR^n) \to \mS(\bR^n)$ osserviamo che $I \circ T : \mS(\bR^n) \to \mS^*(\bR^n)$ \e continua, per cui estendendo per continuit\'a possiamo definire
\[
T^* : \mS^*(\bR^n) \to \mS^*(\bR^n)
\ \ : \ \
T^* \circ I := I \circ T
\ .
\]
Chiamiamo $T^*$ {\em l'applicazione aggiunta} di $T$.

\begin{ex}[La derivata debole]
\label{ex_DB}
{\it
Per ogni $\alpha \in \bZ^{+,n}$, l'operatore di derivazione definisce l'applicazione continua
\[
D^\alpha : \mS(\bR^n) \to \mS(\bR^n)
\ \ , \ \
\| D^\alpha f  \|_{\gamma,\delta} \leq \| f  \|_{\gamma,\delta+\alpha}
\ .
\]
L'operatore aggiunto $D^{\alpha,*} : \mS^*(\bR^n) \to \mS^*(\bR^n)$ \e noto come la \textbf{derivata debole}. Ora, per ogni $f,g \in \mS(\bR^n)$ troviamo
\[
\left \langle D^{\alpha,*} \circ I(f) , g \right \rangle
%\ = \
%\left \langle I \circ D^\alpha f , g \right \rangle
\ = \
\int D^\alpha f (x) g(x) \ dx
\ \stackrel{per \ parti}{=} \
(-1)^{|\alpha|} \int f (x) D^\alpha g(x) \ dx
\ = \
\left \langle  I(f) , (-1)^{|\alpha|} D^\alpha g \right \rangle
\ .
\]
Dunque se $\varphi \in \mS^*(\bR^n)$ per densit\'a troviamo
\[
\left \langle D^{\alpha,*} (\varphi) , g \right \rangle
\ = \
\left \langle \varphi , (-1)^{|\alpha|} D^\alpha g \right \rangle
\ \ , \ \
g \in \mS(\bR^n)
\ .
\]
}
\end{ex}

Diciamo che una distribuzione temperata $\varphi \in \mS^*(\bR^n)$ si annulla in un aperto $\Omega \subseteq \bR^n$ qualora $\left \langle \varphi , f \right \rangle = 0$ per ogni $f \in \mS(\bR^n)$ tale che ${\mathrm{supp}}(f) \subseteq \Omega$. Il {\em supporto di $\varphi$}, che denotiamo con ${\mathrm{supp}}(\varphi)$, si definisce come il complemento del pi\'u grande insieme aperto sul quale $\varphi$ si annulla. Si dimostra che
\begin{equation}
\label{eq_supp0}
{\mathrm{supp}}(\varphi) = \{ 0 \} 
\ \Rightarrow \
\varphi = \sum_{\alpha \in I} c_\alpha D^{\alpha,*} \circ \delta_0
\end{equation}
dove $I \subset \bZ^{+,n}$ \e un insieme finito e $c_\alpha \in \bR$ (vedi \cite[Theorem V.11]{RS}).

\begin{ex}[Rinormalizzazione]
{\it
Consideriamo la funzione $x^{-1}_+ (t) := \chi_{(0,+\infty)}(t) \cdot t^{-1}$, $t \in \bR$. Poich\'e $\int_0^1 x^{-1}_+(t) \ dt = + \infty$ concludiamo che $x^{-1}_+$ \textbf{non} definisce una distribuzione su $\mS(\bR)$.
Tuttavia, se consideriamo il sottospazio $\mS_0(\bR)$ di $\mS(\bR)$ delle funzioni a decrescenza rapida che si annullano nell'origine, troviamo che il funzionale definito dall'integrale improprio
\[
\left \langle \mP_{1/x}^+ , f  \right \rangle 
\ := \
\int_0^{+\infty} \frac{1}{t} f(t) \ dt
\ \ , \ \
f \in \mS_0(\bR)
\ ,
\]
\'e ben definito e continuo. Applicando il teorema di Hahn-Banach, concludiamo che esistono estensioni ad $\mS(\bR)$ di $\mP_{1/x}^+$, note come \textbf{rinormalizzazioni}. Prese due rinormalizzazioni $\varphi_1 , \varphi_2$, da (\ref{eq_supp0}) concludiamo
$\varphi_1 - \varphi_2 = \sum_\alpha c_\alpha D^{\alpha,*} \circ \delta_0$.
In effetti, si dimostra facilmente (\cite[Ex.V.3.9]{RS}) che tutte e sole le rinormalizzazioni di $\mP_{1/x}^+$ sono quelle del tipo
\[
\varphi_c \in \mS^*(\bR)
\ : \
\left \langle \varphi_c , f \right \rangle
\ := \
\int_0^c \frac{ f(x)-f(0) }{x} \ dx \ + \ \int_c^\infty \frac{ f(x)  }{x} \ dx
\ \ , \ \
c > 0
\ ,
\]
il che implica $\varphi_c - \varphi_{c'} = -\log \frac{c}{c'} \ \delta_0$.
}
\end{ex}

\

\noindent \textbf{La topologia di Fr\'echet su $C_c^\infty(\Omega)$, distribuzioni e soluzioni deboli.} Sia $\Omega \subseteq \bR^n$ un aperto connesso e $C_c^\infty(\Omega)$ lo spazio delle funzioni $C^\infty$ aventi supporto compatto e contenuto in $\Omega$. Abbiamo gi\'a mostrato che $C_c^\infty(\Omega)$ non \e uno spazio di Banach (vedi Esercizio \ref{sec_top}.7), tuttavia \e possibile verificare che esso \e uno spazio di Fr\'echet rispetto alla famiglia di seminorme
\begin{equation}
\label{eq.snD}
\| D^\alpha f \|_\infty
\ \ , \ \
\alpha \in \bZ^{+,n}
\ \ , \ \
f \in C_c^\infty(\Omega)
\ .
\end{equation}
La dimostrazione del fatto che (\ref{eq.snD}) induce una topologia di Fr\'echet \e non banale e coinvolge la nozione di limite induttivo rispetto alla rappresentazione di $C_c^\infty(\Omega)$ come l'unione
\[
C_c^\infty(\Omega) \ = \  \bigcup_{K \subset \Omega \ : \ K \ compatto } C_0^\infty(K)
\]
(vedi \cite[Problem V.46]{RS}). Qui diamo per buono tale risultato e denotiamo lo spazio di Fr\'echet cos\'i ottenuto con $\mD(\Omega)$. Per definizione, un funzionale lineare $\varphi : \mD(\Omega) \to \bR$ \e continuo se e soltanto se per ogni compatto $K \subset \Omega$ esistono $c_K > 0$, $m_K \in \bZ^{+,n}$ tali che
\begin{equation}
\label{eq_DISTR}
| \left \langle \varphi , f \right \rangle |
\ \leq \
c_K \sum_{|\alpha| \leq m_K} \| D^\alpha f  \|_\infty
\ \ , \ \
\forall f \in C_0^\infty(K) \subset \mD(\Omega)
\ .
\end{equation}
Lo spazio dei funzionali lineari continui su $\mD(\Omega)$, che denotiamo con $\mD^*(\Omega)$, \e detto lo {\em spazio delle distribuzioni}, e viene equipaggiato della topologia debole indotta da $\mD(\Omega)$.

\begin{ex}
{\it
Per ogni $x \in \bR$ ed $\alpha \in \bZ^{+,n}$, abbiamo, ovviamente, che 
$\left \langle \delta_x^\alpha , f \right \rangle := D^\alpha f(x)$, $f \in \mD(\Omega)$, 
\e una distribuzione.
}
\end{ex}

\begin{ex}[Ancora sulla derivata debole]
\label{ex_DD_D}
{\it
Per ogni $g \in C(\bR^n)$, $\alpha \in \bZ^{+,n}$, \e ben definita l'applicazione lineare
\[
\left \langle D^{\alpha,*} g , f  \right \rangle 
\ := \
(-1)^{|\alpha|} \int g (x) D^\alpha f (x) \ dx
\ \ , \ \
f \in \mD(\Omega)
\ ,
\]
la quale \e continua in quanto, per ogni compatto $K \subset \Omega$,
\[
| \left \langle D^{\alpha,*} g , f  \right \rangle |
\ \leq \
{\mathrm{vol}}(K) \ \| g|_K \|_\infty \ \| D^\alpha f  \|_\infty
\ \ , \ \
\forall f \in C_0^\infty(K)
\ .
\]
}
\end{ex}

\begin{ex}
\label{ex_D_L1loc}
{\it
Per ogni $g \in L_{loc}^1(\bR^n)$, \e ben definita la distribuzione
\[
\left \langle F_g , f  \right \rangle \ := \ \int fg
\ \ , \ \
f \in \mD(\Omega)
\ \ : \ \
| \left \langle F_g , f  \right \rangle |
\ \leq \
\| g |_K  \|_1 \| f \|_\infty
\ \ , \ \
\forall f \in C_0^\infty(K)
\ , \
K \subset \Omega
\ .
\]
}
\end{ex}

Introduciamo ora l'operazione di convoluzione tra una distribuzione ed una funzione in $\mD(\Omega)$. Innanzitutto richiamiamo le operazioni di antipodo e traslazione: per ogni $f \in \mD(\Omega)$, $x \in \bR^n$ definiamo
\[
\epsilon f(y) := f(-y)
\ \ , \ \
f_x(y) := f(y+x)
\ \ , \ \
y \in \bR^n
\ \ \Rightarrow \ \
\{ \epsilon f \}_x(y) = f(x-y)
\ ;
\]
\'e del tutto ovvio che $\epsilon f , f_x , \{ \epsilon f \}_x \in \mD(\Omega)$, con $\| D^\alpha \{ \epsilon f \}_x  \|_\infty = \| D^\alpha f  \|_\infty$, $\alpha \in \bZ^{+,n}$. Osserviamo quindi che per ogni $x \in \bR^n$ e $\varphi \in \mD^*(\Omega)$ con associate costanti $c_K$, $m_K$, $K \subset \Omega$, nel senso di (\ref{eq_DISTR}), risulta
\[
|  \left \langle  \varphi , \{ \epsilon f \}_x   \right \rangle   |
\ \leq \
c_K \sum_{|\alpha| \leq m_K} \| D^\alpha \{ \epsilon f \}_x   \|_\infty
\ = \
c_K \sum_{|\alpha| \leq m_K} \| D^\alpha f \|_\infty
\ < \ 
\infty
\ \ , \ \
f \in C_0^\infty(K)
\ .
\]
Per cui, \e ben definita la funzione
\[
\varphi * f : \bR^n \to \bR
\ \ , \ \
\varphi * f (x) := \left \langle  \varphi , \{ \epsilon f \}_x   \right \rangle 
\ \ , \ \
x \in \bR^n
\ .
\]
Facciamo qualche semplice osservazione sulla regolarit\'a di $\varphi * f$. Poich\'e $f \in C_0^\infty(K)$ \e uniformemente continua, troviamo che per ogni $\eps > 0$ esiste $\delta > 0$ tale che 
\[
|x-x'| \leq \delta 
\ \Rightarrow \
\| (\eps f)_x - (\eps f)_{x'} \|_\infty < \eps
\ .
\]
Ripetendo l'argomento precedente per ogni insieme {\em finito} $\{ D^\alpha f \}_{|\alpha| \leq m} \subset C_0^\infty(K)$, concludiamo che
\[
| \varphi * f (x) - \varphi * f (x') |
\ \leq \
c_K \sum_{|\alpha| \leq m_K} \| D^\alpha (\eps f)_x - D^\alpha (\eps f)_{x'} \|_\infty
\ \leq \
c_K M_K \eps
\ ,
\]
dove $M_K$ \e la somma dei multiindici $\alpha \in \bZ^{+,n}$ tali che $| \alpha | \leq m_K$. Dunque $\varphi * f \in C(\bR^n)$, e ripetendo l'argomento precedente sulle derivate di $f$ concludiamo che $\varphi * f \in C^\infty(\bR^n)$.

\begin{ex}
{\it
Data $g \in L_{loc}^1(\bR^n)$, in accordo all'Esempio \ref{ex_D_L1loc} per ogni $f \in \mD(\Omega)$ troviamo
\[
F_g * f (x) = \int g(y) f(x-y) \ dy
\ \ , \ \
x \in \bR^n
\ ,
\]
per cui $\varphi * f$ \e una naturale generalizzazione della convoluzione. Questo giustifica la classica notazione
$\varphi * f (x) = \int \varphi (y) f(x-y) dy$.
Osserviamo che Prop.\ref{prop_der_conv} implica $F_g * f \in C^\infty(\bR^n)$.
}
\end{ex}

\begin{ex}
\label{ex_dirac_conv}
{\it
Consideriamo la delta di Dirac $\delta_0 \in \mD^*(\Omega)$. Allora $\delta_0 * f(x) = \{ \epsilon f \}_x(0) = f(x)$, $f \in \mD(\Omega)$, e quindi $\delta_0 * f = f$.
}
\end{ex}

In modo analogo a quanto accade con le funzioni a decrescenza rapida, si pu\'o verificare che la dualit\'a di Riesz induce un'immersione $I : \mD(\Omega) \to \mD^*(\Omega)$ con immagine densa. Per cui, ogni applicazione continua $T : \mD(\Omega) \to \mD(\Omega)$ si estende in modo univoco ad un'applicazione continua $T^* : \mD^*(\Omega) \to \mD^*(\Omega)$, $T^* \circ I = I \circ T$. 

Un'importante applicazione delle idee precedenti \e la seguente: consideriamo un polinomio 
$p(x) = \sum_{|\alpha| < k} c_\alpha x^\alpha$,
$c_\alpha \in \bR$.
Allora \e definito l'operatore continuo
\[
p(D) : \mD(\Omega) \to \mD(\Omega)
\ \ , \ \ 
p(D)f := \sum_{|\alpha| < k} D^\alpha (c_\alpha f)
\ \ , \ \
\forall f \in \mD(\Omega)
\ ,
\]
il quale si estende in modo unico a
\[
p(D^*) : \mD^*(\Omega) \to \mD^*(\Omega)
\ \ , \ \ 
\left \langle p(D^*) \varphi , f \right \rangle
\ = \
\sum_{|\alpha| \leq k} 
 (-1)^{|\alpha|}  \ 
 \left \langle \varphi , D^\alpha (c_\alpha f) \right \rangle
\ \ , \ \
\forall f \in \mD(\Omega)
\ .
\]
Osserviamo che $p(D)$, $p(D^*)$ sono ben definiti anche qualora $c_\alpha \in C^\infty(\bR)$.

\begin{defn}
Sia $p$ un polinomio a coefficienti in $C^\infty(\bR)$ di grado massimo $k \in \bN$ e $g \in C(\bR)$. Una \textbf{soluzione debole} dell'equazione alle derivate parziali $p(D)u = g$ \e una distribuzione $\varphi \in \mD^*(\bR^n)$ tale che
\[
p(D^*) \varphi = I_g
\ \ \Leftrightarrow \ \
\left \langle p(D^*) \varphi , f \right \rangle
\ = \
\int g(x) f(x) \ dx
\ \ , \ \
\forall f \in \mD(\bR^n)
\ .
\]
Inoltre, $\varphi \in \mD^*(\bR^n)$ si dice \textbf{soluzione fondamentale} se $p(D^*) \varphi = \delta_0$.
\end{defn}

Sia $\varphi \in \mD^*(\bR^n)$ una soluzione fondamentale per l'operatore $p(D)$ ed $f \in \mD(\bR^n)$. Allora $u := \varphi * f$ \e $C^\infty$ ed ha senso considerare la funzione $p(D) u$. Ora, si pu\'o verificare che vale la propriet\'a di associativit\'a
\[
p(D) (\varphi * f) = ( p(D^*) \varphi ) * f \ ,
\]
per cui concludiamo che $p(D)u = \delta_0 * f = f$. In altri termini, $u$ \e una soluzione classica dell'equazione $p(D)u=f$. Il {\em Teorema di Malgrange-Ehrenpreis} (\cite[Theorem IX.23]{RS2}) afferma che ogni operatore differenziale $p(D)$ a coefficienti costanti ammette una soluzione fondamentale. La dimostrazione si basa su tecniche di analisi complessa e l'estensione, tramite il teorema di Hahn-Banach, del funzionale
\[
\varphi_D   \ : \   p(-D) \ (  \mD(\bR^n,\bC) )  \ \to \   \bC
\ \ , \ \
p(-D)f \mapsto f(0)
\ ,
\]
ad una distribuzione $\varphi \in \mD^*(\bR^n,\bC)$, dove 
$\mD(\bR^n,\bC) := \{ f+ig : f,g \in \mD(\bR^n) \}$. 
Per dettagli si veda \cite[Theorem IX.23]{RS2} o \cite{OW}.

\begin{ex}\textbf{\cite[\S IX.5]{RS2}.}
{\it
La soluzione fondamentale dell'\textbf{equazione di Laplace-Poisson} $\Delta u = f$, $f \in C_0^\infty(\bR^n)$, si pu\'o scrivere, \textbf{formalmente}, come $\varphi = F_\Phi$, dove
\[
\Phi(x) :=
\left\{
\begin{array}{ll}
1/2 |x|               \ \ , \ \ n=1 
\\
(2 \pi)^{-1} \log |x| \ \ , \ \ n=2
\\
- ( 4 \pi |x| )^{-1}  \ \ , \ \ n=3
\end{array}
\right.
\]
Osserviamo che in generale $\Phi \notin L_{loc}^1(\bR^n)$, per cui la notazione $F_\Phi$ v\'a intesa nel senso formale.
}
\end{ex}

\subsection{Operatori non limitati.}
\label{sec_onl}

Nonostante gli approcci che possiamo adottare a livello topologico (distribuzioni, spazi di Sobolev) sta di fatto che gli operatori differenziali, che ovviamente sono di grande importanza in analisi, non sono limitati n\'e in norma $\| \cdot \|_\infty$ n\'e in norma $\| \cdot \|_p$, e quindi non sono continui nelle topologie indotte da tali norme. In questa sezione presentiamo alcuni risultati sugli operatori non limitati su spazi di Hilbert.

\begin{defn}
Un \textbf{operatore} su uno spazio di Hilbert $\mH$ \e il dato di una coppia $(D,T)$, dove $D$ (il \textbf{dominio} di $T$) \e un sottospazio vettoriale di $\mH$ e $T : D \to \mH$ \e un'applicazione lineare. Diremo che $(D,T)$ \e \textbf{densamente definito} se $D$ \e denso in $\mH$.
\end{defn}

Ovviamente ogni $T \in B(\mH)$ \e un operatore densamente definito nel senso della definizione precedente, con $D = \mH$. D'altro canto, come esempio fondamentale portiamo la derivata
\[
\mH := L^2(\bR)
\ \ , \ \
D := C^1_c(\bR)
\ \ , \ \
Tf := f'
\ , \
\forall f \in D
\ .
\]
Osserviamo che in generale \e gi\'a difficoltoso effettuare la somma o il prodotto di operatori, in quanto occorre fare attenzione ai domini. Invece, quando $T$ \e iniettivo possiamo facilmente definire {\em l'inverso} di $T$ come l'operatore
\[
(T(D),T^{-1})
\ \ : \ \
T^{-1}(Tu) := u
\ , \
\forall u \in D
\ ;
\]
rimarchiamo il fatto che potremmo avere $T^{-1} \in B(\mH)$ anche quando $T$ \e non limitato. Diciamo che $(D',T')$ {\em estende} $(D,T)$ se $D \subset D'$ e $T'|_D = T$, ed in tal caso scriviamo $(D,T) \prec (D',T')$; osserviamo che pu\'o capitare che $(D,T)$ abbia come estensione un operatore limitato.

Analogamente al caso limitato, diciamo che $u \in D$ \e un autovettore di $(D,T)$ se $Tu=\lambda u$ per qualche $\lambda \in \bC$.

Diciamo che $(D,T)$ \e {\em chiuso} se il grafico
\[
G(D,T)
:=
\{ u \oplus Tu \in D \oplus \mH \}
\]
\e un sottospazio chiuso di $\mH \oplus \mH$, ed in tal caso, grazie al teorema del grafico chiuso, abbiamo che se $D=\mH$ allora $T$ \e limitato. Preso un generico operatore $(D,T)$, diremo che esso \e {\em chiudibile} se la chiusura di $G(D,T)$ in $\mH \oplus \mH$ \e il grafico di un operatore $(\ovl D , \ovl T)$; chiaramente, in tal caso $(\ovl D , \ovl T)$ \e chiuso e per definizione estende $(D,T)$. Chiamiamo $(\ovl D , \ovl T)$ la {\em chiusura} di $(D,T)$.

\

\noindent \textbf{Operatori simmetrici ed autoaggiunti.} Preso $(D,T)$ {\em densamente definito}, per ogni $u \in \mH$ consideriamo l'applicazione lineare
\[
f_{T,u} : D \to \bK
\ \ , \ \
f_{T,u}(v) := ( u,Tv )
\]
(dove $\bK = \bR,\bC$ a seconda del caso reale o complesso), la quale non necessariamente \e limitata; {\em quando essa lo \`e}, essendo $D$ denso possiamo estendere $f_{T,u}$ per continuit\'a ad un funzionale di $\mH$, per cui il Teorema di Riesz ci assicura che esiste ed \e unico $T^*u \in \mH$ tale che
\begin{equation}
\label{def_agg}
(T^*u,v) = \left \langle f_{T,u} , v \right \rangle
\ \  ,\ \
\forall v \in \mH
\ .
\end{equation}
L'{\em operatore aggiunto} di $(D,T)$ si definisce come la coppia $(D^*,T^*)$, dove
\[
D^* := \{ u \in \mH : \| f_{T,u} \| < \infty  \}
\]
e $T^*u$ \e definito da (\ref{def_agg}); ovviamente in particolare troviamo
$( T^*u,v) = (u,Tv)$,
$\forall v \in D$,
il che giustifica la nostra terminologia. Osserviamo che in generale non \e detto che $(D^*,T^*)$ sia densamente definito.

\begin{prop}
\label{prop_OnL01}
Sia $(D,T)$ densamente definito. Allora $(D^*,T^*)$ \e chiuso.
\end{prop}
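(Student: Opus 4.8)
The plan is to show that the graph $G(D^*,T^*) := \{ u \oplus T^*u : u \in D^* \}$ is a closed subspace of the Hilbert space $\mH \oplus \mH$, the latter being equipped with the inner product $( u \oplus v , u' \oplus v' ) := (u,u') + (v,v')$. The key idea is that the orthogonal complement of any subset is closed (as established in the discussion preceding Lemma \ref{lem_pp}), so it suffices to exhibit $G(D^*,T^*)$ as the orthogonal complement of a suitable subspace. To this end I would introduce the map
\[
V : \mH \oplus \mH \to \mH \oplus \mH
\ \ , \ \
V(a \oplus b) := b \oplus (-a)
\ ,
\]
which is at once seen to be isometric and surjective, hence unitary.

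The central computation is then the following: for arbitrary $u,w \in \mH$ and $v \in D$,
\[
( u \oplus w , V(v \oplus Tv) ) = ( u \oplus w , Tv \oplus (-v) ) = (u,Tv) - (w,v)
\ ,
\]
so that $u \oplus w$ is orthogonal to $V(G(D,T))$ if and only if $(u,Tv) = (w,v)$ for every $v \in D$. The heart of the argument is to identify this orthogonality condition with membership in $G(D^*,T^*)$. If $(u,Tv) = (w,v)$ for all $v \in D$, then $f_{T,u}(v) = (u,Tv) = (w,v)$ is bounded by $\| w \|$ thanks to Cauchy-Schwarz (\ref{eq_CS}), whence $u \in D^*$; by the definition of the adjoint together with the Riesz theorem (Teo.\ref{thm_riesz}) one gets $T^*u = w$. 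Conversely, every $u \oplus T^*u$ satisfies the condition, since $(T^*u,v) = (u,Tv)$ for $v \in D$. This yields $G(D^*,T^*) = [V(G(D,T))]^\perp$, and closedness follows immediately.

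The delicate point — and the place where the density hypothesis is indispensable — is checking that $[V(G(D,T))]^\perp$ is genuinely the graph of a function, i.e. that $w$ is uniquely determined by $u$. If $u \oplus w_1$ and $u \oplus w_2$ both satisfy the orthogonality condition, then $(w_1-w_2,v)=0$ for all $v \in D$, and density of $D$ forces $w_1=w_2$. Without density the complement would still be closed but would fail to be a graph, so $T^*$ would not even be well defined as an operator; this is exactly the subtlety I expect to require the most care. As an equivalent alternative I could proceed sequentially: given $u_n \to u$ and $T^*u_n \to w$ with $u_n \in D^*$, passing to the limit in $(T^*u_n,v) = (u_n,Tv)$ gives $(w,v) = (u,Tv)$ for all $v \in D$, whence $u \in D^*$ and $T^*u = w$ — which is simply the same computation unwound into a limit argument.
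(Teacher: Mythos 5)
Your proof is correct and follows essentially the same route as the paper: both arguments hinge on the unitary rotation of $\mH \oplus \mH$ and on identifying $G(D^*,T^*)$ with the orthogonal complement of the (rotated) graph of $(D,T)$, which is automatically closed. The only cosmetic difference is that you write $G(D^*,T^*) = [V(G(D,T))]^\perp$ directly with $V$ unitary on all of $\mH \oplus \mH$, whereas the paper defines the isometry $U$ only on $G(D^*,T^*)$ onto $G(D,T)^\perp$ and transfers closedness through its inverse.
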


\begin{proof}[Dimostrazione]
Se $u \oplus z \in G(D,T)^\perp$ allora, per ogni $v \in D$, abbiamo
\[
0 = ( u \oplus z , v \oplus Tv ) = (u,v) + (z,Tv)
\ \Leftrightarrow \
f_{T,z}(v) = - (u,v)
\ .
\]
Essendo $D$ denso, l'uguaglianza precedente \e equivalente ad affermare che $z \in D^*$ e $T^*z = -u$. Possiamo dunque definire l'operatore
\begin{equation}
\label{eq_OnL01}
U : G(D^*,T^*) \to G(D,T)^\perp
\ \ , \ \
U(z \oplus T^*z) := (-T^*z \oplus z)
\ ,
\end{equation}
il quale chiaramente \e isometrico; del resto le considerazioni precedenti ci dicono che $U$ \e anche suriettivo e quindi invertibile. Ora, l'ortogonale di un sottospazio \e sempre chiuso, per cui, essendo $U$ invertibile ed isometrico, $G(D^*,T^*) = U^{-1}G(D,T)^\perp$ \e chiuso.
\end{proof}

\begin{cor}
\label{cor_OnL01}
Con le notazioni della proposizione precedente, si ha la decomposizione ortogonale
\[
\mH \oplus \mH
\ = \
\ovl{G(D,T)} \oplus UG(D^*,T^*)
\ .
\]
\end{cor}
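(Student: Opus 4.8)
The plan is to deduce the statement directly from the orthogonal decomposition theorem (\ref{eq_ort}), applied this time to the Hilbert space $\mH \oplus \mH$, reusing the structure of the isometry $U$ already exhibited in the proof of Prop.\ref{prop_OnL01}. First I would observe that $\mH \oplus \mH$ is itself a Hilbert space once equipped with the inner product $( u_1 \oplus u_2 , v_1 \oplus v_2 ) := ( u_1,v_1 ) + ( u_2,v_2 )$, whose associated norm is precisely the one with respect to which graphs are considered. Since $T$ is linear, $G(D,T) := \{ u \oplus Tu : u \in D \}$ is a vector subspace of $\mH \oplus \mH$, so the orthogonal decomposition (\ref{eq_ort}) yields at once
\[
\mH \oplus \mH = \ovl{G(D,T)} \oplus G(D,T)^\perp .
\]

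The only remaining step is to identify $G(D,T)^\perp$ with $UG(D^*,T^*)$. This identification is, however, exactly what was established in the course of proving Prop.\ref{prop_OnL01}: the map $U : G(D^*,T^*) \to G(D,T)^\perp$, $U(z \oplus T^*z) := -T^*z \oplus z$, was shown there to be isometric and surjective, its surjectivity being precisely the content of the equivalence $u \oplus z \in G(D,T)^\perp \Leftrightarrow z \in D^*$ and $T^*z = -u$. Consequently $UG(D^*,T^*) = G(D,T)^\perp$, and substituting this into the display above gives the claimed decomposition.

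I do not anticipate a genuine obstacle here: the whole content of the corollary is already contained in the proof of the preceding proposition and only needs to be repackaged. The single point deserving a word of care is making explicit that $\mH \oplus \mH$ carries the Hilbert-space structure required to invoke (\ref{eq_ort}), and that $G(D,T)$ is indeed a (not necessarily closed) subspace, so that passing to its closure $\ovl{G(D,T)}$ in the decomposition is legitimate; both are immediate from the linearity of $T$ and the definition of the inner product on the direct sum.
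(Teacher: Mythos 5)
Your argument is correct and is essentially the paper's own: the paper likewise combines the identification $G(D,T)^\perp = UG(D^*,T^*)$ from (\ref{eq_OnL01}) with the orthogonal decomposition of $\mH \oplus \mH$ relative to the subspace $G(D,T)$ (phrased there via $G(D,T)^{\perp\perp} = \ovl{G(D,T)}$). Your explicit remarks on the Hilbert structure of $\mH \oplus \mH$ and the linearity of $G(D,T)$ are fine but add nothing beyond what the paper takes for granted.
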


\begin{proof}[Dimostrazione]
Usando (\ref{eq_OnL01}) gi\'a sappiamo che $G(D,T)^\perp = UG(D^*,T^*)$. Del resto $G(D,T)^{\perp \perp} = \ovl{G(D,T)}$ e ci\'o conclude la dimostrazione. 
\end{proof}

Un operatore densamente definito $(D,T)$ si dice {\em simmetrico} se 
\[
(Tu,v) = (u,Tv)
\ \ , \ \
\forall u,v \in D
\ .
\]
L'espressione precedente ci dice che ogni $u \in D$ \e tale che $f_{T,u}$ \e limitato e $Tu = T^*u$. Dunque, $(D,T)$ \e simmetrico se e solo se $(D,T) \prec (D^*,T^*)$; ci\'o implica che anche $(D^*,T^*)$ \e densamente definito.
Se $(D,T)$ \e simmetrico allora possiede solo autovalori reali, infatti se $u \in D$ e $Tu=\lambda u$, $\lambda \in \bC$, allora troviamo
\[
(u,Tu) = \lambda (u,u) = (Tu,u) = \ovl \lambda (u,u) \ .
\]

\begin{defn}
Un operatore simmetrico $(D,T)$ si dice: (1) \textbf{autoaggiunto}, se $D=D^*$ (il che implica $T=T^*$); (2) \textbf{essenzialmente autoaggiunto}, se la chiusura $(\ovl D,\ovl T)$ \e un operatore autoaggiunto.
\end{defn}

\begin{cor}
Ogni operatore simmetrico \e chiudibile, ed ogni operatore autoaggiunto \e chiuso.
\end{cor}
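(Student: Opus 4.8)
The plan is to reduce both assertions to the fact already proved, namely that the adjoint of a densely defined operator is closed (Prop.\ref{prop_OnL01}). First I would recall the graph characterization of closability: an operator $(D,T)$ is closable precisely when the closure $\ovl{G(D,T)}$ of its graph inside $\mH \oplus \mH$ is again the graph of an operator. By linearity this happens if and only if $\ovl{G(D,T)}$ contains no element of the form $0 \oplus w$ with $w \neq 0$; that is, a linear subspace of $\mH \oplus \mH$ is a graph exactly when it is single-valued over $0$.

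For a symmetric operator $(D,T)$ I would use the equivalence, noted just before the statement, that symmetry means $(D,T) \prec (D^*,T^*)$, i.e. $G(D,T) \subseteq G(D^*,T^*)$. Since $(D^*,T^*)$ is closed by Prop.\ref{prop_OnL01}, its graph is a closed subspace of $\mH \oplus \mH$, so passing to closures gives $\ovl{G(D,T)} \subseteq G(D^*,T^*)$. Being itself a graph, $G(D^*,T^*)$ contains no nonzero element $0 \oplus w$, and therefore neither does the smaller space $\ovl{G(D,T)}$. By the characterization above $\ovl{G(D,T)}$ is a graph, so $(D,T)$ is closable, and its closure is exactly the restriction of $(D^*,T^*)$ to this graph.

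For the self-adjoint case the argument is immediate: if $(D,T)$ is self-adjoint then $D=D^*$ and $T=T^*$, so $(D,T)$ coincides with its own adjoint, which is closed by Prop.\ref{prop_OnL01}; hence $(D,T)$ is closed.

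The only genuinely delicate point is the graph characterization of closability together with the observation that any linear subspace contained in a graph is automatically single-valued over $0$; once this lemma is isolated, both parts follow formally and no estimates are required. I would be careful that the closure in the first part is taken in $\mH \oplus \mH$ (not merely in the domain) and that single-valuedness is stated as a property of linear subspaces, so that it transfers from $G(D^*,T^*)$ to the subspace $\ovl{G(D,T)}$.
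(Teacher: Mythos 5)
La tua dimostrazione \e corretta e segue essenzialmente la stessa strada del testo, che si limita a dire che il corollario segue immediatamente da Prop.\ref{prop_OnL01}: usi l'inclusione dei grafici $G(D,T) \subseteq G(D^*,T^*)$ data dalla simmetria, la chiusura dell'aggiunto, e il fatto che un sottospazio lineare contenuto in un grafico \e esso stesso un grafico. L'unica differenza \e che espliciti i dettagli (in particolare il criterio di univocit\'a sopra $0$) che il testo lascia al lettore.
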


\begin{proof}[Dimostrazione]
Segue immediatamente da Prop.\ref{prop_OnL01}.
\end{proof}

\begin{rem}
\label{oss_OA}
{\it
Se $(D,T)$ \e autoaggiunto ed $S = S^* \in B(\mH)$ allora $(D,T+S)$ \e autoaggiunto; infatti
\[
| f_{T,u}(v) | - \| u \| \| S \| \| v \|
\ \leq \ 
| f_{T+S,u}(v) |
\ \leq \
| f_{T,u}(v) | + \| u \| \| S \| \| v \|
\ \ , \ \
\forall u \in \mH
\ , \
v \in D
\ ,
\]
per cui $\{ u \in B(\mH) : \| f_{T+S,u} \| < \infty \} = D^* = D$.
}
\end{rem}

\begin{ex}[La derivata]
\label{ex_dx}
{\it
Consideriamo lo spazio di Hilbert complesso $\mH := L^2([0,1],\bC)$ e l'operatore di Volterra
\[
Fu (x) := i \int_0^x u(t) \ dt
\ \ , \ \
u \in \mH
\ .
\]
Se $u \in \mH$ \e tale che $Fu = 0$, allora per ogni $x,y \in (0,1)$ troviamo
\[
0 = ( \chi_{[x,y]} , u ) = \int_x^y u = -i (Fu(y)-Fu(x)) = 0
\ ,
\]
dunque $Fu$ \e ortogonale ad ogni funzione a gradini; poich\'e l'insieme delle funzioni a gradini \e denso in $\mH$ concludiamo che $Fu=0$, cosicch\'e $F$ \e iniettivo. Inoltre $F$ ha immagine $F(\mH)$ densa in $\mH$, in quanto essa contiene il sottoinsieme (denso)
$\{ f \in C^1([0,1],\bC) : f(0)=0 \} \subset \mH$.
Definiamo dunque l'operatore
\[
D := \{ Fu+z \ : \ z \in \bC \ , \ u \in \mH \ , \ Fu(0)=Fu(1)=0 \}
\ \ , \ \
T(Fu+z) := u
\ .
\]
Si verifica facilmente (semplice esercizio sugli integrali multipli!) che $(D,T)$ \e simmetrico e che
\[
D^* = \{ Fu+z : z \in \bC , u \in \mH \} \ ,
\]
cosicch\'e $(D,T)$ non \e autoaggiunto (per dettagli si veda \cite[\S 5.1.16]{Ped}). Si osservi che, applicando il teorema fondamentale del calcolo,
\[
Tg = -i g'
\ \ , \ \
\forall g \in D \cap C^1([0,1],\bC)
\ .
\]
}
\end{ex}

\begin{lem}
Se $S \in B(\mH)$ \e autoaggiunto e iniettivo allora $(D,S^{-1})$, $D := S(\mH)$, \e un operatore autoaggiunto.
\end{lem}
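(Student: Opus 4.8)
The plan is to verify the two defining requirements of an \emph{autoaggiunto} operator: that $(D,S^{-1})$ be densely defined and symmetric, and that its domain coincide with the domain of its adjoint. Throughout, the inverse $S^{-1}:D\to\mH$ is understood in the sense of the inverse operator introduced above, which is legitimate precisely because $S$ is iniettivo, so that $S^{-1}(Sx)=x$ for every $x\in\mH$.

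First I would establish that $D=S(\mH)$ is dense, so that $(D,S^{-1})$ is densely definito and the adjoint construction applies. Since $S$ is iniettivo we have $\ker S=\{0\}$, and since $S=S^*$ the relation (\ref{eq_kerim}) yields
\[
\ovl{S(\mH)}=\{\ker S^*\}^\perp=\{\ker S\}^\perp=\{0\}^\perp=\mH.
\]
Thus $D$ is denso in $\mH$ and $(D^*,(S^{-1})^*)$ is well defined.

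Next I would check symmetry. Given $u,v\in D$, write $u=Sx$ and $v=Sy$ with $x,y\in\mH$; then, using $S=S^*$,
\[
(S^{-1}u,v)=(x,Sy)=(Sx,y)=(u,S^{-1}v).
\]
Hence $(D,S^{-1})$ is simmetrico, which already gives $(D,S^{-1})\prec(D^*,(S^{-1})^*)$, that is $D\subseteq D^*$ and $(S^{-1})^*|_D=S^{-1}$.

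The crux is the opposite inclusion $D^*\subseteq D$, and this is the step I expect to be the main obstacle, since it is exactly where the injectivity of $S$ and the self-adjointness $S=S^*$ must combine to prevent the adjoint domain from being strictly larger. Let $w\in D^*$ and set $z:=(S^{-1})^*w$, so that $(z,v)=(w,S^{-1}v)$ for every $v\in D$. Writing an arbitrary $v\in D$ as $v=Sx$ with $x\in\mH$ and using $S^{-1}v=x$ together with $S=S^*$, this becomes $(Sz,x)=(z,Sx)=(w,x)$ for all $x\in\mH$, whence $w=Sz\in S(\mH)=D$ and $z=S^{-1}w$. Therefore $D^*=D$ and $(S^{-1})^*w=z=S^{-1}w$ on $D$, so $(D,S^{-1})$ is autoaggiunto.
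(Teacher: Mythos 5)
Your proof is correct and follows essentially the same route as the paper's: density of $D$ via $\ovl{S(\mH)}=\{\ker S^*\}^\perp=\mH$, the same symmetry computation, and the same argument for $D^*\subseteq D$ (the paper's $w'$ is your $z$, and the identity $(Sw',S^{-1}u)=(w,S^{-1}u)$ for all $u\in D$ is just your $(Sz,x)=(w,x)$ for all $x\in\mH$ rewritten). No substantive difference.
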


\begin{proof}[Dimostrazione]
Innanzitutto osserviamo che avendosi 
\[
\ovl{D} \ \stackrel{Lemma \ref{lem_pp}}{=} \
D^{\perp \perp} \ \stackrel{ (\ref{eq_kerim}) }{=} \
(\ker S)^\perp \ = \
\{ 0 \}^\perp \ = \
\mH
\ ,
\]
concludiamo che $D$ \e denso in $\mH$.
Per ogni $u:= Su_0$, $v := Sv_0 \in D$ osserviamo che 
\[
(u,S^{-1}v) = (Su_0,v_0) = (u_0,Sv_0) = (S^{-1}u,v) \ ,
\]
cosicch\'e $(D,S^{-1})$ \e simmetrico e quindi $D \subseteq D^*$, il dominio dell'aggiunto $(D^*,S^{-1,*})$. D'altro canto, $w \in \mH$ appartiene a $D^*$ se e solo se esiste $w' \in \mH$ tale che
\[
(w',u) = (w,S^{-1}u)
\ \ , \ \
\forall u \in D
\ .
\]
Ma
\[
(w',u) = (w',Su_0) = (Sw',u_0) = (Sw',S^{-1}u)
\ \Rightarrow \
(w,S^{-1}u) = (Sw',S^{-1}u)
\ \ , \ \
\forall u \in D
\ ,
\]
per cui avendosi $\mH = \{ S^{-1}u , u \in D \}$ concludiamo che $w = Sw'$ e quindi $w \in D$. Da ci\'o si deduce $D=D^*$ e ci\'o conclude la dimostrazione.
\end{proof}

\begin{thm}[von Neumann]
\label{thm_OnL01}
Sia $(D,T)$ densamente definito e chiuso. Allora esiste un sottospazio $D_+ \subseteq D$, denso in $\mH$, tale che $(D_+,T^*T)$ \e autoaggiunto e
$1+T^*T : D_+ \to \mH$
\e biettivo, con $(1+T^*T)^{-1} \in B(\mH)$.
\end{thm}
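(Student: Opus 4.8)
The plan is to derive the whole statement from the orthogonal decomposition of Corollary \ref{cor_OnL01}. Since $(D,T)$ is closed we have $\ovl{G(D,T)} = G(D,T)$, so that decomposition reads
\[
\mH \oplus \mH = G(D,T) \oplus UG(D^*,T^*),
\qquad U(z \oplus T^*z) = (-T^*z) \oplus z .
\]
First I would feed a vector of the form $h \oplus 0$, with $h \in \mH$ arbitrary, into this decomposition: there are unique $u \in D$ and $z \in D^*$ with $h \oplus 0 = (u \oplus Tu) + \big((-T^*z) \oplus z\big)$. Reading off the second component gives $z = -Tu$, so in particular $Tu \in D^*$; reading off the first and substituting yields $h = u - T^*z = u + T^*Tu = (1+T^*T)u$. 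Setting $D_+ := \{ u \in D : Tu \in D^* \}$, this already shows that $1+T^*T : D_+ \to \mH$ is a bijection. I will write $B$ for its inverse.

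Next I would establish that $B$ is a bounded, self-adjoint, injective operator on $\mH$. Writing $u = Bh \in D_+$ and using the adjoint relation $(T^*(Tu),u) = (Tu,Tu)$, one computes $(u,h) = (u,(1+T^*T)u) = \|u\|^2 + \|Tu\|^2 \geq \|u\|^2$; together with Cauchy--Schwarz $|(u,h)| \leq \|u\|\,\|h\|$ this forces $\|Bh\| = \|u\| \leq \|h\|$, so $B \in B(\mH)$ with $\|B\| \leq 1$. For self-adjointness, with $u = Bh$, $v = Bk$ the same adjoint relation gives $(Bh,k) = (u,v) + (Tu,Tv)$, an expression that is conjugate-symmetric in $(h,k)$; hence $(Bh,k) = (h,Bk)$ and $B = B^*$. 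Injectivity of $B$ is immediate, being the inverse of a bijection.

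Now I can invoke the Lemma immediately preceding this theorem (the one stating that for $S = S^* \in B(\mH)$ injective the operator $(S(\mH), S^{-1})$ is self-adjoint) applied to $S = B$: it gives that $(B(\mH), B^{-1}) = (D_+, 1+T^*T)$ is self-adjoint, and its proof simultaneously delivers the density of $D_+$, since $\ovl{D_+} = (\ker B)^\perp = \mH$. Finally, because the identity $1$ is a bounded self-adjoint operator, Remark \ref{oss_OA} shows that $(D_+, T^*T) = (D_+, (1+T^*T) - 1)$ is self-adjoint as well, which completes the proof; and $(1+T^*T)^{-1} = B \in B(\mH)$ has already been obtained.

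The substantive step — where the theorem really lives — is the very first one: recognizing that applying Corollary \ref{cor_OnL01} to $h \oplus 0$ produces at one stroke both the surjectivity of $1+T^*T$ and the explicit identity $h = (1+T^*T)u$. Everything afterwards (the energy estimate, the symmetry computation, and the appeals to the preceding Lemma and to Remark \ref{oss_OA}) is routine. The one point demanding care is the domain bookkeeping: one must verify that the vector $u$ obtained lies in $D$ and that $Tu \in D^*$, so that $T^*Tu$ is actually defined — and this is exactly what the two components of the orthogonal decomposition guarantee.
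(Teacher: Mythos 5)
Your proof is correct and follows essentially the same route as the paper: both apply the decomposition of Corollary \ref{cor_OnL01} to vectors $h \oplus 0$ to obtain the bijectivity of $1+T^*T$ on $D_+$, then deduce self-adjointness via the Lemma on inverses of bounded injective self-adjoint operators and Remark \ref{oss_OA}. The only (immaterial) difference is that you verify $\|B\| \leq 1$ and $B = B^*$ directly from the energy identity $(u,(1+T^*T)u) = \|u\|^2 + \|Tu\|^2$, whereas the paper deduces $\|S\| \leq 1$ from $S$ being a composition of projections and isometries and $S=S^*$ from $(S^*u,u) \in \bR$.
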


\begin{proof}[Dimostrazione]
Poich\'e $T$ \e chiuso abbiamo 
$G(D,T) = \ovl{G(D,T)}$
e grazie a Cor.\ref{cor_OnL01} possiamo introdurre gli operatori
\[
\mH \oplus \mH \stackrel{P}{\to} G(D,T)  \stackrel{P'}{\to} D
\ \ , \ \
\mH \oplus \mH \stackrel{1-P}{\to} UG(D^*,T^*) \stackrel{P'U^{-1}}{\to} D^*
\ ,
\]
dove $P$ \e il proiettore su $G(D,T)$, $1 \in B(\mH \oplus \mH)$ \e l'identit\'a e $P'$ \e la proiezione di $\mH \oplus \mH$ sulla prima componente della somma diretta. Definiamo gli operatori
\[
\left\{
\begin{array}{ll}
S : \mH \to D   \ \ , \ \ Su := P'P(u \oplus 0)
\\
R : \mH \to D^* \ \ , \ \ Ru := \{ P'U^{-1}(1-P) \} (u \oplus 0)
\ .
\end{array}
\right.
\]
Essendo $R , S$ composizioni di proiettori ed operatori isometrici troviamo
$\| R \| , \| S \| \leq 1$.
Applicando la decomposizione ortogonale di Cor.\ref{cor_OnL01} e (\ref{eq_OnL01}), per ogni $u \in \mH$ abbiamo
\[
u \oplus 0 =
(Su \oplus TSu) + (T^*Ru \oplus -Ru ) = 
(S+T^*R)u \oplus (TS-R)u
\ ,
\]
cosicch\'e
\begin{equation}
\label{eq_OnL02}
u = (S+T^*R)u
\ \ , \ \
0 = (TS-R)u
\ \Rightarrow \
R = TS
\ \ , \ \
(1+T^*T)S = 1
\ .
\end{equation}
Posto $D_+ := S(\mH) \subseteq D$, queste ultime uguaglianze ci dicono che:
(1) $T(D_+) \subseteq R(\mH) \subseteq D_*$, cosicch\'e $1+T^*T$ \e ben definito su $D_+$;
(2) $S$ \e iniettivo, e quindi anche $1+T^*T$ \e iniettivo su $D_+$;
(3) l'immagine di $1+T^*T$ coincide con $\mH$.
Inoltre troviamo
\[
(S^*u,u) = 
(S^*(1+T^*T)Su,u) = 
\| Su \|^2 + \| Ru \|^2 \in \bR \ ,
\]
cosicch\'e $S = S^*$. Usando il Lemma precedente otteniamo che $(D_+,S^{-1})$ \e autoaggiunto, e confrontando con l'ultima uguaglianza di (\ref{eq_OnL02}) concludiamo che $(D_+,S^{-1}) = (D_+,1+T^*T)$, cosicch\'e $(D_+,1+T^*T)$ \e autoaggiunto. Infine, Oss.\ref{oss_OA} implica che anche $(D_+,T^*T)$ \e autoaggiunto.
\end{proof}

\begin{ex}{\it
Consideriamo l'operatore simmetrico $(D,T)$ dell'Esempio \ref{ex_dx}. Abbiamo che $(D,T)$ \e densamente definito, ed \e semplice verificare che esso \e anche chiuso. D'altra parte troviamo
\[
T^*T u = - u''
\ \ , \ \
\forall u \in D \cap C^2([0,1],\bC)
\ ,
\]
per cui il teorema di von Neumann implica che l'operatore di derivata seconda ammette un'estensione autoaggiunta. Sempre invocando il teorema di von Neumann, concludiamo che per ogni $f \in \mH$ esiste un unico $u \in D$ tale che $u + T^*T u = f$. Se, in particolare, $u \in C^2([0,1],\bC)$ allora $u$ \e soluzione unica dell'equazione
\[
u-u'' = f 
\ \ , \ \
u \in D
\ .
\]
Notare che il precedente problema differenziale presenta delle condizioni al bordo ($u(0) = u(1)$), "nascoste" nella definizione del dominio $D$.
}
\end{ex}

Un operatore simmetrico $(D,T)$ si dice {\em semicoercitivo} se esiste $\alpha \geq 0$ tale che
$(Tu,u) \geq \alpha \| u \|^2$ 
per ogni $u \in D$; in particolare diciamo che $T$ \e {\em coercitivo} se $\alpha > 0$. Ad esempio, ogni operatore {\em positivo}, ovvero tale che $(Tu,u) \geq 0$, $\forall u \in D$, \e semicoercitivo (con $\alpha = 0$). Per segnalare l'importanza della nozione di coercitivit\'a nella teoria delle equazioni alle derivate parziali invitiamo a dare un'occhiata alle sezioni \ref{sec_StLM} e \ref{sec_SobEDP}. E' possibile dimostrare che ogni operatore simmetrico semicoercitivo ha un'estensione autoaggiunta; questo risultato \e noto come {\em l'estensione di Friedrichs} (\cite[\S 5.1.13]{Ped}).

\begin{ex}[L'operatore di Laplace]
\label{ex_laplace}
{\it
Sia $\Omega \subseteq \bR^d$ aperto. Sullo spazio di Hilbert $L^2(\Omega)$ definiamo l'operatore
\[
D := C_c^\infty(\Omega)
\ \ , \ \
Lu := - \Delta u
\ , \
\forall u \in D 
\ .
\]
Le formule di Green implicano che 
$( u,Lu ) = \int \nabla u \cdot \nabla u \geq 0$, $\forall u \in D$,
per cui usando il teorema di Friedrichs concludiamo che $(D,L)$ ammette un'estensione autoaggiunta. Il dominio di questa estensione \e noto come lo spazio di Sobolev $H^2(\Omega)$ (vedi \S \ref{sec_sobolev}).
}
\end{ex}

\

\noindent \textbf{La trasformata di Cayley.} In questo paragrafo consideriamo spazi di Hilbert {\em complessi}. Sia $(D,T)$ un operatore simmetrico e $\lambda = x+iy \in \bC$; allora anche $(D,T-x 1)$ \e simmetrico e per ogni $u \in D$ risulta
\[
\| (T - \lambda 1)u \|^2 \ =  \
(  (T - x1)u - iyu  ,  (T - x1)u - iyu   ) \ = \
\| (T - x       1)u \|^2 + y^2 \| u \|^2 \ \geq \
y^2 \| u \|^2
\ .
\]
Cosicch\'e $T-\lambda 1$ \e iniettivo {\em quando $y \neq 0$}, e l'operatore
\[
(T-\lambda 1)^{-1} : \{ T - \lambda 1 \}(D) \to \mH
\]
\e limitato con norma $\leq y^{-1}$. Definiamo la {\em trasformata di Cayley} di $(D,T)$ come l'operatore lineare
\begin{equation}
\label{def_cayley}
\mC T : \{ T+i1 \}(D) \to \{ T-i1 \}(D)
\ \ , \ \
\mC T := (T-i1)(T+i1)^{-1} 
\ .
\end{equation}
\begin{lem}
\label{lem_cayley}
Si hanno le seguenti propriet\'a:
\textbf{(1)} $\mC T$ \e isometrico e suriettivo, per cui si estende ad un operatore $\ovl{\mC} T \in B(\mH)$, isometrico su $\ovl{\{ T+i1 \}(D)}$ e nullo sul suo complementare in $\mH$
{\footnote{Operatori di questo tipo sono detti \textbf{isometrie parziali}. Si osservi che il nucleo di un'isometria parziale \e per costruzione il complementare del sottospazio sul quale essa \e isometrica.}}; 
\\
\textbf{(2)} L'operatore $1-\mC T$ \e iniettivo e con immagine $D$, in maniera tale che
\begin{equation}
\label{rel_cayley}
i(1+ \mC T)( 1-\mC T )^{-1} = T \ .
\end{equation}
\end{lem}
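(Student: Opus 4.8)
The plan is to extract everything from the single algebraic identity that symmetry of $(D,T)$ forces on the norms of $(T \pm i1)u$. First I would record that, for every $u \in D$, expanding the sesquilinear product (antilinear in the first slot, as in (\ref{eq.sesq})) and using $(Tu,u)=(u,Tu)$ gives
\[
\| (T \pm i1)u \|^2 \ = \ \| Tu \|^2 + \| u \|^2
\ \ , \ \
u \in D \ ,
\]
the two cross terms $\pm i(Tu,u) \mp i(u,Tu)$ cancelling by symmetry. This identity is the engine of the whole lemma: it shows at once that $T \pm i1$ are injective on $D$ (so that $(T+i1)^{-1}$, and hence $\mC T$, is well defined on $\{T+i1\}(D)$) and that $\| (T+i1)u \| = \| (T-i1)u \|$.

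For part (1), I would note that $\mC T$ is surjective onto $\{T-i1\}(D)$ by construction: writing an arbitrary $w \in \{T+i1\}(D)$ uniquely as $w=(T+i1)u$, one has $\mC T\, w = (T-i1)u$, and as $u$ runs over $D$ the image runs over all of $\{T-i1\}(D)$. The displayed identity gives $\| \mC T\, w \| = \| (T-i1)u \| = \| (T+i1)u \| = \| w \|$, so $\mC T$ is isometric. Being a bounded operator, it extends by continuity to an isometry on the closed subspace $\ovl{\{T+i1\}(D)}$; setting it equal to $0$ on the orthogonal complement $(\{T+i1\}(D))^\perp$ (using the orthogonal decomposition (\ref{eq_ort})) yields $\ovl{\mC}T \in B(\mH)$, a partial isometry whose kernel is exactly that complement.

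For part (2), the key is the pair of elementary computations, valid for $w=(T+i1)u$,
\[
(1-\mC T)w = (T+i1)u - (T-i1)u = 2iu
\ \ , \ \
(1+\mC T)w = (T+i1)u + (T-i1)u = 2Tu \ .
\]
From the first equality, $(1-\mC T)w=0$ forces $u=0$ and hence $w=0$, so $1-\mC T$ is injective; moreover, as $u$ ranges over $D$ the vector $2iu$ ranges over $D$, so the image of $1-\mC T$ is exactly $D$. Thus $(1-\mC T)^{-1}: D \to \{T+i1\}(D)$ is defined, and composing the two identities: for $v := 2iu \in D$ one has $(1-\mC T)^{-1}v = (T+i1)u$, whence $i(1+\mC T)(1-\mC T)^{-1} v = i\,(1+\mC T)w = i\cdot 2Tu = T(2iu) = Tv$, which is (\ref{rel_cayley}).

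The computations are essentially routine; the only points demanding care are the bookkeeping of domains (the claims about $1-\mC T$ concern the unextended $\mC T$ on $\{T+i1\}(D)$, not the partial isometry $\ovl{\mC}T$) and the correct handling of the sesquilinearity convention, so that the cross terms in the opening identity genuinely cancel rather than reinforce. The one mild structural obstacle is justifying that the continuous extension to $\ovl{\{T+i1\}(D)}$ together with $0$ on its complement is a well-defined bounded operator, which is precisely where the orthogonal decomposition (\ref{eq_ort}) enters.
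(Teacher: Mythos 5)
Your proof is correct and follows essentially the same route as the paper: the identity $\| (T\pm i1)u \|^2 = \|Tu\|^2 + \|u\|^2$ drives both the isometry/extension argument in (1) and the computations $(1-\mC T)(T+i1)u = 2iu$, $(1+\mC T)(T+i1)u = 2Tu$ that yield (2). The only cosmetic difference is that the paper proves injectivity of $1-\mC T$ by noting $\mC T v = v$ forces $u=-u$, while you read it off from $(1-\mC T)w = 2iu$; the content is identical.
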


\begin{proof}[Dimostrazione]
Le stesse considerazioni fatte appena prima della definzione di $\mC T$ mostrano che
\[
\| (T+i1)u \|^2 = \| Tu \|^2 + \| u \|^2 = \| (T-i1)u \|^2
\ \ , \ \
\forall u \in D
\ ,
\]
cosicch\'e per ogni $v = (T+i1)u$ abbiamo $\| \mC T v \| = \| v \|$ e $\mC T$ si estende per continuit\'a a $\ovl{\{ T+i1 \}(D)}$. Possiamo quindi definire 
$\ovl{\mC} T v := \mC T \circ Pv$, $v \in \mH$,
dove $P \in B(\mH)$ \e il proiettore sul sottospazio 
$\ovl{\{ T+i1 \}(D)} \subseteq \mH$; 
ci\'o mostra il punto (1)
{\footnote{Nel seguito identificheremo $\mC T$ con $\ovl{\mC} T$.}}.
Riguardo il punto (2) osserviamo che
\[
\mC T v = v
\ \Leftrightarrow \
(T-i1)u = (T+i1)u
\ \Leftrightarrow \
u=-u = 0
\ \Leftrightarrow \
v=0
\ \ , \ \
\forall v = (T+i1)u
\ ,
\]
per cui $1-\mC T$ \e iniettivo. Per valutare l'immagine di $1- \mC T$ poniamo 
$W := \{ T+i1 \}(D)$ 
e calcoliamo
\[
\left\{
\begin{array}{ll}
1|_W - \mC T =
(T+i1)(T+i1)^{-1} - (T-i1)(T+i1)^{-1} = 
2i(T+i1)^{-1}
\\
1|_W + \mC T = 2T(T+i1)^{-1} \ ,
\end{array}
\right.
\]
da cui segue che
\[
\{ 1 - \mC T \} (Tu+iu) = 2iu
\ \ , \ \
\forall u \in D
\ \Rightarrow \
\{ 1 - \mC T \}(W) = D
\ ,
\]
nonch\'e
\[
i(1+\mC T)(1-\mC T)^{-1} = 2iT(T+i1)^{-1} (2i)^{-1}(T+i1) = T
\ .
\]
\end{proof}

Il seguente risultato mostra il vantaggio apportato dalla trasformata di Cayley, consistente nella possibilit\'a di descrivere gli operatori simmetrici (non limitati) in termini di isometrie parziali:
\begin{thm}
\label{thm_cayley}
La trasformata di Cayley definisce una corrispondenza biunivoca dall'insieme degli operatori simmetrici su quello delle isometrie parziali $U \in B(\mH)$ tali che $(1-U)|_{\ker U^\perp}$ ha immagine densa in $\mH$. Inoltre, se $(D,T) \prec (D',T')$ allora $\mC T'u = \mC T u$ per ogni $u \in \{ T+i1 \}(D) \subseteq \{ T'+i1 \}(D')$.
\end{thm}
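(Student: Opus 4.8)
The plan is to handle the ``forward'' direction with Lemma~\ref{lem_cayley} and to concentrate the real work on inverting the Cayley transform. Lemma~\ref{lem_cayley} already gives that for a symmetric $(D,T)$ the map $\mC T$ extends to a partial isometry $\ovl{\mC}T \in B(\mH)$ with initial space $\ker(\ovl{\mC}T)^\perp = \ovl{\{ T+i1 \}(D)}$, and that $1-\mC T$ is injective with image exactly $D$. Since a symmetric operator is by definition densely defined, $D$ is dense, and because the image of $1-\ovl{\mC}T$ restricted to the initial space contains $D$, the hypothesis that $(1-U)|_{\ker U^\perp}$ has dense image is met by $U = \ovl{\mC}T$. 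Thus the Cayley transform does land in the asserted class of partial isometries.

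For surjectivity, suppose $U \in B(\mH)$ is a partial isometry with initial space $W := \ker U^\perp$ (so $U$ is isometric on $W$ and vanishes on $W^\perp$) and with $\{ 1-U \}(W)$ dense. The first and most delicate step is to deduce that $1-U$ is injective on $W$ from the density alone. If $u \in W$ satisfies $Uu=u$, then for every $w \in W$ the isometry of $U$ on $W$ yields $( \{ 1-U \}w , u ) = (w,u) - (Uw,Uu) = (w,u)-(w,u) = 0$, while for $w \in W^\perp$ one has $\{ 1-U \}w = w$, which is orthogonal to $u \in W$. Hence $u$ is orthogonal to the entire image of $1-U$, and since that image contains the dense set $\{ 1-U \}(W)$, we get $u=0$. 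This permits the definitions $D := \{ 1-U \}(W)$, dense in $\mH$, and $T := i(1+U)\{ 1-U \}^{-1}$ on $D$.

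It then remains to verify three things, all by direct computation. Symmetry: writing $u = \{ 1-U \}w$ and $u' = \{ 1-U \}w'$ with $w,w' \in W$, both $(Tu,u')$ and $(u,Tu')$ collapse, after invoking $(Uw,Uw')=(w,w')$, to the common value $i(w,Uw')-i(Uw,w')$. The identity $\mC T=U$: one computes $\{ T+i1 \}u = 2iw$ and $\{ T-i1 \}u = 2iUw$, so $\{ T+i1 \}(D)=W$ and $\mC T(2iw)=2iUw=U(2iw)$, and since $\mC T$ and $U$ both vanish on $W^\perp$ they agree on all of $\mH$. Closedness: a short graph argument (if $\{ 1-U \}w_n \to u$ and $i(1+U)w_n \to v$ then $2w_n \to u-iv$, forcing $w_n \to w \in W$, hence $u = \{1-U\}w \in D$ and $Tu=v$) shows the reconstructed $(D,T)$ is automatically closed. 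The recovery formula~(\ref{rel_cayley}) then reconstructs $T$ and its domain from $U$; since the reconstruction always produces a \emph{closed} operator and, by the monotonicity below, a symmetric operator shares its Cayley transform with its closure, the injectivity in the correspondence is to be read on closed symmetric operators, on which the two constructions are mutually inverse. Finally the monotonicity claim is immediate: if $(D,T) \prec (D',T')$ then $(T+i1)u=(T'+i1)u$ for $u \in D$, whence $\{ T+i1 \}(D) \subseteq \{ T'+i1 \}(D')$, and applying $\mC T'$ to such a vector returns $(T'-i1)u=(T-i1)u=\mC T u$.

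I expect the main obstacle to be the injectivity of $1-U$ on the initial space $W$: unlike in a Hilbert-space direct sum where one has an explicit orthogonal complement at hand, here injectivity must be extracted from the density of $\{ 1-U \}(W)$ through the orthogonality computation above. A secondary point demanding care is the bookkeeping that identifies the initial space of the reconstructed $\mC T$ with $W$ and matches domains exactly; here the closedness of the constructed $T$, furnished by the graph argument, is precisely what makes the recovery sharp and keeps the correspondence from conflating a symmetric operator with a proper extension.
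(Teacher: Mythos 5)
Your proof follows the same route as the paper's: the injectivity of $1-U$ on $\ker U^\perp$ is extracted from the density of $\{1-U\}(\ker U^\perp)$ by exactly the orthogonality computation the paper uses, and the reconstruction $D := \{1-U\}(\ker U^\perp)$, $T(1-U)v := i(1+U)v$, together with the verifications of symmetry and of $\mC T = U$, coincides with the paper's argument (which leaves those verifications as ``semplici conti''). Your additional observation that the reconstructed operator is automatically closed --- so that the bijection is properly read between \emph{closed} symmetric operators and the stated partial isometries, a non-closed symmetric operator sharing its extended Cayley transform with its closure --- is correct and sharpens a point that the paper's statement and its one-line appeal to (\ref{rel_cayley}) for injectivity leave implicit.
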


\begin{proof}[Dimostrazione]
Che l'applicazione $\{ (T,D) \mapsto \mC T \}$ sia iniettiva segue da (\ref{rel_cayley}). D'altra parte \e ovvio che se $(T',D')$ estende $(T,D)$ allora $\mC T'$ estende $\mC T$ nel senso dell'enunciato, per cui rimane da verificare solo che la trasformata di Cayley \e suriettiva.
A tale scopo consideriamo un'isometria parziale $U \in B(\mH)$ tale che $(1-U)|_{\ker U^\perp}$ ha immagine densa; per definizione di isometria parziale abbiamo
\[
(Uv,Uv') = (v,v')
\ \ , \ \
\forall v,v' \in \ker U^\perp
\ ,
\]
cosicch\'e se $Uw = w$ per qualche $w \in \ker U^\perp$ allora
\[
(w,(1-U)v) = (w,v)-(w,Uv) = (Uw,Uv)-(w,Uv) = (Uw-w,Uv) = 0
\ ;
\]
poich\'e $(1-U)|_{\ker U^\perp}$ ha immagine densa concludiamo che deve essere $w=0$, per cui $1-U$ \e iniettivo su $\ker U^\perp$. Siamo ora in grado di definire
\[
D := \{ 1-U \}(\ker U^\perp)
\ \ , \ \
T (1-U)v := i(1+U)v
\ \ , \ \
v \in \ker U^\perp
\ ,
\]
e dei semplici conti mostrano che
\[
( T(1-U)v , (1-U)v ) \in \bR
\ \ , \ \
\forall v \in \ker U^\perp
\]
(cosicch\'e $(D,T)$ \e simmetrico), e che $U = \mC T$.
\end{proof}

I seguenti risultati mostrano l'utilit\'a della trasformata di Cayley nel determinare se l'operatore simmetrico $(D,T)$ \e autoaggiunto.

\begin{prop}
\label{prop_CaAA}
$(D,T)$ \e autoaggiunto se e solo se $\mC T$ \e unitario.
\end{prop}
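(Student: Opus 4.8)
The plan is to pass through the von Neumann range criterion: a symmetric operator $(D,T)$ is self-adjoint if and only if $\{ T+i1 \}(D) = \{ T-i1 \}(D) = \mH$. I would first record this as a lemma. For the forward part, if $T = T^*$ then, from the definition of the adjoint and the density of $D$, one checks that $\{ T+i1 \}(D)^\perp = \ker (T^*-i1) = \ker (T-i1)$, which is $\{ 0 \}$ because a symmetric operator has only real eigenvalues; thus $\{ T+i1 \}(D)$ is dense. Since $T=T^*$ is closed (the corollary following Prop.\ref{prop_OnL01}) and $\| (T+i1)u \|^2 = \| Tu \|^2 + \| u \|^2$, the range is also closed, hence equal to $\mH$; the same argument handles $T-i1$. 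For the converse, if both ranges are $\mH$ and $w \in D^*$, I would choose $u \in D$ with $(T+i1)u = (T^*+i1)w$; since $T \subseteq T^*$ this gives $(T^*+i1)(u-w)=0$, and $\ker (T^*+i1) = \{ T-i1 \}(D)^\perp = \{ 0 \}$ forces $w = u \in D$, so $D^* = D$ and $T$ is self-adjoint.

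Granting the criterion, the two implications are short. If $T$ is self-adjoint, the criterion gives $\{ T \pm i1 \}(D) = \mH$, so $\mC T$ is already defined on all of $\mH$; by Lemma \ref{lem_cayley} it is isometric there, and its image $\{ T-i1 \}(D)$ equals $\mH$, so $\mC T$ is a surjective isometry, i.e. unitary (recall that in \S\ref{sec.oper} an operator is unitary exactly when it is surjective and isometric). Conversely, suppose $U := \mC T$ is unitary, so $\ker U = \{ 0 \}$ and $\ker U^\perp = \mH$. By the correspondence of Theorem \ref{thm_cayley}, $T$ is recovered on $D = \{ 1-U \}(\mH)$ by $T(1-U)v = i(1+U)v$; a direct computation then gives $(T+i1)(1-U)v = 2iv$ and $(T-i1)(1-U)v = 2iUv$ for every $v \in \mH$. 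Hence $\{ T+i1 \}(D) = \mH$ and $\{ T-i1 \}(D) = U(\mH) = \mH$ by surjectivity of $U$, and the criterion yields $T = T^*$.

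The delicate point, and the step I would treat most carefully, is that unitarity of $\mC T$ must be made to deliver the full equalities $\{ T \pm i1 \}(D) = \mH$ and not merely density: an essentially self-adjoint operator that is not self-adjoint has dense but proper ranges, while the closure-based partial isometry $\ovl{\mC} T$ is still unitary. This is precisely why I route the converse through the recovery formula of Theorem \ref{thm_cayley}, which pins down the exact domain $D = \{ 1-U \}(\mH)$ and forces the ranges to land on all of $\mH$, rather than arguing only that $\{ T \pm i1 \}(D)$ are dense. The remaining ingredients are already available: the isometry identity, the reality of the eigenvalues of symmetric operators, and the characterization "unitary $=$ surjective isometry" quoted in \S\ref{sec.oper}.
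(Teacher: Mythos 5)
Your argument is, for the most part, the paper's own proof reorganized: the ``von Neumann range criterion'' you isolate as a lemma is proved by exactly the two computations the paper inlines (range-perp equals kernel of the adjoint, reality of the eigenvalues, and closedness of the range via the isometry identity for the forward half; the $(T^*+i1)(u-w)=0$ computation for the converse half), and the forward implication of the proposition then reads off unitarity from $\{T\pm i1\}(D)=\mH$ just as in the text. That part is correct.

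The converse, however, has a genuine gap at the very step you flag as delicate. You invoke Theorem \ref{thm_cayley} to assert $D=\{1-U\}(\mH)$, but that correspondence, read at the level of the extensions $U\in B(\mH)$, is not injective on non-closed symmetric operators: if $(D,T)$ is essentially self-adjoint but not self-adjoint, then $\ovl{\mC}T=\ovl{\mC}\,\ovl{T}$ is unitary, while $\{1-\ovl{\mC}T\}(\mH)$ is the domain of the closure $\ovl T$, which strictly contains $D$. What Lemma \ref{lem_cayley}(2) actually gives is $D=\{1-\mC T\}(W)$ with $W=\{T+i1\}(D)$, and nothing forces $W=\mH$ unless you already assume it. So your detour either presupposes $\{T+i1\}(D)=\mH$ --- in which case it is the paper's direct argument with an unnecessary extra step --- or, applied to the closure-based unitary $\ovl{\mC}T$, can only prove essential self-adjointness; indeed no argument can do better, since under that reading of ``unitary'' the statement is false, as your own counterexample shows. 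The resolution is the one implicit in the paper's proof: ``$\mC T$ unitary'' must mean that $\mC T$ itself, with domain $\{T+i1\}(D)$ and range $\{T-i1\}(D)$, is an everywhere-defined surjective isometry. Granting that, $T+i1$ and $T-i1$ are onto $\mH$ and your own $D^*=D$ computation finishes the converse with no appeal to Theorem \ref{thm_cayley}.
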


\begin{proof}[Dimostrazione]
Se $(D,T)$ \e autoaggiunto allora $i,-i$ non sono autovalori di $T$, per cui 
$\{ 0 \} = \ker ( T \pm i 1 ) = \{ T \mp i 1 \}(D)^\perp$,
il che implica che $\{ T \pm i 1 \}(D)$ sono densi in $\mH$. Ma $T$, essendo autoaggiunto, \e anche chiuso, per cui $\{ T \pm i 1 \}(D)$ sono spazi chiusi e quindi coincidenti con $\mH$; concludiamo che $\mC T$ \e unitario.
Viceversa, se $\mC T$ \e unitario allora $T+i1$ \e suriettivo e per ogni $u \in D^*$ esiste $v \in D$ tale che
\[
(T^* + i1)u = (T+i1)v
\ \Leftrightarrow \
\{ T^* + i1 \} (u-v) = 0
\]
(infatti $T^*u=Tu$ avendosi $u \in D \subseteq D^*$). Del resto, per ipotesi abbiamo
$\ker ( T^*+i1 ) = \{ T-i1 \}(D)^\perp = \{ 0 \}$,
e quindi $u=v \in D$, da cui $D=D^*$.
\end{proof}

Gli {\em indici di difetto di} $(D,T)$ si definiscono come
\[
n_+ := {\mathrm{dim}}\{ T+i1 \}(D)^\perp
\ \ , \ \
n_- := {\mathrm{dim}}\{ T-i1 \}(D)^\perp
\ \ , \ \
n_+ , n_- \in \bN \cup \{ \infty \}
\ .
\]
Segue direttamente da (\ref{def_cayley}) e dal Lemma \ref{lem_cayley} che $\mC T$ (o, per essere pignoli, la sua estensione per continuit\'a ad $\mH$) \e unitario se e solo se $n_+ = n_- = 0$, per cui $(D,T)$ \e autoaggiunto se e solo se $n_+ = n_- = 0$. 
Se invece $(D,T)$ ha indici di difetto non nulli ma uguali, allora esiste un operatore unitario 
\[
V : \{ T+i1 \}(D)^\perp \to \{ T-i1 \}(D)^\perp
\]
(costruito nella maniera banale, mettendo in corrispondenza biunivoca gli elementi delle basi), per cui possiamo estendere $\mC T$ ponendo
\[
U (u+v) := \mC T u + Vv
\ \ , \ \
u \in \ovl{\{ T+i1 \}(D)}
\ , \
v \in \{ T+i1 \}(D)^\perp
\ ;
\]
per costruzione $U$ \e unitario e tale che $1-U$ ha immagine densa, per cui esso \e la traformata di Cayley di un operatore autoaggiunto $(D',T')$ il quale, grazie al Teorema \ref{thm_cayley}, estende $(D,T)$. Dunque se $n_+ = n_-$ allora $(D,T)$ ha un'estensione autoaggiunta, non unica in quanto dipendente dall'operatore $V$ definito poc'anzi. Per esempi di calcolo di indici di difetto rimandiamo all'Esercizio \ref{sec_afunct}.17 e \cite[Es.13.2.9]{Car} (lo shift).

\begin{rem}
\label{rem_defect}
{\it
In modo analogo al caso limitato abbiamo
$\ker T^* = T(D)^\perp$
per ogni operatore $(D,T)$ densamente definito. Dunque troviamo
\[
n_+ = {\mathrm{dim}} \ker(T^*-i1)
\ \ , \ \
n_- = {\mathrm{dim}} \ker(T^*+i1)
\ ,
\]
cosicch\'e per calcolare gli indici di difetto possiamo procedere risolvendo le equazioni 
$T^*u = \pm iu$, $u \in D^*$.
}
\end{rem}

\

\noindent \textbf{Teoria spettrale degli operatori autoaggiunti.} Sia $(D,T)$ un operatore sullo spazio di Hilbert complesso $\mH$. Il {\em risolvente} di $(D,T)$ \e per definizione l'insieme dei $z \in \bC$ tali che esiste $R_z \in B(\mH)$ con $R_z(\mH) \subseteq D$ e
\[
(z1-T)R_z = 1
\ \ , \ \
R_z(z1-T) = 1|_D
\]
(in termini colloquiali, $R_z = (z1-T)^{-1}$). Lo {\em spettro di $(D,T)$} si definisce come il complementare del risolvente e si denota con $\sigma(D,T)$. Come vedremo, in questo ambito pi\'u generale abbiamo che $\sigma(D,T)$ \e tipicamente non limitato, mentre invece ritroviamo la propriet\'a di chiusura del caso degli operatori limitati:

\begin{prop}
Lo spettro $\sigma(D,T) \subseteq \bC$ \e chiuso, per ogni operatore $(D,T)$. 
\end{prop}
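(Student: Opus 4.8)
The plan is to show, equivalently, that the resolvent set $\rho(D,T) := \bC - \sigma(D,T)$ is open, so that its complement $\sigma(D,T)$ is closed. Note that the argument uses only the defining property of the resolvent, so no hypothesis of density or symmetry on $(D,T)$ is needed, in agreement with the full generality of the statement. Fix $z_0 \in \rho(D,T)$ and let $R_{z_0} \in B(\mH)$ be the corresponding resolvent, so that $R_{z_0}(\mH) \subseteq D$, $(z_0 1 - T)R_{z_0} = 1$ and $R_{z_0}(z_0 1 - T) = 1|_D$. Since $R_{z_0} = 0$ would force $(z_0 1 - T)R_{z_0} = 0 \neq 1$, we have $\| R_{z_0} \| > 0$, and I will prove that the whole disc $\Delta(z_0, \| R_{z_0} \|^{-1})$ is contained in $\rho(D,T)$.

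The first step is a Neumann series. For $|z - z_0| < \| R_{z_0} \|^{-1}$ the operator $(z - z_0)R_{z_0} \in B(\mH)$ has norm $< 1$, so the very argument used earlier to prove that the invertible elements of a Banach algebra form an open set shows that $1 + (z-z_0)R_{z_0}$ is invertible in $B(\mH)$; denote its bounded inverse by $S_z$. Because $S_z$ is the sum of a power series in $R_{z_0}$, it commutes with $R_{z_0}$, a fact I will exploit below.

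The second step is to exhibit the candidate resolvent $R_z := R_{z_0} S_z = S_z R_{z_0} \in B(\mH)$ and to verify the three defining conditions. The inclusion $R_z(\mH) \subseteq D$ is immediate from $R_z(\mH) \subseteq R_{z_0}(\mH) \subseteq D$. For the right inverse I use the identity $(z 1 - T)R_{z_0} = (z_0 1 - T)R_{z_0} + (z - z_0)R_{z_0} = 1 + (z-z_0)R_{z_0}$ valid on all of $\mH$, whence $(z1-T)R_z = [1 + (z-z_0)R_{z_0}]S_z = 1$. For the left inverse I use the analogous identity on $D$: for $u \in D$ one has $R_{z_0}(z1-T)u = u + (z-z_0)R_{z_0}u = [1 + (z-z_0)R_{z_0}]u$; applying $S_z$ and using the commutation $R_z = S_z R_{z_0}$ gives $R_z(z1-T)u = S_z\,[1+(z-z_0)R_{z_0}]u = u$. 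Hence $z \in \rho(D,T)$.

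The main point requiring care, as always with unbounded $(D,T)$, is the bookkeeping of domains, and concretely the left-inverse identity $R_z(z1-T) = 1|_D$, since the factorisation of $z1 - T$ only makes sense on $D$. The commutation of $S_z$ with $R_{z_0}$ — which holds precisely because both are power series in $R_{z_0}$ — is what allows $S_z$ to be moved across $R_{z_0}$ so that the computation stays inside $D$; this is the crux that makes the verification go through. Once the disc $\Delta(z_0, \| R_{z_0} \|^{-1})$ is shown to lie in $\rho(D,T)$, the openness of $\rho(D,T)$, and therefore the closedness of $\sigma(D,T)$, follow at once.
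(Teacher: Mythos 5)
Your proof is correct and follows essentially the same route as the paper's: a Neumann series shows that $1+(z-z_0)R_{z_0}$ is invertible for $|z-z_0|<\| R_{z_0} \|^{-1}$, and composing its inverse with $R_{z_0}$ produces the resolvent at $z$ (the paper writes the perturbation as $1-wR_z$ with $w=-(z-z_0)$, which is the same computation). Your write-up is somewhat more careful than the paper's about the domain bookkeeping, in particular in verifying the left-inverse identity $R_z(z1-T)=1|_D$ via the commutation of $S_z$ with $R_{z_0}$, but the underlying idea is identical.
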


\begin{proof}[Dimostrazione]
Sia $z \in \bC - \sigma(D,T)$ e $w \in \bC$ con $|w| \| R_z \|^{-1} < 1$. Allora la serie 
$\sum_{n=0} R_z^nw^n$
\e assolutamente convergente e, come nel caso limitato, troviamo che $1-wR_z$ \e invertibile, con
$
(1-wR_z)^{-1} = \sum_n R_z^nw^n$.
Per cui
\[
(1-wR_z)^{-1} R_z = 
\{ (z1-T) (1-wR_z) \}^{-1} = 
\{ (z1-T) (1-w(z1-T)^{-1}) \}^{-1} =
\{ (z-w)1-T \}^{-1}
\ ,
\]
e quindi $\bC - \sigma(D,T)$ \e aperto.
\end{proof}

In generale non \e detto che lo spettro di un operatore simmetrico sia contenuto in $\bR$; infatti, mentre da un lato sappiamo che $z1-T$ \e iniettivo se ${\mathrm{Im}}(z) \neq 0$ (vedi paragrafo sulla trasformata di Cayley), non \e detto che $z1-T$ sia suriettivo. Tuttavia, troviamo:
\begin{prop}
Se $(D,T)$ \e autoaggiunto allora $\sigma(D,T) \subseteq \bR$.
\end{prop}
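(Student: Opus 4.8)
The plan is to show that every $z = x + iy \in \bC$ with $y := {\mathrm{Im}}(z) \neq 0$ lies in the resolvent set of $(D,T)$; since the reals are exactly the $z$ with $y = 0$, this yields $\sigma(D,T) \subseteq \bR$. Fix such a $z$. The starting point is the estimate already established for symmetric operators just before the definition of the Cayley transform, which gives
\[
\| (z1 - T)u \|^2 = \| (T - x1)u \|^2 + y^2 \| u \|^2 \geq y^2 \| u \|^2, \qquad u \in D.
\]
This shows at once that $z1 - T$ is injective and that its inverse, defined on the image $(z1-T)(D)$, is bounded with norm at most $|y|^{-1}$. Hence the only two things left to prove are that $(z1 - T)(D)$ is closed and that it is dense; together they force $(z1-T)(D) = \mH$.

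For closedness I would use that a self-adjoint operator is closed (the Corollary following Prop.\ref{prop_OnL01}), hence so is $z1 - T$. Concretely, if $(z1-T)u_n \to w$, the bound above makes $\{ u_n \}$ a Cauchy sequence, so $u_n \to u$ for some $u \in \mH$; closedness of $z1 - T$ then gives $u \in D$ and $(z1-T)u = w$, so $w \in (z1-T)(D)$.

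For density I would compute the orthogonal complement of the image. Using $\ker S^* = S(D)^\perp$ for densely defined operators (Oss.\ref{rem_defect}) with $S := z1 - T$, together with the fact that $z1$ is bounded so that $S^* = \bar z 1 - T^* = \bar z 1 - T$ (here $T = T^*$ because $(D,T)$ is self-adjoint), I obtain
\[
(z1-T)(D)^\perp = \ker(\bar z 1 - T).
\]
Now ${\mathrm{Im}}(\bar z) = -y \neq 0$, so the same symmetric estimate applied to $\bar z$ shows $\bar z 1 - T$ is injective, whence $\ker(\bar z 1 - T) = \{ 0 \}$ and the image is dense.

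Combining the two steps, $z1 - T : D \to \mH$ is a bijection whose inverse $R_z := (z1-T)^{-1}$ is bounded and satisfies $R_z(\mH) \subseteq D$; hence $z$ belongs to the resolvent set. The only delicate point I expect is the adjoint identity $(z1-T)^* = \bar z 1 - T$ and the correct application of $\ker S^* = S(D)^\perp$ to an unbounded $S$; everything else is the routine ``closed $+$ dense $=$ everything'' argument. One could equivalently deduce the surjectivity of $T \pm i 1$ from the unitarity of $\mC T$ in Prop.\ref{prop_CaAA} and then handle general non-real $z$, but the direct route above avoids reworking the Cayley machinery.
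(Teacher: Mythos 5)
Your proof is correct and rests on exactly the same ingredients as the paper's: the estimate $\| (z1-T)u \|^2 \geq y^2 \| u \|^2$, closedness of the image (from closedness of a self-adjoint operator), and density of the image via $(z1-T)(D)^\perp = \ker(\bar z 1 - T) = \{0\}$. The only difference is organizational: the paper normalizes to $S + i1$ with $S := b^{-1}(a1-T)$ and invokes the surjectivity argument already carried out in the proof of Prop.\ref{prop_CaAA}, whereas you run that same ``closed $+$ dense'' argument directly for general non-real $z$ — a legitimate shortcut that avoids the Cayley-transform detour, as you yourself observe.
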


\begin{proof}[Dimostrazione]
Posto $z = a+ib$, $b \neq 0$, abbiamo che $z1-T$ \e invertibile se e solo se $S+i1$ \e invertibile, dove $(D,S)$, $S:=b^{-1}(a1-T)$, \e autoaggiunto.
Essendo $(D,S)$ simmetrico abbiamo che $S+i1$ \e iniettivo (vedi inizio paragrafo sulla trasformata di Cayley), e quanto visto nella dimostrazione di Prop.\ref{prop_CaAA} implica che $S+i1$ \e anche suriettivo e quindi invertibile.
\end{proof}

In analogia con il caso limitato ritroviamo il teorema spettrale per operatori autoaggiunti. Denotiamo con $B(\sigma(D,T))$ e $B^\infty(\sigma(D,T),\bC)$ rispettivamente lo spazio delle funzioni boreliane e la \sC algebra delle funzioni boreliane limitate su $\sigma(T)$, a valori complessi in entrambi i casi.
\begin{thm}[von Neumann]
\label{thm_sonl}
Sia $(D,T)$  un operatore autoaggiunto su uno spazio di Hilbert $\mH$. Allora:
\textbf{(1)} Esiste una misura spettrale $\mu := \{ \mu_{uv} \}_{u,v \in D}$ per $T$, tale che \e verificata (\ref{eq.diag}) per ogni $u,v \in D$;
\textbf{(2)} Per ogni $f \in B(\sigma(D,T),\bC)$, l'applicazione
\[
A_f : D_f \times D_f \to \bC
\ \ , \ \
u,v \mapsto \int_{\sigma(D,T)} f \ d\mu_{uv}
\ \ , \ \
D_f := \{ u \in \mH : \int_{\sigma(D,T)} |f|^2 \ d \mu_{uu} < \infty \}
\ ,
\]
definisce una forma sesquilineare e quindi un operatore $(D_f,f(T))$ tale che
$A_f(u,v) = (u,f(T)v)$, $\forall u,v \in D_f$.
In particolare $(D,T) = (D_I,I(T))$, dove $I(\lambda) := \lambda$, $\forall \lambda \in \sigma(D,T)$;
\textbf{(3)} Se $f \in B^\infty(\sigma(D,T),\bC)$ allora $f(T)$ \e limitato con $\| f(T) \| \leq \| f \|_\infty$, e si ha una rappresentazione
\[
B^\infty(\sigma(D,T),\bC) \to B(\mH)
\ \ , \ \
f \mapsto f(T)
\ .
\]
\end{thm}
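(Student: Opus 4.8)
The plan is to reduce the unbounded case to the already-established spectral theory of bounded operators by means of the Cayley transform. Set $U := \mC T$. By Prop.\ref{prop_CaAA} the operator $U \in B(\mH)$ is unitary, hence normal, and its spectrum $\sigma(U)$ lies on the unit circle $\bT$. Applying the bounded spectral theorem to $U$ (valid for normal operators on a complex Hilbert space, as remarked before the bounded version) furnishes a spectral measure $\{ \nu_{uv} \}_{u,v \in \mH}$ on $\sigma(U)$ together with the Borel functional calculus $g \mapsto g(U)$, a representation of $L_\beta^\infty(\sigma(U))$ into $B(\mH)$ with $(u,g(U)v) = \int_{\sigma(U)} g \, d\nu_{uv}$ for all $u,v$. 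Since $1-U$ is injective (Lemma \ref{lem_cayley}), the spectral projection $\chi_{\{1\}}(U)$ has range $\ker(1-U) = \{0\}$, so $\nu_{uu}(\{1\}) = \| \chi_{\{1\}}(U)u \|^2 = 0$ for every $u$; thus the Cayley--M\"obius map $\phi(\lambda) := i(1+\lambda)(1-\lambda)^{-1}$, a homeomorphism of $\sigma(U) \setminus \{1\}$ onto $\sigma(D,T) \subseteq \bR$ (this being the spectral mapping attached to the Cayley relation), is defined $\nu$-almost everywhere.

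First I would define the candidate spectral measure of $T$ as the push-forward $\mu_{uv} := \phi_* \nu_{uv}$; it is sesquilinear in $(u,v)$ because $\nu_{uv}$ is and push-forward is linear. To obtain the diagonalization (\ref{eq.diag}) I would exploit the multiplicativity of the functional calculus of $U$. Writing any $v \in D$ as $v = (1-U)w$ (Lemma \ref{lem_cayley} gives $D = (1-U)(\mH)$, using that $U$ unitary forces $\{T+i1\}(D)=\mH$), the representation property yields, for every bounded Borel $g$, the identity $\int g\,d\nu_{uv} = (u,g(U)v) = (u,(g\ell)(U)w) = \int g\ell\,d\nu_{uw}$ with $\ell(\lambda)=1-\lambda$, whence $d\nu_{uv} = (1-\lambda)\,d\nu_{uw}$. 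Since $Tv = i(1+U)w$ by the Cayley relation (\ref{rel_cayley}), the singular factor cancels and one computes, with everything now integrable,
\[
\int_{\sigma(D,T)} \lambda \, d\mu_{uv}(\lambda) = \int_{\sigma(U)} \phi \, d\nu_{uv} = \int_{\sigma(U)} i(1+\lambda)\, d\nu_{uw} = i\big[(u,w)+(u,Uw)\big] = (u,Tv),
\]
which is exactly (\ref{eq.diag}).

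Next, for $f \in M_\beta(\sigma(D,T))$ I would define $A_f$ on $D_f \times D_f$ by $A_f(u,v) := \int f \, d\mu_{uv}$. For fixed $v \in D_f$ the functional $u \mapsto A_f(u,v)$ is conjugate-linear and, by the operator Cauchy--Schwarz inequality $|\int f\, d\mu_{uv}| \le \|u\|\,(\int |f|^2 d\mu_{vv})^{1/2}$ (which follows from the bounded case by truncation, since $\int d\mu_{uu}=\|u\|^2$ and $\|f(T)v\|^2=\int|f|^2 d\mu_{vv}$), bounded; the Riesz theorem (Teo.\ref{thm_riesz}) then produces a unique vector $f(T)v$ with $A_f(u,v) = (u,f(T)v)$, defining the operator $(D_f,f(T))$. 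Taking $f = I$, the diagonalization of the previous step shows $(u,I(T)v) = (u,Tv)$ on $D$, so $I(T)$ extends $T$; the full identity $(D,T) = (D_I, I(T))$ reduces to the domain equality $D_I = D$. Finally, part (3) is immediate from the bounded calculus of $U$: if $f \in L_\beta^\infty(\sigma(D,T))$ then $f\circ\phi \in L_\beta^\infty(\sigma(U))$ with $\|f\circ\phi\|_\infty = \|f\|_\infty$ (the point $\{1\}$ being $\nu$-null), so $f(T) = (f\circ\phi)(U)$ is bounded with $\|f(T)\| \le \|f\|_\infty$, and $f \mapsto f(T)$ is a representation because $g \mapsto g(U)$ is and $(f_1 f_2)\circ\phi = (f_1\circ\phi)(f_2\circ\phi)$.

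The main obstacle will be the domain identification $D_I = D$ (and, more generally, the precise description of $D_f$), which is where the measure theory bites: one must show that $\int \lambda^2 \, d\mu_{uu} = \int |\phi|^2 \, d\nu_{uu} < \infty$ holds exactly when $u$ lies in the range of $1-U$. The containment $D \subseteq D_I$ is the easy direction, since $v=(1-U)w$ gives $d\nu_{vv}=|1-\lambda|^2 d\nu_{ww}$ and hence $\int|\phi|^2 d\nu_{vv} = \int|1+\lambda|^2 d\nu_{ww} \le 4\|w\|^2$. Establishing the reverse inclusion $D_I \subseteq D$ is the substantive point: it is the assertion that a vector belongs to the range of the normal operator $\ell(U)$, $\ell(\lambda)=1-\lambda$, precisely when $1/\ell$ is square-integrable against the scalar spectral measure (injectivity of $1-U$ only guarantees $1/\ell$ finite $\nu_{uu}$-a.e.). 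Carrying this out, together with verifying the sesquilinearity and boundedness estimates via the polarization identity and the closedness of $(D_f,f(T))$, is the technical heart of the argument.
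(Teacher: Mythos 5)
The paper does not actually prove Teo.\ref{thm_sonl}: it defers to the references (Reed--Simon VIII.3, Pedersen 5.3) and indicates that the intended route is to rerun the bounded self-adjoint argument --- continuous functional calculus plus Riesz--Markov --- with the modifications forced by the spectrum being only locally compact. Your route is genuinely different: you reduce to the bounded case via the Cayley transform $U := \mC T$, which Prop.\ref{prop_CaAA} makes unitary, push the spectral measure of $U$ forward along $\phi(\lambda) = i(1+\lambda)(1-\lambda)^{-1}$, and recover $T$ from (\ref{rel_cayley}). This is von Neumann's original argument; it buys you the whole bounded normal machinery essentially for free (part (3) indeed becomes one line, since $f(T) = (f\circ\phi)(U)$), at the price of one genuinely operator-theoretic lemma, namely the identification of $D = (1-U)(\mH)$ with the set of $u$ for which $|1-\lambda|^{-2}$ is $\nu_{uu}$-integrable. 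The computations you do carry out --- $d\nu_{u,(1-U)w} = (1-\lambda)\,d\nu_{uw}$ by multiplicativity, the cancellation of the singular factor giving (\ref{eq.diag}), the Cauchy--Schwarz bound that lets Teo.\ref{thm_riesz} produce $f(T)v$ --- are all correct, as is the observation that $\nu_{uu}(\{1\})=0$ because $\chi_{\{1\}}(U)$ projects onto $\ker(1-U)=\{0\}$.

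The step you leave open is the only real gap, and you must close it: without $D_I \subseteq D$ you have only shown that $I(T)$ \emph{extends} $T$, whereas the equality $(D,T)=(D_I,I(T))$ is exactly the self-adjointness you are meant to recover. It does close by the truncation argument you hint at. On $\bT$ one has $|1+\lambda|^2+|1-\lambda|^2=4$, so $u\in D_I$ if and only if $\int |1-\lambda|^{-2}\,d\nu_{uu}<\infty$. Assuming this, put $E_n := \{\lambda\in\sigma(U) : |1-\lambda|>1/n\}$ and $w_n := (\ell^{-1}\chi_{E_n})(U)u$ with $\ell(\lambda):=1-\lambda$. Then for $m<n$ one has $\| w_n-w_m \|^2 = \int_{E_n - E_m}|1-\lambda|^{-2}\,d\nu_{uu}\to 0$, so $w:=\lim_n w_n$ exists, while $(1-U)w_n = \chi_{E_n}(U)u \to u$ because $\chi_{E_n}\to\chi_{\sigma(U)-\{1\}}$ pointwise boundedly and $\chi_{\{1\}}(U)=0$ (the strong-convergence property of the Borel calculus, cf.\ Esercizio \ref{sec_afunct}.16). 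Since $1-U$ is bounded, $u=(1-U)w\in D$. A smaller point worth a sentence in a final write-up: the spectral mapping statement $\phi(\sigma(U)-\{1\})=\sigma(D,T)$ is asserted but not proved; for the construction itself it suffices that the push-forward measures are supported in $\bR$, but to land them on $\sigma(D,T)$ as the theorem states you need that identification too.
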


Referenze per una dimostrazione dettagliata del teorema precedente sono \cite[VIII.3]{RS},\cite[5.3]{Ped}; l'ordine di idee \e essenzialmente quello del caso limitato, con la differenza sostanziale che stavolta $\sigma(D,T)$ \e uno spazio {\em localmente} compatto, per cui occorre apportare delle modifiche all'argomento che utilizza il calcolo funzionale continuo ed il teorema di Riesz-Markov.
Osserviamo inoltre che, a differenza del caso limitato, funzioni continue su $\sigma(D,T)$ possono essere non limitate e quindi tali che la loro immagine rispetto al calcolo funzionale del punto (2) sia un operatore non limitato, come nel caso dello stesso $(D,T)$. Invece le misure spettrali del punto (1) rimangono comunque finite.

\begin{ex}[La derivata]
\label{eq.der.I}
{\it Poniamo $\mH := L^2([0,1],\bC)$, cosicch\'e $\mH \subset L^1([0,1],\bC)$, e denotiamo con $H^1_\bC \subset \mH$ lo spazio delle primitive di funzioni in $\mH$.
Definiamo quindi
\[
D := \{ u \in H^1_\bC : u(0)=u(1) \}
\ \ , \ \
Tu := -iu'
\ , \
\forall u \in D
\ .
\]
L'operatore $(D,T)$ \e banalmente simmetrico (per verificare si integri per parti) ed autoaggiunto (Esercizio \ref{sec_afunct}.17), per cui lo spettro di $(D,T)$ \e reale, ed in effetti $\sigma(D,T) = 2 \pi \bZ$ (Esercizio \ref{sec_afunct}.18).
Consideriamo ora la trasformata di Fourier discreta (Esempio \ref{ex_pontr})
\[
\wa u (n) := \int_0^1 u(x) e^{2 \pi inx} dx
\ \ , \ \
\forall n \in \bZ
\ \ , \ \
u \in \mH
\ .
\]
Essendo 
$\mB_\bC := \{ e_n(x) := e^{-2 \pi inx} , x \in [0,1] \}$ 
una base di $\mH$ (Esempio \ref{ex_base}), abbiamo
$(u,v) = \sum_n \ovl{\wa u(n)} \wa v(n)$ per ogni $u,v \in \mH$.
Definiamo la famiglia di misure complesse
\[
\mu_{uv}E \ := \  \sum_{2 \pi n \in E} \ovl{\wa u(n)} \wa v (n)
\ \ , \ \
\forall E \subseteq 2^{\sigma(D,T)}
\ , \
u,v \in \mH
\ .
\]
Poich\'e $\mB_\bC \subset D$ possiamo calcolare
\[
Te_n = -i e_n' = 2 \pi n e_n
\ \ , \ \
\forall n \in \bZ
\ ,
\]
per cui, per ogni $u,v \in D$, troviamo
\[
(u,Tv) \ = \
\sum_n \ovl{\wa u(n)} (e_n,Tv) \ = \
\sum_n \ovl{\wa u(n)} (Te_n,v) \ = \
\sum_n 2 \pi n \ovl{\wa u(n)} \wa v(n)         \ = \
\sum_{\lambda \in \sigma(D,T)} \lambda \mu_{uv} \{ \lambda \}
\ ,
\]
e $\{ \mu_{uv} \}$ \e una misura spettrale per $(D,T)$.
}
\end{ex}

\

\noindent \textbf{Gruppi ad un parametro.} Sia $(D,T)$ un operatore autoaggiunto. Consideriamo la famiglia di funzioni $\{ e_t \}_{t \in \bR}$,
\[
e_t : \bR \to \bC
\ \ , \ \
e_t(\lambda) := e^{i \lambda t}
\ \ , \ \
\forall \lambda \in \bR
\ .
\]
E' ovvio che ogni $e_t$ \e continua e limitata, per cui 
$\{ e_t \} \subset B^\infty(\sigma(D,T),\bC)$.
Dunque, grazie al teorema precedente possiamo definire
\[
U_t := e_t(T) \in B(\mH)
\ \ , \ \
\forall t \in \bR
\ .
\]
Ora, abbiamo
\[
(e_t)^* = e_{-t}
\ , \
e_t e_{-t} = 1
\ , \
e_{t+s} = e_t e_s
\ \ , \ \
\forall t,s \in \bR
\ ,
\]
e visto che il calcolo funzionale boreliano conserva le operazioni di moltiplicazione e passaggio all'aggiunto troviamo
\begin{equation}
\label{eq.scopug}
U_t^* = U_{-t}
\ \ , \ \
U_t U_{-t} = 1
\ \ , \ \
U_{t+s} = U_t U_s
\ \ , \ \
\forall t,s \in \bR
\ ,
\end{equation}
cosicch\'e in particolare ogni $U_t$, $t \in \bR$, \e unitario. Chiamiamo $U := \{ U_t \}$ il {\em gruppo ad un parametro definito da $(D,T)$}.

Visto che $e_t - 1 \to 0$ puntualmente per $t \to 0$, in conseguenza dell'Esercizio \ref{sec_afunct}.16 e della relazione $U_{t+s} = U_t U_s$ troviamo che $U$ soddisfa la seguente propriet\'a di continuit\'a rispetto alla topologia forte:
\begin{equation}
\label{eq.scopug2}
\lim_{t \to 0} \| U_tv - v \|        \ = \
\lim_{t \to 0} \| U_{t+s}v - U_sv \| \ = \
0
\ \ , \ \
\forall v \in \mH
\ , \
s \in \bR
\ .
\end{equation}
Ora, per ogni $\lambda \in \sigma(D,T)$ abbiamo
\[
| e^{i\lambda t}-1 |  \leq  | \lambda t |
\ , \
\forall t \in \bR
\ ,
\]
per cui per ogni $u \in D$ troviamo
\[
\begin{array}{ll}
\| \{ t^{-1}(U_t-1) - iT \} u \|^2  & =
( u \ , \  \{ t^{-1}(U_t-1) - iT \}^* \{ t^{-1}(U_t-1) - iT \} u ) \\
& =
\int | \{ t^{-1}(e_t-1) - i \lambda \}^* 
       \{ t^{-1}(e_t-1) - i \lambda \}| \ d \mu_{uu}(\lambda)  \\
& =
\int | t^{-1}(e^{i\lambda t}-1) - i\lambda |^2 \ d \mu_{uu}(\lambda)  \\
& \leq
\int \{ \lambda^2 + 2 \lambda^2  + \lambda^2 \} d \mu_{uu}(\lambda) \ ;
\end{array}
\]
grazie al Teorema \ref{thm_sonl} sappiamo che la funzione $I^2(\lambda) := \lambda^2$, $\lambda \in \sigma(D,T)$, \e $\mu_{uu}$-integrabile per ogni $u \in D$, per cui possiamo applicare il teorema di convergenza dominata e passare al limite $t \to 0$ sotto il segno di integrale. Ma
\[
\lim_{t \to 0} t^{-1}(e^{i \lambda t}-1)  -  i \lambda = 0
\ ,
\]
per cui concludiamo che
\begin{equation}
\label{eq.scopug3}
\lim_{t \to 0} t^{-1}(U_t-1) u = iTu
\ \ , \ \
\forall u \in D
\ ,
\end{equation}
il che vuol dire che possiamo esprimere $T$ in termini del suo gruppo ad un parametro. In realt\'a, si dimostra il seguente risultato:

\begin{thm}[Stone]
Ogni famiglia di operatori unitari $\{ U_t \}_{t \in \bR} \subset B(\mH)$ che soddisfi (\ref{eq.scopug}) e (\ref{eq.scopug2}) \e il gruppo ad un parametro di un operatore autoaggiunto $(D,T)$.
\end{thm}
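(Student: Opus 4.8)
Il piano è di ricostruire il {\em generatore infinitesimale} della famiglia $\{ U_t \}$ e di ricondursi ai risultati gi\'a dimostrati sulla trasformata di Cayley e sul teorema spettrale per operatori autoaggiunti. Definisco dunque l'operatore $(D,T)$ ponendo
\[
D := \{ v \in \mH : \ \exists \ \lim_{t \to 0} t^{-1}(U_t-1)v \}
\ \ , \ \
Tv := \frac{1}{i} \lim_{t \to 0} t^{-1}(U_t-1)v
\ \ , \ \
v \in D
\ .
\]
Il primo passo \e verificare, usando solo le relazioni di gruppo (\ref{eq.scopug}), che $U_t(D) \subseteq D$ e che $T U_t = U_t T$ su $D$; infatti derivando l'identit\'a $U_{t+s} = U_t U_s = U_s U_t$ rispetto ad $s$ in $s=0$ si ottiene, per ogni $v \in D$, che $\frac{d}{dt} U_t v = i U_t T v = i T U_t v$. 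Ci\'o mostra anche che $t \mapsto U_t v$, $v \in D$, risolve l'equazione differenziale $x'(t) = iT x(t)$, $x(0)=v$, fatto che sar\'a cruciale nell'ultimo passo.

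Il secondo passo \e dimostrare che $D$ \e denso e che $(D,T)$ \e simmetrico. Per la densit\'a utilizzo i {\em vettori di G{\aa}rding}: per ogni $f \in C_c^\infty(\bR)$ e $v \in \mH$ considero l'integrale a valori in $\mH$
\[
v_f := \int_\bR f(t) \, U_t v \ dt
\ ,
\]
ben definito in quanto $t \mapsto f(t) U_t v$ \e un'applicazione continua (grazie a (\ref{eq.scopug2})) ed a supporto compatto. Una traslazione del parametro mostra che $U_s v_f = \int_\bR f(r-s) U_r v \ dr$, e derivando sotto il segno di integrale si ottiene $v_f \in D$ con $Tv_f = i \int_\bR f'(r) U_r v \ dr$. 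Scegliendo per $f$ un'identit\'a approssimata (nel senso di Def.\ref{def_moll}) e usando ancora (\ref{eq.scopug2}) si ha $v_{f_n} \to v$, da cui la densit\'a di $D$. La simmetria segue dalla definizione di $T$ e dall'unitariet\'a: per $u,v \in D$ si scrive $(Tu,v)$ come limite di $i\,t^{-1}[(u,U_{-t}v)-(u,v)]$ (avendo usato $U_t^* = U_{-t}$) e, con la sostituzione $s=-t$, tale limite vale $(u,Tv)$.

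Il terzo passo \e la parte difficile: mostrare che $(D,T)$ \e (essenzialmente) autoaggiunto, il che far\'o verificando che gli indici di difetto sono nulli. Seguendo Oss.\ref{rem_defect}, occorre provare $\ker(T^*-i1) = \ker(T^*+i1) = \{ 0 \}$. Supponiamo $T^*w = i w$; fissato $v \in D$ pongo $\phi(t) := (U_t v, w)$ ed osservo che, essendo $U_t v \in D$,
\[
\phi'(t) = ( i T U_t v , w ) = ( U_t v , -iT^* w ) \cdot (-1) = (U_t v , w) = \phi(t)
\ ,
\]
dove ho usato $(iX,w) = -i(X,w)$ e $(T\,U_tv, w) = (U_tv, T^*w)$. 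Dunque $\phi(t) = \phi(0)e^{t}$; ma $|\phi(t)| \leq \| v \| \| w \|$ per unitariet\'a di $U_t$, per cui $\phi \equiv 0$ e in particolare $(v,w) = \phi(0) = 0$ per ogni $v \in D$, da cui $w=0$. Analogamente, se $T^*w = -iw$ si trova $\phi'(t) = -\phi(t)$ e di nuovo $w=0$. Quindi $n_+ = n_- = 0$ e, per la caratterizzazione dedotta da Prop.\ref{prop_CaAA} (la trasformata di Cayley $\mC T$ \e unitaria se e solo se $n_+=n_-=0$), l'operatore $(D,T)$ \e autoaggiunto (eventualmente passando alla chiusura, che \e comunque un operatore autoaggiunto). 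Mi aspetto che sia proprio la limitatezza di $\phi$, ossia l'uso essenziale dell'unitariet\'a contro la crescita esponenziale imposta da $T^*w = \pm iw$, a costituire il cuore dell'argomento.

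Il quarto ed ultimo passo \e identificare $\{ U_t \}$ con il gruppo ad un parametro di $(D,T)$. Applicando Teo.\ref{thm_sonl} all'operatore autoaggiunto $(D,T)$ definisco $V_t := e_t(T)$, con $e_t(\lambda) = e^{i\lambda t}$, che per (\ref{eq.scopug3}) soddisfa $\frac{d}{dt} V_t v = iT V_t v$, $v \in D$. Per $v \in D$ considero allora $g(t) := \| U_t v - V_t v \|^2$ ed osservo che $U_t v - V_t v \in D$; poich\'e $(x,iTx) = i(x,Tx)$ \e immaginario puro per $x \in D$ (essendo $(x,Tx) \in \bR$ per simmetria), derivando si ottiene $g'(t) = 2\,{\mathrm{Re}}\,(U_tv-V_tv, iT(U_tv-V_tv)) = 0$. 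Dunque $g$ \e costante e $g(0)=0$ d\'a $U_t v = V_t v$ per ogni $t$ e ogni $v \in D$; la densit\'a di $D$ e l'unitariet\'a (dunque continuit\'a) di $U_t$ e $V_t$ estendono l'uguaglianza a tutto $\mH$. Ci\'o conclude, mostrando che $\{ U_t \}$ \e il gruppo ad un parametro dell'operatore autoaggiunto $(D,T)$.
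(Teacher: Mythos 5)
La tua proposta \e corretta e segue esattamente il piano che il testo si limita ad abbozzare (definire il generatore $(D,T)$, provarne la densit\'a del dominio e l'autoaggiuntezza, poi identificare il gruppo), rinviando per i dettagli a \cite[Thm.VIII.8]{RS}: i vettori regolarizzati $v_f$, l'argomento di crescita esponenziale contro la limitatezza di $\phi$ per annullare gli indici di difetto, e la costanza di $\| U_tv - V_tv \|^2$ sono precisamente i passi della dimostrazione standard. L'unica avvertenza \e quella che tu stesso segnali: dagli indici di difetto nulli segue a priori solo l'essenziale autoaggiuntezza, per cui occorre passare alla chiusura $(\ovl D , \ovl T)$, il che non pregiudica la conclusione.
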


\begin{proof}[Sketch della dimostrazione]
Occorre, prima di tutto, mostrare che l'insieme 
\[
D := \{ u \in \mH : \exists \lim_{t \to 0} t^{-1}(U_t-1) u \in \mH \}
\]
\e un sottospazio denso di $\mH$; si tratta poi di verificare che {\em definendo} $Tu$, $u \in D$, a partire da (\ref{eq.scopug3}) si ottiene effettivamente un operatore autoaggiunto. Per dettagli rimandiamo a \cite[Theorem VIII.8]{RS}. 
\end{proof}

\begin{ex}[La derivata]
\label{ex_der}
{\it
Consideriamo la famiglia $U = \{ U_t \}$ dell'Esempio \ref{ex.trans}; allora \e immediato verificare che $U$ soddisfa (\ref{eq.scopug}), e del resto
\[
\| U_tv-v \|_2^2 
\ = \ 
\int |v(t+s)-v(s)|^2 ds 
\ \stackrel{Es.\ref{sec_Lp}.2}{\to} \
0
\ \ , \ \
\forall v \in \mH
\ .
\]
Dunque $U$ soddisfa le ipotesi del teorema di Stone. Consideriamo ora lo spazio delle funzioni complesse a decrescenza rapida 
\[
\mS(\bR,\bC) := \{ f + ig : f,g \in \mS(\bR) \}
\ .
\]
Allora per ogni $u \in D := \mS(\bR,\bC)$ troviamo
\[
\lim_{t \to 0} \{ t^{-1}(U_t-1) u \}(s) 
\ = \ 
\lim_{t \to 0} t^{-1} \{ u(t+s) - u(s) \} 
\ = \ 
u'(s)
\ \ , \ \
\forall s \in \bR
\ ;
\]
poich\'e $u' \in \mS(\bR,\bC) \subset \mH$, da (\ref{eq.scopug3}) concludiamo subito che l'operatore associato ad $U$ \e l'estensione autoaggiunta di $(D,T)$, $Tu := -iu'$, $u \in D$.
}
\end{ex}

I gruppi ad un parametro hanno importanti applicazioni in meccanica quantistica. Conside- riamo lo spazio di Hilbert $\mH := L^2(\bR^d,\bC)$ ed un operatore autoaggiunto $(D,T)$ su $\mH$. L'{\em equazione di Schroedinger} associata a $T$ con condizione iniziale $\psi_0 \in D$ \e data dal problema differenziale
\begin{equation}
\label{Schroedinger}
\frac{\partial \psi}{\partial t}
\ = \
iT \psi
\ \ , \ \
\psi (\cdot,t_0) = \psi_0
\ ,
\end{equation}
la cui funzione incognita $\psi : \bR^d \times \bR \to \bC$ \e tale che $\psi (\cdot,t) \in D$ per ogni $t \in \bR$. Denotato con $U = \{ U_t \}$ il gruppo a un parametro associato a $(D,T)$, usando (\ref{eq.scopug3}) concludiamo che (\ref{Schroedinger}) ammette soluzione
\begin{equation}
\label{sol.Schroedinger}
\psi (x,t) := \{ U_t \psi_0 \} (x)
\ \ , \ \
\forall x \in \bR^d
\ , \
t \in \bR
\ .
\end{equation}
Portiamo ad esempio l'equazione di Schroedinger per la particella libera, che si ottiene per $d = 3$ e $(D,T)$ l'estensione autoaggiunta dell'operatore di Laplace $- \Delta$ dell'Esempio \ref{ex_laplace}; osserviamo che in tal caso la soluzione \e calcolabile usando la trasformata di Fourier (\S \ref{sec_fou_tra}),
\begin{equation}
\label{sol.Schroedinger2}
\psi (x,t) 
:= 
\int_{\bR^3} \wa \psi_0(\lambda) e^{-i |\lambda|^2t + \lambda \cdot x} \ d \lambda
\ \ , \ \
x \in \bR^3
\ , \
t \in \bR
\end{equation}
(per una verifica si derivi sotto il segno di integrale).
Non sempre per\'o ci troviamo in situazioni cos\'i comode, cosicch\'e diventa essenziale sfruttare le propriet\'a astratte dei gruppi ad un parametro; ed in tal caso, come passo intermedio \e necessario stabilire se il nostro operatore $(D,T)$, presentato tipicamente in forma differenziale, sia (essenzialmente) autoaggiunto. Come esempio menzioniamo l'equazione di Schroedinger per l'atomo con $n$ elettroni, che si ottiene per
\[
D := C_0^\infty(\bR^{3n},\bC)
\ \ , \ \
Tu := - \Delta u + fu
\ \ , \ \
u \in D
\ ,
\]
dove $f$ \e un termine associato al potenziale elettrico,
\[
f(r_1 , \ldots , r_n) 
\ := \ 
- n \sum_{k=1}^n |r_k|^{-1} + \sum_{k < h} |r_h - r_k|^{-1}
\ \ , \ \
\forall r_k \in \bR^3 - \{ 0 \}
\ , \
k = 1 , \ldots , n
\ .
\]
Ovviamente il calcolo esplicito di una soluzione \e ora pi\'u difficoltoso, ed il fatto che $(D,T)$ \e essenzialmente autoaggiunto, acclarato dal Teorema di Kato-Rellich, \e un risultato non banale, per la cui dimostrazione rimandiamo a \cite[Theorem X.16]{RS2}.
Una volta mostrata l'essenziale autoaggiuntezza di $(D,T)$ possiamo calcolare esplicitamente il relativo gruppo ad un parametro attraverso la {\em formula di Trotter-Kato} (vedi \cite[\S VIII.8]{RS}):
\begin{thm}
Siano $(D,H)$, $(D',V)$ autoaggiunti sullo spazio di Hilbert $\mH$ e tali che $(D \cap D',H+V)$ sia essenzialmente autoaggiunto. Allora, per ogni $t \in \bR$,
\[
e_t(H+V)u \ = \ \lim_{k \to \infty} \, \{  e_t(H/k) \, e_t(V/k) \}^k u
\ \ , \ \
\forall u \in \mH
\ .
\]
\end{thm}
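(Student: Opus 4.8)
Il piano \`e di ricondurre l'enunciato alla formula di Chernoff (o, in alternativa, ad una stima telescopica), sfruttando i risultati gi\`a stabiliti sui gruppi ad un parametro. Poniamo $K := \ovl{H+V}$, l'operatore autoaggiunto ottenuto come chiusura di $(D \cap D', H+V)$ (che esiste per l'ipotesi di essenziale autoaggiuntezza), e denotiamo con $U_t := e_t(K)$ il relativo gruppo ad un parametro, con $S_t := e_t(H)$, $W_t := e_t(V)$ (cosicch\'e $e_t(H/k) = S_{t/k}$, $e_t(V/k) = W_{t/k}$, $e_t(H+V) = U_t$) ed $F(s) := S_s W_s$. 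Grazie a (\ref{eq.scopug}) ciascuno di $S_s$, $W_s$, $U_s$ \`e unitario, per cui $F(s)$ \`e unitario e $\| F(t/k)^k \| = \| U_t \| = 1$ per ogni $k$. Il primo passo \`e osservare che, essendo la famiglia $\{ F(t/k)^k \}_k$ uniformemente limitata in norma, per dimostrare la convergenza forte basta farlo sul sottospazio denso $C := D \cap D'$, che per ipotesi \`e denso in $\mH$ ed \`e un core per $K$ (ossia $K$ \`e la chiusura di $(C, H+V)$).

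Il secondo passo \`e il calcolo della derivata forte di $F$ in $s=0$ sul core. Per ogni $u \in C \subseteq D(K)$ scriviamo
\[
\tfrac{1}{s}(F(s)-1)u \ = \ S_s\,\tfrac{1}{s}(W_s-1)u \ + \ \tfrac{1}{s}(S_s-1)u \ ;
\]
applicando (\ref{eq.scopug3}) a $V$ ed a $H$ troviamo $\tfrac{1}{s}(W_s-1)u \to iVu$ e $\tfrac{1}{s}(S_s-1)u \to iHu$, mentre (\ref{eq.scopug2}) garantisce $S_s \to 1$ fortemente con $\| S_s \| = 1$; quindi $\lim_{s \to 0}\tfrac{1}{s}(F(s)-1)u = i(H+V)u = iKu$. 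Poich\'e, sempre per (\ref{eq.scopug3}) applicata a $K$, si ha $\tfrac{1}{s}(U_s-1)u \to iKu$, concludiamo che $\tfrac{1}{s}(F(s)-U_s)u \to 0$ per ogni $u \in C$.

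Il terzo passo, nell'approccio classico, \`e l'identit\`a telescopica: usando $U_{t/k}^j = U_{jt/k}$ (da (\ref{eq.scopug})) e la contrattivit\`a di $F$ e di $U$ si ottiene
\[
\| F(t/k)^k u - U_t u \| \ \leq \ \sum_{j=0}^{k-1} \| (F(t/k)-U_{t/k})\,U_{jt/k}u \| \ \leq \ k \sup_{0 \leq r \leq t} \| (F(t/k)-U_{t/k})\,U_r u \| \ ,
\]
per cui tutto si ridurrebbe a dimostrare che lo scarto normalizzato $\tfrac{1}{s}(F(s)-U_s)$ tende a zero \emph{uniformemente} sull'orbita $\Gamma := \{ U_r u : 0 \leq r \leq t \}$, la quale \`e compatta per (\ref{eq.scopug2}).

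Qui si annida l'ostacolo principale: il secondo passo fornisce la convergenza puntuale solo per vettori del core $C$, mentre $U_r u \in D(K)$ non appartiene in generale a $D(H) \cap D(V)$; \`e proprio questo disallineamento tra i domini ad essere controllato dall'essenziale autoaggiuntezza, che assicura che $K = \ovl{H+V}$ \`e il generatore la cui azione coincide con quella di $F'(0)$ sul core. Per aggirare la difficolt\`a il piano \`e di invocare la disuguaglianza di Chernoff: per la contrazione $F(s)$ vale $\| F(s)^k \psi - e^{k(F(s)-1)}\psi \| \leq \sqrt{k}\,\| (F(s)-1)\psi \|$, la cui verifica si riduce al calcolo della varianza di una legge di Poisson. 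Prendendo $s = t/k$ e $\psi \in C$, il secondo passo d\`a $\| (F(t/k)-1)\psi \| \approx \tfrac{t}{k}\| K\psi \|$, per cui il membro di destra \`e $\approx \tfrac{t}{\sqrt{k}}\| K\psi \| \to 0$; resta da verificare che il gruppo $e^{k(F(t/k)-1)}$, generato dall'operatore limitato e dissipativo $k(F(t/k)-1)$ la cui azione su $C$ tende a $itK$, converga fortemente a $U_t$, cosa che si ottiene con il classico argomento di convergenza di esponenziali di generatori limitati su un core. Mi aspetto che la parte pi\`u delicata sia esattamente la gestione di questo disallineamento dei domini: la disuguaglianza di Chernoff serve precisamente ad eliminare la necessit\`a di differenziare $F$ su vettori fuori dal core, sostituendo il controllo uniforme sull'orbita compatta $\Gamma$ richiesto dalla via telescopica.
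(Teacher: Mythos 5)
The paper does not actually prove this theorem: it states it and defers entirely to \cite[\S VIII.8]{RS}, so there is no internal argument to measure yours against; I can only assess your outline on its own terms and against the standard reference proof. Your route is Chernoff's, not the telescoping argument of Reed--Simon, and its first two steps are correct and complete: the reduction to the dense core $C = D \cap D'$ via uniform boundedness of the unitaries, and the limit $\tfrac{1}{s}(F(s)-1)u \to i(H+V)u = iKu$ on $C$ obtained from the splitting $F(s)-1 = S_s(W_s-1)+(S_s-1)$ together with (\ref{eq.scopug3}) and (\ref{eq.scopug2}). You also diagnose correctly why the naive telescoping estimate stalls (the vectors $U_{jt/k}u$ leave $D \cap D'$), and the Chernoff bound $\| F(s)^k\psi - e^{k(F(s)-1)}\psi \| \leq \sqrt{k}\,\| (F(s)-1)\psi \|$ with $s=t/k$, $\psi \in C$ does give $\tfrac{t}{\sqrt{k}}(\| K\psi \| + o(1)) \to 0$ as you claim. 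The one point to flag is that your closing step, $e^{k(F(t/k)-1)}\psi \to U_t\psi$, is not a routine verification: it is the Trotter--Kato approximation theorem (convergence of bounded dissipative generators on a core of a limit generating a contraction semigroup implies strong resolvent convergence, hence strong convergence of the exponentials), whose proof is of comparable depth to the graph-norm compactness argument on the orbit $\Gamma$ that Reed--Simon use and that you set out to avoid. So the difficulty is relocated into a cited classical theorem rather than eliminated; given that the paper itself cites the whole statement away, this is an acceptable level of detail, but if you mean to prove everything you still owe the resolvent-convergence argument $(\lambda - k(F(t/k)-1))^{-1}\psi \to (\lambda - itK)^{-1}\psi$ and its upgrade to the semigroups.
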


Osserviamo che la formula precedente \e tutt'altro che banale, causa il fatto che $(D,H)$, $(D',V)$ non necessariamente commutano. Il teorema precedente si applica all'atomo con $n$ elettroni ponendo $(D,H) = ( \mS(\bR^{3n},\bC) , - \Delta )$ e definendo $(D',V)$ come l'operatore $Vu := f u$, $\forall u \in D' := \{ u \in \mH : fu \in \mH \}$. L'espressione esplicita di $e_t(H/k)$ si ottiene immediatamente da (\ref{sol.Schroedinger2}),
\[
\{ e_t(H/k) u \}(x)
\ = \
\int_{\bR^{3n}} \wa u(\lambda) e^{-i |\lambda|^2 k^{-1} t  +  \lambda \cdot x} \ d \lambda
\ \ , \ \
x \in \bR^{3n}
\ ,
\]
mentre $e_t(V/k)u = \exp(itf / k) u$; 
%
%(vedi Esercizio \ref{sec_afunct}.19); 
%
in tal modo, calcolando il limite del teorema precedente otteniamo la famosa {\em formula di Feynman-Kac} (vedi \cite[\S X.11]{RS2}).

\subsection{Il Teorema di Schauder.}
\label{sec_gfp}

Come rimarcato in \S \ref{sec_fp} i teoremi di punto fisso rivestono una grossa importanza in analisi: soluzioni di problemi integro-differenziali si possono presentare come punti fissi di applicazioni continue definite su spazi di funzioni, come accade ad esempio nel caso del problema di Cauchy rispetto all'operatore di Volterra (\ref{eq_tc1}).
In questa sezione facciamo una breve rassegna dei teoremi pi\'u noti e delle loro applicazioni. Enfatizziamo il fatto che la convessit\'a svolge un ruolo di primo piano per la dimostrazione dei risultati che seguono.

\begin{thm}[Schauder]
Sia $\mE$ uno spazio di Banach e $C \subset \mE$ un convesso compatto e non vuoto. Allora ogni applicazione continua $f : C \to C$ ammette \textbf{almeno} un punto fisso.
\end{thm}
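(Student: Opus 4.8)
The plan is to reduce the infinite-dimensional statement to the finite-dimensional Brouwer theorem (il teorema di Brouwer per insiemi convessi e compatti di $\bR^n$, enunciato sopra) by approximating $f$ with continuous maps whose range lies in a finite-dimensional compact convex set. The device that makes this work is the \emph{proiezione di Schauder}, built from a partition of unity subordinate to a finite cover of $C$.

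First I would exploit compactness of $C$: fixed $\eps > 0$, the open cover $\{ \Delta(x,\eps) : x \in C \}$ admits a finite subcover, giving points $x_1, \ldots, x_n \in C$ with $C \subseteq \cup_i \Delta(x_i,\eps)$. Setting $\phi_i(x) := \max \{ 0 , \eps - \| x - x_i \| \}$, I would define the Schauder projection $P_\eps(x) := \left( \sum_j \phi_j(x) \right)^{-1} \sum_i \phi_i(x) x_i$. Each $\phi_i$ is continuous and nonnegative, and for every $x \in C$ at least one $\phi_i(x)$ is strictly positive, so the denominator never vanishes and $P_\eps : C \to \mE$ is continuous. Since $P_\eps(x)$ is a convex combination of the $x_i$ and $C$ is convex, $P_\eps$ maps $C$ into $C_\eps := \mathrm{conv}\{ x_1, \ldots, x_n \} \subseteq C$, a compact convex subset of the finite-dimensional subspace spanned by the $x_i$. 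The crucial estimate to verify is $\| P_\eps(x) - x \| < \eps$ for all $x \in C$: writing $\lambda_i := \phi_i / \sum_j \phi_j$, only the indices with $\| x - x_i \| < \eps$ contribute to the convex combination, whence $\| P_\eps(x) - x \| \leq \sum_i \lambda_i(x) \| x_i - x \| < \eps$.

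Next I would apply Brouwer to $f_\eps := P_\eps \circ f$ restricted to $C_\eps$. Since $f(C_\eps) \subseteq C$ and $P_\eps(C) \subseteq C_\eps$, the map $f_\eps : C_\eps \to C_\eps$ is a continuous self-map of a nonempty compact convex subset of a finite-dimensional space; identifying that subspace with some $\bR^m$, il teorema di Brouwer yields $y_\eps \in C_\eps$ with $P_\eps(f(y_\eps)) = y_\eps$. Applying the approximation estimate with $x = f(y_\eps) \in C$ then gives $\| y_\eps - f(y_\eps) \| = \| P_\eps(f(y_\eps)) - f(y_\eps) \| < \eps$, so $y_\eps$ is an "almost fixed point".

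Finally I would take $\eps = 1/k$ and pass to the limit. This produces a sequence $\{ y_k \} \subset C$ with $\| y_k - f(y_k) \| < 1/k$; since $C$ is a compact metric space it is sequentially compact (Prop.\ref{prop_comp}), so some subsequence satisfies $y_{k_j} \to \ovl x \in C$. Continuity of $f$ gives $f(y_{k_j}) \to f(\ovl x)$, while $\| y_{k_j} - f(y_{k_j}) \| \to 0$ forces $\ovl x = f(\ovl x)$. The main obstacle is the construction and analysis of $P_\eps$ — ensuring simultaneously that its range is finite-dimensional, that it takes values in $C$, and that it approximates the identity uniformly — together with the clean reduction to Brouwer on $C_\eps$; once the Schauder projection is in hand, the final limiting argument is routine.
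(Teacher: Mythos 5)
La tua dimostrazione è corretta e segue essenzialmente la stessa strada del testo: proiezione di Schauder costruita da una partizione dell'unità subordinata a un sottoricoprimento finito, riduzione al teorema di Brouwer sull'inviluppo convesso $C_\eps$, punti "quasi fissi" e passaggio al limite per compattezza. Non ci sono lacune da segnalare.
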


\begin{proof}[Dimostrazione]
Preso $\eps > 0$ consideriamo il ricoprimento $X_\eps := \{ \Delta (v,\eps) : v \in C \}$ ed estraiamo un sottoricoprimento finito $X_{\eps,n} := \{ \Delta (v_k,\eps) : k = 1,\ldots,n \}$. 
Definiamo quindi le funzioni continue
\[
g_1, \ldots, g_n : C \to \bR^+
\ \ , \ \
g_k(v):=
\begin{cases}
\eps - \| v-v_k \|, & \;\;\textrm{se $\| v - v_k \| \leq \eps$} 
\\
0, & \;\;\textrm{se $\| v-v_k \| \geq \eps$}
\end{cases}
\]
Osserviamo che $\sum_{k=1}^n g_k(v) > 0$, $v \in C$, per cui definiamo
\[
G_n : C \to C_n
\ \ , \ \
G_n(v) := \frac{\sum_{k=1}^n g_k(v) v_k}{\sum_{k=1}^n g_k(v)}
\ ,
\]
dove $C_n \subseteq C$ \e l'inviluppo convesso di $\{ v_1 , \ldots , v_n \}$. Osserviamo che
\begin{equation}
\label{eq_SCH01}
\| G_n(v)-v \| \leq \eps \ \ , \ \  v \in C \ .
\end{equation}
Consideriamo ora la funzione continua $f_n := G_n \circ f : C \to C_n$ e la restrizione $\ovl f_n : C_n \to C_n$; essendo $C_n$ compatto, convesso, e contenuto nello spazio a dimensione finita generato da $v_1 , \ldots , v_n$, per il teorema di Brouwer troviamo che esiste $w \in C_n$ tale che
\[
G_n \circ f (w) = f_n(w) = \ovl f_n (w) = w \ .
\]
Inoltre, grazie a (\ref{eq_SCH01}) troviamo
\[
\| f(w) - w \| = \| f(w) - G_n \circ f (w)) \| \leq \eps \ .
\]
Osserviamo che $w$ dipende da $\eps$. Scegliendo $\eps = 1/m$, $m \in \bN$, scriviamo $w \equiv w_m$ ed osserviamo che le considerazioni precedenti implicano che
\begin{displaymath}
\forall m \in \bN \ \exists w_m \in C  \ : \ \| f(w_m) - w_m \| \leq m^{-1} \ .
\end{displaymath}
Per compattezza di $C$, esiste una sottosuccessione $w_{m_i}$ tale che $\{ f(w_{m_i}) \}$ converge ad un $w_0 \in C$. La stima
\[
\| w_{m_i} - w_0 \|
\leq
\| w_{m_i} - f(w_{m_i}) \|  +  \| f(w_{m_i}) - w_0 \|
\]
mostra che anche $\{ w_{m_i} \}$ converge a $w_0$; per cui, essendo $f$ continua troviamo $f(w_0) = w_0$.
\end{proof}

\begin{rem}{\it
Usando lo stesso argomento della dimostrazione precedente si ha la seguente versione del teorema di Schauder: se $C \subset \mE$ un convesso, chiuso, limitato e non vuoto, allora ogni applicazione compatta $f : C \to C$ ammette un punto fisso.
}
\end{rem}

Tra le applicazioni del teorema di Schauder, segnaliamo:
\begin{enumerate}
\item {\em Una dimostrazione alternativa del Teorema di Peano}. Con le notazioni di Teo.\ref{thm_P}, poniamo $I := [t_0-r,t_0+r]$, consideriamo lo spazio di Banach $\mE := C(I)$ e definiamo $C := \{ u \in \mE : \| u-u_0  \|_\infty \leq r  \}$. Si verifica facilmente che $C$ \e convesso, chiuso e limitato. A questo punto, si tratta di verificare che l'operatore di Volterra
\[
Fu(t) := u_0 + \int_{t_0}^t f ( s , u(s) ) \ ds \ \ , \ \ u \in C \ ,
\]
definisce un'applicazione compatta da $C$ in s\'e. Il punto fisso di $F$ fornisce la soluzione cercata.
\item {\em Teoremi di esistenza di soluzioni per equazioni integrali}. Un'{\em equazione di Urysohn di seconda specie} \e un problema del tipo
\begin{equation}
\label{eq_URI}
u(t) = \lambda \int_a^b \varphi(t,s,u(s)) \ ds
\ \ , \ \
\varphi \in C(A)
\ \ , \ \
A := [a,b] \times [a,b] \times [-r,r]
\ .
\end{equation}
Poniamo $\mE := C([a,b])$ e $C := \{ u \in \mE : \| u \|_\infty \leq r \}$. Al solito, occorre verificare che $C$ \e convesso, chiuso e limitato, e che l'operatore 
\[
Fu(t) := \lambda \int_a^b \varphi(t,s,u(s)) \ ds \ \ , \ \ u \in C \ ,
\]
sia un'applicazione compatta da $C$ in s\'e: in effetti, cos\'i \e  se $\lambda \leq r \| \varphi \|_\infty^{-1} (b-a)^{-1}$ 
(vedi \cite[\S 16.1]{PM} o \cite[\S 3]{Gra}).
%
%
%\item {\em Esistenza di soluzioni per equazioni alle derivate parziali semiellittiche}. Vedi \cite[]{Bre} e referenze.
%
%
\end{enumerate}

Menzioniamo infine il seguente risultato:
\begin{thm}\textbf{(Leray-Schauder-Tychonoff, \cite[Theorem V.19]{RS}).}
Sia $S$ un convesso compatto e non vuoto in uno spazio localmente convesso $V$. Allora ogni applicazione continua $f : S \to S$ ammette punti fissi.
\end{thm}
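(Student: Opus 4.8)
The plan is to reduce the statement to the finite-dimensional Brouwer theorem by approximating $f$ through finite-dimensional Schauder-type projections, exactly as in the proof of Schauder's theorem given above, and then to extract a genuine fixed point via a compactness (finite intersection) argument that replaces the sequential limit used in the normed case.

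First I would fix a finite family of seminorms $p_1, \ldots, p_m$ from the separating family defining the topology $\sigma V$, and set $q := \max_k p_k$, which is again a continuous seminorm (the maximum of finitely many seminorms is a seminorm). Given $\varepsilon > 0$, the sets $U_v := \{ y \in S : q(y-v) < \varepsilon \}$, $v \in S$, form an open cover of $S$; by compactness I would extract a finite subcover with centers $v_1, \ldots, v_N \in S$. Then, mimicking the Schauder projection, I would set $g_k(y) := \max\{0, \varepsilon - q(y-v_k)\}$ (continuous, with $\sum_k g_k(y) > 0$ on $S$ since the $U_{v_k}$ cover $S$) and define
\[
G(y) := \frac{\sum_{k=1}^N g_k(y)\, v_k}{\sum_{k=1}^N g_k(y)} \in C_N,
\]
where $C_N$ is the convex hull of $\{v_1, \ldots, v_N\}$. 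Since only the terms with $g_k(y)>0$ — hence with $q(v_k - y) < \varepsilon$ — contribute, the sublinearity of $q$ gives $q(G(y)-y) < \varepsilon$ for every $y \in S$.

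Next I would apply the finite-dimensional Brouwer theorem. The set $C_N$ is a compact convex subset of the finite-dimensional subspace spanned by $v_1, \ldots, v_N$, on which $\sigma V$ coincides with the Euclidean topology, so $C_N$ may be identified with a compact convex subset of some $\bR^d$. The map $G \circ f$ restricts to a continuous self-map of $C_N$, so Brouwer yields $w \in C_N$ with $G(f(w)) = w$, whence $q(f(w)-w) = q(f(w) - G(f(w))) < \varepsilon$. Thus for each finite family $F = \{p_1, \ldots, p_m\}$ of seminorms and each $\varepsilon > 0$ the set
\[
A_{F,\varepsilon} := \{ x \in S : p(f(x)-x) \leq \varepsilon \ \ \forall p \in F \}
\]
is non-empty; it is also closed, by continuity of $f$ and of the seminorms.

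The main obstacle is the passage to an actual fixed point, which in the absence of metrizability cannot be carried out with sequences. Here I would exploit the compactness of $S$ directly: the family $\{A_{F,\varepsilon}\}$ has the finite intersection property, since for finitely many pairs $(F_1,\varepsilon_1), \ldots, (F_r,\varepsilon_r)$ one has $A_{F,\varepsilon} \subseteq \bigcap_{j=1}^r A_{F_j,\varepsilon_j}$ with $F := \bigcup_j F_j$ and $\varepsilon := \min_j \varepsilon_j$, and the left-hand side is non-empty by the previous step. As each $A_{F,\varepsilon}$ is a closed subset of the compact set $S$, their total intersection is non-empty; any $x_0$ in it satisfies $p(f(x_0)-x_0) = 0$ for every seminorm $p$ of the separating family, which forces $f(x_0) = x_0$. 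The delicate points to verify carefully are the continuity of $G$ and of $G \circ f$ on $C_N$ and the identification of $C_N$ with a Euclidean convex body (both routine), and the finite-intersection argument, which is precisely where the replacement of the normed-space limit by pure compactness genuinely takes place.
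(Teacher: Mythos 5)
Your proof is correct. Note that the paper itself gives no proof of this statement — it only cites \cite[Thm.V.19]{RS} — so there is nothing in-text to compare against; what you have written is the standard argument and is precisely the natural adaptation of the paper's own proof of the Schauder theorem to the locally convex setting. The two genuinely new ingredients, both handled correctly, are: (i) the Schauder projection $G$ is built from the continuous seminorm $q = \max_k p_k$ instead of the norm, and the estimate $q(G(y)-y) < \eps$ follows from subadditivity exactly as in the normed case; (ii) the sequential extraction of a convergent subsequence, which is unavailable without metrizability, is replaced by the finite-intersection-property argument on the closed nonempty sets $A_{F,\eps}$, whose directedness (shrink $\eps$, enlarge $F$) you verify explicitly. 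The only facts you invoke without proof are that the convex hull of finitely many points is compact and that a finite-dimensional subspace of a Hausdorff topological vector space carries the Euclidean topology (needed both to identify $C_N$ with a convex body in $\bR^d$ and to apply the paper's Teorema di Brouwer); both are standard and acceptable to cite, but the second is the one step that is not entirely "routine" and deserves at least a reference.
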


\begin{ex}{\it
La compattezza \e una condizione essenziale per i teoremi precedenti; ad esempio, l'applicazione
\[
F : l^2_{\leq 1} \to l^2_{\leq 1}
\ \ , \ \
x := \{ x_n \} \mapsto Fx 
\ : \ 
\left\{
\begin{array}{ll}
(Fx)_0 := \sqrt{ 1 - \| x \|_2^2} 
\\
(Fx)_n := x_{n-1}
\end{array}
\right.
\]
\'e continua, ma evidentemente priva di punti fissi.
}
\end{ex}

\subsection{Esercizi.}

\textbf{Esercizio \ref{sec_afunct}.1.} {\it
Sia $\mE := C([0,1])$. Fissato $\alpha \in [0,1]$, sia $T_\alpha : D(T_\alpha) \subset \mE \to \mE$ l'operatore lineare definito da
\[
T_\alpha f (x) = \int_0^x y^{-\alpha} f(y) \ dy
\ \ , \ \
x \in [0,1]
\ ,
\]
dove $D(T_\alpha) := \left\{ f \in \mE : x^{-\alpha} f(x) \in L^1 \right\}$. Stabilire se le seguenti affermazioni sono vere o false: (a) $T_1$ non \e continuo; (b) $T_\alpha$ \e compatto per $\alpha \in [0,1)$. Inoltre, determinare lo spettro di $T_\alpha$. (Suggerimento: si applichino i teoremi di Ascoli-Arzel\'a e Fredholm).
}

\

\noindent \textbf{Esercizio \ref{sec_afunct}.2.} {\it
Sia $n \in \bN$ ed $X \subseteq \bR^n$, $\dot{X} \neq \emptyset$, equipaggiato della misura di Lebesgue $\mu$. Preso $x \in X$, dimostrare che la delta di Dirac $\delta_x \in C_0(X)^*$ si estende ad un funzionale $F \in L^{\infty,*}_\mu(X)$, e che $F$ non appartiene all'immagine dell'applicazione canonica (\ref{def_Fg}).
}

\

\noindent {\it Sketch della soluzione.} Applicando il teorema di Hahn-Banach troviamo che esiste il funzionale $F \in L^{\infty,*}_\mu(X)$ cercato. Supponiamo ora per assurdo che $F = F_g$ per qualche $g \in L^1_\mu(X)$, ovvero
\[
\left \langle F , f \right \rangle 
= 
\int fg \ \ , \ \ f \in L^\infty_\mu(X) \ .
\]
Presa una successione $\{ f_n \} \subset C_c(X)$, $0 \leq f_n \leq 1$, $f_n(x) \equiv 1$, tale che $f_n \to \chi_{ \{ x \} }$ puntualmente, per convergenza di Lebesgue ($|f_ng| \leq |g|$, $n \in \bN$) troviamo
\[
1 = 
f_n(x) = 
\left \langle F , f_n \right \rangle = 
\int f_n g \ d \mu
\ \stackrel{n}{\to} \ 0
\ ,
\]
il che \e assurdo. Osserviamo che $F = F_g$ implicherebbe che $\mu_x \prec \mu$, dove $\mu_x$ \e la misura di Dirac, il che \e assurdo (vedi Esempio \ref{ex_dirac}).

\

\noindent \textbf{Esercizio \ref{sec_afunct}.3 (Schauder).} {\it
Siano $\mE,\mF$ spazi di Banach e $T \in K(\mE,\mF)$. 
\textbf{(1)} Si ponga $X := \ovl{T(\mE_{\leq 1})}$ (chiusura nella topologia della norma) e si mostri che $X$ \e uno spazio metrico compatto.
\textbf{(2)} Presa una successione $\{ f_n \} \subset \mF^*_{\leq 1}$, si mostri che la famiglia 
\[
\{ \varphi_n \} \subset C(X) 
\ \ , \ \ 
\varphi_n (x) := \left \langle f_n , x \right \rangle 
\ \ , \ \ 
x \in X 
\ ,
\]
\e equilimitata ed equicontinua.
\textbf{(3)} Usando i punti precedenti, si mostri $\{ T^* f_n  \} \subset \mE^*$ ammette una sottosuccessione convergente, cosicch\'e $T^*$ \e compatto.
\textbf{(4)} Si prenda ora un generico $T \in B(\mE,\mF)$ e si mostri che se $T^* \in K(\mF^*,\mE^*)$ allora $T$ \e compatto. 
\textbf{(5)} Si concluda che $T \in K(\mE,\mF)$ se e solo se $T^* \in K(\mF^*,\mE^*)$.
}

\

\noindent {\it Soluzione.} 
\textbf{(1)} Segue dalla compattezza di $T$.
\textbf{(2)} Visto che
          $| \left \langle f_n , Tu \right \rangle | \leq \| f_n \| \| T \| \| u \| \leq \| T \|$
          per ogni $u \in \mE_{\leq 1}$, troviamo
          $\| \varphi_n \|_\infty \leq \| T \|$
          e
          $| \varphi_n(x)-\varphi_n(x') | \leq |x-x'|$
          per ogni $n \in \bN$ ed $x,x' \in X$.
\textbf{(3)} Usando Ascoli-Arzel\'a troviamo una sottosuccessione $\{ \varphi_{n_k} \}$ 
          uniformemente convergente, per cui
          \[
          0 \stackrel{h,k}{\leftarrow}
          \| \varphi_{n_h} - \varphi_{n_k}  \|_\infty   
          \ \geq \
          \sup_{u \in \mE_{\leq 1}} \ 
          | \left \langle f_{n_h} , Tu \right \rangle  -  \left \langle f_{n_k} , Tu \right \rangle |
          \ = \
          \| T^* f_{n_h} - T^* f_{n_k} \|
          \ .
          \]
\textbf{(4-5)} Si argomenta in modo analogo ai punti precedenti, usando stavolta successioni 
          $\{ v_n \} \subset \mE_{\leq 1}$.

\

\noindent \textbf{Esercizio \ref{sec_afunct}.4 (Riesz).} {\it (1) Sia $\mE$ uno spazio normato ed $M \subset \mE$ chiuso. Allora per ogni $\eps > 0$ esiste $u \in \mE_1$ tale che 
\begin{equation}
\label{eq_lem_riesz}
d(u,M) := \inf_{w \in M} \| u-w \| \ \geq \ 1-\eps \ .
\end{equation}
(2) Usando il punto precedente, si concluda che se $\mE_1$ \e compatto nella topologia della norma allora $\mE$ ha dimensione finita. 
}

\

\noindent {\it Soluzione.} (1) Sia $v \in \mE-M$. Poich\'e $M$ \e chiuso (e diverso da $\mE$) abbiamo $d := d(v,M) > 0$. Per costruzione, per ogni $\eps > 0$ esiste $w \in M$ tale che $d \leq \| v-w \| \leq (1-\eps)^{-1}d$. Ponendo $u := (v-w) / \| v-w \|$ si trova il vettore cercato. (2) Supponiamo per assurdo che $\mE$ abbia dimensione infinita: allora esiste una successione di sottospazi propri $\mV_n \subset \mV_{n+1}$, $n \in \bN$. Usando il punto precedente riusciamo a costruire una successione $\{ u_n \in (\mV_n)_1 \}$ con $d(u_{n+1},\mV_n) \geq 1/2$ e quindi $\| u_n -u_m \| \geq 1/2$, $m < n$. Ci\'o contraddice l'ipotesi di compattezza di $\mE_1$.

\

\noindent \textbf{Esercizio \ref{sec_afunct}.5.} {\it Si consideri la seguente successione $\{ f_n \} \subset \bigcap_{p \in [1,\infty]} L^p(\bR)$:
\[
f_n(x) :=
\left\{
\begin{array}{ll}
2^{-n} \ \ , \ \ x \in [ 2^n,2^{n+1} ]
\\
0  \ \ , \ \  {altrimenti.}
\end{array}
\right.
\]
Si mostri che: (1) $\| f_n \|_p \to 0$ per ogni $p \in (1,\infty]$; (2) $\{ f_n \}$ non converge debolmente in $L^1(\bR)$.
}

\

\noindent \textbf{Esercizio \ref{sec_afunct}.6 (\cite[Ex.5.52]{Can}).} {\it Si consideri la successione $\{ x^{(n)} \} \subset l^\infty$ definita da
\[
x^{(n)}_k :=
\left\{
\begin{array}{ll}
0 \ \ , \ \ k \leq n
\\
1 \ \ , \ \ k > n \ .
\end{array}
\right.
\]
Si dimostri che $\{ x^{(n)} \}$ converge nella topologia $*$-debole, ma non in quella debole. (Suggerimento: riguardo la convergenza debole, si applichi il Teorema di Hahn-Banach al sottospazio $\mE := \{ x = \{ x_k \} \in l^\infty : \exists \lim_k x_k \}$ ed al funzionale $F \in \mE^*$, $\left \langle F , x \right \rangle := \lim_k x_k$).
}

\

\noindent \textbf{Esercizio \ref{sec_afunct}.7 (Ulteriori propriet\'a dello shift).} {\it
Dato l'operatore $S \in B(l^2)$ definito in (\ref{eq_shift}), si dimostrino le seguenti propriet\'a:
\textbf{(1)} $S^*x = ( x_2 , x_3 , \ldots  )$, $x := ( x_1 , x_2 , \ldots ) \in l^2$;
\textbf{(2)} $S^*S = 1$, dove $1 \in B(l^2)$ \e l'identit\'a;
\textbf{(3)} Per ogni $x,y \in l^2$, risulta $\lim_n (x,S^ny) = 0$. Si interpreti ci\'o in termini della topologia debole di $B(l^2)$ (vedi Esempio \ref{ex_lc}(4)).
\textbf{(4)} Si mostri che, preso $n \in \bN$, risulta
\[
{\mathrm{ind}} S^n = -n
\ \ , \ \
{\mathrm{ind}}(S^*)^n = n
\ ,
\]
dove {\em ind} denota l'indice nel senso di (\ref{eq.index}).
}

\

\noindent \textbf{Esercizio \ref{sec_afunct}.8 (Misure e rappresentazioni).} {\it Sia $X$ uno spazio compatto di Hausdorff. Presa una misura di Radon $\mu$ su $X$ consideriamo lo spazio di Hilbert $\mH_\mu := L_\mu^2(X)$ e, per ogni $f \in C(X)$, definiamo l'operatore lineare
\begin{equation}
\label{eq_muf}
\pi_\mu(f) : \mH_\mu \to \mH_\mu \ : \ \pi_\mu (f) v := fv \ \ , \ \ v \in \mH_\mu \ .
\end{equation}
\textbf{(1)} Si mostri che 
$\| \pi_\mu(f) \| := \sup_{v \in \mH_{\mu,\leq 1}} \| \pi_\mu(f)v \| \leq \| f \|_\infty$;
\textbf{(2)} Si verifichi che (\ref{eq_muf}) definisce una rappresentazione
%
%{\footnote{Preso uno spazio di Hilbert $\mH$ ed un'algebra $\mA$, \e terminologia comune 
%chiamare {\em rappresentazione di $\mA$} un omomorfismo $\pi : \mA \to B(\mH)$.}} 
%
%
\[
\pi_\mu : C(X) \to B(\mH_\mu)
\]
di $C(X)$ (ovvero, si mostri che $\pi_\mu$ \e lineare e che $\pi_\mu(fg) = \pi_\mu(f)\pi_\mu(g)$, $\forall f,g \in C(X)$);
\textbf{(3)} Si verifichi che, preso $v \in \mH_\mu$, l'insieme $[v] := \{ \pi_\mu(f) v , f \in C(X) \}$ \e un sottospazio vettoriale di $\mH_\mu$;
\textbf{(4)} Sia $\nu$ una misura di Radon assolutamente continua rispetto a $\mu$; si dimostri che esiste un operatore lineare $T \in B(\mH_\nu,\mH_\mu)$ tale che
\[
\left\{
\begin{array}{ll}
\| Tw \| = \| w \| \ \ , \ \ \forall w \in \mH_\nu \ ,
\\
T \circ \pi_\nu(f) = \pi_\mu(f) \circ T \ \ ,\ \ \forall f \in C(X) \ .
\end{array}
\right.
\]
\textbf{(5)} Si mostri che $\mH_\mu$ ha dimensione $1$ nel caso in cui $\mu$ sia una misura di Dirac.
\textbf{(6)} Supposto che $\mu$ sia regolare esterna, si mostri che esiste $v_0 \in \mH_\mu$ tale che $[v_0]$ \e denso in $\mH_\mu$.
%
%{\footnote{In realt\'a, ci\'o \e vero per un generico spazio compatto di Hausdorff, 
%vedi \cite[Prop.6.4.11]{Ped}.}}

\

\noindent (Suggerimenti: \textbf{(4)} Il teorema di Radon-Nikodym fornisce una funzione positiva $F \in L_\mu^1(X)$ tale che $\nu E = \int_E F \ d \mu$, $E \in \mM$; si osservi quindi che $F^{1/2}w \in \mH_\mu$ per ogni $w \in \mH_\nu$; \textbf{(6)} Si prenda la funzione $v_0(x) := 1$, $x \in X$, e si usi Prop.\ref{prop_cLp}.)
}

\

\noindent \textbf{Esercizio \ref{sec_afunct}.9}.
{\it
Sia $X$ uno spazio compatto e di Hausdorff, ed $\mA := C(X,\bC)$. Si mostrino i seguenti punti:
\textbf{(1)} Data $f \in C(X)$, si ha $\sigma(f) = f(X)$ (ovvero, lo spettro di $f$ coincide con la sua immagine);
\textbf{(2)} per ogni $x \in X$, si mostri che l'applicazione
\[
\omega_x : \mA \to \bC
\ \ , \ \
\left \langle \omega_x , f \right \rangle := f(x) \ , \ \forall f \in \mA \ ,
\]
\'e un carattere di $\mA$;
\textbf{(3)} Usando i punti precedenti, si mostri che l'applicazione 
$X \to \wa \mA$, $x \mapsto \omega_x$,
\'e un omeomorfismo.
}

\

\noindent \textbf{Esercizio \ref{sec_afunct}.10}.
{\it
Consideriamo la $*$-algebra di Banach $\mA := L^1(\bR,\bC)$ (vedi Esempio \ref{ex_conv}).  \\
\textbf{(1)} Per ogni $\lambda \in \bT$ si mostri che l'applicazione
\[
\omega_\lambda : \mA \to \bC
\ \ : \ \ 
\left \langle \omega_\lambda , f \right \rangle
:=
\int f(t) \lambda^{-t} dt
\ , \
\forall f \in \mA
\]
\'e un carattere di $\mA$ e si verifichi che $\| \omega_\lambda \| \leq 1$.
\textbf{(2)} Data $f \in \mA$, verificare che la funzione 
$\wa f (\lambda) := \left \langle \omega_\lambda , f \right \rangle$,
$\lambda \in \bT$, 
\'e continua e limitata.
\textbf{(3)} Presa $g \in L^\infty(\bR,\bC)$ si consideri $F_g \in \mA^*$ e si mostri che affinch\'e si abbia $F_g \in \wa \mA$, deve essere 
$\ovl{g(t)} = g(-t)$, $g(t+s) = g(t)g(s)$, per ogni $t,s \in \bR$. Si concluda che $g(t) = \lambda^t$ per qualche $\lambda \in \bT$.

\

\noindent (Suggerimenti: per il punto (1) si usino i Teoremi di Fubini-Tonelli ed il Teorema di convergenza dominata. Per il punto (2) si mostri che l'applicazione $\{ \bT \ni \lambda \mapsto \omega_\lambda \in \wa \mA \}$ \e continua e poi si usi la definizione di topologia $*$-debole. Per il punto (3) si scriva esplicitamente $\left \langle F_g, f \right \rangle := \int f \ovl g$ e si impongano le condizioni (\ref{def_char}).)
}

\

\noindent \textbf{Esercizio \ref{sec_afunct}.11}.
{\it
Sia $\{ f_n \} \subset L^1(\bR)$ un'identit\'a approssimata (vedi Def.\ref{def_moll}). 
\textbf{(1)} Si mostri che i funzionali
\[
\left \langle F_n , g \right \rangle := \int f_ng
\ \ , \ \
g \in C_0(\bR)
\ ,
\]
sono lineari e continui, ovvero $\{ F_n \} \subset C_0(\bR)^*$.
\textbf{(2)} Si verifichi che $F_n \stackrel{*}{\to} \delta_0$, dove $\delta_0 \in C_0(\bR)^*$ \e la delta di Dirac.
}

\

\noindent \textbf{Esercizio \ref{sec_afunct}.12}.
{\it
Si fissi $p = 2$ nell'Esempio \ref{ex_Ta} e si calcoli lo spettro dell'operatore $T_a \in B(l^2_\bC)$, $a \in l^\infty_\bC$. Si trovino le condizioni su $a$ tali che $T_a$ sia compatto.
}

\

\noindent \textbf{Esercizio \ref{sec_afunct}.13 (Commutanti e algebre di von Neumann)}.
{\it
Sia $\mH$ uno spazio di Hilbert complesso. Preso un sottoinsieme $\mS \subseteq B(\mH)$, definiamo il \textbf{commutante di $\mS$} 
\[
\mS' := \{ T \in B(\mH) : TA = AT \ , \ \forall A \in \mS \} \ .
\]
Si dimostri che:
\textbf{(1)} $\mS'$ \e un'algebra;
\textbf{(2)} $\mS'$ \e chiuso rispetto alla topologia debole di $B(\mH)$ (nel senso dell'Esempio \ref{ex_lc}(4));
\textbf{(3)} se $\mS = \mS^* := \{ A^* : A \in \mS \}$ allora $\mS'$ \e una $*$-algebra e quindi, per il punto precedente, un'algebra di von Neumann.

\

\noindent (Suggerimenti: riguardo il punto (2), occorre verificare che se $T \in B(\mH)$ appartiene alla chiusura debole di $\mS'$ allora $TA = AT$ per ogni $A \in \mS$, ovvero $(u,ATv) = (u,TAv)$ per ogni $u,v \in \mH$. Del resto, per ipotesi esiste una successione $\{ T_n \} \subseteq \mS'$ tale che $\lim_n (u,T_nv) = (u,Tv)$ per ogni $u,v$.)

}

\

\noindent \textbf{Esercizio \ref{sec_afunct}.14 (Algebre di von Neumann commutative)}.
{\it 
Sia $(X,\mM,\mu)$ uno spazio di misura finita ed $\mH := L_\mu^2(X,\bC)$. Si consideri la rappresentazione
\[
\pi : L_\mu^\infty(X,\bC) \to B(\mH)
\ \ , \ \
f \mapsto \pi_f 
\ : \
\pi_fu := fu \ , \ \forall u \in \mH
\ .
\]
\textbf{(1)} Si verifichi che $\pi$ \e iniettiva, e posto $\mR := \pi(L_\mu^\infty(X,\bC))$ si verifichi che $\mR$ \e una $*$-sottoalgebra di $B(\mH)$;
\textbf{(2)} Si consideri la funzione $u_0 \in \mH$, $u_0(x) := 1$, $\forall x \in X$, e preso $T \in B(\mH)$ si ponga $\varphi := Tu_0 \in \mH$. Si dimostri che se $T \in \mR'$ (ovvero $T \pi_f = \pi_f T$, $\forall f \in L_\mu^\infty(X,\bC)$), allora
\[
Tf = f \varphi
\ \ , \ \
\forall f \in L_\mu^\infty(X,\bC) \subset \mH
\ .
\]
\textbf{(3)} Si mostri che in realt\'a $\varphi \in L_\mu^\infty(X,\bC)$, con $\| \varphi \|_\infty \leq \| T \|$.
\textbf{(4)} Si mostri che $T = \pi_\varphi$, cosicch\'e $\mR' \subseteq \mR$.
\textbf{(5)} Si verifichi che $\mR \subseteq \mR'$, cosicch\'e $\mR = \mR'$, e si concluda che $\mR$ \e un'algebra di von Neumann.

\

\noindent (Suggerimenti: 
(1) Se $\pi_f = 0$ allora in particolare $\pi_fu_0 = f = 0$;
(2) Imponendo la condizione $T \pi_f = \pi_f T$ si trova $\pi_f Tu_0 = T \pi_f u_0 = Tf$;
(3) Scrivendo esplicitamente la disuguaglianza
    $\| Tf \|_2^2 \leq \| T \|^2 \| f \|_2^2$ 
    si trova
    \[
    0 \leq \int |\varphi|^2 |f|^2 \leq \| T \|^2 \int |f|^2
    \ \ , \ \
    \forall f \in L_\mu^\infty(X,\bC) \subset \mH
    \ ,
    \]
    e da quest'ultima segue facilmente la stima cercata;
(4) Si usi il punto (2) e la densit\'a di $L_\mu^\infty(X,\bC)$ in $\mH$ 
    (vedi Prop.\ref{prop_sLp});
(5) Per l'inclusione $\mR \subseteq \mR'$ si usi il fatto che $L_\mu^\infty(X,\bC)$
    \e un'algebra commutativa, e per il fatto che $\mR$ \e chiusa nella topologia
    debole si usi l'esercizio \ref{sec_afunct}.13.)

}

\

\noindent \textbf{Esercizio \ref{sec_afunct}.15 (Le relazioni di Heisenberg)}.
{\it Sia $\mH$ uno spazio di Hilbert complesso. 
\textbf{(1)} Preso $T=T^* \in B(\mH)$, mostrare che
\[
\| T^{2^k} \| = \| T \|^{2^k}
\ \ , \ \
\forall k \in \bN
\ .
\]
\textbf{(2)} Usando il punto precedente, dimostrare che non possono esistere $P,Q \in B(\mH)$ autoaggiunti che soddisfino le relazioni di Heisenberg
\begin{equation}
\label{eq.comm}
PQ - QP = -i1 \ .
\end{equation}
\textbf{(3)} Esibire uno spazio di Hilbert complesso $\mH$ con un sottospazio denso $D \subseteq \mH$ ed operatori simmetrici $(D,P)$, $(D,Q)$ tali che 
\begin{equation}
\label{eq.comm2}
\{ PQ - QP \} u = -iu \ \ , \ \ \forall u \in D \ .
\end{equation}
(Suggerimenti: riguardo (1) si osservi che l'identit\'a da dimostrare \e certamente vera nel caso $k=1$, in quanto $\| T^2 \| = \| T^*T \|$ (vedi (\ref{eq_idc})), e si proceda per induzione.
Riguardo (2) si argomenti per assurdo e si osservi che (\ref{eq.comm}) implica 
$PQ^n - Q^nP = -inQ^{n-1}$,
$\forall n \in \bN$,
cosicch\'e
\[
(2^k+1) \| Q^{2^k} \| 
\ \stackrel{(1)}{=} \
(2^k+1) \| Q \|^{2^k}
\ \leq \
2 \| P \| \| Q \|^{2^k+1}
\ \ , \ \
\forall k \in \bN
\ .
\]
Riguardo (3) si prenda $\mH = L^2(\bR,\bC)$, 
$D = \mS(\bR,\bC) := \{ f + ig : f,g \in \mS(\bR) \}$,
e
\[
Qu(x) := x u(x)
\ , \
x \in \bR
\ \ , \ \
Pu := -iu'
\ \ ,  \ \
u \in D
\ .
\]
}

\noindent \textbf{Esercizio \ref{sec_afunct}.16}
{\it
Si consideri l'operatore autoaggiunto $(D,T)$ ed il relativo calcolo funzionale boreliano del Teorema \ref{thm_sonl}. Presa una successione $\{ f_n \} \subset B^\infty(\sigma(D,T),\bC)$ limitata in norma $\| \cdot \|_\infty$ e convergente puntualmente a $f \in B^\infty(\sigma(D,T),\bC)$, si dimostri che
$\{ f_n(T) \}$
converge a $f(T)$ nella topologia forte di $B(\mH)$ (vedi Esempio \ref{ex_lc}(5)).

\

\noindent (Suggerimenti: occorre verificare che 
$\lim_n \| \{ f(T)-f_n(T) \} u \| = 0$, $\forall u \in B(\mH)$;
d'altra parte
\[
\| \{ f(T)-f_n(T) \} u \|^2 \ = \ 
( u , \{ f(T)-f_n(T) \}^* \{ f(T)-f_n(T) \} u  ) \ = \
\int_{\sigma(D,T)} g_n \ d \mu_{uu} \ ,
\]
dove 
\[
g_n := (f-f_n)^*(f-f_n) \in B^\infty(\sigma(D,T),\bC)
\ \ , \ \
\forall n \in \bN
\ ,
\]
tende puntualmente a $0$. Per cui, avendo $\sigma(D,T)$ misura $\mu_{uu}$-finita, basta invocare il teorema di convergenza limitata Prop.\ref{prop_conv_lim}.)
}

\

\noindent \textbf{Esercizio \ref{sec_afunct}.17.}
{\it
Si consideri lo spazio di Hilbert $\mH := L^2([0,1],\bC)$, l'operatore $(D,T)$ dell'Esempio \ref{eq.der.I}, e l'operatore di Volterra $F \in B(\mH)$ (Esempio \ref{ex_volterra}). Si consideri poi la funzione $u_0(x) := 1$, $\forall x \in [0,1]$, e si osservi che 
$(u_0,u) = \int u$, $\forall u \in \mH$.
\textbf{(1)} Si mostri che $Fg \in D$ per ogni 
$g \in V := \{ g \in \mH : (u_0,g) = 0 \}$, 
e che se $f \in \mH$ \e ortogonale a tutte le funzioni in $V$ allora $f$ \e costante;
\textbf{(2)} Usando il punto precedente e (\ref{eq_ff}), si mostri che
\[
(FT^*v,g) = (v,g)
\ , \
\forall v \in D^*
\ , \
g \in V
\ ;
\]
\textbf{(3)} Si mostri quindi che per ogni $v \in D^*$ esiste $c \in \bC$ tale che
$v = FT^*v + c$, 
cosicch\'e se $v \in D^*$ allora $v$ \e la primitiva di una funzione in $\mH$;
\textbf{(4)} Si mostri che
\[
v(1) - v(0) \ = \ i (v',u_0) \ = \ 0
\ \ , \ \
\forall v \in D^*
\ .
\]
\textbf{(5)} Si concluda che $D=D^*$, cosicch\'e $(D,T)$ \e autoaggiunto;
\textbf{(6)} Usando Oss.\ref{rem_defect}, si mostri che gli indici di difetto di $(D,T)$ sono nulli.

\

\noindent (Suggerimenti:
\textbf{(1)} Si noti che $\int g = Fg(1)$ e che $V^{\perp \perp} = \ovl{\bC u_0}$;
\textbf{(2)} L'equazione (\ref{eq_ff}) ed il punto (1) implicano che
$(FT^*v,g) = (F^*T^*v + \{ FT^*v(1) \}u_0 ,  g ) = (T^*v,Fg) = (v,TFg)$,
e, del resto, $TFg=g$;
\textbf{(3)} Dalle uguaglianze precedenti segue che $FT^*v - v$ \e ortogonale ad ogni $g \in V$, dunque per il punto (1) troviamo che $FT^*v - v$ \e costante;
\textbf{(4)} Si ha $v(1)-v(0) = \int_0^1 v'$;
\textbf{(6)} L'equazione $T^*u = -iu' = \pm iu$ ha soluzioni $u(x) := c e^{\pm x}$, $x \in [0,1]$, $c \in \bC$, le quali non sono in $D$.
)
}

\

\noindent \textbf{Esercizio \ref{sec_afunct}.18}
{\it
Si consideri l'operatore $(D,T)$ dell'Esempio \ref{eq.der.I}, e si mostri che:
\textbf{(1)} $(D,T)$ ha spettro puntuale $2 \pi \bZ := \{ 2 \pi k , k \in \bZ \}$; 
\textbf{(2)} Per ogni $\lambda \in \bR - 2 \pi \bZ$, l'operatore $(D,T-\lambda 1)$ \e biettivo;
\textbf{(3)} Si concluda che $\sigma (D,T) = \sigma p (D,T) = 2 \pi \bZ$.

\

\noindent (Suggerimenti: in via preliminare si osservi che se $u \in D$ allora $u(0) = u(1)$. Riguardo i quesiti:
\textbf{(1)} Occorre risolvere l'equazione
\[
-iu' - \lambda u = 0
\ \ , \ \
u \in D
\ ,
\]
che ha soluzione in $D$ se e solo se $\lambda \in 2 \pi \bZ$ e $u(x) := c e^{i \lambda x}$, dove $c \in \bC$.
\textbf{(2)} Grazie al punto precedente gi\'a sappiamo che $(D,T-\lambda 1)$ \e iniettivo se $\lambda \in \bR - 2 \pi \bZ$. D'altro canto, con il metodo di variazione delle costanti  troviamo che, presa $f \in L^2([0,1],\bC) \subset L^1([0,1],\bC)$ e posto
\[
\wa f_\lambda (t) := \int_0^t f(s) e^{-i\lambda s} ds
\ \ , \ \
t \in [0,1]
\ ,
\]
l'equazione
\[
u' - i \lambda u = f
\ \ , \ \
u \in D
\ ,
\]
ha soluzione 
\[
u(x) \ = \  
e^{- i \lambda x} 
\left\{ 
\frac{\wa f_\lambda(1)}{e^{i \lambda}-1}
+
\wa f_\lambda(x)
\right\}
\ \ , \ \
x \in [0,1]
\ .
\]
Dunque $(D,T-\lambda 1)$ \e suriettivo e quindi biettivo. Infine, osservando che $\wa f_\lambda$ \e continua concludiamo facilmente che l'operatore
\[
\{ S_\lambda f \} (x)
\ := \
e^{- i \lambda x} 
\left\{ 
\frac{\wa f_\lambda(1)}{e^{i \lambda}-1}
+
\wa f_\lambda(x)
\right\}
\ , \
x \in [0,1]
\ \ , \ \ 
f \in L^2([0,1],\bC)
\ ,
\]
\e limitato ed \e in effetti l'inverso di $(D,T-\lambda 1)$.)
}

\

\noindent \textbf{Esercizio \ref{sec_afunct}.19 (Misure e rappresentazioni, II).}
{\it Sia $X$ uno spazio compatto e di Hausdorff con famiglia di boreliani $\beta X$, e $B^\infty(X,\bC)$ la \sC algebra delle funzioni limitate e boreliane su $X$. Si assuma che esiste uno spazio di Hilbert complesso $\mH$ ed una rappresentazione
\[
\pi : B^\infty(X,\bC) \to B(\mH)
\ ,
\]
tale che
$f_n \stackrel{n}{\to} f$ puntualmente $\Rightarrow$ $\pi(f_n) \stackrel{n}{\to} \pi(f)$ nella topologia forte
(vedi Esempio \ref{ex_lc}(5)).
\textbf{(1)} Si mostri che per ogni coppia $u,v \in \mH$ esiste una misura boreliana $\mu_{uv} : \beta X \to \bC$, reale per $u = \lambda v$, $\lambda \in \bR$, e complessa altrimenti, tale che 
\[
\int_X f \ d \mu_{uv} \ = \ ( u , \pi(f)v )
\ \ , \ \
\forall f \in B^\infty(X,\bC)
\ .
\]
\textbf{(2)} Si consideri un compatto del tipo 
\begin{equation}
\label{eq.af19}
X \, := \, I \, \dot{\cup} \, S  \, \subset \, \bR \ ,
\end{equation}
dove $I$ \e l'unione di un numero finito di intervalli compatti mutualmente disgiunti ed 
$S := \{ x_n \} \dot{\cup} \{ x_0 \}$ 
\e dato dalla successione $\{ x_n \}$ con punto limite $x_0 \in \bR$.
Si mostri che esistono uno spazio di Hilbert $\mH$ e $T=T^* \in B(\mH)$ tale che $\sigma(T) = X$, $\sigma p(T) = S$.
\textbf{(3)} Si concluda che per ogni compatto $X \subset \bR$ del tipo (\ref{eq.af19}) esistono uno spazio di Hilbert $\mH$ ed una rappresentazione come  in (1).

\

\noindent (Suggerimenti: (1) Si osservi che ogni funzione caratteristica $\chi_E$, $E \in \beta X$, \e boreliana e che $\pi(\chi_E)$ \e un proiettore; si definisca quindi, per ogni $u,v \in \mH$,
\[
\mu_{uv}E := (u,\pi(\chi_E)v) \ \ , \ \ \forall E \in \beta X \ .
\]
(2) Si definisca la misura
\[
\omega : \beta X \to \bR
\ \ , \ \
\omega E := \mu(E \cap I) + \nu (E \cap S) \ ,
\]
dove $\mu$ \e la misura di Lebesgue e $\nu$ quella di enumerazione, e l'operatore
\[
T f(x) := x f(x)
\ \ , \ \
\forall x \in X
\ , \
f \in \mH := L_\omega^2(X,\bC)
\ ,
\]
tenendo a mente l'Esempio \ref{ex_op.mol}. (3) Si usi l'Esercizio \ref{sec_afunct}.16.)

}

%\
%
%\noindent \textbf{Esercizio \ref{sec_afunct}.20}
%{\it
%Sia $\mH := L^2(\bR,\bC)$ (misura di Lebesgue) ed $f : \bR \to \bR$ una funzione continua 
%in $\bR - \{ 0 \}$ tale che: 
%%
%(1) esiste $\alpha \in (0,+\infty)$, con $f(x) \sim x^{- \alpha}$ nel limite $x \to 0$; 
%(2) $\lim_{|x| \to \infty} f(x) = 0$.
%%
%\textbf{(1)} Si verifichi che l'operatore 
%          %
%          \[
%          Tu := f \cdot u \ \ , \ \ \forall u \in 
%          D := 
%          \{ v \in \mS(\bR,\bC) \ | \
%             \exists \beta \geq \alpha : v(x) \sim x^\beta \ , \ x \to 0 \} 
%          \ ,
%          \]
%          %
%          \e essenzialmente autoaggiunto.
%          %
%\textbf{(2)} Si verifichi che ogni misura
%          %
%          $\mu_{uv}E := \int_E f \ovl u v$,
%          $u,v \in D$,
%          %
%          \e finita, e che 
%          %
%          \[
%          (u,Tv) = \int \lambda \ d \mu_{uv}(\lambda) \ \ , \ \ \forall u,v \in D \ .
%          \]
%          %
%\textbf{(3)} Si verifichi che per ogni $g \in L^\infty(\bR,\bC)$ risulta 
%          %
%          $\{ g(T)u \}(x) = g(f(x)) u(x)$, $x \in \bR$.
%%
%}
%

\newpage
\section{Analisi di Fourier.}
\label{sec_fourier}

Lo sviluppo di una funzione in serie di Fourier contiene il germe di un concetto molto importante in analisi, quello di base di uno spazio di Hilbert. L'idea ulteriore che gli elementi di tale base (le funzioni trigonometriche) forniscano, per separazione di variabili, le soluzioni di un'equazione alle derivate parziali (l'equazione del calore) \e un secondo concetto fondamentale, usato poi anche in altri ambiti (l'equazione dell'oscillatore armonico con i polinomi di Hermite (\cite[Compl.II.III.1-3]{TS}), l'equazione dell'atomo di idrogeno con i polinomi di Laguerre \cite[Compl.II.III.2-3]{TS}, $\ldots$). 
Altrettanto importante \e la trasformata di Fourier, la quale trova applicazioni in svariati campi tra cui le equazioni alle derivate parziali e la teoria della trasmissione dei segnali.

In questa sezione esponiamo i fondamenti dell'analisi di Fourier, in particolare sviluppi in serie e trasformate, accennando poi alle sue generalizzazioni nell'ambito dei gruppi topologici, che formano l'oggetto di studio dell'analisi armonica astratta.

\subsection{Serie di Fourier.}

Consideriamo una successione uniformemente convergente del tipo
\begin{equation}
\label{eq_FOU01}
f(x) :=
\frac{a_0}{2} + \sum_{k=1}^{+\infty} \left\{ a_k \cos kx + b_k \sin kx \right\}
\ \ , \ \
x \in \bR
\ .
\end{equation}
Vista la periodicit\'a delle funzioni trigonometriche, la funzione $f$ \e completamente determinata dai suoi valori nell'intervallo $[-\pi,\pi]$ (o -- equivalentemente -- su $[0,2\pi]$). Ci chiediamo ora come la condizione di uniforme convergenza influisce sulle propriet\'a dei coefficienti $a_k,b_k$. A tale scopo, richiamiamo le relazioni
\begin{equation}
\label{eq_FOU02}
\int_{-\pi}^\pi \cos kx \cos mx \ dx = \delta_{km} \pi
\ \ , \ \
\int_{-\pi}^\pi \cos kx \sin mx \ dx = 0
\ \ , \ \
\int_{-\pi}^\pi \sin kx \sin mx \ dx = \delta_{km} \pi
\ ,
\end{equation}
dove $\delta_{km}$ \e il simbolo di Kronecker. Moltiplicando (\ref{eq_FOU01}) prima per $\cos mx$, poi per $\sin mx$, $m \in \bN$, ed integrando sull'intervallo $[-\pi,\pi]$ troviamo
\begin{equation}
\label{eq_FOU03}
a_k = \frac{1}{\pi} \int_{-\pi}^\pi f(x) \cos kx \ dx
\ \ , \ \
b_k = \frac{1}{\pi} \int_{-\pi}^\pi f(x) \sin kx \ dx
\ \ , \ \
a_0 = \frac{1}{\pi} \int_{-\pi}^\pi f(x) \ dx 
\ .
\end{equation}
Con le precedenti espressioni per i coefficienti $a_k,b_k$, chiamiamo (\ref{eq_FOU01}) lo {\em sviluppo in serie di Fourier di $f$}.

Sorge in modo naturale la questione di quali funzioni ammettano uno sviluppo in serie di Fourier, e di che tipo di convergenza (puntuale, uniforme, ...) questa abbia. Riguardando (\ref{eq_FOU03}), risulta chiaro che una condizione necessaria \e $f \in L^1([-\pi,\pi])$. Ora, (\ref{eq_FOU02}) suggerisce che possiamo riguardare 
\[
\mB
\ := \
\{ \cos mx , \sin kx \ , \ k,m \in \bN \} 
\]
come un insieme ortogonale di funzioni per il prodotto scalare di $L^2([-\pi,\pi])$; in realt\'a, come sar\'a chiaro alla fine della sezione, $\mB$ \e in effetti {\em una base} per $L^2([-\pi,\pi])$. Come primo passo, ci dedicheremo alla dimostrazione di teoremi di convergenza della serie di Fourier per funzioni regolari a tratti. Il seguente lemma, inerente lo "sviluppo di Fourier" della funzione costante $1$, si dimostra per induzione, e ne omettiamo la dimostrazione.
\begin{lem}
Per ogni $n \in \bN$, vale la relazione
\begin{equation}
\label{eq_FOU04}
\frac{1}{2} + \sum_{k=0}^n \cos ky \ = \ \frac{\sin (n+1/2)y}{2\sin y/2}
\ ;
\end{equation}
per cui,
\begin{equation}
\label{eq_FOU05}
\frac{1}{\pi} \int_0^\pi \frac{\sin (n+1/2)y}{2\sin y/2} \ dy 
\ = \ 
\frac{1}{2} 
\ .
\end{equation}
\end{lem}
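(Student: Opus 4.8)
The lemma asserts two things: first, the trigonometric identity
\[
\frac{1}{2} + \sum_{k=0}^n \cos ky = \frac{\sin(n+1/2)y}{2\sin y/2},
\]
and second, the integral consequence
\[
\frac{1}{\pi} \int_0^\pi \frac{\sin(n+1/2)y}{2\sin y/2}\,dy = \frac{1}{2}.
\]
The author suggests an inductive proof but omits it; let me sketch how I would carry it out.

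**The plan for the identity.** The cleanest route I would take is not bare induction but a telescoping argument driven by a product-to-sum formula. The plan is to multiply both sides by $2\sin(y/2)$ and show that $2\sin(y/2)\bigl(\tfrac12 + \sum_{k=0}^n \cos ky\bigr) = \sin((n+1/2)y)$. The key computational step is the product-to-sum identity $2\sin(y/2)\cos(ky) = \sin((k+1/2)y) - \sin((k-1/2)y)$, which follows directly from the sum formulas for sine. Summing this over $k = 0, \ldots, n$ produces a telescoping sum collapsing to $\sin((n+1/2)y) - \sin(-y/2) = \sin((n+1/2)y) + \sin(y/2)$, and the isolated $2\sin(y/2)\cdot \tfrac12 = \sin(y/2)$ term cancels the leftover $\sin(y/2)$, leaving exactly $\sin((n+1/2)y)$. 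This avoids an explicit induction hypothesis and makes the cancellation transparent. If one prefers the induction the author hints at, the step $n \to n+1$ just adds $\cos((n+1)y)$ to the left and requires showing $\frac{\sin((n+1/2)y)}{2\sin(y/2)} + \cos((n+1)y) = \frac{\sin((n+3/2)y)}{2\sin(y/2)}$, which reduces to the same product-to-sum identity applied once.

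**The plan for the integral.** For the second formula I would simply integrate the left-hand side of the identity term by term over $[0,\pi]$. The constant term $\tfrac12$ contributes $\frac{1}{\pi}\int_0^\pi \tfrac12\,dy = \tfrac12$, and each term $\cos(ky)$ with $k \geq 1$ integrates to $\frac{1}{\pi}\bigl[\frac{\sin ky}{k}\bigr]_0^\pi = 0$ since $\sin(k\pi) = \sin 0 = 0$. Hence the whole integral equals $\tfrac12$, which is exactly the claim after substituting the closed form from the first identity. The only subtlety worth a remark is that the integrand $\frac{\sin((n+1/2)y)}{2\sin(y/2)}$ has a removable singularity at $y = 0$ (its limit there is $n + \tfrac12$, finite), so the integral is perfectly well defined and the term-by-term integration of the finite trigonometric sum is unproblematic.

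**Anticipated obstacle.** There is no deep obstacle here: this is a routine computation. The only place demanding a little care is getting the telescoping boundary terms right, in particular remembering that the $k=0$ term of $2\sin(y/2)\cos(ky)$ produces $\sin(y/2) - \sin(-y/2) = 2\sin(y/2)$ rather than a single $\sin(y/2)$, so that the bookkeeping with the extra $\tfrac12$ factor matches up. I would double-check the index endpoints of the telescoping sum to confirm the surviving terms are $\sin((n+1/2)y)$ and $-\sin(-y/2)$, and verify the sign on the latter. Everything else is mechanical.
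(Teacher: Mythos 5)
Your telescoping strategy is the standard (and cleanest) way to establish this kind of identity, and since the paper omits the proof entirely there is no competing argument to compare against; but as you have executed it the computation does not close, and the reason is worth pinpointing: the cancellation you invoke at the end does not occur. Multiplying by $2\sin(y/2)$ and telescoping $\sum_{k=0}^{n}\bigl[\sin((k+\tfrac12)y)-\sin((k-\tfrac12)y)\bigr]$ leaves $\sin((n+\tfrac12)y)-\sin(-y/2)=\sin((n+\tfrac12)y)+\sin(y/2)$, and the isolated term $2\sin(y/2)\cdot\tfrac12$ contributes a further $+\sin(y/2)$. These two $\sin(y/2)$ terms have the same sign: they add to $2\sin(y/2)$ rather than cancelling, so you obtain $\sin((n+\tfrac12)y)+2\sin(y/2)$, not $\sin((n+\tfrac12)y)$.

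The underlying issue is that the identity as printed, with the sum starting at $k=0$, is false: at $n=0$ the left-hand side is $\tfrac12+\cos 0=\tfrac32$ while the right-hand side is $\tfrac12$. The correct Dirichlet-kernel identity is $\tfrac12+\sum_{k=1}^{n}\cos ky=\frac{\sin((n+1/2)y)}{2\sin(y/2)}$, and that is the version actually used later in the proof of Theorem~\ref{thm_FOU_p}, where the sum runs from $k=1$. With that starting index your telescoping gives exactly $\sin((n+\tfrac12)y)-\sin(y/2)$ for the sum, the isolated $\sin(y/2)$ cancels it, and everything works. Note that your treatment of the integral already assumes this corrected indexing implicitly: you integrate the constant $\tfrac12$ and the terms $\cos ky$ with $k\ge 1$, silently omitting the $k=0$ term $\cos 0=1$, which would otherwise contribute an extra $1$ and yield $\tfrac32$ instead of $\tfrac12$. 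So the fix is to flag the misprint, restate the identity with the sum from $k=1$, and then your argument goes through verbatim.
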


La seguente {\em diseguaglianza di Bessel} segue immediatamente dalla considerazione che $\mB$ \e un insieme ortogonale in $L^2([-\pi,\pi])$ (vedi Prop.\ref{prop_bessel}):
\begin{lem}
Sia $f \in L^2([-\pi,\pi])$. Allora 
\begin{equation}
\label{eq_FOU06}
\frac{a_0^2}{2} + \sum_{k=1}^\infty (a_k^2+b_k^2)
\ \leq \
\frac{1}{\pi} \int_{-\pi}^{\pi} f(x)^2 dx 
\ .
\end{equation}
\end{lem}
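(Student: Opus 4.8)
The claim is that for $f \in L^2([-\pi,\pi])$, with Fourier coefficients $a_k, b_k$ given by (\ref{eq_FOU03}), one has
\[
\frac{a_0^2}{2} + \sum_{k=1}^\infty (a_k^2+b_k^2) \leq \frac{1}{\pi} \int_{-\pi}^{\pi} f(x)^2 \, dx.
\]

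The plan is to recognize this as a specialization of the abstract Bessel inequality already proved as Prop.\ref{prop_bessel}, applied to the Hilbert space $\mH := L^2([-\pi,\pi])$ equipped with the inner product $(f,g) := \int_{-\pi}^\pi fg$, and to the orthogonal (not yet normalized) family $\mB = \{ \cos kx, \sin kx \}$. The only genuine content here is bookkeeping: Prop.\ref{prop_bessel} is stated for an \emph{orthonormal} system, whereas the relations (\ref{eq_FOU02}) show that the trigonometric functions are merely orthogonal, each with squared norm $\pi$ (and the constant function has squared norm $2\pi$). So the first step is to normalize. I would introduce
\[
e_0(x) := \frac{1}{\sqrt{2\pi}}, \qquad c_k(x) := \frac{1}{\sqrt{\pi}}\cos kx, \qquad s_k(x) := \frac{1}{\sqrt{\pi}}\sin kx,
\]
and check, directly from (\ref{eq_FOU02}), that $\{ e_0 \} \cup \{ c_k, s_k \}_{k\geq 1}$ is an orthonormal system in $\mH$. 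This is a routine computation using the three integral identities already supplied.

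Next I would compute the inner products $(e_0, f)$, $(c_k, f)$, $(s_k, f)$ and relate them to the Fourier coefficients defined in (\ref{eq_FOU03}). One finds $(c_k, f) = \frac{1}{\sqrt\pi}\int_{-\pi}^\pi f\cos kx \,dx = \sqrt\pi\, a_k$, and similarly $(s_k, f) = \sqrt\pi\, b_k$, while $(e_0, f) = \frac{1}{\sqrt{2\pi}}\int_{-\pi}^\pi f\,dx = \sqrt{\frac{\pi}{2}}\, a_0$. Here I should note that $f \in L^2([-\pi,\pi])$ guarantees $f \in L^1([-\pi,\pi])$ by Cor.\ref{cor_holder} (the interval has finite measure), so all these integrals are finite and the coefficients are well defined; this is what lets us invoke the $L^2$ theory at all.

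Finally I would apply Prop.\ref{prop_bessel}, i.e. inequality (\ref{eq_fou}), to the orthonormal family above, obtaining
\[
|(e_0,f)|^2 + \sum_{k=1}^\infty \big( |(c_k,f)|^2 + |(s_k,f)|^2 \big) \leq \| f \|^2 = \int_{-\pi}^\pi f(x)^2 \, dx.
\]
Substituting the computed values gives $\frac{\pi}{2}a_0^2 + \pi\sum_{k\geq 1}(a_k^2 + b_k^2) \leq \int_{-\pi}^\pi f^2$, and dividing through by $\pi$ yields exactly (\ref{eq_FOU06}). There is no real obstacle in this argument: the substance was already carried in Prop.\ref{prop_bessel}, and the only thing to be careful about is tracking the normalization constants correctly through the substitution — in particular keeping the factor $1/2$ in front of $a_0^2$ consistent with the $1/\sqrt{2\pi}$ normalization of the constant mode. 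The one conceptual point worth flagging is that we use \emph{only} the orthogonality of $\mB$, not its completeness; completeness would upgrade the inequality to Parseval's equality, but that is deferred (the excerpt announces it will be established later), so I would resist the temptation to assume it here.
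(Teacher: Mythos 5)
Your proposal is correct and follows exactly the route the paper intends: the text presents this lemma as an immediate consequence of the abstract Bessel inequality (Prop.\ref{prop_bessel}) applied to the orthogonal trigonometric system, and your normalization of $\mB$ together with the bookkeeping of the constants $\sqrt{\pi}$, $\sqrt{2\pi}$ is precisely the computation the paper leaves implicit. The remark that only orthogonality, and not completeness, is used is also consistent with the paper's deferral of Parseval's equality.
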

Osserviamo che sapendo che $\mB$ \e una {\em base} ortonormale (\ref{eq_FOU06}) diventa un'uguaglianza. Usando (\ref{eq_exLEB1}), otteniamo immediatamente la forma classica del Lemma di Riemann-Lebesgue:
\begin{lem}
Sia $f \in L^1([-\pi,\pi])$. Allora
\begin{equation}
\label{eq_FOU07}
\lim_{k \to \infty} \int_{-\pi}^\pi f(x) \cos kx \ dx
\ = \
\lim_{k \to \infty} \int_{-\pi}^\pi f(x) \sin kx \ dx
\ = \ 
0
\ .
\end{equation}
\end{lem}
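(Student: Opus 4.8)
The plan is to reduce the statement to the abstract Riemann--Lebesgue lemma already established in Esercizio \ref{sec_MIS}.1, namely the convergence (\ref{eq_exLEB1}). That result is stated for a function integrable over all of $\bR$ against a bounded, measurable weight $\varphi$ satisfying the antiperiodicity condition $\varphi(x+\pi) = -\varphi(x)$. Our $f$ lives only on $[-\pi,\pi]$, so the first step is to extend it: define $\tilde f : \bR \to \bR$ by $\tilde f := f$ on $[-\pi,\pi]$ and $\tilde f := 0$ elsewhere. Since $f \in L^1([-\pi,\pi])$, clearly $\tilde f \in L^1(\bR)$ with $\| \tilde f \|_1 = \| f \|_1$, and because $\tilde f$ vanishes outside $[-\pi,\pi]$ we have
\[
\int_\bR \tilde f(x) \varphi(kx) \, dx = \int_{-\pi}^\pi f(x) \varphi(kx) \, dx
\]
for any bounded measurable $\varphi$ and any $k$.

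Next I would check that $\varphi = \cos$ and $\varphi = \sin$ both satisfy the hypotheses of the cited exercise. Both are bounded (by $1$) and continuous, hence measurable, and both are antiperiodic with the required half-period $\pi$, since $\cos(x+\pi) = -\cos x$ and $\sin(x+\pi) = -\sin x$ for all $x$. Thus the exercise applies to $\tilde f$ with each of these two weights, yielding $\lim_k \int_\bR \tilde f(x)\cos(kx)\,dx = 0$ and $\lim_k \int_\bR \tilde f(x)\sin(kx)\,dx = 0$; combined with the displayed reduction this is exactly (\ref{eq_FOU07}).

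Should one prefer a self-contained argument not invoking the exercise, the natural alternative is a density argument. For a characteristic function $\chi_{(a,b)}$ of a subinterval one computes directly $\int_a^b \cos(kx)\,dx = k^{-1}(\sin kb - \sin ka) \to 0$, and similarly for $\sin$; by linearity the conclusion holds for every step function in $\mG([-\pi,\pi])$. Since the map $f \mapsto \int_{-\pi}^\pi f(x)\varphi(kx)\,dx$ is dominated by $\| f \|_1$ uniformly in $k$ (because $|\varphi| \le 1$), and since step functions are dense in $L^1([-\pi,\pi])$ in the $\| \cdot \|_1$ norm, an $\eps/2$ splitting transfers the limit from step functions to arbitrary $f \in L^1$.

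The computations here are all routine; the only point demanding genuine care is ensuring the hypotheses of Esercizio \ref{sec_MIS}.1 are met verbatim, in particular the antiperiodicity $\varphi(x+\pi) = -\varphi(x)$, which is precisely what lets the translation trick in that exercise produce the crucial sign change, and, for the alternative route, correctly invoking density of $\mG([-\pi,\pi])$ in $L^1$. Neither is a real obstacle, so the substance of the lemma is already carried by the earlier exercise, and the present statement is essentially its specialization to the trigonometric weights on a bounded interval.
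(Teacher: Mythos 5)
Your proof is correct and follows exactly the paper's route: the paper likewise deduces the lemma immediately from (\ref{eq_exLEB1}) of Esercizio \ref{sec_MIS}.1, and your zero-extension of $f$ to $\bR$ together with the verification that $\cos$ and $\sin$ satisfy $\varphi(x+\pi)=-\varphi(x)$ is precisely the (implicit) content of that reduction. The alternative density argument you sketch is a valid bonus but not needed.
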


Prima di procedere introduciamo le seguenti convenzioni. Data $f : \bR \to \bR$ definiamo, qualora esistano, i limiti
\[
f_+(x) := \lim_{t \to x^+} f(t)
\ \ , \ \
f_-(x) := \lim_{t \to x^-} f(t)
\ \ , \ \
\ovl f (x) := \frac{1}{2} \left( f_+(x) - f_-(x) \right)
\ \ , \ \
x \in \bR
\ .
\]
Una funzione $f : \bR \to \bR$ sia dice {\em periodica con periodo} $a > 0$ se $f(x+a)=f(x)$, $x \in \bR$. E' chiaro che se $f$ \e una funzione sviluppabile in serie di Fourier, allora \e periodica con periodo $2 \pi$. Una propriet\'a elementare delle funzioni periodiche \e la seguente:
\begin{equation}
\label{eq_IFP}
\int_{-a}^a f \ = \ \int_{-a-x}^{a-x} f \ \ , \ \ x \in \bR \ .
\end{equation}

\begin{thm}
\label{thm_FOU_p}
Sia $f : \bR \to \bR$ una funzione con periodo $2 \pi$ e regolare a tratti. Allora la serie di Fourier (\ref{eq_FOU01}) converge puntualmente alla funzione $\ovl f$, e quindi ad $f$ stessa nei suoi punti di continuit\'a. 
\end{thm}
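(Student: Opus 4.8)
Il piano è quello di dimostrare la convergenza puntuale analizzando la cosiddetta somma parziale $n$-esima della serie di Fourier e mostrando che il suo limite è proprio $\ovl f(x)$. Per ogni $n \in \bN$ e $x \in \bR$ considero
\[
S_n(x) := \frac{a_0}{2} + \sum_{k=1}^n \left\{ a_k \cos kx + b_k \sin kx \right\}
\ .
\]
Sostituendo le espressioni integrali (\ref{eq_FOU03}) per $a_k$, $b_k$, e usando le formule di addizione $\cos k(x-t) = \cos kx \cos kt + \sin kx \sin kt$, riscrivo $S_n(x)$ come un unico integrale
\[
S_n(x) = \frac{1}{\pi} \int_{-\pi}^\pi f(t) \left( \frac{1}{2} + \sum_{k=1}^n \cos k(t-x) \right) dt
\ .
\]
A questo punto applico il Lemma sopra (\ref{eq_FOU04}), che identifica il termine fra parentesi come il \emph{nucleo di Dirichlet} $\sin((n+1/2)y) / (2 \sin(y/2))$ con $y := t-x$. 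Effettuando il cambio di variabile $t \mapsto t-x$ e sfruttando la periodicità di $f$ tramite (\ref{eq_IFP}), riduco $S_n(x)$ ad un integrale del nucleo di Dirichlet contro $f(x+y)$ sull'intervallo $[-\pi,\pi]$.

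Il passo successivo è spezzare l'integrale nelle due metà $[0,\pi]$ e $[-\pi,0]$, in modo da separare i contributi dei limiti destro $f_+(x)$ e sinistro $f_-(x)$. Usando la relazione (\ref{eq_FOU05}), che afferma che l'integrale normalizzato del nucleo di Dirichlet su mezzo periodo vale $1/2$, posso scrivere le costanti $\frac{1}{2}f_+(x)$ e $\frac{1}{2}f_-(x)$ come integrali del nucleo stesso. In questo modo la differenza $S_n(x) - \ovl f(x)$ diventa un integrale del tipo
\[
\frac{1}{\pi} \int_0^\pi \frac{ \left( f(x+y) - f_+(x) \right) + \left( f(x-y) - f_-(x) \right) }{2 \sin(y/2)} \sin\left( (n+1/2) y \right) \ dy
\ ,
\]
ovvero l'integrale di una funzione fissata (indipendente da $n$) contro l'oscillante $\sin((n+1/2)y)$.

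L'osservazione cruciale, e il punto dove prevedo la difficoltà principale, è verificare che la funzione
\[
g_x(y) := \frac{ \left( f(x+y) - f_+(x) \right) + \left( f(x-y) - f_-(x) \right) }{2 \sin(y/2)}
\]
appartiene a $L^1([0,\pi])$. La potenziale singolarità è in $y=0$: qui $\sin(y/2) \sim y/2$, ma poiché $f$ è regolare a tratti i rapporti incrementali $(f(x+y)-f_+(x))/y$ e $(f(x-y)-f_-(x))/y$ restano limitati per $y \to 0^+$ (esistono infatti le derivate laterali). Dunque $g_x$ è limitata vicino all'origine e, essendo continua a tratti altrove su $[0,\pi]$, è integrabile. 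Una volta stabilito che $g_x \in L^1([0,\pi])$, concludo applicando il Lemma di Riemann-Lebesgue (\ref{eq_FOU07}): l'integrale di $g_x(y) \sin((n+1/2)y)$ tende a zero per $n \to \infty$. Qui occorre una piccola cautela nell'espansione di $\sin((n+1/2)y) = \sin ny \cos(y/2) + \cos ny \sin(y/2)$, così da ricondurre l'integrale a combinazioni lineari esattamente della forma (\ref{eq_FOU07}), osservando che anche $g_x(y)\cos(y/2)$ e $g_x(y)\sin(y/2)$ restano in $L^1$. Ne segue $\lim_n S_n(x) = \ovl f(x)$, che è la tesi; nei punti di continuità $f_+(x) = f_-(x) = f(x)$ e quindi $\ovl f(x) = f(x)$.
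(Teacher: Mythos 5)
La tua proposta segue essenzialmente la stessa strada della dimostrazione del testo: riscrittura di $S_n$ tramite il nucleo di Dirichlet usando (\ref{eq_FOU04}) e (\ref{eq_IFP}), sottrazione di $\ovl f(x)$ via (\ref{eq_FOU05}), verifica che la funzione ausiliaria \`e limitata vicino a $0$ (grazie alle derivate laterali) e integrabile, e conclusione con il Lemma di Riemann--Lebesgue dopo lo sviluppo di $\sin((n+1/2)y)$. L'unica differenza \`e puramente cosmetica (tu ripieghi l'integrale su $[0,\pi]$, il testo lo tiene su $[-\pi,\pi]$ con una $g$ definita a tratti), quindi l'argomento \`e corretto e coincide con quello del testo.
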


\begin{proof}[Dimostrazione]
Sia $n \in \bN$ ed $S_n$ la somma parziale $n$-esima della serie di Fourier di $f$. Allora
\[
\begin{array}{ll}
S_n(x) & =
\frac{1}{\pi} \int_{-\pi}^{\pi} f(t) 
\left\{
     \frac{1}{2} + \sum_{k=1}^n \cos kt \cos kx + \sin kt \sin kx
\right\}
\ dt
\\ \\ & =
\frac{1}{\pi} \int_{-\pi}^{\pi} f(t)
\left\{
     \frac{1}{2} + \sum_{k=1}^n \cos k(t-x) 
\right\}
\ dt
\\ \\ & \stackrel{u:=t-x}{=}
\frac{1}{\pi} \int_{-\pi-x}^{\pi+x} f(x+u)
\left\{
     \frac{1}{2} + \sum_{k=1}^n \cos ku 
\right\}
\ du
\\ \\ & \stackrel{(\ref{eq_IFP})}{=}
\frac{1}{\pi} \int_{-\pi}^{\pi} f(x+u)
\left\{
     \frac{1}{2} + \sum_{k=1}^n \cos ku 
\right\}
\ du
\\ \\ & \stackrel{(\ref{eq_FOU04})}{=}
\frac{1}{\pi} \int_{-\pi}^{\pi} f(x+u)
\frac{ \sin (n+1/2)u }{ 2 \sin u/2 } 
\ du
\ .
\end{array}
\]
Possiamo allora procedere a stimare
\[
S_n(x) - \ovl f(x)
=
\frac{1}{\pi} \int_{-\pi}^0 
\frac{ f(x+u) - f_-(x)  }{ 2 \sin u/2 }
\sin (n+1/2)u 
\ du
\ + \
\frac{1}{\pi} \int_0^{\pi} 
\frac{ f(x+u) - f_+(x)  }{ 2 \sin u/2 }
\sin (n+1/2)u 
\ du
\ .
\]
A questo punto, \e conveniente definire e studiare la funzione
\[
g(u) :=
\left\{
\begin{array}{ll}
\frac{1}{\pi}  
\frac{ f(x+u) - f_-(x) }{ 2 \sin u/2 }
\ \ , \ \
u \in [-\pi,0)
\\ \\
0 
\ \ , \ \
u=0
\\ \\
\frac{1}{\pi}
\frac{ f(x+u) - f_+(x)  }{ 2 \sin u/2 }
\ \ , \ \
u \in (0,\pi]
\end{array}
\right.
\]
Per costruzione $g$ \e limitata; inoltre l'insieme $A$ dei punti di discontinuit\'a di $g$ \e costituito al pi\'u dai punti del tipo $u = y-x$, dove $y$ \e di discontinuit\'a per $f$, ed eventualmente da $0$. Poich\'e $f$ \e regolare a tratti concludiamo che $A$ \e un insieme finito, per cui $g$ \e integrabile. A questo punto, osserviamo che
\[
S_n(x) - \ovl f(x)
=
\frac{1}{\pi} \int_{-\pi}^{\pi} 
g(u) \sin (n+1/2)u 
\ du
=
\frac{1}{\pi} 
\int_{-\pi}^{\pi} 
\left(
g(u) \sin \frac{u}{2} \cdot \cos nu
+
g(u) \cos \frac{u}{2} \cdot \sin nu
\right)
\ du
\ ,
\]
e (\ref{eq_FOU07}) implica che $\lim_n |S_n(x)-\ovl f(x)| = 0$.
\end{proof}

\begin{thm}
Sia $f : \bR \to \bR$ di periodo $2 \pi$ continua e regolare a tratti. Allora la serie di Fourier di $f$ converge totalmente, e quindi uniformemente, ad $f$. 
\end{thm}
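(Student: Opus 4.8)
Il piano \e di dimostrare la \emph{convergenza totale} della serie di Fourier, ovvero $\sum_{k \geq 1} ( |a_k| + |b_k| ) < \infty$: da questa seguir\'a, grazie al criterio delle serie assolutamente convergenti (Prop.\ref{prop_compl}) applicato allo spazio di Banach $C([-\pi,\pi])$ (Prop.\ref{prop_C0X}), che la serie converge uniformemente ad una funzione continua $g$. D'altra parte Teo.\ref{thm_FOU_p} garantisce la convergenza puntuale ad $\ovl f$; essendo $f$ continua si ha $\ovl f = f$, per cui necessariamente $g = f$ e la tesi \e dimostrata. Tutta la difficolt\'a \e quindi concentrata nella stima $\sum ( |a_k| + |b_k| ) < \infty$, che sfrutta la regolarit\'a a tratti di $f$ per guadagnare un fattore $k^{-1}$ sui coefficienti.

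Per prima cosa osservo che, essendo $f$ continua e regolare a tratti, essa \e assolutamente continua (\S\ref{sec_AC_BV}, Teo.\ref{thm_AC2}) e la sua derivata $f'$, definita tranne che in un numero finito di punti per periodo, \e limitata e regolare a tratti, dunque $f' \in L^2([-\pi,\pi])$. Denoto con $\alpha_k , \beta_k$ i coefficienti di Fourier di $f'$. Il passo chiave \e collegare $\alpha_k , \beta_k$ ad $a_k , b_k$ tramite integrazione per parti: integrando per parti $\alpha_k = \pi^{-1} \int_{-\pi}^\pi f'(x) \cos kx \, dx$ e $\beta_k = \pi^{-1} \int_{-\pi}^\pi f'(x) \sin kx \, dx$, i termini di bordo si annullano perch\'e $f(\pi) = f(-\pi)$ (periodicit\'a) e perch\'e $\sin k\pi = 0$, e si ottengono le relazioni $\alpha_k = k b_k$, $\beta_k = - k a_k$, ovvero $|a_k| + |b_k| = k^{-1} ( |\alpha_k| + |\beta_k| )$.

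A questo punto applico la diseguaglianza di Cauchy-Schwarz per le successioni (caso $p=q=2$ di (\ref{eq_HM_lp})), assieme alla stima elementare $(s+t)^2 \leq 2(s^2+t^2)$:
\[
\sum_{k \geq 1} ( |a_k| + |b_k| )
\ = \
\sum_{k \geq 1} \frac{ |\alpha_k| + |\beta_k| }{k}
\ \leq \
\sqrt 2 \left( \sum_{k \geq 1} \frac{1}{k^2} \right)^{1/2}
\left( \sum_{k \geq 1} ( \alpha_k^2 + \beta_k^2 ) \right)^{1/2}
\ .
\]
Il primo fattore \e finito ($\sum k^{-2} = \pi^2/6$), mentre il secondo lo \e grazie alla diseguaglianza di Bessel (\ref{eq_FOU06}) applicata ad $f' \in L^2([-\pi,\pi])$, che fornisce $\sum_k ( \alpha_k^2 + \beta_k^2 ) \leq \pi^{-1} \int_{-\pi}^\pi f'(x)^2 \, dx < \infty$. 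Dunque $\sum ( |a_k| + |b_k| ) < \infty$, e poich\'e $| a_k \cos kx + b_k \sin kx | \leq |a_k| + |b_k|$ per ogni $x \in \bR$, la serie \e totalmente (e quindi uniformemente) convergente, come voluto.

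La parte pi\'u delicata \e la giustificazione rigorosa dell'integrazione per parti quando $f$ \e solo regolare a tratti: occorrer\'a spezzare l'integrale sugli intervalli $[ x_j , x_{j+1} ]$ di una partizione che isola i punti in cui $f'$ \e discontinua, integrare per parti su ciascuno di essi (dove $f$ \e $C^1$), e verificare che i termini di bordo interni si cancellano a due a due grazie alla \emph{continuit\'a} di $f$ nei punti di raccordo, restando solo i contributi agli estremi $\pm \pi$, che si elidono per periodicit\'a. In alternativa, si pu\'o invocare direttamente la formula di integrazione per parti valida per funzioni assolutamente continue, a cui $f$ appartiene in virt\'u di Teo.\ref{thm_AC2}.
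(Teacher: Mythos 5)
La tua dimostrazione \e corretta e segue essenzialmente la stessa strada del testo: integrazione per parti per ottenere le relazioni $\alpha_k = k b_k$, $\beta_k = -k a_k$ (le (\ref{eq_FOU08}) del testo), diseguaglianza di Bessel applicata ad $f' \in L^2([-\pi,\pi])$, e infine la sommabilit\'a di $\sum_k (|a_k|+|b_k|)$ combinando $\sum_k k^{-2} < \infty$ con $\sum_k k^2(a_k^2+b_k^2) < \infty$. L'unica differenza \e cosmetica — tu usi Cauchy-Schwarz per le serie dove il testo usa la stima elementare $2xy \leq x^2+y^2$ — ed \e apprezzabile che tu abbia esplicitato sia la giustificazione dell'integrazione per parti sui punti di raccordo sia l'identificazione del limite uniforme con $f$ tramite Teo.\ref{thm_FOU_p}, passaggi che il testo lascia impliciti.
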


\begin{proof}[Dimostrazione]
Per avere la convergenza totale (e quindi uniforme) \e sufficiente dimostrare che la serie
\[
s_n := \sum_{k=1}^n \left( |a_k| + |b_k| \right)
\ \ , \ \
n \in \bN \ ,
\]
\e convergente. A tale scopo, osserviamo che poich\'e $f$ \e regolare a tratti la derivata $f'$ \e ben definita e continua in $[-\pi,\pi]$ tranne che in un numero finito di punti, nei quali poniamo $f' := 0$. In tal modo abbiamo che $f' \in L^1([-\pi,\pi]) \cap L^2([-\pi,\pi])$, ed integrando per parti possiamo calcolarne i coefficienti di Fourier
\begin{equation}
\label{eq_FOU08}
a'_k := \frac{1}{\pi} \int_{-\pi}^\pi f'(t) \cos kt \ dt =   k b_k
\ \ , \ \
b'_k := \frac{1}{\pi} \int_{-\pi}^\pi f'(t) \sin kt \ dt = - k a_k
\ .
\end{equation}
Applicando la diseguaglianza di Bessel (\ref{eq_FOU06}), otteniamo
\begin{equation}
\label{eq_FOU09}
\sum_k^n \left( (a'_k)^2 + (b'_k)^2  \right) =
\sum_k^n \left( k^2a_k^2 + k^2b_k^2  \right) \leq
\frac{1}{\pi} \int_{-\pi}^\pi f'(t)^2 dt
\ \ , \ \
n \in \bN
\ ,
\end{equation}
il che implica che la serie $\sum_k k^2 (a_k^2+b_k^2)$ converge. Ora, applicando la diseguaglianza $xy \leq 1/2 (x^2+y^2)$ a
\[
2|a_k| = \frac{2}{k} \cdot k|a_k|  \leq  \frac{1}{k^2} + k^2 a_k^2
\ \ , \ \
2|b_k| = \frac{2}{k} \cdot k|b_k|  \leq  \frac{1}{k^2} + k^2 b_k^2
\ ,
\]
otteniamo 
\[
2 \sum_k^n ( |a_k|+|b_k| ) 
\ \leq \
\sum_k^n \frac{1}{k^2} +  
\frac{1}{2} \sum_k^n \left( k^2a_k^2 + k^2b_k^2 \right)
\ \ , \ \
n \in \bN
\ ,
\]
e l'ultimo termine \e una serie convergente.
\end{proof}

\begin{rem}{\it
Lo spazio delle funzioni continue e regolari a tratti su $[-\pi,\pi]$ \e denso in $L^2([-\pi,\pi])$ nella norma $\| \cdot \|_2$ e ci\'o implica che $\mB$ ne \e effettivamente una base ortonormale. Inoltre, osservando che
\[
\frac{1}{\pi} \int_{-\pi}^{\pi} f(t)^2 \ dt
\ = \ 
\sum_k (a_k^2+b_k^2)
\ \leq \
\sum_k (k^2a_k^2+k^2b_k^2)
\ ,
\]
da (\ref{eq_FOU09}) otteniamo una versione della diseguaglianza di Poincar\'e (\ref{eq_Poincare2}).
}
\end{rem}

\begin{ex}{\it
Consideriamo la funzione $f^\sharp$ con periodo $2\pi$ che prolunga $f(x) := x$, $x \in [-\pi,\pi]$.
Allora abbiamo lo sviluppo
\[
b_k = - \frac{2}{k} \cos k\pi = (-1)^{k+1} \frac{2}{k}
\ \Rightarrow \
f^\sharp (x) = x = 2 \sum_{k=1} (-1)^{k+1} \frac{\sin kx}{k}
\]
(Poich\'e $f^\sharp$ \e dispari, otteniamo una serie di soli seni). Definendo invece $f(x) := x^2$, otteniamo una funzione pari $f^\sharp$ con sviluppo di Fourier
\[
f^\sharp(x) = x^2 = \frac{\pi^2}{3} + 4 \sum_{k=1}^\infty (-1)^k \frac{\cos kx}{k^2}
\ .
\]
Le espressioni precedenti possono essere usate per calcolare esplicitamente la somma delle serie numeriche che si ottengono fissando dei valori di $x$. Si veda ad esempio i casi $x = \pi , \pi/2$.
}
\end{ex}

\noindent \textbf{L'equazione del calore.} Siano $\omega \in \bR$ ed $f \in C^2([-\pi,\pi])$; consideriamo il problema alle derivate parziali
\begin{equation}
\label{eq_CALOR}
\left\{
\begin{array}{ll}
\partial_t u = \omega^2 \partial_{xx} u
\\ \\
u(x,0) = f(x)
\end{array}
\right.
\ \ , \ \
x \in [-\pi,\pi]
\ , \
t \in [0,+\infty)
\ .
\end{equation}
Supponendo che una soluzione $u$ di (\ref{eq_CALOR}) debba essere di classe $C^2$ rispetto ad $x$, possiamo sviluppare $u (\cdot , t)$ in serie di Fourier. Inoltre osserviamo che le funzioni
\[
e^{- \omega^2 t} \cos x
\ \ , \ \
e^{- \omega^2 t} \sin x
\]
sono delle soluzioni di (\ref{eq_CALOR}), se non si tiene conto della condizione iniziale. L'idea \e quindi quella di scrivere lo sviluppo
\begin{equation}
\label{eq_GR_HEAT}
u(x,t) 
= 
\frac{a_0}{2}
+
\sum_k \
e^{-k^2 \omega^2 t}
\left\{
a_k \cos kx + b_k \sin kx
\right\}
\ .
\end{equation}
Assumendo le necessarie condizioni di regolarit\'a, troviamo
\[
\partial_x u 
= 
\sum_k \ 
k e^{-k^2 \omega^2 t}
\left\{
- a_k \cos kx + b_k \sin kx
\right\}
\ \Rightarrow \
\partial_{xx} u 
= 
-
\sum_k \ 
k^2 e^{-k^2 \omega^2 t}
\left\{
a_k \cos kx + b_k \sin kx
\right\}
\ ,
\]
e quindi $u$ \e una soluzione di (\ref{eq_CALOR}). I termini $a_k$, $b_k$, $k \in \bN$, si possono determinare imponendo la condizione iniziale:
\[
f(x) = u(x,0) 
=
\frac{a_0}{2}
+
\sum_k \ 
\left\{
a_k \cos kx + b_k \sin kx
\right\}
\]
per cui 
\[
a_k = \frac{1}{\pi} \int_{-\pi}^{\pi} f(x) \cos kx \ dx
\ \ , \ \
b_k = \frac{1}{\pi} \int_{-\pi}^{\pi} f(x) \sin kx \ dx
\ .
\]
Il richiedere la convergenza della serie per $\partial_{xx}u$ equivale a richiedere $f \in C^2([-\pi,\pi])$. 
In maniera analoga, possiamo risolvere prolemi alle derivate parziali con condizioni miste piuttosto che condizioni iniziali.

\begin{rem}[Il nucleo del calore]
{\it
Consideriamo il problema (\ref{eq_CALOR}) nel caso ad $n$ dimensioni (ovvero, $\partial_t u = \omega^2 \Delta u$, $u : \bR^n \times \bR^+ \to \bR$, con $u(\cdot,0) = f$, $f : \bR^n \to \bR$) e la funzione
\[
\Phi (x,t) := \frac{1}{ (4 \pi \omega^2 t)^{n/2}  } \ e^{ - |x|^2 / (4 \omega^2 t)  }
\ \ , \ \
(x,t) \in \bR^n \times \bR^+
\ .
\]
Osserviamo che $\Phi \notin L_{loc}^1(\bR^n \times \bR^+)$, tuttavia si pu\'o verificare che integrando formalmente rispetto ad $x$ otteniamo la famiglia di distribuzioni
\[
F_t \in \mD^*(\bR^n) \ , \ t \in \bR
\ \ : \ \
\left \langle F_t , f \right \rangle := \int f(y) \Phi (y,t) \ dy
\ , \
f \in \mD(\bR^n)
\ .
\]
In particolare per $t = 0$ abbiamo la delta di Dirac, $F_0 = \delta_0$, per cui effettuando la convoluzione troviamo $F_0 * f = f$. Inoltre, per $t>0$ abbiamo $\partial_t \Phi = \omega^2 \Delta \Phi$, dunque ponendo $u(x,t) := F_t * f$ otteniamo una soluzione di (\ref{eq_CALOR}), come si verifica derivando formalmente l'espressione esplicita di $u$:
\[
u(x,t) := \int f(y) \Phi(t,x-y) \ dy \ \ , \ \ x \in \bR^n \ , \ t>0 \ .
\]
Per questo motivo $\Phi$ \e detta la \textbf{soluzione fondamentale, o nucleo}, dell'equazione del calore.
Nel caso $n = 1$, sviluppando una soluzione $u = u(x,t)$ in serie di Fourier (si veda (\ref{eq_GR_HEAT}) e la dimostrazione di Teo.\ref{thm_FOU_p}), troviamo
\[
u(x,t) 
\ = \ 
\frac{1}{\pi} \
\int f(y) \left\{ \frac{1}{2} + \sum_k e^{- k^2 \omega^2 t} \cos k(x-y) \right\} \ dy
\ \ , \ \
(x,t) \in \bR^2
\ ,
\]
per cui 
\[
\Phi(x,t) \ = \ \frac{1}{2 \pi} + \frac{1}{\pi} \sum_k e^{- k^2 \omega^2 t} \cos kx
\ \ , \ \
(x,t) \in \bR^2
\ ,
\]
esprime lo "sviluppo di Fourier" della soluzione fondamentale (vedi (\ref{eq.fou.dis}) per un significato preciso dell'espressione precedente). Osservare che la serie nell'espressione precedente non converge per $t=0$.
}
\end{rem}

\subsection{La trasformata di Fourier.}
\label{sec_fou_tra}

Da un punto di vista intuitivo la trasformata di Fourier pu\'o essere vista come un analogo continuo delle serie di Fourier, oppure come una "continuazione analitica" della trasformata di Laplace (si veda (\ref{eq_laplace_tr})). In questa sezione esporremo in buon dettaglio il caso della retta reale, limitandoci ad accennare ai casi pi\'u generali di $\bR^d$, $d > 1$, e dei gruppi localmente compatti abeliani.

Iniziamo con il definire, per ogni $f \in L^1(\bR,\bC)$
\begin{equation}
\label{def_fourier_tr}
\wa f (x) 
\ := \
\frac{1}{\sqrt{2\pi}} \int_\bR f(t) e^{ixt} \ dt 
\ \ , \ \
x \in \bR
\ .
\end{equation}
Innanzitutto osserviamo che l'espressione precedente \e ben definita per ogni $x \in \bR$ in quanto
\begin{equation}
\label{eq_st_fourier}
| \wa f (x) |  \leq  \frac{1}{\sqrt{2\pi}} \| f \|_1
\ \ , \ \
x \in \bR
\ .
\end{equation}
Elenchiamo alcune propriet\'a elementari della trasformata di Fourier:
\begin{itemize}
\item $(f+ag)\wa{} = \wa f + a \wa g$, $f,g \in L^1(\bR,\bC)$, $a \in \bR$; 
\item $\wa f \in C_0(\bR,\bC)$; \\
      Infatti, sia $\{ x_n \} \subset \bR$ con $x_n \to x$; poich\'e 
      $|e^{ix_nt} f(t)| = |f(t)|$, possiamo applicare il teorema di Lebesgue e 
      concludere che
      $
      \lim_n \wa f(x_n) = 	
      \wa f(x)
      $.
      Il fatto che $\wa f$ svanisce all'infinito segue dal lemma di
      Riemann-Lebesgue (si veda (\ref{eq_exLEB1})). 
\item $\|  \wa f  \|_\infty \leq (2\pi)^{-1/2} \| f \|_1$; \\ 
      Ci\'o segue da (\ref{eq_st_fourier}).
\item $(2\pi)^{-1/2} (f*g) \ \wa{}  \ = \ \wa f \cdot \wa g$; \\
      Infatti, basta usare il teorema di Fubini in modo analogo a (\ref{eq.pr.L}), 
      avendosi $f,g \in L^1(\bR,\bC)$.
\end{itemize}

Nelle righe che seguono stabiliremo alcune propriet\'a di una successione di funzioni che giocher\'a un ruolo importante per le trasformate di Fourier, costruita a partire dalla cosiddetta {\em misura Gaussiana}. Innanzitutto, definiamo
\[
\rho (x) := \frac{1}{\sqrt{2\pi}} \ e^{-x^2/2} \ \ , \ \ x \in \bR
\ \ \Rightarrow \ \
\rho \in C_0^\infty (\bR) \ \cap \bigcap_{p \in [1,+\infty]} L^p(\bR)
\ ,
\]
e dimostriamo le seguenti propriet\'a:
\begin{equation}
\label{eq_gauss_1}
\rho (x) = \wa \rho (x) \ \ , \ \ x \in \bR \ .
\end{equation}
\begin{equation}
\label{eq_gauss_2}
\rho_n (x) := n \rho(nx) \ \ , \ \ x \in \bR \ ,\ n \in \bN
\ \ \Rightarrow \
\| \rho_n \|_1 \equiv 1 \ .
\end{equation}
\begin{equation}
\label{eq_gauss_3}
\rho_n (x) = 
\frac{1}{\sqrt{2\pi}} \int_\bR \rho \left( \frac{t}{n} \right) e^{itx} \ dt
\ \ , \ \
\wa \rho_n(x) = \wa \rho \left( \frac{x}{n} \right) = \rho \left( \frac{x}{n} \right)
\ .
\end{equation}
\begin{equation}
\label{eq_gauss_4}
f * \rho_n (x) 
\ = \ 
\int_\bR  \wa f (t) \rho \left( \frac{t}{n} \right) e^{-itx} \ dt
\ \ , \ \
f \in L^1(\bR,\bC)
\ .
\end{equation}
Per quanto riguarda \textbf{(\ref{eq_gauss_1})}, osserviamo che derivando sotto il segno di integrale ed integrando per parti troviamo
\[
\wa \rho \ '(x) = 
\frac{i}{2\pi} \int_\bR t e^{ixt - t^2/2} dt =
\frac{i}{2\pi} 
\left(
\left[
- e^{ixt-t^2/2}
\right]_{-\infty}^{+\infty} +  i \int_\bR e^{ixt-t^2/2} x \ dt
\right)
\ .
\]
L'uguaglianza precedente ci dice che $\wa \rho$ \e soluzione del problema di Cauchy
\[
\left\{
\begin{array}{ll}
u' (x) = - (2\pi)^{-1/2} x u(x)
\\
u(0) = 1 \ ,
\end{array}
\right.
\]
il quale d'altra parte ha soluzione unica $u = \rho$; per cui, $\rho = \wa \rho$. La dimostrazione di \textbf{(\ref{eq_gauss_2})} si effettua usando la sostituzione $t \mapsto nt$ nell'integrale $\| \rho_n \|_1$. \textbf{(\ref{eq_gauss_3})} si dimostra usando le uguaglianze
\[
\rho_n (x) = n \rho (nx) = n \wa \rho (nx) = 
\frac{n}{2 \pi} \int_\bR e^{itnx-t^2/2} \ dt 
\]
ed applicando la sostituzione $t \mapsto nt$. Infine, \textbf{(\ref{eq_gauss_4})} si dimostra calcolando
\[
\begin{array}{ll}
f * \rho_n (x) & =
\int_\bR f(x-y) \rho_n(y) \ dy 
\stackrel{(\ref{eq_gauss_3})}{=} 
\frac{1}{\sqrt{2\pi}} \int f(x-y) \rho(t/n) e^{iyt} \ dtdy 
\\ \\ & =
- \frac{1}{\sqrt{2\pi}} \int f(s) \rho(t/n) e^{i(x-s)t} \ dtds 
=
-
\int_\bR \left[ 
         \frac{1}{\sqrt{2\pi}} \int_\bR f(s) e^{-ist} \ ds  
         \right] 
         \rho(t/n) e^{ixt} \ dt 
\\ \\ & =
- \int_\bR \wa f (-t) \rho(t/n) e^{ixt} \ dt 
=
\int_\bR \wa f (t) \rho(t/n) e^{-ixt} \ dt
\ ,
\end{array}
\]
avendo usato il teorema di Fubini.

\

Richiamiamo ora la notazione $f_y(x) := f(x+y)$, $x,y \in \bR$, e diamo il seguente
\begin{lem}
\label{lem_FT01}
Si ha
\begin{equation}
\label{eq_FT01}
\lim_{n} \| f * \rho_n - f  \|_1 \ \stackrel{n}{\to} \ 0
\ \ , \ \
f \in L^1(\bR,\bC)
\ ,
\end{equation}
\begin{equation}
\label{eq_FT02}
\lim_{n} \| f * \rho_n - f  \|_2 \ \stackrel{n}{\to} \ 0
\ \ , \ \
f \in L^1(\bR,\bC) \cap L^2(\bR,\bC)
\ .
\end{equation}
\end{lem}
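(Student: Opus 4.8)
The plan is to treat both statements uniformly by a single approximate-identity argument valid for $p=1,2$, the only genuine novelty with respect to Prop.\ref{prop_moll} being that the Gaussians $\rho_n$ do \emph{not} have compact support, so the concentration of mass at the origin must be controlled by a tail estimate rather than by the support condition of Def.\ref{def_moll}.

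First I would record that, by (\ref{eq_gauss_2}), each $\rho_n(y)\,dy$ is a probability measure ($\rho_n \geq 0$, $\int \rho_n = 1$), and that $f * \rho_n = \rho_n * f \in L^p(\bR,\bC)$ for $p=1$ (resp. $p=2$) thanks to Teo.\ref{thm_conv}; for $p=2$ this is exactly where the hypothesis $f \in L^1(\bR,\bC) \cap L^2(\bR,\bC)$ enters, since we convolve $\rho_n \in L^1$ with $f \in L^2$ and obtain $\| \rho_n * f \|_2 \leq \| \rho_n \|_1 \| f \|_2 = \| f \|_2$. Writing $f(x) = \int_\bR f(x)\rho_n(y)\,dy$ I can then express the error as
\[
(\rho_n * f)(x) - f(x) = \int_\bR [f_{-y}(x) - f(x)]\,\rho_n(y)\,dy ,
\]
where $f_{-y}(x) = f(x-y)$ in the notation of Esercizio \ref{sec_Lp}.2. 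Exactly as in Passo~1 of the proof of Teo.\ref{thm_RFK}, applying the inequality of Jensen (or, equivalently, of Holder) with respect to the probability measure $\rho_n\,dy$ and then Fubini yields
\[
\| \rho_n * f - f \|_p^p \ \leq \ \int_\bR \rho_n(y)\, \| f_{-y} - f \|_p^p \ dy .
\]

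Next I would split this integral over $\{|y| < \delta\}$ and $\{|y| \geq \delta\}$. On the first region the integrand is small: by the continuity of translation established in Esercizio \ref{sec_Lp}.2 (equivalently (\ref{eq_exLEB01}) in the case $p=1$), given $\eps > 0$ there is $\delta > 0$ with $\| f_{-y} - f \|_p^p < \eps$ for $|y| < \delta$, so that part contributes at most $\eps \int_\bR \rho_n \leq \eps$. On the tail I use the crude bound $\| f_{-y} - f \|_p^p \leq 2^p \| f \|_p^p$ together with the change of variable $u = ny$, which gives
\[
\int_{|y| \geq \delta} \rho_n(y)\,dy \ = \ \int_{|u| \geq n\delta} \rho(u)\,du \ \stackrel{n}{\to} \ 0 ,
\]
since $\rho \in L^1(\bR)$. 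Letting $n \to \infty$ therefore produces $\limsup_n \| \rho_n * f - f \|_p^p \leq \eps$, and the arbitrariness of $\eps$ gives (\ref{eq_FT01}) and (\ref{eq_FT02}).

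The computations are all routine; the one point demanding genuine care — and which I expect to be the main obstacle — is precisely the tail estimate, i.e. verifying that the Gaussian mass outside a fixed neighbourhood of the origin vanishes as $n \to \infty$. This is what substitutes for the compact-support hypothesis of Def.\ref{def_moll} and explains why $\{ \rho_n \}$, although not a sequence of mollifiers in that strict sense, still behaves as an approximate identity in the norms $\| \cdot \|_1$ and $\| \cdot \|_2$.
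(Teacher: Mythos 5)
Your proposal is correct and follows essentially the same route as the paper: bound $\| \rho_n * f - f \|_p$ by $\int \rho_n(y)\, \| f_{-y}-f \|_p^{(p)}\, dy$ (triangle inequality for $p=1$, Jensen for $p=2$), split at $|y|=\delta$, use the continuity of translation from Esercizio \ref{sec_Lp}.2 near the origin and the vanishing of the Gaussian tail $\int_{|y|\geq\delta}\rho_n \to 0$ far from it. Your explicit change of variables $u=ny$ for the tail is a slightly more detailed justification of a step the paper only asserts, but the argument is the same.
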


\begin{proof}[Dimostrazione]
Per dimostrare (\ref{eq_FT01}), effettuiamo la stima
\[
\| f * \rho_n - f  \|_1
\ \leq \
\int | f(x-y) - f (x) | \rho_n(y) \ dx dy
\ = \ 
\int \| f_{-y} - f \|_1 \rho_n(y) \ dy \ .
\]
Ora, usando l'Esercizio \ref{sec_Lp}.2 troviamo che $g(y) := \| f_{-y} - f \|_1$ \e continua, per cui scelto $\eps > 0$ esiste $\delta > 0$ tale che $|y| < \delta$ implica $\| f_{-y} - f \|_1 < \eps$. Dunque,
\[
\| f * \rho_n - f  \|_1
\ \leq \
\eps \int_{-\delta}^{\delta} \rho_n(y) \ dy
+
2 \| f \|_1 \int_{|y| \geq \delta} \rho_n(y) \ dy
\ ,
\]
e poich\'e il secondo addendo nell'espressione precedente diventa arbitrariamente piccolo nel limite $n \to \infty$, troviamo quanto volevasi dimostrare. 
Per dimostrare (\ref{eq_FT02}), osserviamo che $d \mu (t) := \rho_n(t) \ dt$ \e una misura di probabilit\'a, per cui possiamo usare la diseguaglianza di Jensen (\ref{eq_jensen}) e concludere
\[
| f * \rho_n (x) - f (x) |^2 
\ = \ 
\left( \int |f(x-y) - f(x)| \rho_n(y) \ dy \right)^2
\ \stackrel{Jensen}{\leq} \
\int |f(x-y) - f(x)|^2 \rho_n(y) \ dy
\ .
\]
Dunque, otteniamo la diseguaglianza
\[
\| f*\rho_n - f \|_2 
\ \leq \ 
\int \| f_{-y} - f \|_2 \rho_n(y) \ dy 
\]
e l'argomento usato per dimostrare (\ref{eq_FT01}) implica quanto volevasi dimostrare.
\end{proof}

\begin{rem}{\it
Il lemma precedente stabilisce che $\{ \rho_n \}$ si comporta come una successione di mollificatori, nonostante i supporti ${\mathrm{supp}}(\rho_n)$ non soddisfino la propriet\'a enunciata in Def.\ref{def_moll}. Quando $f \in C_c(\bR)$ il risultato pu\'o essere migliorato, ottenendo una convergenza uniforme (vedi Oss.\ref{rem_moll} o \cite[Lemma 2.8.1]{Giu2}). Poich\'e ogni $\rho_n$ \e analitica, sviluppando in serie di Taylor troviamo
\[
\rho_n * f (x) 
\ = \
\frac{1}{\sqrt{2\pi}} \
\lim_{m \to \infty}
\sum_{k=0}^m
\frac{ (-1)^k n^{2k+1} }{ {k!} }
\int_\bR
(x-t)^{2k} f(t) \ dt
\ .
\]
Notare che il termine a destra \e un polinomio in $x$ di grado $2m$; \e questa l'essenza della dimostrazione del teorema di densit\'a di Weierstrass (\cite[Teo.2.8.1]{Giu2}).
}\end{rem}

Con la seguente notazione, introduciamo l'applicazione nota come {\em antipodo}:
\[
\epsilon f (x) := \ovl{f(-x)} \ \ , \ \ x \in \bR \ , \ f \in L^1(\bR,\bC)
\ \ \Rightarrow \ \ 
\wa{\epsilon f} (x) = \ovl{\wa f (x)}
\ \ , \ \
x \in \bR
\ .
\]

\begin{thm}[Parseval]
\label{thm_FT02}
Se $f \in L^1(\bR,\bC) \cap L^2(\bR,\bC)$ allora $\wa f \in L^2(\bR,\bC)$ e $\| f \|_2 = \|  \wa f  \|_2$.
\end{thm}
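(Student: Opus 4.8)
The statement to prove is the Parseval identity (Plancherel theorem on the overlap $L^1 \cap L^2$): for $f \in L^1(\bR,\bC) \cap L^2(\bR,\bC)$ we have $\wa f \in L^2(\bR,\bC)$ and $\| f \|_2 = \| \wa f \|_2$. The plan is to exploit the Gaussian approximate identity $\{ \rho_n \}$ whose properties were just assembled in equations (\ref{eq_gauss_1})--(\ref{eq_gauss_4}) and Lemma~\ref{lem_FT01}. The key idea is to compute the convolution $f * \epsilon f$ at the origin in two different ways: directly, which yields $\| f \|_2^2$, and through the Fourier transform, which yields $\| \wa f \|_2^2$. The Gaussian regularization is what lets me pass to the limit rigorously and, crucially, guarantees the integrability needed to apply Fubini.

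First I would set $g := f * \epsilon f$, where $\epsilon f(x) = \ovl{f(-x)}$. Since $f \in L^1$ and $\epsilon f \in L^1$, Theorem~\ref{thm_conv} gives $g \in L^1(\bR,\bC)$, and in fact the continuity of convolutions (Esercizio~\ref{sec_Lp}.2, which applies since $f \in L^2$ and $\epsilon f \in L^2$ are conjugate-exponent partners with $p = q = 2$) shows $g$ is continuous. A direct computation gives
\[
g(0) = \int_\bR f(-y) \epsilon f(y) \ dy = \int_\bR f(-y) \ovl{f(-y)} \ dy = \int_\bR |f(s)|^2 \ ds = \| f \|_2^2 \ .
\]
On the Fourier side, using the convolution property $(2\pi)^{-1/2}(f * g)\wa{} = \wa f \cdot \wa g$ together with $\wa{\epsilon f} = \ovl{\wa f}$, I get $\wa g = \sqrt{2\pi} \, \wa f \cdot \ovl{\wa f} = \sqrt{2\pi} \, |\wa f|^2$, which is a nonnegative function.

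Next I would recover $g(0)$ from $\wa g$ via the Gaussian smoothing. Evaluating (\ref{eq_gauss_4}) at $x = 0$ for the function $g$ gives
\[
g * \rho_n (0) = \int_\bR \wa g(t) \rho\left( \frac{t}{n} \right) \ dt = \sqrt{2\pi} \int_\bR |\wa f(t)|^2 \rho\left( \frac{t}{n} \right) \ dt \ .
\]
By Lemma~\ref{lem_FT01}, since $g \in L^1$ we have $g * \rho_n \to g$ in $\| \cdot \|_1$; but I actually want pointwise convergence at $0$. Because $g$ is continuous and $\{ \rho_n \}$ behaves as an approximate identity, $g * \rho_n(0) \to g(0) = \| f \|_2^2$ (this is the continuity argument behind Lemma~\ref{lem_FT01}, specialized at a point of continuity). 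Meanwhile the right-hand integrand $|\wa f(t)|^2 \rho(t/n)$ is nonnegative and increases monotonically to $|\wa f(t)|^2$ as $n \to \infty$, since $\rho(t/n) \nearrow \rho(0) = (2\pi)^{-1/2}$. Hence by the monotone convergence theorem (Teorema~\ref{thm_conv_mon}),
\[
\sqrt{2\pi} \int_\bR |\wa f(t)|^2 \rho\left( \frac{t}{n} \right) \ dt \ \stackrel{n}{\to} \ \sqrt{2\pi} \cdot \frac{1}{\sqrt{2\pi}} \int_\bR |\wa f(t)|^2 \ dt = \| \wa f \|_2^2 \ .
\]
Equating the two limits gives $\| f \|_2^2 = \| \wa f \|_2^2$, and the finiteness of the left-hand side forces $\wa f \in L^2(\bR,\bC)$.

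The main obstacle I anticipate is the rigorous justification of the pointwise convergence $g * \rho_n(0) \to g(0)$ and, relatedly, making sure the interchange-of-limits steps are clean. The subtlety is that Lemma~\ref{lem_FT01} is stated as an $L^1$ (and $L^2$) norm convergence, not pointwise, so I cannot directly quote it to evaluate at the single point $0$. The fix is to use that $g$ is continuous at $0$: given $\eps > 0$ pick $\delta$ so that $|g(-y) - g(0)| < \eps$ for $|y| < \delta$, then split $\int (g(-y) - g(0)) \rho_n(y)\, dy$ into the regions $|y| < \delta$ and $|y| \geq \delta$ exactly as in the proof of Lemma~\ref{lem_FT01}, using $\| \rho_n \|_1 = 1$ and the fact that $\int_{|y| \geq \delta} \rho_n \to 0$ together with the boundedness of $g$. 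The Fubini application producing $\wa g = \sqrt{2\pi}|\wa f|^2$ is routine given $f, \epsilon f \in L^1$, and the monotone convergence step is immediate once one observes $\rho$ is even with maximum at the origin; these are the parts I would not belabor.
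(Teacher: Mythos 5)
Your proposal is correct and follows essentially the same route as the paper's proof: both set $g := f * \epsilon f$, compute $g(0) = \| f \|_2^2$ directly, identify $\wa g$ with $|\wa f|^2$ (up to the normalizing constant) via the convolution formula, and equate the two limits of $g * \rho_n(0)$ using (\ref{eq_gauss_4}) and monotone convergence. The only difference is a minor technical one: you justify $g * \rho_n(0) \to g(0)$ by a direct $\eps$-$\delta$ argument at the point of continuity, whereas the paper deduces pointwise convergence from $\| g*\rho_n - g \|_1 \to 0$ together with the continuity of $g$ and of each $g * \rho_n$ (via Esercizio \ref{sec_MIS}.2); both justifications are valid.
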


\begin{proof}[Dimostrazione]
Ponendo $g := f * \epsilon f$, troviamo $g \in L^1(\bR,\bC)$, e
\[
g(y) = \int_\bR f(y-x) \ovl{f(-x)} \ dx 
\ \ \Rightarrow \ \ 
g(0) = \| f \|_2^2 \ .
\]
Inoltre,
\begin{equation}
\label{eq_FT04}
\wa g(x) 
\ = \
(f*\epsilon f) \wa{} \ (x) 
\ = \
\wa f (x) \ovl{\wa f (x)} 
\ \ , \ \
x \in \bR
\ \ \Rightarrow \ \
\int_\bR \wa g = \| \wa f \|_2^2
\ .
\end{equation}
Essendo $f , \epsilon f \in L^2(\bR,\bC)$, usando l'Esercizio \ref{sec_Lp}.2 troviamo che $g$ \e continua, e grazie a Prop.\ref{prop_der_conv} lo stesso \e vero per ogni
$g * \rho_n$, $n \in \bN$.
Usando (\ref{eq_FT01}) troviamo $\| g*\rho_n-g \|_1 \stackrel{n}{\to} 0$, per cui l'Esercizio \ref{ex_sec_lebesgue} implica 
\[
\lim_n g*\rho_n (x) = g(x) \ \ , \ \ x \in \bR \ .
\]
In particolare,
\[
\lim_n g*\rho_n (0) = g(0) \ = \ \| f \|_2^2 \ .
\]
D'altra parte, usando (\ref{eq_gauss_4}) troviamo
\begin{equation}
\label{eq_FT03}
g * \rho_n (x) 
\ = \
\int_\bR \wa g (t) \ \rho \left( \frac{t}{n} \right) e^{-ixt} \ dt
\ \ , \ \
x \in \bR
\ \Rightarrow \
g * \rho_n (0) 
\ = \
\int_\bR \wa g (t) \ \rho \left( \frac{t}{n} \right) \ dt
\ ;
\end{equation}
ora, osserviamo che la successione
\begin{equation}
\label{eq_FT05}
\lambda_n(x) := \rho \left( \frac{x}{n} \right)
\ \ , \ \
x \in \bR
\ ,
\end{equation}
\'e puntualmente convergente alla costante $(2\pi)^{-1/2}$ nonch\'e monot\'ona crescente; per cui, per convergenza monot\'ona (Teo.\ref{thm_conv_mon}) e grazie a (\ref{eq_FT03}), troviamo
\[
g(0) = 
\lim_n \int \wa g (t) \rho \left( \frac{t}{n} \right) \ dt
= 
\frac{1}{\sqrt{2\pi}} \int \wa g (t) \ dt
= 
\|  \wa f  \|_2^2
\ ,
\]
avendo usato (\ref{eq_FT04}).
\end{proof}

\begin{thm}[Teorema di inversione di Fourier]
Sia $f \in L^1(\bR,\bC)$ tale che $\wa f \in L^1(\bR,\bC)$.
Allora
\begin{equation}
\label{eq_FT06}
f(x) = \frac{1}{\sqrt{2\pi}} \int \wa f (t) e^{-ixt} dt
\ \ , \ \
{\mathrm{q.o. \ in \ }} 
x \in \bR
\ .
\end{equation}
\end{thm}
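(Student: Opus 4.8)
The plan is to start from the explicit convolution identity (\ref{eq_gauss_4}), namely
\[
f * \rho_n(x) = \int_\bR \wa f(t) \, \rho\left( \frac{t}{n} \right) e^{-ixt} \, dt ,
\]
valid for every $f \in L^1(\bR,\bC)$ and every $x \in \bR$. The idea is to compute the limit of both sides as $n \to \infty$ in two different ways and then match them.

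First I would treat the right-hand side. Since by hypothesis $\wa f \in L^1(\bR,\bC)$, and since $\rho$ attains its maximum at the origin so that $\rho(t/n) \leq \rho(0) = (2\pi)^{-1/2}$ for all $t \in \bR$ and $n \in \bN$, the integrand is dominated by $(2\pi)^{-1/2} |\wa f(t)|$, an integrable function independent of $n$. As $\rho(t/n) \to \rho(0) = (2\pi)^{-1/2}$ pointwise, the teorema di convergenza di Lebesgue (Teo.\ref{teo_lebesgue1}) yields, for every $x \in \bR$,
\[
\lim_n f * \rho_n(x) = \frac{1}{\sqrt{2\pi}} \int_\bR \wa f(t) e^{-ixt} \, dt ,
\]
i.e. the right-hand side converges pointwise everywhere to the candidate expression.

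Next I would treat the left-hand side through $L^1$ convergence. Lemma \ref{lem_FT01}, equation (\ref{eq_FT01}), gives $\| f * \rho_n - f \|_1 \to 0$. By the teorema di Fischer-Riesz (Teo.\ref{thm_RF}), there is then a subsequence $\{ f * \rho_{n_k} \}$ converging to $f$ quasi ovunque. Combining the two facts, for almost every $x$ we have simultaneously $f * \rho_{n_k}(x) \to f(x)$ and $f * \rho_{n_k}(x) \to \frac{1}{\sqrt{2\pi}} \int_\bR \wa f(t) e^{-ixt} \, dt$; by uniqueness of limits in $\bC$ the two coincide, which is exactly (\ref{eq_FT06}).

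The only delicate point — and the main obstacle to a naive argument — is reconciling the two modes of convergence: pointwise everywhere on the right, but merely $L^1$ (hence only subsequential a.e.) on the left. The clean resolution is precisely the subsequence extraction furnished by Fischer-Riesz; one should not expect genuine equality at every point, only the almost-everywhere statement, which is all that (\ref{eq_FT06}) claims. I would also remark that the right-hand side defines a continuous function (being, up to reflection, the Fourier transform of $\wa f \in L^1$), so the identity holds after identifying $f$ with its a.e.\ equivalence class.
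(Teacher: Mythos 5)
La tua dimostrazione segue essenzialmente la stessa strada del testo: identit\'a (\ref{eq_gauss_4}), passaggio al limite nell'integrale a destra, e convergenza $L^1$ di $f*\rho_n$ ad $f$ con estrazione di una sottosuccessione convergente q.o. via Fischer-Riesz. L'unica differenza \e che per il limite a destra tu usi la convergenza dominata con maggiorante $(2\pi)^{-1/2}|\wa f\,|$, mentre il testo invoca la convergenza monot\'ona applicata a $\rho(t/n)$; la tua scelta \e anzi pi\'u pulita, visto che l'integranda $\wa f(t)e^{-ixt}$ \e a valori complessi e il teorema di Beppo Levi non vi si applica direttamente senza decomposizioni preliminari.
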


\begin{proof}[Dimostrazione]
Usando (\ref{eq_gauss_4}) ed applicando il teorema di convergenza monot\'ona alla successione (\ref{eq_FT05}) troviamo
\[
f * \rho_n (x) 
\ = \ 
\int_\bR  \wa f (t) \rho \left( \frac{t}{n} \right) e^{-itx} \ dt
\ \stackrel{n}{\to} \
\frac{1}{\sqrt{2\pi}} \int \wa f (t) e^{-ixt} dt
\ .
\]
D'altra parte, grazie a (\ref{eq_FT01}) ed al teorema di Fischer-Riesz (Teo.\ref{thm_RF}), concludiamo che esiste una sottosuccessione $\{ n_k \}$ tale che $f(x) = \lim_n f*\rho_{n_k}(x)$, q.o. in $x \in \bR$.
\end{proof}

\begin{cor}
Sia $f \in L^1(\bR,\bC)$ tale che $\wa f (x) = 0$ q.o. in $x \in \bR$. Allora $f(x) = 0$ q.o. in $x \in \bR$.
\end{cor}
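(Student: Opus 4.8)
The plan is to obtain the statement as an immediate consequence of the Fourier inversion theorem just established. The crucial observation is that the hypothesis $\wa f(x) = 0$ q.o. in $x \in \bR$ automatically guarantees $\wa f \in L^1(\bR,\bC)$: indeed, the zero function (up to equivalence q.o.) is integrable, with $\| \wa f \|_1 = 0$. Thus the hypotheses of the inversion theorem are met, and we are entitled to invoke (\ref{eq_FT06}).

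First I would apply (\ref{eq_FT06}) to write
\[
f(x) = \frac{1}{\sqrt{2\pi}} \int_\bR \wa f(t) e^{-ixt} \ dt
\ \ , \ \
{\mathrm{q.o. \ in \ }} x \in \bR
\ .
\]
Then, since $\wa f(t) = 0$ q.o. in $t \in \bR$, for each fixed $x$ the integrand $\wa f(t) e^{-ixt}$ vanishes q.o. in $t$; hence, by Oss.\ref{oss_muA0}(1), the integral on the right-hand side equals $0$ for every $x \in \bR$. Therefore $f(x) = 0$ q.o. in $x \in \bR$, which is the assertion.

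There is no substantial obstacle here: the whole weight of the argument rests on the inversion theorem, and the corollary amounts to a direct substitution. The only point worth a word of care is checking that $\wa f \in L^1(\bR,\bC)$, which is exactly what is needed to legitimately use the inversion formula and which holds trivially because $\wa f = 0$ q.o. As an optional remark one could note that, since $\wa f \in C_0(\bR,\bC)$ is continuous, the hypothesis $\wa f = 0$ q.o. in fact forces $\wa f \equiv 0$ everywhere on $\bR$; however, this sharpening is not required for the proof.
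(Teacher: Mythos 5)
Your proof is correct and is precisely the argument the paper intends: the corollary is stated immediately after the inversion theorem as a direct consequence, and your observation that $\wa f = 0$ q.o. trivially gives $\wa f \in L^1(\bR,\bC)$, so that (\ref{eq_FT06}) applies and yields $f = 0$ q.o., is exactly the expected reasoning.
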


Osserviamo ora che -- essendo $L^1 \cap L^2$ denso in $L^2$ -- la trasformata di Fourier si estende ad un operatore
\[
F \in BL^2(\bR,\bC) \ ,
\]
il quale, grazie al Teorema di Parseval, \e isometrico. Con il prossimo teorema dimostriamo che $F$ \e in effetti un operatore unitario.

\begin{thm}[Teorema di Fourier-Plancherel]
\label{thm_FP}
L'estensione $F$ della trasformata di Fourier \e un operatore unitario di $L^2(\bR,\bC)$ in s\'e.
\end{thm}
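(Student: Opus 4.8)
Il piano è di mostrare che l'estensione $F \in BL^2(\bR,\bC)$ della trasformata di Fourier, già nota essere isometrica grazie al Teorema di Parseval (Teo.\ref{thm_FT02}), è suriettiva, il che — essendo $F$ isometrica e quindi iniettiva con immagine chiusa — completerà la dimostrazione che $F$ è unitaria. Ricordiamo infatti dalla discussione su $\sC$ algebre (\S\ref{sec.oper}) che un operatore su uno spazio di Hilbert è unitario se e solo se è isometrico e suriettivo. Poiché $F$ è isometrico, la sua immagine $F(L^2(\bR,\bC))$ è un sottospazio chiuso di $L^2(\bR,\bC)$; per la decomposizione ortogonale (\ref{eq_ort}) basterà dunque verificare che $F(L^2(\bR,\bC))^\perp = \{ 0 \}$, ovvero che l'immagine è densa.

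Il passo cruciale è l'uso del teorema di inversione di Fourier (\ref{eq_FT06}). Per prima cosa osserverei che $F$ ammette un "candidato inverso": sia $G \in BL^2(\bR,\bC)$ l'operatore ottenuto estendendo per continuità da $L^1 \cap L^2$ l'applicazione $g \mapsto \check g$, dove $\check g(x) := (2\pi)^{-1/2} \int \wa g(t) e^{-ixt} \, dt$ è la trasformata di Fourier "inversa" (che differisce da $F$ solo per il segno nell'esponente, ed è quindi anch'essa isometrica per lo stesso argomento di Parseval applicato all'antipodo $\epsilon$). Il teorema di inversione stabilisce che per ogni $f \in L^1(\bR,\bC)$ con $\wa f \in L^1(\bR,\bC)$ si ha $G(Ff) = f$ q.o.. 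La strategia è mostrare che la relazione $GF = 1$ (e simmetricamente $FG = 1$) vale su tutto $L^2(\bR,\bC)$ per densità, a patto di esibire un sottospazio denso di $L^2(\bR,\bC)$ sul quale il teorema di inversione sia applicabile letteralmente.

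A tale scopo il sottospazio naturale è quello generato dalle funzioni del tipo $f * \rho_n$, dove $\{ \rho_n \}$ è la successione costruita dalla Gaussiana in (\ref{eq_gauss_2}). Infatti, per $f \in L^1 \cap L^2$, la funzione $f * \rho_n$ appartiene a $L^1 \cap L^2$ e la sua trasformata di Fourier $(f*\rho_n)\wa{} = \sqrt{2\pi}\, \wa f \cdot \wa \rho_n = \sqrt{2\pi}\,\wa f \cdot \rho(\cdot/n)$ è prodotto di una funzione limitata per una Gaussiana riscalata, quindi sta in $L^1(\bR,\bC)$; dunque il teorema di inversione si applica a $f * \rho_n$. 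D'altra parte, grazie al Lemma \ref{lem_FT01}, equazione (\ref{eq_FT02}), si ha $\| f * \rho_n - f \|_2 \to 0$, per cui tali funzioni sono dense in $L^1 \cap L^2$, e quindi in $L^2(\bR,\bC)$. Applicando $GF$ a $f * \rho_n$ si ottiene $f * \rho_n$ stessa; passando al limite in norma $\| \cdot \|_2$, per continuità di $GF$ si conclude $GFf = f$ per ogni $f \in L^2(\bR,\bC)$, il che fornisce la suriettività di $F$ (poiché $F(Gf) = f$ si ottiene in modo del tutto simmetrico scambiando i ruoli, ovvero sfruttando $\epsilon$).

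La principale difficoltà che prevedo è di natura tecnica e consiste nel giustificare con cura lo scambio tra i passaggi al limite $n \to \infty$ e l'applicazione degli operatori $F$ e $G$: occorre assicurarsi che l'identità $GFf = f$, valida puntualmente q.o. per le approssimanti $f * \rho_n$ tramite (\ref{eq_FT06}), sopravviva al passaggio alla convergenza in $L^2$. Qui è essenziale che sia la convergenza $f * \rho_n \to f$ (in $\| \cdot \|_2$, da (\ref{eq_FT02})) sia la continuità degli operatori limitati $F, G$ siano già disponibili; il punto delicato è semplicemente organizzare l'argomento in modo che il teorema di inversione venga invocato soltanto su funzioni per cui è legittimo (cioè con trasformata in $L^1$), e non direttamente su un generico elemento di $L^2$, dove $\wa f$ è definita solo come limite in media quadratica e non tramite l'integrale (\ref{def_fourier_tr}).
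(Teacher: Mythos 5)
Your proposal is correct and follows essentially the same route as the paper: isometry from Parseval, then surjectivity by realizing the inverse transform as a bounded operator and verifying that it inverts $F$ on the dense set of Gaussian regularizations $f*\rho_n$ (whose transforms lie in $L^1(\bR,\bC)$, so the inversion formula applies), followed by passage to the limit in $\| \cdot \|_2$ via (\ref{eq_FT02}). The only cosmetic differences are that the paper constructs the candidate inverse through the truncations $g\chi_{[-n,n]}$ and an a.e. limit (Fischer--Riesz) rather than as a direct continuous extension from $L^1 \cap L^2$, and that it proves the key identity $F'F(g*\rho_n)=g*\rho_n$ by the explicit Gaussian computation (\ref{eq_gauss_3})--(\ref{eq_gauss_4}) instead of citing the inversion theorem.
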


\begin{proof}[Dimostrazione]
Visto che gi\'a sappiamo che $F$ \e isometrico l'unica propriet\'a che occorre verificare \e la suriettivit\'a. Per ogni $g \in L^2(\bR,\bC)$ definiamo $g_n := g \chi_{[-n,n]}$, $n \in \bN$, ed osserviamo che $g_n \in L^1(\bR,\bC) \cap L^2(\bR,\bC)$ (Esercizio \ref{sec_Lp}.3); per cui, \e ben definita $\wa g_n \in C_0(\bR,\bC)$. Il primo passo della dimostrazione sar\'a quello di verificare che ponendo
\begin{equation}
\label{eq_breve}
\breve g (x) 
\ := \ 
\lim_n 
\frac{1}{\sqrt{2\pi}} \int g_n(t) e^{-ixt} \ dt 
\ = \
\lim_n
\wa g_n(-x)
\ \ , \ \
{\mathrm{q.o. \ in \ }} 
x \in \bR
\ ,
\end{equation}
otteniamo una ben definita funzione in $L^2(\bR,\bC)$. Al che, mostreremo che $\{ g \mapsto \breve g \}$ fornisce l'inversa di $F$. A tale scopo, osserviamo che chiaramente $\lim_n \| g - g_n \|_2 = 0$ per cui -- per isometria di $F$ -- troviamo $\lim_n \| Fg - \wa g_n \|_2 = 0$. Applicando Fischer-Riesz e (\ref{eq_breve}) troviamo 
\[
Fg(x) = \lim_k \wa g_{n_k} (x) = \breve g (-x)
\ \ , \ \
{\mathrm{q.o. \ in \ }} 
x \in \bR
\ ,
\]
e dunque $\breve g (x)$ \e ben definito q.o. in $x \in \bR$ ed ivi coincidente con $Fg (-x)$. Ci\'o implica
\[
\int |\breve g (x)|^2 \ dx 
\ = \
\int | Fg(-x) |^2 \ dx
\ \Rightarrow \
\| \breve g \|_2 = 
\| Fg \|_2       = 
\| g \|_2        < 
+ \infty \ .
\]
Abbiamo quindi costruito un operatore isometrico $F' \in BL^2(\bR,\bC)$, $F'g := \breve g$. Mostriamo ora che $F'F$ \e l'identit\'a su $L^2(\bR,\bC)$; a tale scopo, per densit\'a, \e sufficiente verificare solo per $g \in L^1(\bR,\bC) \cap L^2(\bR,\bC)$, e si ha
\[
g*\rho_n (x) 
\ \stackrel{ (\ref{eq_gauss_4}) }{=} \
\int_\bR  \wa g (t) \rho \left( \frac{t}{n} \right) e^{-itx} \ dt
\ \stackrel{ (\ref{eq_gauss_3}) }{=} \
\int_\bR  \wa g (t) \wa \rho_n(t) e^{-itx} \ dt 
\ = \
\frac{1}{\sqrt{2\pi}}
\int (g*\rho_n) \wa{} \ (t) \ e^{-itx} \ dt
\ .
\]
L'uguaglianza precedente si pu\'o leggere come $g*\rho_n = F'F (g*\rho_n)$, $n \in \bN$. 
Poich\'e per (\ref{eq_FT02}) si ha $\lim_n \| g - g*\rho_n \|_2 = 0$, otteniamo $g = F'F g$, ed il teorema \e dimostrato.
\end{proof}

\

\noindent \textbf{La trasformata di Fourier in $\bR^d$.} La teoria della trasformata di Fourier si generalizza facilmente al caso $\bR^d$, $d \in \bN$. Presa $f \in L^1(\bR^d,\bC)$ (misura prodotto di Lebesgue) definiamo
\[
\wa f(x) 
\ := \ 
(2\pi)^{-n/2}
\int_{\bR^d} 
f(t) e^{i x \cdot t} \ dt 
\ \ , \ \
x \in \bR^d
\ ,
\]
dove $x \cdot t$ denota il prodotto scalare. Gli strumenti di lavoro principali della sezione precedente, le convoluzioni e le misure gaussiane, si utilizzano senza problemi in questo caso pi\'u generale, le prime senza variazioni e le seconde definendo
\[
\rho_n(x) := (2\pi)^{-n/2} e^{-|x|^2/2}
\ \ , \ \
x \in \bR^d
\ .
\]
In modo analogo al caso $d=1$ possiamo definire l'antitrasformata
\[
\breve f (x) := \wa f (-x) 
\ , \
\forall x \in \bR^d
\ \ , \ \
f \in L^1(\bR^d,\bC)
\ .
\]
I risultati principali della sezione precedente rimangono, ovviamente, veri:

\begin{thm}[Parseval, Fourier, Plancherel]
La trasformata di Fourier definisce un operatore lineare limitato
\begin{equation}
\label{eq.foud}
L^1(\bR^d,\bC) \to C_0(\bR^d,\bC)
\ \ , \ \
f \mapsto \wa f
\ ,
\end{equation}
il quale ha inverso $\{ g \mapsto \breve g \}$ in $C_0(\bR^d,\bC) \cap L^1(\bR^d,\bC)$. Inoltre 
\[
\| \wa f \|_2 \ = \ \| f \|_2 
\ \ , \ \
\forall f \in L^1(\bR^d,\bC) \cap L^2(\bR^d,\bC) \ ,
\]
e (\ref{eq.foud}) si estende ad un operatore unitario
$F \in BL^2(\bR^d,\bC)$.
\end{thm}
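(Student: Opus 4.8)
The plan is to reproduce, step by step, the one-dimensional development of the preceding section, exploiting the fact that everything needed factorizes across the $d$ coordinates via the product structure of Lebesgue measure. First I would record the elementary mapping properties of the transform in \eqref{eq.foud}: the bound $\|\wa f\|_\infty \leq (2\pi)^{-d/2}\|f\|_1$ is immediate from the definition, continuity of $\wa f$ follows by dominated convergence (since $|f(t)e^{ix\cdot t}| = |f(t)|$), and the vanishing at infinity is the $d$-dimensional Riemann--Lebesgue lemma. Together these give a bounded linear operator $L^1(\bR^d,\bC)\to C_0(\bR^d,\bC)$. Alongside this I would re-establish, by an application of Fubini exactly as in the scalar case, the convolution rule $\wa{f*g}=(2\pi)^{d/2}\,\wa f\,\wa g$, which is the workhorse for Parseval.

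The technical heart of the reduction to $d=1$ is the Gaussian. I set $\rho(x):=(2\pi)^{-d/2}e^{-|x|^2/2}$ and observe that $\rho(x)=\prod_{k=1}^d \rho^{(1)}(x_k)$, a product of one-dimensional Gaussians; since the $d$-dimensional transform of a product over separated variables factorizes (again Fubini--Tonelli), the self-duality $\wa\rho=\rho$ follows coordinatewise from the one-dimensional identity already proved by the Cauchy-problem argument. From here I would define the approximate identity $\rho_n(x):=n^d\rho(nx)$, verify $\|\rho_n\|_1\equiv 1$ and the key formula $f*\rho_n(x)=\int \wa f(t)\,\rho(t/n)e^{-it\cdot x}\,dt$, and then prove the two convergence statements $\|f*\rho_n-f\|_1\to 0$ and $\|f*\rho_n-f\|_2\to 0$. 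These are verbatim the scalar proofs: the first uses continuity of translation (Esercizio \ref{sec_Lp}.2) adapted to $\bR^d$, and the second uses Jensen's inequality (Prop.\ref{prop_jensen}) with the probability measure $\rho_n(t)\,dt$.

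With these tools in place the three main assertions follow in sequence, each mirroring its one-dimensional counterpart. For Parseval I would set $g:=f*\eps f$ with $\eps f(x):=\ovl{f(-x)}$, so that $g$ is continuous, $g(0)=\|f\|_2^2$, and $\wa g=|\wa f|^2$; evaluating $\lim_n g*\rho_n(0)$ two ways — pointwise via the convergence lemma, and through the formula for $f*\rho_n$ followed by monotone convergence applied to $\rho(t/n)\nearrow(2\pi)^{-d/2}$ — yields $\|\wa f\|_2=\|f\|_2$ on $L^1\cap L^2$. The inversion formula $f=\{g\mapsto\breve g\}\circ(f\mapsto\wa f)$ on $C_0\cap L^1$ comes from the same $f*\rho_n$ representation together with Fischer--Riesz (Teo.\ref{thm_RF}) to pass to a pointwise-convergent subsequence. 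Finally, Parseval shows the transform extends to an isometry $F\in BL^2(\bR^d,\bC)$ by density of $L^1\cap L^2$ in $L^2$, and unitarity reduces to surjectivity: I construct $F'g:=\breve g$ as in the scalar Plancherel theorem and check $F'F=1$ on the dense set $L^1\cap L^2$ using the convolution identity for $g*\rho_n$.

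The main obstacle, and really the only nonroutine point, is the self-duality $\wa\rho=\rho$ in $\bR^d$: once it is secured, every subsequent estimate is a formally identical transcription of the $d=1$ arguments with scalars replaced by the scalar product $x\cdot t$. The clean way through it is the factorization $\rho=\bigotimes_k \rho^{(1)}$ combined with Fubini, so that the potentially delicate multidimensional Gaussian integral never has to be computed directly but is inherited from the one-variable Cauchy-problem computation. A secondary point requiring care is that the monotone-convergence step uses $\rho(t/n)\nearrow(2\pi)^{-d/2}$, which holds because $\rho$ is radially decreasing; this is exactly the property that makes $\{\rho_n\}$ behave like a mollifying sequence despite its supports not being compact.
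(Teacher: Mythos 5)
Your plan is correct and coincides with the route the paper itself indicates: the text gives no detailed proof here but states that the theorem is obtained "adattando le tecniche del caso unidimensionale con un uso massiccio del teorema di Fubini", which is precisely your reduction via the factorization $\rho=\prod_k\rho^{(1)}$ and the coordinatewise Gaussian self-duality. The only alternative the paper mentions, the Schwartz-space approach of Reed--Simon, is a genuinely different organization, but your proposal follows the paper's own intended argument and fills in the details consistently with the one-dimensional section.
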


La dimostrazione del teorema precedente si effettua adattando le tecniche del caso unidimensionale con un uso massiccio del teorema di Fubini. Per dettagli in merito segnalamo \cite[Chap.IX]{RS2}, dove un approccio leggermente diverso rispetto a quello della sezione precedente viene adottato con l'uso dello spazio $\mS(\bR^d,\bC)$ delle funzioni complesse a decrescenza rapida.

\

\noindent \textbf{Trasformata di Fourier e distribuzioni temperate.} L'utilizzo di $\mS(\bR^d,\bC)$ di cui abbiamo appena accennato \e motivato dal fatto che la trasformata di Fourier si restringe ad un operatore lineare
\[
T : \mS(\bR^d,\bC) \to \mS(\bR^d,\bC)
\ \ , \ \
Tf := \wa f
\ \ , \ \
\forall f \in \mS(\bR^d,\bC) \subset L^1(\bR^d,\bC)
\ ,
\]
il quale \e continuo nella topologia naturale di $\mS(\bR^d,\bC)$ (vedi \cite[Theorem IX.1]{RS2}); ci\'o implica che \e ben definita la {\em trasformata di Fourier sulle distribuzioni temperate}, come l'aggiunto di $T$:
\begin{equation}
\label{eq.fou.dis}
T^* : \mS^*(\bR^d,\bC) \to \mS^*(\bR^d,\bC)
\ \ : \ \
T^* \circ I = I \circ T
\ ,
\end{equation}
dove $I : \mS(\bR^d,\bC) \to \mS^*(\bR^d,\bC)$ \e l'immersione canonica nel senso di (\ref{eq_emb_ss}).

\

\noindent \textbf{La trasformata di Fourier dei gruppi localmente compatti abeliani.} Sia $G$ un gruppo topologico localmente compatto e di Hausdorff. Il {\em gruppo dei caratteri di $G$} \e l'insieme $G^*$ delle funzioni continue del tipo
\[
\chi : G \to \bT
\ \ : \ \
\chi (ts) = \chi(s) \chi(s)
\ , \
\forall t,s \in G
\ ,
\]
equipaggiato con il prodotto $\chi \chi' (s) := \chi(s) \chi'(s)$, inverso $\chi^{-1}(s) := \ovl{\chi(s)}$, ed identit\'a $e(s) := 1$, $s \in G$. Introduciamo su $G^*$ la topologia della convergenza uniforme sui compatti:
\[
\chi_n \stackrel{n}{\to} \chi 
\ \Leftrightarrow \
\sup_{s \in K} d ( \chi_n(s) , \chi (s) ) \ \stackrel{n}{\to} \ 0
\ , \
\forall K \subseteq G \ {\mathrm{compatto}}
\ ,
\]
dove $d : \bT \times \bT \to \bR$ \e la metrica di $\bT$ (il quale \e omeomorfo al cerchio).
E' possibile dimostrare che $G^*$ \e a sua volta localmente compatto e di Hausdorff (oltre che, ovviamente, abeliano, ovvero, $\chi \chi' = \chi' \chi$ per ogni $\chi , \chi' \in G^*$). Diamo alcuni esempi fondamentali di gruppi di caratteri:
\begin{enumerate}
\item Se $G = \bR^d$, $d \in \bN$ (come gruppo additivo) allora $G^* \simeq \bR^d$; infatti tutti e soli i caratteri di $\bR^d$ sono quelli del tipo
\[
\chi_x : \bR^d \to \bT 
\ \ , \ \ 
\chi_x(t) := e^{ix \cdot t} 
\ , \ \forall t \in \bR^d 
\ \ , \ \
x \in \bR^d
\ .
\]
\item Se $G = \bT$ (gruppo moltiplicativo) allora $G^* \simeq \bZ$. Infatti i caratteri di $\bT$ sono tutti e soli quelli del tipo
\[
\chi_k : \bT \to \bT 
\ \ , \ \ 
\chi_k(z) := z^k 
\ , \ 
\forall z \in \bT 
\ \ , \ \
k \in \bZ
\ .
\]
\item Se $G = \bZ$ (gruppo additivo) allora $G^* \simeq \bT$, con caratteri
\[
\chi_z : \bZ \to \bT 
\ \ , \ \ 
\chi_z(k) := z^k 
\ , \ 
\forall k \in \bZ 
\ \ , \ \
z \in \bT
\ .
\]
\end{enumerate}
Un'occhiata agli esempi precedenti mostra che si hanno isomorfismi $\{ \bR^d \}^{**} \simeq \bR^d$, $\bT^{**} \simeq \bT$, $\bZ^{**} \simeq \bZ$; questo non \e un fatto casuale, in quanto si ha il seguente teorema:

\begin{thm}[Pontryagin-van Kampen]
Se $G$ \e un gruppo localmente compatto abeliano allora si ha un isomorfismo $G \simeq G^{**}$.
\end{thm}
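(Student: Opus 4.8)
This is a deep structural result whose full proof requires substantial machinery, so I will describe the standard line of attack rather than attempt a self-contained argument. The plan is to construct a canonical map from $G$ to its double dual $G^{**}$ and then prove it is an isomorphism of topological groups. The natural candidate is the evaluation map
\[
\eta : G \to G^{**}
\ \ , \ \
\langle \eta(g) , \chi \rangle := \chi(g)
\ \ , \ \
g \in G \ , \ \chi \in G^* \ .
\]
First I would check that $\eta$ is well defined: for fixed $g$, the assignment $\chi \mapsto \chi(g)$ is a continuous homomorphism from $G^*$ into $\bT$, hence an element of $G^{**}$. One then verifies routinely that $\eta$ is itself a continuous group homomorphism, using the topology of uniform convergence on compacta on both $G^*$ and $G^{**}$.

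The substantive work splits into three separate assertions, each of which I would establish in turn. The first is that $\eta$ is \emph{injective}; this amounts to showing that the characters of $G$ separate points, i.e.\ that for every $g \neq e$ there exists $\chi \in G^*$ with $\chi(g) \neq 1$. The second is that $\eta$ is \emph{surjective}, which is the genuinely hard part. The third is that $\eta$ is a \emph{homeomorphism} onto its image, so that the algebraic isomorphism is also topological; this requires comparing the compact-open topology on $G^{**}$ with the original topology of $G$ and checking that $\eta$ and $\eta^{-1}$ both carry compact sets to relatively compact sets.

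The key external ingredient I would invoke is the theory of the abstract Fourier transform built on the Haar measure $\mu \in R(G)$ introduced in \S\ref{sec_MIS1}, together with the structure theory of locally compact abelian groups. The standard route reduces the general case to a handful of building blocks via the structure theorem, which states that every locally compact abelian group is isomorphic to $\bR^n \times H$ with $H$ containing a compact open subgroup; duality is then verified directly on $\bR^n$, on $\bT$, on $\bZ$ and on finite groups (as illustrated by the three examples preceding the statement), and propagated through the constructions of products, closed subgroups and quotients. Separation of points (injectivity of $\eta$) follows from the abundance of positive-definite functions and the Gel'fand--Naimark theory of \S\ref{sec_topdeb} applied to the commutative $*$-algebra $(L^1_\mu(G,\bC),*)$, whose spectrum is precisely $G^*$; surjectivity rests on Pontryagin's duality for the Fourier inversion formula combined with a careful analysis of the annihilator correspondence between closed subgroups of $G$ and of $G^*$.

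The main obstacle will be surjectivity of $\eta$: showing that \emph{every} character of $G^*$ arises as evaluation at a point of $G$. The difficulty is that one cannot argue pointwise; instead one must exploit the full force of the inversion theorem and the bijective correspondence $H \mapsto H^{\perp}$ between closed subgroups of $G$ and those of $G^*$, reducing to the compactly generated case through the structure theorem. Because this machinery is well beyond the scope of the present notes, I would present only the construction of $\eta$ and the separation-of-points argument in detail, and for the harder half refer the reader to a standard treatment such as \cite{HR}, signposting clearly that the surjectivity step is where the real content lies.
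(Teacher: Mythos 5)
La tua proposta segue essenzialmente la stessa strada del testo: entrambi definite l'applicazione canonica di valutazione $s \mapsto \varphi_s$, $\varphi_s(\chi) := \chi(s)$, verificate le propriet\'a elementari e rimandate a Hewitt--Ross \cite{HR} per i punti delicati (continuit\'a/omeomorfismo e suriettivit\'a). Sei anzi un p\'o pi\'u accurato del testo nel segnalare che l'iniettivit\'a --- cio\`e la separazione dei punti tramite caratteri, via funzioni definite positive e la teoria di Gel'fand di $(L^1_\mu(G,\bC),*)$ --- non \e affatto banale, mentre il testo la liquida come semplice esercizio.
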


\begin{proof}[Sketch della dimostrazione]
Si tratta di verificare che l'applicazione
\begin{equation}
\label{eq.Pon}
G \to G^{**}
\ \ , \ \
s \mapsto \varphi_s 
\ \ : \ \ 
\varphi_s(\chi) := \chi(s)
\ , \
\forall \chi \in G^*
\ , \
s \in G
\ ,
\end{equation}
\e un isomorfismo. Le verifiche che (\ref{eq.Pon}) \e iniettiva e conserva il prodotto sono semplici e vengono lasciate come esercizio. La continuit\'a e la suriettivit\'a sono i punti pi\'u delicati, e per essi rimandiamo a \cite[\S 4.3]{Fol} o \cite[Vol.1, \S 24.8]{HR}.
\end{proof}

Osserviamo che il teorema precedente si applica solo nel caso in cui $G$ sia abeliano; infatti un generico gruppo topologico potrebbe essere privo di caratteri non banali, come ad esempio il gruppo $SU(2)$ delle matrici complesse $2 \times 2$ con determinante $1$.

Denotiamo ora con $\mu$ la misura di Haar di $G$ (vedi \S \ref{sec_MIS1}) e definiamo, per ogni $f \in L_\mu^1(G,\bC)$,
\begin{equation}
\label{def_a_fou}
\wa f (\chi) := \int_G f(s) \chi(s) \ d \mu (s)
\ \ , \ \
\forall \chi \in G^*
\ .
\end{equation}
Analogamente al caso $G = \bR$, usando il teorema di convergenza dominata troviamo che $\wa f$ \e una funzione continua, cosicch\'e abbiamo un analogo astratto della trasformata di Fourier. Il fatto che (quando $G^*$ non \e compatto) $\wa f$ svanisce all'infinito \e pi\'u delicato da dimostrare, ma comunque vero, cosicch\'e $\wa f \in C_0(G^*, \bC)$, cos\'i come rimangono veri, {\em nel caso $G$ abeliano}, i teoremi di Fourier, Parseval e Plancherel, con la modifica che stavolta abbiamo un operatore unitario
\[
F : L_\mu^2(G,\bC) \to L_{\mu^*}^2(G^*,\bC) \ ,
\]
dove $\mu^*$ \e la misura di Haar su $G^*$. Per dettagli si veda \cite[\S 4.2]{Fol} o \cite[Vol.1, Cap.6]{HR}.

\begin{ex}
\label{ex_pontr}
{\it
Nel caso $G = \bR^d$ ritroviamo (a meno di radici di $2 \pi$) la classica trasformata di Fourier in $\bR^d$; nel caso $G = \bT$ ($G^* \simeq \bZ$) abbiamo invece la cosiddetta trasformata di Fourier discreta
\[
\wa f(k) := \int_\bT f(z) z^k d \mu(z)
\ \ , \ \
\forall k \in \bZ
\ \ , \ \
f \in L^1_\mu(\bT,\bC)
\ ;
\]
l'integrale precedente \e effettuato sulla misura di Haar $\mu$ di $\bT$, la quale coincide essenzialmente con la misura di Lebesgue sull'intervallo $[0,1]$ una volta usato il cambiamento di variabile 
\[
[0,1] \to \bT
\ \ , \ \
\theta \mapsto e^{2 \pi i \theta}
\ .
\]
Infine, nel caso $G = \bZ$ ($G^* \simeq \bT$) la misura di Haar coincide con la misura di enumerazione (Esempio \ref{ex_misnumZ}), cosicch\'e lo spazio delle le funzioni integrabili su $\bZ$ \e dato da
\[
l^1(\bZ,\bC) \ := \ \{ \{ f_k \in \bC \}_{k \in \bZ} \ : \ \sum_k |f_k| < \infty  \}
\ .
\]
Di conseguenza, ogni $f \in l^1(\bZ,\bC)$ si pu\'o riguardare come la successione dei coefficienti di Fourier della sua trasformata:
\[
\wa f (z) := \sum_{k \in \bZ} f_k z^k
\ \ , \ \
\forall z \in \bT
\ \ , \ \
f \in l^1(\bZ,\bC)
\ .
\]
}
\end{ex}

\subsection{Esercizi.}

\noindent \textbf{Esercizio \ref{sec_fourier}.1.} {\it Scrivere gli sviluppi in serie di Fourier delle seguenti funzioni:
(1) $e^{\alpha x}$, $\alpha \in \bR$;
(2) $x \cos x$;
(3) $x (1+\cos x)$;
(4) ${\mathrm{sgn}} x$;
(5) $\chi_{[-1,1]}$.
}

\

\noindent \textbf{Esercizio \ref{sec_fourier}.2.} {\it Calcolare la trasformata di Fourier di $f_n := n \chi_{[0,1/n]}$, $n \in \bN$, e studiare la convergenza della successione $\{ \wa f_n \} \subset C_0(\bR,\bC)$.}

\

\noindent \textbf{Esercizio \ref{sec_fourier}.3.} {\it Sia $f \in L_{loc}^1(\bR,\bC)$ una funzione $2 \pi$-periodica, e
\[
\wa f (n) 
\ := \ 
\int_{-\pi}^\pi f(x) e^{-inx} \ dx
\ \ , \ \
n \in \bZ 
\ .
\]
Dimostrare che: \textbf{(1)} Valgono le uguaglianze
\[
\wa f (n) 
\ = \
\frac{1}{2} \
\int_{-\pi}^{\pi} 
\left(  f(x) - f \left( x + \frac{\pi}{n} \right) \right) \ e^{-inx} \ dx 
\ \ , \ \
n \in \bZ \ ;
\]
\textbf{(2)} Se $f$ \e lipschitziana, allora si ha una stima
$| \wa f(n) | \leq c n^{-1}$,
$n \in \bN$;
\textbf{(3)} Se $f \in C^1(\bR,\bC)$, allora
$\lim_{n \to \pm \infty} n \wa f(n) = 0$.

\

\noindent (Suggerimento: per (1) si usi lo stesso metodo usato per dimostrare il Lemma di Riemann-Lebesgue).
}

\noindent \textbf{Esercizio \ref{sec_fourier}.4 (La trasformata di una misura finita).} 
{\it Si consideri lo spazio normato $\Lambda_\beta^1(\bR^d,\bC)$ delle misure boreliane complesse (e quindi finite) su $\bR^d$ 
{\footnote{Vedi Def.\ref{def_l1} ed Esercizio \ref{sec_MIS}.7.}}.
\textbf{(1)} Presa $\mu \in \Lambda_\beta^1(\bR^d,\bC)$, si mostri che la funzione
\[
\wa \mu (x) := \int e^{i x \cdot t} d \mu (t)
\ \ , \ \
x \in \bR^d
\ ,
\]
\e continua e tale che $\| \wa \mu \|_\infty \leq | \mu |(\bR^d) < \infty$.
\textbf{(2)} Presa $f \in L^1(\bR^d,\bC)$ e la misura 
\[
\mu_f E := \int_E f
\ \ , \ \
E \subseteq \bR^d
\ ,
\]
si mostri che $\mu_f \in \Lambda_\beta^1(\bR^d,\bC)$ e che $\wa f = \wa \mu_f$.
\textbf{(3)} Si considerino le misure di Dirac $\mu_a$, $a \in \bR^d$, e si mostri che
$\wa \mu_a (x) = e^{i x \cdot a}$, $x \in \bR^d$ (cosicch\'e, in particolare, per $a=0$ otteniamo la funzione costante $1$).  

\

\noindent (Suggerimenti: per il punto (1) si usi il teorema di convergenza dominata, mentre per il punto (2) si osservi che $\int g \ d \mu_f = \int gf$ per ogni $g \in L_{\mu_f}^1(\bR^d)$).
}

\

\noindent \textbf{Esercizio \ref{sec_fourier}.5 (Trasformata di Fourier e gruppi ad un parametro).} 
{\it Sia $U := \{ U_t \}$ un gruppo ad un parametro sullo spazio di Hilbert complesso $\mH$ (vedi (\ref{eq.scopug}) e (\ref{eq.scopug2})). Per ogni $f \in L^1(\bR,\bC)$ si definisca
\begin{equation}
\label{eq.exUt}
A_f(u,v) \ := \ \int f(t) ( u,U_tv ) \ dt
\ \ , \ \
u,v \in \mH
\ .
\end{equation}
\textbf{(1)} Si mostri che $A_f$ \e una forma bilineare continua, per cui esiste ed \e unico l'operatore $T_f \in B(\mH)$ tale che
$A_f(u,v) = (u,T_fv)$, $\forall u,v \in \mH$;
\textbf{(2)} Si mostri che
\begin{equation}
\label{eq.exUt2}
T_{af + g} = aT_f + T_g \ , \ T_{f*g} = T_f T_g \ , \ \| T_f \| \leq \| f \|_1
\ \ , \ \
\forall a \in \bC \ , \ f,g \in L^1(\bR,\bC)
\ .
\end{equation}
\textbf{(3)} Si prenda $\mH := L^2(\bR,\bC)$ e si verifichi che definendo
\begin{equation}
\label{eq.exUt3}
U_tu(s) := e^{its}u(s)
\ \ , \ \
\forall u \in \mH
\ , \
t \in \bR
\ ,
\end{equation}
si ottiene un gruppo ad un parametro. Infine, si mostri che
\[
T_fu = \wa f u
\ \ , \ \
\forall f \in L^1(\bR,\bC) \ , \  u \in \mH \ .
\]
(Suggerimenti: Per (1) si osservi che, essendo ogni $U_t$ unitario, troviamo $|(u,U_tv)| \leq \| u \| \| v \|$ e quindi
$\| A_f(u,v) \| \leq \| f \|_1 \| u \| \| v \|$.
Per (2), in particolare l'uguaglianza che coinvolge la convoluzione $f*g$, si confrontino i prodotti scalari
$( u, T_{f*g}v )$ e $(u,T_f T_g v)$
usando (\ref{eq.scopug}) ed il teorema di Fubini.
Per (3) si usino i teoremi di Lebesgue e di Fubini).
}

\

\noindent \textbf{Esercizio \ref{sec_fourier}.6.} {\it Si mostri che 
$\wa \varphi \in C_0^\infty(\bR,\bC) \cap L^2(\bR,\bC)$ 
per ogni $\varphi \in C_c^\infty(\bR,\bC)$.

\noindent (Suggerimenti: si usi il Teorema di Parseval. Per la differenziabilit\'a si usi il Teorema \ref{thm_der_int}).
}

\

\noindent \textbf{Esercizio \ref{sec_fourier}.7 (Trasformata di Fourier e derivate deboli).} {\it Si consideri la trasformata di Fourier come un operatore unitario $F \in BL^2(\bR,\bC)$, cosicch\'e 
$Ff = \wa f$, $F^*f = \breve f$, $\forall f \in L^1(\bR,\bC) \cap L^2(\bR,\bC)$.
\textbf{(1)} Si determini l'operatore autoaggiunto $(D,T)$ associato, nel senso di (\ref{eq.scopug3}), al gruppo ad un parametro (\ref{eq.exUt3});
\textbf{(2)} Si mostri che $\{ \wa U_t := FU_tF^* \}$ \e, a sua volta, un gruppo ad un parametro e se ne dia un'espressione esplicita
          sul sottospazio $L^1(\bR,\bC) \cap L^2(\bR,\bC)$;
\textbf{(3)} Si mostri che l'operatore autoaggiunto $(\wa D , \wa T)$ associato ad $\{ \wa U_t \}$ \e dato da
          $\wa D = F(D)$, $\wa T = FTF^*$;
\textbf{(4)} Si verifichi (usando l'Esercizio precedente) che 
          \[
          -i \frac{d}{ds} \wa \varphi(s) \ = \ FT \varphi(s)
          \ \ , \ \
          \forall \varphi \in C_c^\infty(\bR,\bC)
          \ ,
          \]
          cosicch\'e, per definizione di $\wa T$, si ha 
          $\wa T F \ = \ -i d / ds \circ F$ sul sottospazio $C_c^\infty(\bR,\bC) \subset L^2(\bR,\bC)$
          {\footnote{In realt\'a si verifica che $(\wa D , \wa T)$ \e la derivata debole
          \[
          \left\{
          \begin{array}{ll}
          \wa D  = \{ u \in L^2(\bR,\bC) \ | \
                      \exists u' \in L^2(\bR,\bC) : 
                      \int u \varphi' = - \int u' \varphi \ , \ 
                      \forall \varphi \in C_c^\infty(\bR,\bC)  \} \ ,
          \\
          \wa T u  = -iu'
          \ ;
          \end{array}
          \right.
          \]
          il dominio $\wa D$ \e noto come lo spazio di Sobolev (complesso) $H^1(\bR,\bC)$ (vedi \S \ref{sec_sobolev});
          per dettagli si veda \cite[\S III.3]{Mal}.}}
          ;
          
          \
          
          \noindent (Suggerimenti: (1) Calcolando il limite (\ref{eq.scopug3}) si trova
                     \[
                     D = \left\{ u \in L^2(\bR,\bC) : \int s^2 |u(s)|^2 < \infty \right\}
                     \ \ , \ \
                     Tu(s) := s u(s)
                     \ , \
                     \forall s \in \bR \ , \ u \in D
                     \ .
                     \]
                     (2) Per ogni $f \in L^1(\bR,\bC) \cap L^2(\bR,\bC)$ e $t,x \in \bR$ si trova 
                     \[
                     \begin{array}{ll}
                     \{ \wa U_t f \}(x) & = 
                     \{ F U_t \breve f \}(x) = \\ & =
                     \int e^{its} \breve f(s) e^{ixs} \ ds = \\ & =
                     \int f(\lambda) e^{is(t+x-\lambda)} \ d \lambda ds  = \\ & =
                     f(x+t) \ ,
                     \end{array}
                     \]
                     per cui $\wa U_t$ \e un operatore di traslazione (Esempio \ref{ex.trans}) e l'operatore autoaggiunto
                     associato \e l'estensione autoaggiunta della derivata (Esempio \ref{ex_der}).
                     (3) Semplici manipolazioni algebriche.
                     (4) Si calcoli la trasformata di Fourier di $T \varphi$ usando il punto (1)).
}

\

\noindent \textbf{Esercizio \ref{sec_fourier}.8 (La rappresentazione regolare).} 
{\it
Sia $G$ un gruppo topologico (localmente compatto, di Hausdorff) e $\mu : \mM \to \wa \bR^+$, $\mM \subset 2^G$, la sua misura di Haar (invariante a sinistra, ovvero $\mu(gE) = \mu E$, $\forall E \in \mM$). Si consideri lo spazio di Hilbert $\mH := L_\mu^2(G,\bC)$ e si denoti con $U \mH$ il gruppo degli operatori unitari.
\textbf{(1)} Preso $h \in G$ si definisca
$u_h(g) := u(hg)$, $\forall u \in \mH$, $g \in G$,
e si mostrino le seguenti uguaglianze:
\[
\{ \alpha u + v  \}_h = \alpha u_h + v_h
\ \ , \ \
\| u \|_2 = \| u_h \|_2
\ \ , \ \
\forall \alpha \in \bC \ , \ u,v \in \mH 
\ ,
\]
cosicch\'e l'applicazione $U_h u := u_h$, $u \in \mH$, definisce un operatore unitario $U_h \in U \mH$.
\textbf{(2)} Si mostri che $U_{hh'} = U_h U_{h'}$, $\forall h,h' \in G$.
\textbf{(3)} Assumendo che lo spazio delle funzioni continue a supporto compatto $C_c(G,\bC)$ sia denso in $\mH$
          {\footnote{Ci\'o \e sempre garantito quando $G$ \e uno spazio metrico (e quindi normale), vedi \S \ref{sec_appLp}. 
          In particolare, $C_c(G,\bC)$ \e denso in $\mH$ quando $G$ \e un gruppo di Lie.}},
          si mostri che 
          \[
          \lim_{h \to h'} \| U_hu - U_{h'}u \|_2  \to 0 \ \ , \ \ \forall u \in \mH \ .
          \]
\textbf{(4)} Si mostri che, presa $f \in L_\mu^1(G,\bC)$ e denotato con $(\cdot,\cdot)$ il prodotto scalare di $\mH$, 
          l'applicazione
          \[
          A_f : \mH \times \mH \to \bC
          \ \ , \ \
          A_f(u,v) := \int_G f(h) (u,U_hv) \ d \mu(h)
          \ ,
          \]
          \e una forma sesquilineare limitata, che definisce quindi un operatore limitato $T_f \in B(\mH)$ tale che
          $(u,T_fv) = A_f(u,v)$, $\forall u,v \in \mH$. Si verifichino le seguenti relazioni:
          \[
          T_{f*l} = T_f T_l \ , \ 
          T_{\alpha f + l} = \alpha T_f + T_l \ , \
          \| T_f \| \leq \| f \|_1
          \ \ , \ \
          \forall f,l \in L_\mu^1(G,\bC)
          \ , \
          \alpha \in \bC
          \ .
          \]
\textbf{(5)} Si verifichino i punti precedenti nei casi:(i) $G = \bR$ (gruppo additivo), dove $\mu$ \e la misura di Lebesgue;
          (ii) $G = \bZ$ (sempre gruppo additivo), dove $\mu$ \e la misura di enumerazione (vedi Esempio \ref{ex_misnumZ}).
\textbf{(6)} Sia assuma che $G$ sia compatto e si verifichi che vale l'inclusione $G^* \subset \mH$, dove $G^*$ \e il gruppo 
          dei caratteri di $G$. 
\textbf{(7)} Si verifichi che 
          $\{ U_h \chi \}(g) = \chi(h) \chi(g)$, $\forall \chi \in G^* \subset \mH$, $h,g \in G$,
          e che
          $\{ T_f \chi \}(g) = \wa f(\chi) \chi (g)$, 
          $\forall f \in L_\mu^1(G,\bC)$, $\chi \in G^*$, $g \in G$.

\

\noindent (Suggerimenti: (1) La linearit\'a \e ovvia, mentre per l'isometria si usi l'invarianza per traslazioni di $\mu$; 
                         (3) Si verifichi per $u \in C_c(G,\bC)$ e poi si argomenti per densit\'a;
                         (4) Si proceda come nell'Esercizio 8.5;
                         (6) Si osservi che $\chi \in G^*$ \e continua e che $\mu$ \e una misura finita.
                         (7) Si osservi che, grazie al teorema di Riesz, basta verificare che
                             $A_f(u,\chi) = \wa f(\chi) (u,\chi )$, $\forall u \in \mH$,
                             e si usi il teorema di Fubini sull'integrale doppio $A_f(u,\chi)$).
}

%
%\newpage
%\section{Equazioni alle derivate parziali.}
%
%\subsection{L'equazione di Laplace.}
%\subsection{L'equazione delle onde.}
%\subsection{L'equazione del calore.}
%\subsection{Il Teorema di Hille-Yosida.}
%

\newpage
\section{Analisi Complessa.}
\label{sec_compl}

Consideriamo un'applicazione $f : U \to \bC$, con $U \subseteq \bC$ aperto. Scrivendo, per ogni $z \in U$, $z = x +iy$, $x,y \in \bR$, ed $f(z) = u+iv$, $u,v \in \bR$, troviamo che $f$ pu\'o essere riguardata come una coppia di funzioni reali di due variabili reali, cosicch\'e scriviamo
\[
f(z) \ \equiv \ f(x,y) \ \equiv \ u(x,y) + i v(x,y) \ . 
\]
Dunque valgono per le funzioni complesse tutti i risultati dimostrati per le funzioni di due variabili reali a valori in $\bR^2$, in particolare quelli inerenti le forme differenziali (\S \ref{sec_fdif}). Tuttavia l'analisi delle funzioni di variabile complessa presenta delle importanti peculiarit\'a, e gioca un ruolo importante in svariati ambiti, dalla geometria (si pensi ad esempio al teorema di Riemann-Roch) all'ingegneria elettrica, per non parlare della teoria quantistica dei campi.

\subsection{Funzioni olomorfe.}

In analogia al caso reale (in pi\'u variabili, vedi (\ref{eq_fRn1})), diremo che $f$ \e derivabile in $\zeta \in U$ lungo la direzione 
$v \in \bT := \{ z \in \bC : |z|=1 \}$,
se esiste finito il limite (con $t \in \bR$)
\begin{equation}
\label{eq_olom0}
\frac{\partial f}{\partial v} (\zeta) 
\ := \
\lim_{t \to 0} \frac{ f(\zeta+tv)-f(\zeta) }{ t }
\ .
\end{equation}
In particolare, scegliendo $x = 1$, $y = i$, abbiamo le {\em derivate parziali}
\[
\frac{\partial f}{\partial x} (\zeta) = 
\frac{\partial u}{\partial x} (\zeta) + i \frac{\partial v}{\partial x} (\zeta)
\ \ , \ \
\frac{\partial f}{\partial y} (\zeta) = 
\frac{\partial u}{\partial y} (\zeta) + i \frac{\partial v}{\partial y} (\zeta)
\ .
\]
Al solito, parleremo di funzioni di classe $C^k(U,\bC)$, $k = 1 , \ldots , +\infty$. 

\

Ora, vogliamo introdurre una nozione diversa di derivata, intesa stavolta come limite rispetto alla variabile $z$,
\begin{equation}
\label{eq_olom2}
f'(\zeta) := \lim_{z \to \zeta} \frac{ f(z)-f(\zeta) }{ z-\zeta } 
\ \ , \ \
\forall \zeta \in U 
\ ,
\end{equation}
mettendo in evidenza il fatto che $z$ tende a $\zeta$ a prescindere dalla direzione. Diremo che una funzione $f : U \to \bC$ \e {\em olomorfa} in $U$ se per ogni $\zeta \in U$ esiste in $\bC$ il limite (\ref{eq_olom2}), che chiameremo la {\em derivata complessa di $f$ in $\zeta$}.

Come si esprime la nozione di olomorfia in termini delle usuali derivate parziali? Per rispondere a questa domanda osserviamo (\ref{eq_olom0}) e notiamo che a denominatore del rapporto incrementale in effetti non appare la differenza 
$z - \zeta = tv$, $z := \zeta + tv$,
bens\'i il parametro $t$. Dunque se vogliamo parlare di derivata nel limite $z \to \zeta$ occorre innanzitutto dividere $\partial f / \partial v$ per $v$; l'indipendenza dalla direzione si traduce invece col fatto che la funzione
\begin{equation}
\label{eq_olom1}
U \times \bT \to \bC
\ \ , \ \
(\zeta , v) \mapsto \frac{1}{v} \frac{\partial f}{\partial v} (\zeta)
\ ,
\end{equation}
non dipende da $v \in \bT$ e coincide proprio con la derivata complessa di $f$.

\

Applicando i consueti risultati di derivazione di combinazioni lineari e prodotti di funzioni otteniamo che l'insieme delle funzioni olomorfe in $U$ \e un'algebra, la quale sar\'a denotata con $\mO(U)$; l'algebra delle funzioni olomorfe in ogni intorno di $U$ sar\'a denotata invece con $\mO(\ovl U)$.

Diamo ora un'utile caratterizzazione delle funzioni olomorfe. Come vedremo negli esempi successivi, "moltissime" funzioni in $C^\infty(U,\bC)$ non sono olomorfe, e lo strumento pi\'u comodo per verificarlo \e dato proprio dal Lemma seguente.
\begin{lem}
\label{lem_CR}
Sia $f \in C^\infty(U,\bC)$, $f(z) = u(x,y) + iv(x,y)$. Allora $f$ \e olomorfa in $U$ se e solo se valgono le \textbf{equazioni di Cauchy-Riemann}
\begin{equation}
\label{eq_CR}
\frac{\partial f}{\partial x} + i \frac{\partial f}{\partial y} = 0
\ \Leftrightarrow \ 
\frac{\partial u}{\partial x} =   \frac{\partial v}{\partial y}
\ \ , \ \
\frac{\partial u}{\partial y} = - \frac{\partial v}{\partial x} 
\ .
\end{equation}
\end{lem}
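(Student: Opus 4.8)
The plan is to run the equivalence through the characterization of holomorphy recorded just before the statement: $f$ has a complex derivative at $\zeta$ precisely when the quantity appearing in (\ref{eq_olom1}), namely $\tfrac{1}{w}\tfrac{\partial f}{\partial w}(\zeta)$ (here I write $w$ for the direction, to keep $v$ free for the imaginary part of $f$), is independent of $w \in \bT$. First I would use that $f \in C^\infty(U,\bC)$ is in particular real-differentiable, so by Teo.\ref{thm_fRn1} each directional derivative is the differential evaluated on the direction; writing $w = w_1 + iw_2$ with $(w_1,w_2) \in \bR^2$ this records
\[
\frac{\partial f}{\partial w}(\zeta) \ = \ w_1 \frac{\partial f}{\partial x}(\zeta) + w_2 \frac{\partial f}{\partial y}(\zeta) \ .
\]

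Next I would substitute $w = e^{i\theta} = \cos\theta + i\sin\theta$ and expand $\cos\theta,\sin\theta$ in complex exponentials. A short computation then gives
\[
\frac{1}{w}\frac{\partial f}{\partial w}(\zeta)
\ = \
\frac{1}{2}\left( \frac{\partial f}{\partial x} - i\frac{\partial f}{\partial y} \right)
+ \frac{e^{-2i\theta}}{2}\left( \frac{\partial f}{\partial x} + i\frac{\partial f}{\partial y} \right) \ ,
\]
with all partials evaluated at $\zeta$. The first summand does not depend on $\theta$, whereas the second genuinely varies with $\theta$ through the factor $e^{-2i\theta}$ unless its coefficient vanishes. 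Hence independence of $w$ is equivalent to $\partial f/\partial x + i\,\partial f/\partial y = 0$, which is the first form of (\ref{eq_CR}); and when it holds the common value is $f'(\zeta) = \partial f/\partial x(\zeta)$.

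To close both directions of the equivalence I would argue as follows. If the Cauchy-Riemann equation holds, then the differential $df_\zeta(h) = h_1 \tfrac{\partial f}{\partial x} + h_2 \tfrac{\partial f}{\partial y} = (h_1 + ih_2)\tfrac{\partial f}{\partial x}(\zeta) = h\,\tfrac{\partial f}{\partial x}(\zeta)$ is complex-linear, so $\tfrac{f(\zeta+h)-f(\zeta)}{h}\to \tfrac{\partial f}{\partial x}(\zeta)$ as $h\to 0$ in $\bC$ and $f$ is holomorphic; conversely, if the limit (\ref{eq_olom2}) exists, approaching $\zeta$ along each fixed $w$ forces $\tfrac{1}{w}\tfrac{\partial f}{\partial w}(\zeta)$ to equal that single limit, hence to be independent of $w$. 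Finally I would pass to real and imaginary parts: writing $f = u + iv$,
\[
\frac{\partial f}{\partial x} + i\frac{\partial f}{\partial y}
\ = \
\left( \frac{\partial u}{\partial x} - \frac{\partial v}{\partial y} \right)
+ i\left( \frac{\partial v}{\partial x} + \frac{\partial u}{\partial y} \right) \ ,
\]
so vanishing of this complex number is exactly the pair of real equations in (\ref{eq_CR}). The only genuinely delicate point — the hard part — is the passage between the directional characterization and the honest two-dimensional limit defining $f'$: this is where real differentiability of $f$ must be invoked (the $o(|h|)$ remainder in the first-order expansion being uniform over directions), rather than mere existence of the partial derivatives, which on its own would not suffice.
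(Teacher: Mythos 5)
Your proof is correct. It follows the same overall route as the paper's — holomorphy as direction-independence of the normalized directional derivative, then translation into the Cauchy--Riemann equation — but the execution differs in two places worth noting. In the forward direction the paper simply equates the two choices of direction $1$ and $i$, whereas you compute $\tfrac{1}{w}\tfrac{\partial f}{\partial w}(\zeta)$ for every $w=e^{i\theta}$ and isolate the $\theta$-dependent part as $\tfrac{e^{-2i\theta}}{2}\bigl(\tfrac{\partial f}{\partial x}+i\tfrac{\partial f}{\partial y}\bigr)$; this gives a genuine equivalence (independence of direction if and only if the coefficient vanishes) rather than only a necessary condition, and is in substance the Wirtinger decomposition into $\partial/\partial z$ and $\partial/\partial\ovl z$ that the paper only introduces later. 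In the converse — which, as you correctly flag, is the delicate step, since direction-independence of directional derivatives alone would not give the two-dimensional limit — the paper writes a second-order expansion $f(z)-f(\zeta)=\xi\tfrac{\partial f}{\partial x}+\eta\tfrac{\partial f}{\partial y}+\xi^2H_x+\eta^2H_y+\xi\eta H_{xy}$ (exploiting $f\in C^\infty$) and bounds the quadratic remainder after dividing by $|z-\zeta|$, while you invoke only first-order real differentiability: the Cauchy--Riemann equation makes $df_\zeta$ complex-linear, $df_\zeta(h)=h\,\tfrac{\partial f}{\partial x}(\zeta)$, and the $o(|h|)$ remainder divided by $h$ vanishes. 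Your version is cleaner and needs only $C^1$ regularity; the paper's is more hands-on but uses more smoothness than necessary. Both arguments are complete.
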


\begin{proof}[Dimostrazione]
Sia $f$ olomorfa. Scegliendo di passare al limite $z \to \zeta$ lungo la direzione $v$ abbiamo $z - \zeta = tv$; cosicch\'e, scegliendo $v = 1$, $w = i$, troviamo
\[
\frac{\partial f}{\partial x} (\zeta) =
\frac{1}{v} \frac{\partial f}{\partial v} (\zeta) =
\frac{1}{w} \frac{\partial f}{\partial w} (\zeta) =
-i \frac{\partial f}{\partial y} (\zeta) \ .
\]
Scrivendo esplicitamente $f(z) = u(x,y)+iv(x,y)$, otteniamo le relative equazioni in $u,v$.
Supponiamo ora che $f$ soddisfi (\ref{eq_CR}). Introduciamo le notazioni
\[
\zeta = x_0 + i y_0
\ \ , \ \
z     = x   + i y
\ \ , \ \
\xi   = x   - x_0
\ \ , \ \
\eta  = y   - y_0
\ ,
\]
cosicch\'e
\[
z - \zeta = \xi + i \eta \ .
\]
Essendo $f \in C^\infty(U,\bC)$ abbiamo, per $z$ appartenente ad un opportuno intorno $U' \subseteq U$
\[
f(z) - f(\zeta) =
\xi  \frac{\partial f}{\partial x} (\zeta) + 
\eta \frac{\partial f}{\partial y} (\zeta) +
\xi^2 H_x + \eta^2 H_y + \xi \eta H_{xy} 
\ ,
\]
dove $H_x , H_y , H_{xy} \in C^\infty(U',\bC)$. Applicando le equazioni di Cauchy-Riemann troviamo
\[
f(z) - f(\zeta) =
\frac{\partial f}{\partial x} (\zeta) (z-\zeta) +
\xi^2 H_x + \eta^2 H_y + \xi \eta H_{xy} \ .
\]
Ora, per diseguaglianza triangolare abbiamo $\xi,\eta \leq |z-\zeta| = |\xi + i \eta|$; per cui, posto $M := \max_{\ovl{U'}} \{ |H_x| , |H_y| , |H_{xy}| \}$, concludiamo che
\[
\left| \frac{f(z) - f(\zeta)}{z-\zeta}  - 
       \frac{\partial f}{\partial x} (\zeta)
\right|
=
\frac{1}{|\xi + i \eta|}
|\xi^2 H_x + \eta^2 H_y + \xi \eta H_{xy}|
\leq
M(\xi + 2\eta)
\stackrel{(\xi,\eta) \to 0}{\longrightarrow} 0
\ .
\]
Dunque, $f'(\zeta)$ \e ben definito come limite.
\end{proof}

\begin{cor}
Se $f(z) = u(x,y) + i v(x,y)$ \e olomorfa allora $u,v \in C^\infty(U)$ soddisfano l'equazio- ne di Laplace:
\[
\Delta u := \frac{\partial^2u}{\partial x^2} + \frac{\partial^2u}{\partial y^2}  =  0
\ \ , \ \
\Delta v  =  0 \ .
\]
\end{cor}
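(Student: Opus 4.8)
The plan is to split the statement into its two assertions: first that $u$ and $v$ are of class $C^\infty(U)$, and then that each satisfies Laplace's equation. The second assertion is a short computation once the first is available, so I would organize the proof around first securing the regularity and then differentiating the Cauchy--Riemann relations.

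Granting for the moment that $u,v \in C^\infty(U)$, I would proceed as follows. Since $f$ is holomorphic it is in particular of class $C^\infty(U,\bC)$, so Lemma~\ref{lem_CR} applies and gives the Cauchy--Riemann equations $\frac{\partial u}{\partial x} = \frac{\partial v}{\partial y}$ and $\frac{\partial u}{\partial y} = -\frac{\partial v}{\partial x}$. Differentiating the first with respect to $x$ and the second with respect to $y$ yields $\frac{\partial^2 u}{\partial x^2} = \frac{\partial^2 v}{\partial x \, \partial y}$ and $\frac{\partial^2 u}{\partial y^2} = -\frac{\partial^2 v}{\partial y \, \partial x}$. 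Since $v \in C^\infty$, the mixed partials agree by the Schwartz theorem (Teo.~\ref{thm_fRn2}), so adding the two identities gives $\Delta u = \frac{\partial^2 u}{\partial x^2} + \frac{\partial^2 u}{\partial y^2} = 0$. The identity $\Delta v = 0$ follows by the symmetric argument, differentiating $\frac{\partial u}{\partial x} = \frac{\partial v}{\partial y}$ with respect to $y$ and $\frac{\partial u}{\partial y} = -\frac{\partial v}{\partial x}$ with respect to $x$, and again invoking Schwartz on the mixed partials of $u$.

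The genuine obstacle is the regularity claim $u,v \in C^\infty(U)$. The definition of holomorphy via the complex limit (\ref{eq_olom2}) only presupposes the existence of $f'$, not a priori the smoothness of $f$ regarded as a map from $U \subseteq \bR^2$ into $\bR^2$; indeed Lemma~\ref{lem_CR} itself takes $f \in C^\infty(U,\bC)$ as a hypothesis, so its conclusion cannot by itself upgrade mere complex differentiability to smoothness. Establishing that a holomorphic function is automatically $C^\infty$ (in fact analytic) is the deep input here: it is not accessible from the material developed so far and rests on the Cauchy integral theory, which represents $f$ locally by a convergent power series and thereby permits differentiation under the integral sign arbitrarily many times. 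I would therefore defer this point to (or cite) that later development, and present the corollary under the reading that, for a holomorphic $f$ whose smoothness is thus guaranteed, the real and imaginary parts $u$ and $v$ are harmonic.
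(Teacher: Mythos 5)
Your proof is correct and follows the argument the paper intends (the corollary is stated without an explicit proof, immediately after Lemma~\ref{lem_CR}): differentiate the Cauchy--Riemann relations and invoke the Schwartz theorem on mixed partials. Your remark that the $C^\infty$ regularity of $u,v$ is not a consequence of Lemma~\ref{lem_CR} but must either be taken as a standing hypothesis (as in that lemma) or be deferred to the later equivalence of holomorphy and analyticity in \S\ref{sec_intc} is accurate and consistent with how the paper is organized.
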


\begin{ex}[\cite{Arb}]{\it 
Usando le equazioni di Cauchy-Riemann, si trova facilmente che le funzioni
\[
f(z) = x^3 + iy
\ \ , \ \
f(z) = \ovl z
\ \ , \ \
f(z) = |z|^2 
\ \ , \ \
f(z) = \sin x + i \cos y
\]
\textbf{non} sono olomorfe. D'altra parte, ogni funzione del tipo $f(z) := z^k$, $k \in \bN$, \e olomorfa.
}\end{ex}

\subsection{Serie di potenze e funzioni analitiche.}
\label{sec_anal}

Una serie di potenze si presenta nel seguente modo:
\begin{equation}
\label{eq_SPA1}
\sum_{n=0}^\infty a_n z^n \ \ , \ \ a_n \in \bC \ .
\end{equation}
La questione della convergenza di una serie di potenze \e completamente risolta dal seguente
\begin{thm}[Abel]
Data una serie di potenze del tipo (\ref{eq_SPA1}), esiste un reale esteso $r \in [0,+\infty]$, detto \textbf{raggio di convergenza}, con le seguenti propriet\'a:
per ogni $z \in \bC$ con $|z| < r$, risulta che $\sum_n a_n z^n$ \e assolutamente convergente, e quindi convergente; se $\rho \in (0,r)$, allora (\ref{eq_SPA1}) converge \textbf{uniformemente} per ogni $z$ tale che $|z| \leq \rho$; se $z \in \bC$ e $|z| > r$, allora (\ref{eq_SPA1}) non converge.
\end{thm}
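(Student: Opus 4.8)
The plan is to define the radius of convergence explicitly via the coefficients and then verify the three claimed properties through comparison with geometric series. First I would set
\[
r := \frac{1}{\limsup_n |a_n|^{1/n}}
\]
(with the conventions $r = +\infty$ when $\limsup_n |a_n|^{1/n} = 0$ and $r = 0$ when the $\limsup$ is $+\infty$), the classical Cauchy--Hadamard formula. This is the natural candidate because the $n$-th root test for the series $\sum_n a_n z^n$ involves precisely $|a_n z^n|^{1/n} = |z| \, |a_n|^{1/n}$, and taking $\limsup$ gives $|z|/r$, which governs convergence.

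Next I would establish absolute convergence for $|z| < r$. Choosing any $\rho$ with $|z| < \rho < r$, the definition of $r$ gives $|a_n|^{1/n} < 1/\rho$ for all large $n$, hence $|a_n z^n| \leq (|z|/\rho)^n$ eventually; since $|z|/\rho < 1$, the geometric series $\sum_n (|z|/\rho)^n$ converges and dominates $\sum_n |a_n z^n|$, so the series is absolutely convergent. Absolute convergence implies convergence because $\bC$ is complete (one may invoke the criterion of Proposizione~\ref{prop_compl}, viewing the partial sums as a Cauchy sequence controlled by the tail of the dominating geometric series). For the uniform convergence on the closed disc $|z| \leq \rho$ with $\rho \in (0,r)$, I would pick an intermediate $\rho'$ with $\rho < \rho' < r$ and note that the same estimate yields $|a_n z^n| \leq (\rho/\rho')^n$ uniformly in $z$ on that disc; the Weierstrass $M$-test (with $M_n := (\rho/\rho')^n$ summable) then gives uniform convergence, which is the content of the totally convergent bound.

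For the divergence claim when $|z| > r$, I would argue that $|a_n|^{1/n} > 1/|z|$ for infinitely many $n$ by the very definition of $\limsup$, so $|a_n z^n| > 1$ for infinitely many $n$. Consequently the general term $a_n z^n$ does not tend to $0$, and the series cannot converge (a convergent series must have terms tending to zero, which follows from the Cauchy criterion).

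The main obstacle, and the only genuinely delicate point, is handling the boundary and the role of the $\limsup$ correctly: the statement deliberately says nothing about $|z| = r$, and I expect the subtlety to lie in making the two-sided use of $\limsup$ precise—namely that for convergence I need an eventual upper bound $|a_n|^{1/n} < 1/\rho$ (true because $\limsup < 1/\rho$), whereas for divergence I need an infinitely-often lower bound $|a_n|^{1/n} > 1/|z|$ (true because $\limsup > 1/|z|$). Keeping straight which statement is ``eventually'' and which is ``infinitely often,'' together with the edge conventions $r = 0$ and $r = +\infty$, is where care is required; the rest reduces to routine geometric-series comparisons already available from the completeness results proved earlier.
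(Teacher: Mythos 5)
Your proof is correct and follows the same route as the paper's: the Cauchy--Hadamard definition of $r$ followed by comparison with a geometric series. It is in fact more complete than the paper's own proof, which establishes only the absolute convergence for $|z|<r$ and omits both the uniform convergence on $|z|\leq\rho$ and the divergence for $|z|>r$; moreover your careful separation of the \emph{eventual} bound $|a_n|^{1/n}<1/\rho$ (for convergence) from the \emph{infinitely often} bound $|a_n|^{1/n}>1/|z|$ (for divergence) is exactly the point that needs care, and is stated more precisely than the paper's inequality $1/\rho > 1/r > |a_n|^{1/n}$ for $n>n_0$, whose middle term need not dominate $|a_n|^{1/n}$ when the $\limsup$ is attained infinitely often.
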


\begin{proof}[Dimostrazione]
Definiamo $r$ tramite la {\em formula di Hadamard}
\begin{equation}
\label{eq_HAD}
\frac{1}{r}
\ := \
\lim_n \sup |a_n|^{1/n} \ .
\end{equation}
Se $|z| < r$, allora per ogni $\rho \in (|z|,r)$ si ha chiaramente
\[
\frac{|z|}{r} \ < \ \frac{\rho}{r} \ < \ 1 
\ \ , \ \
\frac{|z|}{\rho} \ < \ 1
\ . 
\]
Inoltre, per definizione di $r$, esiste $n_0 \in \bN$ tale che 
\[
\frac{1}{\rho} \ > \ \frac{1}{r} \ > \ |a_n|^{1/n}
\ \ , \ \
n > n_0
\ .
\]
Dunque,
\[
1 > 
\left( \frac{|z|}{\rho} \right)^n >
|a_n| |z|^n
\ ,
\]
per cui (\ref{eq_SPA1}) \e assolutamente convergente (e quindi convergente). 
\end{proof}

\begin{defn}
Sia $U \subseteq \bC$ un aperto. Una funzione $f : U \to \bC$ si dice \textbf{analitica} in $U$ se per ogni $\zeta \in U$ esiste un disco $\Delta = \Delta ( \zeta , r )$, $\Delta \subseteq U$, tale che
\begin{equation}
\label{eq_FA}
f(z) = \sum_{n=0}^\infty a_n ( z-\zeta )^n \ \ , \ \ z \in \Delta \ ,
\end{equation}
per opportuni coefficienti $a_n \in \bC$.
\end{defn}

Allo scopo di iniziare a chiarire la natura delle funzioni analitiche, ed in particolare dei coefficienti $a_n$, $n \in \bN$, diamo il seguente
\begin{lem}
\label{lem_SPA2}
Ogni funzione analitica $f : U \to \bC$ \e olomorfa. Inoltre, ogni derivata $n$-esima $f^{(n)}$ \e analitica in $U$ (e quindi olomorfa), ed $f$ si sviluppa nella \textbf{serie di Taylor}
\begin{equation}
\label{eq_SPA2}
f(z) 
\ = \
\sum_{n=0}^\infty \frac{1}{n!} \ f^{(n)}(\zeta) (z-\zeta)^n 
\ \ , \ \
z \in \Delta \ .
\end{equation}
\end{lem}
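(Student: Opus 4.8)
Il piano \e di ricondurre l'intero enunciato allo studio della derivabilit\'a complessa termine a termine di una serie di potenze. Fisso $\zeta \in U$: per l'ipotesi di analiticit\'a esiste un disco $\Delta = \Delta(\zeta,r) \subseteq U$ sul quale vale (\ref{eq_FA}), e, a meno di una traslazione, posso supporre $\zeta = 0$, cosicch\'e $f(z) = \sum_n a_n z^n$ per $|z| < r$. Come primo passo mostrerei che la serie formalmente derivata $g(z) := \sum_{n \geq 1} n a_n z^{n-1}$ ha lo stesso raggio di convergenza $r$: ci\'o segue dalla formula di Hadamard (\ref{eq_HAD}) e dall'uguaglianza $\lim_n n^{1/n} = 1$, che d\'a $\lim_n \sup |n a_n|^{1/n} = \lim_n \sup |a_n|^{1/n}$. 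Iterando due volte lo stesso argomento, anche $\sum_{n \geq 2} n(n-1) a_n z^{n-2}$ ha raggio $r$.

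Il passo centrale \e verificare che $f'(w) = g(w)$ per ogni $w \in \Delta$, dove $f'$ \e la derivata complessa di (\ref{eq_olom2}); ci\'o stabilisce simultaneamente che $f$ \e olomorfa. Fisserei $w$, sceglierei un raggio ausiliario $\rho$ con $|w| < \rho < r$ e considererei $z$ con $|z| < \rho$. Partendo dall'identit\'a algebrica
\[
\frac{z^n - w^n}{z-w} - n w^{n-1} = \sum_{k=0}^{n-1} (z^k - w^k) w^{n-1-k}
\]
e dalla stima elementare $|z^k - w^k| \leq |z-w| \, k \rho^{k-1}$, otterrei la maggiorazione
\[
\left| \frac{f(z)-f(w)}{z-w} - g(w) \right|
\ \leq \
|z-w| \, \sum_{n \geq 2} |a_n| \, \frac{n(n-1)}{2} \, \rho^{n-2}
\ .
\]
La serie a secondo membro \e finita proprio perch\'e $\rho$ \e strettamente minore del raggio di convergenza di $\sum_{n \geq 2} n(n-1) a_n z^{n-2}$; dunque il membro destro tende a $0$ per $z \to w$, e concludo $f'(w) = g(w) = \sum_{n \geq 1} n a_n w^{n-1}$.

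A questo punto procederei per induzione. Poich\'e $g$ \e ancora una serie di potenze centrata in $\zeta$ con raggio $r$, essa \e analitica e le si applica quanto gi\'a provato; ripetendo l'argomento ottengo che ogni derivata $f^{(m)}$ esiste, \e analitica (e dunque olomorfa per il passo centrale) e si scrive
\[
f^{(m)}(z) = \sum_{n \geq m} n(n-1) \cdots (n-m+1) \, a_n \, (z-\zeta)^{n-m}
\ .
\]
Valutando in $z = \zeta$ sopravvive solo il termine $n=m$, da cui $f^{(m)}(\zeta) = m!\, a_m$, ovvero $a_m = \frac{1}{m!} f^{(m)}(\zeta)$; sostituendo in (\ref{eq_FA}) ottengo lo sviluppo di Taylor (\ref{eq_SPA2}). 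L'ostacolo principale \e la stima uniforme del passo centrale: tutto poggia sul controllo del resto tramite la convergenza della serie delle derivate seconde e sulla scelta del raggio ausiliario $\rho$ compreso strettamente tra $|w|$ e $r$, che permette di sfruttare il teorema di Abel sulla convergenza uniforme nei sottodischi chiusi.
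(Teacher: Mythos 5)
La tua dimostrazione \e corretta, ma segue una strada genuinamente diversa da quella del testo. Il testo stabilisce l'olomorfia passando per l'analisi reale: mostra che le somme parziali $g_k$ e le loro derivate parziali convergono uniformemente sui sottodischi chiusi (usando il fatto che la serie derivata ha lo stesso raggio di convergenza), deduce che $\partial f/\partial x$ e $\partial f/\partial y$ esistono e sono limiti delle corrispondenti derivate delle $g_k$, e conclude invocando la caratterizzazione tramite le equazioni di Cauchy-Riemann (Lemma \ref{lem_CR}), che ogni $g_k$ soddisfa. Tu invece lavori direttamente con il rapporto incrementale complesso (\ref{eq_olom2}): l'identit\'a algebrica per $(z^n-w^n)/(z-w) - nw^{n-1}$ e la stima $|z^k-w^k| \leq |z-w|\,k\rho^{k-1}$ ti danno un controllo esplicito del resto tramite la serie delle derivate seconde, e quindi $f'(w)=g(w)$ senza alcun passaggio per le variabili reali. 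Il tuo approccio \e pi\'u autocontenuto (non richiede il Lemma \ref{lem_CR}, che \e enunciato per funzioni $C^\infty$, ipotesi che la dimostrazione del testo verifica solo implicitamente) e fornisce una stima quantitativa dell'errore; l'approccio del testo \e pi\'u rapido se si d\'a per acquisita la teoria della derivazione termine a termine delle serie uniformemente convergenti di funzioni reali. La conclusione per induzione e l'identificazione $a_m = f^{(m)}(\zeta)/m!$ coincidono nei due argomenti.
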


\begin{proof}[Dimostrazione]
Per mostrare che $f$ \e olomorfa verifichiamo che soddisfa le equazioni di Cauchy-Riemann. A tale scopo, consideriamo al solito $\zeta \in \Delta \subseteq U$ tale che $f(z)$ \e della forma (\ref{eq_FA}), e ne denotiamo con 
\begin{equation}
\label{eq_SPA3}
g_k (z) := \sum_{n=0}^k a_n (z-\zeta)^n \ \ , \ \ k \in \bN \ ,
\end{equation}
le somme parziali. Per il Teorema di Abel, esiste $\rho > 0$ tale che $g_n \to f$ {\em uniformemente} in $\Delta_\rho := \{ z \in U : |z-\zeta| \leq \rho \} \subset \Delta$. Inoltre, poich\'e
\[
\lim_n \sup |a_n|^{1/n}
\ = \ 
\lim_n \sup (n|a_n|)^{1/n}
\ ,
\]
la serie delle derivate
\begin{equation}
\label{eq_SPA4}
g'_k(z) := \sum_{n=1}^k n a_n (z-\zeta)^{n-1}
\end{equation}
ha lo stesso raggio di convergenza di (\ref{eq_SPA3}), e quindi converge uniformemente in $\Delta_\rho$. Per gli usuali teoremi di derivazione di serie uniformemente convergenti, $f$ \e derivabile, e
\[
\frac{\partial g_k}{\partial x} \stackrel{k}{\to} \frac{\partial f}{\partial x}
\ \ , \ \
\frac{\partial g_k}{\partial y} \stackrel{k}{\to} \frac{\partial f}{\partial y}
\ .
\]
Per cui, poich\'e ogni $g_k$ soddisfa (\ref{eq_CR}), abbiamo che $f$ soddisfa (\ref{eq_CR}) e quindi \e olomorfa. L'argomento precedente mostra anche che $f' = \lim_k g'_k$ \e analitica e quindi olomorfa. Infine, confrontando i termini di (\ref{eq_SPA3}) e (\ref{eq_SPA4}) otteniamo $f^{(n)}(\zeta) = n! a_n$, da cui (\ref{eq_SPA2}).
\end{proof}

Un esempio fondamentale di funzione analitica (e quindi olomorfa) \e dato da
\[
f(z) := \frac{1}{z} \ \ , \ \ z \in U := \bC - \{ 0 \} \ .
\]
Infatti, per ogni $\zeta \in U$ e $z : |z-\zeta| < |\zeta|$ troviamo che la serie
\[
\sum_{n=0}^\infty
(-1)^n \left( \frac{z-\zeta}{\zeta} \right)^n
\]
\'e assolutamente convergente, e quindi uniformemente convergente in ogni disco chiuso $\Delta \subset \{ z : |z-\zeta| < |\zeta| \}$. La somma \e chiaramente data (per la regola di divisione tra polinomi) da 
\[
\frac{1}{1 - (-1) (z-\zeta)\zeta^{-1} } \ = \frac{\zeta}{z} \ ,
\]
per cui
\begin{equation}
\label{eq_1z}
f(z) 
\ = \ 
\frac{1}{z} 
\ = \
\sum_{n=0}^\infty
\frac{ (-1)^n }{ \zeta^{n-1} } ( z-\zeta)^n
\ \ , \ \ 
|z-\zeta| < |\zeta|
\ .
\end{equation}

\subsection{Integrazione complessa.}
\label{sec_intc}

In realt\'a le propriet\'a di analiticit\'a ed olomorfia sono del tutto equivalenti. Il fatto che ogni funzione olomorfa \e analitica si dimostra utilizzando il teorema di Cauchy, il quale stabilisce l'annullarsi dell'integrale di una funzione olomorfa su una curva chiusa.

\

\noindent \textbf{Curve a valori complessi.} Iniziamo richiamando le seguenti terminologie: una {\em curva}
\[
\gamma : I := [a,b] \to \bC 
\ \ , \ \
\gamma(t) = x(t) + i y(t)
\ , \
t \in I
\]
(dove $x,y : I \to \bR$, $k=1,2$, al solito sono le componenti di $\gamma$) si dice {\em chiusa} se $\gamma(a) = \gamma (b)$, e {\em semplice} se $\gamma |_{[a,b)}$, $\gamma |_{(a,b]}$ sono iniettive. 
Diremo inoltre che $\gamma$ \e {\em regolare} se essa \e continua ed ha derivata
\[
\gamma' : I \to \bC
\ \ , \ \
\gamma'(t) := x'(t) + i y'(t)
\]
continua 
{\footnote{Qui definiamo $\gamma'(a)$ e $\gamma'(b)$ rispettivamente come i limiti per $t \to a^+$ e $t \to b^-$.}}
in $\dot{I} := (a,b)$. 
Infine, diremo che la curva continua $\gamma$ \e regolare {\em a tratti} se $I$ pu\'o essere suddiviso in un numero finito $I_1 , \ldots , I_n$ di sottointervalli chiusi tali che ogni restrizione $\gamma |_{I_k}$, $k = 1 , \ldots , n$, sia regolare. In tal caso $\gamma'$ pu\'o essere comunque definita su tutto $I$, estendendo per continuit\'a a sinistra in ogni estremo destro di $I_k$, $k = 1 , \ldots , n$, per cui $\gamma'$ risulta essere continua a tratti.
Preso un aperto $U \subset \bC$, denotiamo con $C^{reg}(I,U)$ l'insieme delle curve regolari a tratti su $I$ con immagine contenuta in $U$.

\begin{ex}{\it
La curva seguente $\gamma : [0,4] \to \bC$ descrive il perimetro di un quadrato in $\bC$ ed \e regolare a tratti,
\[
\gamma(t) :=
\left\{
\begin{array}{ll}
t \ \ , \ \ t \in [0,1) \ ,
\\
1 + i(t-1)   \ \ , \ \ t \in [1,2) \ ,
\\
i+3-t \ \ , \ \ t \in [2,3) \ ,
\\
i(4-t) \ \ , \ \ t \in [3,4] \ .
\end{array}
\right.
\ \Rightarrow \
\gamma'(t) =
\left\{
\begin{array}{ll}
1 \ \ , \ \ t \in [0,1] \ ,
\\
i   \ \ , \ \ t \in (1,2] \ ,
\\
-1 \ \ , \ \ t \in (2,3] \ ,
\\
-i \ \ , \ \ t \in (3,4] \ .
\end{array}
\right.
\]
%
%Osservare che $\gamma'$ \e continua a tratti.
}
\end{ex}

\noindent \textbf{Integrali curvilinei.} Sia ora $f \in C(U,\bC)$; per ogni $\gamma \in C^{reg}(I,U)$ definiamo
\begin{equation}
\label{def_int.comp}
\int_\gamma f \ dz
\ := \
\int_a^b f \circ \gamma(t) \cdot \gamma'(t) \ dt
\ .
\end{equation}
La funzione integranda $f \circ \gamma(t) \cdot \gamma'(t)$ \e per definizione continua a tratti, per cui l'integrale precedente esiste gi\'a nel senso di Riemann.
Si noti che cambiando l'orientazione di $\gamma$, il che corrisponde ad effettuare il cambio di variabile $t \mapsto \ovl \gamma(t) := \gamma (b+a-t)$, $t \in I$, si ottiene l'inversione di segno
\[
\int_{\ovl \gamma} f \ dz \ = \ 
- \int_a^b f \circ \gamma (b+a-t) \cdot \gamma'(b+a-t) \ dt \ = \
- \int_a^b f \circ \gamma(s) \cdot \gamma'(s) \ ds \ = \
- \int_\gamma f \ dz \ .
\]
Infine, osserviamo che $\int_\gamma f dz$ si pu\'o esprimere anche in termini di integrali di forme differenziali, definendo la $1$-forma
\[
\omega_f : U \to \bR^{2,*} \simeq \bR^2 \simeq \bC
\ \ , \ \
\omega_f (x,y) := f(x+iy) \{ dx + i \ dy \}
\ ,
\]
e procedendo per integrazione di forme differenziali: infatti, usando (\ref{eq.int.form}) con $m=1$, $n=2$, si verifica immediatamente che (\ref{def_int.comp}) coincide con l'integrale di $\omega_f$. {\em Nel seguito, seguendo una notazione standard, scriveremo $\omega_f \equiv f dz$}.

\begin{ex}
\label{ex_int.curv}
{\it
\textbf{(1)} Siano $\zeta, \zeta' \in \bC$ e $\gamma : I := [0,1] \to \bC$ definita da $\gamma (t) := (1-t) \zeta' + t \zeta$. Allora $\gamma'(t) = \zeta - \zeta'$ e
\begin{equation}
\label{eq_triv}
\int_\gamma 1 \, dz
\ = \
\int_0^1 1 \cdot (\zeta -\zeta') \, dt 
\ = \
\zeta - \zeta' \ .
\end{equation}
\textbf{(2)} Sia $\zeta \in \bC$, $\eps > 0$ e $\gamma : I \to \bC - \{ \zeta \}$ la curva (chiusa e semplice)
\[
\gamma (t) \, := \, \zeta + \eps e^{2 \pi i t } \ \ , \ \ t \in I := [0,1] \ .
\]
Allora
\begin{equation}
\label{eq_IC01}
\int_\gamma \, \frac{dz}{z-\zeta} 
\ = \
2 \pi i
\ .
\end{equation}
Per la verifica, osserviamo che l'integrale precedente \e ben definito perch\'e
$f(z) := (z-\zeta)^{-1}$ \e olomorfa per $z \in \bC - \{ \zeta \}$;
dunque
$f \circ \gamma(t) = \eps^{-1} e^{-2 \pi i t }$
e 
$\gamma'(t) = \eps 2 \pi i e^{2 \pi i t }$,
cosicch\'e (\ref{eq_IC01}) si riduce all'integrale su $I$ della funzione costante $2 \pi i$, il quale evidentemente \e uguale proprio a $2 \pi i$.
}
\end{ex}

\

\noindent \textbf{Domini regolari ed il teorema di Cauchy.} Un {\em dominio regolare} \e un aperto limitato $U \subset \bC$ il cui bordo $\partial U$ \e costituito da un numero finito di curve chiuse, semplici, e regolari a tratti. 
Osser- viamo che dall'ipotesi di limitatezza di $U$ segue che $\bC - U$ possiede una, ed una sola, componente connessa non limitata $V$; chiamiamo {\em frontiera esterna} di $U$ l'insieme di curve 
$\partial U_{est} := V \cap \ovl{U}$.
Chiamiamo invece {\em frontiera interna} l'insieme di curve 
$\partial U_{int} := \partial U - \partial U_{est}$.
Per distinguere la frontiera interna da quella esterna si prende, per convenzione, orientazione antioraria per le curve in $\partial U_{est}$, e oraria per quelle in $\partial U_{int}$.

\begin{ex}
{\it
\textbf{(1)} Consideriamo la corona circolare aperta $U \subset \bC$,
$U := \{ z \in \bC : 1 < |z| < 2  \}$.
Allora $U$ \e un dominio regolare, il cui bordo \e l'unione disgiunta $\partial U = \partial U_{int} \dot{\cup} \partial U_{est}$ delle curve
\[
\left\{
\begin{array}{ll}
\partial U_{int} : [0,1] \to \bC \ \ , \ \ \partial U_{int}(t) := e^{2 \pi i (1-t)} \ ,
\\
\partial U_{est} : [0,1] \to \bC \ \ , \ \ \partial U_{est}(t) := 2 e^{2 \pi i t } \ .
\end{array}
\right.
\]
Osservare l'orientazione inversa di $\partial U_{int}$ rispetto a $\partial U_{est}$.
\textbf{(2)} Come variazione del tema precedente consideriamo 
$\Delta(\zeta,r) := \{ z \in \bC : |z-\zeta| < r \}$
e definiamo
\[
U := \Delta(0,4) - \{ \ovl{\Delta(2,1)} \cup \ovl{\Delta(-2,1)} \} \ .
\]
Allora $\partial U_{est}$ \e la circonferenza di raggio $4$ centrata in $0$, con orientazione antioraria, mentre $\partial U_{int}$ \e l'unione delle circonferenze di raggio $1$ centrate, rispettivamente, in $2$ e $-2$, orientate entrambe in senso orario.
}
\end{ex}

Ora, per additivit\'a dell'integrale di Riemann troviamo che se 
$\partial U = \gamma_1 \dot{\cup} \ldots \dot{\cup} \gamma_n$,
dove ogni $\gamma_k$, $k = 1 , \ldots , n$, \e una curva chiusa, semplice e regolare a tratti, allora
\[
\int_{\partial U} f \ dz \ = \ \sum_k \int_{\gamma_k} f \ dz \ .
\]
Sottolineiamo il fatto che l'orientazione delle $\gamma_k$ della frontiera interna \e opposta rispetto alle rimanenti altre, il che comporta, come abbiamo visto poc'anzi, il cambio di segno dei relativi integrali qualora volessimo scriverle con l'orientazione antioraria.
\begin{thm}[Cauchy-Goursat]
\label{thm_CG}
Sia $U \subset \bC$ un dominio regolare ed $f \in \mO(\ovl U)$. Allora 
\begin{equation}
\label{eq_thm_cauchy}
\int_{\partial U} f \ dz \ = \ 0 \ .
\end{equation}
\end{thm}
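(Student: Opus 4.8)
The plan is to reduce the statement to the theorem of Stokes (\ref{eq_stokes}), exploiting the identification, already established in the text, of the complex line integral with the integral of the $1$-form $\omega_f \equiv f\,dz$. First I would observe that $\ovl U$ is a regular domain in the sense required by (\ref{eq_stokes}): its boundary $\partial U$ consists of finitely many closed, simple, piecewise-regular curves, and the orientation convention adopted in the text (antiorario su $\partial U_{est}$, orario su $\partial U_{int}$) is precisely the one under which $\int_{\partial K}\omega = \int_K d\omega$ holds. Applying (\ref{eq_stokes}) with $K = \ovl U$ and $\omega = \omega_f$ then yields
\[
\int_{\partial U} f\,dz \ = \ \int_{\partial U} \omega_f \ = \ \int_U d\omega_f \ .
\]

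The heart of the argument is the computation of $d\omega_f$. Since $dz = dx + i\,dy$ is closed, I would write $d(f\,dz) = df \wedge dz$, and then expand
\[
df \wedge dz
\ = \
\left( \frac{\partial f}{\partial x} dx + \frac{\partial f}{\partial y} dy \right) \wedge ( dx + i\,dy )
\ = \
i \left( \frac{\partial f}{\partial x} + i \frac{\partial f}{\partial y} \right) dx \wedge dy \ ,
\]
usando $dx \wedge dx = dy \wedge dy = 0$ e $dy \wedge dx = - dx \wedge dy$. At this point the Cauchy-Riemann equations enter decisively: by Lemma \ref{lem_CR} the hypothesis $f \in \mO(\ovl U)$ gives exactly $\frac{\partial f}{\partial x} + i \frac{\partial f}{\partial y} = 0$ (si veda (\ref{eq_CR})), so the bracket vanishes identically, $d\omega_f = 0$, and therefore $\int_U d\omega_f = 0$. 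Combining with the display above gives $\int_{\partial U} f\,dz = 0$, as desired.

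The step I expect to be the main obstacle is regularity, namely justifying that Stokes' theorem may legitimately be applied. The formula (\ref{eq_stokes}) presupposes that $\omega_f$ is a form of class $C^1$, so that $d\omega_f$ is defined and the computation above makes sense; this in turn requires $f$ to be continuously differentiable on $\ovl U$. The stated hypothesis is only that $f$ is holomorphic in a neighborhood of $\ovl U$, and at this stage of the development we have not yet established that olomorfia implies $C^\infty$ regularity --- indeed, the present theorem is one of the ingredients used later to prove that very implication, so I must avoid circularity. The clean resolution available within the text is that the Cauchy-Riemann characterization of Lemma \ref{lem_CR} is itself stated for $f \in C^\infty(U,\bC)$, so the ambient smoothness hypothesis is already in force; alternatively one records that the full strength of the statement, for merely complex-differentiable $f$, follows from the genuine Goursat argument (subdivision of triangles and a limiting estimate) which bypasses Green's theorem entirely. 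I would flag this point explicitly rather than let it pass silently, and then carry out the orientation bookkeeping on $\partial U_{est}$ and $\partial U_{int}$ as the only remaining routine verification.
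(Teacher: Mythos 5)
La tua dimostrazione coincide essenzialmente con quella del testo: entrambe riducono l'enunciato al teorema di Stokes (formula di Gauss-Green) osservando che le equazioni di Cauchy-Riemann implicano $d(f\,dz)=0$, ovvero che la forma $f\,dz$ \e chiusa. L'osservazione sulla regolarit\'a (Stokes richiede $\omega_f$ di classe $C^1$, ipotesi implicita nel Lemma \ref{lem_CR} che assume $f \in C^\infty$) \e corretta e rende esplicito un punto che il testo lascia sottinteso.
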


E' disponibile in letteratura una serie di dimostrazioni del teorema di Cauchy, di natura sia analitica che geometrica. La tecnica pi\'u classica \e quella di approssimare $U$ con un insieme di rettangoli sui quali \e pi\'u semplice verificare la validit\'a di (\ref{eq_thm_cauchy}), si veda \cite{Ahl}. Nelle righe seguenti mostriamo come il teorema di Cauchy sia conseguenza del {\em teorema di Stokes
{\footnote{Visto che $\bC$ ha dimensione reale $2$ il teorema di Stokes si riduce alla formula di Gauss-Green trattata in \S \ref{sec_fdif}.}}
}: se $\omega := p(x,y)dx + q(x,y)dy$ \e una $1$-forma $C^\infty(\ovl U,\bC)$, $U \subset \bR^2$ dominio regolare, allora
\begin{equation}
\label{q_stokes}
\int_U d\omega \ = \ \int_{\partial U} \omega 
\ \ , \ \
d\omega 
:= 
\left( \frac{\partial q}{\partial x} - \frac{\partial p}{\partial y}  \right)
\ dx dy
\ .
\end{equation}
Ora, l'osservazione cruciale \e che se $f \in \mO(U)$, allora applicando (\ref{eq_CR}) troviamo
\[
d(fdz) \ = \
d(fdx+ifdy) \ = \
      \frac{\partial f}{\partial y} \ dxdy \
- \ i \frac{\partial f}{\partial x} \ dxdy \ = \
0
\ .
\]
Dunque, {\em se $f \in \mO(\ovl U)$, allora la $1$-forma $fdz$ \e chiusa}. Applicando il teorema di Stokes a $\omega = fdz$ otteniamo immediatamente (\ref{eq_thm_cauchy}). 
Possiamo ora dimostrare il seguente fondamentale teorema:
\begin{thm}[Formula di Cauchy]
Sia $U \subset \bC$ un dominio regolare ed $f \in \mO(\ovl U)$. Allora
\begin{equation}
\label{eq_IC02}
f(\zeta) 
\ = \
\frac{1}{2 \pi i} \int_{\partial U} \frac{f(z)}{z-\zeta} \ dz
\ \ , \ \
\forall \zeta \in U
\ .
\end{equation}
\end{thm}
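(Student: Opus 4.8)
The plan is to deduce the Cauchy integral formula from the Cauchy--Goursat theorem (Teo.\ref{thm_CG}) by a standard ``puncturing'' argument. Fix $\zeta \in U$ and consider the auxiliary function
\[
g(z) := \frac{f(z)}{z-\zeta}
\ \ , \ \
z \in U - \{ \zeta \}
\ .
\]
Since $f$ is holomorphic on $\ovl U$ and $z \mapsto (z-\zeta)^{-1}$ is holomorphic away from $\zeta$ (as seen in the analyticity of $1/z$ established via \eqref{eq_1z}), the quotient $g$ is holomorphic on $U - \{ \zeta \}$. The trouble is precisely that $g$ has a singularity at $\zeta$, so I cannot apply Teo.\ref{thm_CG} to $g$ on $U$ directly.

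First I would remove a small disk around the singularity. Choose $\eps > 0$ small enough that $\ovl{\Delta(\zeta,\eps)} \subset U$, and set
\[
U_\eps := U - \ovl{\Delta(\zeta,\eps)}
\ .
\]
This $U_\eps$ is again a regular domain whose boundary is $\partial U$ (oriented as before, counterclockwise on the outer part) together with the circle $\partial \Delta(\zeta,\eps)$ traversed clockwise, since it now bounds $U_\eps$ from the interior. On $\ovl{U_\eps}$ the function $g$ is holomorphic, so Teo.\ref{thm_CG} applies and gives $\int_{\partial U_\eps} g \ dz = 0$, which I rewrite (accounting for orientations, as in the additivity remark preceding Teo.\ref{thm_CG}) as
\[
\int_{\partial U} \frac{f(z)}{z-\zeta} \ dz
\ = \
\int_{\gamma_\eps} \frac{f(z)}{z-\zeta} \ dz
\ \ , \ \
\gamma_\eps(t) := \zeta + \eps e^{2 \pi i t }
\ , \
t \in [0,1]
\ ,
\]
where $\gamma_\eps$ now carries the counterclockwise orientation. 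Thus the whole problem is reduced to computing the right-hand integral in the limit $\eps \to 0$.

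Next I would evaluate the circle integral. The key computational input is the basic identity \eqref{eq_IC01}, namely $\int_{\gamma_\eps} (z-\zeta)^{-1} \ dz = 2 \pi i$, which holds for every $\eps$. I write
\[
\int_{\gamma_\eps} \frac{f(z)}{z-\zeta} \ dz
\ = \
f(\zeta) \int_{\gamma_\eps} \frac{dz}{z-\zeta}
\ + \
\int_{\gamma_\eps} \frac{f(z)-f(\zeta)}{z-\zeta} \ dz
\ = \
2 \pi i \, f(\zeta)
\ + \
R_\eps
\ ,
\]
so it remains to show that the remainder $R_\eps$ tends to $0$. The main obstacle, and the one genuine estimate in the proof, is controlling $R_\eps$: since $f$ is holomorphic it is in particular continuous at $\zeta$, so $|f(z)-f(\zeta)|$ is small on $\gamma_\eps$, while $|z-\zeta| = \eps$ on that circle and the length of $\gamma_\eps$ is $2 \pi \eps$. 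A direct length-times-sup bound then gives $|R_\eps| \leq 2 \pi \eps \cdot \sup_{|z-\zeta|=\eps}|f(z)-f(\zeta)| / \eps = 2 \pi \sup_{|z-\zeta|=\eps}|f(z)-f(\zeta)| \stackrel{\eps \to 0}{\to} 0$. Since the left-hand side $\int_{\partial U} g \ dz$ does not depend on $\eps$, passing to the limit yields $\int_{\partial U} g \ dz = 2 \pi i \, f(\zeta)$, which is exactly \eqref{eq_IC02}. I expect the only subtlety to be bookkeeping of the boundary orientations when invoking Teo.\ref{thm_CG} on the punctured domain; the analytic content reduces entirely to the continuity estimate for $R_\eps$.
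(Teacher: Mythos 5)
La tua dimostrazione è corretta e segue essenzialmente la stessa strada del testo: si buca il dominio intorno a $\zeta$, si applica Cauchy--Goursat su $U - \ovl{\Delta(\zeta,\eps)}$ per ridursi al cerchietto $\gamma_\eps$, e si conclude per continuità di $f$ in $\zeta$ sfruttando l'indipendenza da $\eps$. L'unica differenza è cosmetica: tu separi $f(\zeta)\int_{\gamma_\eps}(z-\zeta)^{-1}dz$ e stimi il resto con lunghezza per estremo superiore, mentre il testo parametrizza direttamente l'integrale ottenendo $2\pi i\int_0^1 f(\zeta+\eps e^{2\pi i t})\,dt$ e passa al limite $\eps\to 0$; entrambe le varianti poggiano sullo stesso fatto di continuità.
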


\begin{proof}[Dimostrazione]
La funzione $\{ z \mapsto (z-\zeta)^{-1} \}$ \e analitica in $U - \{ \zeta \}$ (si veda (\ref{eq_1z})), e quindi olomorfa (Lemma \ref{lem_SPA2}). Per cui,
\[
z \mapsto
\frac{f(z)}{z-\zeta}
\ \ , \ \
z \in U - \{ \zeta \}
\ ,
\]
\'e una funzione olomorfa. Sia ora $\Delta = \Delta (\zeta,\eps) \subset U$, cosicch\'e $f(z)(z-\zeta)^{-1}$ \e olomorfa in $U-\ovl \Delta$. Definendo $\gamma$ come nell'Esempio \ref{ex_int.curv}(2) per il teorema di Cauchy si trova, tenendo conto delle orientazioni
{\footnote{In termini espliciti abbiamo $\partial (U-\ovl \Delta) = \partial U \dot{\cup} \ovl \gamma$. Per cui, visto che l'integrale cambia segno passando da $\ovl \gamma$ a $\gamma$, per ogni $g$ olomorfa su $U-\ovl \Delta$ si trova 
$\int_{\partial(U-\ovl \Delta)} g dz = 
 \int_{\partial U} g dz - \int_\gamma g dz \stackrel{Teo.\ref{thm_CG}}{=} 
 0$.}},
\begin{equation}
\label{eq_IC03}
0
\ \stackrel{Teo.\ref{thm_CG}}{=} \
\int_{\partial (U-\ovl \Delta)} \frac{f(z)}{z-\zeta} \ dz
\ \Leftrightarrow \
\int_{\partial U} \frac{f(z)}{z-\zeta} \ dz
\ = \
\int_\gamma \frac{f(z)}{z-\zeta} \ dz
\ .
\end{equation}
Per cui, per dimostrare (\ref{eq_IC02}) \e sufficiente valutare l'integrale su $\gamma$ nell'uguaglianza precedente,
\[
\int_\gamma \frac{f(z)}{z-\zeta} \ dz
=
\int_0^1 
\frac
     { f( \zeta + \eps e^{2 \pi i t}) }
     { \eps e^{2 \pi i t} } \
\eps 2 \pi i e^{2 \pi i t} \ dt
=
2 \pi i \int_0^1 f( \zeta + \eps e^{2 \pi i t}) \ dt
\ .
\]
Ora, (\ref{eq_IC03}) implica che l'integrale precedente in realt\'a non dipende dalla scelta di $\eps > 0$, per cui passando al limite $\eps \to 0$ otteniamo (\ref{eq_IC02}).
\end{proof}

\begin{cor}
Ogni funzione olomorfa su un dominio regolare $U \subset \bC$ \e analitica. Per cui, tutte e sole le funzioni analitiche su $U$ sono le funzioni olomorfe.
\end{cor}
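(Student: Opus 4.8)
The plan is to obtain the analyticity of $f$ directly from the Cauchy formula (\ref{eq_IC02}), by turning the Cauchy kernel into a convergent power series. First I would fix an arbitrary $\zeta_0 \in U$ and set $d := \mathrm{dist}(\zeta_0,\partial U)$, which is strictly positive since $U$ is open and $\partial U$ is compact (a finite union of regular closed curves). I then choose any $r \in (0,d)$, so that $\Delta(\zeta_0,r) \subset U$, and for $\zeta \in \Delta(\zeta_0,r)$ I write $f(\zeta)$ via (\ref{eq_IC02}) and expand the kernel about $\zeta_0$:
\[
\frac{1}{z-\zeta} = \frac{1}{(z-\zeta_0) - (\zeta-\zeta_0)} = \sum_{n=0}^\infty \frac{(\zeta-\zeta_0)^n}{(z-\zeta_0)^{n+1}},
\]
which is precisely the geometric expansion already exploited in (\ref{eq_1z}).

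The key analytic point is the uniform convergence of this series as $z$ ranges over $\partial U$. For $z \in \partial U$ one has $|z-\zeta_0| \ge d > r$, so that for $\zeta \in \ovl{\Delta(\zeta_0,\rho)}$ with any $\rho < r$ the ratio satisfies $\left| \frac{\zeta-\zeta_0}{z-\zeta_0} \right| \le \rho/d =: q < 1$, \emph{uniformly} in both $z$ and $\zeta$. Since $f$ is continuous on the compact set $\partial U$ it is bounded there, say by $M$, so the $n$-th term is dominated by $(M/d)\, q^n$, and the Weierstrass M-test yields uniform convergence.

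With uniform convergence secured I would integrate term by term over $\partial U$ — legitimate because a uniformly convergent series of continuous functions may be integrated termwise over a compact, piecewise regular curve — obtaining
\[
f(\zeta) = \sum_{n=0}^\infty a_n (\zeta-\zeta_0)^n, \qquad a_n := \frac{1}{2 \pi i} \int_{\partial U} \frac{f(z)}{(z-\zeta_0)^{n+1}} \, dz.
\]
This is exactly an expansion of the form (\ref{eq_FA}) valid on a disk centered at $\zeta_0$, so $f$ is analytic at $\zeta_0$; as $\zeta_0$ was arbitrary, $f$ is analytic on $U$. The reverse implication (analytic $\Rightarrow$ olomorfa) is already Lemma \ref{lem_SPA2}, and combining the two directions gives the ``tutte e sole'' conclusion.

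The main obstacle is justifying the interchange of sum and integral: everything hinges on securing the geometric bound $q < 1$ uniformly over the \emph{whole} boundary, including the inner components $\partial U_{int}$, which is why it is essential to choose $r$ (and hence $\rho$) strictly below the distance $d$ from $\zeta_0$ to $\partial U$. Once this uniform estimate is in place, the termwise integration and the identification of the coefficients $a_n$ are routine.
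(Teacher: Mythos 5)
La tua dimostrazione è corretta e segue essenzialmente la stessa strada del testo: formula integrale di Cauchy, sviluppo geometrico del nucleo come in (\ref{eq_1z}) e integrazione termine a termine, con l'unica differenza cosmetica che integri su tutto $\partial U$ anziché sul bordo di un piccolo disco attorno a $\zeta_0$ (i coefficienti coincidono per il teorema di Cauchy). La tua giustificazione esplicita della convergenza uniforme tramite la stima $q = \rho/d < 1$ e il test di Weierstrass rende anzi più preciso il passaggio che il testo liquida citando genericamente i teoremi di passaggio al limite sotto il segno di integrale.
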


\begin{proof}[Dimostrazione]
Dopo Lemma \ref{lem_SPA2}, \e sufficiente dimostrare la prima affermazione. A tale scopo, presa $f \in \mO(\ovl U)$, applicando la formula integrale di Cauchy, i teoremi di passaggio al limite sotto il segno di integrale, e (\ref{eq_1z}), troviamo, per $\zeta \in \Delta (z_0,\eps)$ e $\gamma$ definita alla solita maniera,
\begin{equation}
\label{eq_IC04}
\begin{array}{ll}
f (\zeta) & =
\frac{1}{2 \pi i} \ \int_\gamma \frac{f(z)}{z-\zeta} \ dz 
\\ \\ & =
\frac{1}{2 \pi i} \ \int_\gamma 
                    \frac{f(z)}
                         { (z-z_0) 
                            \left( 1 - \frac{\zeta-z_0}{z-z_0} \right) }
                    \ dz  
\\ \\ & \stackrel{(\ref{eq_1z})}{=}
\frac{1}{2 \pi i} \ \int_\gamma 
                    \frac{f(z)}{z-z_0}
                    \sum_{n=0}^\infty \left( \frac{\zeta-z_0}{z-z_0} \right)^n \ dz
\\ \\ & =
\sum_{n=0}^\infty  \left[
                   \frac{1}{2 \pi i} \ 
                   \int_\gamma 
                         \frac{f(z)}{(z-z_0)^{n+1}}
                         \ dz
                   \right]
                   (\zeta-z_0)^n \ .
\end{array}
\end{equation}
\end{proof}

\

\noindent \textbf{Alcune conseguenze della formula di Cauchy.} La formula integrale di Cauchy ha una serie di implicazioni, importanti sia dal punto di vista analitico che algebrico-geometrico. L'esposizione di queste prender\'a il resto della sezione.

\begin{rem}
{\it
Da (\ref{eq_IC04}) e (\ref{eq_SPA2}) segue anche la formula
\begin{equation}
\label{eq_IC05}
f^{(n)}(\zeta)
=
\frac{n!}{2 \pi i} \ \int_\gamma \frac{f(z)}{(z-\zeta)^{n+1}} \ dz 
\ \ , \ \
\zeta \in U
\ \ , \ \
n = 0 , 1 , \ldots
\ ,
\end{equation}
la quale implica, nel caso in cui $f$ sia costante,
\begin{equation}
\label{eq_IC06}
\int_\gamma \frac{1}{(z-\zeta)^{n+1}} \ dz \ = \ 0
\ \ , \ \
\zeta \in U
\ \ , \ \
n = 1 , 2 , \ldots \ .
\end{equation}
}
\end{rem}

\begin{thm}[Liouville]
Sia $f \in \mO(\bC)$ limitata. Allora $f$ \e costante.
\end{thm}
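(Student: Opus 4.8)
The plan is to apply the Cauchy integral formula for the derivative, established just above in equation (\ref{eq_IC05}) with $n=1$, and to exploit the hypothesis that $f$ is bounded on all of $\bC$. Let $M := \sup_{z \in \bC} |f(z)| < +\infty$. First I would fix an arbitrary point $\zeta \in \bC$ and, for an arbitrary radius $R > 0$, consider the circle $\gamma_R(t) := \zeta + R e^{2\pi i t}$, $t \in [0,1]$, which is the (positively oriented) boundary of the disk $\Delta(\zeta,R)$. Since $f$ is holomorphic on all of $\bC$, it is in particular holomorphic on a neighborhood of $\ovl{\Delta(\zeta,R)}$, so the formula (\ref{eq_IC05}) applies and gives
\[
f'(\zeta) = \frac{1}{2\pi i} \int_{\gamma_R} \frac{f(z)}{(z-\zeta)^2} \ dz \ .
\]

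The key step is then to estimate the modulus of this integral. Parametrizing by $\gamma_R(t)$ one has $z - \zeta = R e^{2\pi i t}$ and $\gamma_R'(t) = 2\pi i R e^{2\pi i t}$, so $|z-\zeta| = R$ and the length element is controlled by $R$. Bounding $|f(z)| \leq M$ on the circle, a direct computation yields
\[
|f'(\zeta)| \ \leq \ \frac{1}{2\pi} \int_0^1 \frac{|f(\gamma_R(t))|}{R^2} \, |\gamma_R'(t)| \ dt \ \leq \ \frac{1}{2\pi} \cdot \frac{M}{R^2} \cdot 2\pi R \ = \ \frac{M}{R} \ .
\]

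Now I would let $R \to +\infty$. Since the left-hand side $|f'(\zeta)|$ does not depend on $R$, while the right-hand side $M/R$ tends to $0$, we conclude $f'(\zeta) = 0$. As $\zeta \in \bC$ was arbitrary, $f' \equiv 0$ on $\bC$. To finish I would invoke the fact that a holomorphic function with identically vanishing derivative on the connected set $\bC$ is constant; this follows because the vanishing of $f'$ forces all partial derivatives of the real and imaginary parts to vanish (via the Cauchy-Riemann relations of Lemma \ref{lem_CR}), so $u$ and $v$ are constant on the connected domain, whence $f$ is constant.

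I do not expect any serious obstacle here: the argument is essentially the length-times-sup estimate for contour integrals applied to the derivative formula, and the only subtlety worth stating carefully is that the bound $M/R$ holds uniformly in the choice of $\zeta$ and for every $R$, so that taking the radius to infinity is legitimate for each fixed $\zeta$. The mild conceptual point—rather than a technical difficulty—is recognizing that boundedness of $f$ over the entire plane is exactly what allows $R$ to be sent to infinity while keeping $M$ fixed; this global hypothesis is indispensable and is what distinguishes entire functions from merely locally holomorphic ones.
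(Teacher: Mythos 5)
La tua dimostrazione è corretta e segue essenzialmente la stessa strada del testo: la stima di Cauchy $|f^{(n)}(\zeta)| \leq n! M / R^n$ ottenuta dalla formula integrale (\ref{eq_IC05}), seguita dal passaggio al limite $R \to \infty$. L'unica differenza, del tutto marginale, è nel passo conclusivo: tu usi solo il caso $n=1$ e deduci la costanza da $f' \equiv 0$ tramite le equazioni di Cauchy-Riemann e la connessione di $\bC$, mentre il testo annulla tutte le derivate $f^{(n)}$, $n \geq 1$, e conclude tramite lo sviluppo in serie di Taylor.
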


\begin{proof}[Dimostrazione]
Per ipotesi esiste $M \in \bR$ tale che $\| f \|_\infty < M$. 
Usando $z-\zeta = \eps e^{2 \pi i t}$ e $dz = \eps 2 \pi i e^{2 \pi i t} dt$
otteniamo, da (\ref{eq_IC02}),
\[
|f^{(n)}(\zeta)|
=
\left| 
\frac{n!}{2 \pi i} \ \int_\gamma \frac{f(z)}{(z-\zeta)^{n+1}} \ dz 
\right|
\leq
\frac{n!}{\eps^n} \ |f(\zeta)|
\leq
\frac{n!}{\eps^n} \ M
\ .
\]
Potendo scegliere $\eps > 0$ arbitrariamente grande, troviamo $f^{n}(\zeta) = 0$
e quindi, esprimendo $f$ in serie di Taylor, otteniamo che $f$ \e costante.
\end{proof}

Il seguente risultato mostra che in effetti (\ref{eq_thm_cauchy}) {\em caratterizza} le funzioni olomorfe:
\begin{thm}[Morera]
\label{thm.morera}
Sia $f \in C(U,\bC)$ tale che, per ogni $\gamma : I \to U$ chiusa e semplice,
\begin{equation}
\label{eq_morera}
\int_\gamma f \ dz = 0 \ .
\end{equation}
Allora $f$ \e olomorfa in $U$.
\end{thm}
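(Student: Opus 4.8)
The plan is to exploit the hypothesis to build a \emph{local primitive} of $f$ and then invoke the equivalence between olomorfia and analiticit\`a established via the formula di Cauchy. Since olomorfia is a purely local property, it suffices to prove that $f$ is holomorphic on a neighbourhood of each point; I would therefore fix $\zeta_0 \in U$ and work inside a disk $\Delta := \Delta(\zeta_0,r) \subseteq U$, which is convex (so any two of its points are joined by a segment contained in $\Delta$) and is a dominio regolare in the sense of \secref{sec_intc}. The restriction of $f$ to $\Delta$ still satisfies (\ref{eq_morera}) for every closed simple curve in $\Delta$.

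First I would define $F : \Delta \to \bC$ by $F(z) := \int_{[\zeta_0,z]} f \ d\zeta$, the integral of $f$ along the oriented segment from $\zeta_0$ to $z$. To compute the increment I would look at the triangle with vertices $\zeta_0, z', z$: when these are not collinear its positively oriented boundary is a closed, simple, piecewise-regular curve, so the hypothesis (\ref{eq_morera}) yields $\int_{[\zeta_0,z]} f = \int_{[\zeta_0,z']} f + \int_{[z',z]} f$, that is
\[
F(z) - F(z') \ = \ \int_{[z',z]} f(\zeta) \ d\zeta \ .
\]
The central step is then to show $F'(z') = f(z')$. Using the elementary integral (\ref{eq_triv}), namely $\int_{[z',z]} 1 \ d\zeta = z - z'$, I would write
\[
\frac{F(z) - F(z')}{z - z'} - f(z')
\ = \
\frac{1}{z-z'} \int_{[z',z]} \bigl( f(\zeta) - f(z') \bigr) \ d\zeta \ ;
\]
by continuity of $f$, for every $\eps > 0$ there is $\delta > 0$ with $|f(\zeta) - f(z')| < \eps$ whenever $|\zeta - z'| < \delta$, and since the segment $[z',z]$ has length $|z - z'|$ the right-hand side has modulus $\leq \eps$ for $z$ close to $z'$. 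Hence the limit is $0$, so $F \in \mO(\Delta)$ with $F' = f$.

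To conclude, since $F$ is olomorfa on the dominio regolare $\Delta$, the Corollario alla formula di Cauchy shows $F$ is analitica on $\Delta$; by \lemref{lem_SPA2} the derivative $f = F'$ of an analytic function is itself analitica, hence olomorfa. As $\zeta_0 \in U$ was arbitrary, $f \in \mO(U)$. The main obstacle I expect is the degenerate case in which $\zeta_0, z', z$ are collinear: then the triangle boundary is not a \emph{simple} closed curve, so (\ref{eq_morera}) does not apply verbatim. I would dispose of this either by direct additivity of the segment integral over collinear points, or by approximating the collinear configuration with nondegenerate triangles and passing to the limit, using that $z \mapsto \int_{[\zeta_0,z]} f$ is continuous (which follows from the boundedness of $f$ on compacta of $\Delta$). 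A secondary technical point worth stating explicitly is the well-definedness/path-independence of $F$, which the triangle identity above already encodes.
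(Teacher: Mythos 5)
Your proof is correct and rests on the same core mechanism as the paper's: construct a primitive $F$ of $f$, verify $F'=f$ by the $\eps$-$\delta$ estimate built on (\ref{eq_triv}), and conclude via Lemma \ref{lem_SPA2} that $f=F'$, being the derivative of an analytic function, is itself olomorfa. Where you differ is in the setup. The paper works \emph{globally}: it fixes $z_0\in U$, defines $F(\zeta)$ as the integral along an arbitrary regular curve from $z_0$ to $\zeta$, and uses (\ref{eq_morera}) applied to the concatenation $\gamma_1*\gamma^{-1}$ to get path-independence. You work \emph{locally} on a convex disk, define $F$ along segments only, and extract the increment formula from the boundary of a triangle. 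Your version buys two things the paper quietly glosses over: the concatenation of two distinct paths from $z_0$ to $\zeta$ is closed but in general \emph{not simple}, so the hypothesis as stated does not apply to it verbatim, whereas a nondegenerate triangle boundary is genuinely simple; and the global construction tacitly requires $U$ to be (path-)connected, which the local one does not, since olomorfia is a local property. The price you pay is the degenerate collinear case, which you correctly identify and which is disposed of by additivity of the segment integral (or by approximation, using continuity of $z\mapsto\int_{[\zeta_0,z]}f$). One small remark: once you know $F\in\mO(\Delta)$ with $F'=f$, you can invoke Lemma \ref{lem_SPA2} exactly as the paper does, provided you pass through the Corollario alla formula di Cauchy to know $F$ is analitica; you do this explicitly, which is the right order of deductions in this text.
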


\begin{proof}[Dimostrazione]
Fissiamo $z_0 \in U$, e, per ogni $\zeta \in U$, consideriamo una curva semplice $\gamma \in C^{reg}(I,U)$, $\gamma (0) = z_0$, $\gamma (1) = \zeta$; definiamo quindi
\[
F(\zeta) := \int_\gamma f \ dz 
\ .
\]
Da (\ref{eq_morera}) segue che se $\gamma_1$ \e tale che $\gamma_1(0) = z_0$, $\gamma_1(1) = \zeta$, allora 
\[
\int_{\gamma_1 * \gamma^{-1}} f \ dz = 
\int_{\gamma_1} f \ dz - \int_\gamma f \ dz = 0 \ ,
\]
per cui $F(\zeta)$ non dipende dalla scelta di $\gamma$. Mostriamo che $F$ \e olomorfa e che $f = F'$, il che implica che $f$ \e olomorfa (Lemma \ref{lem_SPA2}). Scelto $\zeta \in U$ osserviamo che, per continuit\'a di $f$, per ogni $\eps > 0$ esiste $\delta > 0$ tale che $|f(\zeta)-f(\zeta')| < \eps$ per $\zeta' \in \Delta (\zeta,\delta) \subseteq U$. Denotato con $\gamma_2 : I \to U$ il segmento $\gamma_2(t) = (1-t) \zeta' + t\zeta$, $t \in [0,1]$, stimiamo
\[
\left| \frac{ F(\zeta)-F(\zeta') }{ \zeta-\zeta' } - f(\zeta)  \right|
=
\left| \frac{1}{\zeta-\zeta'} \int_{\gamma_2} [f - f(\zeta)] \ dz \right|
\leq
\frac{1}{|\zeta-\zeta'|} \ \eps |\zeta-\zeta'|
=
\eps
\ ,
\]
avendo usato (\ref{eq_triv}). Per cui $f = F'$ ed il teorema \e dimostrato.
\end{proof}

\begin{thm}
\label{thm.morera.2}
Sia $U \subset \bC$ un dominio regolare. Allora $\mO(\ovl U)$ \e uno spazio di Banach rispetto alla norma dell'estremo superiore su $\ovl U$.
\end{thm}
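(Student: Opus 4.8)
The plan is to realize $\mO(\ovl U)$ as a \emph{closed} vector subspace of the Banach space $C(\ovl U,\bC)$ and then to invoke the elementary fact that a closed subspace of a Banach space is itself complete. Since $U$ is a regular domain it is bounded, so $\ovl U$ is compact; by the complex version of Prop.\ref{prop_C0X} the space $C(\ovl U,\bC)$, equipped with $\|\cdot\|_\infty$, is a Banach space. Every $f\in\mO(\ovl U)$ is in particular holomorphic, hence continuous, on $\ovl U$, so that $\mO(\ovl U)\subseteq C(\ovl U,\bC)$ is a vector subspace. Thus it suffices to prove closedness in the topology of uniform convergence: given a Cauchy sequence $\{f_n\}\subset\mO(\ovl U)$, completeness of $C(\ovl U,\bC)$ produces a limit $f\in C(\ovl U,\bC)$ with $\|f-f_n\|_\infty\to 0$, and the whole problem reduces to showing $f\in\mO(\ovl U)$.

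To establish the holomorphy of the limit I would use the theorem of Morera (Teo.\ref{thm.morera}) combined with passage to the limit under the integral sign. Fix any closed simple curve $\gamma$ lying in the interior $U$. Each $f_n$ is holomorphic, so by the Cauchy--Goursat theorem (Teo.\ref{thm_CG}) one has $\int_\gamma f_n\,dz=0$ for every $n$. Since $\gamma$ has finite length $L$ and $\{f_n\}$ converges to $f$ uniformly on the compact image of $\gamma$, the estimate $\bigl|\int_\gamma (f-f_n)\,dz\bigr|\le L\,\|f-f_n\|_\infty$ gives $\int_\gamma f\,dz=\lim_n\int_\gamma f_n\,dz=0$. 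As $f$ is continuous and its integral over every closed curve in $U$ vanishes, Morera's theorem yields that $f$ is holomorphic on $U$; by the equivalence between analyticity and holomorphy (Lemma \ref{lem_SPA2} and the Cauchy formula) $f$ then admits local power-series expansions throughout $U$, and the uniform control of its derivatives through (\ref{eq_IC05}) confirms the interior regularity.

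The main obstacle I anticipate is \emph{not} the interior holomorphy, which Morera settles cleanly, but rather reconciling the conclusion with the definition of $\mO(\ovl U)$ as the functions holomorphic on a full neighborhood of $\ovl U$: the limiting argument certifies holomorphy only on the open set $U$, whereas membership in $\mO(\ovl U)$ nominally asks for holomorphy slightly past $\partial U$. I would resolve this by reading $\mO(\ovl U)$ through the class naturally produced by the construction, namely the functions continuous on $\ovl U$ and holomorphic on $U$ (the uniform closure of the germs restricted to $\ovl U$); this is exactly the space returned by the argument above and is the only one for which the completeness statement can hold verbatim. The remaining work is genuinely routine: one must check that the vanishing of $\int_\gamma f\,dz$ over \emph{simple} closed curves is enough to trigger Morera — handling self-intersecting contours by decomposing them into finitely many simple loops — and that uniform convergence persists along the curves employed, both of which follow directly from the results already at our disposal.
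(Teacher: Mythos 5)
Your argument is essentially the paper's own proof: a uniform limit of functions in $\mO(\ovl U)$ is continuous, the Cauchy--Goursat theorem gives $\int_\gamma f_n\,dz=0$ for each $n$, uniform convergence lets you pass to the limit in the contour integral, and Morera's theorem (Teo.\ref{thm.morera}) then yields holomorphy of the limit. Your additional observation --- that this only certifies holomorphy on $U$ rather than on a neighborhood of $\ovl U$, so that the statement should really be read for the space of functions continuous on $\ovl U$ and holomorphic on $U$ --- is a genuine subtlety that the paper's proof silently glosses over, and your resolution of it is the correct one.
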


\begin{proof}[Dimostrazione]
Sia $\{ f_n \} \subset \mO(\ovl U)$ una successione uniformemente convergente ad $f$. Allora $f$ \e continua. Inoltre, per convergenza uniforme, per ogni curva chiusa e semplice $\gamma$ troviamo
\[
\int_\gamma f \ dz 
\ = \
\lim_n \int_\gamma f_n \ dz 
\ = \ 
0
\]
Dunque, applicando il teorema di Morera, $f$ \e analitica.
\end{proof}

\begin{thm}[Principio di continuazione analitica]
Sia $U \subset \bC$ un dominio regolare connesso ed $f \in \mO(\ovl U)$ non nulla. Allora l'insieme degli zeri di $f$ in $U$ non ha punti di accumulazione.
\end{thm}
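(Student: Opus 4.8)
The plan is to argue by contradiction, combining the analyticity of $f$ with the connectedness of $U$. Suppose the zero set $Z(f) := \{ z \in U : f(z) = 0 \}$ admits an accumulation point $\zeta_0 \in U$. Since $\zeta_0$ is a limit of zeros and $f$ is continuous, we get $f(\zeta_0) = 0$. Because $U$ is a regular domain, by the corollary that every holomorphic function on such a domain is analytic, $f$ agrees on some disk $\Delta(\zeta_0,r) \subseteq U$ with its power series $f(z) = \sum_{n \geq 0} a_n (z-\zeta_0)^n$ as in (\ref{eq_FA}), with coefficients $a_n = f^{(n)}(\zeta_0)/n!$ by (\ref{eq_SPA2}). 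The first goal is to show that $f$ vanishes identically on this disk.

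To this end I would reason on the order of vanishing at $\zeta_0$. If not every coefficient were zero, let $m$ be the least index with $a_m \neq 0$ and factor $f(z) = (z-\zeta_0)^m g(z)$, where $g(z) := \sum_{k \geq 0} a_{m+k}(z-\zeta_0)^k$ is again analytic on $\Delta(\zeta_0,r)$ and satisfies $g(\zeta_0) = a_m \neq 0$. By continuity $g$ stays nonzero on a smaller disk $\Delta(\zeta_0,\rho)$, whence $f(z) = (z-\zeta_0)^m g(z) \neq 0$ for every $z \in \Delta(\zeta_0,\rho)$ with $z \neq \zeta_0$. This contradicts the assumption that $\zeta_0$ is an accumulation point of $Z(f)$. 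Hence all $a_n$ vanish and $f \equiv 0$ on $\Delta(\zeta_0,r)$.

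The final step is to propagate this local vanishing to all of $U$ via connectedness. I would set $A := \{ z \in U : f^{(n)}(z) = 0, \ \forall n \geq 0 \}$. Each derivative $f^{(n)}$ is itself analytic, hence continuous by Lemma \ref{lem_SPA2}, so $A$ is closed as an intersection of closed sets. It is also open: if $z_0 \in A$, all Taylor coefficients of $f$ at $z_0$ vanish, so by the argument just given $f \equiv 0$ on a disk about $z_0$, and therefore all derivatives vanish on that disk, placing it inside $A$. By the previous step $\zeta_0 \in A$, so $A \neq \emptyset$; since $U$ is connected, $A = U$, and in particular $f \equiv 0$ on $U$, contradicting the hypothesis that $f$ is not null. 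Therefore $Z(f)$ admits no accumulation point in $U$.

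The purely algebraic factorization and the "punctured disk'' estimate are routine once the power series is available, so the first two steps should present no difficulty. The delicate point, and the step I expect to require the most care, is the open-and-closed argument: one must ensure that the condition defining $A$ is genuinely both open and closed, which is precisely why I phrase $A$ through the simultaneous vanishing of \emph{all} derivatives (so that closedness comes for free from continuity of the $f^{(n)}$) rather than through the a priori merely open condition that $f$ vanish on a neighborhood. The only further analytic input is that local identical vanishing forces every derivative to vanish there, which is immediate from (\ref{eq_SPA2}).
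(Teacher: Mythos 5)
Your proof is correct. The core of your argument -- showing that an accumulation point of zeros forces every Taylor coefficient at that point to vanish -- is the same as the paper's, though you reach it by a slightly different local computation: you factor $f(z) = (z-\zeta_0)^m g(z)$ with $g(\zeta_0) = a_m \neq 0$ and invoke continuity of $g$ to get a punctured disk on which $f$ is nonzero, whereas the paper evaluates $0 = \lim_n f(\zeta_n)/(\zeta-\zeta_n)^j = a_j$ directly along the sequence of zeros. Both are standard and equally rigorous.

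The genuine difference is in the final step. The paper, after concluding that $f$ vanishes on a disk around the accumulation point, simply writes ``Per cui, $f=0$'' and stops; the passage from local vanishing to vanishing on all of $U$ is left entirely implicit, even though the connectedness hypothesis is there precisely for this purpose. You carry this step out explicitly with the set $A := \{ z \in U : f^{(n)}(z) = 0 \ \forall n \}$, which is closed because each $f^{(n)}$ is continuous (Lemma \ref{lem_SPA2}) and open because the vanishing of all Taylor coefficients at a point kills $f$ on a whole disk around it. This is exactly the right way to phrase the clopen argument (defining $A$ via the a priori merely open condition ``$f \equiv 0$ near $z$'' would make closedness awkward), and your write-up is in this respect more complete than the one in the paper.
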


\begin{proof}[Dimostrazione]
Supponiamo che esista $\zeta \in U$ ed una successione $\{ \zeta_n \}$ (con $\zeta_n \neq \zeta$, $n \in \bN$) tali che $\zeta = \lim_n \zeta_n$, $f(\zeta) = f(\zeta_n) = 0$, $n \in \bN$. Allora, esiste un disco $\Delta = \Delta (\zeta,\eps)$ contenente ogni $\zeta_n$ per $n$ maggiore o uguale di un opportuno $k$, e possiamo scrivere $f(z) = \sum_k a_k (z-\zeta)^k$, $z \in \Delta$. Se $j$ \e il primo indice tale che $a_j \neq 0$, allora
\[
0 \ = \ 
\lim_n \frac{ f(\zeta_n) }{ (\zeta - \zeta_n)^j }  \ = \ 
\sum_{k=j}^\infty a_k \lim_n (\zeta-\zeta_n)^{k-j} 
\ = \
a_j 
\ .
\]
Per cui, $f = 0$.
\end{proof}

Il teorema precedente consente di estendere ai complessi le funzioni reali classiche, considerando gli sviluppi in serie
\[
\sin z := \sum_{n=0}^\infty 
          (-1)^n
          \frac{z^{2n+1}}{(2n+1)!} 
\ \ , \ \ 
z \in \bC 
\ ,
\]
\[
\cos z := \sum_{n=0}^\infty 
          (-1)^n
          \frac{z^{2n}}{(2n)!}     
\ \ , \ \ 
z \in \bC 
\ ,
\]
\[
e^z := \sum_{n=0}^\infty 
       \frac{z^{n}}{n!}     
\ \ , \ \ 
z \in \bC 
\ ,
\]
\[
\log z := \sum_{n=1}^\infty 
          (-1)^{n-1}
          \frac{z^{2n}}{(2n)!}     
\ \ , \ \ 
|z-1| < 1 
\ ,
\]
ed osservando che per $z \in \bR$ otteniamo esattamente le usuali funzioni reali $\sin$, $\cos$, $\exp$, $\log$. Il principio di continuazione analitica assicura che le funzioni sopra definite sono le {\em uniche} funzioni {\em analitiche} che estendono quelle reali date: ad esempio, se $f \in \mO(\bC)$ e $f(t) = \sin t$, $\forall t \in \bR$, allora $f(z) - \sin z$ ha insieme degli zeri contenente $\bR$ e quindi avente punti di accumulazione, per cui deve essere $f(z) - \sin z \equiv 0$. 
Enfatizziamo il fatto che l'unicit\'a dell'estensione sussiste soltanto se ci restringiamo a considerare funzioni analitiche, infatti possiamo trovare sempre un'infinit\'a di estensioni a $\bC$ continue o $C^\infty$.

\begin{ex}[Il catino di Cauchy]
{\it
Consideriamo la funzione reale
\[
f(x)
:=
\left\{
\begin{array}{ll}
e^{-1/x} \ \ , \ \ x>0
\\
0 \ \ , \ \ x \leq 0 \ .
\end{array}
\right.
\]
Un semplice studio mostra che $f \in C_0^\infty(\bR)$. L'insieme degli zeri di $f$ \e dato da $\{ x \leq 0 \}$ ed ha chiaramente punti di accumulazione, per cui non esiste un'estensione analitica di $f$ definita su un aperto $U$ che contenga $\bR$.
}
\end{ex}

\begin{lem}
Sia $U \subset \bC$ un dominio regolare e connesso, ed $f \in \mO(\ovl U)$ tale che $|f|$ sia costante in $U$. Allora $f$ \e costante.
\end{lem}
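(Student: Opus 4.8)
The plan is to split into two cases according to the constant value of $|f|$. If $|f| \equiv 0$ on $U$ then $f \equiv 0$ and there is nothing to prove, so the substance of the argument lies in the case $|f| \equiv c$ with $c>0$, which I would attack directly through the Cauchy--Riemann equations (Lemma~\ref{lem_CR}).

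First I would write $f = u + iv$ with $u,v$ real-valued. Since $f$ is olomorfa it is analitica (by the corollary to the formula di Cauchy) and hence $C^\infty$ on $U$, so all the partial derivatives below exist and are continuous. The hypothesis reads $u^2 + v^2 = c^2$ on $U$. Differentiating this identity with respect to $x$ and to $y$ gives $u u_x + v v_x = 0$ and $u u_y + v v_y = 0$. Substituting the Cauchy--Riemann relations $u_y = -v_x$ and $v_y = u_x$ into the second equation turns it into $v u_x - u v_x = 0$. Thus $(u_x,v_x)$ solves the homogeneous linear system with coefficient matrix $\left(\begin{smallmatrix} u & v \\ v & -u \end{smallmatrix}\right)$, whose determinant equals $-(u^2+v^2) = -c^2$.

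The key step is the observation that this determinant is nonzero precisely because $c>0$; hence the only solution of the system is $u_x = v_x = 0$ at every point of $U$, and the Cauchy--Riemann equations then force $u_y = -v_x = 0$ and $v_y = u_x = 0$ as well. Therefore $f' = u_x + i v_x \equiv 0$ on $U$, and since $U$ is connected I would conclude that $u$ and $v$ are constant, that is, $f$ is constant.

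The only genuine obstacle is isolating the degenerate case cleanly: the determinant argument collapses exactly when $c=0$, which is why the splitting into the two cases at the outset is essential, and the connectedness of $U$ is what allows one to pass from a vanishing derivative to an actually constant function. A slicker alternative I might present instead, again valid only for $c>0$, is to remark that $f$ never vanishes, so $1/f \in \mO(U)$ and consequently $\ovl f = c^2/f$ is itself olomorfa; then $f+\ovl f = 2u$ and $f-\ovl f = 2iv$ are both olomorfe while being respectively real- and imaginary-valued, which via Lemma~\ref{lem_CR} forces $u$ and $v$ to be constant on the connected set $U$.
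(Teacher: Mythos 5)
Your proof is correct and is essentially the paper's argument written out in real coordinates: the paper computes $\frac{\partial}{\partial z}(f\ovl f) = f'\ovl f = 0$ with the Wirtinger operator, which unpacks exactly to your $2\times 2$ linear system in $(u_x,v_x)$ with determinant $-(u^2+v^2)$. Your explicit split into the cases $c=0$ and $c>0$ is in fact a small improvement, since the paper's closing step ($f'\ovl f=0$ implies $f'=0$) silently requires $\ovl f\neq 0$, which is precisely your nondegenerate case.
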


\begin{proof}[Dimostrazione]
Definendo gli operatori
\[
\frac{\partial}{\partial \ovl z}
\ := \
\frac{\partial}{\partial x} + i \frac{\partial}{\partial y}
\ \ , \ \
\frac{\partial}{\partial z}
\ := \
\frac{\partial}{\partial x} - i \frac{\partial}{\partial y}
\ ,
\]
troviamo che l'equazione di Cauchy-Riemann per $f$ si scrive 
\[
\frac{\partial f}{\partial \ovl z} = 
\frac{\partial \ovl f}{\partial z} = 
0
\ \Leftrightarrow \
f' = \frac{\partial f}{\partial z} 
\]
(vedi (\ref{eq_CR})). Per cui, in $U$ troviamo
\[
0 = 
\frac{\partial}{\partial z} |f|^2 =
\frac{\partial}{\partial z} f \ovl f =
f' \ovl f + f \frac{\partial \ovl f}{\partial z} =
f' \ovl f
\ ,
\]
il che implica $f'=0$.
\end{proof}

\begin{thm}[Principio del massimo]
Sia $U \subset \bC$ un dominio regolare connesso, $f \in \mO(\ovl U)$. Allora il massimo di $|f|$ in $\ovl U$ \e assunto in un punto del bordo $\partial U$.
\end{thm}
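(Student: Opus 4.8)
Il piano \e di ridurre l'enunciato alla dimostrazione che un massimo interno di $|f|$ costringe $f$ ad essere costante. Come primo passo osserverei che, essendo $\ovl U$ compatto ed $|f|$ continua, il teorema di Weierstrass assicura che $|f|$ assume il suo massimo $M := \max_{\ovl U} |f|$ in qualche punto $p \in \ovl U$. Se $p \in \partial U$ non vi \e nulla da dimostrare, per cui posso supporre che il massimo sia assunto in un punto interno $\zeta \in U$, ovvero $|f(\zeta)| = M$, e mostrare che in tal caso $f$ \e costante su tutto $\ovl U$: ci\'o implica, per continuit\'a, che $|f| \equiv M$ anche su $\partial U$, e quindi la tesi.

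Il punto centrale \e la propriet\'a di media, gi\'a implicitamente ottenuta nella dimostrazione della formula di Cauchy. Scelto $\eps_0 > 0$ tale che $\ovl{\Delta(\zeta,\eps_0)} \subset U$ e presa la curva $\gamma(t) := \zeta + \eps e^{2 \pi i t}$, $t \in [0,1]$, $0 < \eps < \eps_0$, il calcolo svolto nella dimostrazione della formula di Cauchy (sfruttando il fatto che, per il teorema di Cauchy-Goursat, l'integrale su $\gamma$ non dipende da $\eps$ ed eguaglia quello su $\partial U$) fornisce
\[
f(\zeta) = \int_0^1 f(\zeta + \eps e^{2 \pi i t}) \ dt \ .
\]
Da qui stimerei
\[
M = |f(\zeta)| \leq \int_0^1 |f(\zeta + \eps e^{2 \pi i t})| \ dt \leq \int_0^1 M \ dt = M \ ,
\]
per cui tutte le diseguaglianze sono in realt\'a uguaglianze. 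In particolare la funzione continua e non negativa $t \mapsto M - |f(\zeta + \eps e^{2 \pi i t})|$ ha integrale nullo, e quindi \e identicamente nulla; dunque $|f| \equiv M$ sulla circonferenza di raggio $\eps$. Variando $\eps$ in $(0,\eps_0)$ concludo che $|f| \equiv M$ su tutto il disco $\Delta(\zeta,\eps_0)$.

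A questo punto applicherei i risultati gi\'a dimostrati. Poich\'e $|f|$ \e costante sul dominio regolare e connesso $\Delta(\zeta,\eps_0)$, ed $f$ \e olomorfa in un intorno della sua chiusura, il Lemma precedente implica che $f$ \e costante, diciamo $f \equiv c$, su tale disco. Posto $g := f - c \in \mO(\ovl U)$, osservo che l'insieme degli zeri di $g$ contiene $\Delta(\zeta,\eps_0)$ e quindi possiede punti di accumulazione in $U$; per il Principio di continuazione analitica ci\'o \e possibile solo se $g$ \e identicamente nulla, ovvero $f \equiv c$ su tutto il connesso $U$ (e dunque su $\ovl U$ per continuit\'a). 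Ne segue $|f| \equiv |c| = M$ su $\ovl U$, e in particolare il massimo \e assunto nei punti di $\partial U$.

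L'ostacolo principale \e l'argomento di media: occorre estrarre con cura la propriet\'a $f(\zeta) = \int_0^1 f(\zeta + \eps e^{2 \pi i t}) \ dt$ dalla formula di Cauchy e dedurre dall'uguaglianza tra media e massimo che l'integrando \e costante in modulo su ogni circonferenza interna. Una volta acquisita la costanza di $|f|$ su un disco, il resto \e un collegamento quasi meccanico tra il Lemma sulla costanza del modulo ed il Principio di continuazione analitica, che propaga l'informazione locale a tutto il connesso $U$.
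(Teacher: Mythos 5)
La tua proposta \e corretta e segue essenzialmente la stessa strada della dimostrazione del testo: propriet\'a di media dedotta dalla formula di Cauchy, uguaglianza nella diseguaglianza di media che forza $|f| \equiv M$ sulle circonferenze interne e quindi sul disco, ed infine il Lemma sulla costanza del modulo combinato con il principio di continuazione analitica. L'unica differenza \e che tu eviti l'argomento per assurdo e concludi direttamente che, se il massimo \e interno, $f$ \e costante e quindi il massimo \e comunque assunto anche sul bordo: questa formulazione \e anzi logicamente pi\'u pulita, dato che la costanza di $f$ non \e di per s\'e una contraddizione.
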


\begin{proof}[Dimostrazione]
Supponiamo per assurdo che $\zeta \in U$ sia un punto di massimo per $|f|$. Allora esiste un disco $\Delta$ di centro $\zeta$ e raggio $\eps$ tale che $\ovl \Delta \subseteq U$, e grazie alla formula di Cauchy troviamo
\[
f(\zeta) = 
\int_{\partial \Delta} \frac{f(z)}{z-\zeta} \ dz =
\frac{1}{2\pi} \int_0^{2\pi} f(\zeta+\eps e^{i\theta}) \ d \theta \ ,
\]
il che implica
\begin{equation}
\label{eq_PM}
|f(\zeta)| =: M \leq 
\frac{1}{2\pi} \int_0^{2\pi} |f(\zeta+\eps e^{i\theta})| \ d\theta
\ .
\end{equation}
Supponiamo che esista $z' := \zeta + \eps e^{i \theta} \in \partial \Delta$ con $|f(z')| < M$. Allora troviamo $|f(z)| < M$ in un intorno $V \subseteq \partial \Delta$ di $z'$, e quindi (visto che in $\partial \Delta - V$ abbiamo $|f(z)| \leq M$)
\[
M > 
\frac{1}{2\pi} \int_0^{2\pi} |f(\zeta+\eps e^{i\theta})| \ d\theta
\ ,
\]
il che contraddice (\ref{eq_PM}). Dunque $|f(z)| = M$, $z \in \partial \Delta$.
Applicando questo ragionamento per ogni disco di raggio minore di $\eps$, concludiamo che $|f|_\Delta = M$. Applicando il teorema precedente, troviamo che $f$ \e costante in $\Delta$ e, per continuazione analitica, concludiamo che $f$ \e costante, il che fornisce una contraddizione.
\end{proof}

\begin{rem}{\it 
Si hanno versioni del teorema precedente per funzioni armoniche (vedi \cite[IV.2.3]{TS}), e per soluzioni del problema di Dirichlet non omogeneo (vedi \cite[VIII.5,IX.7]{Bre}). 
}
\end{rem}

\begin{cor}[Teorema fondamentale dell'algebra.]
Sia $p \in \bC[z]$ non costante. Allora $p$ ammette almeno uno zero in $\bC$.
\end{cor}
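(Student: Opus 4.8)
Il piano \e quello di ragionare per assurdo ed applicare il teorema di Liouville, appena dimostrato. Supponiamo dunque che $p$ non abbia zeri in $\bC$; allora la funzione $f(z) := 1/p(z)$ \e ben definita su tutto $\bC$ ed \e olomorfa (intera), essendo il reciproco di una funzione olomorfa mai nulla. L'idea \e mostrare che $f$ \e limitata: per il teorema di Liouville ci\'o implicherebbe che $f$ \e costante, e quindi che $p$ \e costante, in contraddizione con l'ipotesi di non costanza.

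Il punto cruciale \e dunque la stima di crescita $|p(z)| \to \infty$ per $|z| \to \infty$. Scrivendo $p(z) = a_n z^n + \ldots + a_0$ con $a_n \neq 0$ ed $n \geq 1$ (proprio perch\'e $p$ \e non costante), la diseguaglianza triangolare fornisce, per $z \neq 0$, una minorazione del tipo $|p(z)| \geq |z|^n ( |a_n| - |a_{n-1}||z|^{-1} - \ldots - |a_0||z|^{-n} )$. Poich\'e il fattore tra parentesi tende a $|a_n| > 0$ nel limite $|z| \to \infty$, concludiamo che esiste $R > 0$ tale che $|p(z)| \geq 1$, e quindi $|f(z)| \leq 1$, per ogni $|z| \geq R$.

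Rimane da controllare il comportamento di $f$ sul disco compatto $\ovl{\Delta(0,R)}$: ivi $f$ \e continua (anzi olomorfa) e quindi limitata per il teorema di Weierstrass, diciamo $|f(z)| \leq M$. Mettendo insieme le due stime otteniamo $|f(z)| \leq \max \{ M,1 \}$ per ogni $z \in \bC$, per cui $f$ \e un'applicazione intera e limitata. Applicando il teorema di Liouville concludiamo che $f$, e quindi $p$, \e costante, il che fornisce la contraddizione cercata. Il passo che richiede maggiore attenzione \e la stima di crescita di $|p|$ all'infinito, ma si tratta di una maggiorazione elementare che sfrutta in modo essenziale l'ipotesi $n \geq 1$; tutto il resto \e conseguenza immediata di risultati gi\'a stabiliti.
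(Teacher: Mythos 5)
La tua dimostrazione \e corretta, ma segue una strada genuinamente diversa da quella del testo, pur condividendone il passo tecnico centrale: entrambe partono dalla stessa stima di crescita $|p(z)| \geq c |z|^n$ per $|z|$ grande, ottenuta applicando la diseguaglianza triangolare e sfruttando $n \geq 1$. Il punto di divergenza \e la conclusione. Tu invochi il teorema di Liouville: la stima all'infinito, combinata con la limitatezza di $1/p$ sul compatto $\ovl{\Delta(0,R)}$ (Weierstrass), mostra che $1/p$ \e intera e limitata, dunque costante, e quindi $p$ \e costante. Il testo invece applica il principio del massimo al disco $\Delta(0,\rho)$: il massimo di $1/|p|$ vi \e assunto sul bordo, dove vale $1/|p(z)| \leq 1/(c\rho^n)$, e facendo tendere $\rho \to \infty$ si ottiene $1/|p| \equiv 0$, il che \e assurdo. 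La via del testo non ha bisogno di Weierstrass, perch\'e il principio del massimo fornisce direttamente una maggiorazione quantitativa che degenera, ed \e scelta perch\'e il corollario \e enunciato subito dopo quel teorema come sua applicazione; la tua via per Liouville \e quella pi\'u classica, altrettanto legittima visto che il teorema di Liouville \e gi\'a stato dimostrato poco prima nella stessa sezione, ed ha il piccolo vantaggio di richiedere solo una limitatezza qualitativa di $1/p$.
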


\begin{proof}[Dimostrazione]
Posto $p(z) = \sum_{k=0}^n a_kz^k$, consideriamo un arbitrario $\rho > 0$ ed osserviamo che, applicando ricorsivamente la diseguaglianza triangolare, per $|z| > \rho$ abbastanza grande troviamo
\[
|p(z)| \ \geq \
|z^n| \left\{ |a_n| - \sum_{k=0}^n \frac{|a_k|}{|z|^{n-k}} \right\} \ \geq \
c |z|^n
\ ,
\]
dove $c$ \e un'opportuna costante positiva. Se $p$ fosse privo di zeri avremmo che $p^{-1}$ sarebbe olomorfa, ed applicando il principio di massimo ad $U = \Delta := \Delta (0,\rho)$ troveremmo
\[
\frac{1}{|p(z)|} \leq \frac{1}{c} \frac{1}{|z|^n}
\ \Rightarrow \
M := \max_{\Delta} \frac{1}{|p(z)|} \leq \frac{1}{|c\rho^n|} 
\stackrel{\rho \to \infty}{\lra} 0
\ ,
\]
il che fornisce una contraddizione.
\end{proof}

\subsection{Funzioni meromorfe ed il teorema dei residui.}

Sia $U \subseteq \bC$ aperto ed $f \in \mO(U)$. Un punto $z_0 \in U$ si dice {\em zero di $f$ di ordine $n \in \bN$} se esiste un intorno $V \ni z$ tale che
\[
f(z) = a_n (z-z_0)^n + a_{n+1}(z-z_0)^{n+1} + \ldots
\ \ , \ \
a_k \neq 0 
\ , \
z \in V 
\ ;
\]
in termini equivalenti, esiste una funzione olomorfa $h \in \mO(V)$ tale che $h(z_0) \neq 0$ e
\[
f(z) = (z-z_0)^n h(z) \ \ , \ \ z \in V \ .
\]
Un {\em punto singolare (o singolarit\'a)} per $f$ \e un punto $z_0 \in \bC$ sul quale $f$ non \e definita; diremo che $z_0$ \e {\em isolato} se esiste un intorno $V \ni z_0$ tale che $f$ sia definita in $V - \{ z_0 \}$. Una singolarit\'a isolata $z_0$ \e un {\em polo di ordine $n \in \bN$} se esiste un intorno $V \ni z_0$ tale che 
\[
h(z) := (z-z_0)^n f (z) \ \ , \ \ z \in V - \{ z_0 \} \ ,
\]
si estende per continuit\'a ad una funzione {\em olomorfa e non nulla} in $V$. Usando lo sviluppo di Taylor di $h$, concludiamo che $f$ ammette uno {\em sviluppo di Laurent}
\begin{equation}
\label{eq_LAURENT}
f(z) = 
a_{-n}(z-z_0)^{-n} +
\ldots             +
a_{-1}(z-z_0)^{-1} +
a_0                +
a_1(z-z_0)         +
\ldots             +
a_k(z-z_0)^k       +
\ldots
\ ,
\end{equation}
$z \in V - \{ z_0 \}$. Viceversa, ogni funzione $f$ che ammetta uno sviluppo in serie del tipo precedente ha in $\zeta$ un polo di ordine $n$.

Sia ora $\zeta \in \bC$ una singolarit\'a isolata per $f$ ed $U \ni \zeta$ un {\em dominio regolare} tale che $\ovl U - \{ \zeta \}$ sia contenuto nel dominio di olomorfia di $f$. Dal teorema di Cauchy segue che 
\begin{equation}
\label{eq_RES01}
\res_\zeta f(z) \ dz 
\ := \
\int_{\partial U} f(z) \ dz
\end{equation}
non dipende dalla scelta di $U$. Chiameremo la quantit\'a definita dall'equazione precedente il {\em residuo in $\zeta$ della forma differenziale $f \ dz$}. Osserviamo che (\ref{eq_RES01}) fornisce un interpretazione del residuo come un'ostruzione per la forma $f \ dz$ ad essere chiusa. Chiaramente,
\[
f \in \mO(V) 
\ \Rightarrow \
\res_\zeta f(z) \ dz  =  0
\ , \ 
\forall \zeta \in V
\ .
\]
Sia ora $f$ del tipo 
\[
f(z) = \sum_{n=-\infty}^{+\infty} a_n (z-\zeta)^n
\ \ , \ \
z \in V
\ .
\]
dove i coefficienti $a_{-k}$ sono nulli per $k \in \bN$ abbastanza grande. Allora
\begin{equation}
\label{eq_RES02}
\res_\zeta f(z) \ dz = a_{-1} \ .
\end{equation}
Infatti, sfruttando il passaggio al limite sotto il segno di integrale, e calcolando il residuo sul bordo di un opportuno disco $\Delta$ di centro $\zeta$, otteniamo
\[
\res_\zeta f(z) \ dz
=
\frac{1}{2\pi i}
\sum_{n=-\infty}^{+\infty} a_n 
\int_{\partial \Delta} (z-\zeta)^n dz
\ ,
\]
ed i termimi con $n \geq 0$ svaniscono per olomorfia della funzione $(z-\zeta)^n$, mentre i termini con $n \leq -2$ svaniscono grazie all'uguaglianza (\ref{eq_IC06}).

\begin{defn}
Una funzione di variabile complessa $f$ di dice \textbf{meromorfa} in un aperto $U \subseteq \bC$ se esiste un insieme di punti isolati $A \subset U$ tale che 
\begin{enumerate}
\item $f \in \mO(U-A)$;
\item $f$ possiede in $A$ singolarit\'a al pi\'u polari.
\end{enumerate}
Denotiamo con $\mM(U)$ l'algebra delle funzioni meromorfe in $U$, e con $\mM(\ovl U)$ l'algebra delle funzioni meromorfe in ogni intorno di $\ovl U$ olomorfe in un intorno di $\partial U$. 
\end{defn}

\begin{ex}{\it
Ogni funzione razionale \e meromorfa.
}
\end{ex}

\begin{ex}{\it
La funzione $f(z) := (e^z - 1)^{-1}$, $z \in \bC$, \e meromorfa ed ha poli $\zeta_k := 2 \pi i k$, $k \in \bZ$. Ogni polo ha ordine $1$, avendosi
\[
\lim_{z \to \zeta_k} \frac{z}{e^z - 1} 
\ \stackrel{de \ l'Hopital}{=} \
\frac{1}{e^{\zeta_k}}
\ = \
1
\ .
\]
}
\end{ex}

\begin{thm}[Teorema dei residui]
Sia $U \subset \bC$ un dominio regolare ed $f \in \mM(\ovl U)$. Allora
\begin{equation}
\label{eq_RES03}
\frac{1}{2 \pi i} \int_{\partial U} f(z) \ dz
\ = \
\sum_{\zeta \in U} \res_\zeta f(z) \ dz
\ .
\end{equation}
\end{thm}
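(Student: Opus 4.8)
The plan is to isolate the finitely many poles of $f$ inside $\ovl U$ by small disks and then apply the Cauchy--Goursat theorem (Teo.\ref{thm_CG}) to the region obtained by deleting those disks, exactly as was done for a single pole in the proof of the Cauchy integral formula.

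First I would check that the pole set is finite. Since $U$ is a regular domain it is bounded, so $\ovl U$ is compact; by hypothesis $f \in \mM(\ovl U)$ has only isolated polar singularities in $\ovl U$ and is holomorphic in a neighborhood of $\partial U$. Hence the set $A$ of poles of $f$ lying in $\ovl U$ consists of isolated points contained in the open set $U$, and by compactness of $\ovl U$ it must be finite; write $A = \{ \zeta_1 , \ldots , \zeta_m \} \subset U$. Next, for each $j$ I would pick a closed disk $\ovl{\Delta_j} := \ovl{\Delta(\zeta_j,\eps_j)} \subset U$, taking the $\eps_j$ small enough that the disks are pairwise disjoint (possible since $A$ is finite and its points are therefore separated). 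Set $U' := U - \cup_j \ovl{\Delta_j}$; this is again a regular domain, and $f \in \mO(\ovl{U'})$ since every pole has been removed. Its boundary consists of $\partial U$ with its given orientation together with the circles $\partial \Delta_j$, which now belong to the inner boundary $\partial U'_{int}$ and are therefore oriented clockwise.

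Applying Cauchy--Goursat to $f$ on $U'$ and keeping track of orientations as in the proof of the Cauchy formula gives
\[
0 = \int_{\partial U'} f \ dz = \int_{\partial U} f \ dz - \sum_{j=1}^m \int_{\partial \Delta_j} f \ dz \ ,
\]
where the $\partial \Delta_j$ are now traversed counterclockwise. I would then identify each remaining integral as a residue: by the definition (\ref{eq_RES01}) the quantity $\res_{\zeta_j} f(z) \ dz$ does not depend on the choice of regular domain around $\zeta_j$, so $\frac{1}{2\pi i} \int_{\partial \Delta_j} f \ dz = \res_{\zeta_j} f(z) \ dz$. Dividing the displayed identity by $2\pi i$ then yields
\[
\frac{1}{2\pi i} \int_{\partial U} f \ dz = \sum_{j=1}^m \res_{\zeta_j} f(z) \ dz = \sum_{\zeta \in U} \res_\zeta f(z) \ dz \ ,
\]
the last equality holding because $\res_\zeta f(z) \ dz = 0$ at every $\zeta \in U - A$, where $f$ is holomorphic.

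The main obstacle is not analytic but topological: one must argue carefully that $A$ is finite and that $U'$ is genuinely a regular domain, so that Teo.\ref{thm_CG} applies, and the bookkeeping of the inner-boundary orientations must mirror exactly what was already done in the proof of the Cauchy integral formula. Everything analytic here reduces to the single-pole computation already carried out, so the only real work is assembling it uniformly over the finite set $A$ and verifying the orientation conventions.
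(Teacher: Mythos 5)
La tua proposta è corretta e segue essenzialmente la stessa strada della dimostrazione del testo: finitezza dell'insieme dei poli per compattezza, dischi chiusi disgiunti attorno ai poli, teorema di Cauchy--Goursat sul dominio regolare ottenuto rimuovendoli, e identificazione degli integrali sulle circonferenze con i residui tramite la definizione (\ref{eq_RES01}). Non ci sono differenze sostanziali rispetto all'argomento del testo.
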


\begin{proof}[Dimostrazione]
Poich\'e $U$ \e precompatto l'insieme $A$ dei poli di $f$ \e finito. Consideriamo per ogni $\zeta \in A$ un disco $\Delta_\zeta$ di centro $\zeta$, in maniera tale che $\ovl \Delta_\zeta \cap \ovl \Delta_\xi = \emptyset$ per $\zeta \neq \xi \in A$. Per costruzione $f$ \e olomorfa in ogni $\ovl \Delta_\zeta - \{ \zeta \}$, per cui
\[
\frac{1}{2 \pi i} 
\sum_{\zeta \in A} 
\int_{\partial \Delta_\zeta}  f(z) \ dz
\ = \
\sum_{\zeta \in A} \res_\zeta f(z) \ dz
\ .
\]
D'altro canto $f$ \e olomorfa in $V := U - \dot{\cup}_\zeta \ovl \Delta_\zeta$, per cui usando il teorema di Cauchy troviamo
\[
0 
\ = \
\frac{1}{2 \pi i} \int_{\partial V} f(z) \ dz
\ = \
\frac{1}{2 \pi i} \int_{\partial U} f(z) \ dz
-
\sum_{\zeta \in A} \res_\zeta f(z) \ dz
\ .
\]
\end{proof}

Il teorema dei residui \e di estrema utilit\'a nel calcolo di integrali complessi (ed anche reali). Infatti, il calcolo del residuo di una funzione meromorfa (ovvero, il termine destro di (\ref{eq_RES03})) \e un'operazione piuttosto semplice: posto $n$ uguale all'ordine del polo di $f$ in $\zeta$, grazie a (\ref{eq_LAURENT}) e (\ref{eq_RES02}), otteniamo immediatamente
\begin{equation}
\label{eq_RES05}
\res_\zeta f(z) \ dz 
\ = \ 
a_{-1} 
\ = \
\frac{1}{(n-1)!} \ \lim_{ z \to \zeta} 
\frac{d^{n-1}}{dz^{n-1}} \left\{ (z-\zeta)^n f(z)  \right\}
\ .
\end{equation}

\subsection{Esercizi.}

\noindent \textbf{Esercizio \ref{sec_compl}.1 (\cite{Spi}).} {\it Calcolare l'integrale
\[
\int_0^\infty \frac{dx}{1+x^6} \ .
\]
}

\noindent {\it Soluzione.} Si consideri la funzione meromorfa $f(z) := (1+z^6)^{-1}$, definita su $\bC$ privato dell'insieme $P := \{ e^{ (2k+1) \pi i / 6 } \ , \ k = 0 , \ldots , 5 \}$. Chiaramente, gli elementi di $P$ sono tutti poli semplici per $f$. Preso $R > 0$, consideriamo il dominio regolare $U$ delimitato dal segmento $\{ | {\mathrm{Re}}(z) | \leq R \}$ e dalla semicirconferenza $S = \{ z = R e^{i \theta} , \theta \in (0,\pi) \}$. Per $R$ abbastanza grande, saranno contenuti in $U$ tutti e soli i poli di $f$ contenuti nel semipiano $\{ | {\mathrm{Im}}(z) | \geq 0 \}$, che sono 
\[
\zeta_1 := e^{i \pi / 6 } \ \ , \ \ 
\zeta_2 := e^{i \pi / 2 } \ \ , \ \
\zeta_3 := e^{i \pi 5/6 } \ .
\]
Applicando (\ref{eq_RES05}) abbiamo $\res_{\zeta_k} f(z) \ dz = \lim_{z \to \zeta_k} (z-\zeta_k)f(z)$, $k = 1,2,3$, ovvero
\[
\res_{\zeta_1} f(z) \ dz 
\ = \
\lim_{z \to \zeta_1} \left( \frac{ z - e^{i \pi / 6 } }{ z^6 + 1 } \right)
\ \stackrel{de \ l'Hopital}{=} \
\lim_{z \to \zeta_1} \frac{1}{6z^5} 
\ = \
\frac{ e^{- i \pi 5/6 } }{6}
\ ,
\]
ed analogamente
\[
\res_{\zeta_2} f(z) \ dz 
\ = \
\frac{ e^{- i \pi 5/2 } }{6}
\ \ , \ \
\res_{\zeta_3} f(z) \ dz 
\ = \
\frac{ e^{- i \pi 25/6 } }{6}
\ .
\]
Per cui,
\[
\int_\gamma \frac{dz}{1+z^6} 
\ = \
\frac{2 \pi i }{6} 
\left(
e^{- i \pi 5/6 } + e^{- i \pi 5/2 } + e^{- i \pi 25/6 }
\right)
\ = \
\frac{2}{3} \ \pi 
\ ,
\]
e quindi
\[
\frac{2}{3} \ \pi
\ = \
\int_{-R}^R \frac{dx}{1+x^6} + \int_S \frac{dz}{1+z^6} \ . 
\]
Valutiamo ora, con $R$ abbastanza grande, l'integrale su $S$:
\[
\left| \int_S \frac{dz}{1+z^6} \right|
\ = \
\left| \int_0^1 \frac{ R 2 \pi i e^{2 \pi i t} }{ 1+R^6e^{2 \pi i t} } \ dt \right|
\ \leq \
\frac{ 2 \pi R }{ R^6 - 1 }
\ .
\]
Passando al limite per $R \to \infty$ concludiamo
\[
\int_\bR \frac{dx}{1+x^6} \ = \ \frac{2}{3} \ \pi
\ \ \Rightarrow \ \ 
\int_0^\infty \frac{dx}{1+x^6} \ = \ \frac{\pi}{3} 
\ .
\]

\

\noindent \textbf{Esercizio \ref{sec_compl}.2.} {\it Calcolare l'integrale
\[
I := \int_0^{2 \pi} \frac{dx}{ 3 - 2\cos x + \sin x } \ .
\]
}

\

\noindent {\it Soluzione.} Effettuando la sostituzione
\[
z = e^{ix}
\ \ , \ \
\sin x = \frac{z-z^{-1}}{2}
\ \ , \ \
\cos x = \frac{z+z^{-1}}{2}
\ \ , \ \
dz = iz \ dx
\ ,
\]
troviamo
\[
I 
\ = \
\int_S \frac{2 \ dz}{ (1-2i)z^2 + 6 i z - 1 - 2i } 
\ ,
\]
dove $S := \{ z : |z| = 1 \}$. Proseguiamo quindi applicando il teorema dei residui.

\

\noindent \textbf{Esercizio \ref{sec_compl}.3.} {\it Si calcolino gli integrali
\[
\int_{|z|=1} \frac{e^z}{z} \ dz
\ \ \ , \ \ \
\int_{|z|=2} \frac{dz}{z^2+1}
\ \ .
\]
}

\

\noindent \textbf{Esercizio \ref{sec_compl}.4 (\cite[Ex.7.51]{Roy}).} {\it Sia $U := \{ z : |z|<1  \}$ ed $\mF \subseteq \mO(U)$ una famiglia equilimitata nella norma dell'estremo superiore. Usando la formula di Cauchy, si dimostri che $\mF$ \e anche equicontinua.

\

\noindent (Suggerimento: presi $\zeta , \zeta' \in U$ ed un dominio regolare $\ovl V \subset U$ tale che $\zeta , \zeta' \in V$, si verifichi che per ogni $f \in \mF$ si ha la stima
\[
| f(\zeta) - f(\zeta') | 
\ \leq \ 
\frac{\| f \|_\infty}{2 \pi}  \int_{\partial V} \left| \frac{1}{ z - \zeta } - \frac{1}{ z - \zeta' } \right| \ dz
\ ;
\]
quindi si usi l'equilimitatezza).
}

\

\noindent \textbf{Esercizio \ref{sec_compl}.5. (L'indicatore logaritmico).} {\it Sia $f \in \mM(U)$ priva di zeri in $\partial U$. Si mostri che
\begin{equation}
\label{eq_RES04}
\frac{1}{2 \pi i} \int_{\partial U} \frac{f'(z)}{f(z)} \ dz 
\ = \
\sharp \{ {\mathrm{zeri \ di \ }} f \} - \sharp \{ {\mathrm{poli \ di \ }} f \}
\ ,
\end{equation}
dove zeri e poli di $f$ sono contati con la relativa molteplicit\'a.
}

\

\noindent {\it Soluzione.} Preso $\xi \in U$ consideriamo un disco $\Delta$ con centro $\xi$ tale che $\xi$ \e l'unico zero o polo di $f$ in $\ovl \Delta$. Per ipotesi esistono $n \in \bZ$ ed una funzione $h$ olomorfa e non nulla in $\Delta$ tale che $f$ \e della forma 
\[
f(z) = (z-\xi)^nh(z)
\ \ , \ \
z \in \Delta
\ .
\]
Se $n > 0$ allora $f$ ha uno zero in $\xi$, mentre per $n <0$ abbiamo un polo; in entrambi i casi $n$ \`e, a meno del segno, la relativa molteplicit\'a. Dunque
\[
\frac{f'(z)}{f(z)}
\ = \
n \frac{1}{z-\xi} + \frac{h'(z)}{h(z)}
\ ,
\]
dove $h'/h$ \e olomorfa in $U$. Quindi 
\[
\res_\xi \frac{f'(z)}{f(z)} \ dz
\ = \
\frac{1}{2 \pi i} \int_{\partial \Delta} \frac{f'(z)}{f(z)} \ dz
\ = \
\frac{n}{2 \pi i} \int_{\partial \Delta} \frac{1}{z-\xi} \ dz
\ = \
n
\ .
\]
Ripetendo il ragionamento per ogni $\xi \in U$ troviamo la formula desiderata.

\

\noindent \textbf{Esercizio \ref{sec_compl}.6.} {\it Si calcoli
\begin{equation}
\label{eq_exRES}
\int_\bR \frac{ e^{ax} + e^{-ax} }{ e^x + e^{-x} } \ dx 
\ \ , \ \
|a| < 1
\ .
\end{equation}

\

\noindent (Suggerimenti: conviene calcolare separatamente gli integrali complessi
\[
I := \int_\gamma \frac{ e^{az} }{ e^z + e^{-z} } \ dz 
\ \ , \ \
J := \int_\gamma \frac{ e^{-az} }{ e^z + e^{-z} } \ dz
\ ,
\]
dove la curva chiusa $\gamma$ v\'a scelta in modo astuto. Procediamo al calcolo di $I$. In primo luogo, osserviamo che i poli della funzione integranda di $I$ si trovano per
\[
e^{2z} = -1 
\ \Rightarrow \
z = (k+1/2)\pi i 
\ \ , \ \
k \in \bZ
\ ,
\]
e sono tutti semplici. Scegliamo quindi $r > 0$ e $\gamma$ come la curva il cui grafico \e il rettangolo con vertici
\[
-r \ \ , \ \ r \ \ , \ \ r+\pi i \ \ , \ \ -r + \pi i
\ .
\]
In tal modo l'unico polo interno al grafico di $\gamma$ \e $\xi := 1/2 \pi i$ ed il relativo residuo \e
\[
\res_\xi \frac{ e^{az} }{ e^z + e^{-z} } \ dz =
\lim_{z \to \xi} ( z - \xi ) \frac{ e^{az} }{ e^z + e^{-z} } =
- i/2 e^{a i \pi / 2}
\ .
\]
Questo residuo \e uguale ad $I$, e del resto
\[
I  =
\int_{-r}^r \frac{e^{ax}}{e^x+e^{-x}} \ dx +
\int_0^\pi \frac{e^{a(r+iy)}}{e^{r+iy}+e^{-r-iy}} \ dy +
\int_{-r}^r \frac{e^{a(x+\pi i)}}{e^{x+\pi i}+e^{-x+\pi i}} \ dx +
\int_\pi^0 \frac{e^{a(-r+iy)}}{e^{-r+iy}+e^{-r-iy}} \ dy
\ .
\]
Effettuando una stima al limite $r \to \infty$ otteniamo che gli integrali tra $0,\pi$ tendono a zero; i restanti due integrali si scrivono
\[
\int_{-r}^r \frac{e^{ax}}{e^x+e^{-x}} \ dx +
\int_{-r}^r \frac{e^{a(x+\pi i)}}{e^{x+\pi i}+e^{-x+\pi i}} \ dx =
K_r + e^{a \pi i} K_r
\ ,
\]
dove
\[
K_r := \int_r^r \frac{e^{ax}}{e^x+e^{-x}} \ dx 
\]
\e, passando al $\lim_{r \to \infty}$, l'integrale reale che vogliamo calcolare. Dunque
\[
I = - i/2 e^{a i \pi / 2} =  K_r + e^{a \pi i} K_r
\]
e quindi
\[
K_r = \frac{1}{2} \frac{\pi}{\cos(a \pi / 2)} \ .
\]
Il calcolo di $J$ si effettua in maniera analoga.)
}

\

\noindent \textbf{Esercizio \ref{sec_compl}.7.} {\it Sia $\{ x \} := x-[x] \in (0,1)$ la parte frazionaria di $x \in \bR$. 
Si dimostri che la funzione
\[
g(z) := \int_1^\infty \frac{ \{ x \} }{ x^{z+1} } \ dx
\ \ , \ \
{\mathrm{Re}}(z) > 0
\]
(integrale reale) \e olomorfa.

\

\noindent (Suggerimento: La funzione $x,z \mapsto \{ x \} / x^{z+1}$ \e di classe $L^1$ su $[1,\infty)$ come funzione di $x$ ed olomorfa come funzione di $z$ nel semipiano ${\mathrm{Re}}(z) > 0$, infatti $x^{-z-1} = e^{-(z+1) \log x}$. Applicando il Teorema \ref{thm_der_int} possiamo derivare rispetto a $z$ sotto il segno d'integrale, concludendo che $g$ \e olomorfa).

}

\

\noindent \textbf{Esercizio \ref{sec_compl}.8. (La zeta di Riemann).} {\it Si ponga 
$U_{ > \lambda} := \{ z \in \bC : {\mathrm{Re}}(z) > \lambda \}$, $\forall \lambda \in \bR$,
e si consideri un aperto $U$ tale che $\ovl U \subset U_{>1}$. Si prenda quindi la successione 
\[
f_n(z) \ := \ \sum_{k=1}^n \frac{1}{k^z}
\ \ , \ \
z \in U
\ .
\]
\textbf{(1)} Si mostri che ogni $f_n$ \e olomorfa in $\ovl U$;
\textbf{(2)} Si verifichi che $\{ f_n \}$ converge uniformemente ad una funzione $f$, la quale \e quindi olomorfa in $\ovl U$;
\textbf{(3)} Si mostri che $f$ \e prolungabile analiticamente ad una funzione $F_{>1} \in \mO(U_{>1})$.
\textbf{(4)} Usando l'esercizio precedente e le identit\'a
          \[
          \frac{1}{k^z} \ = \ z \int_k^\infty \frac{dx}{x^{z+1}}
          \ \ , \ \
          z \in U_{>1}
          \ , \
          k = 1,2,\ldots
          \]
          (integrali reali) si verifichi che $F_{>1}$ \e prolungabile ad una funzione meromorfa $F_{>0} \in \mM(U_{>0})$ 
          con un polo di ordine $1$ in $z = 1$
          {\footnote{In realt\'a esiste un ulteriore prolungamento $F \in \mM(\bC)$ avente come unico polo $z=1$.
                     E' questa la funzione zeta di Riemann propriamente detta (\cite{Bom,Edw}),
                     famosa per la seguente:
                     
                     \
                     
                     \noindent \textbf{Congettura (Riemann).} Tutti e soli gli zeri di $F$ contenuti nella {\em striscia critica}
                               $\{ 0 \leq {\mathrm{Re}}(z) \leq 1  :  z \neq 1 \}$
                               si trovano sulla retta $\{ {\mathrm{Re}}(z) = 1/2 \}$.
                     
                     \
                     
           \noindent Ulteriore aspetto interessante inerente la zeta di Riemann \e la {\em formula di Eulero}
                     \[
                     F_{>1}(z) \ = \ \prod_{p \ {\mathrm{primo}} } \frac{1}{1-p^{-z}} \ \ , \ \ z \in U_{>1} \ .
                     \]
          }}.
          
\

\noindent (Suggerimenti: 
(2) Posto $\alpha := \inf_{z \in \ovl U} {\mathrm{Re}}(z)$ ed osservato che $\alpha > 1$, per ogni $n > m$ si ha la stima
    \[
    \sup_U | f_n(z) - f_m(z) | \ \leq \ \sum_{k=m}^n \frac{1}{k^\alpha} \ .
    \]
(3) Si usi il principio di continuazione analitica.
(4) Si ha
    \[
    \sum_k \frac{1}{k^z} \ = \
    \sum_k z \int_k^\infty \frac{dx}{x^{z+1}} \ = \
    z \int_1^\infty \frac{ x- \{ x \} }{x^{z+1}} \ dx \ ,
    \]
    dunque
    \[
    F_{>1}(z) = 1 + \frac{1}{z-1} - z \int_1^\infty \frac{ \{ x \} }{x^{z+1}} \ dx \ ,
    \]
    e quest'ultima espressione \e ben definita come funzione meromorfa su $U_{>0}$, in particolare l'integrale \e olomorfo grazie all'esercizio precedente ed il polo \e di ordine $1$).

}

\newpage
\section{Cenni sugli Spazi di Sobolev.}
\label{sec_sobolev}

Gli spazi di Sobolev sono stati introdotti come strumento per la dimostrazione di teoremi di esistenza ed unicit\'a della soluzione di problemi differenziali con condizioni al bordo. L'idea di fondo \e quella di combinare il concetto di derivata debole con la teoria degli spazi $L^p$.

In un certo senso la teoria degli spazi di Sobolev sovverte il punto di vista del classico calcolo variazionale (\S \ref{sec_calc_var}). Se da un lato, classicamente, la minimizzazione di un funzionale veniva effettuata risolvendo un'equazione differenziale (l'equazione di Eulero-Lagrange), ora l'esistenza e l'unicit\'a della soluzione di un problema alle derivate parziali vengono dimostrate associando ad esso un problema variazionale. Il vantaggio di questo approccio risiede nel fatto che per portare a buon fine il procedimento di minimizzazione abbiamo a disposizione gli strumenti dell'analisi funzionale (in particolare, i Teoremi di Stampacchia-Lax-Milgram). 

\

In questa sezione considereremo intervalli aperti $(a,b)$ non necessariamente limitati, ovvero sono ammesse le possibilit\'a $a,b = \pm \infty$.

\subsection{Propriet\'a di base.}
Presi $a,b \in \bR$, consideriamo il problema differenziale
\begin{equation}
\label{eq_Sob1}
\left\{
\begin{array}{ll}
- u'' + u = f
\\
u(a) = u(b) = 0
\end{array}
\right.
\end{equation}
Ovviamente il problema (\ref{eq_Sob1}) non \e particolarmente interessante, visto che siamo in grado di risolverlo con metodi classici. Tuttavia ce ne serviremo per introdurre alcuni importanti concetti.

Consideriamo una soluzione $u$ di $(\ref{eq_Sob1})$. Allora per ogni $\varphi \in C^1_0(a,b)$ risulta, integrando per parti, $- \int_a^b u'' \varphi =$ $\int_a^b u' \varphi'$, per cui
\begin{equation}
\label{eq_Sob2}
\int_a^b \left( u' \varphi' + u \varphi \right) = \int_a^b f \varphi \ .
\end{equation}
Osserviamo che l'equazione precedente, soddisfatta da ogni soluzione di (\ref{eq_Sob1}), ha senso pi\'u in generamente per funzioni $u \in C^1(a,b)$. In questo modo, possiamo pensare di sostituire il problema iniziale (\ref{eq_Sob1}) con la ricerca di una funzione $u$ che soddisfi l'equazione (\ref{eq_Sob2}), con il vantaggio che lo spazio delle possibili soluzioni \e a priori molto pi\'u grande (infatti, cerchiamo funzioni $C^1$ piuttosto che $C^2$).

\begin{lem}
\label{lem_Sob1}
Siano $a,b \in \wa \bR$, $p \in [1,+\infty]$ ed $u \in L^p(a,b)$. Se esiste $v \in L^p(a,b)$ tale che
\begin{equation}
\label{eq_Sob3}
- \int_a^b v \varphi = \int_a^b u \varphi'
\ \ , \ \
\forall \varphi \in C^1_0(a,b)
\ ,
\end{equation}
allora $v$ \e unico.
\end{lem}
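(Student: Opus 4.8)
The plan is to reduce the uniqueness of $v$ to the so-called fundamental lemma of the calculus of variations (du Bois-Reymond). Suppose $v_1, v_2 \in L^p(a,b)$ both satisfy (\ref{eq_Sob3}) against the same $u$. Subtracting the two identities, the function $w := v_1 - v_2 \in L^p(a,b)$ satisfies
\[
\int_a^b w \varphi = 0 \ \ , \ \ \forall \varphi \in C^1_0(a,b) \ ,
\]
so the whole statement reduces to proving that such a $w$ vanishes quasi ovunque in $(a,b)$; indeed, $w = 0$ q.o.\ gives $v_1 = v_2$ in $L^p(a,b)$. Notice that $u$ itself plays no role beyond guaranteeing a common secondo membro.

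To show $w = 0$ q.o., I will work locally and exploit the mollifier machinery of \S\ref{sec_conv}. It suffices to prove $w = 0$ q.o.\ on every compact subinterval $[c,d] \subset (a,b)$. Fix such an interval, choose $[c',d']$ with $[c,d] \subset (c',d')$ and $[c',d'] \subset (a,b)$, and let $\tilde w \in L^1(\bR)$ be the extension by zero of $w \chi_{[c',d']}$; this is integrable since $[c',d']$ has finite measure and $w \in L^p \subseteq L^1$ there (Cor.\ref{cor_holder}). Take a sequence of mollifiers $\{ \rho_n \} \subset C_c^\infty(\bR)$ (Def.\ref{def_moll}) with ${\mathrm{supp}}(\rho_n) \subseteq \ovl{\Delta(0,1/n)}$. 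The key observation is that for $x \in [c,d]$ and $n$ large enough that $1/n < \min \{ c-c' , d'-d \}$, the function $\varphi_x(y) := \rho_n(x-y)$ has support inside $(c',d') \subset (a,b)$, hence $\varphi_x \in C_c^\infty(a,b) \subset C^1_0(a,b)$, so the hypothesis yields
\[
(\rho_n * \tilde w)(x) = \int_{\bR} \rho_n(x-y) \tilde w(y) \ dy = \int_a^b w \varphi_x = 0 \ .
\]
Thus $\rho_n * \tilde w$ vanishes identically on $[c,d]$ for every sufficiently large $n$.

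Finally, I will pass to the limit. By Prop.\ref{prop_moll} (case $p=1$) we have $\| \rho_n * \tilde w - \tilde w \|_1 \to 0$, and the Fischer-Riesz theorem (Teo.\ref{thm_RF}) extracts a subsequence with $\rho_{n_k} * \tilde w \to \tilde w$ q.o.\ in $\bR$. Since each $\rho_{n_k} * \tilde w$ is zero on $[c,d]$ for $k$ large, the limit $\tilde w = w$ is zero q.o.\ on $[c,d]$; as $[c,d]$ was an arbitrary compact subinterval, $w = 0$ q.o.\ in $(a,b)$, which proves $v_1 = v_2$ and hence the uniqueness of $v$. The main obstacle is not analytic but one of bookkeeping: I must keep the mollified test functions $\varphi_x$ supported strictly inside $(a,b)$ --- a genuine boundary issue when $a,b$ are finite --- and transfer the convolution identities on $\bR$ to the interval through the zero-extension $\tilde w$; once this localization is arranged, the convergence statements are exactly the cited ones.
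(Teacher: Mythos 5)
Your proof is correct, but it follows a genuinely different route from the paper's. The paper disposes of the lemma in two lines: setting $z := v - w$, the hypothesis says that the functional $F_z \in L^{q,*}(a,b)$, $q := \ovl p$, vanishes on $C^1_0(a,b)$, and density of the test functions in $L^q(a,b)$ together with the Riesz duality $L^{q,*} \simeq L^p$ forces $z = 0$ q.o.. You instead prove the du Bois-Reymond lemma directly: you localize to a compact $[c,d] \subset (a,b)$, observe that the translated mollifiers $\varphi_x(y) = \rho_n(x-y)$ are admissible test functions once $1/n$ is smaller than the distance of $[c,d]$ from the boundary, deduce that $\rho_n * \tilde w \equiv 0$ on $[c,d]$, and pass to the limit via Prop.\ref{prop_moll} and Fischer--Riesz. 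Your bookkeeping (the intermediate interval $[c',d']$, the zero-extension $\tilde w$, the use of Cor.\ref{cor_holder} to land in $L^1$) is all in order. What each approach buys: the paper's argument is shorter, but as written it is delicate at $p=1$, where $q = \infty$, $C^1_0(a,b)$ is \emph{not} dense in $L^\infty(a,b)$, and $L^{\infty,*}$ is strictly larger than $L^1$ (cf.\ Prop.\ref{prop_dual_Li}); your mollification argument only ever uses $w \in L^1_{loc}$ and therefore covers all $p \in [1,+\infty]$ uniformly, at the cost of being longer. It is also the argument that generalizes verbatim to several variables and to distributions.
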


\begin{proof}[Dimostrazione]
Supposto che esista $w \in L^p(a,b)$ che soddisfi (\ref{eq_Sob3}) troviamo $\int_a^b z \varphi = 0$, $\varphi \in C^1_0(a,b)$, dove $z := v-w \in$ $L^p(a,b)$. Per densit\'a di $C^1_0(a,b)$ in $L^p(a,b)$ concludiamo che il funzionale $F_z \in L^{q,*}(a,b)$, $q := \ovl{p}$, \e nullo, per cui $z = 0$ q.o. per dualit\'a di Riesz.
\end{proof}

La funzione $v$ del Lemma precedente si dice {\em la derivata debole di} $u$ e nel seguito sar\'a denotata con $u'$. Si osservi che $u'$ \e la derivata debole di $u$ anche nel senso delle distribuzioni (Esempio \ref{ex_DB}), con l'ulteriore propriet\'a di appartenere ad $L^p(a,b)$. L'insieme delle funzioni $L^p$ debolmente derivabili \e chiaramente uno spazio vettoriale; chiameremo tale spazio {\em spazio di Sobolev}, e lo denoteremo con
\[
W^{1,p} (a,b)
\ := \
\{
u \in L^p(a,b) \ | \ \exists u' \in L^p : 
- \int_a^b u' \varphi = \int_a^b u \varphi'
\ , \
\varphi \in C^1_0(a,b)
\}
\]
(osservare che il suffisso "1" suggerisce che stiamo considerando un analogo di $C^1$). Osserviamo che nella definizione precedente potremmo considerare equivalentemente $\varphi \in C_0^\infty(a,b)$ o $\varphi \in C_c^\infty(a,b)$.

\begin{rem}{\it
Sia $C_c(a,b)$ lo spazio delle funzioni continue a supporto compatto in $(a,b)$; allora \e chiaro che $C_c(a,b) \subset L^p(a,b)$, $\forall p \in [1,+\infty]$. Se 
$u \in C^1_c(a,b) := C^1(a,b) \cap C_c(a,b)$ 
allora la derivata di $u$ (nel senso classico) appartiene a $C_c(a,b)$, e chiaramente essa coincide con la derivata debole. Per cui abbiamo applicazioni canoniche
\[
C^1_c(a,b) \to W^{1,p}(a,b) \ \ , \ \ \forall p \in [1,+\infty] \ .
\]
}
\end{rem}

Tornando al nostro problema iniziale, osserviamo che affinch\'e sia ben definito l'integrale del termine sinistro di (\ref{eq_Sob2}) \e sufficiente che sia $u \in W^{1,p}(a,b)$ per un qualche $p \in [1,+\infty]$. Inoltre, siamo passati da un problema differenziale ad uno integrale.

\begin{ex}{\it 
\label{ex_Sob1}
Posto $a = -1$, $b = 1$, allora $u(x) := 1/2 ( |x|+x )$, $x \in [-1,1]$, appartiene a $W^{1,p}(a,b)$ per ogni $p \in [1,+\infty]$. La derivata debole di $u$ \e la \textbf{funzione di Heaviside}
\[
H(x) :=
\left\{
\begin{array}{ll}
0 \ \ , \ \ x \in (-1,0)
\\
1 \ \ , \ \ x \in [0,1)
\end{array}
\right.
\]
D'altra parte, $H$ non appartiene a $W^{1,p}$ per nessun $p \in [1,+\infty]$ (lasciamo la verifica di questo fatto come semplice esercizio).
}\end{ex}

Consideriamo ora la seguente norma su $W^{1,p}$:
\begin{equation}
\label{eq_nW1p}
\| u \|_{W,p}
\ := \
\| u \|_p + \| u' \|_p
\ \ , \ \
u \in W^{1,p}
\ .
\end{equation}
Osserviamo che nel caso $p = 2$ allora possiamo vedere $\| u \|_{W,2}$ come la norma associata al prodotto scalare 
\[
(u,u)_{H,1} 
\ := \
\int_a^b uv + \int_a^b u'v'
\ .
\]
Introduciamo la notazione $H^1(a,b) := W^{1,2}(a,b)$, cosicch\'e $H^1(a,b)$ \e uno spazio pre-Hilbertiano.

\begin{prop}[Completezza degli spazi di Sobolev]
\label{prop_Sob1}
Per ogni $p \in [1,+\infty]$, lo spazio di Sobolev $W^{1,p}(a,b)$ \e completo rispetto alla norma (\ref{eq_nW1p}). Inoltre, $W^{1,p}(a,b)$ \e riflessivo per $p \in (1,+\infty)$, e separabile per $p \in [1,+\infty)$.
\end{prop}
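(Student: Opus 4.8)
Il piano \`e di separare nettamente i due enunciati: la completezza si ottiene per passaggio al limite nella definizione di derivata debole, mentre riflessivit\`a e separabilit\`a si riconducono a quelle note degli spazi $L^p$ realizzando $W^{1,p}(a,b)$ come sottospazio chiuso di un prodotto $L^p \times L^p$. Per la completezza prenderei una successione di Cauchy $\{ u_n \} \subset W^{1,p}(a,b)$ rispetto a $\| \cdot \|_{W,p}$: dalla definizione (\ref{eq_nW1p}) segue subito che sia $\{ u_n \}$ che $\{ u_n' \}$ sono di Cauchy in $L^p(a,b)$, per cui il Teorema di Fischer-Riesz (Teo.\ref{thm_RF}) fornisce $u,v \in L^p(a,b)$ con $\| u_n - u \|_p \to 0$ e $\| u_n' - v \|_p \to 0$. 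Resterebbe solo da verificare che $v$ \`e la derivata debole di $u$, passando al limite nell'uguaglianza $- \int_a^b u_n' \varphi = \int_a^b u_n \varphi'$ per ogni funzione test $\varphi$ (a supporto compatto in $(a,b)$, cosicch\'e $\varphi , \varphi' \in L^{\ovl p}(a,b)$): grazie alla diseguaglianza di Holder (Prop.\ref{prop_holder}),
\[
\left| \int_a^b (u_n' - v) \varphi \right| \leq \| u_n' - v \|_p \| \varphi \|_{\ovl p} \to 0
\]
e analogamente $\int_a^b u_n \varphi' \to \int_a^b u \varphi'$, da cui $- \int_a^b v \varphi = \int_a^b u \varphi'$. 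Dunque $u \in W^{1,p}(a,b)$ con $u' = v$, ed essendo $\| u_n - u \|_{W,p} \to 0$ si conclude la completezza.

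Per la seconda parte introdurrei lo spazio di Banach $\mE := L^p(a,b) \times L^p(a,b)$, equipaggiato della norma $\| (f,g) \| := \| f \|_p + \| g \|_p$, e l'applicazione lineare
\[
T : W^{1,p}(a,b) \to \mE
\ \ , \ \
Tu := (u,u')
\ ,
\]
isometrica per costruzione. Mostrerei poi che l'immagine $T(W^{1,p}(a,b))$ \`e chiusa in $\mE$: se $(u_n,u_n') \to (f,g)$ in $\mE$, allora $u_n \to f$ ed $u_n' \to g$ in $L^p(a,b)$, e ripetendo il medesimo passaggio al limite usato per la completezza si trova $g = f'$, per cui $(f,g) = Tf$ appartiene all'immagine. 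In tal modo $T$ identifica isometricamente $W^{1,p}(a,b)$ con un sottospazio di Banach di $\mE$, e la tesi discende dalle analoghe propriet\`a di $\mE$.

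A questo punto la separabilit\`a ($p \in [1,+\infty)$) sarebbe immediata: essendo $(a,b)$ separabile, $L^p(a,b)$ \`e separabile, e quindi lo \`e il prodotto $\mE$; poich\'e ogni sottoinsieme di uno spazio metrico separabile \`e separabile, tale \`e $T(W^{1,p}(a,b))$ e dunque, via l'isometria $T$, $W^{1,p}(a,b)$. Per la riflessivit\`a ($p \in (1,+\infty)$) userei che la dualit\`a di Riesz rende $L^p(a,b)$ riflessivo, da cui la riflessivit\`a di $\mE$ (avendosi $\mE^{**} \simeq \mE$), e infine che un sottospazio chiuso $\mF$ di uno spazio riflessivo $\mE$ \`e riflessivo: la palla $\mF_{\leq 1}$, convessa e chiusa in norma, \`e debolmente chiusa (caratterizzazione dei convessi debolmente chiusi) ed \`e contenuta nella palla $\mE_{\leq 1}$, debolmente compatta per Kakutani; essa \`e quindi debolmente compatta, e grazie al teorema di Hahn-Banach (Teo.\ref{thm_HB}) la topologia $\sigma(\mF,\mF^*)$ coincide con la restrizione di $\sigma(\mE,\mE^*)$, sicch\'e applicando ancora Kakutani a $\mF$ si ottiene la sua riflessivit\`a. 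Ponendo $\mF = T(W^{1,p}(a,b))$ seguirebbe la riflessivit\`a cercata.

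L'ostacolo principale \`e, a mio avviso, di natura concettuale pi\`u che calcolativa, e risiede nel passaggio per sottospazi chiusi: giustificare che un sottospazio chiuso di uno spazio riflessivo \`e riflessivo richiede di combinare con cura il teorema di Kakutani, la caratterizzazione dei convessi debolmente chiusi e il teorema di Hahn-Banach per identificare la topologia debole indotta sul sottospazio con la sua topologia debole intrinseca. Il resto del lavoro (completezza e chiusura dell'immagine di $T$) si riduce essenzialmente al medesimo passaggio al limite nella definizione di derivata debole, reso lecito dalla diseguaglianza di Holder, mentre separabilit\`a e riflessivit\`a degli spazi $L^p$ sono disponibili come risultati gi\`a acquisiti.
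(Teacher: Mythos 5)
La tua proposta \`e corretta e segue essenzialmente la stessa strada della dimostrazione del testo: completezza via Fischer-Riesz e passaggio al limite nell'identit\'a che definisce la derivata debole, poi realizzazione isometrica di $W^{1,p}(a,b)$ come sottospazio chiuso di $L^p(a,b) \times L^p(a,b)$ per dedurne riflessivit\`a e separabilit\`a. L'unica differenza \`e che tu espliciti (correttamente) il fatto che un sottospazio chiuso di uno spazio riflessivo \`e riflessivo, che il testo d\`a per acquisito.
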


\begin{proof}[Sketch della dimostrazione]
La completezza segue osservando che se $\{ u_n \} \subset W^{1,p}(a,b)$ \e di Cauchy allora esistono $u := \lim_n u_n \in L^p$ e $u_1 := \lim_n u'_n \in L^p$; un passaggio al limite per l'uguaglianza
\[
\int_a^b u_n \varphi' = - \int_a^b u'_n \varphi
\ \ , \ \
\varphi \in C^1_c([a,b])
\ ,
\]
mostra che effettivamente $u_1$ \e la derivata debole di $u$, dunque $W^{1,p}(a,b)$ \e completo. La riflessivit\'a segue considerando l'isometria canonica
\[
W^{1,p}(a,b) \to L^p(a,b) \times L^p(a,b)
\ \ , \ \
u \mapsto (u,u')
\ ,
\]
il che permette di esibire $W^{1,p}(a,b)$ come un sottospazio chiuso dello spazio riflessivo $L^p(a,b) \times L^p(a,b)$. Poich\'e in generale un sottospazio chiuso di uno spazio riflessivo \e riflessivo concludiamo che $W^{1,p}(a,b)$ \e riflessivo. Allo stesso modo, la separabilit\'a di $W^{1,p}(a,b)$ segue dalla separabilit\'a di $L^p(a,b) \times L^p(a,b)$.
\end{proof}

\begin{rem}
{\it
Consideriamo l'applicazione canonica
\[
D : W^{1,p}(a,b) \to  L^p(a,b)
\ \ , \ \
u \mapsto u'
\ .
\]
Allora $D$ \e un operatore lineare tale che $\| Du \|_p = \| u' \|_p \leq \| u \|_{W,p}$. Per cui gli spazi di Sobolev, analogamente a quanto accade nella teoria delle distribuzioni, sono atti a rendere la derivata (debole) un'applicazione continua.
}
\end{rem}

\begin{thm}[Esistenza del rappresentante continuo]
\label{thm_Sob1}
Sia $p \in [1,+\infty]$ ed $a,b \in \bR$. Per ogni $u \in W^{1,p}(a,b)$, esiste ed \e unica $\tilde u \in C([a,b])$ tale che $u = \tilde u$ q.o. in $(a,b)$, in maniera tale che
\begin{equation}
\label{eq_Sob4}
\tilde u (x) - \tilde u (y) = \int^x_y u'(t) \ dt 
\ \ , \ \
x,y \in (a,b)
\ .
\end{equation}
\end{thm}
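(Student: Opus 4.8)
Il piano è quello di ricondurre l'enunciato alla rappresentazione di $u$ come primitiva, a meno di una costante, della sua derivata debole $u'$. Per prima cosa osservo che, essendo $(a,b)$ limitato, il Cor.\ref{cor_holder} garantisce $u' \in L^p(a,b) \subseteq L^1(a,b)$; dunque, fissato $y_0 \in (a,b)$, la funzione
\[
w(x) := \int_{y_0}^x u'(t) \ dt
\ \ , \ \
x \in [a,b]
\ ,
\]
è ben definita, ed è continua grazie al Cor.\ref{prop_ic}. L'obiettivo è mostrare che $w$ differisce da $u$ per una costante q.o., cosicché $\tilde u := w - c$, per un'opportuna $c \in \bR$, fornisca il rappresentante cercato.

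Il passo cruciale è verificare che $w$ abbia $u'$ come derivata debole, ovvero che $\int_a^b w \varphi' = - \int_a^b u' \varphi$ per ogni $\varphi \in C^1_c(a,b)$. A tale scopo scriverò $w(x) = \int_a^b u'(t) \psi(x,t) \ dt$, dove $\psi(x,t) := \chi_{\{ y_0 < t < x \}} - \chi_{\{ x < t < y_0 \}}$, e userò il teorema di Fubini, applicabile in quanto $\int \int |u'(t) \varphi'(x)| \ dt dx \leq \| u' \|_1 \| \varphi' \|_1 < \infty$, per scambiare l'ordine di integrazione. Un calcolo diretto dell'integrale interno, distinguendo i casi $t > y_0$ e $t < y_0$ e sfruttando le condizioni al bordo $\varphi(a) = \varphi(b) = 0$, mostra che $\int_a^b \psi(x,t) \varphi'(x) \ dx = - \varphi(t)$ in entrambi i casi. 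Da ciò segue $\int_a^b w \varphi' = - \int_a^b u' \varphi$, il che, per definizione di derivata debole, coincide con $\int_a^b u \varphi'$. Concludo così che $g := w - u$ soddisfa $\int_a^b g \varphi' = 0$ per ogni $\varphi \in C^1_c(a,b)$.

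L'ostacolo principale è il seguente lemma di annullamento: se $g \in L^p(a,b)$ verifica $\int_a^b g \varphi' = 0$ per ogni $\varphi \in C^1_c(a,b)$, allora $g$ è costante q.o.. Lo dimostrerò fissando $\rho \in C^1_c(a,b)$ con $\int_a^b \rho = 1$ e osservando che, data una qualsiasi $h \in C^1_c(a,b)$, la funzione
\[
\varphi_h(x) := \int_a^x \left( h(t) - \Big( \int_a^b h \Big) \rho(t) \right) dt
\]
appartiene a $C^1_c(a,b)$ (il suo supporto è compatto proprio perché l'integrando ha integrale totale nullo, cosicché $\varphi_h$ si annulla in un intorno di $a$ e di $b$) e soddisfa $\varphi_h' = h - ( \int_a^b h ) \rho$. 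Sostituendo in $\int_a^b g \varphi_h' = 0$ ottengo $\int_a^b (g - c) h = 0$ per ogni $h$, dove $c := \int_a^b g \rho$; per densità di $C^1_c(a,b)$ in $L^{\ovl p}$ e dualità di Riesz concludo $g = c$ q.o..

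Applicando il lemma a $g = w - u$ ricavo $u = w - c$ q.o.; pongo quindi $\tilde u := w - c$, che è continua su $[a,b]$, uguale ad $u$ q.o., e che soddisfa (\ref{eq_Sob4}) in quanto $\tilde u(x) - \tilde u(y) = w(x) - w(y) = \int_y^x u'$. L'unicità è immediata: se $\tilde u_1 , \tilde u_2 \in C([a,b])$ coincidono entrambe con $u$ q.o., allora $\tilde u_1 = \tilde u_2$ q.o.; essendo continue e l'insieme $\{ \tilde u_1 = \tilde u_2 \}$ denso in $[a,b]$, esse coincidono ovunque. Mi aspetto che la parte più delicata sia il lemma di annullamento, in particolare la verifica che $\varphi_h$ abbia supporto compatto, mentre lo scambio d'integrazione via Fubini è di routine.
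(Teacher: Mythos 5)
La tua dimostrazione è corretta e segue essenzialmente la stessa strada di quella del testo: primitiva $w$ della derivata debole, scambio dell'ordine d'integrazione via Fubini per ottenere $\int_a^b w \varphi' = - \int_a^b u' \varphi$, e riduzione al fatto che una $g \in L^p(a,b)$ con $\int_a^b g \varphi' = 0$ per ogni $\varphi \in C^1_c(a,b)$ è costante q.o.. La differenza è che quest'ultimo passo, che il testo dichiara ``non banale'' e rimanda a \cite{Bre}, tu lo dimostri per esteso con la costruzione standard della primitiva $\varphi_h$ di $h - (\int_a^b h)\rho$; la verifica che $\varphi_h$ abbia supporto compatto è corretta ed è effettivamente il punto chiave. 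L'unica imprecisione sta nella chiusura del lemma di annullamento: per $p=1$ si ha $\ovl p = \infty$ e $C^1_c(a,b)$ \emph{non} è denso in $L^\infty(a,b)$, per cui l'argomento di densità e dualità di Riesz non si applica alla lettera; in quel caso conviene concludere $g=c$ q.o. per mollificazione (da $\int_a^b (g-c)h = 0$ per ogni $h \in C_c^\infty(a,b)$ segue $(g-c)*\rho_n = 0$ sui compatti di $(a,b)$, e $(g-c)*\rho_n \to g-c$ in $L^1$). Si tratta peraltro della stessa svista già presente nella dimostrazione del Lemma \ref{lem_Sob1} del testo, e non inficia la correttezza complessiva dell'argomento.
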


\begin{proof}[Sketch della dimostrazione]
Fissato $x_0 \in (a,b)$, l'idea \e quella di considerare la funzione continua 
\[
u_0(x) := \int_{x_0}^x u'(t) \ dt
\ \ , \ \
x \in (a,b)
\ .
\]
Osserviamo che $u_0$ \e ben definita in quanto $u'|_{(x_0,x)} \in L^p([x_0,x]) \subset L^1([x_0,x])$ (vedi Cor.\ref{cor_holder}). Cosicch\'e, per ogni $\varphi \in C_0^1(a,b)$ troviamo
\[
\begin{array}{ll}
\int_a^b u_0 \varphi' & =
\int_a^b \int_{x_0}^x u'(t) \varphi'(x) \ dtdx = 
\\ & = 
- \int_a^{x_0} dx  \int_x^{x_0}  u'(t) \varphi'(x) dt
+ \int_{x_0}^b dx  \int_{x_0}^x  u'(t) \varphi'(x) dt
\ \stackrel{Fubini}{=} \
- \int_a^b u' \varphi \ .
\end{array}
\]
L'uguaglianza precedente implica 
\[
\int_a^b (u_0 - u) \varphi' = 0
\ \ , \ \
\varphi \in C^1_0(a,b)
\ .
\]
Da quest'ultima uguaglianza si pu\'o dedurre (in modo non banale, vedi \cite[Lemma VIII.1,Cor.IV.24]{Bre}) che $u - u_0$ coincide q.o. con una costante $c \in \bR$. Poniamo allora $\tilde u := u_0 + c$.
\end{proof}

Presi $a,b \in \bR$ diciamo che $u \in W^{1,p}(a,b)$ {\em si annulla al bordo} se ci\'o accade al suo rappresentante continuo, ed in tal caso scriveremo $u \in W^{1,p}_0(a,b)$. Nel seguito, identificheremo $u$ col suo rappresentante continuo.

\begin{rem}{\it 
\label{rem_Sob1}
Siano $a,b \in \bR$ ed $u_1 \in L^p(a,b)$ con $p \in [1,+\infty]$. Poich\'e $(a,b)$ ha misura finita abbiamo che $u_1$ \e integrabile (Cor.\ref{cor_holder}) ed $u :=$ $\int_a^x u_1(t) dt$, $x \in [a,b]$, \e assolutamente continua e quindi limitata. Ci\'o implica $u \in L^p(a,b)$ e quindi $u \in W^{1,p}(a,b)$ con derivata debole $u_1$. Dunque, su intervalli limitati le funzioni in $W^{1,p}(a,b)$ sono tutte e sole le primitive di funzioni $L^p$. In particolare,
\[
W^{1,1}(a,b) = AC([a,b]) \ \ , \ \  a,b \in \bR \ .
\]
} \end{rem}

%\begin{prop}\textbf{\cite[Prop.VIII.3,Cor.VIII.4]{Bre}.}
%Sia $u \in L^p(a,b)$, $p \in (1,+\infty]$, $q := \ovl{p}$. 
%Allora $u \in W^{1,p}(a,b)$ se e soltanto se esiste $c \in \bR^+$, $c \leq \| u \|_p$, 
%tale che
%%
%\begin{equation}
%\label{eq_Sob5}
%\left| \int_a^b u \varphi' \right| 
%\ \leq \ 
%c \| \varphi \|_q
%\ \ , \ \
%\varphi \in C_c^\infty(a,b)
%\ .
%\end{equation}
%%
%Inoltre, se $u \in L^\infty(a,b)$ allora $u \in W^{1,\infty}(a,b)$ se e soltanto se esiste 
%$c \in \bR^+$ tale che 
%%
%\[
%|u(x) - u(y)| \leq c |x-y| 
%\ \ , \ \
%{\mathrm{q.o. \ in \ }} 
%x,y \in (a,b)
%\ .
%\]
%%
%\end{prop}

\begin{prop}[Diseguaglianza di Poincar\'e]
\label{prop_SobPoi}
Siano $a,b \in \bR$, $a < b$, $p \in [1,+\infty]$. Allora esiste una costante $c = c(a,b)$ tale che
\begin{equation}
\label{eq_Poincare}
\| u \|_{W,p} 
\ \leq \ 
c \| u' \|_p 
\ \ , \ \ 
u \in W^{1,p}_0(a,b)
\ .
\end{equation}
Cosicch\'e la seminorma $n(u) :=$ $\| u' \|_p$ , $u \in W^{1,p}_0(a,b)$, \e equivalente a $\| \cdot \|_{W,p}$ su $W^{1,p}_0(a,b)$.
\end{prop}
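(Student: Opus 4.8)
Il piano \`e di ridurre la tesi alla sola stima di $\| u \|_p$ in termini di $\| u' \|_p$: poich\'e $\| u' \|_p$ compare gi\`a al secondo membro di (\ref{eq_Poincare}), baster\`a dimostrare che esiste $C = C(a,b)$ con $\| u \|_p \le C \| u' \|_p$ per ogni $u \in W^{1,p}_0(a,b)$, e poi sommare $\| u' \|_p$ ad ambo i membri. Lo strumento centrale \`e il Teorema \ref{thm_Sob1}: identificato $u$ col suo rappresentante continuo, e ricordando che $u \in W^{1,p}_0(a,b)$ significa $u(a) = 0$, la formula (\ref{eq_Sob4}) fornisce la rappresentazione integrale
\[
u(x) = \int_a^x u'(t) \ dt \ \ , \ \ x \in (a,b) \ .
\]

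Primo passo: stimare puntualmente $u$. Posto $q := \ovl{p}$ l'esponente coniugato di $p$, applicando la diseguaglianza di Holder (Prop.\ref{prop_holder}) su $(a,x)$ si trova
\[
|u(x)| \ \le \ \int_a^x |u'(t)| \ dt \ \le \ \| u' \|_p \ \| \chi_{(a,x)} \|_q \ \le \ (b-a)^{1/q} \| u' \|_p
\]
uniformemente in $x \in (a,b)$ (nel caso $p=1$ si ha $q = \infty$ e $(b-a)^{1/q}=1$, mentre per $p = \infty$ si ha $q=1$); in particolare $\| u \|_\infty \le (b-a)^{1/q} \| u' \|_p$. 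Qui \`e essenziale che $(a,b)$ sia \emph{limitato}, poich\'e ci\`o garantisce che la funzione costante $1$ appartenga a $L^q(a,b)$: \`e questo il punto in cui interviene l'ipotesi $a,b \in \bR$.

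Secondo passo: passare dalla stima in $\| \cdot \|_\infty$ a quella in $\| \cdot \|_p$. Maggiorando $|u|$ con $\| u \|_\infty$ sull'intervallo $(a,b)$ si ottiene $\| u \|_p \le (b-a)^{1/p} \| u \|_\infty$, e combinando con il passo precedente e con la relazione $1/p + 1/q = 1$,
\[
\| u \|_p \ \le \ (b-a)^{1/p} (b-a)^{1/q} \| u' \|_p \ = \ (b-a) \| u' \|_p \ .
\]
Sommando $\| u' \|_p$ segue (\ref{eq_Poincare}) con $c := 1 + (b-a)$.

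Infine l'equivalenza delle norme \`e immediata: da un lato $n(u) = \| u' \|_p \le \| u \|_{W,p}$ per definizione di $\| \cdot \|_{W,p}$, dall'altro $\| u \|_{W,p} \le c \, n(u)$ \`e esattamente (\ref{eq_Poincare}); si osservi inoltre che $n$ \`e in realt\`a una \emph{norma} su $W^{1,p}_0(a,b)$, dato che $\| u' \|_p = 0$ forza $u' = 0$ q.o. e quindi, per (\ref{eq_Sob4}) e la condizione al bordo, $u \equiv 0$. Non prevedo ostacoli sostanziali: la dimostrazione \`e interamente elementare una volta disponibili il Teorema \ref{thm_Sob1} e la diseguaglianza di Holder. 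Gli unici punti a cui prestare attenzione sono la trattazione uniforme dei casi estremi $p=1$ e $p=\infty$ tramite l'esponente coniugato $q$, e il fatto che la finitezza dell'intervallo \`e indispensabile per la validit\`a della stima.
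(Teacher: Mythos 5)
La tua dimostrazione è corretta e segue essenzialmente la stessa strada del testo: rappresentazione integrale $u(x)=\int_a^x u'$ da $u(a)=0$ e Teo.\ref{thm_Sob1}, stima di $\| u \|_\infty$ via Hölder sfruttando la limitatezza di $(a,b)$, e infine passaggio a $\| u \|_p$ con il fattore $(b-a)^{1/p}$. Le uniche differenze sono cosmetiche (Hölder applicata su $(a,x)$ anziché su tutto $(a,b)$, e l'osservazione aggiuntiva, corretta, che $n$ è in realtà una norma su $W^{1,p}_0(a,b)$).
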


\begin{proof}[Dimostrazione]
Poich\'e $u(a) = 0$, ricordando Oss.\ref{rem_Sob1} troviamo
\begin{equation}
\label{eq_Sob6}
|u(x)| = 
|u(x) - u(a)| = 
\left| \int_a^x u'(t) \ dt \right| \leq 
\| u' \|_1
\ .
\end{equation}
Per cui, $\| u \|_\infty \leq \| u' \|_1$, e quindi
\[
(b-a)^{-1/p} \| u \|_p 
\leq
\| u \|_\infty
\leq
\| u' \|_1
\ \stackrel{Cor.\ref{cor_holder}}{\leq} \
\| 1 \|_q \| u' \|_p
\ ,
\]
con $q := \ovl{p}$. In termini pi\'u espliciti, abbiamo dimostrato la disuguaglianza
\begin{equation}
\label{eq_Poincare2}
\int_a^b |u|^p
\ \leq \
(b-a)^p \int_a^b |u'|^p
\ \ , \ \
u \in W^{1,p}_0(a,b)
\ .
\end{equation}
\end{proof}

\subsection{Immersioni compatte di $W^{1,p}$.}

\

\noindent \textbf{Approssimazione in $W^{1,p}$.} Il teorema seguente costituisce un importante risultato di approssimazione; la dimostrazione si basa sui risultati di approssimazione per gli spazi $L^p$ (vedi \S \ref{sec_appLp}):
\begin{thm}\textbf{\cite[Teo.VIII.6]{Bre}}
\label{thm_c1W1p}
Sia $p \in [1,+\infty)$ e $u \in W^{1,p}(a,b)$. Allora esiste una successione $\{ u_n \} \subset C_c^\infty(\bR)$ tale che $\| u_n |_{(a,b)} - u \|_{W,p} \stackrel{n}{\to} 0$.
\end{thm}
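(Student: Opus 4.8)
The plan is to reduce everything to an approximation statement on the whole line $\bR$, where the mollifiers of Prop.\ref{prop_moll} are available, and then to recover the assertion on $(a,b)$ by restriction. Concretely, I would first construct an extension $\bar u \in W^{1,p}(\bR)$ with $\bar u|_{(a,b)} = u$; then regularize $\bar u$ by convolution to obtain smooth functions $v_n := \rho_n * \bar u$ converging to $\bar u$ in the Sobolev norm on $\bR$; then truncate each $v_n$ by a smooth cutoff to make it compactly supported without spoiling the convergence; and finally extract, by a diagonal argument, a single sequence $\{ u_n \} \subset C_c^\infty(\bR)$ with $\| u_n - \bar u \|_{W,p} \to 0$ on $\bR$. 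Restricting to $(a,b)$ and using $\bar u|_{(a,b)} = u$ then yields $\| u_n|_{(a,b)} - u \|_{W,p} \to 0$, as required.

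For the regularization step I would take a mollifier sequence $\{ \rho_n \} \subset C_c^\infty(\bR)$ (Def.\ref{def_moll}). Then $v_n = \rho_n * \bar u \in C^\infty(\bR)$ and Prop.\ref{prop_moll} gives $\| v_n - \bar u \|_p \to 0$. The decisive point is that convolution commutes with the weak derivative: writing $v_n'(x) = \int \rho_n'(x-y)\,\bar u(y)\,dy$ and recognizing $y \mapsto \rho_n(x-y)$ as an admissible test function in $C_c^\infty(\bR)$, the defining relation of the weak derivative on $\bR$ converts this into $\int \rho_n(x-y)\,\bar u'(y)\,dy = (\rho_n * \bar u')(x)$; thus $v_n' = \rho_n * \bar u'$ and a second application of Prop.\ref{prop_moll} gives $\| v_n' - \bar u' \|_p \to 0$. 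Hence $\| v_n - \bar u \|_{W,p} \to 0$. For the truncation I would fix a cutoff $\varphi_k \in C_c^\infty(\bR)$ with $0 \le \varphi_k \le 1$, $\varphi_k \equiv 1$ on $[-k,k]$, $\varphi_k \equiv 0$ off $[-k-1,k+1]$, and $\| \varphi_k' \|_\infty$ bounded uniformly in $k$. Then $\varphi_k v_n \in C_c^\infty(\bR)$, and from $(\varphi_k v_n)' = \varphi_k' v_n + \varphi_k v_n'$ together with dominated convergence applied to the $L^p$ tails of $v_n$ and $v_n'$ one checks $\| \varphi_k v_n - v_n \|_{W,p} \to 0$ as $k \to \infty$ for each fixed $n$; a diagonal choice $k = k(n)$ then finishes this part.

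The main obstacle is the extension step, since direct mollification on $(a,b)$ fails near the endpoints: the integration by parts defining $v_n'$ would produce uncontrolled boundary contributions. I would handle it by passing to the continuous representative $\tilde u$ of $u$ (Teo.\ref{thm_Sob1}) and reflecting across each finite endpoint. For a finite endpoint $a$, the reflection $x \mapsto \tilde u(2a - x)$ has weak derivative $-u'(2a-x) \in L^p$, and --- crucially --- because $\tilde u$ is continuous the reflected and original pieces match in value at $a$, so no jump (hence no Dirac mass in the weak derivative) is created; this is exactly where continuity is indispensable. Reflecting at both finite endpoints and multiplying by a fixed smooth cutoff equal to $1$ on a neighbourhood of $\ovl{(a,b)} \cap \bR$ produces $\bar u \in W^{1,p}(\bR)$ with $\bar u|_{(a,b)} = u$; on half-lines or on all of $\bR$ the corresponding infinite ends need no reflection, since $u$ already lies in $L^p$ there (cf. Oss.\ref{rem_Sob1} for the identification on bounded intervals). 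Verifying that the reflected-and-cutoff function genuinely satisfies the weak-derivative identity against every $\varphi \in C_c^\infty(\bR)$ --- splitting the test integral at the endpoints and checking that the boundary terms cancel thanks to the matching of continuous representatives --- is the one computation that demands real care.
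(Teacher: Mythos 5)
Your proposal is correct and follows exactly the route of the reference the paper cites for this theorem (the paper itself gives no proof, deferring to \cite[Teo.VIII.6]{Bre} and noting only that the argument rests on the $L^p$ approximation results of \S\ref{sec_appLp}): extension to $W^{1,p}(\bR)$ by reflection of the continuous representative, mollification using $v_n' = \rho_n * \bar u'$ and Prop.\ref{prop_moll}, smooth truncation, and a diagonal extraction. The only point deserving the care you already flag is the extension over a bounded interval, where one reflects at each finite endpoint (or first localizes with a cutoff near each endpoint) before multiplying by a compactly supported cutoff equal to $1$ on $\ovl{(a,b)}$; with that done, every step is sound.
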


\

\noindent \textbf{Diseguaglianza di Sobolev e conseguenze.} Abbiamo visto in precedenza che se $(a,b)$ \e limitato allora vale la stima (\ref{eq_Sob6}), la quale implica  
\[
\| u \|_\infty 
\ \leq \
\| 1 \|_q \| u' \|_p
\ \leq \
\| 1 \|_q \| u' \|_p + \| 1 \|_q \| u \|_p
\ = \
(b-a)^{1/q} \| u \|_{W,p}
\ \ , \ \
u \in W^{1,p}_0(a,b)
\ .
\]
Una versione della diseguaglianza precedente vale anche per intervalli non limitati e generiche $u \in W^{1,p}(a,b)$. Ci\'o ha importanti conseguenze sulla struttura degli spazi di Sobolev:

\begin{lem}\textbf{(La diseguaglianza di Sobolev, \cite[Teo.VIII.7]{Bre}).}
\label{lem_immcont}
Esiste una costante non nulla $c = c(b-a)$, $b-a \in (0,+\infty]$, tale che
\begin{equation}
\label{eq_infty_p}
\| u \|_\infty  
\ \leq \
c \| u \|_{W,p}
\ \ , \ \
u \in W^{1,p}(a,b) 
\ , \
p \in [1,+\infty]
\ .
\end{equation}
\end{lem}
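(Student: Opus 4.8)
Il piano \e di distinguere il caso $p = +\infty$, immediato, da quello $p \in [1,+\infty)$, nel quale mi ridurr\'o per densit\'a alle funzioni in $C_c^\infty(\bR)$ grazie al Teorema \ref{thm_c1W1p}. Per $p = +\infty$ basta osservare che $\| u \|_\infty \leq \| u \|_\infty + \| u' \|_\infty = \| u \|_{W,\infty}$, per cui (\ref{eq_infty_p}) vale con $c = 1$. Supponiamo quindi $p \in [1,+\infty)$ e poniamo $q := \ovl{p}$. La struttura sar\'a: prima stabilire una stima puntuale per le funzioni in $C_c^\infty(\bR)$ ristrette ad $(a,b)$, usando \emph{soltanto} le norme su $(a,b)$; poi passare al limite grazie all'approssimazione in norma $\| \cdot \|_{W,p}$.

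Il cuore della dimostrazione \e la stima puntuale, che stabilir\'o per $v \in C_c^\infty(\bR)$ introducendo la funzione $G(s) := |s|^{p-1} s$, di classe $C^1$ con $G'(s) = p|s|^{p-1}$. Allora $G \circ v$ \e $C^1$ a supporto compatto, con $(G \circ v)' = p|v|^{p-1} v'$. Fissato $x \in (a,b)$, integrer\'o $(G \circ v)'$ a partire da un punto base scelto con cura. Se $(a,b)$ \e illimitato, diciamo $b = +\infty$ (gli altri casi sono analoghi), sfrutter\'o il fatto che $v$ si annulla fuori da un compatto per integrare dall'estremo all'infinito, ottenendo $|v(x)|^p = |G(v(x))| \leq p \int_a^{+\infty} |v|^{p-1} |v'|$. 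Se invece $(a,b)$ \e limitato, sceglier\'o per il teorema della media un punto $y_0 \in (a,b)$ con $|v(y_0)|^p \leq (b-a)^{-1} \int_a^b |v|^p$, e integrando da $y_0$ ad $x$ trover\'o $|v(x)|^p \leq (b-a)^{-1} \| v \|_p^p + p \int_a^b |v|^{p-1} |v'|$. In entrambi i casi, applicando la diseguaglianza di Holder con esponenti $q,p$ (si veda Cor.\ref{cor_holder}) si ha $\int_a^b |v|^{p-1} |v'| \leq \| v \|_p^{p-1} \| v' \|_p$; passando al $\sup$ su $x$ e maggiorando con $\| v \|_{W,p}^p = ( \| v \|_p + \| v' \|_p )^p$ otterr\'o $\| v \|_\infty \leq c(b-a) \| v \|_{W,p}$, dove tutte le norme sono calcolate su $(a,b)$ e la costante $c(b-a)$ vale $p^{1/p}$ nel caso illimitato e dipende anche da $(b-a)^{-1/p}$ nel caso limitato.

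Per concludere, dato $u \in W^{1,p}(a,b)$ prender\'o (Teorema \ref{thm_c1W1p}) una successione $\{ u_n \} \subset C_c^\infty(\bR)$ con $\| u_n|_{(a,b)} - u \|_{W,p} \to 0$. Applicando la stima puntuale alle differenze $u_n - u_m \in C_c^\infty(\bR)$ trover\'o $\| (u_n - u_m)|_{(a,b)} \|_\infty \leq c \| (u_n - u_m)|_{(a,b)} \|_{W,p} \to 0$, per cui $\{ u_n|_{(a,b)} \}$ \e di Cauchy in norma $\| \cdot \|_\infty$ e converge uniformemente ad una funzione continua e limitata $w$. D'altra parte $u_n|_{(a,b)} \to u$ in $L^p(a,b)$, per cui (Teorema di Fischer-Riesz, Teo.\ref{thm_RF}) una sottosuccessione converge ad $u$ q.o.; confrontando i due limiti concluder\'o $w = u$ q.o., ovvero $w$ \e il rappresentante continuo di $u$ (nel caso limitato ci\'o \e coerente con Teo.\ref{thm_Sob1}). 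Passando al limite nella diseguaglianza $\| u_n|_{(a,b)} \|_\infty \leq c \| u_n|_{(a,b)} \|_{W,p}$, dove il primo membro tende a $\| w \|_\infty = \| u \|_\infty$ per convergenza uniforme e il secondo a $c \| u \|_{W,p}$, otterr\'o (\ref{eq_infty_p}).

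L'ostacolo principale \e proprio la costruzione della stima puntuale con le sole norme su $(a,b)$: la scelta del punto base di integrazione (il punto della media nel caso limitato, l'estremo all'infinito nel caso illimitato) \e essenziale per non introdurre la norma $W^{1,p}(\bR)$ di $v$, che il Teorema \ref{thm_c1W1p} non controlla. Un secondo punto delicato \e l'identificazione del limite uniforme $w$ con $u$, che richiede di incrociare la convergenza uniforme con quella in $L^p$, dato che su intervalli illimitati la prima non implica la seconda.
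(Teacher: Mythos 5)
La tua proposta \e corretta e completa: la stima puntuale tramite $G(s) := |s|^{p-1}s$ con Holder, la scelta accorta del punto base di integrazione nei due casi, e l'identificazione del limite uniforme con $u$ tramite la sottosuccessione q.o. convergente sono esattamente i passi dell'argomento standard. Il testo non fornisce una dimostrazione propria ma rimanda a \cite[Teo.VIII.7]{Bre}, ed \e proprio quella dimostrazione che hai ricostruito, per cui non c'\e nulla da obiettare.
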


%\begin{proof}[Sketch della dimostrazione]
%Si fa un uso combinato di operatori di prolungamento, diseguaglianze di Holder e Young, 
%ed il teorema di densit\'a Teo.\ref{thm_c1W1p}.
%\end{proof}

%\begin{cor}
%Sia $(a,b)$ non limitato, $p \in [1,+\infty)$ ed $u \in W^{1,p}(a,b)$. 
%Allora $\lim_{|x| \to \infty} u(x) = 0$.
%\end{cor}

%\begin{proof}[Dimostrazione]
%Infatti, $u$ \e approssimata in norma $\| \cdot \|_{W,p}$ da una successione 
%$\{ u_n \} \subset C_c^\infty(a,b)$ (Teo.\ref{thm_c1W1p}), e (\ref{eq_infty_p}) 
%implica $\| u - u_n \|_\infty \leq$ $c \| u - u_n \|_{W,p} \stackrel{n}{\to} 0$.
%\end{proof}

I seguenti interessanti risultati si dimostrano con l'uso di stime derivanti da (\ref{eq_infty_p}) e Teo.\ref{thm_c1W1p}:
\begin{cor}[Derivata di un prodotto]
Sia $p \in [1,+\infty]$ ed $u,v \in W^{1,p}(a,b)$. Allora $uv \in W^{1,p}(a,b)$ e
\begin{equation}
\label{eq_der_W1p}
(uv)' = u'v + uv'
\ \ , \ \
\int_y^x u'v 
\ = \
u(x) v(x) - u(y) v(y) - \int_y^x uv'
\ \ , \ \
x,y \in [a,b]
\ ,
\end{equation}
cosicch\'e $W^{1,p}(a,b)$ \e un'algebra.
\end{cor}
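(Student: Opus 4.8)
The plan is to reduce everything to the classical product rule for smooth functions via the density result of Teo.\ref{thm_c1W1p}, exploiting the fact that elements of $W^{1,p}(a,b)$ are bounded by the Sobolev inequality (Lemma \ref{lem_immcont}). I first treat the case $p \in [1,+\infty)$. Given $u,v \in W^{1,p}(a,b)$, Teo.\ref{thm_c1W1p} furnishes sequences $\{ u_n \}, \{ v_n \} \subset C_c^\infty(\bR)$ with $\| u_n|_{(a,b)} - u \|_{W,p} \to 0$ and $\| v_n|_{(a,b)} - v \|_{W,p} \to 0$. For the smooth compactly supported $u_n, v_n$ the classical Leibniz rule $(u_nv_n)' = u_n'v_n + u_nv_n'$ holds pointwise, so for every $\varphi \in C_c^1(a,b)$ one has $\int_a^b u_nv_n \varphi' = - \int_a^b (u_n'v_n + u_nv_n')\varphi$. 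The goal is to pass to the limit in this identity.

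The key observation is that products cannot be controlled by $L^p$ convergence alone; here the Sobolev inequality (\ref{eq_infty_p}) is decisive, since it gives $\| u_n - u \|_\infty \le c \| u_n - u \|_{W,p} \to 0$ and likewise for $v_n$, so in particular $\| u_n \|_\infty$ and $\| v_n \|_\infty$ stay bounded and $u, v \in L^\infty(a,b)$. Writing $u_nv_n - uv = u_n(v_n - v) + (u_n - u)v$ and estimating by Hölder-type inequalities I obtain $\| u_nv_n - uv \|_p \le \| u_n \|_\infty \| v_n - v \|_p + \| u_n - u \|_\infty \| v \|_p \to 0$; an entirely analogous splitting of $u_n'v_n + u_nv_n' - (u'v + uv')$ into $u_n'(v_n-v) + (u_n'-u')v + u_n(v_n'-v') + (u_n-u)v'$ yields convergence in $L^p$, using that $\{ \| u_n' \|_p \}$ is bounded and $\| v_n - v \|_\infty \to 0$. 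Since $u, v \in L^\infty$ and $u', v' \in L^p$, the limits $uv$ and $u'v + uv'$ both lie in $L^p(a,b)$. Testing against $\varphi \in C_c^1(a,b)$ (whose derivative lies in $L^{\ovl{p}}$ on the compact support) and passing to the limit gives $\int_a^b uv\varphi' = - \int_a^b (u'v + uv')\varphi$, which is precisely the statement that $uv \in W^{1,p}(a,b)$ with weak derivative $u'v + uv'$.

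For $p = +\infty$, where Teo.\ref{thm_c1W1p} is unavailable, I would argue locally. The defining identity for the weak derivative only involves test functions $\varphi$ with compact support in $(a,b)$, hence supported in some bounded subinterval $(\alpha,\beta)$ with $a < \alpha < \beta < b$. On such a bounded interval $W^{1,\infty}(\alpha,\beta) \subseteq W^{1,p}(\alpha,\beta)$ for any finite $p$ (the measure being finite, cf. Cor.\ref{cor_holder}), so the case already proved yields $\int_\alpha^\beta uv\varphi' = -\int_\alpha^\beta (u'v + uv')\varphi$ for every such $\varphi$. As every $\varphi \in C_c^1(a,b)$ is captured this way, the identity holds globally; since $u, v, u', v' \in L^\infty(a,b)$ we have $uv, u'v + uv' \in L^\infty(a,b)$, whence $uv \in W^{1,\infty}(a,b)$.

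Finally, the integration-by-parts formula follows by applying the fundamental-theorem-of-calculus identity (\ref{eq_Sob4}) of Teo.\ref{thm_Sob1} to $uv \in W^{1,p}$ (restricting to a bounded subinterval containing $x, y$ if $(a,b)$ is unbounded): $(uv)(x) - (uv)(y) = \int_y^x (u'v + uv')$, which rearranges into $\int_y^x u'v = u(x)v(x) - u(y)v(y) - \int_y^x uv'$. Closure under multiplication, together with the already established vector-space structure, makes $W^{1,p}(a,b)$ an algebra. I expect the main obstacle to be not the limiting computation itself but the recognition that it requires the uniform ($L^\infty$) control supplied by the Sobolev embedding — without it the cross terms $u_n'(v_n - v)$ and $(u_n - u)v$ cannot be bounded — together with the slightly delicate localization needed to cover the endpoint case $p = +\infty$.
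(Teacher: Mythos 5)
La tua dimostrazione è corretta e segue esattamente la strada indicata dal testo, che per questo corollario si limita ad osservare che esso ``si dimostra con l'uso di stime derivanti da (\ref{eq_infty_p}) e Teo.\ref{thm_c1W1p}'': approssimazione con funzioni lisce a supporto compatto e controllo uniforme dei termini misti tramite l'immersione in $L^\infty$, che è precisamente ciò che svolgi in dettaglio (inclusa la localizzazione per $p=+\infty$ e l'uso di (\ref{eq_Sob4}) per la formula di integrazione per parti).
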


\begin{cor}[Derivata di una composizione]
Sia $\varphi \in C^1(\bR)$ tale che $\varphi(0) = 0$ ed $u \in W^{1,p}(a,b)$. Allora $\varphi \circ u \in W^{1,p}(a,b)$ e $(\varphi \circ u)' = ( \varphi' \circ u ) u'$.
\end{cor}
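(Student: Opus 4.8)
The plan is to reduce the statement to verifying the two defining requirements of membership in $W^{1,p}(a,b)$: that both $\varphi\circ u$ and $g:=(\varphi'\circ u)\,u'$ belong to $L^p(a,b)$, and that $g$ is the weak derivative of $\varphi\circ u$, i.e.
\[
\int_a^b (\varphi\circ u)\,\psi' \ = \ -\int_a^b g\,\psi
\ \ , \ \
\forall \psi\in C_c^\infty(a,b)
\ .
\]
The integrability is the easy half. By the Sobolev inequality (Lemma \ref{lem_immcont}) the continuous representative of $u$ is bounded, with values in some $[-R,R]$; setting $M:=\sup_{|t|\le R}|\varphi'(t)|<\infty$, the hypothesis $\varphi(0)=0$ gives $|\varphi(u(x))|\le M\,|u(x)|$, whence $\varphi\circ u\in L^p$ because $u\in L^p$, while $|g|\le M\,|u'|$ gives $g\in L^p$ because $u'\in L^p$.

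For the identity I would first treat $p\in[1,+\infty)$ by approximation. By Theorem \ref{thm_c1W1p} there is a sequence $\{u_n\}\subset C_c^\infty(\bR)$ with $\|u_n|_{(a,b)}-u\|_{W,p}\to 0$; applying the Sobolev inequality to $u_n-u$ yields the crucial extra fact that $u_n\to u$ \emph{uniformly} on $(a,b)$. In particular the $u_n$ take values in a common compact interval $[-R',R']$, on which $\varphi$ and $\varphi'$ are uniformly continuous, so $\varphi\circ u_n\to\varphi\circ u$ and $\varphi'\circ u_n\to\varphi'\circ u$ uniformly. Since each $u_n$ has compact support and $\varphi(0)=0$, the function $\varphi\circ u_n$ lies in $C_c^1(\bR)$, so the classical chain rule and integration by parts give
\[
\int_a^b (\varphi\circ u_n)\,\psi' \ = \ -\int_a^b (\varphi'\circ u_n)\,u_n'\,\psi
\ \ , \ \
\forall \psi\in C_c^\infty(a,b)
\ ,
\]
and it remains to pass to the limit in $n$ on both sides.

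The left-hand side converges because $\psi'$ has compact support and $\varphi\circ u_n\to\varphi\circ u$ uniformly there, so the difference is bounded by $\|\varphi\circ u_n-\varphi\circ u\|_\infty\,\|\psi'\|_1\to0$. The right-hand side is the delicate point: I would write
\[
(\varphi'\circ u_n)u_n'\psi-(\varphi'\circ u)u'\psi
\ = \
[(\varphi'\circ u_n)-(\varphi'\circ u)]\,u_n'\,\psi
\ + \
(\varphi'\circ u)[u_n'-u']\,\psi
\]
and estimate the two pieces separately over the bounded set $\mathrm{supp}(\psi)$: the first via the uniform smallness of $\varphi'\circ u_n-\varphi'\circ u$ together with the $L^1(\mathrm{supp}\,\psi)$-boundedness of $\{u_n'\}$ (a consequence of Hölder, Cor.\ref{cor_holder}, and the convergence $u_n'\to u'$ in $L^p$), the second via Hölder applied to the bounded, compactly supported factor $(\varphi'\circ u)\psi\in L^{\ovl p}$ and to $\|u_n'-u'\|_p\to0$. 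This yields the weak-derivative identity, and, with the integrability already established, we conclude $\varphi\circ u\in W^{1,p}(a,b)$ with $(\varphi\circ u)'=(\varphi'\circ u)u'$.

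Finally, for $p=+\infty$ — where Theorem \ref{thm_c1W1p} is unavailable — I would argue by localization: given $\psi\in C_c^\infty(a,b)$ choose a bounded interval $(a',b')$ with $\mathrm{supp}(\psi)\subset(a',b')$ and $\ovl{(a',b')}\subset(a,b)$; on a bounded interval one has $W^{1,\infty}\subseteq W^{1,p}$ for any finite $p$, so the identity just proved applies to $u|_{(a',b')}$ and gives the required relation for this $\psi$. As $\varphi\circ u$ and $g$ are manifestly in $L^\infty(a,b)$, this settles that case too. The main obstacle is exactly the limit of the product $(\varphi'\circ u_n)\,u_n'$, which mixes the merely uniform convergence of the outer factor with the $L^p$-convergence of the derivatives; the uniform convergence $u_n\to u$ furnished by the Sobolev inequality is precisely what lets both estimates close.
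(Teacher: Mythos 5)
Your argument is correct and is precisely the proof the paper has in mind: the text states only that the corollary follows from the Sobolev inequality (\ref{eq_infty_p}) and the approximation theorem Teo.\ref{thm_c1W1p}, and you use exactly those two ingredients — the Sobolev inequality to get uniform convergence of the approximants $u_n$ (hence of $\varphi\circ u_n$ and $\varphi'\circ u_n$), and the classical chain rule plus the split of $(\varphi'\circ u_n)u_n'-(\varphi'\circ u)u'$ to pass to the limit in the weak-derivative identity. The localization for $p=+\infty$ and the use of $\varphi(0)=0$ both for the $L^p$ bound and for the compact support of $\varphi\circ u_n$ are handled correctly.
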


\noindent \textbf{L'immersione continua in $L^\infty$.} Il Lemma \ref{lem_immcont} ci dice che \e ben definito e limitato l'operatore lineare canonico
\begin{equation}
\label{map_infty_p}
I_p^\infty : W^{1,p}(a,b) \to L^\infty(a,b)
\ \ , \ \
p \in [1,+\infty]
\ ,
\end{equation}
che assegna alla funzione $u \in W^{1,p}(a,b)$ la sua classe in $L^\infty(a,b)$ (osservare che $\| I_p^\infty \| \leq c$). Chiaramente $I_p^\infty$ \e anche iniettivo, infatti se $u \in W^{1,p}(a,b)$ \e tale che $\| u \|_\infty = 0$ allora $u=0$ q.o..

\

\noindent \textbf{Immersioni compatte.} Sia ora $(a,b)$ limitato. Applicando il teorema del rappresentante continuo troviamo che (\ref{map_infty_p}) prende valori in $C([a,b])$, per cui \e ben definito l'operatore limitato ed iniettivo
\begin{equation}
\label{map_c_p}
I_p^{cont} : W^{1,p}(a,b) \to C([a,b])
\ \ , \ \
a,b \in \bR
\ , \
p \in [1,+\infty]
\ .
\end{equation}
Del resto, applicando la diseguaglianza di Holder troviamo $\| f \|_q \leq (b-a)^{1/q} \| f \|_\infty$ per ogni $f \in L^\infty(a,b)$ e $q \in [1,+\infty)$, per cui abbiamo un operatore limitato ed iniettivo
\begin{equation}
\label{map_q_p}
I_p^q : W^{1,p}(a,b) \to L^q(a,b)
\ \ , \ \
a,b \in \bR
\ , \
p \in [1,+\infty]
\ , \
q \in [1,+\infty)
\ .
\end{equation}

\begin{thm}\textbf{(Immersione compatta, \cite[Teo.VIII.7]{Bre}).}
\label{thm_Sob2}
Per ogni $a,b \in \bR$, $a<b$, si ha quanto segue:
\begin{enumerate}
\item  L'operatore $I_p^{cont} : W^{1,p}(a,b) \to C([a,b])$, definito da (\ref{map_c_p}), \e compatto per ogni $p \in (1,+\infty]$;
\item  L'operatore $I_1^q : W^{1,1}(a,b) \to L^q(a,b)$, definito come in (\ref{map_q_p}), \e compatto per ogni $q \in [1,+\infty)$.
\end{enumerate}
\end{thm}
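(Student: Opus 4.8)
The plan is to derive both statements from the one-dimensional fundamental-theorem identity (\ref{eq_Sob4}) together with the two compactness criteria already available: Ascoli-Arzel\`a (Teo.\ref{thm_AA1}) for part (1) and Riesz-Fr\'echet-Kolmogorov (Teo.\ref{thm_RFK}) for part (2). Recall that by definition an operator is compact precisely when the image of the closed unit ball is precompact in the norm topology of the target, so in each case I would fix the unit ball $\mB := \{ u \in W^{1,p}(a,b) : \| u \|_{W,p} \leq 1 \}$ (identified with its continuous representatives) and show its image is precompact.

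For part (1) I would verify, for the compact metric space $[a,b]$, that $I_p^{cont}(\mB)$ is equilimitata ed equicontinua, so that Teo.\ref{thm_AA1} applies. Equiboundedness is immediate from the Sobolev inequality (Lemma \ref{lem_immcont}): $\| u \|_\infty \leq c \| u \|_{W,p} \leq c$ for every $u \in \mB$. For equicontinuity I would fix $u \in \mB$ and $x,y \in [a,b]$ and use (\ref{eq_Sob4}) for the continuous representative together with the Holder inequality with conjugate exponent $q := \ovl p$, obtaining $|u(x)-u(y)| = | \int_y^x u'(t) \, dt | \leq \| u' \|_p \, |x-y|^{1/q} \leq |x-y|^{1/q}$. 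Since $p>1$ forces $q<\infty$ (and $p=\infty$ gives $q=1$), the majorant $|x-y|^{1/q}$ tends to $0$ as $|x-y|\to 0$ uniformly in $u$, which is exactly equicontinuity; Ascoli-Arzel\`a then yields compactness of $I_p^{cont}$. I would emphasize that $p>1$ is used precisely here: for $p=1$ one has $1/q=0$ and the estimate collapses to $|x-y|^0=1$, so the Ascoli route genuinely breaks down, and this is why part (2) must be handled by a different tool.

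For part (2) I would invoke Teo.\ref{thm_RFK} in $L^q(\bR)$ with $d=1$. First extend each $u$ in the unit ball $\mB_1$ of $W^{1,1}(a,b)$ to $\wt u \in L^q(\bR)$ by setting $\wt u = 0$ outside $(a,b)$, and put $\mF := \{ \wt u : u \in \mB_1 \}$. The Sobolev inequality with $p=1$ gives $\| u \|_\infty \leq c$, hence $\| \wt u \|_q \leq c(b-a)^{1/q}$, so $\mF$ is bounded in $L^q(\bR)$. The decisive point is the uniform translation estimate $\lim_{h\to 0}\| \wt u_h - \wt u \|_q = 0$: combining (\ref{eq_Sob4}) with Fubini-Tonelli gives $\| u_h - u \|_{L^1} \leq |h|\,\| u' \|_1 \leq |h|$ on the interior, and then the interpolation $\int | \wt u_h - \wt u |^q \leq (2c)^{q-1} \int | \wt u_h - \wt u |$ together with the uniform $L^\infty$ bound converts this into $\| \wt u_h - \wt u \|_q^q \leq C|h|$ with $C$ independent of $u$. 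Once this holds, Teo.\ref{thm_RFK} gives that $\mF_{(a,b)} = I_1^q(\mB_1)$ is precompact in $L^q(a,b)$, i.e. $I_1^q$ is compact.

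The main obstacle I anticipate lies in part (2): extending by zero introduces a jump of $\wt u$ across the endpoints $a$ and $b$, so the $L^q$ translation modulus is not controlled by $\| u' \|_1$ alone. One must separately bound the mass of $| \wt u_h - \wt u |^q$ over the two intervals of length $|h|$ straddling $a$ and $b$, where the uniform bound $\| u \|_\infty \leq c$ makes this contribution $O(|h|)$. Keeping every constant independent of $u$ is the delicate requirement, since Teo.\ref{thm_RFK} demands the translation modulus to vanish uniformly over the whole family $\mF$. An alternative would be to approximate first by $C_c^\infty$ functions through Teo.\ref{thm_c1W1p} and argue by density, but the direct extension-by-zero combined with the interpolation trick appears to be the most transparent route.
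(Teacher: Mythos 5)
Your proposal is correct and follows essentially the same route as the paper's (sketched) proof: Ascoli--Arzel\`a via the H\"older estimate $|u(x)-u(y)|\leq \|u'\|_p\,|x-y|^{1/q}$ for part (1), and Riesz--Fr\'echet--Kolmogorov for part (2). You in fact supply more detail than the paper, correctly identifying and handling the only delicate point (the $O(|h|)$ boundary contribution from the zero-extension in the uniform translation estimate), which the paper leaves implicit.
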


\begin{proof}[Sketch della dimostrazione]
Verifichiamo la compattezza di (\ref{map_c_p}). A tale scopo denotiamo con $\mF$ l'immagine attraverso $I_p^{cont}$ della palla unitaria di $W^{1,p}(a,b)$ ed osserviamo che
\[
| u(x) - u(y) | 
\leq
\int_y^x |u'|
\ \stackrel{Holder}{\leq} \
\| u' \|_p |x-y|^{1/q}
\leq
|x-y|^{1/q}
\ \ , \ \
u \in \mF
\ , \
q := \ovl{p}
\ .
\]
Dunque $\mF$ \e equilimitato ed equicontinuo, ed il Teorema di Ascoli-Arzel\'a (Teo.\ref{thm_AA1}) implica che $\mF$ \e precompatto.
Infine, per quanto riguarda la compattezza di $I_1^q$, l'idea \e quella di ripetere il ragionamento precedente applicando il Teorema di Riesz-Fr\'echet-Kolmogorov.
\end{proof}

\begin{rem}{\it
L'operatore $I_1^{cont}$ \e continuo ed iniettivo, ma non compatto, anche quando $(a,b)$ \e limitato (si veda \cite[Cap.VIII]{Bre}).
}
\end{rem}

\begin{cor}
Siano $a,b \in \bR$ ed $\{ f_n \} \subset W^{1,p}(a,b)$ \e una successione limitata in norma $\| \cdot \|_{W,p}$. 
\textbf{(1)} Se $p \in (1,+\infty]$ allora esiste una sottosuccessione $\{ f_{k_n} \}$ convergente in norma $\| \cdot \|_\infty$, e quindi convergente in norma $\| \cdot \|_q$ per ogni $q \in [1,+\infty]$;
\textbf{(2)} Se $p = 1$ allora esiste una sottosuccessione $\{ f_{k_n} \}$ convergente in norma $\| \cdot \|_q$ per ogni $q \in [1,+\infty)$.
\end{cor}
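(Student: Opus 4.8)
The plan is to read both statements as immediate consequences of the compact embedding theorem (Teo.\ref{thm_Sob2}), combined with the elementary fact recorded just after the definition of compact operator: a compact operator sends a bounded sequence to one possessing a norm-convergent subsequence (equivalently, it maps bounded sets into precompact ones, and precompactness is sequential in a metric space by Prop.\ref{prop_comp}).

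For part (1) I would argue as follows. Since $\{ f_n \}$ is bounded in $\| \cdot \|_{W,p}$ and $p \in (1,+\infty]$, the canonical operator $I_p^{cont} : W^{1,p}(a,b) \to C([a,b])$ of (\ref{map_c_p}) is compact by Teo.\ref{thm_Sob2}(1). Identifying each $f_n$ with its continuous representative, the images $\{ I_p^{cont} f_n \}$ lie in a precompact subset of $C([a,b])$, so a subsequence $\{ f_{k_n} \}$ converges in $\| \cdot \|_\infty$. Finally, on the bounded interval $(a,b)$ the estimate $\| g \|_q \leq (b-a)^{1/q} \| g \|_\infty$ (a special case of Cor.\ref{cor_holder}) shows that convergence in $\| \cdot \|_\infty$ forces convergence in $\| \cdot \|_q$ for every $q \in [1,+\infty)$; together with the case $q = +\infty$, which is the uniform convergence itself, this covers all $q \in [1,+\infty]$.

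For part (2) the only genuine subtlety is producing a \emph{single} subsequence that serves simultaneously for every $q \in [1,+\infty)$, rather than a different one for each exponent. First I would note that, $\{ f_n \}$ being bounded in $W^{1,1}(a,b)$, the Sobolev inequality (Lemma \ref{lem_immcont}) yields a uniform bound $\| f_n \|_\infty \leq M$. Applying Teo.\ref{thm_Sob2}(2) with $q = 1$, the operator $I_1^1$ is compact, so some subsequence $\{ f_{k_n} \}$ converges in $\| \cdot \|_1$. The key observation is then the interpolation estimate
\[
\| f_{k_n} - f_{k_m} \|_q^q = \int_a^b |f_{k_n} - f_{k_m}|^{q-1} \, |f_{k_n} - f_{k_m}| \leq (2M)^{q-1} \, \| f_{k_n} - f_{k_m} \|_1 ,
\]
valid for every $q \in (1,+\infty)$, which shows that this one subsequence is Cauchy in $\| \cdot \|_q$ and hence convergent there by completeness of $L^q(a,b)$ (Teo.\ref{thm_RF}).

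In truth there is no real obstacle: the corollary is bookkeeping once Teo.\ref{thm_Sob2} is granted. The single point deserving attention is the uniformity in $q$ in part (2); exploiting the $L^\infty$ bound from Lemma \ref{lem_immcont} to interpolate down from $L^1$-convergence is what lets one subsequence serve the whole range, and it is cleaner than a diagonal extraction over a countable dense set of exponents.
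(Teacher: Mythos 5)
Your proof is correct and follows exactly the route the paper intends: the corollary is stated without proof as an immediate consequence of Teo.\ref{thm_Sob2}, and your part (1) is precisely that deduction together with the elementary bound $\| g \|_q \leq (b-a)^{1/q}\| g \|_\infty$ on the bounded interval. The one point the paper leaves silent --- that in part (2) a \emph{single} subsequence must work for every $q \in [1,+\infty)$ --- you resolve correctly by combining the uniform $L^\infty$ bound from Lemma \ref{lem_immcont} with the interpolation $\| h \|_q^q \leq \| h \|_\infty^{q-1}\| h \|_1$, which is cleaner than invoking compactness of $I_1^q$ separately for each exponent.
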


\begin{rem}{\it
Se $p \in (1,+\infty]$ e $\{ f_n \} \subset W^{1,p}(a,b)$ \e una successione debolmente convergente allora quanto visto in \S \ref{sec_topdeb} implica che $\{ f_n \}$ \e limitata, e quindi si applicano i risultati del corollario precedente.
}
\end{rem}

\subsection{Ordini e dimensioni generali.}

Introduciamo ora (in modo ricorsivo) gli spazi di Sobolev di ordine superiore al primo:
\[
\left\{
\begin{array}{ll}
W^{m,p}(a,b) := \{ u \in W^{m-1,p}(a,b) : u' \in W^{m-1,p}(a,b) \}
\ , \ 
m = 2,3,\ldots
\\
H^m(a,b) := W^{m,2}(a,b) 
\end{array}
\right.
\]
Per definizione, $u \in W^{m,p}(a,b)$ se e soltanto se esistono $du , \ldots , d^mu \in$ $L^p(a,b)$ tali che
\[
\int_a^b d^ku \varphi 
\ = \
(-1)^k \int_a^b u d^k \varphi
\ \ , \ \
k = 1,\ldots,m
\ , \
\varphi \in C_c^\infty(a,b)
\ .
\]
Sugli spazi $W^{m,p}(a,b)$ sono definite le norme
\[
\| u \|_{m,p}
:=
\| u \|_p
+
\sum_{k=1}^m \| d^ku \|_p
\ ,
\]
ed in particolare $H^m(a,b)$ possiede norma indotta da prodotto scalare. Si puo dimostrare che la norma $\| \cdot \|_{m,p}$ \e equivalente alla norma
\[
n_{m,p}(u) := \| u \|_p
+
\| d^mu \|_p
\]
(quando $(a,b)$ \e limitato ed $u \in W^{m,p}_0(a,b)$, ci\'o segue applicando ricorsivamente (\ref{eq_Poincare2})). Altre propriet\'a di base di $W^{1,p}(a,b)$ si estendono in modo analogo: ad esempio, esiste una applicazione continua ed iniettiva
\[
W^{m,p}(a,b) \to C^{m-1}([a,b])
\ ,
\]
in analogia al Punto 1 di Teo.\ref{thm_Sob2}.

Concludiamo la sezione introducendo gli spazi di Sobolev nel caso multidimensionale. Sia $n \in \bN$, $\Omega \subseteq \bR^n$ un aperto, e $p \in [1,+\infty]$. Definiamo
\[
W^{1,p}(\Omega)
:=
\{
u \in L^p(\Omega)
:
\exists \frac{\partial u}{\partial x_1} , 
        \ldots , 
        \frac{\partial u}{\partial x_n} 
        \in L^p(\Omega)
\ , \
\int_\Omega u \frac{\partial \varphi}{\partial x_i}
= 
- \int_\Omega \frac{\partial u}{\partial x_i} \varphi
\ , \
\varphi \in C_c^\infty(\Omega)
\}
\ ,
\]
che equipaggiamo della norma
\[
\| u \|_{W,p}
:=
\| u \|_p
+
\sum_{i=1}^n \| \frac{\partial u}{\partial x_i} \|_p
\ .
\]
In particolare, definiamo $H^1(\Omega) := W^{1,2}(\Omega)$, la cui norma \e indotta dal prodotto scalare
\begin{equation}
\label{def_H1n}
( u,v )_{H,1}
:=
\int_\Omega uv 
+ 
\sum_{i=1}^n 
\int_\Omega \frac{\partial u}{\partial x_i} \frac{\partial v}{\partial x_i}
\ .
\end{equation}
Usando esattamente la stessa tecnica del caso unidimensionale, otteniamo
\begin{prop}
Per ogni aperto $\Omega \subseteq \bR^n$, lo spazio $W^{1,p}(\Omega)$ \e riflessivo per $p \in (1,+\infty)$ e separabile per $p \in [1,+\infty)$. In particolare, $H^1(\Omega)$ \e uno spazio di Hilbert separabile. 
\end{prop}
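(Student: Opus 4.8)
Il piano \e di ricalcare fedelmente la dimostrazione del caso unidimensionale (Prop.\ref{prop_Sob1}). Per prima cosa introdurrei l'applicazione lineare canonica
\[
T : W^{1,p}(\Omega) \to \underbrace{L^p(\Omega) \times \cdots \times L^p(\Omega)}_{n+1}
\ \ , \ \
Tu := \left( u \ , \ \frac{\partial u}{\partial x_1} \ , \ldots , \ \frac{\partial u}{\partial x_n} \right)
\ ,
\]
ed equipaggerei il prodotto $L^p(\Omega)^{n+1}$ della norma $\| (v_0 , \ldots , v_n) \| := \sum_{i=0}^n \| v_i \|_p$, in maniera tale che $T$ sia un'isometria sull'immagine (infatti $\| Tu \| = \| u \|_{W,p}$). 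Lo scopo \e allora esibire $W^{1,p}(\Omega)$, tramite $T$, come un sottospazio chiuso di $L^p(\Omega)^{n+1}$, per poi invocare le propriet\'a di ereditariet\'a di riflessivit\'a e separabilit\'a.

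Il passo tecnico centrale \e dimostrare che $T(W^{1,p}(\Omega))$ \e chiuso, il che equivale alla completezza di $W^{1,p}(\Omega)$. Prenderei dunque una successione $\{ u_k \}$ tale che $\{ Tu_k \}$ converga in $L^p(\Omega)^{n+1}$, ovvero $u_k \to u$ e $\partial u_k / \partial x_i \to v_i$ in norma $\| \cdot \|_p$, $i = 1 , \ldots , n$. L'idea \e passare al limite nell'identit\'a di derivata debole
\[
\int_\Omega u_k \frac{\partial \varphi}{\partial x_i} = - \int_\Omega \frac{\partial u_k}{\partial x_i} \varphi
\ \ , \ \
\varphi \in C_c^\infty(\Omega)
\ ;
\]
fissata $\varphi$, la diseguaglianza di Holder (Prop.\ref{prop_holder}) fornisce $| \int_\Omega (u_k - u) \partial_i \varphi | \leq \| u_k - u \|_p \| \partial_i \varphi \|_q \to 0$ (con $q := \ovl p$), ed analogamente per il termine destro, da cui $\int_\Omega u \, \partial_i \varphi = - \int_\Omega v_i \varphi$. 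Ci\'o mostra che $v_i = \partial u / \partial x_i$ e che $u \in W^{1,p}(\Omega)$ con $Tu = (u , v_1 , \ldots , v_n)$, per cui il limite appartiene all'immagine di $T$ e quest'ultima \e chiusa. Su questo punto concentrerei l'attenzione, essendo gli altri passi di natura puramente astratta; l'unica delicatezza \e proprio la giustificazione del passaggio al limite sotto il segno di integrale, garantita comunque dalla sola diseguaglianza di Holder su compatti di misura eventualmente infinita.

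A questo punto la conclusione \e immediata. Per $p \in (1,+\infty)$ la dualit\'a di Riesz assicura che $L^p(\Omega)$ \e riflessivo, dunque lo \e il prodotto finito $L^p(\Omega)^{n+1}$; poich\'e un sottospazio chiuso di uno spazio riflessivo \e riflessivo (nel senso di Def.\ref{def_rifl}), concludo che $W^{1,p}(\Omega)$ \e riflessivo. Per $p \in [1,+\infty)$, $L^p(\Omega)$ \e separabile, quindi lo \e $L^p(\Omega)^{n+1}$; essendo la separabilit\'a ereditata dai sottospazi di uno spazio metrico, $W^{1,p}(\Omega)$ \e separabile. Infine, per $p = 2$ l'isometria $T$ realizza $W^{1,2}(\Omega)$ come sottospazio chiuso del completo $L^2(\Omega)^{n+1}$, per cui esso \e completo; essendo la sua norma indotta dal prodotto scalare (\ref{def_H1n}), $H^1(\Omega)$ \e uno spazio di Hilbert, separabile grazie a quanto appena visto.
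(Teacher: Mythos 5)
La tua dimostrazione \e corretta e segue essenzialmente la stessa strada del testo, che per il caso multidimensionale rimanda esplicitamente alla tecnica di Prop.\ref{prop_Sob1}: immersione isometrica in $L^p(\Omega)^{n+1}$, chiusura dell'immagine tramite il passaggio al limite nell'identit\'a di derivata debole (giustificato da Holder), e poi ereditariet\'a di riflessivit\'a e separabilit\'a per sottospazi chiusi. Non c'\e nulla da aggiungere.
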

Altri risultati, come l'approssimazione con funzioni $C_c^\infty(\Omega)$ (Teorema di Friedrichs, \cite[Teo.IX.2]{Bre}), la diseguaglianza di Poincar\'e (\cite[Cor.IX.19]{Bre}), e l'immersione continua di $W^{1,p}(\Omega)$ in $L^\infty(\Omega)$, rimangono veri anche in pi\'u dimensioni, seppure in quest'ultimo caso con alcune ipotesi aggiuntive sulla coppia $(p,n)$. In particolare, se $p>n$ ed $\Omega \subset \bR^n$ \e limitato e di classe $C^1$ allora si ha un'immersione compatta $W^{1,p}(\Omega) \to C(\ovl \Omega)$ (Teorema di Rellich-Kondrachov, \cite[Teo.IX.16]{Bre}){\footnote{Del resto, anche in Teo.\ref{thm_Sob2} si richiede $p>1$ per avere la compattezza.}}; al solito, utilizzeremo le notazioni $W_0^{1,p}(\Omega)$, $H_0^1(\Omega)$ per denotare i sottospazi delle funzioni nulle al bordo di $\Omega$.

\subsection{Applicazioni alle equazioni alle derivate parziali.}
\label{sec_SobEDP}

In questa sezione mostriamo come l'uso combinato degli spazi di Sobolev e del teorema di Lax-Milgram permette di dimostrare risultati di esistenza ed unicit\'a per problemi differenziali di tipo ellittico con condizioni al bordo.

Nel seguito, indicheremo con $\Omega \subset \bR^n$ un aperto {\em limitato}. Allo scopo di avere una notazione pi\'u agile, denotiamo con
\[
\nabla u \cdot \nabla v  \in L^2(\Omega)
\ \ , \ \
\nabla u := 
\left( \frac{\partial u}{\partial x_1} , \ldots , \frac{\partial u}{\partial x_1} \right)
\ ,
\]
la funzione ottenuta effettuando il prodotto scalare dei gradienti di $u,v \in H^1(\Omega)$.
\begin{thm}[Il principio di Dirichlet] Sia dato il problema 
\begin{equation}
\label{eq_D}
\left\{
\begin{array}{ll}
- \Delta u + u = f
\\
u |_{\partial \Omega} = 0
\end{array}
\right.
\end{equation}
dove $f \in L^2(\Omega)$. Allora esiste ed \e unica la \textbf{soluzione debole} $u \in H_0^1(\Omega)$ di (\ref{eq_D}), ovvero
\[
\int_\Omega ( \nabla u \cdot \nabla v + uv )
=
\int_\Omega fv
\ \ , \ \ 
\forall v \in H_0^1(\Omega)
\ .
\]
La funzione $u$ si ottiene come soluzione del problema variazionale
\[
u 
\ = \
\min_{v \in H_0^1(\Omega)}
\left\{
\frac{1}{2} \int_\Omega \left( | \nabla v |^2 + v^2 \right)
-
\int_\Omega fv
\right\}
\ .
\]
\end{thm}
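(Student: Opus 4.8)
The plan is to cast (\ref{eq_D}) as an abstract variational problem on the Hilbert space $\mH := H_0^1(\Omega)$ and then to apply the Lax--Milgram theorem (\thmref{thm_SLM2}) directly. Recall that $H^1(\Omega)$ is a separable Hilbert space for the scalar product $( \cdot , \cdot )_{H,1}$ of (\ref{def_H1n}); since $H_0^1(\Omega)$ is a closed subspace it inherits this Hilbert structure, and I shall write $\| u \|_{H,1} := ( u,u )_{H,1}^{1/2}$ for the induced norm. First I would introduce the bilinear form
$$
A(u,v) := \int_\Omega ( \nabla u \cdot \nabla v + uv )
\ \ , \ \
u,v \in \mH
\ ,
$$
and observe that it coincides \emph{exactly} with the scalar product $( \cdot , \cdot )_{H,1}$. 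This single identification does almost all the work: $A$ is automatically symmetric, and the two hypotheses of \thmref{thm_SLM2} reduce to elementary facts about the inner product. Indeed $|A(u,v)| \leq \| u \|_{H,1} \| v \|_{H,1}$ is precisely Cauchy--Schwarz (\ref{eq_CS}), while $A(u,u) = \| u \|_{H,1}^2$ exhibits $A$ as coercive with constant $\alpha = 1$.

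Next I would check that the datum defines a bounded functional $\varphi \in \mH^*$ through $\langle \varphi , v \rangle := \int_\Omega fv$. The only estimate required is H\"older with $p=q=2$: since $\| v \|_2 \leq \| v \|_{H,1}$ by the very definition of the scalar product, one obtains $| \langle \varphi , v \rangle | \leq \| f \|_2 \| v \|_2 \leq \| f \|_2 \| v \|_{H,1}$, whence $\| \varphi \| \leq \| f \|_2 < \infty$.

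Having verified the hypotheses, \thmref{thm_SLM2} produces a unique $u \in H_0^1(\Omega)$ satisfying $A(u,v) = \langle \varphi , v \rangle$ for every $v \in \mH$, which is exactly the weak formulation $\int_\Omega ( \nabla u \cdot \nabla v + uv ) = \int_\Omega fv$. Because $A$ is symmetric, the second part of \thmref{thm_SLM2} identifies this $u$ with the unique minimizer of
$$
v \mapsto \frac{1}{2} A(v,v) - \langle \varphi , v \rangle
= \frac{1}{2} \int_\Omega ( |\nabla v|^2 + v^2 ) - \int_\Omega fv
\ ,
$$
which is precisely the asserted variational characterization.

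In truth there is no serious analytic obstacle once the problem is framed this way: the whole difficulty has been front-loaded into the construction of $H_0^1(\Omega)$ as a Hilbert space and into \thmref{thm_SLM2} itself, both of which I am entitled to invoke. The one conceptual point I would flag rather than develop is the passage between the classical problem (\ref{eq_D}) and its weak form: the solution $u$ lives a priori only in $H_0^1(\Omega)$, so that both the equation $-\Delta u + u = f$ and the boundary condition $u|_{\partial \Omega} = 0$ are to be read in the weak and trace senses encoded by $H_0^1$. Recovering a genuinely classical solution would demand a separate elliptic regularity argument, which the statement as phrased does not require.
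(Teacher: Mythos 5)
Your argument is correct and is exactly the route the paper takes: apply Lax--Milgram to the bilinear form induced by the scalar product (\ref{def_H1n}) on $H_0^1(\Omega)$ and to the functional $\left \langle \varphi , v \right \rangle := \int_\Omega fv$. You merely spell out the verifications (Cauchy--Schwarz for continuity, coercivity with constant $1$, H\"older for the boundedness of $\varphi$) that the paper leaves implicit.
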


\begin{proof}[Dimostrazione]
Sia applica il teorema di Lax-Milgram alla forma bilineare indotta dal prodotto scalare (\ref{def_H1n}), ed al funzionale
\[
\left \langle \varphi, v \right \rangle := \int_\Omega fv 
\ \ , \ \
v \in H_0^1(\Omega)
\ .
\]
\end{proof}

\begin{rem}
\textbf{(Riguardo il concetto di soluzione debole del problema di Dirichlet).}
{\it Supponiamo che $u_c \in C^2_0(\Omega)$ sia una soluzione classica di (\ref{eq_D}); allora, una semplice integrazione per parti implica che 
\[
\int_\Omega ( \nabla u_c \cdot \nabla v + u_cv )
=
\int_\Omega fv
\ \ , \ \ 
\forall v \in C_c^\infty(\Omega)
\ .
\]
Usando i teoremi di densit\'a, troviamo che la precedente  uguaglianza \e verificata per $v \in H_0^1(\Omega)$, e quindi $u_c$ \e soluzione debole. Viceversa, dimostrare che una soluzione debole $u_d$ \e anche regolare (ovvero $u_d \in C^2(\Omega)$) \e un risultato non banale; una volta dimostrato che $u_d \in C^2(\Omega)$ una semplice integrazione per parti permette di concludere che $u_d$ \e una soluzione classica.
} \end{rem}

\begin{thm}[Il problema di Dirichlet non omogeneo] Sia dato il problema 
\begin{equation}
\label{eq_D1}
\left\{
\begin{array}{ll}
- \Delta u + u = f
\\
u |_{\partial \Omega} = g
\end{array}
\right.
\end{equation}
dove $f \in L^2(\Omega)$, $g \in C(\partial \Omega)$. Se $g = \tilde g |_{\partial \Omega}$ per qualche $\tilde g \in H^1(\Omega) \cap C(\ovl \Omega)$, allora esiste ed \e unica la soluzione debole $u \in H^1(\Omega)$ di (\ref{eq_D1}), come soluzione del problema variazionale
\[
u 
\ = \
\min_{v \in K}
\left\{
\frac{1}{2} \int_\Omega \left( | \nabla v |^2 + v^2 \right)
-
\int_\Omega fv
\right\}
\ ,
\]
dove
\[
K 
:=
\{
v \in H^1(\Omega) : v - \tilde g \in H_0^1(\Omega)
\}
\ .
\]
\end{thm}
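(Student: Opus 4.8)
The plan is to apply the Stampacchia theorem (Teo.\ref{thm_SLM1}) on the Hilbert space $\mH := H^1(\Omega)$, taking as convex set precisely $K$, as bilinear form the one induced by the scalar product (\ref{def_H1n}),
\[
A(u,v) := \int_\Omega ( \nabla u \cdot \nabla v + uv )
\ \ , \ \
u,v \in H^1(\Omega)
\ ,
\]
and as functional $\varphi \in \mH^*$ the map $\left \langle \varphi , v \right \rangle := \int_\Omega fv$. This is the natural strategy: the inhomogeneous boundary datum is absorbed into the geometry of the admissible set $K$, while all the analytic work is delegated to Stampacchia.

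First I would check that $K$ is a non-empty, convex, closed subset of $H^1(\Omega)$. Non-emptiness is immediate since $\tilde g \in K$; convexity and closedness follow at once from the identity $K = \tilde g + H_0^1(\Omega)$, because $H_0^1(\Omega)$ is a closed vector subspace of $H^1(\Omega)$, so $K$ is a translate of a closed subspace. Next I would verify the hypotheses of Teo.\ref{thm_SLM1}: the form $A$ is symmetric and, by Cauchy-Schwarz, bounded; it is coercive with constant $\alpha = 1$ since $A(u,u) = \| u \|_{H,1}^2$; and $\varphi$ is bounded, as $| \int_\Omega fv | \leq \| f \|_2 \| v \|_2 \leq \| f \|_2 \| v \|_{H,1}$. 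Being $A$ symmetric, part (2) of Teo.\ref{thm_SLM1} gives existence and uniqueness of $u \in K$ realizing the minimum over $K$ of $\frac{1}{2} A(v,v) - \left \langle \varphi , v \right \rangle$, a quantity that is exactly $\frac{1}{2} \int_\Omega ( |\nabla v|^2 + v^2 ) - \int_\Omega fv$, as in the statement.

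It remains to show that this minimizer is the weak solution. By part (1) of Teo.\ref{thm_SLM1}, $u$ satisfies $A(u, v-u) \geq \left \langle \varphi , v-u \right \rangle$ for all $v \in K$. Here the affine structure of $K$ is the decisive point: since $K = \tilde g + H_0^1(\Omega)$, as $v$ ranges over $K$ the difference $w := v - u$ ranges over the whole subspace $H_0^1(\Omega)$. Applying the variational inequality to both $w$ and $-w$ I obtain the equality $A(u,w) = \left \langle \varphi , w \right \rangle$ for every $w \in H_0^1(\Omega)$, that is
\[
\int_\Omega ( \nabla u \cdot \nabla w + uw ) = \int_\Omega fw
\ \ , \ \
\forall w \in H_0^1(\Omega)
\ ,
\]
which is precisely the weak formulation of (\ref{eq_D1}); the membership $u - \tilde g \in H_0^1(\Omega)$ encoded in $u \in K$ is the weak way of imposing $u|_{\partial \Omega} = g$.

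I do not expect a genuinely hard analytic obstacle, since the heavy machinery (completeness of $H^1(\Omega)$, the Riesz representation underlying Stampacchia, and the Stampacchia theorem itself) is already available. The main point is conceptual rather than computational: recognizing that the passage from the variational inequality on $K$ to the weak equation hinges on $K$ being an affine translate of the closed subspace $H_0^1(\Omega)$, which turns the one-sided inequality into a two-sided one, hence into an equality. A secondary subtlety, which I would at most remark upon, is that the hypothesis $g = \tilde g|_{\partial \Omega}$ with $\tilde g \in H^1(\Omega) \cap C(\ovl \Omega)$ is exactly what makes $K$ non-empty and gives the boundary condition its meaning.
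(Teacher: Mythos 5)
La tua proposta è corretta e segue essenzialmente la stessa strada della dimostrazione del testo, che si limita a osservare che $K$ è un convesso chiuso e ad invocare il teorema di Stampacchia; tu riempi i dettagli (in particolare il passaggio dalla disuguaglianza variazionale all'uguaglianza sfruttando la struttura affine $K = \tilde g + H_0^1(\Omega)$) in modo del tutto coerente. L'unico punto che il testo menziona e tu tralasci è che $K$ dipende solo da $g$ e non dalla scelta dell'estensione $\tilde g$, il che garantisce che la soluzione è ben definita a partire dal dato al bordo.
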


\begin{proof}[Sketch della dimostrazione]
Sia osserva che $K$ \e un chiuso convesso in $H^1(\Omega)$, non dipendente da $\tilde g$ ma solo da $g$. A questo punto, si applica il teorema di Stampacchia.
\end{proof}

\begin{prop}[Il problema di Sturm-Liouville]
Sia dato il problema
\begin{equation}
\label{eq_SL}
\left\{
\begin{array}{ll}
- (pu')' + qu = f
\\
u(0) = u(1) = 0
\end{array}
\right.
\end{equation}
dove $p \in C^1([0,1])$, $p \geq \alpha$ con $\alpha \in \bR^+ - \{ 0 \}$, $q \in C([0,1])$, $f \in L^2$. Se $q \geq 0$, allora esiste ed \e unica la soluzione debole di (\ref{eq_SL}), come soluzione del problema variazionale
\[
u 
\ = \
\min_{v \in H_0^1(0,1)}
\left\{
\frac{1}{2} \int_0^1 [ p(v')^2 + qv^2 ]
-
\int_0^1 fv
\right\}
\ .
\]
\end{prop}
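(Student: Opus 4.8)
The plan is to recognize problem (\ref{eq_SL}) as a special case of the Lax-Milgram framework, exactly as was done for the Dirichlet principle. First I would introduce the bilinear form
\[
A(u,v) := \int_0^1 ( p u' v' + q uv ) \ \ , \ \ u,v \in H_0^1(0,1) \ ,
\]
together with the functional $\left \langle \varphi , v \right \rangle := \int_0^1 fv$. The weak formulation of (\ref{eq_SL}) is the requirement that $A(u,v) = \left \langle \varphi , v \right \rangle$ for every $v \in H_0^1(0,1)$; this is precisely what one obtains by multiplying $-(pu')' + qu = f$ by a test function $v \in C_c^\infty(0,1)$ and integrating by parts, the boundary terms vanishing because $v(0) = v(1) = 0$, and then extending the identity to all of $H_0^1(0,1)$ by density (Teo.\ref{thm_c1W1p}).

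Next I would verify the hypotheses of Lax-Milgram (Teo.\ref{thm_SLM2}). Bilinearity and symmetry of $A$ are immediate. Boundedness follows from the diseguaglianza di Holder: since $p,q$ are continuous on the compact $[0,1]$ we have $\| p \|_\infty , \| q \|_\infty < + \infty$, and
\[
| A(u,v) | \ \leq \ \| p \|_\infty \| u' \|_2 \| v' \|_2 + \| q \|_\infty \| u \|_2 \| v \|_2 \ \leq \ c \, \| u \|_{W,2} \| v \|_{W,2}
\]
with $c := \| p \|_\infty + \| q \|_\infty$, using $\| u \|_2 , \| u' \|_2 \leq \| u \|_{W,2}$. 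Continuity of $\varphi$ is checked in the same way, from $| \left \langle \varphi , v \right \rangle | \leq \| f \|_2 \| v \|_2 \leq \| f \|_2 \| v \|_{W,2}$.

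The crux is coercivity, and it is here that the sign hypotheses $p \geq \alpha > 0$ and $q \geq 0$ enter decisively. Because $q \geq 0$, the term $\int_0^1 q u^2$ is nonnegative and may be discarded, giving
\[
A(u,u) \ = \ \int_0^1 ( p (u')^2 + q u^2 ) \ \geq \ \alpha \int_0^1 (u')^2 \ = \ \alpha \| u' \|_2^2 \ .
\]
This controls only the seminorm $\| u' \|_2$; to upgrade it to the full norm $\| \cdot \|_{W,2}$ I would invoke the diseguaglianza di Poincar\'e on $H_0^1(0,1)$ (Prop.\ref{prop_SobPoi}), which gives $\| u \|_{W,2} \leq c' \| u' \|_2$ for $u \in H_0^1(0,1)$, whence $A(u,u) \geq \alpha (c')^{-2} \| u \|_{W,2}^2$, so that $A$ is coercive on $H_0^1(0,1)$.

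Having confirmed that $A$ is a bounded, symmetric, coercive bilinear form and that $\varphi \in H_0^1(0,1)^*$, the teorema di Lax-Milgram (Teo.\ref{thm_SLM2}) produces a unique $u \in H_0^1(0,1)$ with $A(u,v) = \left \langle \varphi , v \right \rangle$ for all $v$, which is exactly the weak solution of (\ref{eq_SL}); and, since $A$ is symmetric, the same theorem identifies $u$ as the unique minimizer of $\tfrac{1}{2} A(v,v) - \left \langle \varphi , v \right \rangle$ over $H_0^1(0,1)$, which is the variational problem in the statement. The only genuinely delicate point is the coercivity step, since it is precisely there that both structural assumptions on $p$ and $q$ are used, and where Poincar\'e's inequality converts the seminorm bound into a true norm estimate on the closed subspace $H_0^1(0,1)$.
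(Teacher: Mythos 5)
La tua dimostrazione è corretta e segue essenzialmente la stessa strada del testo: coercitività della forma $A(u,v) = \int_0^1 (pu'v' + quv)$ tramite $q \geq 0$, $p \geq \alpha$ e la diseguaglianza di Poincar\'e, seguita dall'applicazione del teorema di Lax-Milgram. Le verifiche aggiuntive di limitatezza e della formulazione debole, che il testo omette, sono svolte correttamente.
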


\begin{proof}[Dimostrazione]
Poich\'e $q \geq 0$ abbiamo
\[
\int_0^1 q u^2 \geq 0
\ .
\]
Usando la diseguaglianza precedente e la diseguaglianza di Poincar\'e troviamo
\[
\int_0^1 [ p(u')^2 + q u^2 ] 
\ \geq \ 
\alpha \int_0^1 (u')^2 
\ \stackrel{(\ref{eq_Poincare})}{\geq} \
\alpha c \| u \|_{W,2}^2
\ .
\]
Per cui la forma
\[
A(u,v) := \int_0^1 ( pu'v' + q uv ) 
\ \ , \ \
u,v \in H_0^1(0,1)
\ ,
\]
\'e coercitiva. Possiamo quindi applicare il teorema di Lax-Milgram.
\end{proof}

Con metodi analoghi (e qualche piccola variante), siamo in grado di dimostrare il seguente
\begin{prop}[Il problema di Neumann]
Sia dato il problema
\begin{equation}
\label{eq_N}
\left\{
\begin{array}{ll}
- u'' + u = f
\\
u'(0) = u'(1) = 0
\end{array}
\right.
\end{equation}
con $f \in L^2$. Allora esiste ed \e unica la soluzione debole $u \in H^2(0,1)$ di (\ref{eq_SL}), come soluzione del problema variazionale
\[
u 
\ = \
\min_{v \in H_0^1(0,1)}
\left\{
\frac{1}{2} \int_0^1 [ (v')^2 + v^2 ]
-
\int_0^1 fv
\right\}
\ .
\]
\end{prop}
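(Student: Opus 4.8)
The plan is to mimic the structure of the Dirichlet and Sturm--Liouville results, applying the Lax--Milgram theorem (Teo.\ref{thm_SLM2}) to produce a weak solution and then upgrading it to an $H^2$ function for which the stated Neumann conditions hold. The one genuine difference from the Dirichlet case is that the trial space must be the \emph{full} space $H^1(0,1)$ rather than $H^1_0(0,1)$: the condition $u'(0)=u'(1)=0$ is a \emph{natural} boundary condition, recovered a posteriori rather than imposed a priori, so the minimization in the statement is to be read over $H^1(0,1)$.

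First I would set $\mH := H^1(0,1)$, which is a separable Hilbert space by Prop.\ref{prop_Sob1}, and consider the bilinear form $A(u,v) := \int_0^1 (u'v' + uv)$. This is precisely the inner product inducing the norm $\|\cdot\|_{W,2}$, hence bounded and symmetric, and coercive with constant $1$ since $A(v,v) = \|v\|_{W,2}^2$. Because $f \in L^2(0,1)$, the Hölder inequality (Prop.\ref{prop_holder}, with $p=q=2$) shows that $\langle \varphi, v \rangle := \int_0^1 fv$ defines an element $\varphi \in \mH^*$. The symmetric case of Lax--Milgram (Teo.\ref{thm_SLM2}) then yields a unique $u \in \mH$ minimizing $\tfrac12 A(v,v) - \langle\varphi,v\rangle$, equivalently the unique solution of
\[
\int_0^1 (u'v' + uv) = \int_0^1 fv, \qquad \forall v \in H^1(0,1).
\]
Uniqueness is immediate from coercivity, since the difference $w$ of two solutions satisfies $\|w\|_{W,2}^2 = A(w,w) = 0$.

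Next I would establish interior regularity. Restricting the weak identity to test functions $v \in C_c^\infty(0,1) \subset H^1_0(0,1)$ gives $\int_0^1 u'v' = \int_0^1 (u-f)v$, which says exactly that $u'$ admits the weak derivative $u - f \in L^2(0,1)$. Hence $u' \in H^1(0,1)$, so $u \in H^2(0,1)$ and $-u'' + u = f$ almost everywhere. By the continuous-representative theorem (Teo.\ref{thm_Sob1}), $u'$ has a continuous representative on $[0,1]$, so the boundary values $u'(0)$ and $u'(1)$ are well defined. Finally I would recover the boundary conditions: for an \emph{arbitrary} $v \in H^1(0,1)$, the integration-by-parts formula (\ref{eq_der_W1p}), valid since $u',v \in H^1(0,1)$, gives $\int_0^1 u'v' = u'(1)v(1) - u'(0)v(0) - \int_0^1 u''v$. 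Substituting into the weak identity and cancelling the integral terms via $-u''+u=f$ leaves $u'(1)v(1) - u'(0)v(0) = 0$ for every $v \in H^1(0,1)$; choosing affine test functions with independently prescribed endpoint values (e.g.\ $v(0)=1,v(1)=0$ and then $v(0)=0,v(1)=1$) forces $u'(0) = u'(1) = 0$.

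The main obstacle — and the precise point where the Neumann case departs from the Dirichlet one — is this last step. One must ensure that the boundary traces $u'(0),u'(1)$ make sense at all, which requires $u' \in H^1(0,1)$ and hence forces the regularity step to precede the boundary analysis, and one must verify that the boundary term produced by integration by parts can be made to isolate each endpoint separately. I expect everything else to reduce to routine verifications modeled directly on the preceding propositions.
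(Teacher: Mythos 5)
Your proof is correct and is exactly the argument the paper has in mind: no explicit proof is given for this proposition (the text only says that it follows "con metodi analoghi e qualche piccola variante"), and the "piccola variante" is precisely what you isolate, namely minimizing over all of $H^1(0,1)$ rather than $H^1_0(0,1)$ and recovering $u'(0)=u'(1)=0$ a posteriori as a natural boundary condition via the integration-by-parts formula (\ref{eq_der_W1p}) and the $H^2$ regularity step. Note that you are in fact proving a corrected version of the statement: as printed, the minimum is taken over $H^1_0(0,1)$ and the equation is cited as (\ref{eq_SL}), both evidently copy-paste slips from the Sturm--Liouville proposition, and your reading is the right one. One small transcription error: restricting the weak identity to $v\in C_c^\infty(0,1)$ gives $\int_0^1 u'v' = \int_0^1 (f-u)v$, not $\int_0^1 (u-f)v$; the conclusion you draw from it --- that the weak derivative of $u'$ is $u-f$, hence $u\in H^2(0,1)$ and $-u''+u=f$ a.e. --- is nevertheless the correct one.
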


%
%***************************************************************************

\newpage
{\small

}

\end{document}